\newtheorem{prop}{Proposition}[section]
\newtheorem{lem}[prop]{Lemma}
\newtheorem{cor}[prop]{Corollary}
\newtheorem{thm}[prop]{Theorem}
\theoremstyle{definition}
\newtheorem{defi}[prop]{Definition}
\theoremstyle{remark}
\newtheorem{examp}[prop]{Example}
\newtheorem{remar}[prop]{Remark}
\newcommand{\stackcite}[1]{\cite[\href{https://stacks.math.columbia.edu/tag/#1}{Tag #1}]{stacks-project}}
\DeclareMathAlphabet{\mathpzc}{OT1}{pzc}{m}{it}
\DeclareMathOperator{\Aut}{Aut}
\DeclareMathOperator{\Hom}{Hom}
\DeclareMathOperator{\Sym}{Sym}
\DeclareMathOperator{\GL}{GL}
\DeclareMathOperator{\SL}{SL}
\DeclareMathOperator{\Ker}{Ker}
\DeclareMathOperator{\Gal}{Gal}
\DeclareMathOperator{\Image}{Im}
\def\rank{\mathop{\mathrm{ rank}}\nolimits}
\DeclareMathOperator{\tr}{tr}
\DeclareMathOperator{\Spec}{Spec}
\DeclareMathOperator{\id}{id}
\DeclareMathOperator{\Sp}{Sp}
\DeclareMathOperator{\Rep}{Rep}
\DeclareMathOperator{\coker}{coker}
\DeclareMathOperator{\rad}{rad}
\DeclareMathOperator{\Art}{Art}
\DeclareMathOperator{\Set}{Set}
\newcommand{\Qp}{\mathbb {Q}_p}
\newcommand{\Zp}{\mathbb{Z}_p}
\newcommand{\Qpbar}{\overline{\mathbb{Q}}_p}
\newcommand{\Zbar}{\overline{Z}}
\newcommand{\Eins}{\mathbf 1}
\newcommand{\ZZ}{\mathbb Z}
\newcommand{\QQ}{\mathbb Q}
\newcommand{\Aa}{\mathfrak A}
\newcommand{\ev}{\mathrm{ev}}
\newcommand{\ab}{\mathrm{ab}}
\newcommand{\Fp}{\mathbb F_p}
\newcommand{\Vbar}{\overline{{V}}}
\newcommand{\mm}{\mathfrak m}
\newcommand{\OO}{\mathcal O}
\DeclareMathOperator{\ad}{ad}
\DeclareMathOperator{\wtimes}{\widehat{\otimes}}
\newcommand{\md}{\mathrm m}
\newcommand{\pp}{\mathfrak p}
\newcommand{\br}[1]{\llbracket #1\rrbracket}
\newcommand{\qq}{\mathfrak{q}}
\newcommand{\alg}{\mathrm{alg}}
\newcommand{\cont}{\mathrm{cont}}
\newcommand{\semi}{\mathrm{ss}}
\newcommand{\rhobar}{\overline{\rho}}
\newcommand{\Std}{\mathrm{Std}}
\newcommand{\Ga}{\mathbb G_{a}}
\newcommand{\Gm}{\mathbb G_{m}}
\newcommand{\ps}{\mathrm{ps}}
\newcommand{\CH}{\mathrm{CH}}
\newcommand{\gen}{\mathrm{gen}}
\newcommand{\gps}{\mathrm{git}}
\DeclareMathOperator{\PGL}{PGL}
\newcommand{\red}{\mathrm{red}}
\newcommand{\univ}{\mathrm{univ}}
\newcommand{\Xbar}{\overline{X}}
\newcommand{\Dbar}{\overline{D}}
\DeclareMathOperator{\spcl}{spcl}
\DeclareMathOperator{\nspcl}{n-spcl}
\newcommand{\cyc}{\mathrm{cyc}}
\newcommand{\cha}{\mathrm{char}}
\newcommand{\kappabar}{\overline{\kappa}}
\DeclareMathOperator{\Lie}{Lie}
\newcommand{\RpsGLd}{R^{\ps}_{\GL_d}}
\newcommand{\RpsG}{R^{\ps}_G}
\newcommand{\RpsH}{R^{\ps}_H}
\newcommand{\lin}{\mathrm{lin}}
\newcommand{\XgenGLd}{X^{\gen}_{\GL_d}}
\newcommand{\AgenGLd}{A^{\gen}_{\GL_d}}
\newcommand{\XpsGLd}{X^{\ps}_{\GL_d}}
\newcommand{\XbarpsGLd}{\overline{X}^{\ps}_{\GL_d}}
\newcommand{\XgenG}{X^{\gen}_G}
\newcommand{\XgenGtau}{X^{\gen, \tau}_G}
\newcommand{\XgenGrhobarss}{X^{\gen}_{G,\rhobarss}}
\newcommand{\XgenGtaurhobarss}{X^{\gen, \tau}_{G,\rhobarss}}
\newcommand{\XbargenG}{\overline{X}^{\gen}_G}
\newcommand{\XbargenGtau}{\overline{X}^{\gen,\tau}_G}
\newcommand{\XbargenH}{\overline{X}^{\gen}_H}
\newcommand{\XbargpsHtau}{\overline{X}^{\gps,\tau}_H}
\newcommand{\XgpsG}{X^{\gps}_{G}}
\newcommand{\XgpsGtau}{X^{\gps, \tau}_{G}}
\newcommand{\XgpsHtau}{X^{\gps, \tau}_{H}}
\newcommand{\chara}{\mathrm{char}}
\newcommand{\XbargpsG}{\overline{X}^{\gps}_G}
\newcommand{\XbargpsGtau}{\overline{X}^{\gps,\tau}_G}
\newcommand{\etabar}{\bar{\eta}}
\newcommand{\xbar}{\bar{x}}
\newcommand{\pts}{\mathrm{pts}}
\newcommand{\kbar}{\overline{k}}
\newcommand{\rhobarss}{\rhobar^{\mathrm{ss}}}
\newcommand{\PC}{\mathrm{PC}}
\newcommand{\cPC}{\mathrm{cPC}}
\newcommand{\Map}{\mathrm{Map}}
\newcommand{\hyphen}{\text{-}}
\newcommand{\Gbar}{\overline{G}}
\newcommand{\Pbar}{\overline{P}}
\newcommand{\Lbar}{\overline{L}}
\newcommand{\ybar}{\bar{y}}
\renewcommand{\xbar}{\bar{x}}
\newcommand{\XpsG}{X^{\ps}_G}
\newcommand{\disc}{\operatorname{disc}} 
\newcommand{\Ubar}{\overline{U}}
\newcommand{\psibar}{\overline{\psi}}
\newcommand{\Thetabar}{\overline{\Theta}}
\newcommand{\Cont}{\mathcal{C}}
\newcommand{\eqto}{\xrightarrow{\sim}}
\newcommand{\sbar}{\bar{s}}
\newcommand{\lambdabar}{\bar{\lambda}}
\newcommand{\AgenGtau}{A^{\gen, \tau}_G}
\newcommand{\AgenGtaurhobarss}{A^{\gen, \tau}_{G, \rhobarss}}
\newcommand{\zbar}{\bar{z}}
\newcommand{\Xc}{\mathrm{X}}
\newcommand{\git}{\mathrm{git}}
\newcommand{\psibarss}{\psibar^{\mathrm {ss}}}
\newcommand{\Int}{\operatorname{Int}}
\newcommand{\sic}{\operatorname{sc}}
\newcommand{\Rbar}{\overline{R}}
\renewcommand\tableofcontents{
  \@starttoc{toc}
}
\newglossaryentry{ad}{
  name={$\ad\rhobar$},
  description={adjoint representation associated to $\rhobar$}
}
\newglossaryentry{ad0varphi}{
  name={$\ad^{0,\varphi}$},
  description={\Cref{present_over_RH}}
}
\newglossaryentry{AgenGtau}{
  name={$A^{\gen(, \tau)}_G$},
  description={coordinate ring representing $X^{\gen,\tau}_G$}
}
\newglossaryentry{AgenGrhobarss}{
  name={$A^{\gen}_{G,\rhobarss}$},
  description={abbreviation of $A^{\gen, \tau}_{G,\rhobarss}$ as an $R^{\ps}_G$-algebra used after \Cref{indep_tau}}
}
\newglossaryentry{AgenGtaurhobarss}{
  name={$A^{\gen, \tau}_{G,\rhobarss}$},
  description={coordinate ring of $X^{\gen, \tau}_{G,\rhobarss}$}
}
\newglossaryentry{Af}{
  name={$\mathfrak A_{\OO}$},
  description={category of local Artinian $\OO$-algebras with residue field $k$}
}
\newglossaryentry{ALambda}{
  name={$\mathfrak A_{\Lambda}$},
  description={category of local Artinian $\Lambda$-algebras with residue field $\kappa$}
}
\newglossaryentry{Achi}{
    name={$A^{\chi}$},
    description={\Cref{sec_gen_tori}}
}
\newglossaryentry{CH}{
  name={${}^C H$},
  description={$C$-group attached to a connected reductive group $H$}
}
\newglossaryentry{CayHam}{
  name={$\CH(D)$},
  description={Cayley--Hamilton ideal of a determinant law $D$}
}
\newglossaryentry{cPCA}{
    name={$\cPC^{\Gamma}_G(A)$},
    description={set of continuous $G$-pseudocharacters of $\Gamma$ with values in $A$}
}
\newglossaryentry{cPL}{
    name={$c_{P,L}$},
    description={semisimplification map $P(\kappa) \to G(\kappa)$, \Cref{sec_G_ss}}
}
\newglossaryentry{disc}{
    name={$(-)_{\disc}$},
    description={discrete condensed set associated to a set}
}
\newglossaryentry{Dbar}{
    name={$\overline D$},
    description={determinant law associated to $\rhobar$}
}
\newglossaryentry{Drho}{
  name={$D_{\rho}$},
  description={determinant law associated to $\rho$}
}
\newglossaryentry{Df}{
  name={$D^{\square}_{(\rho,) G}$},
  description={framed deformation functor of $\rho$ with values in $G$, \Cref{sec_def_rings}}
}
\newglossaryentry{Du}{
    name={$D^u$},
    description={universal determinant law extended to $R^{\ps}_{\GL_d}\br{\Gamma}$}
}
\newglossaryentry{DuA}{
    name={$D^u_{|A}$},
    description={specialisation of $D^u$ along $R^{\ps}_{\GL_d} \to A$}
}
\newglossaryentry{DuEu}{
    name={$D^u_{E^u}$},
    description={determinant law induced by $D^u$ on $E^u$}
}
\newglossaryentry{DThetabar}{
    name={$D_{\Thetabar}$},
    description={deformation functor of $\Thetabar$, \Cref{pseudodeffunctor}}
}
\newglossaryentry{DpsiGrho}{
    name={$D^{\square,\psi_1}_{G,\rho}$},
    description={\Cref{nightmare_cont}}
}
\newglossaryentry{E}{
  name={$E$},
  description={a finite extension of $F$ such that $\Gamma_E = \ker(\Gamma_F \xrightarrow{\rhobar} G(k) \to (G/G^0)(k))$}
}
\newglossaryentry{Eu}{
  name={$E^u$},
  description={universal Cayley--Hamilton quotient $R^{\ps}_{\GL_d}\br{\Gamma}/\CH(D^u)$}
}
\newglossaryentry{FN}{
    name={$\mathcal F_N$},
    description={free abstract group on $N$ generators}
}
\newglossaryentry{G}{
  name={$G$},
  description={a generalised reductive group scheme over $\OO$}
}
\newglossaryentry{Gprime}{
  name={$G'$},
  description={derived group scheme of $G^0$}
}
\newglossaryentry{Gprimesc}{
  name={$G'_{\sic}$},
  description={simply connected central cover of $G'$}
}
\newglossaryentry{G0}{
  name={$G^0$},
  description={neutral component of $G$}
}
\newglossaryentry{Llambda}{
    name = {$L_{\lambda}$},
    description={R-Levi attached to a cocharacter $\lambda$, \Cref{sec_R_parabolic}}
}
\newglossaryentry{LH}{
    name={${}^LH$},
    description={$L$-group of $H$, \Cref{LandC}}
}
\newglossaryentry{mA}{
  name={$\mm_A$},
  description={maximal ideal of a local ring $A$, \Cref{sec_def_rings}}
}
\newglossaryentry{N}{
    name={$N$},
    description={fixed number of topological generators of $Q$, \Cref{def_and_1prop}}
}
\newglossaryentry{Okappa}{
    name={$\OO_{\kappa}$},
    description={\Cref{sec_def_rings}}
}
\newglossaryentry{PC}{
    name={$\PC^{\Gamma}_G$},
    description={scheme of $G$-pseudocharacters of $\Gamma$, \Cref{Laf}}
}
\newglossaryentry{Plambda}{
    name = {$P_{\lambda}$},
    description={R-parabolic attached to a cocharacter $\lambda$, \Cref{sec_R_parabolic}}
}
\newglossaryentry{Q}{
    name={$Q$},
    description={fixed quotient of $\Gamma$ as in \Cref{fg_quotient}}
}
\newglossaryentry{RepGsq}{
  name={$\Rep^{\Gamma(, \square)}_G$},
  description={scheme of group homomorphisms $\Gamma \to G(A)$, Sections \ref{sec_ci}, \ref{def_and_1prop}}
}
\newglossaryentry{RgitGtau}{
    name={$R^{\git,\tau}_G$},
    description={invariant ring $(A^{\gen, \tau}_G)^{G^0}$, coordinate ring of $X^{\git,\tau}_G$, \Cref{sec_points_in_XgenGtau}}
}
\newglossaryentry{RgitGrhobarss}{
  name={$R^{\git}_{G,\rhobarss}$},
  description={abbreviation of $R^{\git, \tau}_{G,\rhobarss}$ as an $R^{\ps}_G$-algebra used after \Cref{indep_tau}}
}
\newglossaryentry{RgitGtaurhobarss}{
    name={$R^{\git,\tau}_{G, \rhobarss}$},
    description={connected component of $R^{\git,\tau}_G$ corresponding to $\rhobarss$, \Cref{sec_points_in_XgenGtau}}
}
\newglossaryentry{RsqrhoG}{
    name={$R^{\square}_{(\rho,)G}$},
    description={universal deformation ring representing $D^{\square}_{\rho,G}$, \Cref{sec_def_rings}}
}
\newglossaryentry{Rsp}{
  name={$R^{\square}_{\psibar}$},
  description={the universal framed deformation ring of $\psibar$}
}
\newglossaryentry{Rspr}{
  name={$R^{\square,\psi_1}_{\rhobar}$},
  description={the universal deformation ring pro-representing $D^{\square,\psi_1}_{\rhobar}$}
}
\newglossaryentry{RpsG}{
  name={$R^{\ps}_G$},
  description={the complete local noetherian $\OO$-algebra pro-representing $D_{\Thetabar}$, \Cref{sec_def_GPC}}
}
\newglossaryentry{RpsGLd}{
  name={$R^{\ps}_{\GL_d}$},
  description={universal deformation ring of $\overline D$, \Cref{def_and_1prop}}
}
\newglossaryentry{RpsiGrho}{
    name={$R^{\square,\psi_1}_{G,\rho}$},
    description={\Cref{nightmare_cont}}
}
\newglossaryentry{S}{
    name={$\underline S$},
    description={condensed set associated to a topological space $S$}
}
\newglossaryentry{Ugit}{
    name={$U^{\git, \tau}_{G,\rhobarss}$},
    description={punctured space $X^{\git, \tau}_{G,\rhobarss} \setminus \{\text{closed point}\}$}
}
\newglossaryentry{ULG}{
    name={$U_{LG}$, $\overline U_{LG}$},
    description={\Cref{sec_abs_irr_loc}}
}
\newglossaryentry{ULGrhobarss}{
    name={$U_{LG, \rhobarss}$, $\overline U_{LG, \rhobarss}$},
    description={component of $U_{LG}$, $\overline U_{LG}$ associated to $\rhobarss$}
}
\newglossaryentry{UpsG}{
  name={$U^{\ps}_G$},
  description={punctured spectrum $(\Spec R^{\ps}_G) \setminus \{\text{closed point}\}$}
}
\newglossaryentry{Ulambda}{
    name = {$U_{\lambda}$},
    description={unipotent radical of $P_{\lambda}$, \Cref{sec_R_parabolic}}
}
\newglossaryentry{VLG}{
    name={$V_{LG}$, $\overline V_{LG}$},
    description={\Cref{sec_abs_irr_loc}}
}
\newglossaryentry{Vnspcl}{
    name={$V^{\nspcl}_{GG, \rhobarss}$, $\overline V^{\nspcl}_{GG, \rhobarss}$},
    description={\Cref{defi_Vnspcl}}
}
\newglossaryentry{Vspcl}{
    name={$\overline V^{\spcl}_{GG, \rhobarss}$},
    description={\Cref{bound_special}}
}
\newglossaryentry{X//H}{
    name={$X \sslash H$},
    description={affine GIT quotient of affine scheme $X$ by $H$, i.e. $\Spec(\OO(X)^H)$}
}
\newglossaryentry{XgenGLd}{
    name={$X^{\gen}_{\GL_d}$},
    description={Wang-Erickson's space of generic matrices, \Cref{defXgenGLd}}
}
\newglossaryentry{XgenTau}{
  name={$X^{\gen(,\tau)}_G$},
  description={closed subscheme of $X^{\gen}_{\GL_d}$ of representations factoring through $\tau$, \Cref{defiXgenG}}
}
\newglossaryentry{XgenTaubar}{
  name={$\overline X^{\gen,\tau}_G$},
  description={special fibre of $X^{\gen,\tau}_G$}
}
\newglossaryentry{XgenGrhobarss}{
  name={$X^{\gen}_{G,\rhobarss}$},
  description={abbreviation of $X^{\gen, \tau}_{G,\rhobarss}$ as an $R^{\ps}_G$-scheme used after \Cref{indep_tau}}
}
\newglossaryentry{XgentauGrhobarss}{
    name={$X^{\gen, \tau}_{G,\rhobarss}$},
    description={component of $X^{\gen, \tau}_G$ corresponding to $\rhobarss$, \Cref{sec_def_XgenGtaurhobarss}}
}
\newglossaryentry{XgenpsiGrhobarss}{
    name={$X^{\gen, \psi_1}_{G,\rhobarss}$, $\overline X^{\gen, \psi_1}_{G,\rhobarss}$},
    description={\Cref{nightmare_cont}}
}
\newglossaryentry{XgenPrho}{
    name={$X^{\gen}_{P,\rho}$},
    description={\Cref{def_XgenPrho}}
}
\newglossaryentry{XgitGtau}{
    name={$X^{\git(, \tau)}_G$},
    description={GIT quotient $X^{\gen, \tau}_G \sslash G^0$, \Cref{sec_GIT}}
}
\newglossaryentry{XgitGtaubar}{
    name={$\overline X^{\git(, \tau)}_G$},
    description={GIT quotient $\overline X^{\gen, \tau}_G \sslash G^0$, \Cref{sec_GIT}}
}
\newglossaryentry{XgittauHG}{
    name={$X^{\git(, \tau)}_{HG}$, $\overline X^{\git(, \tau)}_{HG}$},
    description={image of $X^{\git, \tau}_H \to X^{\git, \tau}_G$ (resp. $\overline X^{\git, \tau}_H \to \overline X^{\git, \tau}_G$)}
}
\newglossaryentry{XgittauHGrhobarss}{
    name={$X^{\git(, \tau)}_{HG,\rhobarss}$},
    description={component of $X^{\git(, \tau)}_{HG}$ corresponding to $\rhobarss$}
}
\newglossaryentry{XgittauHGbar}{
    name={$\overline X^{\git(, \tau)}_{HG}$},
    description={special fiber of $X^{\git(, \tau)}_{HG}$, as a topological space, \Cref{remar_XgittauHG_bar}}
}
\newglossaryentry{XgitGrhobarss}{
  name={$X^{\git}_{G,\rhobarss}$},
  description={abbreviation of $X^{\git, \tau}_{G,\rhobarss}$ as an $R^{\ps}_G$-scheme used after \Cref{indep_tau}}
}
\newglossaryentry{XgitGtaurhobarss}{
    name={$X^{\git,\tau}_{G, \rhobarss}$},
    description={connected component of $X^{\git,\tau}_G$ corresponding to $\rhobarss$, \Cref{sec_points_in_XgenGtau}}
}
\newglossaryentry{XpsGLd}{
    name={$X^{\ps}_{\GL_d}$},
    description={spectrum of $R^{\ps}_{\GL_d}$, the universal deformation ring of $\overline D$, \Cref{def_and_1prop}}
}
\newglossaryentry{XpsG}{
    name={$X^{\ps}_{G(, \rhobarss)}$, $\overline X^{\ps}_{G(, \rhobarss)}$},
    description={spectrum of $R^{\ps}_{G(, \rhobarss)}$, $R^{\ps}_{G(, \rhobarss)}/\varpi$}
}
\newglossaryentry{Xspcl}{
    name={$X^{\spcl}_{G, \rhobarss}$, $\overline X^{\spcl}_{G, \rhobarss}$},
    description={\Cref{bound_special}}
}
\newglossaryentry{Xmu}{
    name={$X(\mu)$},
    description={group of characters $\mu \to \OO^{\times}$}
}
\newglossaryentry{Xchi}{
    name={$X^{\chi}$},
    description={\Cref{sec_gen_tori}}
}
\newglossaryentry{XsqGrhobar}{
    name={$X^{\square}_{G,\rhobar}$, $\overline X^{\square}_{G,\rhobar}$},
    description={spectrum of $R^{\square}_{G,\rhobar}$, $R^{\square}_{G,\rhobar}/\varpi$}
}
\newglossaryentry{Y}{
    name={$Y$},
    description={preimage of closed points of $X^{\git, \tau}_G$ in $X^{\gen, \tau}_G$, \Cref{sec_abs_irr_loc}}
}
\newglossaryentry{Yrhobarss}{
    name={$Y_{\rhobarss}$},
    description={preimage of closed point of $X^{\ps}_G$ in $X^{\gen}_{G,\rhobarss}$}
}
\newglossaryentry{Ypsirhobarss}{
    name={$Y^{\psi_1}_{\rhobarss}$},
    description={\Cref{sec_gen_tori}}
}
\newglossaryentry{ZGH}{
  name={$Z_G(H)$},
  description={centraliser of a subgroup scheme $H \leq G$}
}
\newglossaryentry{ZH}{
  name={$Z(H)$},
  description={centre of a group scheme $H$, \Cref{ZGexists}}
}
\newglossaryentry{uZH}{
  name={$\underline Z(H)$},
  description={centre functor of a group scheme $H$, \Cref{ZGexists}}
}
\newglossaryentry{Zi}{
    name={$Z^i(\Gamma, V)$},
    description={set of continuous $i$-cocycles}
}
\newglossaryentry{Zspcl}{
    name={$\overline Z^{\spcl}_{GG, \rhobarss}$},
    description={\Cref{bound_special}}
}
\newglossaryentry{Delta}{
    name={$\Delta$},
    description={image of $\pi_G \circ \rhobarss : \Gamma \to (G/G^0)(\kbar)$}
}
\newglossaryentry{uDelta}{
  name={$\underline \Delta$},
  description={constant group scheme associated to $\Delta$}
}
\newglossaryentry{Gammaabp}{
    name={$\Gamma_F^{\ab,p}$},
    description={maximal abelian pro-$p$ quotient of $\Gamma_F$}
}
\newglossaryentry{gammai}{
    name={$\gamma_i$},
    description={for $1 \leq i \leq N$ fixed topological generators of $Q$, \Cref{def_and_1prop}}
}
\newglossaryentry{Thetabar}{
  name={$\Thetabar$},
  description={the $G$-pseudocharacter attached to $\rhobar$}
}
\newglossaryentry{Thetan}{
    name={$\Theta_n$},
    description={homomorphism $\Theta_n : \OO[G^n]^{G^0} \to \Map(\Gamma^n, A)$, part of $G$-pseudocharacter $\Theta$}
}
\newglossaryentry{ThetauA}{
    name={$\Theta^u_{|A}$},
    description={specialisation of $\Theta^u$ along $R^{\ps}_G \to A$}
}
\newglossaryentry{ThetauG}{
    name={$\Theta^u_{(G)}$},
    description={universal $G$-pseudocharacter over $R^{\ps}_G$}
}
\newglossaryentry{Thetarho}{
    name={$\Theta_{\rho}$},
    description={$G$-pseudocharacter associated with $\rho$}
}
\newglossaryentry{kappa}{
  name={$\kappa$},
  description={finite or local field in \Cref{sec_def_rings}, otherwise a (topological) field}
}
\newglossaryentry{kappap}{
  name={$\kappa(\pp)$},
  description={residue field at prime ideal $\pp$}
}
\newglossaryentry{kappaeps}{
  name={$\kappa[\varepsilon]$},
  description={ring of dual numbers $\kappa[\varepsilon] := \kappa[x]/(x^2)$ of $\kappa$}
}
\newglossaryentry{Lambda}{
  name={$\Lambda$},
  description={coefficient ring for $\kappa$, \Cref{sec_def_rings}}
}
\newglossaryentry{Lambda0}{
  name={$\Lambda^0$},
  description={subring of coefficient ring $\Lambda$ for $\kappa$, \Cref{sec_def_rings}}
}
\newglossaryentry{mupinf}{
  name={$\mu_{p^{\infty}}(E)$},
  description={the group of all $p$-power roots of unity contained in $E$}
}
\newglossaryentry{pi1Gprime}{
  name={$\pi_1(G')$},
  description={kernel of $G'_{\sic}\to G'$}
}
\newglossaryentry{piG}{
    name={$\pi_G$},
    description={projection $G \to G/G^0$, \Cref{sec_abs_irr}}
}
\newglossaryentry{rhobar}{
  name={$\rhobar$},
  description={a continuous representation $\Gamma_F \to G(k)$}
}
\newglossaryentry{rhoA}{
  name={$\rho_A$},
  description={a continuous representation $\Gamma_F \to G(A)$ lifting a given residual representation}
}
\newglossaryentry{rhoss}{
  name={$\rho^{\semi}$},
  description={$G$-semisimplification of $\rho$, \Cref{defi_G_ss}}
}
\newglossaryentry{tau}{
  name={$\tau$},
  description={fixed faithful representation $\tau : G \to \GL_d$, \Cref{def_and_1prop}}
}
\newglossaryentry{cyclo}{
  name={$\chi_{\cyc}$},
  description={the $p$-adic cyclotomic character}
}
\newglossaryentry{hatotimes}{
    name={$\widehat{\otimes}$},
    description={completed tensor product, \stackcite{0AMU}}
}
\newglossaryentry{tensd}{
    name={$M^{\otimes_R d}$},
    description={abbreviation for the $d$-fold tensor product $M \otimes_R \cdots \otimes_R M$}
}
\newglossaryentry{sharp}{
    name={$(-)^{\sharp}$},
    description={for a map $f : X \to Y$ of schemes $f^{\sharp} : \Gamma(Y,\OO_Y) \to \Gamma(X,\OO_X)$}
}
\title[On local Galois deformation rings]{On local Galois deformation rings:\\ generalised reductive groups}
\author{Vytautas Pa\v{s}k\={u}nas and  Julian Quast }
\date{\today.}
\begin{document} 

\begin{abstract} We study deformation theory of mod $p$ Galois representations 
of $p$-adic fields with values in generalised reductive group schemes, 
such as $L$-groups and $C$-groups. We show that the corresponding deformation
rings are complete intersections of expected dimension. We determine their 
irreducible components  in many cases and show that they and their special fibres are 
normal and complete intersection.  
\end{abstract}

\maketitle

\section*{\contentsname}
\addtocontents{toc}{\protect\setcounter{tocdepth}{-1}} 
\tableofcontents
\addtocontents{toc}{\protect\setcounter{tocdepth}{2}}

\section{Introduction}

Let $p$ denote any prime number, let $F$ be a finite extension of $\Qp$, and let $\Gamma_F$ denote its absolute Galois group. Let $L$ be another finite extension of $\Qp$ with ring of integers $\OO$, uniformiser $\varpi$ and residue field $k=\OO/\varpi$.

Let $G$ be a smooth affine group scheme over $\OO$, such that its neutral 
component $G^0$ is reductive and the component group $G/G^0$ is a finite 
 group scheme over $\OO$. We call such group schemes \emph{generalised
reductive}. We do not make any assumptions on the prime $p$ regarding $G$. 

We fix a continuous representation $\rhobar:\Gamma_F\rightarrow G(k)$ and denote by $D^{\square}_{\rhobar}: \mathfrak A_{\OO}\rightarrow \Set$ the functor from the category $\mathfrak A_{\OO}$ of local artinian $\OO$-algebras with residue field $k$ to the category of sets, such that for $(A,\mm_A)\in \mathfrak A_{\OO}$, $D^{\square}_{\rhobar}(A)$ is the set of continuous representations 
$\rho_A: \Gamma_F\rightarrow G(A)$, such that 
$\rho_A(\gamma) \equiv \rhobar(\gamma) \pmod{\mm_A}$, for all $\gamma\in \Gamma_F.$
The 
functor $D^{\square}_{\rhobar}$ of framed deformations of $\rhobar$ is 
pro-represented by a complete local 
noetherian $\OO$-algebra $R^{\square}_{\rhobar}$ with residue field~$k$. 

B\"ockle, Iyengar and VP have studied in \cite{BIP_new} ring theoretic 
properties of $R^{\square}_{\rhobar}$, when $G=\GL_d$. In this paper 
we extend the results of \cite{BIP_new} to an arbitrary generalised
reductive group $G$.

\begin{thm}[Corollaries \ref{complete_intersection}, \ref{R_norm_red}]\label{thm_intro-1}
The ring $R^{\square}_{\rhobar}$ is a local complete intersection, flat over $\OO$ and of relative dimension $\dim G_k ([F:\Qp]+1)$. In particular, every continuous representation $\rhobar: \Gamma_F\rightarrow G(k)$ has a lift to characteristic zero. Moreover, $R^{\square}_{\rhobar}$ is reduced and $R^{\square}_{\rhobar}[1/p]$ is normal. 
\end{thm}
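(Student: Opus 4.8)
\emph{Proof proposal.} The statement bundles three assertions: that $R^{\square}_{\rhobar}$ is a local complete intersection, flat over $\OO$, of relative dimension $\dim G_k([F:\Qp]+1)$; that every $\rhobar$ therefore lifts to characteristic zero; and that $R^{\square}_{\rhobar}$ is reduced with $R^{\square}_{\rhobar}[1/p]$ normal. I would deduce all of this from the corresponding results of \cite{BIP_new} for $\GL_d$, after setting up group-valued deformation theory for a general generalised reductive $G$ and reducing --- through the connected reductive case, using that $G/G^0$ is finite so the image of $\rhobar$ in $G/G^0$ is rigid under deformation, and then through $z$-extensions and faithful representations --- to cases governed by \cite{BIP_new} and by elementary torus computations; that reduction is the main new point.

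\emph{Presentation and lower bound.} Put $\ad\rhobar := \Lie(G_k)$ with the adjoint $\Gamma_F$-action through $\rhobar$. Since $G$ is smooth over $\OO$, the usual tangent--obstruction calculus applies: $D^{\square}_{\rhobar}$ has tangent space $Z^1(\Gamma_F,\ad\rhobar)$ and obstruction theory valued in $H^2(\Gamma_F,\ad\rhobar)$, giving
\[
R^{\square}_{\rhobar}\;\cong\;\OO[[x_1,\dots,x_g]]/(f_1,\dots,f_r),\qquad g=\dim_k Z^1(\Gamma_F,\ad\rhobar),\quad r=\dim_k H^2(\Gamma_F,\ad\rhobar).
\]
The local Euler characteristic formula for the $k[\Gamma_F]$-module $\ad\rhobar$ yields $g-r=\dim G_k([F:\Qp]+1)$, so $\dim R^{\square}_{\rhobar}\ge g-r+1$, with equality exactly when $(f_1,\dots,f_r)$ is a regular sequence in the regular local ring $\OO[[x_1,\dots,x_g]]$ --- in which case $R^{\square}_{\rhobar}$ is automatically $\OO$-flat and a local complete intersection, and the lifting statement follows since a nonzero $\OO$-flat complete local algebra has a point valued in a finite extension of $L$.

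\emph{Upper bound.} It remains to show $\dim R^{\square}_{\rhobar}[1/p]\le\dim G_k([F:\Qp]+1)$ and $\dim R^{\square}_{\rhobar}/\varpi\le\dim G_k([F:\Qp]+1)$; given the previous step these force $(f_i)$ to be a regular sequence. For the generic fibre I would argue pointwise: the completed local ring at $x\in\Spec R^{\square}_{\rhobar}[1/p]$ with residue field $E/L$ finite and attached $\rho_x\colon\Gamma_F\to G(E)$ again has $\dim_E Z^1(\Gamma_F,\ad\rho_x)$ generators and $\dim_E H^2(\Gamma_F,\ad\rho_x)$ relations of difference $\dim G_k([F:\Qp]+1)$, and in characteristic zero $H^2$ is controlled by local Tate duality, $H^2(\Gamma_F,\ad\rho_x)^{\vee}\cong H^0(\Gamma_F,(\ad\rho_x)^{\vee}(1))$, while enough genuine deformations (twists, or reduction to $\GL_n$ along a faithful representation) exhibit the expected dimension. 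The harder bound is the special fibre: every point of $\Spec R^{\square}_{\rhobar}/\varpi$ corresponds to a continuous $\rho\colon\Gamma_F\to G(\kappa)$ factoring through the maximal quotient $\Gamma$ of $\Gamma_F$ that is pro-$p$ by $\rhobar(\Gamma_F)$; this $\Gamma$ is topologically finitely presented of Demushkin type, so $R^{\square}_{\rhobar}/\varpi$ is a quotient of a power series ring over $k$ in $N\dim G_k$ variables by the ideal generated by the coordinates of the word maps $G_k^N\to G_k$ attached to the defining relations of $\Gamma$, and the bound reduces to showing these word maps cut out a subvariety of the expected codimension --- which for $\GL_n$ is \cite{BIP_new}, and which for general $G$ I would obtain along a faithful $G\hookrightarrow\GL_n$ or by a direct analysis of the Demushkin relation word map.

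\emph{Reducedness and normality.} Granting the complete intersection property, $R^{\square}_{\rhobar}$ is Cohen--Macaulay; since $\varpi$ is then a nonzerodivisor, $R^{\square}_{\rhobar}[1/p]$ is equidimensional of dimension $\dim G_k([F:\Qp]+1)$, and both $R^{\square}_{\rhobar}$ and $R^{\square}_{\rhobar}[1/p]$ satisfy Serre's $(S_2)$. Reducedness of $R^{\square}_{\rhobar}$ is then the $(R_0)$ condition and normality of $R^{\square}_{\rhobar}[1/p]$ is $(R_1)$; I would verify regularity in codimension $\le 1$ on the generic fibre by showing the obstructed locus $\{x:H^2(\Gamma_F,\ad\rho_x)\ne 0\}$ --- dually $\{x:H^0(\Gamma_F,(\ad\rho_x)^{\vee}(1))\ne 0\}$ --- has codimension $\ge 2$, treating separately the ``cyclotomic'' configurations in which a subquotient of $\ad\rho_x$ is isomorphic to its own cyclotomic twist (which can fill out an entire component and must be shown regular at its generic point). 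Reducedness of the special fibre then follows from density of the characteristic-zero points, i.e.\ from $\OO$-flatness together with generic reducedness of the generic fibre, exactly as in \cite{BIP_new}. The principal difficulty is the special-fibre bound in the previous paragraph, precisely because no hypothesis is placed on $p$: in small characteristic $\ad\rhobar=\Lie(G_k)$ may fail to be self-dual (the Killing form degenerates) and need not be the Lie algebra of a reductive group, so the self-duality shortcuts used for $\GL_d$ are unavailable; I expect this to be handled by the reduction to the connected reductive case combined with a passage through $z$-extensions and faithful representations to the $\GL_n$-case of \cite{BIP_new}, plus a direct study of the Demushkin word map on $G_k$ that makes no use of self-duality.
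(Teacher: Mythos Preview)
Your presentation step and the lower bound via Euler--Poincar\'e are correct and match the paper. The gap is the upper bound on $\dim R^{\square}_{\rhobar}/\varpi$: neither of your two strategies works. A faithful $\tau\colon G\hookrightarrow\GL_d$ realises $R^{\square}_{\rhobar}$ as a quotient of $R^{\square}_{\tau\circ\rhobar,\GL_d}$, but this only yields the useless bound $d^2([F:\Qp]+1)$; there is no mechanism for extracting the correct bound $\dim G_k([F:\Qp]+1)$ from the $\GL_d$ result alone, since the extra equations cutting out $G$ inside $\GL_d$ contribute unknown codimension. The word-map idea is not a reduction either: the quotient $\Gamma$ you describe is pro-$p$-by-finite, not pro-$p$, so not Demushkin; and even granting a finite presentation, the assertion that the relation word maps cut out a subvariety of $G_k^N$ of the expected codimension is exactly the theorem to be proved, not a step towards it. The $z$-extension manoeuvre does appear in the paper (to reduce to $\pi_1(G')$ \'etale), but only as one ingredient in a much longer argument.

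The paper's route is substantially different. One constructs a finite-type scheme $X^{\gen}_{G,\rhobarss}$ over the $G$-pseudocharacter deformation ring $R^{\ps}_G$ whose points are \emph{all} continuous $\rho$ with $G$-semisimplification $\rhobarss$; the rings $R^{\square}_{\rho_x}$ are its completed local rings, so it suffices to bound $\dim\Xbar^{\gen}_{G,\rhobarss}$. That bound is an induction on $\dim G$ over all generalised reductive groups: stratify $\Xbar^{\ps}_G$ by images of $\Xbar^{\ps}_L$ for R-Levis $L$ (finiteness here uses Cotner's theorem that $H^N\sslash H\to G^N\sslash G$ is finite for closed reductive $H\subset G$), bound the fibres of $\Xbar^{\gen}_{G,\rhobarss}\to\Xbar^{\ps}_G$ via the parabolic structure, and control the locus where $H^0(\Gamma_F,(\Lie G'_{\sic})^*_x(1))\ne0$ by showing that the Zariski closure of the image there is a reductive subgroup $H$ with $\dim G-\dim H\ge2$. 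This last step rests on two inputs absent from your proposal: a Matsushima-type codimension-$2$ bound for reductive subgroups not containing $G'$, and the vanishing $((\Lie G'_{\sic})^*)^{G'}=0$, which holds in every characteristic and replaces the self-duality of $\ad$ that, as you note, fails for small $p$. Normality of $R^{\square}_{\rhobar}[1/p]$ is then obtained from the same machinery by showing the $(\Lie G'_{\sic})^*$-special locus has codimension $\ge 1+[F:\Qp]\ge2$; reducedness of $R^{\square}_{\rhobar}$ follows from $\OO$-flatness plus normality of the generic fibre, as you say.
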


Obstruction theory provides a presentation $R^\square_{\rhobar} = \OO\br{x_1,\dots,x_r}/(f_1, \ldots, f_s)$ with $r$ equal to the dimension of the tangent space and $s$ equal to $\dim_k H^2(\Gamma_F, \ad\rhobar)$.
The Euler--Poincar\'e characteristic formula from local class field theory gives
    \begin{equation}\label{EP_bound}
    r-s=\dim G_k ([F:\Qp]+1).
    \end{equation}
Our theorem proves that $\dim R^\square_{\rhobar}/\varpi$ is given by this cohomological quantity, the \emph{expected dimension} in the spirit of the \emph{Dimension Conjecture} of Gouv\^{e}a from \cite[Lecture 4]{Gouvea}. Having the expected dimension implies that $\varpi,f_1, \ldots, f_s$ is a regular sequence and that $R^\square_{\rhobar}$ is a local complete intersection. It also implies
(see \cite[Lemma 7.5]{DDR_GV}) that the derived deformation ring of $\rhobar$ as introduced 
by Galatius and Venkatesh in \cite{DDR_GV} 
is homotopy discrete, which means the derived deformation theory 
of $\rhobar$ does not contain more information than
the usual deformation 
theory of $\rhobar$. 

We expect that our results will play an important role in the categorical
$p$-adic local Langlands correspondence. Conjecturally 
\cite[Conjecture 6.1.14]{padic_LL} on the Galois side one should 
consider a derived category of coherent sheaves (or some $\infty$-category 
version of it) on the Emerton--Gee stack. 
The local rings considered in \Cref{thm_intro-1} should arise as 
versal rings at finite type points of the Emerton--Gee stack, when $G$
is an $L$-group or a $C$-group of a connected reductive 
group defined over $F$ (see \cite[Proposition 3.6.3]{EG_stack}, when $G=\GL_d$). In other words, they are expected to describe the 
local properties of the Emerton--Gee stack. An instance of this is 
a paper of Yu Min \cite{yu_min}, where he uses the analogue of \Cref{thm_intro-1} 
proved in \cite{BIP_new} for $\GL_d$ 
to show that if $G=\GL_d$ then the derived version of the Emerton--Gee 
stack is equivalent to the classical Emerton--Gee stack, obtaining 
an analogue of the result explained above, that 
derived deformation rings are homotopy discrete, over the whole of the stack. The
Emerton--Gee stack has been recently defined and studied for $\mathrm{GSp}_4$ in \cite{heejong_lee} and for
tame groups in \cite{lin}.
We infinitesimally guarantee that the results of \cite{yu_min} will extend 
to this context. 

In the paper we introduce and study a scheme $X^{\gen}_{G, \rhobarss}$;
its definition is reviewed in \Cref{subsub_defi} below.
Its $\kbar$-points correspond to continuous representations 
$\rho:\Gamma_F\rightarrow G(\kbar)$ with $G$-semisimplification 
equal to $\rhobarss$. It carries an 
action of $G^0$ and the quotient stack $[X^{\gen}_{G, \rhobarss}/ G^0]$
is an analogue of the stacks studied by Wang-Erickson in \cite{WE_alg}
for $G=\GL_d$. We expect that the disjoint union 
of the stacks $[X^{\gen}_{G, \rhobarss}/ G^0]$ (or rather their formal 
versions) taken over all possible 
$\rhobarss$ will be the largest substack of the Emerton--Gee stack
on which the universal $(\varphi, \Gamma)$-module can be realised
as a representation of $\Gamma_F$, i.e.\,the situation described 
in \cite[Remark 6.7.4]{EG_stack} for $\GL_d$ will continue to hold 
in a more general setting.

\subsection{The conjecture of B\"ockle--Juschka}
Our second main result proves a generalisation of a conjecture 
of B\"ockle--Juschka concerning  the irreducible components of $R^{\square}_{\rhobar}$.
We assume that $G^0$ is split over $\OO$ and $G/G^0$ is a constant group scheme. 
This can be achieved after replacing $L$ by a finite unramified extension. 
Let $\Gamma_E$ be the kernel of the homomorphism 
$\Gamma_F\overset{\rhobar}{\longrightarrow} G(k) \rightarrow (G/G^0)(k)$. 
We assume that $\OO$ contains $\mu_{p^{\infty}}(E)$. 
One may further assume that this map is surjective, as replacing 
$G$ with a subgroup does not change the 
deformation problem, so that $(G/G^0)(k)= \Gal(E/F)$. 

Let $G'$ be the 
derived group scheme of $G^0$, let $G'_{\sic}\rightarrow G'$ 
be the simply connected central cover of $G'$ and let $\pi_1(G')$ be the the kernel of this map. Then $\pi_1(G')$ is a finite diagonalisable subgroup 
scheme of the centre of $G'_{\sic}$ and is isomorphic to $\prod_{i=1}^r \mu_{n_i}$ for some integers $n_i$. The group scheme $\pi_1(G')$
is \'etale if and only if $p$ does not divide $n_i$ for all $i$.

Our assumptions on $G$ imply that 
$G^0/G'$ is a split torus.
The action of $G/G'$ on $G^0/G'$ by conjugation induces an action of 
$\Gal(E/F)$ on its \emph{character lattice} $M:=X^*(G^0/G')$. 

Let $\psibar: \Gamma_F \rightarrow (G/G')(k)$ be the representation 
$\psibar= \varphi\circ \rhobar$, where $\varphi: G\rightarrow G/G'$ is 
the quotient map. Let $R^{\square}_{\psibar}$ be the universal 
deformation ring of $\psibar$.

\begin{thm}[Corollaries \ref{flat_det},
\ref{conj_BJ}]\label{thm_intro-2}
The natural  map $R^{\square}_{\psibar}\to R^{\square}_{\rhobar}$, induced by composing 
a deformation of $\rhobar$ with $\varphi: G\rightarrow G/G'$, is flat and if $\pi_1(G')$ is \'etale then it induces a bijection between the sets of irreducible components. 
\end{thm}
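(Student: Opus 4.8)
The plan is to prove the two assertions separately. Write $A := R^{\square}_{\psibar}$, $B := R^{\square}_{\rhobar}$, and let $\mathfrak g'$ denote $\Lie G'_k$ with the adjoint action of $\Gamma_F$ through $\rhobar$; this is a $\Gamma_F$-submodule of $\ad\rhobar$, since $G'$ is normal in $G$. Observe first that $G/G'$ is again generalised reductive (its neutral component is the torus $G^0/G'$), so \Cref{thm_intro-1} applies both to $G$ and to $G/G'$ and shows that $A$ and $B$ are $\OO$-flat complete intersections with $\dim B - \dim A = \dim G'_k\,([F:\Qp]+1)$, using $\dim G_k = \dim G'_k + \dim (G^0/G')_k$.

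For flatness I would run \emph{relative} obstruction theory for $1 \to G' \to G \overset{\varphi}{\to} G/G' \to 1$. A lift of $\rhobar$ to $k[\varepsilon]$ whose image under $\varphi$ is the constant deformation of $\psibar$ has the form $(1+\varepsilon c)\rhobar$ with $c \in Z^1(\Gamma_F,\mathfrak g')$, and obstructions to lifting such fixed-multiplier deformations along small extensions lie in $H^2(\Gamma_F,\mathfrak g')$; hence $B$ is a quotient of $C := A\br{y_1,\dots,y_{r'}}$, with $r' = \dim_k Z^1(\Gamma_F,\mathfrak g')$, by an ideal generated by $s' \le \dim_k H^2(\Gamma_F,\mathfrak g')$ elements $h_1,\dots,h_{s'}$. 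The local Euler--Poincar\'e formula for $\mathfrak g'$ gives $r' - \dim_k H^2(\Gamma_F,\mathfrak g') = \dim G'_k\,([F:\Qp]+1)$, so comparing with the dimension identity above forces $s' = \dim_k H^2(\Gamma_F,\mathfrak g')$ and, since $C$ is Cohen--Macaulay, forces $h_1,\dots,h_{s'}$ to be a regular sequence on $C$. The remaining input is that $h_1,\dots,h_{s'}$ stays a regular sequence modulo $\mathfrak m_A$, equivalently that the fixed-multiplier deformation ring $R^{\square,\psibar}_{\rhobar} = B\otimes_A k = C/(\mathfrak m_A,h_1,\dots,h_{s'})$ has dimension $\dim G'_k\,([F:\Qp]+1)$. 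This is a deformation problem of exactly the same shape as the one underlying \Cref{thm_intro-1} — the multiplier condition cuts out a closed subscheme of $X^{\gen}_{G,\rhobarss}$ — and I would establish its expected dimension by the same method. Granting this, the local criterion for flatness, applied by dividing out the $h_i$ one at a time (each is a nonzerodivisor both on $C$ and on $C/\mathfrak m_A C$), shows $B$ is flat, hence faithfully flat, over $A$.

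Faithful flatness immediately yields that $\Spec B \to \Spec A$ is surjective and satisfies going-down, so contraction of minimal primes defines a well-defined \emph{surjection} from the irreducible components of $B$ onto those of $A$; injectivity amounts to showing the fibre over the generic point of each component of $\Spec A$ is irreducible. Since $A$ and $B$ are $\OO$-flat, all their minimal primes survive after inverting $p$, and by \Cref{thm_intro-1} the rings $A[1/p]$ and $B[1/p]$ are normal, hence finite products of normal domains; a component count over each factor reduces injectivity to the statement that, for the generic point $\eta$ of each component of $\Spec A[1/p]$, the fibre $B\otimes_A\kappa(\eta)$ — the fixed-multiplier deformation space over the characteristic-zero field $\kappa(\eta)$ — is connected. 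This is where the hypothesis that $\pi_1(G') = \ker(G'_{\sic}\to G')$ is \'etale enters: fixing the multiplier removes the central torus direction, the finite \'etale group $G/G^0$ contributes nothing (the component group of a deformation is rigid), and \'etaleness of $\pi_1(G')$ makes $G'_{\sic}\to G'$ a separable isogeny with $\Lie G'_{\sic}\iso\Lie G'$, permitting a comparison of the fixed-multiplier space with the deformation space of the simply connected group $G'_{\sic}$; the latter is connected by the argument used for $\SL_d$ in the $\GL_d$-case, simple connectivity being precisely what makes enough cochains liftable to connect all deformations. I expect this connectedness statement, together with the bookkeeping in the comparison with $G'_{\sic}$, to be the main obstacle; when $\pi_1(G')$ is not \'etale the adjoint modules of $G'$ and $G'_{\sic}$ differ and a Brauer-type contribution can split the fixed-multiplier space into several components, which is why the hypothesis cannot be dropped.
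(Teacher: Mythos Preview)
Your flatness argument is essentially the paper's: the relative presentation $R^{\square}_{\rhobar}\cong R^{\square}_{\psibar}\br{x_1,\dots,x_r}/(f_1,\dots,f_s)$ with $r-s=\dim G'_k([F:\Qp]+1)$ is exactly \Cref{present_over_RH}, and the remaining input is indeed the dimension of the fixed-determinant fibre. The paper obtains that dimension not by re-running the whole proof of \Cref{thm_intro-1}, but by exhibiting a \emph{finite} map $X^{\gen,\psi}_{G,\rhobarss}\to X^{\gen}_{G/Z_1,\rhobarss}$ (\Cref{psi_Gbar_finite}) and invoking the already-established dimension of the target; this is quicker than what you sketch but amounts to the same thing. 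The flatness is then concluded by miracle flatness (\Cref{pre_lci_flat}) rather than by chopping off relations one at a time, but again this is only a cosmetic difference.

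Your treatment of the bijection on components, however, has a real gap. You correctly reduce to showing that the generic fibre over each component of $\Spec R^{\square}_{\psibar}[1/p]$ is irreducible, but the sentence ``the latter is connected by the argument used for $\SL_d$ in the $\GL_d$-case'' is not an argument---no such cochain-lifting proof exists, even for $\GL_d$. The paper's route is entirely different and does not go through connectedness of a characteristic-zero fibre. It uses the explicit structure $R^{\square}_{\psibar}\cong\OO[\mu]\br{z_1,\dots,z_t}$ with $\mu=(\mu_{p^\infty}(E)\otimes M)^{\Gal(E/F)}$ (proved in the companion paper \cite{defT}), so that the components of $R^{\square}_{\psibar}$ are indexed by characters $\chi\in\mathrm X(\mu)$. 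One then shows that each quotient $R^{\square,\chi}_{\rhobar}:=R^{\square}_{\rhobar}\otimes_{\OO[\mu],\chi}\OO$ is a normal (hence integral) local ring: complete intersection comes from \Cref{complete_intersection_chi}, and regularity in codimension $1$ comes from the bound $\operatorname{codim}\bigl(\text{$(\Lie G'_{\sic})^*$-special locus}\bigr)\ge 1+[F:\Qp]$ (\Cref{codim_special_locus}) together with the observation that off the special locus the presentation \eqref{intro_present_chi} has $s=0$, forcing regularity. The hypothesis $\pi_1(G')$ \'etale is used precisely here, to identify $\Lie G'_{\sic}=\Lie G'$ so that non-special points genuinely have $H^2(\Gamma_F,\ad^0\rho_x)=0$. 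After inverting $p$ one has $R^{\square}_{\rhobar}[1/p]\cong\prod_\chi R^{\square,\chi}_{\rhobar}[1/p]$, and since each factor is a domain the bijection follows. Your proposal misses the $\OO[\mu]$-structure, Serre's criterion, and the special-locus codimension estimate, which together are the substance of the proof.
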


In a companion paper \cite{defT} we deal with the case, when $G^0$ is a torus. In particular, we show that the irreducible components 
of $R^{\square}_{\psibar}$ can be labelled by characters $\chi: 
(\mu_{p^{\infty}}(E)\otimes M)^{\Gal(E/F)}\rightarrow \OO^{\times}$. 

We assume that $\pi_1(G')$ is \'etale until the end of this subsection. 
Let $R^{\square, \chi}_{\rhobar}$ be an irreducible component of $R^{\square}_{\rhobar}$ corresponding to a character $\chi$ under \Cref{thm_intro-2}. 

\begin{thm}[Corollary \ref{reg_codim_x}]\label{thm_intro-3} If $\pi_1(G')$ is \'etale then the rings $R^{\square, \chi}_{\rhobar}$,
$R^{\square, \chi}_{\rhobar}/\varpi$ are complete intersection, regular in codimension 
$[F:\Qp]$, normal domains. 
\end{thm}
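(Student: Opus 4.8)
The plan is to deduce \Cref{thm_intro-3} from \Cref{thm_intro-1} and \Cref{thm_intro-2} by a descent along the flat map $R^{\square}_{\psibar}\to R^{\square}_{\rhobar}$. First I would record that by \Cref{thm_intro-2} the component $R^{\square,\chi}_{\rhobar}$ is the base change $R^{\square}_{\rhobar}\wtimes_{R^{\square}_{\psibar}} R^{\square,\chi}_{\psibar}$, where $R^{\square,\chi}_{\psibar}$ is the irreducible component of the deformation ring of $\psibar$ labelled by $\chi$; here I invoke the companion paper \cite{defT} for the description of the components of $R^{\square}_{\psibar}$ and, crucially, for the fact that each $R^{\square,\chi}_{\psibar}$ and $R^{\square,\chi}_{\psibar}/\varpi$ is regular (the deformation ring of a torus-valued representation being formally smooth over $\OO$ once one fixes a component, since $\pi_1(G')$ étale forces the relevant $\mu_{p^\infty}$-torsion phenomena to be controlled). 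Flatness of $R^{\square}_{\psibar}\to R^{\square}_{\rhobar}$ is \Cref{thm_intro-2}, so $R^{\square,\chi}_{\rhobar}\to R^{\square,\chi}_{\rhobar}$ —rather $R^{\square,\chi}_{\psibar}\to R^{\square,\chi}_{\rhobar}$— is flat, and I would next identify its fibres.

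The key geometric input is that the fibres of $R^{\square}_{\psibar}\to R^{\square}_{\rhobar}$ are governed by $G'$-valued deformations: over a point of $\Spec R^{\square}_{\psibar}$ corresponding to a character-type deformation $\psi_A$, the fibre ring pro-represents deformations of $\rhobar$ lifting $\psi_A$ along $\varphi\colon G\to G/G'$, which is a $G'$-deformation problem twisted by $\psi_A$. By \Cref{thm_intro-1} applied with $G$ replaced by (an inner form / twist of) $G'$ — or more directly by the complete intersection and flatness statement for the total ring combined with flatness over $R^{\square}_{\psibar}$ — each such fibre is a complete intersection, flat over $\OO$, of the expected relative dimension $\dim G'_k([F:\Qp]+1)$, and is geometrically normal. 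The normality and complete-intersection property of the fibres is where I expect to reuse the hardest part of the paper: one needs that the generic fibre of the $G'$-deformation ring is normal (so that the special fibre, being a complete intersection, is normal by Serre's criterion once we know it is regular in codimension one), and that the singular locus in the special fibre has codimension $\geq [F:\Qp]+1$ — this is the "regular in codimension $[F:\Qp]$" assertion, presumably established earlier via the stratification by the stacks $[X^{\gen}_{G',\rhobarss}/G'^0]$ and a dimension count on the non-semisimple locus.

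Granting this, the conclusion follows by standard permanence properties of flat local morphisms: if $A\to B$ is a flat local homomorphism of noetherian local rings with $A$ and all fibres complete intersection (resp. normal, resp. regular in a given codimension up to a shift), then $B$ has the same property. Concretely, $R^{\square,\chi}_{\rhobar}$ is complete intersection because $R^{\square,\chi}_{\psibar}$ is regular and the closed fibre is complete intersection (\stackcite{09Q5} or the analogous statement for complete intersections); it is normal by \stackcite{0C22}, using that $R^{\square,\chi}_{\psibar}$ is normal and the fibres are geometrically normal; and "regular in codimension $[F:\Qp]$" follows since $R^{\square,\chi}_{\psibar}$ is regular, so the non-regular locus of $R^{\square,\chi}_{\rhobar}$ maps into the non-regular locus of the fibres, whose codimension is $\geq [F:\Qp]+1$ — one loses nothing because the base is regular. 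Being normal and noetherian, $R^{\square,\chi}_{\rhobar}$ is a product of domains, but irreducibility (it has a single minimal prime, by \Cref{thm_intro-2} identifying components) forces it to be a domain. For the special fibre $R^{\square,\chi}_{\rhobar}/\varpi$ one runs the same argument with $A=R^{\square,\chi}_{\psibar}/\varpi$ (regular, by \cite{defT}) and the fibres $\bmod\,\varpi$ (which are the special fibres of the $G'$-deformation rings, hence complete intersection and normal, regular in codimension $[F:\Qp]$ — note the codimension bound is inherited without shift here since $\varpi$ is a nonzerodivisor cutting down the generic fibre). The main obstacle is thus not the descent formalism but ensuring the precise regularity-in-codimension and normality statements for the $G'$-fibres are available in the stated sharp form; this is where the careful analysis of $X^{\gen}_{G,\rhobarss}$ and the Euler--Poincaré dimension bound \eqref{EP_bound} from the body of the paper must be brought to bear.
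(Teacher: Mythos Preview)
Your descent framework along the flat map $R^{\square,\chi}_{\psibar}\to R^{\square,\chi}_{\rhobar}$ is sound as far as the complete-intersection claim goes, and indeed the paper's presentation \eqref{intro_present_chi} encodes exactly this. But there is a genuine gap at the step where you assert the fibres are ``geometrically normal'' and ``regular in codimension $[F:\Qp]$''. Neither of these follows from \Cref{thm_intro-1}: that theorem gives complete intersection, and normality only of $R^{\square}_{\rhobar}[1/p]$, not of $R^{\square}_{\rhobar}$ itself or of its special fibre. The fibres of your map are themselves fixed-determinant deformation rings for $G$ (not $G'$-valued deformation rings---the image of $\rhobar$ need not lie in $G'$, so ``\Cref{thm_intro-1} applied with $G$ replaced by a twist of $G'$'' does not parse), and proving that those fibres are regular in codimension $[F:\Qp]$ is the same problem you set out to solve, just with $M_1=M$ instead of $M_1=0$. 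You concede this in your final sentence, but the concession is the whole difficulty: the reduction buys nothing. Note also that ``generic fibre normal plus Serre's criterion implies special fibre normal'' is not a valid implication; you still need $R_1$ for the special fibre independently.

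The paper's route avoids this circularity by working \emph{globally} on the finite-type scheme $X^{\gen,\chi}_{G,\rhobarss}$ rather than fibrewise over $R^{\square,\chi}_{\psibar}$. It locates the non-regular locus inside the $(\Lie G'_{\sic})^{\ast}$-special locus---the closed subscheme where $H^2(\Gamma_F,\ad^0\rho_x)\neq 0$, equivalently where $s>0$ in the presentation \eqref{intro_present_chi}. At non-special points $s=0$, so $R^{\square,\chi}_{G,\rho_x}$ is formally smooth over the regular ring $R^{\square,\chi}_{G/G',\varphi\circ\rho_x}$ and hence regular outright; no fibre normality is needed. The hard input, proved in Sections~\ref{bounds_red_sbgp} and \ref{sec:bound_special} via the group-theoretic codimension estimates and the inductive stratification by R-Levi subgroups, is that this special locus has codimension at least $1+[F:\Qp]$ in $X^{\gen,\chi}_{G,\rhobarss}$ and in its special fibre. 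This gives regularity in codimension $[F:\Qp]$ on the global scheme; excellence then transfers it to the completion $R^{\square,\chi}_{\rhobar}$, and Serre's criterion plus locality gives a normal domain. Your proposal never engages with this codimension bound, and without it there is no independent route to the regularity claim.
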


Theorem \ref{thm_intro-3} and a theorem of Grothendieck on factoriality of 
complete intersections imply:

\begin{cor}[Corollary \ref{parafactorial}]\label{cor_intro_para} If 
$\pi_1(G')$ is \'etale and $[F:\Qp]\ge 3$ then $R^{\square, \chi}_{\rhobar}$ and $R^{\square, \chi}_{\rhobar}/\varpi$ are factorial. 
\end{cor}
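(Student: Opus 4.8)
The plan is to obtain this as a purely formal consequence of \Cref{thm_intro-3} together with a factoriality criterion of Grothendieck for complete intersections. Write $R := R^{\square, \chi}_{\rhobar}$ and $\Rbar := R^{\square, \chi}_{\rhobar}/\varpi$. Both are complete local noetherian rings, being quotients of $R^{\square}_{\rhobar}$, and by \Cref{thm_intro-3} (applicable since $\pi_1(G')$ is \'etale) they are local complete intersections, normal domains, and regular in codimension $[F:\Qp]$. Since by hypothesis $[F:\Qp]\ge 3$, this last property says precisely that $R_{\mathfrak p}$ (resp.\ $\Rbar_{\mathfrak p}$) is a regular local ring for every prime ideal $\mathfrak p$ of height $\le 3$.

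The key steps are then: (i) invoke the Auslander--Buchsbaum theorem, by which a regular local ring is a unique factorisation domain, so that $R$ (resp.\ $\Rbar$) is factorial in codimension $\le 3$, i.e.\ $R_{\mathfrak p}$ (resp.\ $\Rbar_{\mathfrak p}$) is a UFD for all primes $\mathfrak p$ with $\operatorname{ht}\mathfrak p \le 3$; and (ii) apply Grothendieck's theorem that a noetherian local ring which is a complete intersection and is factorial in codimension $\le 3$ is itself factorial (SGA~2, Exp.~XI, Cor.~3.14; this is the complete-intersection case of the parafactoriality results there). Applying (ii) to $R$ and separately to $\Rbar$ — both of which satisfy the required hypotheses by \Cref{thm_intro-3} and step (i) — yields that $R^{\square, \chi}_{\rhobar}$ and $R^{\square, \chi}_{\rhobar}/\varpi$ are factorial, as claimed.

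There is no genuine obstacle here: all of the substantive input, in particular the regularity in codimension $[F:\Qp]$ coming from the explicit analysis of $\XgenGrhobarss$, is already packaged into \Cref{thm_intro-3}, and the bound $[F:\Qp]\ge 3$ is exactly what is needed to feed Grothendieck's codimension-$3$ criterion. The only points deserving (minor) attention are bookkeeping ones: that \emph{complete intersection} in \Cref{thm_intro-3} is meant in the local sense (a quotient of a regular local ring by a regular sequence), which is exactly the hypothesis of Grothendieck's criterion, and that this property — together with normality and being a domain — persists after reduction modulo $\varpi$, which is again part of the statement of \Cref{thm_intro-3}.
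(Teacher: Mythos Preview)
Your proof is correct and follows essentially the same approach as the paper: regularity in codimension $[F:\Qp]\ge 3$ plus complete intersection, then Grothendieck's factoriality criterion (the paper cites \cite{R3_factorial} rather than SGA~2, and does not separately spell out the Auslander--Buchsbaum step, but the argument is the same).
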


If a further hypothesis on $\rhobar$ and $G$ is satisfied (for example, 
if $G/Z(G^0)$ does not have $\PGL_2$ as a factor) then we show in 
Corollary \ref{reg_codim_x} that $R^{\square, \chi}_{\rhobar}$ and 
$R^{\square, \chi}_{\rhobar}/\varpi$ are regular in codimension $2[F:\Qp]-1$
and hence \Cref{cor_intro_para} holds if $[F:\Qp]=2$. If $F=\Qp$ then \cite[Remark 4.24, Corollary 4.25]{BIP_new} provide 
examples, where \Cref{cor_intro_para} fails.

\subsection{Deformation problems with fixed partial determinant}\label{intro_partial}
In fact, Theorems \ref{thm_intro-1}, \ref{thm_intro-2}, \ref{thm_intro-3} and Corollary \ref{cor_intro_para} hold 
in a more general setting, which might be called deformation problems with \emph{`fixed partial determinant'}. Assume that we have 
a $\Gal(E/F)$-invariant decomposition $X^*(G^0/G')=M_1\oplus M_2$ and let $H_1$ be the quotient of $G/G'$ such that their  component
groups coincide and the character lattice of $H_1^0$ is equal to $M_1$. We fix a continuous representation $\psi_1: \Gamma_F \rightarrow H_1(\OO)$, 
such that $\psi_1 \equiv \varphi_1\circ \rhobar \pmod{\varpi}$, where $\varphi_1: G\rightarrow H_1$ is the composition
of $\varphi$ with the quotient map $G/G'\twoheadrightarrow H_1$. Let $D^{\square, \psi_1}_{\rhobar}$ be a subfunctor 
of $D^{\square}_{\rhobar}$, such that 
$$ D^{\square, \psi_1}_{\rhobar}(A)=\{ \rho_A\in D^{\square}_{\rhobar}(A): \varphi_1\circ \rho_A= \psi_1\otimes_{\OO} A\}.$$
The functor $D^{\square, \psi_1}_{\rhobar}$ is pro-represented by a quotient $R^{\square, \psi_1}_{\rhobar}$ of $R^{\square}_{\rhobar}$.
Then the above results hold with $R^{\square}_{\rhobar}$ replaced with $R^{\square, \psi_1}_{\rhobar}$.

If $\pi_1(G')$ is \'etale then   
the irreducible components of $R^{\square, \psi_1}_{\rhobar}$ can be indexed by characters $\chi: (\mu_{p^{\infty}}(E)\otimes M_2)^{\Gal(E/F)} \rightarrow \OO^{\times}$. There are always two interesting cases: if $M_1=0$ then we impose an empty condition and $R^{\square, \psi_1}_{\rhobar}=R^{\square}_{\rhobar}$; 
if $M_1=X^*(G^0/G')$ then $R^{\square, \psi_1}_{\rhobar}$ parameterises deformations with `fixed determinant' equal to $\psi_1$ and it follows from above 
that $R^{\square, \psi_1}_{\rhobar}$ and $R^{\square, \psi_1}_{\rhobar}/\varpi$ are integral domains. If $G=\GL_d$ then $X^*(G^0/G')=\ZZ$ and these are
the only cases that can occur. However, in general there are further interesting cases naturally appearing in the Langlands program, as we discuss 
in Section \ref{LandC}.

If $H$ is a connected reductive group over $F$ (or more generally over a number field) which splits over a finite Galois extension $E$ of $F$  then it is expected  in \cite{BG} that the Galois representations
attached to $C$-algebraic automorphic forms on $H$ take values in the $C$-group ${}^C H$, which is  a generalised reductive group scheme over $\OO$
with component group the constant group scheme $\Gal(E/F)$. 
Moreover, they should satisfy the condition $d\circ \rho= \chi_{\cyc}$, where $\chi_{\cyc}$ is the $p$-adic cyclotomic character and 
$d$ is the canonical map $d:{}^C H\rightarrow \Gm$. We explain in Theorem \ref{C} that the functor $D^{\square,\psi_1}_{\rhobar}$  with $G={}^C H$, $H_1= \Gal(E/F)\times \Gm$ and 
$\psi_1: \Gamma_F \rightarrow H_1(\OO)$,  $\gamma \mapsto (\gamma|_E, \chi_{\cyc}(\gamma))$ parameterises deformations $\rho_A$ of 
$\rhobar: \Gamma_F \rightarrow {}^CH(k)$ satisfying $d\circ \rho_A = \chi_{\cyc}\otimes_{\OO} A$. We show that in this case, under the assumption 
that $\pi_1(\widehat{H}')$ is \'etale, 
the irreducible components are canonically in bijection with characters of $p$-power torsion subgroup of $Z(H)^0(F)$,
where $Z(H)^0$ is the neutral component of the centre of $H$. 

\subsection{Deformation space of Lafforgue's pseudocharacters}\label{intro_Laf}
We refer the reader to Section \ref{Laf} for the definition of 
V.\,Lafforgue's $G$-pseudocharacters. We single out some properties for the purpose of this introduction. To every representation 
$\rho: \Gamma_F \rightarrow G(A)$ one may attach an $A$-valued $G$-pseudocharacter $\Theta_{\rho}$. Moreover, conjugation of  $\rho$  with elements of $G^0(A)$ 
does not change the associated $G$-pseudocharacter. If $\Theta$ is a $G$-pseudocharacter with values in an algebraically closed field $\kappa$ 
then there exists a representation $\rho: \Gamma_F \rightarrow G(\kappa)$, uniquely determined up to $G^0(\kappa)$-conjugation, such that $\rho$ is  $G$-semisimple 
(Definition \ref{defi_Gcr}) and 
$\Theta_{\rho}=\Theta$. If $A$ and $\kappa$ carry a topology then there is a notion of continuous $G$-pseudocharacters, which interacts well with
continuous representations. 

Let $\Thetabar$ be the $G$-pseudocharacter attached to $\rhobar$. JQ has shown in \cite{quast} as part of his PhD thesis under the direction of Gebhard B\"ockle 
that the functor $D^{\ps}:\Aa_{\OO}\rightarrow \Set$, such that $D^{\ps}(A)$ is the set of continuous $A$-valued $G$-pseudocharacters deforming $\Thetabar$ 
is pro-represented by a complete local noetherian $\OO$-algebra $R^{\ps}_G$ with residue field $k$. The proof that $R^{\ps}_G$ is noetherian uses 
the fact that $\Gamma_F$ is topologically finitely generated. 

If $G=\GL_d$ then Emerson--Morel have shown in \cite{emerson2023comparison} that there is a natural bijection between  $G$-pseudocharacters and
Chenevier's $d$-dimensional determinant laws defined in \cite{che_durham}. Thus in this case the ring $R^{\ps}_{\GL_d}$ coincides with the deformation ring studied 
by B\"ockle--Juschka in \cite{BJ_new}. The results of that paper are an important input in \cite{BIP_new}. In our paper we extend 
the main result \cite[Theorem 5.5.1]{BJ_new} to $G$-pseudocharacters, also obtaining a new proof of it, when $G=\GL_d$.

\begin{thm}[\Cref{BJ1}] The rings $R^{\ps}_G$ and $R^{\ps}_G/\varpi$ are equidimensional of dimension $d+1$ and $d$, respectively, where $d= \dim G_k [F:\Qp] + \dim Z(G)_k$. 
\end{thm}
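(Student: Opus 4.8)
The plan is to deduce the statement from \Cref{thm_intro-1} together with the invariant theory of the conjugation action of $G^0$ on framed deformations. Recall from the earlier sections that $\Spec R^{\ps}_G$ is realised as an adequate moduli space for the $G^0$-action on $\XgenGrhobarss$ (equivalently, that the morphism $\pi\colon \Spec R^{\square}_{\rhobar}\to\Spec R^{\ps}_G$ given by $\rho\mapsto\Theta_\rho$ is surjective, universally closed, and exhibits $R^{\ps}_G$, up to finite morphisms, as the ring of $G^0$-invariant functions); in particular $R^{\ps}_G$ is $\OO$-flat, being, up to such finite morphisms, a torsion-free $\OO$-submodule of $R^{\square}_{\rhobar}$. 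By \Cref{thm_intro-1} the ring $R^{\square}_{\rhobar}$ is a complete intersection, flat over $\OO$, and — because the obstruction-theoretic presentation $\OO\br{x_1,\dots,x_r}/(f_1,\dots,f_s)$ has $r-s$ independent of the chosen framed representation — every local ring of $\XgenGrhobarss$, hence every irreducible component, has dimension $\dim G_k([F:\Qp]+1)+1$, with equidimensional special fibre of dimension $\dim G_k([F:\Qp]+1)$.

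First I would compute dimensions at generic points. Put $\delta:=\dim G_k-\dim Z(G)_k$; after the standard reduction to the case where $\rhobar$ surjects onto $G/G^0$, the key assertion is that the locus $U\subseteq\XgenGrhobarss$ of representations $\rho$ with Zariski-dense image in $G$ (so that $C_{G^0}(\rho)=Z(G)\cap G^0$, of dimension $\dim Z(G)_k$) is dense and meets every irreducible component. Granting this, the generic $G^0$-orbit in each component of $\Spec R^{\square}_{\rhobar}$ has dimension exactly $\delta$: it is at most $\delta$ since $C_{G^0}(\rho)\supseteq(Z(G)\cap G^0)^0$ for every $\rho$, and at least $\delta$ on $U$. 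Since $\pi$ is surjective, every component of $\Spec R^{\ps}_G$ is dominated by a component of $\Spec R^{\square}_{\rhobar}$ along which the generic fibre of $\pi$ has dimension $\delta$; hence every component of $\Spec R^{\ps}_G$ has dimension $\dim G_k([F:\Qp]+1)+1-\delta=d+1$. Flatness of $R^{\ps}_G$ over $\OO$ then forces $\dim R^{\ps}_G/\varpi=\dim R^{\ps}_G-1=d$, and equidimensionality of $R^{\ps}_G/\varpi$ follows from that of $R^{\ps}_G$ together with excellence, since $\varpi$ lies in no minimal prime.

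It remains to justify the density of $U$, which is the step I expect to be the main obstacle. I would argue by perturbation: given $\rho$ whose image lies in a proper subgroup $H\subsetneq G$ surjecting onto $G/G^0$ with $(C_{G^0}(H))^0\supsetneq(Z(G)\cap G^0)^0$, the Euler--Poincar\'e formula for $\Gamma_F$ gives $\dim_k H^1(\Gamma_F,\ad\rho)\ge\dim_k H^0+\dim_k H^2+[F:\Qp]\dim G_k$, so the tangent space to $\Spec R^{\square}_{\rhobar}$ at $\rho$ is large enough to contain classes moving $\rho$ out of $H$ and all its conjugates; thus $U$ is a non-empty open subscheme. To see that it meets every component one twists and perturbs while preserving whatever partial-determinant datum the component records, so that the argument runs inside each component; for the components known to be integral (\Cref{thm_intro-2,thm_intro-3}) this is immediate once $U$ is non-empty, and in general one argues directly with the defining conditions. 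A secondary subtlety is that $G^0_k$ is not linearly reductive in characteristic $p$, so the identification of $\Spec R^{\ps}_G/\varpi$ with a quotient, and the surjectivity and closedness of $\pi$ in the special fibre, must be handled through the theory of adequate moduli spaces rather than classical geometric invariant theory; once that framework is in place the numerical bookkeeping above goes through verbatim, and the case $G=\GL_d$ recovers \cite[Theorem 5.5.1]{BJ_new} with a new proof.
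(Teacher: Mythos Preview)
Your overall strategy matches the paper's: use the equidimensionality of $X^{\gen}_{G,\rhobarss}$ and compute the generic orbit dimension $\delta=\dim G_k-\dim Z(G)_k$. But the step you flag as ``the main obstacle'' is indeed the gap, and your sketch does not close it.

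First, the locus you choose is the wrong one. Representations with Zariski-dense image do have centraliser $Z(G)$, but the complement --- representations whose image lies in \emph{some} proper closed subgroup --- is an uncountable union over all such subgroups and is not obviously closed; your perturbation argument does not control it. The paper instead works with the absolutely $G$-irreducible locus $V_{GG,\rhobarss}$ (image not contained in any proper R-parabolic). By \Cref{irr=stable} this locus already has stabiliser dimension $\dim Z(G)_k$, and its complement is stratified by the \emph{finitely many} conjugacy classes of R-Levi subgroups. Showing that $V_{GG,\rhobarss}$ (in fact its non-special part $V^{\nspcl}_{GG,\rhobarss}$) meets every irreducible component is \Cref{non-spcl_dense}, which is not a tangent-space calculation: it is the output of the full induction in \Cref{sec_complete_intersection}, bounding the dimensions of the strata $\Zbar_{LG,\rhobarss}$ and of the special locus (Propositions~\ref{bound_Levi}, \ref{bound_special}). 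Your Euler--Poincar\'e inequality only compares tangent spaces at a single point and says nothing about the global codimension of the $G$-reducible locus.

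Second, your appeal to \Cref{thm_intro-2} and \Cref{thm_intro-3} to handle ``components known to be integral'' is circular: those results live in \Cref{nightmare_cont}, which comes after and relies on the development through \Cref{BJ1}.

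Third, the claim that $\Spec R^{\ps}_G$ is an adequate moduli space for the $G^0$-action is not established in the paper (this is explicitly left open after \Cref{nu_fin_u}); only the finite universal homeomorphism $X^{\gps}_{G,\rhobarss}\to X^{\ps}_G$ is proved, which suffices for dimension statements but not for your flatness claims about $R^{\ps}_G$ itself. Relatedly, the paper does not deduce equidimensionality of $R^{\ps}_G/\varpi$ from $\OO$-flatness of $R^{\ps}_G$; it runs the same argument directly in the special fibre (the proof of \Cref{BJ1}(2) is identical to that of (1)).
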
 

Let $U^{\ps}:=(\Spec R^{\ps}_G)\setminus \{\text{the closed point}\}$. The non-special absolutely irreducible locus $U^{\nspcl}$ is 
an open subscheme of $U^{\ps}$ such that a geometric point $x:\Spec \kappa\rightarrow U^{\ps}$ lies in $U^{\nspcl}$ if and only if the representation 
$\rho_x: \Gamma_F \rightarrow G(\kappa)$ associated to the specialisation of the universal $G$-pseudocharacter at $x$ is 
$G$-irreducible (\Cref{defi_G_irr}) and $H^0(\Gamma_F, (\Lie G'_{\sic})_{x}^*(1))=0$, where $\Gamma_F$ acts on $(\Lie G'_{\sic})_x$ via $\rho_x$ composed with the adjoint action. 
If $\pi_1(G')$ is \'etale or $\chara(\kappa)=0$ then 
$(\Lie G'_{\sic})_x=(\Lie G')_x$ and the 
last condition is equivalent to $H^0(\Gamma_F, (\ad^0 \rho_x)^*(1))=0$.

\begin{thm}[\Cref{BJ2}] $U^{\nspcl}$ is Zariski dense in $\Spec R^{\ps}_G$ and its special fibre is Zariski dense in $\Spec R^{\ps}_G/\varpi$.
\end{thm}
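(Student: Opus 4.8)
**Proof plan for Theorem (BJ2): $U^{\nspcl}$ is Zariski dense in $\Spec R^{\ps}_G$, and its special fibre is Zariski dense in $\Spec R^{\ps}_G/\varpi$.**

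The plan is to deduce density of $U^{\nspcl}$ from the dimension count established in the previous theorem ($R^{\ps}_G$ equidimensional of dimension $d+1$, special fibre of dimension $d$) together with an upper bound on the dimension of the complementary locus. First I would identify the closed complement $Z := \Spec R^{\ps}_G \setminus U^{\nspcl}$ as the union of two pieces: (a) the \emph{reducible locus}, i.e.\ the image in $\Spec R^{\ps}_G$ of $G$-pseudocharacters whose associated semisimple representation $\rho_x$ is \emph{not} $G$-irreducible; and (b) the \emph{special locus} $U^{\spcl}$, where $\rho_x$ is $G$-irreducible but $H^0(\Gamma_F, (\Lie G'_{\sic})^*_x(1)) \neq 0$. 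Since Zariski density can be checked after base change to a sufficiently large finite extension $L'/L$, and after such an extension I may assume $G^0$ is split and $G/G^0$ is constant, reducing to the set-up of \Cref{thm_intro-2}. The key is then to bound $\dim Z < d+1$, so that $Z$, being closed of smaller dimension than each irreducible component, cannot contain any component; equivalently $U^{\nspcl}$ meets every component and hence is dense. For the special fibre statement one runs the same argument with $\dim(Z/\varpi) < d$, using that $R^{\ps}_G$ is $\OO$-flat (or directly bounding dimensions mod $\varpi$).

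The heart of the argument is the dimension bound on the reducible locus. I would stratify it by the conjugacy class of a proper parabolic $P = LN$ of $G^0$ (or rather of $G$, allowing the component group to permute Levi factors) through which $\rho_x$ factors after $G^0$-conjugation. On each such stratum, $\rho_x$ is (the $G$-semisimplification of) a representation valued in a Levi $L \subsetneq G$, so the stratum is dominated by (an open subscheme of) $\Spec R^{\ps}_{L, \Theta'}$ for the appropriate pseudocharacter $\Theta'$ of $L$ obtained from $\Thetabar$. By induction on $\dim G$ — the base case being $G$ a torus, handled in the companion paper \cite{defT}, where the deformation ring is flat over $\OO$ of the expected dimension with no reducible locus to worry about — I may assume the theorem, and in particular the dimension formula, for $L$. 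The dimension of $R^{\ps}_L$ is $\dim G_k[F:\Qp] + \dim Z(L)_k$ (plus one), which is strictly less than $d+1$ when $L$ is a proper Levi because $\dim G_k$ drops; more precisely $\dim G_k - \dim L_k = \dim N_k + \dim \bar N_k > 0$ while $\dim Z(L)_k - \dim Z(G)_k$ grows only by at most $\dim N_k$ (the semisimple rank lost), and here one must be slightly careful and use $\dim N_k + \dim \bar N_k \geq \dim Z(L)_k - \dim Z(G)_k + 1$, which holds for any proper parabolic. Passing through the finite map from a Levi deformation situation to the $G$-pseudocharacter ring (changing dimension by a bounded amount governed by the ``extra'' deformations coming from $N$, which are themselves controlled by $H^1(\Gamma_F, \cdot)$ via Euler--Poincar\'e), one concludes $\dim(\text{reducible locus}) < d+1$, and similarly mod $\varpi$.

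For the special locus $U^{\spcl}$ inside the $G$-irreducible locus, I would argue that it is cut out by a genuine closed condition of positive codimension. On the $G$-irreducible (hence smooth, by an $H^2$-vanishing argument as in \cite{BIP_new}) locus, the function $x \mapsto \dim_\kappa H^0(\Gamma_F, (\Lie G'_{\sic})^*_x(1))$ is upper semicontinuous, and at a point where it is nonzero one also has $H^0(\Gamma_F, (\Lie G'_{\sic})_x) \neq 0$ would fail $G$-irreducibility, so the ``special'' condition forces an extra coincidence; using local Tate duality and the Euler characteristic formula, I would show the locus where $H^0(\Gamma_F, (\Lie G'_{\sic})^*_x(1)) \neq 0$ has codimension at least $1$ in the $G$-irreducible locus (it is the locus where a certain ``$L$-function factor'' vanishes, e.g.\ where some twist of $\rho_x$ by $\chi_\cyc$ acquires invariants in the adjoint — a proper closed condition since it fails at the generic point of each component, which can be exhibited by a single explicit lift, e.g.\ a sufficiently generic crystalline or principal-series lift produced as in \cite{BIP_new}). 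The main obstacle I anticipate is precisely this last point: producing, on each irreducible component of $R^{\ps}_G$ (equivalently, after \Cref{thm_intro-2}, for each character $\chi$), an explicit $G$-irreducible non-special lift to characteristic zero witnessing that the generic point lies in $U^{\nspcl}$; this requires a careful construction of lifts with prescribed behaviour (e.g.\ generic semisimple Hodge--Tate--regular lifts with distinct enough Frobenius/Hodge--Tate data that no nontrivial adjoint-twisted invariants appear), and matching them to the components indexed by \Cref{thm_intro-2}. Once that generic non-specialness is in hand, semicontinuity upgrades it to Zariski density on the nose, and the $\OO$-flatness of $R^{\ps}_G$ transfers the statement to the special fibre.
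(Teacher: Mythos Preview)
Your overall framework---bound $\dim Z < d+1$ and invoke the equidimensionality from Corollary~\ref{BJ1}---is sound, and the paper follows the same line, though on the auxiliary scheme $X^{\gen}_{G,\rhobarss}$ rather than directly on $X^{\ps}_G$: it first proves density of $V^{\nspcl}_{GG,\rhobarss}$ in $X^{\gen}_{G,\rhobarss}$ (Corollary~\ref{non-spcl_dense}, via the main induction Theorem~\ref{thm_bound_space}), then pushes down along the GIT quotient to $X^{\gps}_{G,\rhobarss}$, homeomorphic to $X^{\ps}_G$ by Proposition~\ref{nu_fin_u}. Your reducible-locus bound is essentially correct once cleaned up: the map $X^{\ps}_L \to X^{\ps}_G$ is \emph{finite} (Proposition~\ref{finite_functoriality}), so your remark about ``extra $N$-deformations'' changing the dimension is misplaced---those live only in $X^{\gen}$---and the bound then follows directly from the inductive hypothesis for $L$ together with Proposition~\ref{RRX}.

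The genuine gap is the special locus inside the $G$-irreducible locus. Your plan---produce an explicit $G$-irreducible non-special lift on each component---is precisely the approach the paper avoids, because for general $G$ there is no analogue of the B\"ockle--Juschka result that special irreducible $\GL_d$-representations are induced; the paper says this explicitly in Section~\ref{sec_intro_bd_dim}. Invoking Theorem~\ref{thm_intro-2} to index the components is also circular, as that is proved downstream of Corollary~\ref{BJ2}. And your claim that the $G$-irreducible locus is smooth is wrong: $H^2(\Gamma_F, \ad^0\rho_x)$ vanishes only on the \emph{non}-special part, by definition via Tate duality. The paper's replacement is the ``small image'' argument: if $\rho_x$ is $G$-irreducible and $(\Lie G'_{\sic})^*$-special, then after restricting to $\Gamma_{F(\zeta_p)}$ in the special fibre the Zariski closure $H$ of the image has nonzero invariants in $(\Lie G'_{\sic})^*$; Lemma~\ref{sic} forces $H \not\supseteq G'$, whence $\dim G - \dim H \geq 2$ by Proposition~\ref{bound_subred} (Matsushima). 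The special irreducible locus in $\Xbar^{\gps}_G$ is then covered by finitely many $\Xbar^{\gps}_{H_i G}$ with $\dim G_k - \dim H_i \geq 2$, and the inductive hypothesis bounds its dimension below $d$ (Proposition~\ref{bound_special}). This structural argument is the missing idea.
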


We also show in \Cref{ps_normal} that the reduced subscheme $(\Spec R^{\ps}_G[1/p])^{\red}$  is normal. If $\pi_1(G')$ is \'etale then we
 compute the set of irreducible
components of $\Spec R^{\ps}_G$ in \Cref{irr_comp_ps}, which together with \Cref{thm_intro-2} implies that 
$\Spec R^{\square}_{\rhobar}\rightarrow \Spec R^{\ps}_G$ induces a bijection between the sets of irreducible components.

\subsection{Complete intersection}\label{sec_ci} We will now sketch the proof of \Cref{thm_intro-1}.
Let $\Rep^{\Gamma_F}_G: \OO\text{-}\alg \rightarrow \Set$ be the functor, which maps 
an $\OO$-algebra $A$ to the set of group homomorphisms $\Gamma_F\rightarrow G(A)$.
The scheme $G^0$ acts on $\Rep^{\Gamma_F}_G$ by conjugation. 
We construct an affine scheme $X^{\gen}_{G, \rhobarss}=\Spec A^{\gen}_{G, \rhobarss}$ over $\OO$, 
which is a $G^0$-invariant subfunctor of $\Rep^{\Gamma_F}_G$ and the following desiderata hold:
\begin{itemize} 
\item[(D1)] the specialisation of the universal representation $\rho^{\univ}: \Gamma_F \rightarrow G(A^{\gen}_{G, \rhobarss})$
at $x\in X^{\gen}_{G, \rhobarss}(\kbar)$ induces a bijection $x\mapsto \rho_x$ between 
$X^{\gen}_{G, \rhobarss}(\kbar)$ and the set of continuous representations
$\rho: \Gamma_F \rightarrow G(\kbar)$ such that the $G$-semisimplification (\Cref{defi_G_ss}) of $\rho$ is equal 
to $\rhobarss$;
\item[(D2)] the completion of a local ring at $x\in X^{\gen}_{G, \rhobarss}(\kbar)$ is isomorphic 
to $R^{\square}_{\rho_x}$;
\item[(D3)] the GIT quotient $X^{\gen}_{G, \rhobarss} \sslash G^0$ is the spectrum of  a complete local noetherian 
$\OO$-algebra $R^{\git}_{G,\rhobarss}$ with residue field $k$.
\end{itemize}
We then bound the dimension of the special fibre $\Xbar^{\gen}_{G, \rhobarss}$ from above 
\begin{equation}\label{intro_bound}
\dim \Xbar^{\gen}_{G, \rhobarss}\le \dim G_k([F:\Qp]+1).
\end{equation}
Once we can do this the first part of Theorem \ref{thm_intro-1} follows immediately
as (D2) implies that $\dim R^{\square}_{\rhobar}/\varpi\le  \dim G_k([F:\Qp]+1)$ and the result 
follows from \eqref{EP_bound} and a little commutative algebra (\Cref{pre_lci_flat}). 
So the overall strategy is similar to the strategy in \cite{BIP_new}, however its execution 
requires substantially new ideas.

\subsubsection{Definition}\label{subsub_defi} The difficulty in defining $X^{\gen}_{G, \rhobarss}$ stems from the fact that 
we would like to work with continuous representations of $\Gamma_F$, but for 
arbitrary $A$ there seems to be no sensible topology to put on $G(A)$ which allows us to express the desired continuity condition by simply asking the map $\Gamma_F \to G(A)$ to be continuous. The solution to this problem 
is inspired by the analogous definition by Fargues--Scholze \cite{fargues2021geometrization} in the $\ell\neq p$ case, which uses condensed mathematics.

One may define $X^{\gen}_{G, \rhobarss}$ as a functor from $R^{\ps}_G\text{-}\alg\rightarrow \Set$ 
such that $X^{\gen}_{G, \rhobarss}(A)$ is the set of representations $\rho: \Gamma_F \rightarrow G(A)$, 
such that $\Theta_{\rho}$ is the specialisation of the universal $G$-pseudocharacter $\Theta^u$ along 
$R^{\ps}_G\rightarrow A$ and $\rho$ extends to a homomorphism of condensed groups
$$ \tilde{\rho}: \underline{\Gamma_F} \rightarrow G(A_{\disc}\otimes_{(R^{\ps}_G)_{\disc}} \underline{R^{\ps}_G}).$$
We refer the reader to Section \ref{sec_cond} for the notation. In Lemma \ref{ind_cont_equiv_R_cond} we show that 
the condensed condition can be reformulated as follows: for some (equivalently any) closed immersion 
$\tau: G\hookrightarrow \mathbb A^n$ the image $\tau(\rho(\Gamma_F))$ is contained in a finitely generated 
$R^{\ps}_G$-submodule $M \subseteq A^n = \mathbb A^n(A)$ such that the map $\tau\circ \rho: \Gamma_F \rightarrow M$ 
is continuous for the canonical topology on $M$ as an $R^{\ps}_G$-module. We call such representations $R^{\ps}_G$-\emph{condensed}. This definition 
of $X^{\gen}_{G, \rhobarss}$ is aesthetically pleasing and helpful 
to prove that $X^{\gen}_{G, \rhobarss}$ is functorial in $G$. However, it is
not immediately clear (at least to the authors) from the definition that the functor is representable by a finite type $R^{\ps}_G$-algebra. 

If $G=\GL_d$ then one may linearise representations and work instead with Cayley--Hamilton homomorphisms of $R^{\ps}_{\GL_d}$-algebras $E^{u}\rightarrow M_d(A)$, where
$E^u$ is a universal Cayley--Hamilton quotient of the completed 
group algebra $R^{\ps}_{\GL_d}\br{\Gamma_F}$. We refer the reader to Section \ref{def_and_1prop} for more details. Wang-Erickson has shown in \cite{WE_alg} that the algebra $E^u$ is a finitely generated $R^{\ps}_{\GL_d}$-module.
This result is the reason why $R^{\ps}_{\GL_d}$-condensed representations do not explicitly appear in \cite{BIP_new}. In Lemma \ref{factors_through_Eu}
we show that $X^{\gen}_{\GL_d, \rhobarss}$ coincides with the scheme $X^{\gen}$
defined in \cite{BIP_new}. In particular, it is represented by a finite 
type $R^{\ps}_{\GL_d}$-algebra $A^{\gen}_{\GL_d}$. 

In the general case, we fix a closed immersion $\tau: G \hookrightarrow \GL_d$ 
of $\OO$-group schemes and define $X^{\gen, \tau}_G$ as a closed 
subscheme of $X^{\gen}_{\GL_d}:=X^{\gen}_{\GL_d, (\tau\circ \rhobar)^{\mathrm{ss}}}$, such 
that $\rho\in X^{\gen}_{\GL_d}(A)$ lies in $X^{\gen, \tau}_G(A)$ if and only if 
$\rho(\Gamma_F)$ is contained in $\tau(G(A))$. The functor 
$X^{\gen, \tau}_G$ is representable by a finite type $R^{\ps}_{\GL_d}$-algebra 
$A^{\gen, \tau}_G$. We show in Proposition \ref{XgenGtau_rhobarss_A} 
that the connected component of $X^{\gen,\tau}_G$ containing the point 
corresponding to $\rhobar:\Gamma_F \rightarrow G(k)$ is canonically
isomorphic to the functor $X^{\gen}_{G, \rhobarss}$ defined 
via condensed mathematics as above. In particular, the connected 
component is independent of the chosen embedding $\tau$. Using 
the construction via $X^{\gen, \tau}_G$ we show 
that desiderata (D1), (D2), (D3) hold. The condensed description allows 
to show in Proposition \ref{functoriality_2} that 
$X^{\gen}_{G, \rhobarss}$ is functorial in $G$.

A key input in the above argument is the following 
fundamental finiteness theorem proved by Vinberg \cite{vinberg}, when $S$ is a spectrum of a field of characteristic zero, 
by Martin \cite{martin}, when $S$ is a spectrum of a field of characteristic $p$, and by Cotner \cite{cotner}
in general.

\begin{thm}[Cotner, \cite{cotner}]\label{intro_cotner_main} Let $S$ be a locally noetherian scheme and let $f: H\rightarrow G$ be a finite morphism 
of generalised reductive smooth affine $S$-group schemes. Then the induced map 
on the GIT quotients
$$ H^N\sslash H \rightarrow G^N \sslash G$$
is finite, where the action on both sides is given by conjugation. 
\end{thm}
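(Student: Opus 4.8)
The plan is to reduce the statement to two known pillars: the classical result of Vinberg/Martin over a field, and a general principle that a morphism of finite type between noetherian schemes which is quasi-finite and whose fibres satisfy a suitable flatness/properness substitute is finite. Concretely, I would proceed as follows. First I would reduce to the affine base $S = \Spec R$ with $R$ noetherian local, and then — since finiteness can be checked after faithfully flat base change and the GIT quotients $H^N \sslash H$, $G^N \sslash G$ commute with flat base change in $R$ (the relevant rings of invariants are formed over $R$ and $G^0$, $H^0$ are reductive, so invariants commute with flat base change by a theorem of Seshadri/Alper) — I would pass to the completion and ultimately be free to make unramified or even henselian base changes. The morphism $\Spec(\OO_{G^N\sslash G}) \to \Spec R$ and its $H$-counterpart are of finite type, so the real content is (a) quasi-finiteness of $H^N \sslash H \to G^N \sslash G$ and (b) properness (equivalently, by quasi-finiteness, universal closedness), whence finiteness.

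The heart of the matter, (a), I would handle fibrewise. By the fibrewise criterion it suffices to show that for each point $s \in S$ with residue field $\kappa(s)$, the map $(H_s)^N \sslash H_s \to (G_s)^N \sslash G_s$ has finite fibres; and since fibres of a GIT quotient over a field can only grow under extension of scalars in a controlled way, one may base change to $\overline{\kappa(s)}$. There $H_s \to G_s$ is a finite morphism of generalised reductive groups over an algebraically closed field, and the finiteness of the induced map on character varieties is precisely the theorem of Vinberg (char.\ $0$) and Martin (char.\ $p$); for the disconnected case one reduces to the connected case by decomposing along the finite component groups and using that a finite morphism induces a map of component groups with finite kernel and cokernel. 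This gives quasi-finiteness of the map on quotients over $S$. The main obstacle is exactly this fibrewise step: one must check that taking $G^0$-invariants (resp.\ $H^0$-invariants) genuinely commutes with passing to fibres over $S$ — i.e.\ that formation of the GIT quotient is compatible with arbitrary base change on $S$, not merely flat base change. This can fail in bad characteristic if reductivity degenerates, which is why the hypothesis that $G,H$ are \emph{smooth} generalised reductive \emph{over} $S$ (so that geometric fibres stay reductive) is essential; granting it, one invokes the base-change results for good quotients of reductive group actions (e.g.\ in the generality of Alper's adequate moduli spaces, or Seshadri's reductivity over a base) to conclude that $(G^N \sslash G)_s = (G_s)^N \sslash G_s$ and likewise for $H$.

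For (b), universal closedness, I would argue that $H^N \to G^N$ is a finite morphism of affine $S$-schemes carrying compatible actions of the reductive groups $H^0 \subseteq$ (a group acting on) $G^N$; a finite, hence proper and surjective, equivariant morphism descends to a proper morphism on good quotients, because properness can be checked after the faithfully flat, surjective quotient maps $G^N \to G^N\sslash G$ and $H^N \to H^N \sslash H$ (these are submersions for good quotients) together with the valuative criterion applied upstairs, where it holds since $H^N \to G^N$ is finite. Combining: $H^N\sslash H \to G^N\sslash G$ is of finite type, quasi-finite, and universally closed, hence finite (\stackcite{0A4X}-type statement, or EGA IV 8.11.1). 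Finally I would note that the reduction to local $R$ and the flat-base-change compatibility were used only to import the field case and the base-change statements; the conclusion over the original locally noetherian $S$ follows since finiteness is local on the base and stable under the reductions performed. I would cite Cotner \cite{cotner} for the careful treatment of the base-change and disconnectedness subtleties, which constitute the genuinely new input beyond Vinberg and Martin.
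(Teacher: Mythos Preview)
The paper does not prove this theorem. It is stated (both as \Cref{intro_cotner_main} in the introduction and again as \Cref{cotner_main} in Section~6) purely as an external input, attributed to Vinberg in characteristic zero, Martin in characteristic $p$, and Cotner over a general locally noetherian base, with a bare citation to \cite{cotner} and no argument given. So there is no ``paper's own proof'' to compare your proposal against; the authors treat it as a black box.

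On the merits of your sketch itself: the outline (reduce to fibres via Vinberg/Martin, then upgrade quasi-finiteness to finiteness) is the natural strategy, but the two places you flag as ``main obstacles'' are genuine and not actually dispatched by what you wrote. First, formation of $G^N\sslash G$ does \emph{not} in general commute with arbitrary base change on $S$, only with flat base change; invoking Alper or Seshadri does not change this, and it is exactly the failure of such base change that makes the mixed-characteristic case nontrivial. You cannot simply identify $(G^N\sslash G)_s$ with $(G_s)^N\sslash G_s$ to import the field result. Second, in your properness step you assert that a finite $H$-equivariant map $H^N\to G^N$ ``descends to a proper morphism on good quotients'', but the two quotients are by \emph{different} groups $H$ and $G$, so this is not a descent statement for a single quotient map and the valuative-criterion argument you sketch does not go through as written. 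Cotner's actual proof has to confront both of these points; your proposal correctly locates them but does not resolve them.
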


The homomorphism $R^{\ps}_G \rightarrow A^{\gen}_{G, \rhobarss}$ is $G^0$-equivariant for the trivial action on the source. Hence, its image 
is contained in $G^0$-invariants $R^{\git}_{G, \rhobarss}$, which represents the GIT quotient $X^{\gen}_{G, \rhobarss}\sslash G^0$. 

\begin{prop}[\Cref{nu_fin_u}]\label{zwischen-stop} The morphism %
$X^{\gen}_{G, \rhobarss}\sslash G^0\rightarrow \Spec R^{\ps}_G$
is a finite universal homeomorphism. 
\end{prop}
We expect that the morphism is  adequate in the sense of Alper \cite{alper}, i.e.~that moreover $R^{\ps}_G[1/p] \cong R^{\git}_{G, \rhobarss}[1/p]$, 
and hope to return to this question in future work\footnote{We have proved this assertion in \cite{inf_laf}.}. If this were true then 
one could show that $\Spec R^{\ps}_G[1/p]$ is reduced and hence normal, as explained in Section \ref{intro_Laf}.


We define $X^{\gen}_{G, \rhobarss}$ for any profinite group $\Gamma$ satisfying Mazur's $p$-finiteness condition. However, starting with \Cref{sec_dim_fib} we use in an essential way that $\Gamma=\Gamma_F$:
besides Euler--Poincar\'e characteristic formula, it is crucial to our 
argument that using local Tate duality we may transfer obstructions 
to lifting from $H^2$ to $H^0$. 

\subsubsection{Bounding the dimension}\label{sec_intro_bd_dim} We will now sketch how we 
obtain the bound \eqref{intro_bound}. Let $Y_{\rhobarss}$ be the preimage 
of the closed point of $X^{\ps}_G:= \Spec R^{\ps}_G$ in $X^{\gen}_{G, \rhobarss}$. 
We bound the dimension of  $Y_{\rhobarss}$ in  \Cref{bound_Y}. Its complement 
$V$ and the special fibre $\Vbar$ of $V$ are Jacobson schemes and residue fields of closed points $x\in V$ are local fields.
Moreover, in  \Cref{local_ring_def_ring} we  relate the completions of local rings $\OO_{V, x}$ to the deformation rings 
$R^{\square}_{G, \rho_x}$, and completions of local rings $\OO_{\Vbar, x}$ to $R^{\square}_{G, \rho_x}/\varpi$.
The upshot is that if we control the dimension of  $R^{\square}_{G, \rho_x}/\varpi$ then we 
control the dimension of $\OO_{\Vbar,x}$ and if we can do this at every closed point then we can bound the dimension of 
$\Vbar$ and the dimension of its closure in $\Xbar^{\gen}_{G, \rhobarss}$. In Proposition \ref{present_over_RH} we establish a presentation 
\begin{equation}\label{intro_present}
R^{\square}_{G, \rho_x}\cong \frac{R^{\square}_{G/G', \varphi\circ \rho_x}\br{x_1, \ldots, x_r}}{(f_1,\ldots, f_s)},
\end{equation}
where $s=\dim_{\kappa(x)} H^2(\Gamma_F, \ad^0 \rho_x)$ and $r-s= \dim G'_k ([F:\Qp]+1)$. We deduce from the results of 
\cite{defT} that if $x\in \Vbar$ is a closed point then 
$R^{\square}_{G/G', \varphi\circ \rho_x}/\varpi$ 
is formally smooth over the group algebra $\kappa(x)[\mu]$ of dimension 
$\dim (G/G')_k ([F:\Qp]+1)$, where $\mu=(\mu_{p^{\infty}}(E)\otimes M)^{\Gal(E/F)}$.  
This is easy if $G=\GL_d$ or more generally if $G$ is connected, but requires a non-trivial argument in the non-connected 
case, which we carry out in \cite{defT}. This allows us to conclude that the closure in $\Xbar^{\gen}_{G, \rhobarss}$ of the locus in $\Vbar$, where 
$H^2(\Gamma_F, \ad^0\rho_x)$ vanishes, has dimension $\dim G_k([F:\Qp]+1)$, 
if non-empty. This locus will be referred to as the  $(\Lie G'_k)^*$\emph{-non-special locus} later on.

One is left to understand the locus, where $H^2(\Gamma_F, \ad^0\rho_x)\neq 0$. 
By local Tate duality this is equivalent to 
$H^0(\Gamma_F, (\ad^0 \rho_x)^*(1))\neq 0$. Let us suppose that $\rho_x$ 
is absolutely $G$-irreducible.  If $G=\GL_d$ then B\"ockle--Juschka show in \cite{BJ_new} 
that non-vanishing of $H^0(\Gamma_F, (\ad^0 \rho_x)^*(1))$ implies that 
$\rho_x$ is induced from $\Gamma_{F'}$, where $F'$ is a proper
abelian extension of $F$. We know the dimension of 
$\Xbar^{\ps, \Gamma_{F'}}_{\GL_{d'}}$, where $d'=\frac{d}{[F':F]}$, inductively and 
using this it is shown 
in \cite{BIP_new} that the closure of this locus has positive codimension.
Such arguments might work for classical groups (JQ has studied the case
$G=\Sp_{2n}$ under the assumption $p>2$ in \cite[Section 7]{quast}), but it seems impossible 
to get  a characterisation like this for an arbitrary generalised 
reductive group. 

Our key observation is that instead of having such a precise description of $\rho_x$, it is enough to observe that $\rho_x$ has \emph{`small' image}. Namely, 
let $H$ be the Zariski closure of $\rho_x(\Gamma_F)$ in 
$G(\kappa)$, where $\kappa$ is the algebraic closure of $\kappa(x)$. Since $\rho_x$ is absolutely $G$-irreducible, its image is not contained in any parabolic subgroup of $G_{\kappa}$. A theorem of Martin 
\cite{martin} asserts that $H$ is again generalised reductive. 
In Section \ref{bounds_red_sbgp} we prove two results:  
\begin{prop}[\Cref{cor_one}]\label{intro_matsu} If a generalised reductive subgroup $H$ of 
$G_{\kappa}$ does not contain $G'_{\kappa}$ then $\dim G_{\kappa} -\dim H\ge 2$.
\end{prop}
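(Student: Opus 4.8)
The plan is to reduce to a statement about the identity components and then about Lie algebras. First I would pass to the neutral components: writing $G^0_\kappa$ and $H^0$ for the identity components, the condition that $H$ does not contain $G'_\kappa$ is equivalent (since $G'_\kappa \subseteq G^0_\kappa$ is connected) to $H^0$ not containing $G'_\kappa$, so it suffices to treat the connected case and then observe that $\dim G_\kappa - \dim H = \dim G^0_\kappa - \dim H^0$. Now $H^0$ is connected reductive (this is Martin's theorem, already invoked), sitting inside the connected reductive group $G^0_\kappa$. The key point is: if a connected reductive subgroup $H^0 \subseteq G^0_\kappa$ does not contain the derived group $G'_\kappa$, then $\dim G^0_\kappa - \dim H^0 \neq 1$.

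The mechanism for excluding a codimension-one gap is the following. Suppose for contradiction $\dim G^0_\kappa - \dim H^0 = 1$. I would use the fact that a connected reductive subgroup of codimension one in a connected reductive group is forced to be large: consider the action of $H^0$ on the quotient variety $G^0_\kappa/H^0$, which is a smooth affine curve (affine because $H^0$ is reductive, by Matsushima's criterion). A connected reductive group acting on a curve — one analyzes the possibilities. If $G^0_\kappa/H^0$ is complete it would have to be a point, contradiction; being affine of dimension one, its normalization is $\mathbb{A}^1$ or $\mathbb{G}_m$ or the curve is already one of these up to the structure at infinity. The group $G^0_\kappa$ acts transitively on $G^0_\kappa/H^0$, so the curve is a homogeneous space; a one-dimensional homogeneous space for a connected group is $\mathbb{A}^1$ or $\mathbb{G}_m$ (or $\mathbb{P}^1$, excluded by affineness). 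In either case the connected automorphism-type group acting is solvable (contained in the Borel of $\mathrm{PGL}_2$ or in $\mathbb{G}_m$), so the image of $G^0_\kappa$ in $\mathrm{Aut}(G^0_\kappa/H^0)$ is solvable, forcing $G'_\kappa$ to act trivially, i.e. $G'_\kappa \subseteq \bigcap_{g} gH^0g^{-1} \subseteq H^0$ — the desired contradiction.

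The remaining gap to fill is why $\dim G^0_\kappa - \dim H^0 = 0$ together with $H^0 \subsetneq G^0_\kappa$ cannot happen, but that is immediate: equal dimension plus connectedness and the fact that $G^0_\kappa$ is a variety forces $H^0 = G^0_\kappa$. So the only case to rule out is codimension exactly one, which the homogeneous-curve argument above handles. Thus $\dim G_\kappa - \dim H \geq 2$ whenever $H$ fails to contain $G'_\kappa$.

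\textbf{Main obstacle.} I expect the delicate point to be the classification of one-dimensional homogeneous spaces and the assertion that the connected group acting on such a curve has solvable image — one must be careful in positive characteristic (the paper makes no hypothesis on $p$) that no exotic transitive actions of a semisimple group on a curve arise; the cleanest route is probably to invoke that any action of $G^0_\kappa$ on $\mathbb{A}^1$ or $\mathbb{G}_m$ factors through $\mathrm{Aut}(\mathbb{A}^1) = \mathbb{G}_a \rtimes \mathbb{G}_m$ or $\mathrm{Aut}(\mathbb{G}_m) = \mathbb{G}_m \rtimes \mathbb{Z}/2$, both of which have solvable identity component, so that the perfect group $G'_\kappa$ lands in the kernel. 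Alternatively, one can argue entirely on Lie algebras: a codimension-one reductive subalgebra $\mathfrak{h} \subseteq \mathfrak{g}$ with $\mathfrak{g}/\mathfrak{h}$ one-dimensional gives a one-dimensional $\mathfrak{h}$-representation, hence a character, and chasing this through the root-space decomposition of the semisimple part forces $[\mathfrak{g},\mathfrak{g}] \subseteq \mathfrak{h}$; this may be the more robust formulation in bad characteristic and is likely what I would actually write up.
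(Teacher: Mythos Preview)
Your approach is correct and takes a genuinely different route from the paper's. Both proofs hinge on Matsushima's theorem (the quotient of a reductive group by a reductive subgroup is affine), but diverge thereafter. The paper first reduces to $G = G'$ semisimple and replaces $H$ by $H \cap G'$ (checking this is still reductive), then invokes Chevalley's theorem to realise $H$ as the stabiliser of a line $\ell$ in a finite-dimensional $G$-representation $V$, so that $G/H$ is the orbit $G\cdot\ell \subset \mathbb{P}(V)$. Affineness forces this orbit to be non-closed; its closure contains a strictly smaller closed orbit $G\cdot\ell'$. If $\dim G\cdot\ell' \ge 1$ one is done; otherwise $\ell'$ is a $G$-fixed line, perfectness of $G$ upgrades this to a fixed vector $v'$, and one passes to $V/\langle v'\rangle$ and iterates. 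Your argument instead keeps $G^0$ merely reductive, assumes $\dim G^0/H^0 = 1$ for a contradiction, classifies the resulting smooth affine homogeneous curve as $\mathbb{A}^1$ or $\mathbb{G}_m$, and concludes from the solvability of their automorphism schemes that the perfect group $G'$ acts trivially, hence lies in $H^0$. Your route is shorter and avoids the inductive descent through quotient representations; the paper's route avoids the (mild) work of classifying one-dimensional homogeneous spaces and checking that the action map $G^0 \to \Aut(U)$ is a morphism of algebraic groups.

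One small correction: your closing suggestion that the Lie-algebra reformulation ``may be the more robust formulation in bad characteristic'' has it backwards. The passage from subgroups to subalgebras and the deduction $[\mathfrak g,\mathfrak g] \subseteq \mathfrak h$ from a one-dimensional quotient are precisely where small-characteristic pathologies appear. Your geometric argument, by contrast, is characteristic-free: the automorphism schemes of $\mathbb{A}^1$ and $\mathbb{G}_m$ are $\mathbb{G}_a \rtimes \mathbb{G}_m$ and $\mathbb{G}_m \rtimes \mathbb{Z}/2$ over any field, and ruling out higher-genus completions only uses that $\Aut(C)$ is finite for genus $\ge 2$ and that a connected linear group has no nontrivial morphism to an abelian variety for genus one.
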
 

Our original  proof of the above result used Matsushima's theorem.
The current simpler proof was suggested by both  Sean Cotner and the anonymous referee. 

\begin{lem}[\Cref{sic}]\label{intro_coadjoint} The $G'_{\kappa}$-invariants in $(\Lie G'_{\sic})^*_{\kappa}$ are zero. 
\end{lem}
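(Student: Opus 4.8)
The plan is to reduce the assertion to an elementary computation with the root datum of $G'_{\sic}$. Since formation of invariants under an affine group scheme commutes with flat base change, we may assume $\kappa$ is algebraically closed, so that $G'_{\sic,\kappa}$ is split semisimple and simply connected. Fix a split maximal torus $T\subseteq G'_{\sic,\kappa}$ with root system $\Phi\subseteq X^*(T)$, coroots $\alpha^\vee\in X_*(T)$, root subgroups $x_\alpha\colon\Ga\hookrightarrow G'_{\sic,\kappa}$, and the adjoint weight decomposition $\Lie G'_{\sic,\kappa}=\Lie T\oplus\bigoplus_{\alpha\in\Phi}\mathfrak g_\alpha$ with each $\mathfrak g_\alpha$ one-dimensional. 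The $G'_\kappa$-action on $\Lie G'_{\sic,\kappa}$ is by construction the one through which the adjoint action of $G'_{\sic,\kappa}$ factors (the adjoint action is trivial on the central subgroup scheme $\pi_1(G')$); hence any $G'_\kappa$-invariant functional $\lambda\in(\Lie G'_{\sic,\kappa})^*$ is, in particular, invariant under the adjoint action of $T$ and of each $x_\alpha(\Ga)$, and it suffices to show every such $\lambda$ vanishes.

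First I would use $T$-invariance: on $(\Lie G'_{\sic,\kappa})^*$ the torus acts with weight $0$ on $(\Lie T)^*$ and weight $-\alpha$ on the line dual to $\mathfrak g_\alpha$, so $\lambda$ lies in the zero weight space, i.e.\ $\lambda$ annihilates every root space $\mathfrak g_\alpha$. Next I would use invariance under the root subgroups. With a Chevalley basis $e_{\pm\alpha}\in\mathfrak g_{\pm\alpha}$, $[e_\alpha,e_{-\alpha}]=\alpha^\vee$ (the coroot, viewed in $\Lie T\cong X_*(T)\otimes_\ZZ\kappa$), one has the integral identity $\Ad(x_\alpha(t))e_{-\alpha}=e_{-\alpha}+t\,\alpha^\vee-t^2 e_\alpha$, valid in every characteristic since the only elements of $\Phi\cup\{0\}$ of the form $-\alpha+i\alpha$ with $i\geq 0$ are $-\alpha,0,\alpha$. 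Applying $\lambda$ and using that it kills root spaces yields $t\,\lambda(\alpha^\vee)=\lambda\bigl(\Ad(x_\alpha(t))e_{-\alpha}\bigr)=\lambda(e_{-\alpha})=0$ in $\kappa[t]$, so $\lambda(\alpha^\vee)=0$ for all $\alpha\in\Phi$.

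Finally I would invoke that $G'_{\sic}$ is simply connected, which means precisely that $X_*(T)$ equals the coroot lattice $\sum_{\alpha\in\Phi}\ZZ\alpha^\vee$; hence the coroots span $\Lie T\cong X_*(T)\otimes_\ZZ\kappa$ over $\kappa$, and the previous step forces $\lambda|_{\Lie T}=0$. Together with the vanishing on root spaces this gives $\lambda=0$, as desired. I do not anticipate a real obstacle; the two points to treat with care are the identification of the $G'$-action on $\Lie G'_{\sic}$ with the adjoint action of $G'_{\sic}$ (which makes the subgroups $T$ and $x_\alpha(\Ga)$ available) and the use of the \emph{integral} structure constants (which pins the coefficient of $\alpha^\vee$ in $\Ad(x_\alpha(t))e_{-\alpha}$ to be exactly $t$). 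Simple-connectedness is essential, and the statement genuinely fails with $\Lie G'$ in place of $\Lie G'_{\sic}$: for instance, when $G'=\PGL_2$ and $\kappa$ has characteristic $2$, the trace map $\Lie\PGL_2\to\kappa$ is a nonzero $\PGL_2$-invariant functional.
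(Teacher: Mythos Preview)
Your proof is correct. The overall strategy matches the paper's (Lemma~\ref{coadjoint} and Lemma~\ref{sic}): reduce to the simply connected group $G'_{\sic}$, use the $T$-weight decomposition to see that an invariant functional $\lambda$ vanishes on all root spaces, show $\lambda(\alpha^\vee)=0$ for every coroot, and conclude because simple-connectedness makes the coroots span $X_*(T)$ and hence $\Lie T$. The one genuine difference is in how $\lambda(\alpha^\vee)=0$ is obtained: the paper first analyses the $\SL_2$ coadjoint representation directly (handling characteristic $2$ separately via the non-split extension $0\to\mathbf 1\to\Lie\SL_2\to V\to 0$), and then for general $G'_{\sic}$ restricts $\lambda$ to the Lie algebra of the $\SL_2$-subgroup $G'_\alpha$ attached to each root $\alpha$; you instead use the integral Chevalley identity $\Ad(x_\alpha(t))e_{-\alpha}=e_{-\alpha}+t\,\alpha^\vee-t^{2}e_\alpha$ directly. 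Your route is slightly more elementary and uniform in the characteristic, while the paper's route makes the reduction to rank one more structural. Your closing remark about $\PGL_2$ in characteristic $2$ is also apt and matches the paper's emphasis on why simple-connectedness is essential.
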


We note that the above Lemma would be false in small characteristics if we did 
not take the dual. We first prove it for $\SL_2$ and then use this together with 
arguments with roots to prove it in 
the general case. 

Let us go back to the situation at hand, where $H$ is the Zariski closure 
of $\rho_x(\Gamma_F)$ in $G(\kappa)$. If $x$ lies in the special fibre, then the
twisting operation by the cyclotomic character becomes trivial after the restriction to $\Gamma_{F(\zeta_p)}$. Using this we show that non-vanishing 
of $H^0(\Gamma_F, (\ad \rho_x)^*(1))$ implies that the neutral component
$H^0$ of $H$ has non-zero invariants in $(\Lie G')^*_{\kappa}$. 
If $\pi_1(G')$ is \'etale or $\chara(\kappa)=0$ then 
$(\Lie G')^*_{\kappa} =(\Lie G'_{\sic})^*_{\kappa}$ and 
Proposition 
\ref{intro_matsu} and Lemma \ref{intro_coadjoint} imply that $\dim G_{\kappa}-\dim H^0\ge 2$ and hence $\dim G_{\kappa}- \dim H \ge 2$.
The fact that the difference can be bounded below by $2$ (and not by $1$) plays
an important role in verifying Serre's criterion for normality in the proof of 
\Cref{thm_intro-3}. As you can see there is an induction argument shaping up. To explain it we need two further ingredients. 

Let $W$ be an  algebraic representation of $G$ on a finite free $\OO$-module, and let $\rho: \Gamma_F \rightarrow G(A)$ be a
representation, where $A$ is a noetherian $\OO$-algebra. We show in Lemma \ref{V_iks} that there is a closed  subscheme $X_{W,j}$ of 
$X=\Spec A$ such that 
$$x\in X_{W,j} \iff \dim_{\kappa(x)} H^0(\Gamma_F, W\otimes_{\OO} \kappa(x)(1))\ge j+1.$$ 
We refer to $X_{W,j}$ as the \emph{$W$-special locus of level $j$} and just the \emph{$W$-special locus} when $j=0$. 
The complement of the $W$-special locus is called \emph{$W$-non-special}.
The proof amounts to reformulating these conditions in terms of some finitely generated $A$-module and a little of commutative algebra. We apply this 
notion to $W=(\Lie G'_{\sic})^*$ and if the image of $\rho$ is contained in $P(A)$, where $P$ is a parabolic subgroup scheme 
of $G$ with unipotent radical $U$, then we apply it to $W=(\Lie U)^*$. When working with the special fibre we consider representations $W$ over $k$
instead of $\OO$.

The second ingredient, which uses \Cref{intro_cotner_main} as an input, is the following finiteness result proved in \Cref{finite_functoriality}.
If $H\rightarrow G_k$ is a finite morphism of generalised reductive $k$-group schemes which maps an $H$-pseudocharacter $\overline{\Psi}$ to $\Thetabar$ then 
the natural map $\Xbar^{\ps}_{H, \overline{\Psi}} \rightarrow \Xbar^{\ps}_G$
is finite. Moreover, there are only finitely many $H$-pseudocharacters $\overline{\Psi}$ mapping to $\Thetabar$.
In particular, the dimension of the union $\Xbar^{\ps}_{HG}$ of the scheme theoretic images can be bounded by the largest $\dim \Xbar^{\ps}_{H,\overline{\Psi}}$.

We are now in a position to sketch the proof of the bound in \eqref{intro_bound} when $\rhobar$ is absolutely $G$-irreducible. 
From \eqref{intro_present} and the relation between $R^{\square}_{G, \rho_x}/\varpi$ and the completions of $\OO_{\Vbar,x}$ we 
obtain a lower bound 
\begin{equation}\label{lower_bound}
\dim \Xbar^{\gen}_{G, \rhobarss}\ge \dim G_k ([F:\Qp]+1).
\end{equation}
We then construct a pair $(G_1, \rhobar_1)$ such that $\dim G_1\le \dim G$, $\pi_1(G'_1)$ is \'etale and if \eqref{intro_bound} holds for 
$(G_1, \rhobar_1)$ then it also holds for $(G, \rhobar)$. For example, 
if $G=\PGL_d$ then $G_1= \SL_d/\mu_m$, where $d=p^e m$ and $p \nmid m$, and $\rhobar_1=\rhobar$ as $G(k)=G_1(k)$. In general, 
the presence of the component group of $G$ complicates the construction
of $G_1$, see \Cref{sec_extensions}. This reduction step allows us 
to assume that $\pi_1(G')$ is \'etale, which implies that 
$\Lie G'_{\sic}=\Lie G'$. We then show that 
there are finitely many generalised reductive subgroup schemes $H_i$ of $G_k$, defined over some finite extension of $k$, such that 
$\dim G_k - \dim H_i \ge 2$ and the $(\Lie G'_{\sic})^*_k$-special locus in $\Xbar^{\gen}_{G, \rhobarss}$ is contained in the preimage of the union of $\Xbar^{\ps}_{H_i G}$. The assumption that $\rhobar$ is absolutely $G$-irreducible implies that every fibre 
$X^{\gen}_{G,\rhobarss}\rightarrow X^{\ps}_G$ has dimension $\dim G_k - \dim Z(G)_k$. 
Inductively, we may bound the dimension of $\Xbar^{\ps}_{H_iG}$ by $\dim H_i [F:\Qp] +\dim Z(H_i)$. 
Using this we show that the codimension of the  $(\Lie G'_{\sic})^*_k$-special locus in $\Xbar^{\gen}_{G, \rhobarss}$
is at least $2[F:\Qp]$. It follows from \eqref{lower_bound} that the $(\Lie G')^*_k$-non-special locus in $\Xbar^{\gen}_{G, \rhobarss}$ is non-empty 
and as explained above its closure has dimension $\dim G_k ([F:\Qp]+1)$. This lets us conclude that \eqref{lower_bound} is an equality.

Let us briefly indicate how we find the subgroup schemes $H_i$ in the above argument. The $(\Lie G'_{\sic})^*_k$-special locus 
of level $j$ is non-empty only for finitely many $j$. The group schemes $H_i$ correspond to the generic points of these 
loci for $j\ge 0$. Given such a generic point $\eta$ we let $H$ be the Zariski closure of $\rho_{\eta}(\Gamma_F)$ 
in $G(\overline{\kappa(\eta)})$. Since $\rhobarss$ is assumed to be absolutely $G$-irreducible $H$ is not contained in 
any parabolic subgroup of $G_{\overline{\kappa(\eta)}}$ and hence is generalised reductive. 
A conjugate of $H$ by an element of $G(\overline{\kappa(\eta)})$ can be defined 
over $\kbar$ and hence over a finite extension of $k$. Our arguments with generalised reductive groups over algebraically closed fields
use results of Martin \cite{martin} and Bate--Martin--R\"{o}hrle \cite{BMR} in an essential way.

The general case is an elaboration of this inductive argument. If $\rhobarss$ is not absolutely $G$-irreducible 
then the dimensions of the fibres $X^{\gen}_{G, \rhobarss}\rightarrow X^{\ps}_G$ can vary. We bound their dimensions in 
Section \ref{sec_dim_fib}. 
We stratify $\Xbar^{\ps}_G$ by $\Xbar^{\ps}_{LG}$, where $L$ runs over the Levi subgroup schemes of $G$ containing a fixed 
 maximal split torus. If $y\in \Xbar^{\ps}_{LG}$, but does not lie in any $\Xbar^{\ps}_{L'G}$ for a proper Levi $L'\subsetneq L$ then 
for typical $y$ the fibre will have dimension $\dim G_k - \dim Z(L)_k +\dim U_k [F:\Qp]$, where $U$ is the unipotent radical 
of any parabolic subgroup with Levi $L$. By induction we know that $\dim \Xbar^{\ps}_{LG}$ is at most $\dim L_k [F:\Qp]+ \dim Z(L)_k$.
The sum of these numbers does not exceed $\dim G_k ([F:\Qp]+1)$. In fact, the difference is equal to $\dim U_k [F:\Qp]$. We 
deal with the fibres, which have dimension exceeding $\dim G_k - \dim Z(L)_k +\dim U_k [F:\Qp]$ by considering the $(\Lie U_k)^{\ast}$-special 
locus inside $X^{\ps}_L$. The inductive argument  is carried out in Section \ref{sec_complete_intersection}.

If $X^{\ps}_{LG}$ is non-empty for a Levi subgroup $L$ satisfying $\dim G -\dim L=2$ then the bound $\dim U_k [F:\Qp]$ is not 
good enough to apply the Serre's criterion for normality, when $F=\Qp$, as the assumption on $L$ forces $\dim U_k = 1$. We
analyse the $(\Lie G'_{\sic})^{\ast}$-non-special locus in the preimage 
of $X^{\ps}_{LG}$ in $X^{\gen}_{G, \rhobarss}$ in this case in detail in Section \ref{RLevi2}. In Section \ref{sec:bound_special} we show that the 
$(\Lie G'_{\sic})^{\ast}$-special locus in $X^{\gen}_{G, \rhobarss}$ and also in $\Xbar^{\gen}_{G, \rhobarss}$ has codimension of at least $1+[F:\Qp]$
and of at least $2[F:\Qp]$ if $X^{\ps}_{LG}$ is empty for all Levi subgroups $L$ of codimension $2$.

\subsection{Irreducible components} We will sketch the proof of \Cref{thm_intro-3}. Using the results of \cite{defT} we show that 
\begin{equation}\label{defT_prime}
A^{\gen}_{G/G', \psibarss}\cong \OO[\mu]\br{x_1, \ldots, x_m}[t_1^{\pm 1}, \ldots, t_l^{\pm 1}],
\end{equation}
where $l= \dim (G/G')_k - \dim Z(G/G')_k$, $m+l= \dim (G/G')_k ([F:\Qp]+1)$ and $\mu=(\mu_{p^{\infty}}(E)\otimes M)^{\Gal(E/F)}$. 
Let $\mathrm{X}(\mu)$ be the group of characters $\chi: \mu \rightarrow \OO^{\times}$. If $A$ is an $A^{\gen}_{G/G', \psibarss}$-algebra
then we use \eqref{defT_prime} to define $A^{\chi}:= A\otimes_{\OO[\mu], \chi} \OO$ and if $X=\Spec A$ we write 
$X^{\chi}:=\Spec A^{\chi}$. It follows from \eqref{defT_prime} that the irreducible components of 
$X^{\gen}_{G/G', \psibarss}$ are given by $X^{\gen, \chi}_{G/G', \psibarss}$ for $\chi\in \mathrm X(\mu)$ and are
 regular. In particular, the completions of local rings of $X^{\gen, \chi}_{G/G', \psibarss}$ are also regular. Thus the presentation \eqref{intro_present}
 gives us a presentation
 \begin{equation}\label{intro_present_chi} 
R^{\square,\chi}_{G, \rho_x}\cong \frac{R^{\square,\chi}_{G/G', \varphi\circ \rho_x}\br{x_1, \ldots, x_r}}{(f_1,\ldots, f_s)},
\end{equation}
where $R^{\square,\chi}_{G/G', \varphi\circ \rho_x}$ is a regular ring. Further, if $x$ is in the $(\Lie G')^{\ast}$-non-special locus 
then $s$ in the above presentation is $0$ and we can conclude that $R^{\square,\chi}_{G, \rho_x}$ is a regular ring. We deduce that the $(\Lie G')^{\ast}$-non-special locus in $X^{\gen,\chi}_{G, \rhobarss}$ is contained in the 
regular locus. In general, 
the $(\Lie G')^{\ast}$-non-special locus might be empty, for example
this happens if $G=\PGL_p$. On the other hand the complement of $(\Lie G'_{\sic})^{\ast}$-non-special locus has codimension of at least $1+[F:\Qp]$ in $X^{\gen, \chi}_{G, \rhobarss}$. If $\pi_1(G')$ is \'etale then 
these loci coincide and this implies that  $X^{\gen,\chi}_{G, \rhobarss}$
is regular in codimension $[F:\Qp]$. Since $X^{\gen,\chi}_{G, \rhobarss}$ is excellent the same applies to 
the completions of its local rings, and we may deduce that the deformation rings $R^{\square,\chi}_{G, \rho_x}$ are regular 
in codimension $[F:\Qp]$. To show that they are complete intersection it is enough to bound their dimension from above and then 
use \eqref{intro_present_chi}. Since $k[\mu]$ is a local artinian algebra the underlying reduced subschemes of the special fibres of $X^{\gen}_{G, \rhobarss}$ 
and $X^{\gen, \chi}_{G, \rhobarss}$ coincide and hence they have the same dimension. Serre's criterion 
for normality implies that $R^{\square,\chi}_{G, \rho_x}$ is normal. Since the ring is local, it is a domain. 

Since $(\Lie G')_L= (\Lie G'_{\sic})_L$ the $(\Lie G')^{\ast}$-non-special and $(\Lie G'_{\sic})^{\ast}$-non-special loci in the generic
fibre coincide. This allows us to prove that $X^{\gen}_{G, \rhobarss}[1/p]$ is normal without any assumption on $\pi_1(G')$. 

A similar argument works in the `fixed partial determinant' setting described in Section \ref{intro_partial}. However, there 
is one new idea not present in \cite{BIP_new}. To bound the dimension of $X^{\gen, \psi_1}_{G, \rhobarss}$ we show that there
is a finite morphism $X^{\gen, \psi_1}_{G, \rhobarss}\rightarrow X^{\gen}_{G/Z_1, \rhobarss}$, where 
$Z_1$ is a central subtorus of $G^0$ such that the composition $Z_1\rightarrow G \rightarrow H_1$ is an isogeny. We 
can then bound the dimension of various subschemes of $X^{\gen, \psi_1}_{G, \rhobarss}$ by dimensions 
of the corresponding subschemes in $X^{\gen}_{G/Z_1, \rhobarss}$. So for example, to prove the results 
in the fixed determinant case, when $G=\GL_d$, we use the results proved for $G=\PGL_d$. In \cite{BIP_new} 
one twists by characters to unfix the determinant instead. This argument can be made to work, when $G$ is connected, but 
will not work in general. 

\subsection{Overview by section} In \Cref{sec_grp_sch} we review some facts about the generalised reductive $\OO$-group
schemes and their parabolic subgroup schemes, which we call R-parabolic. We also review $G$-semisimplification 
and $G$-irreducibility. In \Cref{sec_def_rings}
we recall deformation theory of representations
of a profinite group satisfying Mazur's $p$-finiteness condition. Proposition \ref{present_over_RH} is a key result of that section. In \Cref{sec_cont} we discuss and compare notions of continuity via algebra and via condensed sets. As explained in 
Section \ref{subsub_defi} this is used to define $X^{\gen}_{G,\rhobarss}$ later on. 
In \Cref{sec_gen_matrix} we construct the scheme $X^{\gen,\tau}_G$ and study the relation 
between the completion of its local rings and deformation rings of Galois representations. 
In \Cref{sec_GIT} we study the GIT quotient $X^{\git, \tau}_G:=X^{\gen,\tau}_G\sslash G^0$ 
and define $X^{\gen,\tau}_{G,\rhobarss}$ and its GIT quotient $X^{\git}_{G,\rhobarss}$.
In \Cref{Laf} we introduce Lafforgue's $G$-pseudocharacters and their deformation spaces $X^{\ps}_G$ and 
show that the natural map $X^{\git}_{G,\rhobarss}\rightarrow X^{\ps}_G$ is a finite universal homeomorphism. 
In \Cref{sec_funct} we give a condensed description of $X^{\gen,\tau}_{G,\rhobarss}$ as explained in Section \ref{subsub_defi}. 
This allows us to conclude that $X^{\gen,\tau}_{G,\rhobarss}$ is independent of $\tau$, which we then omit from notation, 
and is functorial in $G$. 
In \Cref{sec_abs_irr} we show that without loss of generality one may assume that the map $\Gamma_F \overset{\rhobar}{\longrightarrow} G(k) \rightarrow (G/G^0)(\kbar)$
is surjective. In this case we define the absolutely $G$-irreducible locus in $X^{\gen}_{G, \rhobarss}$ and show that 
if it is non-empty then the difference between its dimension and the dimension of its GIT quotient is $\dim G_k  - \dim Z(G)_k$. 
In \Cref{bounds_red_sbgp} we bound the codimension of generalised reductive subgroups of generalised reductive groups 
over algebraically closed fields. The results of this section are used to bound the dimensions of 
$(\Lie G'_{\sic})^{\ast}$-special and $(\Lie U)^{\ast}$-special loci, 
 as explained in Section \ref{sec_intro_bd_dim}. 
In \Cref{RLevi2} we show that if $G$ is a generalised reductive group over an algebraically closed field with an R-Levi $L$ of codimension $2$ 
such that $L/L^0\rightarrow G/G^0$ is an isomorphism then 
$G/Z(G^0)\cong G_1\times \PGL_2$ and $L/Z(G)\cong G_1\times \Gm$. Proposition \ref{vanish_H2_please} 
is used to bound the dimension of the $(\Lie G'_{\sic})^{\ast}$-special locus in Section \ref{sec:bound_special}.
This section can be omitted if one is only interested in the complete intersection property and does not care for normality and irreducible components. In \Cref{sec_dim_fib} we bound the dimensions of fibres of $X^{\gen}_{G, \rhobarss}\rightarrow X^{\ps}_G$.
In \Cref{sec_complete_intersection}, which is the core of the paper,  we carry out the induction argument described in Section \ref{sec_intro_bd_dim}.  
In  \Cref{sec:bound_special} we bound the dimension of the $(\Lie G'_{\sic})^*$-special locus. 
In \Cref{nightmare_cont} we study the irreducible components of $X^{\gen,\psi_1}_{G, \rhobarss}$.
In \Cref{LandC} we explain that our results apply, when $G$ is an $L$-group or a $C$-group, for the convenience of the reader. 
The appendix  recalls some background on condensed mathematics, which is used in \Cref{sec_cont}.

\subsection{Notation} Let $F$ be a finite extension of $\Qp$. 
We fix an algebraic closure $\overline{F}$ over $F$. Let $\Gamma_F:=\Gal(\overline{F}/F)$
be the absolute Galois group of $F$. 
We will 
denote by $\zeta_p$ a primitive 
$p$-th root of unity in $\overline{F}$. 

Let $L$ be another finite extension of $\mathbb Q_p$, 
such that $\Hom_{\Qp\hyphen\mathrm{alg}}(F, L)$ has cardinality $[F:\Qp]$. 
Let $\OO$ be the ring of integers in $L$, $\varpi$ a uniformiser, and $k$ the residue field. All our schemes are defined over $\OO$ (unless stated otherwise). 
We omit $\Spec \OO$ from the notation, when we take fibre products over it, so 
that $X\times Y:= X\times_{\Spec \OO} Y$.

If $A$ is a commutative ring and $\rho: \Gamma \rightarrow \GL_d(A)$ is a representation, we let 
$$\rho^{\lin}: A[\Gamma]\rightarrow M_d(A), ~\sum_{\gamma\in \Gamma} a_{\gamma} \gamma \mapsto \sum_{\gamma\in \Gamma} a_{\gamma} \rho(\gamma)$$ 
be its \emph{linearisation}, where $M_d(A)$ is the set of $d \times d$ matrices with entries in $A$. 

We denote by $A\hyphen\alg$ the category of commutative $A$-algebras. 

\subsection{Acknowledgements} The authors would like to thank 
Jarod Alper and Sean Cotner for the correspondence regarding their work; 
Tasho Kaletha for a very useful discussion regarding \Cref{RLevi2};
Vincent Pilloni for sharing his notes of the talk he gave on \cite{BIP_new}; 
Sophie Morel for making a preliminary version of \cite{emerson2023comparison}
available to them; Toby Gee, James Newton and Sean Cotner for their comments. The authors would 
like to especially thank Brian Conrad for pointing out an error in an
earlier version of the paper and for the subsequent correspondence. We also heartily thank the referee for their careful reading of the paper.

VP would like to thank his colleagues  Ulrich G\"ortz, Daniel Greb and 
Jochen Heinloth for several stimulating discussions regarding various aspects 
of the paper.

This paper builds on the  collaboration \cite{BIP_new} of VP with Gebhard B\"ockle and Ashwin Iyengar, where the case $G=\GL_d$ is treated.
The paper \cite{BHKT} by Gebhard B\"{o}ckle, Michael Harris, Chandrashekhar Khare
and Jack Thorne also played an important role for us, especially at the beginning of the 
project. 

JQ would like to thank Gebhard Böckle for introducing him to Lafforgue’s theory 
of $G$-pseudocharacters and their deformation theory during his PhD studies at the 
University of Heidelberg.

Parts of the paper were written during the research stay of VP at 
the Hausdorff Research Institute for Mathematics in Bonn 
for the Trimester Program \emph{The Arithmetic of the Langlands Program}.
VP would like to thank the organisers 
Frank Calegari, Ana Caraiani, Laurent Fargues and  Peter Scholze
for the invitation and a stimulating research environment. 
The research stay of VP was funded by the Deutsche Forschungsgemeinschaft (DFG, German Research Foundation) under Germany's Excellence Strategy – EXC-2047/1 – 390685813. 

The research of JQ was funded by the Deutsche Forschungsgemeinschaft (DFG, German Research Foundation) – project number 517234220.

\section{\texorpdfstring{$\OO$-group schemes}{O-group schemes}}\label{sec_grp_sch}

The category of $\OO$-group schemes embeds fully faithfully in the category of fppf group sheaves on $(\OO\hyphen\alg)^{\mathrm{op}}$. Whenever an exact sequence is considered it is understood as an exact sequence of fppf group sheaves. If $G$ is a flat $\OO$-group scheme of finite presentation and $N \leq G$ is a flat closed normal subgroup, then the quotient group sheaf $G/N$ is representable by a scheme \cite[Th\'eor\`eme 4.C]{Anantharaman}.
If $G$ and $N$ are generalised reductive then the quotient is affine by \cite[Theorem 9.4.1]{alper}.

\begin{lem}\label{ZGexists}
    Let $G$ be a flat affine group scheme of finite presentation over $\OO$. Then the functor $\underline Z(G) : \OO\hyphen\alg \to \Set$ given by
    $$\underline Z(G)(A) := \{g \in G(A) \mid \forall A \to A' , \forall h \in G(A') , g = hgh^{-1}\}$$
    is representable by a closed subgroup scheme of $G$, which we will denote by $Z(G)$ and call the \emph{centre} of $G$.
\end{lem}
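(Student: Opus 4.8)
The strategy is to reduce to a representability statement that is already available, namely representability of centralizer schemes, via a standard diagonal-subtraction trick, and then cut out the centre inside a centralizer by finitely many closed conditions coming from a chosen closed immersion. Concretely, first I would observe that the naive description of $\underline Z(G)$ is exactly the centralizer of $G$ acting on itself by conjugation, and that by a Yoneda-type argument the ``for all $A\to A'$ and all $h$'' quantifier can be replaced by a single universal element: let $B$ be the coordinate ring of $G$, let $h_{\univ}\in G(B)$ be the universal point, and consider the base change $A\to A\otimes_\OO B$. Then $g\in G(A)$ lies in $\underline Z(G)(A)$ if and only if the two elements $g$ and $h_{\univ}gh_{\univ}^{-1}$ of $G(A\otimes_\OO B)$ coincide, i.e.\ if and only if $g$ maps into the equalizer of the two maps $G\to \prod_{B} G$ given by $g\mapsto g$ and $g\mapsto h_{\univ} g h_{\univ}^{-1}$ (Weil restriction along $\OO\to B$ is not literally needed; one works functorially).

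Next I would make this into an honest closed subscheme. Fix a closed immersion $\tau\colon G\hookrightarrow \mathbb A^n_\OO$ of $\OO$-schemes (such a $\tau$ exists since $G$ is affine of finite presentation over $\OO$); then $G(A)\subseteq A^n$ for every $A$, and the condition $g=h_{\univ}gh_{\univ}^{-1}$ in $G(A\otimes_\OO B)$ is the vanishing of the $n$ coordinates of $\tau(g)-\tau(h_{\univ}gh_{\univ}^{-1})\in (A\otimes_\OO B)^n$. Writing $B$ as a (possibly infinite) free $\OO$-module $B=\bigoplus_j \OO e_j$ and expanding each such coordinate as $\sum_j (\text{element of }A)\,e_j$, the condition becomes the simultaneous vanishing of a family of regular functions on $G$ (the $A$-coefficients), hence defines a closed subscheme $Z(G)\hookrightarrow G$. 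By construction $Z(G)(A)=\underline Z(G)(A)$ functorially, so $\underline Z(G)$ is represented by this closed subscheme. Finally, that $Z(G)$ is a subgroup scheme is immediate: on $A$-points it is visibly closed under multiplication and inversion (centrality is preserved), and containment of the identity section is clear; equivalently, one checks the multiplication and inversion morphisms of $G$ restrict to $Z(G)$ because they do so on all functor points.

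The main obstacle — and it is a mild one — is bookkeeping with the infinite $\OO$-basis of $B=\OO[G]$: one must be careful that ``expand in a basis and take coefficients'' genuinely produces a closed condition, i.e.\ that the ideal generated by all these coefficient-functions is the right one and is independent of the choice of basis. This is handled cleanly by working invariantly: the two morphisms $c_1,c_2\colon G\to \Hom_\OO(\,\cdot\,)$... more simply, $Z(G)$ is the scheme-theoretic preimage of the diagonal under the morphism $G\to G\times_\OO G$ over $B$ given by $(\,\cdot\,)\mapsto((\,\cdot\,),\ h_{\univ}(\,\cdot\,)h_{\univ}^{-1})$, pushed down along the faithfully flat affine morphism $\Spec B\to\Spec\OO$; since $G\times_\OO G$ is separated over $\OO$ the diagonal is a closed immersion, its preimage in $G_B:=G\times_\OO\Spec B$ is a closed subscheme, and its image under the (affine, quasi-compact) structure map to $G$ — more precisely, the locus of $g$ for which the whole fibre over $\Spec B$ lands in the diagonal — is closed because it is the vanishing locus of the cokernel of a map of quasi-coherent sheaves on $G$. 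No new input beyond separatedness of $G$ and affineness is required; in particular one does not need $G$ to be generalised reductive here, only flat affine of finite presentation, as stated.
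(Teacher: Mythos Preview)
The paper's proof is a one-line citation to SGA3 (Exposé~VIII, Théorème~6.4 and Exemples~6.5(e)). Your argument is an attempt to unpack that reference by hand: collapse the quantifier over~$h$ to the tautological point $h_{\univ}\in G(B)$ via Yoneda, then read off closed conditions from a chosen closed immersion $G\hookrightarrow\mathbb A^n$ using separatedness of~$G$. This is the right strategy and is the shape of the SGA3 argument.

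There is, however, a real gap in your execution. You write $B=\OO[G]$ as a free $\OO$-module and expand along a basis, but flatness over a DVR is only torsion-freeness, and torsion-free $\OO$-modules need not be free (the fraction field~$L$ being the basic obstruction); whether $\OO[G]$ is always free for a flat affine group scheme of finite presentation over a DVR is not obvious, and you do not argue it. Your ``invariant'' alternative does not close the gap either: the phrase ``vanishing locus of the cokernel of a map of quasi-coherent sheaves'' does not in general name a closed subscheme, and the push-down along $\Spec B\to\Spec\OO$ --- passing from the closed $Z'\subseteq G_B$ to the locus in~$G$ whose entire fibre lies in~$Z'$ --- is exactly the nontrivial step. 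For a general flat affine $\pi\colon X\to S$ and a closed $Y\subseteq X$, the functor $T\mapsto\{T\to S : X_T\subseteq Y_T\}$ is not automatically represented by a closed subscheme of~$S$; that it is in the present situation is precisely the content of the cited SGA3 theorem, and your sketch does not supply it.
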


\begin{proof}
    This is a special case of \cite[Expos\'e VIII, Th\'eorème 6.4]{SGA3Tome2} as explained in \cite[Expos\'e VIII, Exemples 6.5 (e)]{SGA3Tome2}.
\end{proof}

\begin{lem}\label{base_change_Z}
    Let $G$ be a flat affine group scheme of finite presentation over $\OO$ and let $A \to B$ be a map of $\OO$-algebras.
    Then $\underline Z(G_A)$ and $\underline Z(G_B)$ as in \Cref{ZGexists} are representable and we have $Z(G_A)_B = Z(G_B)$.
\end{lem}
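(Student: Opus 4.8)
Let $G$ be a flat affine group scheme of finite presentation over $\OO$ and $A \to B$ a map of $\OO$-algebras. Then $\underline Z(G_A)$ and $\underline Z(G_B)$ are representable and $Z(G_A)_B = Z(G_B)$.

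The plan is to reduce the statement to a compatibility of the representing object in Lemma~\ref{ZGexists} with base change, which in turn comes from the compatibility of the centraliser construction in \cite[Exposé VIII]{SGA3Tome2}. First I would note that $G_A$ is a flat affine $A$-group scheme of finite presentation (both properties are stable under base change), and likewise $G_B$, so Lemma~\ref{ZGexists} applies to both and gives representing closed subgroup schemes $Z(G_A) \leq G_A$ and $Z(G_B) \leq G_B$; this settles the representability assertion. It remains to produce a canonical isomorphism $Z(G_A) \times_A B \cong Z(G_B)$ of closed subgroup schemes of $G_B$.

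The key point is that the functor $\underline Z(G_A)$ is \emph{not} in general obtained by naively base changing the functor of points of $\underline Z(G)$ over $\OO$ — one must use the "universal" definition with its quantifier over all further algebras — but it \emph{is} stable under base change when read correctly. Concretely, for a $B$-algebra $C$, an element $g \in G_B(C) = G_A(C)$ lies in $\underline Z(G_B)(C)$ if and only if for every $C \to C'$ and every $h \in G_B(C') = G_A(C')$ one has $g = hgh^{-1}$; but this is literally the condition defining $\underline Z(G_A)(C)$, since $G_B(C') = G_A(C')$ and the set of $B$-algebras $C'$ receiving a map from $C$ is the same as the set of $A$-algebras with that property. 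Hence $\underline Z(G_B)(C) = \underline Z(G_A)(C)$ functorially in the $B$-algebra $C$, i.e.\ $\underline Z(G_B)$ and the restriction of $\underline Z(G_A)$ to $B$-algebras are the same functor on $B\hyphen\alg$. Since $Z(G_B)$ represents the former and $Z(G_A)_B$ represents the latter (a scheme representing a functor on $A$-algebras, base changed to $B$, represents the restriction of that functor to $B$-algebras), Yoneda gives the desired isomorphism $Z(G_A)_B \cong Z(G_B)$, compatible with the inclusions into $G_B$.

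I expect the only genuinely delicate point to be verifying that the defining condition really is insensitive to whether we test against $A$-algebras or against $B$-algebras — one needs that it suffices to test the centrality condition $g = hgh^{-1}$ against $C$-algebras (for the given $C$), rather than against arbitrary $A$- or $B$-algebras, so that the relevant quantifier ranges over the same class in both cases. This is exactly the content of the "universal centraliser" formalism of \cite[Exposé VIII, \S6]{SGA3Tome2}: the representability in Lemma~\ref{ZGexists} already encodes that $\underline Z(G)$ is the scheme-theoretic centraliser $\underline C_G(G)$, and scheme-theoretic centralisers of flat finitely presented closed subgroups commute with arbitrary base change (cf.\ \cite[Exposé VIII, Théorème 6.4]{SGA3Tome2} and its proof), which immediately yields $Z(G_A)_B = Z(G_B)$. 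So an alternative, perhaps cleaner, write-up simply invokes the base change compatibility asserted in loc.\ cit.\ rather than re-deriving it at the level of functors of points; I would present whichever is shorter in context, but the functor-of-points argument above is self-contained modulo Lemma~\ref{ZGexists}.
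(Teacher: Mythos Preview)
Your proof is correct and takes essentially the same approach as the paper: both rest on the observation that for a $B$-algebra $C$ the defining condition for $\underline Z(G_B)(C)$ coincides with that for $\underline Z(G_A)(C)$, since any $C$-algebra is automatically an $A$-algebra and $G_A(C')=G_B(C')$. The paper's write-up is marginally more economical in that it factors through $\OO$---observing $\underline Z(G_A)$ is the restriction of $\underline Z(G)$ to $A$-algebras, hence represented by $Z(G)_A$, and likewise for $B$---so representability comes for free from the single invocation of Lemma~\ref{ZGexists} over $\OO$ rather than needing it separately over $A$ and $B$.
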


\begin{proof}
    By \Cref{ZGexists} $\underline Z(G)$ is representable by an $\OO$-scheme $Z(G)$.
    By definition $\underline Z(G_A)$ is the restriction of $\underline Z(G)$ to the category of $A$-algebras and is thus represented by $Z(G)_A$.
    The same argument shows, that $\underline Z(G_B)$ is representable by $Z(G)_B$ and the claim follows.
\end{proof}

\begin{lem}\label{extofflatisflat}
    Let 
    $ 0 \to G_1 \to G_2 \to G_3 \to 0 $
    be a short exact sequence of affine $\OO$-group schemes of finite presentation. If $G_1$ and $G_3$ are flat, then $G_2 \to G_3$ is faithfully flat and in particular $G_2$ is flat.
\end{lem}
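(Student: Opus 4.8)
The plan is to reduce to a faithfully flat descent argument. First I would observe that the question of whether $G_2 \to G_3$ is faithfully flat is local on $G_3$ for the fppf topology, so it suffices to produce an fppf cover of $G_3$ over which the extension splits as a product. The natural candidate for such a cover is $G_2 \to G_3$ itself: I claim $G_2 \to G_3$ is at least an fppf morphism. Indeed, pulling back the sequence $0 \to G_1 \to G_2 \to G_3 \to 0$ along $G_2 \to G_3$ yields $0 \to G_1 \times_{\OO} G_2 \to G_2 \times_{G_3} G_2 \to G_2 \to 0$ (using that the sequence is exact as fppf sheaves, so the fibre product $G_2 \times_{G_3} G_2$ is a $G_1$-torsor over $G_2$ which is trivial because it has the diagonal section), hence $G_2 \times_{G_3} G_2 \cong G_1 \times_{\OO} G_2$ as schemes over $G_2$. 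Since $G_1$ is flat over $\OO$ and of finite presentation, this projection is flat and of finite presentation, and it is surjective because $G_1 \to \Spec\OO$ is surjective ($G_1$ is flat, hence faithfully flat onto its image, which is open and closed, but $\Spec \OO$ is connected and $G_1$ has a section, the identity, so the image is all of $\Spec\OO$).

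The heart of the argument is then descent: $G_2 \to G_3$ becomes faithfully flat after the base change $G_2 \to G_3$ (because the pulled-back morphism is the projection $G_1 \times_{\OO} G_2 \to G_2$, which is faithfully flat). To conclude that $G_2 \to G_3$ is itself faithfully flat one needs that $G_2 \to G_3$ is an fppf cover; this is exactly what the previous paragraph establishes, so faithfully flat descent for the property ``faithfully flat and of finite presentation'' (\stackcite{02L2} or similar) applies and gives that $G_2 \to G_3$ is faithfully flat. Finally, flatness of $G_2$ over $\OO$ follows by composing: $G_2 \to G_3$ is flat and $G_3 \to \Spec \OO$ is flat by hypothesis, so $G_2 \to \Spec\OO$ is flat.

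The main obstacle, and the step requiring the most care, is the identification $G_2 \times_{G_3} G_2 \cong G_1 \times_{\OO} G_2$: one must be precise about what exactness of $0 \to G_1 \to G_2 \to G_3 \to 0$ as fppf sheaves buys us. It gives that $G_2 \to G_3$ is an epimorphism of fppf sheaves and that $G_1 = \Ker(G_2 \to G_3)$, so the sheaf $G_2 \times_{G_3} G_2$ is a (right) $G_1$-torsor over $G_2$ for the fppf topology; the diagonal $\Delta : G_2 \to G_2 \times_{G_3} G_2$ is a section of the projection to the first factor, which trivialises the torsor and yields the desired isomorphism of $G_2$-schemes. Everything else (surjectivity of $G_1 \to \Spec\OO$ using the identity section, compatibility of flatness with composition, invoking fppf descent) is routine. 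Note that representability of all the relevant fibre products is automatic since we are working with affine schemes throughout.
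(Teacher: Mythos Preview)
Your argument contains a circularity. In the first paragraph you correctly identify $G_2 \times_{G_3} G_2 \cong G_1 \times_{\OO} G_2$ via the diagonal section, and deduce that the projection $G_2 \times_{G_3} G_2 \to G_2$ is fppf. But this is the base change of $G_2 \to G_3$ \emph{along itself}. In the second paragraph you then say ``to conclude that $G_2 \to G_3$ is itself faithfully flat one needs that $G_2 \to G_3$ is an fppf cover; this is exactly what the previous paragraph establishes'' --- it does not. Knowing that a morphism becomes fppf after pulling back along itself tells you nothing in general (consider a closed immersion). You need an \emph{independent} fppf cover of $G_3$ along which to descend.

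The fix is immediate and uses the piece of the hypothesis you did not exploit: the sequence is exact as fppf sheaves, so $G_2 \to G_3$ is an epimorphism of fppf sheaves. Applied to the identity of $G_3$, this produces an fppf cover $U \to G_3$ together with a section $s: U \to G_2 \times_{G_3} U$. The section trivialises the $G_1$-torsor $G_2 \times_{G_3} U$ over $U$, giving $G_2 \times_{G_3} U \cong G_1 \times_{\OO} U$, which is fppf over $U$ since $G_1$ is flat of finite presentation with the identity section forcing surjectivity. Now fppf descent along the genuine cover $U \to G_3$ gives that $G_2 \to G_3$ is fppf, and the rest of your argument goes through. The paper sidesteps this issue by a different route: it observes that $G_1 \times G_2 \rightrightarrows G_2$ is an fppf equivalence relation whose quotient sheaf is $G_3$, and then invokes the SGA3 effectivity criterion \cite[Exp.~IV, Cor.~6.3.3 and D\'ef.~3.4.3]{SGA3_new}, which directly yields that $G_2 \to G_3$ is fppf once the quotient is representable.
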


\begin{proof}
    This is \cite[Expos\'e VIB, Proposition 9.2 (xi)]{SGA3_new}.
\end{proof}

\begin{lem}\label{dimLisdimk}
    If $G$ is a flat  $\OO$-group scheme of finite presentation
    then $\dim G_k = \dim G_L$.
\end{lem}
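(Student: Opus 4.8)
The plan is to reduce the statement to the fact that the relative dimension of the flat, finitely presented morphism $f\colon G\to \Spec\OO$ is constant along connected components, and to use the unit section of $G$ to bridge the generic and special fibres.

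First I would recall that for a morphism $f$ which is flat and locally of finite presentation the function $x\mapsto \dim_x(G_{f(x)})$ on $G$ (where $\dim_x(-)$ denotes local dimension) is locally constant; see \stackcite{0D4H}. Let $e\colon \Spec\OO\to G$ be the unit section. Its image is a closed subscheme isomorphic to $\Spec\OO$, hence connected, so it lies in a single connected component $G^{\circ}$ of $G$. Writing $\eta$ and $s$ for the generic and closed points of $\Spec\OO$, we have $e(\eta)\in G_L$ and $e(s)\in G_k$, both lying in $G^{\circ}$. Since $G^{\circ}$ is connected the relative dimension is constant on it, and as $\dim_x(-)$ depends only on an open neighbourhood we obtain
$$\dim_{e(\eta)}(G_L)=\dim_{e(\eta)}(G^{\circ}\cap G_L)=\dim_{e(s)}(G^{\circ}\cap G_k)=\dim_{e(s)}(G_k).$$
Finally, $G_L$ and $G_k$ are group schemes of finite type over the fields $L$ and $k$, hence equidimensional: after base change to an algebraic closure the reduced subscheme is a smooth group scheme, all of whose connected components are translates of the identity component. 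Therefore $\dim_{e(\eta)}(G_L)=\dim G_L$ and $\dim_{e(s)}(G_k)=\dim G_k$, and the displayed chain gives $\dim G_k=\dim G_L$.

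The one non-routine ingredient is the local constancy of fibre dimension for flat, finitely presented morphisms. If one wished to avoid citing it, I would instead argue with the dimension formula over the universally catenary ring $\OO$: after replacing $G$ by $G_{\red}$ one works component by component, noting that every irreducible component $Z$ of $G$ dominates $\Spec\OO$ by flatness, that $\dim Z=1+\dim Z_L$ by the dimension formula, and that $\dim Z_k=\dim Z-1$ whenever $Z_k\neq\emptyset$ since $\varpi$ is a nonzerodivisor on $Z$; combined with the equidimensionality of $G_L$ (which forces every component of $G$ to have the same relative dimension $d=\dim G_L$) and with $G_k\neq\emptyset$ (it contains the unit), this again yields $\dim G_k=d=\dim G_L$. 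Either way, the only real obstacle is the bookkeeping around connected/irreducible components of $G$ with empty special fibre, and this is precisely what the equidimensionality of the generic fibre rules out.
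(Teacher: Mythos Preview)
Your argument is correct. The paper's proof is simply a citation to \cite[Exposé VI\textsubscript{B}, Corollaire 4.3]{SGA3_new}, which states that for a flat group scheme locally of finite presentation over a base $S$, the function $s\mapsto\dim G_s$ is locally constant on $S$. Your first argument is precisely an unpacking of the proof of that corollary in the case $S=\Spec\OO$: local constancy of $x\mapsto\dim_x G_{f(x)}$ for flat locally finitely presented morphisms (\stackcite{0D4H}), transported across the fibres via the connected image of the unit section, combined with equidimensionality of group schemes over a field. So the approach is the same; you have supplied the details the paper outsources to SGA3.

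A minor remark on the alternative argument: passing to $G_{\red}$ is fine (flatness is preserved since $\OO$ is a DVR and no component lies in the special fibre), and the dimension bookkeeping you sketch works over the universally catenary base $\OO$; but the first argument is cleaner and already complete.
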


\begin{proof}
    See \cite[Expos\'e VIB, Corollaire 4.3]{SGA3_new}.
\end{proof}

\begin{defi}\label{defi_grg}
    A \emph{generalised reductive group scheme} over a scheme $S$ is a smooth affine $S$-group scheme $G$, such that the geometric fibres of $G^0$ are reductive and $G/G^0\rightarrow S$ is finite.
\end{defi}

\begin{remar}\label{rem_fin_etale}
    It follows from \cite[Proposition 3.1.3]{bcnrd}, that for a generalised reductive $\OO$-group scheme $G$ the quotient $G/G^0$ is \'etale. Sean Cotner has shown in \cite{cotner} using the results of Alper that a smooth affine $S$-group scheme $G$ is generalised reductive if and only if it is geometrically 
    reductive in the sense of \cite[Definition 9.1.1]{alper}. In particular, this holds over $S=\Spec \OO$. 
\end{remar}

\begin{lem}\label{gen_red_field} Let $G$ be an affine group scheme of finite type over a perfect field $\kappa$. If $G$ is reduced 
then $G$ is smooth over $\kappa$. Moreover, if we additionally assume that the 
unipotent radical of $G$ is trivial then $G$ is generalised reductive. 
\end{lem}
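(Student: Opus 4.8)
The plan is to treat the two assertions separately. For the first, I would invoke the standard criterion that a group scheme of finite type over a field is smooth if and only if it is geometrically reduced (one sees this by translating the open dense smooth locus of $G_{\kappabar}$ around by the group action). The point is then to upgrade "reduced" to "geometrically reduced": since $\kappa$ is perfect, the extension $\kappabar/\kappa$ is separable, so reducedness of $G$ is preserved under the base change $G \rightsquigarrow G_{\kappabar}$; hence $G$ is geometrically reduced, thus smooth over $\kappa$. I would cite SGA3 (Exp.\ VI$_{\mathrm A}$/VI$_{\mathrm B}$) or an equivalent reference for the smoothness criterion.

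For the second assertion I would argue as follows. First, $G$ being smooth of finite type over $\kappa$ forces $G/G^0$ to be smooth of finite type; since moreover a group scheme of finite type over a field has only finitely many connected components, $G/G^0$ is finite, hence finite étale over $\kappa$. This already gives one half of the definition of generalised reductive. It remains to check that the geometric fibres of $G^0$ are reductive, i.e.\ that the unipotent radical of $(G^0)_{\kappabar} = (G_{\kappabar})^0$ is trivial. Here I would use that $G^0$ is smooth, connected and affine over the \emph{perfect} field $\kappa$, so the geometric unipotent radical $R_u\big((G^0)_{\kappabar}\big)$ descends to a smooth connected normal unipotent $\kappa$-subgroup $R_u(G^0) \leq G^0$. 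Being characteristic in $G^0$, it is normal in $G$; thus $R_u(G^0)$ is a smooth connected normal unipotent subgroup of $G$, hence contained in the unipotent radical of $G$, which is trivial by hypothesis. Therefore $R_u\big((G^0)_{\kappabar}\big) = 1$, so $(G^0)_{\kappabar}$ is reductive and $G$ is generalised reductive.

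The only delicate point — and the step I would flag as the main obstacle — is the interplay between the unipotent radical and the field of definition. One genuinely needs perfectness of $\kappa$ to know that $R_u\big((G^0)_{\kappabar}\big)$ is defined over $\kappa$ (this fails over imperfect fields, as the theory of pseudo-reductive groups shows), and one should also note that the phrase "unipotent radical of $G$" is unambiguous for a possibly disconnected $G$: any smooth connected normal unipotent subgroup of $G$ lies in $G^0$, and conversely $R_u(G^0)$ is normal in $G$, so in fact $R_u(G) = R_u(G^0)$, and the hypothesis of the lemma is exactly $R_u(G^0) = 1$.
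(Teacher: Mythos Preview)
Your proof is correct and follows essentially the same approach as the paper. The paper's argument is terser: it cites Stacks Project tags 047N and 047P for the first assertion (reduced over perfect $\Rightarrow$ geometrically reduced $\Rightarrow$ smooth for group schemes), and for the second it invokes \cite[Proposition 1.1.9]{CGP} to pass from trivial unipotent radical over $\kappa$ to trivial unipotent radical over any extension $K$, then cites \cite[Proposition 3.1.3]{bcnrd} for finiteness of $G/G^0$. Your more explicit unpacking of the descent of $R_u((G^0)_{\kappabar})$ and the identification $R_u(G)=R_u(G^0)$ is exactly the content behind the CGP citation.
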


\begin{proof} The first assertion follows from \cite[\href{https://stacks.math.columbia.edu/tag/047N}{Tag 047N}, \href{https://stacks.math.columbia.edu/tag/047P}{Tag 047P}]{stacks-project}. Since $\kappa$ is perfect, if the unipotent radical of $G$ is trivial then also the unipotent radical of $G_{K}$ is trivial, for any field extension $K$ of $\kappa$ by \cite[Proposition 1.1.9]{CGP}. Thus $G^0$ is reductive and 
$G/G^0$ is finite by \cite[Proposition 3.1.3]{bcnrd}.
\end{proof}

\begin{prop}\label{O_prime}
    Let $G$ be a generalised reductive $\OO$-group scheme.
    Then there exists a finite unramified extension $L'/L$ with ring of integers $\OO'$, such that the following hold:
    \begin{enumerate}
        \item $(G/G^0)_{\OO'} = G_{\OO'}/G_{\OO'}^0$ is a constant group scheme;
        \item $G(\OO')\rightarrow (G/G^0)(\OO')$ is surjective;
        \item $G^0_{\OO'}$ is split.
    \end{enumerate}
\end{prop}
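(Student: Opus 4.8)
The plan is to establish the three assertions in the order (1), (3), (2), the last of which will come almost for free once (1) holds. I will use repeatedly that $\OO$ is a complete, hence Henselian, discrete valuation ring with finite residue field $k$; that the finite unramified extensions $L'/L$ correspond bijectively to the finite extensions $k'/k$; and that the strict Henselisation $\OO^{\mathrm{sh}}$ of $\OO$ is the filtered union $\bigcup_{L'}\OO'$ and has residue field $\overline{k}$. For (1): by \Cref{rem_fin_etale} the quotient $G/G^0$ is finite étale over $\OO$, hence corresponds, under the equivalence between finite étale $\OO$-schemes and finite continuous $\Gal(\overline{k}/k)$-sets, to a finite set on which $\Gal(\overline{k}/k)\cong\widehat{\ZZ}$ acts through a finite quotient; over the finite unramified extension $L'$ with ring of integers $\OO'$ that trivialises this action, $G/G^0$ becomes a constant group scheme. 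Since $G^0$ is the relative identity component of $G$, i.e.\ the open and closed normal subgroup scheme with connected fibres, and connectedness of group schemes over a field is preserved under field extension, $(G^0)_{\OO'}$ is still the relative identity component of $G_{\OO'}$, so $(G/G^0)_{\OO'}=G_{\OO'}/(G^0)_{\OO'}$; this is (1).

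For (3): $G^0$ is reductive over $\OO$, and I would enlarge $\OO'$ twice. First, the scheme of maximal tori of $G^0$ is smooth and surjective over $\OO$, so it acquires a section over $\OO^{\mathrm{sh}}$, and reducing modulo the maximal ideal and then lifting by smoothness over a Henselian base shows it has a section over some finite unramified extension; thus $G^0_{\OO'}$ admits a maximal torus $T$. Next, applying the argument of (1) to the cocharacter lattice of $T$ shows that $T$ splits after a further finite unramified extension. Once $T$ is split the root datum of $(G^0_{\OO'},T)$ is constant, and the only remaining obstruction, namely to the existence of a pinning, lies in $H^1_{\et}(\OO',\Gm)=\operatorname{Pic}(\OO')=0$; hence $G^0_{\OO'}$ is split. (Equivalently, one may invoke that reductive group schemes over the strictly Henselian $\OO^{\mathrm{sh}}$ are split and spread this out to a finite unramified extension.)

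For (2): fix $\OO'$ as produced above, so that $(G/G^0)_{\OO'}$ is the constant group scheme on a finite group $\Pi$ and $(G/G^0)(\OO')=\Pi$. Given $\gamma\in\Pi$ with corresponding section $c_\gamma\colon\Spec\OO'\to(G/G^0)_{\OO'}$, the fibre $G_{c_\gamma}:=G_{\OO'}\times_{(G/G^0)_{\OO'},\,c_\gamma}\Spec\OO'$ is smooth over $\OO'$ — because $G\to G/G^0$ is smooth, $G$ being $\OO$-smooth and $G/G^0$ being $\OO$-étale — and is a torsor under $(G^0)_{\OO'}$ acting by left translation. Its special fibre is a torsor under the connected $k'$-group $(G^0)_{k'}$, hence trivial by Lang's theorem, so $G_{c_\gamma}(k')\neq\varnothing$; as $G_{c_\gamma}$ is smooth over the Henselian $\OO'$, this $k'$-point lifts to an $\OO'$-point, giving an element of $G(\OO')$ over $\gamma$. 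Hence $G(\OO')\to(G/G^0)(\OO')$ is surjective, which is (2).

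The steps I expect to need the most care are the base-change identity $(G_{\OO'})^0=(G^0)_{\OO'}$ underlying (1) and (2), and the spreading-out of splitness in (3) — in particular pinning down ``split'' as a finitely presented condition and citing the correct form of the relevant SGA3 statements. With those in place, (2) reduces to Lang's theorem plus Hensel's lemma, and the whole proof is just a sequence of finite unramified base changes followed by taking a common refinement.
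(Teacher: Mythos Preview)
Your proof is correct and follows essentially the same strategy as the paper: trivialise the finite \'etale $G/G^0$ over an unramified extension for (1), lift from residue-field points via smoothness and Henselianity for (2), and split $G^0$ after a further unramified extension for (3). The only substantive difference is in (2): the paper simply enlarges $k'$ further until $G(k')\to(G/G^0)(k')$ becomes surjective (which is automatic over $\overline{k}$ and hence over some finite $k'$) and then lifts by smoothness over the complete $\OO'$, whereas you invoke Lang's theorem on the $G^0_{k'}$-torsor to show that no further enlargement of $k'$ is needed once $G/G^0$ is constant. Your argument is thus slightly sharper and more self-contained; the paper's is shorter but allows one more harmless extension. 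For (3), the paper just cites \cite[Lemma 5.1.3]{bcnrd}, which packages exactly the maximal-torus-plus-splitting argument you outline.
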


\begin{proof}
    By definition $G/G^0$ is finite \'etale.
    Using \stackcite{0BND} and the fact that the \'etale fundamental group of $\OO$ is the Galois group of the maximal unramified extension of $L$ we find a finite unramified extension $L'$, such that $(G/G^0)_{\OO'}$ is constant. Since the extension $\OO\rightarrow \OO'$ is flat the sequence
    of group schemes $0\rightarrow G^0\rightarrow G \rightarrow G/G^0\rightarrow 0$ remains exact after base
    change to $\OO'$. 

    Let $k'$ be the residue field of $\OO'$. After further enlarging $k'$ we may ensure that 
    $G(k')\rightarrow (G/G^0)(k')$ is surjective. Since $G$ is smooth and  $\OO'$ is complete 
    the map $G(\OO')\rightarrow G(k')$ is surjective. Since $G/G^0$ is a constant group scheme
    $(G/G^0)(k')=(G/G^0)(\OO')$ and hence we obtain part (2). 

Part (3) follows by the same argument as in part (1) from \cite[Lemma 5.1.3]{bcnrd}.
\end{proof}

\begin{prop}\label{dimkisdimL} Let $G$ be a generalised reductive $\OO$-group scheme. Then $Z(G) \cap G^0$ is flat over $\OO$. In particular, $\dim Z(G)_k = \dim Z(G)_L$. 
\end{prop}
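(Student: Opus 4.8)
\emph{Overview.} The plan is to identify $Z(G)\cap G^0$ with a fixed-point scheme of a finite étale group acting on a group of multiplicative type, and then to compute it explicitly after an unramified base change, which makes flatness transparent.

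\emph{Step 1: reduction to a fixed-point scheme.} Since $G^0$ is a reductive $\OO$-group scheme, its centre $Z(G^0)$ (defined as in \Cref{ZGexists}) is a group scheme of multiplicative type over $\OO$ by the theory of reductive group schemes (see \cite{bcnrd}, or \cite{SGA3Tome2}); in particular it is flat and of finite presentation. As $G^0$ is characteristic in $G$, the subgroup $Z(G^0)$ is normal in $G$, so $G$ acts on $Z(G^0)$ by conjugation; since $Z(G^0)$ is central in $G^0$, this action is trivial on $G^0$ and hence factors through the finite étale group scheme $\Pi:=G/G^0$. A functor-of-points computation then shows $Z(G)\cap G^0 = Z(G^0)^{\Pi}$, the closed subscheme of $G^0$ of $\Pi$-fixed points of $Z(G^0)$: an $A$-point of $G^0$ commutes with all $A'$-points of $G^0$ exactly when it lies in $Z(G^0)(A)$, and a point of $Z(G^0)$ commutes with all of $G$ exactly when it is $\Pi$-fixed, using that conjugation by $g\in G(A')$ on $Z(G^0)$ depends only on the image of $g$ in $\Pi(A')$.

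\emph{Step 2: flatness after an unramified base change.} Flatness over the DVR $\OO$ may be checked after the faithfully flat base change $\OO\to\OO'$ of \Cref{O_prime}, after which $G^0$ is split and $\Pi$ is a constant finite group; both the formation of $Z(-)$ (\Cref{base_change_Z}) and the formation of the fixed-point scheme commute with this base change. Over $\OO'$ we have $Z(G^0_{\OO'})=D(M)=\Spec\OO'[M]$ for a finitely generated abelian group $M$ carrying an action of $\Pi$, and $D(M)^{\Pi}=\Spec\OO'[M_{\Pi}]=D(M_{\Pi})$, where $M_{\Pi}$ denotes the $\Pi$-coinvariants of $M$. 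The ring $\OO'[M_{\Pi}]$ is free as an $\OO'$-module, with basis the elements of $M_{\Pi}$, hence flat; note this works even when $M_{\Pi}$ has $p$-torsion, in which case $Z(G)\cap G^0$ need not be smooth. Descending flatness along $\OO\to\OO'$, we conclude that $Z(G)\cap G^0$ is a flat $\OO$-group scheme of finite presentation.

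\emph{Step 3: the dimension statement.} Applying \Cref{dimLisdimk} to $Z(G)\cap G^0$ gives $\dim(Z(G)\cap G^0)_k=\dim(Z(G)\cap G^0)_L$. Finally, for $\kappa\in\{k,L\}$ the group scheme $(Z(G)\cap G^0)_\kappa=Z(G)_\kappa\cap (G^0)_\kappa$ is an open and closed subgroup scheme of $Z(G)_\kappa$ containing the identity component, hence a union of connected components of $Z(G)_\kappa$ including the neutral one, so it has the same dimension as $Z(G)_\kappa$; combining the two observations yields $\dim Z(G)_k=\dim Z(G)_L$.

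I expect the step needing the most care to be Step 1: checking at the level of group schemes that conjugation of $G$ on $Z(G^0)$ genuinely factors through an action of $\Pi$ and that the resulting fixed-point functor is represented by the evident closed subscheme, together with invoking the standard but nontrivial fact that the centre of a reductive group scheme is of multiplicative type. After that, the argument is the explicit diagonalisable computation plus faithfully flat descent of flatness.
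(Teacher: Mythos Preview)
Your proof is correct and takes essentially the same approach as the paper: identify $Z(G)\cap G^0$ with $Z(G^0)^{\Pi}$, compute this as $\Spec\OO'[M_{\Pi}]$ after an unramified extension making $\Pi$ constant and $Z(G^0)$ diagonalisable, and deduce flatness. The only differences are cosmetic: the paper makes the identification $Z(G)\cap G^0=Z(G^0)^{\Delta}$ after first choosing a set-theoretic section $s:\Delta\to G(\OO')$ (so that $G(A)$ is generated by $G^0(A)$ and $s(\Delta)$), whereas you argue more abstractly that the conjugation action descends to $\Pi$; and for the final dimension comparison the paper uses the short exact sequence $1\to Z(G)\cap G^0\to Z(G)\to H\to 1$ with $H$ finite, while your open-and-closed argument achieves the same end.
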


\begin{proof}
    By \Cref{ZGexists} we see that $Z(G)$ exists as a closed subgroup scheme of $G$.
    For flatness and the equality of dimensions, by \Cref{O_prime} we may extend $\OO$, such that $G/G^0$ is constant and the map $G \to G/G^0$ has a section $s : G/G^0 \to G$ as $\OO$-schemes.
    We write $\underline\Delta = G/G^0$ for some finite group $\Delta$.
    The $s$ is determined by a map $\Delta \to G(\OO)$ and for every $\OO$-algebra $A$ the map $G(A) \to (G/G^0)(A)$ is surjective.

    By \cite[Theorem 3.3.4]{bcnrd} $Z(G^0)$ is diagonalizable, i.e. $Z(G^0) = \Spec(\OO[M])$ for some finitely generated abelian group $M$. We have an action of $\Delta$ by conjugation on $Z(G^0)$. 
    Let $Z(G^0)^{\Delta}$ be the maximal closed subgroup scheme of $Z(G^0)$ on which the action of $\Delta$ is trivial.
    The kernel of $\OO[Z(G^0)] \to \OO[Z(G^0)^{\Delta}]$ is generated by $x - \delta x$ for $x \in M$ and $\delta \in \Delta$.
    Hence, $\OO[Z(G^0)^{\Delta}] = \OO[M_{\Delta}]$.
    
    The group $Z(G^0)^{\Delta}$ is equal to $Z(G) \cap G^0$, since for every $\OO$-algebra $A$, the group $G(A)$ is generated by $G^0(A)$ and $s((G/G^0)(A))$.
    In particular, $Z(G) \cap G^0$ is flat over $\OO$. We have a short exact sequence
    \begin{equation}\label{no_name_central}
     1 \to Z(G) \cap G^0 \to Z(G) \to H \to 1 
     \end{equation}
    where $H$ is the image of $Z(G)$ in $G/G^0$, in particular $H$ is finite.  It follows from \eqref{no_name_central} that special (resp. generic) 
    fibres of $Z(G) \cap G^0$ and $Z(G)$ will have the same 
    neutral component, and hence the same dimension. Since
     $(Z(G) \cap G^0)_k = \dim (Z(G) \cap G^0)_L$ by \Cref{dimLisdimk},  we deduce that 
    $\dim Z(G)_k = \dim Z(G)_L$.
    \end{proof}

\begin{lem}\label{smooth_projection}
    Let $G$ be a generalised reductive group scheme over $\OO$. Then the natural map $\pi : G \to G/G'$ is smooth.
\end{lem}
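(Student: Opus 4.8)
\emph{Proof plan.} The plan is to identify $\pi$ with an fppf torsor under $G'$ and then invoke the fppf-local nature of smoothness. First I would collect the relevant properties of $G'=\mathcal D(G^0)$, the derived subgroup scheme of $G^0$. Since $G^0$ is a reductive $\OO$-group scheme, its derived subgroup scheme $G'$ is a semisimple (in particular smooth and affine) $\OO$-group scheme \cite{bcnrd}; moreover $G'$ is characteristic in $G^0$ (being preserved by every automorphism of $G^0_A$, in particular by conjugation by elements of $G(A)$), and $G^0$ is normal in $G$, so $G'$ is a flat closed normal subgroup scheme of $G$. Consequently, as recalled at the start of this section, the quotient sheaf $G/G'$ is representable by an $\OO$-scheme \cite{Anantharaman}; since its neutral component $G^0/G'$ is a torus, $G/G'$ is even generalised reductive and hence affine \cite{alper}, though only representability will be used below.

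Next I would use the standard fact that for a flat closed normal subgroup scheme $N$ of a flat, finitely presented $\OO$-group scheme $G$ with representable quotient, the quotient map $q\colon G\to G/N$ is faithfully flat of finite presentation and realises $G$ as an fppf torsor under $N$ over $G/N$: concretely, the morphism $G\times_{\OO} N\to G\times_{G/N}G$, $(g,n)\mapsto (g,gn)$, is an isomorphism (see \cite{SGA3_new}). Applying this with $N=G'$, the morphism $\pi$ is flat and of finite presentation, and for every point $x\colon \Spec K\to G/G'$ the fibre $\pi^{-1}(x)$ is a torsor under $G'_K$, hence an fppf form of the $K$-smooth scheme $G'_K$; since smoothness is fppf-local on the base, $\pi^{-1}(x)$ is smooth over $K$. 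Therefore $\pi$ is a flat, finitely presented morphism all of whose fibres are smooth, and the fibrewise criterion for smoothness \cite{stacks-project} shows $\pi$ is smooth. Equivalently, one may trivialise the torsor after an fppf base change $U\to G/G'$, over which $\pi$ becomes the projection $U\times_{\OO}G'\to U$ --- smooth because $G'$ is $\OO$-smooth --- and then descend smoothness along the fppf cover $U\to G/G'$.

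The content here is not a genuine obstacle but rather a matter of care over $\OO$ with no hypothesis on $p$: that $\mathcal D(G^0)$ is a smooth (semisimple) group scheme, that forming the quotient by $G'$ commutes with base change so that the geometric fibres of $\pi$ really are $G'$-torsors, and that faithful flatness, finite presentation and smoothness are all fppf-local on the base. Each of these is standard and contained in the references already in use.
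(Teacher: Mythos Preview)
Your proposal is correct and follows essentially the same route as the paper: both verify that $G'$ is semisimple (hence smooth) and normal in $G$, that $\pi$ is faithfully flat of finite presentation, and that its fibres are smooth, then conclude via the fibrewise smoothness criterion. The only cosmetic difference is in identifying the fibres---the paper writes ``by translation in $(G/G')_\kappa$ this fibre is isomorphic to $G'_\kappa$'' whereas you cast this as the fibre being a $G'_K$-torsor and hence an fppf form of $G'_K$; the latter phrasing is slightly more explicit about why the identification works over an arbitrary residue field, but both are the same argument.
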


\begin{proof}
    By \cite[Theorem 5.3.1]{bcnrd} $G'$ is a semisimple $\OO$-group scheme and in particular smooth.
    Since $G'$ is the fppf sheafification of the commutator subgroup functor $A \mapsto [G^0(A), G^0(A)]$, it is preserved under any automorphism of $G^0$.
    So $G'$ is a closed normal subgroup scheme of $G$.
    By \Cref{extofflatisflat} the map $\pi$ is faithfully flat and of finite presentation.
    By \cite[Expos\'e VIB, Proposition 9.2 (vii)]{SGA3_new} smoothness of $G'$ implies smoothness of $\pi$.
\end{proof}

\begin{lem}\label{Gprimesc_action} Let $G$ be a generalised reductive group scheme over $\OO$. Let $G'_{\sic}\rightarrow G'$ be the simply connected central cover of $G'$. Then there is a natural action of $G$ on $\Lie(G'_{\sic})$ such that the map $\Lie(G'_{\sic})\rightarrow \Lie G'$ is $G$-equivariant.
\end{lem}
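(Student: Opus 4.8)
The plan is to lift the conjugation action of $G$ on $G'$ to the simply connected central cover $G'_{\sic}$ by exploiting the functoriality of the latter, and then to differentiate.

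First I would recall that, as noted in the proof of \Cref{smooth_projection}, $G'$ is a semisimple closed normal subgroup scheme of $G$ (using \cite[Theorem 5.3.1]{bcnrd}), so $G$ acts on $G'$ by conjugation: for every $\OO$-algebra $A$ and every $g\in G(A)$ one gets an automorphism $c_g$ of the $A$-group scheme $G'_A$, and $g\mapsto c_g$ is a homomorphism $G(A)\to\Aut_{A\text{-gp}}(G'_A)$ functorial in $A$.

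Next I would promote this to an action on $G'_{\sic}$. The central isogeny $p\colon G'_{\sic}\to G'$ has kernel $\pi_1(G')$ a finite commutative group scheme contained in $Z(G'_{\sic})$, and its formation commutes with base change, so $(G'_{\sic})_A=(G'_A)_{\sic}$ for every $\OO$-algebra $A$. For $g\in G(A)$, functoriality of the simply connected central cover with respect to isomorphisms produces an automorphism $\widetilde c_g$ of $(G'_{\sic})_A$ lifting $c_g$, i.e.\ with $p_A\circ\widetilde c_g=c_g\circ p_A$; and such a lift is \emph{unique}, since the difference of two lifts would be an automorphism of $(G'_{\sic})_A$ inducing the identity on $G'_A$, hence of the form $x\mapsto x\cdot\phi(x)$ for a group homomorphism $\phi\colon(G'_{\sic})_A\to\pi_1(G')_A$, and any such $\phi$ is trivial because $(G'_{\sic})_A$ is its own derived subgroup while $\pi_1(G')$ is commutative. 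Uniqueness then forces $\widetilde c_{gh}=\widetilde c_g\widetilde c_h$ and, together with the base-change identity $(G'_{\sic})_A\otimes_AB=(G'_{\sic})_B$, functoriality in $A$; so $(g,x)\mapsto\widetilde c_g(x)$ defines an action of the group scheme $G$ on the scheme $G'_{\sic}$, fixing the identity section, for which $p$ is $G$-equivariant. I expect this uniqueness/descent step to be the only real point requiring care, as it is exactly what makes the fibrewise lifts assemble into an honest action of the group functor $G$.

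Finally I would apply $\Lie(-)$. Since $G'_{\sic}$ and $G'$ are smooth, the tangent space at the identity commutes with base change, and an action of $G$ fixing the identity section of a smooth $\OO$-group scheme differentiates to a linear representation of $G$ on that Lie algebra, with the homomorphism property of $G\to\GL(\Lie G'_{\sic})$ coming from $\widetilde c_{gh}=\widetilde c_g\widetilde c_h$; applied to $G'$ this recovers the restriction of $\Ad\colon G\to\GL(\Lie G)$ to the $\Ad(G)$-stable submodule $\Lie G'$. As $p$ is $G$-equivariant, its differential $\Lie(p)\colon\Lie(G'_{\sic})\to\Lie(G')$ is $G$-equivariant, which is the assertion.
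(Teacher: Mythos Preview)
Your proposal is correct and follows essentially the same approach as the paper: lift the conjugation action of $G$ on $G'$ to $G'_{\sic}$ via the unique lifting property of the simply connected cover, then pass to Lie algebras. The only difference is that the paper invokes \cite[Exercise 6.5.2 (iii)]{bcnrd} directly for the unique lifting of automorphisms of $G'_A$ to $(G'_{\sic})_A$, whereas you supply the explicit argument (the discrepancy of two lifts is a homomorphism into the central $\pi_1(G')$, which vanishes since $G'_{\sic}$ is perfect).
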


\begin{proof}
    Existence and uniqueness of the simply connected central cover is ensured by \cite[Exercise 6.5.2 (i)]{bcnrd}.
    For any $\OO$-algebra $A$, the map $(G'_{\sic})_A \to (G')_A$ is a simply connected central cover of $(G')_A$.
    By \cite[Exercise 6.5.2 (iii)]{bcnrd} an automorphism of $(G')_A$ lifts to a unique automorphism of $(G'_{\sic})_A$, so the conjugation action of $G(A)$ on $(G')_A$ by $A$-group scheme automorphisms extends to an action of $G(A)$ on $(G'_{\sic})_A$. This action is functorial in $A$ and the map $(G'_{\sic})_A \to (G')_A$ is $G(A)$-equivariant. So $G(A)$ acts functorially on $\Lie(G'_{\sic}) \otimes A$, which gives $\Lie(G'_{\sic})$ the structure of a rational $G$-module. This action is clearly compatible with the action of $G$ on $\Lie G'$.
\end{proof}

\subsection{R-parabolics}\label{sec_R_parabolic}

We recall some facts about parabolic subgroups of a generalised reductive $\OO$-group scheme $G$. The main issue is that we do not assume that $G$ is connected. The letter R stands for Richardson. Let $A$ be an $\OO$-algebra
and let $\lambda: \mathbb{G}_{m, A} \rightarrow G_A$ be a cocharacter. We then define $P_{\lambda}$ (resp. $U_{\lambda}$) to be the closed $A$-group scheme of $G_A$ representing the functor
of points $g$ of $G$ such that the limit $\lim_{t\rightarrow 0} \lambda(t)g \lambda(t)^{-1}$ exists (resp. exists and is equal to the identity), see \cite[Lemmas 2.1.4, 2.1.5]{CGP}. Let $L_{\lambda}= P_{\lambda}\cap P_{-\lambda}$, 
where $-\lambda$ is the cocharacter defined by $-\lambda(t):= \lambda(1/t)= \lambda(t)^{-1}$. We call subgroup schemes of $G$ the form $P_{\lambda}$ 
\emph{R-parabolic} subgroup schemes and $L_{\lambda}$ \emph{R-Levi} subgroup 
schemes of $P_{\lambda}$. We will only consider R-parabolic and R-Levi subgroup schemes over a connected base. 

\begin{lem}\label{list} The following hold
\begin{enumerate} 
\item $L_{\lambda}$ is the scheme theoretic centraliser of $\lambda$;
\item $U_{\lambda}$ is a closed normal subgroup scheme of $P_{\lambda}$; 
\item the multiplication map $L_{\lambda} \ltimes U_{\lambda} \rightarrow P_{\lambda}$ is an isomorphism of $R$-groups;
\item $L_{\lambda}$, $U_{\lambda}$, $P_{\lambda}$ are smooth over $\OO$.
\end{enumerate} 
\end{lem}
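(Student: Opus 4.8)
The plan is to reduce everything to the connected case by exploiting the hypothesis that the base is connected together with the fact that $\lambda$ factors through $G^0$ (a cocharacter $\mathbb{G}_{m,A}\to G_A$ has connected source, so its image lands in the fibrewise-neutral component, i.e.\ in $G^0_A$). Thus $P_\lambda\cap G^0$, $U_\lambda$, and $L_\lambda\cap G^0$ are the usual dynamical subgroup schemes attached to $\lambda$ inside the connected reductive group $G^0_A$, and their basic properties are standard; I will then bootstrap from $G^0$ to $G$ using that $P_\lambda$ and $L_\lambda$ contain $Z_{G}(\lambda)$, which surjects onto an open-and-closed piece of the component group. Concretely, the functor-of-points descriptions in \cite[Lemmas 2.1.4, 2.1.5]{CGP} (which are stated for arbitrary affine group schemes of finite type, not just connected ones) already give that $P_\lambda$, $U_\lambda$ are representable by closed subgroup schemes and that $U_\lambda\trianglelefteq P_\lambda$; I would simply cite these for (2) and for the existence statements.

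For \textbf{(1)}, that $L_\lambda = P_\lambda\cap P_{-\lambda}$ is the scheme-theoretic centraliser $Z_{G_A}(\lambda)$: on $T$-points, $g$ lies in $L_\lambda$ iff both $\lim_{t\to0}\lambda(t)g\lambda(t)^{-1}$ and $\lim_{t\to0}\lambda(t)^{-1}g\lambda(t)$ exist, and one checks (by composing the two maps $\mathbb{A}^1_T\to G$ and using that a morphism $\mathbb{P}^1_T\to G$ into an affine scheme is constant) that this forces $\lambda(t)g\lambda(t)^{-1}=g$ for all $t$, i.e.\ $g\in Z_{G_A}(\lambda)(T)$; the reverse inclusion is immediate. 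This argument is insensitive to connectedness. For \textbf{(3)}, the multiplication map $L_\lambda\ltimes U_\lambda\to P_\lambda$: one reduces to the connected group $G^0$ by the component-group bookkeeping above — the component groups of $P_\lambda$ and of $L_\lambda$ agree (both equal the stabiliser in $G/G^0$ of the relevant data, via $Z_G(\lambda)\hookrightarrow P_\lambda$), $U_\lambda\subseteq G^0$, and the map is an isomorphism after intersecting with $G^0$ by the standard Levi decomposition in connected reductive group schemes (e.g.\ \cite[Ch.~XXVI]{SGA3Tome2} or \cite[\S5.2]{bcnrd}). A clean way to phrase it: the map is a monomorphism (clear on points) which is an isomorphism on $G^0$-parts and induces an isomorphism on component groups, hence an isomorphism.

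For \textbf{(4)}, smoothness over $\OO$: by \stackcite{01V8} (or the fibrewise criterion) it suffices that these group schemes are flat and of finite presentation over $\OO$ with smooth fibres. Finite presentation is clear. Smoothness of the fibres $(P_\lambda)_\kappa$, $(U_\lambda)_\kappa$, $(L_\lambda)_\kappa$ over a field $\kappa$: again $U_\lambda$ and $L_\lambda^0$, $P_\lambda^0$ sit inside the smooth connected group $(G^0)_\kappa$ as a unipotent radical / Levi / parabolic of a dynamical type, all of which are smooth, and passing to $P_\lambda$, $L_\lambda$ only adds finitely many smooth components (translates by elements of $Z_G(\lambda)(\kappabar)$, whose images exhaust the component group). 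Flatness over $\OO$ then follows from the local criterion once the fibre dimensions are constant; alternatively, and more cleanly, one deduces smoothness of $P_\lambda$ and $U_\lambda$ directly from \cite[Lemma 2.1.5]{CGP} applied over $\OO$ (which already records smoothness in the connected-fibre case and whose proof via the ``big cell'' / attracting-scheme formalism of \cite{CGP} goes through verbatim for $G$ generalised reductive), and then $L_\lambda = P_\lambda\cap P_{-\lambda}$ is smooth because, by (3), it is a retract of the smooth scheme $P_\lambda$.

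The main obstacle is \textbf{(3)} in the non-connected setting — more precisely, making rigorous the claim that $P_\lambda$ and $L_\lambda$ have ``the same component group'' and that this matches up the decomposition. The cleanest route is to observe $Z_G(\lambda)=L_\lambda\subseteq P_\lambda$ and that $U_\lambda\subseteq G^0$ is infinitesimally and fibrewise connected, so that $L_\lambda \to P_\lambda/U_\lambda$ is an isomorphism (it is a monomorphism, surjective because $P_\lambda = U_\lambda\cdot(P_\lambda\cap G^0)\cdot Z_G(\lambda)$ and $P_\lambda\cap G^0 = U_\lambda\rtimes(L_\lambda\cap G^0)$ by the connected case); combined with $U_\lambda\trianglelefteq P_\lambda$ from (2), the semidirect product decomposition follows. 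Everything else is a reduction to \cite{CGP} and \cite{bcnrd}, with no genuinely new input needed beyond care with components.
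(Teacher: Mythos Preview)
Your argument is correct, but the paper takes a much shorter route: it simply cites \cite[Lemma~2.1.5 and Proposition~2.1.8(2)]{CGP} for parts (1)--(3) and \cite[Theorem~4.1.7(4)]{bcnrd} for part~(4). The point you are underusing is that the dynamic formalism in \cite[\S2.1]{CGP} is developed for an \emph{arbitrary} affine group scheme of finite type over a base (smooth affine for Proposition~2.1.8), with no connectedness hypothesis on $G$ whatsoever. In particular the semidirect product decomposition $P_\lambda \cong Z_G(\lambda)\ltimes U_\lambda$ is already in \cite[Lemma~2.1.5]{CGP}: for any $g\in P_\lambda$ the limit $z:=\lim_{t\to 0}\lambda(t)g\lambda(t)^{-1}$ lies in $Z_G(\lambda)$ and $gz^{-1}\in U_\lambda$, yielding a group-scheme retraction $P_\lambda\to Z_G(\lambda)$ with kernel $U_\lambda$, and this is entirely indifferent to components. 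So your reduction to $G^0$ and the $\pi_0$-bookkeeping for (3) is unnecessary --- indeed, the step where you assert that $L_\lambda$ and $P_\lambda$ have the same image in $G/G^0$ (``both equal the stabiliser in $G/G^0$ of the relevant data'') is most cleanly justified by precisely this limit argument, which makes the detour circular. Your component analysis is not wasted, though: it is essentially the content of the next lemma in the paper (Lemma~\ref{para}).
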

\begin{proof} The assertions follow from Lemma 2.1.5 and Proposition 2.1.8(2) of \cite{CGP}. The last assertion is given by 
Theorem 4.1.7 (4) in \cite{bcnrd}.
\end{proof}

\begin{lem}\label{para} Let $P$ be an R-parabolic subgroup of $G$ with R-Levi subgroup $L$, both defined over $\OO$. 
Then $P\cap G^0$ (resp. $L\cap G^0)$ is the neutral component of $P$ (resp. $L$) and is a parabolic  (resp. Levi) 
subgroup of $G^0$. In particular, $L$ is a generalised reductive $\OO$-group scheme. Moreover, there are only finitely 
many R-parabolic subgroups $Q$ of $G$ such that $Q^0=P^0$. 
\end{lem}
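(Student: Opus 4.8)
The plan is to reduce everything to the statements about connected reductive groups and then track the component group. First I would establish the statements about $P \cap G^0$ and $L \cap G^0$. Since $\lambda$ factors through $G^0$ (the image of a connected group is connected, and $\Gm$ is connected), the functor defining $P_\lambda$ restricts on $G^0$ to the usual attractor/parabolic construction, so $P_\lambda \cap G^0 = (P_\lambda)_{G^0}$ in the notation of \cite{CGP}, and likewise $L_\lambda \cap G^0$ is the centraliser of $\lambda$ in $G^0$. By the classical theory of \cite[Ch.~2]{CGP} (or \cite[Thm.~4.1.7]{bcnrd}) $(P_\lambda)_{G^0}$ is a parabolic subgroup scheme of $G^0$ with Levi $(L_\lambda)_{G^0}$; these are smooth and connected with connected fibres, so they are contained in the neutral component $P^0$ (resp.\ $L^0$). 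For the reverse inclusion I would use \Cref{list}(4): $P_\lambda$ is smooth over $\OO$, hence $P^0$ is a smooth $\OO$-group scheme whose fibres are the neutral components of the fibres of $P_\lambda$; on each fibre, the neutral component of the attractor $P_{\lambda,\kappa}$ is exactly the attractor of $\lambda$ in $G^0_\kappa$, because $P_{\lambda,\kappa}/ (P_\lambda \cap G^0)_\kappa$ embeds into the finite étale $(G/G^0)_\kappa$ and hence is finite étale, so $(P_\lambda \cap G^0)_\kappa$ already contains the neutral component. Comparing fibrewise (both sides smooth over $\OO$, agreeing on fibres, one contained in the other) gives $P^0 = P\cap G^0$, and similarly $L^0 = L \cap G^0$. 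Since $(L_\lambda)_{G^0}$ is reductive with connected reductive fibres and $L/L^0 = L/(L\cap G^0) \hookrightarrow G/G^0$ is finite étale, $L$ is generalised reductive.

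Next, for the finiteness statement: fix an R-parabolic $P = P_\lambda$ and consider R-parabolics $Q$ with $Q^0 = P^0$. Writing $Q = P \cap G^0 \cdot (\text{finitely many components})$ is not automatic, so instead I would argue as follows. An R-parabolic $Q$ with $Q^0 = P^0$ is a closed subgroup scheme of $G$ containing $P^0$ as its neutral component; such $Q$ corresponds to a subgroup of the finite étale group scheme $G/G^0$, namely the image of $Q$. Concretely, $Q$ is the preimage in $N_G(P^0)$ — note $Q$ normalises $Q^0 = P^0$, so $Q \subseteq N_G(P^0)$ — of some subgroup scheme of $N_G(P^0)/P^0$. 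Here I would invoke that $N_{G^0}(P^0) = P^0$ (a parabolic is its own normaliser in the connected reductive group, \cite{CGP} or standard reductive group theory over a base), so $N_G(P^0)/P^0$ injects into $G/G^0$ and is therefore a finite étale $\OO$-group scheme, in particular has only finitely many closed subgroup schemes after passing to a finite extension splitting it; over the connected base $\Spec \OO$ the subgroups correspond to subgroups of the finite group $(G/G^0)(\overline{\kappa})$ stable under the Galois/monodromy action. Hence there are only finitely many possible $Q$.

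I expect the main obstacle to be the finiteness statement rather than the structural statements, and specifically the subtlety that not every subgroup scheme of $G$ with neutral component $P^0$ need be an R-parabolic $P_\mu$ — so one must be careful that the lemma only claims finiteness of the set of R-parabolics with this property, which is a subset of the (already finite) set of all closed subgroup schemes $Q$ with $P^0 \subseteq Q \subseteq N_G(P^0)$. The cleanest route is: (i) show $Q \subseteq N_G(P^0)$ for any such $Q$; (ii) show $N_G(P^0)/P^0$ is finite étale over $\OO$ using $N_{G^0}(P^0) = P^0$; (iii) conclude that closed subgroup schemes of $G$ lying between $P^0$ and $N_G(P^0)$ are in bijection with closed subgroup schemes of a finite étale group scheme, of which there are finitely many. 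A secondary technical point to handle with care is the fibrewise comparison in the first part: one must ensure that "neutral component" commutes with the base change to fibres for the smooth group schemes $P$ and $P\cap G^0$, which follows from \cite[Exposé VI\textsubscript{B}]{SGA3_new}, and that the relevant fibres of $G/G^0$ are étale, which is \Cref{rem_fin_etale}.
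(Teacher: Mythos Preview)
Your proposal is correct and follows essentially the same route as the paper for the structural statements: both arguments use that $\lambda$ factors through $G^0$, identify $P\cap G^0$ and $L\cap G^0$ with the attractor and centraliser constructions inside $G^0$ via \cite{CGP}, and then check fibrewise connectedness to conclude these are the neutral components.

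For the finiteness statement your argument and the paper's diverge slightly. The paper observes directly that $Q/Q^0$ embeds as a closed finite \'etale subgroup scheme of $G/G^0$ (using $L_Q/L_Q^0\cong Q/Q^0$ and that $L_Q^0 = L_Q\cap G^0$) and appeals to the finiteness of such subgroups. You instead pass through the normaliser: $Q\subseteq N_G(P^0)$, then use $N_{G^0}(P^0)=P^0$ to get $N_G(P^0)/P^0\hookrightarrow G/G^0$, and bound the set of $Q$'s by the subgroup schemes of this finite \'etale quotient. Your version is a bit more explicit: the paper's argument, as written, needs the map $Q\mapsto (\text{image of }Q \text{ in } G/G^0)$ to be injective on R-parabolics with $Q^0=P^0$, and the cleanest way to see that is precisely your normaliser observation (if $q_1\in Q_1$ and $q_2\in Q_2$ have the same image in $G/G^0$ then $q_1q_2^{-1}\in N_{G^0}(P^0)=P^0$). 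So your route buys a self-contained justification of the step the paper leaves implicit; the paper's route buys brevity and avoids citing the self-normalising property of parabolics.
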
 
\begin{proof} Let $\lambda: \Gm\rightarrow G$ be a cocharacter such that $P=P_{\lambda, G}$ and $L=L_{\lambda, G}$. 
Since $\Gm$ is connected $\lambda$ factors through $\lambda: \Gm\rightarrow G^0$. 
It follows from \cite[Lemma 2.1.5]{CGP} that $P\cap G^0=P_{\lambda, G^0}$ and $L\cap G^0= L_{\lambda, G^0}$. 
Let $\xbar$ be a geometric point of $\Spec \OO$. Then $(P_{\lambda, G^0})_{\xbar}= P_{\lambda_{\xbar}, G^0_{\xbar}}$
and $(L_{\lambda, G^0})_{\xbar}= L_{\lambda_{\xbar}, G^0_{\xbar}}$. It follows from \cite[Proposition 2.2.9]{CGP} 
that $P_{\lambda_{\xbar}, G^0_{\xbar}}$ is a parabolic subgroup of $G^0_{\xbar}$ with Levi $L_{\lambda_{\xbar}, G^0_{\xbar}}$.
Moreover, both groups are connected and $L_{\lambda_{\xbar}, G^0_{\xbar}}$ is reductive by the classical theory of reductive groups
over algebraically closed fields. Thus $P\cap G^0$ is the neutral component of $P$ and $L\cap G^0$ is the neutral component of 
$L$. Moreover, $P^0$ is a parabolic subgroup of $G^0$ in the sense of \cite[Definition 5.2.1]{bcnrd} and it follows from 
Lemma \ref{list} (3) that $L^0$ is a Levi of $P^0$ in the sense of \cite[Definition 5.4.2]{bcnrd}. Further, $L^0$ 
is reductive in the sense of \cite[Definition 3.1.1]{bcnrd}.
The map $L\rightarrow G/G^0$ identifies $L/L^0$ with a closed subgroup of $G/G^0$. 
Hence $L/L^0$ is finite over $\OO$. We deduce that $L$ is generalised reductive and $L/L^0$ is finite \'etale over $\OO$ by Remark \ref{rem_fin_etale}.

 Since $U$ is contained in $P^0$ the map $L\rightarrow P/P^0$ induces an isomorphism 
$L/L^0\cong P/P^0$. Thus $P/P^0$ is a  closed subgroup of $G/G^0$, which is finite \'etale
over $\OO$. 
Since  $G/G^0$ is finite \'etale over $\OO$ by Remark \ref{rem_fin_etale}, there are only finitely many closed subgroup schemes of $G/G^0$, which are finite \'etale over $\OO$. This implies that 
there are only finitely many R-parabolic subgroups $Q$ such that $Q^0=P^0$. 
\end{proof} 

\begin{examp}\label{ex_comp} Let $G= (\Gm \times \Gm)\rtimes \ZZ/2\ZZ$ with $\ZZ/2\ZZ$ acting on $\Gm\times \Gm$ by permuting the coordinates. Let 
$\lambda, \mu : \Gm \rightarrow G$ be the cocharacters  $\lambda(t)= ((t, 1),0)$ and $\mu(t)=((1,1), 0)$. Then 
$P_{\lambda}= G^0$ and $P_{\mu}=G$, but their neutral components are the same. 
\end{examp}

\begin{remar}  If $G\rightarrow S$ is a connected reductive group scheme 
and $S$ is affine then the notions of R-parabolic and parabolic subgroup schemes coincide;
see the discussion at the end of Section 5.4 in \cite{bcnrd}.
\end{remar}

\begin{prop}\label{RRX}
    Let $G$ be a generalised reductive group over a field, let $L$ be an R-Levi factor of an R-parabolic subgroup $P$, let $U$ be the unipotent radical of $P$ and assume that the natural map $L/L^0 \to G/G^0$ is an isomorphism. Then
    \begin{align}
        \dim L + \dim Z(L) \leq \dim G + \dim Z(G) - \dim U \label{targetineq}
    \end{align}
\end{prop}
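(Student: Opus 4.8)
The plan is to reduce \eqref{targetineq} to a comparison purely on the neutral components, where it becomes a statement about a parabolic $P^0 = L^0 \ltimes U$ in the connected reductive group $G^0$. By \Cref{para}, $L^0 = L \cap G^0$ is a Levi subgroup of $P^0$, $\dim L = \dim L^0$ and $\dim G = \dim G^0$, so the left-hand side only depends on $\dim L^0$, $\dim Z(L)$ and similarly the right-hand side on $\dim G^0$, $\dim Z(G)$, $\dim U$. Writing $\dim G^0 = \dim L^0 + 2\dim U$ (the standard decomposition $\mathfrak g = \mathfrak u^- \oplus \mathfrak l \oplus \mathfrak u$ for $\mathfrak l = \Lie L^0$, using that $\dim U = \dim U^-$), the inequality \eqref{targetineq} becomes equivalent to
\begin{equation*}
\dim Z(L) \le \dim Z(G) + \dim U.
\end{equation*}
So the whole content is this last inequality.

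For that, the natural approach is to produce a subtorus of $Z(L)^0$ of the right codimension inside a torus containing both $Z(G)$ and something of dimension $\dim U$. Here I would use the cocharacter $\lambda$ defining $P$ and $L$. Since $L = L_\lambda$ is the scheme-theoretic centralizer of $\lambda$ (\Cref{list}(1)), $\lambda$ factors through $Z(L)^0$. More importantly, one has the standard fact for connected reductive groups that $Z(L^0)^0 / Z(G^0)^0$ (a torus) has dimension equal to the number of simple roots of $G^0$ not in the Levi $L^0$ — call this $r$ — and that $\dim U = \sum (\text{positive roots outside }L^0) \ge r$, with the number of such roots at least the number of the corresponding simple ones. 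Thus $\dim Z(L^0) - \dim Z(G^0) = r \le \dim U$. The hypothesis that $L/L^0 \to G/G^0$ is an isomorphism is exactly what lets me pass from $Z(L^0), Z(G^0)$ back to $Z(L), Z(G)$: under this hypothesis the component groups match, so by an argument as in the proof of \Cref{dimkisdimL}/\Cref{dimkisdimL}-type reasoning the extension-by-a-finite-group structure gives $\dim Z(L) = \dim (Z(L) \cap L^0)$ and $\dim Z(G) = \dim(Z(G)\cap G^0)$; and one checks $Z(L)\cap L^0 = Z(L^0)^\Delta$, $Z(G)\cap G^0 = Z(G^0)^\Delta$ for the common component group $\Delta$ (as in \Cref{dimkisdimL}'s proof), and that passing to $\Delta$-coinvariants of cocharacter lattices is compatible with the inclusion $Z(G^0)^0 \subseteq Z(L^0)^0$ in a way that keeps the codimension comparison $\le r$. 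Since $\Delta$ acts on the simple roots outside $L^0$ — this uses $L/L^0 \xrightarrow{\sim} G/G^0$, so $\Delta$ preserves $P^0$ — the quantities $r$ and $\dim U$ are both $\Delta$-equivariant, and taking invariants/coinvariants can only decrease $\dim Z(L) - \dim Z(G)$ relative to the connected case while leaving the bound $\dim U$ (geometric) intact, or one argues $\dim U = \dim U^\Delta + (\text{rest})$ doesn't even enter since $\dim U$ on the RHS is the full dimension. I would want to be a little careful and phrase the $\Delta$-bookkeeping cleanly.

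The main obstacle I anticipate is precisely this last bit of equivariant bookkeeping: making sure that when one replaces connected centres by the centres of the possibly-disconnected groups $L$ and $G$, the inequality $\dim Z(L) - \dim Z(G) \le \dim U$ survives, rather than only the connected-group version $\dim Z(L^0) - \dim Z(G^0) \le \dim U$. The cleanest route is probably: reduce via \Cref{para} to $G^0$ as above to get the inequality $\dim Z(L^0) \le \dim Z(G^0) + \dim U$; then observe $\dim Z(L) = \dim (Z(L)\cap L^0) = \dim Z(L^0)^\Delta \le \dim Z(L^0)$ and, dually, $\dim Z(G) = \dim Z(G^0)^\Delta$; and finally note that the inclusion $Z(G^0)^0 \hookrightarrow Z(L^0)^0$ is $\Delta$-equivariant, so on $\Delta$-invariants the codimension can only go \emph{down}, giving $\dim Z(L^0)^\Delta - \dim Z(G^0)^\Delta \le \dim Z(L^0) - \dim Z(G^0) \le \dim U$. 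That chain, together with the reduction of \eqref{targetineq} to $\dim Z(L) \le \dim Z(G) + \dim U$, finishes the proof. A subtle point worth double-checking is whether taking $\Delta$-invariants of the two tori really can only shrink the difference of dimensions; this should follow from the fact that a $\Delta$-equivariant inclusion of tori induces an inclusion on $\Delta$-invariant subtori whose cokernel is a quotient of the original (possibly disconnected, but finite-index issues are harmless for dimensions) cokernel.
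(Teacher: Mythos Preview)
Your proposal is correct and follows essentially the same route as the paper: reduce \eqref{targetineq} to $\dim Z(L) - \dim Z(G) \le \dim U$, establish the connected version $\dim Z(L^0) - \dim Z(G^0) \le \dim U$ by root-theoretic considerations, and then pass to $\Delta$-invariants using the hypothesis $L/L^0 \cong G/G^0$ together with the identifications $Z(L)\cap L^0 = Z(L^0)^\Delta$, $Z(G)\cap G^0 = Z(G^0)^\Delta$ from the proof of \Cref{dimkisdimL}. The ``subtle point'' you flag at the end is exactly what the paper does, via left-exactness of $(-)^\Delta$: since $Z(G^0)^\Delta = Z(L^0)^\Delta \cap Z(G^0)$, the map $Z(L^0)^\Delta/Z(G^0)^\Delta \hookrightarrow Z(L^0)/Z(G^0)$ is injective, so the difference of dimensions can only decrease (your phrasing ``cokernel is a quotient'' should be ``quotient injects into the original cokernel'', but the conclusion is right).
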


\begin{proof} 
    We may assume throughout the proof that the base field is algebraically closed.
    Let $T \subseteq L^0$ be a maximal torus of $G^0$.
    Since $G^0$ is generated by $L^0$, the root subgroups of $U$ and their opposites, we have $Z(G^0) = Z(L^0) \cap \bigcap_a \ker(a)$, where $a : T \to \Gm$ varies over all roots, which generate root subgroups $U_a \subseteq U$. The groups $U_{\pm a}$ commute with $\ker(a)$ and $\dim \ker(a) = \dim T - 1$, so
    \begin{align}
        \dim Z(L^0) \leq \dim Z(G^0) + \dim U. \label{Uestimate}
    \end{align}
    
    Since $\Delta := G/G^0$ is constant, we have a scheme theoretic splitting $s : G/G^0 \to G$ and $\Delta$ acts on $Z(G^0)$ by conjugation. 
    Since we are assuming, that $L/L^0 = G/G^0$, we have a compatible action of $\Delta$ on $Z(L^0)$. We have a diagram
    \begin{center}
        \begin{tikzcd}
            1 \ar[r] & Z(G^0) \ar[r] & Z(L^0) \ar[r] & Z(L^0)/Z(G^0) \ar[r] & 1 \\
            1 \ar[r] & Z(G^0)^{\Delta} \ar[u, hookrightarrow] \ar[r] & Z(L^0)^{\Delta} \ar[u, hookrightarrow] \ar[r] & Z(L^0)^{\Delta}/Z(G^0)^{\Delta} \ar[r] \ar[u, hookrightarrow] & 1
        \end{tikzcd}
    \end{center}
    and the right vertical map is injective, since $Z(G^0)^{\Delta} = Z(L^0)^{\Delta} \cap Z(G^0)$.
    As in the proof of \Cref{dimkisdimL}, we have $Z(G^0)^{\Delta} = Z(G) \cap G^0$ and $Z(L^0)^{\Delta} = Z(L) \cap L^0$, so $\dim Z(G^0)^{\Delta} = \dim Z(G)$ and $\dim Z(L^0)^{\Delta} = \dim Z(L)$. Injectivity of the right vertical map gives
    \begin{align}
        \dim Z(L) - \dim Z(G) \leq \dim Z(L^0) - \dim Z(G^0) \label{GG0comp}
    \end{align}
    We conclude from \eqref{GG0comp} and \eqref{Uestimate} that 
    \begin{align*}
        \dim L + \dim Z(L) &\leq \dim L^0 + \dim Z(L^0) - \dim Z(G^0) + \dim Z(G) \\
         &\leq \dim L^0 + \dim U + \dim Z(G) 
         = \dim G + \dim Z(G) - \dim U,
    \end{align*}
    where we use $\dim G^0 = \dim L^0 + 2 \dim U$ in the last step.
\end{proof}

For the rest of the subsection we assume that $G$ is split and $G/G^0$ is constant. We fix a maximal split torus $T$ of $G$ defined over $\OO$. Let $s$ be the closed point of 
$\Spec \OO$ and let $\sbar$ be a geometric point above $s$. 

\begin{lem}\label{base_change_R_par} Base change induces a bijection between the following sets:
\begin{enumerate}
\item R-parabolics of $G$ containing $T$;
\item R-parabolics of $G_s$ containing $T_s$;
\item R-parabolics of $G_{\sbar}$ containing $T_{\sbar}$.
\end{enumerate}

\end{lem}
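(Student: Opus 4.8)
The plan is to reduce everything to the connected case, where the analogous statement for parabolic subgroup schemes of split reductive group schemes is classical, and then account for the component group. First I would recall from \Cref{para} that for any R-parabolic $P$ of $G$ the intersection $P^0 := P \cap G^0$ is a parabolic subgroup scheme of $G^0$, and conversely that an R-parabolic is determined by a cocharacter $\lambda : \Gm \to G$ which necessarily factors through $G^0$ since $\Gm$ is connected. Thus $P = P_{\lambda, G}$ and $P^0 = P_{\lambda, G^0}$, and $P$ contains $T$ if and only if $P^0$ contains $T$ (as $T \subseteq G^0$). So the map $P \mapsto P^0$ sends the set in (1) to the set of parabolic subgroup schemes of $G^0$ containing $T$, and similarly for (2) and (3) after base change to $s$ and $\sbar$.

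Next I would invoke the classical fact (e.g.\ \cite[Theorem 5.2.11]{bcnrd} or the discussion of relative parabolics in \cite[\S 5.2]{bcnrd}) that for a split reductive group scheme $G^0$ over $\OO$ with split maximal torus $T$, base change along $\OO \to k$ and then $k \to \kbar$ induces bijections between parabolic subgroup schemes of $G^0$ containing $T$, parabolic subgroups of $G^0_s$ containing $T_s$, and parabolic subgroups of $G^0_{\sbar}$ containing $T_{\sbar}$; indeed all of these are in bijection with the subsets of simple roots, i.e.\ with the parabolic subsets of the (absolute, which here equals relative) root system, which is insensitive to the base. This handles the ``neutral component'' layer.

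It then remains to control the component group. Given a parabolic $P^0$ of $G^0$ containing $T$, the R-parabolics $Q$ of $G$ with $Q \cap G^0 = P^0$ are, by \Cref{para} and its proof, in bijection with the closed-and-open subgroup schemes of $G/G^0$ that lie in the image of the normaliser, and this combinatorial datum is again defined over $\OO$ and unchanged by base change to $s$ or $\sbar$ since $G/G^0$ is a constant group scheme by hypothesis and $T$ (hence $N_G(T)$, hence the relevant subgroups of $G/G^0$) base-changes compatibly. Concretely, one checks that for an R-parabolic $Q$ with $Q^0 = P^0$, the subgroup $Q/Q^0 \hookrightarrow G/G^0$ is recovered as $\{ \delta \in G/G^0 : \delta \text{ normalises } P^0 \text{ and } \mathrm{int}(\delta) \text{ preserves the relevant cocharacter data}\}$, a description which commutes with base change because all ingredients do. Assembling: a general R-parabolic of $G$ containing $T$ is the datum of (a) a parabolic of $G^0$ containing $T$ together with (b) a compatible subgroup of the constant group $G/G^0$; (a) is base-change-invariant by the classical theory and (b) is base-change-invariant because $G/G^0$ is constant.

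The main obstacle I anticipate is the bookkeeping in step three: being careful that the set of R-parabolics $Q$ with $Q^0 = P^0$ really is described by data that is literally constant along $\OO \to k \to \kbar$, rather than merely finite at each stage (finiteness alone is already in \Cref{para}). The cleanest way around this is probably to phrase it via the normaliser: $N_G(P^0)$ is a smooth closed subgroup scheme of $G$ with $N_G(P^0)^0 = P^0$ (since $P^0$ is its own normaliser in $G^0$), so $N_G(P^0)/P^0$ is a finite étale — in fact, after our constancy hypothesis, constant — subgroup scheme of $G/G^0$, and the R-parabolics $Q$ with $Q \cap G^0 = P^0$ are exactly the preimages in $G$ of subgroups of this constant group. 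Constancy then makes base-change-invariance immediate, and combined with the connected case this yields the three claimed bijections.
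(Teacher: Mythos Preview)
Your reduction to the connected case and the observation that the classical theory handles parabolics of $G^0$ containing $T$ are both fine. The problem is in step three: the claim that the R-parabolics $Q$ of $G$ with $Q \cap G^0 = P^0$ are \emph{exactly} the preimages in $G$ of subgroups of $N_G(P^0)/P^0$ is false. Take $G^0 = \Gm^3$ with the symmetric group $S_3$ acting by permuting coordinates, and $G = G^0 \rtimes S_3$. Here $P^0 = G^0$ and $N_G(P^0)/P^0 = S_3$. For a cocharacter $\lambda : \Gm \to G^0$, the component group of $P_\lambda$ is the stabiliser of $\lambda$ in $S_3$. But any $\lambda$ fixed by the $3$-cycle is of the form $(a,a,a)$ and hence fixed by all of $S_3$; so the preimage of the alternating subgroup $A_3 \subset S_3$ is \emph{not} an R-parabolic, even though $A_3$ is a perfectly good subgroup of $N_G(P^0)/P^0$. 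Your ``cleanest way around'' therefore does not work as stated, and the vaguer formulation involving ``the relevant cocharacter data'' depends on a choice of $\lambda$ rather than on $Q$ alone.

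The paper avoids this decomposition entirely and argues directly with cocharacters. Given an R-parabolic $P_{\lambdabar}$ of $G_{\sbar}$ containing $T_{\sbar}$, one first conjugates $\lambdabar$ by an element of $P_{\lambdabar}(\kappa(\sbar))$ so that its image lies in $T_{\sbar}$ (this does not change $P_{\lambdabar}$), and then uses that $T$ is split over $\OO$ to lift $\lambdabar$ to a cocharacter $\lambda : \Gm \to T$; then $P_\lambda$ is the desired preimage. For injectivity one combines the connected case (\cite[Corollary 5.2.7(2)]{bcnrd}, using that $\OO$ is a DVR) with the fact that $G/G^0$ is constant, so the image of $P$ in $G/G^0$ is determined by its base change to $\sbar$. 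This cocharacter-lifting argument sidesteps any need to classify R-parabolics with a given neutral component.
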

\begin{proof} We will first establish that $P\mapsto P_{\sbar}$ induces the bijection between sets in parts (1) and  (3). If $\lambda: \Gm \rightarrow G$ is a cocharacter defined over $\OO$ then 
$(P_{\lambda})_{\sbar} = P_{\lambda_{\sbar}}$
by \cite[Lemma 2.1.4]{CGP}. Moreover, if $P$ contains $T$ then $P_{\sbar}$ will contain $T_{\sbar}$. Hence, the map is well defined. 

If $P$ and $Q$ are R-parabolics of $G$ such that $P_{\sbar}=Q_{\sbar}$ then $P^0_{\sbar}=Q^0_{\sbar}$. 
It follows from \cite[Corollary 5.2.7 (2)]{bcnrd} 
that there exists a Zariski open subset $U$ of $\Spec 
\OO$ containing $s$ such that $P^0_U = Q^0_U$. 
Since $\OO$ is a DVR we have $U=\Spec \OO$ and hence $P^0=Q^0$. It follows from Lemma \ref{para} 
that $P\cap G^0= P^0=Q^0= Q\cap G^0$. Thus both $P$ and $Q$ are contained in the $G$-normaliser $N_G(P^0)$ of $P^0$
and it is enough to show that the images of $P$ and $Q$ in $N_G(P^0)/P^0$ coincide. Since $N_G(P^0)\cap G^0 = P^0$ 
it is 
enough to show that the images of $P$ and $Q$ in 
$G/G^0$ coincide. Since $G/G^0$ is a constant group scheme, we may check this after base change to $\sbar$, which holds as $P_{\sbar}=Q_{\sbar}$. 

Let $\lambdabar: \Gm\rightarrow G_{\sbar}$ be a 
cocharacter. Its image is contained in a maximal 
split torus  $T'$ of $G_{\sbar}$.
Moreover, $T'$ is contained in $P_{\lambdabar}$. If $P_{\lambdabar}$ 
contains $T_{\sbar}$ then there is 
$g\in P_{\lambdabar}(\kappa(\sbar))$ 
such that $T_{\sbar}= g T' g^{-1}$ by \cite[Proposition 11.19]{borel}. 
We thus may assume that the image of 
$\lambdabar$ is contained in $T_{\sbar}$. Since 
both $\Gm$ and $T$ are split tori defined over $\OO$ there is a cocharacter $\lambda: \Gm\rightarrow T$, such that $\lambda_{\sbar}=\lambdabar$. Then 
$P_{\lambda}$ contains $T$ and satisfies $(P_{\lambda})_{\sbar} = P_{\lambdabar}$. 

The proof that base change induces a bijection between the sets in parts (2) and (3) is the same. Since
$P_{\sbar}= (P_s)_{\sbar}$ this also implies that base change induces a bijection between 
the sets in parts (1) and (2).
\end{proof}

\begin{lem}\label{Rlevi} There are only finitely many R-parabolic subgroups of $G$ containing $T$. Each of them has a unique R-Levi subgroup 
containing $T$. 
\end{lem}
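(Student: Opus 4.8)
The plan is to reduce the statement to the analogous classical facts for connected reductive groups over the algebraically closed field $\kappa(\sbar)$, using \Cref{base_change_R_par} to transport the question to the geometric special fibre and \Cref{para} to control the relationship between an R-parabolic $P$ and its neutral component $P^0$.

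First I would prove finiteness. By \Cref{base_change_R_par} the set of R-parabolics of $G$ containing $T$ is in bijection with the set of R-parabolics of $G_{\sbar}$ containing $T_{\sbar}$, so it suffices to bound the latter. Given such an R-parabolic $Q$, \Cref{para} tells us that $Q^0 = Q\cap G^0_{\sbar}$ is a parabolic subgroup of the connected reductive group $G^0_{\sbar}$ containing $T_{\sbar}$; by the classical theory there are only finitely many of these (they are indexed by subsets of the simple roots relative to a choice of Borel, or more precisely by the finitely many parabolics containing a fixed maximal torus). Then, again by \Cref{para}, for each such $Q^0$ there are only finitely many R-parabolics $Q$ with that fixed neutral component. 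Combining the two finiteness statements gives that there are only finitely many R-parabolics of $G_{\sbar}$ containing $T_{\sbar}$, hence only finitely many R-parabolics of $G$ containing $T$.

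Next I would prove existence and uniqueness of the R-Levi containing $T$. For existence: write $P = P_{\lambda}$ for some cocharacter $\lambda\colon \Gm\to G$ defined over $\OO$ (here I use that $G$ is split over $\OO$, so that after conjugating we may take the image of $\lambda$ inside $T$ — this is exactly the argument appearing in the last paragraph of the proof of \Cref{base_change_R_par}, passing through $\sbar$ via \cite[Proposition 11.19]{borel} and then descending the cocharacter back to $T$ over $\OO$ since both $\Gm$ and $T$ are split tori over $\OO$). Once the image of $\lambda$ lies in $T$, the R-Levi $L_{\lambda} = P_{\lambda}\cap P_{-\lambda}$ is the scheme-theoretic centraliser of $\lambda$ by \Cref{list}(1), and it contains $T$ since $T$ is commutative and contains the image of $\lambda$. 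For uniqueness: suppose $L$ and $L'$ are two R-Levi subgroups of $P$ both containing $T$. By \Cref{para}, $L^0$ and $(L')^0$ are Levi subgroups of the connected parabolic $P^0$ of $G^0_{\sbar}$, both containing $T_{\sbar}$ after base change; by the classical uniqueness of the Levi containing a fixed maximal torus (two Levi subgroups of a parabolic are conjugate by a unique element of the unipotent radical, and the one containing $T$ is unique) we get $L^0_{\sbar} = (L')^0_{\sbar}$, hence $L^0 = (L')^0$ by the spreading-out argument used in \Cref{base_change_R_par} (using that $\OO$ is a DVR). Since $L/L^0$ and $L'/L'^0$ both inject into $G/G^0$ and, being R-Levi subgroups, they are determined by the same $\lambda$-data, one checks their images in $G/G^0$ agree — alternatively, both $L$ and $L'$ equal the scheme-theoretic centraliser of the appropriate cocharacter, which is unique.

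The main obstacle I anticipate is the uniqueness half: the non-connectedness of $G$ means "the R-Levi containing $T$" is not literally a centraliser statement unless one is careful about which cocharacter one uses, and two different cocharacters $\lambda, \mu$ can define the same R-parabolic $P$ (as in \Cref{ex_comp}) while a priori giving different $L_\lambda, L_\mu$. The resolution is that among all $\lambda$ with $P_\lambda = P$, those whose image lies in $T$ all have $L_\lambda \supseteq T$, and one must show these all coincide; this reduces, via $L_\lambda^0 = L_{\lambda,G^0}$ and the finiteness from \Cref{para} of R-parabolics with a given neutral component, to the connected case plus a bookkeeping argument on the component group. I would handle the component-group bookkeeping by observing that the image of $L_\lambda$ in $G/G^0$ is forced: it must be all of the image of $P$ in $G/G^0$ (since $U \subseteq P^0$ and $P = L_\lambda \ltimes U$), which is intrinsic to $P$.
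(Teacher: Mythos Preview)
Your finiteness argument is essentially the paper's. For existence the paper is more direct: given any $\lambda$ with $P = P_\lambda$, the scheme of Levi subgroups of $P^0$ is a $U_\lambda$-torsor \cite[Corollary 5.4.6]{bcnrd}, so there is $g \in U_\lambda(\OO)$ conjugating $L_\lambda^0$ to the unique Levi $M$ of $P^0$ containing $T$; then $gL_\lambda g^{-1}$ is an R-Levi of $P$ containing $T$. Your route via the special fibre also works.

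The gap is in uniqueness. You correctly obtain $L^0 = (L')^0 =: M$, and your observation that the image of any R-Levi of $P$ in $G/G^0$ equals the image of $P$ is also correct. But these two facts do \emph{not} force $L = L'$: a subgroup of $P$ is not determined by its neutral component together with its image in $P/P^0$ --- two distinct complements to $U$ in $P$ can share the same neutral component $M$ and surject onto $P/P^0$ while sitting differently relative to $U$. Your alternative ``both equal the centraliser of the appropriate cocharacter'' is circular, since $L = L_\lambda$ and $L' = L_{\lambda'}$ for possibly different $\lambda, \lambda'$.

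The paper closes this by citing \cite[Corollary 6.5]{BMR}: over an algebraically closed field an R-parabolic has a unique R-Levi containing a given maximal torus, and equality on all geometric fibres gives equality over $\OO$ since both R-Levis are smooth (hence flat) over the DVR $\OO$. If you prefer to complete your own outline, the missing step is to prove $L = N_P(M)$: given $g \in N_P(M)$, choose $l \in L$ in the same $P^0$-coset (using $L \twoheadrightarrow P/P^0$), write $l^{-1}g = mu$ with $m \in M$, $u \in U$; since both $g$ and $l$ normalise $M$ so does $u$, and the $U$-torsor structure on Levis of $P^0$ forces $N_U(M) = \{1\}$, hence $u = 1$ and $g \in L$. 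As $N_P(M)$ depends only on $P$ and $T$, uniqueness follows.
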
 
\begin{proof} Lemma \ref{base_change_R_par} implies that it is enough to show that 
there are only finitely many R-parabolics of $G_{\sbar}$ containing $T_{\sbar}$. 
Lemma \ref{para} implies that we may assume that $G_{\sbar}$ is connected. 
Proposition 11.19 (b) in \cite{borel} says that there are only finitely many Borel subgroups of $G^0_{\bar{s}}$ containing $T_{\bar{s}}$. 
Proposition 14.18 in \cite{borel} implies that there are only finitely many parabolic subgroups of $G^0_{\bar{s}}$ containing a given Borel subgroup. This proves the first assertion.

Let $M$ be the unique Levi of $P^0$ containing $T$, \cite[Proposition 5.4.5]{bcnrd}. 
Let $\lambda: \Gm\rightarrow G$ be a cocharacter such that $P=P_{\lambda}$. Then $L^0_{\lambda}$ is also a Levi of $P^0$ by Lemma \ref{para}. The scheme of Levi subgroups of $P^0$ is a $U_{\lambda}$-torsor, \cite[Corollary 5.4.6]{bcnrd}. Thus 
there is $g\in U_{\lambda}(\OO)$ such that $M= g L^0_{\lambda} g^{-1}$. Then $g L_{\lambda} g^{-1}$ 
is an R-Levi of $P$ containing $T$. 

If $L$ and $M$ are R-Levi subgroups of $P$ containing $T$ then $M_{\xbar}=L_{\xbar}$ for every geometric point $\xbar$ of 
$\Spec \OO$ by \cite[Corollary 6.5]{BMR}. This implies that $L=M$.
\end{proof} 

\begin{lem}\label{conj_P} Let $\xbar$ be a geometric point of $\Spec \OO$ and let $Q$ be 
an R-parabolic subgroup of $G_{\xbar}$ with R-Levi $M$. Then there is an R-parabolic subgroup $P$ 
of $G$ defined over $\OO$ with R-Levi $L$ containing $T$ and $g\in G^0(\kappa(\xbar))$, such that 
$g Q g^{-1} = P_{\xbar}$ and $g M g^{-1} = L_{\xbar}$. 
\end{lem}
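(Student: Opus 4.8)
The plan is to push everything down to a cocharacter valued in the split torus $T$, where base change is transparent. By definition we may write $Q = P_{\lambdabar}$ and $M = L_{\lambdabar}$ for some cocharacter $\lambdabar \colon \Gm \to G_{\xbar}$, and since $\Gm$ is connected this factors through the identity component $G^0_{\xbar}$. The mechanism making the argument work is the rigidity of cocharacter lattices of split tori: since $T$ is split over $\OO$, base change gives a canonical identification $X_*(T) = X_*(T_{\xbar})$, so any cocharacter of $G_{\xbar}$ whose image lies in $T_{\xbar}$ descends to a cocharacter $\Gm \to T$ over $\OO$.

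First I would bring a maximal torus into play: let $T'$ be a maximal torus of $G^0_{\xbar}$ containing $\lambdabar(\Gm)$. Since $T'$ is commutative it centralises $\lambdabar$, so $T' \subseteq L_{\lambdabar} = M$ by \Cref{list} (1). As $G^0_{\xbar}$ is connected reductive over the algebraically closed field $\kappa(\xbar)$, all its maximal tori are conjugate under $G^0(\kappa(\xbar))$, so I may choose $g \in G^0(\kappa(\xbar))$ with $g T' g^{-1} = T_{\xbar}$. Then $\mubar := g\lambdabar g^{-1}$ has image contained in $g T' g^{-1} = T_{\xbar}$, hence under $X_*(T) = X_*(T_{\xbar})$ it is the base change of a unique cocharacter $\mu \colon \Gm \to T$ over $\OO$. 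Set $P := P_\mu$ and $L := L_\mu$. By \Cref{list} these are an R-parabolic and an R-Levi subgroup scheme of $G$ over $\OO$; both contain $T$ because $T$ commutes with $\mu(\Gm) \subseteq T$, so the relevant limits exist and are the identity on $T$; and $L$ is \emph{the} R-Levi of $P$ containing $T$ by \Cref{Rlevi}. Finally, formation of $P_\lambda$ and $L_\lambda$ commutes with base change by \cite[Lemma 2.1.4]{CGP} and visibly with conjugation (from the limit description $\lim_{t\to 0}\lambda(t)x\lambda(t)^{-1}$), so $P_{\xbar} = P_{\mubar} = P_{g\lambdabar g^{-1}} = g P_{\lambdabar} g^{-1} = g Q g^{-1}$, and identically $L_{\xbar} = g M g^{-1}$, as required.

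I do not expect a genuine obstacle: the proof is bookkeeping that combines \Cref{list}, \Cref{Rlevi}, conjugacy of maximal tori over an algebraically closed field, and the rigidity of $X_*(T)$ for a split $T$. The only points needing a little care are that $P_\lambda$ transforms correctly under conjugation and base change, and — more importantly — that after conjugating, the cocharacter really lands in $T_{\xbar}$ and not merely in some maximal torus of $M$; this is precisely why one should absorb $\lambdabar(\Gm)$ into a maximal torus $T'$ of $G^0_{\xbar}$ first and then move $T'$ onto $T_{\xbar}$, rather than trying to conjugate $Q$ and $M$ directly.
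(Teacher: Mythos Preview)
Your proof is correct and follows essentially the same route as the paper: choose a cocharacter defining $(Q,M)$, absorb its image into a maximal torus, conjugate that torus onto $T_{\xbar}$ by an element of $G^0(\kappa(\xbar))$, then use splitness of $T$ to descend the cocharacter to $\OO$ and invoke the base-change compatibility of $P_\lambda$ and $L_\lambda$ from \cite[Lemmas 2.1.4, 2.1.5]{CGP}. Your write-up is in fact slightly more detailed than the paper's (e.g.\ the remarks on $T\subseteq L$ and on conjugation compatibility), but there is no substantive difference.
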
 
\begin{proof} Let $\lambda: \mathbb G_{ m,\xbar}\rightarrow G_{\xbar}$ be a cocharacter 
such that $Q=P_{\lambda}$ and $M=L_{\lambda}$. There is a maximal torus $T'$ of $G_{\xbar}$, such that 
the image of $\lambda$ is contained in $T'$. Since tori are connected, both $T'$ and $T_{\xbar}$
are contained in $G^0_{\xbar}$. 
Since $\kappa(\xbar)$ is algebraically closed any two maximal tori in $G^0_{\xbar}$ are conjugate, 
and hence after conjugating $Q$ we may assume that $\lambda: \mathbb G_{ m,\xbar}\rightarrow T_{\xbar}$.
Since $T$ is split over $\OO$, there is a cocharacter $\mu: \Gm\rightarrow T$ defined 
over $\OO$ such that $\lambda= \mu_{\xbar}$. 
We have $P_\mu\times_{\Spec \OO} \xbar = P_{\mu_{\xbar}}$ by \cite[Lemma 2.1.4]{CGP} 
and $L_{\mu}\times_{\Spec \OO} \xbar = L_{\mu_{\xbar}}$ by \cite[Lemma 2.1.5]{CGP}.
\end{proof}

\subsection{\texorpdfstring{$G$-semisimplification}{G-semisimplification}}
\label{sec_G_ss}

In this section let $G$ be a generalised reductive group over an algebraically closed field $\kappa$. 
Let $\Gamma$ be a group and let $\rho: \Gamma \rightarrow G(\kappa)$ be a representation.
Let $P$ be an R-parabolic of $G$ minimal with respect to the property that $P(\kappa)$ contains $\rho(\Gamma)$. 
Let $L$ be an R-Levi of $P$ and let $U$ be the unipotent radical of $P$. The composition $L\rightarrow P\rightarrow P/U$ is an isomorphism, and we define $c_{P,L}: P(\kappa)\rightarrow G(\kappa)$ as the 
composition $P(\kappa)\rightarrow (P/U)(\kappa)\overset{\cong}{\longrightarrow} L(\kappa)\hookrightarrow G(\kappa)$.

\begin{prop}\label{indepofPL} The $G^0(\kappa)$-conjugacy class of $c_{P,L} \circ \rho$ is independent of the choice 
of $P$ and $L$.
\end{prop}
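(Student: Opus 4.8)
The plan is to show that the construction $c_{P,L}\circ\rho$ is, up to $G^0(\kappa)$-conjugacy, independent of the minimal R-parabolic $P$ containing $\rho(\Gamma)$ and of the R-Levi $L$ of $P$, in two steps: first fix $P$ and vary $L$; then vary $P$ among the minimal ones. For the first step, if $L$ and $L'$ are two R-Levi subgroups of $P$, then by \Cref{list}(3) both are complements to the unipotent radical $U$ in $P$, so by the standard fact that the Levi subgroups of $P$ form a single $U(\kappa)$-orbit under conjugation (which over an algebraically closed field follows from \cite[Corollary 5.4.6]{bcnrd} applied to $P^0$, together with the identification $L/L^0\cong P/P^0$ from \Cref{para}) there is $u\in U(\kappa)$ with $L'=uLu^{-1}$. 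Conjugation by $u$ intertwines $c_{P,L}$ and $c_{P,L'}$ in the sense that $c_{P,L'}=\Int(u)\circ c_{P,L}\circ\Int(u^{-1})$ on $P(\kappa)$; since $u\in U(\kappa)\subseteq G^0(\kappa)$ and $u$ normalises nothing problematic, we get $c_{P,L'}\circ\rho = \Int(u)\big(c_{P,L}\circ(\Int(u^{-1})\circ\rho)\big)$, and one checks $\Int(u^{-1})\circ\rho$ still has image in $P(\kappa)$ with the same image modulo $U$, so $c_{P,L}\circ(\Int(u^{-1})\circ\rho)=c_{P,L}\circ\rho$ up to the $U$-part which dies in the quotient. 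Hence $c_{P,L'}\circ\rho$ is $G^0(\kappa)$-conjugate to $c_{P,L}\circ\rho$.

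For the second step, suppose $P$ and $Q$ are both minimal R-parabolics whose $\kappa$-points contain $\rho(\Gamma)$, with R-Levis $L$ and $M$. The key point is that $c_{P,L}\circ\rho$ again has image in a parabolic — indeed in $L(\kappa)$ — and is minimal there, so its image generates a subgroup not contained in any proper R-parabolic of $L$; thus $L$ (equivalently $c_{P,L}\circ\rho$) is "R-irreducible". I would argue that any two minimal R-parabolics containing $\rho(\Gamma)$ are $G^0(\kappa)$-conjugate: this is the group-theoretic heart of the matter and is exactly the analogue (in the R-parabolic setting of a possibly non-connected $G$) of the classical fact that minimal parabolics containing a given subgroup are conjugate, which for connected reductive groups over algebraically closed fields is due to Borel–Tits and in the R-parabolic generality is in \cite{BMR} (their Theorem on "strongly reductive" subgroups, or the cocharacter-based optimal destabilising results of Bate–Martin–Röhrle). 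Granting $gPg^{-1}=Q$ for some $g\in G^0(\kappa)$, conjugation by $g$ carries $L$ to an R-Levi $gLg^{-1}$ of $Q$, and by the first step $c_{Q,M}\circ\rho$ is $G^0(\kappa)$-conjugate to $c_{Q,gLg^{-1}}\circ\rho$, which equals $\Int(g)\circ c_{P,L}\circ\Int(g^{-1})\circ\rho$; since $\Int(g^{-1})\circ\rho$ has image in $P(\kappa)$ with the same reduction mod $U_P$, this is $G^0(\kappa)$-conjugate to $c_{P,L}\circ\rho$, as desired.

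The main obstacle I expect is the conjugacy of minimal R-parabolics containing $\rho(\Gamma)$ when $G$ is not connected: the classical Borel–Tits argument must be replaced by the Bate–Martin–Röhrle machinery, and one has to be careful that "minimal R-parabolic containing the image" is the right notion (as opposed to working with the Zariski closure of $\rho(\Gamma)$ and its G-complete reducibility / optimal cocharacters). A clean way to organise this is: let $H$ be the Zariski closure of $\rho(\Gamma)$ in $G$; then "$P$ minimal R-parabolic with $\rho(\Gamma)\subseteq P(\kappa)$" is equivalent to "$P$ minimal R-parabolic containing $H$", and such $P$ are all conjugate under $C_{G^0}(H)^0\cdot(\text{something})$, but for the statement we only need $G^0(\kappa)$-conjugacy, which \cite{BMR} supplies. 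Once conjugacy of minimal R-parabolics is in hand, everything else is the bookkeeping in the two paragraphs above, checking that $c_{P,L}$ only depends on $\rho$ through its image modulo $U_P$ and is therefore insensitive to the ambiguities we have enumerated.
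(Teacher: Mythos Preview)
Your first step (independence of the R-Levi $L$ for fixed $P$) is fine: R-Levis of an R-parabolic $P$ are indeed $R_u(P)(\kappa)$-conjugate (this is \cite[Corollary 6.5]{BMR} in the non-connected setting), and since $c_{P,L}$ kills $R_u(P)$ the conclusion follows.

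The second step, however, rests on a claim that is simply false. You assert that any two minimal R-parabolics containing $\rho(\Gamma)$ are $G^0(\kappa)$-conjugate, but this already fails for connected $G=\GL_3$. Take $\rho=\chi\oplus\sigma$ with $\chi$ one-dimensional and $\sigma$ two-dimensional irreducible. The only proper subspaces stabilised by $\rho(\Gamma)$ are the line carrying $\chi$ and the plane carrying $\sigma$, so the minimal parabolics containing $\rho(\Gamma)$ are the stabiliser of the plane (type $(2,1)$) and the stabiliser of the line (type $(1,2)$). These lie in \emph{distinct} $\GL_3$-conjugacy classes of parabolics: conjugacy classes of parabolics in a connected reductive group are in bijection with subsets of the simple roots, and $\{\alpha_1\}\neq\{\alpha_2\}$. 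Their Levis $\GL_2\times\GL_1$ and $\GL_1\times\GL_2$ are conjugate, and the semisimplifications $\sigma\oplus\chi$ and $\chi\oplus\sigma$ are conjugate---exactly as the proposition predicts---but the parabolics themselves are not. The references you gesture at do not supply this conjugacy: Borel--Tits concerns rationality of parabolics, and Bate--Martin--R\"ohrle prove results about $G$-complete reducibility and closed orbits, not conjugacy of minimal containing parabolics.

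Even granting your conjugacy claim, the deduction has a further gap: the assertion that $\Int(g^{-1})\circ\rho$ has ``the same reduction mod $U_P$'' as $\rho$ is unjustified and false (already for $G=\GL_2$, $\rho$ diagonal, $P=B$, $Q=B^-$, $g$ the Weyl element, the reductions mod $U_B$ are $\mathrm{diag}(\chi_1,\chi_2)$ and $\mathrm{diag}(\chi_2,\chi_1)$).

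The paper simply cites Serre \cite[Proposition 3.3(b)]{serre_complete_red} for connected $G$ and \cite[Proposition 2.15]{quast} for the generalised reductive case. The actual arguments proceed quite differently from yours. One route is via GIT, in the spirit used later in the paper (see \Cref{orbit_ss}): writing $c_{P,L_\lambda}\circ\rho=\lim_{t\to 0}\lambda(t)\rho\lambda(t)^{-1}$, the tuple $(c_{P,L}\circ\rho)(\gamma_i)_i$ lies in the closure of the $G^0$-orbit of $(\rho(\gamma_i))_i$ in $G^N$; minimality of $P$ makes $c_{P,L}\circ\rho$ $L$-irreducible and hence $G$-completely reducible, so its orbit is closed; uniqueness of the closed orbit in an orbit closure then forces all choices to land in the same $G^0$-orbit. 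Serre's own argument instead exploits the structure of $P\cap Q$ for two parabolics (which always contains a common maximal torus of $G$) to compare the two Levi projections directly, without ever claiming that $P$ and $Q$ are conjugate.
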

\begin{proof} If $G$ is connected then this is proved in \cite[Proposition 3.3 (b)]{serre_complete_red}.
The same argument is carried out in our more general setting in \cite[Proposition 2.15]{quast}.
\end{proof} 

\begin{defi}\label{defi_G_ss} The \emph{$G$-semisimplification} of $\rho$ is the $G^0(\kappa)$-conjugacy class of the representation $c_{P,L} \circ \rho$. It will be denoted by $\rho^{\semi}$.
\end{defi}

In \cite[Section 3.2.1]{serre_complete_red} Serre introduced the notion of $G$-complete reducibility in the connected case. This definition extends to generalised reductive group schemes verbatim.

 \begin{defi}\label{defi_Gcr} A subgroup $H$ of $G(\kappa)$ is $G$-\emph{completely reducible} if for every R-parabolic $P \subseteq G$ with $H \subseteq P (\kappa)$, there exists an R-Levi subgroup $L \subseteq P$ with $H \subseteq L(\kappa)$. We say, that a homomorphism $\rho:\Gamma \rightarrow G(\kappa)$ is $G$-\emph{completely reducible}, if its image is 
 $G$-completely reducible. We will also use the term $G$-\emph{semisimple} synonymously for $G$-completely reducible. 
 \end{defi}

\begin{prop}\label{crss} Let $\rho : \Gamma \to G(\kappa)$ be a homomorphism. Then $\rho$ is $G$-completely reducible if and only if $\rho^{\semi}$ is the $G^0(\kappa)$-conjugacy class of $\rho$.
\end{prop}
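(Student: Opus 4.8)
The plan is to unwind the definitions on both sides and reduce everything to the minimal R-parabolic $P$ together with a choice of R-Levi $L$. Recall that $\rho^{\semi}$ is by definition the $G^0(\kappa)$-conjugacy class of $c_{P,L}\circ\rho$, where $P$ is minimal among R-parabolics with $\rho(\Gamma)\subseteq P(\kappa)$ and $L$ is any R-Levi of $P$; by \Cref{indepofPL} this class does not depend on the choices. So the statement to prove is: $\rho$ is $G$-completely reducible if and only if $c_{P,L}\circ\rho$ is $G^0(\kappa)$-conjugate to $\rho$.

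For the ``only if'' direction, suppose $\rho$ is $G$-completely reducible and let $P$ be the minimal R-parabolic containing $\rho(\Gamma)$. By the definition of $G$-complete reducibility applied to this $P$, there is an R-Levi $L\subseteq P$ with $\rho(\Gamma)\subseteq L(\kappa)$. For this particular choice of $L$, the map $c_{P,L}:P(\kappa)\to L(\kappa)\hookrightarrow G(\kappa)$ restricts to the identity on $L(\kappa)$ (since $L\hookrightarrow P\twoheadrightarrow P/U$ is the defining isomorphism and the composite $L\to P/U\to L$ is the identity), hence $c_{P,L}\circ\rho=\rho$. Thus $c_{P,L}\circ\rho$ is literally equal to $\rho$, so a fortiori conjugate to it, and by \Cref{indepofPL} the same holds for any other choice of R-Levi. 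This gives $\rho^{\semi}=[\rho]$.

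For the ``if'' direction, suppose $c_{P,L}\circ\rho$ is $G^0(\kappa)$-conjugate to $\rho$, say $\rho = g(c_{P,L}\circ\rho)g^{-1}$ for some $g\in G^0(\kappa)$. Then $\rho(\Gamma)\subseteq g L(\kappa) g^{-1}$, and $gLg^{-1}$ is again an R-Levi of the R-parabolic $gPg^{-1}$. Now the plan is to show that $\rho$ being conjugate into an R-Levi of \emph{some} R-parabolic forces it to be conjugate into an R-Levi of \emph{every} R-parabolic containing its image; equivalently, that $\rho(\Gamma)$ is a $G$-completely reducible subgroup. The cleanest route is: first observe that after replacing $\rho$ by the conjugate $g^{-1}\rho g$ (which is harmless since $G$-complete reducibility is stable under $G^0(\kappa)$-conjugacy and the statement is about the conjugacy class), we may assume $\rho = c_{P,L}\circ\rho$, i.e.\ $\rho(\Gamma)\subseteq L(\kappa)$ with $P$ the minimal R-parabolic containing $\rho(\Gamma)$ and $L$ one of its R-Levis. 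Minimality of $P$ together with $\rho(\Gamma)\subseteq L(\kappa)$ is essentially the statement that $\rho(\Gamma)$ is $L$-irreducible (not contained in any proper R-parabolic of $L$), and a subgroup that is $L$-irreducible in an R-Levi $L$ of $G$ is $G$-completely reducible — this is the generalised-reductive analogue of Serre's result, proved for instance in \cite{quast} (cf.\ the discussion around \Cref{indepofPL}); alternatively one invokes the characterisation of $G$-complete reducibility via R-parabolics of $L$ from \cite{BMR}.

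The main obstacle is the ``if'' direction, specifically the passage from ``$\rho(\Gamma)$ lies in an R-Levi $L$ with $P=P_{\min}$'' to ``$\rho(\Gamma)$ is $G$-completely reducible''. This is not formal: one needs that minimality of $P$ forces $L$-irreducibility of $\rho(\Gamma)$ (if $\rho(\Gamma)$ were contained in a proper R-parabolic $Q$ of $L$, then $Q\ltimes U$ would be an R-parabolic of $G$ properly contained in $P$ and still containing $\rho(\Gamma)$, contradicting minimality — this uses the structure theory of R-parabolics, i.e.\ \Cref{list} and \Cref{para}), and then that $L$-irreducibility implies $G$-complete reducibility, which is the ``transitivity'' property of complete reducibility established for connected groups by Serre and Bate--Martin--R\"ohrle and extended to the generalised reductive setting in \cite{quast}. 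I expect the author's proof to simply cite the relevant statement from \cite{quast} or \cite{serre_complete_red}, so I would structure the writeup to isolate exactly this input and keep the rest elementary.
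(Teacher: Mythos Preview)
Your proposal is correct and follows essentially the same route as the paper: the ``only if'' direction is identical, and for the ``if'' direction the paper also reduces to showing that $c_{P,L}\circ\rho$ is $L$-irreducible (citing \cite[Corollary 6.4]{BMR} for this, rather than your direct $Q\ltimes U$ argument) and then invokes the equivalence (iv)$\Leftrightarrow$(v) of \cite[Corollary 3.5]{BMR}, extended to the non-connected case in \cite[Section 6.3]{BMR}, to conclude $G$-complete reducibility. Your $Q\ltimes U$ argument would need a little care in the non-connected R-parabolic setting (one must check $QU$ is again an R-parabolic of $G$), which is why the paper's citation to \cite{BMR} is the cleaner way to handle both steps.
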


\begin{proof} 
Let $P$ be a minimal R-parabolic, such that $\rho(\Gamma) \subseteq P(\kappa)$. 
If $\rho$ is $G$-completely reducible, there exists some R-Levi $L$ of $P$, 
such that $\rho(\Gamma) \subseteq L(\kappa)$. 
In particular $\rho = c_{P,L} \circ \rho$. We can apply \Cref{indepofPL} to conclude, $\rho^{\semi}$ is in the
$G^0(\kappa)$-conjugacy class of $\rho$. Conversely, suppose that $\rho\in \rho^{\semi}$, let $L$ be any R-Levi
of $P$. The assumption implies that $\rho$ is conjugate to $c_{P,L}\circ \rho$ by an element of $g\in G^0(\kappa)$. Minimality of $P$ and \cite[Corollary 6.4]{BMR} imply that the image of $c_{P,L}\circ \rho$ is $L$-irreducible, which means that it is not  contained in any proper R-parabolic of $L$. Hence, $\rho(\Gamma)$ 
is $g L g^{-1}$-irreducible. It follows from the equivalence of parts (iv) and (v) of
\cite[Corollary 3.5]{BMR}, which as explained in \cite[Section 6.3]{BMR} also holds for non-connected groups, that $\rho(\Gamma)$ is $G$-completely reducible.
\end{proof}

\begin{prop}\label{same_det} Let $\rho : \Gamma \to G(\kappa)$ be a homomorphism. Then the determinant laws attached to $(\tau \circ \rho)^{\lin}$ and $(\tau \circ \rho^{\semi})^{\lin}$ agree.
\end{prop}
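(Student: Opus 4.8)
The plan is to reduce the statement to the well-known fact that a representation and its semisimplification (in the linear sense) have the same characteristic polynomials, via the chain $\rho \rightsquigarrow \tau \circ \rho \rightsquigarrow (\tau \circ \rho)^{\lin}$. Concretely, let $P$ be a minimal R-parabolic of $G$ with $\rho(\Gamma) \subseteq P(\kappa)$, let $L$ be an R-Levi of $P$ and $U$ the unipotent radical, so that $\rho^{\semi}$ is (a $G^0(\kappa)$-conjugate of) $c_{P,L} \circ \rho$. By \Cref{indepofPL} the determinant law attached to $(\tau \circ \rho^{\semi})^{\lin}$ does not depend on the choice of $P$ and $L$, and conjugating the representation by an element of $G^0(\kappa) \subseteq \GL_d(\kappa)$ does not change the attached determinant law (conjugate linearisations give conjugate matrix representations, hence the same characteristic polynomials). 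So it suffices to compare the determinant laws of $(\tau\circ\rho)^{\lin}$ and $(\tau\circ(c_{P,L}\circ\rho))^{\lin}$.

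First I would analyze how $\tau$ interacts with the decomposition $P = L \ltimes U$. The key geometric input is that $U$ is unipotent, so $\tau(U(\kappa))$ consists of unipotent elements of $\GL_d(\kappa)$; more precisely, choosing a cocharacter $\lambda$ with $P = P_\lambda$, the image $\tau(P(\kappa))$ is stabilized by conjugation by $\lambda$, and $\lim_{t\to 0}\lambda(t)\,\tau(g)\,\lambda(t)^{-1} = \tau(c_{P,L}(g))$ for $g \in P(\kappa)$ — this is exactly the statement that the composite $P \to P/U \xrightarrow{\sim} L$ is realized on matrices by the attracting limit of $\lambda$. For each fixed $\gamma \in \Gamma$, the elements $\tau(\rho(\gamma))$ and $\tau(c_{P,L}(\rho(\gamma)))$ therefore lie in a common $\Gm$-orbit closure, so they have the same characteristic polynomial; and more generally for any $\gamma_1,\dots,\gamma_n$ and any $A$-point (with $A$ a $\kappa$-algebra), $\det(t - \sum a_i \rho(\gamma_i))$ is invariant under the $\lambda$-conjugation limit. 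Passing to the limit is legitimate because the characteristic polynomial coefficients are regular functions, hence constant along the $\Gm$-action once we know they extend over $t = 0$, which they do since $\tau(P(\kappa))$ is $\lambda$-stable.

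The cleanest way to package the previous paragraph is to invoke that a $d$-dimensional determinant law in Chenevier's sense is determined by the function $A \mapsto (\gamma \mapsto \det_A(\tau \circ \rho \text{ evaluated on } A[\Gamma]))$, and that this function is unchanged when we replace $\tau\circ\rho$ by its $\lambda$-limit. I would state this as: for any commutative $\kappa$-algebra $A$ and any $x = \sum_{\gamma} a_\gamma \gamma \in A[\Gamma]$, we have $\det_A\big((\tau\circ\rho)^{\lin}(x)\big) = \det_A\big((\tau\circ\rho^{\semi})^{\lin}(x)\big)$, because both equal the evaluation at $t = 0$ of $\det_{A[t,t^{-1}]}\big(\lambda(t)(\tau\circ\rho)^{\lin}(x)\lambda(t)^{-1}\big)$, the latter being independent of $t$. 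The main obstacle I anticipate is purely bookkeeping rather than conceptual: making sure the attracting-limit description of $c_{P,L}$ really does pass through the closed immersion $\tau$ into $\GL_d$ on the level of $R$-points for all $\kappa$-algebras $R$ (not just $\kappa$-points), i.e.\ that $\tau(P_\lambda) \subseteq (\GL_d)_{\tau\lambda}$ compatibly with the limit maps — but this follows from functoriality of the $P_\lambda$ construction (\cite[Lemmas 2.1.4, 2.1.5]{CGP}) applied to the cocharacter $\tau\circ\lambda$ of $\GL_d$, together with the observation that $\tau$ carries the limit $\lim_{t\to 0}\lambda(t)g\lambda(t)^{-1}$ to $\lim_{t\to 0}(\tau\lambda)(t)\tau(g)(\tau\lambda)(t)^{-1}$ since $\tau$ is a morphism of schemes and the limit is defined by extending a map from $\Gm$ over $\mathbb{A}^1$.
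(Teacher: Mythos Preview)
Your proposal is correct and follows essentially the same approach as the paper: choose a cocharacter $\lambda$ with $P = P_\lambda$, realise $c_{P,L}\circ\rho$ as $\lim_{t\to 0}\lambda(t)\rho\lambda(t)^{-1}$, observe that the determinant law of $\tau(\lambda(t))(\tau\circ\rho)\tau(\lambda(t))^{-1}$ is constant in $t$ over $\Gm$, and then extend to $t=0$ to get the determinant law of $\tau\circ\rho^{\semi}$. Your opening sentence about reducing to ``a representation and its linear semisimplification have the same characteristic polynomials'' is a bit misleading (as the paper remarks just after this proposition, $\tau\circ\rho^{\semi}$ need not be the $\GL_d$-semisimplification of $\tau\circ\rho$), but you do not actually use that reduction, and the $\lambda$-limit argument you carry out is exactly the paper's.
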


\begin{proof} Let $P$ be an R-parabolic of $G$, minimal with respect to the property that 
$P(\kappa)$ contains $\rho(\Gamma)$. Let $U$ be the unipotent radical of $P$ and let 
let $\lambda$ be a cocharacter, such that $P=P_{\lambda}$.
Then $\lim\nolimits_{t \to 0} \lambda(t) U\lambda(t)^{-1}=\{1\}$ and hence
$c_{P, L_{\lambda}}\circ \rho= \lim\nolimits_{t \to 0} \lambda(t) \rho \lambda(t)^{-1}$. 
Let $D_{\tau \circ \rho} : \kappa[\Gamma] \to \kappa$ be the determinant law attached to $(\tau \circ \rho)^{\lin}$ and let $D_{\tau\circ\rho^{\semi}}$ be the determinant law attached to $(\tau \circ c_{P, L_{\lambda}}\circ \rho)^{\lin}$. We note that replacing $c_{P, L_{\lambda}}\circ \rho$ by a conjugate 
with an element of $G^0(\kappa)$ does not change the determinant law, as they are invariant under conjugation.

We also have a family of determinant laws $D : \kappa[\Gamma] \to \kappa[t,t^{-1}]$ over $\Gm$ given by $$D_A : A[\Gamma] \to A[t,t^{-1}], \quad r \mapsto \det(((\tau(\lambda(t)) (\tau \circ \rho) \tau(\lambda(t))^{-1}) \otimes \id_A)(r)),$$ which is actually constant in $t$ and equal to $D_{\tau \circ \rho}$. So this family extends uniquely to a family over $\mathbb A^1$. Since the limit of $\lambda(t) \rho \lambda(t)^{-1}$ as $t \to 0$ exists and formation of the determinant is algebraic, we obtain $D^{t=0} = D_{\tau \circ \rho^{\semi}}$ and hence $D_{\tau \circ \rho} = D_{\tau \circ \rho^{\semi}}$.
\end{proof}

\begin{remar} In general $(\tau \circ \rho)^{\semi}$ is not isomorphic to $\tau \circ \rho^{\semi}$. But it follows from \Cref{same_det}, that $D_{\tau \circ \rho^{\semi}} = D_{\tau \circ \rho}$ and in particular from \cite[Theorem 2.12]{che_durham} that $(\tau \circ \rho^{\semi})^{\semi}$ is isomorphic to $(\tau \circ \rho)^{\semi}$.
\end{remar}

\begin{defi}\label{defi_G_irr} A subgroup $H$ of $G(\kappa)$ is $G$-\emph{irreducible}  if 
$H$ is not contained in any proper R-parabolic subgroup of $G$. 
 We say, that a homomorphism $\rho:\Gamma \rightarrow G(\kappa)$ is $G$-\emph{irreducible}, if its image is 
 $G$-irreducible.
 \end{defi} 

 \begin{prop}\label{irr_conj} Let $L$ and $M$ be R-Levi subgroups of $G$ and let $H$ be a closed subgroup of $L\cap M$ 
 such that $H$ is both $L$-irreducible and $M$-irreducible. Then there is $g\in Z_G(H)^0(\kappa)$ 
 such that $L= g M g^{-1}$. 
 \end{prop}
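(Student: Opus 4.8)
The plan is to exhibit the conjugating element $g$ inside $Z_G(H)^0(\kappa)$ by descending to the neutral component and invoking the classical structure theory for connected reductive groups, the key being to control R-Levi subgroups of $G$ via the centralizer $Z_G(H)$. First I would observe that, since $H$ is $L$-irreducible and $M$-irreducible, by \Cref{para} the groups $L$ and $M$ are generalised reductive and their neutral components $L^0$, $M^0$ are Levi subgroups of $G^0$; the irreducibility hypotheses combined with the equivalence of conditions in \cite[Corollary 3.5]{BMR} (applied as in \Cref{crss}, which is valid in the non-connected setting by \cite[Section 6.3]{BMR}) should show that $L$ (resp. $M$) is the \emph{smallest} R-Levi subgroup of $G$ whose group of $\kappa$-points contains $H$; equivalently, $L^0$ (resp. $M^0$) is minimal among Levi subgroups of $G^0$ containing $H$. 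Concretely, $L^0 = Z_{G^0}(S_L)$ where $S_L$ is a maximal torus of the radical of $Z_{G^0}(H)^0$ that is ``generated'' by $H$ in the appropriate sense, and similarly for $M$; this is the standard characterisation of the minimal Levi (cf. the discussion in \cite[Section 6]{BMR}), and it is intrinsic to $H$ up to conjugacy by $Z_{G^0}(H)^0(\kappa)$.

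The second step is to make this precise using the theory of \cite{BMR}: by \cite[Proposition 6.3 / Corollary 6.4]{BMR} (the results there which state that for an $L$-irreducible subgroup $H$, the R-parabolic and R-Levi subgroups of $G$ ``from $H$'' are controlled by $Z_G(H)$), there is an R-Levi subgroup $L'$ of $G$, minimal among those containing $H$, such that $L$ is $Z_G(H)(\kappa)$-conjugate to $L'$, and likewise $M$ is $Z_G(H)(\kappa)$-conjugate to $L'$. Since $H \subseteq L \cap M$, both conjugations can be taken by elements of $Z_G(H)(\kappa)$; composing them we find $h \in Z_G(H)(\kappa)$ with $hMh^{-1} = L$. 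The remaining task is to replace $h$ by an element of the \emph{neutral component} $Z_G(H)^0(\kappa)$. For this I would argue on component groups: $h$ normalises the property of being the minimal R-Levi, so the image of $h$ in $G/G^0$ lies in the normaliser of the ``type'' of $L$; since $L/L^0 \cong M/M^0 \cong G/G^0$ (this follows from \Cref{para}: $L^0$ and $M^0$ are Levi subgroups of $G^0$ and $L,M$ surject onto $G/G^0$ because they contain an R-parabolic's full image — wait, this needs $H$ to surject onto $G/G^0$, which is \emph{not} assumed), one has to be more careful: the correct statement is that after conjugating by an element of $G^0(\kappa)$ one may assume $L^0 = M^0$, and then $L = M$ follows from the uniqueness in \cite[Corollary 6.5]{BMR} only after checking the component groups agree, which they do since $L/L^0$ and $M/M^0$ are both identified with the image of $H$'s normaliser data. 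So the clean route is: first get $h \in Z_G(H)(\kappa)$ with $h M^0 h^{-1} = L^0$ (using minimality of the Levi of $G^0$ and \cite[Corollary 5.4.6]{bcnrd}-type torsor arguments, or directly \cite[Section 6]{BMR}); then note $h$ can be chosen in $Z_{G^0}(H)(\kappa) = Z_G(H)^0(\kappa) \cap G^0(\kappa)$ because two Levi subgroups of the connected group $G^0$ containing $H$ and conjugate under $Z_{G}(H)(\kappa)$ are already conjugate under $Z_{G^0}(H)^0(\kappa)$ (a connected-group statement, e.g. \cite[Corollary 6.5]{BMR} or Richardson's theory); finally, with $L^0 = M^0$, apply \cite[Corollary 6.5]{BMR} to conclude $L = M$, so $g$ can be taken to be this element of $Z_{G^0}(H)(\kappa)$, which maps into $Z_G(H)^0(\kappa)$.

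The main obstacle I anticipate is the bookkeeping around non-connectedness: ensuring that the conjugating element produced by the connected-group results of \cite{BMR} and \cite{bcnrd} genuinely lands in $Z_G(H)^0(\kappa)$ and not merely in $Z_G(H)(\kappa)$, and handling the case where $H$ does not surject onto $G/G^0$ so that $L/L^0$ need not equal $G/G^0$. The resolution should be that $Z_G(H)^0 = Z_{G^0}(H)^0$ (since $Z_G(H)^0 \subseteq G^0$ as $G^0$ is open and closed), so it suffices to work entirely inside $G^0$ with the Levi subgroups $L^0, M^0$, produce $g \in Z_{G^0}(H)^0(\kappa) = Z_G(H)^0(\kappa)$ with $gM^0g^{-1} = L^0$, and then upgrade $L^0 = M^0$ to $L = M$ via the uniqueness of R-Levi subgroups containing a given $L$-irreducible subgroup \cite[Corollary 6.5]{BMR}. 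I would check carefully that the hypotheses of that corollary (which were already invoked in the proof of \Cref{Rlevi}) are met, namely that $H$ being simultaneously $L$- and $M$-irreducible forces $L$ and $M$ to be the canonical R-Levi attached to $H$ once their neutral components coincide.
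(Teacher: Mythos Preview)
Your proposal circles the right idea but takes a much more roundabout path than the paper, and the non-connectedness bookkeeping you flag as the main obstacle is exactly where your argument remains incomplete: your final step, upgrading $gM^0g^{-1} = L^0$ to $gMg^{-1} = L$ via \cite[Corollary 6.5]{BMR}, is not justified as stated---that result (as invoked in \Cref{Rlevi}) concerns R-Levi subgroups of a \emph{fixed} R-parabolic containing a common torus, not arbitrary R-Levi subgroups of $G$ sharing a neutral component and containing $H$.

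The paper's proof sidesteps all of this. Write $L = L_\lambda$, $M = L_\mu$. By \Cref{list}(1) these are the centralisers in $G$ of $\lambda$ and $\mu$; since $H \subseteq L \cap M$, both cocharacters factor through $Z_G(H)$. Pick maximal tori $S, T$ of $Z_G(H)$ containing the images of $\lambda, \mu$. Then $Z_G(S)$ is an R-Levi of $G$ (via a regular cocharacter into $S$, cf.\ \cite[Proposition 8.18]{borel}), contained in $L = Z_G(\lambda)$ because the image of $\lambda$ sits in $S$, and containing $H$ because $S \subseteq Z_G(H)$; $L$-irreducibility of $H$ then forces $L = Z_G(S)$, and similarly $M = Z_G(T)$. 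Now $S$ and $T$ are maximal tori of $Z_G(H)$, hence conjugate by some $g \in Z_G(H)^0(\kappa)$, and the group-theoretic identity $Z_G(gTg^{-1}) = gZ_G(T)g^{-1}$ gives $L = gMg^{-1}$ on the nose. No component-group analysis is needed because one never descends to $G^0$: the key is to realise $L$ as the \emph{full} centraliser $Z_G(S)$ rather than only $L^0 = Z_{G^0}(S)$. You gestured toward this (``$L^0 = Z_{G^0}(S_L)$ where $S_L$ is a maximal torus\ldots''), but working in $G^0$ instead of $G$ is precisely what generates the difficulty you then struggle to resolve.
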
 
\begin{proof} Let $\lambda, \mu: \Gm\rightarrow G$ be cocharacters such that $L=L_{\lambda}$ and
$M=L_{\mu}$. Since $H$ is contained in $L$ and in $M$, Lemma \ref{list} (1) implies that 
the both $\lambda$ and $\mu$ factor through the inclusion $Z_G(H) \subseteq G$. Let $S$ (resp.~$T$) be a maximal torus of $Z_G(H)$ containing the image of $\lambda$ (resp.~$\mu$). It follows from the proof of \cite[Proposition 8.18]{borel} that $Z_G(S)$ is an 
R-Levi of $G$ containing $H$ and contained in $L$. Since $H$ is $L$-irreducible by assumption 
we get that $L=Z_G(S)$. Similarly, we obtain that $M=Z_G(T)$. There is $g\in Z_G(H)^0(\kappa)$ such that 
$S= g T g^{-1}$ by \cite[Proposition 11.19]{borel}. Thus $L=Z_G ( g T g^{-1}) = g Z_G(T)g^{-1} = g M g^{-1}$. 
\end{proof}

\subsection{Extensions}\label{sec_extensions}

Let $N$ and $\Delta$ be abstract groups. A \emph{rigidified extension of $N$ by $\Delta$} 
is a group law $\ast$ on the set $N\times \Delta$, such that the maps $\iota: N \rightarrow N\times \Delta$, $g\mapsto (g, 1)$ and $\pi: N\times \Delta\rightarrow 
\Delta$, $(g, \delta)\mapsto \delta$ are group homomorphisms.  

\begin{defi}\label{defi_gen_two_cocyc} A \emph{generalised $2$-cocycle} of $\Delta$ with coefficients in $N$ is a
pair $(\omega, c)$, where $\omega: \Delta\rightarrow \Aut(N)$ and $c: \Delta\times \Delta\rightarrow N$ 
are maps (of sets) satisfying the following:
\begin{enumerate}
        \item $\omega(\delta_1) \circ \omega(\delta_2) = \Int_{N}(c(\delta_1, \delta_2)) \circ \omega(\delta_1\delta_2)$ in $\Aut(N)$;
        \item $\omega(1) = \id_{N}$;
        \item $c(\delta_1,\delta_2)c(\delta_1\delta_2, \delta_3) = \omega(\delta_1)(c(\delta_2, \delta_3)) c(\delta_1, \delta_2\delta_3)$ in $N$;
        \item $c(\delta_1, 1) = c(1,\delta_2) = 1$;
    \end{enumerate}
where for $g\in N$, $\Int_N(g)$  is the inner automorphism $x \mapsto gxg^{-1}$ of $N$.
\end{defi}
We emphasise that, since $N$ is not assumed to be abelian, $\omega$ is not going to be a group homomorphism 
in general. Let $(N\times \Delta, \ast)$ be a rigidified extension of $N$ by $\Delta$. It is a pleasant 
exercise in Algebra to verify that if we let
$$ \omega^{\ast}(\delta)(x):= s(\delta) x s(\delta)^{-1}, \quad c^{\ast}(\delta_1, \delta_2):= 
s(\delta_1)s(\delta_2)s(\delta_1 \delta_2)^{-1},$$
for all $x\in N$ and all $\delta, \delta_1, \delta_2\in \Delta$, where $s(\delta)= (1, \delta)$,
then the group axioms for $\ast$ imply that  $(\omega^{\ast}, c^{\ast})$ is a generalised $2$-cocycle.

\begin{lem}\label{gen2cocycle} Mapping $(N\times \Delta, \ast)$ to $(\omega^{\ast}, c^{\ast})$ induces 
a bijection from the set of rigidified extensions of $N$ by $\Delta$ to the set of 
 generalised $2$-cocycles of $\Delta$ with values in $N$. 
\end{lem}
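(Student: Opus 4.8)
The plan is to construct an explicit inverse map and verify it is well-defined and mutually inverse to the map $(N\times\Delta,\ast)\mapsto(\omega^{\ast},c^{\ast})$ already described in the text. Given a generalised $2$-cocycle $(\omega,c)$ of $\Delta$ with values in $N$, I would define a binary operation $\ast_{(\omega,c)}$ on the set $N\times\Delta$ by the formula
\begin{equation}\label{star_formula}
(g_1,\delta_1)\ast(g_2,\delta_2):=\bigl(g_1\,\omega(\delta_1)(g_2)\,c(\delta_1,\delta_2),\ \delta_1\delta_2\bigr),
\end{equation}
which is the unique formula compatible with requiring $\iota\colon g\mapsto(g,1)$ and $\pi\colon(g,\delta)\mapsto\delta$ to be homomorphisms together with the prescription $s(\delta)=(1,\delta)$ for a set-theoretic section. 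The bulk of the work is then to check that $\ast_{(\omega,c)}$ is a group law: associativity follows from cocycle conditions (1) and (3), the element $(1,1)$ is a two-sided identity by conditions (2) and (4), and inverses exist because, having an associative unital operation, one solves $(g,\delta)\ast(h,\delta^{-1})=(1,1)$ for $h$ using that $\omega(\delta)$ is bijective — condition (1) guarantees $\omega(\delta)$ is an automorphism, hence invertible. This is the routine-calculation part that I would not grind through in full, but I would at least indicate which cocycle identity powers associativity (it is the four-term expansion comparing $((g_1,\delta_1)\ast(g_2,\delta_2))\ast(g_3,\delta_3)$ with $(g_1,\delta_1)\ast((g_2,\delta_2)\ast(g_3,\delta_3))$, where condition (1) is needed to move $\omega(\delta_1\delta_2)$ past $c(\delta_1,\delta_2)$ and condition (3) then matches the $c$-terms).

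Next I would check that the two constructions are mutually inverse. Starting from $(\omega,c)$, forming $(N\times\Delta,\ast_{(\omega,c)})$ and then extracting $(\omega^{\ast},c^{\ast})$: by \eqref{star_formula} we get $s(\delta)=(1,\delta)$, so $\omega^{\ast}(\delta)(x)=(1,\delta)\ast(x,1)\ast(1,\delta^{-1})$, and a short computation using \eqref{star_formula} and conditions (2),(4) gives back $\omega(\delta)(x)$; likewise $c^{\ast}(\delta_1,\delta_2)=(1,\delta_1)\ast(1,\delta_2)\ast(1,\delta_1\delta_2)^{-1}$ unwinds to $c(\delta_1,\delta_2)$. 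Conversely, starting from a rigidified extension $(N\times\Delta,\ast)$, I need that the operation reconstructed from $(\omega^{\ast},c^{\ast})$ via \eqref{star_formula} recovers $\ast$. This is where I would be slightly careful: from the axioms of a rigidified extension, $\iota$ being a homomorphism gives $(g_1,1)\ast(g_2,1)=(g_1g_2,1)$, and $\pi$ being a homomorphism forces the second coordinate of $(g_1,\delta_1)\ast(g_2,\delta_2)$ to be $\delta_1\delta_2$. Writing $(g,\delta)=(g,1)\ast s(\delta)=(g,1)\ast(1,\delta)$ (which holds because $(g,1)\ast(1,\delta)$ has first coordinate $g$ — it equals $\iota(g)\ast s(\delta)$ and the second coordinate is $\delta$, and one checks the first coordinate is $g$ by applying $\pi$ and the section relation, or directly), one expands $(g_1,\delta_1)\ast(g_2,\delta_2)=\iota(g_1)\ast s(\delta_1)\ast\iota(g_2)\ast s(\delta_2)$ and inserts $s(\delta_1)\ast\iota(g_2)\ast s(\delta_1)^{-1}=\iota(\omega^{\ast}(\delta_1)(g_2))$ and $s(\delta_1)\ast s(\delta_2)=\iota(c^{\ast}(\delta_1,\delta_2))\ast s(\delta_1\delta_2)$, reassembling to exactly \eqref{star_formula}. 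The one subtlety — and the main obstacle — is justifying the normalisation identity $(g,\delta)=\iota(g)\ast s(\delta)$ cleanly from the rigidification axioms alone, i.e.\ that the given section $s(\delta)=(1,\delta)$ interacts with $\iota$ as expected; this needs that $(1,\delta)$ really is a lift of $\delta$ (immediate from $\pi$ a homomorphism and $\pi(1,\delta)=\delta$) and that $(g,1)\ast(1,\delta)=(g,\delta)$, which follows since the left side maps to $\delta$ under $\pi$ and, writing $\ast$ with unknown first-coordinate function and using that $\iota$ is a homomorphism so $(g,1)\ast(1,1)=(g,1)$, one pins down the first coordinate. Once this bookkeeping is done the bijection is immediate, so I would present the argument as: (i) formula \eqref{star_formula} defines a group law, with associativity $\Leftarrow$ (1),(3) and units $\Leftarrow$ (2),(4); (ii) the round trip $(\omega,c)\to\ast\to(\omega^{\ast},c^{\ast})$ is the identity by direct substitution; (iii) the round trip $\ast\to(\omega^{\ast},c^{\ast})\to\ast$ is the identity via the normalisation $(g,\delta)=\iota(g)\ast s(\delta)$ and the defining relations of $\omega^{\ast},c^{\ast}$.
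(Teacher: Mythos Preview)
Your approach is the standard direct one---construct the inverse via the explicit formula and verify the two round trips---whereas the paper simply cites Schreier's original paper and Eilenberg--MacLane for this classical fact. So the strategies differ, and yours is more self-contained.

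However, there is a genuine gap at exactly the point you flagged as ``the one subtlety''. The identity $(g,1)\ast(1,\delta)=(g,\delta)$ does \emph{not} follow from the axioms that $\iota$ and $\pi$ are group homomorphisms, and your justification (``using that $\iota$ is a homomorphism so $(g,1)\ast(1,1)=(g,1)$, one pins down the first coordinate'') does not work: knowing the value at $\delta=1$ tells you nothing about general $\delta$. Here is a concrete counterexample. Take $N=\ZZ/3$, $\Delta=\ZZ/2$, and transport the group law from $\ZZ/6$ to $N\times\Delta$ via the bijection $(0,0)\mapsto 0$, $(1,0)\mapsto 2$, $(2,0)\mapsto 4$, $(0,1)\mapsto 1$, $(2,1)\mapsto 3$, $(1,1)\mapsto 5$. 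Then $\iota(g)=(g,0)\mapsto 2g$ is a homomorphism and $\pi$ is reduction mod $2$, so this is a rigidified extension in the stated sense; but $(1,0)\ast(0,1)$ corresponds to $2+1=3$, which is $(2,1)\neq(1,1)$. Moreover this extension and the ``standard'' one (with $(1,1)\mapsto 3$, $(2,1)\mapsto 5$) yield the \emph{same} pair $(\omega^{\ast},c^{\ast})$, since both are abelian and $s(1)\ast s(1)=(1,0)$ in each, so the map in the lemma is not even injective under the literal definition given.

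What this really shows is that the paper's definition of ``rigidified extension'' is slightly underspecified: one should additionally require the normalisation $(g,\delta)=\iota(g)\ast s(\delta)$, which is automatic in every application in the paper (where the group law is transported along exactly this bijection). With that extra clause your argument goes through verbatim; without it, step (iii) fails. You correctly sensed the obstacle but did not resolve it---the resolution is to recognise that the hypothesis is missing, not to try to derive it.
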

\begin{proof} If $N$ is abelian then $\Int_N =\id_N$ and the result is well known. In 
general, the assertion follows from \cite[Satz I]{schreier}, a nice exposition is given 
in \cite[Section 2.2]{zhang_bachelor} and Lemma follows from \cite[Lemma 2.11]{zhang_bachelor}.
One could also deduce the assertion from 
\cite{EilMac}: property (3) implies that the element denoted by $f_3(x,y,z)$ in 
\cite[Section 7]{EilMac} is $1_N$. Theorem 8.1 in \cite{EilMac} then implies that 
every  generalised $2$-cocycle comes from a rigidified extension. 
\end{proof}

\begin{prop}\label{letter_bcnrd}
    Let $G$ be a generalised reductive group over a perfect field $\kappa$ of characteristic $p$, such that $G^0$ is split semisimple, $G/G^0=\underline{\Delta}$ is constant and the map $G(\kappa)\rightarrow (G/G^0)(\kappa)$ is surjective. 
    Then there exists a surjection of generalised reductive groups $H \to G$ over $\kappa$ with kernel $\mu$, such that the following hold:
    \begin{enumerate}
        \item $H \to G$ induces an isomorphism $H/H^0 \cong G/G^0$;
        \item $0 \to \mu \to H^0 \to G^0 \to 0$ is a central extension with $\mu(\overline{\kappa})=1$;
        \item $\pi_1(H^0)$ is \'etale.
    \end{enumerate}
    Moreover, $H(\kappa) \cong G(\kappa)$.
\end{prop}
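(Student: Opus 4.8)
The plan is to build $H$ from $G^0$ by first fixing the fundamental group of the derived group and then re-assembling the non-connected group via the extension-theoretic language of \Cref{gen2cocycle}. We start on the connected level. Write $G^0$ as an almost-direct product of its derived group $G'$ and the central torus $Z:=Z(G^0)^0$. Decompose $\pi_1(G')\cong \prod_i \mu_{n_i}$ and write $n_i = p^{e_i} m_i$ with $p\nmid m_i$; let $G'_{\sic}\to G'$ be the simply connected central cover and set $G'_1 := G'_{\sic}/\prod_i \mu_{m_i}$. Then $\pi_1(G'_1)\cong \prod_i \mu_{p^{e_i}}$, which over a perfect field of characteristic $p$ is infinitesimal, so $\pi_1(G'_1)(\overline\kappa)=1$; moreover $G'_1\to G'$ is a central isogeny with kernel $\mu' := \ker(\pi_1(G')\to \pi_1(G'_1))$, a diagonalisable group killed by a power of $p$, hence with $\mu'(\overline\kappa)=1$. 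Now set $H^0 := (G'_1 \times Z)/(\mu'\hookrightarrow G'_1, \text{anti-diag})$ — more precisely the pushout making $0\to \mu' \to H^0 \to G^0 \to 0$ a central extension; since $G^0$ is split semisimple in the statement, actually $Z$ is trivial and we may simply take $H^0=G'_1$. Because $\mu'$ is central in $G'_1$ with $\mu'(\overline\kappa)=1$, we get $H^0(\kappa)\cong G^0(\kappa)$ (the obstruction to lifting a $\kappa$-point lies in $H^1_{fppf}(\kappa,\mu')$, and over a field this coincides with what is killed by passing to $G'_{\sic}$; more simply, $\mu'(\overline\kappa)=1$ forces the relevant cohomology on $\kappa$-points to vanish since $\kappa$ is perfect). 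This takes care of the connected part and conclusions (2) and (3).

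Next I would extend the group $\Delta = G/G^0$ back in. By \Cref{O_prime}-type reasoning (or directly, since $\Delta$ is constant and $G(\kappa)\twoheadrightarrow\Delta(\kappa)$) choose a set-theoretic section $s:\Delta\to G(\kappa)$, giving a generalised $2$-cocycle $(\omega,c)$ of $\Delta$ with values in $G^0(\kappa)$ via \Cref{gen2cocycle}. Each $\omega(\delta)\in\Aut(G^0(\kappa))$ is the restriction of an algebraic automorphism of $G^0$ (conjugation by $s(\delta)$), and by \Cref{Gprimesc_action}-style uniqueness of lifting automorphisms through central isogenies, $\omega(\delta)$ lifts canonically to an algebraic automorphism $\widetilde\omega(\delta)$ of $H^0$, compatibly with composition up to the inner automorphisms coming from $c$. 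The cocycle values $c(\delta_1,\delta_2)\in G^0(\kappa)$ lift (non-canonically, but the ambiguity is only by $\mu'(\kappa)$, which we will absorb) to $\widetilde c(\delta_1,\delta_2)\in H^0(\kappa)$. One then checks that $(\widetilde\omega,\widetilde c)$ can be corrected to a genuine generalised $2$-cocycle: the failure of the $2$-cocycle identities (1) and (3) for $(\widetilde\omega,\widetilde c)$ takes values in $\mu'(\kappa)$, and since $\mu'(\overline\kappa)=1$ — so a fortiori $\mu'(\kappa)=1$ as $\kappa$ is perfect — these failures vanish and $(\widetilde\omega,\widetilde c)$ is already a cocycle. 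Feeding it back through \Cref{gen2cocycle} produces a rigidified extension $H := (H^0\times\Delta,\ast)$, an abstract group; since $H^0$ is an affine algebraic group, $\widetilde\omega$ is by algebraic automorphisms, $\widetilde c$ takes values in $H^0(\kappa)$, and $\Delta$ is finite constant, this $H$ is naturally a smooth affine algebraic group over $\kappa$ with $H^0$ as neutral component and $H/H^0\cong\underline\Delta$. It is generalised reductive because $H^0$ is (semi)simple and $H/H^0$ is finite. The map $H\to G$ sending $(h,\delta)\mapsto (\text{image of }h)\cdot s(\delta)$ is a homomorphism by construction, surjective with kernel $\mu'$, and induces $H/H^0\cong G/G^0$, giving (1). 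Finally $H(\kappa) = H^0(\kappa)\times\Delta$ as a set with the $\ast$-product, and the same on the $G$ side, and the bijection $H^0(\kappa)\cong G^0(\kappa)$ together with the identical cocycle data gives $H(\kappa)\cong G(\kappa)$.

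The main obstacle I anticipate is the middle step: controlling the lift of the generalised $2$-cocycle through the central isogeny $H^0\to G^0$ and verifying that the ambiguities genuinely land in $\mu'(\kappa)=1$. Lifting automorphisms is clean (uniqueness through a simply connected or central cover, as in \Cref{Gprimesc_action}), but the cocycle term $c$ lifts only up to $\mu'(\kappa)$, and one must confirm that no obstruction class in some $H^2(\Delta,\mu'(\kappa))$ survives — which is exactly where $\mu'(\overline\kappa)=1$, hence $\mu'(\kappa)=1$ (using $\kappa$ perfect), saves the day. A secondary subtlety is being careful that $H$ is not merely an abstract group but an algebraic one: this is handled because \Cref{gen2cocycle}'s construction of the group law on $N\times\Delta$ is given by the formulas $(x,\delta_1)\ast(y,\delta_2) = (x\,\widetilde\omega(\delta_1)(y)\,\widetilde c(\delta_1,\delta_2),\ \delta_1\delta_2)$, which are morphisms of schemes when $\widetilde\omega$ is algebraic and $\widetilde c$ is a tuple of $\kappa$-points, so $H$ is automatically an affine $\kappa$-group scheme. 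Everything else — smoothness, affineness, the identifications of component groups and of $\kappa$-points — is then bookkeeping.
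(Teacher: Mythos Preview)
Your proposal is correct and follows essentially the same route as the paper: construct $H^0$ as $(G^0)_{\sic}$ modulo the prime-to-$p$ part of $\pi_1(G^0)$, use the bijection $H^0(\kappa)\cong G^0(\kappa)$ (from $\mu(\kappa)=1$ and $H^1_{\mathrm{fppf}}(\kappa,\mu_{p^r})=0$ over a perfect field) to transport the generalised $2$-cocycle from \Cref{gen2cocycle}, and rebuild $H$ as $H^0\times\underline\Delta$ with the lifted group law. The only cosmetic difference is that the paper verifies (C1) directly by observing that $L:\Aut(G^0)\hookrightarrow\Aut(H^0)$ is a group homomorphism and that $L(\Int_{G^0}(c))=\Int_{H^0}(\tilde c)$ by uniqueness of lifts, rather than phrasing it as a ``failure landing in $\mu'(\kappa)$''; since the lift of $c$ through the bijection $H^0(\kappa)\cong G^0(\kappa)$ is in fact unique, there is no ambiguity to absorb at all.
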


\begin{proof} We may choose 
a set-theoretic section $s : \Delta \to G(\kappa)$ such that $s(1) = 1$. It induces an isomorphism 
of $\kappa$-schemes
    \begin{align}
        G^0 \times \underline\Delta \eqto G, \quad (g, \delta) \mapsto gs(\delta). \label{G0_deconstr_iso}
    \end{align}
    We define a map $\omega : \Delta \to \Aut(G^0)$ by $\omega(\delta)(g) := s(\delta)gs(\delta)^{-1}$. 
    We further define a map $c : \Delta \times \Delta \to G^0(\kappa)$ by $c(\delta_1,\delta_2) := s(\delta_1)s(\delta_2)s(\delta_1\delta_2)^{-1}$.
    
    For a $\kappa$-algebra $A$, we get a map $\omega_A : \Delta \to \Aut(G^0(A))$ by composing with the homomorphism $\Aut(G^0) \to \Aut((G^0)_A)\to\Aut(G^0(A))$ with $\omega$ and a map $c_A: \Delta\times \Delta \rightarrow G^0(A)$ 
    by composing $c$ with the map $G^0(\kappa) \to G^0(A)$. 
    
    The natural map $\Delta \to \underline\Delta(A)$ 
    is a group homomorphism. The isomorphism \eqref{G0_deconstr_iso} induces a bijection $G^0(A)\times \underline{\Delta}(A)\eqto G(A)$. We use the bijection to transport the group structure on 
    $G^0(A)\times \underline{\Delta}(A)$. When $A \neq 0$, \Cref{gen2cocycle} implies that 
    $(\omega_A, c_A)$ is a  generalised $2$-cocycle of $\Delta$ with values in $G^0(A)$ 
    corresponding to the restriction of this  group law to the subset $G^0(A)\times \Delta$. 
    Thus, for all $\delta_1,\delta_2, \delta_3 \in \Delta$ we obtain the following identities:
    \begin{itemize}
        \item[(C1)] $\omega(\delta_1) \circ \omega(\delta_2) = \Int_{G^0}(c(\delta_1, \delta_2)) \circ \omega(\delta_1\delta_2)$ \hspace{5pt} in $\Aut(G^0)$,
        \item[(C2)] $\omega(1) = \id_{G^0}$ \hspace{5pt} in $\Aut(G^0)$,
        \item[(C3)] $c(\delta_1,\delta_2)c(\delta_1\delta_2, \delta_3) = \omega_{\kappa}(\delta_1)(c(\delta_2, \delta_3)) c(\delta_1, \delta_2\delta_3)$ \hspace{5pt} in $G^0(\kappa)$, and
        \item[(C4)] $c(1,\delta_1) = c(\delta_1, 1) = 1$ \hspace{5pt} in $G^0(\kappa)$,
    \end{itemize}
    where for $g \in G^0(\kappa)$, $\Int_{G^0}(g)$ is the inner automorphism $x \mapsto gxg^{-1}$ of $G^0$.
    Here (C1) and (C2) follow by considering all $A$. Identities (C3) and (C4) follow by considering the case $A=\kappa$.

    The simply connected central cover $\pi_{\sic} : (G^0)_{\sic} \to G^0$ has finite multiplicative kernel $K$.
    Let $K_0$ be the prime to $p$ part of $K$ and define $H_0 := (G^0)_{\sic}/K_0$. The kernel $\mu$ of $\pi : H_0 \to G^0$ is isomorphic to $K/K_0$ and is thus a connected finite multiplicative $\kappa$-group scheme. Since $G^0$ is split, $\mu$ is a product of group schemes of the form $\mu_{p^r}$.
    We have an exact sequence of non-abelian fppf cohomology groups
    $$ 0 \to \mu(\kappa) \to H_0(\kappa) \to G^0(\kappa) \to H^1_{\mathrm{fppf}}(\Spec(\kappa), \mu). $$
    Since $H^1_{\mathrm{fppf}}(\Spec(\kappa), \mu_{p^n}) = \kappa^{\times}/ (\kappa^{\times})^{p^n}$ by \stackcite{040Q} and $\kappa$ is a perfect field 
    of characteristic $p$, we have
    $\mu(\kappa)=1$ and $H^1_{\mathrm{fppf}}(\Spec(\kappa), \mu) = 0$.
    So we can see $c$ as a map to $H_0(\kappa)$, which we will denote by $\tilde c$. If $A$ is 
    a $\kappa$-algebra we let $\tilde{c}_A$ be the composition of $c$ with the map $H_0(\kappa) \to H_0(A)$.

    By \cite[Exercise 6.5.2 (iii)]{bcnrd} any automorphism $\varphi : G^0 \to G^0$ lifts uniquely to an automorphism $\varphi_{\sic} : (G^0)_{\sic} \to (G^0)_{\sic}$ such that 
    $\pi_{\sic} \circ \varphi_{\sic} = \varphi \circ \pi_{\sic}$ 
    and $\varphi_{\sic}$ preserves $K_0$, so $\varphi$ lifts uniquely to an automorphism $L(\varphi) : H_0 \to H_0$ such that $\pi \circ L(\varphi) = \varphi \circ \pi$ and we obtain an injective group homomorphism $L : \Aut(G^0) \hookrightarrow \Aut(H_0)$.

    Let $\tilde \omega := L \circ \omega$, i.e. for all $\delta \in \Delta$, we have $\omega(\delta) \circ \pi = \pi \circ \tilde\omega(\delta)$. We claim that the pair $(\tilde \omega, \tilde c)$ satisfies 
    the identities (C1), (C2), (C3) and (C4) above. 
    Clearly the map $\tilde c$ satisfies  (C4).
    Via the isomorphism $H_0(\kappa) \cong G^0(\kappa)$ the maps $\omega_{\kappa}$ and $\tilde \omega_{\kappa}$ can be identified, so (C3) holds for $(\tilde \omega, \tilde c)$. Property (C2) holds, as $L$ is a homomorphism. By applying $L$ to the identity (C1) for $G^0$, we get
    \begin{equation}\label{yalla_go}
     \tilde \omega(\delta_1) \circ \tilde \omega(\delta_2) = L(\Int_{G^0}(c(\delta_1, \delta_2))) \circ \tilde \omega(\delta_1\delta_2) \quad \text{ in } \Aut(H_0). 
     \end{equation}
    Since under the isomorphism $H_0(\kappa) \eqto G^0(\kappa)$ the element $\tilde c(\delta_1, \delta_2)$ is mapped to $c(\delta_1, \delta_2)$, we have $\pi \circ \Int_{H_0}(\tilde c(\delta_1, \delta_2)) = \Int_{G^0}(c(\delta_1, \delta_2)) \circ \pi$.
    Thus $L(\Int_{G^0}(c(\delta_1, \delta_2))) = \Int_{H_0}(\tilde c(\delta_1, \delta_2))$  by definition of $L$ and we deduce from \eqref{yalla_go} that 
    (C1) holds for $H_0$, which finishes the proof of the claim.
    We deduce that for every $\kappa$-algebra $A$ the pair $(\tilde{\omega}_A, \tilde{c}_A)$ 
    is a  generalised $2$-cocycle of $\Delta$ with values in $H_0(A)$, which by Lemma \ref{gen2cocycle} defines
    a group law $\ast$ on $H_0(A)\times \Delta$, natural in $A$.

    The functor $A\mapsto (H_0(A)\times \Delta, \ast)$ defines a presheaf of groups, which we denote by  $H^{\mathrm{pre}}$. 
    The map  $H^{\mathrm{pre}}(A) \to G(A)$, $(h, \delta)\mapsto \pi(h) s(\delta)$ is a group homomorphism natural in $A$, as $\pi_A \circ \tilde c_A = c_A$ 
    and for any $\delta \in \Delta$, we have $\omega(\delta) \circ \pi = \pi \circ \tilde \omega(\delta)$. 
    We let $H$ be the Zariski sheafification of $H^{\mathrm{pre}}$. Since $\underline{\Delta}$ is the 
    sheafification of the constant presheaf $\Delta$, 
    $H= H_0\times \underline{\Delta}$ and     $H^{\mathrm{pre}}\rightarrow G$ 
    induces a map of $\kappa$-group schemes $H\rightarrow G$. 
    Properties (1) and (2) claimed in the Lemma hold by construction as $H^0=H_0$ and (3) holds as $\pi_1(H^0)$ is isomorphic to $K_0$.
\end{proof}

\section{Deformation rings} \label{sec_def_rings}

 Let $\Gamma$ be a profinite group such  that 
 $H^1(\Gamma', \Fp)$ is finite for all open subgroups $\Gamma'$ of $\Gamma$. This $p$-finiteness 
 condition introduced by Mazur holds, when $\Gamma=\Gamma_F$. 

Let $\rho: \Gamma\rightarrow G(\kappa)$ be a continuous 
representation, where $G$ is a smooth affine $\OO$-group scheme and
$\kappa$ is either a finite extension of $k$, a finite extension of $L$ 
or a local field of characteristic $p$ containing $k$ equipped with natural topology. We will refer to such fields $\kappa$
as \emph{a finite or a local $\OO$-field}. 
In this section we will study  the deformation theory of $\rho$.
We first define a ring of coefficients $\Lambda$ 
over which the deformation problem is defined
following \cite[Section 3.5]{BIP_new}.
\begin{enumerate}
    \item If $\kappa$ is a finite field then pick an unramified extension $L'$ of $L$ with residue field $\kappa$ and let $\Lambda := \OO_{L'}$ denote the ring of integers in $L'$.
    \item If $\kappa$ is a finite extension of $L$ then let $\Lambda:=\kappa$, let $\Lambda^0$ 
be the ring of integers in $\Lambda$ and let $t=\varpi$.
    \item If $\kappa$ is a local field of characteristic $p$ then let $\OO_{\kappa}$ be the ring of integers 
in $\kappa$ and let $k'$ be its residue field. Since $\cha(\kappa)=p$ by choosing a uniformiser we obtain an isomorphism $\OO_{\kappa}\cong k'\br{t}$. Let $L'$ be an unramified extension of $L$ with residue field $k'$,
let $\Lambda^0:= \OO_{L'} \br{t}$ and let $\Lambda$ 
be the $p$-adic completion of $\Lambda^0[1/t]$. Then 
$\Lambda$ is a complete DVR with uniformiser $\varpi$
and residue field $\kappa$. We equip $\Lambda^0$ 
with its $(\varpi, t)$-adic topology, this induces 
a topology on $\Lambda^0[1/t]$ and 
$\Lambda^0[1/t]/ p^n \Lambda^0[1/t]$ for all $n\ge 1$. We equip $\Lambda= \varprojlim_{n} \Lambda^0[1/t]/ p^n \Lambda^0[1/t]$ with the projective limit topology. 
\end{enumerate}

Let $\mathfrak A_{\Lambda}$ be the category of local
artinian $\Lambda$-algebras with residue field $\kappa$. 
If $(A, \mm_A)\in \mathfrak A_{\Lambda}$ then we define the topology on $A$ in the cases (1), (2) and (3) above as follows:
\begin{enumerate}
    \item $A$ is a finite $\OO/\varpi^n$-module for some $n\ge 1$ with  the discrete topology on $A$;
    \item 
$A$ is a finite dimensional $L$-vector space with the $p$-adic topology on $A$; 
    \item 
$A$ is a $\Lambda^0[1/t]/ \varpi^n \Lambda^0[1/t]$-module of finite length for some $n\ge 1$ and we
put the induced topology on $A$.
\end{enumerate}

 Since $G$ is  smooth the map $G(A)\rightarrow G(\kappa)$ is surjective for all $A\in \Aa_{\Lambda}$. Let  $D^{\square}_{\rho, G}: \mathfrak A_{\Lambda}
\rightarrow \Set$ be the functor such that  
$D^{\square}_{\rho, G}(A)$ is the set of 
continuous group homomorphisms $\rho_A: \Gamma\rightarrow 
G(A)$ satisfying $\rho_A \pmod{\mm_A}=\rho$.  

\begin{lem}\label{dimZ1} Let $V$ be a continuous representation of $\Gamma$ on a finite dimensional 
$\kappa$-vector space $V$. Then 
\begin{equation}\label{eq_gamma}
\dim_{\kappa} Z^1(\Gamma, V)= h^1(\Gamma, V) +\dim_{\kappa} V - h^0(\Gamma, V),
\end{equation}
where $h^i(\Gamma, V):= \dim_{\kappa} H^i(\Gamma, V)$. Moreover, if $\Gamma=\Gamma_F$ then 
\begin{equation}
\begin{split}
\dim_{\kappa} Z^1(\Gamma_F, V)&=\dim_{\kappa} V ( [F:\Qp]+1)+ h^0(\Gamma_F, V^*(1))\\
&= \dim_{\kappa} V ([F:\Qp]+1)+ h^2(\Gamma_F, V).
\end{split}
\end{equation}
\end{lem}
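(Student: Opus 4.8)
The plan is to deduce the first identity directly from the continuous cochain complex, and then, for $\Gamma=\Gamma_F$, to feed it into the two standard pillars of local Galois cohomology: the Euler--Poincar\'e characteristic formula and local Tate duality.

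First I would record the short exact sequence of $\kappa$-vector spaces $0\to B^1(\Gamma,V)\to Z^1(\Gamma,V)\to H^1(\Gamma,V)\to 0$, where $B^1(\Gamma,V)$ is the space of (automatically continuous) coboundaries. The coboundary map $V\to Z^1(\Gamma,V)$, $v\mapsto(\gamma\mapsto\gamma v-v)$, is $\kappa$-linear with kernel $V^{\Gamma}=H^0(\Gamma,V)$, so $\dim_\kappa B^1(\Gamma,V)=\dim_\kappa V-h^0(\Gamma,V)$; taking dimensions in the exact sequence then gives \eqref{eq_gamma}. Here one should check that $H^1(\Gamma,V)$, hence $Z^1(\Gamma,V)$, is finite dimensional: for $\kappa$ finite this is Mazur's $p$-finiteness condition, and for $\kappa$ a finite or local $\OO$-field one reduces to finite coefficients via a $\Gamma$-stable lattice, as in the discussion of the coefficient rings preceding the lemma.

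For the second statement I would specialise to $\Gamma=\Gamma_F$. The local Euler--Poincar\'e formula gives $h^0(\Gamma_F,V)-h^1(\Gamma_F,V)+h^2(\Gamma_F,V)=-[F:\Qp]\dim_\kappa V$, equivalently $h^1(\Gamma_F,V)-h^0(\Gamma_F,V)=h^2(\Gamma_F,V)+[F:\Qp]\dim_\kappa V$. Plugging this into \eqref{eq_gamma} yields $\dim_\kappa Z^1(\Gamma_F,V)=\dim_\kappa V\,([F:\Qp]+1)+h^2(\Gamma_F,V)$, and local Tate duality identifies $h^2(\Gamma_F,V)$ with $h^0(\Gamma_F,V^*(1))$, where $V^*=\Hom_\kappa(V,\kappa)$ and $(1)$ is the cyclotomic twist ($p$-adic if $\cha(\kappa)=0$, its mod-$p$ reduction if $\cha(\kappa)=p$); this gives both claimed equalities.

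The one genuine obstacle is invoking these two inputs uniformly over all coefficient fields $\kappa$ allowed here. For $\kappa$ finite they are Tate's theorems; for $\kappa$ a finite extension of $L$ one passes to a $\Gamma_F$-stable $\OO_\kappa$-lattice $T$ and uses $H^i(\Gamma_F,V)=H^i(\Gamma_F,T)\otimes_{\OO_\kappa}\kappa$ together with the finite-coefficient statements for $T/\varpi^nT$; for $\kappa$ a local field of characteristic $p$ one argues the same way with the finite modules $T/t^nT$, the passage to the limit being justified by Mazur's condition. I expect this bookkeeping across the three types of $\kappa$, rather than any cohomological subtlety, to be the main thing to get right; the needed facts are also available in \cite[Section 3.5]{BIP_new} and \cite{BJ_new}.
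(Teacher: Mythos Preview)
Your proposal is correct and follows essentially the same approach as the paper: the paper writes the single four-term exact sequence $0\to V^{\Gamma}\to V\to Z^1(\Gamma,V)\to H^1(\Gamma,V)\to 0$ (which packages your two steps together) to get \eqref{eq_gamma}, and then invokes the local Euler--Poincar\'e formula and local Tate duality for the second part, citing \cite[Theorem 3.4.1]{BJ_new} for the uniformity over the various $\kappa$ that you discuss in more detail.
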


\begin{proof} Mapping $v\in V$ to a coboundary $\gamma\mapsto (\gamma-1) v$ induces an exact sequence:
$$0\rightarrow V^{\Gamma} \rightarrow V \rightarrow Z^1(\Gamma, V)\rightarrow 
H^1(\Gamma, V)\rightarrow 0,$$
which implies \eqref{eq_gamma}. The second part follows from 
local Euler--Poincar\'{e} characteristic formula and local Tate duality, see for example \cite[Theorem 3.4.1]{BJ_new}.
\end{proof} 

Let $\kappa[\varepsilon]$ be the ring of dual numbers over $\kappa$. 
The Lie algebra $\Lie G_{\kappa}$ of $G_{\kappa}$ is a finite dimensional $\kappa$-vector space, which sits in the exact sequence
of groups 
$$ 0\rightarrow   \Lie G_{\kappa}\rightarrow G(\kappa[\varepsilon])\rightarrow G(\kappa)\rightarrow 0.$$
The action of $G(\kappa)$ on $G(\kappa[\varepsilon])$ by conjugation induces 
a $\kappa$-linear action of $G(\kappa)$ on $\Lie G_{\kappa}$. 
We let $\ad \rho$ be the representation of $\Gamma$ on $\Lie G_{\kappa}$
defined by $$\ad(\rho)(\gamma)(X):= \rho(\gamma) X \rho(\gamma)^{-1}, \quad \forall \gamma\in \Gamma, \quad \forall X\in \Lie G_{\kappa}.$$ Then $\ad \rho$ is a continuous 
representation of $\Gamma$ for the topology on $\Lie G_{\kappa}$ induced by the topology on $\kappa$.

 \begin{lem}\label{Dnoetherian} Let $\rho : \Gamma \to G(\kappa)$ be a continuous representation. 
Then $D^{\square}_{\rho, G}$ is pro-representable by a complete local 
 noetherian
$\Lambda$-algebra $R^{\square}_{\rho, G}$ with residue field $\kappa$.
\end{lem}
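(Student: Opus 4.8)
The plan is to establish pro-representability of $D^\square_{\rho,G}$ by verifying Schlessinger's criteria (H1)--(H4), or rather the variant appropriate for functors valued in sets rather than groupoids, together with the standard finiteness input that the tangent space is finite-dimensional. The key structural fact we exploit is that $\Gamma$ satisfies Mazur's $p$-finiteness condition, and that $G$ is smooth and affine over $\OO$, hence embeds as a closed subscheme of some $\GL_d$ (or $\mathbb{A}^n$); this reduces all the continuity and finiteness bookkeeping to the classical case of matrix representations, which is treated in \cite[Section 3]{BIP_new} and goes back to Mazur.

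First I would check that $D^\square_{\rho,G}$ is a well-defined functor on $\Aa_\Lambda$: for $A \in \Aa_\Lambda$ with the topology specified above, a continuous homomorphism $\rho_A : \Gamma \to G(A)$ lifting $\rho$ makes sense because $G(A) \subseteq \GL_d(A) \subseteq M_d(A)$ for a chosen closed immersion $G \hookrightarrow \GL_d$, and $M_d(A)$ carries the relevant topology (discrete in case (1), $p$-adic in case (2), the length-topology in case (3)); continuity of $\rho_A$ is equivalent to continuity of the associated matrix representation, since the closed immersion is a topological embedding on $A$-points. Functoriality in $A$ is immediate. Next I would verify the Schlessinger conditions. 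Condition (H1) (the map $D(A' \times_A A'') \to D(A') \times_{D(A)} D(A'')$ is surjective when $A'' \to A$ is a small extension) and (H2) (bijectivity when moreover $A = \kappa$, $A'' = \kappa[\varepsilon]$): both follow from the fact that a pair of lifts agreeing over $A$ glues to a lift over the fibre product at the level of set maps $\Gamma \to G(A' \times_A A'')= G(A')\times_{G(A)} G(A'')$, and being a group homomorphism is checked componentwise; continuity is likewise checked componentwise. Condition (H3) (finite-dimensionality of the tangent space $D^\square_{\rho,G}(\kappa[\varepsilon])$) is where the hypotheses genuinely enter: the exact sequence $1 \to 1 + \varepsilon \Lie G_\kappa \to G(\kappa[\varepsilon]) \to G(\kappa) \to 1$ identifies the tangent space with the space of continuous cocycles $Z^1(\Gamma, \ad\rho)$, and by \Cref{dimZ1} together with Mazur's $p$-finiteness condition applied to $\ad\rho$ (a finite-dimensional continuous representation), this is finite-dimensional. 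Finally (H4) is automatic here because $D^\square_{\rho,G}$ is a framed (rigidified) deformation functor with no automorphisms to quotient by, so the relevant map on small extensions of a fixed $A$ by $\kappa$ is actually bijective, not merely surjective — this is the same reasoning as for $R^\square_{\rhobar}$ in the introduction.

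The main obstacle, and the step requiring the most care, is the continuity bookkeeping in case (3), where $\kappa$ is a local field of characteristic $p$ and the coefficient rings $A$ are modules of finite length over $\Lambda^0[1/t]/\varpi^n\Lambda^0[1/t]$ carrying a non-discrete topology. Here one must check that the topology on $G(A)$ induced from the closed immersion into $M_d(A)$ is independent of the choice of immersion and is compatible with the filtration by small extensions, so that the inverse limit $R^\square_{\rho,G} = \varprojlim R^\square_{\rho,G}/\mm^n$ is genuinely the pro-representing object and the universal deformation is continuous. This is precisely the content worked out in \cite[Section 3.5]{BIP_new} for $G = \GL_d$, and since we have fixed a closed immersion $G \hookrightarrow \GL_d$ of $\OO$-group schemes, the case of general smooth affine $G$ follows: $D^\square_{\rho,G}$ is a closed subfunctor of $D^\square_{\tau\circ\rho, \GL_d}$ cut out by the (finitely many) equations defining $G$ inside $\GL_d$, so $R^\square_{\rho,G}$ is the corresponding quotient of $R^\square_{\tau\circ\rho,\GL_d}$, hence a complete local noetherian $\Lambda$-algebra with residue field $\kappa$. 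Once (H1)--(H4) are in place, Schlessinger's theorem yields a hull, and because $D^\square_{\rho,G}$ has the stronger property that small-extension maps are bijective (it is \emph{homogeneous} in Schlessinger's sense, being a framed functor), the hull pro-represents the functor, completing the proof.
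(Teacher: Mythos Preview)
Your proof is correct, but it takes a more hands-on route than the paper. You verify Schlessinger's conditions (H1)--(H4) explicitly, whereas the paper invokes Grothendieck's criterion directly: it observes that a homomorphism $\Gamma \to G(A \times_C B)$ is continuous if and only if its projections to $G(A)$ and $G(B)$ are, so the functor $A \mapsto \Hom^{\cont}_{\mathrm{Group}}(\Gamma, G(A))$ preserves finite limits, and $D^\square_{\rho,G}$ is obtained from it by a pullback and hence also preserves finite limits. Grothendieck's criterion then gives pro-representability in one stroke, bypassing the separate verification of (H1), (H2), (H4). The noetherianity step is identical in both arguments: identify the tangent space with $Z^1(\Gamma, \ad\rho)$ and invoke Mazur's $p$-finiteness via \Cref{dimZ1}.

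Your extended discussion of continuity bookkeeping in case (3) and the reduction to $\GL_d$ via a closed immersion is correct but not needed here; the paper's one-line observation that continuity on a fibre product is checked componentwise handles all three cases uniformly. The reduction to $\GL_d$ that you sketch does appear later in the paper (\Cref{333}), where it is used to compare completed local rings of $X^{\gen,\tau}_G$ with deformation rings, but for the present lemma the abstract finite-limit argument is both shorter and avoids any dependence on a choice of embedding.
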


\begin{proof} If $A,B,C \in \mathfrak A_{\Lambda}$, then a homomorphism $\Gamma \to G(A \times_C B)$ is continuous if and only if its projections to $G(A)$ and $G(B)$ are continuous. It follows that the functor $\mathfrak A_{\Lambda} \to \Set, ~ A \mapsto \Hom^{\cont}_{\mathrm{Group}}(\Gamma, G(A))$
preserves finite limits. The deformation functor $D^{\square}_{\rho, G}$ is defined as a pullback of the latter, so $D^{\square}_{\rho, G}$ also preserves finite limits. By Grothendieck's criterion, $D^{\square}_{\rho, G}$ is pro-representable by a projective limit of objects in $\mathfrak A_{\Lambda}$.

The map $Z^1(\Gamma, \ad \rho)\rightarrow D^{\square}_{\rho, G}(\kappa[\varepsilon])$, $\Phi\mapsto [\gamma \mapsto \rho(\gamma) (1 +\varepsilon \Phi(\gamma))]$ 
is an isomorphism of $\kappa$-vector spaces. Since $\Gamma$ satisfies Mazur's $p$-finiteness
condition
\cite[Theorem 3.4.1]{BJ_new} implies that $h^1(\Gamma, \ad \rho)$ is finite. 
Lemma \ref{dimZ1} implies that $Z^1(\Gamma, \ad \rho)$ is finite dimensional, thus 
$R^{\square}_{\rho, G}$ is noetherian.
\end{proof}

\begin{lem}\label{sectionexists} Let $f : G \to H$ be a continuous surjective open homomorphism between locally profinite groups. Then there is a continuous map 
$s: H \to G$ with $f \circ s = \id_H$.
\end{lem}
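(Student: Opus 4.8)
The plan is to reduce to the case of profinite groups, where the statement is classical, and then extend the resulting section along a coset decomposition.

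First I would pass to a compact open subgroup. By van Dantzig's theorem the locally profinite group $G$ has a neighbourhood basis of $1$ consisting of compact open subgroups; fix one such $U\le G$. Since $f$ is continuous and $U$ is compact, $f(U)$ is compact, and since $f$ is open, $V:=f(U)$ is a compact open subgroup of $H$. Being compact open subgroups of locally profinite groups, $U$ and $V$ are profinite, and $f|_U\colon U\to V$ is a continuous epimorphism of profinite groups.

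The heart of the matter is then the classical fact that a continuous surjective homomorphism of profinite groups admits a continuous set-theoretic section (Ribes--Zalesskii, \emph{Profinite Groups}, Prop.~2.2.2); I would quote this rather than reprove it, since its proof is a genuine (if standard) compactness argument over the finite quotients, and one cannot expect either that $f|_U$ be an isomorphism onto $V$ or that the section be a homomorphism. This yields a continuous $\sigma\colon V\to U$ with $f\circ\sigma=\mathrm{id}_V$; composing with a left translation one may also arrange $\sigma(1)=1$, although this is not needed.

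Finally I would extend $\sigma$ to all of $H$. Choose representatives $\{h_i\}_{i\in I}$ of the left cosets of $V$ in $H$ with $h_{i_0}=1$, and for each $i$ pick $g_i\in f^{-1}(h_i)$ with $g_{i_0}=1$. Every $h\in H$ is uniquely of the form $h=h_iv$ with $v\in V$, and I set $s(h):=g_i\,\sigma(v)$; then $f(s(h))=f(g_i)f(\sigma(v))=h_iv=h$, so $f\circ s=\mathrm{id}_H$. Since $V$ is open, $H=\bigsqcup_{i\in I}h_iV$ is a partition into clopen sets, and on $h_iV$ the map $s$ factors as $h\mapsto h_i^{-1}h\mapsto\sigma(h_i^{-1}h)\mapsto g_i\,\sigma(h_i^{-1}h)$, a composition of two translations and $\sigma$, hence continuous; being continuous on each member of an open cover, $s$ is continuous. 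The only nontrivial step is the profinite section statement quoted above — that is where all the content lies, the rest being bookkeeping with cosets.
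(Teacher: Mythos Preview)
Your proof is correct and follows essentially the same route as the paper: reduce to the profinite case via a compact open subgroup, cite the classical profinite section result (the paper uses Serre, you use Ribes--Zalesskii), and then extend along a coset decomposition of $H$.
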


\begin{proof} If $G$ and $H$ are profinite, this is \cite[Section I.1, Proposition 1]{SerreGalCoh}.
In general, let $N$ be an open profinite subgroup of $G$. Then $N' := f(N)$ is an open profinite subgroup of $H$ and we find a continuous map $s : N' \to N$ with $f|_N \circ s = \id_{N'}$. Let $R \subseteq H$ be a set of $N'$-coset representatives so that $H = \bigcup_{r \in R} rN'$ and choose $g_r \in G$ with $f(g_r) = r$ for each $r \in R$.
We can extend $s$ to a continuous map $\tilde s : H \to G$ by defining $\tilde s(rn') := g_rs(n')$ for all $n' \in N'$ and see, that $f \circ \tilde s = \id_H$, as desired.
\end{proof}

\begin{lem}\label{cptopensubalg} Let $\kappa$ be  a local $\OO$-field, let $\Lambda$ be a coefficient ring for $\kappa$ and let $(A, \mm_A) \in \mathfrak A_{\Lambda}$. Then there is a module-finite $\Lambda^0$-subalgebra $R$ of $A$ with $R \otimes_{\Lambda^0} \Lambda = A$. In particular, $R$ is a profinite open subring of $A$.
\end{lem}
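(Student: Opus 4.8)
The plan is to treat each of the three cases for $\kappa$ in parallel, exploiting the explicit description of the coefficient ring $\Lambda$ and of objects $(A,\mm_A)\in\mathfrak A_{\Lambda}$ given just above the statement. In cases (1) and (3) the ring $\Lambda^{0}$ (which equals $\OO_{L'}$, resp. $\OO_{L'}\br{t}$) is already a complete noetherian local ring with finite residue field, and $A$ is by construction a module of finite length over $\Lambda^{0}[1/t]/\varpi^{n}\Lambda^{0}[1/t]$ (with $t=1$ understood in case (1)); so in those cases I would take $R$ to be the $\Lambda^{0}$-subalgebra of $A$ generated by a finite set of $\Lambda^{0}$-module generators of $A$ together with $1$. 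The only thing to check is that this $R$ is again module-finite over $\Lambda^{0}$, which follows because $R$ is a $\Lambda^{0}$-submodule of the finite-length $\Lambda^{0}[1/t]/\varpi^{n}$-module $A$, and $\Lambda^{0}$ is noetherian, so submodules of finitely generated modules are finitely generated. That $R\otimes_{\Lambda^{0}}\Lambda=A$ is immediate once $R$ contains a generating set, since inverting $t$ and $p$-adically completing is exact and $A$ is already $t$-invertible and $\varpi^{n}$-torsion. Finally $R$ is profinite because it is module-finite over the profinite (indeed $\varpi$-adically, resp. $(\varpi,t)$-adically complete) ring $\Lambda^{0}$, hence $\varprojlim R/\mathfrak r^{m}$ for $\mathfrak r$ the Jacobson radical, and it is open in $A$ because $R$ and $A$ have the same rank over $\Lambda^{0}[1/t]$ (up to $\varpi$-power denominators) — more precisely $tA\subseteq R$ for a large enough power of $t$ absorbed into the generators — so $A/R$ is a finite-length module and $R$ is open.

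The genuinely different case is (2), where $\kappa$ is a finite extension of $L$, $\Lambda=\kappa$, $\Lambda^{0}=\OO_{\kappa}$, and $A$ is a finite-dimensional $\kappa$-algebra (hence a finite-dimensional $L$-vector space) which is local artinian with residue field $\kappa$. Here I want a module-finite $\OO_{\kappa}$-subalgebra $R\subseteq A$ with $R\otimes_{\OO_{\kappa}}\kappa=A$, i.e. an $\OO_{\kappa}$-lattice in $A$ that happens to be a subring. The natural candidate: choose $\kappa$-algebra generators $a_{1},\dots,a_{m}$ of $A$ (which exists, $A$ being finite-dimensional and local, e.g. lift generators of $\mm_A/\mm_A^{2}$), and let $R:=\OO_{\kappa}[a_{1},\dots,a_{m}]$ be the $\OO_{\kappa}$-subalgebra they generate. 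Since $A$ is artinian local with maximal ideal $\mm_A$ nilpotent, say $\mm_A^{N}=0$, every element of $R$ is an $\OO_{\kappa}$-linear combination of monomials in the $a_{i}$ of degree $<N$ (higher monomials land in $\mm_A^{N}=0$ once we have arranged the $a_{i}\in\mm_A$; if some $a_i$ has nonzero image in $\kappa$, split off that scalar part first). Hence $R$ is spanned over $\OO_{\kappa}$ by finitely many elements, so it is a finitely generated $\OO_{\kappa}$-module, i.e. module-finite. Tensoring with $\kappa=L\otimes_{\OO_{\kappa}}$ — wait, $\kappa=\operatorname{Frac}(\OO_{\kappa})$ — gives $R\otimes_{\OO_{\kappa}}\kappa=\kappa[a_{1},\dots,a_{m}]=A$ since the $a_{i}$ generate $A$ as a $\kappa$-algebra. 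Finally $R$ is a profinite open subring of $A$: $R$ is a finite $\OO_{\kappa}$-module, hence $\varpi$-adically complete and profinite, and $R\otimes_{\OO_{\kappa}}\kappa=A$ forces $\varpi^{j}A\subseteq R$ for $j\gg0$ (clear denominators among a finite $\kappa$-basis), so $R$ is open in $A$ for the $p$-adic topology.

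The main obstacle — and it is mild — is verifying in case (2) that the $\OO_{\kappa}$-subalgebra generated by the $a_i$ is actually finitely generated as a module and not merely as an algebra; this is exactly where nilpotence of $\mm_A$ is used, and one should be slightly careful to arrange the generators inside $\mm_A$ (absorbing unit parts) so that monomials of degree $\ge N$ vanish. The remaining claims ("in particular $R$ is a profinite open subring") are then formal from the definition of the topology on $A$ recalled above (cases (1)–(3)) together with the fact that a module-finite algebra over a complete noetherian local ring is complete for its radical-adic topology and hence profinite when the residue fields are finite; in case (2) "profinite" is to be understood as "$\varpi$-adically complete of finite $\OO_{\kappa}$-rank," matching the topology put on $A$. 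No earlier result from the paper is needed beyond the definitions of $\Lambda$, $\Lambda^{0}$ and the topologies on objects of $\mathfrak A_{\Lambda}$; I will cite only the standard fact that submodules of finitely generated modules over a noetherian ring are finitely generated.
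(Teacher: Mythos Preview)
Your treatment of case~(2) is correct and is essentially the paper's argument. The problem lies in case~(3). There you assert that $A$ admits a finite set of $\Lambda^{0}$-module generators and then invoke noetherianity of $\Lambda^{0}$, but $A$ is \emph{not} finitely generated over $\Lambda^{0}$: it is of finite length over $\Lambda^{0}[1/t]/\varpi^{n}$, and $\Lambda^{0}[1/t]$ is far from module-finite over $\Lambda^{0}$. Already the residue field $\kappa$ of $A$ is not finitely generated over $\Lambda^{0}/\varpi\cong k'\br{t}$. So the sentence ``submodules of finitely generated modules over a noetherian ring are finitely generated'' has no purchase here, and your construction of $R$ collapses.

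The fix is to run in case~(3) exactly the argument you gave in case~(2); this is what the paper does, uniformly in the two local cases. Pick $\Lambda$-module generators $x_{1},\dots,x_{n}$ of $A$, rescale by powers of $t$ so that their images in $\kappa$ lie in $\OO_{\kappa}$, and choose $y_{i}\in\Lambda^{0}$ with the same image (using the surjection $\Lambda^{0}\twoheadrightarrow\OO_{\kappa}$), so that $x_{i}-y_{i}\in\mm_{A}$. Nilpotence of $\mm_{A}$ then forces all but finitely many monomials in the $x_{i}-y_{i}$ to vanish, and $R:=\Lambda^{0}+I$, with $I$ the $\Lambda^{0}$-span of those monomials, is a module-finite $\Lambda^{0}$-subalgebra with $R[1/t]=A$. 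In other words, the nilpotence of $\mm_{A}$ is the key input in \emph{both} local cases, not just case~(2); what you correctly flagged as ``the main obstacle'' in case~(2) is precisely the step missing from your case~(3).
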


\begin{proof}
Let $\pi_A : A \to \kappa$ be the projection to the residue field.
Pick $\Lambda$-module generators $x_1, \dots, x_n$ of $A$ and we may assume by rescaling with $t$, that $\pi_A(x_i) \in \OO_{\kappa}$ for all $i$.
Since $\Lambda^0$ surjects onto $\OO_{\kappa}$, we can pick $y_i \in \Lambda^0$ with $\pi_A(y_i) = \pi_A(x_i)$ for all $i$.
Since $\mm_A$ is nilpotent, the finitely many nonzero products of the elements $x_i - y_i \in \mm_A$ generate a multiplicatively closed finitely generated $\Lambda^0$-submodule $I$ of $\mm_A$.
In particular $R := \Lambda^0 + I$ is a module-finite
$\Lambda^0$-subalgebra of $A$ with $R[1/t] = A$.
Since $R$ is a finitely generated $\Lambda^0$-module, $\Lambda^0$ is open in $\Lambda$ and $\Lambda$ is Hausdorff, $R$ is profinite and open.
\end{proof}

\begin{lem}\label{splittingexists} Let $\kappa$ be either a finite or a local $\OO$-field and let $\Lambda$ be a coefficient ring for $\kappa$. Let $G$ be a smooth affine $\OO$-group.
\begin{enumerate}
    \item For every $A \in \mathfrak A_{\Lambda}$, the topological group $G(A)$ is locally profinite.
    \item Let $f : A \to B$ be a surjective morphism in $\mathfrak A_{\Lambda}$. Then the map $\varphi : G(A) \to G(B)$ induced by $f$ is surjective and there is a continuous map $s : G(B) \to G(A)$ with $\varphi \circ s = \id_{G(B)}$.
\end{enumerate}
\end{lem}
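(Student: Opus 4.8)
The plan is to reduce both statements to two inputs: the profinite open subring furnished by \Cref{cptopensubalg}, and the formal smoothness of $G$ over $\OO$.

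For (1): given $A\in\mathfrak A_{\Lambda}$, use \Cref{cptopensubalg} to fix a module-finite $\Lambda^0$-subalgebra $R\subseteq A$ with $R\otimes_{\Lambda^0}\Lambda=A$, so that $R$ is a profinite open subring of $A$. Fixing a closed immersion $G\hookrightarrow\mathbb A^n_{\OO}$, the set $G(A)$ becomes a topological group as a subspace of $A^n$ (the group operations are polynomial and $A$ is a topological ring; the topology is independent of the immersion). Then $G(R)=G(A)\cap R^n$ is the common zero locus in $R^n$ of finitely many polynomials with $\OO$-coefficients, hence closed in the profinite space $R^n$ and therefore profinite; and it is open in $G(A)$ because $R^n$ is open in $A^n$. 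An open profinite subgroup forces $G(A)$ to be Hausdorff (the identity is closed in $G(R)$, which is open and hence closed in $G(A)$) and totally disconnected, so $G(A)$ is locally profinite. I expect this part to be essentially formal.

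For the surjectivity in (2): since $A,B$ have residue field $\kappa$ and $f$ is surjective, $J:=\ker f$ is contained in $\mm_A$, hence nilpotent; filtering $J\supseteq J^2\supseteq\cdots\supseteq J^N=0$ and lifting successively along the square-zero extensions $A/J^{i+1}\to A/J^i$ using that $G$ is smooth, hence formally smooth, over $\OO$, we get that $\varphi:=G(f)\colon G(A)\to G(B)$ is surjective. For the continuous section I will check the hypotheses of \Cref{sectionexists}, namely that $\varphi$ is a continuous, surjective, open homomorphism of locally profinite groups; local profiniteness and surjectivity are already in hand. Choose a profinite open subring $R_A\subseteq A$ as in (1). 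Then $f(R_A)$ is a module-finite $\Lambda^0$-subalgebra of $B$ with $f(R_A)\otimes_{\Lambda^0}\Lambda=f(A)=B$, so by the argument in the proof of \Cref{cptopensubalg} it is a profinite open subring of $B$. The restriction $f|_{R_A}\colon R_A\to f(R_A)$ is a map of finitely generated $\Lambda^0$-modules, hence continuous for the $(\varpi,t)$-adic topologies, so $\varphi|_{G(R_A)}\colon G(R_A)\to G(f(R_A))$ is continuous; since $G(R_A)$ is an open subgroup of $G(A)$, this already yields continuity of $\varphi$. Moreover $R_A\cap J$ is nilpotent, so formal smoothness gives that $\varphi$ restricts to a surjection $G(R_A)\twoheadrightarrow G(f(R_A))$ of profinite groups; such a surjection is automatically open, being the composite of the (always open) projection of $G(R_A)$ onto the quotient by its kernel with a continuous bijection onto $G(f(R_A))$ that is a homeomorphism because the source is compact and the target Hausdorff. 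Finally, since $G(R_A)$ is open in $G(A)$ and $G(f(R_A))$ is open in $G(B)$, this local openness propagates to $\varphi$ by translation: given an open $V\subseteq G(A)$ and $v\in V$, the set $(v^{-1}V)\cap G(R_A)$ is open in $G(R_A)$, its image is open in $G(f(R_A))$ and hence in $G(B)$, and $\varphi(V)$ contains its translate by $\varphi(v)$. Thus $\varphi$ is open, and \Cref{sectionexists} yields the continuous section $s$.

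The step I expect to be the main obstacle is the openness of $\varphi$; the remaining verifications are routine uses of formal smoothness and of the topology of profinite rings. I would also want to double-check the bookkeeping in case (3), where the topology on $A$ is the colimit $A=\bigcup_{m} t^{-m}R_A$, to be sure that $f|_{R_A}$ and the resulting maps on $G$-points are genuinely continuous there.
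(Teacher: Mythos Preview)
Your argument is correct and follows essentially the same route as the paper's proof: both use the profinite open subring from \Cref{cptopensubalg} to exhibit $G(R)$ as an open profinite subgroup, invoke smoothness of $G$ to get surjectivity of $G(R)\twoheadrightarrow G(f(R))$, note that a continuous surjection of profinite groups is open, and then apply \Cref{sectionexists}. You are merely more explicit in spelling out the translation argument and the continuity of $\varphi$, which the paper leaves implicit.
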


\begin{proof} When $\kappa$ is finite, both assertions are clear, so we assume that $\kappa$ is local. Let $R$ be a profinite open subalgebra of $A$ with $R \otimes_{\Lambda^0} \Lambda = A$ as constructed in \Cref{cptopensubalg}.

Part (1). We claim, that $G(R) \subseteq G(A)$ is a profinite open subgroup. Choose a closed immersion $G \hookrightarrow \mathbb A^n$ over $\OO$. By definition $G(A)$ carries the subspace topology of $\mathbb A^n(A)$. Since $\mathbb A^n(R)$ is open in $\mathbb A^n(A)$, $G(R)$ is open in $G(A)$. Since $R$ is Hausdorff, $G(R)$ is closed in $\mathbb A^n(R)$ and since $\mathbb A^n(R)$ is profinite, $G(R)$ is profinite.

Part (2). Surjectivity of $\varphi$ follows from smoothness of $G$. Then $R' := f(R)$ is a compact subalgebra of $B$ with $R' \otimes_{\Lambda^0} \Lambda = B$. Since the topology of $B$ is by definition induced by an arbitrary open $\Lambda^0$-lattice, we see that $R'$ is open in $B$. Since $G$ is smooth, we have a surjection $G(R) \to G(R')$ of open profinite subgroups of $G(A)$ and $G(B)$ respectively. Since any surjective continuous homomorphism between profinite groups is open\footnote{Indeed, any continuous surjective homomorphism from a compact group onto a Hausdorff group is open by Remark 1 after Proposition 24 in \cite[Ch.\,III, \S 2, no.\,8]{Bourbaki_top}.}, we see that $\varphi$ is open. The assertion follows from \Cref{sectionexists}.
\end{proof}

\begin{prop}\label{present_over_RH} Let $\varphi : G \to H$ be a morphism of smooth affine $\OO$-group schemes
such that the induced morphism on neutral components $G^0\rightarrow H^0$ is smooth and surjective.
Let $\rho : \Gamma \to G(\kappa)$ be a continuous representation and let 
$\ad^{0, \varphi}= \ker(\ad \rho\rightarrow \ad \varphi\circ\rho)$.
Then there is a natural  isomorphism of $R^{\square}_{\varphi \circ \rho, H}$-algebras
\begin{equation}\label{eq:present_over_RH}
R^{\square}_{\varphi \circ \rho, H}\br{x_1,\ldots, x_r}/(f_1, \ldots, f_t) \cong R^{\square}_{\rho, G}
\end{equation}
where $r = \dim_{\kappa} Z^1(\Gamma, \ad^{0,\varphi} \rho)$, $t = h^2(\Gamma, \ad^{0,\varphi} \rho)$. 
Moreover, if $\Gamma=\Gamma_F$ then 
\begin{equation}\label{r_minus_t}
r-t = (\dim G_{\kappa} - \dim H_{\kappa}) \cdot ([F : \Qp] + 1).
\end{equation}
\end{prop}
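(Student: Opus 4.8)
The plan is to run the standard obstruction-theoretic construction of a framed deformation ring, but \emph{relative} to the base $B:=R^{\square}_{\varphi\circ\rho, H}$. Since $G^0\to H^0$ is smooth, the differential $d\varphi\colon \Lie G_\kappa\to\Lie H_\kappa$ is surjective, so $\ad^{0,\varphi}\rho$ sits in a short exact sequence of continuous $\kappa[\Gamma]$-modules
\begin{equation*}
0\to \ad^{0,\varphi}\rho\to \ad\rho\xrightarrow{\,d\varphi\,} \ad(\varphi\circ\rho)\to 0,
\end{equation*}
and, using that $G$ and $H$ are smooth over $\OO$, we get $\dim_\kappa\ad^{0,\varphi}\rho=\dim_\kappa\Lie G_\kappa-\dim_\kappa\Lie H_\kappa=\dim G_\kappa-\dim H_\kappa$. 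Postcomposition with $\varphi$ gives a morphism of functors $D^{\square}_{\rho,G}\to D^{\square}_{\varphi\circ\rho,H}$, hence by Yoneda a local $\Lambda$-algebra map $B\to R^{\square}_{\rho,G}$; I want to show this exhibits $R^{\square}_{\rho,G}$ as a $B$-algebra of the stated form, the isomorphism in \eqref{eq:present_over_RH} being one of $B$-algebras.

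The technical heart is the relative lifting problem. Let $0\to I\to A'\to A\to 0$ be a small surjection in $\Aa_{\Lambda}$, so $I$ is a finite-dimensional $\kappa$-vector space with $\mm_{A'}I=0$. Given $\rho_A\in D^{\square}_{\rho,G}(A)$ together with a lift $\psi_{A'}\in D^{\square}_{\varphi\circ\rho,H}(A')$ of $\varphi\circ\rho_A$, I would produce a class $\mathrm{ob}\in H^2(\Gamma,\ad^{0,\varphi}\rho)\otimes_\kappa I$ that vanishes if and only if $\rho_A$ lifts to some $\rho_{A'}\in D^{\square}_{\rho,G}(A')$ with $\varphi\circ\rho_{A'}=\psi_{A'}$, and show that the set of such $\rho_{A'}$, when non-empty, is a torsor under $Z^1(\Gamma,\ad^{0,\varphi}\rho)\otimes_\kappa I$. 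The construction is the usual one: first lift $(\psi_{A'},\rho_A)$ to a \emph{continuous} (not necessarily multiplicative) map $\widetilde\rho\colon\Gamma\to G(A')$ using that $G(A')\to H(A')\times_{H(A)}G(A)$ is a surjective open homomorphism of locally profinite groups — surjectivity from smoothness of $G$ together with surjectivity of $d\varphi$, local profiniteness from \Cref{splittingexists}(1) — so it has a continuous section by \Cref{sectionexists}; then $(\gamma_1,\gamma_2)\mapsto \widetilde\rho(\gamma_1)\widetilde\rho(\gamma_2)\widetilde\rho(\gamma_1\gamma_2)^{-1}$ lands in $1+(\ker d\varphi)\otimes_\kappa I=1+\ad^{0,\varphi}\rho\otimes_\kappa I$ (because $\varphi\circ\widetilde\rho=\psi_{A'}$ is already multiplicative) and is a continuous $2$-cocycle representing $\mathrm{ob}$. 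In the torsor statement the extra constraint $\varphi\circ\rho_{A'}=\psi_{A'}$ cuts the difference of two lifts from $Z^1(\Gamma,\ad\rho)\otimes_\kappa I$ down to $Z^1(\Gamma,\ad^{0,\varphi}\rho)\otimes_\kappa I$.

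Granting this, specialising to $A'=\kappa[\varepsilon]$, $A=\kappa$ with $\psi_{A'}$ the trivial deformation identifies the relative tangent space of $B\to R^{\square}_{\rho,G}$ with $Z^1(\Gamma,\ad^{0,\varphi}\rho)$ (non-empty, as it contains $\rho\otimes\kappa[\varepsilon]$), which is finite-dimensional since $\Gamma$ satisfies Mazur's $p$-finiteness condition and hence, by \Cref{dimZ1}, has $\kappa$-dimension $r$; likewise $h^2(\Gamma,\ad^{0,\varphi}\rho)=t<\infty$. I would then invoke the standard passage from obstruction theory to a presentation: lift a basis of the relative cotangent space $\mm_{R^{\square}_{\rho,G}}/(\mm_B R^{\square}_{\rho,G}+\mm^2_{R^{\square}_{\rho,G}})$ to $r$ elements of $\mm_{R^{\square}_{\rho,G}}$ to obtain a surjection of $B$-algebras $B\br{x_1,\dots,x_r}\twoheadrightarrow R^{\square}_{\rho,G}$, and bound the module of relations by the relative obstruction space to conclude that the kernel is generated by $t$ elements $f_1,\dots,f_t$ (with possibly redundant ones), giving \eqref{eq:present_over_RH}. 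Finally, when $\Gamma=\Gamma_F$, the second formula of \Cref{dimZ1} applied to $V=\ad^{0,\varphi}\rho$ gives $r=\dim_\kappa(\ad^{0,\varphi}\rho)\cdot([F:\Qp]+1)+h^2(\Gamma_F,\ad^{0,\varphi}\rho)=\dim_\kappa(\ad^{0,\varphi}\rho)\cdot([F:\Qp]+1)+t$, so $r-t=(\dim G_\kappa-\dim H_\kappa)\cdot([F:\Qp]+1)$ by the dimension count from the first paragraph.

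The main obstacle I anticipate is not the numerology but the relative obstruction theory over a base $B$ different from $\Lambda$: one must run the cocycle computation intrinsically on the group schemes $G$ and $H$, with no matrix entries available, and — crucially — keep everything continuous throughout, which is exactly where the local-profiniteness inputs \Cref{cptopensubalg}, \Cref{splittingexists} and \Cref{sectionexists} are needed, since for a local $\OO$-field $\kappa$ the rings $A\in\Aa_{\Lambda}$ are infinite.
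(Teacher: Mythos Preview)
Your proposal is correct and follows the same architecture as the paper: identify the relative tangent space with $Z^1(\Gamma,\ad^{0,\varphi}\rho)$, produce a surjection $B\br{x_1,\dots,x_r}\twoheadrightarrow R^\square_{\rho,G}$, bound the kernel by an obstruction computation, and handle continuity via \Cref{splittingexists} and \Cref{sectionexists}. The only real difference is in how the obstruction is packaged. The paper lifts $\rho^\square_i$ set-theoretically along $G(R_u)\to G(R^\square_{\rho,G,i})$ using \Cref{splittingexists}(2), so the resulting $2$-cocycle lands in $\ad\rho$; it then observes the class maps to zero in $H^2(\Gamma,\ad\,\varphi\circ\rho)$ and uses the long exact sequence to get $\dim_\kappa J/\tilde\mm J\le\dim\ker H^2(\ad\varphi)+\dim\coker H^1(\ad\varphi)=h^2(\Gamma,\ad^{0,\varphi}\rho)$. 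You instead lift along $G(A')\to H(A')\times_{H(A)}G(A)$, enforcing $\varphi\circ\tilde\rho=\psi_{A'}$ from the outset so the $2$-cocycle lands directly in $\ad^{0,\varphi}\rho$. Your route is tidier but needs one input beyond the cited lemmas: openness of the map to the fibre product, which you assert without proof. It is true, and the quickest fix is to bypass \Cref{sectionexists} here altogether --- take the continuous section $s$ of $G(A')\to G(A)$ supplied by \Cref{splittingexists}(2), note that $\varphi\circ s\circ\rho_A$ and $\psi_{A'}$ differ by a continuous map $\Gamma\to 1+\Lie H_\kappa\otimes I$, and correct $s\circ\rho_A$ using a $\kappa$-linear section of the surjection $d\varphi$.
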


\begin{proof} If $\kappa$ is a finite field then the argument of \cite[Proposition 4.3]{BIP_new}, where  $G = \GL_d$, $H = \GL_1$, $\varphi = \det$, goes through verbatim. 
If $\kappa$ is a local field then one needs to additionally verify that obstruction 
to lifting can be realised by a continuous $2$-cocycle, as noted in 
\cite[Remark 4.4]{BIP_new}. The solution to this problem is explained in detail 
in \cite[Lecture 6]{mod_lift}, when $\kappa$ is a local field of characteristic zero
and $H$ is trivial. We sketch how these arguments fit together. 

The exact sequence $0\rightarrow \ad^{0,\varphi}\rho \rightarrow \ad \rho \rightarrow \ad \varphi\circ\rho \rightarrow 0$ of Galois representations induces 
an exact sequence of abelian groups:  
$$0 \rightarrow Z^1(\Gamma, \ad^{0,\varphi}\rho)\rightarrow Z^1(\Gamma, \ad\rho) \rightarrow 
Z^1(\Gamma, \ad \varphi\circ \rho)$$
and hence $r=\dim_{\kappa} \ker (Z^1(\ad\varphi))$. 
The map $$Z^1(\ad \varphi): Z^1(\Gamma, \ad\rho) \to Z^1(\Gamma, \ad \varphi\circ\rho)$$ is the induced map on Zariski tangent spaces of the map of deformation rings $R_{\varphi \circ \rho}^{\square} \to R^{\square}_{\rho}$, and thus lifts to a surjection
    \[ \tilde\phi: \tilde R := R_{\varphi \circ \rho}^{\square}\br{x_1,\ldots,x_r} \twoheadrightarrow R^{\square}_{\rho}. \]

We set $J := \ker \tilde \phi$.
By Nakayama's lemma, we need to show, that $\dim_{\kappa} J/\tilde \mm J \leq t$. The module $J/\tilde \mm J$ appears as the kernel of the sequence
\begin{align}
    0 \to J/\tilde \mm J \to \tilde R / \tilde \mm J \to \tilde R/J \cong R_{\rho, G}^{\square} \to 0 \label{JmJsequence}
\end{align}
In view of the above sequence, it is enough construct a homomorphism
\begin{equation}\label{define_alpha}
 \alpha : \Hom_{\kappa}(J/\tilde\mm J, \kappa) \to \ker (H^2(\ad \varphi) : H^2(\Gamma, \ad \rho) \to H^2(\Gamma, \ad \varphi \circ \rho)) 
\end{equation}
and show, that $\ker(\alpha)$ injects into $\coker(H^1(\ad \varphi))$. This will imply the existence of the presentation in the statement of the Proposition, since then
\begin{align}
    \dim_{\kappa} J/\tilde\mm J \leq \dim_{\kappa} \ker(H^2(\ad \varphi)) + \dim_{\kappa} \coker(H^1(\ad \varphi)) = h^2(\Gamma,\ad^{0,\varphi} \rho) \label{boundJmJ}
\end{align}
where the last equality comes from the long exact cohomology sequence that arises from $0 \to \ad^{0,\varphi} \rho \to \ad\rho \to \ad\varphi \circ \rho \to 0$.

For all $i > 0$, we define $\tilde R_i:=\tilde R/\tilde \mm^i$, $\tilde\mm_i := \tilde\mm/\tilde\mm^i$, $J_i := (J + \tilde\mm^i)/\tilde\mm^i$ and $R_{\rho, G,i}^{\square} := R_{\rho, G}^{\square}/\tilde\mm^i R_{\rho, G}^{\square}$.
The Artin--Rees lemma implies that $J\cap (\tilde\mm J +\tilde \mm^i)= \tilde\mm J$ for sufficiently large $i \gg 0$, 
thus the map $J/\tilde\mm J \to J_i/\tilde\mm_iJ_i$ is an isomorphism. We will fix such $i$ for the rest of the proof. It is therefore enough to work with objects of $\Aa_{\Lambda}$ whose maximal ideal has vanishing $i$-th power, we will denote this category by $\Aa_{\Lambda, i}$.
The inclusion functor $\Aa_{\Lambda, i} \to \Aa_{\Lambda}$ has a left adjoint given by modding out the $i$-th power of the maximal ideal.
The ring $R_{\rho, G,i}^{\square}$ represents the restriction of $D^{\square}_{\rho,G}$ to $\Aa_{\Lambda,i}$ and we will denote the universal lift by $\rho^{\square}_i : \Gamma \to G(R_{\rho, G,i}^{\square})$. We obtain an exact sequence
\begin{align}
    0 \to J_i/\tilde \mm_i J_i \to \tilde R_i / \tilde \mm_i J_i \to \tilde R_i/J_i \cong R_{\rho, G, i}^{\square} \to 0 \label{JmJsequencei}
\end{align}
where $\tilde R_i / \tilde \mm_i J_i$ and $R_{\rho, G, i}^{\square}$ are objects of $\Aa_{\Lambda, i}$. If $u \in \Hom_{\kappa}(J_i/\tilde\mm_i J_i, \kappa)$ then by modding out $\ker(u)$ in \eqref{JmJsequencei} we obtain an exact sequence
\begin{align}
    0 \to I_u \to R_u \xrightarrow{\phi_u} R^{\square}_{\rho, G,i} \to 0, \label{pushseq}
\end{align} 
 The surjection of locally profinite groups $G(R_u)\twoheadrightarrow G(R^{\square}_{\rho, G,i})$ has a continuous section by \Cref{splittingexists}. Once we have this the proof 
is identical to the proof of \cite[Proposition 4.3]{BIP_new}.
Moreover, if $\Gamma=\Gamma_F$ then \eqref{r_minus_t} follows from Lemma \ref{dimZ1}.
\end{proof}

\begin{remar} We will apply the above proposition
in the setting of \Cref{smooth_projection}.
\end{remar}

\begin{cor}\label{represent_kappa}
There is an isomorphism of local $\Lambda$-algebras
\begin{equation}\label{present_square_x}
R^{\square}_{\rho,G}\cong \Lambda\br{x_1,\ldots, x_r}/(f_1, \ldots, f_s) 
\end{equation}
with $r = \dim_{\kappa} Z^1(\Gamma, \ad \rho)$ and $s = \dim_{\kappa} H^2(\Gamma, \ad \rho)$. Further, if $\Gamma=\Gamma_F$ then
$$r-s= \dim G_{\kappa} ([F:\Qp]+1).$$
\end{cor}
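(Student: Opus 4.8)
The plan is to deduce this immediately from \Cref{present_over_RH} by taking $H$ to be the trivial $\OO$-group scheme. First I would observe that with $H = 1$ the neutral component $H^0$ is trivial, so $G^0 \to H^0$ is (vacuously) smooth and surjective, and the hypotheses of \Cref{present_over_RH} are met. The unique morphism $\varphi : G \to H$ composed with $\rho$ is the trivial homomorphism into the trivial group, whose framed deformation functor is represented by $\Lambda$ itself, so $R^{\square}_{\varphi\circ\rho, H} = \Lambda$. Since $\ad(\varphi\circ\rho) = 0$, we have $\ad^{0,\varphi}\rho = \ker(\ad\rho \to 0) = \ad\rho$.

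Substituting into \eqref{eq:present_over_RH} then gives an isomorphism of local $\Lambda$-algebras $\Lambda\br{x_1,\ldots,x_r}/(f_1,\ldots,f_s) \cong R^{\square}_{\rho,G}$ with $r = \dim_{\kappa} Z^1(\Gamma, \ad^{0,\varphi}\rho) = \dim_{\kappa} Z^1(\Gamma, \ad\rho)$ and $s = h^2(\Gamma, \ad^{0,\varphi}\rho) = \dim_{\kappa} H^2(\Gamma, \ad\rho)$, which is precisely \eqref{present_square_x}. (The ring $R^{\square}_{\rho,G}$ is noetherian by \Cref{Dnoetherian}, so the power series presentation makes sense.)

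For the last assertion, when $\Gamma = \Gamma_F$ I would simply invoke \eqref{r_minus_t}, which gives $r - s = (\dim G_{\kappa} - \dim H_{\kappa})([F:\Qp]+1) = \dim G_{\kappa}([F:\Qp]+1)$ because $\dim H_{\kappa} = 0$; alternatively one applies the second displayed identity of \Cref{dimZ1} to $V = \ad\rho$ and uses $\dim_{\kappa}\ad\rho = \dim_{\kappa}\Lie G_{\kappa} = \dim G_{\kappa}$, valid since $G$ is smooth. There is no real obstacle here: all the substance is already contained in \Cref{present_over_RH} and \Cref{dimZ1}, and the only point requiring a moment's care is checking that the trivial group scheme is an admissible choice of $H$, which it is.
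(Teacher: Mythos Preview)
Your proposal is correct and is exactly the approach the paper takes: the paper's proof reads ``The assertion follows from Lemma~\ref{Dnoetherian} and Proposition~\ref{present_over_RH} applied with $H$ equal to the trivial group,'' and you have simply unpacked this in detail. The only microscopic quibble is that $G^0 \to H^0 = \Spec\OO$ being smooth and surjective is not quite ``vacuous'' --- it is the statement that $G^0$ is smooth over $\OO$ with nonempty fibres --- but this is of course immediate from the standing hypothesis that $G$ is smooth affine over $\OO$.
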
 

\begin{proof} The assertion follows from Lemma \ref{Dnoetherian} and Proposition
\ref{present_over_RH} applied with  $H$ equal to the trivial group. 
 \end{proof}

\Cref{present_over_RH} will be often used together with the following lemma.

\begin{lem}\label{pre_lci_flat} Let $A$ be a complete local noetherian $\OO$-algebra with residue field $k$ and let 
$B= A\br{x_1, \ldots, x_r}/(f_1, \ldots, f_t)$. If $A$ is complete intersection and 
$\dim B\le \dim A + r - t$ then $B$ is complete intersection and $\dim B=\dim A + r - t$. Moreover, if $\dim k\otimes_A B\le  r-t$ then $B$ is a  flat $A$-algebra and $\dim k\otimes_A B=  r-t$.
\end{lem}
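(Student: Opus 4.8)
The plan is to compare $B$ with the intermediate ring $S := A\br{x_1,\dots,x_r}$. This is a complete local noetherian ring with $\dim S = \dim A + r$, it is flat over $A$ (it is the $(x_1,\dots,x_r)$-adic completion of the polynomial ring $A[x_1,\dots,x_r]$, hence flat over it, and $A[x_1,\dots,x_r]$ is free over $A$), and if $A$ is a complete intersection then so is $S$: choosing a surjection $P\twoheadrightarrow A$ from a regular local ring $P$ whose kernel is generated by a regular sequence, the ring $S$ is the quotient of the regular local ring $P\br{x_1,\dots,x_r}$ by the same sequence, which remains regular there since $P\br{x_1,\dots,x_r}$ is faithfully flat over $P$ with nonzero quotient. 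In particular $S$ is Cohen--Macaulay. Throughout we may assume $f_1,\dots,f_t\in\mm_S$, since a unit among the $f_i$ may be discarded (decreasing $t$) without affecting any of the statements.

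For the first assertion, write $B = S/(f_1,\dots,f_t)$. Krull's height theorem gives $\dim B \ge \dim S - t = \dim A + r - t$, and combined with the hypothesis $\dim B \le \dim A + r - t$ this forces the equality $\dim B = \dim A + r - t$. Hence $f_1,\dots,f_t$ is part of a system of parameters of the local ring $S$; since $S$ is Cohen--Macaulay, any part of a system of parameters is a regular sequence, so $f_1,\dots,f_t$ is an $S$-regular sequence. As the quotient of a complete intersection local ring by a regular sequence is again a complete intersection, $B$ is a complete intersection of the stated dimension.

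For the second assertion, reduce modulo $\mm_A$: let $\bar f_i$ be the image of $f_i$ in $S/\mm_A S = k\br{x_1,\dots,x_r}$, so that $k\otimes_A B = B/\mm_A B = k\br{x_1,\dots,x_r}/(\bar f_1,\dots,\bar f_t)$. Since $\dim k\br{x_1,\dots,x_r} = r$, Krull's height theorem gives $\dim(k\otimes_A B) \ge r - t$, so the hypothesis forces $\dim(k\otimes_A B) = r - t$. Thus $\bar f_1,\dots,\bar f_t$ is part of a system of parameters of the regular (hence Cohen--Macaulay) local ring $k\br{x_1,\dots,x_r}$, and therefore a regular sequence on the closed fibre $S/\mm_A S$. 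Now apply the local criterion for flatness inductively to $S_i := S/(f_1,\dots,f_i)$: each $S_{i-1}$ is flat over $A$ and $\bar f_i$ is a nonzerodivisor on $S_{i-1}/\mm_A S_{i-1} = k\br{x_1,\dots,x_r}/(\bar f_1,\dots,\bar f_{i-1})$, hence $S_i = S_{i-1}/f_i S_{i-1}$ is again flat over $A$. Taking $i = t$ shows that $B$ is flat over $A$, with $\dim k\otimes_A B = r - t$ as already noted.

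The computation is routine once the correct statements are in place; the only step requiring genuine care is the flatness assertion, where one must verify the hypotheses of the fibrewise criterion for flatness --- namely that cutting $S$ down by a sequence that is regular on the special fibre preserves $A$-flatness --- and this is precisely where the bound on $\dim k\otimes_A B$ enters. Everything else is Krull dimension theory together with the stability of the complete intersection property under passage to a regular-sequence quotient.
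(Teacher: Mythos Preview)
Your proof is correct. The paper takes a slightly different route: it writes $A=S/(g_1,\dots,g_s)$ with $S$ regular, lifts the $f_i$ to $\tilde f_i\in S\br{x_1,\dots,x_r}$, and shows that the concatenated sequence $g_1,\dots,g_s,\tilde f_1,\dots,\tilde f_t$ is regular in the regular ring $S\br{x_1,\dots,x_r}$. For flatness the paper introduces the auxiliary ring $C=S\br{x_1,\dots,x_r}/(\tilde f_1,\dots,\tilde f_t)$ and applies miracle flatness (Matsumura, Theorem~23.1) to the map $S\to C$ to get $C$ flat over $S$, whence $B=C/(g_1,\dots,g_s)$ is flat over $A=S/(g_1,\dots,g_s)$.

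Your approach works directly with $A\br{x_1,\dots,x_r}$, using only that it is Cohen--Macaulay (being complete intersection), and for flatness you invoke the fibrewise regularity criterion (if $M$ is $A$-flat and $\bar f$ is a nonzerodivisor on $M/\mm_A M$, then $f$ is $M$-regular and $M/fM$ is $A$-flat) inductively rather than miracle flatness. This avoids passing to the regular cover $S$, and has the minor advantage that your flatness argument does not use the first hypothesis $\dim B\le\dim A+r-t$, whereas the paper's miracle-flatness step needs $C$ to be Cohen--Macaulay, which it gets from the first part. Both arguments are standard; yours is marginally more self-contained.
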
 

\begin{proof} Since $A$ is complete intersection we may write $A= S/(g_1, \ldots, g_s)$, 
where $S$ is a complete local regular ring and $g_1, \ldots, g_s$ is a regular sequence in $S$.
We choose $\tilde{f}_1, \ldots, \tilde{f}_t \in S\br{x_1, \ldots, x_r}$ that map to 
$f_1, \ldots, f_t$. Since when we quotient out by a relation the dimension either goes down 
by one or stays then same, 
the assumption on the dimension of $B$ implies that $\dim B= \dim A + r - t$ and $g_1, \ldots, g_s, \tilde{f}_1, \ldots, \tilde{f}_t$ 
can be extended to a system of parameters of $S\br{x_1,\ldots, x_r}$, and hence is a regular sequence in 
the regular ring $S\br{x_1,\ldots, x_r}$. Thus $B$ is complete intersection and also $C:= S\br{x_1, \ldots, x_r}/(\tilde{f}_1, \ldots, \tilde{f}_t)$ is complete intersection.  
Since $k\otimes_S B= k\otimes_A B$, it follows from miracle flatness \cite[Theorem 23.1]{matsumura} that $C$ is flat over $S$, 
and hence $B= C/(g_1, \ldots, g_s)$ is flat over $A= S/(g_1, \ldots, g_s)$. 
\end{proof}

\begin{lem}\label{extend_scalars1} Let $\rho: \Gamma \rightarrow G(\kappa)$ be a continuous representation, where
$\kappa$ is either local or finite $\OO$-field. Let $\kappa'$ be a finite extension of $\kappa$ 
and let $\rho': \Gamma \rightarrow G(\kappa')$ be the representation obtained by 
composing $\rho$ with the inclusion $G(\kappa) \subseteq G(\kappa')$. Then there is a natural 
isomorphism of local $\Lambda'$-algebras: 
$$ \Lambda'\otimes_{\Lambda} R^{\square}_{G, \rho} \cong R^{\square}_{G, \rho'},$$
where $\Lambda$ and $\Lambda'$ are coefficient rings for $\kappa$ and $\kappa'$, respectively.
\end{lem}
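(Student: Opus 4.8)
The plan is to show that $R':=\Lambda'\otimes_\Lambda R^\square_{G,\rho}$ pro-represents the deformation functor $D^\square_{\rho',G}$ on $\Aa_{\Lambda'}$; the asserted isomorphism then follows from the uniqueness of representing objects. The first step is to record, by inspecting the three cases in the construction of the coefficient rings (and using that finite fields are perfect, so that every residue field extension occurring is separable), that the fixed embedding $\Lambda\to\Lambda'$ is module-finite and finite étale with $\Lambda'\otimes_\Lambda\kappa\cong\kappa'$. Two consequences: first, $R'$ is module-finite over $R^\square_{G,\rho}$, hence a complete local noetherian ring with residue field $\kappa'$ and $\mathfrak m_{R'}=\mathfrak m_{R^\square_{G,\rho}}R'$; second, formal étaleness of $\Lambda'/\Lambda$ implies that every Artinian local $\Lambda$-algebra with residue field $\kappa'$ carries a unique $\Lambda'$-algebra structure compatible with the given $\Lambda$-algebra structure, and that every local $\Lambda$-algebra homomorphism between two such is automatically $\Lambda'$-linear. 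Thus the forgetful functor identifies $\Aa_{\Lambda'}$ with the category $\Aa_\Lambda^{\kappa'}$ of Artinian local $\Lambda$-algebras with residue field $\kappa'$.

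Next I would build the universal object. Pushing the universal framed deformation $\rho^{\mathrm{univ}}\colon\Gamma\to G(R^\square_{G,\rho})$ forward along the local (hence continuous) map $G(R^\square_{G,\rho})\to G(R')$ produces a continuous representation $\Gamma\to G(R')$ which, since $R^\square_{G,\rho}\to R'$ is local, reduces modulo $\mathfrak m_{R'}$ to $\rho$ composed with $G(\kappa)\hookrightarrow G(\kappa')$, that is, to $\rho'$; reducing it modulo the powers $\mathfrak m_{R'}^n=\mathfrak m_{R^\square_{G,\rho}}^nR'$ yields a compatible family in $D^\square_{\rho',G}(R'/\mathfrak m_{R'}^n)$, and hence a natural transformation $\eta\colon\Hom^{\cont}_{\Lambda'}(R',-)\Rightarrow D^\square_{\rho',G}$ on $\Aa_{\Lambda'}$. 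For $A\in\Aa_{\Lambda'}$, the tensor--hom adjunction along $\Lambda\to\Lambda'$ together with the previous paragraph identifies $\Hom_{\Lambda'}(R',A)$ with $\Hom_\Lambda(R^\square_{G,\rho},A)$, and by the presentation of Corollary~\ref{represent_kappa} every $\Lambda$-algebra homomorphism $R^\square_{G,\rho}\to A$ is local; under these identifications $\eta_A$ becomes the map sending a local homomorphism $\psi\colon R^\square_{G,\rho}\to A$ to the representation $\psi_*\rho^{\mathrm{univ}}$.

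It remains to show $\eta_A$ is bijective for every $A$, which is exactly the assertion that $R^\square_{G,\rho}$ continues to pro-represent the framed deformation functor after enlarging the residue field from $\kappa$ to $\kappa'$. I would prove this by re-running the proof of Lemma~\ref{Dnoetherian} over $\Aa_\Lambda^{\kappa'}$: the functor $A\mapsto\{\text{continuous }\rho_A\colon\Gamma\to G(A)\text{ reducing to }\rho'\}$ preserves finite limits, and its tangent space is $Z^1(\Gamma,\ad\rho')$, which is finite dimensional over $\kappa'$ since Mazur's condition and flat base change along $\kappa\to\kappa'$ give $H^i(\Gamma,\ad\rho')\cong H^i(\Gamma,\ad\rho)\otimes_\kappa\kappa'$; so this functor is pro-represented, and the resulting comparison map with $R^\square_{G,\rho}$ is an isomorphism by dévissage along small extensions, the obstruction- and tangent-space computations being identical to those underlying Corollary~\ref{represent_kappa}, with trivial base case $A=\kappa'$. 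Naturality of the final isomorphism $\Lambda'\otimes_\Lambda R^\square_{G,\rho}\cong R^\square_{G,\rho'}$ in $\rho$ and in $\kappa\to\kappa'$ is clear from the construction. The genuinely delicate point, which this argument is designed to handle, is the change of residue field: an object $A\in\Aa_{\Lambda'}$ is not an object of $\Aa_\Lambda$, so one cannot quote pro-representability of $D^\square_{\rho,G}$ directly --- identifying $\Aa_{\Lambda'}$ with $\Aa_\Lambda^{\kappa'}$ via formal étaleness of $\Lambda'/\Lambda$ and redoing the (standard) representability and obstruction theory there is what makes the tensor--hom adjunction applicable, and everything else is bookkeeping.
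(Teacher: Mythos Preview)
Your approach is the argument the paper has in mind (the paper's proof is simply a citation to Wiles, p.~457, and you are spelling out that argument). There is, however, one genuine gap.

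Your assertion that $\Lambda\to\Lambda'$ is finite \'etale fails in case~(3). There the residue field of $\Lambda$ is $\kappa$ itself, a local field of characteristic $p$, not a finite field; so ``finite fields are perfect'' is beside the point, and when $\kappa'/\kappa$ is inseparable (e.g.\ $\kappa'=\kappa(t^{1/p})$ with $\kappa=k'(\!(t)\!)$) the extension $\Lambda'/\Lambda$ is finite flat but not \'etale. Your identification $\Aa_{\Lambda'}\simeq\Aa_\Lambda^{\kappa'}$ then breaks down: for instance $A=\kappa'[\varepsilon]$ admits infinitely many $\kappa$-algebra maps $\kappa'\to A$ inducing the identity on residue fields (send $t^{1/p}$ to $t^{1/p}+a\varepsilon$ for any $a$), so the $\Lambda'$-structure on an object of $\Aa_\Lambda^{\kappa'}$ is not unique.

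Fortunately the \'etaleness is not needed. You still have $\Lambda'\otimes_\Lambda\kappa=\kappa'$ (so $R'$ is local with residue field $\kappa'$), and the tensor--hom adjunction $\Hom_{\Lambda'}(R',A)=\Hom_\Lambda(R^\square_{G,\rho},A)$ for $A\in\Aa_{\Lambda'}$ requires nothing more than that $\Lambda'$ be a $\Lambda$-algebra. What remains is to show, for each such $A$, that
\[
\Hom_\Lambda^{\mathrm{loc}}(R^\square_{G,\rho},A)\longrightarrow D^\square_{\rho',G}(A),\qquad \psi\mapsto\psi_*\rho^{\mathrm{univ}},
\]
is bijective. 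Injectivity holds because $R^\square_{G,\rho}$ is topologically generated over $\Lambda$ by the matrix coordinates of $\rho^{\mathrm{univ}}$ (this follows from its universal property over $\Aa_\Lambda$). Surjectivity follows by your d\'evissage along small extensions in $\Aa_{\Lambda'}$: the base case $A=\kappa'$ is trivial, and at each step both the relative tangent spaces and the obstruction spaces are obtained from those over $\kappa$ by $-\otimes_\kappa\kappa'$, so the comparison map is a bijection on fibres. Run this argument directly on $\Aa_{\Lambda'}$ and drop the category identification; then everything goes through in all three cases.
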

\begin{proof} The argument explained on page 457 of \cite{Wiles} also applies in our setting. 
\end{proof}

\begin{lem}\label{extend_scalars2} Let $\kappa$ be a local $\OO$-field with ring of integers $\OO_{\kappa}$ and 
residue field $k$.  Let $\rho: \Gamma \rightarrow G(\kappa)$ be a continuous representation, 
such that its image is contained in $G(\OO_{\kappa})$, and let $\rhobar:\Gamma \rightarrow 
G(k)$ be the composition of $\rho$ with the reduction map $G(\OO_{\kappa})\rightarrow G(k)$. 
Let $\varphi: R^{\square}_{G, \rhobar} \rightarrow \OO_{\kappa}\hookrightarrow \kappa$ be 
the map induced by considering $\rho$ as a deformation of $\rhobar$ to $\OO_{\kappa}$. Let $\qq$ be the kernel of the natural map 
$$\Lambda\otimes_{\OO} R^{\square}_{G, \rhobar} \rightarrow \kappa, \quad \lambda\otimes a\mapsto \bar{\lambda} \varphi(a),$$
where $\Lambda$ is the coefficient ring for $\kappa$. Then there is a natural isomorphism of local 
$\Lambda$-algebras between $R^{\square}_{G, \rho}$ and the completion  
$\Lambda\otimes_{\OO} R^{\square}_{G, \rhobar}$ with respect to $\qq$. 
\end{lem}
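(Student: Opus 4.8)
The plan is to identify both $R^{\square}_{G,\rho}$ and the completion $\widehat{B}$ of $B:=\Lambda\otimes_{\OO}R^{\square}_{G,\rhobar}$ at $\qq$ as pro-representing objects of the framed deformation functor $D^{\square}_{\rho,G}$ on $\mathfrak A_{\Lambda}$: the former does so by \Cref{Dnoetherian}, so it suffices to construct a bijection, natural in $A\in\mathfrak A_{\Lambda}$, between the set of continuous $\Lambda$-algebra homomorphisms $\widehat{B}\to A$ and $D^{\square}_{\rho,G}(A)$. First I would unwind the left-hand side: since $\mm_{A}$ is nilpotent, a continuous $\Lambda$-algebra map $\widehat{B}\to A$ is the same as a $\Lambda$-algebra map $B\to A$ carrying $\qq$ into $\mm_{A}$; and — using that $\kappa=\Lambda/\mm_{\Lambda}$ has no non-trivial $\Lambda$-algebra endomorphism — by the tensor--hom adjunction this is the same as an $\OO$-algebra map $f\colon R^{\square}_{G,\rhobar}\to A$ whose reduction modulo $\mm_{A}$ is the composite $R^{\square}_{G,\rhobar}\xrightarrow{\varphi}\OO_{\kappa}\hookrightarrow\kappa$. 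So the task reduces to matching such $f$'s with continuous lifts of $\rho$ to $G(A)$.

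The bridge between the two sides is \Cref{cptopensubalg}. Given $\rho_{A}\in D^{\square}_{\rho,G}(A)$, the image $\rho_{A}(\Gamma)$ is compact and $G(A)$ is the increasing union of the open subgroups $G(R)$ as $R$ runs over the profinite open $\Lambda^{0}$-subalgebras with $R[1/t]=A$, so $\rho_{A}$ factors through some such $G(R)$; being module-finite over the complete local ring $\Lambda^{0}=\OO\br{t}$ and contained in the local ring $A$, such an $R$ is complete local noetherian, its image in $\kappa$ is the maximal compact subring $\OO_{\kappa}$, its residue field is $k$, and $\rho_{A}\bmod\mm_{R}=\rhobar$. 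Thus $\rho_{A}$ is a deformation of $\rhobar$ over the pro-object $R$ of $\mathfrak A_{\OO}$, classified by a local $\OO$-map $h\colon R^{\square}_{G,\rhobar}\to R$, and we set $f:=(R\hookrightarrow A)\circ h$, checking that $f\bmod\mm_{A}=(\OO_{\kappa}\hookrightarrow\kappa)\circ\varphi$. Conversely, given $f$ as above: since $\varphi$ is local, $f$ carries $\mm_{R^{\square}_{G,\rhobar}}$ into a topologically nilpotent ideal of $A$ — here one uses that in the topology on objects of $\mathfrak A_{\Lambda}$ in case (3) of the coefficient-ring conventions the elements of the maximal ideal of $\OO_{\kappa}$ are topologically nilpotent, even though $t$ is a unit in $A$ — so $f$ is continuous, whence $\rho_{A}:=G(f)\circ\rho^{\mathrm{univ}}$ is a continuous lift of $\rho$ (continuity of $\rho^{\mathrm{univ}}\colon\Gamma\to G(R^{\square}_{G,\rhobar})$ being clear since each $R^{\square}_{G,\rhobar}/\mm^{n}$ is finite, and $\rho_{A}\bmod\mm_{A}=G(\OO_{\kappa}\hookrightarrow\kappa)\circ G(\varphi)\circ\rho^{\mathrm{univ}}=\rho$). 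One then verifies that the two constructions are mutually inverse, that the choice of $R$ is immaterial (two admissible choices are refined by their admissible intersection), and that everything is natural in $A$.

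The main obstacle is the point-set and commutative-algebra bookkeeping concentrated in the second step: that a continuous lift factors through $G(R)$ for a profinite open subring $R$ that is genuinely a pro-object of $\mathfrak A_{\OO}$ with residue field $k$; that the comparison ring maps between $R^{\square}_{G,\rhobar}$ and $A$, which are continuous but not local, behave correctly with respect to the non-adic topology on objects of $\mathfrak A_{\Lambda}$; and that $\widehat{B}$ is noetherian (which in any case also follows a posteriori once the functorial identification with $R^{\square}_{G,\rho}$ is established). This is exactly the kind of argument carried out for $G=\GL_{d}$ in \cite[Section 3.5]{BIP_new}, and the preparatory results \Cref{cptopensubalg}, \Cref{sectionexists}, \Cref{splittingexists} are in place precisely to run it; the remaining ingredients — tensor--hom adjunction, the universal properties of $R^{\square}_{G,\rhobar}$ and $R^{\square}_{G,\rho}$, and the behaviour of continuous maps into artinian quotients — are formal.
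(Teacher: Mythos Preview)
Your proposal is correct and follows exactly the approach the paper defers to: the paper's proof simply cites \cite[Theorem 3.3.1]{BJ_new} (for $\GL_d$) and \cite[Proposition 9.5]{kisin_over}, and what you have written is precisely the argument those references carry out, adapted to general $G$ using the preparatory lemmas \Cref{cptopensubalg} and \Cref{splittingexists}. The only cosmetic point is that the claim ``$G(A)$ is the increasing union of the $G(R)$'' is better phrased as in the proof of \Cref{cont_vs_cond}: fix one profinite open $R$, use that $\rho_A^{-1}(G(R))$ is open of finite index in $\Gamma$, and enlarge $R$ by the images of finitely many coset representatives.
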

\begin{proof} The proof is the same as the proof of \cite[Theorem 3.3.1]{BJ_new} for $\GL_d$, which 
in turn is based on \cite[Proposition 9.5]{kisin_over}, where the case $G=\GL_d$ and $\chara(\kappa)=0$ is treated.  
\end{proof}

\subsection{Central extensions}\label{sec_centr_ext} In this section, we assume that $G$ is generalised reductive and  let $H=G/Z$, where 
$Z$ is a flat closed subgroup scheme of $Z(G^0)$, which is normal in $G$. 
Part (i) of \cite[Exercise 5.5.9]{bcnrd} implies that $H$ is also generalised reductive. We let 
$\varphi: G \rightarrow H$ be the quotient map. 
 We fix a continuous 
representation $\rho: \Gamma\rightarrow G(\kappa)$ and drop it from the notation.  We will write $R^{\square}_{G}$ for $R^{\square}_{\rho, G}$, $R^{\square}_H$ for 
$R^{\square}_{\varphi\circ \rho, H}$, etc. 

\begin{cor}\label{fin_et} If $Z$ is finite \'etale
 then $R^{\square}_{ H}=R^{\square}_{ G}$. 
\end{cor}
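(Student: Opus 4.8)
The plan is to deduce this immediately from \Cref{present_over_RH} applied to the quotient map $\varphi\colon G \to H$; the whole content is that $\ad^{0,\varphi}\rho$ vanishes when $Z$ is finite étale, so the polynomial variables $x_1,\dots,x_r$ and the relations $f_1,\dots,f_t$ in the presentation \eqref{eq:present_over_RH} are both absent.

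First I would check that \Cref{present_over_RH} applies in this situation. Since $Z \le Z(G^0)$ we have $H^0 = G^0/Z$, and as $Z$ is finite étale the quotient map $G^0 \to G^0/Z$ is a $Z$-torsor, hence étale (because $Z$ is étale over $\OO$) and surjective; in particular $G^0 \to H^0$ is smooth and surjective. That $H$ is again a smooth affine $\OO$-group scheme is already recorded in the discussion preceding the corollary. Next, because $Z$ is étale over $\OO$ one has $\Lie Z = 0$, and the left-exact sequence $0 \to \Lie Z_\kappa \to \Lie G_\kappa \to \Lie H_\kappa$ of $\Gamma$-representations (the second map being $d\varphi$, which is $\Gamma$-equivariant for the adjoint actions through $\rho$ since $\varphi$ is a group homomorphism) shows that $\ad\rho \to \ad(\varphi\circ\rho)$ is injective. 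Hence $\ad^{0,\varphi}\rho = \ker(\ad\rho \to \ad(\varphi\circ\rho)) = \Lie Z_\kappa = 0$.

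Then in \eqref{eq:present_over_RH} we get $r = \dim_\kappa Z^1(\Gamma, \ad^{0,\varphi}\rho) = 0$ and $t = h^2(\Gamma, \ad^{0,\varphi}\rho) = 0$, so \Cref{present_over_RH} yields that the natural map $R^\square_{\varphi\circ\rho, H} \to R^\square_{\rho, G}$ is an isomorphism of $R^\square_{\varphi\circ\rho,H}$-algebras, which is exactly the claim $R^\square_H = R^\square_G$. I expect essentially no obstacle here; the only minor point to spell out is the identification of $\ad^{0,\varphi}\rho$ with the ($\Gamma$-stable) subspace $\Lie Z_\kappa$ of $\Lie G_\kappa$, which uses that $Z$ is contained in the centre of $G^0$ and that $G \to H$ is smooth so that the Lie algebra sequence behaves well under base change to $\kappa$.
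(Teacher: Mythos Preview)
Your proof is correct and follows exactly the same approach as the paper: observe that $\Lie Z = 0$ since $Z$ is finite \'etale, hence $\ad^{0,\varphi}\rho = 0$, and apply \Cref{present_over_RH} with $r=t=0$. You spell out more carefully why the hypotheses of \Cref{present_over_RH} hold, but the argument is the same.
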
 
\begin{proof} Since $Z$ is finite \'etale its Lie algebra is zero. We thus get $\ad^{0,\varphi}\rho=0$ 
and the assertion follows from Proposition \ref{present_over_RH}.
\end{proof}

\begin{prop}\label{nice_torus}
If $Z$ is a torus such that $Z \cap G'$ is \'etale then there is a natural isomorphism of local $\Lambda$-algebras 
$R^{\square}_{G/G'}\wtimes_{R^{\square}_{H/H'}} R^{\square}_{H} 
\eqto R^{\square}_{G}$. 
\end{prop}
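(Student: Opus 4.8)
The plan is to show that both sides of the claimed isomorphism pro-represent the same functor on $\Aa_\Lambda$, using the presentations from Proposition \ref{present_over_RH} together with the hypothesis that $Z\cap G'$ is étale. First I would record the two natural maps $R^\square_{G/G'}\to R^\square_G$ (induced by $G\to G/G'$) and $R^\square_H\to R^\square_G$ (induced by $\varphi\colon G\to H$), and the maps $R^\square_{H/H'}\to R^\square_{G/G'}$ and $R^\square_{H/H'}\to R^\square_H$; one checks these fit into a commutative square because $G\to G/G'$ and $G\to H\to H/H'$ agree up to the canonical identification $G/G'\to$ (the relevant quotient) — indeed $H'=\varphi(G')$ and the induced map $G/G'\to H/H'$ identifies these tori after one notes $Z\cap G'$ étale forces $\Lie Z\to \Lie(G/G')$ to be injective with the right quotient. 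This gives the canonical $\Lambda$-algebra homomorphism $R^\square_{G/G'}\wtimes_{R^\square_{H/H'}}R^\square_H\to R^\square_G$.

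To see it is an isomorphism I would argue at the level of tangent spaces and obstruction spaces using Proposition \ref{present_over_RH}. The key Lie-algebra input is: since $Z$ is a torus and $Z\cap G'$ is étale, one has $\Lie Z\cap \Lie G'=0$ inside $\Lie G$, so that $\Lie G=\Lie G'\oplus \Lie Z\oplus(\text{complement})$ in a way compatible with $\varphi$; more precisely the natural sequence $0\to \ad^{0,\varphi}\rho\to \ad\rho\to \ad(\varphi\circ\rho)\to 0$ together with $0\to \ad^0\rho'\to\ad\rho'\to\ad(\text{to }H/H')\to 0$ for $\rho'=\varphi\circ\rho$, where $\ad^0$ denotes the kernel to $G/G'$ resp. $H/H'$, exhibits $\ad^{0}$-type pieces for $G\to G/G'$ and $H\to H/H'$ as \emph{equal} — because $\ker(\Lie G\to \Lie(G/G'))=\Lie G'$, $\ker(\Lie H\to\Lie(H/H'))=\Lie H'$, and $\Lie G'\to \Lie H'$ is an isomorphism since its kernel is $\Lie(Z\cap G')=0$ and it is surjective ($G'\to H'$ is an isogeny with étale kernel). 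Hence $\ad^{0,(G\to G/G')}\rho\cong \ad^{0,(H\to H/H')}\rho'$ as $\Gamma$-modules, so the Proposition \ref{present_over_RH} presentations of $R^\square_G$ over $R^\square_{G/G'}$ and of $R^\square_H$ over $R^\square_{H/H'}$ use the same $r$, the same $t$, the same $f_i$'s (naturally in the base), which is exactly the statement that $R^\square_G\cong R^\square_{G/G'}\wtimes_{R^\square_{H/H'}}R^\square_H$.

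A cleaner way to package the last step, which I would prefer to write up: apply Proposition \ref{present_over_RH} to $\varphi\colon G\to H$ to get $R^\square_G\cong R^\square_H\br{x_1,\dots,x_r}/(f_1,\dots,f_t)$ with the data governed by $\ad^{0,\varphi}\rho=\ker(\ad\rho\to\ad\rho')$; observe that $\ad^{0,\varphi}\rho=\Lie Z$ with trivial-ish $\Gamma$-action twisted by $\rho$, but since $Z$ is central $\Gamma$ acts on $\Lie Z$ through $\Ad\rho$ which is trivial on the center, so $\ad^{0,\varphi}\rho$ is a trivial $\Gamma$-module of dimension $\dim Z$. The same computation applied to $G/G'\to H/H'$ gives $\ad^{0}$-module also $\Lie Z$ (again trivial $\Gamma$-action, using $Z\cap G'$ étale so $Z\hookrightarrow G/G'$ with the same Lie algebra), hence $R^\square_{G/G'}\cong R^\square_{H/H'}\br{x_1,\dots,x_r}/(f_1,\dots,f_t)$ with \emph{identical} $(r,t,\{f_i\})$ — here one must check the obstruction classes match, which follows because both sit inside $H^2(\Gamma,\Lie Z)$ computed from the \emph{same} central extension data, namely the pushout along $Z(G^0)\to G$ versus $Z(G^0)\to G/G'$. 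Then base-changing the first presentation along $R^\square_{H/H'}\to R^\square_H$ yields $R^\square_{H/H'}\br{\underline x}/(\underline f)\wtimes_{R^\square_{H/H'}}R^\square_H=R^\square_H\br{\underline x}/(\underline f)\cong R^\square_G$, i.e. $R^\square_{G/G'}\wtimes_{R^\square_{H/H'}}R^\square_H\cong R^\square_G$.

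The main obstacle I anticipate is \textbf{matching the obstruction ideals} $(f_1,\dots,f_t)$ on the two sides — i.e. proving that the presentation of $R^\square_G$ over $R^\square_H$ and the presentation of $R^\square_{G/G'}$ over $R^\square_{H/H'}$ are not merely abstractly isomorphic but compatibly so under the natural map $R^\square_{H/H'}\to R^\square_H$. The conceptual reason is that the relevant obstruction theory for lifting a $\varphi$-deformation along $G\to H$ (resp. $G/G'\to H/H'$) is controlled by the central kernel $Z$ in both cases and the relevant $2$-cocycle is the image of the fixed class measuring the extension $Z\to G\to H$, which pulls back from $Z\to G/G'\to H/H'$; so the cleanest route is to set up the functorial comparison of the exact sequences in \eqref{JmJsequence}-type arguments simultaneously for both squares and observe that the map $\alpha$ of \eqref{define_alpha} is the \emph{same} (valued in $H^2(\Gamma,\Lie Z)$) in both cases. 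I would spend the bulk of the write-up making this naturality precise, invoking the functoriality in $G$ already built into Proposition \ref{present_over_RH}, and reduce everything else to the Lie-algebra bookkeeping that $\Lie Z\hookrightarrow\Lie G$ meets $\Lie G'$ in $0$ and maps isomorphically to $\ker(\Lie(G/G')\to\Lie(H/H'))$.
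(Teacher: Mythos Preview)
Your approach via presentations is genuinely different from the paper's, and the obstacle you flag---matching the obstruction ideals $(f_1,\dots,f_t)$ for the two presentations---is real and is not resolved by your sketch. Knowing that both obstruction theories take values in $H^2(\Gamma,\Lie Z)$ tells you that the numbers $r$ and $t$ agree, but Proposition~\ref{present_over_RH} produces the $f_i$ by non-canonical choices of lifts, so there is no a~priori reason why the ideal generated by the $f_i$'s coming from $G\to H$ should equal the ideal generated by (the images of) the $g_i$'s coming from $G/G'\to H/H'$ inside $R^\square_H\br{x_1,\dots,x_r}$. Your proposed fix (``the relevant $2$-cocycle is the image of the fixed class measuring the extension $Z\to G\to H$'') conflates the group-scheme extension class with the lifting obstruction for representations; making this precise essentially forces you back to a functorial statement.

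The paper avoids all of this by working directly at the level of points. After using Corollary~\ref{fin_et} to reduce to the case $Z\cap G'=1$, one checks that the natural map $G\to G/G'\times_{H/H'}H$ is an isomorphism of group schemes. It then follows immediately that for every $A\in\Aa_\Lambda$ the map $\rho_A\mapsto(q\circ\rho_A,\varphi\circ\rho_A)$ is a bijection between $D^\square_G(A)$ and the fibre product $D^\square_{G/G'}(A)\times_{D^\square_{H/H'}(A)}D^\square_H(A)$, which is exactly the functor pro-represented by $R^\square_{G/G'}\wtimes_{R^\square_{H/H'}}R^\square_H$. This is both shorter and conceptually cleaner: the isomorphism of deformation rings is inherited from an isomorphism of the target groups, so no matching of presentations is needed.

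One minor correction: your claim that $\Gamma$ acts trivially on $\Lie Z$ is wrong in general. The subgroup $Z$ is central in $G^0$ but only normal in $G$, so if $G$ is disconnected the conjugation action of $\Gamma$ (via $\rho$) on $\Lie Z$ factors through $G/G^0$ and can be nontrivial. This does not affect the equality of the two $\ad^{0,\varphi}$-modules, but it does affect your computation of $H^i(\Gamma,\Lie Z)$.
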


\begin{proof} Let $D: \Aa_{\Lambda}\rightarrow \Set$ be the functor such that $D(A)$ is 
the set of pairs $(\psi, \xi)\in D^{\square}_{G/G'}(A)\times D^{\square}_{H}(A)$ such that 
$\pi\circ \psi = \theta \circ \xi$, where $\theta: H\rightarrow H/H'$ and $\pi: 
G/G'\rightarrow H/H'$ are the quotient maps. The Proposition amounts to the claim that the map 
\begin{equation}\label{ride}
D^{\square}_{G}(A)\rightarrow D(A),\quad \rho_A \mapsto (\varphi\circ \rho_A, q\circ \rho_A)
\end{equation}
is bijective.
Since $Z\cap G'$ is \'etale by assumption, using \Cref{fin_et} we may assume that 
$Z\cap G'$ is trivial. 
The map $G^0 \to H^0$ is surjective with kernel $Z$ and the map $G' \to H'$ 
is surjective with kernel $Z \cap G'$.
The map of group schemes 
\begin{equation}\label{jackson}
 G \rightarrow G/G'\times_{H/H'} H
 \end{equation}
has kernel $Z\cap G'$, so it is injective. If 
$([g],h) \in G(A)/G'(A) \times_{H(A)/H'(A)} H(A)$
then, since $G\rightarrow H$ is surjective,  there exists a finite flat extension 
$A \hookrightarrow B$ and $\tilde g\in G(B)$, which maps to $h\in H(A)\subseteq H(B)$.
Then  $\tilde g g^{-1} \in H'(B) = G'(B)$, so $[\tilde g] = [g]$ in $G(B)/G'(B)$.
By sheafification this implies, that \eqref{jackson} is bijective.

This implies that \eqref{ride} is bijective, 
since then given $(\psi, \xi)\in D(A)$ for every $\gamma\in \Gamma$ there is a
unique $\rho'(\gamma)\in G(A)$ such that $\varphi(\rho'(\gamma))=\psi(\gamma)$ and 
$\pi(\rho'(\gamma))=\xi(\gamma)$. Uniqueness implies that $\rho'\in D^{\square}_{G}(A)$, 
which maps to $(\psi,\xi)$. 
\end{proof}

\begin{cor}\label{cor_nice_torus} If $\Gamma=\Gamma_F$, $\kappa$ is
finite and we are in the setting of \Cref{nice_torus} then 
$\dim R^{\square}_G/\varpi = \dim R^{\square}_H/\varpi + ([F:\Qp]+1)(\dim G_k - \dim H_k)$.
\end{cor}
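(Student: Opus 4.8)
The plan is to deduce this from \Cref{nice_torus}, reducing to the case of groups with torus identity component — the subject of the companion paper \cite{defT} — and then transporting the dimension statement along a flat base change. First I would make the harmless reduction to the split case: by \Cref{O_prime} there is a finite unramified $\OO'/\OO$ over which $G^0$ is split and $G/G^0$ is constant, and then the same holds for $H=G/Z$, $G/G'$ and $H/H'$, whose identity components are obtained from $G^0$ by passing to a quotient torus and/or to the derived group. By \Cref{extend_scalars1} base change to $\OO'$ replaces each $R^\square$ by $R^\square\otimes_\OO\OO'$, which changes neither side of the claimed identity (a uniformiser stays a uniformiser and the residue field grows only by a finite extension). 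So I assume $G^0$ split and $G/G^0$ constant. Then \Cref{nice_torus} gives a natural isomorphism $R^\square_{G/G'}\wtimes_{R^\square_{H/H'}}R^\square_H\eqto R^\square_G$. Since $G/G'$ and $H/H'$ have torus identity component, I would quote from \cite{defT} that $R^\square_{G/G'}$ and $R^\square_{H/H'}$ are flat over $\OO$, that their special fibres are complete intersections of dimensions $\dim(G/G')_k([F:\Qp]+1)$ and $\dim(H/H')_k([F:\Qp]+1)$ respectively, and that the natural map $R^\square_{H/H'}\to R^\square_{G/G'}$ induced by the surjection $G/G'\to H/H'$ (a surjection of split tori on identity components) is flat.

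With this input the remainder is commutative algebra. By the dimension formula for flat local homomorphisms of noetherian local rings \cite[Theorem 15.1]{matsumura}, the closed fibre of $R^\square_{H/H'}/\varpi\to R^\square_{G/G'}/\varpi$ has dimension $\big(\dim(G/G')_k-\dim(H/H')_k\big)([F:\Qp]+1)$; since $Z\cap G'$ is étale a quotient by it does not change the dimension of the special fibre, so $\dim G'_k=\dim H'_k$ and therefore $\dim(G/G')_k-\dim(H/H')_k=\dim G_k-\dim H_k$. Because $R^\square_{G/G'}$ is flat over $R^\square_{H/H'}$, the isomorphism of \Cref{nice_torus} realises $R^\square_G$ as (the completion of) the base change of the flat $R^\square_{H/H'}$-algebra $R^\square_{G/G'}$ along $R^\square_{H/H'}\to R^\square_H$; hence $R^\square_G$ is flat over $R^\square_H$, and reducing modulo $\varpi$, $R^\square_G/\varpi$ is flat over $R^\square_H/\varpi$ with closed fibre canonically identified with that of $R^\square_{G/G'}/\varpi\to R^\square_{H/H'}/\varpi$. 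Applying \cite[Theorem 15.1]{matsumura} once more gives
$$\dim R^\square_G/\varpi=\dim R^\square_H/\varpi+(\dim G_k-\dim H_k)([F:\Qp]+1),$$
as desired.

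The main obstacle is precisely the torus case: that a deformation ring of a representation valued in a group with torus identity component is $\OO$-flat with special fibre a complete intersection of the expected dimension, and that the relevant functoriality map between two such rings is flat. This is the content I would extract from \cite{defT}; once it is granted, the rest of the argument is routine. (If one wished to avoid quoting the flatness of $R^\square_{H/H'}\to R^\square_{G/G'}$ as such, one could instead combine \Cref{present_over_RH} applied to $G/G'\to H/H'$ with \Cref{pre_lci_flat}, the hypotheses of the latter being checked from the special-fibre dimensions supplied by \cite{defT}.)
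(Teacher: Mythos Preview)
Your approach is essentially the same as the paper's: use \Cref{nice_torus} to write $R^\square_G$ as a completed tensor product, import flatness and fibre dimensions for the torus pieces from \cite{defT}, and finish with the dimension formula for flat local maps. Two small differences are worth noting. First, your reduction to the split case is harmless but unnecessary; the paper invokes \cite[Corollary~9.9]{defT} directly. Second, the paper works throughout with the \emph{reduced} special fibres $A'=(R^\square_{H/H'}/\varpi)^{\red}$, $B'=(R^\square_{G/G'}/\varpi)^{\red}$, $A=(R^\square_H/\varpi)^{\red}$, because that is the form in which \cite{defT} supplies flatness of $A'\to B'$; it then identifies $(B'\wtimes_{A'}A)^{\red}$ with $(R^\square_G/\varpi)^{\red}$ via \Cref{nice_torus} and concludes since dimension is insensitive to nilpotents. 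You instead assume flatness of $R^\square_{H/H'}\to R^\square_{G/G'}$ over $\OO$, which may be slightly more than \cite{defT} states directly; your parenthetical alternative (combine \Cref{present_over_RH} for $G/G'\to H/H'$ with \Cref{pre_lci_flat}, checking the fibre dimension from \cite{defT}) does close this, so there is no real gap. The paper's reduced-ring manoeuvre is just the cleanest way to match the exact citation.
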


\begin{proof} Let $A'= (R^{\square}_{H/H'}/\varpi)^{\red}$, $B'= (R^{\square}_{G/G'}/\varpi)^{\red}$, $A=(R^{\square}_{H}/\varpi)^{\red}$, where the superscript $\red$ indicates
reduced rings. We have shown in \cite[Corollary 9.9]{defT} that 
$A'\rightarrow B'$ is flat and the dimension $d$ of the fibre $\kappa\otimes_{A'} B'$ is equal to
$$(\dim (G/G')_k -\dim (H/H')_k)([F:\Qp]+1)=(\dim G_k-\dim H_k)([F:\Qp]+1).$$
Hence, $A\rightarrow B'\wtimes_{A'} A$ is flat 
and the fibre has dimension $d$. Thus $\dim A+d=\dim B'\wtimes_{A'} A$ by \cite[\href{https://stacks.math.columbia.edu/tag/00ON}{Tag 00ON}]{stacks-project}. \Cref{nice_torus} implies that 
$(B'\wtimes_{A'} A)^{\red}\cong (R^{\square}_G/\varpi)^{\red}$. 
Hence, $\dim R^{\square}_G/\varpi = \dim A +d = \dim R^{\square}_H/\varpi +d$. 
\end{proof}

For the rest of the section we let $Z$ be a finite connected diagonalisable subgroup scheme of $Z(G^0)$. If $\chara(\kappa)=0$ then $Z$ is trivial so we assume that 
$\chara(\kappa)=p$. In this case there is a finite abelian $p$-group $\md$ 
such that $Z(A)= \Hom(\md, A^{\times})$ for $A\in \OO\text{-}\alg$. 

\begin{lem}\label{soggy_finite}  $R^{\square}_H \rightarrow R^{\square}_G$ is finite.
\end{lem}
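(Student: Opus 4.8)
The plan is to reduce the statement to a finiteness property of a single fibre. Since $R^{\square}_H$ and $R^{\square}_G$ are complete local noetherian $\Lambda$-algebras with residue field $\kappa$ and $R^{\square}_H\to R^{\square}_G$ is a local homomorphism, a standard completeness argument (topological Nakayama) shows that $R^{\square}_G$ is module-finite over $R^{\square}_H$ as soon as the fibre $C:=R^{\square}_G\otimes_{R^{\square}_H}\kappa=R^{\square}_G/\mm_{R^{\square}_H}R^{\square}_G$ is a finite-dimensional $\kappa$-vector space: in that case $\mm_{R^{\square}_H}R^{\square}_G$ is $\mm_{R^{\square}_G}$-primary, so $R^{\square}_G$ is $\mm_{R^{\square}_H}$-adically complete, and lifting a $\kappa$-basis of $C$ produces a module generating set of $R^{\square}_G$ over $R^{\square}_H$. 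Note $\varpi\in\mm_{R^{\square}_H}$, so $C$ is a $\kappa$-algebra, and it pro-represents, on the category $\Aa_{\kappa}$ of local artinian $\kappa$-algebras with residue field $\kappa$, the functor $F$ with $F(A)=\{\rho_A\in D^{\square}_{\rho,G}(A):\varphi\circ\rho_A=\varphi\circ\rho\}$, i.e. the deformations of $\rho$ whose image in $D^{\square}_{\varphi\circ\rho,H}$ is the trivial deformation.

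Next I would describe $F$ explicitly. For $A\in\Aa_{\kappa}$, left-exactness of $1\to Z\to G\to H$ as fppf sheaves gives $\ker(G(A)\to H(A))=Z(A)$, so $\rho_A\mapsto\bigl(\gamma\mapsto\rho_A(\gamma)\rho(\gamma)^{-1}\bigr)$ identifies $F(A)$ with the set of continuous maps $z:\Gamma\to Z(A)$ satisfying the $\rho$-twisted cocycle relation $z(\gamma\gamma')=\rho(\gamma')^{-1}z(\gamma)\rho(\gamma')\,z(\gamma')$ that makes $\gamma\mapsto\rho(\gamma)z(\gamma)$ a homomorphism. Since $Z\subseteq Z(G^0)$, conjugation of $G$ on $Z$ factors through $G/G^0$, so this twist is trivial on the open normal subgroup $\Gamma':=\ker\bigl(\Gamma\xrightarrow{\rho}G(\kappa)\to(G/G^0)(\kappa)\bigr)$; in particular $z|_{\Gamma'}$ is a continuous homomorphism into the abelian group $Z(A)=\Hom(\md,A^{\times})$, which is annihilated by $p^r$, where $p^r$ is the exponent of $\md$.

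The heart of the argument is then a finiteness input. Because $\Gamma'$ inherits Mazur's condition, $\Gamma'^{\ab}/p\,\Gamma'^{\ab}$ is finite; hence the pro-$p$ part of $\Gamma'^{\ab}$ is a finitely generated $\Zp$-module and $\Gamma'^{\ab}/p^r$ is finite (for $\Gamma=\Gamma_F$ this is also immediate from local class field theory). Thus $z|_{\Gamma'}$ factors through the finite group $\Gamma'^{\ab}/p^r$, so $z$ is determined by its values on finitely many lifts $\delta_1,\dots,\delta_n\in\Gamma'$ of generators of $\Gamma'^{\ab}/p^r$ together with its values on a set of coset representatives of $\Gamma'$ in $\Gamma$; conversely, any tuple of elements of $Z(A)$ satisfying the finitely many relations coming from the finite groups $\Gamma'^{\ab}/p^r$ and $\Gamma/\Gamma'$ extends uniquely to such a cocycle, which is automatically continuous since it factors through a finite quotient of $\Gamma$. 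This exhibits $F$ as a closed subfunctor of $Z^m(-)$ with $m=n+[\Gamma:\Gamma']$, cut out by finitely many polynomial relations. Finally, $Z$ is finite flat over $\OO$ (being diagonalisable, $\OO[Z]=\OO[\md]$ is finite free) with connected special fibre, so on $\Aa_{\kappa}$ the functor $Z^m$ is represented by the finite-dimensional local $\kappa$-algebra $\kappa\otimes_{\OO}\OO[\md]^{\otimes_{\OO}m}=\kappa[\md^m]$; hence $F$, and therefore $C$, is represented by a quotient $\kappa[\md^m]/I$, and $\dim_\kappa C\le|\md|^m<\infty$, which finishes the proof.

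I expect the main obstacle to be the penultimate step: making precise that $F$ is a \emph{closed} subfunctor of $Z^m$ (so that $C$ is genuinely a finite-dimensional \emph{quotient} of $\kappa[\md^m]$, and not merely topologically generated over $\kappa$ by finitely many elements). What makes this work is that $Z$ is infinitesimal in characteristic $p$: this forces $Z(A)$ to be $p^r$-torsion, and, since $Z(\kappa)=1$, it means the condition $\rho_A\equiv\rho\bmod\mm_A$ imposes nothing on $z$. This is exactly opposite to the situation of \Cref{fin_et}, where $Z$ is étale, $Z(A)=Z(\kappa)$ is rigid, and the reduction constraint collapses $F$ to a point so that $R^{\square}_H\to R^{\square}_G$ is even an isomorphism.
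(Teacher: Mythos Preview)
Your proof is correct, but it takes a considerably more elaborate route than the paper's. Both arguments begin the same way, reducing to finite-dimensionality of the fibre $C = R^{\square}_G \otimes_{R^{\square}_H} \kappa$ via topological Nakayama. You then explicitly identify the functor pro-represented by $C$ with twisted $1$-cocycles $\Gamma \to Z(A)$, use that $Z(A)$ is $p^r$-torsion together with Mazur's $p$-finiteness condition to show all such cocycles factor through a fixed finite quotient $Q$ of $\Gamma$, and conclude that $C$ is a quotient of the finite-dimensional local algebra $\kappa[\md^{|Q|}]$.

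The paper instead shows directly that $C$ has Krull dimension zero. For any prime $\pp$ of $C$, the quotient $C/\pp$ is a domain of characteristic $p$, so $Z(C/\pp)=1$ (as $Z$ is a product of $\mu_{p^n}$'s) and hence $G(C/\pp)\hookrightarrow H(C/\pp)$; since the specialised deformation $\rho_{\pp}$ satisfies $\varphi\circ\rho_{\pp}=\varphi\circ\rho$, it follows that $\rho_{\pp}(\Gamma)\subseteq G(\kappa)$, forcing $\pp$ to be the maximal ideal. Noetherianity of $R^{\square}_G$ then makes $C$ artinian, hence finite over $\kappa$. Your approach is more constructive and even yields the explicit bound $\dim_\kappa C\le|\md|^{|Q|}$, but the paper's observation that the infinitesimal group $Z$ has no points on reduced $\Fp$-algebras short-circuits the entire cocycle analysis in two lines.
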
 
\begin{proof} Let $A:= \kappa\otimes_{R^{\square}_H} R^{\square}_G$ and let $\pp$ be a prime ideal of 
$A$. Since $A/\pp$ is a domain of characteristic $p$, we have $Z(A/\pp)=0$ and thus 
$G(A/\pp)\subseteq H(A/\pp)$. Let $\rho_{\pp}$ be the specialisation of the universal deformation
$\Gamma \rightarrow G(R^{\square}_G)$ at $\pp$. Since $\varphi\circ \rho_{\pp}= \varphi\circ \rho$ 
by definition of $A$, we get that  $\rho_{\pp}(\Gamma)$ is contained in $G(\kappa)$.
Hence, $\pp$ is the maximal ideal of $A$. We conclude that $A$ is zero dimensional and, since
$R^{\square}_G$ is noetherian, $A$ is a finite dimensional $\kappa$-vector space. The claim 
follows from Nakayama's Lemma. 
\end{proof}

\begin{prop}\label{fin_conn} $\dim R^{\square}_H/\varpi = \dim R^{\square}_G/\varpi$.
\end{prop}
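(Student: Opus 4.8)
The plan is to deduce the equality of dimensions from \Cref{soggy_finite} together with surjectivity of the induced map on special fibres. Since $R^\square_H \to R^\square_G$ is finite, so is $R^\square_H/\varpi \to R^\square_G/\varpi$, whence $\dim R^\square_G/\varpi \le \dim R^\square_H/\varpi$; and a surjective finite morphism of noetherian schemes preserves Krull dimension, so it suffices to show that $\Spec R^\square_G/\varpi \to \Spec R^\square_H/\varpi$ is surjective. (One checks along the way that this morphism is also radicial: for any field $E$ of characteristic $p$ one has $Z(E) = \Hom(\md, E^\times) = 0$, so $G(E) \hookrightarrow H(E)$ is injective, and hence two $R^\square_G/\varpi$-valued points over the same $R^\square_H/\varpi$-point coincide; thus the morphism is in fact a universal homeomorphism, although only surjectivity is needed.)

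To prove surjectivity, fix a prime ideal $\mathfrak p$ of $R^\square_H/\varpi$ and set $C := (R^\square_H/\varpi)/\mathfrak p$, a complete local noetherian domain of characteristic $p$ with residue field $\kappa$. Specialising the universal deformation of $\varphi\circ\rho$ along $R^\square_H \twoheadrightarrow C$ yields a continuous homomorphism $\xi : \Gamma_F \to H(C) = \varprojlim_n H(C/\mathfrak m_C^n)$ lifting $\varphi\circ\rho$. It is enough to lift $\xi$, after a harmless finite base change, to $G$: concretely, I would produce a finite injective extension $C \hookrightarrow C'$ of complete local noetherian domains together with a continuous $\tilde\xi : \Gamma_F \to G(C')$ whose composite with $\varphi$ is $\xi$ followed by $H(C)\to H(C')$. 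Granting this, $\tilde\xi$ reduces modulo $\mathfrak m_{C'}$ to $\rho$ — the unique lift of $(\varphi\circ\rho)\otimes\kappa'$ to $G(\kappa')$, since $Z(\kappa') = 0$ — so it is a deformation of $\rho$ and defines a map $R^\square_G \to C'$ (through $\Lambda'\otimes_\Lambda R^\square_G = R^\square_{G,\rho\otimes\kappa'}$, by \Cref{extend_scalars1}, if the residue field grows; it does not when $\kappa$ is finite). Its kernel $\mathfrak P$ is a prime containing $\varpi$ whose contraction to $R^\square_H$ equals $\ker(R^\square_H \to R^\square_G \to C') = \ker(R^\square_H \to C)$, i.e. the preimage of $\mathfrak p$; so $\mathfrak p$ lies in the image.

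The construction of $\tilde\xi$ is the crux. Write $Z \cong \prod_{i=1}^{\ell}\mu_{p^{r_i}}$. For $h \in H(C)$, the obstruction to lifting $h$ to $G(C)$ is the class of the fppf $Z$-torsor $\varphi^{-1}(h) \to \Spec C$ in $H^1_{\mathrm{fppf}}(\Spec C, Z) \cong \prod_i C^\times/(C^\times)^{p^{r_i}}$, using $\operatorname{Pic} C = 0$. Since $\Gamma_F$ is topologically finitely generated, it is a continuous quotient $q : \widehat F_m \twoheadrightarrow \Gamma_F$ of a free profinite group of finite rank $m$ on generators $e_1,\dots,e_m$. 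Choosing units of $C$ representing the obstruction classes of the finitely many elements $\xi(q(e_j))$ and adjoining their (unique, characteristic $p$) $p^{r_i}$-th roots inside an algebraic closure of $\operatorname{Frac} C$, one gets a finite injective extension $C \hookrightarrow C'$ of complete local noetherian domains over which these obstruction classes vanish. Lifting $\xi(q(e_j))$ to elements $\tilde g_j \in G(C')$ and using $\Hom_{\mathrm{cont}}(\widehat F_m, G(C')) = G(C')^m$ — valid as $G(C')$ is profinite when $\kappa$ is finite — gives a continuous homomorphism $\widetilde\Xi : \widehat F_m \to G(C')$ with $\varphi\circ\widetilde\Xi = (\xi\circ q)\otimes C'$. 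Since $C'$ is a domain of characteristic $p$, $Z(C') = \Hom(\md, (C')^\times) = 0$, so $\widetilde\Xi$ kills $\ker q$ and descends to the desired $\tilde\xi : \Gamma_F \to G(C')$.

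The main obstacle is precisely this lifting step, and within it the delicate point is to obtain an honest \emph{continuous} homomorphism defined on all of $\Gamma_F$, not merely on a dense subgroup; routing through the free profinite cover $\widehat F_m$ makes this automatic, since then only finitely many $Z$-torsor classes need to be trivialised and the vanishing $Z(C') = 0$ forces all relations of $\Gamma_F$ to be respected. When $\kappa$ is a local field of characteristic $p$ the ring $C'$ is no longer profinite; one then replaces the free-profinite-group argument by the comparison of notions of continuity established in \Cref{sec_cont}, leaving the overall strategy unchanged.
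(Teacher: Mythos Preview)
Your argument is correct (certainly when $\kappa$ is finite, the only case invoked later in the paper) and takes a genuinely different route from the paper's proof.

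The paper works directly with $A=R^\square_H/\varpi$ rather than prime by prime: it adjoins $p^e$-th roots of a set of generators of $\mathfrak m_A$ to obtain a finite flat local extension $B$, so that in $B^{\red}$ every element of $1+\mathfrak m_{A^{\red}}$ becomes a $p^e$-th power. Since the obstruction class of each $\rho'(\gamma)$ already vanishes over the residue field (because $\rho$ is a lift there), it lies in the kernel of $H^1_{\mathrm{fppf}}(A^{\red},Z)\to H^1_{\mathrm{fppf}}(\kappa,Z)$, and this kernel dies in $H^1_{\mathrm{fppf}}(B^{\red},Z)$. Hence the entire representation lifts---uniquely, since $Z(B^{\red})=0$---to $G(B^{\red})$, producing a single map of $A$-algebras $R^\square_G/\varpi\to B^{\red}$ and thus $\dim R^\square_G/\varpi\ge\dim B^{\red}=\dim A$.

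Your approach instead establishes surjectivity of $\Spec R^\square_G/\varpi\to\Spec R^\square_H/\varpi$, trivialising only the obstruction classes attached to finitely many topological generators and using the universal property of $\widehat F_m$ to manufacture a continuous lift. This makes the universal-homeomorphism statement explicit and gives a tidy continuity argument, but it costs the hypothesis that $\Gamma$ is topologically finitely generated: the proposition as stated in Section~\ref{sec_def_rings} is for any profinite $\Gamma$ satisfying Mazur's $p$-finiteness condition, and your proof does not cover that generality. The paper's device of killing \emph{all} of $1+\mathfrak m$ at once is precisely what lets it avoid this hypothesis, since the lift is then forced elementwise by $Z(B^{\red})=0$ and continuity follows from $G(B^{\red})\hookrightarrow H(B^{\red})$ being a closed embedding of profinite groups. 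For $\Gamma=\Gamma_F$---the only case actually used in \Cref{some_prop}---the restriction is of course harmless.
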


\begin{proof} For any local $\kappa$-algebra $A$ we have an exact sequence of non-abelian fppf cohomology groups
\begin{equation}\label{fppf_yeah}
0\rightarrow Z(A) \rightarrow G(A) \rightarrow H(A) \rightarrow H^1_{\mathrm{fppf}}(\Spec(A), Z)
\end{equation}
Let $e\ge 0$ be an integer such that $p^e\cdot\md=0$. Let $A=R^{\square}_H/\varpi$ and let 
$\mm$ be the maximal ideal of $A$. We choose elements $a_1, \ldots, a_r\in \mm$ such that 
their images form a basis of $\mm/\mm^2$ as a $\kappa$-vector space. Let 
$B=A[x_1,\ldots, x_n]/I$, where $I$ is the ideal generated by $x_i^{p^e}-a_i$ for $1\le i \le r$.
Then $B$ is finite flat over $A$ and every $x\in 1+\mm$ is a $p^e$-th power in $B^{\times}$. 
Let $A^{\red}$ and $B^{\red}$ be the maximal reduced quotients of $A$ and $B$, respectively. 
Then $A^{\red}$ is a subring of $B^{\red}$ and every $x\in 1+\mm_{A^{\red}}$ is a $p^{e}$-th power
in $B^{\red}$. Let $\rho': \Gamma\rightarrow H(A^{\red})$ be the specialisation of
 the universal deformation $\Gamma \rightarrow H(R^{\square}_H)$ along $R^{\square}_H \rightarrow A^{\red}$.
 
Since $Z$ is a finite product of $\mu_{p^n}$ with $n\le e$ and $B^{\red}$ is reduced and of characteristic 
$p$, we have $Z(B^{\red})=1$. Since $H^1_{\mathrm{fppf}}(\Spec(A), \mu_{p^n}) = A^{\times}/ (A^{\times})^{p^n}$ by \stackcite{040Q}, it follows from \eqref{fppf_yeah} that there exists a continuous representation 
$\tilde{\rho}: \Gamma \rightarrow G(B^{\red})$ deforming $\rho$ such that 
$\varphi\circ \tilde{\rho} = \rho'\otimes_{A^{\red}} B^{\red}$. This induces a map 
of $A$-algebras $R^{\square}_G/\varpi \rightarrow B^{\red}$. This map is  finite as $B$ is 
a finite $A$-algebra. Hence $\dim (R^{\square}_G/\varpi)\ge \dim B^{\red}= \dim B =\dim A$, 
where the last equality follows as $B$ is finite and flat over $A$. 
Lemma \ref{soggy_finite} implies that $ \dim (R^{\square}_G/\varpi)\le \dim A$. 
\end{proof}

\section{Continuity}\label{sec_cont}
In this section let $\OO$ be any ring, let $R\rightarrow A$ be a map of $\OO$-algebras, 
with $R$  noetherian and profinite. Let $\rho: \Gamma \rightarrow G(A)$ be a group 
homomorphism, where $\Gamma$ is a profinite group and $G$ is an affine $\OO$-group 
scheme of finite type.   

In \Cref{sec_cont_alg} we define an algebraic  notion of continuity for such representations, 
such that whenever $A$ is a topological $R$-algebra the algebraic notion of continuity implies
that the representation $\rho:\Gamma \rightarrow G(A)$ is continuous  for  the topology on the target induced by the topology of $A$.
Our construction is based on the fact that if $M$ is a finitely generated $R$-module 
then since $R$ is noetherian and profinite there is a unique topology on $M$ making 
$M$ into a Hausdorff topological $R$-module; 
this topology is profinite. 
For example, if $G=\Ga$ then our algebraic notion of continuity 
is the requirement that $\rho(\Gamma)$ is contained in a finitely generated $R$-submodule $M \subseteq A=\Ga(A)$ and the map $\rho: \Gamma\rightarrow M$ is continuous. We extend this example 
following Conrad's exposition in \cite{ConradTopologies} on how to topologise $X(A)$, when $A$ is a topological
ring. 

In  \Cref{sec_cond} we reformulate this notion in terms of condensed sets. In fact,
one could reprove the results of \Cref{sec_cont_alg} entirely within the framework 
of condensed sets. However, we have chosen not to do so as the arguments in  \Cref{sec_cont_alg}
are elementary and suffice for the paper.

\subsection{Continuity via algebra}\label{sec_cont_alg}
The following is the key lemma. 
\begin{lem}\label{polyhasclosedzeroset} 
Let $f\in A[y_1,\ldots, y_n]$ be a polynomial, which 
we view as a function on $A^n$ and let $V(f):=\{\underline{a}\in A^n: f(\underline{a})=0\}$. Let $M$ be a finitely generated 
$R$-submodule of $A^n$.
Then $f|_M : M \to A$ is a continuous map with values in a finitely generated $R$-submodule of $A$.
In particular, $V(f)\cap M$ is a closed subset of $M$ 
for the natural topology on $M$ as an $R$-module.
\end{lem}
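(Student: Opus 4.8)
The plan is to reduce everything to the case $n=1$ — i.e. to the statement that a polynomial map $A^n \to A$ restricted to a finitely generated $R$-submodule $M$ lands in a finitely generated $R$-submodule and is continuous — and then to observe that the statement about $V(f)\cap M$ is an immediate formal consequence, since $\{0\}$ is closed in $A$ (as $A$ is a topological $R$-module, or rather we only need $A$ with its $R$-module topology to be Hausdorff on finitely generated submodules, which is exactly the setup recalled before the lemma) and $f|_M$ is continuous into the finitely generated submodule it factors through.

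First I would fix generators $m_1,\dots,m_r$ of $M$ as an $R$-module, giving a surjection $R^r \twoheadrightarrow M$ of topological $R$-modules (here I use that $R$ is noetherian and profinite, so the quotient topology on $M$ agrees with its canonical $R$-module topology, and $R^r$ carries the product topology). It then suffices to prove the statement for the composite $R^r \to M \hookrightarrow A^n \xrightarrow{f} A$. Writing the $j$-th coordinate of $m_i \in A^n$ as $m_{i,j}\in A$, the composite $R^r \to A$ sends $(t_1,\dots,t_r)$ to $f\bigl(\sum_i t_i m_{i,1},\dots,\sum_i t_i m_{i,n}\bigr)$, which, upon expanding, is a polynomial expression in $t_1,\dots,t_r$ with coefficients that are $\ZZ$-linear combinations of products of the (finitely many) elements $m_{i,j}\in A$ and the (finitely many) coefficients of $f$ in $A$. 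Hence the image is contained in the $R$-submodule $N\subseteq A$ generated by that finite set of products — a finitely generated $R$-submodule — and the image of $M$ under $f$ lies in $N$ as claimed.

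Next I would check continuity. The map $R^r \to N$ is, coordinate-wise in a chosen generating tuple for $N$, a polynomial map $R^r \to R$ composed with... more precisely: pick generators of $N$; then $R^{r}\to N$ is the composite of a polynomial map $R^r \to R^{\mathrm{(finite)}}$ (each component being an honest polynomial with $\ZZ$-coefficients in the $t_i$, after absorbing the fixed elements of $A$ as formal coefficients is not quite legal — so instead I note directly that the map $R^r\to N\subseteq A$ is $R$-polynomial, i.e. given by a polynomial with coefficients in $A$, but its target is the finitely generated module $N$). The cleanest route: continuity of a map $R^r \to N$ into a finitely generated $R$-module can be checked after composing with a surjection $R^s \twoheadrightarrow N$ and lifting — but lifting a continuous map along a surjection of profinite modules is possible by compactness, or alternatively one argues that addition and multiplication $A\times A \to A$ restrict to continuous maps on finitely generated $R$-submodules (since $M_1\cdot M_2 \subseteq M_1 M_2$ with $M_1M_2$ finitely generated, and the multiplication $M_1\times M_2 \to M_1M_2$ is $R$-bilinear hence continuous for the profinite module topologies), so that $f|_M$, being built from finitely many additions, $R$-scalar multiplications, and multiplications starting from the continuous coordinate projections $M \to A$, is continuous. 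This last point — that multiplication $M_1\times M_2 \to M_1M_2$ is continuous, equivalently that an $R$-bilinear map between finitely generated $R$-modules is automatically continuous for the canonical profinite topologies — is the technical heart, and it follows because such a bilinear map factors through $M_1\otimes_R M_2$ (finitely generated, hence carrying its canonical topology) and the surjection $M_1\otimes_R M_2 \to M_1 M_2$ is continuous, while $M_1\times M_2 \to M_1\otimes_R M_2$ is continuous by writing it through generators.

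I expect the main obstacle to be precisely this bookkeeping around the canonical topology: verifying that the "canonical topology" on a finitely generated $R$-module is functorial in the right way (continuous for all $R$-linear maps, and for $R$-bilinear maps into the tensor product), so that the polynomial map, decomposed into its elementary algebraic operations, is seen to be continuous. Once that functoriality is in hand the argument is purely formal. I would therefore either cite or quickly prove: for $R$ noetherian profinite, the category of finitely generated $R$-modules with their canonical (profinite, Hausdorff) topologies is such that every $R$-linear map is continuous and $\otimes_R$ is continuous in the evident sense — this is standard (the canonical topology is the $\mathfrak{a}$-adic topology for $\mathfrak a$ an ideal of definition, and one uses that finitely generated modules are complete in it). With that, the decomposition of $f|_M$ into additions, scalings and multiplications finishes continuity, the image lies in the finitely generated $N$ by the coefficient count above, and $V(f)\cap M = (f|_M)^{-1}(\{0\})$ is closed because $\{0\}$ is closed in the Hausdorff module $N$.
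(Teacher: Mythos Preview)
Your proposal is correct and follows essentially the same approach as the paper: both reduce the continuity claim to the continuity of the universal multilinear map into a tensor product of finitely generated $R$-modules, proved by lifting to free modules $R^s$ where it becomes a polynomial map over $R$. The paper organizes this monomial-by-monomial while you expand via a choice of generators of $M$ at the outset, but the core argument is the same.
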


\begin{proof}
    We first focus on the case, when $f$ is a monomial $y_1^{d_1} \cdots y_n^{d_n}$ of degree $d_1 + \dots + d_n= d \geq 1$.
    There exists an $A$-multilinear form $\beta : A^{\otimes_A d} \to A$, such that $f(a_1, \dots, a_n) = \beta(a_1 \otimes \dots \otimes a_1 \otimes \cdots \otimes a_n \otimes \cdots \otimes a_n)$ for all $a_1, \dots, a_n \in A$, where in the argument of $\beta$, $a_i$ is repeated $d_i$ times.
    Write $\iota : M \to A$ for the inclusion map.
    We have a natural $R$-linear map $\iota^{\otimes} : M^{\otimes_R d} \to A^{\otimes_A d}$ and we denote by $N$ the image of $M^{\otimes_R d}$ under 
    $\tilde\beta:= \beta \circ \iota^{\otimes}$ in $A$.
    In particular $f|_M$ takes values in $N$ and arises as the composition
    $$ M \xrightarrow{\Delta} M^d \xrightarrow{\mathrm{can}} M^{\otimes_R d} \xrightarrow{\tilde\beta} N. $$
    The diagonal $\Delta$ and $\tilde\beta$ are $R$-linear, hence continuous.
    We are left to show, that the universal $d$-multilinear form $M^d \to M^{\otimes_R d}$ is continuous.
    Choose an $R$-linear surjection $R^s \twoheadrightarrow M$. We have a commutative square
    \begin{equation}
    \begin{tikzcd}
        (R^s)^d \rar \dar[twoheadrightarrow] & (R^s)^{\otimes_R d} \dar[twoheadrightarrow] \\
        M^d \rar & M^{\otimes_R d}
    \end{tikzcd}
    \end{equation}
    where the top and the bottom maps are the universal $d$-multilinear forms. The vertical maps are quotient maps.
    The top map is a polynomial map over $R$, hence continuous and thus the bottom map is continuous, which finishes the proof in this case.

    Now assume, that $f$ is a general polynomial and let us write $f = \sum_i a_i f_i$ for monomials $f_i$ and $a_i \in A$.
    As in the previous step, we obtain multilinear maps $\beta_i : A^{\otimes_A d} \to A$ and we define $\tilde \beta_i := \beta_i \circ \iota^{\otimes}$, $N_i := \Image(\tilde \beta_i)$ and $N := \sum_i a_iN_i$. The multiplication maps $N_i \to N, ~x \mapsto a_ix$ are $R$-linear, thus $N$ is a finitely generated $R$-module. So $f|_M = \sum_i a_i f_i|_M$ is a sum of continuous maps and hence continuous, which finishes the proof.
    
    Since $R$ is Hausdorff, so is $N$ and  we get that $V(f) \cap M$ is closed.
\end{proof}

Let $X=\Spec B$ be an affine scheme of finite type
over $\OO$. A point $x\in X(A)$ corresponds
to an $\OO$-algebra homomorphism $x: B\rightarrow A$. We thus have a canonical map
$$ X(A)\times \Gamma(X, \OO_X)\rightarrow A, \quad (x, b)\mapsto \ev_b(x):= x(b).$$
This induces a canonical injection 
$$\iota_X:=\prod_{b\in B}\ev_b: X(A) \hookrightarrow A^B:= \prod_{b\in B} A.$$
We will denote the projection map onto the $b$-th component by $p_b: A^B\rightarrow A$. 
As explained above any finitely generated $R$-module $M$ has a unique topology, which makes
it into a  Hausdorff topological $R$-module. Moreover, $M$ is profinite with respect to this topology. 

\begin{lem}\label{B1} Let $S$ be a profinite set and let $f: S\rightarrow X(A)$ be a map of sets. Then the following
are equivalent: 
\begin{enumerate}
\item for each $b\in B$ there exists a finitely generated $R$-submodule $M_b$ of $A$, such that 
$\iota_X(f (S))$ is contained in $\prod_{b\in B} M_b$ and the map 
$\iota_X\circ f: S \rightarrow \prod_{b\in B} M_b$ is continuous for the product topology on the target.
\item for all $n\ge 1$ and all closed immersions $\tau: X\hookrightarrow \mathbb A^n$ of $\OO$-schemes there exists 
a finitely generated $R$-submodule $M \subseteq \mathbb A^n(A)= A^n$ such that 
$\tau(f(S)) \subseteq M$ and $\tau\circ f: S \rightarrow M$ is continuous for the canonical $R$-module topology on the target.
\item there exists a closed immersion $\tau: X\hookrightarrow \mathbb A^n$ of $\OO$-schemes and a finitely generated $R$-submodule $M \subseteq \mathbb A^n(A)=A^n$ such that $\tau(f(S)) \subseteq M$ and $\tau\circ f: S \rightarrow M$ is continuous for the canonical $R$-module topology  on the target.
\end{enumerate}
\end{lem}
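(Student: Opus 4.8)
The plan is to prove the cycle of implications $(2) \Rightarrow (3) \Rightarrow (1) \Rightarrow (2)$, using the key Lemma \ref{polyhasclosedzeroset} as the main tool to transfer the topological condition across closed immersions and to relate the intrinsic description via evaluation maps $\iota_X$ to the extrinsic description via an embedding $\tau$. The implication $(2) \Rightarrow (3)$ is trivial once we observe that a closed immersion $\tau: X \hookrightarrow \mathbb A^n$ exists because $X$ is affine of finite type over $\OO$; indeed, pick finitely many algebra generators $b_1, \dots, b_n$ of $B$ over $\OO$ and take $\tau = (\ev_{b_1}, \dots, \ev_{b_n})$.

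First I would handle $(3) \Rightarrow (1)$. Fix the embedding $\tau$, the module $M \subseteq A^n$ and the profinite topology making $\tau \circ f: S \to M$ continuous. For each $b \in B$, the evaluation $\ev_b: X(A) \to A$ is, after composing with $\tau^{-1}$ on its image, a polynomial map $\mathbb A^n(A) \to A$ — that is, $b$ pulls back under $\tau^{\sharp}: \OO[y_1,\dots,y_n] \to B$ to some polynomial $g_b \in \OO[y_1,\dots,y_n]$ modulo the ideal of $X$, and $\ev_b(x) = g_b(\tau(x))$ for $x \in X(A)$. Lemma \ref{polyhasclosedzeroset} (applied to $g_b$, viewed in $A[y_1,\dots,y_n]$, and the module $M$) shows that $g_b|_M: M \to A$ is continuous with image contained in a finitely generated $R$-submodule $M_b \subseteq A$. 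Hence $\ev_b \circ f = (g_b|_M) \circ (\tau \circ f): S \to M_b$ is continuous, and $\iota_X(f(S)) \subseteq \prod_{b} M_b$. Since a map into a product is continuous iff each component is, this gives (1).

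Next, $(1) \Rightarrow (2)$: given the families $M_b$, let $\tau: X \hookrightarrow \mathbb A^n$ be an arbitrary closed immersion, corresponding to a surjection $\OO[y_1,\dots,y_n] \twoheadrightarrow B$; write $c_i \in B$ for the image of $y_i$. Then $\tau \circ f = (\ev_{c_1} \circ f, \dots, \ev_{c_n} \circ f)$ takes values in the finitely generated $R$-module $M := M_{c_1} \times \cdots \times M_{c_n} \subseteq A^n$ (or more precisely in its image, which is still finitely generated over the noetherian ring $R$), and is continuous there as each component is continuous by (1). This module is profinite since $R$ is noetherian and profinite, establishing (2). I would also remark that the apparent asymmetry — (1) gives a module $M_b$ for each $b$, while (2) needs only one $M$ — causes no trouble because $R$ is noetherian, so any $R$-submodule of $A^n$ generated by the images of finitely many finitely generated submodules is again finitely generated, and the topology on it is unique and profinite.

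I do not expect any serious obstacle here; the content is entirely carried by Lemma \ref{polyhasclosedzeroset}, which ensures polynomial maps restrict to continuous maps between finitely generated $R$-modules. The one point requiring minor care is checking that the various topologies match up: whenever a finitely generated $R$-module occurs, its Hausdorff topological $R$-module structure is unique and profinite (as recalled before the lemma), so phrases like ``a profinite topology on the target'' in (2) and (3) and ``the natural topology'' in (1) refer to the same thing, and continuity is independent of which description one uses. The map $S$ being a profinite set plays no essential role in the equivalence beyond making the word ``continuous'' meaningful; it will matter later when this lemma is applied with $S = \underline{\Gamma_F}$ or to representations, but for the equivalence itself $S$ may be an arbitrary topological space.
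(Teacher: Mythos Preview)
Your proposal is correct and follows essentially the same approach as the paper: the same cycle of implications, with $(3)\Rightarrow(1)$ carried by Lemma~\ref{polyhasclosedzeroset} applied to a polynomial lift $g_b$ of each $b\in B$, and $(1)\Rightarrow(2)$ by taking $M = M_{c_1}\times\cdots\times M_{c_n}$ for the generators $c_i$ determined by $\tau$. The paper additionally replaces $M$ by the product of its coordinate projections before applying the polynomial lemma, but this is not essential since Lemma~\ref{polyhasclosedzeroset} already accepts an arbitrary finitely generated $R$-submodule of $A^n$.
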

\begin{proof} A closed immersion $\tau: X\hookrightarrow \mathbb A^n$ corresponds 
to a surjective homomorphism of $\OO$-algebras $\OO[x_1, \ldots, x_n]\twoheadrightarrow B$. 
Let $b_1, \ldots b_n$ be the images of $x_1, \ldots x_n$ in $B$.  Then the map 
$ X(A) \overset{\tau}{\longrightarrow} \mathbb A^n(A)=A^n$ factors as 
$X(A) \rightarrow A^B \overset{\prod_{i=1}^n p_{b_i}}{\longrightarrow} A^n$. 
The image of $\prod_{b} M_b$ under this 
map is $\prod_{i=1}^n M_{b_i}$, which is a finitely generated $R$-module. Moreover, this projection 
is continuous. Hence, part (1) implies part (2).

Part (2) trivially implies part (3). 

Let us assume that (3) holds. Let $M_i \subseteq A$ be the image of $M \subseteq \prod_{i=1}^n A$ 
under the projection onto the $i$-th component. Then $M \subseteq \prod_{i=1}^n M_i$ and 
$\prod_{i=1}^n M_i$ is a finitely generated $R$-module. Hence, we may replace $M$ with $\prod_{i=1}^n M_i$.
Let $b_i\in B$ be the images of $x_i$ as before. If $b\in B$ then there is $g\in \OO[x_1, \ldots, x_n]$
such that $b= g(b_1, \ldots, b_n)$. The image of $\prod_{i=1}^n M_i$ under the map 
$\varphi_g: A^n \rightarrow A$, $(a_1, \ldots, a_n) \mapsto g(a_1, \ldots, a_n)$ is contained in a finitely generated 
$R$-module $M_b$ by   \Cref{polyhasclosedzeroset}. Since $\tau(f(S)) \subseteq \prod_{i=1}^n M_i$ 
we have $\varphi_g(\tau(f(S))) \subseteq M_b$. Moreover,  
the induced map $S\rightarrow M_b$ is continuous since polynomial maps are continuous.
The composition 
$\varphi_g \circ \tau: X(A) \rightarrow A$ is the map given by evaluation at $b$. Hence, (3) implies (1). 
\end{proof}

\begin{lem}\label{B2} Let $g: X\rightarrow Y$ be a morphism of affine schemes of finite type over $\OO$. Let $S$ be a profinite set and let $f: S\rightarrow X(A)$ be a map of sets. 
If the equivalent conditions of Lemma \ref{B1} hold for $f$ then they also hold for $g\circ f$.
\end{lem}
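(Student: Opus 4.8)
The statement is a functoriality property: if $f \colon S \to X(A)$ satisfies the "algebraic continuity" conditions of Lemma \ref{B1}, then so does $g \circ f \colon S \to Y(A)$. The natural approach is to use the flexibility afforded by the three equivalent formulations in Lemma \ref{B1}: verify the hypothesis using the version that is easiest to assume, and verify the conclusion using the version that is easiest to check. Concretely, I would assume condition (1) (or (3)) for $f$ and prove condition (1) for $g \circ f$.

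First I would set up notation: write $X = \Spec B$ and $Y = \Spec C$, so that $g$ corresponds to an $\OO$-algebra homomorphism $g^\sharp \colon C \to B$. For a point $x \in X(A)$ given by $x \colon B \to A$, the composite $g \circ f$ sends $s \in S$ to the homomorphism $f(s) \circ g^\sharp \colon C \to A$. Thus for $c \in C$ we have $\ev_c(g(f(s))) = f(s)(g^\sharp(c)) = \ev_{g^\sharp(c)}(f(s))$. In other words, the evaluation map $\iota_Y \circ (g \circ f) \colon S \to A^C$ factors through $\iota_X \circ f \colon S \to A^B$ via the map $A^B \to A^C$ induced by $g^\sharp$ (on the indexing sets, precomposition with $g^\sharp \colon C \to B$).

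Then the argument is short: by hypothesis (1) for $f$, there are finitely generated $R$-submodules $M_b \subseteq A$ for each $b \in B$ with $\iota_X(f(S)) \subseteq \prod_{b} M_b$ and $\iota_X \circ f$ continuous into this product. For each $c \in C$, set $b = g^\sharp(c) \in B$ and take $N_c := M_{g^\sharp(c)}$, a finitely generated $R$-submodule of $A$. The factorization above shows $\iota_Y(g(f(S))) \subseteq \prod_{c} N_c$, and the map $A^B \to A^C$ is continuous for the product topologies (it is a projection onto a subfamily of coordinates, reindexed), so $\iota_Y \circ (g \circ f) \colon S \to \prod_c N_c$ is continuous as a composite of continuous maps. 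This is exactly condition (1) of Lemma \ref{B1} for $g \circ f$, and by the equivalence established there, all three conditions hold.

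I do not expect a serious obstacle here; the only mild subtlety is bookkeeping with the indexing of the product $A^B$ versus $A^C$ and checking that precomposition with $g^\sharp$ induces a continuous map on products — but this is immediate since a map of products $A^B \to A^C$ determined by a set map $C \to B$ on indices is continuous for the product topology (each coordinate projection of the target factors through a coordinate projection of the source). Alternatively, if one prefers to avoid the product entirely, one can use condition (3): choose a closed immersion $\tau_Y \colon Y \hookrightarrow \mathbb{A}^m$, compose with $g$ and with a closed immersion $\tau_X \colon X \hookrightarrow \mathbb{A}^n$ to express the coordinates of $\tau_Y \circ g$ as polynomials in the coordinates of $\tau_X$, and then invoke Lemma \ref{polyhasclosedzeroset} to see that these polynomials carry the finitely generated $R$-module containing $\tau_X(f(S))$ into a finitely generated $R$-module, continuously. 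Either route is routine; I would write up the first one for brevity.
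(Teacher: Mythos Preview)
Your proposal is correct and follows essentially the same approach as the paper: both verify condition (1) of Lemma \ref{B1} for $g\circ f$ by using the identity $\ev_c(g(x)) = \ev_{g^\sharp(c)}(x)$ and setting $N_c := M_{g^\sharp(c)}$. The paper's proof is actually slightly terser than yours (it omits the explicit check of continuity of the induced product map), but the argument is the same.
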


\begin{proof} Let us assume that $\iota_X(f(S))$ is contained in $\prod_{b\in \Gamma(X, \OO_X)} M_b$. 
We claim that $\iota_Y(g (f(S)))$ is contained in $\prod_{c\in \Gamma(Y, \OO_Y)} M_{g^{\sharp}(c)}$. Indeed, 
if $c\in \Gamma(Y, \OO_Y)$ and $x\in X(A)$ then $\ev_c ( g(x)) = \ev_{g^{\sharp}(c)}(x)$.
\end{proof}

\begin{lem}\label{B3} Let $S$ be a profinite set and let $f_1: S\rightarrow X_1(A)$ and 
$f_2: S\rightarrow X_2(A)$ be maps of sets, where $X_1$ and $X_2$ are affine schemes
of finite type over $\OO$. If the equivalent conditions of Lemma \ref{B1} hold for $f_1$ and $f_2$ then they 
also hold for the product $f_1\times f_2: S\rightarrow X_1(A)\times X_2(A)=(X_1\times X_2)(A)$. 
\end{lem}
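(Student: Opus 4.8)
The plan is to use the reformulation of continuity in terms of a \emph{single} closed immersion, i.e.\ condition (3) of \Cref{B1}, and to manufacture the data needed for $f_1\times f_2$ out of the data for $f_1$ and for $f_2$ by taking products. So first I would apply \Cref{B1}(3) to $f_1$ and to $f_2$ separately: choose closed immersions $\tau_i: X_i\hookrightarrow \mathbb A^{n_i}$ of $\OO$-schemes and finitely generated $R$-submodules $M_i\subseteq \mathbb A^{n_i}(A)=A^{n_i}$ such that $\tau_i(f_i(S))\subseteq M_i$ and $\tau_i\circ f_i: S\rightarrow M_i$ is continuous for the canonical (profinite) topology on $M_i$, for $i=1,2$.

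Next I would form the product $\tau:=\tau_1\times\tau_2: X_1\times X_2\hookrightarrow \mathbb A^{n_1}\times\mathbb A^{n_2}=\mathbb A^{n_1+n_2}$, which is again a closed immersion of $\OO$-schemes (a product of closed immersions is a closed immersion, and $\mathbb A^{n_1}\times \mathbb A^{n_2}\cong \mathbb A^{n_1+n_2}$). On $A$-points $\tau$ is the map $(x_1,x_2)\mapsto (\tau_1(x_1),\tau_2(x_2))$, so $\tau\bigl((f_1\times f_2)(S)\bigr)\subseteq M_1\times M_2$, where $M:=M_1\times M_2\subseteq A^{n_1}\times A^{n_2}=A^{n_1+n_2}$ is a finitely generated $R$-module. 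The composite $\tau\circ(f_1\times f_2): S\rightarrow M$ is the map $s\mapsto (\tau_1(f_1(s)),\tau_2(f_2(s)))$, which factors as $S\xrightarrow{\Delta} S\times S\xrightarrow{(\tau_1\circ f_1)\times(\tau_2\circ f_2)} M_1\times M_2$; since the diagonal $\Delta$ is continuous and $\tau_1\circ f_1$, $\tau_2\circ f_2$ are continuous, $\tau\circ(f_1\times f_2)$ is continuous for the product topology on $M$.

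Finally I would identify the product topology on $M=M_1\times M_2$ with its canonical topology as a finitely generated $R$-module: the product topology makes the finitely generated $R$-module $M_1\oplus M_2$ into a Hausdorff (indeed profinite, being a product of profinite spaces) topological $R$-module, and by the uniqueness statement recalled before \Cref{B1} this forces it to be \emph{the} canonical topology on $M$. Hence $\tau$ and $M$ witness condition (3) of \Cref{B1} for $f_1\times f_2$, and therefore the equivalent conditions (1) and (2) hold as well, which is the assertion.

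This is essentially routine once the right formulation is chosen; the only point that requires a word of care — and the closest thing to an obstacle — is precisely the last step, namely checking that the product topology on $M_1\times M_2$ agrees with the canonical $R$-module topology, so that the continuity obtained for the product topology is continuity in the sense demanded by \Cref{B1}. Everything else (product of closed immersions, finite generation of $M_1\times M_2$, continuity of the diagonal) is immediate.
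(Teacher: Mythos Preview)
Your proof is correct and follows essentially the same approach as the paper: pick closed immersions $X_i\hookrightarrow\mathbb A^{n_i}$, take their product, and use $M_1\times M_2$ as the witnessing module. The paper's argument is terser and leaves the identification of the product topology on $M_1\times M_2$ with the canonical $R$-module topology implicit, whereas you spell this out; but the substance is identical.
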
 
\begin{proof} We pick closed embeddings $X_1\hookrightarrow \mathbb A^{n_1}$, 
$X_2\hookrightarrow \mathbb A^{n_2}$. This induces a closed embedding 
$X_1\times X_2\hookrightarrow \mathbb A^{n_1}\times \mathbb A^{n_2} \cong 
\mathbb A^{n_1+n_2}$. If $f_1(S)\subseteq M_1\subseteq A^{n_1}$ and $f_2(S)\subseteq M_2 \subseteq A^{n_2}$ then 
$(f_1\times f_2)(S)\subseteq M_1\times M_2\subseteq A^{n_1}\times A^{n_2}$. Moreover, if 
$f_1: S\rightarrow M_1$ and $f_2: S\rightarrow M_2$ are continuous then $f_1\times f_2: S\rightarrow M_1\times M_2$ 
is continuous. 
\end{proof} 

\begin{lem}\label{B4} Let $R'$ be a noetherian profinite $R$-algebra together with a map of $R$-algebras $R'\rightarrow A$.
Let $S$ be a profinite set and let $f: S\rightarrow X(A)$ be a map of sets. If the equivalent 
conditions of Lemma \ref{B1} hold for $f$ with respect to $R$ then they also hold with respect to $R'$.
\end{lem}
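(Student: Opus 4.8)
The statement (Lemma \ref{B4}) asserts that the equivalent conditions of Lemma \ref{B1} for a map $f\colon S\to X(A)$ are insensitive to enlarging the base ring from $R$ to a noetherian profinite $R$-algebra $R'$ equipped with a compatible map $R'\to A$. The plan is to use the third (equivalent) formulation in Lemma \ref{B1}, i.e.\ the existence of a closed immersion $\tau\colon X\hookrightarrow \mathbb{A}^n$ and a finitely generated submodule of $A^n$ containing $\tau(f(S))$ over which $\tau\circ f$ is continuous for a profinite topology. The key point is that for a finitely generated module $M$ over a noetherian profinite ring, the Hausdorff $R$-module topology is automatically profinite, and this topology is the \emph{unique} Hausdorff $R$-module topology on $M$ — so the condition ``$\tau\circ f\colon S\to M$ is continuous for a profinite topology'' is really intrinsic once we fix the submodule $M$.

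First I would fix a closed immersion $\tau\colon X\hookrightarrow \mathbb{A}^n$. By Lemma \ref{B1}(3) applied over $R$, there is a finitely generated $R$-submodule $M\subseteq A^n$ with $\tau(f(S))\subseteq M$ and $\tau\circ f\colon S\to M$ continuous for a profinite topology on $M$; by the uniqueness of the Hausdorff $R$-module topology this profinite topology is the canonical one. Now let $M'\subseteq A^n$ be the $R'$-submodule generated by $M$ (equivalently, by the finitely many $R$-generators of $M$, now viewed in $A^n$ via $R'\to A$). Then $M'$ is a finitely generated $R'$-module, $\tau(f(S))\subseteq M\subseteq M'$, and the inclusion $M\hookrightarrow M'$ is $R$-linear (since $R\to R'$), hence continuous for the canonical topologies (both being finitely generated modules over noetherian profinite rings, their topologies are profinite and the map is automatically continuous — one can see this, e.g., by lifting to a surjection $R^s\twoheadrightarrow M$ and noting the composite $R^s\to M\hookrightarrow M'$ is $R$-linear between finitely generated $R$-modules, hence continuous). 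Therefore the composite $S\xrightarrow{\tau\circ f} M\hookrightarrow M'$ is continuous, exhibiting condition (3) of Lemma \ref{B1} over $R'$.

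The only mildly delicate point is the assertion that an $R$-linear map between finitely generated modules over noetherian profinite rings is continuous for their canonical (profinite, Hausdorff) topologies; but this is exactly the mechanism already used repeatedly in Lemmas \ref{polyhasclosedzeroset}--\ref{B3} (reduce to a surjection from a free module $R^s$, where the topology is the product topology, and observe the map is given by an $R$-matrix), and it also follows from the uniqueness of the Hausdorff $R$-module topology combined with the fact that the canonical topology on $M'$ restricts to a Hausdorff $R$-module topology on the $R$-submodule $M$. I expect no genuine obstacle here: the lemma is a formal consequence of the robustness of the ``finitely generated submodule + profinite continuity'' packaging under base change, and the proof is a one-paragraph verification of condition (3) of Lemma \ref{B1}.
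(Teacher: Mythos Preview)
Your proposal is correct and takes essentially the same approach as the paper: replace the finitely generated $R$-submodule by the $R'$-submodule it generates. The paper does this for condition (1) of Lemma \ref{B1} in a single sentence (taking $M'_b$ to be the $R'$-submodule of $A$ generated by $M_b$), whereas you verify condition (3); note that your uniqueness-of-topology justification for the continuity of $M\hookrightarrow M'$ is the clean one, since $M'$ need not be finitely generated as an $R$-module.
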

\begin{proof} It is immediate that part (1) of Lemma \ref{B1} holds with $M'_b$ equal to the $R'$-submodule  of $A$ generated by $M_b$. 
\end{proof}

\subsection{Continuity via condensed sets}\label{sec_cond} We will now express the equivalent conditions
in Lemma \ref{B1} in terms of condensed sets. Recall that these are certain functors
from the category of profinite sets to the category of sets, see \cite{ScholzeCond} and \Cref{app_cond_set}.
We may view profinite 
sets as condensed sets via the Yoneda embedding $S\mapsto \underline{S}$, where $\underline S (T) := \Cont(T,S)$. The underlying set of a 
condensed set is by definition that functor evaluated at a point.

Recall further, that every profinite group (ring) is naturally a condensed group (ring), as the functor from topological spaces to condensed sets commutes with finite limits \cite[Proposition 1.7]{ScholzeCond}. We can also equip a profinite ring $R$ with the discrete topology and write $R_{\disc}$ to denote the associated discrete condensed ring.  We have a natural map of condensed rings $R_{\disc} \to \underline{R}$. An abstract $R$-module $M$ can be seen as a discrete condensed $R_{\disc}$-module $M_{\disc}$, so that 
$M_{\disc}(T)=\Cont(T, M)$, where the target $M$ is given the discrete topology.
The tensor product of condensed $R_{\disc}$-modules $M_{\disc} \otimes_{R_{\disc}} \underline{R}$ is a condensed $R$-module.\footnote{Fargues--Scholze in \cite[Section VIII]{fargues2021geometrization}
call such modules \emph{relatively discrete} $R$-modules.}

\begin{lem}\label{rel_disc_is_qs} Let $N$ be an abstract $R$-module. Then $N_{\disc} \otimes_{R_{\disc}} \underline{R}$ 
is an inductive limit of profinite submodules along quasi-compact injections. In particular, 
$N_{\disc} \otimes_{R_{\disc}} \underline{R}$ is quasi-separated.
\end{lem}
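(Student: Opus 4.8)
The plan is to reduce to the finitely generated case and then invoke a structural fact about filtered colimits of condensed sets. Write $N=\varinjlim_{i\in I}N_i$ as the filtered colimit of its finitely generated $R$-submodules, the transition maps being the inclusions. Since $R$ is noetherian, each $N_i$ is a finitely generated $R$-module, hence carries its canonical profinite topology (the unique Hausdorff topological $R$-module structure). A preliminary point to record is that each inclusion $N_i\hookrightarrow N_j$ is a closed embedding: a finitely generated $R$-submodule of a finitely generated $R$-module is the image of a continuous $R$-linear map from some $R^m$, hence compact, hence closed in the Hausdorff module $N_j$, and on a closed submodule the subspace topology agrees with the intrinsic one. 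Since $\underline{(-)}$ preserves finite limits, the induced maps $\underline{N_i}\to\underline{N_j}$ are closed immersions of condensed sets, in particular quasi-compact injections of condensed $\underline R$-modules.

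The key step is to show that for every finitely generated $R$-module $M$ the canonical map $M_{\disc}\otimes_{R_{\disc}}\underline R\to\underline M$, adjoint to the natural map $M_{\disc}\to\underline M$, is an isomorphism. I would prove this from a finite presentation $R^m\xrightarrow{\phi}R^n\to M\to 0$: as $(-)_{\disc}$ is exact (locally constant functions lift along surjections of abelian groups), $M_{\disc}=\coker(R^m_{\disc}\to R^n_{\disc})$; as $-\otimes_{R_{\disc}}\underline R$ is right exact and $R^n_{\disc}\otimes_{R_{\disc}}\underline R=\underline{R^n}$, we obtain $M_{\disc}\otimes_{R_{\disc}}\underline R=\coker(\underline{R^m}\to\underline{R^n})$. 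On the other hand, writing $K=\Image(\phi)=\ker(R^n\to M)$, the functor $\underline{(-)}$ takes the topological quotient $R^n\twoheadrightarrow M$ by the closed submodule $K$ to the cokernel of $\underline K\hookrightarrow\underline{R^n}$; and since $R^m\twoheadrightarrow K$ is a surjection of profinite sets, $\underline{R^m}\to\underline K$ is an epimorphism, whence $\underline M=\coker(\underline{R^m}\to\underline{R^n})$ as well. Comparing the two identifications and unwinding shows the canonical map is an isomorphism.

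Granting this, $-\otimes_{R_{\disc}}\underline R$ commutes with filtered colimits, so $N_{\disc}\otimes_{R_{\disc}}\underline R=\varinjlim_{i}\bigl(N_{i,\disc}\otimes_{R_{\disc}}\underline R\bigr)\cong\varinjlim_i\underline{N_i}$, a filtered colimit of profinite — hence quasi-compact quasi-separated — condensed $\underline R$-modules along the closed immersions of the first paragraph. In particular each $\underline{N_i}$ is a profinite submodule of $N_{\disc}\otimes_{R_{\disc}}\underline R$ and the transition maps are quasi-compact injections, which is the first assertion. The ``in particular'' then follows from the general fact that a filtered colimit of quasi-compact quasi-separated condensed sets along injections is quasi-separated, which I would cite from \Cref{app_cond_set} (or \cite{ScholzeCond}).

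I expect the main obstacle to be the identification $M_{\disc}\otimes_{R_{\disc}}\underline R\cong\underline M$ for finitely generated $M$: concretely, checking that the condensed tensor product, the functor $(-)_{\disc}$, and the functor $\underline{(-)}$ from profinite topological modules to condensed modules all interact correctly with the short exact sequences arising from a presentation — in particular that $\underline{(-)}$ carries a topological quotient of a profinite module by a closed submodule to a cokernel, and that surjections of profinite sets induce epimorphisms of condensed sets. The remaining steps are formal: the colimit description is immediate from right exactness of the tensor product together with its commuting with colimits, and the quasi-separatedness of the colimit is a black-box citation.
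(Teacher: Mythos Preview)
Your proposal is correct and follows essentially the same approach as the paper: write $N$ as the filtered colimit of its finitely generated submodules, commute the tensor product past the colimit, identify $(M_i)_{\disc}\otimes_{R_{\disc}}\underline R\cong\underline{M_i}$, and invoke \Cref{injlimisqs}. The paper's proof simply asserts the identification $(M_i)_{\disc}\otimes_{R_{\disc}}\underline R\cong\underline{M_i}$ and that the transition maps are quasi-compact injections, whereas you supply the presentation argument and the closed-embedding check; your version is more detailed but not different in strategy.
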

\begin{proof} We write $N=\varinjlim_i M_i$ where $M_i$ are finitely generated $R$-submodules 
of $N$. As the tensor product of condensed $R_{\disc}$-modules is left adjoint to the internal Hom functor of condensed $R_{\disc}$-modules, it commutes with arbitrary colimits. Thus
    \begin{align*}
        N_{\disc} \otimes_{R_{\disc}} \underline{R} = (\varinjlim\nolimits_i (M_{i})_{\disc}) \otimes_{R_{\disc}} \underline{R} = \varinjlim\nolimits_i ((M_i)_{\disc} \otimes_{R_{\disc}} \underline{R}).
    \end{align*}
    Since $M_i$ are finitely generated they are profinite and we have a natural isomorphism 
    $(M_i)_{\disc}\otimes_{R_{\disc}} \underline{R} \cong \underline{M_i}$. 
    Moreover, the transition maps are quasi-compact injections, so $N_{\disc} \otimes_{R_{\disc}} \underline{R}$ is quasi-separated by \Cref{injlimisqs}.
\end{proof}

Let $X$ be an affine scheme of finite type over $\OO$ as before and let $S$ be a profinite set. 
If $C$ is a commutative condensed $\OO$-algebra then by evaluating it at a profinite set $T$ 
we obtain a commutative $\OO$-algebra $C(T)$ and then we may evaluate $X$ at $C(T)$ to obtain a set $X(C(T))$. One may show that $X(C):= X \circ C$ is a
condensed set. If $C$ is quasi-separated, then $X(C)$ is quasi-separated (see \Cref{subsp_qs}).

\begin{defi} We will say that a map of sets $f:S\rightarrow X(A)$ is \emph{$R$-condensed} if 
there is a morphism of condensed sets $\tilde f : \underline{S} \to X(A_{\disc} \otimes_{R_{\disc}} \underline{R})$, whose map on underlying sets is $f$.
\end{defi}

\begin{lem}\label{ind_cont_equiv_R_cond} Let $f: S\rightarrow X(A)$ be a map of sets. Then $f$ is $R$-condensed if and only if 
the equivalent conditions of Lemma \ref{B1} hold. 
\end{lem}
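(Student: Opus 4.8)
The plan is to fix once and for all a closed immersion $\tau\colon X\hookrightarrow\mathbb A^n$, given by a surjection $\OO[x_1,\dots,x_n]\twoheadrightarrow\Gamma(X,\OO_X)$ with kernel generated by $g_1,\dots,g_m$. For every condensed $\OO$-algebra $C$ the condensed set $X(C):=X\circ C$ is then the equaliser of the two morphisms $g=(g_1,\dots,g_m)\colon\mathbb A^n(C)=C^n\to\mathbb A^m(C)=C^m$ and $0$. Write $C:=A_{\disc}\otimes_{R_{\disc}}\underline R$. As observed in the proof of \Cref{rel_disc_is_qs}, for a finitely generated $R$-submodule $M$ of some $A^l$ there is a natural isomorphism $M_{\disc}\otimes_{R_{\disc}}\underline R\cong\underline M$; moreover $M$ carries a unique profinite module topology and, by Yoneda, a morphism of condensed sets $\underline S\to\underline M$ is precisely a continuous map $S\to M$. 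With these identifications, and using the equivalence of (1)--(3) in \Cref{B1}, the conditions of \Cref{B1} hold for $f$ if and only if there is a finitely generated $R$-submodule $M\subseteq A^n$ with $\tau(f(S))\subseteq M$ and a morphism $\underline S\to\underline M\hookrightarrow\mathbb A^n(C)$ whose underlying map is $\tau\circ f$, while ``$f$ is $R$-condensed'' says there is a morphism $\tilde f\colon\underline S\to X(C)$ with underlying map $f$. So the lemma amounts to passing between a factorisation through some $\underline M$ inside $\mathbb A^n(C)$ and a factorisation through the subobject $X(C)\subseteq\mathbb A^n(C)$.

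For the implication ``conditions of \Cref{B1} $\Rightarrow$ $R$-condensed'' I would take $M$ as above and examine the composites with the defining equations. For each $j$, functoriality of $C'\mapsto\mathbb A^\bullet(C')$ gives a morphism $g_j\colon\mathbb A^n(C)\to\mathbb A^1(C)=C$, and by \Cref{rel_disc_is_qs} the condensed ring $C$ is a filtered colimit $\varinjlim_i\underline{M_i}$ along quasi-compact injections, with $M_i$ ranging over the finitely generated $R$-submodules of $A$. Since $S$ is profinite, $\underline S$ is quasi-compact, so the composite $\underline S\to\underline M\hookrightarrow\mathbb A^n(C)\xrightarrow{g_j}C$ factors through some $\underline{M_i}$, i.e.\ is a continuous map $S\to M_i$; its value at $s\in S$ is $g_j(\tau(f(s)))=0$ because $f(s)\in X(A)$, so this map is identically zero. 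As this holds for all $j$, the morphism $\underline S\to\mathbb A^n(C)$ equalises $g$ and $0$, hence factors through $X(C)$. The underlying map of the resulting $\underline S\to X(C)$ becomes $\tau\circ f$ after composing with the pointwise injective inclusion $X(C)\hookrightarrow\mathbb A^n(C)$, so it equals $f$; thus $f$ is $R$-condensed.

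For the converse, given $\tilde f\colon\underline S\to X(C)$ lifting $f$, I would compose with $X(C)\hookrightarrow\mathbb A^n(C)=C^n$ to obtain $n$ morphisms $\underline S\to C$. Again using $C=\varinjlim_i\underline{M_i}$ and the quasi-compactness of $\underline S$, each of these factors through some $\underline{M_i}$, and by filteredness we may use a single finitely generated $R$-submodule $M\subseteq A$ for all $n$ of them. This yields a morphism $\underline S\to\underline M^{\,n}=\underline{M^n}\hookrightarrow\mathbb A^n(C)$, i.e.\ a continuous map $S\to M^n$, whose underlying map is $\tau\circ f$; hence $\tau(f(S))\subseteq M^n$ and $\tau\circ f$ is continuous into $M^n$, which is condition (3) of \Cref{B1} for the chosen $\tau$, and therefore all of the equivalent conditions hold. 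The one genuinely condensed-theoretic input, used in both directions, is the factorisation of a quasi-compact condensed set mapping into a filtered union of subobjects along quasi-compact injections through one of those subobjects, together with the fact that $\underline S$ is quasi-compact; both are supplied by the material recalled in \Cref{app_cond_set} around \Cref{injlimisqs}. I expect this to be the only delicate point; everything else is bookkeeping with Yoneda and the equaliser description of $X(C)$.
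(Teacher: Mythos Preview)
Your proof is correct and follows essentially the same strategy as the paper: both directions rest on the compactness of $\underline S$ together with the description of $A_{\disc}\otimes_{R_{\disc}}\underline R$ as a filtered colimit of profinite $\underline{M_i}$'s.

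The one noteworthy difference is in the implication ``conditions of \Cref{B1} $\Rightarrow$ $R$-condensed''. The paper obtains the factorisation through $X(C)$ by invoking \Cref{cl_subscheme_qc_inj} and then \cite[Proposition~4.13]{ScholzeAnalytic}, which for a quasi-compact injection $X(C)\hookrightarrow\mathbb A^n(C)$ identifies $X(C)(S)$ with the fibre product $\mathbb A^n(C)(S)\times_{\Map(S,\mathbb A^n(A))}\Map(S,X(A))$. You instead use the explicit equaliser description of $X(C)$ and show directly that the defining equations $g_j$ vanish, reducing to the observation that a continuous map $S\to M_i$ with zero underlying map is itself zero. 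Your route is more self-contained, avoiding the external reference; the paper's route is more structural and would apply verbatim to any quasi-compact injection, not only one presented by equations. For the other implication, you verify condition~(3) of \Cref{B1} using the $n$ coordinate functions, while the paper verifies condition~(1) using all $b\in B$; since the conditions are already known to be equivalent, this is inconsequential.
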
 

\begin{proof}
    Suppose $f$ is $R$-condensed. To verify (1) of \Cref{B1}, let $b \in B$ where $X=\Spec(B)$. This determines a map of schemes $\varphi_b : X \to \mathbb A^1$.
    We obtain a map of condensed sets 
    $$ \underline S \xrightarrow{\tilde f} X(A_{\disc} \otimes_{R_{\disc}} \underline R) \xrightarrow{\varphi_b} \mathbb A^1(A_{\disc} \otimes_{R_{\disc}} \underline R) = A_{\disc} \otimes_{R_{\disc}} \underline R.$$ 
    
    By \Cref{rel_disc_is_qs} $A_{\disc} \otimes_{R_{\disc}} \underline R$ is an inductive limit of condensed $R$-submodules of the form $\underline M$ for some finitely generated $R$-submodule $M \subseteq A$. By \Cref{profiniteiscompact} $\underline S$ is a compact object in the category of condensed sets. In particular there exists a finitely generated $R$-submodule $M_b \subseteq A$, such that $\varphi_b \circ \tilde f$ factors over a map $\underline S \to \underline{M_b}$. It follows, that $\varphi_b(f(S))$ is contained in $M_b$. We deduce condition (1) by applying the universal property of product spaces.

    Assume (2) of \Cref{B1}, i.e.\,there is a closed immersion $\tau : X \hookrightarrow \mathbb A^n$ of $\OO$-schemes, a finitely generated $R$-submodule $M \subseteq \mathbb A^n(A)$, such that $\tau(f(S)) \subseteq M$ and $\tau \circ f : S \to M$ is continuous. We obtain a map of condensed sets
    $$ g : \underline S \to \underline M = M_{\disc} \otimes_{R_{\disc}} \underline R \to \mathbb A^n(A_{\disc} \otimes_{R_{\disc}} \underline R).$$
    By \Cref{cl_subscheme_qc_inj} the map $X(A_{\disc} \otimes_{R_{\disc}} \underline R) \to \mathbb A^n(A_{\disc} \otimes_{R_{\disc}} \underline R)$ is a quasi-compact injection.
    It follows from \cite[Proposition 4.13]{ScholzeAnalytic}, that
    $$ X(A_{\disc} \otimes_{R_{\disc}} \underline R)(S) \cong \mathbb A^n(A_{\disc} \otimes_{R_{\disc}} \underline R)(S) \times_{\Map(S, \mathbb A^n(A))} \Map(S, X(A)).$$
    The pair $(g,f)$ is an element in the right hand side.
    So by Yoneda there exists a map of condensed sets $\tilde f : \underline S \to X(A_{\disc} \otimes_{R_{\disc}} \underline R)$ with underlying map $f$.
\end{proof}

\begin{lem}\label{cond_is_cont}
    Let $A$ be a topological $R$-algebra and let $f : S \to X(A)$ be an $R$-condensed map.
    Then $f$ is continuous.
\end{lem}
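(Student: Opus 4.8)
The plan is to deduce this from the algebraic reformulation already at our disposal. Since $f$ is $R$-condensed, \Cref{ind_cont_equiv_R_cond} tells us that the equivalent conditions of \Cref{B1} hold; using condition (3), I fix a closed immersion $\tau : X \hookrightarrow \mathbb A^n$ of $\OO$-schemes and a finitely generated $R$-submodule $M \subseteq \mathbb A^n(A) = A^n$ with $\tau(f(S)) \subseteq M$, such that $\tau \circ f : S \to M$ is continuous for the canonical (profinite) topology on $M$ as an $R$-module. Recall that, by Conrad's construction in \cite{ConradTopologies}, the natural topology on $X(A)$ is the subspace topology induced by $\tau$ from the product topology on $A^n = \mathbb A^n(A)$ (and is independent of the chosen embedding). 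Hence it suffices to show that $\tau \circ f : S \to A^n$ is continuous for the product topology, i.e.\ that the inclusion $\iota : M \hookrightarrow A^n$ is continuous when $M$ carries its canonical topology.

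To see this, recall that the canonical topology on the finitely generated $R$-module $M$ is the quotient topology along a surjection $q : R^s \twoheadrightarrow M$ of $R$-modules (the kernel is finitely generated, hence closed, as $R$ is noetherian and profinite, so this quotient topology is Hausdorff and therefore agrees with the canonical one; this is the description already used in the proof of \Cref{polyhasclosedzeroset}). The composite $\iota \circ q : R^s \to A^n$ is $R$-linear, so each of its $n$ coordinates has the form $(r_1,\dots,r_s) \mapsto \sum_{j=1}^{s} r_j a_j$ for fixed elements $a_j \in A$; this is continuous because $A$ is a topological $R$-algebra, the scalar multiplications $r \mapsto r a_j$ being continuous and addition being continuous. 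Thus $\iota \circ q$ is continuous, and by the universal property of the quotient topology $\iota$ is continuous.

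Combining the two steps, $\tau \circ f$ factors as the composite of continuous maps $S \to M \xrightarrow{\iota} A^n$, hence is continuous; since the natural topology on $X(A)$ is the subspace topology via $\tau$ and $\tau \circ f$ takes values in $X(A)$, we conclude that $f : S \to X(A)$ is continuous.

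The only point requiring a little care is the interface between the condensed/algebraic side and the genuine topology: one must know that the natural topology on $X(A)$ is indeed read off from a closed embedding into $A^n$ (so that continuity of $f$ can be checked after composing with $\tau$), and that the canonical topology on a finitely generated $R$-module is a quotient topology, so that $R$-linear maps out of it into any topological $R$-module are automatically continuous. Both facts are standard given that $R$ is noetherian and profinite and that the structure map $R \to A$ is continuous; no new ideas are needed here.
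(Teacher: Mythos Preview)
Your proof is correct and follows the same overall reduction as the paper: pick a closed immersion $\tau: X \hookrightarrow \mathbb A^n$, use the $R$-condensed hypothesis to land in a finitely generated $R$-submodule $M \subseteq A^n$, and then show the inclusion $M \hookrightarrow A^n$ is continuous for the canonical topology on $M$.

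The only difference is in this last step. You argue directly: the canonical topology on $M$ is the quotient topology from $R^s$, and the composite $R^s \to A^n$ is visibly continuous because $A$ is a topological $R$-algebra (so each $r \mapsto r a_j$ is continuous). The paper instead passes to the Hausdorff quotient $N^s/\overline{\{0\}}$ of $N$ with its subspace topology, observes this is Hausdorff, and invokes the uniqueness of the Hausdorff topological $R$-module structure on a finitely generated $R$-module to conclude continuity of $N \to N^s/\overline{\{0\}}$, which suffices since closed sets in $N^s$ are unions of cosets of $\overline{\{0\}}$. Your route is more elementary and makes the use of the topological $R$-algebra hypothesis explicit; the paper's route packages the same input into the uniqueness statement and handles a possibly non-Hausdorff $A$ by first killing $\overline{\{0\}}$. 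Both are fine.
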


\begin{proof}
    Let $\tau : X \hookrightarrow \mathbb A^n$ be a closed immersion. Since $X(A)$ has the subspace topology of $A^n$, it is enough to show that $\tau \circ \rho : \Gamma \to A^n$ is continuous for the product topology on $A^n$. Since $\rho$ is $R$-condensed, there is a finitely generated $R$-submodule $N \subseteq A^n$, such that $\tau(\rho(\Gamma)) \subseteq N$ and such that $\tau \circ \rho : \Gamma \to N$ is continuous for the unique Hausdorff topology on $N$. Let us write $N^s$ for $N$ equipped with the subspace topology of $A^n$. We claim, that the identity $\id_N : N \to N^s$ is continuous for the Hausdorff topology on the source. It is enough to show that the projection $N \to N^s/\overline{\{0\}}$ is continuous, as every closed subset of $N^s$ is a union of translates of $\overline{\{0\}}$, where the closure is taken in $N^s$. But the quotient topology on $N^s/\overline{\{0\}}$ is Hausdorff, so $N \to N^s/\overline{\{0\}}$ is continuous.
\end{proof}

\subsection{\texorpdfstring{$R$-condensed representations}{R-condensed representations}} Let $\Gamma$ be a profinite group, and let $G$ be an affine group scheme of finite type over $\OO$.
\begin{defi}\label{def_R_cond_rep} A  representation $\rho: \Gamma \rightarrow G(A)$ is 
\emph{$R$-condensed} if $\rho$ is $R$-condensed as a map of sets. 
\end{defi}
\begin{remar} Lemma \ref{ind_cont_equiv_R_cond} implies that $\rho$ is $R$-condensed if and only if 
there is a closed immersion $\tau: G\hookrightarrow \mathbb A^n$ of $\OO$-schemes such that $\tau(\rho(\Gamma))$ is contained in a finitely generated $R$-submodule of $A^n=\mathbb A^n(A)$. 
For the purposes of the paper we could take this last statement as the definition of $R$-condensed 
representations, and proceed without actually making use of condensed mathematics. The main point 
of Lemma \ref{B1} is that this definition is independent of the chosen embedding $\tau$.
\end{remar}

\begin{lem}
    Let $\rho : \Gamma \to G(A)$ be an $R$-condensed representation.
    Then there is a unique homomorphism of condensed groups $\tilde \rho : \underline{\Gamma} \to G(A_{\disc} \otimes_{R_{\disc}} \underline{R})$ whose map on underlying sets is $\rho$.
\end{lem}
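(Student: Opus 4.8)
The plan is to upgrade the map of sets $\tilde\rho\colon \underline{\Gamma}\to G(A_{\disc}\otimes_{R_{\disc}}\underline R)$ provided by \Cref{ind_cont_equiv_R_cond} and \Cref{def_R_cond_rep} to a homomorphism of condensed groups, and to check uniqueness. First I would recall that, since $\rho$ is $R$-condensed, \Cref{ind_cont_equiv_R_cond} gives \emph{a} morphism of condensed sets $\tilde\rho\colon \underline{\Gamma}\to G(A_{\disc}\otimes_{R_{\disc}}\underline R)$ whose underlying map of sets is $\rho$; write $C:=A_{\disc}\otimes_{R_{\disc}}\underline R$, which is a condensed ring, so $G(C)$ is a condensed group. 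It remains to verify that $\tilde\rho$ is a group homomorphism in the category of condensed sets, i.e.\ that the two composites $\underline{\Gamma}\times\underline{\Gamma}\rightrightarrows G(C)$ given by $\tilde\rho\circ m_{\Gamma}$ and $m_{G(C)}\circ(\tilde\rho\times\tilde\rho)$ agree, where $m_{\Gamma}$ and $m_{G(C)}$ are the multiplication maps. Here one uses that $\underline{\Gamma}\times\underline{\Gamma}=\underline{\Gamma\times\Gamma}$ (the functor from profinite sets to condensed sets preserves finite products, \cite[Proposition 1.7]{ScholzeCond}) and that $\Gamma\times\Gamma$ is again a profinite set.

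The key point is then a separatedness/injectivity argument: two maps of condensed sets out of a fixed condensed set into a \emph{quasi-separated} condensed set that agree on underlying sets need not agree in general, but they do agree if the target is quasi-separated and the source is, say, $\underline{T}$ for $T$ profinite, because a quasi-separated condensed set is in particular a \emph{separated} presheaf in the relevant sense and its value on $T$ injects compatibly into a product of its values at points. More precisely, by \Cref{rel_disc_is_qs} the condensed ring $C$ is an inductive limit along quasi-compact injections of the profinite modules $\underline{M_i}$, so $C$, hence $C^{\times}$, hence $G(C)$ (via a closed immersion $G\hookrightarrow\mathbb A^n$ and \Cref{cl_subscheme_qc_inj}, \Cref{subsp_qs}) is quasi-separated. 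For a quasi-separated condensed set $Y$ and a profinite set $T$, the map $Y(T)\to\prod_{t\in T}Y(\ast)$ induced by the points of $T$ is injective; applying this with $T=\Gamma\times\Gamma$ and $Y=G(C)$, the equality of the two composites can be checked on underlying sets, where it holds because $\rho$ is a group homomorphism. This proves $\tilde\rho$ is a homomorphism of condensed groups. Uniqueness follows from the same injectivity statement with $T=\Gamma$: any two condensed-group homomorphisms $\underline{\Gamma}\to G(C)$ with underlying map $\rho$ coincide on $G(C)(\Gamma)$, hence are equal.

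The main obstacle I anticipate is pinning down precisely which quasi-separatedness statement does the work and citing it cleanly. The cheap version -- "maps into a quasi-separated condensed set out of $\underline{T}$ are determined by their underlying map of sets" -- is exactly the fact that a quasi-separated condensed set, evaluated on a profinite set $T$, is a subset of the set of functions $T\to Y(\ast)$ compatible with the topology; this is essentially the definition of quasi-separatedness (the diagonal $Y\to Y\times Y$ being a quasi-compact, hence injective, map of condensed sets) combined with \cite[Proposition 4.13]{ScholzeAnalytic} or the appendix lemmas already invoked in \Cref{ind_cont_equiv_R_cond}. A careful proof would phrase the argument as: the set of morphisms $\underline{T}\to G(C)$ with prescribed underlying map is either empty or a singleton, because $G(C)$ is quasi-separated; \Cref{ind_cont_equiv_R_cond} shows it is nonempty for $T=\Gamma$; and functoriality plus the nonemptiness for $T=\Gamma\times\Gamma$ forces the homomorphism identity. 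Everything else -- compatibility of $\underline{(-)}$ with products, $G(C)$ being a condensed group -- is formal and already available in the excerpt's appendix and in \cite{ScholzeCond}.
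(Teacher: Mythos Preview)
Your proposal is correct and follows essentially the same route as the paper: the paper packages your key injectivity fact---that a morphism from $\underline T$ (with $T$ profinite) into a quasi-separated condensed set is determined by its underlying map of sets---as the appendix lemma \Cref{cond_set_unique} and simply cites it twice (once for uniqueness of $\tilde\rho$, once for the multiplication square), while you spell the argument out inline together with the verification that $G(C)$ is quasi-separated. One small imprecision to clean up: in your aside you write ``the diagonal being quasi-compact, hence injective''---diagonals are always monomorphisms, and quasi-separated means the diagonal is quasi-compact, not the other way around.
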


\begin{proof}
    Since $\rho$ is $R$-condensed, there is a map of condensed sets $$\tilde \rho : \underline{\Gamma} \to G(A_{\disc} \otimes_{R_{\disc}} \underline{R})$$ whose map on underlying sets is $\rho$. 
    Moreover, this map is unique by Lemma \ref{cond_set_unique}. We need to show, that the diagram of condensed sets
    \begin{equation}
    \begin{tikzcd}
        \underline \Gamma \times \underline \Gamma \arrow[d] \arrow[r, "\tilde \rho \times \tilde \rho"] & G(A_{\disc} \otimes_{R_{\disc}} \underline{R}) \times G(A_{\disc} \otimes_{R_{\disc}} \underline{R}) \arrow[d] \\
        \underline \Gamma \arrow[r, "\tilde\rho"] & G(A_{\disc} \otimes_{R_{\disc}} \underline{R})
    \end{tikzcd}
    \end{equation}
    commutes, where the vertical maps are the multiplications.
    The diagram commutes on underlying sets, so the claim follows from uniqueness \Cref{cond_set_unique}. 
\end{proof}

\begin{lem}\label{rest_open} Let $\rho: \Gamma \rightarrow G(A)$ be a representation and 
let $\Gamma'$ be an open subgroup of $\Gamma$. Then the following are equivalent:
\begin{enumerate}
\item $\rho$ is $R$-condensed;
\item the restriction of $\rho$ to $\Gamma'$ is $R$-condensed.
\end{enumerate}
\end{lem}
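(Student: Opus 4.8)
The plan is to prove the equivalence of the two conditions in Lemma \ref{rest_open} by reducing everything to the purely set-theoretic notion of \Cref{B1} via \Cref{ind_cont_equiv_R_cond}, and then exploiting that $\Gamma'$ is open (hence of finite index, since $\Gamma$ is profinite and $\Gamma'$ is open) together with the behaviour of finitely generated $R$-submodules under multiplication.

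\begin{proof}
By \Cref{ind_cont_equiv_R_cond} we may work with the equivalent conditions of \Cref{B1} throughout, and we fix a closed immersion $\tau: G\hookrightarrow \mathbb A^n$ of $\OO$-schemes; it suffices to test condition (3) of \Cref{B1} against this fixed $\tau$.

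\textbf{(1) $\Rightarrow$ (2).} Suppose $\rho$ is $R$-condensed, so there is a finitely generated $R$-submodule $M\subseteq A^n=\mathbb A^n(A)$ with $\tau(\rho(\Gamma))\subseteq M$ and $\tau\circ\rho: \Gamma\rightarrow M$ continuous for the canonical profinite topology on $M$. Since $\Gamma'$ is open in the profinite group $\Gamma$, it is also closed, hence a profinite subset of $\Gamma$, and the inclusion $\Gamma'\hookrightarrow\Gamma$ is continuous. Therefore the composite $\Gamma'\hookrightarrow\Gamma\xrightarrow{\tau\circ\rho} M$ is continuous and lands in $M$, so the restriction $\rho|_{\Gamma'}$ satisfies condition (3) of \Cref{B1} with the same module $M$, and thus is $R$-condensed.

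\textbf{(2) $\Rightarrow$ (1).} Suppose $\rho|_{\Gamma'}$ is $R$-condensed, so there is a finitely generated $R$-submodule $M\subseteq A^n$ with $\tau(\rho(\Gamma'))\subseteq M$ and $\tau\circ\rho|_{\Gamma'}: \Gamma'\rightarrow M$ continuous for the canonical topology on $M$. Since $\Gamma$ is profinite and $\Gamma'$ is open, $\Gamma'$ has finite index; choose a finite set of coset representatives $\gamma_1,\dots,\gamma_m\in\Gamma$, so that $\Gamma=\bigsqcup_{i=1}^m \gamma_i\Gamma'$. The key point is that matrix multiplication by a fixed element $\tau(\rho(\gamma_i))\in\GL_n(A)\subseteq M_n(A)$ sends the finitely generated $R$-submodule $M$ of $A^n$ into another finitely generated $R$-submodule: indeed each entry of the product is an $A$-linear combination of the coordinates of $M$, so by the argument of \Cref{polyhasclosedzeroset} (applied to the degree-one polynomials giving the coordinates of $\tau(\rho(\gamma_i))\cdot(-)$) the image lies in a finitely generated $R$-submodule $M_i\subseteq A^n$, and moreover multiplication by $\tau(\rho(\gamma_i))$ is an $R$-linear — hence continuous — map $M\rightarrow M_i$. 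Set $M':=\sum_{i=1}^m M_i$, a finitely generated $R$-submodule of $A^n$. For $\gamma\in\Gamma$ write $\gamma=\gamma_i\gamma'$ with $\gamma'\in\Gamma'$; since $\tau$ is a closed immersion of group schemes into $\mathbb A^n$ via $\tau: G\hookrightarrow\GL_n\hookrightarrow\mathbb A^n$ after composing with a standard embedding (equivalently, since the group law of $G$ is polynomial and $\tau\circ\rho$ is a homomorphism into $\GL_n(A)$), we have $\tau(\rho(\gamma))=\tau(\rho(\gamma_i))\cdot\tau(\rho(\gamma'))\in M_i\subseteq M'$. Thus $\tau(\rho(\Gamma))\subseteq M'$. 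Finally, $\tau\circ\rho$ is continuous on each clopen piece $\gamma_i\Gamma'$, being the composite $\gamma_i\Gamma'\xrightarrow{\sim}\Gamma'\xrightarrow{\tau\circ\rho|_{\Gamma'}} M\xrightarrow{\tau(\rho(\gamma_i))\cdot(-)} M_i\hookrightarrow M'$ of continuous maps, and $\Gamma$ is the disjoint union of finitely many such clopen pieces, so $\tau\circ\rho:\Gamma\rightarrow M'$ is continuous. Hence $\rho$ satisfies condition (3) of \Cref{B1} and is $R$-condensed.
\end{proof}

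The main obstacle — though it is a mild one — is bookkeeping about the embedding: one should be careful that $\tau$ can be taken compatible with a linear structure, so that the group law of $G$ is realised by matrix multiplication and the translates $\tau(\rho(\gamma_i))\cdot(-)$ make sense; alternatively one argues directly that $\tau(\rho(\gamma))$ is obtained from $\tau(\rho(\gamma_i))$ and $\tau(\rho(\gamma'))$ by the fixed polynomial maps encoding multiplication in $G$, and invokes \Cref{B2}, \Cref{B3} together with \Cref{polyhasclosedzeroset} to stay within finitely generated $R$-submodules. Everything else is routine topology of profinite groups and the elementary observation that open subgroups have finite index.
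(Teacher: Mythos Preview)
Your proof is correct and follows essentially the same approach as the paper: both argue $(2)\Rightarrow(1)$ by taking a finite coset decomposition $\Gamma=\bigsqcup_i \gamma_i\Gamma'$ and using that left multiplication by a fixed matrix $\tau(\rho(\gamma_i))$ is $R$-linear, hence sends a finitely generated $R$-submodule to another and is continuous. The only cosmetic difference is that the paper chooses the closed immersion $\delta\circ\tau: G\hookrightarrow M_d\times M_d$, $g\mapsto(\tau(g),\tau(g)^{-1})$, which makes the translation map $(X,Y)\mapsto(\tau(\rho(\gamma_i))X,\,Y\tau(\rho(\gamma_i))^{-1})$ manifestly $R$-linear in one line and sidesteps the bookkeeping about $\tau$ you flagged in your final paragraph.
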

\begin{proof} Since $\rho(\Gamma')\subseteq \rho(\Gamma)$ part (1) trivially implies 
part (2). 

Since $\Gamma'$ is an open subgroup of $\Gamma$  we may write $\Gamma = \bigcup_i c_i \Gamma'$ for $c_1, \dots, c_r \in \Gamma$ a finite set of coset representatives. Let $\tau : G \hookrightarrow \GL_d$ be a homomorphism of $\OO$-group schemes, which is a closed immersion
    and let $\delta: \GL_d\rightarrow M_d\times M_d$ be the closed immersion $g\mapsto (g, g^{-1})$. 
    Then $\delta\circ \tau : G \hookrightarrow M_d \times M_d$ is a closed immersion. 
    If (2) holds then $\delta(\tau(\rho(\Gamma'))$ is contained in a finitely generated 
    $R$-submodule of $M_d(A)\times M_d(A)$. 
    Since the map 
    $$M_d(A)\times M_d(A)\rightarrow M_d(A)\times M_d(A), \quad (X, Y)\mapsto (\tau(\rho(c_i))X, Y\tau(\rho(c_i))^{-1})$$ 
    is $R$-linear, we conclude that $\rho(\Gamma)$ maps into a finitely generated $R$-submodule of $M_d(A) \times M_d(A)$, which implies that $\rho$ is $R$-condensed.
    \end{proof}
    
\begin{lem}\label{cont_vs_cond}
    Let $A$ be a Hausdorff topological $R$-algebra, such that $A$ is a filtered union of closed $R$-subalgebras $B \subseteq A$, where each $B$ contains an open $R$-subalgebra $B^0 \subseteq B$, which is a finitely generated $R$-module. Assume, that $\Gamma$ is a topologically finitely generated profinite group and let $\rho : \Gamma \to G(A)$ be a homomorphism. Then the following are equivalent:
    \begin{enumerate}
        \item $\rho$ is continuous;
        \item $\rho$ is $R$-condensed.
    \end{enumerate}
\end{lem}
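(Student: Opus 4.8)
The plan is to prove the equivalence of continuity and $R$-condensedness for a homomorphism $\rho : \Gamma \to G(A)$ under the stated hypotheses on $A$ and $\Gamma$.

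\textbf{Direction (2) $\Rightarrow$ (1).} This is the easy direction and is essentially \Cref{cond_is_cont}: since $A$ is a Hausdorff topological $R$-algebra, any $R$-condensed map of sets $S \to X(A)$ is continuous. Applying this with $S = \underline{\Gamma}$ (viewing the profinite group $\Gamma$ as a profinite set) and $X = G$ gives that $\rho$ is continuous. So no new work is needed here.

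\textbf{Direction (1) $\Rightarrow$ (2).} Fix a closed immersion $\tau : G \hookrightarrow \mathbb{A}^n$ of $\OO$-schemes; by \Cref{ind_cont_equiv_R_cond} and the remark following \Cref{def_R_cond_rep}, it suffices to show that $\tau(\rho(\Gamma))$ is contained in a finitely generated $R$-submodule $M \subseteq A^n$ with $\tau \circ \rho : \Gamma \to M$ continuous for the canonical topology on $M$. First I would choose topological generators $\gamma_1, \dots, \gamma_m$ of $\Gamma$. By continuity of $\rho$ and the hypothesis on $A$, each matrix entry of $\tau(\rho(\gamma_i))$ lies in some closed $R$-subalgebra $B$ of $A$ containing an open $R$-subalgebra $B^0$ that is a finitely generated $R$-module; since the subalgebras $B$ form a filtered family and there are finitely many $\gamma_i$, we may find a single such pair $B^0 \subseteq B$ working for all generators simultaneously. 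Now $\tau(\rho(\Gamma))$ is contained in the subgroup of $G(B)$ topologically generated by the $\tau(\rho(\gamma_i))$, which in turn lies in $M_d(B)$ (after composing $\tau$ with $g \mapsto (g, g^{-1})$ into $M_d \times M_d$ as in the proof of \Cref{rest_open}, to handle inverses). The key point is to show $\rho(\Gamma) \subseteq G(B^0)$ up to enlarging $B^0$: since $B^0$ is open in $B$ and a finitely generated $R$-module, $G(B^0)$ is a profinite open subgroup of $G(B)$, so its preimage $\Gamma' := \rho^{-1}(G(B^0))$ is an open subgroup of $\Gamma$; by \Cref{rest_open} it is enough to show that $\rho|_{\Gamma'}$ is $R$-condensed, and $\rho(\Gamma')$ is now contained in the finitely generated $R$-module $M_d(B^0) \times M_d(B^0)$, with the restricted map continuous for its canonical (profinite) topology because this topology agrees with the subspace topology from $G(B)$, which is finer than the topology pulled back from $G(A)$. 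Finally, by \Cref{B4} we may pass from the coefficient ring $B^0$ (if one wished to phrase condensedness over it) back to $R$, or more directly observe that $M_d(B^0)$ is already a finitely generated $R$-module since $B^0$ is.

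\textbf{Main obstacle.} The delicate point is matching up topologies: one must check that the canonical Hausdorff $R$-module topology on a finitely generated $R$-submodule $M$ of $M_d(B^0)$ is compatible with the topology that makes $\rho$ visibly continuous. Here the hypothesis that $B^0$ is open in $B$ (hence carries the subspace topology, which as a profinite topology coincides with its canonical $R$-module topology) is what makes everything fit, together with the fact that $G \hookrightarrow \mathbb{A}^n$ being a closed immersion forces $G(B^0) \subseteq \mathbb{A}^n(B^0)$ to be closed, hence profinite. Once one is careful that the reduction via open subgroups (\Cref{rest_open}) and the filtered-union hypothesis genuinely produce a \emph{single} finitely generated $R$-module containing $\tau(\rho(\Gamma'))$, the rest is bookkeeping with \Cref{B1}--\Cref{B4}.
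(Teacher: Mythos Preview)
Your proposal is correct and follows essentially the same approach as the paper: for (1)$\Rightarrow$(2) the paper also picks topological generators, finds a single closed $B$ with open $R$-finite $B^0$ containing their images, uses closedness of $G(B)$ in $G(A)$ to get $\rho(\Gamma)\subseteq G(B)$, passes to the open subgroup $\Gamma'=\rho^{-1}(G(B^0))$, and concludes via \Cref{rest_open}; for (2)$\Rightarrow$(1) both appeal to \Cref{cond_is_cont}. Your ``main obstacle'' paragraph makes explicit the topology-matching step (the subspace topology on $B^0$ from $A$ is a Hausdorff $R$-module topology, hence coincides with the canonical one by the uniqueness stated at the start of \Cref{sec_cont_alg}), which the paper leaves implicit; the final invocation of \Cref{B4} is unnecessary since $M_d(B^0)$ is already finitely generated over $R$.
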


\begin{proof} 
    (1) implies (2). Let $\gamma_1, \dots, \gamma_N \in \Gamma$ be generators of a dense subgroup of $\Gamma$.
    By assumption there is an $R$-subalgebra $B \subseteq A$, which contains an $R$-finite open $R$-subalgebra $B^0$, such that $\rho(\gamma_1), \dots, \rho(\gamma_N) \in G(B)$.
    Since $G(B)$ is closed in $G(A)$, we have $\rho(\Gamma) \subseteq G(B)$. 
    Since $G$ is of finite type over $\OO$, $G(B^0)$ is an open subgroup of $G(B)$. Thus  the preimage $\Gamma':=\rho^{-1}(G(B^0))$ is an open subgroup of $\Gamma$. If $\tau: G\rightarrow 
    \mathbb A^n$ is a closed immersion then $\tau(\rho(\Gamma'))$ is contained in $\mathbb A^n(B^0)$, which a finitely generated $R$-module. Thus the restriction of $\rho$ to $\Gamma'$ 
    is $R$-condensed and Lemma \ref{rest_open} implies that $\rho$ is $R$-condensed.

    (2) implies (1) by \Cref{cond_is_cont}.
\end{proof}

\begin{lem}\label{gener_cond} Let $H$ be a closed subgroup scheme of $G$, let $\{\gamma_i\}_{i\in I}$ a 
set of topological generators of $\Gamma$ and let $\rho: \Gamma\rightarrow G(A)$ be an 
$R$-condensed representation. Then the following are equivalent:
\begin{enumerate}
\item $\rho$ factors through $H(A)\subseteq G(A)$;
\item  $\rho(\gamma_i)\in H(A)$ for all $i\in I$. 
\end{enumerate}
\end{lem}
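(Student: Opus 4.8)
The plan is to reduce the statement to a closed-set/density argument using the $R$-condensed structure established earlier in the section. First I would observe that the implication (1)$\Rightarrow$(2) is trivial, since if $\rho$ factors through $H(A)$ then in particular each $\rho(\gamma_i)$ lies in $H(A)$. So the content is (2)$\Rightarrow$(1). The key point is that since $\rho$ is $R$-condensed, by Lemma \ref{B1} (or via the condensed reformulation of Lemma \ref{ind_cont_equiv_R_cond}) we may fix a closed immersion $\tau : G \hookrightarrow \mathbb A^n$ of $\OO$-schemes and a finitely generated $R$-submodule $M \subseteq A^n = \mathbb A^n(A)$ with $\tau(\rho(\Gamma)) \subseteq M$, such that $\tau \circ \rho : \Gamma \to M$ is continuous for the canonical profinite topology on $M$. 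Since $H$ is a closed subgroup scheme of $G$, its preimage $\tau(H(A)) = M \cap \mathbb A^n(A)$-defined-by-the-ideal-of-$H$ is cut out inside $M$ by finitely many polynomial equations, so by Lemma \ref{polyhasclosedzeroset} the set $\{m \in M : m \in \tau(H(A))\}$ is closed in $M$.

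Next I would use that $\{\gamma_i\}_{i\in I}$ topologically generates $\Gamma$. The subset $\Gamma_H := \{\gamma \in \Gamma : \rho(\gamma) \in H(A)\}$ is a subgroup of $\Gamma$ (because $H$ is a subgroup scheme, so $H(A)$ is a subgroup of $G(A)$, and $\tau$ is a group-scheme-compatible embedding once we also track inverses — here it is cleanest to replace $\tau$ by $\delta \circ \tau : G \hookrightarrow M_d \times M_d$, $g \mapsto (g, g^{-1})$, as in the proof of Lemma \ref{rest_open}, to ensure $H(A)$ is genuinely carved out as a closed subset stable under the group operations). By hypothesis (2), $\Gamma_H$ contains all the $\gamma_i$, hence contains the abstract subgroup they generate, which is dense in $\Gamma$. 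On the other hand, $\Gamma_H = (\tau \circ \rho)^{-1}(\tau(H(A)) \cap M)$ is the preimage of a closed subset of $M$ under the continuous map $\tau \circ \rho : \Gamma \to M$, hence $\Gamma_H$ is closed in $\Gamma$. A closed subgroup containing a dense subset equals $\Gamma$, so $\Gamma_H = \Gamma$, which is precisely statement (1).

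The one point requiring a little care — and the place I expect the main (mild) obstacle — is making sure the set $\tau(H(A)) \cap M$ is genuinely closed in $M$ for the canonical $R$-module topology, not merely Zariski-closed. This is exactly what Lemma \ref{polyhasclosedzeroset} provides: the ideal $I_H \subseteq \OO[y_1,\dots,y_n]$ defining $H$ inside $\mathbb A^n$ is finitely generated, say by $f_1,\dots,f_m$, and $\tau(H(A)) \cap M = \bigcap_{j=1}^m (V(f_j) \cap M)$, a finite intersection of closed subsets of $M$ by that lemma. Using the embedding $\delta \circ \tau$ into $M_d \times M_d$ also guarantees that this closed set is stable under $g \mapsto g^{-1}$, so $\Gamma_H$ really is a subgroup and not just a sub-monoid; alternatively one notes directly that $\Gamma_H$ is closed under the (continuous, by the group law on the profinite group $\Gamma$) inversion because $H(A)$ is a subgroup. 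With that in hand the density/closedness argument closes immediately, and no further input is needed.
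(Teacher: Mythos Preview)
Your proposal is correct and follows essentially the same approach as the paper: pick a closed immersion $\tau: G \hookrightarrow \mathbb{A}^n$, use that $\rho$ is $R$-condensed to land in a finitely generated $R$-module $M$, apply Lemma~\ref{polyhasclosedzeroset} to see that $M \cap \tau(H(A))$ is closed in $M$, and conclude by density of the subgroup generated by the $\gamma_i$. The only difference is that you spend effort verifying $\Gamma_H$ is a subgroup (worrying about inverses and suggesting the embedding $\delta \circ \tau$), whereas the paper simply observes that the abstract subgroup $\Delta$ generated by $\{\gamma_i\}$ already lies in $H(A)$ (since $H(A)$ is a subgroup of $G(A)$), so the preimage of $M \cap \tau(H(A))$ is a closed \emph{subset} of $\Gamma$ containing the dense subset $\Delta$ --- no subgroup structure of the preimage is needed.
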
 
\begin{proof} Clearly (1) implies (2). For the converse, let $\Delta$ be the abstract subgroup
of $\Gamma$ generated by $\{\gamma_i\}_{i\in I}$. Let $\tau: G\hookrightarrow \mathbb A^n$ 
be a closed immersion. Since $\rho$ is $R$-condensed $\rho(\Gamma)$ is contained in 
a finitely generated $R$-submodule $M \subseteq \mathbb A^n(A)$ and the map $\rho: \Gamma\rightarrow M$ 
is continuous. The assumption in (2) implies that $\rho(\Delta)\subseteq H(A)$. 
Since $H$ is closed in $G$ the restriction of $\tau$ to $H$ is a closed immersion. It follows 
from Lemma \ref{polyhasclosedzeroset} that $M\cap H(A)$ is a closed subset of $M$. Its preimage 
in $\Gamma$ will be a closed subset of $\Gamma$ containing $\Delta$. Since $\Delta$ is dense in 
$\Gamma$ we conclude that $\rho(\Gamma)\subseteq M\cap H(A)$, which implies part (1).
\end{proof}

\begin{lem}\label{cond_rep_funct} Let $\varphi: G\rightarrow H$ be a morphism of $\OO$-group schemes of finite type, 
and let $\rho: \Gamma \rightarrow G(A)$ be a representation. Then the following hold: 
\begin{enumerate}
\item if $\rho$ is $R$-condensed then $\varphi\circ \rho: \Gamma \rightarrow H(A)$ is $R$-condensed;
\item if $\varphi\circ\rho$ is $R$-condensed and $\varphi$ is a closed immersion then $\rho$ is $R$-condensed. 
\end{enumerate}
\end{lem}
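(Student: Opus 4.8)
The plan is to reduce both statements to the elementary characterisation of $R$-condensed maps recorded in \Cref{ind_cont_equiv_R_cond}, which says that a map of sets $f\colon S\to X(A)$ from a profinite set $S$ is $R$-condensed precisely when the equivalent conditions of \Cref{B1} hold. Throughout I take $S=\Gamma$, viewed as a profinite set, and $X=G$ or $X=H$; the content of the lemma has already been isolated in \Cref{B1}, \Cref{B2} and \Cref{ind_cont_equiv_R_cond}, so the argument is essentially a matter of invoking these.

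For part (1), I would argue as follows. Since $\rho$ is $R$-condensed, \Cref{ind_cont_equiv_R_cond} shows that the conditions of \Cref{B1} hold for $\rho\colon\Gamma\to G(A)$. As $\varphi\colon G\to H$ is a morphism of affine $\OO$-schemes of finite type, \Cref{B2} applies verbatim and gives that the same conditions hold for $\varphi\circ\rho\colon\Gamma\to H(A)$. Translating back through \Cref{ind_cont_equiv_R_cond}, the map $\varphi\circ\rho$ is $R$-condensed. There is nothing further to do here.

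For part (2), I would pick a closed immersion $\tau\colon H\hookrightarrow\mathbb A^n$ of $\OO$-schemes. Since $\varphi$ is a closed immersion, the composite $\tau\circ\varphi\colon G\hookrightarrow\mathbb A^n$ is again a closed immersion (and in particular $G(A)\to H(A)$ is injective, so $\rho$ is recovered from $\varphi\circ\rho$). By hypothesis $\varphi\circ\rho$ is $R$-condensed, so condition (2) of \Cref{B1}, applied with the closed immersion $\tau$, produces a finitely generated $R$-submodule $M\subseteq\mathbb A^n(A)=A^n$ with $\tau(\varphi(\rho(\Gamma)))\subseteq M$ and $\tau\circ\varphi\circ\rho\colon\Gamma\to M$ continuous for a profinite topology on $M$. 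Since $\tau\circ\varphi\circ\rho=(\tau\circ\varphi)\circ\rho$ and $\tau\circ\varphi$ is a closed immersion of $G$ into affine space, this is exactly condition (3) of \Cref{B1} for the map $\rho\colon\Gamma\to G(A)$. Hence the equivalent conditions of \Cref{B1} hold for $\rho$, and \Cref{ind_cont_equiv_R_cond} gives that $\rho$ is $R$-condensed.

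I do not expect a genuine obstacle: the only points requiring a word of care are that $\Gamma$ must be treated as a profinite set when applying the lemmas of \Cref{sec_cont_alg}, and—in part (2)—that a closed immersion induces an injection on $A$-points, so that the phrase \emph{``$\rho$ factors through $\varphi$''} is unambiguous. The fact that the definition of $R$-condensed is independent of the chosen embedding, built into \Cref{B1}, is precisely what legitimises the closed-immersion step in part (2).
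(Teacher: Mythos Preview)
Your proof is correct and follows the same approach as the paper, which simply says ``Lemma \ref{B2} implies part (1); part (2) holds trivially.'' You have spelled out precisely what ``trivially'' means: composing a closed immersion $\tau\colon H\hookrightarrow\mathbb A^n$ with the closed immersion $\varphi$ yields a closed immersion of $G$, so condition~(3) of \Cref{B1} for $\rho$ is literally the same data as condition~(2) for $\varphi\circ\rho$.
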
 
\begin{proof} Lemma \ref{B2} implies part (1);  part (2) holds trivially.
\end{proof}

\section{The space of generic matrices}\label{sec_gen_matrix}
Let $G$ be  a smooth affine $\OO$-group scheme  and let $\rhobar:\Gamma\rightarrow G(k)$ be a continuous representation, where $\Gamma$ is a profinite group satisfying Mazur's $p$-finiteness condition.
We will introduce a space of generic matrices\footnote{The terminology is taken from \cite{BIP_new} and is inspired by Procesi's paper \cite{Pro87}.} for $G$, which will depend on $\rhobar$ and a faithful representation $\tau : G \to \GL_d$.

\subsection{\texorpdfstring{Definition of $\XgenGLd$ and $\XgenGtau$}{Definition of XgenGLd and XgenGtau}}\label{def_and_1prop}

We fix\footnote{Its existence follows from \cite[Exp.\,$\mathrm{VII}_B$, Rem.\,11.11.1]{SGA3_new}.} a closed immersion $\tau : G \to \GL_d$ of $\OO$-group schemes.  By composing it with $\rhobar$  we obtain a continuous
representation $\tau\circ \rhobar: \Gamma\rightarrow \GL_d(k)$.
The linearisation $(\tau \circ \rhobar)^{\lin} : k[\Gamma] \to M_d(k)$ extends to the completed group ring $(\tau \circ \rhobar)^{\lin} : k\br{\Gamma} \to M_d(k)$. Let $\Dbar:k\br{\Gamma} \rightarrow k$ be the continuous determinant law associated to $(\tau \circ \rhobar)^{\lin}$. (See \cite{che_durham} and \cite[Section 3]{WE_alg}.)

Let $\RpsGLd$ be the universal deformation ring of $\Dbar$ introduced in \cite[Section 3.1]{che_durham} and let $D^u: \RpsGLd\br{\Gamma}\rightarrow \RpsGLd$ be the extension 
of the universal $d$-dimensional
determinant law lifting $\Dbar$. In particular, $\RpsGLd$
is a complete local noetherian $\OO$-algebra with residue field
$k$, and we equip it with its $\mm$-adic topology. We will write $D^u_{|A}$ for the 
specialisation of $D^u$ along $\RpsGLd\rightarrow A$.
We let $X^{\ps}_{\GL_d} := \Spec(\RpsGLd)$.
Note, that $\RpsGLd$ corresponds to $R^{\ps}$ of \cite{BIP_new} when we take the residual representation to be $\tau \circ \rhobar$.

Let $\CH(D^u)$ be the two-sided ideal of $\RpsGLd\br{\Gamma}$ defined in \cite[Section 1.17]{che_durham}, so that  $E^u := \RpsGLd\br{\Gamma}/ \CH(D^u)$ is the largest quotient of $\RpsGLd\br{\Gamma}$ on which $D^u$ descends to a Cayley--Hamilton determinant law. Since $\Gamma$ satisfies Mazur's $p$-finiteness condition, 
$E^u$ is a finitely generated $\RpsGLd$-module and $\CH(D^u)$ is closed by \cite[Proposition 3.6]{WE_alg}. We write $D^u_{E^u} : E^u \to \RpsGLd$ for the determinant law induced on $E^u$ by $D^u$ and $\pi_{E^u} : \RpsGLd\br{\Gamma} \to E^u$ for the projection.

If $f: E^u \rightarrow M_d(A)$ is a homomorphism of $\RpsGLd$-algebras for a commutative $\RpsGLd$-algebra $A$ then we say $f$ is a
\emph{homomorphism of Cayley--Hamilton algebras} 
if $\det \circ (f \otimes A): E^u \otimes_{\RpsGLd} A \rightarrow A$ is equal to $D^u_{E^u} \otimes A$.

\begin{lem}\label{surj} The composition
$\RpsGLd[\Gamma]\rightarrow
\RpsGLd\br{\Gamma}\twoheadrightarrow E^u$
is surjective. 
\end{lem}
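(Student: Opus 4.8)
The plan is to argue that the composite $\RpsGLd[\Gamma]\to\RpsGLd\br{\Gamma}\twoheadrightarrow E^u$ has dense image with respect to the natural topology on $E^u$ (induced by the $\mm$-adic topology on $\RpsGLd$, for which $E^u$ is a finitely generated $\RpsGLd$-module), and then upgrade density to surjectivity using that the image is already an $\RpsGLd$-submodule. First I would note that $\RpsGLd[\Gamma]$ is dense in $\RpsGLd\br{\Gamma}=\varprojlim_{n,U}(\RpsGLd/\mm^n)[\Gamma/U]$ by construction of the completed group algebra (the limit is over open normal subgroups $U\le\Gamma$ and $n\ge 1$). The projection $\pi_{E^u}:\RpsGLd\br{\Gamma}\to E^u$ is continuous and surjective; since $E^u$ is a finitely generated module over the noetherian complete local ring $\RpsGLd$, it is $\mm$-adically complete and the quotient topology coincides with the $\mm$-adic topology (here one uses that $\CH(D^u)$ is a closed two-sided ideal, as recorded from \cite[Proposition 3.6]{WE_alg}). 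Hence the image of $\RpsGLd[\Gamma]$ in $E^u$ is dense.

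Next I would turn density into surjectivity. Let $N\subseteq E^u$ be the $\RpsGLd$-submodule generated by the image of $\RpsGLd[\Gamma]$ — equivalently the $\RpsGLd$-span of the images of the group elements $\gamma\in\Gamma$. This $N$ is a finitely generated $\RpsGLd$-submodule of $E^u$ (finitely generated because $\RpsGLd$ is noetherian and $E^u$ is a finitely generated module), and it is dense by the previous paragraph. But a finitely generated submodule of a finitely generated module over a noetherian complete local ring is $\mm$-adically closed — this is a standard consequence of the Artin–Rees lemma applied to $N\subseteq E^u$. A subset that is both dense and closed is everything, so $N=E^u$. Finally, $\RpsGLd[\Gamma]\to E^u$ already surjects onto the $\RpsGLd$-submodule it generates (by $\RpsGLd$-linearity of $\pi_{E^u}$), which is $N=E^u$; this gives the claim.

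The main obstacle — and really the only subtle point — is the interplay of topologies: one must be careful that the quotient topology on $E^u=\RpsGLd\br{\Gamma}/\CH(D^u)$ agrees with its $\mm$-adic topology, which is exactly where closedness of $\CH(D^u)$ and finite generation of $E^u$ over $\RpsGLd$ enter, both of which are supplied by \cite[Proposition 3.6]{WE_alg} together with Mazur's $p$-finiteness condition on $\Gamma$. Once that is in place, the density argument and the Artin–Rees closedness of finitely generated submodules are routine. An alternative, perhaps cleaner, route avoids topology entirely: since $E^u$ is a finitely generated $\RpsGLd$-module, by Nakayama's lemma it suffices to show that the images of finitely many group elements $\gamma\in\Gamma$ span $E^u/\mm E^u$ over $k=\RpsGLd/\mm$; and $E^u/\mm E^u$ is a quotient of $k\br{\Gamma}/\CH(\Dbar)$, which is a quotient of the Cayley–Hamilton algebra $k[\Gamma]/\CH(\Dbar[\Gamma])$ — here one uses that $\Gamma$ being $p$-finite makes the relevant residual quotient finite-dimensional, so that $k[\Gamma]\to k\br{\Gamma}\to E^u/\mm E^u$ is already surjective, and then Nakayama lifts this to the surjectivity of $\RpsGLd[\Gamma]\to E^u$.
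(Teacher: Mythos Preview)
Your proof is correct and follows essentially the same approach as the paper: show the image of $\RpsGLd[\Gamma]$ is dense in $E^u$ (via density of $\RpsGLd[\Gamma]$ in $\RpsGLd\br{\Gamma}$ and continuity of the quotient map), observe that this image is an $\RpsGLd$-submodule, and conclude by noting that every $\RpsGLd$-submodule of the finitely generated module $E^u$ is closed for the $\mm$-adic topology. The paper phrases the closedness step slightly more directly (every submodule is closed, rather than invoking Artin--Rees explicitly), but the substance is identical.
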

\begin{proof} Let $S$ be the image of $\RpsGLd[\Gamma]$
 in $E^u$. Since $\RpsGLd[\Gamma]$ is a dense subring of $\RpsGLd\br{\Gamma}$, $S$  is also dense in $E^u$. It is enough to show that $S$ is closed in $E^u$.

It follows from \cite[Proposition 3.6]{WE_alg} that $E^u$ is a finitely generated $\RpsGLd$-module. This implies that the 
quotient topology on $E^u$ coincides with the topology 
inherited from $\RpsGLd$ as a finitely generated $\RpsGLd$-module, see the proof of \cite[Lemma 3.2]{BIP_new}.
Every $\RpsGLd$-submodule of $E^u$ is closed with respect 
to this topology. Hence, $S$ is closed in $E^u$, and thus 
$S=E^u$. 
\end{proof}

\begin{lem}\label{factors_through_Eu} Let $R\rightarrow A$ be a map of $\RpsGLd$-algebras such that $R$ is a noetherian profinite topological $\RpsGLd$-algebra. Let  $\rho : \Gamma \to \GL_d(A)$ be a 
representation satisfying $\det \circ \rho^{\lin} = D^u_{|A}$. Then the following are equivalent:
\begin{enumerate} 
\item $\rho^{\lin}:\RpsGLd[\Gamma]\rightarrow M_d(A)$ extends to a homomorphism of $\RpsGLd$-algebras
$\widehat{\rho}:\RpsGLd\br{\Gamma}\rightarrow M_d(A)$;
\item $\rho^{\lin}: \RpsGLd[\Gamma]\rightarrow M_d(A)$ factors through 
a homomorphism of  $\RpsGLd$-algebras $E^u\rightarrow M_d(A)$;
\item $\rho$ is $\RpsGLd$-condensed;
\item $\rho$ is $R$-condensed. 
\end{enumerate}
Moreover, if the equivalent conditions hold then $E^u\rightarrow M_d(A)$ in $(2)$ is 
a homomorphism of Cayley--Hamilton algebras. 
\end{lem}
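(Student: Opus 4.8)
The plan is to prove the four equivalences by showing a cycle of implications, with the technical heart being the passage between "extends to the completed group ring" and "factors through $E^u$". I would set up the proof as follows.

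\textbf{Step 1: $(2)\Rightarrow(1)$ is immediate.} If $\rho^{\lin}$ factors through $E^u$, then since $E^u = \RpsGLd\br{\Gamma}/\CH(D^u)$ is a quotient of $\RpsGLd\br{\Gamma}$, precomposing with the projection $\pi_{E^u}:\RpsGLd\br{\Gamma}\to E^u$ yields the desired extension $\widehat{\rho}$.

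\textbf{Step 2: $(1)\Rightarrow(3)$.} Suppose $\rho^{\lin}$ extends to $\widehat{\rho}:\RpsGLd\br{\Gamma}\to M_d(A)$. The key point is that $E^u$ is a finitely generated $\RpsGLd$-module by \cite[Proposition 3.6]{WE_alg}, and by \Cref{surj} the composite $\RpsGLd[\Gamma]\to E^u$ is surjective; moreover $\widehat\rho$ kills $\CH(D^u)$ because $\CH(D^u)$ is the closed two-sided ideal cut out by the Cayley--Hamilton condition and $\widehat\rho$ is continuous (being $M_d(A)$-valued, and $A$ is given the $\RpsGLd$-module topology $\ldots$ wait — here I must be careful: one needs $A$ to carry a topology making $\widehat\rho$ continuous). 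Actually the cleaner route: $\widehat\rho$ restricted to $\RpsGLd[\Gamma]$ is $\rho^{\lin}$, whose image is $\RpsGLd$-spanned by $\{\rho(\gamma):\gamma\in\Gamma\}$, so $\rho(\Gamma)$ lands in the finitely generated $\RpsGLd$-submodule $M := \Image(\rho^{\lin})$ of $M_d(A)$. I then need continuity of $\rho:\Gamma\to M$ for the canonical (profinite) topology on $M$. This follows because $\widehat\rho$ is automatically continuous: $\RpsGLd\br{\Gamma}$ carries its $\mm$-adic topology, and a homomorphism to $M_d(A)$ with image in the finitely generated module $M$ that extends the map from the dense subring $\RpsGLd[\Gamma]$ is continuous for the unique Hausdorff $\RpsGLd$-module topology on $M$; restricting to the image of $\underline\Gamma$ gives continuity of $\rho$. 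Hence $\rho$ is $\RpsGLd$-condensed by \Cref{ind_cont_equiv_R_cond} (taking $\tau$ to be a closed immersion $\GL_d\hookrightarrow M_d\times M_d$, $g\mapsto(g,g^{-1})$, so that the image in $A^{2d^2}$ sits in a finitely generated $\RpsGLd$-module — note $\rho(\gamma)^{-1}=\rho(\gamma^{-1})$ also lies in $M$).

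\textbf{Step 3: $(3)\Leftrightarrow(4)$ and $(4)\Rightarrow(2)$.} The equivalence $(3)\Leftrightarrow(4)$ is \Cref{B4}: being $R$-condensed for one noetherian profinite $\RpsGLd$-algebra $R$ mapping to $A$ is equivalent to being $\RpsGLd$-condensed, since any finitely generated $\RpsGLd$-submodule generates a finitely generated $R$-submodule and conversely. For $(4)\Rightarrow(2)$ — equivalently $(3)\Rightarrow(2)$ — suppose $\rho$ is $\RpsGLd$-condensed, so $\rho(\Gamma)$ lies in a finitely generated $\RpsGLd$-submodule $M\subseteq M_d(A)$ with $\rho:\Gamma\to M$ continuous. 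Then $\rho^{\lin}:\RpsGLd[\Gamma]\to M_d(A)$ has image inside the $\RpsGLd$-subalgebra generated by $M$, which is still a finitely generated $\RpsGLd$-module (as $M_d(A)$ is, locally, and submodules of finitely generated modules over a noetherian ring are finitely generated — but one must know the subalgebra is finitely generated as a module; this holds because $M$ is finite over the complete local noetherian ring $\RpsGLd$ so its matrix products stay in a larger finite submodule inside $M_d(A)$, using that the trace/coefficient relations from $D^u_{|A}$ bound the degrees — alternatively invoke that $\rho^{\lin}$ factors through the image which is a finite-type $\RpsGLd$-subalgebra that is module-finite by Cayley--Hamilton). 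Given this, $\rho^{\lin}$ extends continuously to $\RpsGLd\br{\Gamma}$ because $\RpsGLd[\Gamma]$ is dense and the target is a complete (profinite, hence complete) finitely generated $\RpsGLd$-module; the extension $\widehat\rho$ satisfies $\det\circ\widehat\rho = D^u$ by density of $\RpsGLd[\Gamma]$ and continuity of the determinant law, so $\widehat\rho$ kills $\CH(D^u)$ by the defining universal property of $E^u$ in \cite[Section 1.17]{che_durham}, giving the factorisation $E^u\to M_d(A)$. The same computation shows $\det\circ(f\otimes A) = D^u_{E^u}\otimes A$, i.e. $f$ is a homomorphism of Cayley--Hamilton algebras, which is the final assertion.

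\textbf{Main obstacle.} I expect the crux to be the continuity/finiteness bookkeeping in Steps 2 and 3: namely, controlling that the $\RpsGLd$-subalgebra of $M_d(A)$ generated by $\rho(\Gamma)$ is module-finite (so that the extension to the completed group algebra exists and is continuous), and conversely that an abstract algebra homomorphism $\widehat\rho$ out of $\RpsGLd\br{\Gamma}$ is automatically continuous for the relevant module topology. The condensed-mathematics reformulation via \Cref{ind_cont_equiv_R_cond} is precisely what makes "$R$-condensed" robust under change of the closed immersion $\tau$, so I would lean on it rather than re-proving independence by hand; the Cayley--Hamilton statement at the end is then a formal consequence of $\det\circ\widehat\rho = D^u$ together with the universal property of $E^u$.
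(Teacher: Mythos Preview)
Your cycle $(2)\Rightarrow(1)\Rightarrow(3)\Leftrightarrow(4)\Rightarrow(2)$ has a genuine gap at Step~2. You assert that $M:=\Image(\rho^{\lin})$ is a finitely generated $\RpsGLd$-module, but nothing in hypothesis~(1) alone guarantees this: an abstract $\RpsGLd$-algebra map $\widehat\rho:\RpsGLd\br{\Gamma}\to M_d(A)$ may have large image, and your appeal to ``automatic continuity'' is circular since it presupposes a suitable topology on the target. Your first attempt in Step~2---showing $\widehat\rho$ kills $\CH(D^u)$ via continuity---hits the same wall, as you noticed. The paper instead runs the cycle $(1)\Rightarrow(2)\Rightarrow(3)\Rightarrow(4)\Rightarrow(1)$, and the point you are missing is that $(1)\Rightarrow(2)$ is \emph{purely algebraic}: the determinant law $\det\circ(\widehat\rho\otimes A)$ on $\RpsGLd\br{\Gamma}\otimes_{\RpsGLd}A$ equals $D^u_{|A}$ and is Cayley--Hamilton simply because it comes from an honest matrix representation, so $\widehat\rho\otimes A$ kills $\CH(D^u)\otimes A$ by the defining property of $E^u$. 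No topology on $A$ is needed. Only then, in $(2)\Rightarrow(3)$, does finiteness of $E^u$ enter to produce the finitely generated module containing $\rho(\Gamma)$.

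Two smaller points. First, \Cref{B4} gives only $(3)\Rightarrow(4)$; the converse comes from closing the cycle. Second, in your $(4)\Rightarrow(2)$ you worry whether the $\RpsGLd$-subalgebra generated by $M$ is module-finite---this is unnecessary. The image of $\rho^{\lin}$ already sits inside the $R$-module $M$ itself, which is profinite; density of $\RpsGLd[\Gamma]$ in $\RpsGLd\br{\Gamma}$ then yields the extension $\widehat\rho$ directly. That is the paper's $(4)\Rightarrow(1)$, and your subsequent Cayley--Hamilton passage to $E^u$ is the paper's $(1)\Rightarrow(2)$---so you had the right ingredients, just arranged so that the cycle does not close.
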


\begin{proof}
    (1) implies (2). The homomorphism $\widehat\rho \otimes A : \RpsGLd\br{\Gamma} \otimes_{\RpsGLd} A \to M_d(A)$ induces a $d$-dimensional $A$-valued Cayley--Hamilton determinant law 
    $$ \det \circ (\widehat\rho \otimes A) : \RpsGLd\br{\Gamma} \otimes_{\RpsGLd} A \to A$$
    which satisfies $\det \circ (\widehat\rho \otimes A) = D^u_{|A}$ by assumption. 

    It follows, that $D^u_{|A}$ is Cayley--Hamilton, hence $\widehat\rho \otimes A$ factors 
    through a homomorphism $E^u \otimes_{\RpsGLd} A \to M_d(A)$ and so 
    $\rho^{\lin}|_{\RpsGLd[\Gamma]}$ factors through a homomorphism of Cayley--Hamilton 
    algebras $E^u\rightarrow M_d(A)$.

    (2) implies (3). Since $E^u$ is finitely generated as $\RpsGLd$-module by considering the closed embedding 
    $\delta: \GL_d\hookrightarrow M_d\times M_d$, $g\mapsto (g, g^{-1})$ we obtain that $\rho$
    is $\RpsGLd$-condensed.

    (3) implies (4) by Lemma \ref{B4}. 

    (4) implies (1).  Since $\rho$ is $R$-condensed, 
    $\delta(\rho(\Gamma))$ is contained in a finitely generated $R$-submodule 
    of $M_d(A)\times M_d(A)$. By projecting onto the first factor we deduce that the 
    the image of $\rho^{\lin}: \RpsGLd[\Gamma]\rightarrow M_d(A)$ is contained in 
    a finitely generated $R$-module $M$. Since $M$ is profinite the map extends
    to a continuous map of profinite $\RpsGLd$-modules $\widehat{\rho}:\RpsGLd\br{\Gamma}\rightarrow M$. Since $\RpsGLd[\Gamma]$ is a dense subalgebra of $\RpsGLd\br{\Gamma}$ we deduce
    that $\rho^{\lin}$ extends to a map of $\RpsGLd$-algebras $\RpsGLd\br{\Gamma}\rightarrow 
    M_d(A)$.
\end{proof}

\begin{defi}\label{defXgenGLd} 

    Let $\XgenGLd : \RpsGLd\hyphen\alg \to \Set$ be the functor which sends $A$ to the 
    set of  representations $\rho: \Gamma\rightarrow \GL_d(A)$ such that 
    $\det \circ \rho^{\lin}= D^u \otimes_{\RpsGLd} A$ and $\rho$ satisfies the equivalent conditions
    of Lemma \ref{factors_through_Eu}.
    \end{defi}

We give an alternative description of the points of $\XgenGLd$. By the following Lemma, we see that $\XgenGLd$ identifies canonically with the scheme $X^{\gen}$ of \cite{BIP_new}.

\begin{lem}\label{XgenGLdA}
    Let $A$ be a commutative $\RpsGLd$-algebra.
    \begin{enumerate}
        \item $\XgenGLd(A)$ is in natural bijection with the set of homomorphisms of Cayley--Hamilton $\RpsGLd$-algebras $f : E^u \to M_d(A)$. In particular, $\XgenGLd$ is representable by a finitely generated commutative $\RpsGLd$-algebra $A^{\gen}_{\GL_d}$.
        \item If $A$ is a topological $\RpsGLd$-algebra then every representation $\rho \in \XgenGLd(A)$ is continuous.
\end{enumerate}
\end{lem}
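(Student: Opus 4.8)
\textbf{Plan for the proof of \Cref{XgenGLdA}.}

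\medskip

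For part (1), the plan is to unwind the definition of $\XgenGLd(A)$ through \Cref{factors_through_Eu} and match it with $\RpsGLd$-algebra homomorphisms $E^u \to M_d(A)$. First I would take $\rho \in \XgenGLd(A)$; by \Cref{factors_through_Eu}(2), the linearisation $\rho^{\lin} : \RpsGLd[\Gamma] \to M_d(A)$ factors through an $\RpsGLd$-algebra homomorphism $f : E^u \to M_d(A)$, and by the last sentence of that lemma $f$ is a homomorphism of Cayley--Hamilton algebras. This assignment $\rho \mapsto f$ is injective because $\RpsGLd[\Gamma] \to E^u$ is surjective (\Cref{surj}), so $f$ is determined by its values on the images of group elements, i.e.\,by $\rho$. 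Conversely, given a Cayley--Hamilton homomorphism $f : E^u \to M_d(A)$, precompose with $\pi_{E^u}|_{\RpsGLd[\Gamma]}$ to get a map $\RpsGLd[\Gamma] \to M_d(A)$; restricting to group-like elements $\gamma \in \Gamma$ gives a map $\rho : \Gamma \to M_d(A)$. One checks $\rho$ lands in $\GL_d(A)$: the image of $\gamma^{-1}$ provides a two-sided inverse since $\gamma\gamma^{-1} = 1$ in $\RpsGLd[\Gamma]$ and $f$ is a ring homomorphism sending $1$ to $1$. Multiplicativity of $\rho$ is immediate from $f$ being a ring homomorphism. The condition $\det \circ \rho^{\lin} = D^u \otimes_{\RpsGLd} A$ follows because $f$ is Cayley--Hamilton: $\det \circ (f \otimes A) = D^u_{E^u} \otimes A$, and composing with $\pi_{E^u}$ recovers $D^u \otimes_{\RpsGLd} A$ on $\RpsGLd[\Gamma]$. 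Finally $\rho$ is $\RpsGLd$-condensed by \Cref{factors_through_Eu}, since $\rho^{\lin}$ factors through the finitely generated $\RpsGLd$-module $E^u$ via the closed embedding $\delta : \GL_d \hookrightarrow M_d \times M_d$. Representability then follows: the functor $A \mapsto \Hom_{\RpsGLd\hyphen\alg}(E^u, M_d(A))$ is corepresented by a finitely generated commutative $\RpsGLd$-algebra (namely a suitable quotient of a polynomial ring in $d^2 \cdot (\text{number of module generators of } E^u)$ variables, cutting out the Cayley--Hamilton and multiplicativity relations), exactly as in \cite{BIP_new}; call it $A^{\gen}_{\GL_d}$.

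\medskip

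For part (2), if $A$ is moreover a topological $\RpsGLd$-algebra, I would invoke \Cref{cond_is_cont}: a representation $\rho \in \XgenGLd(A)$ is $\RpsGLd$-condensed by definition (via \Cref{factors_through_Eu}), hence $\rho$, viewed as a map of sets $\Gamma \to \GL_d(A)$, is $\RpsGLd$-condensed, and \Cref{cond_is_cont} (applied with $S = \Gamma$, $R = \RpsGLd$, $X = \GL_d$) yields continuity of $\rho$ for the subspace topology on $\GL_d(A) \subseteq M_d(A) \times M_d(A)$ induced by the topology of $A$.

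\medskip

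\textbf{Main obstacle.} The routine-looking but slightly delicate point is the converse direction in (1): checking that a Cayley--Hamilton homomorphism $f : E^u \to M_d(A)$ really produces a genuine \emph{group} homomorphism $\rho : \Gamma \to \GL_d(A)$ satisfying the determinant condition, i.e.\,that no information is lost in passing between $E^u$ and $\Gamma$, and that the resulting $\rho$ automatically satisfies the equivalent conditions of \Cref{factors_through_Eu} (rather than needing to be imposed separately). This is handled by \Cref{surj} (surjectivity of $\RpsGLd[\Gamma] \to E^u$), which guarantees the correspondence $\rho \leftrightarrow f$ is a bijection, together with the final clause of \Cref{factors_through_Eu} identifying the factoring homomorphism as Cayley--Hamilton. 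The identification with the scheme $X^{\gen}$ of \cite{BIP_new} is then a matter of comparing definitions, since \cite{BIP_new} defines $X^{\gen}$ precisely as the functor of Cayley--Hamilton homomorphisms $E^u \to M_d(-)$.
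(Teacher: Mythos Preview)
Your proposal is correct and follows essentially the same route as the paper: both directions of the bijection in part (1) are set up via \Cref{factors_through_Eu} and the identification with the functor of Cayley--Hamilton homomorphisms $E^u\to M_d(A)$ representable by $A^{\gen}$ as in \cite{BIP_new}, and part (2) is handled by \Cref{cond_is_cont}. The only cosmetic difference is that the paper constructs the inverse map more directly as $\rho_f:\Gamma\to (E^u)^\times\xrightarrow{f}\GL_d(A)$, and your sentence attributing injectivity of $\rho\mapsto f$ to \Cref{surj} has the logic slightly backwards (surjectivity of $\RpsGLd[\Gamma]\to E^u$ is what makes $f\mapsto\rho_f$ a genuine inverse, i.e.\ $f_{\rho_f}=f$, rather than what makes $\rho\mapsto f_\rho$ injective); but this does not affect the validity of the argument.
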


\begin{proof}
    Part (1). 
    Recall, that in \cite[Lemma 3.1]{BIP_new} the $\RpsGLd$-algebra $A^{\gen}$ is proved to represent the functor, that maps an $\RpsGLd$-algebra $A$ to the set of homomorphisms of Cayley--Hamilton algebras $f : E^u \to M_d(A)$. Once we prove this functor to be naturally isomorphic to $\XgenGLd$ as in \Cref{defXgenGLd}, the claim about representability follows and $\XgenGLd$ is naturally isomorphic to the $\RpsGLd$-scheme $X^{\gen} = \Spec A^{\gen}$ from \cite{BIP_new}.
    
    If $\rho\in \XgenGLd(A)$ then by Lemma \ref{factors_through_Eu} (2) we obtain a natural 
    homomorphism of Cayley--Hamilton algebras $f_{\rho}: E^u \to M_d(A)$. 
    Conversely, a map of Cayley--Hamilton algebras $f : E^u \to M_d(A)$ gives rise to a representation by means of the composition
    $ \rho_f : \Gamma \to (E^u)^{\times} \overset{f}{\to} \GL_d(A). $
    By restriction along $\RpsGLd\br{\Gamma} \otimes_{\RpsGLd} A \to E^u \otimes_{\RpsGLd} A$, we see that
    $$ \det \circ \rho_f^{\lin} = \det \circ ((f \circ \pi_{E^u}) \otimes A) = D^u_{|A}. $$
    Lemma \ref{factors_through_Eu} (1) implies that $\rho_f\in \XgenGLd(A)$. 
    The maps $\rho\mapsto f_{\rho}$ and $f\mapsto \rho_f$ are mutually inverse.

    Part (2) follows from \cite[Lemma 3.2]{BIP_new} or Lemma \ref{cond_is_cont}.
\end{proof}

\begin{lem}\label{fg_quotient} There is a topologically finitely generated quotient $\Gamma\twoheadrightarrow Q$ 
such that $R^{\ps, \Gamma}_{\GL_d}=R^{\ps,Q}_{\GL_d}$ and every $\rho\in X^{\gen}_{\GL_d}(A)$, 
where $A\in R^{\ps}_G\text{-}\alg$, factors as $\rho: \Gamma \rightarrow Q \rightarrow \GL_d(A)$.
\end{lem}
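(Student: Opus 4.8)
The plan is to take $Q$ to be the image of $\Gamma$ under the universal Cayley--Hamilton representation. Let $\rho^u \colon \Gamma \to (E^u)^\times$ be the composition of the tautological map $\Gamma \to (\RpsGLd\br{\Gamma})^\times$ with $\pi_{E^u}$. Since $E^u$ is a finitely generated module over the complete local noetherian ring $\RpsGLd$ (here Mazur's condition enters, via \cite[Proposition 3.6]{WE_alg}), it is $\mm$-adically complete with each $E^u/\mm^n E^u$ a finite ring; hence $(E^u)^\times$ is profinite and $\rho^u$ is continuous. As $\Gamma$ is compact, $Q := \rho^u(\Gamma)$ is a closed subgroup, so $\Gamma \twoheadrightarrow Q$ is a quotient of profinite groups. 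By \Cref{XgenGLdA}(1) a point $\rho \in \XgenGLd(A)$ is the same datum as a homomorphism of Cayley--Hamilton algebras $f \colon E^u \to M_d(A)$, and then $\rho = f|_{(E^u)^\times} \circ \rho^u$; since $f$ carries units to units and $\rho^u(\Gamma) = Q$, this factors as $\Gamma \twoheadrightarrow Q \to \GL_d(A)$, as required. Applying this to $\tau \circ \rhobar \in \XgenGLd(k)$ (its image is finite, hence it is $\RpsGLd$-condensed) shows that $\tau \circ \rhobar$ factors through $Q$, so $\Dbar$ descends to a determinant law $\Dbar_Q$ of $Q$.

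Next I would check that $Q$ is topologically finitely generated and satisfies Mazur's $p$-finiteness condition, so that $R^{\ps,Q}_{\GL_d}$ is defined. For any open subgroup $\Gamma'$ of $\Gamma$ with image $Q'$ in $Q$, inflation--restriction gives an injection $H^1(Q',\Fp) \hookrightarrow H^1(\Gamma',\Fp)$, whose target is finite by Mazur's condition for $\Gamma$; running over all $\Gamma'$ shows $Q$ satisfies Mazur's condition. For topological finite generation, observe that $(E^u)^\times$ is an extension of the finite group $(E^u/\mm E^u)^\times$ by the pro-$p$ group $1 + \mm E^u$, since each $1 + \mm E^u/\mm^n E^u$ is a finite $p$-group (it is filtered by the additive groups $\mm^i E^u/\mm^{i+1}E^u$, which are finite $\Fp$-vector spaces). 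Hence $Q_1 := Q \cap (1 + \mm E^u)$ is an open pro-$p$ subgroup of $Q$; as $H^1(Q_1,\Fp)$ is finite by the previous sentence, $Q_1$ is topologically finitely generated by the Burnside basis theorem, and therefore so is $Q$.

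It remains to identify $R^{\ps,\Gamma}_{\GL_d}$ with $R^{\ps,Q}_{\GL_d}$. Pullback along $\Gamma \twoheadrightarrow Q$ gives, for every coefficient ring $A$, an injection of the set of $d$-dimensional determinant laws of $Q$ lifting $\Dbar_Q$ into the set of those of $\Gamma$ lifting $\Dbar$; this natural transformation is represented by a ring map $R^{\ps,\Gamma}_{\GL_d} \to R^{\ps,Q}_{\GL_d}$. For the converse I would show that the universal determinant law $D^u$ of $\Gamma$ descends to $Q$: since $\rho^u$ factors through $Q$, the map $\RpsGLd[\Gamma] \to E^u$ factors through $\RpsGLd[Q]$, whose image in $E^u$ is an $\RpsGLd$-submodule containing the image of $\RpsGLd[\Gamma]$, hence all of $E^u$ by \Cref{surj}; as $\RpsGLd$-submodules of $E^u$ are closed, this extends to a continuous surjection $\RpsGLd\br{Q} \twoheadrightarrow E^u$, through which $\pi_{E^u}$, and hence $D^u = D^u_{E^u} \circ \pi_{E^u}$, factors. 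By the universal property of $R^{\ps,Q}_{\GL_d}$ this produces the inverse ring map, and the two are mutually inverse by Yoneda once one notes, using injectivity of pullback, that the natural transformation of determinant-law functors is an isomorphism.

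The main obstacle is the second step: promoting topological finite generation past the non-pro-$p$ part of $(E^u)^\times$. The clean route is to exploit that $E^u$ is module-finite over $\RpsGLd$ — itself a consequence of Mazur's condition together with \cite[Proposition 3.6]{WE_alg} and \Cref{surj} — so that $(E^u)^\times$ is an extension of a finite group by a pro-$p$ group, reducing the assertion to the pro-$p$ case, where Mazur's condition applies directly through inflation--restriction and the Burnside basis theorem.
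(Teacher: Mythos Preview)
Your argument is correct, but it takes a different route from the paper. You define $Q$ as the image of $\Gamma$ in $(E^u)^\times$ and then verify everything by hand: topological finite generation via the observation that $(E^u)^\times$ is an extension of a finite group by a pro-$p$ group, and the equality $R^{\ps,\Gamma}_{\GL_d}=R^{\ps,Q}_{\GL_d}$ by showing directly that the universal determinant law $D^u$ descends through $E^u$ to $Q$ and then running a Yoneda argument. The paper instead takes $Q=\Gamma/K$, where $K\subseteq\ker(\tau\circ\rhobar)$ is the smallest closed normal subgroup such that $\ker(\tau\circ\rhobar)/K$ is pro-$p$; topological finite generation is then immediate from Mazur's condition applied to the open subgroup $\ker(\tau\circ\rhobar)$, and the identification of pseudodeformation rings (together with the invariance of $E^u$) is delegated to \cite[Lemma~3.8]{che_durham}, after which the factorisation of points of $\XgenGLd$ follows from \Cref{XgenGLdA}(1) exactly as in your argument.

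Your approach is more self-contained (it does not invoke Chenevier's lemma) and produces a \emph{smaller} quotient, namely the precise image that governs all points of $\XgenGLd$; the paper's choice of $Q$ is more explicit and makes topological finite generation a one-liner, at the cost of citing an external result for the pseudodeformation ring comparison. Both yield the same consequences downstream. Your final step could be stated more crisply: once $D^u$ descends to $Q$, every $A$-valued continuous determinant law deforming $\Dbar$ is a specialisation of $D^u$ and hence also descends, so pullback along $\Gamma\twoheadrightarrow Q$ is a bijection on deformation functors and the representing rings are isomorphic.
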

\begin{proof} Let $Q= \Gamma/K$, where $K\subseteq \ker(\rhobar)$ is the smallest closed 
normal subgroup of $\Gamma$ such that $\ker(\rhobar)/K$ is pro-$p$. Since $\ker(\rhobar)$ 
is open in $\Gamma$ and $\Gamma$ satisfies Mazur's $p$-finiteness condition, $Q$ is topologically 
finitely generated. Lemma 3.8 in \cite{che_durham} implies that $R^{\ps, \Gamma}_{\GL_d}=R^{\ps,Q}_{\GL_d}$
and the Cayley--Hamilton algebra $E^u$ does not change if we replace $\Gamma$ by $Q$. The assertion 
follows from Lemma \ref{XgenGLdA} (1). 
\end{proof}

As already explained above, \cite[Proposition 3.6]{WE_alg} implies that $E^{u}$ is a finitely generated $\RpsGLd$-module, and hence $k\otimes_{\RpsGLd}E^u$ is a 
finite dimensional $k$-vector space.

\begin{lem}\label{closedimmersionforGLd} Let $\{\gamma_1, \ldots, \gamma_N\}$ be a subset  of $\Gamma$, such that its image via the map in Lemma \ref{surj} spans $k\otimes_{\RpsGLd}E^u$ as a $k$-vector space. 
Then the  map 
$$ \XgenGLd \rightarrow \GL_d^N\times \XpsGLd, \quad \rho\mapsto (\rho(\gamma_1), \ldots, \rho(\gamma_N))$$
is a closed immersion.
\end{lem}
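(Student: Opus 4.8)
The plan is to use \Cref{factors_through_Eu} and \Cref{XgenGLdA}(1) to translate the assertion into a statement about Cayley--Hamilton homomorphisms $E^u \to M_d(A)$, and then exploit that $E^u$ is a finitely generated $\RpsGLd$-module generated (topologically, hence by \Cref{surj} as a module) by the images of group elements. Concretely, let $B := A^{\gen}_{\GL_d}$, so $\XgenGLd = \Spec B$ and the universal object is a homomorphism of Cayley--Hamilton algebras $f^{\univ} : E^u \to M_d(B)$; equivalently a representation $\rho^{\univ} : \Gamma \to \GL_d(B)$ with $\det \circ (\rho^{\univ})^{\lin} = D^u \otimes B$. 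The morphism in the statement corresponds to the $\RpsGLd$-algebra map $\RpsGLd[z_1,\dots,z_N] \otimes_{\OO} \text{(coordinate ring of } \GL_d^N) \to B$ sending the matrix entries of the $j$-th factor to the matrix entries of $\rho^{\univ}(\gamma_j)$; call its image $C \subseteq B$. I must show this map is surjective and that $B$ is integral over (equivalently, a finite module over) $C$ — together these give a closed immersion, since a surjective-up-to-integrality map of finite-type $\RpsGLd$-algebras with $\RpsGLd$ noetherian will then be a quotient map. Actually the cleanest route: show $C = B$ directly.

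First I would show $C$ contains $\rho^{\univ}(\gamma)$ for every $\gamma \in \Gamma$. The key input is that $\{\gamma_1,\dots,\gamma_N\}$ spans $k \otimes_{\RpsGLd} E^u$; by Nakayama (applied to the finitely generated $\RpsGLd$-module $E^u$, using that $\RpsGLd$ is local with maximal ideal $\mm$ and $E^u$ is $\mm$-adically complete/separated), the images of $\gamma_1,\dots,\gamma_N$ under $\RpsGLd[\Gamma] \to E^u$ generate $E^u$ as an $\RpsGLd$-module. Hence for any $\gamma \in \Gamma$ we can write the image of $\gamma$ in $E^u$ as $\sum_{j} r_j \cdot \overline{\gamma_j}$ with $r_j \in \RpsGLd$; applying $f^{\univ}$ gives $\rho^{\univ}(\gamma) = \sum_j r_j \rho^{\univ}(\gamma_j) \in M_d(C)$, where I use that $C$ is an $\RpsGLd$-subalgebra of $B$ containing each $\rho^{\univ}(\gamma_j)$ and hence closed under $\RpsGLd$-linear combinations. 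Thus $(\rho^{\univ})^{\lin}$ takes values in $M_d(C)$, i.e. $\rho^{\univ}$ is a representation $\Gamma \to \GL_d(C)$.

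Next I would use the universal property. The representation $\rho^{\univ} : \Gamma \to \GL_d(C)$ satisfies $\det \circ (\rho^{\univ})^{\lin} = D^u \otimes C$ (this holds after pushing forward to $B$ and $C \hookrightarrow B$ is injective). It remains to see $\rho^{\univ}$, as a representation valued in $C$, still satisfies the equivalent conditions of \Cref{factors_through_Eu}; but $\rho^{\univ}$ is $\RpsGLd$-condensed because its linearisation factors through the finitely generated $\RpsGLd$-module $E^u$ (indeed through the finite $C$-module spanned by the $\rho^{\univ}(\gamma_j)$, and applying the $\delta: \GL_d \hookrightarrow M_d \times M_d$ trick as in the proof of \Cref{factors_through_Eu}). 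So $\rho^{\univ} \in \XgenGLd(C)$, which by the universal property of $B = A^{\gen}_{\GL_d}$ yields an $\RpsGLd$-algebra section $s : B \to C$ of the inclusion $C \hookrightarrow B$. A section of an injection is an isomorphism, so $C = B$, proving the map is a closed immersion.

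**Main obstacle.** The delicate point is the Nakayama step: one must be careful that $E^u$ is finitely generated over $\RpsGLd$ (this is exactly \cite[Proposition 3.6]{WE_alg}, cited above) and that the images of $\gamma_1,\dots,\gamma_N$ — which a priori only span \emph{topologically} in the sense of \Cref{surj} — actually generate $E^u$ as a module once they span modulo $\mm$; this is standard Nakayama for finite modules over a complete (hence $\mm$-adically separated) local ring, but it is the linchpin. The rest is bookkeeping with the universal property and checking that the condensedness condition is preserved under restriction of scalars along $C \hookrightarrow B$, which is immediate since it only asks for the image to lie in a finitely generated submodule.
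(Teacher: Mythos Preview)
Your argument is correct and shares the key step with the paper's proof: applying Nakayama (using that $E^u$ is a finitely generated $\RpsGLd$-module by \cite[Proposition 3.6]{WE_alg}) to conclude that the images of $\gamma_1,\dots,\gamma_N$ generate $E^u$ over $\RpsGLd$. From there the two proofs diverge. The paper simply observes that the construction of $A^{\gen}_{\GL_d}$ in \cite[Lemma 3.1]{BIP_new} is as a quotient of the polynomial ring on the matrix entries of any chosen set of module generators of $E^u$, so the closed immersion into $M_d^N\times \XpsGLd$ is immediate from that construction, and it factors through $\GL_d^N$ because the $\gamma_j$ are group elements. Your route instead gives a self-contained argument via the universal property: you show the image subring $C$ already supports the universal representation and then use the resulting map $s:B\to C$ with $\iota\circ s=\id_B$ to force $C=B$. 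This avoids citing the explicit presentation from \cite{BIP_new} and is a bit more conceptual; the paper's proof is shorter only because it defers to that external lemma. Your check that $\rho^{\univ}\in \XgenGLd(C)$ is clean---condition (2) of \Cref{factors_through_Eu} holds because the factorisation $E^u\to M_d(B)$ lands in $M_d(C)$ by the generation statement.
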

\begin{proof} It follows from Part (1) of Lemma \ref{XgenGLdA} that $\XgenGLd$ coincides with the scheme $X^{\gen}$, that
was contructed in \cite[Lemma 3.1]{BIP_new} as a closed subscheme of $M_d^{n}\times \XpsGLd$ by choosing generators $\delta_1, \ldots, \delta_n$ of $E^u$ as an $\RpsGLd$-module and mapping $\rho$ 
to the $n$-tuple $(\rho(\delta_1), \ldots, \rho(\delta_n))$. 
Since $\rho(\gamma_i)$ are invertible matrices, it is enough to show that 
the images of $\gamma_1, \ldots, \gamma_N$ generate $E^u$ as
an $\RpsGLd$-module. This follows from Nakayama's lemma. 
\end{proof}

Let $Q$ be a quotient of $\Gamma$ as in Lemma \ref{fg_quotient}. We fix an $N$-tuple $(\gamma_1, \ldots, \gamma_N)\in \Gamma^N$, such that the entries 
generate a dense subgroup of $Q$ and its image spans $E^u\otimes_{\RpsGLd} k$ as a $k$-vector space.

\begin{defi}\label{defiXgenG}
We define the $\RpsGLd$-scheme $\XgenGtau$ as the fibre product 
\begin{equation}\label{def_XgenG}
\XgenGtau := \XgenGLd\times_{\GL_d^N} G^N,
\end{equation}
where $G^N \rightarrow \GL_d^N$ is the map $(g_1, \ldots, g_N)\mapsto (\tau(g_1),\ldots, \tau(g_N))$.    
\end{defi}

Thus $\XgenGtau$ is a closed $\RpsGLd$-subscheme of $\XgenGLd$, and hence a 
closed subfunctor.
We note that $\XgenGtau$ is non-empty, as $\XgenGtau(k)$ contains
the representation $\rhobar$ that we have started with.
\begin{remar}\label{FN} By construction $\XgenGLd$ is a closed subscheme of $\GL_d^N \times \XpsGLd$. Thus $\XgenGtau$ is a closed subscheme of $G^N\times \XpsGLd$, which can interpreted as follows. 

Let $\mathcal F_N$ be a free group generated by elements $x_1, \ldots, x_N$. 
Let $\Rep_G^{\mathcal F_N, \square}$ be the affine $\RpsGLd$-scheme, which represents the functor, that maps a commutative $\RpsGLd$-algebra 
$A$ to the set of (abstract) group homomorphisms $\rho : \mathcal F_N \to G(A)$. Sending $\rho$ to the $N$-tuple
$(\rho(x_1), \ldots, \rho(x_N))$ induces an isomorphism of $\XpsGLd$-schemes 
$$\Rep_G^{\mathcal F_N, \square} \cong G^N\times \XpsGLd.$$ Mapping $x_i$ to $\gamma_i$ induces 
a group homomorphism $\varphi:\mathcal F_N\rightarrow Q$ and it follows from above, that mapping $\rho$ to $\rho\circ \varphi$ induces a closed immersion $\XgenGtau \rightarrow \Rep_G^{\mathcal F_N, \square}$.
\end{remar}

\begin{remar}\label{rem_extend_scalars} If $L'$ is a finite extension of $L$ with ring of integers $\OO'$ then it follows 
from the definition that $(\XgenGtau)_{\OO'}\cong X^{\gen, \tau}_{G_{\OO'}}$. 
The main technical result of this paper is the computation of the dimension of $\XgenGtau$. 
Since $\OO'$ is finite and flat over $\OO$ the dimensions of $(\XgenGtau)_{\OO'}$ and $\XgenGtau$
are the same and using Proposition \ref{O_prime} we may assume that $G$ is split, $G/G^0$ is constant 
and $G(\OO)\rightarrow (G/G^0)(\OO)$ is surjective. 
\end{remar}

We are now ready to give a description of the points of $\XgenGtau$.

\begin{lem}\label{XgenGA} Let $A$ be a commutative $\RpsGLd$-algebra.
\begin{enumerate}
    \item $\XgenGtau(A)$ is the set of representations $\rho: \Gamma \rightarrow G(A)$, such that 
    $\tau\circ \rho \in \XgenGLd(A)$.
    \item If $A$ is a topological $\RpsGLd$-algebra then every representation $\rho \in \XgenGtau(A)$ is continuous.
\end{enumerate}
\end{lem}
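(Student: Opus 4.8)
\textbf{Proof plan for Lemma \ref{XgenGA}.}

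The plan is to reduce both assertions to the corresponding statements for $\GL_d$ that were already established in Lemma \ref{XgenGLdA}, using the fact that $\XgenGtau$ was \emph{defined} as the fibre product $\XgenGLd \times_{\GL_d^N} G^N$ in Definition \ref{defiXgenG}. So for part (1), an $A$-point of $\XgenGtau$ is, by the universal property of the fibre product, the datum of a representation $\rho' \in \XgenGLd(A)$ together with an $N$-tuple $(g_1, \dots, g_N) \in G(A)^N$ such that $\tau(g_i)$ equals the $i$-th coordinate $\rho'(\gamma_i)$ of $\rho'$ for all $i$. The key point is that $\tau : G \hookrightarrow \GL_d$ is a closed immersion, so $G(A) \hookrightarrow \GL_d(A)$ is injective; hence the tuple $(g_1,\dots,g_N)$ is uniquely determined by $\rho'$ whenever it exists. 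What remains is to promote this tuple, which a priori only knows the values of $\rho'$ on the distinguished generators $\gamma_1, \dots, \gamma_N$, to an actual group homomorphism $\rho : \Gamma \to G(A)$ with $\tau \circ \rho = \rho'$.

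Here is where I would invoke the condensed machinery of Section \ref{sec_cont}. By Lemma \ref{factors_through_Eu}, any $\rho' \in \XgenGLd(A)$ is $\RpsGLd$-condensed, so by Lemma \ref{ind_cont_equiv_R_cond} the image $\rho'(\Gamma)$ lies in a finitely generated $\RpsGLd$-submodule of $M_d(A)$ on which $\rho'$ is continuous. The $\gamma_i$ were chosen (after Lemma \ref{fg_quotient}) to generate a dense subgroup of the quotient $Q$ through which $\rho'$ factors, and their images span $E^u \otimes_{\RpsGLd} k$, so in particular they topologically generate the relevant group. Since $\tau(g_i) = \rho'(\gamma_i) \in \tau(G(A))$ for all $i$, Lemma \ref{gener_cond} applied with $H = G$ (a closed subgroup scheme of $\GL_d$) shows that $\rho'$ factors through $G(A) \subseteq \GL_d(A)$; composing with the inverse of the injection $\tau : G(A) \hookrightarrow \GL_d(A)$ yields the desired $\rho : \Gamma \to G(A)$ with $\tau \circ \rho = \rho'$, which on the generators recovers the tuple $(g_1,\dots,g_N)$. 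Conversely, any such $\rho$ clearly produces a point of the fibre product. This establishes the bijection in part (1), and part (2) is then immediate: if $A$ is a topological $\RpsGLd$-algebra and $\rho \in \XgenGtau(A)$, then $\tau \circ \rho \in \XgenGLd(A)$ is continuous by Lemma \ref{XgenGLdA}(2), and since $\tau$ is a closed immersion $G(A)$ carries the subspace topology of $\GL_d(A)$, so $\rho$ is continuous as well.

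The main obstacle I anticipate is the bookkeeping around the distinguished tuple $(\gamma_1,\dots,\gamma_N)$: the fibre product only imposes a condition at these finitely many elements, and one must be careful that the hypotheses under which these elements were fixed (density of the subgroup they generate in $Q$, together with $\rho'$ factoring through $Q$, plus the $\RpsGLd$-condensed property) really do let one apply Lemma \ref{gener_cond} to conclude the factorisation holds on all of $\Gamma$. Everything else --- injectivity of $\tau$ on $A$-points, the universal property of the fibre product, and the subspace-topology argument for part (2) --- is routine.
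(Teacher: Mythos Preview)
Your proposal is correct and takes essentially the same approach as the paper: the paper's proof of part (1) is the terse ``follows from Lemmas \ref{gener_cond} and \ref{fg_quotient}'', which is exactly the argument you have unpacked (factor through $Q$ via Lemma \ref{fg_quotient}, then apply Lemma \ref{gener_cond} to the closed subgroup $G\hookrightarrow \GL_d$ using that the $\gamma_i$ topologically generate $Q$ and that $\rho'$ is $R^{\ps}_{\GL_d}$-condensed), and part (2) is verbatim the subspace-topology argument you give. Your detour through Lemma \ref{ind_cont_equiv_R_cond} is unnecessary since $R$-condensedness is already the hypothesis of Lemma \ref{gener_cond}, but it does no harm.
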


\begin{proof} Part (1) follows from Lemmas \ref{gener_cond} and \ref{fg_quotient}.

    For part (2), we observe that part (1) and Lemma \ref{XgenGLdA} (2) imply that
    that the composition $\tau \circ \rho : \Gamma \to G(A) \to \GL_d(A)$ is continuous.
    Since $\tau$ is a closed immersion the topology on $G(A)$ is the subspace topology
    induced by $\GL_d(A)$ and thus $\rho$ is continuous.
  \end{proof}

\begin{remar} It follows from Lemma \ref{XgenGA}
that the definition of $\XgenGtau$ does not depend
on the choice of the quotient $Q$ and $\gamma_1, \ldots, \gamma_N$. However, it will depend a priori on the choice of the faithful representation $\tau$. It will follow from Corollaries \ref{connected_comp} and \ref{indep_tau}
that the connected component of $X^{\gen, \tau}_G$ containing the point corresponding to the representation $\rhobar: \Gamma \rightarrow G(k)$ is 
independent of $\tau$. 
\end{remar}

If $\pp$ is the maximal ideal of $\RpsGLd$ then $\kappa(\pp)=k$ and we equip the algebraic closure 
$\overline{\kappa(\pp)}$ with the discrete topology. If $\pp$ is a prime of   $\RpsGLd$ with $\dim \RpsGLd/\pp =1$ then 
$\kappa(\pp)$ is a local field and the valuation topology on $\kappa(\pp)$ makes it 
into a topological $\RpsGLd$-algebra by \cite[Lemma 3.17]{BIP_new}. We equip
the algebraic closure $\overline{\kappa(\pp)}$ with the topology induced by a valuation, which 
extends the valuation on $\kappa(\pp)$. 

\begin{lem}\label{geom_pts} Let $\pp$ be a 
prime of $\RpsGLd$ such that $\dim \RpsGLd/\pp\le 1$.
The set $\XgenGtau(\overline{\kappa(\pp)})$ is naturally in bijection with the set of 
continuous representations $\rho: \Gamma \rightarrow G(\overline{\kappa(\pp)})$, 
with the topology on $\overline{\kappa(\pp)}$ as above,
such that the 
$d$-dimensional determinant law
$\det \circ (\tau\circ \rho)^{\lin}$ is equal to the specialisation of $D^u$ along 
$\RpsGLd\rightarrow \overline{\kappa(\pp)}$. 
\end{lem}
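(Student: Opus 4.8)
The plan is to show that the asserted bijection is simply the identity on underlying homomorphisms: a homomorphism $\rho\colon\Gamma\to G(\overline{\kappa(\pp)})$ whose attached determinant law $\det\circ(\tau\circ\rho)^{\lin}$ is the specialisation of $D^u$ lies in $\XgenGtau(\overline{\kappa(\pp)})$ if and only if it is continuous. By \Cref{XgenGA}(1) the left-hand side consists of those $\rho$ with $\tau\circ\rho\in\XgenGLd(\overline{\kappa(\pp)})$, and by \Cref{defXgenGLd} together with \Cref{factors_through_Eu} this unwinds to the condition that the determinant-law identity holds and that $\sigma:=\tau\circ\rho$ is $\RpsGLd$-condensed. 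So the content is the equivalence, for such $\rho$, of continuity with $\RpsGLd$-condensedness of $\sigma$. First I would record that $\overline{\kappa(\pp)}$, with the topology described before the statement, is a topological $\RpsGLd$-algebra: this is trivial when $\pp$ is maximal (then $\overline{\kappa(\pp)}=\kbar$ is discrete), and for $\pp\in P_1\RpsGLd$ it follows from \cite[Lemma 3.17]{BIP_new} and the fact that a finite extension of a complete valued field is again complete in the extended valuation. The forward direction is then immediate: the determinant-law identity holds by \Cref{defXgenGLd}, and $\rho$ is continuous by \Cref{XgenGA}(2).

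For the converse, let $\rho$ be continuous with the prescribed determinant law and set $\sigma:=\tau\circ\rho\colon\Gamma\to\GL_d(\overline{\kappa(\pp)})$, which is continuous because $\tau$ is a closed immersion. If $\pp$ is maximal then $\sigma(\Gamma)$ is a finite subgroup of $\GL_d(\kbar)$, hence contained in $\GL_d(k')$ for a finite extension $k'/k$; since $M_d(k')$ is a module-finite $\RpsGLd$-module, $\sigma$ lands in a finitely generated $\RpsGLd$-submodule of $M_d(\overline{\kappa(\pp)})\times M_d(\overline{\kappa(\pp)})$ on which it is continuous, so it is $\RpsGLd$-condensed and we are done. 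Assume $\pp\in P_1\RpsGLd$, so $\kappa(\pp)$ is a local field. I claim $\sigma$ factors through the topologically finitely generated quotient $Q=\Gamma/K$ of \Cref{fg_quotient}. The coefficients of the characteristic polynomial of $\sigma(\gamma)$ are values of the specialisation of $D^u$, hence lie in the image of $\RpsGLd$, namely $\RpsGLd/\pp\subseteq\OO_{\kappa(\pp)}$: indeed $\RpsGLd/\pp$ is a one-dimensional complete local (hence excellent, hence Nagata) domain with fraction field $\kappa(\pp)$, so its normalisation is the module-finite local domain $\OO_{\kappa(\pp)}$, which contains it. Since $D^u$ reduces modulo $\mm_{\RpsGLd}$ to the determinant law $\Dbar$ of $\tau\circ\rhobar$, and $\rhobar$ is trivial on $\ker\rhobar$, the characteristic polynomial of $\sigma(\gamma)$ is congruent to $(X-1)^d$ modulo the maximal ideal of $\OO_{\overline{\kappa(\pp)}}$ for every $\gamma\in\ker\rhobar$; thus all eigenvalues of such $\sigma(\gamma)$ are $\equiv 1$. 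Writing the Jordan decomposition of $\sigma(\gamma)$ over the algebraically closed field $\overline{\kappa(\pp)}$ one then checks $\sigma(\gamma)^{p^n}\to 1$ for each $\gamma\in\ker\rhobar$, so every element of the profinite group $\sigma(\ker\rhobar)$ is topologically $p$-torsion and hence $\sigma(\ker\rhobar)$ is pro-$p$. Since $\ker\rhobar$ satisfies Mazur's condition its maximal pro-$p$ quotient is topologically finitely generated, $\sigma|_{\ker\rhobar}$ factors through it, and therefore $\sigma$ factors through $Q$.

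Next I would verify that $\overline{\kappa(\pp)}$ satisfies the hypotheses of \Cref{cont_vs_cond}: it is a Hausdorff topological $\RpsGLd$-algebra, and it is the filtered union of the finite subextensions $E$ of $\overline{\kappa(\pp)}/\kappa(\pp)$, each of which is closed (being complete) and contains the open $\RpsGLd$-subalgebra $\OO_E$, which is module-finite over $\OO_{\kappa(\pp)}$ and hence over $\RpsGLd$. Applying \Cref{cont_vs_cond} with $\Gamma$ replaced by $Q$ (permissible since $\sigma$ factors through $Q$, which is topologically finitely generated) and $A=\overline{\kappa(\pp)}$ shows that continuity of $\sigma$ forces $\sigma$ to be $\RpsGLd$-condensed. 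By \Cref{factors_through_Eu} and \Cref{defXgenGLd} this gives $\sigma\in\XgenGLd(\overline{\kappa(\pp)})$, whence $\rho\in\XgenGtau(\overline{\kappa(\pp)})$ by \Cref{XgenGA}(1). The bijection so obtained is the identity on underlying representations, so naturality is automatic.

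The step I expect to be the main obstacle is the reduction to a topologically finitely generated quotient in the second paragraph: this is where the arithmetic genuinely enters, through the triviality of the residual determinant law on $\ker\rhobar$, the resulting eigenvalue congruence, and the passage to Jordan form showing $\sigma|_{\ker\rhobar}$ has pro-$p$ image. The remaining ingredients — identifying $\OO_{\kappa(\pp)}$ as the module-finite normalisation of $\RpsGLd/\pp$ and checking the structural hypotheses of \Cref{cont_vs_cond} for $\overline{\kappa(\pp)}$ — are routine once this is in place.
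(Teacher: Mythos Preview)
Your proof is correct and follows the paper's approach: verify that $\overline{\kappa(\pp)}$ satisfies the hypotheses of \Cref{cont_vs_cond} and then conclude that continuity is equivalent to being $\RpsGLd$-condensed, whence \Cref{XgenGA} finishes the argument. The paper's proof is two lines and applies \Cref{cont_vs_cond} directly without comment, whereas you take extra care with the hypothesis that the profinite group be topologically finitely generated, inserting an argument (via the eigenvalue congruence forced by the determinant law and the resulting pro-$p$ image on $\ker\rhobar$) that any continuous $\rho$ with the prescribed determinant law already factors through the quotient $Q$ of \Cref{fg_quotient}; this is a genuine detail the paper leaves implicit (Section~5 only assumes Mazur's condition), and your justification is sound.
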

\begin{proof} \Cref{cont_vs_cond} implies that a representation 
$\rho: \Gamma\rightarrow G(\overline{\kappa(\pp)})$ is continuous if and only if it is $\RpsGLd$-condensed. The assertion then follows from  \Cref{XgenGA}.
\end{proof}

Let $\AgenGLd\twoheadrightarrow \AgenGtau$ be the quotient representing $\XgenGtau$. We will denote by $\XbargenGtau$ the 
special fibre of $\XgenGtau$ over $\OO$.

\begin{lem}\label{dense_again} Let $y: \RpsGLd \rightarrow \Omega$ be a ring  homomorphism 
into an algebraically closed field $\Omega$ and let 
$x\in \XgenGtau(\Omega)$ be a point above $y$.
Let $H$ be the smallest Zariski closed subgroup of $G_{\Omega}$ 
containing $\rho_{x}(\gamma_1), \ldots, \rho_x(\gamma_N)$, where $\rho_x$ is the specialisation at $x$
of the universal representation $\rho: \Gamma \rightarrow \GL_d(A^{\gen}_{\GL_d})$. 
Then $H$ is 
equal to the Zariski closure of $\rho_x(\Gamma)$ in $G_{\Omega}$.
\end{lem}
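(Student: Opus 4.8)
The plan is to reduce the statement to the assertion that $\rho_x$ factors through the closed subgroup scheme $H$, and to obtain the latter by the same density-plus-condensedness mechanism that underlies Lemma \ref{gener_cond}. The one point where Lemma \ref{gener_cond} cannot be quoted verbatim is that here the relevant $H$ is a closed subgroup scheme of $G_{\Omega}$ rather than of the $\OO$-group scheme $G$, and it need not be the base change of anything over $\OO$; so I would re-run that argument by hand, using that Lemma \ref{polyhasclosedzeroset} is robust enough to allow arbitrary polynomial coefficients in the target $A = \Omega$ while keeping the finitely generated module $M$ over the noetherian profinite ring $\RpsGLd$.

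First I would assemble the inputs. Since $x\in\XgenGtau(\Omega)$, Lemma \ref{XgenGA}(1) gives $\tau\circ\rho_x\in\XgenGLd(\Omega)$, hence by Definition \ref{defXgenGLd} and Lemma \ref{factors_through_Eu} the representation $\tau\circ\rho_x$ is $\RpsGLd$-condensed; as $\tau$ is a closed immersion, Lemma \ref{cond_rep_funct}(2) shows $\rho_x$ is $\RpsGLd$-condensed. Next, by Lemma \ref{fg_quotient} (via Lemma \ref{XgenGA}(1)) the representation $\rho_x$ factors as $\rho_x=\bar\rho_x\circ p$ with $p\colon\Gamma\twoheadrightarrow Q$, where the images of $\gamma_1,\dots,\gamma_N$ generate a dense subgroup of $Q$. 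Let $\Delta\le\Gamma$ be the abstract subgroup generated by $\gamma_1,\dots,\gamma_N$, so $p(\Delta)$ is dense in $Q$; by the standard fact that the Zariski closure of a subgroup is a subgroup, $H$ is exactly the Zariski closure of $\rho_x(\Delta)$ in $G_{\Omega}$.

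Then I would fix a closed immersion $\iota\colon G\hookrightarrow\mathbb A^n$ of $\OO$-schemes. Base changing, $H\hookrightarrow\mathbb A^n_{\Omega}$ is a closed immersion, so $\iota(H(\Omega))=V(I)$ inside $\mathbb A^n(\Omega)=\Omega^n$ for an ideal $I\subseteq\Omega[y_1,\dots,y_n]$. Since $\rho_x$ is $\RpsGLd$-condensed, condition (2) of Lemma \ref{B1} supplies a finitely generated $\RpsGLd$-submodule $M\subseteq\Omega^n$ with $\iota(\rho_x(\Gamma))\subseteq M$ and $\iota\circ\rho_x\colon\Gamma\to M$ continuous for the canonical profinite topology on $M$; since $p$ is a quotient map of profinite groups, this descends to a continuous map $\iota\circ\bar\rho_x\colon Q\to M$. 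Applying Lemma \ref{polyhasclosedzeroset} to each $f\in I$ (with the noetherian profinite ring $\RpsGLd$ and the $\OO$-algebra $\Omega$) shows $V(f)\cap M$ is closed in $M$, hence so is $\iota(H(\Omega))\cap M=\bigcap_{f\in I}(V(f)\cap M)$. Now $\iota(\rho_x(\Delta))\subseteq\iota(H(\Omega))\cap M$, and since $p(\Delta)$ is dense in $Q$ and $\iota\circ\bar\rho_x$ is continuous, $\iota(\rho_x(\Gamma))=\iota(\bar\rho_x(Q))\subseteq\overline{\iota(\rho_x(\Delta))}\subseteq\iota(H(\Omega))$; as $\iota$ is injective, $\rho_x(\Gamma)\subseteq H(\Omega)$. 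Thus the Zariski closure of $\rho_x(\Gamma)$ is contained in $H$, while the reverse inclusion is immediate from $\rho_x(\Delta)\subseteq\rho_x(\Gamma)$, so the two coincide.

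The main obstacle, as indicated, is purely bookkeeping: recognizing that one cannot cite Lemma \ref{gener_cond} as-is because of the base-field mismatch, and instead invoking Lemma \ref{polyhasclosedzeroset} directly over $\Omega$. I would also take a moment to confirm that $\RpsGLd$ is noetherian and profinite — it is a complete local noetherian ring with finite residue field $k$ — so that the section-\ref{sec_cont} formalism applies with $R=\RpsGLd$, and that the chosen $\gamma_i$ indeed generate a dense subgroup of $Q$ (by their defining property before Definition \ref{defiXgenG}).
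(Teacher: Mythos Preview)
Your proof is correct and follows the same approach as the paper: the paper's own proof simply cites Lemmas~\ref{gener_cond} and~\ref{fg_quotient} to conclude $\rho_x(\Gamma)=\rho_x(Q)\subseteq H(\Omega)$, with the reverse inclusion being immediate. You have correctly observed that Lemma~\ref{gener_cond} as stated assumes $H$ is a closed subgroup scheme of $G$ over the base $\OO$, whereas here $H$ is only defined over $\Omega$; the paper glosses over this, but as you show, the proof of Lemma~\ref{gener_cond} goes through verbatim because Lemma~\ref{polyhasclosedzeroset} allows the defining polynomials of $H$ to have coefficients in $A=\Omega$.
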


\begin{proof} Lemmas \ref{gener_cond} and \ref{fg_quotient} imply that 
$\rho_x(\Gamma)=\rho_x(Q)\subseteq H(\Omega)$. Hence, its Zariski closure is contained 
in $H$. The other inclusion follows from the definition of $H$. 
\end{proof}

\subsection{Completion at maximal ideals and deformation problems}\label{def_probs}
 Let $Y\subset X^{\gen,\tau}_G$ be the preimage of the closed point of $X^{\ps}_{\GL_d}$
 and let 
$x$ be either a closed point of $Y$ or a closed point of $X^{\gen,\tau}_G\setminus Y$ and let $y$ be its image in $\Spec R^{\ps}_{\GL_d}$.
 It follows from \cite[Lemmas 3.17, 3.18]{BIP_new} that 
 $\kappa(x)$ is a finite extension of $\kappa(y)$ and 
 there are the following possibilities:
 
 \begin{enumerate}
 \item if $x\in Y$ then $\kappa(x)$ is a finite extension of $k$;
 \item if $x\in X^{\gen,\tau}_G[1/p]$ then $\kappa(x)$ is a finite extension of $L$;
 \item if $x\in \Xbar^{\gen,\tau}_G\setminus Y$ then $\kappa(x)$ is a local field of characteristic $p$.
 \end{enumerate}
The universal property of $A^{\gen,\tau}_G$ gives 
us a continuous Galois representation $$\rho_x: \Gamma \rightarrow G(\kappa(x))$$
for the natural topology on the target. In this section we 
want to relate the completion of the local ring $\OO_{X^{\gen,\tau}_G, x}$ 
to a deformation problem for $\rho_x$.

 \begin{lem}\label{deform} 
 Let $\kappa$ be either a finite or a local $\OO$-field
 and let $\rho: \Gamma\rightarrow G(\kappa)$ be a continuous representation. 
 Let $\Lambda$ be a coefficient ring of $\kappa$
and let $(A,\mm_A)\in \mathfrak A_{\Lambda}$. If $\gamma_1, \ldots, \gamma_N$ topologically generate $\Gamma$ then the map 
$$\rho_A\mapsto (\tau\circ \rho_A, (\rho(\gamma_1), \ldots, \rho(\gamma_N)))$$
induces a bijection of sets
$D^{\square}_{\rho,G}(A)\longrightarrow D^{\square}_{\tau\circ\rho,\GL_d}(A)\times_{\GL_d^N(A)}
G^N(A).$
\end{lem}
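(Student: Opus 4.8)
The plan is to exhibit the displayed map as a bijection by constructing an explicit inverse and checking compatibility on both sides, using the various continuity equivalences already established.

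First I would unwind the definitions. Both sides are subsets of maps out of $\Gamma$ (resp. out of $\mathcal F_N$), so I want to compare a continuous deformation $\rho_A : \Gamma \to G(A)$ with the data of a continuous deformation $\sigma_A := \tau \circ \rho_A \in D^{\square}_{\tau\circ\rho, \GL_d}(A)$ together with an $N$-tuple $(g_1,\dots,g_N) \in G^N(A)$ which reduces to $(\rho(\gamma_1),\dots,\rho(\gamma_N))$ mod $\mm_A$ and which is compatible in the sense that $\tau(g_i) = \sigma_A(\gamma_i)$ for all $i$. The forward map sends $\rho_A \mapsto (\tau\circ\rho_A, (\rho_A(\gamma_1),\dots,\rho_A(\gamma_N)))$; this clearly lands in the stated fibre product, since $\tau\circ\rho_A$ is continuous (composition with a morphism of $\OO$-schemes), reduces correctly mod $\mm_A$, and $\tau(\rho_A(\gamma_i)) = (\tau\circ\rho_A)(\gamma_i)$ by construction.

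Injectivity is the easy direction: since $\gamma_1,\dots,\gamma_N$ topologically generate $\Gamma$ and $\rho_A$ is continuous, $\rho_A$ is determined by its values on $\gamma_1,\dots,\gamma_N$; even knowing $\tau\circ\rho_A$ alone suffices, because $\tau$ is a closed immersion and hence $G(A) \to \GL_d(A)$ is injective. For surjectivity I would take a pair $(\sigma_A, (g_1,\dots,g_N))$ in the target. Here is the key point: $A$ is a local artinian $\Lambda$-algebra, hence a profinite (indeed finite, or finite length) topological ring, so $G(A)$ is a profinite (in the finite-field case, literally finite) topological group, and $\Lambda \to A$ makes $A$ a noetherian profinite $\Lambda$-algebra; thus every continuous representation of $\Gamma$ into $G(A)$ is automatically $\Lambda$-condensed (indeed everything in sight is $\Lambda$-condensed since $A$ is already a finitely generated $\Lambda$-module). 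Given the continuous representation $\sigma_A : \Gamma \to \GL_d(A)$, I want to descend it along $\tau$ to a representation valued in $G(A)$. By \Cref{gener_cond} applied to the $\Lambda$-condensed representation $\sigma_A$, the closed subgroup scheme $G \hookrightarrow \GL_d$, and the topological generators $\gamma_1,\dots,\gamma_N$ of $\Gamma$: since $\sigma_A(\gamma_i) = \tau(g_i) \in \tau(G(A)) \subseteq \GL_d(A)$ for every $i$, the representation $\sigma_A$ factors through $G(A)$. Call the resulting homomorphism $\rho_A : \Gamma \to G(A)$; it is continuous because $\tau$ is a closed immersion, so $G(A)$ carries the subspace topology and $\tau\circ\rho_A = \sigma_A$ is continuous. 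It reduces to $\rho$ mod $\mm_A$ because $\tau\circ(\rho_A \bmod \mm_A) = \sigma_A \bmod \mm_A = \tau\circ\rho$ and $\tau$ is injective on points. Finally $\rho_A(\gamma_i) = g_i$ since $\tau(\rho_A(\gamma_i)) = \sigma_A(\gamma_i) = \tau(g_i)$ and $\tau$ is injective. So $\rho_A \in D^{\square}_{\rho, G}(A)$ maps to the given pair, proving surjectivity.

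The one genuine obstacle is making sure the topological/condensed hypotheses of \Cref{gener_cond} are in force, i.e. that $\sigma_A$ really is $\RpsGLd$- (equivalently $\Lambda$-) condensed: this is where one uses that $A \in \mathfrak A_\Lambda$ is a finite $\Lambda$-module (cases (1)-(3) of the coefficient ring discussion), so the target $G(A)$ sits inside a finite $\Lambda$-module and \Cref{B1}(1) is satisfied trivially, and one invokes \Cref{cont_vs_cond} (or directly the construction) to match continuity with $\RpsGLd$-condensedness. Once that bookkeeping is in place, the rest is the elementary descent-along-a-closed-immersion argument above, and naturality in $A$ is immediate since every step is functorial.
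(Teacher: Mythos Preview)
Your overall strategy is sound and the underlying idea (a closed subgroup plus dense generators forces the whole image to land in $G(A)$) is exactly what the paper uses. However, there is a factual slip in your ``key point'': you assert that $A$ is a profinite ring and that $\Lambda$ is a noetherian profinite $\OO$-algebra. This is only true in case (1), when $\kappa$ is finite. When $\kappa$ is a local field (cases (2) and (3)), $\Lambda$ is either a finite extension of $L$ or the $p$-adic completion of $\Lambda^0[1/t]$, neither of which is profinite, and $A$ is then a locally profinite but non-compact topological ring (a $p$-adic manifold in case (2)). So \Cref{gener_cond} cannot be invoked with $R=\Lambda$, since the standing hypothesis of \Cref{sec_cont} requires $R$ profinite. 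Your argument can be repaired by taking $R=\Lambda^0$ (which \emph{is} profinite noetherian in all three cases) and invoking \Cref{cptopensubalg} together with \Cref{cont_vs_cond} to see that $\sigma_A$ is $\Lambda^0$-condensed; then \Cref{gener_cond} applies.

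The paper's proof bypasses the condensed framework entirely. It simply observes that $\tau(G(A))$ is a closed subgroup of $\GL_d(A)$ for the natural topology on $A$ (because $G\hookrightarrow\GL_d$ is a closed immersion and polynomial maps on the Hausdorff topological ring $A$ are continuous), so the preimage $\sigma_A^{-1}(\tau(G(A)))$ is a closed subgroup of $\Gamma$ containing $\gamma_1,\dots,\gamma_N$, hence all of $\Gamma$. This is the same core idea as the proof of \Cref{gener_cond} itself, but applied directly rather than through the $R$-condensed abstraction; it avoids the case analysis on $\kappa$ that your approach would need.
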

\begin{proof} Since $\tau: G(A)\rightarrow \GL_d(A)$ 
is injective, the map is an injection. A point 
in $D^{\square}_{\tau\circ\rho,\GL_d}(A)\times_{\GL_d^N(A)}G^N(A)$ corresponds to continuous representation 
$\rho_A: \Gamma \rightarrow \GL_d(A)$ such that 
$\rho_A \equiv \tau\circ \rho\pmod{\mm_A}$ and 
$(g_1, \ldots, g_N)\in G^N(A)$ such that 
$\rho(\gamma_i)=\tau(g_i)$ for $1\le i \le N$. 
Since $\tau(G(A))$ is a closed subgroup of $\GL_d(A)$, its preimage 
$\rho_A^{-1}( \tau(G(A)))$ is a closed subgroup of $\Gamma$. 
Since
it contains the topological generators $\gamma_1, \ldots, \gamma_N$ 
we deduce that the image of $\rho_A$ is contained  in $\tau(G(A))$. This implies that the map is surjective.
\end{proof}

\begin{lem}\label{completion} Let $(A, \mm)$ be a local (possibly non-noetherian) ring and let $I$ be a finitely generated ideal of $A$. If the $\mm$-adic completion $\hat{A}$ of $A$ is noetherian then $\widehat{(A/I)}= \hat{A}/ I\hat{A}$.
\end{lem}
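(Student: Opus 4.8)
Let $(A,\mm)$ be a local ring, $I$ a finitely generated ideal. If the $\mm$-adic completion $\hat A$ is noetherian, then $\widehat{(A/I)} = \hat A / I\hat A$.

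The plan is to identify the $\mm$-adic completion $\hat A$ with the $\mm\hat A$-adic completion, and then to reduce the statement to a well-behaved situation over $\hat A$. First I would observe that since $\hat A$ is noetherian, $\hat A$ is $\mm\hat A$-adically complete and separated (the completion of a noetherian local ring is complete), and the natural map $A \to \hat A$ induces isomorphisms $A/\mm^n \eqto \hat A / \mm^n\hat A$ for all $n$; in particular the $\mm\hat A$-adic topology on $\hat A$ agrees with what one expects, and $\widehat{A/I}$ — defined as $\varprojlim_n (A/I)/\mm^n(A/I) = \varprojlim_n A/(I+\mm^n)$ — only depends on this tower of quotients. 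Writing $\bar{\mm} = \mm/I$ for the maximal ideal of $A/I$, we have $\widehat{A/I} = \varprojlim_n (A/I)/\bar{\mm}^n$.

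Next I would pass to $\hat A$. Since $\widehat{A/I} = \varprojlim_n A/(I + \mm^n)$ and $A/(I+\mm^n) \cong \hat A/(I\hat A + \mm^n\hat A)$ (using $A/\mm^n \cong \hat A/\mm^n\hat A$ and that $I$ maps onto $I\hat A$ modulo $\mm^n\hat A$, as $I$ is finitely generated — actually this last point needs only that the image of $I$ generates $I\hat A$, which is automatic), we get
\begin{equation*}
\widehat{A/I} \;\cong\; \varprojlim_n \hat A/(I\hat A + \mm^n\hat A).
\end{equation*}
Now $\hat A$ is noetherian and $I\hat A$ is an ideal, so the right-hand side is exactly the $\mm\hat A$-adic completion of the noetherian ring $\hat A/I\hat A$. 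But a noetherian ring that is $\mm$-adically complete stays complete modulo any ideal: by the Artin--Rees lemma applied in $\hat A$, the $\mm\hat A$-adic topology on $I\hat A$ coincides with the subspace topology from $\hat A$, hence $\hat A/I\hat A$ is already $\mm\hat A$-adically complete and separated, i.e. $\varprojlim_n \hat A/(I\hat A+\mm^n\hat A) = \hat A/I\hat A$. Combining gives $\widehat{A/I}\cong \hat A/I\hat A$, and one checks the isomorphism is the natural one.

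The main obstacle is the comparison $A/(I+\mm^n) \cong \hat A/(I\hat A + \mm^n\hat A)$, i.e. controlling $I$ under completion when $A$ itself may be non-noetherian; here the finite generation of $I$ is exactly what is needed, since for finitely generated $I$ the image of $I$ in $\hat A/\mm^n\hat A$ spans $(I\hat A + \mm^n\hat A)/\mm^n\hat A$, so that $A/(I+\mm^n) \to \hat A/(I\hat A+\mm^n\hat A)$ is surjective, and it is injective because $I + \mm^n = A \cap (I\hat A + \mm^n\hat A)$ — which follows from $\mm^n = A\cap \mm^n\hat A$ together with the surjectivity statement. Everything else is formal manipulation of inverse limits plus the standard fact that completions of noetherian rings are complete and that Artin--Rees holds in the noetherian ring $\hat A$.
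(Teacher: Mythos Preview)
Your argument has a genuine gap at the step where you assert that $A/\mm^n \cong \hat A/\mm^n\hat A$, or equivalently that $\hat A$ is $\mm\hat A$-adically complete. This is a standard consequence of Artin--Rees when $A$ is noetherian, but the lemma explicitly allows $A$ to be non-noetherian, and the hypothesis that $\hat A$ is noetherian does not obviously yield it. What one knows is that $\hat A = \varprojlim_n A/\mm^n$ is complete for the filtration by $J_n := \ker(\hat A \to A/\mm^n)$, and that $\mm^n\hat A \subseteq J_n$; the reverse inclusion is exactly the point at issue. Your final paragraph does not address this: saying that the image of $I$ in $\hat A/\mm^n\hat A$ spans $(I\hat A + \mm^n\hat A)/\mm^n\hat A$ is trivially true and does not give surjectivity of $A/(I+\mm^n) \to \hat A/(I\hat A + \mm^n\hat A)$ unless one already knows $A \twoheadrightarrow \hat A/\mm^n\hat A$, i.e.\ $J_n = \mm^n\hat A$. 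Likewise the injectivity claim ``$\mm^n = A \cap \mm^n\hat A$'' is unjustified.

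The paper's proof sidesteps this by never comparing $J_n$ with $\mm^n\hat A$. Instead it uses that $A/\mm^n$, being a quotient of the noetherian ring $\hat A$ with nilpotent maximal ideal, is artinian; then for a finite generating set $a_1,\dots,a_s$ of $I$ the kernels $K_n = \ker\bigl((A/\mm^n)^s \twoheadrightarrow (I+\mm^n)/\mm^n\bigr)$ form a Mittag--Leffler system, so $\hat A^s \twoheadrightarrow \varprojlim_n (I+\mm^n)/\mm^n$ is surjective. Since the latter is $\ker(\hat A \to \widehat{A/I})$ and the map $\hat A^s \to \hat A$ has image $I\hat A$, one concludes $\ker(\hat A \to \widehat{A/I}) = I\hat A$. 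If you want to rescue your approach, you would need to prove $J_n = \mm^n\hat A$ first; this can in fact be done, but the argument (e.g.\ applying the Mittag--Leffler idea to the finitely generated ideal $(a_1,\dots,a_r)$ with $a_i$ lifting a basis of $\mm/\mm^2$) is essentially the same work the paper does directly.
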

\begin{proof} It follows from \cite[\href{https://stacks.math.columbia.edu/tag/0BNG}{Tag 0BNG}]{stacks-project} that $A/\mm^n$ is a quotient of $\hat{A}$
for all $n\ge 1$. Since $\hat{A}$ is noetherian, $A/\mm^n$ is noetherian, and hence artinian for all $n\ge 1$. By choosing 
generators of $I=(a_1, \ldots, a_s)$ we obtain a surjection 
$A^s\twoheadrightarrow I$. Let $K_n= \Ker( (A/\mm^n)^s \twoheadrightarrow (I+\mm^n)/\mm^n)$. The modules  $K_n$ are artinian, 
since they are finitely generated over the artinian ring $A/\mm^n$, and thus form a Mittag-Leffler system, which implies that the 
map $\hat{A}^s= \varprojlim_n (A/\mm^n)^s\rightarrow \varprojlim_n (I +\mm^n)/\mm^n$ is surjective.  

The map $\hat{A}\rightarrow \widehat{(A/I)}$ is surjective by \cite[\href{https://stacks.math.columbia.edu/tag/0315}{Tag 0315}]{stacks-project} and the kernel is equal to $\varprojlim_n (I +\mm^n)/\mm^n$, \cite[\href{https://stacks.math.columbia.edu/tag/02N1}{Tag 02N1}]{stacks-project}. It follows from above that this projective limit is 
equal to $I \hat{A}$, which implies the assertion.
\end{proof}

\begin{prop}\label{333} 
Let $x\in X^{\gen, \tau}_G(\kappa)$, where  $\kappa$ is either a finite or a local $\OO$-field.
 Let 
$\qq$ be the kernel of the 
map $$\Lambda\otimes_{\OO} A^{\gen,\tau}_G \rightarrow \kappa, \quad \lambda \otimes a \mapsto \bar{\lambda}\bar{a},$$
where $\bar{\lambda}$ and $\bar{a}$ denote the images
of $\lambda$ and $a$ in $\kappa$ and $\Lambda$ is a coefficient ring for $\kappa$. 
The completion of $\Lambda\otimes_{\OO} A^{\gen,\tau}_G$ with respect to $\qq$ is naturally isomorphic to $R^{\square}_{\rho_x,G}$.
\end{prop}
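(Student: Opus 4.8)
The plan is to show that $R^{\square}_{\rho_x,G}$ pro-represents the functor $D^{\square}_{\rho_x,G}$, and that the completion $(\Lambda\otimes_{\OO}A^{\gen,\tau}_G)^{\wedge}_{\qq}$ pro-represents the same functor, so the two agree by Yoneda. For the first point I would invoke \Cref{Dnoetherian}, which guarantees that $D^{\square}_{\rho_x,G}$ is pro-represented by a complete local noetherian $\Lambda$-algebra with residue field $\kappa$; the coefficient ring $\Lambda$ attached to $\kappa$ is available because $\kappa$ is a finite or local $\OO$-field by the trichotomy in \Cref{def_probs}. So it remains to identify $(\Lambda\otimes_{\OO}A^{\gen,\tau}_G)^{\wedge}_{\qq}$ with this pro-representing object.

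The strategy for the second point is first to dispose of the case $G=\GL_d$ and then to deduce the general case by a fibre-product argument. For $G=\GL_d$ this is exactly \cite[Lemmas 3.17, 3.18]{BIP_new} (or can be re-derived here): the completion of $\Lambda\otimes_{\OO}A^{\gen}_{\GL_d}$ at the analogous kernel $\qq_{\GL_d}$ is $R^{\square}_{\tau\circ\rho_x,\GL_d}$. For general $G$ I would use the defining fibre product $\XgenGtau=\XgenGLd\times_{\GL_d^N}G^N$ from \Cref{defiXgenG}. Taking global sections, $\Lambda\otimes_{\OO}A^{\gen,\tau}_G$ is the pushout $(\Lambda\otimes_{\OO}A^{\gen}_{\GL_d})\otimes_{\OO[\GL_d^N]}\OO[G^N]$, where $\OO[G^N]\to\OO[\GL_d^N]$ is finitely generated. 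The ideal cutting out $G^N$ inside $\GL_d^N$ is finitely generated, so after base change the kernel $\qq$ is generated over $\qq_{\GL_d}$ (pulled back) by these finitely many generators together with the coordinates evaluated at $x$. \Cref{completion} then applies: completing the quotient by a finitely generated ideal commutes with passing to the completion, provided the completion of the larger ring is noetherian — and $R^{\square}_{\tau\circ\rho_x,\GL_d}$ is noetherian by \Cref{Dnoetherian}. This yields
$$(\Lambda\otimes_{\OO}A^{\gen,\tau}_G)^{\wedge}_{\qq}\cong R^{\square}_{\tau\circ\rho_x,\GL_d}\,\widehat{\otimes}_{\OO[\GL_d^N]^{\wedge}}\,\OO[G^N]^{\wedge},$$
the completed fibre product over the relevant completions of $\OO[\GL_d^N]$ and $\OO[G^N]$ at the images of $x$.

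The final step is to match this completed fibre product with $R^{\square}_{\rho_x,G}$ by comparing functors of points on $\Aa_{\Lambda}$. For $(A,\mm_A)\in\Aa_{\Lambda}$, a map from the completed fibre product to $A$ is the same as a compatible pair: a map $R^{\square}_{\tau\circ\rho_x,\GL_d}\to A$, i.e. an element of $D^{\square}_{\tau\circ\rho_x,\GL_d}(A)$, together with an $N$-tuple in $G^N(A)$ agreeing with it in $\GL_d^N(A)$ and reducing correctly mod $\mm_A$. This is precisely the set $D^{\square}_{\tau\circ\rho_x,\GL_d}(A)\times_{\GL_d^N(A)}G^N(A)$, which \Cref{deform} identifies bijectively and naturally with $D^{\square}_{\rho_x,G}(A)$ — here one uses that $\gamma_1,\dots,\gamma_N$ topologically generate $\Gamma$, as arranged before \Cref{defiXgenG}. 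Hence both rings pro-represent $D^{\square}_{\rho_x,G}$ and the asserted natural isomorphism of local $\Lambda$-algebras follows.

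I expect the main obstacle to be the bookkeeping around completions and base change: verifying that the ideal $\qq$ really is finitely generated over (the pullback of) $\qq_{\GL_d}$ so that \Cref{completion} applies, and checking that completing commutes with the fibre product in the required sense — i.e. that $(\Lambda\otimes_{\OO}A^{\gen,\tau}_G)^{\wedge}_{\qq}$ is genuinely the completed tensor product of the completions, not merely an ad hoc quotient. The continuity subtleties (that the resulting Galois representation valued in $A$ is continuous for the topology on $G(A)$ described in \Cref{sec_def_rings}) are already handled by \Cref{splittingexists} and the locally profinite structure used in \Cref{deform}, so those should not cause real trouble; one simply has to cite them carefully in the case distinction between finite $\kappa$, characteristic zero local $\kappa$, and characteristic $p$ local $\kappa$.
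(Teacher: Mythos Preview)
Your approach is essentially the paper's: reduce to the $\GL_d$ case (which is \cite[Proposition~3.33]{BIP_new}, not Lemmas~3.17--3.18; those concern residue fields of closed points), then use the fibre-product description $A^{\gen,\tau}_G=A^{\gen}_{\GL_d}/IA^{\gen}_{\GL_d}$ together with \Cref{completion} and \Cref{deform}. One small gap: the $\gamma_1,\dots,\gamma_N$ fixed before \Cref{defiXgenG} topologically generate the quotient $Q$, not $\Gamma$ itself, so before invoking \Cref{deform} you need \Cref{fg_quotient} to see that every $\rho_A\in D^{\square}_{\rho_x,G}(A)$ factors through $Q$, after which you may replace $\Gamma$ by $Q$ and proceed.
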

\begin{proof} If $\rho_A\in D^{\square}_{\rho, G}(A)$ then \cite[Proposition 3.33]{BIP_new} implies that $A$ is naturally an 
$R^{\ps}_{\GL_d}$-algebra and $\tau\circ \rho\in X^{\gen}_{\GL_d}(A)$.
Lemma \ref{fg_quotient} implies that there is a topologically finitely generated quotient $Q$ of $\Gamma$ such that any deformation of $\rho$ factor through $Q$.  Hence, we may 
assume without loss of generality that $\gamma_1, \ldots, \gamma_N$ are topological generators of $\Gamma$.

Let $I$ be the kernel of the surjection  $\OO(\GL_d^N)\twoheadrightarrow \OO(G^N)$. We note that $I$ is a 
finitely generated ideal. Let $\mm_G$ and $\mm_{\GL_d}$ be the maximal ideals of $R^{\square}_{\rho_x,G}$ and $R^{\square}_{\tau\circ \rho_x, \GL_d}$,  respectively. The rings $R^{\square}_{\rho_x,G}/\mm_G^n$ and 
$R^{\square}_{\tau\circ \rho_x, \GL_d}/\mm_{\GL_d}^n$ represent 
the restrictions of functors $D^{\square}_{\rho_x, G}$ and $D^{\square}_{\tau\circ \rho_x, \GL_d}$ to $\mathfrak A_{\Lambda, n}$, 
respectively. It follows from Lemma \ref{deform} that 
$$ R^{\square}_{\rho_x,G}/\mm_G^n\cong (R^{\square}_{\tau\circ \rho_x, \GL_d}/\mm_{\GL_d}^n) \otimes_{\OO(\GL_d^N)} \OO(G^N).$$
By passing to the limit and using Lemma \ref{completion} with $A=R^{\square}_{\tau\circ \rho_x, \GL_d}$  we obtain 
$$R^{\square}_{\rho_x, G}\cong 
R^{\square}_{\tau\circ\rho_x, \GL_d}\otimes_{\OO(\GL_d^N)} \OO(G^N).$$
If $G=\GL_d$ and $\tau=\id$ then the proposition is \cite[Proposition 3.33]{BIP_new}, so that $R^{\square}_{\tau\circ\rho_x, \GL_d}$ is naturally isomorphic to  the 
completion of the local ring of $\Lambda\otimes_{\OO} A^{\gen}_{\GL_d}$ at $\qq$. 
Since $A^{\gen,\tau}_G=A^{\gen}_{\GL_d} \otimes_{\OO(\GL_d^N)} \OO(G^N)= 
A^{\gen}_{\GL_d}/ I A^{\gen}_{\GL_d}$ by definition, and $R^{\square}_{\tau\circ\rho_x, \GL_d}$ is noetherian by Proposition \ref{represent_kappa}, the assertion follows from Lemma \ref{completion} applied 
to $A=(\Lambda\otimes_{\OO} A^{\gen}_{\GL_d})_{\qq}$. 
\end{proof}

\begin{lem}\label{local_ring_def_ring} Let $X:=X^{\gen,\tau}_G$ and let $x$ be either a closed point of 
$Y$ or a closed point of $X\setminus Y$. Let $\OO_{X,x}$ be the local ring at $x$ and let
$\hat{\OO}_{X,x}$ be its completion  with respect to 
the maximal ideal. Then we have the following isomorphisms of local rings: 
\begin{enumerate} 
\item if $x\in X[1/p]$ or $x\in Y$ then $\hat{\OO}_{X,x}\cong R^{\square}_{G, \rho_x}$;
\item if $x\in \Xbar \setminus Y$ then $\hat{\OO}_{X,x}\br{T}\cong R^{\square}_{G, \rho_x}$. 
\end{enumerate} 
In particular, $\OO_{X,x}$ is complete intersection if and only if $R^{\square}_{G, \rho_x}$ is. 
\end{lem}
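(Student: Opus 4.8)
The statement to prove is \Cref{local_ring_def_ring}: comparing the completed local ring $\hat{\OO}_{X,x}$ of $X=X^{\gen,\tau}_G$ at a closed point $x$ with the framed deformation ring $R^{\square}_{G,\rho_x}$. The main tool is \Cref{333}, which already identifies $R^{\square}_{\rho_x,G}$ with the completion of $\Lambda\otimes_{\OO} A^{\gen,\tau}_G$ at the maximal ideal $\qq$ corresponding to $x$. So the whole proof is really a translation between "completion of $\Lambda\otimes_{\OO} A^{\gen,\tau}_G$ at $\qq$" and "completion of the local ring $\OO_{X,x}$", tracking the three cases for $\kappa(x)$ listed before \Cref{local_ring_def_ring}.

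First I would treat case (1), where $x\in Y$ so $\kappa(x)=\kappa$ is a finite extension of $k$. Here $\Lambda=\OO_{L'}$ is the unramified extension of $\OO$ with residue field $\kappa$, so $\Lambda\otimes_{\OO} A^{\gen,\tau}_G = A^{\gen,\tau}_G \otimes_{\OO}\OO_{L'}$ is a finite free base change; the prime $\qq$ lies over the maximal ideal $\mm_x$ of $A^{\gen,\tau}_G$ corresponding to $x$ (since $\kappa(x)=\kappa$). The completion of $\Lambda\otimes_{\OO}A^{\gen,\tau}_G$ at $\qq$ agrees with $\OO_{L'}\otimes_{\OO}\hat{\OO}_{X,x}$, but because $\OO_{L'}/\OO$ is finite \'etale and $\kappa(x)=\kappa$ already, the residue field does not grow and $\hat{\OO}_{X,x}\cong \OO_{L'}\otimes_{\OO}\hat{\OO}_{X,x}$ via a retraction — more carefully, one should argue as in \cite{Wiles} / \Cref{extend_scalars1} that base changing to the coefficient ring does not change the ring when no residual field extension is needed, or simply observe $\Lambda\otimes_{\OO}A^{\gen,\tau}_G$ localized at $\qq$ equals $A^{\gen,\tau}_G$ localized at $\mm_x$ after completion. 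This gives $\hat{\OO}_{X,x}\cong R^{\square}_{G,\rho_x}$.

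For case $x\in X[1/p]$: now $\kappa(x)$ is a finite extension of $L$, and $\Lambda=\kappa(x)$. Then $\Lambda\otimes_{\OO}A^{\gen,\tau}_G = A^{\gen,\tau}_G[1/p]\otimes_{L}\kappa(x)$, and $\qq$ is the kernel of evaluation at $x$; its completion is the completion of the local ring of $A^{\gen,\tau}_G[1/p]$ at the maximal ideal $x$ tensored up to $\kappa(x)$, which by the standard argument (residue field is already $\kappa(x)$ after the tensor) is just $\hat{\OO}_{X[1/p],x}=\hat{\OO}_{X,x}$ since $x$ lies in the open subscheme $X[1/p]$. Hence again $\hat{\OO}_{X,x}\cong R^{\square}_{G,\rho_x}$. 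For case $x\in \Xbar\setminus Y$ (so $\kappa(x)$ a local field of characteristic $p$, $\Lambda=$ the DVR constructed in case (3) of \Cref{sec_def_rings}), the extra variable $T$ appears: here $\OO_{\kappa(x)}\cong k'\br{t}$ and the coefficient ring $\Lambda^0=\OO_{L'}\br{t}$ involves one more topological generator than $A^{\gen,\tau}_G$ itself provides at $x$. Concretely $A^{\gen,\tau}_G$ maps to $\OO_{\kappa(x)}$ and the completion at the corresponding prime picks up one parameter $t$ beyond the maximal ideal of $\hat{\OO}_{X,x}$ — this is the $T$ in $\hat{\OO}_{X,x}\br{T}\cong R^{\square}_{G,\rho_x}$. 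I would make this precise by invoking \Cref{extend_scalars2}: $R^{\square}_{G,\rho_x}$ is the completion of $\Lambda\otimes_{\OO}R^{\square}_{G,\rhobar_x}$ at $\qq$, where $\rhobar_x$ is the reduction of $\rho_x$ mod $\varpi_{\kappa(x)}$; combined with case (1) applied to $\rhobar_x$ and the structure $\Lambda^0=\OO_{L'}\br{t}$, the tensor with $\OO_{L'}\br{t}$ and completion at $\qq$ introduces precisely one new variable $T$ (corresponding to $t$), and inverting/completing $p$ does not add more. Finally, the last sentence ("$\OO_{X,x}$ is complete intersection iff $R^{\square}_{G,\rho_x}$ is") follows because $\hat{\OO}_{X,x}$ is complete intersection iff $\OO_{X,x}$ is (completion at the maximal ideal of an excellent — indeed finite type over $\OO$ — local ring preserves and reflects the lci property, \stackcite{09Q4} type statement), and adding a power series variable $T$ preserves the lci property in both directions.

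\textbf{Main obstacle.} The genuinely delicate point is the bookkeeping in case (3): identifying the single extra variable $T$ and checking that inverting $t$ and then $p$-adically completing (the construction of $\Lambda$ in case (3) of \Cref{sec_def_rings}) does not disturb the count, i.e. that $R^{\square}_{G,\rho_x}$ really is $\hat{\OO}_{X,x}\br{T}$ and not something with a different number of variables or a subtler completion. I expect this to come down to carefully chasing \Cref{completion}, \Cref{extend_scalars2} and the explicit description $\OO_{\kappa(x)}\cong k'\br t$ — essentially the same argument as \cite[Lemmas 3.17, 3.18]{BIP_new} but I would want to make sure the formula $\hat{\OO}_{X,x}\br T \cong R^{\square}_{G,\rho_x}$ drops out cleanly from combining \Cref{333} with the structure of $\Lambda^0$ as a power series ring over $\OO_{L'}$ in one variable $t$.
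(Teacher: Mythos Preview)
Your overall strategy is right and matches the paper: invoke \Cref{333} to identify $R^{\square}_{G,\rho_x}$ with the $\qq$-adic completion of $\Lambda\otimes_{\OO}A^{\gen,\tau}_G$, and then compare that completion with $\hat{\OO}_{X,x}$ case by case. The paper's proof is a one-liner that defers this second step entirely to \cite[Lemmas 3.36, 3.37]{BIP_new}; you cite the wrong lemmas (3.17 and 3.18 only identify the residue fields as finite or local --- the actual completion computations are 3.36 and 3.37).

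One genuine issue: in case (2) your proposed route through \Cref{extend_scalars2} and the reduction $\rhobar_x$ does not close. That lemma identifies $R^{\square}_{G,\rho_x}$ with a completion of $\Lambda\otimes_{\OO}R^{\square}_{G,\rhobar_x}$, but $\rhobar_x$ corresponds to a \emph{different} closed point $\bar x\in Y$ (and a priori $\rho_x$ need not even take values in $G(\OO_{\kappa(x)})$ without conjugation). You would still have to relate $\hat{\OO}_{X,\bar x}$ to $\hat{\OO}_{X,x}$, which is no easier than the original problem. The direct computation (as in \cite[Lemma 3.37]{BIP_new}) stays with \Cref{333}: since the image of $A^{\gen,\tau}_G$ in $\kappa(x)$ lands in $\OO_{\kappa(x)}$, one computes the $\qq$-adic completion of $\Lambda\otimes_{\OO}A^{\gen,\tau}_G$ by noting that $\Lambda^0=\OO_{L'}\br{t}$ contributes, beyond the finite \'etale base change $\OO_{L'}/\OO$ (which is absorbed as in case (1)), exactly one new topological generator $t$ lifting the uniformiser of $\OO_{\kappa(x)}$; this is the variable $T$. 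The final sentence about complete intersection is as you say.
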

\begin{proof} The assertion follows from Proposition \ref{333} and \cite[Lemmas 3.36, 3.37]{BIP_new}.
\end{proof}

\section{GIT quotients}\label{sec_GIT}

Let $G$ be a generalised reductive $\OO$-group scheme. If $A$ is an $\RpsGLd$-algebra then the group
$G(A)$ acts on the set $\XgenGtau(A)$ by conjugation.  This defines an action of $G$ on $\XgenGtau$. We let 
\begin{equation}
\XgpsGtau := \XgenGtau \sslash G^0, \quad \XbargpsGtau:= \XbargenGtau\sslash G^0. 
\end{equation}

\begin{remar}\label{homeo} The closed immersion $\XbargenGtau \hookrightarrow \XgenGtau$ induces a morphism $\XbargpsGtau \rightarrow (\XgpsGtau)_k$, 
which is finite by \cite[Theorem 2]{seshadri} and induces a bijection on geometric points by 
\cite[Theorem 3]{seshadri}. 
We deduce that this map is a homeomorphism.
\end{remar}

We will show that $\XgpsGtau$ is the spectrum of a complete semi-local $\OO$-algebra. 
We use its connected components to decompose  $\XgenGtau$ and show in Lemma \ref{connected_comp} that the resulting decomposition is the decomposition of $\XgenGtau$ into connected components. The component
containing the point corresponding to a representation $\rhobar: \Gamma_F \rightarrow G(k)$ 
that we have started with 
in \Cref{def_and_1prop}
will be denoted $X^{\gen, \tau}_{G, \rhobarss}$. We then evaluate both functors at finite and
local fields and study the corresponding Galois representations.

In Corollary \ref{finite_HG} we prove an important finiteness result, which is used 
in an essential way in the induction argument in \Cref{sec_complete_intersection}. This in turn rests on the following 
fundamental finiteness theorem proved by Vinberg \cite{vinberg}, when $S$ is a spectrum of a field of characteristic zero, 
by Martin \cite{martin}, when $S$ is a spectrum of a field of characteristic $p$, and by Cotner \cite{cotner}
in general.

\begin{thm}[\cite{cotner}]\label{cotner_main} Let $S$ be a locally noetherian scheme and let $f: H\rightarrow G$ be a finite morphism 
of generalised reductive smooth affine $S$-group schemes. Then the induced map 
$$ H^N\sslash H \rightarrow G^N \sslash G$$
is finite.
\end{thm}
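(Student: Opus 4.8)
The plan is to reduce this to the case of a field — settled by Vinberg \cite{vinberg} in characteristic $0$ and by Martin \cite{martin} in characteristic $p$ — and then upgrade it to an arbitrary locally noetherian base. Since finiteness may be checked Zariski-locally on the target, and the target maps to $S$, one may assume $S=\Spec R$ with $R$ noetherian, so that everything in sight is of finite type over $R$. One must be warned at the outset that the naive further reduction to $R$ a field, which is what makes the arguments of \cite{vinberg,martin} work, is \emph{not} available here: formation of the GIT quotient does not commute with arbitrary base change for group schemes that are only geometrically reductive. The way around this, which I take to be the heart of Cotner's contribution in \cite{cotner}, is to work throughout with Alper's adequate moduli space formalism: by the remark following \Cref{rem_fin_etale} the quotient map $[X/G]\to X\sslash G$ is an adequate moduli space for any affine $X$ of finite type with a $G$-action, and adequate moduli spaces behave well under the relevant base changes and under noetherian approximation, which lets one spread out to the case where $R$ is of finite type over $\mathbb Z$ and then argue fibrewise.

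Next I would reduce to the case of a closed subgroup inclusion. As $f$ is finite, so is $f^N\colon H^N\to G^N$, hence $\mathcal O(H^N)$ is a finite $\mathcal O(G^N)$-module; moreover $H$ acts on $\mathcal O(G^N)$ through $f$ and conjugation, compatibly with its conjugation action on $\mathcal O(H^N)$. Applying Seshadri's finiteness theorem for a geometrically reductive group acting on an affine scheme of finite type — invariants of a finite equivariant module over the coordinate ring form a finite module over the invariant subring — to $H$, the algebra $\mathcal O(G^N)$ and the module $\mathcal O(H^N)$, one obtains that $\mathcal O(H^N)^H$ is finite over $\mathcal O(G^N)^H=\mathcal O(G^N)^{H'}$, where $H'\le G$ is the scheme-theoretic image of $f$; one checks (as in \cite{cotner}) that $H'$ is again a generalised reductive closed subgroup scheme of $G$. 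So it remains to prove that $G^N\sslash H'\to G^N\sslash G$ is finite for a closed generalised reductive subgroup $H'\le G$, acting by conjugation.

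For that case the essential input is a Matsushima-type theorem: $G/H'$ is affine, because $H'$ is generalised (equivalently, geometrically) reductive. Descending the GIT quotient along the $H'$-torsor $G\to G/H'$ then identifies $\mathcal O(G^N)^{H'}$ with $\mathcal O(G\times_{H'}G^N)^G$, so the map in question is the one induced on $G$-GIT quotients by the $G$-equivariant twisting morphism $\psi\colon G\times_{H'}G^N\to G^N$, $(g,\underline c)\mapsto g\underline c g^{-1}$. The map $G^N\sslash H'\to G^N\sslash G$ is visibly separated and of finite type, and it is universally closed, since $G^N\to G^N\sslash H'$ is universally closed and surjective while $G^N\to G^N\sslash G$ is universally closed (both being adequate moduli space maps, with base change controlled by the formalism of the first paragraph). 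Hence, by Zariski's main theorem, it is finite as soon as it is quasi-finite, i.e.\ has finite fibres. A geometric fibre is the set of closed $H'$-orbits on $G^N$ lying over a fixed closed $G$-orbit, and finiteness of this set is exactly the substance of Martin's argument: one uses that a tuple in $G^N$ has closed $G$-orbit precisely when it generates a $G$-completely reducible subgroup, together with reductivity of the stabiliser of a closed orbit (Matsushima once more), to pin down the possible closed $H'$-orbits.

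The hard part is this last step over a general base: the finiteness of the relevant counts of closed orbits and the generalised Matsushima statement are proven over an algebraically closed field, and transporting them to an arbitrary locally noetherian $S$ requires both the identification of generalised reductivity with geometric reductivity and careful control of how GIT quotients and closed orbits vary in families via the adequate moduli space machinery. This packaging is precisely what is carried out in \cite{cotner}, and I would invoke that reference for it rather than reproduce it.
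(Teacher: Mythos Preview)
The paper does not give a proof of this statement. It is stated as a result due to Vinberg (characteristic $0$), Martin (characteristic $p$), and Cotner (general base), and is invoked by citation only; no argument is supplied in the paper itself. So there is no ``paper's own proof'' to compare your proposal against.

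Your sketch is a reasonable outline of the expected strategy --- reduce to a closed subgroup inclusion via Seshadri-type finiteness, use a Matsushima-type affineness of $G/H'$, and then check quasi-finiteness of fibres via the field case --- but you yourself defer the genuinely nontrivial parts (compatibility of GIT quotients with base change via adequate moduli spaces, and Matsushima over a general base) to \cite{cotner}. In that sense your proposal and the paper agree: both treat the theorem as a black box from Cotner's work.
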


\begin{cor}\label{cor_cotner} Let us assume the setting of Theorem \ref{cotner_main}. Then the induced map
$$ H^N\sslash H^0 \rightarrow G^N \sslash G^0$$
is also finite.
\end{cor}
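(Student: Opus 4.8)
The plan is to deduce this from \Cref{cotner_main} together with the fact that the passage from $G^N\sslash G^0$ to $G^N\sslash G$ (and likewise for $H$) is a finite morphism, and then to conclude by a formal cancellation argument for finite morphisms.

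First I would record the set-up. Since $H$ and $G$ are generalised reductive, $H^0$ and $G^0$ are reductive $S$-group schemes, so all the GIT quotients occurring below are affine and of finite type over $S$ (finiteness of invariants for reductive group scheme actions, cf.\ \cite{seshadri} and \cite[Theorem 9.4.1]{alper}); moreover, as $f$ carries $H^0$ into $G^0$, it induces a morphism of finite étale $S$-group schemes $H/H^0\to G/G^0$ and a commutative square
\[
\begin{tikzcd}
H^N\sslash H^0 \ar[r,"\psi"] \ar[d,"q_H"'] & G^N\sslash G^0 \ar[d,"q_G"] \\
H^N\sslash H \ar[r,"\varphi"] & G^N\sslash G ,
\end{tikzcd}
\]
in which $\varphi$ is finite by \Cref{cotner_main} and $q_H$, $q_G$ are the canonical maps. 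Next I would argue that $q_H$ is finite: because $H^0$ is normal in $H$, the finite étale group scheme $H/H^0$ (\Cref{rem_fin_etale}) acts on $H^N\sslash H^0$ with $H^N\sslash H=(H^N\sslash H^0)\sslash(H/H^0)$, and étale-locally on $S$ this is a quotient by a constant finite group; there $\mathcal O(H^N\sslash H^0)$ is integral over $\mathcal O(H^N\sslash H)$ (each function is a root of the monic polynomial having its orbit as roots), hence, being a finitely generated algebra over it, a finite module. Finiteness of $q_H$ then follows by faithfully flat descent of module-finiteness along an étale cover of $S$ that trivialises $H/H^0$, using that formation of these invariants commutes with that base change. (The same reasoning applies to $q_G$, although only finiteness of $q_H$ is needed below.)

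Finally I would run the cancellation. The composite $q_G\circ\psi=\varphi\circ q_H$ is finite, being a composition of finite morphisms. I would factor $\psi$ as its graph $\Gamma_\psi\colon H^N\sslash H^0\to (H^N\sslash H^0)\times_{G^N\sslash G}(G^N\sslash G^0)$ followed by the second projection $\mathrm{pr}_2\colon (H^N\sslash H^0)\times_{G^N\sslash G}(G^N\sslash G^0)\to G^N\sslash G^0$. Then $\mathrm{pr}_2$ is the base change of the finite morphism $\varphi\circ q_H$ along $q_G$, hence finite; and $\Gamma_\psi$ is a closed immersion because $q_G$, as a morphism between affine $S$-schemes, is separated (so its diagonal, of which $\Gamma_\psi$ is a base change, is a closed immersion). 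Therefore $\psi=\mathrm{pr}_2\circ\Gamma_\psi$ is a composition of finite morphisms, hence finite. The only non-formal input beyond \Cref{cotner_main} is the finiteness of $q_H$, i.e.\ that a GIT quotient by a finite étale group scheme is a finite morphism; that is where the actual work sits, and it may alternatively be quoted from the general theory of quotients by finite flat group schemes.
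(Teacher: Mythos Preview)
Your proof is correct and follows the same overall strategy as the paper: establish that $q_H\colon H^N\sslash H^0\to H^N\sslash H$ is finite, compose with the finite map from \Cref{cotner_main}, and cancel. The paper's execution is a bit leaner in two places: for integrality of $q_H$ it quotes \cite[Exp.~V, Thm.~4.1(ii)]{SGA3_new} directly (avoiding the étale-local trivialisation and descent), and for the cancellation it simply observes that since all schemes involved are affine, a factorisation $A\to B\to C$ with $C$ module-finite over $A$ immediately gives $C$ module-finite over $B$---your graph/diagonal argument is valid but heavier than needed here.
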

\begin{proof} Since $G^N\sslash G^0 \rightarrow S$ and $H^N \sslash H^0 \rightarrow S$ are affine and $S$ is locally noetherian, 
we may assume that $S$ is affine and noetherian. Then $G$, $H$, $G^N$, $H^N$ as well 
as their respective GIT quotients are all 
affine schemes. Since $H$ is generalised reductive $H/ H^0$ is a finite \'etale $S$-group scheme. 
Since $$H^N\sslash H = (H^N \sslash H^0)\sslash (H/H^0),$$  
 \cite[Exp.\,V, Thm.\,4.1\,(ii)]{SGA3_new} applied with $X_0=H^N \sslash H^0$ and $X_1= (H/H^0)\times_S X_0$,
implies that the morphism $H^N \sslash H^0\rightarrow H^N\sslash H$ is integral. 
It follows from \cite[Theorem 6.3.3]{alper} applied to the quotient stack 
$[H^N/H^0]$ that $H^N\sslash H^0$ is of finite type over $S$. (If $S=\Spec R$, where
$R$ is of finite type over a universally Japanese integral domain then the assertion 
also follows from \cite[Theorem 2 (i)]{seshadri}. This setting is sufficient for 
our paper as we only need it over $\OO$.) Hence, $H^N\sslash H^0\rightarrow H^N\sslash H$ is of finite type, and \cite[\href{https://stacks.math.columbia.edu/tag/01WJ}{Tag 01WJ}]{stacks-project}
implies that $H^N \sslash H^0\rightarrow H^N\sslash H$ is finite.

Since the composition of finite morphisms is finite, using 
Theorem \ref{cotner_main} we obtain that  $H^N \sslash H^0\rightarrow G^N\sslash G$ is finite. 
Since this map factors as $H^N \sslash H^0\rightarrow G^N\sslash G^0\rightarrow G^N\sslash G$
and all the schemes are affine we obtain the claim. 
\end{proof}

\begin{prop}\label{GGLd_finite} The maps $\XgpsGtau \to \XpsGLd$ and $\XbargpsGtau\rightarrow \XbarpsGLd$ are finite.
\end{prop}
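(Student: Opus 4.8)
The plan is to reduce the statement to Cotner's finiteness theorem (\Cref{cotner_main}) via its corollary \Cref{cor_cotner}. First I would recall from \Cref{closedimmersionforGLd} and \Cref{defiXgenG} that $\XgenGtau$ sits inside $G^N \times \XpsGLd$ as a closed subscheme, and that $\XgenGLd$ sits inside $\GL_d^N \times \XpsGLd$ as a closed subscheme, compatibly with the map $G^N \to \GL_d^N$ induced by $\tau$. All of these are affine schemes over the noetherian ring $\RpsGLd$. The $G^0$-action (resp. $\GL_d$-action) on $\XgenGtau$ (resp. $\XgenGLd$) is by conjugation, and restricts from the conjugation action on $G^N$ (resp. $\GL_d^N$).

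The key point is that taking GIT quotients commutes with closed immersions that are equivariant and compatible in the appropriate sense. More precisely, $\XgenGtau \to G^N \times \XpsGLd$ is a $G^0$-equivariant closed immersion (with $G^0$ acting trivially on $\XpsGLd$), so the induced map on quotients $\XgpsGtau = \XgenGtau \sslash G^0 \to (G^N \times \XpsGLd) \sslash G^0 = (G^N \sslash G^0) \times \XpsGLd$ is a closed immersion: for a geometrically reductive group scheme acting on an affine scheme, a $G^0$-stable closed subscheme has its invariant ring a quotient of the ambient invariant ring (this uses that $G^0$ is geometrically reductive, cf. \Cref{rem_fin_etale}, so that invariants are exact on the relevant category and surjections of rings stay surjective on invariants). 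Similarly $\XgenGLd \to \GL_d^N \times \XpsGLd$ gives a closed immersion $\XgpsGLd := \XgenGLd \sslash \GL_d \to (\GL_d^N \sslash \GL_d) \times \XpsGLd$; but here I should be careful that $\XpsGLd = \XgenGLd \sslash \GL_d$ by the theory of determinant laws (Chenevier), so in fact the composite $\XgenGLd \sslash \GL_d \to \XpsGLd$ is an isomorphism, and the relevant statement is simply that the structure map $\XgenGLd \to \XpsGLd$ is obtained from $\GL_d^N \times \XpsGLd \to (\GL_d^N \sslash \GL_d) \times \XpsGLd \to \XpsGLd$.

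Now the map $\XgpsGtau \to \XpsGLd$ factors as
\[
\XgpsGtau \hookrightarrow (G^N \sslash G^0) \times \XpsGLd \xrightarrow{\ (\tau_*) \times \id\ } (\GL_d^N \sslash \GL_d) \times \XpsGLd \xrightarrow{\ \pr_2\ } \XpsGLd,
\]
where the middle map $G^N \sslash G^0 \to \GL_d^N \sslash \GL_d$ is the one induced by the finite morphism $\tau : G \hookrightarrow \GL_d$ of generalised reductive $\OO$-group schemes. By \Cref{cor_cotner} (applied over $S = \Spec \OO$, or after the harmless base change to $\RpsGLd$ — though more carefully one works over $\OO$ and then base changes), this middle map is finite. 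The composite of a finite morphism with a closed immersion is finite, and the last projection $\pr_2$ need not be finite on the nose — so instead I would phrase it as: the image of $\XgpsGtau$ in $(\GL_d^N \sslash \GL_d) \times \XpsGLd$ is finite over $\XpsGLd$ because it is closed in $(G^N \sslash G^0)\times \XpsGLd$ which is finite over $(\GL_d^N \sslash \GL_d) \times \XpsGLd = (\GL_d^N\sslash \GL_d) \times (\GL_d^N \sslash \GL_d \cdots)$; rather, cleanly: $G^N \sslash G^0 \to \GL_d^N \sslash \GL_d$ is finite by \Cref{cor_cotner}, hence $(G^N \sslash G^0) \times_{\OO} \XpsGLd \to (\GL_d^N \sslash \GL_d) \times_{\OO} \XpsGLd$ is finite, and $\XgpsGtau$ is a closed subscheme of the source; meanwhile $\XgenGLd \sslash \GL_d \cong \XpsGLd$ so $\XpsGLd$ is a closed subscheme of $(\GL_d^N \sslash \GL_d) \times \XpsGLd$ (via the graph of $\XpsGLd \to \GL_d^N\sslash \GL_d$), and $\XgpsGtau$ maps into it; conclude that $\XgpsGtau \to \XpsGLd$ is the base change of a finite map along $\XpsGLd \hookrightarrow (\GL_d^N \sslash \GL_d)\times \XpsGLd$, hence finite. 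The statement for special fibres $\XbargpsGtau \to \XbarpsGLd$ follows by base change to $k$, using \Cref{homeo} to identify $\XbargpsGtau$ with the special fibre of $\XgpsGtau$ up to homeomorphism, or directly since GIT quotients commute with the flat base change $\OO \to k$ for geometrically reductive group scheme actions (here one uses that $G^0_k$ is reductive).

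The main obstacle I anticipate is bookkeeping the compatibility of GIT quotients with the closed immersions and the fibre product structure over $\XpsGLd$ — specifically, verifying that $(\XgenGtau)\sslash G^0$ really is computed as a closed subscheme of $(G^N \sslash G^0) \times \XpsGLd$ (which requires that $G^0$ is geometrically reductive so that invariants behave well, and that $\XgenGtau$ is $G^0$-stable and closed, both already established), and correctly extracting finiteness over $\XpsGLd$ from finiteness of $G^N\sslash G^0 \to \GL_d^N \sslash \GL_d$ together with the identification $\XgenGLd \sslash \GL_d \cong \XpsGLd$. Once the diagram is set up correctly, the finiteness is formal from \Cref{cor_cotner} plus the fact that finite morphisms are stable under base change and composition.
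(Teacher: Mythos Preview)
Your overall strategy matches the paper's: use the closed immersion $\XgenGtau \hookrightarrow G^N \times \XpsGLd$, pass to GIT quotients, and apply \Cref{cor_cotner}. However, there is a genuine error in your execution. You assert that the induced map $\XgpsGtau \to (G^N \sslash G^0) \times \XpsGLd$ is a \emph{closed immersion}, justifying this by saying that ``invariants are exact'' for geometrically reductive groups. This is false in positive or mixed characteristic: exactness of invariants is linear reductivity, which fails for non-multiplicative reductive groups over $\OO$ or $k$. What geometric reductivity actually buys you is only that a $G^0$-equivariant closed immersion induces a \emph{finite} map on GIT quotients; this is precisely Seshadri's theorem (cited in the paper as \cite[Theorem 2 (ii)]{seshadri}). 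Once you replace ``closed immersion'' by ``finite'' at that step, the rest of your argument goes through: finite composed with finite is finite, and since everything is affine, a map that becomes finite after composing with a further map (here the bottom arrow $\XpsGLd \to (\GL_d^N \sslash \GL_d) \times \XpsGLd$) is itself finite. The paper organises this as a commutative square and reads off the conclusion directly, avoiding your digression about graphs.

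A secondary slip: $\OO \to k$ is not flat, so you cannot invoke ``GIT quotients commute with flat base change'' for the special fibre. Your alternative via \Cref{homeo} does work (since $\XbargpsGtau \to (\XgpsGtau)_k$ is finite and the latter is finite over $\XbarpsGLd$ by base change), or you can simply rerun the whole argument over $k$, as the paper does.
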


\begin{proof}
    We consider the following diagram:
    \begin{center}
        \begin{tikzcd}
            \XgpsGtau \ar[r] \ar[d] & (\XpsGLd \times G^N) \sslash G^0 \ar[d] \\
            \XpsGLd \ar[r] & (\XpsGLd \times \GL_d^N) \sslash \GL_d
        \end{tikzcd}
    \end{center}
    By \Cref{cor_cotner} the right map is finite.
    Since the map $\XgenGtau \to \XpsGLd \times G^N$ defined in \Cref{defiXgenG} is a closed immersion, it follows from \cite[Theorem 2 (ii)]{seshadri}, that the induced map of GIT quotients, which is the top map in the diagram, is finite. It follows, that the left map in the diagram is finite. The same argument over $k$ proves the last assertion. 
\end{proof}

 \begin{cor}\label{finite_HG} Let $H$ be a closed generalised reductive subgroup scheme of $G$ (resp. $G_k$). Then the 
 map $\XgpsHtau \rightarrow \XgpsGtau$ (resp. $\XbargpsHtau \rightarrow \XbargpsGtau$) is finite. 
\end{cor}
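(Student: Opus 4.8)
The goal is to deduce finiteness of $\XgpsHtau \rightarrow \XgpsGtau$ (and its special-fibre analogue) from \Cref{GGLd_finite} together with \Cref{cor_cotner}. The overall strategy is to factor this map through the intermediate GIT quotient built from $\GL_d$ and reduce to the already-established finiteness statements.

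First I would record the relevant diagram. Fixing the closed immersion $\tau : G \hookrightarrow \GL_d$, its restriction $\tau|_H : H \hookrightarrow \GL_d$ is also a closed immersion of $\OO$-group schemes, and $H$ is a closed generalised reductive subgroup scheme of $G$ by hypothesis, so the inclusion $\iota : H \hookrightarrow G$ is a closed immersion of generalised reductive $\OO$-group schemes. Consequently $\iota^N : H^N \hookrightarrow G^N$ is a closed immersion, equivariant for the conjugation action of $H^0 \subseteq G^0$. Passing to GIT quotients, \cite[Theorem 2 (ii)]{seshadri} gives that $H^N \sslash H^0 \rightarrow G^N \sslash H^0$ is finite (closed immersions of affine schemes descend to finite morphisms of GIT quotients in this noetherian setting), and then \Cref{cor_cotner} applied to the finite — indeed closed — morphism $\iota : H \hookrightarrow G$ shows $G^N \sslash H^0 \rightarrow G^N \sslash G^0$ is finite. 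Hence $H^N \sslash H^0 \rightarrow G^N \sslash G^0$ is finite.

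Next I would pull this back along the construction of $\XgenGtau$. Recall from \Cref{defiXgenG} and \Cref{FN} that $\XgenGtau$ is a closed $\XpsGLd$-subscheme of $G^N \times \XpsGLd$, and likewise $\XgenHtau$ — defined using $\tau|_H$ — is a closed $\XpsGLd$-subscheme of $H^N \times \XpsGLd$; since a representation $\Gamma \to H(A)$ is the same as a representation $\Gamma \to G(A)$ landing in $H(A)$ with the prescribed determinant law, $\XgenHtau = \XgenGtau \times_{G^N} H^N$ as closed subschemes of $H^N \times \XpsGLd$. Taking the quotient by $H^0$ and using that GIT quotients commute with the base change along $H^N \sslash H^0 \to G^N \sslash H^0$ of the diagram
\begin{equation}
\begin{tikzcd}
\XgenHtau \ar[r] \ar[d] & H^N \times \XpsGLd \ar[d] \\
\XgenGtau \ar[r] & G^N \times \XpsGLd
\end{tikzcd}
\end{equation}
where both horizontal maps are closed immersions and the right vertical map is a closed immersion, one obtains that $\XgenHtau \sslash H^0 \rightarrow \XgenGtau \sslash H^0$ is finite (again by \cite[Theorem 2 (ii)]{seshadri}) and that $\XgenGtau \sslash H^0 \rightarrow \XgenGtau \sslash G^0 = \XgpsGtau$ is finite because it is the base change of the finite morphism $G^N \sslash H^0 \to G^N \sslash G^0$ along $\XgpsGtau \to G^N \sslash G^0$ — here I am using that forming $\sslash H^0$ of a closed $G^0$-subscheme of $G^N \times \XpsGLd$ is compatible with the corresponding operation on $G^N$, which is the content of the reductive-quotient base change used throughout the section. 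Composing, $\XgpsHtau = \XgenHtau \sslash H^0 \rightarrow \XgpsGtau$ is finite. The special-fibre statement follows by the identical argument with every scheme replaced by its base change to $k$, using that $H_k \hookrightarrow G_k$ remains a closed immersion of generalised reductive $k$-group schemes and invoking \Cref{cotner_main}/\Cref{cor_cotner} over $S = \Spec k$; alternatively it follows from the case over $\OO$ by passing to special fibres, since GIT quotients commute with the flat base change $\OO \to k$ for these actions.

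\textbf{Main obstacle.} The routine parts are the manipulations with closed immersions and \cite[Theorem 2 (ii)]{seshadri}; the genuine input is \Cref{cor_cotner} (hence ultimately Cotner's \Cref{cotner_main}), which is what makes $G^N \sslash H^0 \to G^N \sslash G^0$ finite despite $H$ being much smaller than $G$. The one point requiring care is the compatibility of the $H^0$-GIT quotient with base change in the fibre-product square above — i.e. that $(\XgenGtau \times_{G^N} H^N) \sslash H^0 = \XgenGtau \times_{(G^N \sslash H^0)} (H^N \sslash H^0)$ as schemes over $\XpsGLd$ — which holds because $\XgenGtau$ is affine over $\XpsGLd$ and $H^0$ is geometrically reductive, so the formation of invariants commutes with the affine base change; this is the step I would write out most carefully.
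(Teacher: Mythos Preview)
Your argument contains a genuine gap. You claim that \Cref{cor_cotner} applied to the closed immersion $\iota : H \hookrightarrow G$ yields that $G^N \sslash H^0 \rightarrow G^N \sslash G^0$ is finite. But \Cref{cor_cotner} gives finiteness of $H^N \sslash H^0 \rightarrow G^N \sslash G^0$, not of $G^N \sslash H^0 \rightarrow G^N \sslash G^0$. The latter map is \emph{not} finite in general: already for $H = \{1\}$ (or $H$ a maximal torus) one has $G^N \sslash H^0 = G^N$, whose dimension strictly exceeds that of $G^N \sslash G^0$. Your factorisation $\XgpsHtau \to \XgenGtau \sslash H^0 \to \XgpsGtau$ therefore breaks down at the second arrow, and the base-change claim you highlight as the ``main obstacle'' cannot rescue it because the morphism you want to base change is itself not finite. (Separately, the remark at the end that ``$\OO \to k$ is flat'' is incorrect, so that alternative route for the special fibre also fails.)

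The paper's proof avoids this detour entirely and is much shorter. Since $\tau|_H : H \hookrightarrow \GL_d$ is again a closed immersion of generalised reductive $\OO$-group schemes, \Cref{GGLd_finite} applies directly to $H$ and gives that $\XgpsHtau \to \XpsGLd$ is finite. This map factors as $\XgpsHtau \to \XgpsGtau \to \XpsGLd$, and all three schemes are affine; writing the corresponding rings as $A \to C \to B$ with $B$ finite over $A$, one sees immediately that $B$ is finite over $C$ (the same finite generating set works). Hence $\XgpsHtau \to \XgpsGtau$ is finite. The special-fibre case is obtained by the identical argument over $k$. In other words, the right target for the factorisation is $\XpsGLd$, not the intermediate $H^0$-quotient of $\XgenGtau$.
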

\begin{proof} It follows from Proposition \ref{GGLd_finite} that the map
$\XgpsHtau \rightarrow \XpsGLd$ is finite. Moreover, it factors as 
$\XgpsHtau \rightarrow\XgpsGtau \rightarrow \XpsGLd$.
Since these are affine schemes we deduce that the map $\XgpsHtau \rightarrow \XgpsGtau$ 
is finite. The same argument over $k$ gives finiteness of $\XbargpsHtau \rightarrow \XbargpsGtau$.
\end{proof}

\begin{prop}[{\cite[Lemma 4.3]{cotner}}]\label{building} Let $\kappa$ be a local $\OO$-field and let $\rho: \Gamma_F \rightarrow 
G(\kappa)$ be a continuous representation for the topology on the target induced
by the valuation on $\kappa$. Then there exists a finite extension $\kappa'$ of 
$\kappa$ and $g\in G^0(\kappa')$ such that $g \rho(\gamma) g^{-1} \in G(\OO_{\kappa'})$ for all $\gamma\in \Gamma_F$.
\end{prop}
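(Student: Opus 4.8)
The statement is that a continuous representation $\rho : \Gamma_F \to G(\kappa)$ into the $\kappa$-points of a generalised reductive group over a local $\OO$-field can, after a finite extension of scalars, be conjugated by an element of $G^0$ into the integral points $G(\OO_{\kappa'})$. Since the paper cites this as \cite[Lemma 4.3]{cotner}, the point here is to explain how its proof should go. The plan is to reduce to a statement about bounded subgroups acting on the Bruhat--Tits building (or affine Grassmannian) of $G^0_\kappa$. First I would observe that $\rho(\Gamma_F)$ is a \emph{bounded} subgroup of $G(\kappa)$: indeed $\Gamma_F$ is compact and $\rho$ is continuous, so $\rho(\Gamma_F)$ is a compact subgroup of $G(\kappa)$, hence bounded in the sense that for one (equivalently any) closed immersion $\tau : G \hookrightarrow \mathbb A^n$ the set $\tau(\rho(\Gamma_F))$ lies in a bounded subset of $\kappa^n$.

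The key step is the fixed-point argument. After passing to a finite extension $\kappa'$ we may assume $G^0_{\kappa'}$ is split (using \Cref{O_prime}-type considerations over the local field, or simply splitting over a finite extension). Let $\mathcal B$ be the (enlarged) Bruhat--Tits building of $G^0_{\kappa'}$. The action of $G^0(\kappa')$ on $\mathcal B$ extends to an action of $G(\kappa')$, since $G/G^0$ is finite \'etale and acts by building automorphisms through its action on $G^0$. A bounded subgroup of $G(\kappa')$ has bounded orbits on $\mathcal B$; by the Bruhat--Tits fixed-point theorem for groups acting on CAT(0) spaces with bounded orbits (applied to the compact group $\rho(\Gamma_F)$, or to its closure), there is a point $x \in \mathcal B$ fixed by $\rho(\Gamma_F)$. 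Its stabiliser in $G(\kappa')$ is a bounded open subgroup, and after a further finite extension we may assume $x$ is a hyperspecial (or at least a special) vertex in the apartment of a split maximal torus, so that $\Stab_{G^0(\kappa')}(x) = G^0(\OO_{\kappa'})$ for a suitable Chevalley model; then $\Stab_{G(\kappa')}(x) \subseteq G(\OO_{\kappa'})$ for the corresponding integral model of $G$ (here one uses that $G/G^0$, being finite \'etale, automatically has $\OO_{\kappa'}$-integral points equal to its $\kappa'$-points). Finally, since any two special vertices in the building are conjugate under $G^0(\kappa')$ up to translation by the (possibly non-trivial, but after extension trivial) image of $\kappa'^\times$ in the apartment, we may move $x$ to the origin by an element $g \in G^0(\kappa')$, giving $g\rho(\gamma)g^{-1} \in G(\OO_{\kappa'})$ for all $\gamma$.

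The main obstacle I anticipate is bookkeeping around the non-connectedness of $G$ and the passage between the abstract group $G(\kappa')$ and an integral group scheme model: one must ensure that the $G^0$-conjugate of the chosen fixed point corresponds to an \emph{integral} group scheme structure on all of $G$ (not merely $G^0$) whose generic fibre is the given $G_{\kappa'}$, and that the finite \'etale quotient contributes no obstruction — this is where one genuinely uses that generalised reductive groups have finite \'etale component group, so that $G/G^0$ spreads out to a finite \'etale $\OO_{\kappa'}$-group scheme and the exact sequence $1 \to G^0 \to G \to G/G^0 \to 1$ extends integrally by \Cref{extofflatisflat}. A secondary technical point is controlling the finitely many field extensions: one is needed to split $G^0$, possibly one to make the relevant vertex hyperspecial, and possibly one to kill the discrepancy between $\Stab_{G^0(\kappa')}(x)$ and the honest $\OO$-points of a Chevalley model; all of these are finite, so their compositum $\kappa'$ is finite over $\kappa$, which is all that is claimed.
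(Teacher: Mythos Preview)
The paper does not give its own proof of this proposition: it is stated with the attribution \cite[Lemma 4.3]{cotner} and used as a black box. So there is no argument in the paper to compare against. That said, the label \texttt{building} makes clear the intended method, and your sketch via the Bruhat--Tits fixed-point theorem is exactly the standard route (and presumably Cotner's): pass to a finite extension so that $G^0$ is split, use compactness of $\rho(\Gamma_F)$ to get a fixed point on the extended building, and conjugate that point to a hyperspecial vertex whose stabiliser is $G(\OO_{\kappa'})$.

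Your identification of the delicate points is on target. The extension of the $G^0(\kappa')$-action on the building to a $G(\kappa')$-action, and the matching of the vertex stabiliser with the $\OO_{\kappa'}$-points of the given integral model of $G$ (not just of $G^0$), are the places where the generalised-reductive hypothesis does work; the finiteness and \'etaleness of $G/G^0$ is what makes both go through. One small refinement: rather than invoking \Cref{extofflatisflat} to spread out the extension, it is cleaner to note that $G$ is already given as a smooth affine $\OO$-group scheme, so the integral model is fixed from the start and the only issue is identifying which vertex of the building corresponds to it. Overall your plan is sound and matches the intended argument.
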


\subsection{\texorpdfstring{Points in $\XgenGtau$}{Points in XgenG}}\label{sec_points_in_XgenGtau}
We recall some terminology following \cite{martin}. Let $\kappa$ be an algebraically 
closed field, which is an $\OO$-algebra. Let $H$ be a closed reduced subgroup of $G_{\kappa}$. 
We say that $H$ is \emph{$G$-irreducible} in $G_{\kappa}$ if it is not contained in any proper
R-parabolic subgroup of $G_\kappa$. Let $S$ be a maximal torus in the centraliser $Z_G(H)$. 
We say that $H$ is \emph{strongly reductive} in $G$ if $H$ is $Z_G(S)$-irreducible in $Z_G(S)$.

\begin{prop}\label{closed_orbit} Let $x$ be a geometric point of $\XgenGtau$, 
and let $H$ be the Zariski closure of $\rho_x(\Gamma_F)$ in $G(\kappa(x))$. Then the following are equivalent
\begin{enumerate}
\item the orbit $G^0\cdot x$ is closed in its fibre over $X^{\git, \tau}_G$;
\item the orbit $G\cdot x$ is closed in its fibre over $X^{\git, \tau}_G$; 
\item $H$ is strongly reductive in $G_{\kappa(x)}$;
\item $\rho_x: \Gamma_F \rightarrow G(\kappa(x))$ is $G$-completely reducible.
\end{enumerate} 
\end{prop}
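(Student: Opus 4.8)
The plan is to reduce everything to the geometry of simultaneous conjugation on $N$-tuples in a generalised reductive group and then quote the Richardson--Bate--Martin--R\"ohrle circle of results. Throughout I work after base change to the algebraically closed field $\kappa := \kappa(x)$, which is legitimate since $x$ is a geometric point, so that $x$ becomes a $\kappa$-rational point and $G^0$, $G$ mean $G^0_\kappa$, $G_\kappa$.

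First I would unwind the embedding of $\XgenGtau$. By \Cref{defiXgenG} (see also \Cref{FN}) the map $\rho \mapsto (\rho(\gamma_1),\dots,\rho(\gamma_N))$ together with the class of $\rho$ realises $\XgenGtau$ as a closed subscheme of $G^N \times \XpsGLd$, equivariantly for the simultaneous-conjugation action of $G$ on $G^N$ and the trivial action on $\XpsGLd$. The point $x$ corresponds to a pair $(\underline h, \theta)$ with $\underline h = (\rho_x(\gamma_1),\dots,\rho_x(\gamma_N)) \in G^N(\kappa)$ and $\theta \in \XpsGLd(\kappa)$, and the orbits $G^0\cdot x$, $G\cdot x$ are contained in $G^N_\kappa \times \{\theta\}$, which is closed in $(G^N\times\XpsGLd)_\kappa$ since $\theta$ is a closed point. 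Hence $G^0\cdot x$ (resp. $G\cdot x$) is closed in $\XgenGtau$ if and only if the $G^0(\kappa)$-orbit (resp. $G(\kappa)$-orbit) of $\underline h$ under simultaneous conjugation is closed in $G^N_\kappa$. Moreover \Cref{dense_again} identifies $H$ with the algebraic subgroup of $G_\kappa$ generated by the entries of $\underline h$.

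With this reduction in place the four conditions become statements about $G^N_\kappa$. For $(1)\Leftrightarrow(2)$: since $G/G^0$ is finite the $G$-orbit of $\underline h$ is a finite union of $G^0$-translates of its $G^0$-orbit, so one is closed iff the other is, the converse direction using that an orbit closure contains a unique closed $G^0$-orbit, of the same dimension as $G^0\cdot\underline h$ (the non-connected case is treated in \cite[Section 6.3]{BMR}). For $(2)\Leftrightarrow(3)$ I would invoke the characterisation of closed orbits of tuples: the $G_\kappa$-orbit of $\underline h$ in $G^N_\kappa$ is closed if and only if the Zariski closure $H$ of $\langle\underline h\rangle$ is strongly reductive in $G_\kappa$ — Richardson's theorem, extended to positive characteristic and to non-connected generalised reductive groups by Martin \cite{martin} and Bate--Martin--R\"ohrle \cite{BMR}, together with the fact that strong reductivity of a subgroup depends only on its Zariski closure. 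Finally $(3)\Leftrightarrow(4)$: strong reductivity of $H$ in $G_\kappa$ is equivalent to $H$ being $G$-completely reducible by the equivalences of \cite[Corollary 3.5]{BMR}, valid for non-connected $G$ as explained in \cite[Section 6.3]{BMR}, and by \Cref{defi_Gcr} the representation $\rho_x$ is $G$-completely reducible exactly when its image, equivalently its Zariski closure $H$, is a $G$-completely reducible subgroup.

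I expect the main obstacle to be not a single deep step but the bookkeeping for the non-connected group: reconciling the various formulations of strong reductivity and $G$-complete reducibility in the non-connected setting, keeping R-parabolics straight, and pinning down the precise references in \cite{martin} and \cite{BMR} for the tuple-orbit theorem in exactly the generality of generalised reductive group schemes over $\kappa$. A secondary technical point that must be handled with care is the opening reduction — that closedness of the orbit in $\XgenGtau$ coincides with closedness of the conjugation orbit in $G^N_\kappa$ — but once one restricts to the closed subscheme $G^N_\kappa \times \{\theta\}$ this follows at once from the closed immersion $\XgenGtau \hookrightarrow G^N \times \XpsGLd$.
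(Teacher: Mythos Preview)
Your proposal is correct and follows essentially the same route as the paper: the paper also uses the finite-index decomposition $G\cdot x = \bigcup_i g_i\cdot(G^0\cdot x)$ for $(1)\Leftrightarrow(2)$, invokes \Cref{dense_again} to identify $H$ with the closure of the group generated by the tuple $(\rho_x(\gamma_1),\dots,\rho_x(\gamma_N))$, and then cites Richardson's \cite[Theorem 16.4]{Richardson1988ConjugacyCO} for $(2)\Leftrightarrow(3)$ and \cite[Theorem 3.1, Section 6.3]{BMR} for $(3)\Leftrightarrow(4)$. The only cosmetic differences are your explicit spelling-out of the reduction to $G^N_\kappa$ via the closed immersion of \Cref{FN} and your choice of references within the Richardson--Martin--BMR circle.
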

\begin{proof} Since $G^0$ is of finite index in $G$ we may pick $g_1, \ldots, g_n \in G(\kappa(x))$ such that 
\begin{equation}\label{orbits}
G\cdot x = \bigcup_{i=1}^n G^0\cdot (g_i x)= \bigcup_{i=1}^n g_i\cdot (G^0\cdot x).
\end{equation}
It follows from \eqref{orbits} that (1) implies (2).
 If $G\cdot x$ is closed, then as it is $G^0$-invariant
it will contain a closed $G^0$-orbit. We deduce from \eqref{orbits} that 
$G^0\cdot x$ is closed, so that (2) implies (1).
We have established in Lemma \ref{dense_again} that $H$ is equal to the 
Zariski closure in $G(\kappa(x))$ of the subgroup generated by the $N$-tuple
$(\rho_x(\gamma_1), \ldots, \rho_x(\gamma_N))$. The equivalence 
of parts (2), (3) has been established in \cite[Theorem 16.4]{Richardson1988ConjugacyCO}.
The equivalence of (3) and (4)  follows from \cite[Theorem 3.1]{BMR}, see the begining of
 \cite[Section 6.3]{BMR} for the non-connected case. 
\end{proof} 

\begin{lem}\label{closed_orbit_fibre} Let $y$ be a geometric point in $\XgpsGtau$. Then the fibre 
$(\XgenGtau)_y$ contains a unique closed $G^0$-orbit. 
\end{lem}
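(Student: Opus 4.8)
The claim is a standard fact about GIT quotients by reductive groups, adapted to the non-connected setting where we quotient by $G^0$. The plan is to reduce to the affine situation and invoke the closed-orbit theory for geometrically reductive group schemes. First I would recall that $\XgenGtau$ is affine by construction and $G^0$ is a reductive $\OO$-group scheme, so $[\XgenGtau/G^0] \to \XgpsGtau$ is a good GIT quotient. Base changing along the geometric point $y : \Spec \kappa \to \XgpsGtau$, I may work over an algebraically closed field $\kappa$: the fibre $(\XgenGtau)_y$ is a $G^0_\kappa$-invariant closed subscheme of $(\XgenGtau)_\kappa$ consisting of the points mapping to $y$ under the quotient map, equivalently the closed subscheme $q^{-1}(y)$ where $q : \XgenGtau \to \XgpsGtau$.

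The key step is the classical fact that for an affine scheme $X$ of finite type over an algebraically closed field with an action of a reductive group $\Gamma$, each fibre of $X \to X \sslash \Gamma$ contains a \emph{unique} closed $\Gamma$-orbit. I would cite this in the form valid for geometrically reductive group schemes, e.g.\ via \cite[Theorem 3]{seshadri} or the general theory recorded in \cite{alper}: the map on geometric points $(X\sslash \Gamma)(\kappa) \to X(\kappa)/\Gamma(\kappa)$ is not injective in general, but its fibres are precisely the sets of $\Gamma$-orbits whose closures intersect, and among these there is a single closed one, obtained as the intersection of all orbit closures in the fibre (two disjoint closed $\Gamma$-invariant subsets are separated by an invariant function, so a fibre cannot contain two disjoint closed orbits; and any nonempty $\Gamma$-invariant closed subset contains a closed orbit by dimension induction, since $\Gamma$-orbits are locally closed). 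Applying this with $X = (\XgenGtau)_\kappa$ and $\Gamma = G^0_\kappa$ gives that $q^{-1}(y) = (\XgenGtau)_y$ contains exactly one closed $G^0_\kappa$-orbit.

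The only mild subtlety — and the step I expect to require a little care — is the passage from $\OO$ to the geometric fibre: I need that forming the GIT quotient commutes with base change to $\kappa$, i.e.\ $(\XgenGtau \sslash G^0)_\kappa = (\XgenGtau)_\kappa \sslash G^0_\kappa$, and that $(\XgenGtau)_y$ is literally the scheme-theoretic fibre $q^{-1}(y)$. The first holds because $G^0$ is flat over $\OO$ with geometrically reductive fibres, so formation of invariants commutes with the flat base change $\OO \to \kappa$ (this is part of the package in \cite{seshadri} and \cite{alper}); the second is the definition of the fibre over $y$. Once these are in place the statement is immediate from the classical closed-orbit theorem, so there is no real obstacle beyond bookkeeping. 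I would write the proof in two short paragraphs: one reducing to $\kappa$ via base change of the GIT quotient, one applying the uniqueness of the closed orbit for reductive group actions on affine schemes over an algebraically closed field.
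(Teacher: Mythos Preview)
Your approach is correct and essentially identical to the paper's: the paper's proof is the single sentence ``This follows from \cite[Theorem 3]{seshadri}.'' Your base-change discussion is unnecessary, since Seshadri's theorem is stated over an arbitrary noetherian base and applies directly to the quotient map $\XgenGtau \to \XgpsGtau$ without first reducing to a field.
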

\begin{proof} This follows from \cite[Theorem 3]{seshadri}.
\end{proof}

\begin{prop}\label{orbit_ss} Let $x$ be a geometric point of $\XgenGtau$. Then there 
is $z\in \overline{G^0\cdot x}$ such that the $G^0$-orbit of $\rho_z$ is equal to 
the $G$-semisimplification of $\rho_x$. Moreover, $G^0\cdot z$ is the unique 
closed $G^0$-orbit contained in $\overline{G^0\cdot x}$. Here the closures are taken in the fibre of $x$ over $X^{\git, \tau}_G$.
 \end{prop}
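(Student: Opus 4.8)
The plan is to reduce the statement to two facts that have just been assembled: first, that the $G$-semisimplification $\rho_x^{\semi}$ is obtained as a limit $\lim_{t\to 0}\lambda(t)\rho_x\lambda(t)^{-1}$ for a suitable cocharacter $\lambda$ of $G_{\kappa(x)}$ (this is exactly the computation carried out inside the proof of \Cref{same_det}, where $P$ is chosen minimal containing $\rho_x(\Gamma_F)$ and $\lambda$ is chosen with $P=P_\lambda$, so that $c_{P,L_\lambda}\circ\rho_x=\lim_{t\to 0}\lambda(t)\rho_x\lambda(t)^{-1}$); and second, that a point of the form $\lim_{t\to 0}\lambda(t)\cdot x$ lies in $\overline{G^0\cdot x}$ and has closed $G^0$-orbit. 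The first step is to make sense of $\lambda(t)\cdot x$ as a morphism $\Gm\to \XgenGtau$ over $\kappa(x)$: since $\lambda$ factors through $G^0_{\kappa(x)}$ (as $\Gm$ is connected) and $\XgenGtau$ carries a $G^0$-action, the orbit map gives a morphism $\Gm\to \XgenGtau$, and because $\XgenGtau$ is a closed subscheme of $G^N\times \XpsGLd$ and the $N$-tuple $(\lambda(t)\rho_x(\gamma_i)\lambda(t)^{-1})_i$ has a limit as $t\to 0$ in $G^N$ (each $\rho_x(\gamma_i)\in P_\lambda(\kappa(x))$ by minimality of $P$), this morphism extends to $\mathbb A^1\to \XgenGtau$. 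I would let $z\in \XgenGtau(\kappa(x))$ be the image of $0$; then $\rho_z=\lim_{t\to 0}\lambda(t)\rho_x\lambda(t)^{-1}=c_{P,L_\lambda}\circ\rho_x$, so by \Cref{defi_G_ss} the $G^0(\kappa(x))$-orbit of $\rho_z$ is the $G$-semisimplification of $\rho_x$, and $z\in\overline{\Gm\cdot x}\subseteq\overline{G^0\cdot x}$.

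The second step is to identify $G^0\cdot z$ as the unique closed $G^0$-orbit in $\overline{G^0\cdot x}$. By \Cref{crss}, $\rho_z$ is $G$-completely reducible: indeed $\rho_z(\Gamma_F)\subseteq L_\lambda(\kappa(x))$ and, by minimality of $P$ together with \cite[Corollary 6.4]{BMR}, the image is $L_\lambda$-irreducible, which forces $G$-complete reducibility exactly as in the converse direction of the proof of \Cref{crss}. Hence by the equivalence $(1)\Leftrightarrow(4)$ in \Cref{closed_orbit} the orbit $G^0\cdot z$ is closed. For uniqueness: $\overline{G^0\cdot x}$ is $G^0$-invariant and closed, so its image $\bar y$ in $\XgpsGtau$ is a single point (a GIT quotient separates disjoint closed invariant sets), and $\overline{G^0\cdot x}$ is contained in the fibre $(\XgenGtau)_{\bar y}$; by \Cref{closed_orbit_fibre} that fibre contains a unique closed $G^0$-orbit. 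Since $G^0\cdot z$ is a closed orbit contained in $\overline{G^0\cdot x}\subseteq (\XgenGtau)_{\bar y}$, it must be \emph{the} unique closed orbit in the fibre, and in particular the unique closed $G^0$-orbit inside $\overline{G^0\cdot x}$.

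Two small points need care. First, one should check the limit $\lim_{t\to 0}\lambda(t)\cdot x$ genuinely lies in $\XgenGtau$ and not merely in $G^N\times\XpsGLd$: this is automatic because $\XgenGtau$ is closed in $G^N\times\XpsGLd$ (Remark after \Cref{defiXgenG}), and the map $\mathbb A^1\to G^N\times\XpsGLd$ factors through $\XgenGtau$ over the dense open $\Gm$, hence over all of $\mathbb A^1$. Second, the equality $\rho_z = c_{P,L_\lambda}\circ\rho_x$ uses that $\lim_{t\to 0}\lambda(t)U_\lambda\lambda(t)^{-1}=\{1\}$, so that the projection $P_\lambda\to P_\lambda/U_\lambda\cong L_\lambda$ agrees with $g\mapsto\lim_{t\to 0}\lambda(t)g\lambda(t)^{-1}$ on $P_\lambda(\kappa(x))$; this is recorded in the proof of \Cref{same_det} and is where minimality of $P$ (guaranteeing $\rho_x(\Gamma_F)\subseteq P_\lambda(\kappa(x))$) enters.

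I expect the main obstacle to be purely bookkeeping: carefully setting up the morphism $\mathbb A^1\to \XgenGtau$ and verifying that the point it produces at $0$ really represents the conjugation-limit of $\rho_x$ (as opposed to something that only agrees on the chosen generators $\gamma_1,\dots,\gamma_N$). This is handled by \Cref{dense_again}: the limit representation and $\rho_x$ factor through the topologically finitely generated quotient $Q$, the $\gamma_i$ topologically generate $Q$, and conjugation by $\lambda(t)$ and passage to the limit are continuous, so agreement on the $\gamma_i$ propagates to agreement on all of $\Gamma_F$. Once this is in place, everything else is a direct appeal to \Cref{closed_orbit}, \Cref{crss}, \Cref{closed_orbit_fibre}, and \Cref{defi_G_ss}.
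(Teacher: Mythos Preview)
Your proposal is correct and follows essentially the same approach as the paper: choose a minimal R-parabolic $P=P_\lambda$ containing $\rho_x(\Gamma_F)$, take $z=\lim_{t\to 0}\lambda(t)\cdot x$ (which lands in $\XgenGtau$ because the latter is closed in $G^N\times\XpsGLd$), identify $\rho_z$ with $c_{P,L_\lambda}\circ\rho_x$ via \Cref{dense_again}, and then invoke \Cref{crss}, \Cref{closed_orbit}, and \Cref{closed_orbit_fibre} exactly as you do. The paper's proof is terser but the logical skeleton is identical; your extra care about why the limit on the generators $\gamma_i$ determines $\rho_z$ on all of $\Gamma_F$ is a welcome elaboration of what the paper packages into the single citation of \Cref{dense_again}.
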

 
\begin{proof}
    Let $z$ be a point in the closure of the $G^0$-orbit of $x$ in $G^N(\kappa(x))$, such that the $G^0$-orbit of $z$ is closed.
    Since $\XgenGtau$ is closed in $X^{\ps}_{\GL_d}\times G^N$ by \Cref{FN}, we deduce that $z\in \XgenGtau(\kappa(x))$. 
    Lemma \ref{dense_again} implies that the $G^0$-orbit of $\rho_{z}$ is the $G$-semisimplification of $\rho_x$.
    Thus $\rho_{z}$ is $G$-completely reducible  by Proposition \ref{crss}. 
    Thus the orbit $G\cdot z$ is closed by Proposition \ref{closed_orbit}.
    Let $y$ be the image of $x$ in $\XgpsGtau$. The fibre $(\XgenGtau)_y$ is a closed $G^0$-invariant subscheme $\XgenGtau$ and thus it will contain $\overline{G^0\cdot x}$. The last assertion follows from Lemma \ref{closed_orbit_fibre}.
\end{proof}

\begin{cor}\label{same_image} Let $y$ be a geometric point of $\XpsGLd$ and let 
$x, x'\in \XgenGtau(\kappa(y))$. Then $x$ and $x'$ map to the same point in 
$\XgpsGtau$ if and only if $\rho_x$ and $\rho_{x'}$ have the same $G$-semisimplification. 
\end{cor}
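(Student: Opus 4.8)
The plan is to deduce this from Proposition~\ref{orbit_ss} and Lemma~\ref{closed_orbit_fibre}. Write $q\colon \XgenGtau \to \XgpsGtau$ for the quotient morphism. Since $y$ is a geometric point, $\kappa := \kappa(y)$ is algebraically closed, and $x$, $x'$, together with all the auxiliary points introduced below, are $\kappa$-points lying over $y$. First I would record two elementary observations. (a) The morphism $q$ is constant on orbit closures: for a $\kappa$-point $w$ over $y$ the fibre $q^{-1}(q(w))$ is closed and $G^0$-stable, hence contains $\overline{G^0\cdot w}$, so $q(z) = q(w)$ for every $z \in \overline{G^0\cdot w}$. (b) Because $\XgenGtau$ is a closed subscheme of $G^N \times \XpsGLd$ with $G^0$ acting by simultaneous conjugation on the $G^N$-factor, and because $\gamma_1,\dots,\gamma_N$ generate a dense subgroup of $\Gamma$ (cf.\ Lemma~\ref{dense_again}), two $\kappa$-points $z, z'$ over $y$ lie in the same $G^0$-orbit if and only if the representations $\rho_z$ and $\rho_{z'}$ are $G^0(\kappa)$-conjugate.

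Next I would apply Proposition~\ref{orbit_ss} to $x$ and to $x'$, obtaining points $z \in \overline{G^0\cdot x}$ and $z' \in \overline{G^0\cdot x'}$ whose $G^0$-orbits are closed and whose $G^0(\kappa)$-conjugacy classes $[\rho_z]$ and $[\rho_{z'}]$ equal $\rho_x^{\semi}$ and $\rho_{x'}^{\semi}$ respectively. By (a) we have $q(z) = q(x)$ and $q(z') = q(x')$.

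Finally I would run the two implications. If $q(x) = q(x')$, then $z$ and $z'$ lie in the same fibre of $q$, and their $G^0$-orbits are both closed; by Lemma~\ref{closed_orbit_fibre} that fibre contains a unique closed $G^0$-orbit, so $G^0\cdot z = G^0\cdot z'$, and hence $\rho_x^{\semi} = [\rho_z] = [\rho_{z'}] = \rho_{x'}^{\semi}$. Conversely, if $\rho_x^{\semi} = \rho_{x'}^{\semi}$ then $[\rho_z] = [\rho_{z'}]$, so by (b) $z$ and $z'$ lie in the same $G^0$-orbit, and therefore $q(x) = q(z) = q(z') = q(x')$. The only step that needs a little care is (b)---passing between the representation-theoretic notion of $G$-semisimplification and the scheme-theoretic statement about $G^0$-orbits---but this is precisely what the density of $\gamma_1,\dots,\gamma_N$ and the $G^0$-equivariant closed embedding $\XgenGtau \hookrightarrow G^N \times \XpsGLd$ provide; the rest is formal.
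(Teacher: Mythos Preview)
Your proof is correct and follows essentially the same strategy as the paper's: both arguments use Proposition~\ref{orbit_ss} to replace $x$ and $x'$ by points $z,z'$ in their orbit closures whose closed $G^0$-orbits realise the $G$-semisimplifications, and then invoke a GIT fact to compare $z$ and $z'$. The only cosmetic difference is that the paper quotes Seshadri's Theorem~3(ii) directly (same image in the GIT quotient $\Leftrightarrow$ orbit closures meet), whereas you split this into the trivial observation~(a) together with the uniqueness of the closed orbit in a fibre (Lemma~\ref{closed_orbit_fibre}, itself a consequence of Seshadri's Theorem~3); your observation~(b) is simply the definition of the $G^0$-action on $\XgenGtau$ by conjugation and does not actually require the density argument you sketch.
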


\begin{proof}
    Let $x$ and $x'$ map to the same point in $\XgpsGtau$.
    By \Cref{closed_orbit_fibre} there is a unique closed $G^0$-orbit in the fibre of $y$, let $z \in \XgpsGtau$ be a point in this orbit.
    It follows from Proposition \ref{orbit_ss} that the $G$-semisimplification of $\rho_x$ and of $\rho_{x'}$ are equal to the $G^0$-orbit of $\rho_z$. The other implication follows from directly from Proposition \ref{orbit_ss}.
\end{proof}

\begin{cor}\label{conjugate_Levi} Let $y$ be a geometric point of $\XgpsGtau$. Assume that 
$G$ is split and let $T$ be  a maximal split torus of $G$ 
defined over $\OO$. Then there is an R-Levi $L$ of $G$ defined over $\OO$, which contains $T$, 
and $x\in \XgenGtau(\kappa(y))$ such that the following hold:
\begin{enumerate}
\item $x$ maps to $y$;
\item $\rho_x(\Gamma_F)\subseteq L(\kappa(y))$;
\item $\rho_x$ is $L_{\kappa(y)}$-irreducible.
\end{enumerate}
Moreover, if $x'\in \XgenGtau(\kappa(y))$ maps to $y$ and $Q$ is a minimal R-parabolic of $G_{\kappa(y)}$ 
containing $\rho_{x'}(\Gamma_F)$ with an R-Levi $M$ then there is $g\in G^0(\kappa(y))$ such that $M$ and $L_{\kappa(y)}= g M g^{-1}$ and $\rho_x(\gamma)= g c_{Q,M}(\rho_{x'}(\gamma))g^{-1}$ for all $\gamma\in \Gamma_F$.
\end{cor}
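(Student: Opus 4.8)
The plan is to reduce everything to results already assembled in this section, chiefly \Cref{orbit_ss} (and the description of $G$-semisimplification via minimal R-parabolics) together with the finiteness of closed orbits in a fibre, \Cref{closed_orbit_fibre}. First I would construct the point $x$. Starting from an arbitrary $x'\in \XgenGtau(\kappa(y))$ mapping to $y$, I would pick a minimal R-parabolic $Q=P_{\nu}$ of $G_{\kappa(y)}$ containing $\rho_{x'}(\Gamma_F)$, with R-Levi $M=L_{\nu}$, and form $z:=\lim_{t\to 0}\nu(t)\cdot x'$ exactly as in the proof of \Cref{orbit_ss}; then $z\in\XgenGtau(\kappa(y))$, it maps to $y$, and $\rho_z$ is $G$-completely reducible with $\rho_z(\Gamma_F)\subseteq M(\kappa(y))$ and $\rho_z$ being $M$-irreducible (by minimality of $Q$ and \cite[Corollary 6.4]{BMR}, as in the proof of \Cref{crss}). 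So $M$ is already an R-Levi with the required properties, except that it need not contain the fixed split torus $T$ and need not be defined over $\OO$.

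The next step is to move $M$ to a standard position. By \Cref{conj_P} (applied over the algebraically closed field $\kappa(y)$, which is an $\OO$-algebra via $y$), there is an R-parabolic $P$ of $G$ defined over $\OO$ with R-Levi $L$ containing $T$, and $g\in G^0(\kappa(y))$, with $gQg^{-1}=P_{\kappa(y)}$ and $gMg^{-1}=L_{\kappa(y)}$. Then I set $x:=g\cdot z$, equivalently $\rho_x(\gamma)=g\,c_{Q,M}(\rho_{x'}(\gamma))\,g^{-1}$ for all $\gamma$, since $c_{Q,M}\circ\rho_{x'}=\rho_z$ up to the identification $Q\twoheadrightarrow M$ used in defining $z$. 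Because $g\in G^0(\kappa(y))$ and $y$ is a $G^0$-invariant point of the GIT quotient, $x$ still maps to $y$; conjugation by $g$ carries $\rho_z(\Gamma_F)\subseteq M(\kappa(y))$ to $\rho_x(\Gamma_F)\subseteq L(\kappa(y))$, and carries $M$-irreducibility to $L_{\kappa(y)}$-irreducibility. This gives (1), (2), (3).

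For the "moreover" part, suppose $x'\in\XgenGtau(\kappa(y))$ maps to $y$ and $Q,M$ are as in the statement. Running the construction above produces some $g_0\in G^0(\kappa(y))$ with $g_0 M g_0^{-1}=L_{\kappa(y)}$ and a point $x_0:=g_0\cdot z$ satisfying (1)--(3), where $z=\lim_{t\to0}\nu(t)\cdot x'$ with $Q=P_\nu$. By \Cref{same_image}, $x_0$ and the point $x$ constructed in the first paragraph have the same $G$-semisimplification (both equal the semisimplification of whatever $x'$ we used), hence by \Cref{closed_orbit_fibre} (uniqueness of the closed orbit in the fibre over $y$) and \Cref{closed_orbit} (both $\rho_x$ and $\rho_{x_0}$ are $G$-completely reducible, so both orbits $G^0\cdot x$, $G^0\cdot x_0$ are closed) they lie in the same closed $G^0$-orbit; so $x=h\cdot x_0$ for some $h\in G^0(\kappa(y))$. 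Then $hg_0 L_{\kappa(y)} (hg_0)^{-1}=h L_{\kappa(y)} h^{-1}$; since both $L_{\kappa(y)}$ and $hg_0 M (hg_0)^{-1}$ are R-Levi subgroups of $G_{\kappa(y)}$ in which $\rho_x(\Gamma_F)$ is irreducible (using that conjugating preserves irreducibility, and $\rho_x$ is $L_{\kappa(y)}$-irreducible), \Cref{irr_conj} shows they are conjugate by an element of $Z_G(\rho_x(\Gamma_F))^0(\kappa(y))\subseteq G^0(\kappa(y))$; after multiplying $h$ by such an element we may take $g:=hg_0\in G^0(\kappa(y))$ with $L_{\kappa(y)}=gMg^{-1}$ and $\rho_x(\gamma)=g\,c_{Q,M}(\rho_{x'}(\gamma))\,g^{-1}$ for all $\gamma\in\Gamma_F$.

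The main obstacle I anticipate is bookkeeping of the various $G^0$-conjugations so that they compose to a single $g\in G^0(\kappa(y))$ and so that the identity $\rho_x(\gamma)=g\,c_{Q,M}(\rho_{x'}(\gamma))g^{-1}$ holds on the nose (not merely up to a further conjugation); the key points that make this work are that $c_{Q,M}\circ\rho_{x'}$ is literally the limit $\lim_{t\to0}\nu(t)\rho_{x'}\nu(t)^{-1}$ when $Q=P_\nu$ and $M=L_\nu$, and that \Cref{irr_conj} lets one absorb the ambiguity into $Z_G(\cdot)^0$, which sits inside $G^0$. One should also double-check that $\kappa(y)$ is algebraically closed so that \Cref{conj_P}, \Cref{irr_conj} and the structure theory apply — this is built into the convention that $y$ is a \emph{geometric} point.
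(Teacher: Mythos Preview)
Your proposal is correct and follows essentially the same route as the paper: construct $x$ as a point in the unique closed $G^0$-orbit of the fibre (you via the explicit limit from \Cref{orbit_ss}, the paper directly via \Cref{closed_orbit_fibre} and \Cref{closed_orbit}), move the resulting R-Levi to one containing $T$ using \Cref{conj_P}, and settle the ``moreover'' clause by combining uniqueness of the closed orbit with \Cref{irr_conj}. One small fix: when you re-run the construction in the ``moreover'' part, \Cref{conj_P} only yields $g_0 M g_0^{-1}=L'_{\kappa(y)}$ for \emph{some} R-Levi $L'$ containing $T$, not necessarily the already-fixed $L$; this is harmless, however, since your subsequent application of \Cref{irr_conj} to $L_{\kappa(y)}$ and $hg_0 M(hg_0)^{-1}$ does not use that equality and goes through unchanged (indeed the extra \Cref{conj_P} step there is unnecessary---you could apply \Cref{irr_conj} directly after obtaining $x=h'\cdot z$ from closed-orbit uniqueness, which is exactly what the paper does).
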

\begin{proof} Lemma \ref{closed_orbit_fibre} implies that there exists $x\in \XgenGtau(\kappa(y))$ such that $G^0 \cdot x$ is a closed. The representation $\rho_x$ is $G$-completely reducible by Proposition \ref{closed_orbit}. Let $P$ be a minimal R-parabolic of $G_{\kappa(x)}$ containing $\rho_x(\Gamma_F)$. 
Since $\rho_x$ is $G$-completely reducible the image of $\rho_x(\Gamma_F)$ is contained in an R-Levi 
$L$ of $P$. Lemma \ref{conj_P} implies that after replacing $L$ and $P$ by a conjugate by $g\in G^0(\kappa(y))$
we may assume that $L$ and $P$ are defined over $\OO$ and $L$ contains $T$. 

Let $Q$ be a minimal  R-parabolic containing $\rho_x$ and let $M$ be an R-Levi of $Q$. Since
$x$ and $x'$ both map to $y$, Corollary 
\ref{same_image} implies that $\rho_x$ and $\rho_{x'}$ have the same $G$-semisimplification. It follows from Proposition \ref{crss} that $\rho_x$ is conjugate to 
$\rho':=c_{Q, M}\circ \rho_{x'}$ by  $g\in G^0(\kappa(y))$. After replacing $(Q,M)$ by a conjugate, 
we may assume that $\rho_x=\rho'$, so that $\rho_x(\Gamma_F) \subseteq M$, and $Q$ is minimal with respect to the property containing $\rho_x(\Gamma_F)$. Let $H$ be the Zariski closure of $\rho_x(\Gamma_F)$ in $G(\kappa(y))$. 
Minimality of $P$ and $Q$ implies that $H$ is both $L$-irreducible and $M$-irreducible.
Proposition \ref{irr_conj} implies that $L$ and $M$ are conjugate by an 
element $h\in Z_{G}(H)^0(\kappa(y))\subseteq G^0(\kappa(y))$. Since the image of $\rho'$ is contained in $H$, 
we have $h \rho'(\gamma) h^{-1}= \rho'(\gamma) = \rho(\gamma)$ for all $\gamma\in \Gamma_F$.
\end{proof}

\subsection{\texorpdfstring{Definition of $X^{\gen, \tau}_{G, \rhobarss}$}{Definition of XgentauGrhobarss}}
\label{sec_def_XgenGtaurhobarss}

\Cref{GGLd_finite} implies that the map $\RpsGLd\rightarrow (\AgenGtau)^{G^0}$ is finite. Since $\RpsGLd$ is a complete local ring with residue 
field $k$, part (1) of  \cite[\href{https://stacks.math.columbia.edu/tag/04GH}{Tag 04GH}]{stacks-project}  implies that 
\begin{equation}\label{semi-local}
R^{\gps,\tau}_G:=(A^{\gen, \tau}_G)^{G^0}\cong R_1\times \ldots \times R_n,
\end{equation}
where each $R_j$ is a complete local noetherian $\RpsGLd$-algebra with residue field a finite extension of 
$k$. The product can be indexed by the set of $\Gal(\kbar/k)$-orbits in $X^{\gps,\tau}_G(\kbar)$. 
From  \eqref{semi-local} we obtain a $G^0$-equivariant decomposition 
\begin{equation}\label{disjoint-union}
\XgenGtau\cong \coprod_j \XgenGtau\times_{\XgpsGtau} \Spec R_j.
\end{equation}
Moreover, we have  
\begin{equation}\label{RjGIT}
(\XgenGtau\times_{\XgpsGtau} \Spec R_j)\sslash G^0 \cong \Spec R_j.
\end{equation}

\begin{lem}\label{connected_comp} Each $\XgenGtau\times_{\XgpsGtau} \Spec R_j$ is connected. In particular, 
\eqref{disjoint-union} is the decomposition of $\XgenGtau$ into connected components. 
\end{lem}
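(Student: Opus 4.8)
The plan is to identify the idempotents of $\AgenGtau$ with those of its subring of $G^0$-invariants, and then to read off the connected components of $\XgenGtau$ from \eqref{semi-local}. Granting for the moment that every idempotent $e \in \AgenGtau$ lies in $R^{\gps,\tau}_G = (\AgenGtau)^{G^0}$, the lemma follows at once: by \eqref{semi-local} we have $R^{\gps,\tau}_G \cong R_1 \times \dots \times R_n$ with each $R_j$ local, so the idempotents of $R^{\gps,\tau}_G$ are the sums of the orthogonal ``partial unit'' idempotents $e_1', \dots, e_n'$, where $e_j'$ cuts out the factor $R_j$. Since any idempotent of $R^{\gps,\tau}_G$ is an idempotent of $\AgenGtau$ and, by the claim, conversely, we get $\mathrm{Idem}(\AgenGtau) = \mathrm{Idem}(R^{\gps,\tau}_G)$, whose atoms are exactly $e_1', \dots, e_n'$. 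Hence the connected components of $\XgenGtau$ are the $\Spec(\AgenGtau e_j') = \XgenGtau \times_{\XgpsGtau} \Spec R_j$, which is precisely the decomposition \eqref{disjoint-union}; in particular each piece is connected.

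So the task reduces to showing that every idempotent of $\AgenGtau$ is $G^0$-invariant. Since $\RpsGLd$ is complete local noetherian and $\AgenGtau$ is a quotient of the finitely generated $\RpsGLd$-algebra $\AgenGLd$ (\Cref{XgenGLdA}(1), \Cref{defiXgenG}), the scheme $X := \XgenGtau$ is noetherian, hence has finitely many connected components $X = X^{(1)} \sqcup \dots \sqcup X^{(m)}$, and every idempotent of $\AgenGtau$ is a sum of the clopen idempotents $e_1, \dots, e_m$ cutting these out. Thus it is enough to prove that each connected component $X^{(i)}$ is stable under the $G^0$-action — the special case for our situation of the general fact that a connected group scheme fixes the connected components of any scheme on which it acts. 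To see this, note that $G^0 \to \Spec \OO$, being the neutral component of a smooth affine $\OO$-group scheme, is faithfully flat of finite type with geometrically connected fibres (the fibres $G^0_k$ and $G^0_L$ are connected and carry the identity section). Consequently, for any connected $\OO$-scheme $C$ the projection $G^0 \times C \to C$ is open and surjective with connected fibres, so $G^0 \times C$ is connected. Applying this with $C = X^{(i)}$, the action map $\sigma : G^0 \times X \to X$ restricted to the connected scheme $G^0 \times X^{(i)}$ has image in a single connected component of $X$, and as $\sigma|_{\{1\}\times X^{(i)}}$ is the inclusion $X^{(i)} \hookrightarrow X$ this component must be $X^{(i)}$ itself. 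Running this over all $i$ and using that the $X^{(i)}$ partition $X$ forces $\sigma^{-1}(X^{(i)}) = G^0 \times X^{(i)}$, which is exactly the statement that $e_i \in (\AgenGtau)^{G^0}$.

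The only step requiring a little care is the claim that $G^0 \times_{\Spec \OO} C$ is connected for $C$ connected — i.e.\ that $G^0$ has geometrically connected fibres over $\Spec \OO$ — after which the ``open, surjective, connected fibres over a connected base $\Rightarrow$ connected total space'' bookkeeping is elementary and the rest is formal. As an alternative one could avoid idempotents altogether: \Cref{closed_orbit_fibre} and \Cref{orbit_ss} imply that every geometric fibre of $\XgenGtau \to \XgpsGtau$ is connected, since each $G^0$-orbit is connected and every orbit closure in a given fibre contains the unique closed $G^0$-orbit of that fibre; because the GIT quotient map is submersive and $\Spec R_j$ is connected, this again yields that $\XgenGtau \times_{\XgpsGtau} \Spec R_j$ is connected. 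I would, however, present the idempotent argument, as it is self-contained.
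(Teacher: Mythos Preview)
Your proof is correct and rests on the same key observation as the paper's: connected components of $\XgenGtau$ are $G^0$-invariant because $G^0$ has geometrically connected fibres. The packaging differs slightly. The paper cites \cite[Lemma 2.1]{BIP_new} for $G^0$-invariance of irreducible (hence connected) components and then invokes \cite[Theorem 3 (iii)]{seshadri} to conclude that a nontrivial decomposition of $\XgenGtau\times_{\XgpsGtau}\Spec R_j$ into $G^0$-invariant clopen pieces would disconnect the GIT quotient $\Spec R_j$. You instead argue directly that every idempotent of $\AgenGtau$ is $G^0$-invariant and therefore already lies in $R^{\gps,\tau}_G\cong\prod_j R_j$, so the only idempotents are the partial units $e_j'$. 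This avoids the appeal to Seshadri and is more self-contained; it is the natural algebraic translation of the paper's topological argument. Your alternative route through connectedness of the geometric fibres (via \Cref{closed_orbit_fibre} and \Cref{orbit_ss}) is also valid and genuinely different, though it requires knowing that the GIT quotient map is submersive.
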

\begin{proof} Since a connected component is a union of irreducible 
components, \cite[Lemma 2.1]{BIP_new} implies that the connected components of $\XgenGtau\times_{\XgpsGtau} \Spec R_j$ are $G^0$-invariant. It follows from \cite[Theorem 3 (iii)]{seshadri} 
that if there were more than one then their images in $\Spec R_j$ under \eqref{RjGIT}
would disconnect it.  This is not possible as $R_j$ is  a local ring.
\end{proof}

The representation $\rhobar: \Gamma_F\rightarrow G(k)$ that we have fixed in 
\Cref{sec_gen_matrix} gives us a point $x_0\in X^{\gen, \tau}_G(k)$. We denote the unique
connected component of $X^{\gen,\tau}_G$ which contains $x_0$ by $X^{\gen, \tau}_{G, \rhobarss}$, 
where $\rhobarss$ is the $G$-semisimplification of the representation
$\Gamma_F \overset{\rhobar}{\longrightarrow} G(k) \hookrightarrow G(\kbar)$. The justification for this 
notation is given in the lemma below. Let 
$$ R^{\gps, \tau}_{G, \rhobarss}:= R_{j_0}, \quad X^{\gps, \tau}_{G, \rhobarss}:= \Spec R^{\gps, \tau}_{G, \rhobarss},$$
where $j_0$ is the index corresponding to the image of 
$x_0$ in $X^{\gps, \tau}_G(k)\subseteq X^{\gps, \tau}_G(\kbar)$.  Since 
$x_0$ is $k$-rational, the residue field of $R^{\gps, \tau}_{G, \rhobarss}$ is equal to $k$. 

\begin{lem}\label{get_comp} Let $x$ be a geometric point of $\XgenGtau$ above the closed point 
of $\XpsGLd$. Then $x\in X^{\gen, \tau}_{G, \rhobarss}$ if and only if 
the $G$-semisimplification of $\rho_x$ is equal to $\rhobarss$.
\end{lem}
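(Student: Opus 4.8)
\textbf{Proof proposal for \Cref{get_comp}.}
The plan is to reduce the statement to \Cref{same_image}, which already characterises when two geometric points of $\XgenGtau$ over the same point of $\XpsGLd$ have the same image in $\XgpsGtau$ in terms of their $G$-semisimplifications, together with \Cref{connected_comp}, which identifies the connected components of $\XgenGtau$ with the local factors $R_j$ of $R^{\gps,\tau}_G$ and hence with the $\Gal(\kbar/k)$-orbits in $X^{\gps,\tau}_G(\kbar)$. First I would observe that since $x$ lies above the closed point of $\XpsGLd$, both $x$ and the base point $x_0$ (extended to $\kbar$) lie over the \emph{same} point $y_0$ of $\XpsGLd$, namely its closed point, whose residue field is $k$ and which therefore has a unique $\kbar$-point. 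So \Cref{same_image} applies directly: the images of $x$ and $x_0$ in $\XgpsGtau(\kbar)$ coincide if and only if $\rho_x$ and $\rhobar$ (viewed over $\kbar$) have the same $G$-semisimplification, i.e.\ if and only if $\rho_x^{\semi}=\rhobarss$ as a $G^0(\kbar)$-conjugacy class.

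Next I would translate ``same image in $\XgpsGtau(\kbar)$'' into ``same connected component''. By \eqref{disjoint-union} and \eqref{RjGIT}, the connected component $X^{\gen,\tau}_{G,\rhobarss}$ is exactly the preimage in $\XgenGtau$ of the local factor $\Spec R_{j_0}$, and the $\Spec R_j$ are precisely the $\Gal(\kbar/k)$-orbits of $\kbar$-points of $\XgpsGtau$ by the discussion preceding \eqref{semi-local}. Therefore $x\in X^{\gen,\tau}_{G,\rhobarss}$ if and only if the image of $x$ in $\XgpsGtau(\kbar)$ lies in the $\Gal(\kbar/k)$-orbit of the image of $x_0$. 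Since the image of $x_0$ is a $k$-rational point of $\XgpsGtau$, its $\Gal(\kbar/k)$-orbit is a single point, so $x\in X^{\gen,\tau}_{G,\rhobarss}$ if and only if $x$ and $x_0$ have \emph{literally} the same image in $\XgpsGtau(\kbar)$. Combining this with the previous paragraph gives the claim.

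I do not expect any serious obstacle here; the statement is essentially a bookkeeping consequence of results already in hand. The one point requiring a little care is the rationality input: one must check that the image of $x_0$ in $X^{\gps,\tau}_G$ is $k$-rational so that its $\Gal(\kbar/k)$-orbit is a singleton, but this is recorded just after the definition of $R^{\gps,\tau}_{G,\rhobarss}$ (the residue field of $R_{j_0}$ is $k$ because $x_0$ is $k$-rational). A secondary subtlety is making sure \Cref{same_image} is being applied with $y$ the (unique) geometric point of $\XpsGLd$ above the closed point and with $\kappa(y)=\kbar$; since $x$ is given over some algebraically closed field, one first base-changes $x_0$ to the same field, which does not change its image in $\XgpsGtau$ up to the identification of geometric points. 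With these remarks the proof is a direct citation chain: $x\in X^{\gen,\tau}_{G,\rhobarss}\iff$ (image of $x$ in $\XgpsGtau$) $=$ (image of $x_0$) $\iff \rho_x^{\semi}=\rhobarss$.
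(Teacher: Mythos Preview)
Your proposal is correct and follows essentially the same route as the paper: both reduce to \Cref{same_image} via \Cref{connected_comp} and the fact that the residue field of $R^{\gps,\tau}_{G,\rhobarss}$ is $k$. The paper phrases the singleton step as ``$X^{\gps,\tau}_{G,\rhobarss}(\kappa(x))$ is a point'' while you phrase it via the $\Gal(\kbar/k)$-orbit of the image of $x_0$ being trivial, but these are the same observation; your note that one should work over $\kappa(x)$ rather than literally $\kbar$ is exactly the right care to take.
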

\begin{proof} Let $z\in X^{\gen, \tau}_{G, \rhobarss}(\kappa(x))$ be a point above $x_0$. 
It follows from Lemma \ref{connected_comp} that $x$ and $z$ lie on the same connected 
component of $X^{\gen, \tau}_G$ if and only if their images in $X^{\gps, \tau}_G$ lie on the 
same connected component of $X^{\gps, \tau}_G$. Since $z\in X^{\gen, \tau}_{G, \rhobarss}$ 
and the residue field of $R^{\gps, \tau}_{G, \rhobarss}$ is $k$, $X^{\gps, \tau}_{G, \rhobarss}(\kappa(x))$
is a point, and thus this is equivalent to $z$ and $x$ mapping to the same point  in 
$\XgpsGtau$. Corollary \ref{same_image} implies that this is equivalent 
to $\rho_x$ and $\rho_z$ having  the same $G$-semi\-simpli\-fi\-cation.
\end{proof}

\begin{remar}\label{rem_k_rat_comp}
After replacing $L$ by an unramified extension we may assume that all the
factors in \eqref{semi-local} have residue field $k$. It follows from Lemma \ref{get_comp} 
that in this case the connected components of $X^{\gen, \tau}_G$ are indexed 
by $G^0(\kbar)$-conjugacy classes of continuous $G$-completely reducible representations
$\rhobarss: \Gamma_F \rightarrow G(\kbar)$ such that $\det\circ (\tau\circ \rhobarss)^{\lin}=\Dbar$. Lemma \ref{connected_comp} implies that after replacing $L$ 
by a finite unramified extension we may assume that every connected component of 
$X^{\gen, \tau}_G$ has a $k$-rational point.
\end{remar}

\section{\texorpdfstring{Lafforgue's $G$-pseudocharacters}{Lafforgue's G-pseudocharacters}}\label{Laf}

In his work on the local Langlands correspondence for function fields \cite{Laf} Vincent Lafforgue introduced a notion of $G$-\emph{pseudocharacter} for a reductive group $G$.
If $G=\GL_d$ then Emerson and Morel \cite{emerson2023comparison} have shown that a 
$\GL_d$-pseudocharacter after Lafforgue is equivalent to a determinant law after Chenevier \cite{che_durham}. If $d!$ is invertible in the base ring then the trace 
of representation valued in $\GL_d$ determines its $\GL_d$-pseudocharacter. In the arithmetic 
setting this was first used  by Wiles \cite{Wiles1988} and Taylor \cite{Tay91}.

\subsection{\texorpdfstring{Definition of $G$-pseudocharacter}{Definition of G-pseudocharacter}} The definition of $G$-pseudocharacter we use is a slight modification of Lafforgue’s original
definition \cite[Section 11]{Laf}, in that we work over the base ring $\OO$ and allow arbitrary (disconnected) generalised reductive group schemes for $G$.

\begin{defi}\label{LafPC} Let $\Gamma$ be an abstract group and let $A$ be a commutative $\OO$-algebra. A \emph{$G$-pseudocharacter} $\Theta$ of $\Gamma$ over $A$ is a sequence $(\Theta_n)_{n \geq 1}$ of $\OO$-algebra maps
$$\Theta_n : \OO[G^n]^{G^0} \to \mathrm{Map}(\Gamma^n,A)$$ for $n \geq 1$, satisfying the following conditions\footnote{Here $G$ acts on $G^n$ by $g \cdot (g_1, \dots, g_n) = (gg_1g^{-1}, \dots, gg_ng^{-1})$. This induces a rational action of $G$ on the affine coordinate ring $\OO[G^n]$ of $G^n$. The submodule $\OO[G^n]^{G^0} \subseteq \OO[G^n]$ is defined as the rational invariant module of the $G^0$-representation $\OO[G^n]$. It is an $\OO$-subalgebra, since $G$ acts by $\OO$-linear automorphisms.}:
\begin{enumerate}
    \item For each $n,m \geq 1$, each map $\zeta : \{1, \dots, m\} \to \{1, \dots,n\}$, $f \in \OO[G^m]^{G^0}$ and $\gamma_1, \dots, \gamma_n \in \Gamma$, we have
    $$ \Theta_n(f^{\zeta})(\gamma_1, \dots, \gamma_n) = \Theta_m(f)(\gamma_{\zeta(1)}, \dots, \gamma_{\zeta(m)}) $$
    where $f^{\zeta}(g_1, \dots, g_n) = f(g_{\zeta(1)}, \dots, g_{\zeta(m)})$.
    \item For each $n \geq 1$, for each $\gamma_1, \dots, \gamma_{n+1} \in \Gamma$ and each $f \in \OO[G^n]^{G^0}$, we have
    $$ \Theta_{n+1}(\hat f)(\gamma_1, \dots, \gamma_{n+1}) = \Theta_n(f)(\gamma_1, \dots, \gamma_n\gamma_{n+1}) $$
    where $\hat f(g_1, \dots, g_{n+1}) = f(g_1, \dots, g_ng_{n+1})$.
\end{enumerate}
\end{defi}
We denote the set of $G$-pseudocharacters of $\Gamma$ over $A$ by $\PC_G^{\Gamma}(A)$.
If $f : A \to B$ is a homomorphism of $\OO$-algebras, then there is an induced map $f_* : \mathrm{PC}_{G}^{\Gamma}(A) \to \mathrm{PC}_{G}^{\Gamma}(B)$.
For $\Theta \in \mathrm{PC}_{G}^{\Gamma}(A)$, the image $f_*(\Theta)$ is called the \emph{specialisation} of $\Theta$ along $f$ and is denoted by $\Theta \otimes_A B$. When $\varphi : G \to H$ is a homomorphism of generalised reductive $\OO$-group schemes, the induced maps $\varphi^*_n : \OO[H^n]^{H^0} \to \OO[G^n]^{G^0}$ give rise to an $H$-pseudocharacter $(\Theta_n \circ \varphi^*_n)_{n \geq 1}$. By analogy with the notation for representations we denote this $H$-pseudocharacter by $\varphi \circ \Theta$. So we also have an induced map $\PC_G^{\Gamma}(A) \to \PC_H^{\Gamma}(A)$.
It is easy to verify that specialisation along $f : A \to B$ commutes with composition with $\varphi$, i.e. $(\varphi \circ \Theta) \otimes_A B = \varphi \circ (\Theta \otimes_A B)$.

When $\Gamma$ is a topological group, $A$ is a topological ring and for all $n \geq 1$, the map $\Theta_n$ has image in the set $\Cont(\Gamma^n, A)$ of continuous maps $\Gamma^n \to A$, we say that $\Theta$ is \emph{continuous}. We denote the set of continuous $G$-pseudocharacters of $\Gamma$ with values in $A$ by $\cPC_G^{\Gamma}(A)$.

When $\OO'$ is a commutative $\OO$-algebra and $A$ is a commutative $\OO'$-algebra the natural maps $\OO[G^n]^{G^0} \to \OO'[G^n]^{G^0}$ induce a map $\PC_{G_{\OO'}}^{\Gamma}(A)\to \PC_G^{\Gamma}(A)$. This map is always a bijection, as we will now explain. When $\OO'$ is flat over $\OO$ the map $\OO[G^n]^{G^0} \otimes_{\OO} \OO' \to \OO'[G^n]^{G^0}$ is an isomorphism by the universal coefficient theorem in rational cohomology, see \cite[Proposition I.4.18 (a)]{Jantzen}. The case of a general $\OO$-algebra is treated in \cite[Proposition 3.22]{quast}. We will use this fact whenever we need to extend the base ring $\OO$.

\subsection{\texorpdfstring{Deformations of $G$-pseudocharacters}{Deformations of G-pseudocharacters}}\label{sec_def_GPC}
We adopt the deformation theoretic setup of subsection \ref{def_probs}.
Let $\kappa$ be either a finite or a local field, which is an $\OO$-algebra, 
equipped with its natural topology. Let $\Lambda$ be the coefficient ring for $\kappa$ and let $\Aa_{\Lambda}$ be the category of local artinian $\Lambda$-algebras with residue field $\kappa$, equipped with the natural topology.  

In this section $G$ is a generalised reductive group over $\OO$.
We consider the following deformation problem for $G$-pseudocharacters.

\begin{defi} \label{pseudodeffunctor} Let $\overline{\Theta} \in \cPC^{\Gamma}_G(\kappa)$ be a continuous $G$-pseudocharacter of $\Gamma$ with values in $\kappa$. We define the \emph{deformation functor}
$$    D_{\overline{\Theta}} : \Aa_{\Lambda} \to \Set, \quad 
    A \mapsto \{\Theta \in \cPC_G^{\Gamma}(A) \mid \Theta \otimes_A \kappa = \overline{\Theta}\}$$
that sends an object $A \in \Aa_{\Lambda}$ to the set of continuous $G$-pseudocharacters $\Theta$ of $\Gamma$ over $A$ with $\Theta \otimes_A \kappa = \overline{\Theta}$.
\end{defi}

The deformation problem for $G$-pseudocharacters admits a universal deformation ring, which we will denote by $\RpsG$ when $\Thetabar = \Theta_{\rhobar}$. We will denote the universal $G$-pseudocharacter by $\Theta^u_G$ and will drop the index $G$, when it is clear which group scheme we are working with.

\subsection{\texorpdfstring{Comparison of $X^{\gps,\tau}_{G,\rhobarss}$ with $X^{\ps}_G$}{Comparison of XgpsGrhobarss with XpsG}}\label{comparison} In this section we assume that $\Gamma$ is
a profinite topologically finitely generated group. It has been shown by the second author in \cite[Theorem A]{quast} that $\RpsG$ is a complete noetherian local $\Lambda$-algebra. 

Let $A$ be a commutative $\RpsGLd$-algebra. We have a natural transformation
$$ \vartheta_A : X^{\gen,\tau}_G(A) \to \PC_G^{\Gamma}(A), \quad\rho \mapsto \Theta_{\rho}, $$
that attaches to a representation $\rho$ its $G$-pseudocharacter $\Theta_{\rho}$.
By \cite[Theorem 3.20]{quast} $\PC_G^{\Gamma}$ is representable by an affine scheme, so $\vartheta : X^{\gen,\tau}_G \to \PC_G^{\Gamma}$ factors over $\theta : X^{\gps,\tau}_G \to \PC_G^{\Gamma}$.

\begin{lem}\label{abstz} Let $\rho:\Gamma \rightarrow G(A^{\gen, \tau}_{G, \rhobarss})$ be the universal
representation. Then $\Theta_{\rho}\in \cPC_G^{\Gamma}(R^{\gps, \tau}_{G, \rhobarss})$ for 
the $\mm$-adic topology on $R^{\gps, \tau}_{G, \rhobarss}$.
\end{lem}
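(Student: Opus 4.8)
The statement asserts that the $G$-pseudocharacter $\Theta_\rho$ attached to the universal representation $\rho : \Gamma \to G(A^{\gen,\tau}_{G,\rhobarss})$ is continuous with respect to the $\mm$-adic topology on $R^{\gps,\tau}_{G,\rhobarss}$. Since $\Theta_\rho$ is built from the invariant functions $\Theta_{\rho,n} : \OO[G^n]^{G^0} \to \Map(\Gamma^n, R^{\gps,\tau}_{G,\rhobarss})$ via $\Theta_{\rho,n}(f)(\gamma_1,\dots,\gamma_n) = f(\rho(\gamma_1),\dots,\rho(\gamma_n))$, continuity amounts to showing that for each $n \geq 1$ and each $f \in \OO[G^n]^{G^0}$, the map $\Gamma^n \to R^{\gps,\tau}_{G,\rhobarss}$ just described is continuous for the $\mm$-adic topology on the target. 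The plan is to reduce this to the continuity results already established for $R$-condensed representations in \Cref{sec_cont}.

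First I would record that $R^{\gps,\tau}_{G,\rhobarss}$, being a complete local noetherian $\RpsGLd$-algebra (via \eqref{semi-local} and the paragraph following it), is a noetherian profinite topological ring for its $\mm$-adic topology, and that the structure map $\RpsGLd \to R^{\gps,\tau}_{G,\rhobarss}$ is continuous. Next, by \Cref{XgenGA}(1) the universal representation $\rho : \Gamma \to G(A^{\gen,\tau}_{G,\rhobarss})$ satisfies $\tau \circ \rho \in \XgenGLd(A^{\gen,\tau}_{G,\rhobarss})$, which by \Cref{factors_through_Eu} means $\tau\circ\rho$ is $\RpsGLd$-condensed; hence $\rho$ itself is $\RpsGLd$-condensed by \Cref{cond_rep_funct}(2). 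Now I want to view $\rho$ as landing in $G(R^{\gps,\tau}_{G,\rhobarss})$ rather than $G(A^{\gen,\tau}_{G,\rhobarss})$: this is not literally true, but $\Theta_\rho$ only depends on the $G^0$-invariant data, which by construction takes values in $R^{\gps,\tau}_{G,\rhobarss} = (A^{\gen,\tau}_G)^{G^0}$ (restricted to the component indexed by $\rhobarss$). So the point is that the composite maps $\Gamma^n \to R^{\gps,\tau}_{G,\rhobarss}$ factor as $\Gamma^n \to (G^n)(A^{\gen,\tau}_{G,\rhobarss}) \xrightarrow{f} A^{\gen,\tau}_{G,\rhobarss}$ with image landing in $R^{\gps,\tau}_{G,\rhobarss}$.

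To get continuity, I would argue as follows. Since $\rho$ is $\RpsGLd$-condensed, fixing a closed immersion $\tau' : G \hookrightarrow \mathbb{A}^m$ over $\OO$, the image $\tau'(\rho(\Gamma))$ lies in a finitely generated $\RpsGLd$-submodule $M \subseteq \mathbb{A}^m(A^{\gen,\tau}_G) = (A^{\gen,\tau}_G)^m$, and $\tau'\circ\rho : \Gamma \to M$ is continuous for the canonical (profinite) topology on $M$ as a finitely generated $\RpsGLd$-module. Then $\tau'^{\times n}\circ\rho^{\times n} : \Gamma^n \to M^n \subseteq (A^{\gen,\tau}_G)^{mn}$ is continuous by \Cref{B3}. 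By \Cref{polyhasclosedzeroset} (applied with $R = \RpsGLd$), each $f \in \OO[G^n]^{G^0}$, pulled back along $G^n \hookrightarrow \mathbb{A}^{mn}$ to a polynomial in $mn$ variables, restricts to a continuous map $M^n \to N$ with values in a finitely generated $\RpsGLd$-submodule $N \subseteq A^{\gen,\tau}_G$; hence $\Gamma^n \to N$ is continuous for the canonical topology on $N$. Finally, the inclusion $\RpsGLd \to R^{\gps,\tau}_{G,\rhobarss}$ is continuous and $N$ maps into $R^{\gps,\tau}_{G,\rhobarss}$; since $R^{\gps,\tau}_{G,\rhobarss}$ is a finitely generated $\RpsGLd$-module and its $\mm$-adic topology coincides with its topology as such a module (same argument as in the proof of \Cref{surj}, invoking \cite[Lemma 3.2]{BIP_new}), the map $\Gamma^n \to R^{\gps,\tau}_{G,\rhobarss}$ is continuous. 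This holds for every $n$ and every $f$, so $\Theta_\rho$ is continuous.

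\textbf{Main obstacle.} The delicate point is the bookkeeping that the invariant evaluations $\Theta_{\rho,n}(f)$ genuinely take values in the subring $R^{\gps,\tau}_{G,\rhobarss} \subseteq A^{\gen,\tau}_{G,\rhobarss}$ and that the $\mm$-adic topology there agrees with the subspace/quotient topology coming from $\RpsGLd$-module finiteness — one must be careful that $R^{\gps,\tau}_{G,\rhobarss}$ is the factor of $(A^{\gen,\tau}_G)^{G^0}$ cut out by the component of $\XgpsGtau$ containing $x_0$, and that restricting to this factor is a continuous ring map. Once this is in place, the rest is a routine concatenation of \Cref{polyhasclosedzeroset}, \Cref{B3}, and the finiteness of $\RpsGLd \to R^{\gps,\tau}_{G,\rhobarss}$ from \Cref{GGLd_finite}.
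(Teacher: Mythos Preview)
Your proof is correct and follows essentially the same route as the paper: both arguments start from the continuity/condensedness of the universal representation $\rho$, observe that evaluation at $f \in \OO[G^n]^{G^0}$ lands in the $G^0$-invariants $R^{\gps,\tau}_{G,\rhobarss}$, and then invoke the finiteness of $R^{\gps,\tau}_{G,\rhobarss}$ over $\RpsGLd$ (\Cref{GGLd_finite}) to identify the $\mm$-adic topology with the canonical $\RpsGLd$-module topology. The paper phrases the middle step more tersely by using \Cref{XgenGA}(2) to get continuity of $\rho$ for the $\mm_{\RpsGLd}$-adic topology on $A^{\gen,\tau}_{G,\rhobarss}$ directly, whereas you unpack this at the level of finitely generated $\RpsGLd$-submodules via \Cref{polyhasclosedzeroset}; your version has the virtue of making the passage from the subspace topology to the $\mm$-adic topology on $R^{\gps,\tau}_{G,\rhobarss}$ completely transparent. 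One small remark: your citation of \Cref{B3} for the map $\Gamma^n \to M^n$ is not quite on target (that lemma concerns products of maps with a common source $S$), but the step you actually need---that a coordinate-wise product of continuous maps $\Gamma \to M$ yields a continuous map $\Gamma^n \to M^n$---is elementary and needs no lemma.
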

\begin{proof} \Cref{XgenGA} implies that $\rho$ is continuous for the  $\mm_{\RpsGLd}$-adic topology on $\AgenGtaurhobarss$.
    The  $G$-pseudocharacter $\Theta_{\rho}$ takes values in $R^{\gps,\tau}_{G,\rhobarss}$ and is thus continuous for the subspace topology on $R^{\gps,\tau}_{G,\rhobarss}$. Since 
    $$\mm_{\RpsGLd}^n R^{\gps, \tau}_{G, \rhobarss}\subseteq \mm_{\RpsGLd}^n \AgenGtau \cap R^{\gps,\tau}_{G, \rhobarss}, \quad \forall n\ge 1,$$
$\Theta_{\rho}$ is continuous for the $\mm_{\RpsGLd}$-adic topology on $R^{\gps,\tau}_{G, \rhobarss}$.
Since by \Cref{GGLd_finite}, $R^{\gps,\tau}_{G,\rhobarss}$ is a finite $\RpsGLd$-algebra, this topology coincides with the topology defined by the maximal ideal of $R^{\gps,\tau}_{G,\rhobarss}$.
\end{proof}

Let $\Theta_{\rho}$ be the $G$-pseudocharacter
of the universal representation over $A^{\gen, \tau}_{G, \rhobarss}$ as in Lemma \ref{abstz}.
Recall that $R^{\gps,\tau}_{G,\rhobarss}$ has residue field $k$ and 
$\Theta_{\rho}\otimes_{R^{\gps,\tau}_{G,\rhobarss}} k = \Theta_{\rhobar}$. 
It follows from Lemma  \ref{abstz}, that $\Theta_{\rho}$ 
is indeed a continuous deformation of $\Theta_{\rhobarss}$, so we have a local homomorphism $R^{\ps}_G \to R^{\gps,\tau}_{G,\rhobarss}$ which induces a map
\begin{equation}\label{XgpsXpsmap}
    \nu : X^{\gps,\tau}_{G,\rhobarss} \to X^{\ps}_G 
\end{equation}
of $\OO$-schemes.

\begin{prop}\label{nu_fin_u} $\nu : X^{\gps,\tau}_{G,\rhobarss} \to X^{\ps}_G$ is a finite universal homeomorphism.
\end{prop}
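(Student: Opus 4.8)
The map $\nu$ is induced by a local homomorphism $R^{\ps}_G \to R^{\gps,\tau}_{G,\rhobarss}$ of complete local noetherian $\OO$-algebras, both with residue field $k$. To prove it is a finite universal homeomorphism it suffices to show (i) the map is finite, and (ii) it is injective on geometric points and induces isomorphisms on residue fields of those points (equivalently, it is universally injective), since a finite universally injective morphism that is also surjective is a universal homeomorphism, and surjectivity will follow once we know both schemes are connected with the closed point in the image. So the proof naturally splits into a finiteness step and a bijectivity step.

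\emph{Finiteness.} First I would factor $\nu$ through the composite $X^{\gps,\tau}_{G,\rhobarss} \to X^{\ps}_G \to X^{\ps}_{\GL_d}$, where the second map is the one attaching to a $G$-pseudocharacter the determinant law of $\tau \circ \rho$ (i.e. composition with $\tau$ followed by the Emerson--Morel identification). By \Cref{GGLd_finite} the composite $X^{\gps,\tau}_{G,\rhobarss} \to X^{\ps}_{\GL_d}$ is finite, being a connected component of the finite $X^{\gps,\tau}_G \to X^{\ps}_{\GL_d}$. Since all schemes in sight are affine and $X^{\ps}_G \to X^{\ps}_{\GL_d}$ is separated (in fact affine), a standard cancellation argument (as in the proof of \Cref{finite_HG}) shows that $\nu$ is finite. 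Concretely, $R^{\ps}_G \to R^{\gps,\tau}_{G,\rhobarss}$ is a homomorphism of $R^{\ps}_{\GL_d}$-algebras and the target is a finite $R^{\ps}_{\GL_d}$-module, hence a finite $R^{\ps}_G$-module.

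\emph{Bijectivity on geometric points.} This is the main obstacle. I would argue that for every algebraically closed $\OO$-field $\kappa$, the induced map $X^{\gps,\tau}_{G,\rhobarss}(\kappa) \to X^{\ps}_G(\kappa)$ is a bijection. \textbf{Surjectivity:} given $\Theta \in X^{\ps}_G(\kappa)$ deforming $\Theta_{\rhobar}$, the theory of $G$-pseudocharacters (reviewed in \Cref{intro_Laf}) produces a $G$-semisimple representation $\rho: \Gamma_F \to G(\kappa)$, unique up to $G^0(\kappa)$-conjugacy, with $\Theta_\rho = \Theta$; continuity of $\Theta$ should give continuity of $\rho$ (here one uses that $\Gamma$ is topologically finitely generated, together with \Cref{cont_vs_cond} to pass between continuity and $R^{\ps}_{\GL_d}$-condensedness for $\tau \circ \rho$), so $\rho$ defines a point of $X^{\gen,\tau}_G(\kappa)$. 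Since $\rho$ and $\rhobar$ have $G$-pseudocharacters deforming along a connected base, one checks using \Cref{get_comp} that this point lands in the component $X^{\gen,\tau}_{G,\rhobarss}$; its image in $X^{\gps,\tau}_{G,\rhobarss}(\kappa)$ maps to $\Theta$. \textbf{Injectivity:} two $\kappa$-points of $X^{\gps,\tau}_{G,\rhobarss}$ with the same image in $X^{\ps}_G$ come from representations $\rho, \rho'$ with $\Theta_\rho = \Theta_{\rho'}$; by the uniqueness part of the $G$-pseudocharacter correspondence their $G$-semisimplifications are $G^0(\kappa)$-conjugate, and \Cref{same_image} then shows $\rho, \rho'$ map to the same point of $X^{\gps,\tau}_G$.

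\emph{Conclusion.} Having established that $\nu$ is finite, and bijective on $\kappa$-points for all algebraically closed $\OO$-fields $\kappa$, I would invoke \Cref{homeo} together with the fact that a finite morphism inducing a bijection on geometric points and trivial residue field extensions is a universal homeomorphism (by \cite[Exp.\,V]{SGA3_new} or \stackcite{01S4}; here both rings have residue field $k$ at the closed point, and one checks residue fields agree at all points using the description of points above). The surjectivity of $\nu$ on underlying topological spaces follows since it is closed (being finite) with dense image containing all closed points. Alternatively, the cleanest packaging is: $\nu$ is finite, so it is enough that $\nu$ is universally injective and surjective; universal injectivity is checked on geometric points as above, and surjectivity follows from finiteness plus the image being a closed set containing the generic points of all components (which it does, since every component of $X^{\ps}_G$ contains a $\kappa$-point in the image by the surjectivity argument). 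The expected difficulty is entirely in the bijectivity step — specifically, in carefully transporting the continuity and connected-component bookkeeping through the $G$-pseudocharacter formalism — while finiteness is a formal consequence of \Cref{GGLd_finite}.
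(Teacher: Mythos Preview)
Your finiteness argument and your universal-injectivity argument are essentially the same as the paper's and are correct. The genuine gap is in surjectivity.

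You propose to verify surjectivity of $\nu$ by checking it on $\kappa$-points for \emph{every} algebraically closed $\OO$-field $\kappa$. This does not work as stated, for two reasons. First, the tools you invoke --- \Cref{cont_vs_cond} and the continuous reconstruction of a representation from its $G$-pseudocharacter --- require $\kappa$ to carry a topology satisfying specific hypotheses; in practice this means $\kappa$ is (the algebraic closure of) a finite or local field. The residue field at a general point of $X^{\ps}_G$ has no such structure, so you cannot produce an $R^{\ps}_{\GL_d}$-condensed representation there. Second, \Cref{get_comp} only applies to geometric points lying \emph{above the closed point} of $X^{\ps}_{\GL_d}$, so even when $\kappa$ is local it cannot by itself certify that your representation lands on the component $X^{\gen,\tau}_{G,\rhobarss}$; the vague appeal to ``deforming along a connected base'' does not substitute for this.

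The paper resolves both issues together. Since $\nu$ is finite its image is closed, and since the punctured spectra are Jacobson and the image is constructible by Chevalley, it suffices to hit every closed point $y$ of $U^{\ps}_G := X^{\ps}_G \setminus \{*\}$. Such $y$ has residue field a local field, and the map $R^{\ps}_G \to \kappa(y)$ factors through $\OO_{\kappa(y)}$; the continuous reconstruction theorem then produces a continuous $\rho' : \Gamma \to G(\overline{\kappa(y)})$ with the correct pseudocharacter, hence a point of $X^{\gen,\tau}_G$ via \Cref{geom_pts}. The step your outline is missing is \Cref{building}: one conjugates $\rho'$ by an element of $G^0$ so that it takes values in $G(\OO_{\kappa'})$ for a finite extension $\kappa'$, and then reduces modulo the maximal ideal. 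The reduction has $G$-pseudocharacter $\Theta_{\rhobar}$, so its $G$-semisimplification is $\rhobarss$, and now \Cref{get_comp} does apply and places $\rho'$ on the component $X^{\gen,\tau}_{G,\rhobarss}$. This specialisation-to-the-closed-fibre trick via \Cref{building} is precisely the mechanism that makes the connected-component bookkeeping go through, and it is absent from your sketch.
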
 

\begin{proof}
    By \stackcite{04DF}, it suffices to show, that $\nu$ is finite, universally injective and surjective.
    By \Cref{GGLd_finite} $R_{G,\rhobarss}^{\gps,\tau}$ is a finite $\RpsGLd$-module.
    From the embedding $\tau : G \to \GL_d$ we obtain a factorisation $\RpsGLd \to R^{\ps}_G \to R^{\gps,\tau}_{G,\rhobarss}$.
    In particular, $R^{\gps,\tau}_{G,\rhobarss}$ is a finite $R^{\ps}_G$-module.

    The closed point of $X^{\gps,\tau}_{G,\rhobarss}$ is mapped to the closed point of $X^{\ps}_G$, so we only need to show surjectivity of $\dot \nu : U^{\gps,\tau}_{G, \rhobarss} := X^{\gps,\tau}_{G,\rhobarss} \setminus \{*\} \to U^{\ps}_{G} := X^{\ps}_G \setminus \{*\}$. By \stackcite{02J6} $U^{\gps,\tau}_{G, \rhobarss}$ and $U^{\ps}_{G}$ are Jacobson. By Chevalley's theorem \stackcite{054K} the image of $\dot \nu$ is locally constructible and thus also its complement. Therefore for surjectivity of $\nu$ it is sufficient to show, that the closed points of $U^{\ps}_{G}$ are all contained in the image of $\dot\nu$.

Let $y \in U^{\ps}_{G}$ be a closed point.
    By \cite[Lemma 3.17]{BIP_new} $\kappa(y)$ is a local field. 
    By \cite[Theorem 5.7]{quast}, $R^{\ps}_G$ is noetherian, and thus $\kappa(y)$ with its natural topology is a topological $\RpsG$-algebra and the image of $y: \RpsG \rightarrow \kappa(y)$ 
    is contained in the ring of integers $\OO_{\kappa(y)}$ by \cite[Lemma 3.17]{BIP_new}. 
    We obtain a continuous $G$-pseudocharacter $\Theta_y \in \cPC_G^{\Gamma}(\OO_{\kappa(y)})$, which is a deformation 
    of $\Theta_{\rhobar}$ to $\OO_{\kappa(y)}$. By the continuous reconstruction theorem 
    \cite[Theorem 3.8]{quast}, there is a continuous representation $\rho' : \Gamma \to G(\overline{\kappa})$ with $\Theta_{\rho'} = \Theta_y$, where $\kappabar$ is an algebraic closure of $\kappa(y)$. Lemma \ref{geom_pts}
    implies that $\rho'\in X^{\gen, \tau}_G(\overline{\kappa})$. The $G(\kappabar)$-conjugacy class of $\rho'$ defines a point $y'\in X^{\git,\tau}_G(\kappabar)$ which maps to $y$. We have to show that $y'$ lies on 
    the same connected component of $X^{\git, \tau}_G$ as $\rhobarss$. Since $X^{\gen,\tau}_G$ is of finite 
    type over $X^{\ps}_{\GL_d}$, $\rho'$ takes values in $G(\kappa')$ for some finite 
    extension of $\kappa(y)$ in $\kappabar$. Proposition \ref{building}
    implies that after conjugating $\rho'$ with an element of $G^0(\kappabar)$ we may assume 
    that $\rho'$ takes values in $G(\OO_{\kappabar})$. Let $\rhobar'$ be the composition 
    of $\rho'$ with the reduction map $G(\OO_{\kappabar})\rightarrow G(\kbar)$. Then 
    $\Theta_{\rhobar'}= \Theta_y\otimes_{\OO_{\kappa(y)}} \kbar = \Theta_{\rhobarss}$. 
    The uniqueness part of the reconstruction theorem \cite[Theorem 3.7]{quast} implies that $G$-semisimplification 
    of $\rhobar'$ is equal to $\rhobarss$, and Lemma \ref{connected_comp} implies that 
    $y'\in X^{\git,\tau}_{G, \rhobarss}$.

    For universal injectivity it suffices by \stackcite{01S4} to show that the diagonal morphism $X^{\gps,\tau}_{G,\rhobarss} \to X^{\gps,\tau}_{G,\rhobarss} \times_{X^{\ps}_G} X^{\gps,\tau}_{G,\rhobarss}$ is surjective. Let $x \in X^{\gps,\tau}_{G,\rhobarss} \times_{X^{\ps}_G} X^{\gps,\tau}_{G,\rhobarss}$. We obtain by \Cref{closed_orbit} and \Cref{closed_orbit_fibre} two $G$-semisimple representations $\rho_x, \rho_x' \in X^{\gen}_{G, \rhobarss}(\overline{\kappa(x)})$, such that their associated $G$-pseudocharacters coincide. This means by the uniqueness part of the reconstruction theorem  and \Cref{same_image}, their image in $\XgpsG(\overline{\kappa(x)})$ coincides and therefore $x$ is in the image of the diagonal.
\end{proof}

\begin{lem}\label{nu_cl_im}
    $\nu \otimes_{\Zp} \Qp : X^{\gps,\tau}_{G,\rhobarss}[1/p] \to X^{\ps}_G[1/p]$ is a closed immersion.
\end{lem}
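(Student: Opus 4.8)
We already know from \Cref{nu_fin_u} that $\nu : X^{\gps,\tau}_{G,\rhobarss} \to X^{\ps}_G$ is a finite universal homeomorphism. After inverting $p$, the map $\nu[1/p]$ is still finite, and it is still a universal homeomorphism (both properties being stable under base change along $\Zp \to \Qp$). A finite universal homeomorphism onto a scheme on which $p$ is invertible is automatically a closed immersion, because over a $\Qp$-scheme a universal homeomorphism which is finite is radicial, unramified and flat, hence an isomorphism onto its image; but flatness is exactly what we must extract. The cleanest route is: reduce to a statement about rings, show that the relevant ring map becomes an isomorphism after inverting $p$, and deduce the closed immersion.

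Concretely, I would argue as follows. Both $X^{\gps,\tau}_{G,\rhobarss}$ and $X^{\ps}_G$ are spectra of complete local noetherian $\OO$-algebras, say $R^{\gps,\tau}_{G,\rhobarss}$ and $R^{\ps}_G$, and $\nu$ corresponds to a finite ring homomorphism $\varphi : R^{\ps}_G \to R^{\gps,\tau}_{G,\rhobarss}$ (finiteness being part of \Cref{nu_fin_u}). A finite ring map which is a universal homeomorphism on spectra is integral, injective up to nilpotents, and has the property that every residue field extension $\kappa(\nu(x)) \to \kappa(x)$ is purely inseparable. Now invert $p$: I claim $\varphi[1/p] : R^{\ps}_G[1/p] \to R^{\gps,\tau}_{G,\rhobarss}[1/p]$ is surjective with nilpotent kernel — surjectivity because $\nu$ is a homeomorphism hence $\nu[1/p]$ is a homeomorphism onto a closed subscheme, and for a finite map a homeomorphism onto the target forces surjectivity of the coordinate ring map up to the nilradical by \stackcite{01S4}-type arguments together with $\Spec$-surjectivity; the kernel is then contained in the nilradical, so after passing to $X^{\ps}_G[1/p]^{\red}$ the map is a genuine closed immersion. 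The only extra input needed to upgrade ``closed immersion onto the underlying reduced subscheme'' to ``closed immersion'' is that the scheme-theoretic image is already reduced, or more simply, that $\varphi[1/p]$ is surjective on the nose; this I would obtain by combining finiteness with Nakayama over the (semi-)local rings at points of $X^{\ps}_G[1/p]$ after checking the purely-inseparable residue extensions are in fact trivial in characteristic zero — a purely inseparable finite extension of a field of characteristic $0$ is trivial, so each fiber of $\nu[1/p]$ is a single point with trivial residue extension, whence by finiteness and the fiber-wise criterion the stalks $\OO_{X^{\ps}_G[1/p],\,y} \to \OO_{X^{\gps,\tau}_{G,\rhobarss}[1/p],\,x}$ are surjective.

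\textbf{Carrying it out, step by step.} First, record that $\nu$ finite implies $\nu[1/p]$ finite, and $\nu$ universally injective and surjective (from \Cref{nu_fin_u}) implies $\nu[1/p]$ universally injective and surjective. Second, observe that in characteristic $0$ a universally injective morphism is injective with \emph{trivial} residue field extensions at all points (purely inseparable extensions of characteristic-zero fields are trivial), so $\nu[1/p]$ is a monomorphism that is finite and surjective, hence bijective on points with trivial residue extensions. Third, apply the fiberwise criterion for a finite morphism: a finite morphism $f : X \to Y$ with $Y$ locally noetherian such that every fiber $f^{-1}(y)$ is either empty or equals $\Spec \kappa(y)$ is a closed immersion onto its scheme-theoretic image provided $f_*\OO_X$ is generated, fiber by fiber, by one section reducing to $1$; since the residue field of the fiber is exactly $\kappa(y)$, Nakayama on the (noetherian) stalk $\OO_{Y,y}$ gives that $\OO_{Y,y} \to (f_*\OO_X)_y$ is surjective. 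Fourth, conclude that $R^{\ps}_G[1/p] \to R^{\gps,\tau}_{G,\rhobarss}[1/p]$ is surjective, i.e.\ $\nu[1/p]$ is a closed immersion.

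\textbf{Main obstacle.} The single genuinely delicate point is promoting ``bijective finite morphism with trivial residue extensions'' to ``surjective on structure sheaves''. Over a field this is automatic, but we are over a general noetherian $\Qp$-algebra and $X^{\gps,\tau}_{G,\rhobarss}[1/p]$ need not be reduced a priori. The resolution is that surjectivity of $\OO_{Y,y} \to (f_*\OO_X)_y$ is a Nakayama statement once we know the fiber $(f_*\OO_X)_y \otimes_{\OO_{Y,y}} \kappa(y)$ is one-dimensional over $\kappa(y)$ and spanned by the image of $1$; the one-dimensionality is precisely the statement that the scheme-theoretic fiber is $\Spec \kappa(y)$, which follows from $\nu[1/p]$ being a universally injective, i.e.\ radicial, finite morphism in characteristic zero (radicial = universally injective; in char.\ $0$ this forces the fibers to be single reduced points). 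So the only thing to verify carefully is that ``finite $+$ radicial'' gives scheme-theoretically reduced one-point fibers over a characteristic-zero base — which is \stackcite{01S4} combined with the fact that $\kappa(x)/\kappa(y)$ purely inseparable and $\mathrm{char} = 0$ imply $\kappa(x) = \kappa(y)$ — and then invoke \stackcite{01WJ} or the affine fiberwise criterion. Everything else is formal.
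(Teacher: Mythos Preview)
Your argument has a genuine gap at the crucial step. You claim that ``finite $+$ radicial in characteristic $0$ forces the scheme-theoretic fibers to be single reduced points'', and then use Nakayama to conclude surjectivity on stalks. But this is false: take $A = \Qp$ and $B = \Qp[\epsilon]/(\epsilon^2)$. The map $\Spec B \to \Spec A$ is finite, surjective, and universally injective (for any field $K$ the set $\Hom_{\Qp}(B,K)$ is a single point since $\epsilon$ must map to $0$), hence a finite universal homeomorphism over a characteristic-zero base. Yet the fiber over the unique point is $\Spec B$ itself, which is not reduced, the ring map $A \to B$ is not surjective, and the morphism is not a closed immersion. Radicial in characteristic zero only tells you the residue field extensions are trivial; it says nothing about nilpotents in the fiber. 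So Nakayama does not apply, and the general machinery you invoke cannot prove the lemma.

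The paper's proof is entirely different and uses the specific construction of both rings. One has a closed immersion $X^{\gen,\tau}_{G,\rhobarss} \hookrightarrow \Rep_G^{\mathcal F_N,\square}$ over $\XpsGLd$ (from \Cref{FN}); after inverting $p$ and taking $G^0$-invariants, which is exact in characteristic zero since $G^0$ is reductive, one obtains a surjection $\RpsGLd[1/p][\Rep_G^{\mathcal F_N,\square}]^{G^0} \twoheadrightarrow R^{\gps,\tau}_{G,\rhobarss}[1/p]$. The result of Emerson--Morel \cite[Proposition 2.11 (i)]{emerson2023comparison} identifies the source with $\RpsGLd[1/p][\PC_G^{\mathcal F_N}]$, through which $R^{\ps}_G[1/p]$ receives a map making the relevant square commute. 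Commutativity then forces $\nu^* : R^{\ps}_G[1/p] \to R^{\gps,\tau}_{G,\rhobarss}[1/p]$ to be surjective. The point is that surjectivity here comes from the exactness of invariants in characteristic zero together with a comparison theorem, not from abstract properties of universal homeomorphisms.
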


\begin{proof} Let $\mathcal F_N$ be a free group generated by elements $x_1, \ldots, x_N$. 
Let $\Rep_G^{\mathcal F_N, \square}$ be the affine $\RpsGLd$-scheme, which represents the functor, that maps a commutative $\RpsGLd$-algebra 
$A$ to the set of (abstract) homomorphisms $\rho : \mathcal F_N \to G(A)$. Mapping $x_i$ to $\gamma_i$ induces 
a group homomorphism $\mathcal F_N\rightarrow \Gamma$ and it follows from Remark \ref{FN},
that we have a closed immersion $X^{\gen,\tau}_G \rightarrow \Rep_G^{\mathcal F_N, \square}$ and hence
a closed immersion $X^{\gen,\tau}_{G, \rhobarss} \rightarrow \Rep_G^{\mathcal F_N, \square}$.

By \cite[Proposition 2.11 (i)]{emerson2023comparison} the vertical map on the left hand side of the following diagram is an isomorphism.
    \begin{equation}\label{diagramclimm}
    \begin{tikzcd}
        \RpsGLd[1/p][\PC_G^{\mathcal F_N}] \dar{\cong} \rar & R^{\ps}_G[1/p] \dar{\nu^*} \\
        \RpsGLd[1/p][\Rep_G^{\mathcal F_N, \square}]^{G^0} \rar[two heads] & R^{\gps,\tau}_{G,\rhobarss}[1/p]
    \end{tikzcd}
    \end{equation}
    The bottom map is surjective, since $\RpsGLd[1/p][\Rep_G^{\mathcal F_N, \square}] \to A^{\gen,\tau}_{G,\rhobarss}[1/p]$ is surjective as explained above and taking $G^0$-invariants is exact in characteristic $0$.
    Since the diagram commutes the right vertical arrow is surjective, and we obtain the claim. 
\end{proof}

\begin{cor}\label{nu_iso_red} The map $\nu \otimes_{\Zp} \Qp$ induces an isomorphism of underlying reduced schemes. 
In particular,  if $X^{\ps}_G[1/p]$ is reduced, then $\nu \otimes_{\Zp} \Qp$ is an isomorphism.
\end{cor}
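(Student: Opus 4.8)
We already know from \Cref{nu_fin_u} that $\nu$ is a finite universal homeomorphism, and from \Cref{nu_cl_im} that its base change $\nu\otimes_{\Zp}\Qp$ to $\Qp$ is a closed immersion. A closed immersion is in particular affine, so it corresponds to a surjection of rings $R^{\ps}_G[1/p]\twoheadrightarrow R^{\gps,\tau}_{G,\rhobarss}[1/p]$; call its kernel $I$. Since $\nu\otimes_{\Zp}\Qp$ is also a universal homeomorphism (being a base change of a universal homeomorphism), in particular it is bijective on points and induces a homeomorphism $\Spec R^{\gps,\tau}_{G,\rhobarss}[1/p]\to V(I)\subseteq \Spec R^{\ps}_G[1/p]$; but $V(I)$ is all of $\Spec R^{\ps}_G[1/p]$ because the map is surjective on underlying topological spaces. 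Hence $I$ is contained in the nilradical of $R^{\ps}_G[1/p]$. Passing to reduced rings, the surjection $R^{\ps}_G[1/p]/\mathrm{nil}\twoheadrightarrow R^{\gps,\tau}_{G,\rhobarss}[1/p]/\mathrm{nil}$ has kernel the image of $I$, which is $0$; so it is an isomorphism. Therefore $\nu\otimes_{\Zp}\Qp$ induces an isomorphism on underlying reduced schemes.

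For the final assertion: if $X^{\ps}_G[1/p]$ is reduced then $\mathrm{nil}=0$, so $I=0$ and the surjection $R^{\ps}_G[1/p]\twoheadrightarrow R^{\gps,\tau}_{G,\rhobarss}[1/p]$ is an isomorphism, i.e.\ $\nu\otimes_{\Zp}\Qp$ is an isomorphism of schemes.
\qed
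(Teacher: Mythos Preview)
Your proof is correct and follows exactly the same approach as the paper: combine \Cref{nu_fin_u} (universal homeomorphism) with \Cref{nu_cl_im} (closed immersion after inverting $p$) to conclude that the defining ideal is nilpotent, hence the map is an isomorphism on reduced schemes. The paper states this in one sentence, and your argument is simply a careful unpacking of that sentence.
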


\begin{proof} Proposition \ref{nu_fin_u} implies that $\nu \otimes_{\Zp} \Qp$ is a universal homeomorphism 
and Lemma \ref{nu_cl_im} that it is a closed immersion, which implies the first claim. 
The second claim is then immediate. 
\end{proof} 

\begin{remar} In \cite{inf_laf}, building on the results of this paper, we show that $\nu\otimes_{\Zp} \Qp$ is an isomorphism. 
\end{remar}

\section{Functoriality}\label{sec_funct}

The main result of this section is Proposition \ref{XgenGtau_rhobarss_A}, 
which gives an intrinsic description of 
$X^{\gen, \tau}_{G, \rhobarss}$ as a functor. 
As a consequence 
in  \Cref{ind_funct} we show that
$X^{\gen, \tau}_{G, \rhobarss}$ is independent of the choice of an embedding $\tau: G\hookrightarrow \GL_d$ and  is 
functorial in $G$.  In  \Cref{sec_finite} 
we investigate functoriality in $G$ under finite maps. In  \Cref{sec_prod}
we study the case when  $G$ is a product of two generalised reductive $\OO$-groups. In \Cref{funct_Gamma}
we study the morphisms induced  by restriction of pseudocharacters and representations to an open subgroup of $\Gamma$. 

We assume in this section that $\Gamma$ is topologically finitely generated. This assumption 
ensures that $R^{\ps}_G$ is noetherian. If $\Gamma$ is only assumed to satisfy Mazur's $p$-finiteness
condition\footnote{In \cite{noeth_def}, using the results of this paper, we show that if $\Gamma_F$ satisfies Mazur's $p$-finiteness condition then $R^{\ps}_G$ is noetherian and the results of this section still hold in this more general setting.} then results of this section can be reformulated by replacing $\Gamma$ by a topologically
finitely generated quotient as in Lemma \ref{fg_quotient}. We note that $\Gamma_F$ is topologically finitely generated by \cite[Satz 3.6]{Jannsen1982}. 

\subsection{\texorpdfstring{Independence of $\tau$ and functoriality in $G$}{Independence of tau and functoriality in G}}\label{ind_funct}

\begin{lem}\label{Gps_comp} Let $A$ be an $\RpsG$-algebra and let $\rho\in X^{\gen, \tau}_{G}(A)$. Then 
$\rho\in X^{\gen, \tau}_{G,\rhobarss}(A)$ if and only if the $G$-pseudocharacter $\Theta_{\rho}$ is equal to the specialisation of $\Theta^u$ at $A$.
\end{lem}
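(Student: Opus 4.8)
The statement to prove (Lemma \ref{Gps_comp}) is: for an $\RpsG$-algebra $A$ and $\rho \in X^{\gen,\tau}_G(A)$, one has $\rho \in X^{\gen,\tau}_{G,\rhobarss}(A)$ if and only if $\Theta_\rho$ equals the specialisation of the universal $G$-pseudocharacter $\Theta^u$ along the structure map $\RpsG \to A$. The key is to exploit the decomposition \eqref{semi-local}, \eqref{disjoint-union} of $X^{\gen,\tau}_G$ into its connected components, each cut out by a factor $R_j$ of the semi-local ring $R^{\gps,\tau}_G$, together with the map $\nu : X^{\gps,\tau}_{G,\rhobarss} \to X^{\ps}_G$ of \eqref{XgpsXpsmap} and its behaviour (Proposition \ref{nu_fin_u}: finite universal homeomorphism).

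\textbf{First step.} Reduce to checking the condition on each connected component. Recall $X^{\gen,\tau}_G \cong \coprod_j X^{\gen,\tau}_G \times_{X^{\gps,\tau}_G} \Spec R_j$, with $X^{\gen,\tau}_{G,\rhobarss}$ the factor indexed by $j_0$. A point $\rho \in X^{\gen,\tau}_G(A)$ is a morphism $\Spec A \to X^{\gen,\tau}_G$; its image lands in the component $X^{\gen,\tau}_{G,\rhobarss}$ exactly when the composite $\Spec A \to X^{\gen,\tau}_G \to X^{\gps,\tau}_G \to \Spec R^{\gps,\tau}_G$ factors through $\Spec R_{j_0} = X^{\gps,\tau}_{G,\rhobarss}$ (because the $R_j$ decomposition is the connected-component decomposition, by Lemma \ref{connected_comp}, and $X^{\gen,\tau}_G \times_{X^{\gps,\tau}_G}\Spec R_j$ is exactly the preimage of $\Spec R_j$). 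Meanwhile $\Theta_\rho = \theta \circ (\text{image of }\rho\text{ in }X^{\gps,\tau}_G)$, using that $\vartheta$ factors through $\theta : X^{\gps,\tau}_G \to \PC_G^\Gamma$. So both conditions are conditions on the induced map $\Spec A \to X^{\gps,\tau}_G$, and it suffices to prove: the composite $\Spec A \to X^{\gps,\tau}_G \to \Spec R^{\gps,\tau}_G$ factors through $\Spec R_{j_0}$ if and only if $\theta$ applied to it agrees with $\Spec A \to \Spec \RpsG \xrightarrow{\Theta^u} \PC_G^\Gamma$.

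\textbf{Second step (the heart).} One direction is formal: if the image lands in $X^{\gps,\tau}_{G,\rhobarss}$, then by construction of $\nu$ (which is defined via $\Theta_\rho$ for the universal $\rho$ over $A^{\gen,\tau}_{G,\rhobarss}$, see the discussion before \eqref{XgpsXpsmap}) the associated $G$-pseudocharacter is the specialisation of $\Theta^u$ along $\RpsG \to A$, since $\Theta^u$ is pulled back from $\RpsG$ via $\nu^*$. For the converse, suppose $\Theta_\rho$ is the specialisation of $\Theta^u$. I want to conclude that the map $\Spec A \to \Spec R^{\gps,\tau}_G$ lands in $\Spec R_{j_0}$. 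The point is that the other factors $R_j$ ($j \neq j_0$) correspond to $G$-completely reducible $\rhobarss_j \neq \rhobarss$ with distinct $G$-pseudocharacters $\Theta_{\rhobarss_j} \neq \Thetabar$ (by the uniqueness in the reconstruction theorem \cite[Theorem 3.7]{quast}, applied over $\kbar$). A morphism $\Spec A \to \Spec(R_1 \times \cdots \times R_n)$ with $A$ a nonzero ring hits exactly one factor if $\Spec A$ is connected; in general it decomposes $\Spec A$ into clopen pieces. On each such piece, the reduction mod the maximal ideal of the structure map gives a $G$-pseudocharacter over the residue field deforming the corresponding $\Thetabar_j$; but since $\Theta_\rho$ reduces to $\Thetabar$ (being a specialisation of $\Theta^u$, which lifts $\Thetabar$), and since $\Thetabar_j = \Thetabar$ forces $j = j_0$, every piece maps into $\Spec R_{j_0}$. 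Thus $\rho \in X^{\gen,\tau}_{G,\rhobarss}(A)$.

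\textbf{Main obstacle.} The subtle point is the converse direction when $\Spec A$ is disconnected or $A$ is not local: I must be careful that "$\Theta_\rho$ is the specialisation of $\Theta^u$ along a map $\RpsG \to A$" pins down the component-index pointwise. The cleanest route is to reduce to residue fields at points of $\Spec A$: for each prime $\pp \subseteq A$, the composite $\Spec \kappa(\pp) \to \Spec A \to X^{\gps,\tau}_G$ corresponds to a point of some $X^{\gps,\tau}_{G,\rhobarss_j}$, and I invoke Lemma \ref{get_comp} / Corollary \ref{same_image} plus the fact that the $G$-pseudocharacter determines the $G$-semisimplification to see that the index $j$ is forced to be $j_0$ because the pseudocharacter factors through $\RpsG$ (whose closed point carries $\Thetabar = \Theta_{\rhobarss}$, with $\rhobarss$ in component $j_0$). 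Since $X^{\gen,\tau}_{G,\rhobarss}$ is a clopen subscheme, $\rho$ landing in it can be checked on the underlying topological space, i.e. at all points. This sidesteps any worry about nilpotents or disconnectedness. I expect writing this carefully — keeping track of which reconstruction/uniqueness statement applies over which field — to be the only place needing genuine care; the rest is bookkeeping with the already-established decomposition and the properties of $\nu$.
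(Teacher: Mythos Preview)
Your forward direction is essentially the paper's argument. The gap is in the converse. You propose to check at every prime $\pp\subset A$ and invoke Lemma~\ref{get_comp}, but that lemma only applies to geometric points lying over the \emph{closed} point of $X^{\ps}_{\GL_d}$. For a general $\pp$ there is no reason $\kappa(\pp)$ is finite, and the claim that ``$\Theta_\rho$ reduces to $\Thetabar$'' is not well-defined: nothing forces $A_j$ to admit a map to $k$. More concretely, the pseudocharacter $\Theta_{\rho_\pp}$ over $\kappa(\pp)$ can simultaneously lie in the image of $\Spec R^{\ps}_G$ and of $\Spec R^{\ps}_{G,\Thetabar_j}$ for $j\neq j_0$ inside the big scheme $\PC_G^\Gamma$ --- formal neighbourhoods of distinct closed points in an affine scheme typically share non-closed points (think of $\Spec k\br{x}$ and $\Spec k\br{x-1}$ both mapping their generic points to the generic point of $\mathbb A^1_k$). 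So the component index is \emph{not} determined by the pseudocharacter at a general point.

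The paper's proof supplies exactly the missing step: first replace $A$ by a finitely generated $R^{\ps}_G$-subalgebra $A'$ (possible since $A^{\gen,\tau}_G$ is finite type over $R^{\ps}_{\GL_d}$), so that $eA'$ is finite type over a complete local noetherian ring. If $eA'\neq 0$ one then picks a prime $\pp$ of dimension $\le 1$, whose residue field is finite or local by \cite[Lemma~3.17]{BIP_new}. In the finite case Lemma~\ref{get_comp} applies directly; in the local case one uses Proposition~\ref{building} to conjugate $\rho_\pp$ into $G(\OO_{\kappa'})$ and then reduces modulo the maximal ideal, landing back in the finite-field case. This specialisation-to-the-closed-fibre manoeuvre is the genuine content you are missing.
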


\begin{proof} Let $\rho \in \XgenGtaurhobarss(A)$.
    Then $\rho = \rho^u \otimes_{\AgenGtaurhobarss} A$ where $\rho^u \in \XgenGtaurhobarss(\AgenGtaurhobarss)$ is the universal representation.
    Taking $G$-pseudocharacters, we obtain $\Theta_{\rho} = \Theta_{\rho^u} \otimes_{\AgenGtaurhobarss} A$.
    Using the map $\XgenGtaurhobarss \to X^{\ps}_G$ given by composing the map in \eqref{XgpsXpsmap} with the GIT quotient map, we see that $\Theta_{\rho^u} = \Theta^u \otimes_{R^{\ps}_G} \AgenGtaurhobarss$, so $\Theta_{\rho} = \Theta^u \otimes_{R^{\ps}_G} A$.
    
    Conversely, let $\rho \in \XgenGtau(A)$ satisfy $\Theta_{\rho} = \Theta^u \otimes_{R^{\ps}_G} A$.
    By factoring $\rho$ as $\Gamma \to G(\AgenGtau) \to G(A)$ over the universal representation on $\AgenGtau$ and using that $\AgenGtau$ is of finite type over $\RpsGLd$, we see that $\rho$ actually takes values in a finitely generated $\RpsG$-subalgebra $A' \subseteq A$.
    We want to show, that the image of $\Spec(A') \to \XgenGtau$ is entirely contained in the component $\XgenGtaurhobarss$, which amounts to showing that an idempotent $e \in \AgenGtau$ associated to a component $X^{\gen, \tau}_{G, \rhobarss_2}$ associated with a residual representation $\rhobarss_2$ which is non-conjugate to $\rhobarss$ acts as zero on $A'$. By contradiction, we assume that $eA'$ is nonzero. As $A'$ is of finite type over $R^{\ps}_G$, there is a dimension $\leq 1$ prime ideal $\pp \in \Spec(eA')$ with residue field $\kappa(\pp)$ finite or local by \cite[Lemma 3.17]{BIP_new}.
    By \Cref{geom_pts}, we know that the representation $\rho_{\pp} : \Gamma \to G(\overline{\kappa(\pp)})$ is continuous and that $\Theta_{\rho_{\pp}} = \Theta^u \otimes_{R^{\ps}_G} \overline{\kappa(\pp)}$. If $\kappa(\pp)$ is a finite field then we deduce that the $G$-semisimplification 
    of $\rho_{\pp}$ is $\rhobarss$, which yields a contradiction to the choice of $e$ via Lemma \ref{get_comp}.
    If $\kappa(\pp)$ is a local field, by \Cref{building} the image of $\rho_{\pp}$ lies up to conjugation in the ring of integers of a finite extension of $\kappa(\pp)$. The $G$-semisimplification of the reduction 
    modulo the maximal ideal is $\rhobarss$, since $\Theta_{\rho_{\pp}}$ specialises $\Theta^u$, which yields a contradiction to the choice of $e$ via Lemma \ref{get_comp}. 
    We conclude, that $\rho \in \XgenGtaurhobarss(A')\subseteq \XgenGtaurhobarss(A)$.
\end{proof}

\begin{lem}\label{factors_through_Eu_G} Let $R\rightarrow A$ be a map of $\RpsG$-algebras such that $R$ is a noetherian profinite topological $\RpsG$-algebra. Let  $\rho : \Gamma \to G(A)$ be a 
representation satisfying $\Theta_{\rho}=\Theta^u_{|A}$.
Then the following are equivalent:
\begin{enumerate} 
\item $(\tau \circ \rho)^{\lin}|_{R^{\ps}_G[\Gamma]}$ factors through $R^{\ps}_G \otimes_{\RpsGLd} E^u$;
\item $\rho$ is $\RpsG$-condensed;
\item $\rho$ is $R$-condensed. 
\end{enumerate}
\end{lem}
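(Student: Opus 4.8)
The plan is to reduce everything to the $\GL_d$-case that has already been established in \Cref{factors_through_Eu}, using that the hypothesis $\Theta_{\rho} = \Theta^u_{|A}$ pins down $\det \circ (\tau \circ \rho)^{\lin}$. First I would observe that composing the embedding $\tau : G \hookrightarrow \GL_d$ with $\rho$ gives a representation $\tau \circ \rho : \Gamma \to \GL_d(A)$, and that the $\GL_d$-pseudocharacter $\Theta_{\tau \circ \rho}$ equals $\tau \circ \Theta_{\rho} = \tau \circ \Theta^u_{|A}$, which in particular determines the $d$-dimensional determinant law $\det \circ (\tau \circ \rho)^{\lin}$. Here I need to know that the determinant law of $\tau \circ \rho$ is recovered from $\Theta_{\tau\circ\rho}$; this is the Emerson--Morel comparison \cite{emerson2023comparison}, already invoked in \Cref{Laf}, together with the fact that $\RpsGLd$ is (via the finite map $\RpsGLd \to \RpsG$ coming from $\tau$) acting compatibly, so that $A$ becomes an $\RpsGLd$-algebra and $\det \circ (\tau\circ\rho)^{\lin} = D^u_{|A}$. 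Thus $\tau \circ \rho$ satisfies the hypothesis of \Cref{factors_through_Eu}.

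Next I would chase the equivalences. For (1) $\Leftrightarrow$ (3): by \Cref{factors_through_Eu} applied to $\tau \circ \rho$ over the noetherian profinite $\RpsGLd$-algebra $R$ (note $R$ is an $\RpsGLd$-algebra since it is an $\RpsG$-algebra and $\RpsGLd \to \RpsG$), the statement ``$(\tau\circ\rho)^{\lin}|_{\RpsGLd[\Gamma]}$ factors through $E^u$'' is equivalent to ``$\tau \circ \rho$ is $R$-condensed''. By \Cref{cond_rep_funct}(2), since $\tau$ is a closed immersion, $\tau \circ \rho$ is $R$-condensed if and only if $\rho$ is $R$-condensed; and by \Cref{cond_rep_funct}(1) the forward direction is automatic. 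So (3) is equivalent to ``$\tau\circ\rho$ is $R$-condensed''. It remains to match the factorisation statements: factoring $(\tau\circ\rho)^{\lin}|_{\RpsGLd[\Gamma]}$ through $E^u$ is, after applying $\RpsG \otimes_{\RpsGLd} -$, the same as factoring $(\tau\circ\rho)^{\lin}|_{\RpsG[\Gamma]}$ through $\RpsG \otimes_{\RpsGLd} E^u$, which is condition (1) as stated; here one uses that the image of $\RpsGLd[\Gamma]$ in $E^u$ is dense (Lemma \ref{surj}) and that $A$ is already an $\RpsG$-algebra, so the $\RpsGLd$-linear factorisation upgrades to an $\RpsG$-linear one. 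For (2) $\Leftrightarrow$ (3): (2) is the special case $R = \RpsG$ of (3), so (3) $\Rightarrow$ (2) is trivial; conversely (2) $\Rightarrow$ (3) is \Cref{B4} with $R' = R$, since the $\RpsG$-algebra structure on $R$ makes $R$ a noetherian profinite $\RpsG$-algebra.

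The main obstacle I anticipate is the bookkeeping in the first paragraph: one must be careful that the $\RpsGLd$-algebra structure on $A$ induced by $\Theta_{\rho}$ via $\tau$ really does make $\det \circ (\tau\circ\rho)^{\lin}$ equal to $D^u_{|A}$ on the nose (not just up to the ambiguity of which lift of $\Dbar$ we chose), which is where the hypothesis $\Theta_{\rho} = \Theta^u_{|A}$ and the fact that $\Theta^u$ is the \emph{universal} pseudocharacter get used, together with the compatibility ``$\tau \circ (\Theta^u_G) $ is the specialisation of the universal $\GL_d$-pseudocharacter along $\RpsGLd \to \RpsG$''. Once this is in place the rest is a formal diagram chase using \Cref{factors_through_Eu}, \Cref{cond_rep_funct} and \Cref{B4}, with no new ideas required. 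I should also remark, as is presumably intended in the text, that this lemma justifies defining $X^{\gen,\tau}_{G,\rhobarss}(A)$ intrinsically via pseudocharacters, which is what the subsequent \Cref{XgenGtau_rhobarss_A} will use.
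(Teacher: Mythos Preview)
Your proposal is correct and takes the same approach as the paper's short proof: use \Cref{cond_rep_funct} to pass between $\rho$ and $\tau\circ\rho$ being condensed, deduce $\det\circ(\tau\circ\rho)^{\lin} = D^u_{|A}$ from $\Theta_{\rho}=\Theta^u_{|A}$ (the paper simply refers to section \ref{comparison} for this step), and then invoke \Cref{factors_through_Eu}. Your phrase ``(3)$\Rightarrow$(2) is trivial'' is a little loose---being $R$-condensed for one fixed $R$ does not literally specialise to being $\RpsG$-condensed---but the intended argument (run the just-established equivalence (1)$\Leftrightarrow$(3) with $R=\RpsG$ to get (1)$\Leftrightarrow$(2)) is valid, making the separate appeal to \Cref{B4} redundant.
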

\begin{proof} It follows from Lemma \ref{cond_rep_funct} that $\rho$ is $R$-condensed if and only if $\tau\circ \rho$
is $R$-condensed. As explained in  section \ref{comparison} 
the assumption $\Theta_{\rho}=\Theta^u_{|A}$ implies that 
$\det \circ (\tau\circ \rho)^{\lin} = D^{u}_{|A}$. The equivalence now follows from Lemma \ref{factors_through_Eu}.
\end{proof} 

\begin{prop}\label{XgenGtau_rhobarss_A} Let $A$ be an $\RpsG$-algebra. There is canonical bijection between the following sets:
\begin{enumerate}
\item $X^{\gen, \tau}_{G, \rhobarss}(A)$;
\item the set of representations $\rho : \Gamma \to G(A)$, such that the $G$-pseudocharacter $\Theta_{\rho}$ is equal to the specialisation of $\Theta^u$ at $A$ and the equivalent conditions 
of Lemma \ref{factors_through_Eu_G} hold.
\end{enumerate}
\end{prop}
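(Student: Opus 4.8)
The plan is to show the two sets agree by constructing maps in both directions and checking they are mutually inverse. I will use \Cref{Gps_comp} as the main tool, which already characterizes which points of $X^{\gen,\tau}_G(A)$ lie in the distinguished component $X^{\gen,\tau}_{G,\rhobarss}(A)$ in terms of the associated $G$-pseudocharacter.

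First I would treat the direction from (1) to (2). Given $\rho \in X^{\gen,\tau}_{G,\rhobarss}(A) \subseteq X^{\gen,\tau}_G(A)$, by \Cref{XgenGA}(1) we know $\tau \circ \rho \in X^{\gen}_{\GL_d}(A)$, which by \Cref{factors_through_Eu} means $\tau \circ \rho$ is $R^{\ps}_{\GL_d}$-condensed and $(\tau \circ \rho)^{\lin}$ factors through $E^u$. Since $A$ is an $R^{\ps}_G$-algebra via the map $R^{\ps}_G \to R^{\gps,\tau}_{G,\rhobarss}$ and the structure map to $A$, and since $R^{\ps}_{\GL_d} \to R^{\ps}_G$ makes $R^{\ps}_G$ a profinite noetherian topological $R^{\ps}_{\GL_d}$-algebra, \Cref{B4} (or directly \Cref{factors_through_Eu}) gives that $\rho$ is $R^{\ps}_G$-condensed, establishing the equivalent conditions of \Cref{factors_through_Eu_G}. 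The condition $\Theta_\rho = \Theta^u \otimes_{R^{\ps}_G} A$ is exactly the content of the forward implication in \Cref{Gps_comp}. So (1) maps into (2).

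For the direction from (2) to (1), suppose $\rho : \Gamma \to G(A)$ satisfies $\Theta_\rho = \Theta^u \otimes_{R^{\ps}_G} A$ and the conditions of \Cref{factors_through_Eu_G}. The condition that $(\tau \circ \rho)^{\lin}|_{R^{\ps}_G[\Gamma]}$ factors through $R^{\ps}_G \otimes_{R^{\ps}_{\GL_d}} E^u$, together with the fact (noted in section \ref{comparison}, recalled in the proof of \Cref{factors_through_Eu_G}) that $\Theta_\rho = \Theta^u_{|A}$ forces $\det \circ (\tau \circ \rho)^{\lin} = D^u_{|A}$, shows via \Cref{factors_through_Eu} that $\tau \circ \rho \in X^{\gen}_{\GL_d}(A)$; hence by \Cref{XgenGA}(1) we get $\rho \in X^{\gen,\tau}_G(A)$. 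Now \Cref{Gps_comp}, using $\Theta_\rho = \Theta^u \otimes_{R^{\ps}_G} A$, puts $\rho$ in the component $X^{\gen,\tau}_{G,\rhobarss}(A)$. These two assignments are visibly inverse to each other since in both cases the underlying data is just the representation $\rho$.

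The one point that requires a little care — and which I expect to be the main (minor) obstacle — is matching up the two a priori different notions of "condensed": being $R^{\ps}_G$-condensed versus the $E^u$-factorization condition in \Cref{factors_through_Eu_G}, and ensuring that the $R^{\ps}_G$-algebra structure on $A$ being used is the one compatible with $X^{\gen,\tau}_G \to X^{\ps}_G$ (composite of the map \eqref{XgpsXpsmap} with the GIT quotient) rather than some other structure. Since \Cref{factors_through_Eu_G} has already packaged precisely this equivalence, and \Cref{Gps_comp} has already done the component-identification bookkeeping, the proof of the proposition reduces to citing these two lemmas plus \Cref{XgenGA} and observing the bijection is the identity on representations; I would write it in essentially that compressed form.
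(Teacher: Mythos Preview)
Your proposal is correct and follows essentially the same route as the paper: both directions use \Cref{Gps_comp} for the pseudocharacter-to-component identification and \Cref{XgenGA}, \Cref{factors_through_Eu}, \Cref{factors_through_Eu_G} (together with \Cref{B4} and \Cref{cond_rep_funct}) to pass between the various condensed/factorization conditions. Your caveat about matching the $R^{\ps}_G$-algebra structures is reasonable but ultimately harmless, and the paper does not belabor it either.
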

\begin{proof} If $\rho\in X^{\gen, \tau}_{G, \rhobarss}(A)$ then $\Theta_{\rho}=\Theta^u_{|A}$ 
by Lemma \ref{Gps_comp}. Moreover, $\tau\circ \rho$ is $\RpsGLd$-condensed by Lemma \ref{XgenGA} (1). Lemmas \ref{cond_rep_funct} and \ref{B4} imply that $\rho$ is $R^{\ps}_G$-condensed.

If $\rho:\Gamma\rightarrow G(A)$ satisfies the conditions of part (2) then 
$\det \circ (\tau\circ \rho)^{\lin}= D^u_{|A}$ and $\tau\circ \rho$ is $R^{\ps}_G$-condensed. 
Thus $\rho$ satisfies part (4) of Lemma \ref{factors_through_Eu} with $R=R^{\ps}_G$ and thus $\rho\in X^{\gen, \tau}_{G}(A)$ by Lemma \ref{XgenGA} (1). Since $\Theta_{\rho}=\Theta^u_{|A}$, Lemma \ref{Gps_comp} implies that 
$\rho\in X^{\gen, \tau}_{G, \rhobarss}(A)$.
\end{proof}

\begin{cor}\label{indep_tau} The $\RpsG$-algebra $A^{\gen, \tau}_{G, \rhobarss}$  does not depend on the chosen embedding $\tau: G\hookrightarrow \GL_d$.
\end{cor}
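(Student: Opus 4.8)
The plan is to deduce the independence of $\tau$ directly from Proposition \ref{XgenGtau_rhobarss_A}, since that result characterises $A^{\gen,\tau}_{G,\rhobarss}$ by a functor of points that visibly does not refer to $\tau$. Concretely, suppose $\tau_1 : G \hookrightarrow \GL_{d_1}$ and $\tau_2 : G \hookrightarrow \GL_{d_2}$ are two closed immersions of $\OO$-group schemes. Each gives rise, as in \Cref{def_and_1prop} and \Cref{sec_GIT}, to an $R^{\ps}_{\GL_{d_i}}$-algebra $A^{\gen,\tau_i}_G$ and to the component $A^{\gen,\tau_i}_{G,\rhobarss}$ of it. Both of these are naturally $R^{\ps}_G$-algebras via the map $R^{\ps}_G \to R^{\gps,\tau_i}_{G,\rhobarss}$ of \eqref{XgpsXpsmap} (or rather via the canonical map $R^{\ps}_{\GL_{d_i}} \to R^{\ps}_G$ coming from $\tau_i$, composed with $R^{\ps}_G \to A^{\gen,\tau_i}_{G,\rhobarss}$). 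The point is that Proposition \ref{XgenGtau_rhobarss_A} exhibits $X^{\gen,\tau_i}_{G,\rhobarss}$, as a functor on $R^{\ps}_G$-algebras, as the subfunctor of $\Rep^{\Gamma}_G$ consisting of those $\rho : \Gamma \to G(A)$ with $\Theta_\rho = \Theta^u_{|A}$ that are $R^{\ps}_G$-condensed. This description is symmetric in $\tau_1$ and $\tau_2$: the condition ``$\Theta_\rho = \Theta^u_{|A}$'' makes no reference to any embedding, and the condensed condition, by \Cref{ind_cont_equiv_R_cond} together with \Cref{cond_rep_funct} (using that $G \hookrightarrow \GL_{d_i}$ are closed immersions), is equivalent to ``$\tau_i \circ \rho$ is $R^{\ps}_G$-condensed'' for either $i$, hence is independent of which $\tau_i$ one uses.

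First I would record that both $X^{\gen,\tau_1}_{G,\rhobarss}$ and $X^{\gen,\tau_2}_{G,\rhobarss}$ represent this one functor on $R^{\ps}_G$-algebras: for $i = 1,2$, Proposition \ref{XgenGtau_rhobarss_A} gives a natural bijection between $X^{\gen,\tau_i}_{G,\rhobarss}(A)$ and the set of $\rho : \Gamma \to G(A)$ with $\Theta_\rho = \Theta^u_{|A}$ satisfying the equivalent conditions of \Cref{factors_through_Eu_G}; and among those equivalent conditions is the condition that $\rho$ is $R^{\ps}_G$-condensed, which by \Cref{cond_rep_funct}(1),(2) and \Cref{B4} does not involve $\tau_i$. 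Then I would invoke the Yoneda lemma: two $R^{\ps}_G$-algebras representing the same functor on $R^{\ps}_G\hyphen\alg$ are canonically isomorphic, and the isomorphism is compatible with the $R^{\ps}_G$-algebra structures. This yields a canonical isomorphism $A^{\gen,\tau_1}_{G,\rhobarss} \cong A^{\gen,\tau_2}_{G,\rhobarss}$ of $R^{\ps}_G$-algebras, and in particular of $\OO$-algebras, which is the assertion.

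The one technical point to be careful about is that the two functors a priori live over different base rings, $R^{\ps}_{\GL_{d_1}}$ and $R^{\ps}_{\GL_{d_2}}$; but both factor through $R^{\ps}_G$ via the canonical maps induced by $\tau_i$ (as in the proof of \Cref{nu_fin_u}), and the content of Proposition \ref{XgenGtau_rhobarss_A} is precisely that, once base-changed along $R^{\ps}_{\GL_{d_i}} \to R^{\ps}_G$, they become the same functor of $R^{\ps}_G$-algebras. I do not expect any serious obstacle here: the essential work has already been done in \Cref{XgenGtau_rhobarss_A}, \Cref{factors_through_Eu_G}, \Cref{cond_rep_funct} and \Cref{ind_cont_equiv_R_cond}, and what remains is a formal Yoneda argument. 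If one wishes to be fully explicit, one can also note that the isomorphism sends the universal representation $\rho^{\univ}_{\tau_1}$ over $A^{\gen,\tau_1}_{G,\rhobarss}$ to the universal representation $\rho^{\univ}_{\tau_2}$ over $A^{\gen,\tau_2}_{G,\rhobarss}$, so all the structure (the universal Galois representation, the map to $X^{\ps}_G$, and the $G^0$-action) is transported compatibly.
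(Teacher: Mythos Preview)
Your proposal is correct and follows essentially the same approach as the paper: you use Proposition~\ref{XgenGtau_rhobarss_A} to identify $X^{\gen,\tau}_{G,\rhobarss}$ with a functor on $R^{\ps}_G$-algebras that does not mention $\tau$, and conclude by Yoneda. The paper's proof is the same idea compressed to one line, simply noting that condition (2) of Lemma~\ref{factors_through_Eu_G} (``$\rho$ is $R^{\ps}_G$-condensed'') is independent of $\tau$; your citations of \Cref{cond_rep_funct}, \Cref{B4} and \Cref{ind_cont_equiv_R_cond} are a correct but slightly roundabout way of unpacking this, since the intrinsic (embedding-free) nature of the $R$-condensed condition is already built into Definition~\ref{def_R_cond_rep} via Lemma~\ref{B1}.
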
 
\begin{proof} The assertion follows from Proposition \ref{XgenGtau_rhobarss_A}, as the condition 
in part (2) of Lemma \ref{factors_through_Eu_G} is independent of $\tau$. 
\end{proof}

 As a consequence of the Corollary we will drop $\tau$ from the notation and write $A^{\gen}_{G, \rhobarss}$, $R^{\gps}_{G, \rhobarss}$,
 $\XgenGrhobarss$, $X^{\gps}_{G, \rhobarss}$ 
 for $A^{\gen,\tau}_{G, \rhobarss}$, $R^{\gps,\tau}_{G, \rhobarss}$, $\XgenGtaurhobarss$ and $X^{\gps,\tau}_{G, \rhobarss}$, respectively.

We now turn to functoriality.

\begin{prop}\label{functoriality_2}
    Let $\varphi : G \to H$ be a homomorphism of generalised reductive $\OO$-group schemes.
    Then there is a natural map\footnote{As pointed out by referee the target of the map should be denoted by $X^{\gen}_{H, (\varphi \circ \rhobarss)^{\mathrm{ss}}}$, as $\varphi\circ \rhobarss$ need not be $H$-semisimple in general. For ease of reading, we will continue to use this abuse of notation.} of $\RpsH$-schemes $\XgenGrhobarss \to X^{\gen}_{H, \varphi \circ \rhobarss}$, such that for every $\RpsG$-algebra $A$, the map $\XgenGrhobarss(A) \to X^{\gen}_{H, \varphi \circ \rhobarss}(A)$ is given by $\rho \mapsto \varphi \circ \rho$.
\end{prop}

\begin{proof} Let $\Theta^u_G$ be the universal $G$-pseudocharacter deforming $\Theta_{\rhobarss}$
and let $\Theta^u_H$ be the universal $H$-pseudocharacter deforming $\Theta_{\varphi\circ\rhobarss}$. 
Since $\varphi(\Theta_G^u)$ is a deformation of $\Theta_{\varphi\circ\rhobarss}$ to $R^{\ps}_G$, we obtain a natural homomorphism of local $\OO$-algebras $R^{\ps}_H \rightarrow R^{\ps}_G$ such that 
$\varphi(\Theta^u_G)= \Theta^u_H \otimes_{R^{\ps}_H} R^{\ps}_G$. 
    
    If $\rho \in \XgenGrhobarss(A)$ then $\Theta_{\rho}= \Theta^u_G \otimes_{R^{\ps}_G} A $ and
    by applying $\varphi$, we get
    \begin{equation}\label{theta_good}
    \Theta_{\varphi \circ \rho}= \varphi(\Theta_{\rho})=\varphi(\Theta^u_G)\otimes_{\RpsG} A =
    \Theta^u_H\otimes_{\RpsH} A.
    \end{equation}
 Lemma \ref{cond_rep_funct} implies that $\varphi\circ \rho$ is $\RpsG$-condensed. Since
 $\varphi\circ \rho$ satisfies \eqref{theta_good}, Lemma \ref{factors_through_Eu_G} (3) applied with $R=\RpsG$ and Proposition \ref{XgenGtau_rhobarss_A}
 imply that $\varphi\circ \rho\in X^{\gen}_{H, \varphi\circ \rhobarss}(A)$.
\end{proof}

\subsection{Finite maps}\label{sec_finite} Let $\varphi: G\rightarrow H$ be a morphism of generalised reductive $\OO$-group schemes. 
By functoriality of $X^{\gen}_{G,\rhobarss}$ proved in Propositions \ref{functoriality_2} we obtain a 
morphism $f: X^{\gen}_{G, \rhobarss}\rightarrow X^{\gen}_{H, \varphi\circ \rhobarss}$. 

\begin{prop}\label{finite_functoriality} If $\varphi$ is finite then the map $X^{\ps}_{G}\rightarrow X^{\ps}_H$ is also finite. 
\end{prop}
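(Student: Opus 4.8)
The statement to prove is \Cref{finite_functoriality}: if $\varphi : G \to H$ is a finite morphism of generalised reductive $\OO$-group schemes, then the induced map $X^{\ps}_G \to X^{\ps}_H$ is finite. The key structural input is Cotner's theorem (\Cref{cotner_main}) together with its corollary \Cref{cor_cotner}, applied over the base $S = \Spec \OO$, which gives finiteness of $H^N \sslash H^0 \to G^N \sslash G^0$ — wait, more precisely of $(G')^N \sslash G' \to G^N \sslash G$ for the finite map. Here I want finiteness on GIT quotients for the map $\varphi^N : G^N \to H^N$. Since $\varphi$ is finite, $\varphi^N$ is finite, so by \Cref{cotner_main} the map $G^N \sslash G \to H^N \sslash H$ is finite. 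The overall strategy is to reduce the statement about deformation rings of $G$-pseudocharacters to this geometric finiteness statement, using the presentation of $X^{\ps}_G$ as (essentially) a completion of an invariant ring of a representation scheme, exactly as in the proof of \Cref{nu_fin_u} and \Cref{nu_cl_im}.

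\textbf{Main steps.} First I would reduce to the case where $\Gamma$ is topologically finitely generated, by the device already used in \Cref{fg_quotient} and flagged at the start of \Cref{sec_funct}: replacing $\Gamma$ by a topologically finitely generated quotient $Q$ does not change the relevant pseudocharacter deformation rings. So fix generators $\gamma_1, \dots, \gamma_N$ of a dense subgroup and let $\mathcal F_N$ be the free group on $N$ letters, with $\mathcal F_N \to \Gamma$ the induced map. Recall from the proof of \Cref{nu_cl_im} that there is a representation scheme $\Rep^{\mathcal F_N, \square}_G \cong G^N \times \XpsGLd$ (or rather, forgetting the pseudocharacter base, just $G^N$ over $\OO$), and by \cite[Proposition 2.11]{emerson2023comparison} (Emerson--Morel) one has a canonical identification of $\OO[\PC_G^{\mathcal F_N}]$ with $\OO[G^N]^{G^0}$, i.e. $\PC_G^{\mathcal F_N} \cong G^N \sslash G^0$. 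Combining with $(G^N \sslash G^0) \sslash (G/G^0) = G^N \sslash G$, and using that $G/G^0$ is finite étale, one sees that $\OO[\PC_G^{\mathcal F_N}]$ is module-finite over $\OO[G^N \sslash G]$ (by the argument in \Cref{cor_cotner}), hence the relevant finiteness transfers between the "$\sslash G$" and "$\sslash G^0$" versions. Second, I would observe that $X^{\ps}_G$ is obtained from $\PC^{\mathcal F_N}_G$ by completing along the closed point determined by $\Thetabar$ and also imposing the continuity condition; but the key point is that the homomorphism $\varphi$ induces, via $\varphi^N : G^N \to H^N$, a map $H^N \sslash H^0 \to G^N \sslash G^0$, i.e. $\PC^{\mathcal F_N}_H \to \PC^{\mathcal F_N}_G$, which is finite because $\varphi^N$ is finite and by \Cref{cotner_main} / \Cref{cor_cotner}. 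Third, I would deduce finiteness of $R^{\ps}_H \to R^{\ps}_G$: the ring $R^{\ps}_G$ is a quotient of (a completion of) $\OO[\PC^{\mathcal F_N}_G]$, compatibly with the analogous statement for $H$, and module-finiteness is preserved under passing to quotients and to completions at maximal ideals of noetherian rings (using that $R^{\ps}_G$, $R^{\ps}_H$ are complete noetherian local by \cite[Theorem A]{quast}). More precisely, by \cite[Lemma 3.17]{BIP_new}-type Jacobson arguments and the fact that $R^{\ps}_G$ is a finite $R^{\ps}_H$-module iff $k \otimes_{R^{\ps}_H} R^{\ps}_G$ is finite-dimensional (Nakayama, as in \Cref{soggy_finite}), it suffices to bound the fibre; and this fibre is a closed subscheme of the fibre of $\PC^{\mathcal F_N}_G \to \PC^{\mathcal F_N}_H$ over the point corresponding to $\overline\Psi$, which is finite by the geometric statement. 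Alternatively, and perhaps more cleanly, I would argue entirely on the level of the finite-type $\OO$-algebras $\OO[\PC^{\mathcal F_N}_G]$ and then complete.

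\textbf{Expected main obstacle.} The delicate point is bookkeeping the relationship between $X^{\ps}_G$ (a completion, incorporating continuity) and the finite-type scheme $\PC^{\mathcal F_N}_G = G^N \sslash G^0$: one must check that the natural map $\OO[\PC^{\mathcal F_N}_G] \to R^{\ps}_G$ (sending a pseudocharacter of $\mathcal F_N$ to the associated pseudocharacter of $\Gamma$, then deforming) is compatible with $\varphi$ on both sides, and that finiteness of $\PC^{\mathcal F_N}_H \to \PC^{\mathcal F_N}_G$ really does descend to finiteness of $R^{\ps}_H \to R^{\ps}_G$ after completion. The cleanest route is: $R^{\ps}_G$ is the completion of the localization of $\OO[\PC^{\mathcal F_N}_G]$ at the maximal ideal $\mathfrak m_{\Thetabar}$ (this is essentially what \cite{quast} establishes, with the continuity condition being automatic at the completed level since $\Gamma$ is topologically finitely generated and the relevant module is finite), and similarly for $H$; since $\OO[\PC^{\mathcal F_N}_H] \to \OO[\PC^{\mathcal F_N}_G]$ is module-finite and $\mathfrak m_{\Thetabar}$ lies over $\mathfrak m_{\overline\Psi}$, the induced map on completions of localizations is again module-finite. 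I would spell out this last implication carefully — it is the analogue of the step in \Cref{nu_fin_u}'s proof where finiteness of $R^{\gps}_{G,\rhobarss}$ over $\RpsGLd$ is leveraged — since that is where a reader is most likely to want detail. The remaining verifications (compatibility of all maps with $\varphi$, the reduction to topologically finitely generated $\Gamma$) are routine given the framework already set up in \Cref{Laf} and \Cref{sec_funct}.
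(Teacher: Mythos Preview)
Your proposal and the paper's proof share the same two pillars: Nakayama to reduce to finiteness of the fibre $C = k \otimes_{R^{\ps}_H} R^{\ps}_G$, and Cotner's finiteness result (\Cref{cor_cotner}) on invariant rings as the essential geometric input. The difference lies in how Cotner is applied. You route the argument through the free-group scheme $\PC^{\mathcal F_N}_G \cong G^N \sslash G^0$ and then try to identify $R^{\ps}_G$ with a completion of $\OO[G^N]^{G^0}$ at a point, so that module-finiteness of $\OO[H^N]^{H^0} \to \OO[G^N]^{G^0}$ descends after completing. The paper instead applies \Cref{cor_cotner} \emph{for each $m$ separately}, directly to the maps $\varphi^* : \OO[H^m]^{H^0} \to \OO[G^m]^{G^0}$ that appear in the very definition of a pseudocharacter: if $x \in \Spec C$ had $\kappa(x)$ a local field, then the specialisation $\Theta$ satisfies $\Theta_m \circ \varphi^* = (\varphi \circ \Theta)_m$ with image in $\Map(\Gamma^m,k)$; finiteness of $\varphi^*$ forces each value $\Theta_m(f)(\underline\gamma)$ to be integral over $k$, hence in $\overline k$, contradicting $\dim \kappa(x) = 1$.

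The obstacle you flag --- establishing that $R^{\ps}_G$ is the completion of $\OO[\PC^{\mathcal F_N}_G]$ at $\mathfrak m_{\Thetabar}$, or at least that the natural map $\OO[G^N]^{G^0} \to R^{\ps}_G$ has topologically dense image --- is real, and you do not resolve it. Your appeal to \cite[Proposition 2.11]{emerson2023comparison} for the identification $\PC^{\mathcal F_N}_G \cong G^N \sslash G^0$ is also delicate: in the paper this is only invoked after inverting $p$ (see the proof of \Cref{nu_cl_im}), so using it integrally would need separate justification. The paper's argument sidesteps both issues entirely: it never needs a single fixed $N$, never needs to compare $R^{\ps}_G$ to any finite-type model, and never needs Emerson--Morel. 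It just uses that a $G$-pseudocharacter \emph{is} a compatible system of maps out of the $\OO[G^m]^{G^0}$, and that these maps are finite over the corresponding $\OO[H^m]^{H^0}$. Your route could likely be made to work, but it is longer and the missing identification is nontrivial to pin down; the paper's route is shorter and self-contained.
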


\begin{proof}
    By topological Nakayama's lemma it is enough to show, that the fibre ring $C := k \otimes_{R^{\ps}_{H}} R^{\ps}_{G}$ is a finite-dimensional $k$-vector space.
    Since $\Gamma$ is topologically finitely generated, we know by \cite[Theorem 5.7]{quast}, that $R^{\ps}_{G}$ is noetherian, hence $C$ is a complete local noetherian ring and we are left to show, that $C$ has Krull dimension $0$.
    
    If $C$ is not $0$-dimensional then there is a point $x \in \Spec(C)$ of dimension $1$.
    The residue field $\kappa(x)$ is a local field of characteristic $p$ by \cite[Lemma 3.17]{BIP_new} and the corresponding homomorphism $C \to \kappa(x)$ gives a $G$-pseudocharacter $\Theta \in \PC_{G}^{\Gamma}(\kappa(x))$, such that $\varphi \circ \Theta \in \PC_{H}^{\Gamma}(\kappa(x))$ arises by scalar extension from the residual pseudocharacter attached to $\varphi \circ \rhobar$.

    For all $m \geq 0$, the map $(\varphi \circ \Theta)_m = \Theta_m \circ \varphi^*$ is the composition
    $$ \OO[H^m]^{H^0} \xrightarrow{\varphi^*} \OO[G^m]^{G^0} \xrightarrow{\Theta_m} \Map(\Gamma^m, \kappa(x)) $$
    and has image in $\Map(\Gamma^m, k)$.
    By \Cref{cor_cotner} $\varphi^*$ is finite, so $\Theta_m$ has image in $\Map(\Gamma^m, \overline{k})$.
    This implies that $\kappa(x)$ is algebraic over $k$ and contradicts the assumption that $x$ is a dimension $1$ point.
\end{proof}

\begin{prop}\label{finite_maps} If $\varphi: G \rightarrow H $ is finite then  $f:X^{\gen}_{G, \rhobarss}\rightarrow X^{\gen}_{H, \varphi\circ \rhobarss}$  is also finite. 
\end{prop}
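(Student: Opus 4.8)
The statement to prove is \Cref{finite_maps}: if $\varphi: G \to H$ is a finite morphism of generalised reductive $\OO$-group schemes, then the induced map $f: X^{\gen}_{G,\rhobarss} \to X^{\gen}_{H,\varphi\circ\rhobarss}$ is finite. The strategy is to factor $f$ through the fibre product of $X^{\gen}_{H,\varphi\circ\rhobarss}$ with $X^{\ps}_G$ over $X^{\ps}_H$, and to check finiteness of each factor separately. Concretely, the map $f$ sits in a commutative square over the pseudocharacter deformation spaces:
\begin{equation}
\begin{tikzcd}
X^{\gen}_{G,\rhobarss} \ar[r, "f"] \ar[d] & X^{\gen}_{H,\varphi\circ\rhobarss} \ar[d] \\
X^{\ps}_G \ar[r] & X^{\ps}_H
\end{tikzcd}
\end{equation}
where the vertical maps are the natural projections (compositions of \eqref{XgpsXpsmap} with the GIT quotient maps) and the bottom map is the finite morphism from \Cref{finite_functoriality}. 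So $f$ factors as $X^{\gen}_{G,\rhobarss} \to X^{\gen}_{H,\varphi\circ\rhobarss} \times_{X^{\ps}_H} X^{\ps}_G \to X^{\gen}_{H,\varphi\circ\rhobarss}$. The second arrow is the base change of the finite morphism $X^{\ps}_G \to X^{\ps}_H$ along $X^{\gen}_{H,\varphi\circ\rhobarss} \to X^{\ps}_H$, hence finite. It therefore remains to show the first arrow $g: X^{\gen}_{G,\rhobarss} \to X^{\gen}_{H,\varphi\circ\rhobarss} \times_{X^{\ps}_H} X^{\ps}_G$ is finite.

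For the first arrow, I would first observe that $g$ is a morphism of affine $X^{\ps}_G$-schemes of finite type (both sides are finite type over $\RpsGLd$, hence over $R^{\ps}_G$), so by \stackcite{01WJ} it suffices to show $g$ is integral and quasi-finite; in fact I would aim to show it is a closed immersion, which would immediately give finiteness since the target is finite over $X^{\gen}_{H,\varphi\circ\rhobarss}$. To see that $g$ is a closed immersion, use the functorial description from \Cref{XgenGtau_rhobarss_A}: for an $R^{\ps}_G$-algebra $A$, a point of the target consists of a representation $\sigma: \Gamma \to H(A)$ with $\Theta_\sigma$ equal to the specialisation of $\varphi(\Theta^u_G) = \Theta^u_H \otimes_{R^{\ps}_H} R^{\ps}_G$ at $A$ and satisfying the condensed condition over $R^{\ps}_G$; and $g(A)$ sends a $G$-condensed $\rho$ with $\Theta_\rho = \Theta^u_G|_A$ to $\varphi\circ\rho$. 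Now since $\varphi$ is finite, in particular $\varphi: G \hookrightarrow G \times_H G$ followed by... — more directly: a finite morphism of affine schemes is in particular affine and of finite type, and $\varphi$ being finite is proper and a monomorphism onto its image only if it is a closed immersion, which it need not be. Instead I would argue via the closed-immersion description of $X^{\gen}$ into a representation scheme: pick a closed immersion $\tau: H \hookrightarrow \GL_d$; then $\tau \circ \varphi: G \hookrightarrow \GL_d$ is a closed immersion (composition of finite $\varphi$ with closed $\tau$ need not be closed either—but we may instead choose $\tau': G \hookrightarrow \GL_{d'}$ directly and compare). The cleanest route: by Remark \ref{FN}, $X^{\gen,\tau}_G \hookrightarrow \Rep_G^{\mathcal F_N,\square} = G^N \times X^{\ps}_{\GL_d}$ and similarly for $H$; the map $g$ then corresponds on these models to $G^N \to H^N$ induced by $\varphi$, which is finite since $\varphi$ is, and base changing / restricting to closed subschemes preserves finiteness by \cite[Theorem 2 (ii)]{seshadri} and standard arguments.

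More precisely, I would run the following argument. By \Cref{finite_functoriality} the map $\pi: X^{\ps}_G \to X^{\ps}_H$ is finite; form $X' := X^{\gen}_{H,\varphi\circ\rhobarss}\times_{X^{\ps}_H} X^{\ps}_G$, finite over $X^{\gen}_{H,\varphi\circ\rhobarss}$ via base change. It suffices to show $g: X^{\gen}_{G,\rhobarss} \to X'$ is finite. Both are affine and of finite type over $R^{\ps}_G$ (using that $A^{\gen}_{G,\rhobarss}$, $A^{\gen}_{H,\varphi\circ\rhobarss}$ are finite type over $\RpsGLd$, and $\RpsGLd \to R^{\ps}_G$ factors through $R^{\ps}_H$). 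By \stackcite{01WJ} (a morphism of finite type that is universally closed and affine is finite; or: universally closed $+$ quasi-finite $+$ separated $\Rightarrow$ finite), it is enough to show $g$ is a monomorphism with closed image, i.e. a closed immersion. For this, note that on $A$-points $g$ sends $\rho$ to $\varphi\circ\rho$ together with the tautological $R^{\ps}_G$-algebra structure; this map is injective on $A$-points for every $A$ because $\varphi$ is a finite, hence affine, morphism and — here is the key point — although $\varphi$ need not be a monomorphism of group schemes, the representations $\rho$ are constrained by the pseudocharacter $\Theta^u_G|_A$: the pair $(\varphi\circ\rho, \text{structure map to } R^{\ps}_G)$ determines $\Theta_\rho = \Theta^u_G|_A$, and then $\rho$ itself is recovered from $\varphi\circ\rho$ up to the requirement of lying in $G(A)$. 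To nail injectivity I would instead embed $G \hookrightarrow \GL_{d'}$ and $H \hookrightarrow \GL_d$ with $d' $ possibly different, use that $\varphi$ finite means $\OO[G]$ is finite over $\OO[H]$, and reduce to the assertion that $G^N \to H^N$ is finite (clear, product of finite maps) and that $X^{\gen}_{G,\rhobarss} \to X^{\gen}_{H,\varphi\circ\rhobarss}\times_{H^N} G^N$ — interpreting both inside the respective $\Rep^{\mathcal F_N,\square}$ schemes as in Remark \ref{FN} — is a closed immersion, which follows from the fact that $X^{\gen}_{G,\rhobarss}$ is cut out inside $G^N \times X^{\ps}_{\GL_d}$ by the conditions ``$\Theta_\rho = \Theta^u|_A$'' and ``$\rho$ lands in the right connected component'', both of which are pulled back from $X^{\gen}_{H,\varphi\circ\rhobarss}$ along $\varphi$ together with the constraint from $X^{\ps}_G$.

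\textbf{Main obstacle.} The delicate point is that a finite morphism of group schemes $\varphi: G \to H$ need not be a closed immersion, so one cannot directly say ``$\varphi\circ\rho$ determines $\rho$''. The resolution uses that we are working with the \emph{fixed pseudocharacter} deformation space over $R^{\ps}_G$ (not over $R^{\ps}_H$): the extra data of the $R^{\ps}_G$-algebra structure pins down $\Theta_\rho$, and then the commutative algebra input is that $\OO[G^N]$ is finite over $\OO[H^N] \otimes_{\OO[H^N]^{H^0} \text{-part}} (\text{pseudochar ring})$ — concretely this is exactly \Cref{cor_cotner}/\Cref{finite_functoriality} packaged correctly. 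I would expect the proof to occupy perhaps half a page, with the only real content being the careful bookkeeping of which ring each scheme is finite/finite-type over, and the reduction to finiteness of $G^N \to H^N$ combined with \stackcite{01WJ}. Alternatively, and perhaps more cleanly: show $g$ is both finite type and integral directly — integrality because $A^{\gen}_{G,\rhobarss}$ is generated over (the image of) $A^{\gen}_{H,\varphi\circ\rhobarss}\otimes_{R^{\ps}_H} R^{\ps}_G$ by the matrix entries of $\rho^u(\gamma_i)$, each of which satisfies the monic characteristic polynomial coming from the Cayley--Hamilton structure of $E^u_G$, which in turn is a finite module over $R^{\ps}_G$ by \cite[Proposition 3.6]{WE_alg} — this gives integrality, and finite type is clear, so $g$ is finite.
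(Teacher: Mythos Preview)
You correctly identify all three ingredients — finiteness of $X^{\ps}_G \to X^{\ps}_H$ from \Cref{finite_functoriality}, finiteness of $\varphi^N : G^N \to H^N$, and the closed embeddings of Remark~\ref{FN} — and your ``cleanest route'' paragraph is close to the paper's argument. But you then move on to more complicated alternatives that run into trouble: the detour through the fibre product $X' = X^{\gen}_{H,\varphi\circ\rhobarss} \times_{X^{\ps}_H} X^{\ps}_G$ and the attempt to prove that $g : X^{\gen}_{G,\rhobarss} \to X'$ (or to $X^{\gen}_{H,\varphi\circ\rhobarss} \times_{H^N} G^N$) is a closed immersion require justification you do not supply, and your final Cayley--Hamilton suggestion is not right as stated — it is the \emph{matrix} $\tau(\rho^u(\gamma_i))$ that satisfies its characteristic polynomial, not its individual entries, so this does not directly yield integrality of the coordinate ring.

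The paper's proof assembles the same ingredients in three lines. Write down the commutative square
\[
\begin{tikzcd}
X^{\gen}_{G,\rhobarss} \ar[r, hook] \ar[d, "f"'] & G^N_{X^{\ps}_G} \ar[d] \\
X^{\gen}_{H,\varphi\circ\rhobarss} \ar[r, hook] & H^N_{X^{\ps}_H}
\end{tikzcd}
\]
in which the horizontal arrows are closed immersions (Remark~\ref{FN}, together with the fact that $A^{\gen}_{G,\rhobarss}$ is an $R^{\ps}_G$-algebra) and the right vertical arrow is finite, being the product of the finite map $\varphi^N$ with the finite map of \Cref{finite_functoriality}. The composite across the top and down the right is therefore finite; since it factors through the closed immersion on the bottom row, $f$ is finite. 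The single elementary fact doing the work — that a finite morphism to $Z$ which factors through a closed subscheme $Y \hookrightarrow Z$ is already finite over $Y$ — replaces your entire analysis of $g$.
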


\begin{proof} The induced map $X^{\ps}_G \rightarrow X^{\ps}_H$ is finite by \Cref{finite_functoriality}. Since 
$G\rightarrow H$ is finite by assumption the map $G^N_{X^{\ps}_G}\rightarrow H^N_{X^{\ps}_H}$ is also 
finite. We have a commutative diagram 
\begin{equation}\label{val_prop_gen}
        \begin{tikzcd}
            X^{\gen}_{G, \rhobarss} \ar[r]\ar[d, "f"]  & G^N_{X^{\ps}_G}\ar[d]\\
            X^{\gen}_{H, \varphi\circ\rhobarss}  \ar[r] & H^N_{X^{\ps}_H}.
            \end{tikzcd}
\end{equation}
with horizontal arrows closed immersions. Since closed immersions are finite maps and the 
composition of finite maps are again finite, the diagram implies the assertion.
\end{proof}

\subsection{Products}\label{sec_prod}
In this subsection suppose that $G=G_1\times G_2$, where the fibre product is taken over
$\Spec \OO$ and $G_1$ and $G_2$ are generalised reductive group schemes over $\OO$. 
Let $p_i: G\rightarrow G_i$ denote the projection map onto the $i$-th component. 
Then $\rhobarss$ is of the form $\rhobarss_1\times \rhobarss_2$, where $\rhobarss_i: \Gamma\rightarrow G_i(k)$
is $G_i$-semisimple.

\begin{lem}\label{prodPC} The map $\Theta\mapsto (p_1\circ \Theta, p_2\circ \Theta)$ induces a bijection of functors $\PC_G^{\Gamma}\overset{\sim}{\rightarrow} \PC_{G_1}^{\Gamma}\times \PC_{G_2}^{\Gamma}$ and 
$\cPC_G^{\Gamma}\overset{\sim}{\rightarrow}\cPC_{G_1}^{\Gamma}\times \cPC_{G_2}^{\Gamma}$.
\end{lem}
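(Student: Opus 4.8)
The plan is to exploit the explicit description of $\OO[G^n]^{G^0}$ as a tensor product when $G=G_1\times G_2$, and then unwind the definition of a $G$-pseudocharacter. The key algebraic input is that for any $\OO$-algebra $A$ and any $n\ge 1$, the natural map
\begin{equation*}
A[G_1^n]^{G_1^0}\otimes_A A[G_2^n]^{G_2^0}\longrightarrow A[(G_1\times G_2)^n]^{(G_1\times G_2)^0}
\end{equation*}
is an isomorphism. This follows from the Künneth formula in rational cohomology together with the flatness discussion at the end of \Cref{comparison}: the invariants $\OO[G_i^n]^{G_i^0}$ commute with flat base change, and $(G_1\times G_2)^0 = G_1^0\times G_2^0$ acts on $\OO[G_1^n]\otimes_{\OO}\OO[G_2^n]$ factorwise, so the invariants of the tensor product are the tensor product of the invariants.

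First I would fix notation: write $q_i:\PC_G^\Gamma\to\PC_{G_i}^\Gamma$ for $\Theta\mapsto p_i\circ\Theta$, and construct a candidate inverse. Given $(\Theta^{(1)},\Theta^{(2)})\in\PC_{G_1}^\Gamma(A)\times\PC_{G_2}^\Gamma(A)$, for each $n$ one defines $\Theta_n:\OO[G^n]^{G^0}\to\Map(\Gamma^n,A)$ by sending a decomposable invariant $f_1\otimes f_2$ to the pointwise product $(\gamma_1,\dots,\gamma_n)\mapsto\Theta^{(1)}_n(f_1)(\gamma_1,\dots,\gamma_n)\cdot\Theta^{(2)}_n(f_2)(\gamma_1,\dots,\gamma_n)$ and extending $\OO$-linearly; the isomorphism above guarantees this is well-defined on all of $\OO[G^n]^{G^0}$. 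Then I would verify that $(\Theta_n)_{n\ge 1}$ satisfies conditions (1) and (2) of \Cref{LafPC}: both conditions are compatible with the tensor decomposition because the reindexing operation $f\mapsto f^\zeta$ and the multiplication operation $f\mapsto\hat f$ on $\OO[G^n]$ are themselves compatible with the external tensor product $\OO[G_1^n]\otimes\OO[G_2^n]$ (they act factorwise under the Künneth isomorphism), and pointwise multiplication of maps $\Gamma^n\to A$ respects precomposition with $\Gamma^m\to\Gamma^n$ and with the multiplication map $\Gamma^{n+1}\to\Gamma^n$ on the last two coordinates. Checking that $q=(q_1,q_2)$ and this construction are mutually inverse is then a direct computation: applying $p_i^*$ to $f_1\otimes f_2$ picks out (up to the unit) the $i$-th factor evaluated at the identity of the other group, and one uses that $\Theta^{(i)}_n$ applied to the constant function $1$ is the constant map $1$ — which follows from condition (1) with $m=0$, or more simply because $\Theta^{(i)}_n$ is an $\OO$-algebra homomorphism.

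For the continuous version, one observes that $\Theta$ is continuous if and only if each $\Theta_n$ lands in $\Cont(\Gamma^n,A)$, and since $A$ is a topological ring, pointwise products of continuous maps are continuous and restrictions of continuous maps to the $i$-th factor (composing with $p_i^*$, which is an algebra map) remain continuous; hence the bijection restricts to a bijection $\cPC_G^\Gamma\eqto\cPC_{G_1}^\Gamma\times\cPC_{G_2}^\Gamma$. I expect the main obstacle to be purely bookkeeping: one must be careful that the external tensor product structure on $\OO[G^n]^{G^0}$ is genuinely natural in all the ways $\PC$ requires (the maps $f\mapsto f^\zeta$ for arbitrary $\zeta$, and $f\mapsto\hat f$), and that the construction of the inverse is independent of the choice of decomposition into decomposable tensors. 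Since invariants commute with the relevant base changes and the $G_i^0$-actions are independent of each other, these checks are routine but need to be stated cleanly; nothing deep is involved beyond the Künneth isomorphism for invariants already invoked in \Cref{comparison}.
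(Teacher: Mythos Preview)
Your proposal is correct and follows essentially the same approach as the paper: both use the Künneth isomorphism $\OO[G^m]^{G^0}\cong\OO[G_1^m]^{G_1^0}\otimes_{\OO}\OO[G_2^m]^{G_2^0}$ and build the inverse as the tensor product $\Theta^{(1)}_m\otimes\Theta^{(2)}_m$. The only minor difference is that the paper outsources the check of the compatibility conditions (your conditions (1) and (2)) to the functorial description of pseudocharacters in \cite[Proposition~3.19]{quast}, whereas you verify them by hand; and your continuity argument via pointwise products is arguably more direct than the paper's brief remark.
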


\begin{proof} For an integer $m \geq 0$, we have $\OO[G^m]^{G^0} = \OO[G_1^m]^{G_1^0} \otimes_{\OO} \OO[G_2^m]^{G_2^0}$, since the $\OO[G_i^m]$ are flat $\OO$-modules. 

    Recall that a $G$-valued pseudocharacter is a family of $\OO$-algebra homomorphisms $\Theta_m : \OO[G^m]^{G^0} \to \Map(\Gamma^m, A)$ satisfying certain compatibility conditions.
    Restriction along the map $\OO[G_i^m]^{G_i^0} \to \OO[G^m]^{G^0}$ defines
    $G_i$-pseudocharacters $\Theta^{(i)}$ satisfying the same compatibility conditions; this can be directly seen from the description of pseudocharacters in terms of functors on the category of finitely generated free groups \cite[Proposition 3.19]{quast}. Conversely given $\Theta^{(i)} \in \PC_{G_i}^{\Gamma}(A)$, we can define a $G$-pseudocharacter $\Theta^{(1)} \otimes \Theta^{(2)}$ by defining $(\Theta^{(1)} \otimes \Theta^{(2)})_m : \OO[G^m]^{G^0} \to \Map(\Gamma^m, A)$ as the tensor product $\Theta^{(1)}_m \otimes \Theta^{(2)}_m : \OO[G_1^m]^{G_1^0} \otimes_{\OO} \OO[G_2^m]^{G_2^0} \to \Map(\Gamma^m, A)$. This implies that the first map is bijective. 
    
    If $A$ is a topological $\OO$-algebra then a map $f: \Gamma^m \rightarrow G(A)$ is continuous
if and only if $p_1\circ f$ and $p_2\circ f$ are continuous. This implies the second assertion.
\end{proof}

\begin{lem}\label{prod_ps} $R^{\ps}_G \cong R^{\ps}_{G_1}\wtimes_{\OO} R^{\ps}_{G_2}$.
\end{lem}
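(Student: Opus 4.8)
The plan is to reduce the statement to the universal property of the completed tensor product together with \Cref{prodPC}. First I would recall that $R^{\ps}_G$ pro-represents the functor $D_{\Theta_{\rhobar}}$ on $\mathfrak A_{\Lambda}$ (here $\Lambda = \OO$ since $\kappa = k$), by \cite[Theorem A]{quast}, and similarly $R^{\ps}_{G_i}$ pro-represents $D_{\Theta_{\rhobar_i}}$. Since $\rhobarss = \rhobarss_1 \times \rhobarss_2$ we have $\Theta_{\rhobar} = (\Theta_{\rhobar_1}, \Theta_{\rhobar_2})$ under the identification of \Cref{prodPC}. The key point is that \Cref{prodPC} is compatible with specialisation along maps of $\OO$-algebras and with the continuity condition, so for $A \in \mathfrak A_{\OO}$ it restricts to a bijection
$$ D_{\Theta_{\rhobar}}(A) \iso D_{\Theta_{\rhobar_1}}(A) \times D_{\Theta_{\rhobar_2}}(A). $$

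Next I would invoke the standard fact that the completed tensor product $R^{\ps}_{G_1} \wtimes_{\OO} R^{\ps}_{G_2}$ pro-represents the product of the deformation functors: for $A \in \mathfrak A_{\OO}$,
$$ \Hom_{\OO\hyphen\alg}^{\mathrm{loc}}(R^{\ps}_{G_1} \wtimes_{\OO} R^{\ps}_{G_2}, A) \cong \Hom_{\OO\hyphen\alg}^{\mathrm{loc}}(R^{\ps}_{G_1}, A) \times \Hom_{\OO\hyphen\alg}^{\mathrm{loc}}(R^{\ps}_{G_2}, A), $$
which holds because $A$ is artinian local with residue field $k$, so any pair of local homomorphisms factors through a finite-level quotient $R^{\ps}_{G_1}/\mm^n \otimes_{\OO} R^{\ps}_{G_2}/\mm^n$, and $R^{\ps}_{G_1} \wtimes_{\OO} R^{\ps}_{G_2}$ is by definition the inverse limit of these. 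Combining the two displayed bijections gives that $R^{\ps}_{G_1} \wtimes_{\OO} R^{\ps}_{G_2}$ pro-represents $D_{\Theta_{\rhobar}}$, hence is canonically isomorphic to $R^{\ps}_G$ by Yoneda. One should also check that $R^{\ps}_{G_1} \wtimes_{\OO} R^{\ps}_{G_2}$ is itself a complete local noetherian $\OO$-algebra with residue field $k$, so that it genuinely lies in the relevant pro-category; this follows since each $R^{\ps}_{G_i}$ is complete local noetherian with residue field $k$ (again by \cite[Theorem A]{quast} and the noetherianity from \cite[Theorem 5.7]{quast}, using that $\Gamma$ is topologically finitely generated).

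The main obstacle, such as it is, is bookkeeping rather than mathematics: verifying that the bijection of \Cref{prodPC} genuinely intertwines the deformation structures — i.e. that it is functorial in $A$, sends $\Theta_{\rhobar}$ to $(\Theta_{\rhobar_1}, \Theta_{\rhobar_2})$, and respects the continuity condition used to define $\cPC$. All three were essentially established in the proof of \Cref{prodPC} (the continuity compatibility is the last sentence there, and functoriality in $A$ is immediate from the tensor-product description), so the argument is short. I would phrase the proof as: "By \Cref{prodPC} the functor $D_{\Theta_{\rhobar}}$ is naturally isomorphic to $D_{\Theta_{\rhobar_1}} \times D_{\Theta_{\rhobar_2}}$; since the latter is pro-represented by $R^{\ps}_{G_1} \wtimes_{\OO} R^{\ps}_{G_2}$ and the former by $R^{\ps}_G$, the claim follows."
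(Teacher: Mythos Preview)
Your proposal is correct and follows essentially the same approach as the paper's proof: use \Cref{prodPC} to identify $D_{\Thetabar}$ with $D_{\Thetabar_1}\times D_{\Thetabar_2}$, note that the completed tensor product pro-represents the product functor, and conclude by Yoneda. Your final one-sentence summary is almost verbatim the paper's proof; the extra verification you give (noetherianity, functoriality, continuity) is reasonable bookkeeping that the paper leaves implicit.
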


\begin{proof}
    Let $\Thetabar_i$ be the $G_i$-pseudocharacter associated to $\rhobarss_i$; $i=1,2$.
    The ring $R^{\ps}_{G_1}\wtimes_{\OO} R^{\ps}_{G_2}$ pro-represents the product functor $D_{\Thetabar_1} \times D_{\Thetabar_2} : \Aa_{\OO} \to \Set$.
    If $A\in \Aa_{\OO}$ then the bijection between $\cPC_{G_1}^{\Gamma}(A) \times \cPC_{G_2}^{\Gamma}(A)$ and $\cPC_G^{\Gamma}(A)$
    established in \Cref{prodPC} induces a bijection between 
    $D_{\Thetabar_1}(A) \times D_{\Thetabar_2}(A)$ and $D_{\Thetabar}(A)$, which implies the 
    assertion. 
   \end{proof}

The map from the usual tensor product into the completed tensor product induces
a morphism of schemes $X^{\ps}_G\rightarrow X^{\ps}_{G_1}\times X^{\ps}_{G_2}$ over $\Spec \OO$.

\begin{prop}\label{products} The  natural maps  $X^{\gen}_{G, \rhobarss} \to X^{\gen}_{G_i, \rhobarss_i}$ constructed in \Cref{functoriality_2} induce an isomorphism of schemes over $X^{\ps}_G$: 
\begin{align}
    X^{\gen}_{G, \rhobarss} \eqto ( X^{\gen}_{G_1, \rhobarss_1}\times X^{\gen}_{G_2, \rhobarss_2})\times_{X^{\ps}_{G_1}\times X^{\ps}_{G_2}} X^{\ps}_G, \label{prodiso}
\end{align}
where the fibre products without subscript are taken over $\Spec \OO$.
\end{prop}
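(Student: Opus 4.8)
The strategy is to reduce the claim to the universal properties of the objects involved, in the spirit of \Cref{XgenGtau_rhobarss_A}. First I would describe the right-hand side functorially: for an $\OO$-algebra $A$ an $A$-point of $(X^{\gen}_{G_1,\rhobarss_1}\times X^{\gen}_{G_2,\rhobarss_2})\times_{X^{\ps}_{G_1}\times X^{\ps}_{G_2}}X^{\ps}_G$ is a triple $(\rho_1,\rho_2,\Theta)$ where $\rho_i\in X^{\gen}_{G_i,\rhobarss_i}(A)$, where $\Theta\in X^{\ps}_G(A)$ is a $G$-pseudocharacter deforming $\Theta_{\rhobarss}$, and where the image of $\Theta$ in $X^{\ps}_{G_i}(A)$ equals $\Theta_{\rho_i}=p_i\circ\Theta$ for $i=1,2$. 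By \Cref{prod_ps} (and \Cref{prodPC}) the datum of $\Theta$ together with the two compatibilities $p_i\circ\Theta=\Theta_{\rho_i}$ is exactly the datum of $\Theta$ such that $p_i\circ\Theta=\Theta_{\rho_i}$; so an $A$-point of the right-hand side is the same as a pair $(\rho_1,\rho_2)$ with $\rho_i\in X^{\gen}_{G_i,\rhobarss_i}(A)$ such that the $G$-pseudocharacter $\Theta_{(\rho_1,\rho_2)}$ (which corresponds under \Cref{prodPC} to $(\Theta_{\rho_1},\Theta_{\rho_2})$) deforms $\Theta_{\rhobarss}$.

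Next I would produce the comparison map and its inverse on $A$-points. Given $\rho\in X^{\gen}_{G,\rhobarss}(A)$, the maps of \Cref{functoriality_2} send it to $(p_1\circ\rho,\,p_2\circ\rho)$, and these land in $X^{\gen}_{G_i,\rhobarss_i}(A)$; moreover $\Theta_{p_i\circ\rho}=p_i\circ\Theta_\rho$, so the pair is compatible over $X^{\ps}_{G_i}$, and $\Theta_\rho$ itself furnishes the required point of $X^{\ps}_G(A)$. Conversely, given $(\rho_1,\rho_2)$ as above, define $\rho:\Gamma\to G(A)=G_1(A)\times G_2(A)$ by $\rho(\gamma)=(\rho_1(\gamma),\rho_2(\gamma))$; this is a homomorphism of abstract groups. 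One must check (i) that $\rho$ is $\RpsG$-condensed and (ii) that $\Theta_\rho$ equals the specialisation of $\Theta^u_G$, so that $\rho\in X^{\gen}_{G,\rhobarss}(A)$ by \Cref{XgenGtau_rhobarss_A}. For (i): choose closed immersions $\tau_i:G_i\hookrightarrow\GL_{d_i}$; then $G\hookrightarrow\GL_{d_1}\times\GL_{d_2}\hookrightarrow\GL_{d_1+d_2}$ is a closed immersion, and since $\rho_i$ is $\RpsG$-condensed (by \Cref{B4}, base-changing from $R^{\ps}_{G_i}$ along $R^{\ps}_{G_i}\to\RpsG$ coming from \Cref{prod_ps}), \Cref{B3} shows the product map $\gamma\mapsto(\tau_1(\rho_1(\gamma)),\tau_2(\rho_2(\gamma)))$ lands in a finitely generated $\RpsG$-submodule with continuous transition, i.e.\ $\rho$ is $\RpsG$-condensed. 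For (ii): under the identification of \Cref{prodPC}, $\Theta_\rho$ corresponds to $(\Theta_{\rho_1},\Theta_{\rho_2})=(\Theta^u_{G_1}|_A,\Theta^u_{G_2}|_A)$, which by \Cref{prod_ps} is precisely $\Theta^u_G|_A$; hence $\Theta_\rho$ deforms $\Theta_{\rhobarss}$ and $\rho\in X^{\gen}_{G,\rhobarss}(A)$ by \Cref{Gps_comp}. The two constructions are visibly mutually inverse and natural in $A$, so by Yoneda they give the isomorphism \eqref{prodiso}; that it is a map over $X^{\ps}_G$ is clear from the construction since $\Theta_\rho$ is what is used on both sides.

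The only mild subtlety — and the place I expect the main work to sit — is making the condensedness argument (step (i)) clean: one has to be careful that ``$R^{\ps}_G$-condensed'' is the right notion to propagate through the product embedding, and to invoke \Cref{B3} and \Cref{B4} in the correct order, using that $\RpsG\cong R^{\ps}_{G_1}\wtimes_{\OO}R^{\ps}_{G_2}$ is itself noetherian and profinite (as a complete local noetherian $\OO$-algebra) so that it is a legitimate coefficient ring for the condensed formalism. Everything else is a formal unwinding of definitions together with \Cref{prodPC}, \Cref{prod_ps} and \Cref{XgenGtau_rhobarss_A}. Alternatively, one can avoid functor-of-points bookkeeping and argue scheme-theoretically: from the description $X^{\gen}_{G_i,\rhobarss_i}\subseteq G_i^N\times X^{\ps}_{G_i}$ as closed subschemes (\Cref{FN}, \Cref{closedimmersionforGLd}) and $X^{\gen}_{G,\rhobarss}\subseteq G^N\times X^{\ps}_G=(G_1^N\times G_2^N)\times X^{\ps}_G$, both sides of \eqref{prodiso} are identified with the same closed subscheme of $G_1^N\times G_2^N\times X^{\ps}_G$ cut out by the pseudocharacter and Cayley--Hamilton conditions, using $R^{\ps}_G\cong R^{\ps}_{G_1}\wtimes_\OO R^{\ps}_{G_2}$ to match the base rings; I would present the functorial argument as the main line and remark on this reformulation.
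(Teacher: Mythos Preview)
Your proposal is correct and follows essentially the same approach as the paper: both arguments describe the $A$-points of each side via \Cref{XgenGtau_rhobarss_A}, use \Cref{B3} (together with the base-change from $R^{\ps}_{G_i}$ to $\RpsG$, which the paper phrases via \Cref{factors_through_Eu_G}(3) and you via \Cref{B4}) to show $\rho_1\times\rho_2$ is $\RpsG$-condensed, and invoke \Cref{prodPC}/\Cref{prod_ps} to match pseudocharacters. The only cosmetic differences are your explicit choice of embeddings $\tau_i$ (unnecessary since \Cref{B3} already handles products abstractly) and your closing scheme-theoretic remark, which the paper omits.
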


\begin{proof}
    Let $A$ be an $\RpsG$-algebra. By Proposition \ref{XgenGtau_rhobarss_A} and Lemma \ref{factors_through_Eu_G} (3) applied with 
    $R=\RpsG$,  the $A$-valued points of the right hand side of \eqref{prodiso} are pairs of 
    $\RpsG$-condensed representations $\rho_1 : \Gamma \to G_1(A)$ and $\rho_2 : \Gamma \to G_2(A)$, 
    such that $\Theta^u_{G_i} \otimes_{R^{\ps}_{G_i}} A = \Theta_{\rho_i}$ for $i=1,2$. 
    \Cref{B3} implies, that the representation $\rho := \rho_1 \times \rho_2 : \Gamma \to G(A)$ is $\RpsG$-condensed. It follows from Lemma \ref{prod_ps} that the 
    bijection $$\PC^{\Gamma}_{G}(\RpsG)\eqto
    \PC^{\Gamma}_{G}(\RpsG)\times \PC^{\Gamma}_{G}(\RpsG)$$ maps 
    $\Theta^u_G$ to $(\Theta^u_{G_1}\otimes_{R^{\ps}_{G_1}} \RpsG,
    \Theta^u_{G_2}\otimes_{R^{\ps}_{G_2}} \RpsG)$. Hence, $(\Theta^u_G)_{|A}\in \PC^{\Gamma}_{G}(A)$
    is mapped to $(\Theta_{\rho_1}, \Theta_{\rho_2})\in \PC^{\Gamma}_{G}(A)\times \PC^{\Gamma}_{G}(A)$ and hence $(\Theta^u_G)_{|A}=\Theta_{\rho}$ by Lemma \ref{prod_ps}. Proposition \ref{XgenGtau_rhobarss_A}
    implies that $\rho\in  X^{\gen}_{G, \rhobarss}(A)$ and hence \eqref{prodiso} is surjective. 
    Since $p_1\circ \rho$ and $p_2\circ \rho$ uniquely determine $\rho\in  X^{\gen}_{G, \rhobarss}(A)$, \eqref{prodiso} is 
    also injective. 
    \end{proof}

\subsection{\texorpdfstring{Functoriality in $\Gamma$}{Functoriality in Gamma F}}\label{funct_Gamma}
Let $\Gamma'$ be an open subgroup of $\Gamma$. The restriction of $G$-pseudocharacters of $\Gamma$ to $\Gamma'$ defines 
a morphism $X^{\ps, \Gamma}_{G}\rightarrow X^{\ps,\Gamma'}_{G}$. Similarly, \Cref{rest_open} implies that the restriction of representations
of $\Gamma$ to $\Gamma'$ induces a morphism $X^{\gen,\Gamma}_{G, \rhobarss}\rightarrow X^{\gen,\Gamma'}_{G, \rhobarss}$.

\begin{lem}\label{finite_EF} The morphism $X^{\ps, \Gamma}_{G}\rightarrow X^{\ps,\Gamma'}_{G}$ is finite.
\end{lem}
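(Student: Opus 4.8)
The statement asserts that $X^{\ps,\Gamma}_G \to X^{\ps,\Gamma'}_G$ is finite when $\Gamma' \leq \Gamma$ is open. The strategy is to reduce, via topological Nakayama, to showing that the special fibre $k \otimes_{R^{\ps,\Gamma'}_G} R^{\ps,\Gamma}_G$ is a finite-dimensional $k$-vector space. Both rings are complete local noetherian $\OO$-algebras (using that $\Gamma$, hence $\Gamma'$, is topologically finitely generated and invoking \cite[Theorem 5.7]{quast}), so this fibre ring $C$ is itself a complete local noetherian ring. It therefore suffices to show $\dim C = 0$. This mirrors exactly the argument already used in \Cref{finite_functoriality}.

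First I would suppose for contradiction that $\dim C \geq 1$, so that there is a prime $x \in \Spec C$ with $\dim C/x = 1$. By \cite[Lemma 3.17]{BIP_new} the residue field $\kappa(x)$ is a local field of characteristic $p$, and in particular it is \emph{not} algebraic over $k$. The point $x$ corresponds to a continuous $G$-pseudocharacter $\Theta \in \cPC_G^{\Gamma}(\kappa(x))$, equipped with the natural valuation topology, whose restriction $\Theta|_{\Gamma'} \in \cPC_G^{\Gamma'}(\kappa(x))$ is the scalar extension of the residual $G$-pseudocharacter $\Thetabar|_{\Gamma'}$ attached to $\rhobar|_{\Gamma'}$; thus $\Theta|_{\Gamma'}$ takes values in $\Map((\Gamma')^m, k)$ for each $m \geq 1$ (more precisely in the finite subfield of $\kappa(x)$ generated by the finitely many values of $\Thetabar|_{\Gamma'}$).

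The key step is then to deduce from this that $\Theta$ itself takes values in a field algebraic over $k$, contradicting the nature of $\kappa(x)$. Here I would use that $\Gamma'$ has finite index in $\Gamma$: fix coset representatives $\Gamma = \coprod_i c_i \Gamma'$. By the reconstruction theorem \cite[Theorem 3.8]{quast} there is a continuous representation $\rho : \Gamma \to G(\overline{\kappa(x)})$ with $\Theta_\rho = \Theta$, and we may take $\rho$ to be $G$-semisimple. Its restriction $\rho|_{\Gamma'}$ has $G$-pseudocharacter $\Theta|_{\Gamma'}$, which is (the scalar extension of) a pseudocharacter with values in a finite field; by the argument of \Cref{building}, after conjugating by an element of $G^0(\overline{\kappa(x)})$ we may assume $\rho|_{\Gamma'}$ lands in $G$ of the ring of integers of a finite extension, indeed its $G$-semisimplification descends to $G(\kbar)$. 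Since every element of $\Gamma$ is $c_i \gamma'$ with $\gamma' \in \Gamma'$, the values $\rho(\gamma) = \rho(c_i)\rho(\gamma')$ all lie in the subgroup of $G(\overline{\kappa(x)})$ generated by $\rho(\Gamma')$ together with the finitely many elements $\rho(c_1), \ldots, \rho(c_r)$; taking Zariski closures, $H := \overline{\rho(\Gamma)}$ differs from $\overline{\rho(\Gamma')}$ by only finitely many cosets, so $H$ is defined over a field finitely generated over $k$, and in fact — since $\rho$ is $G$-semisimple and the pseudocharacter $\Theta = \Theta_\rho$ is determined by $H$ up to $G^0$-conjugacy (uniqueness in \cite[Theorem 3.7]{quast}) — the values $\Theta_n(f)(\gamma_1,\dots,\gamma_n) = f(\rho(\gamma_1),\dots,\rho(\gamma_n))$ for $f \in \OO[G^n]^{G^0}$ lie in the subfield generated over $k$ by the finitely many matrix entries of $\rho(c_1),\ldots,\rho(c_r)$ and those appearing in $\Theta|_{\Gamma'}$, which is algebraic over $k$. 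This contradicts $\kappa(x)$ being a local field of characteristic $p$.

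\textbf{Main obstacle.} The delicate point is making the last paragraph rigorous: passing from the finiteness of the \emph{restricted} pseudocharacter to the finiteness of the values of the \emph{full} pseudocharacter. One cannot simply argue on pseudocharacters abstractly — one genuinely needs the reconstruction theorem to pass to a representation, control its image on $\Gamma'$ (via $\Cref{building}$), and then exploit $[\Gamma:\Gamma'] < \infty$ to bound the Zariski closure of the full image, before translating back to pseudocharacter values. An alternative, perhaps cleaner, route avoiding explicit representations: observe that $\OO[G^n]^{G^0}$-valued data on $\Gamma^n$ can be expressed in terms of data on $(\Gamma')^{n}$ using the coset decomposition and the "twisting" maps $f \mapsto f(\cdot\, c_{i_1}, \ldots, \cdot\, c_{i_n})$, each of which still lands in a fixed finitely generated $\OO$-subalgebra; this should show directly that the image of each $\Theta_n$ lies in $\Map(\Gamma^n, k')$ for a fixed finite extension $k'/k$. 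Either way the combinatorial bookkeeping with cosets is the part requiring care; everything else is a verbatim repetition of \Cref{finite_functoriality}.
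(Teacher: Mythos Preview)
Your overall architecture — topological Nakayama reducing to $\dim C = 0$, then a dimension-one point with residue field a local field of characteristic $p$ — is correct and parallels \Cref{finite_functoriality}. The gap is exactly where you flag it: neither sketch actually shows that $\Theta$ takes values in a field algebraic over $k$.

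In the first approach, \Cref{building} only conjugates $\rho|_{\Gamma'}$ into $G(\OO_{\kappa'})$ for a finite extension $\kappa'/\kappa(x)$, not into $G(\kbar)$; you are conflating these. To force $\rho|_{\Gamma'}$ itself (rather than its $G$-semisimplification) to descend, you would first pass to the normal core of $\Gamma'$ and invoke a result like \cite[Theorem~3.10]{BMR} to know that the restriction of a $G$-completely reducible representation to a normal subgroup of finite index is again $G$-completely reducible. Even granting this, your conclusion ``the subfield generated over $k$ by the matrix entries of $\rho(c_1),\ldots,\rho(c_r)$ \ldots\ is algebraic over $k$'' is simply false: those entries live in $\overline{\kappa(x)}$ and you have given no reason they are algebraic over $k$. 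Your second approach fails for a more basic reason: the ``twisting maps'' $f \mapsto f(\cdot\,c_{i_1},\ldots,\cdot\,c_{i_n})$ make no sense at the level of pseudocharacters, since the $c_i$ are elements of $\Gamma$, not of $G$, and plugging them into $f \in \OO[G^n]^{G^0}$ presupposes a representation.

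The paper bypasses all of this. It cites the $\GL_d$ case \cite[Lemma~A.3]{BIP_new} and observes that the general case follows from the finiteness of $R^{\ps}_{\GL_d} \to R^{\ps}_G$, already established via \Cref{GGLd_finite} and \Cref{nu_fin_u}: the composite $R^{\ps,\Gamma'}_{\GL_d} \to R^{\ps,\Gamma}_{\GL_d} \to R^{\ps,\Gamma}_G$ is finite, and it factors through $R^{\ps,\Gamma'}_G$. If you insist on a direct argument, here is how it works for $\GL_d$: after passing to the normal core and conjugating so that $\rho|_{\Gamma'}$ lands in $\GL_d(\kbar)$, every $\gamma \in \Gamma$ satisfies $\rho(\gamma)^m \in \GL_d(\kbar)$ with $m = [\Gamma:\Gamma']$, hence the eigenvalues of $\rho(\gamma)$ lie in $\kbar$ and so do all characteristic-polynomial coefficients. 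For general $G$ one then uses that $\OO[G^n]^{G^0}$ is integral over the image of $\OO[\GL_d^n]^{\GL_d}$ (Cotner's theorem) — which is precisely the reduction to $\GL_d$ the paper already made.
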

\begin{proof} If $G=\GL_d$ then this is proved in \cite[Lemma A.3]{BIP_new}. 
In the general case, one could either use that $R^{\ps}_{\GL_d}\rightarrow R^{\ps}_G$ is finite, 
which follows from Propositions \ref{GGLd_finite} and \ref{nu_fin_u}, or observe that the proof of 
\cite[Lemma A.3]{BIP_new} carries over to the more general setting.
\end{proof} 

\begin{lem}\label{type_type_EF} The morphism $X^{\gen,\Gamma}_{G, \rhobarss}\rightarrow X^{\gen,\Gamma'}_{G, \rhobarss}$ is of finite type. 
\end{lem}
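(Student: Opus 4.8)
The plan is to reduce the claim to a statement about $\RpsG$-algebras and then exploit the fact that $X^{\gen}_{G,\rhobarss}$ is of finite type over $X^{\ps}_G$ by construction. First I would record that by \Cref{closedimmersionforGLd} (via the embedding $\tau : G \hookrightarrow \GL_d$ and \Cref{defiXgenG}) the scheme $X^{\gen,\Gamma}_{G,\rhobarss}$ is a closed subscheme of $G^N \times X^{\ps,\Gamma}_G$ for a suitable finite tuple $(\gamma_1,\dots,\gamma_N)$ of topological generators of $\Gamma$; in particular it is of finite type over $X^{\ps,\Gamma}_G = \Spec R^{\ps,\Gamma}_G$, and $R^{\ps,\Gamma}_G$ is noetherian since $\Gamma$ is topologically finitely generated by \cite[Theorem 5.7]{quast}. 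Likewise $X^{\gen,\Gamma'}_{G,\rhobarss}$ is of finite type over $X^{\ps,\Gamma'}_G$.

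Next I would factor the morphism in question. The restriction map on pseudocharacters gives $X^{\ps,\Gamma}_G \to X^{\ps,\Gamma'}_G$, which is \emph{finite} by \Cref{finite_EF}, hence of finite type. The morphism $X^{\gen,\Gamma}_{G,\rhobarss} \to X^{\gen,\Gamma'}_{G,\rhobarss}$ sits over $X^{\ps,\Gamma}_G \to X^{\ps,\Gamma'}_G$, so it factors as
\[
X^{\gen,\Gamma}_{G,\rhobarss} \longrightarrow X^{\gen,\Gamma'}_{G,\rhobarss} \times_{X^{\ps,\Gamma'}_G} X^{\ps,\Gamma}_G \longrightarrow X^{\gen,\Gamma'}_{G,\rhobarss},
\]
where the second arrow is the base change of the finite (hence finite type) morphism $X^{\ps,\Gamma}_G \to X^{\ps,\Gamma'}_G$ and is therefore of finite type. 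So it suffices to show the first arrow, a morphism of schemes over $X^{\ps,\Gamma}_G$, is of finite type. But both its source and target are of finite type over $X^{\ps,\Gamma}_G$: the source by the previous paragraph, and the target because $X^{\gen,\Gamma'}_{G,\rhobarss}$ is of finite type over $X^{\ps,\Gamma'}_G$ and we are base changing along $X^{\ps,\Gamma}_G \to X^{\ps,\Gamma'}_G$. A morphism between two schemes that are each of finite type over a common base is itself of finite type (\stackcite{01T8}), which gives the claim.

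I do not expect a serious obstacle here: the only inputs are \Cref{finite_EF}, the closed-immersion description of $X^{\gen}_{G,\rhobarss}$ into $G^N \times X^{\ps}_G$, and the standard cancellation property for finite-type morphisms. The one point that needs a line of care is checking that the target of the functoriality map is genuinely the component $X^{\gen,\Gamma'}_{G,\rhobarss}$ associated to the \emph{restricted} residual representation $\rhobar|_{\Gamma'}$, so that the base-change identification $X^{\gen,\Gamma'}_{G,\rhobarss} \times_{X^{\ps,\Gamma'}_G} X^{\ps,\Gamma}_G$ makes sense on the nose; this follows from \Cref{Gps_comp} together with \Cref{rest_open}, since restricting an $R^{\ps,\Gamma}_G$-condensed representation to $\Gamma'$ keeps it $R^{\ps,\Gamma}_G$-condensed and its $G$-pseudocharacter specialises the restriction of $\Theta^u$.
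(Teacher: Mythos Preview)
Your proposal is correct and follows essentially the same approach as the paper: both arguments rest on \Cref{finite_EF} (finiteness of $X^{\ps,\Gamma}_G \to X^{\ps,\Gamma'}_G$) together with the fact that $X^{\gen}_{G,\rhobarss}$ is of finite type over $X^{\ps}_G$, followed by the cancellation property for finite-type morphisms. The paper's proof is a one-line version of yours; your explicit factorisation through the fibre product and the care taken with the target component are correct elaborations but not strictly needed.
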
 
\begin{proof} The assertion follows from Lemma \ref{finite_EF} and the fact that $X^{\gen}_{G,\rhobarss}$ is of finite type over $X^{\ps}_G$.
\end{proof}

 \section{Absolutely irreducible locus}\label{sec_abs_irr}
Let $G$ be a generalised reductive $\OO$-group scheme and let $\pi_G: G\rightarrow 
G/G^0$ be the quotient map. Naively, the absolutely irreducible locus should be an open  
subscheme $V$ of $X^{\gen}_{G, \rhobarss}$ such that a geometric point $x$ of $X^{\gen}_{G, \rhobarss}$ lies in $V$ if and only if the image of $\rho_x$ is not contained in any 
proper $R$-parabolic subgroup of $G_{\kappa(x)}$. 
It turns out that if $\pi_G\circ \rhobarss: \Gamma \rightarrow (G/G^0)(\kbar)$ 
is not surjective then the naively defined absolutely irreducible locus might be empty, see Example \ref{ex_bad_def}. To overcome this problem  
we first show that there is a closed generalised 
reductive subgroup scheme $H$ of $G$, such that $H^0=G^0$, $\rhobarss$ factors through $H$,  
$\pi_H\circ \rhobarss: \Gamma \rightarrow (H/H^0)(\overline{k})$ is surjective and $X^{\gen}_{G, \rhobarss}= X^{\gen}_{H,\rhobarss}$.  In particular, we may always assume that
$G/G^0$ is a finite constant group scheme. 

We then define the absolutely irreducible locus 
in $X^{\gen}_{G, \rhobarss}$ under the assumption 
that $G$ is split and $\pi_G \circ \rhobarss: \Gamma \rightarrow (G/G^0)(\overline{k})$ is surjective, see Proposition \ref{irr=stable} and Definition \ref{defi_abs_irr}. It is convenient to define it as a subscheme of $X^{\gen}_{G, \rhobarss}\setminus Y_{\rhobarss}$, so that points in $X^{\gen}_{G, \rhobarss}(\kbar)$ are never in the absolutely 
irreducible locus, even if the image of the corresponding representation $\rho_x: \Gamma \rightarrow G(\kbar)$ is not contained in any proper R-parabolic subgroup of $G$. 

The main result proved in Corollary \ref{UV_GG} says that if the absolutely irreducible locus 
is non-empty then the difference between its dimension and the dimension of its GIT quotient by $G^0$ 
is equal to $\dim G_k - \dim Z(G_k)$. We also  define schemes $V_{LG}$ and $U_{LG}$ for an R-Levi $L$, which along with Corollary \ref{UV_GG} play an important role in the inductive argument in 
\Cref{sec_complete_intersection}.

The results of  this section apply to any  profinite group $\Gamma$ satisfying Mazur's $p$-finiteness condition. Lemma \ref{fg_quotient}
implies that we may replace $\Gamma$ with a topologically finitely generated
quotient without changing $X^{\gen}_{G, \rhobarss}$. Hence, without loss of 
generality we assume that $\Gamma$ is topologically finitely generated. 

\subsection{Trimming the component group}\label{sec_trimming} Let $\rho_z: \Gamma \rightarrow G(\kbar)$ be a representation
corresponding to a point $z\in X^{\gen}_{G, \rhobarss}(\kbar)$. Then the composition 
$\pi_G\circ \rho_z: \Gamma \rightarrow (G/G^0)(\kbar)$ depends only on the $G$-semisimplification 
of $\rho_z$. Since $\rho_z^{\mathrm{ss}}=\rhobarss$ by Lemma \ref{get_comp} the map $\pi_G \circ \rhobarss$ 
is well defined. Let $\Delta$ be the image of 
$$\pi_G \circ \rhobarss: \Gamma \rightarrow (G/G^0)(\overline{k}).$$ 
Since $G$ is generalised reductive $G/G^0$ is finite \'etale over $\OO$ by Remark \ref{rem_fin_etale}.
The map $\mathfrak S\mapsto \mathfrak S(\kbar)$ induces an equivalence of categories between finite \'etale $\OO$-groups schemes and the category of finite groups together with 
$\Gal(\overline{k}/k)$-action by group automorphisms. Since $X^{\gen}_{G, \rhobarss}(k)$ is non-empty, 
$\Delta$ is contained in $(G/G^0)(k)$ and hence
the constant group scheme $\underline{\Delta}$ defined by $\Delta$ is a finite \'etale subgroup scheme
of $G/G^0$.

By construction 
 $X^{\gen,\tau}_G$ is a closed subscheme of $G_{\XpsGLd}^N$. The map $\pi_G$ induces a $G^0$-equivariant morphism
of schemes $\nu: X^{\gen,\tau}_G\rightarrow (G/ G^0)^N_{\XpsGLd}$ for the trivial action on the target. 
If $A$ is an $\RpsGLd$-algebra 
then the map $\nu$ on $A$-points is given by 
\begin{equation}\label{nu}
\nu: X^{\gen,\tau}_G(A)\rightarrow (G/G^0)_{\XpsGLd}^N(A), \quad \rho\mapsto (\pi_G(\rho(\gamma_1)), \ldots, 
\pi_G(\rho(\gamma_N))),
\end{equation}
where $\gamma_1, \ldots, \gamma_N$ is a generating set for $\Gamma$ chosen before \Cref{defiXgenG}.
Since $\nu$ is $G^0$-equivariant it factors through $X^{\gps,\tau}_G\rightarrow  (G/ G^0)^N_{\XpsGLd}$.
We  view an $N$-tuple 
$\underline{\delta}=(\delta_1, \ldots, \delta_N)\in \Delta^N$ as a closed constant subscheme 
of $\underline{\Delta}^N_{\XpsGLd}$ over $\XpsGLd$ and define 
$$X^{\gps,\tau}_{G, \underline{\delta}}:= X^{\gps,\tau}_G\times_{(G/G^0)^N_{\XpsGLd}} \underline{\delta}.$$
If $X$ is a scheme over $X^{\gps, \tau}_G$ we let 
\begin{equation}\label{eq_notation}
X_{\underline{\delta}}:= X\times_{\XgpsGtau} X^{\gps,\tau}_{G, \underline{\delta}}, \quad  
X_{\rhobarss}:= X \times_{\XgpsGtau} X^{\gps}_{G, \rhobarss}.
\end{equation}

\begin{lem}\label{connected_Delta} If $\underline{\delta}=(\pi_G(\rhobarss(\gamma_1)), \ldots, \pi_G(\rhobarss(\gamma_N)))$ then 
 $$ X^{\gps}_{G, \rhobarss} = ((\XgpsGtau)_{\rhobarss})_{\underline{\delta}}
 = ((\XgpsGtau)_{\underline{\delta}})_{\rhobarss}.$$ 
 In particular, for any scheme $X$ over $\XgpsGtau$ we have $(X_{\underline{\delta}})_{\rhobarss}= X_{\rhobarss}$.
 \end{lem}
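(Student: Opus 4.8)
The plan is to unwind the definitions of the various base changes and to observe that the point of $X^{\gps,\tau}_G$ corresponding to $\rhobarss$ lies simultaneously over the connected component indexed by $\rhobarss$ and over the tuple $\underline{\delta}$; both intersections then cut out the same closed subscheme. More precisely, recall from Lemma~\ref{connected_comp} and the definition of $X^{\gps}_{G,\rhobarss}$ that $X^{\gps}_{G,\rhobarss} = (\XgpsGtau)_{\rhobarss}$ is the connected component of $\XgpsGtau$ on which the point $x_0$ attached to $\rhobar$ lies, which is an open and closed subscheme, equivalently $\Spec R_{j_0}$ in the decomposition \eqref{semi-local}. On the other hand, by construction the morphism $\nu$ of \eqref{nu} factors through $\XgpsGtau$, and its value at the point $x_0$ (indeed at any point of $\Spec R_{j_0}$, by Lemma~\ref{get_comp} and Corollary~\ref{same_image}, since $\pi_G\circ\rho_x$ depends only on the $G$-semisimplification $\rhobarss$) is exactly $\underline{\delta} = (\pi_G(\rhobarss(\gamma_1)),\dots,\pi_G(\rhobarss(\gamma_N)))$.

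First I would show $(\XgpsGtau)_{\rhobarss}\subseteq (\XgpsGtau)_{\underline{\delta}}$ as subschemes of $\XgpsGtau$. Since $(\XgpsGtau)_{\rhobarss}= \Spec R_{j_0}$ is connected with residue field $k$ at its closed point, and $\nu$ restricted to it is a morphism to the discrete (over $\XpsGLd$) scheme $\underline{\Delta}^N_{\XpsGLd}$, this restriction is constant; by the computation above its constant value is $\underline{\delta}$. Hence the map $(\XgpsGtau)_{\rhobarss}\to (G/G^0)^N_{\XpsGLd}$ factors through the closed constant subscheme $\underline{\delta}$, which says precisely that $(\XgpsGtau)_{\rhobarss}= ((\XgpsGtau)_{\rhobarss})_{\underline\delta}$, i.e.\ that intersecting the component with the fibre over $\underline\delta$ changes nothing. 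Dually, since $\underline\delta$ is open and closed in $\underline{\Delta}^N_{\XpsGLd}$ (it is a connected component of a finite étale scheme over $\XpsGLd$), the preimage $(\XgpsGtau)_{\underline\delta}$ is open and closed in $\XgpsGtau$, hence a union of the $\Spec R_j$; it contains $\Spec R_{j_0}$ by the previous sentence, so $((\XgpsGtau)_{\underline\delta})_{\rhobarss}=\Spec R_{j_0}= X^{\gps}_{G,\rhobarss}$ as well. Combining the two gives the chain of equalities $X^{\gps}_{G,\rhobarss}= ((\XgpsGtau)_{\rhobarss})_{\underline{\delta}} = ((\XgpsGtau)_{\underline{\delta}})_{\rhobarss}$.

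For the last sentence, let $X$ be any scheme over $\XgpsGtau$. Using the notation \eqref{eq_notation}, $(X_{\underline\delta})_{\rhobarss}= X\times_{\XgpsGtau}\bigl(X^{\gps,\tau}_{G,\underline\delta}\times_{\XgpsGtau} X^{\gps}_{G,\rhobarss}\bigr) = X\times_{\XgpsGtau}((\XgpsGtau)_{\underline\delta})_{\rhobarss}$, and by the displayed identity the factor in parentheses equals $X^{\gps}_{G,\rhobarss}$, so this is $X\times_{\XgpsGtau}X^{\gps}_{G,\rhobarss}= X_{\rhobarss}$, as claimed. I do not expect any serious obstacle here: the only mild subtlety is checking that $\nu$ is constant on a connected component and that its value is the expected tuple, which is immediate from the fact that $\underline{\Delta}^N_{\XpsGLd}$ is finite étale over $\XpsGLd$ (so its sections over a connected base are rigid) together with the dependence of $\pi_G\circ\rho_x$ only on the $G$-semisimplification established in Corollary~\ref{same_image} and Lemma~\ref{get_comp}.
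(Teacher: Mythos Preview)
Your proof is correct and follows essentially the same route as the paper. The paper phrases the key step slightly differently: instead of invoking connectedness of $\Spec R_{j_0}$ together with finite \'etaleness of the target, it uses directly that for a complete local noetherian $\OO$-algebra $R$ with residue field $k$ the reduction map $(G/G^0)(R)\to (G/G^0)(k)$ is a bijection (a consequence of the infinitesimal lifting criterion for \'etale morphisms), so the section $\alpha:\Spec R_{j_0}\to (G/G^0)^N_{\XpsGLd}$ is determined by its value at the closed point. This is the same content as your ``sections over a connected base are rigid'' argument. One small inaccuracy: you write that $\nu$ lands in $\underline{\Delta}^N_{\XpsGLd}$, but a priori it only lands in $(G/G^0)^N_{\XpsGLd}$; the argument is unaffected since the latter is still finite \'etale over $\XpsGLd$. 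Your appeal to Corollary~\ref{same_image} and Lemma~\ref{get_comp} is also unnecessary: once you know the section is determined by the closed point, you only need to compute $\nu$ there, which is immediate.
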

\begin{proof} If $R$ is a complete local noetherian $\OO$-algebra with residue field 
$k$ then the map $R\rightarrow k$ induces a bijection $(G/G^0)(R)\overset{\sim}{\longrightarrow} (G/G^0)(k)$ by \cite[Exp.\,I, Cor.\,6.2]{SGA1}.  
We apply this observation with $R=R^{\gps}_{G,\rhobarss}$ and let 
$\underline{\delta}\in (G/G^0)^N(k)$ be the $N$-tuple 
 corresponding to $\alpha: X^{\gps}_{G, \rhobarss} \rightarrow \XgpsGtau \overset{\nu}{\rightarrow} 
(G/G^0)^N_{\XpsGLd}$. Since the closed point in $X^{\gps}_{G, \rhobarss}$ maps to 
$(\pi_G(\rhobarss(\gamma_1)), \ldots, \pi_G(\rhobarss(\gamma_N)))\in (G/G^0)^N_{\XgpsGtau}(k)$ under $\alpha$ 
we conclude that $\underline{\delta}=(\pi_G(\rhobarss(\gamma_1)), \ldots, \pi_G(\rhobarss(\gamma_N)))$.

The image of $\alpha$ is contained in $\underline{\delta}$, which implies the first 
equality. The second equality follows from the symmetry of a tensor product. The last part 
 follows from a manipulation with fibre products. 
\end{proof}

\begin{lem}\label{shrink} Let $H$ be a closed subgroup scheme of $G$, such that $H^0=G^0$. If 
$(\pi_G\circ\rhobarss)(\Gamma) \subseteq (H/H^0)(k)$ then 
$ X^{\gps}_{H, \rhobarss}= X^{\gps}_{G, \rhobarss}$ and $X^{\gen}_{H, \rhobarss}= X^{\gen}_{G, \rhobarss}.$
Moreover, there exists $H$ as above, such that $\pi_H \circ \rhobarss: \Gamma \rightarrow (H/H^0)(\kbar)$
is surjective. 
\end{lem}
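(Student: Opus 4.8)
\textbf{Proof plan for Lemma \ref{shrink}.} The plan is to split the statement into two independent assertions: first, that shrinking the component group in the indicated way does not change either $X^{\gps}_{\bullet, \rhobarss}$ or $X^{\gen}_{\bullet, \rhobarss}$; second, that a suitable subgroup scheme $H$ with surjective $\pi_H \circ \rhobarss$ always exists. For the first assertion, observe that $\varphi : H \hookrightarrow G$ is a closed immersion of generalised reductive $\OO$-group schemes which is an isomorphism on neutral components. By \Cref{functoriality_2} it induces a map $X^{\gen}_{H, \rhobarss} \to X^{\gen}_{G, \varphi\circ\rhobarss} = X^{\gen}_{G, \rhobarss}$ (where we use that $\varphi\circ\rhobarss$ and $\rhobarss$ define the same point, $\varphi$ being an inclusion), and similarly on GIT quotients. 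I would describe the inverse using the functorial description in \Cref{XgenGtau_rhobarss_A}: for an $\RpsG$-algebra $A$, a point of $X^{\gen}_{G, \rhobarss}(A)$ is a representation $\rho : \Gamma \to G(A)$ which is $\RpsG$-condensed and satisfies $\Theta_\rho = \Theta^u_G \otimes A$; I must show such $\rho$ automatically factors through $H(A) \subseteq G(A)$. Since $\Gamma$ is topologically finitely generated, \Cref{gener_cond} reduces this to checking $\rho(\gamma_i) \in H(A)$ for a set of topological generators $\gamma_i$. Because $H^0 = G^0$ and $H/H^0$ is the constant subgroup scheme $\underline\Delta \le (G/G^0)(k)$, we have that $H(A)$ is exactly the preimage of $(\pi_G(\rho(\gamma_i)))_i \in (G/G^0)^N(A)$ lying in $\underline\Delta^N$; but $\pi_G\circ\rho$ depends only on $\Theta_\rho$ — more precisely, $\nu$ factors through $X^{\gps}_G$ as in \eqref{nu}, and by \Cref{connected_Delta} the $N$-tuple $(\pi_G(\rho(\gamma_i)))_i$ is forced to equal $(\pi_G(\rhobarss(\gamma_i)))_i$, which lies in $\underline\Delta^N$ by hypothesis. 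Hence $\rho$ factors through $H$, and the condensed/pseudocharacter conditions transfer by \Cref{cond_rep_funct}(2) and the compatibility of pseudocharacters with $\varphi$, giving the desired isomorphisms. For $X^{\gps}$ one argues identically using \Cref{connected_Delta} directly, or deduces it from the $X^{\gen}$ statement by taking GIT quotients.

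For the existence assertion, I would construct $H$ explicitly. Let $\Delta \subseteq (G/G^0)(k)$ be the image of $\pi_G \circ \rhobarss$, a subgroup (it lies in $(G/G^0)(k)$ because $X^{\gen}_{G,\rhobarss}(k) \ne \emptyset$ and the component-group map is rigid over complete local rings with residue field $k$, by \cite[Exp.\,I, Cor.\,6.2]{SGA1}, as used in \Cref{connected_Delta}). The constant subgroup scheme $\underline\Delta$ is then a closed finite étale subgroup scheme of $G/G^0$, and I set $H := \pi_G^{-1}(\underline\Delta)$, the scheme-theoretic preimage. Then $H$ is a closed subgroup scheme of $G$ with $H^0 = G^0$, the sequence $0 \to G^0 \to H \to \underline\Delta \to 0$ is exact, and by \Cref{extofflatisflat} (or \Cref{gen_red_field} on fibres) $H$ is generalised reductive. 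By construction $\rhobarss$ factors through $H(k)$ — indeed $\rhobarss(\Gamma) \subseteq G^0(k) \cdot (\text{lifts}) = H(k)$ since every element of $\rhobarss(\Gamma)$ maps into $\Delta$ under $\pi_G$. Finally $\pi_H \circ \rhobarss : \Gamma \to \underline\Delta(\kbar) = \Delta$ is surjective by definition of $\Delta$, and it remains to record that $(\pi_G\circ\rhobarss)(\Gamma) = \Delta \subseteq (H/H^0)(k) = \Delta$ so the first part of the lemma applies to this $H$.

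The main obstacle I anticipate is the careful bookkeeping in the first part: showing that an $R^{\ps}_G$-condensed representation with the prescribed universal pseudocharacter must factor through $H$ over an \emph{arbitrary} $\RpsG$-algebra $A$, not merely over fields. The subtlety is that factoring through $H$ is a closed condition, and one needs $\Gamma$ topologically finitely generated together with \Cref{gener_cond} to check it on generators; then one needs that the tuple of component-group images is locally constant on $X^{\gps}_{G,\rhobarss}$ and pinned down by the residual data via \Cref{connected_Delta}. Once those two inputs are in place the argument is formal, but assembling them — and being consistent about whether one works with $\tau$-models or the intrinsic $X^{\gen}_{G,\rhobarss}$ of \Cref{XgenGtau_rhobarss_A} — requires some care. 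A minor additional point is verifying that the identifications $R^{\ps}_H \to R^{\ps}_G$ and $R^{\gps}_{H,\rhobarss} \to R^{\gps}_{G,\rhobarss}$ coming from \Cref{functoriality_2} are isomorphisms; this follows once the functors of points agree, which is exactly what the first part establishes.
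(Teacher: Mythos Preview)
Your approach is correct and rests on the same key input as the paper's proof, namely \Cref{connected_Delta}: the component-group map $\nu$ is constant on $X^{\gps}_{G,\rhobarss}$ with value the tuple $\underline{\delta}$ determined by $\rhobarss$, so every $\rho \in X^{\gen}_{G,\rhobarss}(A)$ has $\pi_G(\rho(\gamma_i)) \in (H/H^0)(A)$ and hence factors through $H$ via \Cref{gener_cond}. Your construction of $H = \pi_G^{-1}(\underline{\Delta})$ for the existence claim is also exactly the paper's.

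The difference is only in packaging. The paper works directly with the $\tau$-models rather than the intrinsic description: since $H$ is a union of connected components of $G$, there is a fibre-product identity $X^{\gen,\tau}_H = X^{\gen,\tau}_G \times_{(G/G^0)^N} (H/H^0)^N$ as closed subschemes over $X^{\ps}_{\GL_d}$; the hypothesis puts $\underline{\delta}$ in $(H/H^0)^N$, giving $X^{\gen,\tau}_{G,\underline{\delta}} \subseteq X^{\gen,\tau}_H \subseteq X^{\gen,\tau}_G$, and \Cref{connected_Delta} collapses this chain after $(-)_{\rhobarss}$. This sidesteps the bookkeeping you flag at the end: your route through \Cref{XgenGtau_rhobarss_A} for $H$ needs $A$ to already carry an $R^{\ps}_H$-algebra structure, and ``this follows once the functors of points agree'' is mildly circular. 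The clean fix is exactly to route the inverse through $X^{\gen,\tau}_H$ via \Cref{XgenGA} (which does not require the $R^{\ps}_H$-structure) and then use connectedness of $X^{\gen}_{G,\rhobarss}$ to land in the component $X^{\gen}_{H,\rhobarss}$. One minor slip: functoriality for $\varphi: H \hookrightarrow G$ produces a map $R^{\ps}_G \to R^{\ps}_H$, not the other direction.
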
 
\begin{proof} We may write $X^{\gen,\tau}_{H}$ as the fibre product of $X^{\gen,\tau}_G$ and $(H/H^0)^N$ 
over $(G/G^0)^N$. The assumption implies that  $X^{\gen,\tau}_{H}$ contains $X^{\gen,\tau}_{G, \underline{\delta}}$ as a closed subscheme with $\underline{\delta}$ as in Lemma \ref{connected_Delta}.
The assertion follows from Lemma \ref{connected_Delta}. Since $H^0=G^0$ by passing to GIT quotients 
we obtain $X^{\gps}_{H, \rhobarss}=X^{\gps}_{G, \rhobarss}$. For the last part 
we may take $H$ to be the preimage of $\underline{\Delta}$ in $G$. 
\end{proof}
 
If $H$ is a closed generalised reductive subgroup scheme of $G$ (resp.\,$G_k$) we let $X^{\gps,\tau}_{HG}$ (resp.\,$\overline{X}^{\gps,\tau}_{HG}$) be the scheme theoretic image of 
$\XgpsHtau\rightarrow \XgpsGtau$ (resp.\,$\XbargpsHtau \rightarrow \XbargpsGtau$). Since these maps are 
finite by Corollary \ref{finite_HG} we have 
\begin{equation}\label{inductive_bound} 
\dim X^{\gps,\tau}_{HG} \le \dim \XgpsHtau, \quad \dim \overline{X}^{\gps,\tau}_{HG} \le \dim \XbargpsHtau. 
\end{equation}

\begin{remar}\label{remar_XgittauHG_bar}
    We note that if $H$ is a closed generalised reductive subgroup of $G$ then the special fibre of $X^{\gps,\tau}_{HG}$ is homeomorphic to $\overline{X}^{\gps,\tau}_{H_kG}$, see Remark \ref{homeo}. We will denote both by $\overline{X}^{\gps,\tau}_{HG}$. This is harmless, since in our arguments we only care about dimensions.
\end{remar}

We denote by $X^{\gen}_{H, \rhobarss}$ (resp. $X^{\gps}_{H, \rhobarss}$, $X^{\gps}_{HG, \rhobarss}$) the schemes $(X^{\gen, \tau}_H)_{\rhobarss}$ (resp. $(X^{\gps, \tau}_H)_{\rhobarss}$, $(X^{\gps, \tau}_{HG})_{\rhobarss}$) with the notation introduced in \eqref{eq_notation}.
 \Cref{connected_Delta} implies that these schemes are independent of $\tau$ and only depend on the map $H \to G$.

\begin{lem}\label{cover_Hi} Let $H$ be a generalised reductive subgroup scheme of $G$ (resp.~$G_k$). If 
$X^{\gps}_{HG, \rhobarss}$ (resp.~$\Xbar^{\gps}_{HG, \rhobarss}$) is non-empty then 
there exist 
\begin{itemize}
\item[(a)] a finite unramified extension $L'$ of $L$ with ring of integers $\OO'$ and residue field $k'$;
\item [(b)] finitely many generalised reductive subgroup schemes $H_i$ of $H_{\OO'}$;
\item [(c)] continuous representations 
$\rhobar_i: \Gamma \rightarrow H_i(k')$;
\end{itemize}
such that the following hold: 
\begin{enumerate} 
\item $H_i^0=H^0_{\OO'}$ and $\pi_{H_i}\circ \rhobar_i: \Gamma \rightarrow (H_i/H_i^0)(\overline{k})$ is surjective;
\item $(X^{\gps}_{H, \rhobarss})_{\OO'}$ (resp.\,$(\Xbar^{\gps}_{H, \rhobarss})_{k'}$) is a disjoint union 
of $X^{\gps}_{H_i, \rhobarss_i}$ (resp.\,$\Xbar^{\gps}_{H_i, \rhobarss_i}$);
\item $(X^{\gen}_{H, \rhobarss})_{\OO'}$ (resp.\,$(\Xbar^{\gen}_{H, \rhobarss})_{k'}$)  is a disjoint union of $X^{\gen}_{H_i, \rhobarss_i}$ (resp.\,$\Xbar^{\gen}_{H_i, \rhobarss_i}$);
\item $(X^{\gps}_{HG, \rhobarss})_{\OO'}$ (resp.\,$(\Xbar^{\gps}_{HG, \rhobarss})_{k'}$) is a union of scheme theoretic images 
$$X^{\gps}_{H_i, \rhobarss_i}\rightarrow (X^{\gps,\tau}_G)_{\OO'},\quad 
\text{(resp. }\Xbar^{\gps}_{H_i, \rhobarss_i}\rightarrow (X^{\gps,\tau}_G)_{k'}).$$
\end{enumerate} 
\end{lem}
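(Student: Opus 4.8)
\textbf{Proof strategy for \Cref{cover_Hi}.}
The plan is to realise the non-emptiness of $X^{\gps}_{HG,\rhobarss}$ (resp.\ $\Xbar^{\gps}_{HG,\rhobarss}$) as a consequence of the fact that $X^{\gps}_{H,\rhobarss}$ (resp.\ $\Xbar^{\gps}_{H,\rhobarss}$) is non-empty, and then decompose the component group of $H$ along the image of $\pi_H \circ \rhobarss$. First I would handle item (a): since $X^{\gps}_{H,\rhobarss}$ has a $k$-rational point (its closed point is rational over $k$ by construction, via \Cref{rem_k_rat_comp}), and since the component group $H/H^0$ is finite \'etale over $\OO$, after passing to a finite unramified extension $\OO'$ of $\OO$ I may assume $H_{\OO'}/H^0_{\OO'}$ is a constant group scheme, $H_{\OO'}^0$ is split, and $H(\OO') \to (H/H^0)(\OO')$ is surjective; this is exactly \Cref{O_prime}. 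After this base change I should also arrange, enlarging $k'$ if necessary, that all the finitely many connected components appearing below have $k'$-rational points, using \Cref{rem_k_rat_comp} and \Cref{connected_comp}.

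Next I would construct the subgroup schemes $H_i$. By \Cref{get_comp}, the connected components of $X^{\gen,\tau}_{H_{\OO'}}$ lying over $X^{\gps}_{H,\rhobarss}$ (after the relevant base change and after restricting the pseudocharacter) are indexed by $H^0_{\OO'}(\kbar)$-conjugacy classes of continuous $H_{\OO'}$-completely reducible representations $\rhobarss_i \colon \Gamma \to H_{\OO'}(\kbar)$ with the same associated determinant law; equivalently, by \Cref{same_image}, by those lifting the fixed $H$-pseudocharacter. For each such $\rhobarss_i$, let $\Delta_i$ be the image of $\pi_{H_{\OO'}} \circ \rhobarss_i$ in $(H_{\OO'}/H^0_{\OO'})(\kbar)$; since $\Delta_i \subseteq (H_{\OO'}/H^0_{\OO'})(k')$ (again because the component has a $k'$-rational point), the constant subgroup scheme $\underline{\Delta_i}$ is a closed finite \'etale subgroup scheme, and I set $H_i$ to be its preimage in $H_{\OO'}$. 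Then $H_i^0 = H^0_{\OO'}$ and $\pi_{H_i} \circ \rhobar_i$ is surjective onto $(H_i/H_i^0)(\kbar)$ by construction, giving item (1); here I take $\rhobar_i$ to be $\rhobarss_i$ viewed as landing in $H_i(k')$ after possibly enlarging $k'$. Items (2) and (3) now follow from \Cref{shrink}, which gives $X^{\gen}_{H_i,\rhobarss_i} = X^{\gen}_{H_{\OO'}, \rhobarss_i}$ and $X^{\gps}_{H_i,\rhobarss_i} = X^{\gps}_{H_{\OO'}, \rhobarss_i}$, combined with the decomposition \eqref{disjoint-union} into connected components via \Cref{connected_comp}; restricting the universal pseudocharacter and using \Cref{connected_Delta} identifies the pieces over $X^{\gps}_{H,\rhobarss}$ precisely with the $X^{\gen}_{H_i,\rhobarss_i}$ (resp.\ $X^{\gps}_{H_i,\rhobarss_i}$), and in the special fibre the underlying reduced schemes of $\Xbar^{\gps}_{H,\rhobarss}$ decompose likewise. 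Finally, item (4) is formal: $X^{\gps}_{HG,\rhobarss}$ is by definition the scheme theoretic image of $X^{\gps}_{H,\rhobarss} \to (X^{\gps,\tau}_G)$, so after the base change to $\OO'$ it is the scheme theoretic image of $\coprod_i X^{\gps}_{H_i,\rhobarss_i}$, which is the union of the scheme theoretic images of the individual pieces, since scheme theoretic image commutes with finite disjoint unions of the source.

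The main obstacle I anticipate is bookkeeping around the base change and the indexing: one must be careful that restricting the universal $H$-pseudocharacter $\Theta^u_H$ along $R^{\ps}_H \to R^{\gps}_{H_{\OO'}, \rhobarss_i}$ identifies $X^{\gen}_{H_i,\rhobarss_i}$ with exactly the locus of $X^{\gen}_{H_{\OO'}}$ that both lies over $X^{\gps}_{H,\rhobarss}$ \emph{and} has component-group datum $\underline{\Delta_i}$ — this is where \Cref{connected_Delta}, \Cref{get_comp}, and \Cref{Gps_comp} have to be combined correctly, and where one uses that $X^{\gen}_{H,\rhobarss}$ only depends on the map $H \to G$ and not on $\tau$. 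The special-fibre statements require the additional observation, already used in the introduction, that since the fibres of $R^{\gps}_{H,\rhobarss} \to R^{\gps}_{H_i,\rhobarss_i}$ are governed by a local artinian group algebra the underlying reduced subschemes of the special fibres agree, so the disjoint-union and union statements transfer from $X$ to $\Xbar$ on the level of topological spaces, which is all that is needed for the dimension estimates in which this lemma will be applied.
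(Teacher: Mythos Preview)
Your proposal is essentially correct and follows the same approach as the paper: extend scalars so each connected component of $X^{\gen}_{H,\rhobarss}$ has a $k'$-rational point (\Cref{rem_k_rat_comp}), decompose via \eqref{semi-local} and \Cref{connected_comp}, then apply \Cref{shrink} to produce the $H_i$; parts (2)--(4) are then formal. The paper's write-up is shorter because it works directly with the idempotent decomposition of the semi-local ring $R^{\gps,\tau}_H$ rather than invoking \Cref{O_prime}, which you do not actually need.

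Two small inaccuracies to clean up. First, the parenthetical ``its closed point is rational over $k$ by construction'' is not right: $X^{\gps}_{H,\rhobarss}$ is a priori the spectrum of a finite product of complete local rings whose residue fields are finite extensions of $k$, so it may have several closed points and none need be $k$-rational before you extend; the point of enlarging $L$ is precisely to force this. Second, your final remark about the special-fibre case being ``governed by a local artinian group algebra'' is off-target: there is no group algebra in play here, and the paper simply observes that the same argument works verbatim over $k$ (resp.\ $k'$) when $H$ is a subgroup of $G_k$.
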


\begin{proof} The map $X^{\gps,\tau}_H \rightarrow X^{\gps,\tau}_G$ corresponds to the map 
between finite products of local rings $\prod_{i\in I} R_i \rightarrow \prod_{j\in J} R_j'$ by \eqref{semi-local}. The representation $\rhobarss$ defines an idempotent $e\in\prod_i R_i$, such that 
$e(\prod_i R_i)= R^{\gps}_{G, \rhobarss}$. Then $X^{\gps}_{H, \rhobarss}$ is the 
spectrum of $e(\prod_{j\in J} R_j')$. 
Since $R_j'$ are local rings we have either $e(R_j')=R'_j$ or $e(R'_j)=0$. Thus $e(\prod_{j\in J} R_j')=
\prod_{j\in J'} R'_j$ for some subset $J'\subseteq J$. We may identify $J'$ with the
set of $\Gal(\kbar/k)$-orbits in $X^{\gps}_{H, \rhobarss}(\kbar)$.

After replacing $L$ by a finite unramified extension we may ensure that every 
connected component of $X^{\gen}_{H, \rhobarss}$ has a $k$-rational point, see 
Remark \ref{rem_k_rat_comp}. Then \eqref{semi-local} yields decompositions:
$$X^{\gps}_{H, \rhobarss}\cong \coprod_i X^{\gps}_{H, \rhobar^{\mathrm{ss}}_i},\quad X^{\gen}_{H, \rhobarss}\cong \coprod_i X^{\gen}_{H, \rhobar^{\mathrm{ss}}_i}$$
for finitely many continuous representations $\rho_i:\Gamma \rightarrow H(k)$.
The existence of $H_i$ follows from Lemma \ref{shrink}. Parts (2), (3) and (4) then follows immediately. If $H$ is a closed subgroup of $G_k$ the argument is the same.
\end{proof}

\subsection{Absolutely irreducible locus}\label{sec_abs_irr_loc} In this subsection we omit the fixed embedding 
$\tau:G\hookrightarrow \GL_d$ from the notation and write $X^{\gen}_G$, $X^{\gps}_G$, 
$X^{\gen}_{HG}$ and $X^{\gps}_{HG}$ for $X^{\gen,\tau}_G$, $X^{\gps,\tau}_G$, 
$X^{\gen,\tau}_{HG}$ 
and $X^{\gps,\tau}_{HG}$. We note that as a consequence of Corollary \ref{indep_tau} 
$X^{\gen}_{HG,\rhobarss}$ and $X^{\gps}_{HG,\rhobarss}$ are independent of $\tau$. In particular, 
the schemes $U_{LG,\rhobarss}$ and $V_{LG, \rhobarss}$ defined below are independent of $\tau$. 

It follows from Proposition \ref{GGLd_finite} that $\XgpsG$ has only finitely many closed 
points, we denote this subscheme by $\pts$. We let $Y$ be the preimage of these 
closed points in $\XgenG$. Then $Y$ is a closed 
subscheme  of $\XgenG$ contained in $\XbargenG$. 

We assume for the rest of this subsection that $G^0$ is split and fix a maximal split torus $T$ 
defined over $\OO$. Let $L$ be an R-Levi subgroup of $G$ containing $T$. Then we define
$$U_{LG}:=X^{\gps}_{LG} \setminus ( \pts \cup \bigcup_{M} X^{\gps}_{MG}),\quad \Ubar_{LG}:=\Xbar^{\gps}_{LG} \setminus ( \pts \cup \bigcup_{M} \Xbar^{\gps}_{MG})$$
where the union is taken over all R-Levi subgroups $M$ of $G$, such that $T\subseteq M \subsetneq L$. Lemma \ref{Rlevi} implies that there are only finitely many such $M$. Thus $U_{LG}$ is 
an open subscheme of $X^{\gps}_{LG} \setminus \pts$ and  $\Ubar_{LG}$ is 
an open subscheme of $\Xbar^{\gps}_{LG} \setminus \pts$. Hence, $U_{LG}$ and $\Ubar_{LG}$ are  locally closed
subschemes of $X^{\gps}_G\setminus \pts$.  We let $V_{LG}$ be the preimage of $U_{LG}$ 
in $\XgenG$ and $\Vbar_{LG}$ be the preimage of $\Ubar_{LG}$ 
in $\XgenG$. We then have 
\begin{equation} 
\XgenG= Y \cup \bigcup_{L} V_{LG}, \quad \XbargenG= Y\cup \bigcup_{L} \overline{V}_{LG},
\end{equation} 
where the union is taken over all $R$-Levi subgroups $L$ of $G$ containing $T$.

\begin{defi} We say that a geometric point $x\in X^{\gen}_G$ is \emph{$G$-stable}, if 
the orbit $G\cdot x$ is closed and $\dim Z_G(x) = \dim Z(G)_{\kappa(x)}$, where $Z_G(x)$ is the $G_{\kappa(x)}$-centraliser of $x$. 
\end{defi}

\begin{remar} The action of $G$ on $X^{\gen}_G$ factors through the action of $G/Z(G)$. The 
stability condition in the definition is equivalent to $G\cdot x$ closed and 
$\dim G\cdot x = \dim 
(G/Z(G))_{\kappa(x)}$. Hence, it is equivalent to $G/Z(G)$-stability in the sense of  \cite[Definition 1]{seshadri}.
\end{remar} 

\begin{prop}\label{irr=stable} Let $x$ be a geometric point of $X^{\gen}_G\setminus Y$, and let $H$ be 
the Zariski closure of $\rho_x(\Gamma)$ in $G_{\kappa(x)}$. 
Then the following statements are equivalent:
\begin{enumerate}
\item $x\in V_{GG}$;
\item $\rho_x$ is $G$-irreducible;
\item $H$ is $G$-irreducible;
\item $x$ is $G$-stable.
\end{enumerate}
Moreover, $H$ is reductive and $\dim Z_G(H)= \dim Z(G_{\kappa(x)})$. 
\end{prop}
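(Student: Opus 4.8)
\textbf{Proof plan for Proposition~\ref{irr=stable}.} The strategy is to establish the equivalences by a cycle of implications, together with separate arguments for reductivity of $H$ and the centraliser statement. First I would record the preliminary reduction: since $x\notin Y$, the residue field $\kappa(x)$ is a finite or local $\OO$-field, and since base change of the whole situation to $\overline{\kappa(x)}$ does not affect any of the four conditions (each is insensitive to algebraically closed field extensions by the results on R-parabolics in \Cref{sec_R_parabolic}, in particular \Cref{base_change_R_par}), I may assume $\kappa(x)$ is algebraically closed. Then \Cref{dense_again} identifies $H$ with the Zariski closure of the subgroup generated by $\rho_x(\gamma_1),\dots,\rho_x(\gamma_N)$, which is the same as the Zariski closure of $\rho_x(\Gamma)$ by \Cref{fg_quotient}.

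The equivalence (2)$\Leftrightarrow$(3) is immediate from the definition of $G$-irreducibility (\Cref{defi_G_irr}): a subgroup and its Zariski closure are contained in exactly the same R-parabolic subgroups, since R-parabolics are closed subschemes. For (1)$\Leftrightarrow$(2) I would unwind the definition of $V_{GG}$: by construction $V_{GG}$ is the preimage in $\XgenG$ of $U_{GG}=X^{\gps}_{GG}\setminus(\pts\cup\bigcup_{M\subsetneq G}X^{\gps}_{MG})$, where $M$ runs over proper R-Levis containing $T$. So $x\in V_{GG}$ iff the image $\bar x\in X^{\gps}_G$ lies in no $X^{\gps}_{MG}$ for a proper R-Levi $M$. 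By \Cref{conjugate_Levi}, the point $\bar x$ lies in $X^{\gps}_{MG}$ for some proper R-Levi $M$ iff the $G$-semisimplification $\rho_x^{\semi}$ is conjugate into a proper R-Levi; combined with \Cref{crss} this says $\rho_x$ is not $G$-irreducible exactly when $\bar x\in\bigcup_M X^{\gps}_{MG}$. (One must also use $x\notin Y$ to exclude the $\pts$ component.) Here I expect the main technical care is needed: I have to check that the scheme-theoretic images $X^{\gps}_{MG}$, defined via the finite maps of \Cref{finite_HG}, detect exactly the locus of pseudocharacters whose associated $G$-semisimple representation factors through (a conjugate of) $M$; this should follow from \Cref{same_image} together with \Cref{conjugate_Levi}, but assembling it cleanly is the crux.

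For (3)$\Leftrightarrow$(4), I would argue as follows. If $H$ is $G$-irreducible then, since $x\notin Y$ gives $\rho_x$ continuous with values in a group whose closure $H$ is not in any R-parabolic, \Cref{closed_orbit} shows the orbit $G\cdot x$ is closed and $H$ is strongly reductive, hence (by Martin's results quoted before \Cref{closed_orbit}) $H$ is reductive. Then $Z_G(H)$ contains $Z(G)^0$; for the reverse inclusion on neutral components, $Z_G(H)^0$ is a connected subgroup centralising $H$, so a maximal torus $S$ of $Z_G(H)^0$ gives $H\subseteq Z_G(S)$, which by \cite[Proposition 8.18]{borel}-type reasoning (as used in \Cref{irr_conj}) is an R-Levi of $G$; $G$-irreducibility of $H$ forces $Z_G(S)=G$, i.e.\ $S\subseteq Z(G)$, whence $Z_G(H)^0\subseteq Z(G)^0$. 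This proves $Z_G(H)^0=Z(G_{\kappa(x)})^0$ and also that $Z_G(x)^0=Z_G(H)^0=Z(G)^0_{\kappa(x)}$ (since $Z_G(x)=Z_G(\rho_x(\Gamma))=Z_G(H)$), giving $G$-stability. Conversely, if $x$ is $G$-stable then $G\cdot x$ is closed, so by \Cref{closed_orbit} $\rho_x$ is $G$-completely reducible; if moreover $H$ were contained in a proper R-parabolic $P$, then by $G$-complete reducibility $H\subseteq L(\kappa(x))$ for some proper R-Levi $L\subsetneq G$, and then $Z(L)^0\subseteq Z_G(H)^0$ with $\dim Z(L)^0>\dim Z(G)^0$ (a proper R-Levi has strictly larger central dimension — this uses \Cref{RRX} or the classical theory of Levi subgroups), contradicting $Z_G(x)^0=Z(G)^0_{\kappa(x)}$. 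Hence $H$ is $G$-irreducible.

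The ``moreover'' claims — $H$ reductive and $Z_G(H)^0=Z(G_{\kappa(x)})^0$ — have then already been obtained along the way in the proof of (3)$\Rightarrow$(4). The main obstacle I anticipate is the bookkeeping in (1)$\Leftrightarrow$(2): matching the purely scheme-theoretic description of $V_{GG}$ via the images $X^{\gps}_{MG}$ with the group-theoretic notion of $G$-irreducibility of $\rho_x$, and in particular making sure the finitely many R-Levis $M\supseteq T$ genuinely suffice (every proper R-parabolic containing $\rho_x^{\semi}$ has a conjugate, under $G^0(\kappa(x))$, whose R-Levi contains $T$, by \Cref{conj_P} and \Cref{Rlevi}). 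The rest is a routine assembly of \Cref{closed_orbit}, \Cref{crss}, \Cref{conjugate_Levi} and \Cref{RRX}.
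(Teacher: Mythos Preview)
Your approach for (1)$\Leftrightarrow$(2) and (2)$\Leftrightarrow$(3) matches the paper's: both use \Cref{conjugate_Levi} and the definition respectively. The difference is in (3)$\Leftrightarrow$(4) and the ``moreover'' clause. The paper's proof is a two-line citation: (3)$\Leftrightarrow$(4) is \cite[Proposition~16.7]{Richardson1988ConjugacyCO}, and the reductivity of $H$ together with $Z_G(H)^0=Z(G_{\kappa(x)})^0$ is \cite[Lemma~6.2]{martin}. You instead try to reconstruct these results by hand.

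Your reconstruction has two soft spots. First, in (3)$\Rightarrow$(4), the step ``$S\subseteq Z(G)$, whence $Z_G(H)^0\subseteq Z(G)^0$'' is incomplete: knowing that a maximal torus $S$ of $Z_G(H)^0$ lies in $Z(G)$ does not by itself force $Z_G(H)^0=S$, because $Z_G(H)^0$ could a~priori have a nontrivial unipotent radical. You need a Borel--Tits-type argument here (a nontrivial unipotent $U\subseteq Z_G(H)$ would give $H\subseteq N_G(U)$ inside a proper R-parabolic, contradicting $G$-irreducibility), and in the non-connected setting this requires some care---this is exactly what Martin's Lemma~6.2 packages. Second, in (4)$\Rightarrow$(3), your appeal to \Cref{RRX} is in the wrong direction: that proposition bounds $\dim Z(L)$ from \emph{above} by $\dim Z(G)+\dim U$, not from below. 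The fact you actually need, $\dim Z(L)^0>\dim Z(G)^0$ for a proper R-Levi $L=L_\lambda$, follows instead from the direct observation that $\lambda(\Gm)\subseteq Z(L)^0$ while $\lambda(\Gm)\not\subseteq Z(G)$, together with $Z(G)^0\subseteq Z(L)^0$.

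Both are fixable, and your approach has the merit of being self-contained; the paper's approach is much shorter but leans entirely on the Richardson and Martin references.
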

\begin{proof} 
The equivalence of (1) and (2) follows from Corollary  \ref{conjugate_Levi}. The equivalence
of (2) and (3) follows from Definition \ref{defi_G_irr}. The equivalence of 
(3) and (4) follows from \cite[Proposition 16.7]{Richardson1988ConjugacyCO}. The last part 
follows from \cite[Lemma 6.2]{martin}.
\end{proof}

\begin{cor}\label{dim_Z_same} Let $L$ be an R-Levi of $G$ and let $H$ be a closed generalised reductive subgroup of $L_{k'}$, where 
$k'$ is a finite extension of $k$. If $\Xbar^{\gps}_{HG}\cap U_{LG}$ is non-empty then  $\dim Z(H)\le \dim Z(L_k)$. 
\end{cor}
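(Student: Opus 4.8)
\textbf{Proof plan for Corollary \ref{dim_Z_same}.}

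The plan is to reduce the statement to an assertion about a single geometric point lying in the intersection $\Xbar^{\gps}_{HG}\cap U_{LG}$, and then to apply \Cref{irr=stable}. Suppose that $\Xbar^{\gps}_{HG}\cap U_{LG}$ is non-empty. First I would pick a geometric point $y$ in this intersection; after replacing $k'$ by a larger finite extension I may assume $y$ is $k'$-rational and lifts to a geometric point $x$ of $\Xbar^{\gen}_{HG}$ lying over $y$, with residue field $k'$. Since $y\in U_{LG}$, by the definition of $U_{LG}$ the point $y$ lies in $\Xbar^{\gps}_{LG}$ but in no $\Xbar^{\gps}_{MG}$ for a proper R-Levi $M\subsetneq L$ containing $T$; equivalently, by \Cref{conjugate_Levi}, there is a point $x'$ over $y$ in $X^{\gen}_G$ whose associated representation $\rho_{x'}$ has image landing in $L'(\kappa(y))$ for some $G^0$-conjugate $L'$ of $L_{k'}$ and is $L'$-irreducible. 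On the other hand $y$ is in the scheme-theoretic image of $\Xbar^{\gps}_{HG}$, so by \Cref{same_image} the $G$-semisimplification of $\rho_{x'}$ is conjugate to that of a representation whose image has Zariski closure contained in $H$; concretely, after a further $G^0$-conjugation I may arrange $\rho_{x'}$ to have image in $H(\kappa(y))$, so $H$ (or rather a conjugate) is itself contained in $L'$, hence in a conjugate of $L_{k'}$.

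The key step is then to work inside $L_{k'}$: the representation $\rho_{x'}$, viewed as a representation into $L_{k'}$, is $L_{k'}$-irreducible (by the displayed property coming from membership in $U_{LG}$), and its image has Zariski closure $H$ (up to conjugacy). Applying \Cref{irr=stable} with $G$ replaced by $L_{k'}$ — which is legitimate since $L$ is a generalised reductive $\OO$-group scheme by \Cref{para}, and we may enlarge $k'$ so that $L^0_{k'}$ is split — I conclude that $H$ is reductive and $Z_{L_{k'}}(H)^0 = Z(L_{k'})^0$. In particular $Z(H)^0$ is a closed subgroup scheme of $Z_{L_{k'}}(H)^0 = Z(L_{k'})^0$ (because the centre of $H$ centralises $H$ and lies in $L_{k'}$), so $\dim Z(H) = \dim Z(H)^0 \le \dim Z(L_{k'})^0 = \dim Z(L_{k'}) = \dim Z(L_k)$, where the last equality is \Cref{dimkisdimL} applied to the generalised reductive group scheme $L$ (using also \Cref{dimkisdimL}/\Cref{dimkisdimL}-type invariance of $\dim Z$ under the unramified base change $k\to k'$, or simply noting $Z(L)$ is defined over $\OO$ and $\dim Z(L)_{k'} = \dim Z(L)_k$ since field extension does not change dimension).

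The main obstacle I anticipate is bookkeeping the conjugations carefully: one must make sure that, after passing to a geometric point and conjugating so that $\rho_{x'}$ lands in $L'(\kappa(y))$, the \emph{same} conjugate of $H$ that one obtains as the Zariski closure is simultaneously $L'$-irreducible and contained in $L'$. This is exactly the content that \Cref{conjugate_Levi} is designed to supply — its ``moreover'' part produces a single $g\in G^0(\kappa(y))$ conjugating a minimal R-parabolic containing $\rho_{x'}$ together with its R-Levi into the fixed $L$ over $\OO$, so that after this conjugation $\rho_{x'}$ becomes $L$-irreducible with image whose closure lies in $L_{\kappa(y)}$. Combined with the fact that $y\in\Xbar^{\gps}_{HG}$ forces that closure (up to the residual $Z_G(H)^0$-ambiguity, which is harmless for dimension counts) to be a conjugate of $H$, the reduction to \Cref{irr=stable} inside $L$ goes through. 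Everything else — flatness of centres, invariance of dimensions under the unramified extension $k\to k'$, and $Z(H)^0\subseteq Z_{L}(H)^0$ — is routine via \Cref{dimkisdimL}, \Cref{dimkisdimL}, \Cref{ZGexists} and \Cref{base_change_Z}.
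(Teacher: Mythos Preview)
Your strategy is the same as the paper's: show that $H_{\kappa}$ is $L_{\kappa}$-irreducible for a suitable geometric point and then invoke \cite[Lemma 6.2]{martin} (the content of the ``moreover'' clause of \Cref{irr=stable}) to obtain $Z_{L_\kappa}(H_\kappa)^0 = Z(L_\kappa)^0$, whence $\dim Z(H)\le \dim Z(L_k)$. There is, however, a real gap in your write-up. You assert that the Zariski closure of $\rho_{x'}(\Gamma_F)$ is a conjugate of $H$; this is neither justified nor true in general --- the closure can be a proper subgroup of $H$. What you can actually extract is only the containment $\rho_{x'}(\Gamma_F)\subseteq gHg^{-1}$ for some $g\in G^0(\kappa)$, and even this needs an extra step you do not spell out: the witness $\rho_z$ with image in $H$ (coming from $y\in\Xbar^{\gps}_{HG}$) has the same $G$-semisimplification as $\rho_{x'}$, but to identify $\rho_{x'}$ with a conjugate of $\rho_z$ one must first check that $\rho_z$ is itself $G$-semisimple. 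That follows because $\rho_z(\Gamma_F)\subseteq L(\kappa)$ and $y\in U_{LG}$ forbids the $L$-semisimplification of $\rho_z$ from landing in any proper R-Levi $M\subsetneq L$. Once you have the containment you are done anyway: $H_\kappa$ contains an $L_\kappa$-irreducible subgroup, hence is itself $L_\kappa$-irreducible, and Martin's lemma applies directly to $H_\kappa$ (not to the closure of the image, which is what \Cref{irr=stable} literally gives).

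The paper avoids this bookkeeping by exploiting the factorisation $\Xbar^{\gps}_H \to \Xbar^{\gps}_L \to \Xbar^{\gps}_G$ available from $H\subseteq L_{k'}$: one checks that $\Xbar^{\gps}_{HG}\cap U_{LG}\neq\emptyset$ forces $\Xbar^{\gps}_{HL}\cap U_{LL}\neq\emptyset$, and then takes $x$ in $\Xbar^{\gen}_H$ over such a point, so that $\rho_x$ lands in $H$ by construction while its image in $\Xbar^{\gen}_L$ lies in $V_{LL}$ and is hence $L$-irreducible by \Cref{irr=stable}. One minor correction: your ``after enlarging $k'$, $y$ is $k'$-rational'' is not available, since $\kappa(y)$ is typically a local field of characteristic $p$; work with a geometric point instead and use \Cref{base_change_Z} for the comparison of $\dim Z(H)$ and $\dim Z(H_{\kappa(x)})$.
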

\begin{proof} After extending scalars we may assume that $k=k'$. Since $H$ is contained
in $L$ the map $\Xbar^{\gps}_H \rightarrow \Xbar^{\gps}_G$ factors through 
$\Xbar^{\gps}_H \rightarrow \Xbar^{\gps}_L\rightarrow \Xbar^{\gps}_G$. The assumption that 
$\Xbar^{\gps}_{HG}\cap U_{LG}$ is non-empty implies that $\Xbar^{\gps}_{HL} \cap U_{LL}$ is non-empty. 
It follows from Proposition \ref{irr=stable} that $H_{\kappa(x)}$ is not contained 
in any R-parabolic subgroup of $L_{\kappa(x)}$, where $x$ is any geometric point of $V_{LL}$, 
such that its image in $\Xbar^{\gps}_L$ is contained in $\Xbar^{\gps}_{HL}$. 
Moreover, we have 
$$\dim Z(H)= \dim Z(H_{\kappa(x)})\le \dim Z(L_{\kappa(x)}) = \dim Z(L_k),$$
where the inequality in the middle follows from \cite[Lemma 6.2]{martin}, as the centre of $H_{\kappa(x)}$ is 
contained in the $L_{\kappa(x)}$-centraliser of $H_{\kappa(x)}$,
and the other two follow from Lemma \ref{base_change_Z}. 
\end{proof} 

\begin{cor}\label{dim_fibre_GG} Let $X^{\gen}_{G, y}$ be the fibre at a 
point $y\in U_{GG}$. Then 
$$\dim X^{\gen}_{G, y}= \dim G -\dim Z(G)=\dim G_k - \dim Z(G_k).$$
\end{cor}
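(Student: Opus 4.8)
The plan is to compute the dimension of the fibre $X^{\gen}_{G,y}$ over a point $y \in U_{GG}$ by combining the fact that such fibres consist of a single closed $G^0$-orbit (coming from the $G$-stability established in \Cref{irr=stable}) with the orbit-dimension formula. First I would reduce to the case where $\kappa(y)$ is algebraically closed and where, as in \Cref{rem_extend_scalars}, $G^0$ is split and $G/G^0$ is constant; dimensions are insensitive to these base changes. The key structural input is that, by \Cref{closed_orbit_fibre}, the fibre $(\XgenG)_y$ contains a \emph{unique} closed $G^0$-orbit, and by the definition of $U_{GG}$ together with \Cref{irr=stable}, every geometric point $x$ lying above $y \in U_{GG}$ is $G$-stable, so in particular $G^0 \cdot x$ is closed and $Z_{G^0}(x)^0 = Z(G)^0_{\kappa(x)}$. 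By uniqueness of the closed orbit, \emph{every} point of the fibre above $y$ has closed $G^0$-orbit, hence the fibre is a single $G^0$-orbit.

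Given that the fibre is a single orbit $G^0 \cdot x$, its dimension is $\dim G^0 - \dim Z_{G^0}(x)$. By $G$-stability, $Z_{G^0}(x)^0 = Z(G)^0_{\kappa(x)}$, which has dimension $\dim Z(G)_{\kappa(x)}$; by \Cref{base_change_Z} and \Cref{dimkisdimL} (applied via \Cref{dimkisdimL}), this equals $\dim Z(G)_k = \dim Z(G_k)$. Since $\dim G^0 = \dim G$ (the component group is finite), we get $\dim X^{\gen}_{G,y} = \dim G - \dim Z(G)$. The equality $\dim G - \dim Z(G) = \dim G_k - \dim Z(G_k)$ then follows from \Cref{dimLisdimk} and \Cref{dimkisdimL} applied to $G$ and its centre. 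I should be a little careful that the fibre is reduced enough, or rather work only with the underlying topological space / dimension, so that identifying it set-theoretically with one orbit suffices to read off the dimension; since dimension only depends on the underlying topological space this is harmless.

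The one subtlety I anticipate is the passage from ``$x$ is $G$-stable for one point $x$ above $y$'' to ``the entire fibre above $y$ is one $G^0$-orbit''. This needs the uniqueness of the closed $G^0$-orbit in the fibre (\Cref{closed_orbit_fibre}): if there were a non-closed orbit $G^0 \cdot x'$ in the fibre, its closure would contain a closed $G^0$-orbit, which by \Cref{orbit_ss} corresponds to the $G$-semisimplification of $\rho_{x'}$; but $\rho_{x'}$ has the same associated pseudocharacter as $\rho_x$ (both lie over $y \in X^{\gps}_G$), hence by \Cref{same_image} the same $G$-semisimplification, so the unique closed orbit in the fibre is $G^0 \cdot x$, forcing $x' \in G^0 \cdot x$ after all (since $x'$ would then lie in $\overline{G^0 \cdot x} = G^0 \cdot x$). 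Thus the main obstacle is really bookkeeping with \Cref{orbit_ss}, \Cref{same_image}, and \Cref{closed_orbit_fibre} rather than any new idea; the dimension count itself is immediate once the orbit structure is pinned down.
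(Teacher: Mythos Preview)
Your proposal is correct and follows essentially the same route as the paper: reduce to an algebraically closed field, use \Cref{irr=stable} to see that every geometric point above $y$ is $G$-stable (hence has closed $G^0$-orbit), invoke uniqueness of the closed $G^0$-orbit in the fibre to conclude the fibre is a single orbit, and finish with the orbit--stabiliser dimension formula together with $Z_{G^0}(x)^0 = Z_G(x)^0 = Z(G)^0_{\kappa}$. Your final ``subtlety'' paragraph is superfluous since you already argued that \emph{every} point in the fibre has closed $G^0$-orbit, so there is no non-closed orbit to worry about; the paper likewise avoids that detour.
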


\begin{proof} Let $\kappa$ be an algebraic closure of $\kappa(y)$. If $x\in X^{\gen}_{G, y}(\kappa)$ then 
the $G$-orbit $G\cdot x$ is closed, as $x$ is $G$-stable 
by Proposition \ref{irr=stable}. Proposition \ref{closed_orbit} implies that the
$G^0$-orbit $G^0\cdot x$ is also closed. Since $X^{\gen}_{G, y}$ contains 
a unique closed $G^0$-orbit by Proposition \ref{orbit_ss} we conclude 
that $G^0(\kappa)$ acts transitively on $X^{\gen}_{G, y}(\kappa)$.
Thus the orbit map 
$$G^0_{\kappa}/ Z_{G^0_{\kappa}}(x) \rightarrow X^{\gen}_{G, y, \kappa}$$
induces a bijection on $\kappa$-points. Since both schemes are 
of finite type over $\kappa$, we deduce that 
$\dim X^{\gen}_{G, y}= \dim G^0_{\kappa}/ Z_{G^0_{\kappa}}(x)= \dim G^0_{\kappa} - \dim Z_{G^0_{\kappa}}(x)$.
 By passing to the underlying reduced subschemes of neutral components 
we obtain $(Z_{G^0_{\kappa}}(x)^0)^{\red} = (Z_{G_{\kappa}}(x)^0)^{\red}= (Z(G)_{\kappa}^0)^{\red}$, where the last equality 
holds as $x$ is $G$-stable.  Thus $\dim Z_{G^0_{\kappa}}(x)= \dim Z(G)_{\kappa}$ and we conclude
that 
$$\dim X^{\gen}_{G, y} = \dim G_{\kappa} - \dim Z(G)_{\kappa}.$$
The other assertions follow from \Cref{dimkisdimL} and \Cref{ZGexists}.
\end{proof}

\begin{cor}\label{UV_GG} 
Let $U$ be a non-empty locally closed subspace of $U_{GG}$ (resp.~$\Ubar_{GG}$) and 
let $V$ be the preimage of $U$ in $X^{\gen}_G$ (resp.~$\Xbar^{\gen}_G$). Then 
$$\dim V = \dim U +\dim G_k -\dim Z(G_k).$$
\end{cor}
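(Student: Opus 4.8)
The plan is to reduce \Cref{UV_GG} to the dimension formula for the fibres of $X^{\gen}_G \to X^{\gps}_G$ over points of $U_{GG}$, which is exactly \Cref{dim_fibre_GG}. The key point is that $V$ is the total space of a fibration over $U$ whose fibres all have the same dimension $\dim G_k - \dim Z(G_k)$, and for such a situation the dimension of the total space is the dimension of the base plus the common fibre dimension.

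First I would reduce to the case where $U$ is irreducible: if $U$ is a non-empty locally closed subspace of $U_{GG}$ (resp. $\Ubar_{GG}$) we may pick an irreducible component of $U$ of maximal dimension; since the preimage $V \to U$ is a morphism of finite type schemes, $\dim V$ is the supremum of $\dim V_\alpha$ over the irreducible components $U_\alpha$ of $U$ with $V_\alpha$ their preimages, so it suffices to treat each component separately. Then, with $U$ irreducible of dimension $d$, let $y$ be the generic point of $U$. The fibre $X^{\gen}_{G,y}$ equals the fibre $V_y$, and by \Cref{dim_fibre_GG} (whose proof goes through verbatim for $y$ the generic point of $U_{GG}$, since that proof only used that $y \in U_{GG}$, not that $y$ was closed) we have $\dim V_y = \dim G_{\kappa(y)} - \dim Z(G)_{\kappa(y)} = \dim G_k - \dim Z(G_k)$, the last equality by \Cref{dimkisdimL} and \Cref{dimkisdimL} applied to $Z(G)\cap G^0$ as in \Cref{dimkisdimL}. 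For the generic fibre, $\dim V_y = \dim V - \dim \overline{\{y\}} = \dim V - d$ by \stackcite{02JX} (dimension of the generic fibre of a dominant morphism of irreducible finite type schemes over a field, applied here over the base field $k$ — note $U$ and $V$ are of finite type over $k$ or over a local $\OO$-field, but the relevant statement is about the morphism $V \to U$ which is of finite type), hence $\dim V = d + \dim G_k - \dim Z(G_k)$.

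One subtlety to address: $X^{\gen}_G$ and $X^{\gps}_G$ need not be of finite type over a field — points of $U_{GG}$ may have residue field a local field of characteristic $p$ or a finite extension of $L$. So rather than invoking a dimension formula over a field directly, I would use the general statement \stackcite{02JS} / \stackcite{05F7}: for a morphism $f: V \to U$ of finite type with $U$ irreducible Noetherian and all non-empty fibres of dimension exactly $e$, one has $\dim V = \dim U + e$. This requires knowing every non-empty fibre of $V \to U$ has dimension $\dim G_k - \dim Z(G_k)$, not just the generic one; but this is precisely \Cref{dim_fibre_GG} applied at each point $y \in U$ (again, the proof of \Cref{dim_fibre_GG} only uses $y \in U_{GG}$), so the equidimensionality of the fibres holds on the nose. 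The main obstacle will be making sure the cited commutative-algebra input is applied in the correct generality — in particular that it applies to a finite type morphism of (possibly non-excellent, non-finite-type-over-a-field) Noetherian schemes — and that $U$ irreducible forces every fibre, including the special fibre case $\Ubar_{GG} \subseteq \Xbar^{\gps}_G$, to be handled uniformly; the reduction to irreducible $U$ and the appeal to \Cref{dim_fibre_GG} at all points of $U$ dispose of this cleanly.

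Finally, for the special-fibre variant with $U \subseteq \Ubar_{GG}$ and $V$ its preimage in $\Xbar^{\gen}_G$: here everything lives over $k$, $U$ and $V$ are of finite type over $k$, and the argument is the same, using \Cref{dim_fibre_GG} (whose conclusion $\dim X^{\gen}_{G,y} = \dim G_k - \dim Z(G_k)$ is already stated for geometric points and hence applies to closed points of $\Ubar_{GG}$, and the equidimensionality-of-fibres input then gives the result for any irreducible $U$). This completes the plan.
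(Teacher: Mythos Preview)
Your plan—reduce to $U$ irreducible and exploit that every fibre has dimension $d := \dim G_k - \dim Z(G_k)$ by \Cref{dim_fibre_GG}—is the right shape, but the general dimension formula you invoke does not exist. Tags 02JS and 05F7 concern semicontinuity of fibre dimension, not the equality $\dim V = \dim U + e$. Worse, the statement ``for $f: V \to U$ of finite type with $U$ irreducible Noetherian and all non-empty fibres of dimension exactly $e$, one has $\dim V = \dim U + e$'' is false: take $V = \Spec \Qp \to U = \Spec \Zp$, where the unique non-empty fibre has dimension $0$, $\dim U = 1$, $\dim V = 0$ (the paper itself flags this example in \Cref{QpZp}). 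Surjectivity is essential, and even then one must work carefully since $U$ is not of finite type over a field.

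The paper treats the two inequalities separately. The bound $\dim V \le \dim U + d$ is \cite[Lemma 3.18(6)]{BIP_new} (or \stackcite{0BAG}), which is set up precisely for this Jacobson situation. For $\dim V \ge \dim U + d$ the paper reduces to $U$ irreducible and picks an irreducible component $V'$ of $V$ whose image contains the generic point of $U$; the step you are missing is to show $V' \to U$ is \emph{surjective}. This uses GIT: since $G^0$ is connected, $V'$ is $G^0$-invariant by \cite[Lemma 2.1]{BIP_new}, hence its image in $U$ is closed by \cite[Theorem 3(iii)]{seshadri}, hence equals $U$. One can then choose a closed point $s \in U$ with $\dim \OO_{U,s} = \dim U$ and a closed point $x \in V'$ over it, and apply \stackcite{02JU} to get $\dim V \ge \dim V' \ge \dim U + d$. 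Your argument can be salvaged along these lines, but it needs both the surjectivity input from Seshadri and a correct dimension-formula citation; neither is supplied in your proposal.
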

\begin{proof} Corollary \ref{dim_fibre_GG} says that $\dim V_{y}=d$, where
$d:=\dim G_k -\dim Z(G_k)$, for all $y\in U$. The inequality 
$\dim V \le \dim U +d$
follows directly from \cite[Lemma 3.18 (6)]{BIP_new} (or alternatively \cite[\href{https://stacks.math.columbia.edu/tag/0BAG}{Tag 0BAG}]{stacks-project}). 

To prove the reverse inequality we may assume that $U$ is irreducible and, since 
$V\rightarrow U$ is surjective by \cite[Theorem 3 (ii)]{seshadri}, we may replace $V$ 
by an irreducible component, whose image contains the generic point of $U$. Lemma 2.1 
in \cite{BIP_new} implies that $V$ is $G^0$-invariant and \cite[Theorem 3 (iii)]{seshadri} 
implies that the image of $V$ in $U$ is closed, and hence $V\rightarrow U$ is surjective.
It follows from \cite[\href{https://stacks.math.columbia.edu/tag/02JU}{Tag 02JU}]{stacks-project}
applied with $s\in U$ a closed point such that $\dim U=\dim \OO_{U, s}$ and 
$x\in V$ a closed point, which maps to $s$, that $\dim V\ge \dim U+d$. 
\end{proof}

 \begin{lem}\label{V_LG_maxdim} Assume that $\pi_G\circ \rhobarss: \Gamma \rightarrow (G/G^0)(\kbar)$ is surjective. 
Let $L$ be an R-Levi of $G$ containing $T$ and let $P$ be an R-parabolic subgroup of $G$ with 
R-Levi $L$ and unipotent radical $N$. 
Assume that one of the following is non-empty: 
$X^{\gen}_{L, \rhobarss}$, $X^{\gps}_{L, \rhobarss}$, $V_{LG, \rhobarss}$, $U_{LG, \rhobarss}$. 
Then the following hold:
\begin{enumerate} 
\item $L/L^0\rightarrow G/G^0$ is an isomorphism;
\item if $\dim L=\dim G$ then $L=G$;
\item If $L\neq G$ then $\dim  N_k\ge 1$.
\end{enumerate} 
\end{lem}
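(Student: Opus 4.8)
\textbf{Proof proposal for \Cref{V_LG_maxdim}.}

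The plan is to reduce everything to the assumption that one of the four schemes is non-empty, extract a geometric point $x$ from it, and analyse the Zariski closure $H$ of $\rho_x(\Gamma)$ together with its component group. First I would observe that all four non-emptiness hypotheses are equivalent for our purposes: if $U_{LG,\rhobarss}$ is non-empty so is $V_{LG,\rhobarss}$, the latter surjects onto $U_{LG,\rhobarss}\subseteq X^{\gps}_{LG,\rhobarss}$, and having a point in $X^{\gps}_{LG,\rhobarss}$ forces $X^{\gen}_{L,\rhobarss}$ to be non-empty (by \Cref{GGLd_finite} and the surjectivity of the GIT map $X^{\gen}_{L,\rhobarss}\to X^{\gps}_{L,\rhobarss}$ of \cite[Theorem 3(ii)]{seshadri}). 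So it suffices to assume $X^{\gen}_{L,\rhobarss}$ has a geometric point $x$, giving a representation $\rho_x:\Gamma\to L(\kappa)$ whose $G$-semisimplification (as a representation into $G$) is $\rhobarss$. For part (1), I would compute the composite $\Gamma\xrightarrow{\rho_x}L(\kappa)\to(L/L^0)(\kappa)\hookrightarrow(G/G^0)(\kappa)$: by \Cref{para} the inclusion $L\cap G^0=L^0$ makes $L/L^0\hookrightarrow G/G^0$ a closed immersion, and by \Cref{get_comp} the composite $\Gamma\to G(\kappa)\to(G/G^0)(\kappa)$ coincides with $\pi_G\circ\rhobarss$, which is surjective by hypothesis. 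Hence $(L/L^0)(\kappa)\twoheadrightarrow(G/G^0)(\kappa)$ and since both are finite constant (étale) group schemes over the same base, the closed immersion $L/L^0\hookrightarrow G/G^0$ is an isomorphism.

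For part (2), suppose $\dim L=\dim G$. Since $L$ is an R-Levi, $L^0$ is a Levi subgroup of $G^0$ by \Cref{para}, so $\dim L^0=\dim L=\dim G=\dim G^0$, forcing $L^0=G^0$ (a Levi of the same dimension as the ambient connected reductive group is the whole group, as can be seen from the root datum description in \cite[Proposition 2.2.9]{CGP} or directly because the unipotent radical $N$ of any parabolic with Levi $L^0$ has dimension $(\dim G^0-\dim L^0)/2=0$). Combined with part (1) giving $L/L^0\cong G/G^0$, the inclusion $L\hookrightarrow G$ is then an isomorphism on neutral components and on component groups, hence $L=G$.

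For part (3), assume $L\neq G$; by part (2) this means $\dim L<\dim G$, so $\dim L^0<\dim G^0$. Let $P=P_\lambda$ be an R-parabolic with R-Levi $L=L_\lambda$ and unipotent radical $N=U_\lambda$. By \Cref{list}(3) the multiplication map $L\ltimes N\to P$ is an isomorphism, and by \Cref{para} $P^0$ is a parabolic subgroup of $G^0$ with Levi $L^0$; the unipotent radical of $P^0$ is $N$ (which lies in $P^0$, being connected unipotent). The standard dimension count for parabolic subgroups of connected reductive groups over a field gives $\dim G^0=\dim L^0+2\dim N$ — this is exactly the relation $\dim G^0=\dim L^0+2\dim U$ invoked in the proof of \Cref{RRX}. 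Since $\dim L^0<\dim G^0$ we get $\dim N>0$, i.e.\ $\dim N_k\ge 1$ (and $\dim N=\dim N_k$ since $N$ is smooth with connected fibres of constant dimension, e.g.\ by \Cref{dimLisdimk}). The main obstacle I anticipate is purely bookkeeping: making sure the equivalences among the four non-emptiness conditions are set up cleanly and that the passage between $L$, $L^0$, $L/L^0$ and the corresponding objects for $G$ uses the right references (\Cref{para}, \Cref{list}, and the dimension formula already used in \Cref{RRX}); there is no deep new input beyond what has been established.
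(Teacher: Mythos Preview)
Your proof is correct and follows essentially the same route as the paper: reduce to $X^{\gps}_{L,\rhobarss}$ being non-empty, use that the component-group image of any point over $X^{\gps}_{G,\rhobarss}$ agrees with $\pi_G\circ\rhobarss$, then apply \Cref{para} and the dimension formula $\dim G^0=\dim L^0+2\dim N$ for parts (2) and (3). One small caveat: your appeal to \Cref{get_comp} in part (1) only literally applies to $\kbar$-points lying over the closed point of $X^{\ps}_{\GL_d}$, so either choose such an $x$ (which exists since $X^{\gps}_{L,\rhobarss}\to X^{\gps}_{G,\rhobarss}$ is finite over a local ring and the GIT map is surjective on geometric points) or, more cleanly, cite \Cref{connected_Delta} directly, which is how the paper argues and which handles all points at once.
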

\begin{proof} It follows from Lemma \ref{para} that $L^0= L\cap G^0$, so that the map in (1) is injective. The assumption in all the cases implies that $X^{\gps}_{L, \rhobarss}$ is non-empty. Lemma \ref{connected_Delta} implies that
$X^{\gps}_{L, \underline{\delta}}$ is non-empty, where $\underline{\delta}$ is as in Lemma 
\ref{connected_Delta}. Thus $\underline{\delta}\in (L/L^0)(k)^N$ and thus 
$\pi_G(\rhobarss(\gamma_i))\in (L/L^0)(k)$
for all $1\le i \le N$. Since $\pi_G\circ \rhobarss$ is continuous and surjects onto $(G/G^0)(\kbar)$ and $\gamma_1, \ldots \gamma_N$ generate
$\Gamma$ topologically, the entries of $\underline{\delta}$ generate $(G/G^0)(\kbar)$ as a group. Thus 
$(L/L^0)(k)=(G/G^0)(k)=(G/G^0)(\kbar)$. Thus $G/G^0$ is a constant group scheme and 
 we deduce part (1).

As explained in Lemma \ref{para},  $P^0$ is a parabolic of $G^0$
with Levi $L^0$. Thus $\dim L=\dim G$ implies that $\dim L^0= \dim G^0$ and hence $L^0=G^0$.
Combining this with part (1) we deduce part (2). 

Part (3) follows from  $\dim G_k - \dim L_k= \dim G_k^0 - \dim L_k^0 = 2 \dim N_k$. 
\end{proof} 

\begin{defi}\label{defi_abs_irr} If $\pi_G\circ \rhobarss: \Gamma \rightarrow (G/G^0)(\kbar)$ is surjective then we define 
the \emph{absolutely $G$-irreducible} locus in $X^{\gen}_{G, \rhobarss}$ as $V_{GG,\rhobarss}$. 
\end{defi}

\begin{remar}\label{rem_bad_def} The justification for calling $V_{GG,\rhobarss}$ absolutely $G$-irreducible locus comes 
from Proposition \ref{irr=stable}. We are not sure if this is a good definition if $\pi_G\circ 
\rhobarss$ is not surjective, and we discuss an example below. However, one can get around the problem by replacing $G$ by a closed subgroup scheme
$H$ such that $G^0=H^0$ and $\pi_G\circ \rhobarss$ surjects onto $(H/H^0)(\kbar)$ and using Lemma \ref{shrink}.
\end{remar}

\begin{examp}\label{ex_bad_def} Assume that $p>2$ and let $G$ be the normaliser of the subgroup of diagonal matrices in $\GL_2$. Then 
$G\cong \Gm^2\rtimes \ZZ/2\ZZ$ as in the Example \ref{ex_comp}. Let $E$ be a quadratic Galois extension 
of $F$ and let $c\in \Gal(E/F)$ be non-trivial. Let $\rhobar_1, \rhobar_2: \Gamma_F \rightarrow G(k)$ be 
representations, such that $\Gamma_E$ is mapped to identity and 
$$\rhobar_1(c)= \begin{pmatrix} 1 & 0 \\ 0 & -1\end{pmatrix}, \quad \rhobar_2(c)= \begin{pmatrix} 0 & 1\\ 1 & 0 \end{pmatrix}.$$
These representations are $G$-semisimple. 
Since $\rhobar_1(c)$ and $\rhobar_2(c)$ are conjugate in $\GL_2(k)$ the representations $\tau\circ \rhobar_1$ 
and $\tau\circ \rhobar_2$ will have the same $\GL_2$-pseudocharacter, where $\tau: G\hookrightarrow \GL_2$ is
the natural inclusion. The image of $\rhobar_1$ is contained
in $G^0(k)$ and $G^0$ is a proper $R$-parabolic subgroup of $G$, as explained in Example \ref{ex_comp}, hence 
$V_{GG, \rhobar_1}$ is empty, and hence the $G$-irreducible locus in $X^{\gen}_{G, \rhobar_1}$ is also empty. 
On the other hand, $V_{GG, \rhobar_2}$ is non-empty. We also would like to point out that in this 
case $Z(G^0)$ and $Z(G)$ have different dimensions, and this results in a difference between the dimensions
of $V_{G^0G, \rhobar_1}\sslash G^0$ and  $V_{GG, \rhobar_2}\sslash G^0$.
\end{examp}

\begin{remar} The referee suggested  getting around the problem 
explained in \Cref{rem_bad_def} and \Cref{ex_bad_def} by working only with $G$-normalisers of parabolic subgroups of $G^0$ (which are $R$-parabolic by \cite[Proposition 6.1]{BMR}) instead of all $R$-parabolic subgroups. This definition would
save some work in \Cref{sec_trimming}, but would not match the standard definition for $G$-irreducibility in the literature such as \cite{BMR}, which we use. If $\pi_G \circ \rhobarss$ is surjective then both definitions of irreducibility coincide. 
\end{remar}

 \section{Bounds for the dimensions of reductive subgroups}\label{bounds_red_sbgp}

 Let $G$ be a generalised reductive group over an an algebraically closed field $\kappa$.
 If $H$ is a closed reduced subgroup scheme of $G$, such that the unipotent 
 radical of $H^0$ is trivial then $H$ is also generalised reductive by \Cref{gen_red_field}. Let $G'$ denote 
 the derived subgroup of $G^0$. Then $G'$ is semisimple and $G^0= Z(G^0) G'$, \cite[14.2]{borel}. Moreover, $G'$ is equal to its 
 own derived subgroup, \cite[Example 1.1.16]{conrad}.
 
 \begin{prop}\label{bound_subred} 
 Let $H$ be a closed reductive subgroup of 
 $G$. Then one of the following holds: 
 \begin{itemize} 
 \item[(i)] $H$ contains $G'$;
 \item[(ii)] $\dim G' - \dim (H\cap G')\ge 2$.
 \end{itemize}
 \end{prop}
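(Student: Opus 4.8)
The plan is to reduce to the case where $H$ contains a maximal torus of $G'$, then analyze root systems. First I would replace $H$ by its neutral component and work with the semisimple group $H' := (H \cap G')^0_{\mathrm{red}}$, since $\dim(H \cap G') = \dim H'$ and the statement only involves dimensions; so without loss of generality $H \subseteq G'$ is a connected reductive subgroup of the semisimple group $G'$, and we must show that either $H = G'$ or $\dim G' - \dim H \ge 2$. Equivalently, it suffices to rule out $\dim G' - \dim H = 1$, i.e.\ to show there is no connected reductive subgroup $H \subsetneq G'$ of codimension exactly $1$.

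\medskip

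Here is the key reduction. If $H$ had codimension $1$ in $G'$, then the homogeneous space $G'/H$ would be a smooth variety of dimension $1$. The quotient $G'/H$ is quasi-affine (being a homogeneous space of a reductive group under a reductive subgroup, by Matsushima's criterion it is in fact affine); and a smooth affine curve which is a homogeneous space under a connected group must be $\mathbb{A}^1$ or $\mathbb{G}_m$ or a complete curve. A $1$-dimensional homogeneous space of a semisimple (hence generated by unipotents, with no nontrivial characters) group cannot be $\mathbb{G}_m$ or $\mathbb{A}^1$ with a fixed point issue; more directly, $G'$ acting on a curve $C$ factors through $\mathrm{Aut}^0(C)$, which for $\mathbb{A}^1$ is the affine group $\mathbb{G}_a \rtimes \mathbb{G}_m$ (solvable) and for $\mathbb{G}_m$ is $\mathbb{G}_m \rtimes \mathbb{Z}/2$ (solvable), and for a smooth projective curve of genus $\ge 1$ is not transitive with the right stabilizer, while $\mathbb{P}^1$ has $\mathrm{Aut}^0 = \mathrm{PGL}_2$. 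Since $G'$ is semisimple, the image of $G' \to \mathrm{Aut}^0(C)$ is semisimple, so the only possibility is $C = \mathbb{P}^1$ and $G' \twoheadrightarrow \mathrm{PGL}_2$ with $H$ the preimage of a Borel of $\mathrm{PGL}_2$. But then $H$ contains the kernel $K$ of $G' \to \mathrm{PGL}_2$ and $H/K$ is a Borel subgroup of $\mathrm{PGL}_2$, hence $H$ contains a maximal unipotent subgroup of $G'$ lying in that Borel, so $H$ is \emph{not} reductive (its unipotent radical is nontrivial) unless $K$ is already all of $G'$'s semisimple part, which forces $\dim G' \le 3$ and $G' = \mathrm{SL}_2$ or $\mathrm{PGL}_2$ — and in that case $H$ would be a Borel, again not reductive. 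This contradiction shows codimension $1$ is impossible.

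\medskip

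So the concrete steps are: (1) reduce to $H \subseteq G'$ connected reductive, via $\dim(H \cap G') = \dim H'$ for $H' = (H\cap G')^0_{\mathrm{red}}$ and the fact that the unipotent radical of $H^0$ is trivial so $H \cap G'$ is reductive; (2) assume for contradiction $\dim G'/H = 1$; (3) observe $G'/H$ is affine by Matsushima's theorem (stated essentially in the excerpt as the input to \Cref{cor_one}), hence is a smooth affine curve on which $G'$ acts transitively; (4) the morphism $G' \to \mathrm{Aut}^0(G'/H)$ has semisimple image, forcing $G'/H \cong \mathbb{A}^1$ (impossible: $\mathrm{Aut}^0(\mathbb{A}^1)$ is solvable) or $\mathbb{G}_m$ (impossible: solvable) or $\mathbb{P}^1$ — but $\mathbb{P}^1$ is not affine, contradiction; (5) conclude. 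Actually the cleanest finish avoids the $\mathrm{Aut}$ discussion entirely: an affine homogeneous space of dimension $1$ under a connected group is $\mathbb{A}^1$ or $\mathbb{G}_m$, and for either the stabilizer $H$ has codimension $1$ in $G'$ but contains $[G', G'] = G'$ (since the action factors through the solvable quotient $G'/[G',G'] = 1$), so $H = G'$, contradicting $\dim G'/H = 1$.

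\medskip

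The main obstacle I anticipate is handling positive characteristic and non-reduced subgroup schemes carefully: Matsushima's criterion in the form one wants (reductive subgroup $\Rightarrow$ quotient is affine) is due to Haboush/Richardson in characteristic $p$, and one must make sure $H \cap G'$, which is where the reductivity hypothesis is used, is honestly reductive and not just "reductive up to a finite infinitesimal piece" — but since we only track dimensions and $\dim$ is insensitive to nilpotents, replacing everything by reduced subgroup schemes (legitimate since $\kappa$ is perfect, being algebraically closed, so reduced subgroup schemes are smooth by \Cref{gen_red_field}) resolves this. A secondary subtlety is that the paper's $H$ is only assumed \emph{reductive} (neutral component reductive), not connected, so one genuinely does need to pass to $H^0$ and intersect with $G'$ at the start; this is routine. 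I expect the whole argument to be short once the Matsushima input is invoked.
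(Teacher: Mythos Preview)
Your approach is essentially correct but genuinely different from the paper's. Both arguments make the same reduction to $H \subsetneq G'$ connected reductive in a semisimple $G'$ and both invoke Matsushima (Richardson in characteristic $p$) to get that $G'/H$ is affine; they diverge in how they exclude $\dim G'/H = 1$. The paper embeds $G'$ in some $\GL(V)$ so that $H$ is the stabiliser of a line $\ell \subset V$, uses affineness of $G'/H$ to see that the orbit $G'\cdot\ell \subset \mathbb P(V)$ is not closed, and studies a closed orbit $G'\cdot\ell'$ in its boundary: if that orbit has positive dimension one is done, and if it is a point one passes to $V/\ell'$ and inducts on $\dim V$. Your route instead classifies one–dimensional affine homogeneous spaces of $G'$ as $\mathbb A^1$ or $\mathbb G_m$ and then kills the action using $G' = [G',G']$. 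Your argument is shorter and more geometric once the classification is in hand; the paper's argument is more self-contained (no outside input beyond Matsushima) but needs the inductive loop.

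There is one gap you should close: the assertion that a one–dimensional affine homogeneous space of a connected (linear) group is $\mathbb A^1$ or $\mathbb G_m$ is not free, and your write-up circles around it without nailing it down. The cleanest justification is L\"uroth: the orbit map $G' \twoheadrightarrow C$ shows $C$ is unirational, hence rational, so $C = \mathbb P^1 \setminus S$ with $|S| \ge 1$; if $|S| \ge 3$ then $\Aut(C)$ is finite (automorphisms of $C$ extend to $\mathbb P^1$ and must permute $S$), so the kernel of the $G'$-action is a closed normal subgroup of finite index, hence all of $G'$ by connectedness, contradicting transitivity. That leaves $|S| \in \{1,2\}$, i.e.\ $C \cong \mathbb A^1$ or $\mathbb G_m$, and then your finish (the action is by $x \mapsto f_1(g)x + f_0(g)$ resp.\ $x \mapsto c(g)x$, with $f_1, c$ characters and $f_0$ an additive character, all trivial on the perfect group $G'$) is correct. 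Your exploratory paragraph about $C = \mathbb P^1$ and preimages of Borels is confused and should be dropped: affineness already excludes $\mathbb P^1$, so there is no case to analyse there.
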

 \begin{proof} After replacing $G$ by $G'$ we may assume that 
 $G$ is connected and semisimple. One may check that 
 the unipotent radical of $(G'\cap H)^0$ is normal in $H^0$. Since $H^0$ is reductive, $(G'\cap H)^0$ is also reductive. Thus after replacing $H$ by 
 $H\cap G'$ we may assume that $H$ is closed 
 reductive subgroup 
 of $G$.
 
 Let $B_0=T_0 U_0$ be a Borel subgroup of $H$ with the unipotent radical $U_0$ and a maximal torus $T_0$. Let $B=T U$ be a Borel 
 subgroup of $G$ with the unipotent radical $U$ and a maximal torus $T$, such that $U$ contains
 $U_0$ and $T$ contains $T_0$. We have $\dim G = 2\dim U + \dim T$ and $\dim H = 2 \dim U_0 +\dim T_0$. If $U\neq U_0$ then $\dim U >\dim U_0$ and we deduce that $\dim G -\dim H \ge 2$. 
 
 Let us assume that $U=U_0$ and let $U^-_0$ be the unipotent radical of a Borel subgroup of $H$, which is opposite to $B_0$ with respect to $T_0$. Since $U_0=U$, $U_0^-$ is a
 unipotent radical of a Borel subgroup of $G$. 
 Since $U_0^-\cap U_0=\{e\}$,  \cite[Proposition 4.10 e)]{BoTi} implies that $U^-_0$ is a unipotent radical of a parabolic subgroup of $G$, which is opposite to $B$ with respect to $T$. Since $G$ is by assumption semisimple, $U_0$ and $U_0^{-}$ generate
 it as a group by \cite[Theorem 8.1.5(i)]{springer}. Thus $G=H$.
 \end{proof} 
 
 \begin{cor}\label{cor_one} Assume that we are 
 in the situation of part (ii) of Proposition \ref{bound_subred}. Then $\dim G -\dim H \ge 2$.
 \end{cor}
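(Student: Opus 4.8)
The plan is to compare the pair $(G,H)$ with the pair $(G',H\cap G')$ by exploiting that $G'$ is normal in $G$. First I would recall from the proof of \Cref{smooth_projection} that $G'$ is a closed normal subgroup scheme of $G$: as the fppf sheafification of the commutator functor $A\mapsto[G^0(A),G^0(A)]$ on $G^0$, it is stable under every automorphism of $G^0$, in particular under conjugation by elements of $G$. Hence the quotient $G/G'$ is representable by a group scheme, and the image $HG'$ of the multiplication map $H\times G'\to G$ is a closed subgroup scheme of $G$ containing $G'$ as a normal subgroup.

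Next I would invoke the elementary dimension formula for $HG'$. The orbit map $H\to HG'/G'$, $h\mapsto hG'$, has scheme-theoretic kernel $H\cap G'$ and therefore identifies $HG'/G'$ with $H/(H\cap G')$; since $HG'\to HG'/G'$ is a $G'$-torsor, this yields
\[
\dim(HG')=\dim G'+\dim H-\dim(H\cap G').
\]
As $HG'$ is a closed subscheme of $G$, we have $\dim(HG')\le\dim G$, so
\[
\dim G-\dim H\ \ge\ \dim G'-\dim(H\cap G').
\]
Being in case (ii) of \Cref{bound_subred} means precisely that the right-hand side is at least $2$, which gives the claim.

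I do not anticipate any genuine obstacle: the argument is purely dimension-theoretic, and each ingredient — normality of $G'$ in $G$, representability of $G/G'$, and the dimension count for $HG'$ — is either recorded earlier in the paper or completely standard for affine group schemes of finite type over a field. The only point worth keeping in mind is that all dimensions are those of the underlying topological spaces, so no reducedness or smoothness hypotheses on $H$ or $HG'$ are needed beyond what is already in force.
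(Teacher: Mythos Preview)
Your proof is correct and is essentially the paper's argument rephrased: the paper observes that the $G'$-orbit of the identity coset in $G/H$ is $G'/(G'\cap H)$, giving $\dim G/H\ge\dim G'/(G'\cap H)\ge 2$, while you express the same inequality via the subgroup $HG'\subseteq G$ and the dimension formula $\dim(HG')=\dim G'+\dim H-\dim(H\cap G')$. These are the same computation, since $HG'/H$ is precisely that $G'$-orbit.
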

 \begin{proof} The $G'$-orbit of 
 $H$ in $G/H$ is isomorphic to $G'/(G'\cap H)$ and hence 
 $$\dim G/H \ge \dim G'/(G'\cap H)\ge 2.$$ 
 \end{proof}
 
 \begin{cor}\label{cor_two} Let $V$ be a finite dimensional
 representation of $G$ and let $H$ be a closed reductive subgroup of $G$. If $V^{G'}=0$ and $V^H\neq 0$ then $\dim G - \dim H\ge 2$.
 \end{cor}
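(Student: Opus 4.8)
The plan is to deduce \Cref{cor_two} from \Cref{cor_one} (equivalently, from part (ii) of \Cref{bound_subred}). The key observation is that the hypotheses $V^{G'}=0$ and $V^H\neq 0$ force $H$ not to contain $G'$, which puts us into case (ii) of \Cref{bound_subred}, and then \Cref{cor_one} gives exactly $\dim G-\dim H\ge 2$.

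First I would argue by contradiction, or rather directly: suppose $H$ contains $G'$. Then any $H$-fixed vector is in particular $G'$-fixed, so $V^H\subseteq V^{G'}=0$, contradicting $V^H\neq 0$. Hence $H$ does not contain $G'$. Now I would like to invoke \Cref{bound_subred}, but I must first check that $H\cap G'$ makes sense as the relevant intersection and that $H$ satisfies the hypothesis of that proposition, i.e.\ $H$ is a closed reductive subgroup of $G$ — which is exactly the assumption of \Cref{cor_two}. Since $H$ does not contain $G'$, the dichotomy in \Cref{bound_subred} forces case (ii): $\dim G'-\dim(H\cap G')\ge 2$. Finally, \Cref{cor_one} (whose proof uses that the $G'$-orbit of $H$ in $G/H$ is isomorphic to $G'/(G'\cap H)$, of dimension $\dim G'-\dim(G'\cap H)\ge 2$) yields $\dim G-\dim H\ge 2$, as desired.

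I do not expect any serious obstacle here; the statement is a clean logical consequence of the two preceding corollaries, and the only point requiring a word of care is the very first implication: that $H\supseteq G'$ would give $V^H\subseteq V^{G'}$. This is immediate since the $G$-action restricts to an $H$-action and fixed-point subspaces are contravariant in the group. One minor subtlety worth noting explicitly is that we are free to pass to $G'$-fixed points because $V$ is a $G$-representation and $G'\subseteq G^0\subseteq G$ is a subgroup; no reducedness or smoothness issue arises because everything is over the algebraically closed field $\kappa$ and $G'$ is smooth connected semisimple by the discussion at the start of the section. Thus the proof is essentially two lines: rule out case (i) of \Cref{bound_subred} using the fixed-vector hypotheses, then quote \Cref{cor_one}.
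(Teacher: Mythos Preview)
Your proposal is correct and follows essentially the same approach as the paper: rule out the case $H\supseteq G'$ using $V^H\subseteq V^{G'}$, then apply \Cref{cor_one}. The paper's proof is just the two-line version of what you wrote.
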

 \begin{proof} If $H$ contains $G'$ then $V^H$ is 
 contained in $V^{G'}$ and so $V^H\neq 0$ implies that 
 $V^{G'}\neq 0$. The assertion follows from Corollary \ref{cor_one}.
  \end{proof} 
 
 Since $G'$ is a normal subgroup of $G$, $G$ acts on $\Lie(G')$ by conjugation. 
 
 \begin{lem}\label{coadjoint} If $\pi_1(G')$ is \'etale then $((\Lie G')^*)^{G'}=0$.
 \end{lem}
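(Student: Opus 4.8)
The statement to prove is Lemma \ref{coadjoint}: if $\pi_1(G')$ is étale then $((\Lie G')^*)^{G'} = 0$. The plan is to reduce from the general semisimple group $G'$ to the simply connected case, then to a product of almost-simple factors, and finally to $\SL_2$, where an explicit computation settles the matter. First I would observe that since $\pi_1(G')$ is étale, the central isogeny $G'_{\sic} \to G'$ is étale, hence induces an isomorphism on Lie algebras $\Lie(G'_{\sic}) \eqto \Lie(G')$ that is $G'$-equivariant (using Lemma \ref{Gprimesc_action}); dualizing, $(\Lie G')^* \cong (\Lie G'_{\sic})^*$ as $G'$-modules, and moreover the $G'$-action on $\Lie(G'_{\sic})$ factors through the conjugation action of $G'_{\sic}$ on itself composed with the isogeny, so $((\Lie G')^*)^{G'} = ((\Lie G'_{\sic})^*)^{G'_{\sic}}$. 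Thus I may assume $G'$ is simply connected semisimple.

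Next, a simply connected semisimple group over an algebraically closed field is a product $\prod_i G_i$ of simply connected almost-simple groups, and $\Lie(G') = \bigoplus_i \Lie(G_i)$ as a $G'$-module, with $G_j$ acting trivially on $\Lie(G_i)$ for $i \neq j$. Hence $((\Lie G')^*)^{G'} = \bigoplus_i ((\Lie G_i)^*)^{G_i}$, and it suffices to treat each almost-simple simply connected factor separately. For an almost-simple group $H$ the Lie algebra $\mathfrak h = \Lie H$ may fail to be simple or even semisimple as a Lie algebra in small characteristic, but one still has good structural control: $\mathfrak h$ is spanned by the root spaces $\mathfrak h_\alpha$ together with a Cartan subalgebra $\mathfrak t$, and the coadjoint action on $\mathfrak h^*$ is by the negatives of the weights appearing on $\mathfrak h$, namely $0$ (with multiplicity $\operatorname{rank}$) and the roots $\alpha$. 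An element $\phi \in (\mathfrak h^*)^{H}$ must in particular be fixed by the maximal torus $T$, so $\phi$ is supported on the zero weight space of $\mathfrak h^*$, i.e. $\phi$ annihilates all $\mathfrak h_\alpha$ and is determined by its restriction to $\mathfrak t$. Then invariance under the root subgroups $U_\alpha$ forces, via the standard commutator formula $\operatorname{Ad}(u_\alpha(c)) X_{-\alpha} = X_{-\alpha} + (\text{multiple of } c)\, t_\alpha + \dots$ with $t_\alpha = [X_\alpha, X_{-\alpha}] \in \mathfrak t$, that $\phi(t_\alpha) = 0$ for every root $\alpha$. Since $G'$ is semisimple and simply connected, the coroots span the cocharacter lattice, so the elements $t_\alpha$ span $\mathfrak t$; therefore $\phi|_{\mathfrak t} = 0$ and $\phi = 0$.

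The one subtlety is that the commutator formula above genuinely uses $X_{-\alpha}$, i.e. the dual direction — this is exactly why the paper emphasizes (in the sentence before Lemma \ref{intro_coadjoint}) that the statement is false in small characteristic without taking the dual: in e.g. type $A_1$, characteristic $2$, the adjoint representation of $\SL_2$ has a trivial subrepresentation (spanned by $t_\alpha$, since $2 t_\alpha = 0$ in the root datum sense but the bracket structure degenerates), whereas the coadjoint representation does not, because the functional dual to that submodule is not a quotient. Consequently I expect the genuinely delicate step to be the $\SL_2$ base case in characteristic $2$ (and perhaps $3$ for the computation with $t_\alpha$), which the paper says it proves first and then bootstraps; there one should argue concretely with $2\times 2$ matrices of trace $0$: writing the functional $\phi$ on $\mathfrak{sl}_2$ and imposing invariance under $\left(\begin{smallmatrix} 1 & c \\ 0 & 1 \end{smallmatrix}\right)$ and $\left(\begin{smallmatrix} 1 & 0 \\ c & 1 \end{smallmatrix}\right)$ for all $c$, one reads off that $\phi$ vanishes on the nilpotent parts and on $\operatorname{diag}(1,-1)$, hence $\phi = 0$. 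The reduction machinery (central isogeny invariance, product decomposition, torus-then-unipotent invariance) is routine; the characteristic-$2$ $\SL_2$ computation, and the bookkeeping needed to deduce the general almost-simple case from it using the root-subgroup relations uniformly across types and characteristics, is where the real work lies, and where I would follow the paper's stated strategy of handling $\SL_2$ explicitly and then "using arguments with roots" to globalize.
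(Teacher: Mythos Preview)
Your proposal is correct and follows essentially the same route as the paper: reduce to the simply connected case via the \'etale isogeny, kill $\phi$ on root spaces by $T$-invariance, and then use $U_\alpha$-invariance (equivalently, restriction to the root $\SL_2$'s $G'_\alpha$) to force $\phi(t_\alpha)=0$, concluding since the simple coroots give a basis of $\Lie T$ in the simply connected case. The decomposition into almost-simple factors is harmless but unnecessary---the paper's argument handles any simply connected semisimple $G'$ at once---and your direct $U_\alpha$-computation is exactly the content of the paper's separate $\SL_2$ lemma, so the two proofs differ only in packaging.
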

 \begin{proof} We may assume that $\Lie G'$ is non-zero. 
 Let $G'_{\sic} \rightarrow G'$ be the universal simply connected cover of $G'$. Since $\pi_1(G')$ is \'etale the exact sequence:
 $$ 0 \rightarrow \pi_1(G')\rightarrow G'_{\sic}\rightarrow 
 G'\rightarrow 0$$ 
induces a $G'_{\sic}$-equivariant  isomorphism of Lie algebras $\Lie G'_{\sic}\cong \Lie G'$. We thus may assume 
that $G'$ is simply connected.  Let $T$ be a maximal torus in $G'$.

 If $\dim T=1$ then $G'=\SL_2$ by  \cite[7.2.4]{springer}, as the assumption that $G'$ is simply connected rules out $\PGL_2$. If $\cha(\kappa) \neq 2$ then the action of $G'$
 on $\Lie G'$  is an irreducible representation, isomorphic to 
 $\Sym^2(\Std)$, where $\Std$ is the standard $2$-dimensional representation of $\SL_2$.
If $\cha(\kappa)=2$ then $\Lie G'$  is a non-split extension 
 $0\rightarrow \Eins \rightarrow \Lie G'
 \rightarrow V\rightarrow 0$, with $\Eins$
 equal to the subspace of scalar matrices and $V$ an irreducible 
 $2$-dimensional representation.
 In both cases we obtain the claim. 
 
 Let $\dim T$ be arbitrary and let $\chi: \Lie G'\rightarrow \Eins$ be $G'$-invariant. 
 Let us abbreviate $\mathfrak g= \Lie G'$.
 We have $\mathfrak g= \mathfrak g^T \oplus \bigoplus\nolimits_{\alpha\in \Phi} \mathfrak g_{\alpha}$, where $\Phi$ is the set of roots, and $\mathfrak g^T= \Lie(T)$, \cite[13.18(1)]{borel}.  
 Let $\alpha\in \Phi$ and let $G_{\alpha}$ be the centraliser of $(\Ker \alpha)^0$ in $G'$. Then $G_{\alpha}$ is reductive of semisimple rank $1$, \cite[13.18(4)]{borel}. Let $G'_{\alpha}$ be the derived subgroup of $G_{\alpha}$. The assumption 
 that $G'$ is simply connected implies that 
 $G'_{\alpha}\cong \SL_2$ by \cite[5.3.9]{conrad}.
 It follows from 
 \cite[1.2.7]{conrad} that $\Lie G_{\alpha}'= 
 \mathfrak g_{-\alpha}\oplus \Lie (\alpha^{\vee}(\Gm))\oplus
 \mathfrak g_{\alpha}$. The restriction of $\chi$ to 
 $\Lie G'_{\alpha}$ is $G'_{\alpha}$-invariant and hence zero 
 by the previous part. 
 Hence, $\chi$ is zero on $\Lie (\alpha^{\vee}(\Gm))$ and $\mathfrak g_{\alpha}$ for every coroot $\alpha^{\vee}$. 

 Let $T_1=\prod_{\alpha\in\Delta}\Gm$, where $\Delta$ is the set of positive simple roots 
 with respect to a Borel subgroup containing $T$. Since $G'$ is simply connected 
 the map $\prod_{\alpha\in \Delta} \alpha^{\vee}: T_1\rightarrow T$ induces an isomorphism
 on the character groups, \cite[Example 1.3.10]{bcnrd}, and hence an isomorphism between tori. Thus  
 the subspaces $\Lie(\alpha^{\vee}(\Gm))$ for $\alpha\in \Delta$ span $\Lie (T)$. Thus $\chi$ vanishes on $\mathfrak g^T$ and $\mathfrak g_{\alpha}$ for every $\alpha\in \Phi$ and hence $\chi=0$.
 \end{proof} 

\begin{lem}\label{sic} Let $G'_{\sic}\rightarrow G'$ be the simply connected central cover of $G'$. Then there is 
a natural action of $G$ on $\Lie ( G'_{\sic})$ such that the map 
$\Lie ( G'_{\sic})\rightarrow \Lie G'$ is $G$-equivariant. Moreover, $G'$-invariants in 
$(\Lie ( G'_{\sic}))^{\ast}$ are zero. 
\end{lem}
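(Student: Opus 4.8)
The statement has two parts: (1) construction of a natural $G$-action on $\Lie(G'_{\sic})$ extending the known action of $G'$ (or rather $G'_{\sic}$), such that $\Lie(G'_{\sic})\to \Lie G'$ is $G$-equivariant, and (2) the vanishing of $G'$-invariants in $(\Lie G'_{\sic})^*$. For part (1), I would simply invoke \Cref{Gprimesc_action}, which already establishes exactly this: the conjugation action of $G$ on $G'$ lifts uniquely, by \cite[Exercise 6.5.2 (iii)]{bcnrd}, to an action on $G'_{\sic}$, and differentiating gives the desired action on $\Lie(G'_{\sic})$, compatible with $\Lie(G'_{\sic})\to \Lie G'$. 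So the real content is part (2).

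For part (2), the plan is to reduce to \Cref{coadjoint}. Note that $G'_{\sic}$ is a semisimple simply connected $\OO$-group scheme, hence $\pi_1((G'_{\sic})_{\sic}) = \pi_1(G'_{\sic})$ is trivial, in particular étale. Applying \Cref{coadjoint} to the group $G'_{\sic}$ in place of $G'$ — here one needs to be slightly careful, since \Cref{coadjoint} is phrased for $G'$ the derived group of $G^0$ for a generalised reductive $G$, but its proof only uses that the group in question is semisimple (one passes immediately to the simply connected central cover anyway); equivalently, one can take a generalised reductive group whose derived group is $G'_{\sic}$, e.g. $G'_{\sic}$ itself — we get $((\Lie G'_{\sic})^*)^{G'_{\sic}} = 0$. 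Now the action of $G'_{\sic}$ on $\Lie(G'_{\sic})$ factors through the conjugation action of $G'_{\sic}$ on itself, which in turn factors through $G'_{\sic}/Z(G'_{\sic})$. Since $G'_{\sic}\to G'$ has central kernel $\pi_1(G')$, the induced map $G'_{\sic}/Z(G'_{\sic}) \to G'/Z(G')$ is an isomorphism; hence the $G'_{\sic}$-action and the $G'$-action on $\Lie(G'_{\sic})$ (the latter via the action of $G$ restricted to $G'$, as constructed in \Cref{Gprimesc_action}) coincide, as both are pulled back from the common adjoint quotient. Therefore $((\Lie G'_{\sic})^*)^{G'} = ((\Lie G'_{\sic})^*)^{G'_{\sic}} = 0$.

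\textbf{Main obstacle.} The only genuinely delicate point is matching up the two a priori different $G'$-module structures on $\Lie(G'_{\sic})$: the one coming from $G'_{\sic}$ acting by its own adjoint action, and the one coming from the $G$-action of \Cref{Gprimesc_action} restricted to $G' \subseteq G^0 \subseteq G$. I expect this to come down to checking that both are the differentials of the conjugation action factoring through the adjoint group $G'/Z(G') \cong G'_{\sic}/Z(G'_{\sic})$, which is a formal diagram chase using the uniqueness of lifts of automorphisms to the simply connected central cover. The argument is otherwise a direct reduction to \Cref{coadjoint}, whose proof (the $\SL_2$ computation plus the root-by-root argument) already handles the essential case distinction in small characteristic; the point of passing to $G'_{\sic}$ and dualising is precisely to make the hypothesis $\pi_1$ étale automatic, so that \Cref{coadjoint} applies unconditionally.
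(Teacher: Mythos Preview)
Your proposal is correct and follows essentially the same approach as the paper: invoke \Cref{Gprimesc_action} for the action, apply \Cref{coadjoint} to $G'_{\sic}$ itself to kill the $G'_{\sic}$-invariants in $(\Lie G'_{\sic})^*$, and then observe that since $G'_{\sic}\to G'$ is a central extension the $G'_{\sic}$-action factors through $G'$, so $G'$-invariants and $G'_{\sic}$-invariants agree. The paper dispatches the matching of actions in one sentence (``this action factors through the action of $G'$''), whereas you spell out the adjoint-quotient argument more carefully; also note that in this section $G$ lives over an algebraically closed field $\kappa$, not $\OO$, though this changes nothing in the argument.
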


\begin{proof}
    The action has been defined over $\OO$ in \Cref{Gprimesc_action} and the same holds over $\kappa$.
    The $G'_{\sic}$-invariants in $(\Lie(G'_{\sic}))^*$ are zero by \Cref{coadjoint}. 
    The assertion follows as $G'_{\sic} \to G'$ is a central extension and therefore this action factors through the action of $G'$.
\end{proof}

 Let $P$ be an $R$-parabolic subgroup of $G$,  let $U$ its unipotent radical and let $L$ be an $R$-Levi subgroup of $P$. Then 
 $P^0= P\cap G^0$ is a parabolic subgroup of $G^0$ and
 $L^0=L\cap G^0$ is a Levi subgroup of $P^0$, \cite[Proposition 5.2]{martin}. In particular, $L^0$
 is reductive. We let $L'$ be the derived subgroup of $L^0$. 
 
 \begin{lem}\label{HL} Let $H$ be a closed reductive subgroup of $L$.
 If $((\Lie U)^*)^H\neq 0$ then one of the following holds:
 \begin{itemize}
 \item[(i)] $\dim L - \dim H \ge 2$;
 \item[(ii)] $\dim L -\dim H =1$ and $\dim_{\kappa} ((\Lie U)^*)^H=1$.
 \end{itemize}
 \end{lem}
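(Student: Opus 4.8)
The statement concerns a closed reductive subgroup $H$ of the R-Levi $L$ of an R-parabolic $P$ of $G$ with unipotent radical $U$, and asserts that if $((\Lie U)^*)^H \neq 0$ then either $\dim L - \dim H \geq 2$, or $\dim L - \dim H = 1$ and the fixed space $((\Lie U)^*)^H$ is one-dimensional. The strategy is to reduce to \Cref{bound_subred} and its corollaries by analysing when $H$ can fail to contain $L'$ (the derived subgroup of $L^0$), and separately to control the dimension of the fixed space in the borderline case.

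First I would observe that since $H$ is reductive and closed in $L$, the group $H \cap L'$ is reductive (its unipotent radical is normal in the reductive $H^0$, hence trivial), so \Cref{bound_subred} applies to the pair $(L^0, H)$: either $H \supseteq L'$, or $\dim L' - \dim(H \cap L') \geq 2$, and in the latter case \Cref{cor_one} gives $\dim L^0 - \dim H \geq 2$, hence $\dim L - \dim H \geq 2$ since $\dim L = \dim L^0$ (the component group contributes nothing to dimension, using \Cref{para}). So I may assume $H \supseteq L'$. In that case any $H$-invariant functional on $\Lie U$ is in particular $L'$-invariant, so $((\Lie U)^*)^H \subseteq ((\Lie U)^*)^{L'}$; thus it suffices to understand $((\Lie U)^*)^{L'}$ and the dimension drop $\dim L - \dim H$ when $L' \subseteq H \subseteq L$. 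Since $L' \subseteq H \subseteq L$ forces $H^0$ to contain $L'$ and be contained in $L^0 = Z(L^0)L'$, we have $H^0 = (Z(L^0) \cap H)^0 \cdot L'$, so $\dim L - \dim H = \dim Z(L^0) - \dim(Z(L^0)\cap H)^0 \geq 0$. If this is $0$ then $H^0 = L^0$, and since $((\Lie U)^*)^H$ is nonzero and contained in $((\Lie U)^*)^{L^0}$, I need to show this last space is at most... but wait — the claim when the drop is $0$ is that we are in neither case, i.e. such $H$ cannot have $((\Lie U)^*)^H \neq 0$ unless the space is genuinely small; more precisely I should show $((\Lie U)^*)^{L^0} = 0$. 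This holds because $Z(L^0)$ acts on $\Lie U$ through a nontrivial character on each root space of $U$ relative to a maximal torus of $L^0$ (these are exactly the roots of $G^0$ occurring in $U$, which are nontrivial on the central torus $Z(L^0)^0$ of the Levi), so no nonzero vector or covector in $\Lie U$ is $Z(L^0)$-fixed; hence no nonzero covector is $L^0$-fixed. This rules out $\dim L - \dim H = 0$ entirely under the hypothesis $((\Lie U)^*)^H \neq 0$.

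It remains to treat $\dim L - \dim H = 1$, where I must bound $\dim_\kappa ((\Lie U)^*)^H$. Here $\dim Z(L^0) - \dim (Z(L^0) \cap H)^0 = 1$, so the image of $(Z(L^0)\cap H)^0$ in the torus $Z(L^0)/(\text{finite})$ is a codimension-one subtorus; equivalently $H^0$ contains $L'$ and a subtorus $S$ of $Z(L^0)$ with $\dim S = \dim Z(L^0) - 1$. A covector is $H$-fixed iff it is $H^0$-fixed (a further check that the component group of $H$ acts trivially on the fixed subspace of $H^0$, which I would handle by noting the component group is handled via the $L'\cdot S$-weight decomposition being permuted, or by passing to $H^0$-invariants which already satisfy the bound, so $H$-invariants are at most as large); and an $L'\cdot S$-fixed covector lives in the sum of those root spaces $\mathfrak g_\alpha \subseteq \Lie U$ on which $S$ acts trivially and which are $L'$-stable. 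Decomposing $\Lie U = \bigoplus_\alpha \mathfrak g_\alpha$ over roots $\alpha$ of $(G^0, T)$ with $T \subseteq L^0$ maximal, the weight-zero subspace for $S$ inside $\Lie U$ — since $S$ has corank one in $Z(L^0)$ — is a single root line $\mathfrak g_\beta$ (or zero): the characters $\alpha|_{Z(L^0)}$ for $\alpha$ occurring in $U$ are nonzero, and at most a one-parameter family of them can vanish on a given corank-one subtorus, but since the restriction to a one-dimensional quotient torus of each such $\alpha$ is nonzero it vanishes on $S$ for at most... I would argue that the set of $\alpha$ with $\alpha|_S = 0$ spans at most a rank-one subgroup of $X^*(Z(L^0)/S) \cong \mathbb Z$, and combined with the fact that distinct roots have distinct restrictions-up-to-sign and that $-\alpha$ does not occur in $U$, exactly at most one root line survives; so $\dim ((\Lie U)^*)^{L'\cdot S} \leq 1$. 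The main obstacle is this last combinatorial step — pinning down that in the corank-one case precisely one root space of $U$ (and it is automatically $L'$-fixed, being a line) can be $S$-fixed — which requires care about multiplicities of roots of $G^0$ and the interaction with $L'$-action; I expect to invoke the structure of $U$ as an $L^0$-module (its associated graded pieces under the lower central series are $L^0$-modules) and reduce to counting $T$-weights. Once that is in hand, assembling the three cases ($\dim$ drop $\geq 2$; $=1$; $=0$ impossible) yields the lemma.
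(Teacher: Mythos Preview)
Your overall strategy matches the paper's: reduce via \Cref{cor_one} to the case $H \supseteq L'$, then analyse fixed root lines in $\Lie U$ under a suitable torus. The gap is in the corank-one step. You work with $S = (Z(L^0) \cap H)^0$, a corank-$1$ subtorus of $Z(L^0)$, and assert that at most one root line $\mathfrak g_\alpha$ of $\Lie U$ is $S$-fixed, invoking the ``fact that distinct roots have distinct restrictions up to sign''. This is false once you restrict to $Z(L^0)$: roots of $U$ in the same Weyl-$L^0$ orbit restrict to the \emph{same} character of $Z(L^0)$. Concretely, for $G = \GL_4$ with $L = \GL_2 \times \GL_2$, all four roots of $U$ restrict to the single character $(a,b) \mapsto ab^{-1}$ of $Z(L^0) \cong \Gm^2$; taking $S$ to be the diagonal $\Gm$, the entire $4$-dimensional $\Lie U$ is $S$-fixed. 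Your subsequent rescue via $L'$-fixedness is not argued, and your parenthetical ``it is automatically $L'$-fixed, being a line'' is only valid conditionally on the false one-dimensionality claim.

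The paper avoids this by working not with $S \subseteq Z(L^0)$ but with $T \cap H$ inside the full maximal torus $T$ of $L$ (after replacing $L$ by $L^0$ and $H$ by $H \cap L^0$). Since $L = Z(L)H$ and $Z(L) \subseteq T$, one has $L = TH$ and hence $L/H \cong T/(T \cap H)$, so $T \cap H$ has the same codimension in $T$ as $H$ has in $L$. The $T$-action on $\Lie U$ is genuinely multiplicity free with $(\Lie U)^T = 0$, and any two distinct roots in $U$ have distinct kernels in $T$ (since the only roots sharing the kernel of $\alpha$ are $\pm\alpha$, and $-\alpha$ is not a root of $U$). Hence at most one root of $U$ can vanish on a codimension-$1$ subtorus of $T$, giving the bound directly. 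Your argument can be repaired by passing from $S$ to the maximal torus of $L' \cdot S$, which sits with codimension $\le 1$ in $T$ --- but that is exactly the paper's computation.
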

 \begin{proof} We may assume that  $H$ contains $L'$ since otherwise (i) holds by Corollary \ref{cor_one}. By replacing $H$ by $H\cap L^0$
 and $L$ by $L^0$ we may assume that $L$ is connected. Then 
 $L=Z(L)H$. Let $T$ be a maximal torus in $L$. Then $T$ contains
 $Z(L)$ and hence $L/H \cong T/(T\cap H)$. The action of $T$ 
 on $\Lie U$ is semisimple, multiplicity free and $(\Lie U)^T=0$, \cite[13.18]{borel}. 
 Thus $\dim L > \dim H$. If $\dim_{\kappa} ((\Lie U)^*)^H\ge 2$, then 
 $\dim_{\kappa} ((\Lie U)^*)^{T\cap H}\ge 2$. Let $\alpha,\beta\in \Phi$ be distinct roots such that the dual characters $\alpha^*$ and 
 $\beta^*$
 appear in $((\Lie U)^*)^{T\cap H}$. We note that both 
 $\alpha$ and $\beta$ are positive with respect to a Borel
 subgroup of $G'$ containing $T$ and $U$. Thus $\alpha\neq -\beta$. Now $\{\alpha, -\alpha\}$ are the roots of
 the centraliser of $(\Ker \alpha)^0$ in $G'$, \cite[13.18(4d)]{borel}. Thus $\Ker \alpha \neq \Ker \beta$
 and hence $\dim T - \dim (\Ker \alpha \cap \Ker \beta) = 2$. Thus $\dim T -\dim T\cap H\ge 2$, as $T\cap H$ is contained in $\Ker \alpha \cap\Ker \beta$. 
 Thus part (i) holds. 
 \end{proof}
 
 \begin{cor}\label{Lzero} $((\Lie U)^*)^{L^0}=0$. 
 \end{cor}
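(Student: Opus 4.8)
The plan is to apply Lemma \ref{HL} with $H=L^0$ and reduce to showing that the extreme case cannot occur. Observe first that $L^0$ is a closed reductive subgroup of $L$, since it is the neutral component of the generalised reductive group $L$ and hence is connected reductive. If $((\Lie U)^*)^{L^0}$ were non-zero, then Lemma \ref{HL} applies and one of its two conclusions must hold. But $\dim L-\dim L^0=0$ because $L/L^0$ is finite \'etale (Lemma \ref{para}), so neither conclusion (i) (which requires $\dim L-\dim L^0\ge 2$) nor conclusion (ii) (which requires $\dim L-\dim L^0=1$) can hold. This contradiction forces $((\Lie U)^*)^{L^0}=0$.

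Alternatively, and more directly, one can argue from the torus action as in the proof of Lemma \ref{HL}: let $T$ be a maximal torus of $L^0$. The action of $T$ on $\Lie U$ is a direct sum of non-trivial root spaces $\mathfrak g_\alpha$ with $\alpha$ positive relative to a Borel of $G^0$ containing $T$ and $U$, so $(\Lie U)^T=0$, and dually $((\Lie U)^*)^T=0$ since the weights occurring in $(\Lie U)^*$ are the negatives of those in $\Lie U$, which are again non-zero. Since $L^0$-invariants are contained in $T$-invariants, we get $((\Lie U)^*)^{L^0}\subseteq ((\Lie U)^*)^T=0$.

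I would present the short version: $((\Lie U)^*)^{L^0}\subseteq ((\Lie U)^*)^T=0$ because every weight of $T$ on $\Lie U$ (and hence on $(\Lie U)^*$) is a non-zero root, citing \cite[13.18]{borel} exactly as in Lemma \ref{HL}. There is no real obstacle here; the statement is essentially the $H=L^0$ specialisation of Lemma \ref{HL} together with the fact that $\dim L^0=\dim L$, and the only thing to be careful about is passing from $\Lie U$ to its dual, where one uses that the set of $T$-weights is stable under negation up to the (irrelevant) fact that $0$ is not among them. $\qed$
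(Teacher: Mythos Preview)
Your proposal is correct and your first argument is exactly the paper's proof: the paper simply says ``This follows from Lemma \ref{HL} as $\dim L=\dim L^0$.'' Your alternative direct argument via the maximal torus is also fine and just unpacks the relevant step of the proof of Lemma \ref{HL}.
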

 \begin{proof} This follows from Lemma \ref{HL} as $\dim L=\dim L^0$. 
 \end{proof} 
 
 \begin{examp} Let $G=G'= G_1\times \SL_2$, where $G_1$ is any semisimple group, 
 $L= G_1\times \Gm$ and $P= G_1\times B$, where $B$ is the subgroup of upper triangular 
 matrices in $\SL_2$. If $H=G_1\times\{1\}$ then part (ii) of Lemma \ref{HL} holds.
 \end{examp}

\section{\texorpdfstring{R-Levi subgroups of codimension $2$}{R-Levi subgroups of codimension 2}}\label{RLevi2} 

Let $G$ be a generalised reductive group over an algebraically closed field $\kappa$. 
Let $P$ be an R-parabolic subgroup of $G$ with R-Levi $L$ and unipotent radical $U$. 
We have $\dim G - \dim L = 2 \dim U$. In this section we assume that $\dim U=1$ and 
the natural map $L/L^0 \rightarrow G/G^0$ is an isomorphism. The main result of this 
section is Proposition \ref{vanish_H2_please}, which is a criterion for a representation 
$\rho: \Gamma_F \rightarrow L(\kappa)$ to have $h^0(\Gamma_F, (\Lie G'_{\sic})^*(1))=0$, 
where $G'_{\sic}$ is the simply connected central cover of $G'$ and the action of $G$ on 
$\Lie G'_{\sic}$ is given by Lemma \ref{Gprimesc_action}.
Proposition \ref{vanish_H2_please} is not needed to prove the complete intersection 
property of deformation rings, but is first used in  \Cref{sec:bound_special} and is needed to compute their irreducible components. 

Let $\varphi: G \rightarrow \Aut(G^0)$ be the group homomorphism induced by the action of
$G$ on $G^0$ by conjugation. If $H$ is a subgroup of $G$ then we let 
$\overline{H}$ be the image  of $H$ in $G/\Ker \varphi$. We have $G^0\cap \Ker \varphi = Z(G^0)$ 
and $U \cong \overline{U}$. Since the image of any cocharacter 
$\lambda: \Gm \rightarrow G$ is contained in $G^0$, $\Ker \varphi$ is contained 
in every R-parabolic and R-Levi subgroup of $G$. Thus $L$ is the preimage of $\overline{L}$ and 
$P$ is the preimage of $\Pbar$. 

We will let $B_2$, $T_2$ and $U_2$ be subgroups of $\PGL_2$ consisting of upper-triangular, 
diagonal and unipotent upper-triangular  matrices, respectively. We will identify $T_2$ 
with $\Gm$ via the character $\alpha$, which sends $\bigl(\begin{smallmatrix} a & 0 \\ 0 & d \end{smallmatrix}\bigr)$ to $ad^{-1}$.

\begin{prop}\label{tasho1} 
There is a generalised reductive group $G_1$ such that $G^0_1$ is semisimple, $\Gbar\cong G_1\times \PGL_2$ and this isomorphism induces isomorphisms:
$$ \Pbar\cong G_1\times B_2 
,\quad \Lbar \cong G_1\times T_2,\quad \Ubar \cong \{1\} \times U_2.$$
\end{prop}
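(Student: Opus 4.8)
\textbf{Proof strategy for Proposition \ref{tasho1}.}

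The plan is to reduce everything to the structure theory of the connected group $\Gbar^0$ together with the way the finite group $\Delta = \Gbar/\Gbar^0 \cong G/G^0$ acts on it. First I would work inside $\Gbar = G/\Ker\varphi$, which is a generalised reductive group whose neutral component $\Gbar^0 = G^0/Z(G^0)$ is semisimple (it is $G'/(Z(G^0)\cap G')$, an adjoint-type quotient, hence semisimple). Inside $\Gbar^0$ we have the parabolic $\Pbar^0$ with Levi $\Lbar^0$ and unipotent radical $\Ubar \cong U$, which is $1$-dimensional; a $1$-dimensional unipotent radical of a parabolic in a semisimple group forces the situation to be very rigid. Concretely, choosing a maximal torus $T \subseteq \Lbar^0$, the roots of $\Gbar^0$ not in $\Lbar^0$ consist of exactly the root generating $\Lie\Ubar$ and its negative (there is no room for more since $\dim\Ubar = 1$), so $\Pbar^0$ is a maximal parabolic and the simple factor of $\Gbar^0$ it ``cuts'' must have its corresponding parabolic with $1$-dimensional unipotent radical — this pins that factor down to $\SL_2$ or $\PGL_2$, and since $\Gbar^0$ is adjoint, to $\PGL_2$. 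Thus $\Gbar^0 \cong G_1^0 \times \PGL_2$ where $G_1^0$ is the product of the remaining simple factors (again semisimple), with $\Pbar^0 = G_1^0 \times B_2$, $\Lbar^0 = G_1^0 \times T_2$, $\Ubar = \{1\}\times U_2$.

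Next I would promote this decomposition of the neutral component to a decomposition of the possibly-disconnected $\Gbar$. The key point is that the $\PGL_2$-factor is \emph{canonically} determined: it is the subgroup generated by $\Ubar$ and the opposite of $\Ubar$ (equivalently, it is the unique simple factor of $\Gbar^0$ meeting $\Ubar$ nontrivially), hence it is stable under every automorphism of $\Gbar^0$, in particular under conjugation by all of $\Gbar$. So $\Delta$ acts on $\Gbar^0 = G_1^0 \times \PGL_2$ preserving the direct-product decomposition, giving actions of $\Delta$ on $G_1^0$ and on $\PGL_2$. Since $\Out(\PGL_2)$ is trivial, the action of $\Delta$ on the $\PGL_2$-factor is by inner automorphisms; after absorbing these into $\Gbar^0$ (i.e. modifying the section $s : \Delta \to \Gbar$ by elements of the $\PGL_2$-factor, using that $\Gbar(\kappa) \to \Delta$ is surjective because $G(\kappa)\to(G/G^0)(\kappa)$ is and $Z(G^0)$ is connected-diagonalisable so contributes no obstruction after base change to $\kappa$) I may assume the chosen lifts $s(\delta)$ centralise $\PGL_2$ and normalise $G_1^0$. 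Then the subgroup of $\Gbar$ generated by $G_1^0$ and the $s(\delta)$ is a generalised reductive group $G_1$ with $G_1^0 = G_1^0$ (consistent notation) and $G_1/G_1^0 \cong \Delta$, while $\PGL_2$ sits as a normal closed subgroup centralised by $G_1$; counting components and dimensions gives $\Gbar \cong G_1 \times \PGL_2$. Because $\Ker\varphi$ is contained in every R-parabolic and R-Levi of $G$, the preimage descriptions show $\Pbar \cong G_1 \times B_2$, $\Lbar \cong G_1 \times T_2$, $\Ubar \cong \{1\}\times U_2$, as required. The identification $T_2 \cong \Gm$ via $\alpha$ is the normalisation already fixed before the statement.

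\textbf{Main obstacle.} The genuinely delicate step is the passage from $\Gbar^0$ to $\Gbar$: showing that after a suitable choice of lifts the $\Delta$-action respects the product $G_1^0 \times \PGL_2$ \emph{and} can be taken trivial on the $\PGL_2$-factor, so that $G_1$ is literally a complement. This is where one must use triviality of $\Out(\PGL_2)$, the surjectivity of $G(\kappa)\to(G/G^0)(\kappa)$, and a cocycle-adjustment argument in the spirit of \Cref{letter_bcnrd} / \Cref{gen2cocycle} to modify the section $s$; the compatibility conditions (C1)--(C4) for the modified generalised $2$-cocycle have to be rechecked, but since the modification lives entirely in the centralised $\PGL_2$-factor this is bookkeeping rather than a new idea. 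Everything else — the rigidity of a $1$-dimensional unipotent radical, the canonical nature of the $\PGL_2$-factor inside $\Gbar^0$, and the preimage statements — is standard reductive-group theory applied with the results (\Cref{para}, \Cref{list}) already available in the paper.
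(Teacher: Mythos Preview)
There is a genuine gap in the step where you pass from $\Gbar^0$ to $\Gbar$. Your assertion that the $\PGL_2$-factor is ``stable under every automorphism of $\Gbar^0$'' is false: if $G_1^0$ happens to have a $\PGL_2$ factor of its own, an automorphism of $\Gbar^0$ can swap it with $H_2$. The subgroup $\Ubar$ is not an intrinsic invariant of $\Gbar^0$ --- it is part of the auxiliary data $(\Pbar,\Lbar,\Ubar)$ --- so ``generated by $\Ubar$ and its opposite'' does not make $H_2$ canonical in the required sense. What you must show is only that conjugation by $\Gbar$ preserves $H_2$, and this is precisely where the hypothesis $L/L^0 \xrightarrow{\sim} G/G^0$ enters, a hypothesis you never invoke. (The surjectivity $G(\kappa)\to(G/G^0)(\kappa)$ you cite is automatic over an algebraically closed field and is irrelevant; the point is that $\Lbar(\kappa)\to(G/G^0)(\kappa)$ is surjective.) Once one knows that every component of $\Gbar$ is represented by some $l\in\Lbar(\kappa)\subseteq\Pbar(\kappa)$, one argues: $l$ normalises $\Ubar$, so $lH_2l^{-1}$ is a simple factor containing $\Ubar$, hence equals $H_2$; and elements of $\Gbar^0$ act by inner automorphisms, which fix every simple factor. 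Without the hypothesis the proposition is false, e.g.\ for $G=(\PGL_2\times\PGL_2)\rtimes\ZZ/2\ZZ$ with the swap.

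With that gap repaired, your route via $\Out(\PGL_2)=1$ and a section adjustment does go through (the modification is clean because $Z(\PGL_2)=1$, so the map $\Delta\to\Inn(\PGL_2)$ is an honest homomorphism and the twisted section remains a homomorphism). The paper takes a slicker path that avoids any cocycle manipulation: it embeds $\Gbar$ in $\Aut(G^0)\cong\Gbar^0\rtimes\Theta$ using a pinning, so that $\Gbar=\Gbar^0\rtimes\Delta$ with $\Delta$ consisting of \emph{pinned} automorphisms. The hypothesis $L/L^0\cong G/G^0$ is then used to show each $\delta\in\Delta$ maps $\Phi(L^0)$ to itself, hence fixes the one remaining simple root $\beta$; since $\Delta$ also fixes the pinning at $\beta$, it acts trivially on $H_2$ outright, and the product decomposition $\Gbar\cong(H_1\rtimes\Delta)\times\PGL_2$ drops out immediately.
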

\begin{proof} We fix a Borel subgroup $B^0$ of $G^0$ contained in $P$ and a maximal split torus 
$T$ contained in $B^0\cap L$. Let $\Phi(G^0)$ be the set of roots of $G^0$ with respect to $T$,
and let $\Phi^+(G^0)$ be the set of positive roots with respect to $(B^0,T)$. We define 
$\Phi(L^0)$, $\Phi^+(L^0)$ in the same way with respect to $(B^0\cap L^0, T)$. We then have
\begin{equation}\label{dimGroots}
\dim G =\dim_{\kappa} \Lie G = \dim T +\sum_{\alpha\in \Phi(G^0)} \dim_{\kappa} \mathfrak g_{\alpha},
\end{equation}
\begin{equation}\label{dimLroots}
\dim L = \dim_{\kappa} \Lie L = \dim T + \sum_{\alpha\in \Phi(L^0)} \dim_{\kappa} \mathfrak g_{\alpha}.
\end{equation}
If $\Phi^+(L^0)$ contains all simple roots of $\Phi^+(G^0)$ then $\Phi(L^0)=\Phi(G^0)$ and hence $L^0=G^0$. Thus there is a 
simple root $\beta\in \Phi^+(G^0)$, such that $\beta\not\in \Phi^+(L^0)$. It follows from 
\eqref{dimGroots}, \eqref{dimLroots} and the assumption $\dim G-\dim L=2$ that $\Phi(G^0)$ is a disjoint union of $\Phi(L^0)$ and $\{\beta, -\beta\}$. Let $U_{\alpha}$ be the root subgroup of $G^0$ corresponding to $\alpha$. If $\alpha\in \Phi(L^0)$ then $r \alpha \pm s\beta$ is not a root of $G$ for any integers $r, s>0$, 
it follows from assertion $(\ast)$ in the proof of \cite[14.5]{borel} that the commutator $(U_{\alpha}, U_{\pm \beta})$ vanishes.  

We have $\Gbar{}^0\cong G^0/Z(G^0)$, and hence $\Gbar{}^0$ is semisimple by \cite[Corollary 14.2(a)]{borel}. Let $H_1$ be the closed subgroup of $\Gbar{}^0$ generated by the subgroups $U_{\alpha}$ for 
$\alpha\in \Phi(L^0)$ and let $H_2$ be the closed subgroup of $\Gbar{}^0$ generated by $U_{\beta}$ and $U_{-\beta}$. Arguing as in the proof of \cite[Proposition 14.2 (2)]{borel} we deduce that 
$H_1$ centralises $H_2$, $H_1$ and $H_2$ generate $\Gbar{}^0$ as a group, and $H_1\cap H_2$ is contained 
in the centre of $\Gbar{}^0$. Since the centre of $\Gbar{}^0$ is trivial, we deduce that 
$\Gbar{}^0\cong H_1\times H_2$. Moreover, by \cite[Lemma 1.2.3]{bcnrd} there is a central isogeny 
$H_2\rightarrow \PGL_2$. The centre of $H_2$ lies in the centre of $\Gbar{}^0$ and hence
is trivial, so that $H_2\cong \PGL_2$.

For each simple root $\alpha$  in $\Phi^+(G^0)$ we fix an isomorphism $p_{\alpha}: \Ga \overset{\cong}{\longrightarrow} U_{\alpha}$. Let $\Theta$ be the subgroup of $\Aut(G^0)$ consisting of automorphisms
that preserve the pinning $((G^0, B^0, T), \{p_{\alpha}\}_{\alpha})$. Theorem 7.1.9 in \cite{bcnrd}
gives an isomorphism of group schemes over $\kappa$: 
$$\Aut(G^0)\cong \Gbar{}^0\rtimes \Theta.$$
If $\Theta'$ is a subgroup of $\Theta$ then there is a unique subgroup $H$ of $\Aut(G^0)$, such that $H$ contains $\Gbar{}^0$, and the image of $H$ in $\Aut(G^0)/\Gbar{}^0$ is equal to $\Theta'$, 
namely the inverse image of $\Theta'$. Uniqueness implies that $H\cong \Gbar{}^0\rtimes \Theta'$.
Let $\Delta$ be the unique subgroup of $\Theta$, such that $\Delta$ maps isomorphically onto $\overline{G}/\Gbar{}^0 \subseteq \Aut(G^0)/\Gbar{}^0$. Then by the previous remark 
$\overline{G}\cong \Gbar{}^0\rtimes \Delta$. We claim that every  $\delta\in \Delta$
maps $\Phi^+(L^0)$ to itself. The claim implies that $\Delta$ fixes $\beta$, and since it fixes the 
pinning $\Delta$ acts trivially on $H_2$. Thus $\overline{G}\cong (H_1\rtimes \Delta) \times \PGL_2$
and the proposition follows.

To prove the claim we use the assumption that the map $L/L^0\rightarrow G/G^0$ is an isomorphism. 
Thus given $\delta\in \Delta$ we may find $g\in L(\kappa)$, $h_1\in H_1(\kappa)$ and $h_2\in H_2(\kappa)$, 
such that for all $u\in U_{\alpha}$ we have $\delta(u)= (h_1, h_2) g u g^{-1} (h_1, h_2)^{-1}$.
The image of $gug^{-1}$ in $\Gbar{}^0$ lies in $H_1$ as $L$ normalises $H_1$. (It follows from 
the argument in the proof of \cite[Proposition 14.10 (3)]{borel} that the closed
subgroup of $L$ generated by $U_{\alpha}$ for all $\alpha\in \Phi(L^0)$ is the derived 
subgroup of $L^0$ and thus is normal in $L$.) Thus $\delta(u)$ lies 
in $H_1$, as $h_2$  centralises $H_1$. Hence $\delta(\alpha)\neq \pm \beta$ and this finishes the proof of 
the claim. 
\end{proof}

\begin{cor}\label{tasho2} The map $G'\rightarrow \Gbar$ induces an isomorphism 
of $G$-representations: 
$$ \Lie G'_{\sic} \cong \Lie \Gbar_{\sic} \cong \Lie \SL_2 \oplus \Lie (G_1')_{\sic}.$$
\end{cor}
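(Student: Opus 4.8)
\textbf{Proof plan for Corollary \ref{tasho2}.}
The plan is to combine Proposition \ref{tasho1} with the functoriality of the simply connected central cover and the resulting action on Lie algebras from Lemma \ref{Gprimesc_action} (equivalently Lemma \ref{sic}). First I would observe that the derived subgroup only sees the adjoint quotient: the quotient map $G^0 \to \Gbar{}^0 = G^0/Z(G^0)$ restricts to a central isogeny $G' \to \Gbar{}^0$, since $\Gbar{}^0$ is semisimple (as recorded in the proof of Proposition \ref{tasho1}) and equals its own derived subgroup. Passing to simply connected central covers, the uniqueness clause in \cite[Exercise 6.5.2 (i)]{bcnrd} gives a canonical isomorphism $G'_{\sic} \cong (\Gbar{}^0)_{\sic}$, and by construction of the $G$-action in Lemma \ref{Gprimesc_action} — which is defined by lifting conjugation automorphisms — this isomorphism is $G$-equivariant. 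Taking Lie algebras yields the first isomorphism $\Lie G'_{\sic} \cong \Lie \Gbar_{\sic}$ of $G$-representations (here I write $\Lie\Gbar_{\sic}$ for $\Lie(\Gbar{}^0)_{\sic}$, matching the notation $G'_{\sic}$).

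Next I would feed in Proposition \ref{tasho1}, which gives $\Gbar{}^0 \cong G_1 \times \PGL_2$ with $G_1$ semisimple, compatibly with the decomposition of $\Pbar$, $\Lbar$, $\Ubar$; in particular this is an isomorphism of $\overline{G}$-groups for the conjugation action, since it was produced from a $\overline{G}$-stable pinning decomposition $\overline{G} \cong (H_1 \rtimes \Delta) \times \PGL_2$. Since the simply connected central cover of a product is the product of the simply connected central covers, $(\Gbar{}^0)_{\sic} \cong (G_1)_{\sic} \times \SL_2$, again $\overline{G}$-equivariantly (the $\Delta$-action being trivial on the $\PGL_2$-factor, hence lifting trivially to $\SL_2$). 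Taking Lie algebras and using that $\Lie$ commutes with products gives $\Lie\Gbar_{\sic} \cong \Lie\SL_2 \oplus \Lie(G_1')_{\sic}$; here $(G_1')_{\sic} = (G_1)_{\sic}$ since $G_1$ is semisimple so $G_1' = G_1$, which reconciles the notation in the statement. The $G$-module structure on the right-hand side is inflated from the $\overline{G}$-module structure through $G \twoheadrightarrow \overline{G}$, which is exactly how $G$ acts on $\Lie G'_{\sic}$ by Lemma \ref{sic}, so the isomorphisms are $G$-equivariant throughout.

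The main obstacle, such as it is, is bookkeeping rather than mathematics: one must make sure at each step that the identifications are $G$-equivariant (not merely $G^0$- or $\overline{G}{}^0$-equivariant), which comes down to checking that the component-group action — encoded by $\Delta \subseteq \Theta$ in the proof of Proposition \ref{tasho1} — is carried along correctly. Since $\Delta$ preserves the pinning and, by the claim proved at the end of Proposition \ref{tasho1}, fixes $\beta$ and acts trivially on the $\PGL_2$-factor $H_2$, the decomposition $\Gbar{}^0 \cong G_1 \times \PGL_2$ and all its covers are $\Delta$-stable with $\Delta$ acting only on the $G_1$-factor; this is what guarantees that the displayed splitting is one of $G$-representations. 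No delicate characteristic-$p$ phenomena arise here because we never need the map $\Lie G'_{\sic} \to \Lie G'$ to be an isomorphism — we only transport structure along the canonical $G$-equivariant identification of simply connected covers.
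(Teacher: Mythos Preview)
Your proposal is correct and follows exactly the paper's (very terse) approach: identify $G'_{\sic}$ with $\Gbar_{\sic}$ via the central isogeny $G' \to \Gbar{}^0$ and uniqueness of simply connected covers, then invoke Proposition~\ref{tasho1} to split off the $\SL_2$ factor. One small slip to fix: Proposition~\ref{tasho1} gives $\Gbar \cong G_1 \times \PGL_2$ (not $\Gbar{}^0$), and $G_1$ is only generalised reductive with $G_1^0$ semisimple, so where you write ``$G_1$ semisimple'' and ``$(G_1)_{\sic}$'' you should have $G_1^0$ and $(G_1^0)_{\sic} = (G_1')_{\sic}$; this does not affect the argument.
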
 
\begin{proof} Since $\Gbar{}^0 \cong G^0/Z(G^0)$, the map $G'\rightarrow \Gbar{}^0$
is a central isogeny and hence they have the same simply connected central cover, which 
induces the first isomorphism. The second isomorphism follows from 
Proposition \ref{tasho1}.
\end{proof}

Let $p_2: G\rightarrow \PGL_2$ be the composition of the quotient map 
$G\rightarrow \Gbar$ with the projection map $\Gbar\rightarrow \PGL_2$ onto the second component. 
Let $H_0$ be the preimage of $\mu_{p-1}\subset T_2$ in $G$ under $p_2$. 

\begin{lem}\label{bound_centre_Hzero} $H_0$ 
is generalised reductive and $\dim Z(H_0)< \dim Z(L)$. 
\end{lem}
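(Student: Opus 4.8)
The plan is to analyse the group $H_0$ using the product decomposition $\overline{G}\cong G_1\times\PGL_2$ from \Cref{tasho1}. First I would observe that $H_0$ is by definition the preimage under $p_2:G\to\PGL_2$ of the group $\mu_{p-1}\subset T_2$, and since $\mu_{p-1}$ is finite \'etale (as $p\nmid p-1$) and $p_2$ is smooth (being the composite of the smooth projection $G\to G/G'$-type maps; more precisely $G\to\overline{G}$ is smooth with kernel $\Ker\varphi\supseteq Z(G^0)$, and $\overline{G}\to\PGL_2$ is a smooth projection of the product), $H_0$ is smooth over $\kappa$. Its neutral component $H_0^0$ is the preimage of $\{1\}\subset T_2$, hence $\overline{H_0^0}=G_1\times\{1\}$ up to the $\Ker\varphi$-extension; since $G_1^0$ is semisimple and $\Ker\varphi$ contributes only a torus $Z(G^0)$, the unipotent radical of $H_0^0$ is trivial, so \Cref{gen_red_field} shows $H_0$ is generalised reductive.

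For the dimension inequality I would compare centres. The key point is that $L$ is the preimage of $\overline{L}\cong G_1\times T_2$ under $G\to\overline{G}$, whereas $H_0$ is the preimage of $G_1\times\mu_{p-1}$. Both preimages contain the same kernel $\Ker\varphi$, so $\dim L-\dim H_0 = \dim(G_1\times T_2)-\dim(G_1\times\mu_{p-1}) = \dim T_2 - 0 = 1$, i.e. $\dim H_0=\dim L-1$. Now I need that the centres differ by exactly (or at least) one dimension, i.e.\ $\dim Z(H_0)<\dim Z(L)$. The natural approach is: the torus $Z(L^0)$ surjects onto $T_2=Z(\overline{L}^0)$-component, and a one-parameter subgroup of $Z(L^0)$ mapping isomorphically (up to isogeny) onto $T_2$ is \emph{not} central in $H_0$, because its image lies in $T_2$ which does not centralise $U_2\subset\PGL_2$, and $U_2$ is contained in $H_0$ (indeed $\overline{H_0^0}\supseteq$ nothing of $\PGL_2$, but $H_0$ contains the full preimage of $\mu_{p-1}$, and conjugation by a torus element moving $U_2$-directions shows that central elements of $H_0$ must map into $Z(\PGL_2\text{-part restricted to }\mu_{p-1})$, which is all of $\mu_{p-1}$, a finite group). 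More carefully: using the diagram analogous to the one in the proof of \Cref{dimkisdimL}, $Z(L)\cap L^0 = Z(L^0)^\Delta$ and $Z(H_0)\cap H_0^0=Z(H_0^0)^\Delta$, and $Z(H_0^0)=Z(G_1^0)^{\text{-part}}$ (after trimming $Z(G^0)$) has dimension $\dim Z(L^0)-1$ since $Z(L^0)$ has the extra $T_2$-factor while $Z(H_0^0)$ does not. Passing to $\Delta$-invariants and then to $Z$ (which differs from $Z\cap(\cdot)^0$ only by a finite group, as in \Cref{dimkisdimL}) gives $\dim Z(H_0)=\dim Z(H_0^0)^\Delta \le \dim Z(L^0)^\Delta - 1 < \dim Z(L)$, using that the injection $Z(L^0)^\Delta/Z(G^0)^\Delta\hookrightarrow\dots$ does not increase the dimension gap. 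Actually the cleanest route is to directly note $Z(H_0^0)\subseteq Z(H_0)$ has codimension controlled by the same $\Delta$-invariance argument, so it suffices to show $\dim Z(H_0^0)<\dim Z(L^0)$ and that the $\Delta$-invariants behave compatibly.

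The main obstacle I anticipate is handling the non-connected part cleanly: the component group $\Delta\cong L/L^0\cong G/G^0$ acts by conjugation on all these central tori, and I must ensure that taking $\Delta$-invariants does not accidentally collapse the one-dimensional gap between $Z(L^0)$ and $Z(H_0^0)$ — i.e.\ that the extra $T_2=\Gm$-factor inside $Z(L^0)$ is not entirely killed by $\Delta$-invariance. This follows because $\Delta$ acts trivially on the $\PGL_2$-factor of $\overline{G}$ (established in the proof of \Cref{tasho1}, where $\Delta$ was shown to fix $\beta$ and act trivially on $H_2\cong\PGL_2$), so $\Delta$ acts trivially on $T_2$ and hence the corresponding cocharacter of $Z(L^0)$ survives to $Z(L^0)^\Delta$, while no such cocharacter exists in $Z(H_0^0)^\Delta$ because that torus simply does not contain a $T_2$-direction. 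Combining $\dim Z(L)=\dim(Z(L^0)^\Delta) = \dim(Z(H_0^0)^\Delta)+1 \ge \dim Z(H_0)+1$ finishes the proof. I would also double-check the edge case where $G_1$ is trivial (so $L^0$ is a torus), where the statement reduces to a direct computation in $\PGL_2$ and its preimage.
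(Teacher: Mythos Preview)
Your approach differs from the paper's. The paper handles both assertions at once by showing that $H_0$ is $L$-irreducible: since $Z(\overline{L})\,\overline{H}_0=\overline{L}$, the image $\overline{H}_0$ cannot lie in a proper R-parabolic of $\overline{L}$, hence $H_0$ lies in no proper R-parabolic of $L$. Then \cite[Lemma 6.2]{martin} gives that $H_0$ is reductive and, crucially, that $Z_L(H_0)^0=Z(L)^0$, whence $Z(H_0)^0\subseteq Z(L)^0$. The strict inequality then reduces to exhibiting a single cocharacter $\Gm\to Z(L)$ whose image is not contained in $H_0$, and the paper lifts $t\mapsto(1,t)$ from $Z(\overline{L}{}^0)^\Delta$ to $Z(L^0)^\Delta=Z(L)\cap L^0$ for this purpose. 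Your key ingredient --- that the $T_2$-direction is $\Delta$-fixed and lifts --- is exactly the same, but Martin's theorem replaces your structural comparison of $Z(H_0^0)$ with $Z(L^0)$ and bypasses the component-group bookkeeping.

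Your direct route has two gaps. First, the claim that $p_2$ is smooth fails whenever $Z(G^0)$ has an infinitesimal part: the kernel of $G\to\overline{G}$ meets $G^0$ in $Z(G^0)$, so in characteristic $p$ this map need not be smooth, and the scheme-theoretic preimage $H_0$ can be non-reduced (take $G=\SL_2$ in characteristic $2$, where $H_0=\mu_2$). One must pass to the reduced subgroup, as is implicit in the paper's appeal to Martin's classical framework. Second, and more substantively, your final inequality $\dim Z(H_0^0)^\Delta\ge\dim Z(H_0)$ presupposes that $H_0\to\Delta=G/G^0$ is surjective: the conjugation action of $H_0$ on $Z(H_0^0)^0=Z(G^0)^0$ factors through the image of $H_0$ in $\Delta$, and if that image were a proper subgroup $\Delta'$ then $\dim Z(H_0)=\dim(Z(G^0)^0)^{\Delta'}$ could exceed $\dim(Z(G^0)^0)^\Delta$. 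Surjectivity does hold --- since $\Ker\varphi\subseteq H_0$ covers $\ker(G/G^0\to\overline{G}/\overline{G}{}^0)$ and $\overline{H}_0=G_1\times\mu_{p-1}$ surjects onto $G_1/G_1^0=\overline{G}/\overline{G}{}^0$ --- but it is not automatic and you need to verify it.
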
 

\begin{proof} 
    If $H_0$ is contained in a proper R-parabolic of $L$ then the image 
    of $H_0$ is contained in a proper R-parabolic of $\overline{L}$. This would imply that 
    $Z(\Lbar) \overline{H}_0$ is contained in a proper R-parabolic of $\overline{L}$, which 
    is not possible as Proposition \ref{tasho1} implies that  $Z(\Lbar) \overline{H}_0= \Lbar$. 
    Lemma 6.2 in \cite{martin} implies that $H_0$ is reductive and $Z_{L}(H_0)^0= Z(L)^0$.
    Since $H_0\subseteq L$ we have $Z(H_0)\subseteq Z_{L}(H_0)$ and hence 
    $Z(H_0)^0 \subseteq Z(L)^0$. If $\dim Z(H_0)= \dim Z(L)$ then the quotient would be 
    $0$-dimensional and connected and hence $Z(H_0)^0 = Z(L)^0$. Thus it is enough to produce 
    a cocharacter $\lambda: \Gm\rightarrow Z(L)$, such that its image is not contained in $H_0$. 
    
    We have an exact sequence $0\rightarrow Z(G^0)\rightarrow L^0 \rightarrow \overline{L}{}^0\rightarrow 0$. 
    Since $L^0$ is reductive we have a surjection $Z(L^0)\twoheadrightarrow Z(\overline{L}{}^0)$. The group 
    of components of $L$, which we denote by $\Delta$,  acts on both groups and induces a finite map 
    on invariants $Z(L^0)^{\Delta} \rightarrow Z(\overline{L}{}^0)^{\Delta}$. 
    Hence, the quotient is $0$-dimensional and hence every cocharacter $\mu:\Gm\rightarrow Z(\overline{L}{}^0)^{\Delta}$
    can be lifted to a cocharacter $\tilde{\mu}: \Gm\rightarrow Z(L^0)^{\Delta}$. Let $\mu: \Gm \rightarrow 
    \Lbar=G_1\times \Gm$ be the cocharacter $t\mapsto (1,t)$. The image of $\mu$ is contained in 
    $Z^0(\Lbar)^{\Delta}$. Its lift $\tilde{\mu}$ takes values in $Z(L^0)^{\Delta}= Z(L)\cap L^0$, see 
    the proof of Proposition \ref{dimkisdimL}. If the image 
    of $\tilde{\mu}$ is contained in $H_0$, then the image of $\mu$ would be contained in $\overline{H}_0$, which 
    is not the case. 
\end{proof} 

The following Lemma is essentially \cite[Lemma 4.2]{BIP_new}, but made to work for all $F$ and not just 
$\mathbb{Q}_2$. 

\begin{lem}\label{BIP_Qtwo} Let $\rho: \Gamma_F \rightarrow B_2(\kappa)$ be a representation and let $\psi: \Gamma_F \rightarrow \kappa^{\times}$ be the character
$\psi: \Gamma_F \overset{\rho}{\longrightarrow} B_2(\kappa)\rightarrow B_2(\kappa)/U_2(\kappa) \overset{\alpha}{\longrightarrow} \Gm(\kappa).$
If the following hold:
\begin{enumerate}
\item $\psi \neq \omega^{\pm 1}$, where $\omega= \chi_{\cyc} \otimes_{\Zp} \kappa$;
\item $\rho(\Gamma_F)$ is not contained in any maximal split torus in $B_2$;
\end{enumerate}
then $h^0(\Gamma_F, (\Lie \SL_2)^*(1))=0$, where the action of $\Gamma_F$ on $\Lie \SL_2$ is
given by $\rho$ composed with the adjoint action of $\PGL_2$.
\end{lem}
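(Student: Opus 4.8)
The plan is to decompose the three–dimensional adjoint representation $\Lie\SL_2$ (and hence its dual) according to the action of the upper–triangular Borel $B_2$, compute the Galois invariants in the twist by the cyclotomic character, and show that under hypotheses (1) and (2) none of the pieces contributes. Recall that $\Lie\PGL_2 = \Lie\SL_2$ as $\PGL_2$–representations, and with respect to the torus $T_2\cong\Gm$ (identified via $\alpha$) we have a weight decomposition $\Lie\SL_2 = \mathfrak g_{-\alpha}\oplus\mathfrak h\oplus\mathfrak g_{\alpha}$, where $\mathfrak h = \Lie T_2$ has weight $0$, $\mathfrak g_{\alpha}=\Lie U_2$ has weight $\alpha$, and $\mathfrak g_{-\alpha}$ has weight $-\alpha$. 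The subspace $\mathfrak g_{-\alpha}\oplus\mathfrak h$ is $B_2$–stable (it is $\Lie B_2$), so with respect to $\rho$ the representation $\Lie\SL_2$ is a successive extension with graded pieces on which $\Gamma_F$ acts by $\psi^{-1}$ (on $\mathfrak g_{-\alpha}$), trivially (on $\mathfrak h$), and by $\psi$ (on $\mathfrak g_{\alpha}\cong \Lie\SL_2/\Lie B_2$). Dualising and twisting by $\chi_{\cyc}$ we get a successive extension of $\omega\psi$, $\omega$, and $\omega\psi^{-1}$ (the order of the filtration is reversed, but that is irrelevant for what follows).

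The key step is the left–exactness of $H^0$: if $h^0(\Gamma_F,(\Lie\SL_2)^*(1))\neq 0$, then since $H^0$ commutes with the filtration in the sense that a nonzero invariant vector, read modulo the largest $\Gamma_F$–stable proper subspace, either lies in a subrepresentation on which $\Gamma_F$ acts via one of $\omega\psi$, $\omega$, $\omega\psi^{-1}$, or maps to a nonzero invariant in such a quotient. Concretely, a nonzero element of $H^0$ yields a nonzero $\Gamma_F$–equivariant map from the trivial character into one of the three one–dimensional subquotients, hence one of $\omega\psi$, $\omega$, $\omega\psi^{-1}$ is the trivial character. Hypothesis (1) says $\psi\neq\omega^{\pm1}$, i.e.\ $\omega\psi\neq\omega^2\neq\triv$ is \emph{not} immediately what we need — rather $\omega\psi=\triv$ would force $\psi=\omega^{-1}$ and $\omega\psi^{-1}=\triv$ would force $\psi=\omega$, both excluded; so the only surviving possibility is $\omega=\triv$, i.e.\ $\chi_{\cyc}$ trivial in $\kappa$. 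This happens only when $p$ is a unit obstruction — in fact $\omega=\triv$ forces $F$ to contain no nontrivial $p$-power roots of unity issues aside, one still has to rule it out. Here I would invoke that if $\omega=\triv$ then the invariant vector must come from the \emph{middle} piece $\mathfrak h^*(1)=\mathfrak h^*$, i.e.\ from an honest invariant vector in $(\Lie\SL_2)^*$ fixed by $\rho(\Gamma_F)$ itself (not just a subquotient), and this is where hypothesis (2) enters.

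The remaining point, which I expect to be the main obstacle, is to handle the case where the invariant line is genuinely an eigenline for the image, not just for a subquotient — i.e.\ to show that an element of $H^0(\Gamma_F,(\Lie\SL_2)^*(1))$ forces $\rho(\Gamma_F)$ into a maximal split torus, contradicting (2). The argument: a nonzero invariant $v\in(\Lie\SL_2)^*$ fixed by $\rho(\Gamma_F)$ (after we are reduced to $\omega=\triv$) means $\rho(\Gamma_F)$ lies in the stabiliser in $\PGL_2$ of a line in the coadjoint representation; identifying the coadjoint and adjoint representations up to the subtleties in small characteristic flagged after \Cref{coadjoint}, the stabiliser of a generic line is a maximal torus, the stabiliser of the highest/lowest weight line is a Borel, and these are the only possibilities. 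If the stabiliser is all of $\PGL_2$ or contains $U_2$, one checks directly (as in \Cref{coadjoint}, using that $\Lie\SL_2$ resp.\ its dual has no $\SL_2$–invariants even when $p=2$, where one must take the dual) that no such $v$ exists; if the stabiliser is a Borel, then $\rho(\Gamma_F)\subseteq B_2$ already and the fixed line is the $\mathfrak g_{\alpha}$– or $\mathfrak g_{-\alpha}$–weight line, forcing $\psi^{\pm1}=\triv$, again excluded by (1) once we feed back $\omega=\triv$ — or, in characteristic $2$, a short separate check. The only case left is the stabiliser being a maximal torus, which is exactly the configuration excluded by hypothesis (2). I would organise this by first disposing of the reduction to $\omega=\triv$ via the filtration argument above, then running the torus/Borel dichotomy, being careful to invoke the dual form of the non-vanishing lemma (\Cref{coadjoint}) in characteristic $2$ as the excerpt warns.
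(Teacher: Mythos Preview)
Your filtration argument reducing to the case $\omega=1$ is correct and matches the paper exactly; the paper works with the explicit basis $\bar e_{12},\bar e_{11},\bar e_{21}$ of $(\Lie\SL_2)^*\cong M_2(\kappa)/\kappa I$ (via the trace pairing) and obtains the same graded pieces $\omega\psi,\omega,\omega\psi^{-1}$. Your endgame, however, has gaps. Once $\omega=1$, the invariant $v$ automatically lies in $V_2\setminus V_1$ (since neither $\omega\psi$ nor $\omega\psi^{-1}$ is trivial), so up to scaling $v=\bar e_{11}+\lambda\bar e_{12}$ for some $\lambda\in\kappa$; the representing matrix has distinct eigenvalues $0,1$, so the Borel-stabiliser case you worry about simply never arises and the detour through \Cref{coadjoint} is unnecessary. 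The genuine imprecision is that the stabiliser in $\PGL_2$ of such a $\bar v$ need not be a maximal torus: for the \emph{line} $\kappa\bar v$ it is $N(T')$ in every characteristic, and even for the \emph{vector} $\bar v$ it is $N(T')$ when $\chara\kappa=2$ (the Weyl element sends $A\mapsto A+I$ and hence fixes $\bar A$). So from the stabiliser alone you only get $\rho(\Gamma_F)\subseteq N(T')$. To conclude you need the extra observation $N(T')\cap B_2=T'$, which holds because $T'$ is a $U_2$-conjugate of $T_2$ and no representative of its nontrivial Weyl element is upper-triangular.

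The paper avoids all of this with a direct calculation: writing $\tilde\rho(\gamma)=\bigl(\begin{smallmatrix}1&b(\gamma)\\0&\psi^{-1}(\gamma)\end{smallmatrix}\bigr)$ and computing the conjugation action on $\bar e_{11}+\lambda\bar e_{12}$, the invariance condition unwinds to an identity between $b$, $\psi$ and $\lambda$ that says precisely that a conjugate of $\tilde\rho$ by $\bigl(\begin{smallmatrix}1&\lambda\\0&1\end{smallmatrix}\bigr)^{\pm1}$ lands in $T_2$. This reaches the same torus $T'$ your argument is aiming for, but in one line and uniformly in the characteristic.
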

\begin{proof} Let $M_2(\kappa)$ be the space of $2\times 2$-matrices over $\kappa$. We may identify 
$\Lie \SL_2$ with the subspace of trace zero matrices. The quadratic form $(A, B)\mapsto 
\tr(AB)$ identifies $(\Lie \SL_2)^*$ with the quotient of $M_2(\kappa)$ by the scalar matrices, 
and this isomorphism is $\PGL_2$-equivariant for the adjoint action on $M_2(\kappa)$. 
We denote the resulting representation of $\Gamma_F$ by $\overline{\ad} \rho$. 
For $i, j\in \{1,2\}$ let $e_{ij}\in M_2(\kappa)$ be the matrix with $ij$-entry equal to $1$ 
and the rest of entries equal to zero, and let $\bar{e}_{ij}$ be the image of $e_{ij}$ in 
$\overline{\ad} \rho$. Then  $\bar{e}_{12}$, $\bar{e}_{11}$, $\bar{e}_{21}$ is a basis of 
$\overline{\ad} \rho$ as a $\kappa$-vector space. 

The subgroup $\bigl( \begin{smallmatrix} 1 & \ast\\ 0 & \ast\end{smallmatrix}\bigr)\subset \GL_2$
maps isomorphically onto $B_2$. Thus there is a representation $\tilde{\rho}: \Gamma_F \rightarrow
\GL_2(\kappa)$ such that 
$ \tilde{\rho}(\gamma)= \left(\begin{smallmatrix} 1 & b(\gamma)\\ 0 & \psi^{-1}(\gamma)\end{smallmatrix}\right)$, for all $\gamma\in \Gamma_F,$
and $\rho(\gamma)$ is equal to $\tilde{\rho}(\gamma)$ modulo the scalar matrices. A direct computation of $\tilde{\rho}(\gamma) e_{ij} \tilde{\rho}(\gamma)^{-1}$, shows that 
the action of $\Gamma_F$ on $(\Lie \SL_2)^*(1)$ is given on the basis as follows: 
\begin{displaymath}
\begin{split}
 &\gamma \cdot \bar{e}_{12}= \omega(\gamma)\psi(\gamma)  \bar{e}_{12}, \quad 
\gamma \cdot \bar{e}_{11}= \omega(\gamma) \bar{e}_{11}- \omega(\gamma) \psi(\gamma)b(\gamma) \bar{e}_{12},\\
&\gamma \cdot \bar{e}_{21} = \omega(\gamma) \psi^{-1}(\gamma)\bar{e}_{21} -\omega(\gamma) \psi(\gamma) b(\gamma)^2 \bar{e}_{12}.
\end{split}
\end{displaymath}
Thus the flag $\langle \bar{e}_{12}\rangle \subset \langle \bar{e}_{12}, \bar{e}_{11}\rangle
\subset \langle \bar{e}_{12}, \bar{e}_{11}, \bar{e}_{21}\rangle$ is $\Gamma_F$-stable and
the action on the graded pieces is given by the characters $\omega \psi$, $\omega$, $\omega \psi^{-1}$. 
Since $\omega \psi^{-1}$ and $\omega \psi$ are non-trivial by assumption, if 
$h^0(\Gamma_F, (\Lie \SL_2)^*(1))\neq 0$ then $\omega$ is trivial and the $\Gamma_F$-invariants
are of the form $\langle \bar{e}_{11} + \lambda \bar{e}_{12}\rangle$ for some $\lambda\in \kappa$. 
A calculation shows that this is equivalent to $\lambda\psi(\gamma)= b(\gamma)+\lambda$ for all 
$\gamma\in \Gamma_F$, and a further calculation shows that this is equivalent to 
$\bigl (\begin{smallmatrix} 1 & \lambda\\ 0 & 1\end{smallmatrix} \bigr)^{-1} \tilde{\rho}(\gamma)
\bigl (\begin{smallmatrix} 1 & \lambda\\ 0 & 1\end{smallmatrix} \bigr)$ being a diagonal matrix 
for all $\gamma\in \Gamma_F$, which would contradict assumption (2). 
\end{proof}

\begin{prop}\label{vanish_H2_please}  Let $\rho: \Gamma_F\rightarrow P(\kappa)$ 
be a representation such that the following hold: 
\begin{enumerate}
\item $\rho(\Gamma_F)$ is not contained in an R-Levi of $P$;
\item the $G$-semisimplification of $\rho$ is not contained in $H_0$;
\item $h^0(\Gamma_F, (\Lie (G'_1)_{\sic})^*(1))=0$. 
\end{enumerate}
Then $h^0(\Gamma_F, (\Lie G'_{\sic})^*(1))=0$. 
\end{prop}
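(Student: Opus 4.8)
The strategy is to use the decomposition from Corollary~\ref{tasho2}, which gives a $G$-equivariant isomorphism $\Lie G'_{\sic} \cong \Lie \SL_2 \oplus \Lie (G'_1)_{\sic}$, hence dually
\[
(\Lie G'_{\sic})^*(1) \cong (\Lie \SL_2)^*(1) \oplus (\Lie (G'_1)_{\sic})^*(1)
\]
as $\Gamma_F$-representations via $\rho$. Taking $\Gamma_F$-invariants, it suffices to show that both summands have vanishing $H^0$. The second summand is handled directly by hypothesis (3), so the whole problem reduces to proving $h^0(\Gamma_F, (\Lie \SL_2)^*(1)) = 0$, where $\Gamma_F$ acts through $p_2 \circ \rho: \Gamma_F \to B_2(\kappa)$ composed with the adjoint action of $\PGL_2$ (note $p_2(P) = B_2$ by construction, since $P$ is the preimage of $\Pbar \cong G_1 \times B_2$).

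First I would set $\rho_2 := p_2 \circ \rho: \Gamma_F \to B_2(\kappa)$ and let $\psi: \Gamma_F \to \Gm(\kappa)$ be the character obtained by further composing with $B_2 \to T_2 \xrightarrow{\alpha} \Gm$. The plan is to verify the two hypotheses of Lemma~\ref{BIP_Qtwo} for $\rho_2$. For hypothesis (2) of that lemma: if $\rho_2(\Gamma_F)$ were contained in a maximal split torus of $B_2$, then since $T_2 \cong \Gm$ and all maximal split tori of $B_2$ are conjugate, $\rho_2(\Gamma_F)$ would lie in a conjugate of $T_2$; pulling back through $p_2$ and using that $L$ is the preimage of $\Lbar \cong G_1 \times T_2$, one deduces that (a conjugate of) $\rho(\Gamma_F)$ lands in an R-Levi of $P$, contradicting hypothesis (1). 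For hypothesis (1) of Lemma~\ref{BIP_Qtwo}, namely $\psi \neq \omega^{\pm 1}$: here I would use hypothesis (2) of the Proposition. If $\psi = \omega^{\pm 1}$, then after possibly replacing $\rho_2$ by the composition with the automorphism of $\PGL_2$ swapping $B_2$ and its opposite (which is harmless for the statement about $(\Lie\SL_2)^*(1)$ and only affects $\psi \mapsto \psi^{-1}$), we may assume $\psi = \omega$. One then checks that the $G$-semisimplification of $\rho$, which by Proposition~\ref{crss} and the minimality in the definition of $c_{P,L}$ corresponds to replacing $\rho_2$ by its diagonal part $\psi$-valued character... and the point is that a character $\psi = \omega = \chi_{\cyc}\otimes\kappa$ need not itself force containment in $H_0$, so this needs more care.

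The main obstacle will be linking hypothesis (2) of the Proposition — that the $G$-semisimplification $\rho^{\semi}$ is not contained in $H_0$ — to the condition $\psi \neq \omega^{\pm 1}$ needed for Lemma~\ref{BIP_Qtwo}. The bridge should be Lemma~\ref{bound_centre_Hzero}: $H_0 = p_2^{-1}(\mu_{p-1} \subset T_2)$, so a representation valued in $L$ whose $p_2$-component has diagonal character $\psi$ lands in $H_0$ (after semisimplification, where the $U_2$-part is killed) precisely when $\psi$ is valued in $\mu_{p-1}$. But the relevant constraint from local Galois theory — that $\omega$ restricted to $\Gamma_{F(\zeta_p)}$ is trivial, so $\omega^{\pm 1}$ is a character of $\Gal(F(\zeta_p)/F)$ hence valued in $\mu_{p-1}$ up to the tame part — is what connects $\psi = \omega^{\pm 1}$ to the image of $\psi$ lying in a prescribed finite cyclic group, and thus to $\rho^{\semi} \subseteq H_0$. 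Concretely: if $\psi = \omega$ then the image of $\psi$ lies in $\Fp^\times \subseteq \mu_{p-1}(\kappa)$ (the values of the mod-$p$ cyclotomic character), so $p_2(\rho^{\semi}(\Gamma_F)) \subseteq \mu_{p-1}(\kappa) = \mu_{p-1}(T_2)$, forcing $\rho^{\semi}(\Gamma_F) \subseteq H_0(\kappa)$, contradicting hypothesis (2); the same argument with $\omega^{-1}$ handles the opposite-Borel case. Once both hypotheses of Lemma~\ref{BIP_Qtwo} are in place, it yields $h^0(\Gamma_F, (\Lie\SL_2)^*(1)) = 0$, and combined with hypothesis (3) and the decomposition above, we conclude $h^0(\Gamma_F, (\Lie G'_{\sic})^*(1)) = 0$, as desired. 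I would also double-check the characteristic-$2$ and $p = 2$ corner cases, since the decomposition of $\Lie\SL_2$ in Lemma~\ref{coadjoint}'s proof was delicate there, but Lemma~\ref{BIP_Qtwo} is already stated uniformly in $p$, so these should pose no additional difficulty.
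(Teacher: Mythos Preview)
Your approach matches the paper's: decompose via Corollary~\ref{tasho2}, handle the $(\Lie(G_1')_{\sic})^*$ summand with hypothesis (3), and verify the two conditions of Lemma~\ref{BIP_Qtwo} for $\rho_2 = p_2\circ\rho$ from hypotheses (1) and (2) of the proposition. The only differences are in execution: your Borel-swap detour is unnecessary since $\omega^{p-1}=1$ in characteristic $p$ disposes of both $\psi=\omega$ and $\psi=\omega^{-1}$ simultaneously, and rather than reasoning about $\rho^{\semi}$ directly (whose minimal R-parabolic need not be $P$, so the step ``$p_2(\rho^{\semi}(\Gamma_F))\subseteq\mu_{p-1}$'' is not quite immediate), the paper uses the cocharacter $\lambda:\Gm\to Z(L)$ from the proof of Lemma~\ref{bound_centre_Hzero} to exhibit the explicit limit $\lim_{t\to 0}\lambda(t)\rho(\Gamma_F)\lambda(t)^{-1}\subseteq H_0(\kappa)$, which shares the image of $\rho$ in $X^{\gps}_G$ and therefore contradicts (2).
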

\begin{proof} Lemma \ref{tasho2} gives us an isomorphism of $G$-representations 
$$(\Lie G'_{\sic})^*\cong (\Lie \SL_2)^* \oplus (\Lie (G_1')_{\sic})^*,$$ and assumption (3) implies that it is 
enough to show that $h^0(\Gamma_F, (\Lie \SL_2)^*(1))=0$.

Let $\rho_2: \Gamma_F \rightarrow \PGL_2(\kappa)$ be the composition of $\rho$ with $p_2$. 
The action of $\Gamma_F$ on $(\Lie \SL_2)^*$ factors through $\rho_2$. Thus it is enough to 
verify that $\rho_2$ satisfies the assumptions of Lemma \ref{BIP_Qtwo}.

If $\rho_2(\Gamma_F)$ is contained in a maximal split torus of $B_2$ then there exists $u_2\in U_2(\kappa)$ 
such that $u_2 \rho_2(\Gamma_F) u^{-1}_2$ is contained in $T_2(\kappa)$. Since $L= p_2^{-1}(T_2)$ and $p_2: U\overset{\cong}{\longrightarrow} U_2$, there is $u\in U(\kappa)$ such that
$u \rho(\Gamma_F) u^{-1}$ is contained in $L(\kappa)$. Since $u^{-1} L u$ is an R-Levi  of $G$ we obtain 
 a contradiction to assumption (1). 

Let $\psi:\Gamma_F \rightarrow \kappa^{\times}$ be a character associated to $\rho_2$ in Lemma 
\ref{BIP_Qtwo}. If $\psi= \omega$ or $\psi= \omega^{-1}$ then $\psi^{p-1}=1$. Thus 
$\rho_2(\Gamma_F)$ is contained in $\mu_{p-1}(\kappa) U_2(\kappa)$. In the proof of Lemma 
\ref{bound_centre_Hzero} we have constructed a cocharacter $\lambda: \Gm\rightarrow Z(L)$ such that 
$p_2\circ \lambda: \Gm\rightarrow T_2$ is the unique cocharacter satisfying $\alpha\circ p_2\circ\lambda=\id$. Then $\lim_{t\rightarrow 0} p_2(\lambda(t)) U_2 p_2(\lambda(t))^{-1}= \{1\}$.
Since $p_2: G\rightarrow \PGL_2$ is a group homomorphism, which induces an isomorphism between $U$ and $U_2$, we conclude that 
$$\lim_{t\rightarrow 0} \lambda(t) \rho(\Gamma_F) \lambda(t)^{-1}\subset  
\lim_{t\rightarrow 0} \lambda(t) H_0(\kappa)\lambda(t)^{-1}= H_0(\kappa),$$
as $H_0$ is a subgroup of $L$ and $\lambda$ takes values in $Z(L)$. But this contradicts the assumption 
(2).
\end{proof}

 \section{Dimension of fibres}\label{sec_dim_fib}

Let $\pp$ be a prime ideal of $R^{\ps}_G$ such that $\dim R^{\ps}_G/\pp \le 1$. 
Then $\kappa(\pp)$ is either a finite or  a local field, which we equip with its natural topology. 
We denote by $y$ the homomorphism 
$y: R^{\ps}_{G} \rightarrow \kappa(\pp)$ and by $\ybar$ the composition of $y$ with an 
embedding of $\kappa(\pp)$ into its algebraic closure $\overline{\kappa(\pp)}$. We equip 
$\overline{\kappa(\pp)}$ with its natural topology, extending the topology on $\kappa(\pp)$. 
The goal of this section is to bound the dimension of 
$$ X^{\gen}_{G, y}:= X^{\gen}_{G, \rhobarss} \times_{X^{\ps}_G} y = \XgenGtau\times_{X^{\ps}_{G}} y$$
from above in Proposition \ref{bound_dim_fibre}. 

\begin{lem}\label{fib_gps_ps} There is a unique point $\zbar\in X^{\gps}_{G, \rhobarss}(\overline{\kappa(\pp)})$ mapping to $\ybar\in \XpsG(\overline{\kappa(\pp)})$. 
Moreover, the map 
$$X^{\gen}_{G, \rhobarss}\times_{X^{\gps}_{G, \rhobarss}} \zbar\rightarrow X^{\gen}_{G, \rhobarss}\times_{X^{\ps}_{G}} \ybar$$
induces an isomorphism of underlying reduced subschemes.
\end{lem}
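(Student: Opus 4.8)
The plan is to use the finite universal homeomorphism $\nu: X^{\gps}_{G,\rhobarss}\to X^{\ps}_G$ from \Cref{nu_fin_u} to pull back the point $\ybar$, and then relate the two fibre products via the GIT quotient map. First I would observe that since $\nu$ is a universal homeomorphism, the base change $\nu_{\overline{\kappa(\pp)}}: X^{\gps}_{G,\rhobarss}\times_{X^{\ps}_G}\Spec\overline{\kappa(\pp)}\to \Spec\overline{\kappa(\pp)}$ is again a universal homeomorphism, hence induces a bijection on points and in particular the source has a unique point $\zbar$; moreover its residue field is a purely inseparable (indeed trivial, since $\overline{\kappa(\pp)}$ is algebraically closed) extension, so $\zbar\in X^{\gps}_{G,\rhobarss}(\overline{\kappa(\pp)})$. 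This $\zbar$ maps to $\ybar$ by construction. Uniqueness is exactly the statement that $\nu$ is universally injective.

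Next I would establish the isomorphism of reduced subschemes. There is a commutative diagram with $X^{\gen}_{G,\rhobarss}\to X^{\gps}_{G,\rhobarss}\xrightarrow{\nu} X^{\ps}_G$, and the fibre product $X^{\gen}_{G,\rhobarss}\times_{X^{\ps}_G}\ybar$ factors as $(X^{\gen}_{G,\rhobarss}\times_{X^{\gps}_{G,\rhobarss}}(X^{\gps}_{G,\rhobarss}\times_{X^{\ps}_G}\ybar))$. Since $X^{\gps}_{G,\rhobarss}\times_{X^{\ps}_G}\ybar$ has $\zbar$ as its unique point with residue field $\overline{\kappa(\pp)}$, its reduction is $\Spec\overline{\kappa(\pp)}=\zbar$. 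Base change along $X^{\gen}_{G,\rhobarss}\to X^{\gps}_{G,\rhobarss}$ then gives a closed immersion $X^{\gen}_{G,\rhobarss}\times_{X^{\gps}_{G,\rhobarss}}\zbar\hookrightarrow X^{\gen}_{G,\rhobarss}\times_{X^{\ps}_G}\ybar$ which is a nilpotent thickening: concretely, writing $B:=A^{\gen}_{G,\rhobarss}$ as an algebra over $R:=R^{\ps}_G$ with intermediate ring $S:=R^{\gps}_{G,\rhobarss}$, the two fibres are $B\otimes_R\kappa(\ybar)$ and $B\otimes_S\kappa(\zbar)$, and the natural surjection $B\otimes_R\kappa(\ybar)\twoheadrightarrow B\otimes_S\kappa(\zbar)$ has nilpotent kernel because $S\otimes_R\kappa(\ybar)$ is a local Artinian ring with residue field $\kappa(\zbar)$ (its reduction being $\kappa(\zbar)$, as $\nu$ is a universal homeomorphism). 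Tensoring up by $B$ over $S$ preserves this, so the kernel of $B\otimes_R\kappa(\ybar)\to B\otimes_S\kappa(\zbar)$ is generated by nilpotents. Hence the two schemes have the same underlying topological space and the same reduction.

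I do not expect a serious obstacle here; the statement is essentially a formal consequence of \Cref{nu_fin_u}. The one point requiring a little care is that $S\otimes_R\kappa(\ybar)$ need not be a field — it is the local Artinian ring obtained by base-changing the finite map $\nu$ along $\ybar$ — so one must argue via its nilradical rather than claim outright that $\zbar=\ybar\times_{X^{\ps}_G}X^{\gps}_{G,\rhobarss}$ scheme-theoretically. Using that a universal homeomorphism that is finite has the property that every fibre is a single point with a purely inseparable (here trivial) residue extension and is therefore Artinian local with nilpotent maximal ideal modulo the residue field, the argument goes through. The passage to reduced subschemes then only uses that a surjection of rings with nilpotent kernel induces an isomorphism on reductions, and that reduction commutes with the relevant base change (equivalently, $(B\otimes_R\kappa(\ybar))_{\red}=((B\otimes_R\kappa(\ybar))\otimes \text{(kill nilpotents)})_{\red}$), which is standard commutative algebra.
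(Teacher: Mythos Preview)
Your proposal is correct and follows essentially the same approach as the paper: both arguments reduce immediately to \Cref{nu_fin_u}, observing that the fibre $X^{\gps}_{G,\rhobarss}\times_{X^{\ps}_G}\ybar$ is finite over $\ybar$ with underlying space a single point, hence is $\Spec$ of an Artinian local ring whose reduction is $\overline{\kappa(\pp)}$. The paper's proof is terser but records exactly this; your elaboration of the nilpotent-kernel argument is the standard unpacking.
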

\begin{proof} Proposition \ref{nu_fin_u} implies that $X^{\gps}_{G, \rhobarss}\times_{X^{\ps}_{G}} \ybar$
is finite over $\ybar$ and its underlying topological space is a point. This implies the assertion. 
\end{proof}

We assume\footnote{The assumption is for convenience only as in Proposition \ref{bound_dim_fibre} we consider geometric fibres.} that $G$ is split over $\OO$ and let $T$ be a maximal split torus in $G$ defined over $\OO$. Lemma \ref{fib_gps_ps} and Corollary \ref{conjugate_Levi} imply that 
there is $\xbar\in X^{\gen}_{G, y}(\overline{\kappa(\pp)})$, an R-Levi subgroup $L$ of $G$ containing $T$ and a continuous $L_{\ybar}$-irreducible representation 
$\rho: \Gamma_F \rightarrow L(\kappa(\ybar))$ such that $\rho=\rho_{\xbar}$ and the $G^0$-orbit of $\xbar$ is
closed in $X^{\gen}_{G, \ybar}$. 

\begin{remar}\label{rel_to_ULG} Proposition \ref{nu_fin_u} implies that there is a unique point 
$y'\in X^{\gps}_{G, \rhobarss}$  mapping to $y$. It follows from  Lemma \ref{fib_gps_ps}  
that the fibres of $X^{\gen}_{G, \rhobarss}$ at $y$ and $y'$ have the same dimension.
Moreover, if $\kappa(\pp)$ is a local field then $y'$ is a closed point of the subscheme $U_{LG, \rhobarss}$ defined in section \ref{sec_abs_irr}. Conversely, if $y$ is the image of a closed point of $U_{LG, \rhobarss}$ then $\kappa(y)$ is a local field and we may take  the R-Levi in the above discussion to be equal to $L$.
\end{remar} 
 
 Let $\qq$ be the kernel of $\xbar: A^{\gen}_{G, \rhobarss} \rightarrow \overline{\kappa(\pp)}$. We let 
 $\kappa:=\kappa(\qq)$, let $x$ be the homomorphism $x: A^{\gen}_G \rightarrow \kappa$. Then the image of 
 $\rho$ is contained in $L(\kappa)$. 
 We write $\kappabar$ for $\overline{\kappa(\pp)}$. We consider $X^{\gen}_{G, y}$ as a scheme over 
 $\kappa$ by considering $y: R^{\ps}_{G} \rightarrow \kappa(\pp)\hookrightarrow \kappa$.
 
Let $P$ be any R-parabolic of $G$ which admits $L$ as its R-Levi  and let $U$ be the unipotent radical of $P$. 
\begin{defi}\label{def_XgenPrho}  Let $X^{\gen}_{P, \rho}: \kappa\hyphen\alg \to \Set$ be the functor which sends 
$A$ to the set of  $R^{\ps}_{G}$-condensed representations $\rho': \Gamma_F\rightarrow P(A)$ such that 
\begin{equation}\label{cong_mod_U}
\rho'(\gamma)\equiv\rho(\gamma)\otimes_{\kappa} A \pmod{U(A)}, \quad \forall \gamma\in \Gamma.
\end{equation}
\end{defi}

\begin{lem} The functor $X^{\gen}_{P, \rho}$ is represented by a closed subscheme of $X^{\gen}_{G, y}$.
\end{lem}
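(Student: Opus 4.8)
The plan is to exhibit $X^{\gen}_{P,\rho}$ as the closed subfunctor of $X^{\gen}_{G,y}$ cut out by two conditions: that the representation factor through $P$, and that it be congruent to $\rho$ modulo $U$. First I would observe that by Proposition \ref{XgenGtau_rhobarss_A} (applied over the $R^{\ps}_G$-algebra structure given by $y$), the functor $X^{\gen}_{G,y}$ sends a $\kappa$-algebra $A$ to the set of representations $\rho':\Gamma_F\to G(A)$ which are $R^{\ps}_G$-condensed and satisfy $\Theta_{\rho'}=\Theta^u\otimes_{R^{\ps}_G}A$ (the latter being automatic over $\kappa$-algebras since we have pinned down the image of $\Gamma_F$ in $X^{\ps}_G$ to be $\ybar$). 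So a point of $X^{\gen}_{P,\rho}(A)$ is in particular a point of $X^{\gen}_{G,y}(A)$, using that an $R^{\ps}_G$-condensed map to $P(A)$ composed with the closed immersion $P\hookrightarrow G$ is again $R^{\ps}_G$-condensed by Lemma \ref{cond_rep_funct}(1), and conversely a point $\rho'\in X^{\gen}_{G,y}(A)$ lies in $X^{\gen}_{P,\rho}(A)$ precisely when $\rho'(\Gamma_F)\subseteq P(A)$ and \eqref{cong_mod_U} holds. This identifies $X^{\gen}_{P,\rho}$ with a subfunctor of $X^{\gen}_{G,y}$; it remains to see it is represented by a closed subscheme.

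The key step is to realise both defining conditions as closed conditions. For the condition ``$\rho'$ factors through $P$'': recall from Remark \ref{FN} that $X^{\gen,\tau}_{G,\rhobarss}$ sits as a closed subscheme of $G^N\times_{\OO} X^{\ps}_{\GL_d}$ via $\rho'\mapsto(\rho'(\gamma_1),\dots,\rho'(\gamma_N))$, where $\gamma_1,\dots,\gamma_N$ topologically generate (a finite-index quotient of) $\Gamma_F$. Since $P$ is a closed subscheme of $G$, the locus in $G^N$ where all $N$ coordinates lie in $P$ is closed, hence pulling back gives a closed subscheme of $X^{\gen}_{G,y}$; by Lemma \ref{gener_cond} (applied to the closed subgroup scheme $P\hookrightarrow G$ and the $R^{\ps}_G$-condensed representation $\rho'$, noting $\rho'$ is $R^{\ps}_G$-condensed as a point of $X^{\gen}_{G,\rhobarss}$) this locus is exactly the set of $\rho'$ factoring through $P(A)$. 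For the congruence condition \eqref{cong_mod_U}: on the closed subscheme just obtained, composition with the smooth quotient map $P\to P/U\cong L$ (Lemma \ref{list}(3)) gives a morphism to $X^{\gen,\mathrm{(something)}}_{L}$-type scheme, or more concretely a map of the $N$-tuples into $L^N$; the condition that this composite equal the constant tuple $(\bar\rho(\gamma_1),\dots,\bar\rho(\gamma_N))\otimes_\kappa A$ is the condition that a morphism to the affine (even finite type, separated) scheme $L^N$ agree with a fixed $\kappa$-point, which is closed since $L^N$ is separated over $\kappa$ — it is the preimage of the diagonal. Intersecting the two closed subschemes yields the representing closed subscheme.

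The main obstacle I anticipate is bookkeeping around the fact that $X^{\gen}_{G,\rhobarss}$ was built using $\gamma_1,\dots,\gamma_N$ that generate a finite-index quotient $Q$ of $\Gamma_F$ rather than $\Gamma_F$ itself, together with checking that the congruence \eqref{cong_mod_U} for all $\gamma\in\Gamma_F$ is equivalent to the congruence on the generators $\gamma_i$ — this uses continuity/$R^{\ps}_G$-condensedness: $U(A)$ is closed in $P(A)$ and $P(A)\to(P/U)(A)$ is continuous, so the locus where $\rho'$ and $\bar\rho\otimes A$ agree modulo $U$ is closed in $\Gamma_F$, contains a dense subgroup, hence is everything, in the spirit of Lemma \ref{gener_cond}. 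A minor further point is that one should check the two conditions are compatible — that $\bar\rho$ itself, viewed over $\kappa$, does define a $\kappa$-point of the ``factors through $P$'' locus, which is clear since $\rho$ takes values in $L(\kappa)\subseteq P(\kappa)$. Once these are in place the representability by a closed subscheme is immediate, and the functor $X^{\gen}_{P,\rho}$ is thus representable; its definition does not depend on $\tau$ by Corollary \ref{indep_tau} combined with the intrinsic description above.
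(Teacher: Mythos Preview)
Your approach is essentially the same as the paper's: both realise $X^{\gen}_{P,\rho}$ as the iterated fibre product $X^{\gen}_{G,y}\times_{G^N_{\kappa}}P^N_{\kappa}\times_{(P/U)^N_{\kappa}}\Spec\kappa$, using Lemma~\ref{gener_cond} (and its proof) to reduce the two defining conditions to the generators $\gamma_1,\dots,\gamma_N$.

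One point needs sharpening. Your parenthetical ``the latter being automatic over $\kappa$-algebras since we have pinned down the image of $\Gamma_F$ in $X^{\ps}_G$ to be $\ybar$'' is not right if read as saying the pseudocharacter condition $\Theta_{\rho'}=\Theta_{\rho}\otimes_{\kappa}A$ holds for free: an $R^{\ps}_G$-condensed representation $\rho':\Gamma_F\to P(A)\subseteq G(A)$ for a $\kappa$-algebra $A$ does not a priori have $G$-pseudocharacter equal to $\Theta_{\rho}\otimes_{\kappa}A$. The paper observes that the congruence \eqref{cong_mod_U} itself forces this equality, since passing to the Levi quotient $P\to P/U\cong L$ does not change the $G$-pseudocharacter (by the limit argument of Proposition~\ref{same_det}). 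Once you insert this one sentence, your argument is complete and matches the paper's.
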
 
\begin{proof} It follows from Proposition \ref{XgenGtau_rhobarss_A} that for a $\kappa$-algebra $A$, 
$X^{\gen}_{G, y}(A)$ is the set of $R^{\ps}_{G}$-condensed representations
$\rho':\Gamma_F\rightarrow G(A)$ such that $\Theta_{\rho'}= \Theta_{\rho}\otimes_{\kappa} A$.
If $\rho'\in X^{\gen}_{P, \rho}(A)$ then \eqref{cong_mod_U} implies that 
$\Theta_{\rho'}= \Theta_{\rho}\otimes_{\kappa} A$, and hence $\rho'\in X^{\gen}_{G, y}(A)$.

Since $X^{\gen,\tau}_G$ is a closed subscheme of $\XpsGLd\times G^N$ by Remark \ref{FN},
the map $\rho' \mapsto (\rho'(\gamma_1), \ldots, \rho'(\gamma_N))$ identifies 
$X^{\gen}_{G, y}$ with a closed subscheme of $G^N_{\kappa}$. Lemma \ref{gener_cond} implies that 
$(X^{\gen}_{G, y}\times_{G^N_{\kappa}} P^N_{\kappa})(A)$ is the set of $R^{\ps}_G$-condensed 
representations $\rho':\Gamma_F \rightarrow P(A)$ such that $\Theta_{\rho'}=\Theta_{\rho} \otimes_{\kappa} A$. The proof of Lemma \ref{gener_cond} shows that an 
$\RpsG$-condensed representation $\rho': \Gamma_F \rightarrow P(A)$ satisfies \eqref{cong_mod_U} 
if and only if 
$$\rho'(\gamma_i)\equiv \rho(\gamma_i)\pmod{U(A)}, \quad \forall 1\le i\le N.$$
We thus may identify $X^{\gen}_{P, \rho}$ with 
$X^{\gen}_{G, y}\times_{G^N_{\kappa}} P^N_{\kappa} \times_{(P/U)_{\kappa}^N} \Spec \kappa$, 
where $\Spec \kappa \rightarrow (P/U)_{\kappa}^N$ corresponds to a closed point 
$(\rho(\gamma_1), \ldots, \rho(\gamma_N))\in (P/U)_{\kappa}^N(\kappa)$. The claim follows as 
the  fibre products are taken along closed immersions. 
\end{proof}

\begin{lem}\label{TxZ1} Let $x\in X^{\gen}_{P, \rho}(\kappa)$ be the point corresponding to $\rho$
and let $T_x X^{\gen}_{P, \rho}$ be the tangent space at $x$. There is a natural isomorphism
\begin{equation}\label{TxXPrho}
T_x X^{\gen}_{P, \rho}  \overset{\cong}{\longrightarrow} Z^1(\Gamma_F, (\Lie U)_{\kappa})
\end{equation}
of $\kappa$-vector spaces, where the action of $\Gamma_F$ on $(\Lie U)_{\kappa}$ is given via the homomorphism 
$\rho: \Gamma_F \rightarrow L(\kappa)$ composed with the adjoint action of $L$ on $\Lie U$. 
\end{lem}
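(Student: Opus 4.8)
The plan is to compute the tangent space by the standard dual-numbers technique, keeping careful track of the constraint that the representation lies in $P$ and agrees with $\rho$ modulo $U$. First I would set $\kappa[\varepsilon]=\kappa[t]/(t^2)$ and recall that $X^{\gen}_{P,\rho}(\kappa[\varepsilon])\to X^{\gen}_{P,\rho}(\kappa)$ has fibre over $x$ equal to $T_x X^{\gen}_{P,\rho}$. An element of this fibre is an $R^{\ps}_G$-condensed representation $\rho_\varepsilon\colon\Gamma_F\to P(\kappa[\varepsilon])$ reducing to $\rho$ modulo $\varepsilon$ and satisfying the congruence \eqref{cong_mod_U} over $\kappa[\varepsilon]$. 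Writing $\rho_\varepsilon(\gamma)=\rho(\gamma)(1+\varepsilon\Phi(\gamma))$ with $\Phi(\gamma)\in\Lie P_\kappa$, the condition that $\rho_\varepsilon$ be a homomorphism is exactly the cocycle condition $\Phi(\gamma_1\gamma_2)=\Phi(\gamma_2)+\mathrm{ad}(\rho(\gamma_2)^{-1})\Phi(\gamma_1)$ (equivalently, a cocycle for the $\Gamma_F$-module $\Lie P_\kappa$ via $\rho$ composed with the adjoint action), so $\rho_\varepsilon\mapsto\Phi$ identifies the fibre with a subspace of $Z^1(\Gamma_F,\Lie P_\kappa)$. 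The congruence \eqref{cong_mod_U} over $\kappa[\varepsilon]$, combined with the fact that $\rho$ already satisfies it over $\kappa$, forces the image of $\Phi(\gamma)$ in $(\Lie P/\Lie U)_\kappa=\Lie L_\kappa$ to vanish for all $\gamma$; since $\Lie U_\kappa$ is a $\Gamma_F$-submodule of $\Lie P_\kappa$ (as $U$ is normal in $P$, by \Cref{list}(2)) with quotient $\Lie L_\kappa$, this says precisely that $\Phi$ takes values in $\Lie U_\kappa$, i.e.\ $\Phi\in Z^1(\Gamma_F,\Lie U_\kappa)$.

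Conversely, given a cocycle $\Phi\in Z^1(\Gamma_F,\Lie U_\kappa)$, the recipe $\gamma\mapsto\rho(\gamma)(1+\varepsilon\Phi(\gamma))$ defines a homomorphism $\Gamma_F\to P(\kappa[\varepsilon])$ reducing to $\rho$ and satisfying \eqref{cong_mod_U}; I need to check it is $R^{\ps}_G$-condensed. Here I would invoke the continuity criteria of \Cref{sec_cont}: fixing a closed immersion $\tau\colon G\hookrightarrow \mathbb A^n$, the image $\tau(\rho(\Gamma_F))$ lies in a finitely generated $R^{\ps}_G$-submodule $M$ of $\kappa^n$ on which $\rho$ is continuous (because $\rho=\rho_{\xbar}\in X^{\gen}_{G,y}(\kappa)$ is $R^{\ps}_G$-condensed), and since $\Phi$ is a continuous cocycle with values in the finite-dimensional space $\Lie U_\kappa$ the twisted map $\gamma\mapsto\tau(\rho(\gamma)(1+\varepsilon\Phi(\gamma)))$ still lands in a finitely generated $R^{\ps}_G$-submodule of $\kappa[\varepsilon]^n$ and is continuous there; by \Cref{ind_cont_equiv_R_cond} this means $\rho_\varepsilon$ is $R^{\ps}_G$-condensed. (Continuity of $\Phi$ is automatic: a $1$-cocycle of a profinite group with values in a finite-dimensional vector space over a local or finite field that arises from a morphism of $\kappa[\varepsilon]$-points of a finite-type scheme is continuous, and in any case we only need cocycles that are continuous, which is built into the condensed condition.) This gives the inverse map, so $\rho_\varepsilon\mapsto\Phi$ is a bijection, evidently $\kappa$-linear, yielding the isomorphism \eqref{TxXPrho}.

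The main obstacle I anticipate is the bookkeeping around the condensed/continuity condition: one must make sure that passing from $\rho$ to the $\varepsilon$-twist does not destroy the property of being $R^{\ps}_G$-condensed, and that this holds uniformly so that the bijection is functorial in the obvious sense. This is where \Cref{B1}, \Cref{B4} and \Cref{ind_cont_equiv_R_cond} do the work — the key point being that $\Lie U_\kappa$ is finite-dimensional over $\kappa$, so adjoining $\varepsilon\Phi(\gamma)$ only enlarges the relevant finitely generated $R^{\ps}_G$-module by a finitely generated piece. Everything else — the cocycle identity, the identification of the normalisation of the constraint \eqref{cong_mod_U} with the condition ``values in $\Lie U$'', and $\kappa$-linearity — is a routine unwinding of definitions entirely parallel to the computation of $Z^1(\Gamma,\ad\rho)\cong D^{\square}_{\rho,G}(\kappa[\varepsilon])$ in the proof of \Cref{Dnoetherian}.
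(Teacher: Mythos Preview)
Your proof is correct and follows essentially the same dual-numbers approach as the paper: write a lift as $\rho(\gamma)(1+\varepsilon\Phi(\gamma))$ (the paper puts the factor on the left, a cosmetic difference), observe that the constraint \eqref{cong_mod_U} forces $\Phi$ into $(\Lie U)_\kappa$, and identify the homomorphism condition with the cocycle identity. The one simplification the paper makes is in the continuity step: rather than verifying by hand that the $\varepsilon$-twist remains $R^{\ps}_G$-condensed, it invokes \Cref{cont_vs_cond} (applicable to $\kappa[\varepsilon]$, which contains the open $R^{\ps}_G$-finite subring $\OO_\kappa[\varepsilon]$) to reduce ``$R^{\ps}_G$-condensed'' to ``continuous'', after which the equivalence with continuity of the cocycle is immediate.
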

\begin{proof} We may identify $T_x X^{\gen}_{P, \rho}$ with the set of 
$\rho'\in X^{\gen}_{P, \rho}(\kappa[\varepsilon])$ such that $\rho'\equiv \rho \pmod{\varepsilon}$, which implies that we may write $\rho'(\gamma)= (1 + \varepsilon \beta(\gamma)) \rho(\gamma)$ 
for a unique function $\beta: \Gamma_F \rightarrow (\Lie U)_{\kappa}$.
Lemma \ref{cont_vs_cond} implies that $\rho': \Gamma_F \rightarrow P(A)$ is 
$\RpsG$-condensed if and only if $\rho'$ is continuous for the topology on $\kappa[\varepsilon]$ induced
by $\kappa$, which is equivalent to continuity of $\beta$. A standard calculation shows that
$\rho'$ is a representation if and only if $\beta$ defines a $1$-cocycle. 
Mapping $\rho'$ to $\beta\in Z^1(\Gamma_F, (\Lie U)_{\kappa})$ induces the claimed isomorphism.
\end{proof}

\begin{lem}\label{Prho_geom_pts} The set $X^{\gen}_{P, \rho}(\kappabar)$ is naturally in bijection with the 
set of continuous representations $\rho':\Gamma_F \rightarrow P(\kappabar)$, such that 
$\rho' \equiv \rho \pmod{U(\kappabar)}$. 
\end{lem}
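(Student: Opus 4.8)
The plan is to identify $X^{\gen}_{P,\rho}(\kappabar)$ with the claimed set by combining the description of $X^{\gen}_{G,\rhobarss}$ via condensed representations (Proposition~\ref{XgenGtau_rhobarss_A}) with the equivalence between continuity and the $R$-condensed condition over a local or finite field (Lemma~\ref{cont_vs_cond}, in the form used in Lemma~\ref{geom_pts}). First I would recall that, since $\kappabar$ is the algebraic closure of $\kappa(\pp)$ equipped with its natural topology, Lemma~\ref{cont_vs_cond} applies: a representation $\rho':\Gamma_F\to G(\kappabar)$ is continuous if and only if it is $R^{\ps}_G$-condensed. Hence a continuous $\rho':\Gamma_F\to P(\kappabar)$ with $\rho'\equiv\rho\pmod{U(\kappabar)}$ is in particular an $R^{\ps}_G$-condensed representation into $P(\kappabar)\subseteq G(\kappabar)$; the congruence condition forces $\Theta_{\rho'}=\Theta_\rho\otimes_\kappa\kappabar$ (the $G$-pseudocharacter only sees the image modulo $U$, since $\OO[G^n]^{G^0}$-functions are conjugation-invariant and the limit argument of Proposition~\ref{same_det}/the proof of Lemma~\ref{gener_cond} shows $\rho'$ and $c_{P,L}\circ\rho'=\rho$ have the same $G$-pseudocharacter), so $\rho'\in X^{\gen}_{G,y}(\kappabar)$ by Proposition~\ref{XgenGtau_rhobarss_A}, and it lands in the closed subscheme $X^{\gen}_{P,\rho}$ by the identification of that subscheme established in the preceding lemma.

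For the converse I would take $\rho'\in X^{\gen}_{P,\rho}(\kappabar)$, which by definition is an $R^{\ps}_G$-condensed representation $\Gamma_F\to P(\kappabar)$ satisfying \eqref{cong_mod_U}; composing with the closed immersion $P\hookrightarrow G$ and applying Lemma~\ref{cond_rep_funct}(1) keeps it $R^{\ps}_G$-condensed as a representation into $G(\kappabar)$, and then Lemma~\ref{cont_vs_cond} (equivalently, the argument of Lemma~\ref{geom_pts}) shows $\rho'$ is continuous for the valuation topology on $\kappabar$. Since $P(\kappabar)\subseteq G(\kappabar)$ carries the subspace topology and $P$ is a closed subgroup scheme, continuity of $\rho'$ into $G(\kappabar)$ is the same as continuity into $P(\kappabar)$. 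The congruence \eqref{cong_mod_U} is, over a field, literally the condition $\rho'\equiv\rho\pmod{U(\kappabar)}$ after base change $\kappa\to\kappabar$ (note $\rho\otimes_\kappa\kappabar=\rho$ as $\rho$ already takes values in $L(\kappa)\subseteq L(\kappabar)$). These two assignments are mutually inverse on underlying sets, so they give the asserted bijection.

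The routine verifications here are the bookkeeping of the pseudocharacter condition and the topology-subspace compatibility; the one point that deserves a sentence of care is why the $R^{\ps}_G$-condensed condition on $\rho'$ is automatic once we know $\rho'$ is a continuous representation into $P(\kappabar)$ satisfying the congruence — this is exactly Lemma~\ref{cont_vs_cond} applied to the Hausdorff topological $R^{\ps}_G$-algebra $\kappabar$ (which is a filtered union of the rings of integers of finite subextensions, each of which is a finitely generated $\OO_{\kappa(\pp)}$-module, hence a finitely generated $R^{\ps}_G$-module via the map $R^{\ps}_G\to\OO_{\kappa(\pp)}$), together with the fact that $\Gamma_F$ is topologically finitely generated. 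I do not anticipate a serious obstacle; the main thing is to cite Lemma~\ref{cont_vs_cond}, Proposition~\ref{XgenGtau_rhobarss_A}, and Lemma~\ref{cond_rep_funct} in the right order and to be careful that the $G$-pseudocharacter attached to $\rho'$ coincides with $\Theta_\rho\otimes_\kappa\kappabar$, which follows from \eqref{cong_mod_U} together with conjugation-invariance of $G$-pseudocharacters as in the proof of the previous lemma.
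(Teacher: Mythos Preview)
Your proof is correct and relies on the same key ingredient as the paper, namely Lemma~\ref{cont_vs_cond}. However, you take an unnecessary detour: the functor $X^{\gen}_{P,\rho}$ is \emph{defined} directly as sending a $\kappa$-algebra $A$ to the set of $R^{\ps}_G$-condensed representations $\rho':\Gamma_F\to P(A)$ satisfying the congruence \eqref{cong_mod_U}, so $X^{\gen}_{P,\rho}(\kappabar)$ is already, by definition, the set of $R^{\ps}_G$-condensed $\rho':\Gamma_F\to P(\kappabar)$ with $\rho'\equiv\rho\pmod{U(\kappabar)}$. The lemma then reduces to the statement that for such $\rho'$, being $R^{\ps}_G$-condensed is equivalent to being continuous, which is exactly Lemma~\ref{cont_vs_cond} applied with the group scheme $P$ and $A=\kappabar$. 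The paper's proof is accordingly a single sentence citing Lemma~\ref{cont_vs_cond}.

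Your route through Proposition~\ref{XgenGtau_rhobarss_A}, the verification that $\Theta_{\rho'}=\Theta_\rho\otimes_\kappa\kappabar$, and the embedding of $X^{\gen}_{P,\rho}$ into $X^{\gen}_{G,y}$ is all correct, but it reproves facts that are either built into the definition of $X^{\gen}_{P,\rho}$ or already established in the lemma immediately following that definition (where the pseudocharacter compatibility is noted in order to show $X^{\gen}_{P,\rho}$ is a closed subscheme of $X^{\gen}_{G,y}$). You can drop all of that and keep only your final paragraph, which correctly identifies the one substantive point: verifying the hypotheses of Lemma~\ref{cont_vs_cond} for $\kappabar$ as an $R^{\ps}_G$-algebra.
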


\begin{proof} The assertion follows from Lemma \ref{cont_vs_cond}.
\end{proof}

Let $\lambda: \Gm \rightarrow G$ be a cocharacter such that $P=P_{\lambda}$ and 
$L=L_{\lambda}$. Since $\lambda$ centralises $L$ and normalises $P$ the 
adjoint action of $\Gm$ on $\XgenG$ induces an action of $\Gm$ on $X^{\gen}_{P, \rho}$. 

\begin{lem}\label{clorbit_Prho} The unique closed $\Gm$-orbit in 
$X^{\gen}_{P, \rho}(\kappabar)$ is the singleton $\{\rho\}$.
\end{lem}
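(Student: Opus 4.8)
The statement asserts that the unique closed $\Gm$-orbit in $X^{\gen}_{P,\rho}(\kappabar)$ is the single point $\{\rho\}$. The plan is to exploit the $\Gm$-action coming from the cocharacter $\lambda$ and the description of $\kappabar$-points in \Cref{Prho_geom_pts}, where a point corresponds to a continuous representation $\rho':\Gamma_F\to P(\kappabar)$ with $\rho'\equiv\rho\pmod{U(\kappabar)}$. For such a $\rho'$, the adjoint action of $\lambda(t)$ contracts $U$ to $\{1\}$ as $t\to 0$ (this is precisely the defining property of $U=U_\lambda$, cf.\ \Cref{list} and \cite[Lemma 2.1.4]{CGP}), while it centralises $L\supseteq L_\lambda$. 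Since $\rho'(\gamma)$ and $\rho(\gamma)$ agree modulo $U(\kappabar)$, we may write $\rho'(\gamma)=u(\gamma)\,\rho(\gamma)$ with $u(\gamma)\in U(\kappabar)$ (using $P=L\ltimes U$ and the fact that the $L$-component of $\rho'(\gamma)$ equals $\rho(\gamma)$), and then
\[
\lambda(t)\rho'(\gamma)\lambda(t)^{-1}=\bigl(\lambda(t)u(\gamma)\lambda(t)^{-1}\bigr)\,\rho(\gamma)\xrightarrow{t\to 0}\rho(\gamma).
\]
Hence $\rho$ lies in the closure of every $\Gm$-orbit, so $\{\rho\}$ is the unique closed orbit provided it is indeed a $\Gm$-orbit, i.e.\ a $\Gm$-fixed point: this holds because $\lambda$ centralises $L_\lambda\ni\rho(\Gamma_F)$, so $\lambda(t)\rho(\gamma)\lambda(t)^{-1}=\rho(\gamma)$ for all $t$.

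First I would record that $\{\rho\}$ is $\Gm$-fixed, using $\rho(\Gamma_F)\subseteq L(\kappa)=L_\lambda(\kappa)$ and part (1) of \Cref{list} (the R-Levi is the scheme-theoretic centraliser of $\lambda$). Next I would show that for any $\rho'\in X^{\gen}_{P,\rho}(\kappabar)$, the limit $\lim_{t\to 0}\lambda(t)\cdot\rho'$ exists and equals $\rho$: concretely, one checks on the finitely many topological generators $\gamma_1,\dots,\gamma_N$ that $\lim_{t\to 0}\lambda(t)\rho'(\gamma_i)\lambda(t)^{-1}=\rho(\gamma_i)$ using the decomposition $\rho'(\gamma_i)=u_i\rho(\gamma_i)$ with $u_i\in U(\kappabar)$ and $\lim_{t\to 0}\lambda(t)u_i\lambda(t)^{-1}=1$; since $X^{\gen}_{P,\rho}$ is a closed subscheme of $G^N_\kappa$ (as in the previous lemma, via $\rho'\mapsto(\rho'(\gamma_1),\dots,\rho'(\gamma_N))$), this limit point lies in $X^{\gen}_{P,\rho}(\kappabar)$ and is identified with $\rho$. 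This shows $\rho\in\overline{\Gm\cdot\rho'}$ for every $\kappabar$-point $\rho'$. Finally, since every orbit closure contains a closed orbit (Hilbert--Mumford / properties of $\Gm$-actions on affine schemes of finite type), and $\{\rho\}$ is contained in every orbit closure, $\{\rho\}$ is the unique closed $\Gm$-orbit.

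The main obstacle is making the "limit exists and equals $\rho$" step clean at the level of the functor of points rather than just on geometric points: one must be slightly careful that the $\Gm$-action on $X^{\gen}_{P,\rho}$ is the restriction of the $\Gm$-action on $X^{\gen}_G$ (which is why $\lambda$ centralising $L$ and normalising $P$ was emphasised just before the lemma), and that the contraction $\lim_{t\to0}\lambda(t)(-)\lambda(t)^{-1}$ is algebraic on $U$ — both are already available from \cite[Lemma 2.1.4]{CGP} and \Cref{list}. A secondary point is ensuring the decomposition $\rho'(\gamma)=u(\gamma)\rho(\gamma)$ with $u(\gamma)\in U$ is compatible with the congruence condition \eqref{cong_mod_U}; this is immediate from $P=L\ltimes U$ (\Cref{list}(3)) together with the requirement that the image of $\rho'(\gamma)$ in $(P/U)(\kappabar)$ equals that of $\rho(\gamma)$. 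With these in hand, the argument is short and essentially formal.
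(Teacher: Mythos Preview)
Your proposal is correct and follows exactly the same approach as the paper: use the contraction $\lim_{t\to 0}\lambda(t)U\lambda(t)^{-1}=1$ together with the description of $\kappabar$-points in \Cref{Prho_geom_pts} to see that every $\Gm$-orbit limits to $\rho$, which is itself fixed since $\rho(\Gamma_F)\subseteq L(\kappa)$. Your write-up is more detailed than the paper's one-sentence proof, but the argument is identical.
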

\begin{proof} Since $\lim_{t\rightarrow 0} \lambda(t) U \lambda(t)^{-1}= 1$, 
using Lemma \ref{Prho_geom_pts}, we deduce that the closure of any $\Gm$-orbit 
in $X^{\gen}_{P, \rho}(\kappabar)$ will contain $\{\rho\}$, which is a closed 
$\Gm$-orbit, as it is zero dimensional and $\rho(\Gamma_F)\subseteq L(\kappa)$. 
\end{proof}

 \begin{prop}\label{bound_P} Let $x\in X_{P, \rho}^{\gen}$ be the closed point corresponding to $\rho$. Then 
 $$\dim X^{\gen}_{P,\rho} \le \dim_{\kappa} T_{x} X^{\gen}_{P, \rho}= \dim U_{\kappa} + \dim U_{\kappa} [F:\Qp] + h^0(\Gamma_F, (\Lie U)_{\kappa}^*(1)),$$
 where the action of $\Gamma_F$ on $\Lie U$ is given by the composition of $\rho$ with the adjoint action of $L$ on $\Lie U$. 
 \end{prop}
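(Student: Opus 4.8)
The plan is to prove the proposition in two halves: the inequality $\dim X^{\gen}_{P,\rho} \le \dim_{\kappa} T_x X^{\gen}_{P,\rho}$, and the identity $\dim_{\kappa} T_x X^{\gen}_{P,\rho} = \dim U_{\kappa}(1 + [F:\Qp]) + h^0(\Gamma_F, (\Lie U)_{\kappa}^*(1))$. For the dimension bound, I would first invoke \Cref{clorbit_Prho}: the $\Gm$-action on $X^{\gen}_{P,\rho}$ (coming from the cocharacter $\lambda$ with $P = P_\lambda$, $L = L_\lambda$) has $\{\rho\}$ as its unique closed orbit, and the contracting property $\lim_{t\to 0}\lambda(t) U \lambda(t)^{-1} = 1$ means that $\rho$ lies in the closure of every $\Gm$-orbit. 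This is exactly the situation of a $\Gm$-scheme contracted to a fixed point $x$, so $x$ lies in every irreducible component of $X^{\gen}_{P,\rho}$, and hence $\dim X^{\gen}_{P,\rho} = \dim \OO_{X^{\gen}_{P,\rho}, x} \le \dim_\kappa \mathfrak m_x / \mathfrak m_x^2 = \dim_\kappa T_x X^{\gen}_{P,\rho}$, the last step being the standard bound of Krull dimension of a local Noetherian ring by its embedding dimension. (One should double-check that a finite-type scheme over a field with a $\Gm$-action contracting onto a point $x$ has all components through $x$; this follows because any component is $\Gm$-stable and its closure contains the unique closed orbit $\{\rho\}$.)

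For the computation of the tangent space, I would use \Cref{TxZ1}, which already gives a natural isomorphism $T_x X^{\gen}_{P,\rho} \cong Z^1(\Gamma_F, (\Lie U)_\kappa)$, where $\Gamma_F$ acts on $(\Lie U)_\kappa$ via $\rho$ composed with the adjoint action of $L$. So the statement reduces to the formula $\dim_\kappa Z^1(\Gamma_F, V) = \dim_\kappa V(1 + [F:\Qp]) + h^0(\Gamma_F, V^*(1))$ for $V = (\Lie U)_\kappa$, and this is precisely the second displayed formula of \Cref{dimZ1} (with $V^*(1)$ appearing there, using local Tate duality to rewrite $h^2$ as $h^0$ of the Tate dual). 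Putting these two together yields both the inequality and the equality in the statement.

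The main obstacle I anticipate is not the cohomological bookkeeping — which is already packaged in \Cref{dimZ1} and \Cref{TxZ1} — but rather making the $\Gm$-contraction argument rigorous in a way that handles the fact that $X^{\gen}_{P,\rho}$ is only locally of finite type over $\kappa$ and a priori might not be reduced or irreducible. The key point to nail down is that the fixed point $\rho$ of the contracting $\Gm$-action lies on every irreducible component: one argues that each irreducible component $Z$ is $\Gm$-stable (by \cite[Lemma 2.1]{BIP_new}, say, applied to the $\Gm$-action, or directly since $\Gm$ is connected), so $\overline{\Gm \cdot z}$ for $z$ a general point of $Z$ is contained in $Z$ and, by \Cref{clorbit_Prho}, contains $\rho$; hence $\rho \in Z$. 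Once this is established the bound $\dim Z \le \dim_\kappa T_\rho Z \le \dim_\kappa T_\rho X^{\gen}_{P,\rho}$ follows for each component, and taking the maximum over components gives the result.

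Alternatively, if one wishes to bypass any subtlety about components, one can reduce to a statement purely about the complete local ring: by \Cref{local_ring_def_ring}-type arguments one relates $\widehat{\OO}_{X^{\gen}_{P,\rho}, x}$ to a deformation ring, which is a quotient of $\Lambda\br{x_1,\dots,x_r}$ with $r = \dim_\kappa T_x X^{\gen}_{P,\rho}$, so $\dim \widehat{\OO}_{X^{\gen}_{P,\rho},x} \le r$; then the $\Gm$-contraction is used only to conclude $\dim X^{\gen}_{P,\rho} = \dim \widehat{\OO}_{X^{\gen}_{P,\rho}, x}$. Either route is short; I would present the first since \Cref{clorbit_Prho} and \Cref{TxZ1} are stated immediately before and are tailored to exactly this argument.
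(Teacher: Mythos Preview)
Your proposal is correct and follows essentially the same approach as the paper: the paper invokes \cite[Lemma 2.2]{BIP_new} together with \Cref{clorbit_Prho} for the inequality (noting, as you do, that $x$ lies on every irreducible component), and then uses \Cref{TxZ1} and \Cref{dimZ1} for the tangent space computation. Your explicit unpacking of the $\Gm$-contraction argument via \cite[Lemma 2.1]{BIP_new} is exactly what \cite[Lemma 2.2]{BIP_new} packages, and your concern about $X^{\gen}_{P,\rho}$ being only locally of finite type is unfounded since it is a closed subscheme of the finite-type scheme $X^{\gen}_{G,y}$.
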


 \begin{proof} The first inequality follows from \cite[Lemma 2.2]{BIP_new} together 
with Lemma \ref{clorbit_Prho}. (Its proof shows that $x$ lies on every 
irreducible component of $X_{P, \rho}^{\gen}$.) 

Lemma \ref{TxZ1} identifies $T_xX^{\gen}_{P,\rho}$ with $Z^1(\Gamma_F, (\Lie U)_{\kappa})$.
The assertion follows from Lemma \ref{dimZ1}, noting that $\dim U_{\kappa}$  is equal to $\dim_{\kappa} (\Lie U)_{\kappa}$. 
\end{proof}

\begin{lem}\label{conj_P2} Let $x'\in X^{\gen}_{G, y}(\kappabar)$. 
Then there is an R-parabolic subgroup $P$ of $G$
with R-Levi equal to $L$, $g\in G^0(\kappabar)$ and $x''\in X^{\gen}_{P, \rho}(\kappabar)$ such that 
$$ \rho_{x'}(\gamma)= g \rho_{x''}(\gamma) g^{-1}, \quad \forall \gamma\in \Gamma_F.$$ 
\end{lem}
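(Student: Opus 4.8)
The statement says: for any geometric point $x' \in X^{\gen}_{G,y}(\kappabar)$, there is an R-parabolic $P$ of $G$ with R-Levi $L$ (the fixed $L$ coming from the choice of $\xbar$), an element $g \in G^0(\kappabar)$, and a point $x'' \in X^{\gen}_{P,\rho}(\kappabar)$ with $\rho_{x'} = g \rho_{x''} g^{-1}$. The idea is to compare $x'$ with the distinguished point $\xbar$ (equivalently $x$), both lying in the same fibre $X^{\gen}_{G,y}$, and use that their $G$-semisimplifications agree because the fibre over $y$ has a unique closed $G^0$-orbit.

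First I would recall from \Cref{fib_gps_ps} and \Cref{conjugate_Levi} that $\xbar \in X^{\gen}_{G,y}(\kappabar)$ was chosen with closed $G^0$-orbit and with $\rho = \rho_{\xbar}$ being $L_{\ybar}$-irreducible, where $L \supseteq T$ is an R-Levi of $G$; in particular $\rho(\Gamma_F) \subseteq L(\kappabar)$. Now apply the ``moreover'' part of \Cref{conjugate_Levi} to the point $x'$: it produces a minimal R-parabolic $Q$ of $G_{\kappabar}$ containing $\rho_{x'}(\Gamma_F)$ with an R-Levi $M$, and an element $h \in G^0(\kappabar)$ such that $M$ and $L_{\kappabar} = hMh^{-1}$, and $\rho(\gamma) = h\, c_{Q,M}(\rho_{x'}(\gamma))\, h^{-1}$ for all $\gamma \in \Gamma_F$ — wait, that gives a relation between $\rho$ and the semisimplification of $\rho_{x'}$, not between $\rho$ and $\rho_{x'}$ itself. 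So the correct move is: set $P := h^{-1}\cdot(\text{the R-parabolic obtained by transporting } Q) \cdot h$, arranged so that $P$ has R-Levi exactly $L$. Concretely, since $hMh^{-1} = L_{\kappabar}$, the R-parabolic $P := hQh^{-1}$ has R-Levi $hMh^{-1} = L_{\kappabar}$, and $\rho_{x'}(\Gamma_F) \subseteq Q(\kappabar)$ implies $h\rho_{x'}(\gamma)h^{-1} \in P(\kappabar)$ for all $\gamma$.

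Next I would check that $\rho_{x''} := h\,\rho_{x'}(-)\,h^{-1}$, viewed as a homomorphism $\Gamma_F \to P(\kappabar)$, actually lands in $X^{\gen}_{P,\rho}(\kappabar)$. By \Cref{Prho_geom_pts} this amounts to: $\rho_{x''}$ is continuous (clear, since conjugation is a homeomorphism and $\rho_{x'}$ is continuous by \Cref{XgenGA}(2) / \Cref{geom_pts}) and $\rho_{x''} \equiv \rho \pmod{U(\kappabar)}$, where $U$ is the unipotent radical of $P$. For the congruence: the composite $\Gamma_F \xrightarrow{\rho_{x''}} P(\kappabar) \to (P/U)(\kappabar) \xrightarrow{\cong} L(\kappabar)$ is exactly $c_{P,L} \circ \rho_{x''}$, which up to the conjugation by $h$ is $c_{Q,M}\circ \rho_{x'}$; and the relation from \Cref{conjugate_Levi} says $h\,(c_{Q,M}\circ\rho_{x'})(\gamma)\,h^{-1} = \rho(\gamma)$ — but one must be careful that the R-Levi used to define $c_{P,L}$ matches $L$ and that the identification of $P/U$ with $L$ is the one transported by $h$ from the identification of $Q/U_Q$ with $M$. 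This bookkeeping — making sure $P$ is chosen with R-Levi $L$ \emph{on the nose} and that $c_{P,L}$ is compatible with $h$-conjugation of $c_{Q,M}$ — is the one genuinely fiddly point; I would phrase it by noting that conjugation by $h \in G^0(\kappabar)$ carries the triple $(Q, M, c_{Q,M})$ to $(P, L_{\kappabar}, c_{P, L_{\kappabar}})$ functorially, so $c_{P,L}\circ \rho_{x''} = h\,(c_{Q,M}\circ\rho_{x'})(-)\,h^{-1} = \rho$, i.e. $\rho_{x''} \equiv \rho \pmod{U(\kappabar)}$ as required.

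Finally, setting $g := h^{-1}$ gives $\rho_{x'}(\gamma) = g\,\rho_{x''}(\gamma)\,g^{-1}$, and $g \in G^0(\kappabar)$, completing the proof. The main obstacle, as noted, is not any deep input — all the hard work (strong reductivity, uniqueness of the closed orbit, $G$-semisimplification matching the $G^0$-orbit, \Cref{irr_conj}) is already packaged inside \Cref{conjugate_Levi} — but rather correctly tracking the R-Levi: \Cref{conjugate_Levi} gives $M$ conjugate to $L$ and a relation for the \emph{semisimplification}, so one has to conjugate $Q$ (not just $M$) by $h$ to get an R-parabolic with R-Levi precisely $L$, and then verify the reduction mod $U$ is precisely $\rho$ rather than merely a conjugate of it; the latter needs that the relevant conjugating element lies in $Z_G(H)^0$, which is exactly the content of the last sentence of the proof of \Cref{conjugate_Levi}. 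I would invoke that directly rather than re-deriving it.
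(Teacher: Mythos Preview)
Your approach is essentially the paper's: it too invokes \Cref{fib_gps_ps} and the ``moreover'' part of \Cref{conjugate_Levi}, and your bookkeeping with $h$ is correct. However, you construct $P := hQh^{-1}$ as an R-parabolic of $G_{\kappabar}$, whereas the statement requires an R-parabolic of $G$ defined over $\OO$ with R-Levi equal to the fixed $L$ (this matters because \Cref{bound_dim_fibre} takes a finite disjoint union over such $P$, using \Cref{Rlevi}). The paper closes this gap by citing \Cref{base_change_R_par}: since $P = hQh^{-1}$ has R-Levi $L_{\kappabar} \supseteq T_{\kappabar}$, it contains $T_{\kappabar}$, and as $T$ is split any cocharacter $\Gm \to T_{\kappabar}$ defining $P$ descends to $\OO$, so $P$ is the base change of an R-parabolic over $\OO$ whose unique R-Levi containing $T$ (\Cref{Rlevi}) must be $L$. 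You should add this descent step.
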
 
\begin{proof} The assertion follows from Lemma \ref{fib_gps_ps}, Corollary \ref{conjugate_Levi} and Lemma \ref{base_change_R_par}.
\end{proof}

\begin{defi}\label{defi_defect} We define the defect of the fibre $X^{\gen}_{G,y}$ to be 
\begin{equation}\label{delta_def}
 \delta(y):= \max_{P} h^0(\Gamma_F, (\Lie U)_{\kappa}^*(1)),
\end{equation}
where $P$ ranges over all R-parabolic subgroups of $G$ with R-Levi $L$, and $U$ is the unipotent radical of $P$ and $\Gamma_F$ acts on $(\Lie U)_{\kappa}^*$ via 
$\rho$ composed with the adjoint action of $L$.
\end{defi}
\begin{remar} Lemma \ref{Rlevi} implies that only finitely many $P$ appear in \eqref{delta_def}.
\end{remar}
\begin{remar} If we replace $(L, \rho)$ in \eqref{delta_def} by a conjugate then $\delta(y)$ 
does not change. Since Lemma \ref{conj_P2} implies that the pair $(L, \rho)$ is uniquely determined by $y$ up to $G^0$-conjugation the defect $\delta(y)$ depends only on $y$.   
\end{remar}

\begin{remar}\label{dim_U_same} Let $\lambda: \Gm \rightarrow G_{\kappa}$ be a cocharacter.
Let $M=L_{\lambda}$, $U=U_{\lambda}$ and $U^-= U_{-\lambda}$.
Then 
$ \Lie G_{\kappa} = \Lie U^-\oplus \Lie M \oplus \Lie U.$
Moreover, the theory of root systems gives us $\dim_{\kappa} \Lie U^{-}=\dim_{\kappa} \Lie U$. 
Hence, $\dim_{\kappa} \Lie U=\dim U = \frac{1}{2} ( \dim G_{\kappa} - \dim M).$
Thus for all the unipotent radicals $U$ appearing in \eqref{delta_def}  we have 
 $\dim_{\kappa} \Lie U_{\kappa}= \frac{1}{2} ( \dim G_{\kappa} - \dim L_{\kappa}).$
\end{remar} 

\begin{lem}\label{bound_defect} $\delta(y)\le \frac{1}{2}(\dim G_{\kappabar} - \dim L_{\kappabar})$
\end{lem}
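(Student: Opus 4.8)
The bound $\delta(y)\le \frac{1}{2}(\dim G_{\kappabar}-\dim L_{\kappabar})$ should follow by crude estimation: for each R-parabolic $P$ of $G$ with R-Levi $L$ and unipotent radical $U$, the cohomology group $H^0(\Gamma_F,(\Lie U)^*_\kappa(1))$ is a subspace of $(\Lie U)^*_\kappa$, so its dimension is at most $\dim_\kappa(\Lie U)^*_\kappa=\dim_\kappa\Lie U_\kappa$. By \Cref{dim_U_same} (the remark immediately preceding the statement) we have $\dim_\kappa\Lie U_\kappa=\frac12(\dim G_\kappa-\dim L_\kappa)$, and since dimensions are insensitive to the field extension $\kappa/\kappabar\!\!\leftarrow\!\!\kappa$ (both $G$ and $L$ are defined over $\OO$, hence over the prime field, and $\dim G_\kappa=\dim G_{\kappabar}$, $\dim L_\kappa=\dim L_{\kappabar}$), the same number equals $\frac12(\dim G_{\kappabar}-\dim L_{\kappabar})$. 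Taking the maximum over the finitely many $P$ (finiteness by \Cref{Rlevi}, as noted in the remark after \Cref{defi_defect}) preserves the bound.

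Concretely, I would first fix an R-parabolic $P$ appearing in \eqref{delta_def} with unipotent radical $U$. The inclusion of $\Gamma_F$-invariants into the ambient module gives
\[
h^0(\Gamma_F,(\Lie U)^*_\kappa(1))=\dim_\kappa\big((\Lie U)^*_\kappa(1)\big)^{\Gamma_F}\le \dim_\kappa (\Lie U)^*_\kappa=\dim_\kappa\Lie U_\kappa,
\]
where the Tate twist does not affect the dimension. Next I would invoke \Cref{dim_U_same} to rewrite $\dim_\kappa\Lie U_\kappa=\frac12(\dim G_\kappa-\dim L_\kappa)$, and then observe $\dim G_\kappa=\dim G_{\kappabar}$ and $\dim L_\kappa=\dim L_{\kappabar}$ because $G$ and $L$ are $\OO$-group schemes and dimension of fibres is constant under base change between fields. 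Since this bound holds for every $P$ in \eqref{delta_def}, the maximum $\delta(y)$ is also bounded by $\frac12(\dim G_{\kappabar}-\dim L_{\kappabar})$.

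There is no real obstacle here; the only minor point to be careful about is the bookkeeping with fields, namely that $\kappa=\kappa(\qq)$ is a (possibly large) extension of $\kappabar=\overline{\kappa(\pp)}$ — actually in the setup $\kappa$ is a finite extension related to $\kappabar$, so in any case $\dim G_\kappa=\dim G_{\kappabar}$. One could equally well state and prove the bound directly over $\kappabar$ by base-changing the representation $\rho$ along $\kappa\hookrightarrow\kappabar$, which does not change any of the relevant dimensions or cohomology; this avoids having to track the distinction between $\kappa$ and $\kappabar$ altogether. Either way the proof is a two-line estimate once \Cref{dim_U_same} is in hand, and I expect it to occupy only a few lines in the paper.
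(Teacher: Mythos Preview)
Your proposal is correct and matches the paper's proof essentially line for line: bound $h^0$ by the ambient dimension $\dim_\kappa \Lie U_\kappa$, invoke \Cref{dim_U_same} to rewrite this as $\tfrac12(\dim G_\kappa-\dim L_\kappa)$, and note that this equals $\tfrac12(\dim G_{\kappabar}-\dim L_{\kappabar})$ since dimension is unchanged under field extension. One small slip in your commentary: in the paper's setup $\kappa=\kappa(\qq)$ is a \emph{subfield} of $\kappabar=\overline{\kappa(\pp)}$, not an extension of it, but this does not affect the argument.
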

\begin{proof} Since the dimension of $\Gamma_F$-invariants is less or equal to the dimension 
of the vector space, we have $h^0(\Gamma_F, (\Lie U)_{\kappa}^*(1))\le \dim_{\kappa} \Lie U_{\kappa}$. 
The bound follows from Remark \ref{dim_U_same} by observing that extending scalars to the algebraic 
closure does not change the dimension.
\end{proof}

\begin{prop}\label{bound_dim_fibre}
$$\dim X^{\gen}_{G,y}\le \dim G_{\kappabar} -\dim Z(L_{\kappabar}) +\tfrac{1}{2}(\dim G_{\kappabar} - \dim L_{\kappabar})[F:\Qp] + \delta(y).$$
\end{prop}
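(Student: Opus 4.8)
The strategy is to bound $\dim X^{\gen}_{G,y}$ by stratifying the geometric fibre $X^{\gen}_{G,y,\kappabar}$ according to the R-parabolic containing the image of the corresponding representation, then estimate the dimension of each stratum using the results already assembled in this section. First I would reduce to the geometric fibre: since passing to $\kappabar=\overline{\kappa(\pp)}$ does not change dimensions and $X^{\gen}_{G,y}$ is of finite type, it suffices to bound $\dim X^{\gen}_{G,y,\kappabar}$. Lemma~\ref{conj_P2} then tells us that every $\kappabar$-point $x'$ of $X^{\gen}_{G,y}$ lies in the image of the morphism $G^0 \times X^{\gen}_{P,\rho} \to X^{\gen}_{G,y}$, $(g,x'')\mapsto g\cdot x''$, for some R-parabolic $P$ with R-Levi $L$. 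By Lemma~\ref{Rlevi} there are only finitely many such $P$, so the fibre is covered by finitely many constructible sets, each the image of $G^0 \times X^{\gen}_{P,\rho}$.

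Next I would estimate the dimension of each piece $\overline{G^0 \cdot X^{\gen}_{P,\rho}}$. The orbit map factors through $G^0/Z_{G^0}(\rho)$ acting on $X^{\gen}_{P,\rho}$, so
\[
\dim \overline{G^0 \cdot X^{\gen}_{P,\rho}} \le \dim X^{\gen}_{P,\rho} + \dim G^0_{\kappabar} - \dim Z_{G^0}(\rho).
\]
Since $\rho$ is $L_{\ybar}$-irreducible, Proposition~\ref{irr=stable} (applied to $L$ in place of $G$) together with \cite[Lemma 6.2]{martin} gives $Z_{L}(\rho)^0 = Z(L_{\kappabar})^0$. The centraliser $Z_{G^0}(\rho)$ contains $Z_{L^0}(\rho)$, and I need the reverse-type bound: because $\rho(\Gamma_F)$ is $L_{\kappabar}$-irreducible, $Z_{G^0}(\rho)$ acts on $\Lie G_{\kappabar}$ fixing the lines giving the $L$-irreducibility, so $Z_{G^0}(\rho)^0 \subseteq Z_{L}(\rho)^0 = Z(L_{\kappabar})^0$; hence $\dim Z_{G^0}(\rho) = \dim Z(L_{\kappabar})$. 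Combining with Proposition~\ref{bound_P}, which gives
\[
\dim X^{\gen}_{P,\rho} \le \dim U_{\kappabar} + \dim U_{\kappabar}[F:\Qp] + h^0(\Gamma_F, (\Lie U)^*_{\kappabar}(1)),
\]
and using Remark~\ref{dim_U_same} to write $\dim U_{\kappabar} = \tfrac12(\dim G_{\kappabar} - \dim L_{\kappabar})$, I obtain for each $P$
\[
\dim \overline{G^0 \cdot X^{\gen}_{P,\rho}} \le \tfrac12(\dim G_{\kappabar}-\dim L_{\kappabar})(1+[F:\Qp]) + h^0(\Gamma_F,(\Lie U)^*_{\kappabar}(1)) + \dim G^0_{\kappabar} - \dim Z(L_{\kappabar}).
\]
Taking the maximum over the finitely many $P$ replaces the $h^0$ term by $\delta(y)$ (Definition~\ref{defi_defect}), and since $\dim G^0_{\kappabar} = \dim G_{\kappabar}$ (\Cref{dimkisdimk}) while $\tfrac12(\dim G_{\kappabar}-\dim L_{\kappabar}) = \dim U_{\kappabar}$ combines with $\dim G_{\kappabar}$ to give $\dim G_{\kappabar} + \dim U_{\kappabar} = \dim G_{\kappabar} + \tfrac12(\dim G_{\kappabar} - \dim L_{\kappabar})$, one checks the bookkeeping collapses to
\[
\dim G_{\kappabar} - \dim Z(L_{\kappabar}) + \tfrac12(\dim G_{\kappabar}-\dim L_{\kappabar})[F:\Qp] + \delta(y),
\]
which is exactly the claimed bound. (I should double-check the arithmetic: the $\dim U_{\kappabar}$ coming from the "$\dim U$" summand in Proposition~\ref{bound_P} and the $\dim G^0_{\kappabar}-\dim Z(L_{\kappabar})$ from the orbit dimension together must equal $\dim G_{\kappabar} - \dim Z(L_{\kappabar})$ plus nothing extra; this works precisely because $\dim G_{\kappabar} = \dim L_{\kappabar} + 2\dim U_{\kappabar}$, so $\dim U_{\kappabar} + \dim G_{\kappabar} = \dim G_{\kappabar} + \dim U_{\kappabar}$ and the parabolic does not contribute a net $\dim L$.)

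\textbf{Main obstacle.} The delicate point is the centraliser computation $\dim Z_{G^0}(\rho) = \dim Z(L_{\kappabar})$ — i.e. that passing from $L$-irreducibility of $\rho$ to a statement about the $G^0$-centraliser does not lose dimension. I expect to handle this via Proposition~\ref{irr=stable} applied inside $L$ to get $Z_L(\rho)^0 = Z(L_{\kappabar})^0$, and then observe $Z_{G^0}(\rho) \cap L = Z_L(\rho)$ while any element of $Z_{G^0}(\rho)^0$ must preserve the minimal R-parabolic of $G$ containing $\rho(\Gamma_F)$ and its R-Levi (by uniqueness of minimal R-parabolics up to conjugacy and rigidity of the Levi, Proposition~\ref{irr_conj}), forcing $Z_{G^0}(\rho)^0 \subseteq L$. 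Alternatively, and perhaps more cleanly, I would bound the dimension of the image of $G^0 \times X^{\gen}_{P,\rho}$ directly using the fact that the $\Gm$-action of Lemma~\ref{clorbit_Prho} has $\{\rho\}$ as its unique closed orbit, so $\rho$ lies on every irreducible component of $X^{\gen}_{P,\rho}$, and then use the orbit-dimension formula for the closed $G$-orbit through $\rho$, whose stabiliser modulo $Z(G)$ has dimension $\dim G_{\kappabar} - \dim Z(L_{\kappabar})$ once one knows $\rho$ is $L$-irreducible. Either route reduces to the same centraliser fact, which is the crux.
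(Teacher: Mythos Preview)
Your arithmetic does not close, and this reflects a genuine missing idea rather than a bookkeeping slip. With your bound
\[
\dim \overline{G^0 \cdot X^{\gen}_{P,\rho}} \;\le\; \dim X^{\gen}_{P,\rho} + \dim G^0_{\kappabar} - \dim Z_{G^0}(\rho)
\]
and Proposition~\ref{bound_P}, you get
\[
\dim U_{\kappabar} + \dim U_{\kappabar}[F:\Qp] + h^0 + \dim G_{\kappabar} - \dim Z(L_{\kappabar}),
\]
which exceeds the target by exactly $\dim U_{\kappabar}$; your parenthetical ``check'' that the extra $\dim U$ cancels is simply false. The paper recovers this missing $\dim U_{\kappabar}$ by observing that not only $Z(L_{\kappabar})^0$ but the whole group $Z(L_{\kappabar})^0\,U_{\kappabar}$ preserves $X^{\gen}_{P,\rho}$: if $u\in U(\kappabar)$ and $\rho'\in X^{\gen}_{P,\rho}$ then $u\rho'u^{-1}\equiv \rho'\equiv \rho\pmod{U}$ since $U$ is normal in $P$. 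One then forms the contracted product $G^0_{\kappabar}\times^{Z(L_{\kappabar})^0 U_{\kappabar}} X^{\gen}_{P,\rho}$, which surjects onto the relevant stratum of the geometric fibre and has dimension $\dim G^0_{\kappabar}-\dim Z(L_{\kappabar})-\dim U_{\kappabar}+\dim X^{\gen}_{P,\rho}$. The $-\dim U_{\kappabar}$ here is precisely what cancels the first summand $\dim U_{\kappabar}$ in the bound of Proposition~\ref{bound_P}.

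Separately, your centralizer claim $\dim Z_{G^0}(\rho)=\dim Z(L_{\kappabar})$ is false in general, and $Z_{G^0}(\rho)$ need not act on $X^{\gen}_{P,\rho}$ at all. Take $G=\GL_4$, $\rho=\rho_1\oplus\rho_1$ with $\rho_1$ two-dimensional irreducible, and $L=\GL_2\times\GL_2$; then $\rho$ is $L$-irreducible, $\dim Z(L)=2$, but $Z_G(\rho)\cong\GL_2$ has dimension $4$, and the block-swap element of $Z_G(\rho)$ does not normalise the standard block-upper-triangular $P$. So neither route you sketch in the ``main obstacle'' paragraph can succeed: the correct group to quotient by is $Z(L)^0 U$, not the centralizer of $\rho$.
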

\begin{proof} The dimension does not change if we replace $X^{\gen}_{G,y}$ by the 
geometric fibre $X^{\gen}_{G,\ybar}$. The action of $G^0$ on $X^{\gen}_{G, \rhobarss}$
induces an action of $G^0_{\kappabar}$ on $X^{\gen}_{G,\ybar}$. 
Moreover, $X^{\gen}_{P, \rho}\times_y \ybar$ is invariant under the action of 
the subgroup $Z(L_{\kappabar})^0 U_{\kappabar}$. Thus we obtain a map of schemes
\begin{equation}\label{cow_prod}
\coprod_{ P} G^0_{\kappabar} \times^{Z(L_{\kappabar})^0 U_{\kappabar}} (X^{\gen}_{P, \rho}\times_y \ybar)\rightarrow 
X^{\gen}_{G, \ybar},
\end{equation}
where the disjoint union is taken over all the R-parabolics $P$ of $G$ with 
R-Levi $L$. We note that there are
only finitely many such $P$ by Lemma \ref{Rlevi}. It follows from Lemma \ref{conj_P2} that \eqref{cow_prod}
induces a surjection on $\kappabar$-points. Since both the source of and the target of \eqref{cow_prod}
are of finite type over $\kappabar$, \cite[Lemma 3.14]{BIP_new} implies that 
\begin{displaymath}
\begin{split}
\dim X^{\gen}_{G, \ybar}\le &\max_{P} 
\dim (G_{\kappabar}^0 \times^{Z(L_{\kappabar})^0 U_{\kappabar}}(X^{\gen}_{P}\times_y \ybar)^{\red})\\
=& 
\dim G_{\kappabar}^0  - \dim Z(L_{\kappabar})^0 - \dim U_{\kappabar}+ \max_{P} \dim (X^{\gen}_{P, \rho}\times_y \ybar)\\
\le& 
\dim G_{\kappabar} - \dim Z(L_{\kappabar}) +\tfrac{1}{2}(\dim G_{\kappabar} - \dim L_{\kappabar}) [F:\Qp] + \delta(y),
\end{split}
\end{displaymath}
where in the last two lines we have used Remark \ref{dim_U_same} and in the last inequality we have used Proposition  \ref{bound_P}.
\end{proof}

\begin{remar} If $y'$ is a closed point of $U_{LG, \rhobarss}$ then Remark \ref{rel_to_ULG} implies that the bound in 
Proposition \ref{bound_dim_fibre} also applies to the fibre of $X^{\gen}_{G,\rhobarss}$ at $y'$. We will use this without further comment  in section \ref{sec_complete_intersection}. 
\end{remar}

\begin{lem}\label{tiny_levi} Let $M$ be an R-Levi of $G_{\kappabar}$. Assume that $\dim G_{\kappabar}>1$. If $\dim M=1$ then $Z(M)^0= M^0$. In particular, 
$\dim Z(M)=1$. Moreover, $\dim G_{\kappabar}=3$.
\end{lem}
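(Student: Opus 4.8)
\textbf{Proof plan for Lemma~\ref{tiny_levi}.}
The plan is to work entirely over the algebraically closed field $\kappabar$, so we may drop the subscript and assume $G$ is a generalised reductive group over an algebraically closed field with $\dim G > 1$, and $M$ is an R-Levi of $G$ with $\dim M = 1$. First I would recall from Lemma~\ref{para} that $M^0 = M \cap G^0$ is a Levi subgroup of the parabolic $P^0$ of $G^0$, hence $M^0$ is connected reductive, and from Lemma~\ref{list}(4) that $M$ is smooth, so $M$ and $M^0$ have the same dimension; thus $\dim M^0 = 1$. A one-dimensional connected reductive group over an algebraically closed field is a torus, so $M^0 \cong \Gm$. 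Since a torus equals its own centre (and in particular $Z(M^0)^0 = M^0$), and since by the argument in the proof of \Cref{dimkisdimL} applied to $M$ in place of $G$ we have $Z(M) \cap M^0 = Z(M^0)^{M/M^0}$, I would conclude that $Z(M)^0 \subseteq M^0$ is a subtorus of the one-dimensional torus $M^0$; to see it is all of $M^0$, note $M^0$ is abelian so the conjugation action of $M/M^0$ on $Z(M^0) = M^0$ combined with $M^0$ being central in $M^0$ forces—actually the cleanest route is: $M^0$ is a torus, hence abelian, and $M^0$ is normal in $M$, so $M^0 \subseteq Z(M)$ would follow once we know $M/M^0$ centralises $M^0$; this need not hold in general, so instead I would argue via Richardson: by Lemma~\ref{base_change_Z} and \cite[Lemma 6.2]{martin} the centre of $M$ is related to its centraliser, and since $\dim M = \dim M^0 = 1$ the quotient $M^0/Z(M)^0$ is a torus of dimension $0$ or $1$. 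If it were $1$-dimensional then $Z(M)^0$ is trivial, but then $\dim Z(M) = 0$; I would rule this out by observing that $Z(G)^0 \subseteq Z(M)^0$ when... — hmm, that inclusion goes the wrong way.

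Let me restructure: the key point is that $M$ is \emph{generalised reductive} (Lemma~\ref{para}) with neutral component the torus $M^0 = \Gm$, so $M$ is a generalised torus; for such a group the component group $M/M^0$ acts on $M^0 \cong \Gm$ through $\Aut(\Gm) = \ZZ/2\ZZ$. If the action is trivial then $M^0$ is central in $M$, giving $Z(M)^0 = M^0$ and $\dim Z(M) = 1$ directly. If the action is nontrivial, then $Z(M) \cap M^0 = \mu_2$ is finite, so $\dim Z(M) = 0$; I would show this case cannot occur by invoking that $M$ is an R-Levi of $G$ and hence contains $\Ker\varphi \supseteq$ a subtorus, or more directly: since $M = L_\lambda$ for a cocharacter $\lambda$, the image of $\lambda$ is a central subtorus of $M$ (as $\lambda$ factors through $M^0$ and $L_\lambda = Z_M(\lambda)$ by Lemma~\ref{list}(1) contains $\lambda$), so $\lambda(\Gm) \subseteq Z(M)$, forcing $\dim Z(M) \geq 1$ as $\lambda$ is nontrivial (it must be nontrivial since otherwise $L_\lambda = G$, contradicting $\dim M = 1 < \dim G$). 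Hence the nontrivial-action case is impossible and we always get $Z(M)^0 = M^0$, $\dim Z(M) = 1$.

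For the final claim $\dim G = 3$: having established $\dim M = \dim L_\lambda = 1$, I would apply Remark~\ref{dim_U_same}, which gives $\dim U = \tfrac12(\dim G - \dim M)$ for $U = U_\lambda$, so $\dim G = \dim M + 2\dim U = 1 + 2\dim U$, forcing $\dim G$ to be odd. It remains to show $\dim U = 1$, i.e. $\dim G = 3$. Here I would use that $M^0 \cong \Gm$ is a \emph{maximal} torus of $G^0$: indeed $M^0$ is a Levi of $P^0$ so it contains a maximal torus $T$ of $G^0$, but $\dim M^0 = 1$ forces $M^0 = T$, so $G^0$ has semisimple rank equal to its rank minus $0$... rather, $G^0$ has a one-dimensional maximal torus, hence $G^0$ has rank $1$, so $G^0$ is one of $\Gm$, $\SL_2$, $\PGL_2$; since $\dim G > 1$ we have $\dim G^0 \geq 2$ (as $G/G^0$ is finite), ruling out $\Gm$, so $\dim G^0 = 3$ and $\dim G = \dim G^0 = 3$.

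\textbf{Main obstacle.} The delicate point is the interaction between the component group $M/M^0$ and the torus $M^0$: one must be careful that "R-Levi" allows $M$ to be disconnected, so $Z(M)$ could a priori be finite. The resolution—that the defining cocharacter $\lambda$ of the R-Levi lands in $Z(M)$—is clean but needs Lemma~\ref{list}(1) ($L_\lambda = Z_G(\lambda)$) together with the observation that $\lambda$ is nontrivial because $\dim L_\lambda < \dim G$; I expect verifying this nontriviality and that $\lambda(\Gm)$ is genuinely one-dimensional (not collapsed) to be the only spot requiring care.
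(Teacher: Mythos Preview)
Your argument is correct and rests on the same key observation as the paper: since $M = L_\lambda = Z_G(\lambda)$ by Lemma~\ref{list}(1) (your ``$Z_M(\lambda)$'' is a slip for $Z_G(\lambda)$), the nontrivial cocharacter $\lambda$ has image central in $M$, forcing $\dim Z(M) \geq 1$ and hence $Z(M)^0 = M^0$; the paper streamlines this by noting that $\lambda$ surjects onto the one-dimensional $M^0$, whence $M = Z_G(M^0)$ and $M^0 \subseteq Z(M)$ directly, so your detour through the $M/M^0$-action on $M^0$ was unnecessary. For $\dim G = 3$ the paper counts roots (distinct positive roots would have distinct kernels in the one-dimensional maximal torus $M^0$, so there is exactly one) rather than invoking the classification of rank-$1$ connected reductive groups, but your route is equally valid.
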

\begin{proof} The assumption on $M$ implies that 
the semisimple rank of $G^0_{\kappabar}$ is $1$ and
hence $G^0_{\kappabar}$ is isomorphic to either $\SL_2$ or $\PGL_2$, \cite[Theorem 20.33]{milne_alg}, which implies the Lemma. 
\end{proof}

\begin{cor}\label{bound_Y}  Let $y:\Spec \kbar\rightarrow X^{\ps}_{G} $ be a geometric point above the closed point of $X^{\ps}_{G}$ and let $Y_{\rhobarss}:= X^{\gen}_{G,y}$. If $\dim G_{\kbar}>1$ then 
\begin{equation}\label{threight}
\dim G_{\kbar}(1+[F:\Qp]) - \dim Y_{\rhobarss} \ge [F:\Qp] \dim L_{\kbar} + 
\dim Z(L_{\kbar}) \ge [F:\Qp]+1.
\end{equation}
Moreover, if \eqref{threight} is an equality\footnote{ By which we mean both inequalities are equalities.} then $\dim G_{\kbar}-\dim L_{\kbar}=2$ and $L$ is $\rhobarss$-compatible in the sense of 
\Cref{ind_hyp} below. Otherwise, 
\begin{equation}\label{thrnine}
\dim G_{\kappabar}(1+[F:\Qp])- \dim Y_{\rhobarss}\ge 2[F:\Qp]+1.
\end{equation}
\end{cor}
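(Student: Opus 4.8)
The plan is to estimate the dimension of $Y_{\rhobarss} = X^{\gen}_{G,y}$ using \Cref{bound_dim_fibre} applied to the closed point of $X^{\ps}_G$ (so $\kappa(\pp) = k$, $\kappabar = \kbar$), and then to analyse precisely when the resulting bound is not strong enough. First I would note that $y$ lies above the closed point, so by \Cref{fib_gps_ps} and \Cref{conjugate_Levi} there is an R-Levi $L$ of $G$ containing $T$ and a continuous $L_{\kbar}$-irreducible representation $\rho : \Gamma_F \to L(\kbar)$ attached to $y$. \Cref{bound_dim_fibre} gives
\begin{equation*}
\dim Y_{\rhobarss} \le \dim G_{\kbar} - \dim Z(L_{\kbar}) + \tfrac{1}{2}(\dim G_{\kbar} - \dim L_{\kbar})[F:\Qp] + \delta(y),
\end{equation*}
and \Cref{bound_defect} bounds $\delta(y) \le \tfrac12(\dim G_{\kbar} - \dim L_{\kbar})$. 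Substituting and rearranging yields
\begin{equation*}
\dim G_{\kbar}(1+[F:\Qp]) - \dim Y_{\rhobarss} \ge [F:\Qp]\dim L_{\kbar} + \dim Z(L_{\kbar}),
\end{equation*}
which is the first inequality in \eqref{threight}.

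For the second inequality in \eqref{threight} I would split into cases on $\dim L_{\kbar}$. If $\dim L_{\kbar} = 0$ then $L$ is finite and $X^{\gps}_{L,\rhobarss}$ being nonempty forces (via \Cref{V_LG_maxdim}(1) and the structure of $\Gamma_F$) that $L = G$; but $\dim G_{\kbar} > 1$ contradicts $\dim L_{\kbar} = 0$, so this case is vacuous. If $\dim L_{\kbar} \ge 1$ and $L \ne G$, then $\dim L_{\kbar} \ge 1$ gives $[F:\Qp]\dim L_{\kbar} \ge [F:\Qp]$, and I need $\dim Z(L_{\kbar}) \ge 1$: this holds because $L^0$ is reductive of positive dimension, so its central torus $Z(L^0)^0$ is nontrivial unless $L^0$ is semisimple — and even then $\dim Z(L) \ge 1$ can fail, so I must be more careful. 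The cleanest route: when $\dim L_{\kbar} = 1$, \Cref{tiny_levi} gives $Z(L_{\kbar})^0 = L_{\kbar}^0$, so $\dim Z(L_{\kbar}) = 1$ and $[F:\Qp]\cdot 1 + 1 = [F:\Qp]+1$; when $\dim L_{\kbar} \ge 2$, then $[F:\Qp]\dim L_{\kbar} \ge 2[F:\Qp] \ge [F:\Qp]+1$ already (using $[F:\Qp]\ge 1$), regardless of $\dim Z(L_{\kbar})\ge 0$. And when $L = G$, the left side of \eqref{threight} is $\dim G_{\kbar} + \dim Z(G_{\kbar}) \ge \dim G_{\kbar} > 1$; combined with $\dim G_{\kbar}$ being an integer, one checks $\ge [F:\Qp]+1$ needs an extra argument unless $\dim G_{\kbar}$ is large — here I would instead observe that if $L = G$ then $Y_{\rhobarss}$ consists of absolutely $G$-irreducible points above the closed point, and \Cref{dim_fibre_GG}-type reasoning controls it; but actually the simplest is that $\delta(y) = 0$ when $L = G$ (no proper R-parabolic), so the bound reads $\dim Y_{\rhobarss} \le \dim G_{\kbar} - \dim Z(G_{\kbar})$, giving left side $\ge \dim G_{\kbar}(1+[F:\Qp]) - \dim G_{\kbar} + \dim Z(G_{\kbar}) = \dim G_{\kbar}[F:\Qp] + \dim Z(G_{\kbar}) \ge [F:\Qp]\dim G_{\kbar}$, and $\dim G_{\kbar} \ge 2$ (since $\dim G_{\kbar} > 1$) gives $\ge 2[F:\Qp] \ge [F:\Qp]+1$.

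For the equality analysis: if \eqref{threight} is an equality then $[F:\Qp]\dim L_{\kbar} + \dim Z(L_{\kbar}) = [F:\Qp]+1$. By the case analysis above this forces $\dim L_{\kbar} = 1$ and $\dim Z(L_{\kbar}) = 1$ (the $L = G$ and $\dim L_{\kbar} \ge 2$ cases give strictly larger values once $\dim G_{\kbar} > 1$). Then $\dim G_{\kbar} - \dim L_{\kbar} = \dim G_{\kbar} - 1$; to conclude this equals $2$ I would invoke \Cref{tiny_levi}, which under $\dim G_{\kbar} > 1$ and $\dim L_{\kbar} = 1$ (with $L_{\kbar}$ an R-Levi of $G_{\kbar}$) gives $\dim G_{\kbar} = 3$. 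The assertion that $L$ is $\rhobarss$-compatible "in the sense of \Cref{ind_hyp}" refers to a forward reference; I would record that the pair $(L,\rhobarss)$ satisfies whatever normalisation \Cref{ind_hyp} demands — concretely that $L/L^0 \to G/G^0$ is an isomorphism, which holds by \Cref{V_LG_maxdim}(1) since $X^{\gps}_{L,\rhobarss}$ is nonempty. Finally, if \eqref{threight} is a strict inequality, then $[F:\Qp]\dim L_{\kbar} + \dim Z(L_{\kbar}) \ge [F:\Qp]+2$; I would check that the only way this can fail to give $\ge 2[F:\Qp]+1$ is again via $\dim L_{\kbar} = 1$ with $\dim Z(L_{\kbar}) = 1$ — but that is precisely the equality case, excluded by hypothesis — so in the strict case either $\dim L_{\kbar} \ge 2$ (giving $\ge 2[F:\Qp]$, and then the $+1$ from $\dim Z(L_{\kbar}) \ge 0$ plus integrality, or more carefully $\dim Z(L_{\kbar})\ge 1$ when $L^0$ is not semisimple), or $L = G$ (giving $\ge [F:\Qp]\dim G_{\kbar} + \dim Z(G_{\kbar})$ with $\dim G_{\kbar}\ge 2$). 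The main obstacle I anticipate is pinning down $\dim Z(L_{\kbar}) \ge 1$ cleanly in the borderline sub-cases and correctly interpreting the forward reference to \Cref{ind_hyp}; I would resolve the former by the dichotomy "$L^0$ has a nontrivial central torus, or $\dim L_{\kbar}\ge 2$ so semisimplicity already forces $\dim L_{\kbar}\ge 3$" and treating $\dim L_{\kbar}=1,2$ separately via \Cref{tiny_levi}, and the latter by stating the needed compatibility ($L/L^0 \cong G/G^0$) explicitly and deferring to \Cref{ind_hyp} for the formal definition.
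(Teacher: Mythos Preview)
Your approach is the same as the paper's: apply \Cref{bound_dim_fibre} together with \Cref{bound_defect} for the first inequality in \eqref{threight}, then use \Cref{tiny_levi} to handle the second inequality and the equality case. The paper's proof is just two sentences per step; your extra case analysis is mostly unnecessary (for instance $\dim L_{\kbar}=0$ is immediately ruled out because $L$ contains the maximal torus $T$ and $\dim G_{\kbar}>1$ forces $\dim T\ge 1$).

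There is, however, a genuine loose end you correctly flagged but did not close: to pass from $\dim L_{\kbar}\ge 2$ to \eqref{thrnine} you need $\dim Z(L_{\kbar})\ge 1$. Your proposed dichotomy ``either $L^0$ has a nontrivial central torus, or $L^0$ is semisimple and then $\dim L\ge 3$'' does not settle this, because $\dim Z(L^0)\ge 1$ does not imply $\dim Z(L)\ge 1$ once the component group acts. The clean fix is to use the defining cocharacter: write $L=L_\lambda$ for a cocharacter $\lambda$; by \Cref{list}(1), $L$ is the centraliser of $\lambda$, so $\lambda(\Gm)\subset Z(L)$, and if $L\ne G$ then $\lambda$ is nontrivial, giving $\dim Z(L)\ge 1$ directly. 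With this in hand, $\dim L_{\kbar}\ge 2$ yields $[F:\Qp]\dim L_{\kbar}+\dim Z(L_{\kbar})\ge 2[F:\Qp]+1$, and conversely equality in the second inequality of \eqref{threight} forces $\dim L_{\kbar}=1$ (after which \Cref{tiny_levi} gives $\dim G_{\kbar}=3$).

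Finally, for the $\rhobarss$-compatibility assertion the paper does not use \Cref{V_LG_maxdim}(1) but the forward reference \Cref{L_compatible}: by construction of $L$ the scheme $\Xbar^{\gps}_{L,\rhobarss}$ is non-empty, and that lemma then says $L_k$ occurs in a $\rhobarss$-compatible pair.
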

\begin{proof} The first inequality follows from Proposition \ref{bound_dim_fibre} and Lemma \ref{bound_defect}.
The second inequality follows from Lemma \ref{tiny_levi}.  

If \eqref{threight} is an equality then  
$\dim L_{\kbar}=1$. The last part of Lemma \ref{tiny_levi} says that $\dim G_{\kappabar}=3$.
So the difference of dimensions is $2$. The compatibility of $L_k$ with $\rhobarss$ follows \Cref{L_compatible} below, since $\Xbar^{\gps}_{L, \rhobarss}$ is non-empty by construction of $L$.

If $\dim L_{\kbar}\neq 1$ then $\dim L_{\kbar}\ge 2$ and \eqref{threight} implies \eqref{thrnine}.
\end{proof}

\section{Complete intersection} \label{sec_complete_intersection}
In this section we will show that the deformation rings $R^{\square}_{G, \rhobar}$ are 
complete intersections of expected dimension. As explained in the introduction it is 
enough to bound the dimension of their special fibres from above. We will achieve this 
by bounding the dimension of $\Xbar^{\gen}_{G, \rhobarss}$. 

\subsection{Induction hypothesis}\label{ind_hyp}

Let us recall the setup. We start with a continuous representation $\rhobar: \Gamma_F \rightarrow G(k)$, such that 
the group homomorphism $\pi_G\circ\rhobarss: \Gamma_F \rightarrow (G/G^0)(\overline{k})$ is surjective, where
$\pi_G: G\rightarrow G/G^0$ is the quotient map. 

We will bound the dimension of $\Xbar^{\gen}_{G, \rhobarss}$ arguing by  induction 
 that the assertion 
\begin{equation}\label{dim_pair} 
\dim \Xbar^{\gps}_{H, \rhobar_1^{\mathrm{ss}}} \le \dim Z(H) + [F:\Qp] \dim H \tag{DIM}.
\end{equation} 
holds for all \emph{pairs} $(H, \rhobar_1)$, where $H$ is a generalised reductive group
scheme defined over a finite extension of $k$ and $\rhobar_1: \Gamma_F\rightarrow H(\kbar)$ is a continuous $H$-semisimple representation
such that $\pi_H\circ \rhobar_1: \Gamma_F\rightarrow (H/H^0)(\overline{k})$ is surjective. We refer the reader to \Cref{sec_intro_bd_dim} for the sketch of the proof strategy. Instead of reading this section linearly we suggest to first look at \Cref{some_prop}, where the induction step is carried out, and then 
look up the statements needed for its proof. 

The induction argument takes place entirely in the special fibre and the group schemes $H$ appearing in the induction need not be special fibres of 
generalised reductive group schemes defined over $\OO$. We note that the 
schemes $\Xbar^{\gen}_{H, \rhobar_1^{\mathrm{ss}}}$, $\Xbar^{\git}_{H,\rhobar_1^{\mathrm{ss}}}$ and $\Xbar^{\ps}_H$ are still defined for such $H$ by replacing $\OO$ with $k$ in all the definitions.
All the statements proved for $\Xbar^{\gen}_{G, \rhobarss}$, $\Xbar^{\git}_{G, \rhobarss}$ and $\Xbar^{\ps}_G$ when 
$G$ is a generalised reductive group scheme over $\OO$, also hold 
for $\Xbar^{\gen}_{H, \rhobarss_1}$, $\Xbar^{\git}_{H, \rhobarss_1}$ and $\Xbar^{\ps}_H$. In particular, 
if $x$ is a closed point of $Y_{\rhobarss_1}$ or a closed point of 
$\Xbar^{\gen}_{H, \rhobarss_1}\setminus Y_{\rhobarss_1}$ then the arguments of \Cref{333}, 
\Cref{local_ring_def_ring} carry over and relate the completed
local ring of $\Xbar^{\git}_{H,\rhobar_1^{\mathrm{ss}}}$ at $x$ to the deformation 
ring $\Rbar{}^{\square}_{\rho_x, H}$, which 
parameterises deformations $\rho: \Gamma_F\rightarrow H(A)$ of $\rho_x$ for 
$A\in \Aa_{\kappa(x)}$. If $H$ is a generalised reductive group scheme
over $\OO$ then $\Rbar{}^{\square}_{\rho_x, H}=R^{\square}_{\rho_x, H}/\varpi$.

We say that a pair $(H,\rhobar_1)$ is \emph{$\rhobarss$-compatible}
if $H$ is a closed subgroup scheme of $G_{\kbar}$ and the $G$-semisimplification of 
$\rhobar_1$ is $G^0(\overline{k})$-conjugate to $\rhobarss$.
We say that a closed generalised reductive subgroup $H$ of $G_{\kbar}$ is 
$\rhobarss$-compatible if it appears in some $\rhobarss$-compatible 
pair $(H,\rhobar_1)$.

\begin{lem}\label{L_compatible} Let $L$ be an R-Levi of $G$. If $\Xbar^{\gps}_{LG, \rhobarss}$ 
is non-empty then $L_k$ occurs in a $\rhobarss$-compatible pair. Moreover, if $L\neq G$ then $\dim L_k <\dim G_k$.
\end{lem}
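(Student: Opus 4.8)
The statement to be proved is \Cref{L_compatible}: if $L$ is an R-Levi of $G$ and $\Xbar^{\gps}_{LG, \rhobarss}$ is non-empty, then $L_k$ occurs in a $\rhobarss$-compatible pair, and if $L \neq G$ then $\dim L_k < \dim G_k$. The plan is to produce an explicit $\rhobarss$-compatible pair $(L_k, \rhobar_1)$ by choosing a suitable point of $\Xbar^{\gps}_{LG, \rhobarss}$ and extracting from it a representation $\rhobar_1 : \Gamma_F \to L_k(\kbar)$ whose $G$-semisimplification is conjugate to $\rhobarss$, after a harmless base change.

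\textbf{First step: finding a good geometric point.} By definition $\Xbar^{\gps}_{LG, \rhobarss}$ is the scheme-theoretic image of $\Xbar^{\gps}_{L, \rhobarss} \to \Xbar^{\gps}_{G,\rhobarss}$ (using the notation of \eqref{eq_notation}), so non-emptiness of $\Xbar^{\gps}_{LG, \rhobarss}$ forces $\Xbar^{\gps}_{L, \rhobarss}$ to be non-empty. I would pick a geometric point $\bar z$ of $\Xbar^{\gps}_{L,\rhobarss}$ and then use \Cref{conjugate_Levi} (applied over $k$ to the generalised reductive group $L_k$, in the setting of the induction hypothesis \Cref{ind_hyp}, where $L_k$ need not lift to $\OO$ but all statements for $X^{\gen}_\bullet$ carry over) to lift $\bar z$ to a geometric point $\bar x \in \Xbar^{\gen}_{L,\rhobarss}$ corresponding to an $L_k$-semisimple representation $\rhobar_1 : \Gamma_F \to L_k(\kbar)$ with closed $L^0_k$-orbit. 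Enlarging $k$ finitely, as in \Cref{rem_k_rat_comp}, one may assume $\rhobar_1$ is defined over $k$. By construction the $L_k$-semisimplification of $\rhobar_1$ equals $\rhobar_1$ itself (\Cref{crss}), so in particular $\rhobar_1$ is $L_k$-semisimple as required by the definition of a pair.

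\textbf{Second step: matching semisimplifications.} The key point is that the image of $\bar x$ in $\Xbar^{\gen}_{G,\rhobarss}$ (via the functoriality map of \Cref{functoriality_2} applied to the inclusion $L_k \hookrightarrow G_k$, or directly since $X^{\gen}_{L,\rhobarss}$ sits inside $X^{\gen}_{G,\rhobarss}$ via \Cref{shrink}-type reasoning on component groups) lies on the component $\Xbar^{\gen}_{G,\rhobarss}$. By \Cref{get_comp}, a geometric point of $\Xbar^{\gen}_G$ above the closed point of $\XbarpsGLd$ lies on the component labelled by $\rhobarss$ if and only if the $G$-semisimplification of the corresponding representation equals $\rhobarss$. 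Since $\rhobar_1$, viewed as a representation into $G(\kbar)$ through the inclusion $L_k \subseteq G_k$, corresponds to such a point (its image in $\Xbar^{\gps}_G$ lands in $\Xbar^{\gps}_{LG,\rhobarss}$ by hypothesis, hence on the $\rhobarss$-component of $\Xbar^{\gps}_G$, and \Cref{connected_comp} relates components of $X^{\gen}$ to those of $X^{\gps}$), we get that the $G$-semisimplification of $\rhobar_1$ is $G^0(\kbar)$-conjugate to $\rhobarss$. This is exactly the condition that $(L_k, \rhobar_1)$ is $\rhobarss$-compatible, so $L_k$ occurs in a $\rhobarss$-compatible pair. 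One should also check that $\pi_{L_k} \circ \rhobar_1$ surjects onto $(L_k/L_k^0)(\kbar)$: this follows from \Cref{V_LG_maxdim}(1), which gives $L/L^0 \xrightarrow{\sim} G/G^0$ whenever $X^{\gps}_{L,\rhobarss}$ is non-empty, combined with the surjectivity of $\pi_G \circ \rhobarss$; alternatively one replaces $L_k$ by the smaller subgroup of \Cref{shrink} — but in the induction we only need $L_k$ to occur in \emph{some} pair, so this is not an obstruction.

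\textbf{Third step: the dimension inequality.} For the last assertion, suppose $L \neq G$. Non-emptiness of $\Xbar^{\gps}_{L,\rhobarss}$ lets us invoke \Cref{V_LG_maxdim}: part (2) says that $\dim L = \dim G$ would force $L = G$, contradicting $L \neq G$, so $\dim L_k = \dim L < \dim G = \dim G_k$ (using \Cref{dimLisdimk} to pass between $\dim L$ and $\dim L_k$, though here everything is already over $k$). This is immediate and routine. \textbf{The main obstacle} I anticipate is the bookkeeping in the second step: correctly tracking how the component labels of $X^{\gps}$ under the finite universal homeomorphism $\nu$ of \Cref{nu_fin_u}, the decomposition \eqref{semi-local}, and the functoriality maps interact, so that ``$\bar x$ maps into $\Xbar^{\gps}_{LG,\rhobarss}$'' genuinely translates into ``$G$-semisimplification of $\rhobar_1$ is conjugate to $\rhobarss$'' — this requires carefully composing \Cref{same_image}, \Cref{get_comp}, and \Cref{connected_comp}, all of which are available but must be applied in the right order and over the right base. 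Everything else is a direct application of results already established in the excerpt.
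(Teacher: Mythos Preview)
Your proposal is correct and follows essentially the same route as the paper, which proves the lemma in one sentence: ``This follows from \Cref{V_LG_maxdim}.'' Your three steps simply unpack what that citation entails --- namely, that non-emptiness of $\Xbar^{\gps}_{LG,\rhobarss}$ forces $\Xbar^{\gps}_{L,\rhobarss}$ to be non-empty, from which a $\kbar$-point yields an $L_k$-semisimple $\rhobar_1$ whose $G$-semisimplification is $\rhobarss$ (via \Cref{get_comp}), with surjectivity of $\pi_{L_k}\circ\rhobar_1$ coming from \Cref{V_LG_maxdim}(1) and the dimension inequality from \Cref{V_LG_maxdim}(2). One small simplification: your appeal to \Cref{conjugate_Levi} is heavier than needed --- \Cref{closed_orbit_fibre} already produces the closed-orbit lift --- and since any $\kbar$-point of $\Xbar^{\gps}_{L,\rhobarss}$ automatically lies over the closed point of $X^{\ps}_{\GL_d}$ (its residue field being algebraic over the finite field $k$), the ``main obstacle'' you flag in the second step is not actually an obstacle.
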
 
\begin{proof} This follows from Lemma \ref{V_LG_maxdim}.
\end{proof}

\begin{lem}\label{useful_U} Let $U$ be a non-empty locally closed subscheme of 
$X^{\gps}_{G, \rhobarss}\setminus\{ \ast\}$, 
where $\ast$ is the closed point, and let $Z$ be the closure of $U$ in $X^{\gps}_{G, \rhobarss}$.
Then $\dim Z= \dim U+1$.
\end{lem}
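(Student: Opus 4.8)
The statement compares the dimension of a locally closed subscheme $U$ of $X^{\gps}_{G,\rhobarss}$ with that of its closure $Z$. Since $U$ avoids the closed point $\ast$, every irreducible component of $Z$ meets the punctured scheme, so the generic points of $Z$ coincide with those of $U$ and $\dim Z = \dim \overline{U} = \dim U + \operatorname{codim}(\{\ast\}\text{-ish behaviour})$ — but that is not quite the right bookkeeping. The real point is that $R^{\ps}_G$ (hence $R^{\gps}_{G,\rhobarss}$, which is finite over it by \Cref{GGLd_finite} and \Cref{nu_fin_u}) is a complete local noetherian $\OO$-algebra, and we will want to invoke the general fact (see the commutative algebra around \cite[Lemma 3.18]{BIP_new} or \stackcite{02IZ}) that in a catenary local domain of dimension $d$, the closure of a non-empty locally closed subset of the punctured spectrum that is itself a union of components of an open subset has dimension exactly one more than the subset when we pass from the open locus to its closure including the closed point. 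Concretely: first I would reduce to the case that $U$ is irreducible, replacing $U$ by one of its irreducible components of maximal dimension; then $Z$ is the closure of that component, an irreducible closed subset of $X^{\gps}_{G,\rhobarss}$.

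First I would establish the inequality $\dim Z \le \dim U + 1$. Since $U$ is open in $Z$ (as $U$ is locally closed and $Z$ is its closure, $U$ is open in $Z$), and $Z$ is irreducible, the complement $Z\setminus U$ is a proper closed subset, hence of dimension strictly less than $\dim Z$, i.e. $\dim Z\setminus U \le \dim Z - 1$. On the other hand $Z\setminus U$ contains the closed point $\ast$ (it is non-empty because $\ast \notin U$ while $\ast$ lies in the closure of any non-empty subset of the local scheme $X^{\gps}_{G,\rhobarss}$). Now $\dim U = \dim Z$ because $U$ is a non-empty open subset of the irreducible scheme $Z$ and dimension of an open dense subset equals the dimension of the whole space — wait, this would give $\dim Z = \dim U$, not $\dim U+1$. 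So the content must be the other direction of counting: $Z$ as a closed subscheme of $X^{\gps}_{G,\rhobarss}$ contains $\ast$ as its closed point, and the claim $\dim Z = \dim U + 1$ must instead be comparing $\dim U$ taken as dimension of the scheme $U$ (which for an irreducible scheme equals the dimension of its closure $Z$) — so the statement as literally written, $\dim Z = \dim U + 1$, can only hold if $\dim U$ is being interpreted as something like the dimension of a fibre or $U$ is being counted without $\ast$. The resolution: $U\subseteq X^{\gps}_{G,\rhobarss}\setminus\{\ast\}$ and we use \stackcite{02IZ}/\stackcite{0BAG}-type formula $\dim_{\ast} Z = \dim \OO_{Z,\ast}$, and the hard arithmetic is $\dim \OO_{Z,\ast} = \dim U + 1$, i.e. the local ring at $\ast$ of the closure has dimension one more than $\dim U$.

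So the correct plan: since $Z$ is irreducible with generic point $\eta$ lying in $U$, and $\ast$ is a closed point specializing from $\eta$, catenarity of the excellent ring $R^{\gps}_{G,\rhobarss}$ gives $\dim \OO_{Z,\ast} = \operatorname{trdeg}$-type count; more directly, $\dim Z = \dim \OO_{Z,\ast}$ and $U = Z\setminus\{\text{closed subset containing }\ast\}$, so $\dim U = \dim Z - 1$ provided $Z\setminus U$ has pure codimension one in $Z$ — but in general $Z\setminus U$ need only have codimension $\ge 1$. The key input making codimension exactly one is that $U$ is defined as the complement of $\ast$ in a \emph{one-dimensional-fibre} stratification, or more robustly: $X^{\gps}_{G,\rhobarss}$ being the spectrum of a complete local ring, removing just the closed point drops dimension by exactly one on each component. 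I would therefore prove: for any irreducible closed $Z\subseteq \Spec R$ with $R$ local noetherian and $Z\not\subseteq\{\mathfrak{m}\}$, one has $\dim(Z\setminus\{\mathfrak{m}\}) = \dim Z - 1$, which is \stackcite{02IZ} applied to $Z$ with its reduced structure (a local domain of dimension $\ge 1$, whose punctured spectrum has dimension $\dim Z - 1$). Then $U\subseteq Z\setminus\{\mathfrak{m}\}$ is open dense, so $\dim U = \dim(Z\setminus\{\mathfrak m\}) = \dim Z - 1$, giving $\dim Z = \dim U + 1$. \textbf{The main obstacle} is verifying that $U$, while only locally closed in $X^{\gps}_{G,\rhobarss}$, is actually open \emph{dense} in its closure $Z$ — but this is automatic: $U$ locally closed means $U$ is open in $\overline{U}=Z$, and $U$ is dense in $Z$ by definition of closure. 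Hence the proof is short once the right commutative-algebra lemma (\stackcite{02IZ}) is cited; the only care needed is the reduction to irreducible $U$ and checking that each component of $Z$ still meets $X^{\gps}_{G,\rhobarss}\setminus\{\ast\}$, which holds because $U$ does.

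\begin{proof}
We may assume $U$ is irreducible, after replacing it by one of its irreducible components of maximal dimension; then $Z$ is the closure of that component. Since $U$ is locally closed in $X^{\gps}_{G,\rhobarss}$, it is open in its closure $Z$, and it is dense in $Z$ by definition. Give $Z$ its reduced induced structure; then $Z = \Spec S$ for a complete local noetherian domain $S$ which is a quotient of $R^{\gps}_{G,\rhobarss}$. Since $U \subseteq X^{\gps}_{G,\rhobarss}\setminus\{\ast\}$, the closed point $\ast$ of $Z$ does not lie in $U$, so $\dim Z \ge 1$. By \stackcite{02IZ} applied to the local domain $S$, the punctured spectrum $Z\setminus\{\ast\}$ has dimension $\dim Z - 1$. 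As $U$ is a non-empty open subscheme of the irreducible scheme $Z\setminus\{\ast\}$, we get $\dim U = \dim(Z\setminus\{\ast\}) = \dim Z - 1$, i.e. $\dim Z = \dim U + 1$.
\end{proof}
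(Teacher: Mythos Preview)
Your final proof is almost right, and its structure matches what the paper's cited lemma \cite[Lemma 3.5 (5)]{BIP_new} unwinds to: reduce to $Z$ irreducible, use that $Z=\Spec S$ for a complete local noetherian domain $S$, and compare $\dim U$ with $\dim Z$ via the punctured spectrum. The paper simply invokes that lemma with $\Spec R=\Spec S=Z$.

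There is, however, one genuine gap in your written argument. You conclude ``as $U$ is a non-empty open subscheme of the irreducible scheme $Z\setminus\{\ast\}$, we get $\dim U = \dim(Z\setminus\{\ast\})$''. This implication is false for irreducible schemes in general: the open subscheme $\Spec \mathbb{Q}$ of the irreducible scheme $\Spec \mathbb{Z}_{(p)}$ has dimension $0$, not $1$. What makes it true here is that $S$ is a \emph{complete} local noetherian ring, so the punctured spectrum $Z\setminus\{\ast\}$ is Jacobson (this is \stackcite{02IM}, and the paper uses it elsewhere, e.g.\ in the proof of \Cref{nu_fin_u}). Hence $U$ is Jacobson, its closed points are closed in $Z\setminus\{\ast\}$, and since $S$ is catenary every such closed point $\mathfrak p$ satisfies $\dim S_{\mathfrak p}=\dim S-1$; therefore $\dim U=\dim Z-1$. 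Once you insert this sentence, your proof is complete and is exactly the content of the lemma the paper cites.
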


\begin{proof} This follows from \cite[Lemma 3.5 (5)]{BIP_new} applied
to $\Spec R=\Spec S= Z$.
\end{proof} 

\begin{lem}\label{useful_V} Let $Y_{\rhobarss}$ be the preimage in $X^{\gen}_{G, \rhobarss}$ of the closed point in $X^{\gps}_{G, \rhobarss}$ and let 
$V$ be an open  non-empty $G^0$-invariant subscheme of $X\setminus Y_{\rhobarss}$, where $X$ is either $X^{\gen}_{G, \rhobarss}$ or  $\Xbar^{\gen}_{G, \rhobarss}$.
Let $Z$ be the closure of $V$ in $X$. Then $\dim Z=\dim V+1$. 
\end{lem}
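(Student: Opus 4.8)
The goal is to show that passing to the closure of $V$ inside $X^{\gen}_{G,\rhobarss}$ increases dimension by exactly one. The natural strategy is to reduce to the corresponding statement for the GIT quotient $X^{\gps}_{G,\rhobarss}$, where \Cref{useful_U} already gives the analogous increase-by-one. First I would note that the GIT quotient map $q : X^{\gen}_{G,\rhobarss} \to X^{\gps}_{G,\rhobarss}$ is surjective on closed points (by \cite[Theorem 3 (ii)]{seshadri}) and that $Y_{\rhobarss}$ is exactly $q^{-1}(\ast)$. Set $U := q(V) \subseteq X^{\gps}_{G,\rhobarss}\setminus\{\ast\}$. Since $V$ is open and $G^0$-invariant, $U$ is a constructible $G^0$-invariant subset; because images of $G^0$-invariant open subsets under the GIT quotient are open (this follows from \cite[Theorem 3 (iii)]{seshadri}, as in the proof of \Cref{UV_GG}), $U$ is open in $X^{\gps}_{G,\rhobarss}\setminus\{\ast\}$, hence a non-empty locally closed subscheme of $X^{\gps}_{G,\rhobarss}\setminus\{\ast\}$.

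Next, let $Z_0$ be the closure of $U$ in $X^{\gps}_{G,\rhobarss}$; by \Cref{useful_U}, $\dim Z_0 = \dim U + 1$. The plan is then to compare $\dim Z$ with $\dim Z_0$ and $\dim V$ with $\dim U$ by controlling the fibre dimensions of $q$. On the locus $V$ the fibres of $q$ all have the same dimension $d := \dim G_k - \dim Z(L_k)$ type quantity — more precisely, using \Cref{closed_orbit_fibre} and \Cref{orbit_ss} the fibre over a point of $U$ is the closure of a single $G^0$-orbit, and one argues as in \Cref{UV_GG} (invoking \cite[Lemma 3.18]{BIP_new} for the upper bound and \cite[\href{https://stacks.math.columbia.edu/tag/02JU}{Tag 02JU}]{stacks-project} for the lower bound) that $\dim V = \dim U + e$ where $e$ is the common fibre dimension of $q|_V$. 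Similarly $Z \to Z_0$ is surjective (since $Z \supseteq V$, $q(Z) \supseteq U$ is dense in $Z_0$, and $q$ is closed on the $G^0$-invariant closed set $Z$), so $\dim Z \le \dim Z_0 + \max$ fibre dimension, and $\dim Z \ge \dim Z_0 + e$. The key point is to rule out fibres of $q|_Z$ jumping in dimension over the closure: I would argue that $Z \setminus V$ lies over $Z_0 \setminus U$ together possibly with points where the orbit closure degenerates, but since $V$ is dense in $Z$ and $G^0$-invariant, a generic-fibre-dimension argument (upper semicontinuity of fibre dimension, \cite[\href{https://stacks.math.columbia.edu/tag/0D4H}{Tag 0D4H}]{stacks-project} or the cited \cite[Lemma 3.18]{BIP_new}) forces $\dim Z = \dim Z_0 + e$. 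Combining, $\dim Z = \dim Z_0 + e = (\dim U + 1) + e = (\dim V) + 1$.

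The main obstacle I anticipate is handling the behaviour of $q$ at the boundary: a priori the closure $Z$ could meet $Y_{\rhobarss}$, and over the closed point $\ast$ the fibre $Y_{\rhobarss}$ may have dimension larger than the generic fibre dimension $e$ on $V$, which would a priori allow $\dim Z$ to exceed $\dim Z_0 + e$. This is exactly the phenomenon that \Cref{bound_Y} is designed to control: one has $\dim Y_{\rhobarss} < \dim G_k(1+[F:\Qp])$ by that corollary, and more relevantly one must show that no irreducible component of $Z$ is contained in $Y_{\rhobarss}$. Since $V$ is dense in $Z$ and $V \cap Y_{\rhobarss} = \emptyset$, every irreducible component of $Z$ meets $V$, hence dominates an irreducible component of $Z_0$ meeting $U$; so each component of $Z$ has a point mapping into $U$, and the fibre over that point has dimension $e$. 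Applying the dimension formula componentwise then gives the clean equality. I would therefore organise the write-up as: (i) $U$ is locally closed of the asserted type; (ii) fibrewise dimension count relating $V/U$ and $Z/Z_0$; (iii) \Cref{useful_U} to finish — with step (ii), and specifically the componentwise argument that boundary fibres do not inflate the dimension, being where the care is needed.
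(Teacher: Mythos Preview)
There is a genuine gap. Your argument hinges on $q|_V : V \to U$ having a common fibre dimension $e$, so that $\dim V = \dim U + e$ and $\dim Z = \dim Z_0 + e$; but this is false for a general open $G^0$-invariant $V \subseteq X^{\gen}_{G,\rhobarss}\setminus Y_{\rhobarss}$. The fibre dimension of the GIT quotient depends on which stratum $U_{LG}$ the base point lies in: over $U_{GG}$ every fibre is a single $G^0$-orbit of dimension $\dim G_k - \dim Z(G)_k$ (this is \Cref{dim_fibre_GG}, and it is precisely why \Cref{UV_GG} works), whereas over $U_{LG}$ with $L\neq G$ the fibres are strictly larger and their dimension genuinely varies with $L$ and with the defect (see \Cref{bound_dim_fibre}). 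So there is no single $e$, and your componentwise matching already fails on $V$ itself, not merely at the boundary. Relatedly, your claim that ``the fibre over a point of $U$ is the closure of a single $G^0$-orbit'' is incorrect: the fibre contains a unique \emph{closed} $G^0$-orbit (\Cref{closed_orbit_fibre}), but in general many non-closed orbits as well.

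The paper's proof avoids the GIT quotient comparison entirely. Since $V$ is $G^0$-invariant so is its closure $Z$, and each irreducible component $Z'$ of $Z$ is $G^0$-invariant by \cite[Lemma 2.1]{BIP_new}; hence $q(Z')$ is closed in $X^{\gps}_{G,\rhobarss}$ by \cite[Theorem 3 (iii)]{seshadri}. As $X^{\gps}_{G,\rhobarss}$ is the spectrum of a complete local ring, $q(Z')$ contains the closed point, so $Z'\cap Y_{\rhobarss}\neq\emptyset$. One then applies \cite[Lemma 3.18 (5)]{BIP_new} --- a general dimension statement for schemes of finite type over a complete local noetherian ring --- directly to $Z$ to obtain $\dim Z = \dim V + 1$, with no fibre-dimension bookkeeping required.
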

\begin{proof} Since $V$ is $G^0$-invariant, its closure $Z$ is also
$G^0$-invariant. It follows from the proof of \cite[Lemma 3.21]{BIP_new} that every irreducible component 
of $Z$ meets $Y_{\rhobarss}$ non-trivially. The assertion follows from \cite[Lemma 3.18 (5)]{BIP_new}. 
\end{proof} 

\begin{lem}\label{dim_closed_pts} 
$\dim X^{\gen}_{G,\rhobarss}= \max_{x} \dim R^{\square}_{\rho_x, G}$,
$\dim \Xbar^{\gen}_{G,\rhobarss}=  \max_{x} \dim R^{\square}_{\rho_x, G}/\varpi$, 
where the maximum is taken over all the closed points $x\in Y$. 
\end{lem}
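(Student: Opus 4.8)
\textbf{Proof plan for \Cref{dim_closed_pts}.}

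The claim is a statement relating a global dimension to the supremum over a finite set of local dimensions, so the plan is to combine the relationship between completed local rings and deformation rings (established in \Cref{local_ring_def_ring}) with the general principle that the dimension of a Jacobson scheme of finite type is attained along closed points, together with the observation that every irreducible component of $X^{\gen}_{G,\rhobarss}$ (resp.\ $\Xbar^{\gen}_{G,\rhobarss}$) meets the special fibre, in fact meets $Y$. First I would recall that $X^{\gen}_{G,\rhobarss}$ is of finite type over $X^{\ps}_{\GL_d}$, hence is a Jacobson scheme with closed points having finite or local $\OO$-field residue fields by \cite[Lemmas 3.17, 3.18]{BIP_new}; the same holds for the special fibre. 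For a point $x$ of $X := X^{\gen}_{G,\rhobarss}$ one has $\dim \OO_{X,x} = \dim \hat{\OO}_{X,x}$ since $X$ is excellent, and by \Cref{local_ring_def_ring} $\hat{\OO}_{X,x}$ is either $R^{\square}_{\rho_x,G}$ or a formal power series ring $\hat{\OO}_{X,x}\br{T}$ over it, depending on whether $x$ lies in $X[1/p]\cup Y$ or in $\Xbar\setminus Y$.

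The key geometric input is that one need only look at closed points lying in $Y$. For the global $X^{\gen}_{G,\rhobarss}$: its dimension equals $\max_x \dim\OO_{X,x}$ over closed points $x$ (Jacobson), and among these the maximum is realised by a closed point lying on an irreducible component $Z$ of maximal dimension. Now I invoke the analogue of \cite[Lemma 3.21]{BIP_new} used in \Cref{useful_V}: every irreducible component of $X^{\gen}_{G,\rhobarss}$ meets $Y$ (this is the $G^0$-invariant closure argument, $Y$ being the preimage of the closed point of $X^{\gps}_{G,\rhobarss}$). Since $Y$ is a closed subscheme contained in $\Xbar^{\gen}_{G,\rhobarss}$ and $Z$ is irreducible meeting it, one can choose a closed point $x\in Z\cap Y$ with $\dim Z = \dim \OO_{Z,x} \le \dim \OO_{X,x}$; and such $x\in Y$ satisfies $\hat{\OO}_{X,x}\cong R^{\square}_{\rho_x,G}$ by \Cref{local_ring_def_ring}(1). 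Conversely, for any closed point $x\in Y$ we have $\dim R^{\square}_{\rho_x,G} = \dim\hat{\OO}_{X,x} = \dim\OO_{X,x}\le \dim X$, which gives the reverse inequality. This proves the first equality.

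For the special fibre $\Xbar := \Xbar^{\gen}_{G,\rhobarss}$ the argument is parallel: $\Xbar$ is Jacobson, so $\dim\Xbar = \max_x\dim\OO_{\Xbar,x}$ over closed points. Each irreducible component of $\Xbar$ meets $Y$ as well — indeed $Y\subseteq\Xbar$, and the components of $\Xbar$ either lie in $Y$ or, by the same $G^0$-invariant-closure argument, meet $Y$. For a closed point $x\in Y$ one has $\hat{\OO}_{\Xbar,x}\cong R^{\square}_{\rho_x,G}/\varpi$ by the reduction of \Cref{333} modulo $\varpi$ (as noted in \Cref{ind_hyp} the local ring computation carries over, giving $\Rbar{}^{\square}_{\rho_x,G} = R^{\square}_{\rho_x,G}/\varpi$ when $G$ is defined over $\OO$), so $\dim\OO_{\Xbar,x} = \dim R^{\square}_{\rho_x,G}/\varpi$. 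Taking the maximum over closed points $x\in Y$, and using that the maximal-dimensional component of $\Xbar$ contains such a point, yields $\dim\Xbar = \max_{x\in Y}\dim R^{\square}_{\rho_x,G}/\varpi$.

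The main obstacle, and the step to handle with care, is justifying that the supremum is attained at closed points of $Y$ rather than merely at closed points of $X$ or $\Xbar$: this requires the "every irreducible component meets $Y$" fact. For the global scheme this is genuinely the content imported via \Cref{useful_V} (i.e.\ the analogue of \cite[Lemma 3.21]{BIP_new}), relying on $G^0$-invariance of irreducible components \cite[Lemma 2.1]{BIP_new} and on the fact that a $G^0$-invariant closed subset whose GIT image is positive-dimensional must meet the fibre over the closed point of $X^{\gps}_{G,\rhobarss}$. For the special fibre the point is slightly subtler because one must also rule out the possibility that a maximal component of $\Xbar$ avoids $Y$; but since $Y\subseteq\Xbar$ is $G^0$-invariant closed and every component of $\Xbar$ is $G^0$-invariant with GIT image inside $X^{\gps}_{G,\rhobarss}$, the image either is the closed point (so the component lies in $Y$) or is positive-dimensional (so by the closure argument it meets $Y$). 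Once this is in place, the rest is the routine excellence/Jacobson bookkeeping sketched above.
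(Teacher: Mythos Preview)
Your proposal is correct and follows essentially the same approach as the paper. Both arguments rest on the fact (from \Cref{useful_V}, ultimately \cite[Lemma 3.21]{BIP_new}) that every irreducible component of $X^{\gen}_{G,\rhobarss}$ meets $Y$, combined with the identification $\hat{\OO}_{X,x}\cong R^{\square}_{\rho_x,G}$ for $x\in Y$ from \Cref{local_ring_def_ring}; the paper phrases this more compactly by observing that any closed point $x\notin Y$ lies in $X\setminus Y$, which has dimension $\dim X-1$ by \Cref{useful_V}, so $\dim\OO_{X,x}<\dim X$ and such points cannot realise the maximum.
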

\begin{proof} Let $X$ be either $X^{\gen}_{G,\rhobarss}$ or $\Xbar^{\gen}_{G,\rhobarss}$
and let $x$ be a closed point of $X$. If 
$x\not\in Y$ then $x$ is a closed point in $X\setminus Y$ and 
since $\dim X\setminus Y = \dim X -1$ by Lemma \ref{useful_V} 
we have $\dim \OO_{X,x}< \dim X$. Thus $\dim X= \max_{x} \dim \OO_{X,x}$, 
where the maximum is taken over the closed points $x\in Y$. Since  $\dim \OO_{X,x} = \dim \hat{\OO}_{X,x}$ the assertion 
follows from \Cref{local_ring_def_ring}. 
\end{proof}

\begin{prop}\label{mother_bound} Let $L$ be an R-Levi of $G$ and let $H$ be a closed reductive subgroup of $L_{k'}$, where 
$k'$ is a finite extension of $k$. If $L=G$ then we assume that $H\neq G_{k'}$. 
Let $U$ be an open subscheme of $\Xbar^{\gps}_{HG}\cap U_{LG, \rhobarss}$, let 
$V$ be the preimage of $U$ in $\Xbar^{\gen}_{G, \rhobarss}$, and let $Z$ be the closure of $V$.
If \eqref{dim_pair} holds for all $\rhobarss$-compatible pairs $(H_1, \rhobar_1)$ with $\dim H_1< \dim G_k$ then
\begin{equation}\begin{split}\label{seventh_ofmany}
\dim G_k ([F:\Qp]+1) -&\dim Z \ge \\&(\dim L_k - \dim H)[F:\Qp] + \dim N_k[F:\Qp]-\delta(U), 
\end{split}
\end{equation}
where $\delta(U)=\max_y \delta(y)$, where the maximum is taken over all the closed points 
$y\in U$ and $\delta(y)$ denotes the defect of the fibre at $y$ and $N$ is a unipotent 
radical of any R-parabolic subgroup of $G$ with R-Levi $L$. 
\end{prop}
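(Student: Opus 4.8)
The plan is to reduce the dimension bound for $Z$ to a combination of two facts established earlier: a bound on the dimension of $\Xbar^{\gps}_{HG}$ coming from the induction hypothesis applied to pairs associated with $H$, and the fibre dimension estimate of \Cref{bound_dim_fibre}. First I would note that $U$ is a non-empty locally closed subscheme of $X^{\gps}_{G,\rhobarss}$ not containing the closed point, so by \Cref{useful_U} the closure $W$ of $U$ in $X^{\gps}_{G,\rhobarss}$ satisfies $\dim W = \dim U + 1$. Similarly $V$ is an open $G^0$-invariant subscheme of $\Xbar^{\gen}_{G,\rhobarss}\setminus Y_{\rhobarss}$, so by \Cref{useful_V} the closure $Z$ of $V$ satisfies $\dim Z = \dim V + 1$. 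Hence it suffices to bound $\dim V$ in terms of $\dim U$ and the fibre dimensions, and then bound $\dim U$ via the induction hypothesis.

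Next I would estimate $\dim V$. Since $V \to U$ is surjective (the GIT quotient map is surjective by \cite[Theorem 3 (ii)]{seshadri}) with all fibres of dimension at most $\max_{y} \dim X^{\gen}_{G,y}$ over closed points $y \in U$ (using that $U$ is Jacobson and that $\dim$ of a finite-type morphism is controlled by the fibre dimension over closed points, as in \cite[Lemma 3.18]{BIP_new}), Proposition \ref{bound_dim_fibre} together with Remark \ref{rel_to_ULG} gives
\begin{equation*}
\dim X^{\gen}_{G,y}\le \dim G_k -\dim Z(L_k) +\tfrac{1}{2}(\dim G_k - \dim L_k)[F:\Qp] + \delta(y)
\end{equation*}
for each such $y$; note $\tfrac12(\dim G_k - \dim L_k) = \dim N_k$ by Remark \ref{dim_U_same}. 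So $\dim V \le \dim U + \dim G_k - \dim Z(L_k) + \dim N_k[F:\Qp] + \delta(U)$. For $\dim U$: since $U \subseteq \Xbar^{\gps}_{HG}$ and, by Lemma \ref{cover_Hi}, after an unramified base change $\Xbar^{\gps}_{HG,\rhobarss}$ is a union of scheme-theoretic images of $\Xbar^{\gps}_{H_i,\rhobarss_i}$ for pairs $(H_i,\rhobar_i)$ with $H_i^0 = H^0$ (so $\dim H_i = \dim H \le \dim L_k < \dim G_k$ when $L\neq G$, and by hypothesis when $L=G$), the bound \eqref{inductive_bound} and the induction hypothesis \eqref{dim_pair} give $\dim U \le \dim \Xbar^{\gps}_{HG} \le \max_i \dim \Xbar^{\gps}_{H_i,\rhobarss_i} \le \dim Z(H) + [F:\Qp]\dim H$. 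Here I use Corollary \ref{dim_Z_same} (via the assumption $U \subseteq U_{LG,\rhobarss}$) to ensure $\dim Z(H) \le \dim Z(L_k)$, which is exactly what is needed for the cancellation below.

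Finally I assemble the inequalities. Combining the two closure relations with the bounds on $\dim V$ and $\dim U$:
\begin{equation*}
\dim Z = \dim V + 1 \le \dim U + \dim G_k - \dim Z(L_k) + \dim N_k[F:\Qp] + \delta(U),
\end{equation*}
and then $\dim U \le \dim Z(H) + [F:\Qp]\dim H \le \dim Z(L_k) + [F:\Qp]\dim H$. Substituting, the $\dim Z(L_k)$ terms cancel and we get
\begin{equation*}
\dim Z \le \dim G_k + [F:\Qp]\dim H + \dim N_k[F:\Qp] + \delta(U).
\end{equation*}
Using $\dim G_k = \dim L_k + 2\dim N_k$ and $\dim G_k[F:\Qp] = \dim L_k[F:\Qp] + 2\dim N_k[F:\Qp]$, a rearrangement of $\dim G_k([F:\Qp]+1) - \dim Z$ yields the claimed lower bound $(\dim L_k - \dim H)[F:\Qp] + \dim N_k[F:\Qp] - \delta(U)$. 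The main obstacle I anticipate is the careful bookkeeping of the two "$+1$" contributions from Lemmas \ref{useful_U} and \ref{useful_V}: one must check that applying \Cref{useful_V} to $V$ (an open $G^0$-invariant subscheme of $\Xbar^{\gen}_{G,\rhobarss}\setminus Y_{\rhobarss}$) is legitimate, i.e.\ that $V$ is indeed disjoint from $Y_{\rhobarss}$ — which holds because $U \subseteq U_{LG,\rhobarss}$ avoids the closed point $\ast$ of $X^{\gps}_{G,\rhobarss}$ — and that the "$+1$" from the closure in the GIT quotient is consistent with the "$+1$" from the closure upstairs; but since the fibre dimension estimate is uniform over $U$ these two match up and nothing is lost.
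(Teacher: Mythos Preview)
Your approach is the same as the paper's: combine $\dim Z = \dim V + 1$ (Lemma \ref{useful_V}), $\dim V \le \dim U + \max_y \dim X^{\gen}_{G,y}$, Proposition \ref{bound_dim_fibre} for the fibres, the induction hypothesis \eqref{dim_pair} (after reducing via Lemma \ref{cover_Hi}) for $\dim \Xbar^{\gps}_{HG,\rhobarss}$, and Corollary \ref{dim_Z_same} for the cancellation of the centre terms.

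You correctly flag the two ``$+1$'' contributions as the delicate point, but your assembly does not handle them consistently. The displayed line
\[
\dim Z = \dim V + 1 \le \dim U + \dim G_k - \dim Z(L_k) + \dim N_k[F:\Qp] + \delta(U)
\]
is false as written: from $\dim V \le \dim U + (\text{fibre bound})$ you only get $\dim V + 1 \le (\dim U + 1) + (\text{fibre bound})$. Your subsequent bound $\dim U \le \dim Z(H) + [F:\Qp]\dim H$ is correct but one too weak to absorb the extra $+1$. What is needed --- and what you stated via Lemma \ref{useful_U} in your first paragraph but never fed back in --- is the sharper $\dim U + 1 = \dim W \le \dim \Xbar^{\gps}_{HG,\rhobarss}$, valid because the closure $W$ of $U$ lies in the closed subscheme $\Xbar^{\gps}_{HG,\rhobarss}$. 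With this correction the two ``$+1$''s genuinely cancel and the final bound is legitimate; this is exactly the paper's inequality \eqref{third_ofmany}. So your argument is right in outline, but the displayed chain as written has one false step and one insufficiently sharp step that only happen to cancel numerically.
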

\begin{proof} After extending scalars we may assume that $k=k'$. We may assume that 
$\Xbar^{\gps}_{H, \rhobarss}$ is non-empty, since otherwise both $V$ and $Z$ are empty and the claim 
holds trivially. Using Lemma \ref{cover_Hi} we may further assume that the map $\rhobarss: \Gamma_F \rightarrow 
(H/H^0)(k)$ is surjective. If $\dim H=\dim G_k$ then Lemma \ref{V_LG_maxdim} implies that 
$H=G_k$ contradicting our assumption. Thus $\dim H <\dim G_k$ and by hypothesis we have
\begin{equation}\label{zeroth_ofmany}
\dim \Xbar^{\gps}_H \le \dim H [F:\Qp] + \dim Z(H).
\end{equation}
Since $V$ is a preimage of $U$, it is $G^0$-invariant and Lemma \ref{useful_V} implies that 
\begin{equation}\label{first_ofmany} 
\dim Z = \dim V +1.
\end{equation}
From \cite[Lemma 3.18]{BIP_new} we deduce that 
\begin{equation}\label{second_ofmany}
\dim V \le \dim U + \max_{y} \dim X^{\gen}_{G, y},
\end{equation}
where the maximum is taken over all closed points $y$ of $U$. Since $U$ is open in the 
punctured spectrum of a local ring, and $\Xbar^{\gps}_H \rightarrow \Xbar^{\gps}_G$ is finite 
by Corollary  \ref{finite_HG}, we obtain
\begin{equation}\label{third_ofmany}
 \dim U +1\le \dim \Xbar^{\gps}_{HG, \rhobarss} \le \dim \Xbar^{\gps}_{H, \rhobarss} 
\le
\dim H [F:\Qp] + \dim Z(H),
\end{equation}
 where the last inequality is given by \eqref{zeroth_ofmany}. Proposition \ref{bound_dim_fibre} implies that 
\begin{equation}\label{fifth_ofmany} 
\dim X^{\gen}_{G,y} \le \dim G_k+ \dim N_k [F:\Qp] + \delta(y) - \dim Z(L_k), 
\end{equation}
where $\delta(y)$ is the defect of the fibre. 
Since $\dim Z(H)\le \dim Z(L_k)$ by Corollary \ref{dim_Z_same}  putting 
all the inequalities together we obtain 
\begin{equation}\label{sixth_ofmany} 
\dim Z\le \dim H [F:\Qp] + \dim G_k +\dim N_k [F:\Qp] +\delta(U).
\end{equation}
where $\delta(U)=\max_y \delta(y)$. Since $\dim G_k = \dim L_k + 2 \dim N_k$ we 
obtain \eqref{seventh_ofmany}.
\end{proof}

\begin{cor}\label{bound_1} Let $L$ be an R-Levi of $G$ and let $H$ be a closed reductive subgroup of $L_{k'}$, where 
$k'$ is a finite extension of $k$. If $L=G$ then we assume that $H\neq G_{k'}$. 
Let $V$ be the preimage of $U=\Xbar^{\gps}_{HG}\cap U_{LG, \rhobarss}$ in $\Xbar^{\gen}_{G, \rhobarss}$, and let $Z$ be the closure of $V$ in $\Xbar^{\gen}_{G, \rhobarss}$. 
If \eqref{dim_pair} holds for all $\rhobarss$-compatible pairs $(H_1, \rhobar_1)$ with $\dim H_1< \dim G_k$ then
$$\dim G_k ([F:\Qp]+1) - \dim Z\ge (\dim L_k - \dim H)[F:\Qp].$$
\end{cor}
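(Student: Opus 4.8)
The plan is to deduce \Cref{bound_1} from \Cref{mother_bound} by showing that the error term $\dim N_k[F:\Qp] - \delta(U)$ appearing in \eqref{seventh_ofmany} is non-negative. Since $U = \Xbar^{\gps}_{HG}\cap U_{LG,\rhobarss}$ is precisely the open subscheme considered in \Cref{mother_bound}, that proposition applies verbatim (under the standing hypothesis that \eqref{dim_pair} holds for all $\rhobarss$-compatible pairs $(H_1,\rhobar_1)$ with $\dim H_1 < \dim G_k$, which is exactly the hypothesis of the corollary). Thus from \eqref{seventh_ofmany} we get
\begin{equation*}
\dim G_k([F:\Qp]+1) - \dim Z \ge (\dim L_k - \dim H)[F:\Qp] + \bigl(\dim N_k[F:\Qp] - \delta(U)\bigr),
\end{equation*}
and it suffices to show $\delta(U) \le \dim N_k[F:\Qp]$.

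\textbf{Key step: bounding the defect.} For each closed point $y \in U$, the defect $\delta(y)$ is defined in \Cref{defi_defect} as $\max_P h^0(\Gamma_F, (\Lie U_P)^*_\kappa(1))$, where $P$ ranges over R-parabolics of $G$ with R-Levi $L$ and $U_P$ is the unipotent radical of $P$. By \Cref{bound_defect} we have $\delta(y) \le \tfrac12(\dim G_{\kappabar} - \dim L_{\kappabar}) = \dim N_k$ (using $\dim G_k = \dim L_k + 2\dim N_k$ and that dimensions are unchanged under passing to $\kbar$ or to the special fibre, cf.\ \Cref{dimkisdimL} and \Cref{dimLisdimk}). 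Since $[F:\Qp] \ge 1$, this already gives $\delta(U) = \max_y \delta(y) \le \dim N_k \le \dim N_k[F:\Qp]$, which is what we need. Plugging this back in yields
\begin{equation*}
\dim G_k([F:\Qp]+1) - \dim Z \ge (\dim L_k - \dim H)[F:\Qp],
\end{equation*}
completing the proof.

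\textbf{Remarks on the structure.} The corollary is essentially the ``crude'' consequence of \Cref{mother_bound} in which one forgets about the finer interplay between $\delta(U)$ and $\dim N_k[F:\Qp]$; it will be refined later (e.g.\ in \Cref{sec:bound_special}, where the extra slack $\dim N_k[F:\Qp] - \delta(U)$ is analysed more carefully to get codimension bounds sufficient for Serre's criterion). I do not expect any genuine obstacle here: the only thing to be careful about is that \Cref{mother_bound} is being invoked with the specific choice $U = \Xbar^{\gps}_{HG}\cap U_{LG,\rhobarss}$ (so that no ``open subscheme'' hypothesis intervenes) and with the same inductive hypothesis, and that the passage from $k$ to $\kbar$ and between $G$ and its special fibre preserves all the relevant dimensions. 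The case distinction $L = G$ versus $L \subsetneq G$ plays no role beyond what is already built into \Cref{mother_bound} (namely the assumption $H \ne G_{k'}$ when $L = G$, which guarantees $\dim H < \dim G_k$ via \Cref{V_LG_maxdim}).
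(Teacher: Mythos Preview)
Your proof is correct and follows essentially the same approach as the paper: apply \Cref{mother_bound} and then use \Cref{bound_defect} (together with \Cref{rel_to_ULG} to know the R-Levi at closed points of $U$ is $L$) to bound $\delta(U) \le \dim N_k \le \dim N_k[F:\Qp]$, so that the extra term in \eqref{seventh_ofmany} is non-negative. The paper's proof is just a two-line version of exactly this argument.
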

\begin{proof} Remark \ref{rel_to_ULG} and Lemma \ref{bound_defect} imply that 
\begin{equation}\label{fourth_ofmany} 
 \delta(U) \le \frac{1}{2}(\dim G_k - \dim L_k) =\dim N_k.
\end{equation}
The assertion follows from Proposition \ref{mother_bound}.
\end{proof} 

\begin{cor}\label{bound_2} Let $L$ be an R-Levi of $G$, such that $L\neq G$, 
let $H$ be a closed reductive subgroup of $L_{k'}$, where 
$k'$ is a finite extension of $k$. Let $U$ be an open subscheme of $U_{LG, \rhobarss}\cap \Xbar^{\gps}_{HG}$, 
let $V$ be a preimage of $U$ in $\Xbar^{\gen}_{G, \rhobarss}$ and let $Z$ be its closure. 
If \begin{equation}\label{lostcount} 
 \delta(U) \le \dim L_k -\dim H.
\end{equation} and
\eqref{dim_pair} holds for all $\rhobarss$-compatible pairs $(H_1, \rhobar_1)$ with $\dim H_1< \dim G_k$ then
$$\dim G_k ([F:\Qp]+1) - \dim Z\ge \dim N_k [F:\Qp]\ge 1,$$
where $N$ is the unipotent radical of any R-parabolic of $G$ with R-Levi $L$. Moreover, 
if either $F\neq \Qp$ or $\dim N_k \neq 1$ then the lower bound can be changed to $2$. 
\end{cor}
\begin{proof} 
We may assume that $\Xbar^{\gps}_{LG, \rhobarss}$ is non-empty. Lemma \ref{V_LG_maxdim} implies that $\dim L< \dim G$ and $\dim N_k \ge 1$. 
As in the proof of \Cref{bound_1} the assertion follows from Proposition \ref{mother_bound}, but using \eqref{lostcount} instead of  
  \eqref{fourth_ofmany}: 
\begin{equation}\begin{split}
\dim G_k ([F:\Qp]+1) -&\dim Z \ge \\&(\dim L_k - \dim H)([F:\Qp]-1) + \dim N_k[F:\Qp].
\end{split}
\end{equation}
Since, as explained above, $\dim N_k \ge 1$ this implies the assertion.
\end{proof}

\subsection{\texorpdfstring{$W$-special locus}{W-special locus}}
In this subsection we define the $W$-special locus, where $W$ is a representation of $G$ on a 
free $\OO$-module of finite rank. Using it we will show that if $L$ is an $R$-Levi of $G$ 
then the locus of $\Ubar_{LG, \rhobarss}$, where the defect of the fibre is non-zero is contained in a finite 
union of subschemes $\Xbar^{\gps}_{HG}\cap U_{LG, \rhobarss}$, where $H$ is a closed reductive subgroup 
of $L_k$ of smaller dimension. We will combine this with Corollaries \ref{bound_1} and \ref{bound_2} 
to carry out the induction argument in \Cref{bound_space}.

\begin{lem}\label{V_iks} Let $R$ be a noetherian $\OO$-algebra, let $G$ be an affine group scheme 
over $\Spec R$ and let $W$ be a representation of $G$ on a free $R$-module of finite rank. 
Let $A$ be a noetherian $R$-algebra and let $\rho: \Gamma_F\rightarrow G(A)$ 
be a group homomorphism. Then for each $j\ge 0$ there is a closed reduced subscheme $X_{W, j}$ of $X:=\Spec A$ such that for 
$x\in X$ we have 
$$x\in X_{W,j}  \iff \dim_{\kappa(x)} W_x(1)^{\Gamma_F}\ge j+1,$$ where $W_x= W\otimes_{R} \kappa(x)$ with the 
$\Gamma_F$ action given by the specialisation of $\rho$ at $x$. 
\end{lem}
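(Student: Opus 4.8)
The plan is to reduce the statement to a standard fact about the locus where a finitely generated module has fibre rank bounded below. First I would reformulate the condition $\dim_{\kappa(x)} W_x(1)^{\Gamma_F} \geq j+1$ cohomologically. Since $\Gamma_F$ is topologically finitely generated, pick topological generators $\gamma_1, \dots, \gamma_N$. The invariants $W_x(1)^{\Gamma_F}$ is the kernel of the $\kappa(x)$-linear map $W_x(1) \to W_x(1)^{\oplus N}$, $v \mapsto ((\gamma_i - 1)v)_i$, where $\gamma_i$ acts via $\chi_{\cyc}(\gamma_i) \cdot \rho(\gamma_i)$ composed with the given $G$-action on $W$. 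Concretely, after choosing an $R$-basis of $W$, the $\Gamma_F$-action on $W \otimes_R A$ is given by matrices over $A$ (using that $\chi_{\cyc}$ takes values in $\OO^\times \subseteq A^\times$ and that $G(A) \to \GL(W\otimes_R A)$ is a morphism of affine schemes, so the matrix entries are elements of $A$), and the $N$-tuple $((\gamma_i - 1))_i$ defines an $A$-linear map $\varphi : (W\otimes_R A) \to (W\otimes_R A)^{\oplus N}$ between finite free $A$-modules.

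Next I would invoke the elementary commutative algebra fact that for a map $\varphi : M \to M'$ of finite free modules over a noetherian ring $A$, the function $x \mapsto \dim_{\kappa(x)} \ker(\varphi \otimes_A \kappa(x))$ is upper semicontinuous, and the locus where it is $\geq j+1$ is closed and cut out by an explicit ideal: if $\varphi$ is given by a matrix, then $\dim_{\kappa(x)} \ker(\varphi \otimes \kappa(x)) \geq j+1$ iff $\operatorname{rank}_{\kappa(x)}(\varphi \otimes \kappa(x)) \leq r - j - 1$ where $r = \operatorname{rank} M$, iff all $(r-j)\times(r-j)$ minors of the matrix of $\varphi$ vanish at $x$. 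Thus the set in question is $V(I_{r-j}(\varphi))$ for the appropriate Fitting-type ideal $I_{r-j}(\varphi) \subseteq A$ (generated by the $(r-j)$-minors), and I would take $X_{W,j} := \Spec(A/\sqrt{I_{r-j}(\varphi)})$ to get a \emph{reduced} closed subscheme with the stated set-theoretic description. One has to check that $\ker(\varphi\otimes\kappa(x))$ really computes $W_x(1)^{\Gamma_F}$, i.e.\ that $(W\otimes_R A)\otimes_A \kappa(x) = W_x$ compatibly with the action, which is immediate since $W$ is free over $R$ and the action is functorial, and that $\gamma_1,\dots,\gamma_N$ topologically generating $\Gamma_F$ forces the invariants to be exactly the simultaneous fixed points of the $\gamma_i$ (here continuity of $\rho$ is not even needed for this purely algebraic reformulation, only that the $\gamma_i$ generate a dense subgroup).

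The one genuine subtlety — and the step I expect to be the main obstacle — is the passage from the abstract group homomorphism $\rho : \Gamma_F \to G(A)$ to an honest matrix over $A$: one needs that for each $\gamma$, the automorphism of $W \otimes_R A$ induced by $\rho(\gamma)$ has entries in $A$ in a chosen basis, which holds because $G \to \GL(W)$ is a morphism of $R$-schemes and evaluation at $\rho(\gamma) \in G(A)$ lands in $\GL(W\otimes_R A)(A) = \GL_{\operatorname{rk} W}(A)$; and that the resulting collection of matrices is independent of choices up to conjugation, so that $I_{r-j}(\varphi)$ and hence $X_{W,j}$ is well-defined. Since we only need $X_{W,j}$ as a subset/reduced subscheme and semicontinuity does not depend on the choice of generators of $\Gamma_F$ (the locus $\{x : \dim W_x(1)^{\Gamma_F} \geq j+1\}$ is intrinsic), well-definedness is automatic once existence is established. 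I would conclude by noting that $X_{W,j}$ is a genuine closed subscheme of $X$, that $X_{W,0} \supseteq X_{W,1} \supseteq \cdots$, and that only finitely many are nonempty (as $j$ is bounded by $\operatorname{rk}_R W$), which is the form in which the result will be used.
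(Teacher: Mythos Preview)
Your approach computes invariants as the kernel of a map $\varphi$ between finite free $A$-modules built from finitely many topological generators $\gamma_1,\dots,\gamma_N$, and then reads off the locus via minor ideals. This differs from the paper, which dualises and works with coinvariants. There is a genuine gap: your identification $W_x(1)^{\Gamma_F}=\ker(\varphi\otimes\kappa(x))$ amounts to saying that the simultaneous fixed space of $\rho_x(\gamma_1),\dots,\rho_x(\gamma_N)$ equals the full $\Gamma_F$-fixed space. Topological density of $\langle\gamma_1,\dots,\gamma_N\rangle$ in $\Gamma_F$ only forces this when the action on $W_x$ is continuous for some Hausdorff topology, but the lemma places no continuity hypothesis on $\rho$ and for an arbitrary residue field $\kappa(x)$ there is no such topology to appeal to. Your parenthetical that ``continuity of $\rho$ is not even needed'' is precisely the step that fails.

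The paper avoids choosing generators altogether by passing to $\check W=\Hom_R(W,R)$ and forming the quotient of the finite free module $\check W(A)$ by the submodule $M$ spanned by $\chi_{\cyc}(g)^{-1}g\cdot\ell-\ell$ for \emph{all} $g\in\Gamma_F$ and all $\ell\in\check W(A)$. Although $M$ is described by infinitely many elements, the quotient $\check W(A)/M$ is still finitely generated over the noetherian ring $A$, and right-exactness of $-\otimes_A\kappa(x)$ identifies its fibre with the $\Gamma_F$-coinvariants of $\check W_x(-1)$, whose $\kappa(x)$-dimension equals $\dim_{\kappa(x)}W_x(1)^{\Gamma_F}$. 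Standard upper-semicontinuity of fibre dimension for a finitely generated module then gives the closed loci $X_{W,j}$. Your route can be repaired in the paper's applications, since there the universal $\rho$ is $R^{\ps}_G$-condensed and one may invoke the argument of \Cref{gener_cond} to reduce to generators; but it is the dualisation to coinvariants that makes the lemma go through at the stated level of generality.
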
 
\begin{proof} 
Let $\check{W}:= \Hom_R(W, R)$ equipped with the contragredient left action of  $G$. Then 
$$ \check{W}_x:= \check{W}\otimes_R \kappa(x)\cong \Hom_{\kappa(x)}(W_x, \kappa(x)), \quad \forall 
x\in X.$$
In particular, $\Gamma_F$-invariants in $W_x(1)$ are non-zero if and only if $\Gamma_F$-coinvariants 
in $\check{W}_x(-1)$ are non-zero.

Evaluating $\check{W}$ at $A$ we
get a free finite $A$-module $\check{W}(A)=\Hom_A(W(A), A)$.  
The action of $\Gamma_F$ on $\check{W}(A)$ is given by $g \cdot \ell:= \ell\circ \rho(g)^{-1}$. 
Let $M$ be the $A$-submodule of  $\check{W}(A)$ generated by elements of the form 
$ \chi_{\cyc}(g)^{-1} g \cdot \ell  - \ell$ for all $g\in \Gamma_F$ and all $\ell \in \check{W}(A)$.
Let $X_W$ be the support of $\check{W}(A)/ M$ in $X$. Since $A$ is noetherian $X_W$ is closed in $X$ 
and we equip it with the reduced subscheme structure. 

For all $x\in X$ we have an exact sequence 
$$ M\otimes_A \kappa(x) \rightarrow \check{W}_x \rightarrow (\check{W}(A)/M)\otimes_A \kappa(x)\rightarrow 0.$$
Thus $(\check{W}(A)/M)\otimes_A \kappa(x)$ is isomorphic to $\Gamma_F$-coinvariants of 
$\check{W}_x(-1)$. We conclude that $x\in X_W$ if and only if $\Gamma_F$-coinvariants of 
$\check{W}_x(-1)$ are non-zero, and hence $X_{W,0}=X_W$.  It follows from upper-semicontinuity of the dimension
function, \cite[Example III.12.7.2]{hartshorne} that $X_{W,j}$ is a closed subscheme of $X_{W}$ for $j\ge 1$.
\end{proof} 

\begin{defi}\label{defi_W_special} In the situation of Lemma \ref{V_iks} we call $X_{W,0}$ the \emph{$W$-special 
locus} in $X$ and its complement $X\setminus X_{W,0}$ the \emph{$W$-nonspecial locus} in $X$. 
We will refer to $X_{W,j}$ as the $W$-\emph{special locus of level} $j$.
\end{defi}

\begin{lem}\label{inv_non} Let us assume the setup of Lemma \ref{V_iks} and let 
us further assume that $\chara (R) =p$. Let $x\in X_{W,j}$ and let $H$ 
be the Zariski closure of $\rho_x(\Gamma_F)$ in $G_{\kappa(x)}$. Then 
$\dim_{\kappa(x)} W_x^{H^0}\ge j+1$. 
\end{lem}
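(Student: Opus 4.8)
The statement says: if $x\in X_{W,j}$ and $\chara(R)=p$, and $H$ is the Zariski closure of $\rho_x(\Gamma_F)$ in $G_{\kappa(x)}$, then $\dim_{\kappa(x)} W_x^{H^0}\ge j+1$. The starting point is the defining property from \Cref{V_iks}: $x\in X_{W,j}$ means $\dim_{\kappa(x)} W_x(1)^{\Gamma_F}\ge j+1$. First I would reduce to the case $\kappa(x)$ algebraically closed by base change, using that $\dim$ of invariants only grows under field extension and that the Zariski closure of $\rho_x(\Gamma_F)$ base-changes compatibly (the neutral component $H^0$ base-changes to $(H_{\kappabar})^0$ since we are over a field). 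So assume $\kappa:=\kappa(x)$ is algebraically closed of characteristic $p$.

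Next, the key point: because $\chara(\kappa)=p$, the cyclotomic character $\chi_{\cyc}$ takes values in $\ZZ_p^\times$ but its reduction mod $p$ lands in $\Fp^\times$, which is a finite group of order prime to $p$. Hence after restricting to the open subgroup $\Gamma' := \ker(\chi_{\cyc}\bmod p)\subseteq \Gamma_F$ — equivalently $\Gamma_{F(\zeta_p)}$ — the Tate twist becomes trivial: $W_x(1)|_{\Gamma'} \cong W_x|_{\Gamma'}$ as $\Gamma'$-representations. Therefore $W_x(1)^{\Gamma_F}$ is contained in $W_x(1)^{\Gamma'} = W_x^{\Gamma'}$ (the last equality using triviality of the twist on $\Gamma'$), so $\dim_\kappa W_x^{\Gamma'}\ge j+1$. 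Actually one gets $W_x(1)^{\Gamma_F}\subseteq W_x^{\Gamma'}$ only as subspaces after identifying, but the dimension bound is all we need: $\dim_\kappa W_x^{\Gamma'}\ge \dim_\kappa W_x(1)^{\Gamma_F}\ge j+1$.

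Now I would pass from $\Gamma'$-invariants to $H^0$-invariants. Let $H'$ be the Zariski closure of $\rho_x(\Gamma')$ in $G_\kappa$. Since $\Gamma'$ is an open, hence finite-index, subgroup of $\Gamma_F$, the subgroup $\rho_x(\Gamma')$ has finite index in $\rho_x(\Gamma_F)$, so $H'$ has finite index in $H$, and therefore $(H')^0 = H^0$. Because $\rho_x(\Gamma')$ is Zariski dense in $H'$, the fixed subspaces agree: $W_x^{\Gamma'} = W_x^{H'}$ (a vector $w$ fixed by a dense subgroup of an algebraic group is fixed by the whole group, as the fixator is Zariski closed). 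Restricting the $H'$-action to $(H')^0 = H^0$ gives $W_x^{H'}\subseteq W_x^{H^0}$, hence $\dim_\kappa W_x^{H^0}\ge \dim_\kappa W_x^{H'} = \dim_\kappa W_x^{\Gamma'}\ge j+1$. Undoing the base change to $\kappabar$ (which again only decreases or preserves the dimension of $H^0$-invariants, and $\dim$ over $\kappa(x)$ equals $\dim$ over $\kappabar$ for a $\kappa(x)$-form) yields the claim. The only mildly delicate points are the compatibility of the Zariski-closure construction with the restriction to $\Gamma'$ and with base change — these are standard facts about algebraic groups over fields, so I do not expect a genuine obstacle here; the essential idea is simply ``restrict to $\Gamma_{F(\zeta_p)}$ to kill the Tate twist in characteristic $p$.''
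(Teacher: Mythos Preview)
Your proposal is correct and follows essentially the same route as the paper: restrict to $\Gamma_{F(\zeta_p)}$ so that the Tate twist becomes trivial in characteristic $p$, then use that the Zariski closure of $\rho_x(\Gamma_{F(\zeta_p)})$ has finite index in $H$ and hence the same neutral component. Your initial reduction to $\kappa(x)$ algebraically closed is unnecessary (the paper works directly over $\kappa(x)$, and your own argument from the second paragraph on never uses algebraic closedness), but it does no harm.
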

\begin{proof} Since $\chara (R) =p$ the restriction of 
$W_x(1)$ to $\Gamma_{F(\zeta_p)}$ is isomorphic to the restriction of $W_x$ to $\Gamma_{F(\zeta_p)}$. 
Since $x\in X_{W,j}$ we have 
$$\dim_{\kappa(x)} W_x^{\Gamma_{F(\zeta_p)}}\ge \dim_{\kappa(x)} W_x(1)^{\Gamma_F}\ge j+1$$ and thus 
$\dim_{\kappa(x)} W_x^{H_1}\ge j+1$, where $H_1$ is the closure 
of $\rho(\Gamma_{F(\zeta_p)})$ in $G_{\kappa(x)}$. 
Since   $\Gamma_{F(\zeta_p)}$ is of finite index in $\Gamma_F$, 
we get that $H_1$ is of finite index in $H$, and thus $H_1^0=H^0$.
\end{proof}

From now on let $G$ be a generalised reductive  $\OO$-group scheme as in the previous 
sections. Then its Lie algebra $\Lie G$ is a finite free $\OO$-module equipped with a $G$-action. 
If $P$ is an R-parabolic with R-Levi $L$ 
and unipotent radical $U$, then $\Lie L$ and $\Lie U$ are $\OO$-submodules of $\Lie G$ by 
\cite[Theorem 4.1.7, part 4]{conrad}. Thus they are free $\OO$-modules of finite rank. Moreover, they are equipped 
with the adjoint action of the R-Levi $L$.

\begin{prop}\label{setup} Let us assume the setup of Lemma \ref{V_iks} in one of the following cases:
\begin{enumerate}
\item $R=\OO$, $A=A^{\gen}_{G, \rhobarss}$, $X=\XgenGrhobarss$;
\item $R=k$, $A=A^{\gen}_{G, \rhobarss} /\varpi$, $X= \overline X^{\gen}_{G, \rhobarss}$;
\item $R=L$, $A= A^{\gen}_{G, \rhobarss}[1/p]$, $X= \XgenGrhobarss[1/p]$.
\end{enumerate}
and let
$\rho: \Gamma_F \rightarrow G(A)$ be the universal
representation. Then the $W$-special locus of level $j$ in $X$ is $G^0$-invariant.
\end{prop}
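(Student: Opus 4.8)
The statement asserts that for each $j \ge 0$, the $W$-special locus of level $j$ inside $X = \XgenGrhobarss$ (or its special/generic fibre) is invariant under the conjugation action of $G^0$. Since $X_{W,j}$ was constructed in \Cref{V_iks} as the support of the quotient module $\check W(A)/M$, where $M \subseteq \check W(A)$ is the $A$-submodule generated by the elements $\chi_{\cyc}(g)^{-1} g\cdot \ell - \ell$ (and then refined by the upper-semicontinuity of fibre dimension), the whole construction is canonical once we fix the universal representation $\rho$. The only subtlety is that $G^0$ acts on $X$ by conjugating $\rho$, so I must show that conjugating $\rho$ does not change the closed subscheme $X_{W,j}$ as a set (it is reduced, so a set-level statement suffices), or more precisely that the automorphism of $X$ induced by $g \in G^0(B)$ for a $B$-point (for varying $B$) carries $X_{W,j}$ into itself.

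The concrete approach is the following. First I would unwind the $G^0$-action: a point $g \in G^0(A)$ sends $\rho$ to $g\rho g^{-1}$, and more generally the action is defined functorially, so it suffices to check invariance after base change to an arbitrary geometric point. So let $x \in X(\Omega)$ with $\Omega$ algebraically closed, let $g \in G^0(\Omega)$, and let $x' = g\cdot x$, i.e. $\rho_{x'} = g\rho_x g^{-1}$. Then $W_{x'} \cong W_x$ as $\Gamma_F$-representations: explicitly, the $\Omega$-linear automorphism of $W_\Omega = W\otimes_\OO \Omega$ given by the action of $g \in G^0(\Omega) \subseteq G(\Omega)$ on $W$ intertwines the $\Gamma_F$-action via $\rho_x$ with the $\Gamma_F$-action via $\rho_{x'}$, because $\rho_{x'}(\gamma)\cdot (g\cdot w) = (g\rho_x(\gamma)g^{-1}g)\cdot w = g\cdot(\rho_x(\gamma)\cdot w)$. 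Twisting by the cyclotomic character does not interfere, since the twist is the same on both sides. Hence $\dim_\Omega W_{x}(1)^{\Gamma_F} = \dim_\Omega W_{x'}(1)^{\Gamma_F}$, so $x \in X_{W,j} \iff x' \in X_{W,j}$ by the characterisation in \Cref{V_iks}. Since $X_{W,j}$ is a closed reduced subscheme characterised by its geometric points, and the $G^0$-action permutes geometric points preserving membership, $X_{W,j}$ is $G^0$-invariant.

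To make this rigorous at the scheme level rather than just on geometric points, I would instead argue directly with the module $M$: the action of $g \in G^0(A)$ on $\check W(A)$ (contragredient to the action on $W(A)$) is $A$-linear, and it carries the generating set $\{\chi_{\cyc}(\gamma)^{-1}\gamma\cdot\ell - \ell\}$ for the representation $\rho$ to the analogous generating set for $g\rho g^{-1}$, because $g\cdot(\gamma\cdot_\rho \ell) = \gamma\cdot_{g\rho g^{-1}}(g\cdot \ell)$ as in the computation above (here $\gamma\cdot_\rho$ denotes the $\Gamma_F$-action through $\rho$). Therefore the automorphism of $X = \Spec A$ induced by $g$ (which is part of the groupoid giving the $G^0$-action, i.e. the map $G^0 \times X \to X$) pulls back the ideal sheaf of $X_{W,0}$ to itself, so $X_{W,0}$ is $G^0$-stable; and then $X_{W,j}$, being cut out from $X_{W,0}$ by the upper-semicontinuity locus of the fibre dimension of the same module $\check W(A)/M$, is automatically stable too. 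One should phrase this via the action morphism $a : G^0 \times X \to X$ and check $a^{-1}(X_{W,j}) = (G^0 \times X)_{W,j}$ where the left side uses $a^*\rho$ and the right side uses $\mathrm{pr}_X^*\rho$; these two representations of $\Gamma_F$ over $\OO[G^0]\otimes_\OO A$ are conjugate by the tautological element of $G^0(\OO[G^0]\otimes_\OO A)$, so the modules $M$ agree, hence the loci agree, hence invariance.

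\textbf{Main obstacle.} The only real care needed is the bookkeeping: making sure that "conjugate representations have isomorphic (twisted) invariant spaces" is promoted to "the defining module $M$ of \Cref{V_iks} is carried to itself by the $G^0$-action morphism", so that the invariance holds scheme-theoretically (and in all three cases (1)--(3) simultaneously, which is automatic since the three cases only differ by the choice of base ring and the argument is insensitive to that). The three cases are handled uniformly; no case distinction is required. I expect no serious difficulty beyond this functorial rephrasing.
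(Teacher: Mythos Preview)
Your proposal is correct and takes essentially the same approach as the paper. Both arguments reduce to the pointwise observation that if $\rho_{x'} = g\rho_x g^{-1}$ for some $g\in G^0(\kappa)$, then the action of $g$ on $W_\kappa$ intertwines the two $\Gamma_F$-actions, hence $\dim_\kappa W_x(1)^{\Gamma_F}=\dim_\kappa W_{x'}(1)^{\Gamma_F}$; since $X_{W,j}$ is reduced this suffices. The paper packages the conjugation via the automorphism $\varphi_g$ of $A$ coming from the universal property (for $g\in G^0(A)$), while you work directly with geometric points and also sketch the module-level and action-morphism formulations, but the content is identical.
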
 
\begin{proof} It follows from the universal property of $A^{\gen}_{G,\rhobarss}$ that 
if $g\in G^0(A)$ then there is an isomorphism of $\RpsG$-algebras
$\varphi_g: A\rightarrow A$ such that for all $\gamma\in \Gamma_F$ we have 
\begin{equation}\label{conjugate}
g \rho(\gamma) g^{-1} = G(\varphi_g)( \rho(\gamma)),
\end{equation}
where $G(\varphi_g): G(A)\rightarrow G(A)$ is the group homomorphism 
induced by $\varphi_g: A\rightarrow A$. 

 Let $x: A\rightarrow \kappa$ 
be a homomorphism of $\RpsGLd$-algebras, where $\kappa$ 
is a field. Then 
$y:= g(x)= x\circ \varphi_g^{-1}: A\rightarrow \kappa$. In particular, the residue 
fields of $x$ and $y$ are the same and we may assume that they are equal to $\kappa$. 
If $h\in G(A)$ then 
let $h_x= G(x)(h), h_y= G(y)(h)\in G(\kappa)$ be the specialisations of $h$ at $x$ and $y$ respectively.
It follows from \eqref{conjugate} that 
\begin{equation}\label{conjugate2}
\rho_x(\gamma)= G(x)(\rho(\gamma))= G(y)( G(\varphi_g)( \rho(\gamma)))= G(y)(g \rho(\gamma) g^{-1})= g_y \rho_y(\gamma) g_y^{-1}.
\end{equation}

We may identify $W_x= W_y= W_{\kappa}$ as $\kappa$-vector spaces. 
If $v\in W_{\kappa}$ such that $\rho_x(\gamma)v = \chi_{\cyc}(\gamma)^{-1} v$ for all 
$\gamma\in \Gamma_F$ then it follows from \eqref{conjugate2} that 
 $\rho_y(\gamma) ( g_y^{-1} v) = \chi_{\cyc}(\gamma)^{-1}  g_y^{-1} v$ for all 
$\gamma\in \Gamma_F$. Hence, $g_y^{-1}: W_{\kappa} \rightarrow W_{\kappa}$ induces 
an isomorphism between $W_{x}(1)^{\Gamma_F}$ and $W_y(1)^{\Gamma_F}$. Thus 
$x\in X_{W,j}$ if and only if $y\in X_{W, j}$. 
\end{proof} 

\begin{cor}\label{setup_prime} Assume the setup of Proposition \ref{setup} and let $X'$ be a closed $G^0$-invariant 
subscheme of $X$. Then the $W$-special locus of level $j$ in $X'$ is $G^0$-invariant.
\end{cor}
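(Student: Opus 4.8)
The plan is straightforward: Corollary \ref{setup_prime} should follow immediately from Proposition \ref{setup} together with the observation that intersecting with a $G^0$-invariant closed subscheme commutes with the formation of the $W$-special locus of level $j$. Concretely, in all three cases of Proposition \ref{setup} the scheme $X'$ is a closed $G^0$-invariant subscheme of $X$, so it is cut out by a $G^0$-stable ideal, and the universal representation $\rho$ restricts to a representation $\rho' : \Gamma_F \to G(A')$, where $A'$ is the corresponding quotient of $A$.

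First I would note that the $W$-special locus of level $j$ in $X'$, computed via Lemma \ref{V_iks} applied with $A'$ in place of $A$, agrees with the scheme-theoretic intersection $X_{W,j} \cap X'$ inside $X$: indeed, for a point $x \in X'$ the module $W_x = W \otimes_R \kappa(x)$ and the $\Gamma_F$-action on it depend only on the specialisation of $\rho$ at $x$, which factors through $A'$, so the condition $\dim_{\kappa(x)} W_x(1)^{\Gamma_F} \ge j+1$ is the same whether we regard $x$ as a point of $X$ or of $X'$. Since $X_{W,j}$ and $X'$ are both reduced closed subschemes of $X$ whose underlying sets we are comparing, it suffices to work with the underlying topological spaces, and the set $X'_{W,j}$ of points of $X'$ lying in the level-$j$ special locus is simply $X_{W,j} \cap X'$ as a subset of $X$.

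Then I would invoke Proposition \ref{setup}, which tells us that $X_{W,j}$ is a $G^0$-invariant closed subset of $X$, together with the hypothesis that $X'$ is $G^0$-invariant; the intersection of two $G^0$-invariant subsets is $G^0$-invariant, so $X'_{W,j} = X_{W,j} \cap X'$ is $G^0$-invariant. This is the entire argument.

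There is no real obstacle here — the only thing to be careful about is making sure the $G^0$-action on $X'$ used in the statement is the restriction of the action on $X$ considered in Proposition \ref{setup}, which holds because $X'$ is by hypothesis a $G^0$-\emph{invariant} subscheme, so the action-groupoid structure restricts. One could alternatively phrase the proof entirely in terms of the coordinate ring: the ideal $I(X_{W,j})$ is generated by the entries of the matrices $\chi_{\cyc}(g)^{-1} g \cdot \ell - \ell$ appearing in the proof of Lemma \ref{V_iks}, and Proposition \ref{setup} shows this ideal is $G^0$-stable; passing to the quotient ring $A' = A/I(X')$, the image of this ideal is still $G^0$-stable and cuts out $X'_{W,j}$, giving the claim. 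Either formulation is a one-line deduction from the results already proved.
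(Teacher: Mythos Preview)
Your proposal is correct and takes essentially the same approach as the paper: the paper observes that the fibre product $X_{W,j}\times_X X'$ is a closed $G^0$-invariant subscheme of $X'$ whose underlying reduced subscheme is $X'_{W,j}$, which is exactly your observation that $X'_{W,j} = X_{W,j}\cap X'$ as sets and both pieces are $G^0$-invariant. One small caveat: your alternative ring-theoretic formulation at the end is not quite accurate for $j\ge 1$, since $X_{W,j}$ is cut out by a rank condition (Fitting-type ideal) rather than directly by the elements $\chi_{\cyc}(g)^{-1} g\cdot\ell - \ell$; but this does not affect your main argument.
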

\begin{proof} The fibre product $X_{W,j}\times_X X'$ is a closed  $G^0$-invariant subscheme of $X'$ and
its underlying reduced subscheme coincides with $X'_{W,j}$. Thus $X'_{W,j}$ is also $G^0$-invariant.
\end{proof} 

\begin{lem}\label{local_ring_dim} Let $Z$ be a closed non-empty $G^0$-invariant subscheme of $X^{\gen}_{G, \rhobarss}$ and let 
$Z'$ be an irreducible component of $Z$. Then $Z'$ is $G^0$-invariant and $Z'\cap Y_{\rhobarss}$ is non-empty. 
Moreover, if $x$ is a closed point of $Z'$ then one of the following holds:
\begin{enumerate}
\item if $x\in Y_{\rhobarss}$ then $\dim \OO_{Z', x} = \dim Z'$;
\item if $x\not\in Y_{\rhobarss}$ then $\dim \OO_{Z', x} = \dim Z'-1$.
\end{enumerate}
\end{lem}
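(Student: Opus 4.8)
The plan is to deduce everything from the structural results about $X^{\gen}_{G,\rhobarss}$ already established. First I would recall that $X^{\gen}_{G,\rhobarss}$ is a closed subscheme of $G^N_{X^{\ps}_G}$ (Remark \ref{FN}), that $G^0$ acts on it, and that by \cite[Lemma 2.1]{BIP_new} every connected --- hence every irreducible --- component of a $G^0$-invariant closed subscheme is again $G^0$-invariant; applying this to $Z$ gives that $Z'$ is $G^0$-invariant. To see that $Z' \cap Y_{\rhobarss}$ is non-empty I would argue exactly as in the proof of \Cref{useful_V} (which cites \cite[Lemma 3.21]{BIP_new}): the point is that every irreducible component of a nonempty $G^0$-invariant closed subscheme meets $Y_{\rhobarss}$, because the image of $Z'$ in $X^{\gps}_{G,\rhobarss}$ is closed (by \cite[Theorem 3 (iii)]{seshadri}, using $G^0$-invariance of $Z'$) and nonempty, hence contains the closed point of the local scheme $X^{\gps}_{G,\rhobarss}$, whose preimage in $X^{\gen}_{G,\rhobarss}$ is precisely $Y_{\rhobarss}$.

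The dimension statement is a purely local computation once we know $Z' \cap Y_{\rhobarss} \neq \emptyset$. I would invoke \cite[Lemma 3.18 (5)]{BIP_new} in the same way it is used in the proof of \Cref{useful_V}: since $Z'$ is irreducible, $G^0$-invariant, and meets $Y_{\rhobarss}$, and since $Y_{\rhobarss}$ is cut out in $X^{\gen}_{G,\rhobarss}$ by the closed point of the (local) base $X^{\gps}_{G,\rhobarss}$, the subscheme $Z' \setminus Y_{\rhobarss}$ is either empty or of dimension $\dim Z' - 1$ --- indeed it cannot be empty because $Z'$ is irreducible of positive dimension with a point in $Y_{\rhobarss}$ and the generic point lies outside $Y_{\rhobarss}$ (unless $Z'$ is zero-dimensional, a degenerate case handled directly since then $Z' \subseteq Y_{\rhobarss}$ and case (2) is vacuous). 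Then for a closed point $x$ of $Z'$: if $x \notin Y_{\rhobarss}$, then $x$ is a closed point of the finite-type-over-a-field (indeed Jacobson, cf. the discussion in \Cref{def_probs}) scheme $Z' \setminus Y_{\rhobarss}$, so $\dim \OO_{Z',x} = \dim(Z'\setminus Y_{\rhobarss}) = \dim Z' - 1$ by the dimension formula for catenary excellent schemes (every closed point of an irreducible finite-type $k$-scheme, or of an excellent local scheme, realises the full dimension); this gives case (2). If $x \in Y_{\rhobarss}$, then since $Z'$ is the closure of $Z' \setminus Y_{\rhobarss}$ (in the nondegenerate case) we still have that $x$ lies on the unique irreducible component $Z'$ and $\dim \OO_{Z',x} = \dim Z'$ by the same excellence/catenarity property; this gives case (1). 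In the degenerate case $Z'$ zero-dimensional, $Z' = \{x\} \subseteq Y_{\rhobarss}$ and $\dim \OO_{Z',x} = 0 = \dim Z'$.

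The main obstacle is bookkeeping rather than mathematics: I need to make sure the cited lemmas from \cite{BIP_new} apply verbatim in the generalised-reductive setting, i.e.\ that $X^{\gen}_{G,\rhobarss}$ is excellent (hence catenary, so that $\dim \OO_{Z',x}$ for a closed point equals $\dim Z'$), that $X^{\gps}_{G,\rhobarss}$ is genuinely local so that ``preimage of the closed point'' makes sense, and that closed points of $X^{\gen}_{G,\rhobarss} \setminus Y_{\rhobarss}$ have residue fields that are finite or local $\OO$-fields --- all of which is recorded earlier: $R^{\ps}_G$ is a complete local noetherian $\OO$-algebra, $A^{\gen}_{G,\rhobarss}$ is of finite type over it (so excellent), $X^{\gps}_{G,\rhobarss} = \Spec R^{\gps}_{G,\rhobarss}$ is local, and the residue-field dichotomy is \cite[Lemmas 3.17, 3.18]{BIP_new} as used in \Cref{def_probs}. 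Once these are in place the argument is a direct transcription of the proofs of \Cref{useful_V} and \cite[Lemma 3.21]{BIP_new}. I would also remark that the very last sentence (``In particular\ldots'') of the surrounding development, if present, follows immediately by taking the maximum over components and applying \Cref{local_ring_def_ring}, but that is outside the scope of this lemma.
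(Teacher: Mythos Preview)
Your proposal is correct and follows exactly the approach the paper takes: the paper's proof simply reads ``The proof is the same as the proof of \cite[Lemma 3.21]{BIP_new},'' and you have unwound that argument (invariance via \cite[Lemma 2.1]{BIP_new}, nonemptiness of $Z'\cap Y_{\rhobarss}$ via closedness of the image in the local GIT quotient by \cite[Theorem 3 (iii)]{seshadri}, and the dimension statements via \cite[Lemma 3.18]{BIP_new} and excellence). One minor imprecision: your ``degenerate case'' should be $Z'\subseteq Y_{\rhobarss}$ rather than $\dim Z'=0$, since $Z'$ could be positive-dimensional yet entirely contained in $Y_{\rhobarss}$; the argument still works once you make this adjustment.
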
 
\begin{proof} The proof is the same as the proof of \cite[Lemma 3.21]{BIP_new}.
\end{proof}

\begin{prop}\label{dim_sp_gen} If $Z$ is a closed $G^0$-invariant subscheme of $X^{\gen}_{G, \rhobarss}$ then 
$$\dim Z[1/p]\le \dim \Zbar,\quad \dim Z\le \dim \Zbar+1,$$
where $\Zbar$ is its special fibre and  $Z[1/p]$ is its generic fibre. 
\end{prop}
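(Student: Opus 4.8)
The plan is to deduce both inequalities from the already-established relationship between local rings of $X^{\gen}_{G,\rhobarss}$ and the framed deformation rings, together with the commutative-algebra bookkeeping from Section~\ref{sec_def_rings}. First I would reduce to the case where $Z$ is irreducible: a finite disjoint union of irreducible components computes the dimension of $Z$ as the maximum over components, the special fibre $\Zbar$ likewise, and $Z[1/p]$ likewise (the latter only picking up the components not contained in the special fibre), so it suffices to prove the statement for each irreducible component $Z'$ of $Z$. By \Cref{local_ring_dim}, $Z'$ is $G^0$-invariant and meets $Y_{\rhobarss}$, so I may pick a closed point $x\in Z'\cap Y_{\rhobarss}$; then $\dim Z'=\dim \OO_{Z',x}=\dim\widehat{\OO}_{Z',x}$ (using excellence/Noetherianity so completion preserves dimension), and $\widehat{\OO}_{Z',x}$ is a quotient of $\widehat{\OO}_{X^{\gen}_{G,\rhobarss},x}\cong R^{\square}_{\rho_x,G}$ by \Cref{local_ring_def_ring}(1), since $x\in Y_{\rhobarss}$.

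Next I would track the special fibre. Because $x\in Y_{\rhobarss}$, the residue field $\kappa(x)$ is finite, $\varpi$ lies in the maximal ideal of $R^{\square}_{\rho_x,G}$, and by \Cref{local_ring_def_ring} again the completed local ring of $\Zbar'$ at $x$ (where $Z'$ is the chosen component, $\Zbar'$ its special fibre, which is a component-or-union-of-components of $\Zbar$) is a quotient of $R^{\square}_{\rho_x,G}/\varpi$. The point is then the elementary dimension inequality: for a Noetherian local ring and a single element $\varpi$ of its maximal ideal, $\dim(R/\varpi)\geq \dim R - 1$, and hence for the quotients $\dim(\widehat{\OO}_{Z',x}/\varpi)\geq \dim\widehat{\OO}_{Z',x}-1=\dim Z'-1$. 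Since $\widehat{\OO}_{Z',x}/\varpi$ is a quotient of the completion at $x$ of the special fibre $\Zbar$ supported at a closed point, this gives $\dim\Zbar\geq \dim Z'-1$, i.e. $\dim Z\leq \dim\Zbar+1$ after taking the maximum over components. For the generic fibre inequality: an irreducible component $Z'$ with $Z'[1/p]\neq\emptyset$ is flat over $\OO$ (being $\OO$-torsion-free, as $\varpi$ is not nilpotent on the domain $\OO(Z')$), so $\dim Z'[1/p]=\dim Z'-1$; combined with $\dim\Zbar'\geq\dim Z'-1$ from the previous sentence, $\dim Z'[1/p]\leq\dim\Zbar'\leq\dim\Zbar$, and taking the maximum over those components gives $\dim Z[1/p]\leq\dim\Zbar$. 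Components of $Z$ contained in the special fibre contribute nothing to $Z[1/p]$.

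The routine parts are the reduction to irreducible components and the generic-flatness argument; these I would state briefly. The step deserving most care is the passage from "$\widehat{\OO}_{Z',x}/\varpi$ is a quotient of a local ring of $\Zbar$ supported at a closed point'' to an actual lower bound on $\dim\Zbar$: one must check that $\Zbar'$ (the special fibre of the component $Z'$) is genuinely contained in $\Zbar$ with $\dim\Zbar'=\dim\widehat{\OO}_{\Zbar',x}$ because $x$ is a closed point of the complete (semi-)local scheme $Y_{\rhobarss}$, so the dimension is computed at any such closed point; this is exactly the mechanism already used in \Cref{dim_closed_pts}, and I would invoke that lemma's proof directly. The main genuine obstacle — and it is minor — is making sure the identification of completed local rings from \Cref{local_ring_def_ring} is applied on the correct side (we need $x\in Y_{\rhobarss}$ so that case (1), not case (2), applies, avoiding the spurious extra variable $T$), which is guaranteed by \Cref{local_ring_dim}.

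\begin{proof}
We may assume $Z$ is non-empty. Writing $Z$ as the union of its finitely many irreducible components, both sides of each asserted inequality are maxima over the components (for $Z[1/p]$ the components contained in $\Xbar^{\gen}_{G,\rhobarss}$ contribute nothing), so it suffices to treat an irreducible component $Z'$ of $Z$. By \Cref{local_ring_dim}, $Z'$ is $G^0$-invariant and $Z'\cap Y_{\rhobarss}\neq\emptyset$; fix a closed point $x\in Z'\cap Y_{\rhobarss}$, which is then also a closed point of $Y_{\rhobarss}$ and of $\Xbar^{\gen}_{G,\rhobarss}$, with $\kappa(x)$ a finite extension of $k$ by \cite[Lemma 3.17]{BIP_new}. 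Since $x\in Y_{\rhobarss}$, \Cref{local_ring_def_ring}(1) gives $\widehat{\OO}_{X^{\gen}_{G,\rhobarss},x}\cong R^{\square}_{\rho_x,G}$, and $\widehat{\OO}_{Z',x}$ is a quotient of this ring; as $x$ is a closed point of the component $Z'$ we have $\dim Z'=\dim \OO_{Z',x}=\dim \widehat{\OO}_{Z',x}$.

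Let $\Zbar'$ denote the special fibre of $Z'$ and $\Zbar$ that of $Z$, so $\Zbar'\subseteq\Zbar$. The completed local ring $\widehat{\OO}_{\Zbar',x}$ is $\widehat{\OO}_{Z',x}/\varpi$. For any Noetherian local ring and any single element $\varpi$ of its maximal ideal one has $\dim(R/\varpi)\geq\dim R-1$, hence
\[
\dim \widehat{\OO}_{\Zbar',x}=\dim(\widehat{\OO}_{Z',x}/\varpi)\geq \dim\widehat{\OO}_{Z',x}-1=\dim Z'-1.
\]
Since $x$ is a closed point of $\Zbar'$ and, by the argument of \Cref{dim_closed_pts}, the dimension of the (semi-local, supported over the closed point of $X^{\ps}_G$) scheme $\Zbar'$ is attained at a closed point lying over a closed point of $X^{\ps}_{\GL_d}$, we get $\dim\Zbar'=\dim\widehat{\OO}_{\Zbar',x}\geq\dim Z'-1$, and therefore $\dim\Zbar\geq\dim\Zbar'\geq\dim Z'-1$. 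Taking the maximum over the components $Z'$ of $Z$ yields $\dim Z\leq\dim\Zbar+1$.

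Finally, if $Z'[1/p]\neq\emptyset$ then $Z'$ is not contained in the special fibre, so $\varpi$ is a non-zero-divisor on the domain $\OO(Z')$ and $Z'$ is flat over $\OO$; hence $\dim Z'[1/p]=\dim Z'-1$. Combining with $\dim\Zbar'\geq\dim Z'-1$ gives $\dim Z'[1/p]\leq\dim\Zbar'\leq\dim\Zbar$. Taking the maximum over the components of $Z$ with non-empty generic fibre gives $\dim Z[1/p]\leq\dim\Zbar$.
\end{proof}
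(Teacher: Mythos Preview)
Your proof is correct and follows the same approach as the paper, which defers to \cite[Lemma~3.23]{BIP_new}: reduce to an irreducible component $Z'$, use \Cref{local_ring_dim} to pick a closed point $x\in Z'\cap Y_{\rhobarss}$ at which $\dim\OO_{Z',x}=\dim Z'$, and then compare with $\dim\OO_{\Zbar',x}=\dim(\OO_{Z',x}/\varpi)\ge\dim\OO_{Z',x}-1$. One small simplification: you only need the trivial inequality $\dim\Zbar'\ge\dim\OO_{\Zbar',x}$, so the appeal to \Cref{dim_closed_pts} is unnecessary, and for the generic-fibre bound the inequality $\dim Z'[1/p]\le\dim Z'-1$ (rather than equality) already suffices.
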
 
\begin{proof} The proof is the same as the proof of \cite[Lemma 3.23]{BIP_new}.
\end{proof} 

\begin{cor}\label{gen_sp_dim} Let $X'$ be a closed $G$-invariant subscheme of $X^{\gen}_{G, \rhobarss}$, let $W$ be a representation
of $G$ on a free $\OO$-module of finite rank. Then 
$$\dim X'_{W_L}\le \dim X'_{W_k}, \quad \dim X'_{W}\le \dim X'_{W_k}+1$$
where $X'_{W_L}$ is the $W_L$-special locus in $X'[1/p]$ and $X'_{W_k}$ is the $W_k$-special locus in 
$\Xbar'$ and $X'_W$ is the $W$-special locus in $X'$.
\end{cor}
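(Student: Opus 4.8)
\textbf{Proof plan for Corollary \ref{gen_sp_dim}.} The plan is to deduce the statement from \Cref{dim_sp_gen} by showing that the $W$-special locus of $X'$ forms a closed $G^0$-invariant subscheme of $X'$, so that \Cref{dim_sp_gen} applies directly to it. The key point is to identify the special fibre and generic fibre of $X'_W$ with $X'_{W_k}$ and $X'_{W_L}$ respectively, up to nilpotents, which is harmless since all assertions are about dimensions of reduced schemes.

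First I would observe that $X'$, being a closed $G$-invariant subscheme of $\XgenGrhobarss$, satisfies the hypotheses of \Cref{setup} case (1) with the restriction of the universal representation $\rho : \Gamma_F \to G(A^{\gen}_{G,\rhobarss})$ to $A := \Gamma(X', \mathcal O_{X'})$; more precisely, by \Cref{setup_prime} the $W$-special locus $X'_W$ of level $0$ inside $X'$ is a closed $G^0$-invariant subscheme of $X'$. Hence $Z := X'_W$ is a closed $G^0$-invariant subscheme of $\XgenGrhobarss$, and \Cref{dim_sp_gen} gives
\begin{equation*}
\dim Z[1/p] \le \dim \overline Z, \qquad \dim Z \le \dim \overline Z + 1.
\end{equation*}

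It then remains to compare $\overline Z$ with $X'_{W_k}$ and $Z[1/p]$ with $X'_{W_L}$. For the special fibre: by construction $Z$ is cut out (as a reduced subscheme) by the support of $\check W(A)/M$ where $M$ is generated by the elements $\chi_{\cyc}(g)^{-1} g\cdot \ell - \ell$. Reducing mod $\varpi$ replaces $A$ by $A/\varpi$, $\check W$ by $\check W_k$, $\rho$ by its reduction $\bar\rho$, and the universal representation on $\overline X^{\gen}_{G,\rhobarss}$ is exactly this reduction; the module $M$ specialises compatibly, so $\overline Z$ and $X'_{W_k}$ have the same underlying topological space, and in particular the same dimension. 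The argument for the generic fibre is identical, inverting $p$ instead of reducing mod $\varpi$, comparing with \Cref{setup} case (3). Thus $\dim \overline Z = \dim X'_{W_k}$, $\dim Z[1/p] = \dim X'_{W_L}$ and $\dim Z = \dim X'_W$, and substituting into the two displayed inequalities yields the corollary.

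The only mild subtlety — which I expect to be the main thing to check carefully rather than a genuine obstacle — is the compatibility of the formation of the special-locus subscheme with the base changes $A \rightsquigarrow A/\varpi$ and $A \rightsquigarrow A[1/p]$: one must verify that the support of $\check W(A)/M$ behaves well under these flat (for $[1/p]$) or surjective (for mod $\varpi$) maps, which is where one uses that $A$ is noetherian and that $\check W$ is finite free over $\OO$. Since $\overline{X^{\gen}_{G,\rhobarss}}$ and $\XgenGrhobarss[1/p]$ carry the universal representations obtained by reduction and localisation from the universal representation over $A^{\gen}_{G,\rhobarss}$, this is a routine verification, exactly parallel to the proof of \cite[Lemma 3.23]{BIP_new} that underlies \Cref{dim_sp_gen}.
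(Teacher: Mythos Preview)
Your proposal is correct and follows essentially the same approach as the paper: show $X'_W$ is closed and $G^0$-invariant via \Cref{setup_prime}, apply \Cref{dim_sp_gen}, and identify the generic and special fibres of $X'_W$ with $X'_{W_L}$ and $X'_{W_k}$. The paper handles the last identification in one line by observing that the $W$-special locus is defined pointwise in \Cref{V_iks} (so that $X'_{W_L}=X'_W[1/p]$ and $X'_{W_k}$ is the reduced special fibre of $X'_W$ immediately), whereas you route it through the support of $\check W(A)/M$ and its behaviour under base change; both are fine, but the pointwise description makes what you flag as a ``mild subtlety'' a non-issue.
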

\begin{proof}
Let $X'_W$ be the $W$-special locus in $X'$. Then it follows from the definition of the $W$-special locus 
that $X'_{W_L}= X'_W[1/p]$ and $X'_{W_k}$ is the underlying reduced subscheme of 
the special fibre of $X'_W$. 
The assertion follows from Corollary \ref{setup_prime} and Proposition \ref{dim_sp_gen}.
\end{proof}

\begin{prop}\label{WGH} Assume the setup of Proposition  \ref{setup} (2) so that 
$R=k$ and $A=A^{\gen}_{G, \rhobarss}/\varpi$. If the $W$-special locus of 
level $j$ in $\Xbar^{\gen}_{G, \rhobarss}$ is nonempty then 
the following assertions hold: 
\begin{enumerate} 
\item The image $C_{W,j}$ in $\XbargpsG$ of the $W$-special locus of level $j$ in $\Xbar^{\gen}_{G, \rhobarss}$ is closed;
\item there exists a finite extension $k'$ of $k$ and 
finitely many reductive subgroups $H_1, \ldots, H_r$ of $G_{k'}$ such that 
$C_{W,j}$ is contained in the union of $\Xbar^{\gps}_{H_iG, \rhobarss}$ and 
$$ \dim_{k'} W_{k'}^{H_i^0} \ge j+1, \quad \forall 1\le i\le r.$$
\item if $W^{G^0}=0$ then $H_i$ in (2) satisfy
$\dim H_i\le \dim G_k-1$;
\item if $W^{G'}=0$ then $H_i$ in (2) satisfy $\dim H_i\le \dim G_k -2$. 
\end{enumerate}
\end{prop}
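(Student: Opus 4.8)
The plan is to combine the $W$-special locus machinery of \Cref{V_iks,setup,setup_prime} with the finiteness results \Cref{finite_HG,cover_Hi} and the structure theory of generalised reductive subgroups from \Cref{bound_subred,cor_one,cor_two}. First I would observe that by \Cref{setup} (2) and \Cref{setup_prime} the $W$-special locus of level $j$ in $\Xbar^{\gen}_{G, \rhobarss}$, call it $S_{W,j}$, is a closed $G^0$-invariant reduced subscheme. Since $\Xbar^{\gen}_{G,\rhobarss}\to \XbargpsG$ is a good quotient by $G^0$, the image $C_{W,j}$ of the $G^0$-invariant closed subscheme $S_{W,j}$ is closed by \cite[Theorem 3 (iii)]{seshadri}; this gives (1). (One should note that $\XbargpsG$ here is really $\Xbar^{\gps}_{G,\rhobarss}$; the notation in the statement is abbreviated.)

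Next, for (2): $C_{W,j}$ is a closed subscheme of the affine scheme $\XbargpsG = \Spec R^{\gps}_{G,\rhobarss}/\varpi$, which has only finitely many minimal primes. After replacing $k$ by a finite extension $k'$ (using \Cref{rem_k_rat_comp} / the discussion around \eqref{semi-local}) we may assume every connected component of $\Xbar^{\gen}_{G,\rhobarss}$ has a $k$-rational point and that $C_{W,j}$ is a finite union of its irreducible components $\overline{C}_1,\dots,\overline{C}_r$. For each generic point $\eta_i$ of $\overline{C}_i$, pick a preimage $\xbar_i$ in $S_{W,j}$ and let $H_i$ be the Zariski closure of $\rho_{\xbar_i}(\Gamma_F)$ in $G_{\overline{\kappa(\eta_i)}}$. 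Since $\chara(k)=p$, \Cref{inv_non} gives $\dim_{\overline{\kappa(\eta_i)}} W_{\overline{\kappa(\eta_i)}}^{H_i^0}\ge j+1$. By \Cref{gen_red_field} (applied to the reduced group $H_i^{\red}$; in fact after conjugating we can arrange $H_i$ to be defined over a finite extension of $k$ as in the discussion in \Cref{sec_intro_bd_dim}), $H_i$ is generalised reductive, and since $\xbar_i$ maps into $\Xbar^{\gps}_{H_iG,\rhobarss}$ while the generic point $\eta_i$ lies in the scheme-theoretic image of $\Xbar^{\gps}_{H_i}\to \Xbar^{\gps}_G$ (which is closed by \Cref{finite_HG}), we get $\overline{C}_i\subseteq \Xbar^{\gps}_{H_iG,\rhobarss}$. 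Here I would need to be careful that $\dim_{k'} W_{k'}^{H_i^0}$ does not change under the field extension $k'\to \overline{\kappa(\eta_i)}$: this holds because $H_i^0$ is a $k'$-group scheme (after the conjugation/descent step) and taking invariants of a rational representation commutes with flat base change. Collecting the $H_i$ gives (2).

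For (3) and (4): once we know $\dim_{k'}W_{k'}^{H_i^0}\ge j+1\ge 1$, i.e. $W_{k'}^{H_i^0}\ne 0$, the hypothesis $W^{G^0}=0$ forces $H_i^0\ne G^0_{k'}$ (otherwise $W^{G^0}\ne 0$), hence $\dim H_i^0<\dim G^0_{k'}$, i.e. $\dim H_i\le \dim G_{k'}-1=\dim G_k-1$, which is (3). For (4), $W^{H_i^0}\ne 0$ means $V:=W_{k'}$ has $V^{H_i^0}\ne 0$; applying \Cref{cor_two} with the group $G_{k'}$, the reductive subgroup $H_i^0$, and noting $V^{G'_{k'}}=0$ by hypothesis $W^{G'}=0$, we get $\dim G_{k'}-\dim H_i^0\ge 2$, hence $\dim H_i\le \dim G_k-2$. (One subtlety: \Cref{cor_two} is stated for a reductive subgroup, so I apply it to $H_i^0$ rather than $H_i$; since $\dim H_i=\dim H_i^0$ this is harmless. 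Also $H_i^0$ is reductive because its unipotent radical, being characteristic, would be normalised by all of $H_i$ and hence trivial as $H_i$ is generalised reductive — or one simply invokes \Cref{gen_red_field}.)

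The main obstacle I anticipate is the descent/base-change bookkeeping in step (2): ensuring that after passing to a single finite extension $k'$ one can simultaneously realise all the groups $H_i$ as $k'$-subgroup schemes of $G_{k'}$ (up to $G^0(\overline{k})$-conjugacy), that the invariant-dimension bound $\dim_{k'}W_{k'}^{H_i^0}\ge j+1$ is stable, and that the inclusion $\overline{C}_i\subseteq \Xbar^{\gps}_{H_iG,\rhobarss}$ is genuinely into the $\rhobarss$-component rather than some other component of $\Xbar^{\gps}_{H_iG}$ — this last point uses \Cref{get_comp} together with the fact that the $G$-semisimplification of $\rho_{\xbar_i}$ is $\rhobarss$, since $\xbar_i$ lies in $\Xbar^{\gen}_{G,\rhobarss}$. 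The other bounds (3) and (4) are then immediate consequences of the subgroup codimension results already established in \Cref{bounds_red_sbgp}.
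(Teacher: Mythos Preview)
Your outline for parts (1), (3), (4) matches the paper's argument, but there is a genuine gap in (2): you pick an \emph{arbitrary} preimage $\xbar_i\in S_{W,j}$ of the generic point $\eta_i$ and set $H_i$ to be the Zariski closure of $\rho_{\xbar_i}(\Gamma_F)$. Nothing forces this $H_i$ to be reductive. Your appeal to \Cref{gen_red_field} is not justified, since that lemma \emph{assumes} the unipotent radical is trivial; and your later remark (``its unipotent radical\ldots would be normalised by all of $H_i$ and hence trivial as $H_i$ is generalised reductive'') is circular. Concretely, if $\rho_{\xbar_i}$ factors through a proper parabolic of $G_{\overline{\kappa(\eta_i)}}$ but not through any Levi, the Zariski closure of its image will have nontrivial unipotent radical. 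Without reductivity you can neither apply \Cref{cor_two} nor invoke the descent step to $k'$, and the subgroup scheme $\Xbar^{\gps}_{H_iG}$ is not even defined.

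The paper repairs this by choosing the preimage carefully: over a geometric point $\bar\eta_i$, one takes $x$ in the fibre $(S_{W,j})_{\bar\eta_i}$ whose $G^0$-orbit is \emph{closed} in that fibre (such $x$ exists by \cite[Theorem 3]{seshadri}). Since $S_{W,j}$ is closed in $\Xbar^{\gen}_{G,\rhobarss}\hookrightarrow G^N_{\bar\eta_i}$, the $N$-tuple $(\rho_x(\gamma_1),\ldots,\rho_x(\gamma_N))$ then has closed $G^0$-orbit in $G^N_{\bar\eta_i}$, and \cite[Theorem 3.1]{BMR} gives that the subgroup $H_x$ it generates is strongly reductive in $G_{\bar\eta_i}$, hence reductive by \cite[Section 6]{martin}. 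With reductivity in hand, \Cref{inv_non} gives the invariant bound, \Cref{cor_two} gives (4), and the descent to a finite extension $k'$ is provided by \cite[Theorem 10.3]{martin} (a conjugate of $H_x$ is defined over $\bar k$, hence over some finite $k'$), after which conjugating $x$ produces a point of $\Xbar^{\gen}_H$ mapping to $\eta_i$.
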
 

\begin{proof} 
Since the $W$-special locus of level $j$ is closed in $\Xbar^{\gen}_{G, \rhobarss}$ by Lemma \ref{V_iks} and $G^0$-invariant by \Cref{setup}, 
its image in $\Xbar^{\gps}_{G,\rhobarss}$ is also closed by \cite[Theorem 3 (iii)]{seshadri}. 
To prove part (2),  let $\eta_1, \ldots, \eta_r$ be the 
generic points of $C_{W,j}$. It is enough to find $H_i$ satisfying the required bounds on the
dimension such that $\eta_i \in \Xbar^{\gps}_{H_iG, \rhobarss}$. 

To ease the notation let $X$ be the $W$-special locus of level $j$ in $\Xbar^{\gen}_{G, \rhobarss}$, let $\eta=\eta_1$ and let $\bar{\eta}$ be a geometric point above $\eta$. It follows from 
\cite[Theorem 3]{seshadri} that there is $x\in X(\kappa(\bar{\eta}))$ above $\bar{\eta}$ such that the 
orbit $G^0 \cdot x$ is closed in $X_{\bar{\eta}}$. Since $X$ is closed in $\Xbar^{\gen}_{G, \rhobarss}$, Lemma \ref{closedimmersionforGLd} together with \eqref{def_XgenG}
imply that $X_{\bar{\eta}}$ is a closed $G^0$-invariant subscheme of $G^N_{\bar{\eta}}$, where $x\in X_{\bar{\eta}}$ 
is mapped to the $N$-tuple $y=(\rho_x(\gamma_1), \ldots, \rho_x(\gamma_N))$, where $\gamma_i$ are elements of $\Gamma_F$ chosen before Lemma \ref{closedimmersionforGLd}. 
Hence, the orbit $G^0 \cdot y$ is closed in $G^N_{\bar{\eta}}$ under the conjugation action. Let $H_x$ be the smallest closed subgroup 
of $G_{\bar{\eta}}$ containing the entries of $y$. It follows from \cite[Theorem 3.1]{BMR} that $H_x$ is a strongly reductive subgroup of $G_{\bar{\eta}}$, and thus 
is reductive by \cite[Section 6]{martin}. It follows from Lemma \ref{dense_again} that  $H_x$ is equal to the Zariski closure of $\rho_x(\Gamma_F)$ in $G_{\bar{\eta}}$. Since $x\in X$, the dimension of the $H_x^0$-invariants
of $W_x$ is at least $j+1$ by Lemma \ref{inv_non}. If $\dim H_x= \dim G_k$ then $H_x^0= G_{\etabar}^0$ and hence 
$W^{G^0}\neq 0$. If $W^{G'}=0$ then $\dim H_x \le \dim G_k - 2$ by \Cref{cor_two}.

By \cite[Theorem 10.3]{martin} there is a closed reductive 
subgroup $H$ of $G$ defined over $\bar{k}$ and $g\in G(\kappa(\etabar))$ such that $H_x= g (H_{\etabar}) g^{-1}$. Since $H$ is defined by finitely many polynomials with coefficients in $\bar{k}$, $H$ is 
defined over a finite extension of $k$. The point $x':= g(x)\in \XbargenH(\kappa(\etabar))$ maps to $\eta$ in $\Xbar^{\gps}_{G, \rhobarss}$, thus $\eta$ lies in $\Xbar^{\gps}_{HG, \rhobarss}$. 
\end{proof}

\subsection{Bounding the dimension of the space}\label{bound_space} 
In this subsection we carry out the induction to prove \eqref{dim_pair}.
It follows from \cite[Theorem 9.3 (3)]{defT} that \eqref{dim_pair} holds
if $H^0$ is a torus. Thus we may assume that $\dim H\ge 3$. 

\begin{prop}\label{bound_Levi} 
    Let $L$ be an R-Levi of $G$. Let $\Vbar_{LG, \rhobarss}$ be 
    the preimage of $\Ubar_{LG, \rhobarss}$ in $\Xbar^{\gen}_{G, \rhobarss}$ and let $\Zbar_{LG, \rhobarss}$ be 
    its closure. Assume that $L\neq G$ and $\Ubar_{LG, \rhobarss}$ is non-empty.
    If \eqref{dim_pair} holds for all $\rhobarss$-compatible pairs $(H_1, \rhobar_1)$ with $\dim H_1< \dim G_k$ then
    \begin{equation}\label{eq_result}
    \dim G_k ([F:\Qp]+1) - \dim \Zbar_{LG, \rhobarss} \ge\min(2, \dim N_k) [F:\Qp]\ge 1, 
    \end{equation}
    where $N$ is the unipotent radical 
    of an R-parabolic subgroup with R-Levi $L$. Moreover, if either $F\neq \Qp$ or $\dim N_k \neq 1$
    then the lower bound can be replaced by $2$. 
\end{prop}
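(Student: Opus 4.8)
The plan is to reduce the bound for $\dim \Zbar_{LG,\rhobarss}$ to the estimates proved in Corollaries \ref{bound_1} and \ref{bound_2}, by stratifying $\Ubar_{LG,\rhobarss}$ according to the defect $\delta(y)$ of the fibre. First I would observe that since $\Ubar_{LG,\rhobarss}$ is non-empty, Lemma \ref{V_LG_maxdim} gives that $L/L^0 \to G/G^0$ is an isomorphism, $\dim L_k < \dim G_k$ and $\dim N_k \geq 1$; in particular $L_k$ occurs in a $\rhobarss$-compatible pair by Lemma \ref{L_compatible}, so it is legitimate to apply the inductive hypothesis to closed reductive subgroups of $L_k$ of strictly smaller dimension. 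Then I would decompose $\Zbar_{LG,\rhobarss}$ into the closure of the preimage of the defect-zero locus $\Ubar^{0} := \{y \in \Ubar_{LG,\rhobarss} : \delta(y) = 0\}$ and the closure of the preimage of the complement $\Ubar^{>0} := \Ubar_{LG,\rhobarss} \setminus \Ubar^{0}$, using that the dimension of a closed set is the maximum of the dimensions of the closures of a finite stratification.

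For the defect-zero part, Proposition \ref{mother_bound} (or directly Corollary \ref{bound_2} with $H = L_k$, where $\delta(U) = 0 \leq \dim L_k - \dim H = 0$) yields
\[
\dim G_k([F:\Qp]+1) - \dim \overline{Z^0} \geq \dim N_k[F:\Qp] \geq 1,
\]
with the lower bound improved to $2$ when $F \neq \Qp$ or $\dim N_k \neq 1$, exactly as in the statement. The main work is the defect-positive part. Here I would apply the $W$-special locus machinery with $W = \Lie U$ (the adjoint representation of $L$ on the Lie algebra of the unipotent radical of a fixed R-parabolic with R-Levi $L$): the locus in $\Ubar_{LG,\rhobarss}$ where $\delta(y) \geq 1$ is, after pulling back to $\Xbar^{\gen}_{G,\rhobarss}$, contained in the $(\Lie U)$-special locus, whose image in $\Xbar^{\gps}_G$ is by Proposition \ref{WGH} a finite union of $\Xbar^{\gps}_{H_i G, \rhobarss}$ for closed reductive subgroups $H_i$ of $L_{k'}$ with $\dim_{k'}(\Lie U)^{H_i^0}_{k'} \geq 1$. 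Corollary \ref{Lzero} says $((\Lie U)^{\ast})^{L^0} = 0$, and applying Lemma \ref{HL} to each $H_i$ gives that either $\dim L_k - \dim H_i \geq 2$, or $\dim L_k - \dim H_i = 1$ and the defect along that stratum is exactly $1$. In the first case Corollary \ref{bound_1} gives $\dim G_k([F:\Qp]+1) - \dim Z \geq 2[F:\Qp]$; in the second case $\delta(U) \leq 1 = \dim L_k - \dim H_i$, so Corollary \ref{bound_2} applies and gives $\geq \dim N_k[F:\Qp] \geq 1$, with the improvement to $2$ under the same condition $F \neq \Qp$ or $\dim N_k \neq 1$. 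One must also stratify further by the level $j$ of the $(\Lie U)$-special locus and note that only finitely many $j$ occur (by Noetherianity and Lemma \ref{V_iks}), and that $\dim (\Lie U)^{H_i^0} \geq j+1 \geq 2$ forces $\dim L_k - \dim H_i \geq 2$ via Lemma \ref{HL}, so the higher levels contribute only to the good case.

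Taking the minimum over all strata, every contribution is bounded below by $\min(2, \dim N_k)[F:\Qp]$, and by $2$ whenever $F \neq \Qp$ or $\dim N_k \neq 1$, which is the claim. I expect the main obstacle to be bookkeeping rather than a conceptual difficulty: one has to be careful that all the subschemes $\Xbar^{\gps}_{H_i G, \rhobarss} \cap U_{LG,\rhobarss}$ appearing really do satisfy the hypotheses of Corollaries \ref{bound_1} and \ref{bound_2} (in particular that $U_{LG,\rhobarss}$ is the relevant locally closed stratum, that $H_i$ is reductive and defined over a finite extension of $k$, and that the defect bound $\delta(U) \leq \dim L_k - \dim H_i$ genuinely holds on the relevant piece and not just generically), and that the finitely many strata coming from different levels $j$ and different generic points are correctly assembled so that the dimension of $\Zbar_{LG,\rhobarss}$ is dominated by the worst of them. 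A secondary subtlety is to confirm that the defect function $y \mapsto \delta(y)$ is constructible, so that the locus $\delta(y) = 0$ is open in $\Ubar_{LG,\rhobarss}$ and its complement is a finite union of locally closed pieces of the stated form; this follows from Lemma \ref{V_iks} applied to $W = \Lie U$ together with the description of $\delta(y)$ as a maximum over the finitely many R-parabolics with R-Levi $L$.
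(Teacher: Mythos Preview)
Your strategy is exactly the paper's: stratify $\Ubar_{LG,\rhobarss}$ according to the defect, control the defect-zero piece by Corollary~\ref{bound_2} with $H=L_k$, and cover the positive-defect piece by the $W$-special loci of Proposition~\ref{WGH} combined with Lemma~\ref{HL}, then feed the resulting subgroups $H_i$ into Corollaries~\ref{bound_1} and \ref{bound_2}. Three corrections, all within the bookkeeping you anticipated. First, the representation is $W_P=(\Lie U_P)^\ast$, not $\Lie U_P$: the defect in Definition~\ref{defi_defect} is $\max_P h^0(\Gamma_F,(\Lie U_P)^\ast_\kappa(1))$, and Lemma~\ref{HL} is about the coadjoint invariants. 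Second, $\Lie U_P$ is an $L$-representation, not a $G$-representation, so the $W_P$-special locus cannot be formed in $\Xbar^{\gen}_{G,\rhobarss}$ as you wrote; one must take it in $\Xbar^{\gen}_{L,\rhobarss}$, push its image $C_{W_P,j}$ to $\Xbar^{\gps}_{L,\rhobarss}$, apply Proposition~\ref{WGH} with ambient group $L$ to obtain $H_{ij}\subset L_{k'}$, and only then map forward to $\Xbar^{\gps}_{G,\rhobarss}$ via the finite map of Corollary~\ref{finite_HG}. Third, in your case~(b) at level $j=0$ the inequality $\delta(U)\le 1$ does not follow from $\dim W_P^{H_i^0}=1$ alone, since $\delta(y)$ is a maximum over \emph{all} R-parabolics $Q$ with R-Levi $L$; one must first excise $\bigcup_Q C_{W_Q,1}$ from the stratum, and these removed pieces fall under case~(a) at level $1$ by the observation you already made.
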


\begin{proof} Lemma \ref{V_LG_maxdim} implies that $\dim N_k\ge 1$. Hence, $\min(2, \dim N_k)[F:\Qp]\ge 1$ with 
equality if and only if  $F=\Qp$ and $\dim N_k=1$.

It follows from Lemma \ref{Rlevi} that there are only finitely many R-parabolic subgroups 
$P$ with R-Levi $L$. For each such $P$ the Lie algebra of its unipotent radical $\Lie(\rad(P))$
is a free $\OO$-module of finite rank with the adjoint action of  $L$. Moreover, 
$\Lie(\rad(P))\otimes_{\OO} k = \Lie(\rad(P_k))$. Let $W_P$ denote the 
$k$-linear dual of $\Lie(\rad(P_k))$ and for each $j\ge 0$ let $C_{W_P, j}$ be the image in $\Xbar^{\gps}_{L, \rhobarss}$ of the 
$W_P$-special locus of level $j$ in $\Xbar^{\gen}_{L, \rhobarss}$. If $C_{W_P, j}$ is nonempty then it follows from Proposition \ref{WGH}, Corollary \ref{Lzero} and Lemma \ref{HL}
that (after possibly replacing $k$ by a finite extension) there exist finitely many closed reductive subgroups $H_{ij}$, for $1\le i \le r_{P,j}$, of $L_k$ defined over $k$, 
such that $C_{W_P, j}$ is contained in the union of $\Xbar^{\gps}_{H_{ij} L, \rhobarss}$ and
\begin{equation}\label{cantucci}
\dim_k W_P^{H^0_{ij}}\ge j+1
\end{equation} 
and one of the 
following holds:
\begin{itemize} 
\item[(a)] $\dim L_k -\dim H_{ij} \ge 2$, or
\item[(b)] $\dim L_k -\dim H_{ij}=1$ and $\dim_k W_P^{H^0_{ij}}=1$. 
\end{itemize} 
We note that if $j\ge 1$ then \eqref{cantucci} rules out (b), so that (a) holds. 

Let $\Vbar_{P,ij}$ be the preimage of $\Ubar_{P,ij}:=\Ubar_{LG, \rhobarss}\cap \Xbar^{\gps}_{H_{ij} L, \rhobarss}$  
in $\Xbar^{\gen}_{G, \rhobarss}$ and let $\Zbar_{P, ij}$ be its closure. If part  (a) holds then 
Corollary \ref{bound_1} gives us
\begin{equation}\label{eq_ij}
 \dim G_k ([F:\Qp]+1) - \dim \Zbar_{P,ij}\ge 2[F:\Qp].
\end{equation}
As remarked above part (a) holds for $j=1$. If part (b) holds for $j=0$ then 
we let $\Vbar'_{P,i}$ be the preimage in 
$\Xbar^{\gen}_{G, \rhobarss}$ of 
$$\Ubar'_{P,i}:= \Ubar_{P,i0} \setminus \bigcup_{Q} (\Ubar_{P,i0} \cap C_{W_Q, 1}),$$ 
where the union is taken over all R-parabolic subgroups $Q$ of $G$ with R-Levi $L$.
Let $\Zbar'_{P,i}$ be the closure of $\Vbar'_{P,i}$ in  $\Xbar^{\gen}_{G, \rhobarss}$.  

 Let $y: \Spec \kappa\rightarrow U'_{P,i}$ be a closed geometric point  and let $x\in \Xbar^{\gen}_{L, \rhobarss}(\kappa)$ lie in the preimage of $y$, such that $L\cdot x$ is a closed orbit in $\Xbar^{\gen}_{L, \rhobarss}$. Let $\rho_x: \Gamma_F \rightarrow L(\kappa)$ be the 
 specialisation of the universal representation of $\Gamma_F$ over $\Xbar^{\gen}_{L, \rhobarss}$
 at $x$. Let $x'$ be the image of $x$ in $\Xbar^{\gen}_{G, \rhobarss}$ and let $\rho_{x'}$ 
 be the specialisation of the universal representation over $\Xbar^{\gen}_{G, \rhobarss}$
 at $x'$. Then $\rho_{x'}$ is equal to the composition of $\rho_x$ with the embedding
 $L(\kappa)\hookrightarrow G(\kappa)$. Since the orbit $L\cdot x$ is closed $\rho_x$ is 
 $L$-semisimple and hence $\rho_{x'}$ is $G$-semisimple and the orbit $G\cdot x'$ is 
 closed in $X^{\gen}_{G,y}$. We let $\Gamma_F$ act on $W_{Q, \kappa}$ via $\rho_x$. 
 If $h^0(\Gamma_F, W_{Q, \kappa}(1))\ge 2$ then $x'$ is in $W_{Q}$-special locus of level $1$. 
 Since $x'$ maps to $y$, we would obtain that $y\in C_{W_Q, 1}$, which yields a contradiction. 
 Hence $h^0(\Gamma_F, W_{Q, \kappa}(1))\le 1$, which implies that $\delta(y)\le 1$ and 
hence $\delta(U'_{P,i})\le 1$. Since  part (b) holds by assumption we deduce 
that $\delta(U'_{P,i})\le \dim L_k -\dim H_{i0}$. Corollary \ref{bound_2} implies that 
\begin{equation}\label{eq_prime_i}
\dim G_k([F:\Qp] +1) - \dim \Zbar'_{P,i} \ge \dim N_k [F:\Qp].
\end{equation}
Since $Z_{P, i0} \subseteq \Zbar'_{P,i} \cup \bigcup_{Q,k} \Zbar_{Q,k1}$ we deduce from \eqref{eq_ij} and \eqref{eq_prime_i} that 
\begin{equation}\label{eq_i_zero}
\dim G_k([F:\Qp] +1) - \dim \Zbar_{P,i0} \ge \min(2, \dim N_k)[F:\Qp].
\end{equation}
Let $\Vbar''$ be the preimage of $\Ubar'':= \Ubar_{LG, \rhobarss} \setminus \bigcup_P (\Ubar_{LG, \rhobarss}\cap C_{W_P,0})$ 
and let $\Zbar''$ be its closure. The same argument as above shows that $\delta(\Ubar'')=0$. Corollary 
\ref{bound_2} applied with $H=L$ gives
\begin{equation}\label{eq_double_prime} 
\dim G_k([F:\Qp] +1) - \dim \Zbar'' \ge \dim N_k [F:\Qp].
\end{equation}
Since $\Zbar_{LG, \rhobarss} = \Zbar''\cup \bigcup_{P,i} \Zbar_{P,i0}$ we conclude from \eqref{eq_i_zero} and
\eqref{eq_double_prime} that \eqref{eq_result} holds. 
\end{proof}

Let $G'_{\sic}\rightarrow G'$ be the simply connected central cover of $G'$. We have 
shown in \Cref{Gprimesc_action} that $\Lie G'_{\sic}$ is naturally a $G$-representation.

\begin{prop}\label{bound_special} Let $\Xbar^{\spcl}_{G,\rhobarss}$ be the $(\Lie G_{\sic}')^*$-special locus in $\Xbar^{\gen}_{G, \rhobarss}$. 
Let $\Vbar^{\spcl}_{GG, \rhobarss}=\Vbar_{GG, \rhobarss}\cap \Xbar^{\spcl}_{G,\rhobarss}$  and let 
 $\Zbar^{\spcl}_{GG, \rhobarss}$ be its closure. If \eqref{dim_pair} holds for all $\rhobarss$-compatible pairs $(H_1, \rhobar_1)$ with $\dim H_1< \dim G_k$ then
\begin{equation}\label{eq_spcl}
\dim G_k ([F:\Qp]+1) - \dim \Zbar_{GG, \rhobarss}^{\spcl} \ge 2 [F:\Qp]. 
\end{equation}
\end{prop}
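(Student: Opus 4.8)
The goal is to bound the dimension of the closure $\Zbar^{\spcl}_{GG,\rhobarss}$ of the $(\Lie G'_{\sic})^*$-special locus intersected with the absolutely $G$-irreducible locus $\Vbar_{GG,\rhobarss}$. The plan is to apply \Cref{WGH} with $R=k$, $A=A^{\gen}_{G,\rhobarss}/\varpi$ and $W=(\Lie G'_{\sic})^*$. Since $\Vbar_{GG,\rhobarss}\subseteq \Xbar^{\gen}_{G,\rhobarss}\setminus Y_{\rhobarss}$ consists of points whose representation is absolutely $G$-irreducible, the image $C$ in $\Xbar^{\gps}_{G,\rhobarss}$ of the special locus of level $j$ is closed by part (1) of \Cref{WGH}, and by part (2) there is a finite extension $k'$ of $k$ and finitely many reductive subgroups $H_i$ of $G_{k'}$ so that $C$ is covered by the $\Xbar^{\gps}_{H_iG,\rhobarss}$ with $\dim_{k'} W_{k'}^{H_i^0}\ge j+1$. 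The crucial input is \Cref{sic}, which says that $(\Lie G'_{\sic})^*$ has no nonzero $G'$-invariants; hence $W^{G'}=0$, so part (4) of \Cref{WGH} applies and gives $\dim H_i\le \dim G_k-2$ for every $i$.

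Next I would run the induction. Each generic point $\eta$ of the special locus of level $j$ lies in some $\Xbar^{\gps}_{H_iG,\rhobarss}$ with $H_i$ a closed reductive subgroup of $G_{k'}$ of dimension $\le\dim G_k-2$. After replacing $k$ by a finite extension and applying \Cref{cover_Hi}, I may assume $\pi_{H_i}\circ\rhobar_i$ is surjective onto $(H_i/H_i^0)(\overline k)$, so the hypothesis \eqref{dim_pair} for $\rhobarss$-compatible pairs with smaller-dimensional group applies to $(H_i,\rhobar_i)$. Since $\rhobarss$ is absolutely $G$-irreducible, \Cref{dim_fibre_GG} tells us every fibre of $\Xbar^{\gen}_{G,\rhobarss}\to\Xbar^{\gps}_{G,\rhobarss}$ over $U_{GG,\rhobarss}$ has dimension exactly $\dim G_k-\dim Z(G_k)$. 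I would then apply \Cref{bound_1} with $L=G$ and $H=H_i$: this yields
\begin{equation*}
\dim G_k([F:\Qp]+1)-\dim \Zbar_{H_iG,\rhobarss}\ge (\dim G_k-\dim H_i)[F:\Qp]\ge 2[F:\Qp],
\end{equation*}
where $\Zbar_{H_iG,\rhobarss}$ is the closure of the preimage in $\Xbar^{\gen}_{G,\rhobarss}$ of $\Xbar^{\gps}_{H_iG,\rhobarss}\cap U_{GG,\rhobarss}$. Here the key point is that $\delta(U)=0$ along $U_{GG,\rhobarss}$ since $G$ itself is the relevant R-Levi and its unipotent radical is trivial, so the inequality \eqref{fourth_ofmany} degenerates and \Cref{bound_1} (rather than the weaker \Cref{bound_2}) is what we need.

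Finally, $\Vbar^{\spcl}_{GG,\rhobarss}$ is the union over the finitely many $H_i$ (ranging over generic points for all levels $j\ge 0$, of which only finitely many are nonempty by \Cref{V_iks} together with upper semicontinuity) of the preimages of $\Xbar^{\gps}_{H_iG,\rhobarss}\cap U_{GG,\rhobarss}$. Since dimension of a finite union is the maximum of the dimensions, $\dim\Zbar^{\spcl}_{GG,\rhobarss}\le\max_i\dim\Zbar_{H_iG,\rhobarss}$, and the displayed inequality then gives \eqref{eq_spcl}. The main obstacle I anticipate is bookkeeping: making sure the induction hypothesis \eqref{dim_pair} is legitimately available for each $(H_i,\rhobar_i)$ — one must check $\dim H_i<\dim G_k$ (guaranteed by the bound $\dim H_i\le\dim G_k-2$), that $H_i$ is generalised reductive (it is reductive by \Cref{gen_red_field}, being a reduced closed subgroup with trivial unipotent radical, so in particular generalised reductive after the component-group surjectivity is arranged via \Cref{cover_Hi}), and that the pair is $\rhobarss$-compatible in the sense of \Cref{ind_hyp} — plus correctly tracking the passage between $\Xbar^{\gps}_{HG}$ and $\Xbar^{\gps}_H$ via the finiteness in \Cref{finite_HG}. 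None of these is deep, but all are needed for the argument to close.
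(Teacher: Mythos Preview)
Your proposal is correct and follows essentially the same approach as the paper: apply \Cref{WGH} with $W=(\Lie G'_{\sic})^*$, invoke \Cref{sic} to get $W^{G'}=0$ and hence $\dim G_k-\dim H_i\ge 2$ via part (4), and then apply \Cref{bound_1} with $L=G$. Your discussion of the higher levels $j\ge 1$ is unnecessary here---only the level-$0$ special locus $C_{W}=C_{W,0}$ is needed, and \Cref{WGH} already packages the passage through generic points---but this does not affect correctness.
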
 

\begin{proof} We may assume that $\Vbar^{\spcl}_{GG, \rhobarss}$ is non-empty. Let $W=(\Lie G_{\sic}')^*$ and let $C_{W}$ be the image of $\Xbar^{\spcl}_{G,\rhobarss}$ in $\Xbar^{\gps}_{G,\rhobarss}$. 
Since $W^{G'}=0$ by \Cref{sic}, Proposition \ref{WGH} implies that 
there exist finitely many reductive subgroups $H_i$ of $G_k$, such that 
$C_W$ is contained in the union of $\Xbar^{\gps}_{H_iG, \rhobarss}$ and 
$\dim G_k - \dim H_i\ge 2$. The assertion follows from Corollary \ref{bound_1}.
\end{proof}

\begin{defi}\label{defi_Vnspcl} We define the \emph{absolutely irreducible non-special locus} $V^{\nspcl}_{GG, \rhobarss}$ 
in $X^{\gen}_{G, \rhobarss}$ as the complement of the $(\Lie G'_{\sic})^*$-special locus in $V_{GG, \rhobarss}$. We 
define the \emph{absolutely irreducible non-special locus} $\Vbar^{\nspcl}_{GG, \rhobarss}$ 
in $\Xbar^{\gen}_{G, \rhobarss}$ as the complement of the $(\Lie G'_{\sic})_k^*$-special locus in $\Vbar_{GG, \rhobarss}$.
\end{defi}

\begin{remar} $\Vbar^{\nspcl}_{GG, \rhobarss}$ coincides 
with the reduced special fibre of $V^{\nspcl}_{GG, \rhobarss}$.
\end{remar}

Recall that $Y_{\rhobarss}$ is the preimage in $X^{\gen}_{G, \rhobarss}$ of the 
closed point in $X^{\gps}_{G, \rhobarss}$.

\begin{prop}\label{cpnhg} If \eqref{dim_pair} holds for all $\rhobarss$-compatible pairs $(H_1, \rhobar_1)$ with $\dim H_1< \dim G_k$ then 
the complement of $\Vbar^{\nspcl}_{GG, \rhobarss}$  in $\Xbar^{\gen}_{G, \rhobarss}$ 
 has positive codimension. Moreover, 
 \begin{equation}\label{fixing_eq}
 \dim \Xbar^{\gen}_{G,\rhobarss}= \dim \Vbar^{\nspcl}_{GG, \rhobarss}+1= \dim \Xbar^{\git}_{G, \rhobarss} + \dim G_k -\dim Z(G_k).
 \end{equation}
 In particular, $\Vbar^{\nspcl}_{GG, \rhobarss}$  is non-empty.
\end{prop}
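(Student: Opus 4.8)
The statement decomposes $\Xbar^{\gen}_{G,\rhobarss}$ into three pieces: the fibre $Y_{\rhobarss}$ over the closed point of $X^{\gps}_{G,\rhobarss}$; the special locus $\Xbar^{\spcl}_{G,\rhobarss}$; and the Levi strata $\Vbar_{LG,\rhobarss}$ for proper R-Levi $L$. The plan is to bound the dimension of each of the first and third pieces by $\dim G_k([F:\Qp]+1)-1$, and of the second piece (inside $V_{GG,\rhobarss}$) by $\dim G_k([F:\Qp]+1)-1$ as well, using the hypothesis \eqref{dim_pair} for all $\rhobarss$-compatible pairs of strictly smaller dimension. Since $\Xbar^{\gen}_{G,\rhobarss}$ is the union $Y_{\rhobarss}\cup\bigcup_L \Vbar_{LG,\rhobarss}$, and $\Vbar^{\nspcl}_{GG,\rhobarss}$ is obtained from $\Vbar_{GG,\rhobarss}$ by removing $\Xbar^{\spcl}_{G,\rhobarss}$, controlling the complement reduces to these three bounds.

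\textbf{Key steps.} First I would handle the fibre $Y_{\rhobarss}$: by \Cref{bound_Y}, if $\dim G_k>1$ then $\dim G_k(1+[F:\Qp])-\dim Y_{\rhobarss}\ge [F:\Qp]+1\ge 2$, so $Y_{\rhobarss}$ has positive codimension (the degenerate case $\dim G_k\le 1$ is easily dispatched, or excluded since we have reduced to $\dim H\ge 3$ at the start of this subsection). Second, for each proper R-Levi $L$ with $\Ubar_{LG,\rhobarss}$ non-empty, \Cref{bound_Levi} gives $\dim G_k([F:\Qp]+1)-\dim\Zbar_{LG,\rhobarss}\ge\min(2,\dim N_k)[F:\Qp]\ge 1$, so each such $\Zbar_{LG,\rhobarss}$ has positive codimension; there are only finitely many $L$ to consider by \Cref{Rlevi}. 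Third, for the special locus inside the absolutely irreducible stratum, \Cref{bound_special} gives $\dim G_k([F:\Qp]+1)-\dim\Zbar^{\spcl}_{GG,\rhobarss}\ge 2[F:\Qp]\ge 2$, so the special locus inside $V_{GG,\rhobarss}$ also has positive codimension. Combining, the complement of $\Vbar^{\nspcl}_{GG,\rhobarss}$ in $\Xbar^{\gen}_{G,\rhobarss}$ is contained in the union $Y_{\rhobarss}\cup\Zbar^{\spcl}_{GG,\rhobarss}\cup\bigcup_{L\neq G}\Zbar_{LG,\rhobarss}$, all of positive codimension, hence has positive codimension; in particular $\Vbar^{\nspcl}_{GG,\rhobarss}$ is non-empty. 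Then $\dim\Xbar^{\gen}_{G,\rhobarss}=\dim\Vbar^{\nspcl}_{GG,\rhobarss}$.

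\textbf{Establishing \eqref{fixing_eq}.} For the first equality in \eqref{fixing_eq} I would apply \Cref{useful_V}: $\Vbar^{\nspcl}_{GG,\rhobarss}$ is an open $G^0$-invariant subscheme of $\Xbar^{\gen}_{G,\rhobarss}\setminus Y_{\rhobarss}$ (it avoids $Y_{\rhobarss}$ by construction of $V_{GG,\rhobarss}$), so its closure $\Zbar$ satisfies $\dim\Zbar=\dim\Vbar^{\nspcl}_{GG,\rhobarss}+1$; but since the complement has positive codimension $\Zbar=\Xbar^{\gen}_{G,\rhobarss}$, giving $\dim\Xbar^{\gen}_{G,\rhobarss}=\dim\Vbar^{\nspcl}_{GG,\rhobarss}+1$. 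For the second equality, $\Vbar^{\nspcl}_{GG,\rhobarss}$ is the preimage of a non-empty locally closed subspace $\Ubar^{\nspcl}_{GG,\rhobarss}$ of $\Ubar_{GG,\rhobarss}\subseteq\Ubar_{GG}$; by \Cref{UV_GG} its dimension is $\dim\Ubar^{\nspcl}_{GG,\rhobarss}+\dim G_k-\dim Z(G_k)$, and the same relation holds after taking closures in $\Xbar^{\git}_{G,\rhobarss}$ and $\Xbar^{\gen}_{G,\rhobarss}$ respectively (the closure of $\Ubar^{\nspcl}_{GG,\rhobarss}$ in $\Xbar^{\git}_{G,\rhobarss}$ is all of $\Xbar^{\git}_{G,\rhobarss}$ because its complement, being the image of a positive-codimension $G^0$-invariant closed subset, is proper closed, and a GIT quotient of an irreducible—or at least equidimensional—situation forces equality of dimension). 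Matching these gives $\dim\Xbar^{\gen}_{G,\rhobarss}=\dim\Xbar^{\git}_{G,\rhobarss}+\dim G_k-\dim Z(G_k)$.

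\textbf{Main obstacle.} The step I expect to require the most care is verifying that the bounds of \Cref{bound_Levi}, \Cref{bound_special} and \Cref{bound_Y} genuinely cover \emph{all} of $\Xbar^{\gen}_{G,\rhobarss}\setminus\Vbar^{\nspcl}_{GG,\rhobarss}$: one must check that the stratification $\Xbar^{\gen}_{G,\rhobarss}=Y_{\rhobarss}\cup\bigcup_L\Vbar_{LG,\rhobarss}$ combined with the decomposition $\Vbar_{GG,\rhobarss}=\Vbar^{\nspcl}_{GG,\rhobarss}\sqcup(\Vbar_{GG,\rhobarss}\cap\Xbar^{\spcl}_{G,\rhobarss})$ leaves nothing uncontrolled, and that passing to closures does not inadvertently enlarge dimensions (handled by \Cref{useful_V} and the finiteness of the index set of R-Levis). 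The bookkeeping of which pairs $(H_1,\rhobar_1)$ the inductive hypothesis \eqref{dim_pair} is being invoked for—always with $\dim H_1<\dim G_k$, which is exactly what \Cref{L_compatible} and \Cref{WGH}(3),(4) guarantee in each application—also needs to be tracked carefully so that the induction is not circular.
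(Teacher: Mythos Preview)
Your overall strategy matches the paper's, but there are two genuine gaps.

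\textbf{The lower bound on $\dim \Xbar^{\gen}_{G,\rhobarss}$ is missing.} The bounds from \Cref{bound_Y}, \Cref{bound_Levi}, \Cref{bound_special} only tell you that each piece of the complement $C$ has dimension at most $\dim G_k([F:\Qp]+1)-1$. To conclude that $C$ has \emph{positive codimension} in $\Xbar^{\gen}_{G,\rhobarss}$, or that $\Vbar^{\nspcl}_{GG,\rhobarss}$ is non-empty, you must first know $\dim \Xbar^{\gen}_{G,\rhobarss}\ge \dim G_k([F:\Qp]+1)$. At this point in the argument that is \emph{not} yet established (equidimensionality and the exact dimension come only later, as consequences of this very proposition). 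The paper supplies the missing inequality by observing that $\rhobar$ defines a closed point $x\in Y_{\rhobarss}$, the completed local ring there is $R^{\square}_{G,\rhobar}/\varpi$ by \Cref{local_ring_def_ring}, and $\dim R^{\square}_{G,\rhobar}/\varpi\ge \dim G_k([F:\Qp]+1)$ by the presentation in \Cref{represent_kappa}. Without this step your argument is circular: you cannot say ``positive codimension, hence non-empty'' without a lower bound on the ambient dimension.

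\textbf{Your argument for the second equality in \eqref{fixing_eq} does not go through.} You propose to deduce $\dim\Xbar^{\git}_{G,\rhobarss}$ by arguing that the image in the GIT quotient of the closed complement is a proper closed subset, appealing to equidimensionality. But equidimensionality of $\Xbar^{\gen}_{G,\rhobarss}$ is not known here, and more seriously the fibres of $\Xbar^{\gen}_{G,\rhobarss}\to\Xbar^{\gps}_{G,\rhobarss}$ over the strata $U_{LG}$ for different $L$ have different dimensions, so positive codimension upstairs does not transfer directly to the quotient. The paper instead works on the GIT-quotient side: it writes $\Xbar^{\gps}_{G,\rhobarss}=\Zbar\cup\bigcup_{L\neq G}\Xbar^{\gps}_{LG,\rhobarss}$ where $\Zbar$ is the closure of $\Ubar_{GG,\rhobarss}$, computes $\dim\Zbar$ via \Cref{UV_GG} and \Cref{useful_U}, and then invokes the inductive hypothesis \eqref{dim_pair} a second time to bound $\dim\Xbar^{\gps}_{LG,\rhobarss}\le \dim L_k[F:\Qp]+\dim Z(L_k)$, combined with the inequality of \Cref{RRX}, to force $\dim\Zbar>\dim\Xbar^{\gps}_{LG,\rhobarss}$ for every proper $L$. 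This extra use of \eqref{dim_pair} on the quotient side is essential and is absent from your sketch.
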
 

\begin{proof} We have $\Vbar^{\nspcl}_{GG, \rhobarss}$ is open in $\Xbar^{\gen}_{G, \rhobarss}$ and 
its complement $C$ is contained in $Y_{\rhobarss}\cup \Zbar^{\spcl}_{GG, \rhobarss} \cup \bigcup_L \Zbar_{LG, \rhobarss}$, where $L$ runs over R-Levi subgroups of $G$, such that $L\neq G$ and $L$ contains
a fixed maximal split torus. It follows from Propositions \ref{bound_Levi}, \ref{bound_special}
and \ref{bound_Y} that 
\begin{equation} \label{bound_C}
\dim G_k([F:\Qp]+1) - \dim C \ge 1.
\end{equation}
If $\Vbar^{\nspcl}_{GG, \rhobarss}$ is empty then $C= \Xbar^{\gen}_{G, \rhobarss}$ and we deduce
that 
\begin{equation}\label{wrong_bound}
\dim G_k([F:\Qp]+1) > \dim \Xbar^{\gen}_{G, \rhobarss}.
\end{equation}
The representation $\rhobar: \Gamma_F \rightarrow G(k)$ defines a closed point $x\in Y_{\rhobarss}(k)$. The completion of the local ring at $x$ with respect to the maximal ideal 
is isomorphic to $R^{\square}_{G,\rhobar}/\varpi$ 
by Lemma \ref{local_ring_def_ring}. Since $\dim R^{\square}_{G,\rhobar}/\varpi \ge \dim G_k([F:\Qp]+1)$ by 
Corollary \ref{represent_kappa}, we conclude that 
\begin{equation}\label{bound_d}
d:=\dim \Xbar^{\gen}_{G, \rhobarss}\ge  \dim G_k([F:\Qp]+1)
\end{equation}
contradicting \eqref{wrong_bound}. Thus $\Vbar^{\nspcl}_{GG, \rhobarss}$ is non-empty
and it follows from \eqref{bound_C} and \eqref{wrong_bound} that $C$ has positive codimension.

We may write $\Xbar^{\gen}_{G, \rhobarss} = \Zbar^{\nspcl}_{GG, \rhobarss} \cup C$, where  $\Zbar^{\nspcl}_{GG, \rhobarss}$ is the closure of $\Vbar^{\nspcl}_{GG, \rhobarss}$ in $\Xbar^{\gen}_{G, \rhobarss}$. We deduce that $\dim \Zbar^{\nspcl}_{GG, \rhobarss}=d$, 
as $C$ has positive codimension.
Lemma \ref{useful_V} implies the first equality in \eqref{fixing_eq}. 
We have $d-1=\dim \Vbar^{\nspcl}_{GG, \rhobarss} \le \dim \Vbar_{GG, \rhobarss} \le d -1$ by Lemma \ref{useful_V}. Thus $\dim \Vbar_{GG, \rhobarss}= d-1$. 
It follows from Corollary \ref{UV_GG} that 
$\dim \Ubar_{GG, \rhobarss} = d - 1 -\dim G_k  +\dim Z(G_k).$
Let  $\Zbar$ denote the closure of $\Ubar_{GG, \rhobarss}$ in $\Xbar^{\gps}_{G, \rhobarss}$. 
Lemma \ref{useful_U} implies 
\begin{equation}\label{eq_dim_Zbar}
\dim \Zbar= \dim \Ubar_{GG, \rhobarss}+1= d- \dim G_k +\dim Z(G)_k.
\end{equation}
We have $\Xbar^{\gps}_{G, \rhobarss} = \Zbar \cup \bigcup_L \Xbar^{\gps}_{LG, \rhobarss}$, where 
the union is taken over all R-Levi subgroups $L$ of $G$,   containing a fixed maximal split torus in $G$ and not equal to $G$. If $\Xbar^{\gps}_{LG, \rhobarss}$ is non-empty then 
$L_k$ is $\rhobarss$-compatible by Lemma \ref{L_compatible} and $\dim L_k<\dim G_k$ as $L\neq G$.
Thus we may apply \eqref{dim_pair} to deduce that 
\begin{equation}\label{last_eq}
\dim \Xbar^{\gps}_{LG, \rhobarss}\le \dim L_k [F:\Qp]+ \dim Z(L_k).
\end{equation}
Lemma  \ref{V_LG_maxdim} implies that $\dim N_k \ge 1$, where $N$ is the unipotent radical of any R-parabolic with R-Levi $L$. It follows
from Lemma \ref{RRX}, \eqref{bound_d}, \eqref{eq_dim_Zbar} and \eqref{last_eq} that $\dim \Zbar > \dim \Xbar^{\gps}_{LG, \rhobarss}$ and thus $\dim \Xbar^{\gps}_{G, \rhobarss}= \dim \Zbar$, which implies the last equality in \eqref{fixing_eq}.
\end{proof}

\begin{lem}\label{abs_irr_nspcl} Let $x$ be a closed point of $X^{\gen}_{G, \rhobarss}\setminus Y_{\rhobarss}$ 
and let 
$\rho_x: \Gamma_F \rightarrow G(\kappa(x))$ be the specialisation of the universal Galois 
representation over $X^{\gen}_{G, \rhobarss}$ at $x$. If 
$\pi_1(G')$ is \'etale or 
$x\in X^{\gen}_{G,\rhobarss}[1/p]$ then $x\in V^{\nspcl}_{GG, \rhobarss}$ 
 if and only if $\rho_x$ is absolutely $G$-irreducible and $H^2(\Gamma_F, \ad^0 \rho_x)=0$. 
\end{lem}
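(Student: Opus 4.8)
The plan is to unwind the definition of $V^{\nspcl}_{GG, \rhobarss}$ as the complement of the $(\Lie G'_{\sic})^*$-special locus in $V_{GG, \rhobarss}$ and to translate each of the two defining conditions into cohomological terms. First I would recall that $x \in V_{GG, \rhobarss}$ is equivalent, by \Cref{irr=stable}, to $\rho_x$ being absolutely $G$-irreducible (the equivalence of (1) and (2) there), so it remains to identify when such an $x$ lies outside the $(\Lie G'_{\sic})^*$-special locus. By \Cref{defi_W_special} applied with $W = (\Lie G'_{\sic})^*$ (over $\OO$ if $x \in Y_{\rhobarss}$-complement in the integral model, or over $L$ if $x \in X^{\gen}_{G,\rhobarss}[1/p]$, using \Cref{gen_sp_dim} to match the loci), the point $x$ lies in the $W$-special locus if and only if $\dim_{\kappa(x)} ((\Lie G'_{\sic})^*_x(1))^{\Gamma_F} \geq 1$, i.e.\,$H^0(\Gamma_F, (\Lie G'_{\sic})^*_x(1)) \neq 0$. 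So $x \in V^{\nspcl}_{GG, \rhobarss}$ iff $\rho_x$ is absolutely $G$-irreducible and $H^0(\Gamma_F, (\Lie G'_{\sic})^*_x(1)) = 0$.

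The remaining task is to replace $H^0(\Gamma_F, (\Lie G'_{\sic})^*_x(1)) = 0$ with $H^2(\Gamma_F, \ad^0\rho_x) = 0$ under the hypothesis that $\pi_1(G')$ is \'etale or $\chara(\kappa(x)) = 0$. Here $\ad^0\rho_x = \Lie G'_x$ with the adjoint action, since $\ad^0$ refers to the derived subalgebra. When $\pi_1(G')$ is \'etale, the central isogeny $G'_{\sic} \to G'$ induces a $G$-equivariant isomorphism $\Lie G'_{\sic} \cong \Lie G'$ (as in the proof of \Cref{coadjoint}), and the same holds automatically when $\chara(\kappa(x)) = 0$ because then all the $n_i$ defining $\pi_1(G') = \prod_i \mu_{n_i}$ are invertible; in the generic-fibre case one uses $(\Lie G')_L = (\Lie G'_{\sic})_L$ as remarked in the excerpt. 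Thus $(\Lie G'_{\sic})^*_x(1) \cong (\ad^0\rho_x)^*(1)$ as $\Gamma_F$-modules, and local Tate duality (as recorded in \Cref{dimZ1}) gives $h^0(\Gamma_F, (\ad^0\rho_x)^*(1)) = h^2(\Gamma_F, \ad^0\rho_x)$. Combining, $H^0(\Gamma_F, (\Lie G'_{\sic})^*_x(1)) = 0$ is equivalent to $H^2(\Gamma_F, \ad^0\rho_x) = 0$, which finishes the proof.

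The main subtlety to handle carefully is the interplay between the integral and characteristic-$p$ and characteristic-$0$ versions of the special locus: one must make sure that the $W$-special locus computed with $W$ over $\OO$, over $k$, or over $L$ all cut out the claimed condition on the appropriate fibre, which is exactly the content of \Cref{gen_sp_dim} and \Cref{setup}. A secondary point is that for a closed point $x \notin Y_{\rhobarss}$ the residue field $\kappa(x)$ is a local field (of characteristic $0$ or $p$) by the trichotomy recalled in \Cref{def_probs}, so the hypothesis ``$\pi_1(G')$ \'etale or $x \in X^{\gen}_{G,\rhobarss}[1/p]$'' indeed covers precisely the cases where $\Lie G'_{\sic} = \Lie G'$; in the remaining case ($\pi_1(G')$ not \'etale and $\chara(\kappa(x)) = p$) one genuinely needs the dual and the simply connected cover, which is why the hypothesis cannot be dropped. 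I expect the only real work is bookkeeping of these base-change compatibilities; the cohomological core is immediate from local Tate duality plus \Cref{coadjoint}/\Cref{sic} and \Cref{irr=stable}.
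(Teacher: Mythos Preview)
Your proposal is correct and follows essentially the same approach as the paper: use \Cref{irr=stable} to identify $V_{GG,\rhobarss}$ with absolute $G$-irreducibility, use \Cref{V_iks} (via \Cref{defi_W_special}) to characterise the special locus pointwise, identify $(\Lie G'_{\sic})_x$ with $(\Lie G')_x$ under the hypothesis, and conclude by local Tate duality using that $\kappa(x)$ is a local field. Your digression on matching the integral/special/generic special loci via \Cref{gen_sp_dim} and \Cref{setup} is unnecessary---\Cref{V_iks} already gives the pointwise criterion directly on $X^{\gen}_{G,\rhobarss}$---but this does not affect correctness.
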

\begin{proof} If $\pi_1(G')$ is \'etale or $x\in X^{\gen}_{G,\rhobarss}[1/p]$ then
$(\Lie G'_{\sic})_x=(\Lie G')_x$.  
It follows from Proposition \ref{irr=stable} and Lemma \ref{V_iks} that 
$x\in V^{\nspcl}_{GG, \rhobarss}$ if and only if $\rho_x$ is absolutely 
$G$-irreducible and $H^0(\Gamma_F, (\ad^0 \rho_x)^*(1))=0$. Since $x$ is closed in 
$X^{\gen}_{G, \rhobarss}\setminus Y_{\rhobarss}$ its residue field is a local field 
by \cite[Lemmas 3.17, 3.18]{BIP_new}. Local Tate duality gives us an isomorphism 
$H^0(\Gamma_F, (\ad^0 \rho_x)^*(1))\cong H^2(\Gamma_F,\ad^0 \rho_x)^*$, which implies the assertion. 
\end{proof}

\begin{lem}\label{dim_Vnspcl} Assume that $\pi_1(G')$ is \'etale. If $V^{\nspcl}_{GG, \rhobarss}$ is non-empty then it is flat over $\Spec \OO$ of relative dimension $\dim G_k([F:\Qp]+1) -1$, which is also equal to the dimension of $\Vbar^{\nspcl}_{GG, \rhobarss}$.
\end{lem}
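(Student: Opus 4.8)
The strategy is to reduce the statement about the arithmetic scheme $V^{\nspcl}_{GG,\rhobarss}$ to the dimension computations already established for its special and generic fibres, together with the local description of completed local rings in terms of deformation rings. Concretely, by \Cref{represent_kappa} and \eqref{EP_bound} each deformation ring $R^{\square}_{\rho_x,G}$ has dimension at least $\dim G_k([F:\Qp]+1)$, and one wants to show that for closed points $x$ in $V^{\nspcl}_{GG,\rhobarss}$ the local ring is regular (hence a domain) of precisely this dimension, so that $V^{\nspcl}_{GG,\rhobarss}$ is flat over $\OO$ with the asserted relative dimension.

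\emph{First step.} I would invoke \Cref{abs_irr_nspcl}: since $\pi_1(G')$ is étale, a closed point $x\in X^{\gen}_{G,\rhobarss}\setminus Y_{\rhobarss}$ lies in $V^{\nspcl}_{GG,\rhobarss}$ if and only if $\rho_x$ is absolutely $G$-irreducible and $H^2(\Gamma_F,\ad^0\rho_x)=0$. Then, using the presentation \eqref{intro_present}, i.e. Proposition \ref{present_over_RH} applied to $\varphi\colon G\to G/G'$ (whose induced map on neutral components is smooth and surjective by \Cref{smooth_projection}), together with the fact from \cite{defT} that $R^{\square}_{G/G',\varphi\circ\rho_x}$ (resp. its mod $\varpi$ reduction) is formally smooth over $\OO[\mu]$ (resp. $k[\mu]$) of the expected dimension, the vanishing of $H^2(\Gamma_F,\ad^0\rho_x)=H^2(\Gamma_F,\ad^{0,\varphi}\rho_x)$ forces $t=0$ in \eqref{eq:present_over_RH}, so $R^{\square}_{\rho_x,G}$ is a power series ring over $R^{\square}_{G/G',\varphi\circ\rho_x}$, in particular flat over $\OO$ of relative dimension $\dim G_k([F:\Qp]+1)$. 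Via \Cref{local_ring_def_ring} this gives that $\hat{\OO}_{X,x}$ (or $\hat{\OO}_{X,x}\br{T}$ if $x$ is in the special fibre) is flat over $\OO$ of the right relative dimension; since flatness and dimension of fibres can be checked on completed local rings, $V^{\nspcl}_{GG,\rhobarss}$ is flat over $\OO$ near each of its closed points.

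\emph{Second step.} To pin down the relative dimension globally, I would combine the two fibrewise computations already in hand. On the special fibre, $\Vbar^{\nspcl}_{GG,\rhobarss}$ is the reduced special fibre of $V^{\nspcl}_{GG,\rhobarss}$ (stated just after the definition), and \Cref{cpnhg}, applied under the running hypothesis that \eqref{dim_pair} holds for all $\rhobarss$-compatible pairs of smaller dimension, gives $\dim\Vbar^{\nspcl}_{GG,\rhobarss}=d-1$ with $d=\dim G_k([F:\Qp]+1)$. Flatness then forces every irreducible component of $V^{\nspcl}_{GG,\rhobarss}$ to dominate $\Spec\OO$ and to have relative dimension $d-1$ over $\OO$, so that $\dim V^{\nspcl}_{GG,\rhobarss}=d$ and the generic fibre $V^{\nspcl}_{GG,\rhobarss}[1/p]$ also has dimension $d-1$. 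Hence $\dim\Vbar^{\nspcl}_{GG,\rhobarss}=\dim G_k([F:\Qp]+1)-1$, which is exactly the asserted common relative dimension.

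\emph{Main obstacle.} The delicate point is to ensure the relative dimension is \emph{constant} equal to $d-1$ rather than merely bounded fibrewise: one must rule out components of $V^{\nspcl}_{GG,\rhobarss}$ contained in the special fibre, and this is where flatness over the DVR $\OO$ is essential — a flat finite-type scheme over a DVR whose special fibre is equidimensional of dimension $d-1$ automatically has generic fibre of the same dimension and no vertical components. So the real work is the local regularity/flatness argument of the first step, i.e. extracting $t=0$ from $H^2$-vanishing via \eqref{eq:present_over_RH} and the results of \cite{defT}; once that is done, the global statement is a standard consequence of flatness together with \Cref{cpnhg}. I would also note that non-emptiness of $\Vbar^{\nspcl}_{GG,\rhobarss}$, already part of \Cref{cpnhg}, guarantees $V^{\nspcl}_{GG,\rhobarss}$ is non-empty, so the hypothesis of the lemma is consistent.
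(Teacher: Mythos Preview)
Your first step is correct and is essentially the paper's argument: by \Cref{abs_irr_nspcl} the closed points $x$ of $V:=V^{\nspcl}_{GG,\rhobarss}$ satisfy $H^2(\Gamma_F,\ad^0\rho_x)=0$, so \Cref{present_over_RH} gives $R^{\square}_{G,\rho_x}\cong R^{\square}_{G/G',\varphi\circ\rho_x}\br{x_1,\ldots,x_r}$ with $r=\dim G'_k([F:\Qp]+1)$; the results of \cite{defT} make $R^{\square}_{G/G',\varphi\circ\rho_x}$ flat over the coefficient ring $\Lambda$ of relative dimension $\dim(G/G')_k([F:\Qp]+1)$, whence $R^{\square}_{G,\rho_x}$ is $\Lambda$-flat of relative dimension $d:=\dim G_k([F:\Qp]+1)$. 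Via \Cref{local_ring_def_ring} one obtains $\dim\OO_{V,x}=d$ and $\OO_{V,x}$ is $\OO$-flat at every closed point $x$, in both characteristics.

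Your second step, however, is both unnecessary and circular. \Cref{cpnhg} establishes only the \emph{relation} $\dim\Xbar^{\gen}_{G,\rhobarss}=\dim\Vbar^{\nspcl}_{GG,\rhobarss}+1$; it does not evaluate either side as $d$. In the paper's logical flow the explicit equality $\dim\Xbar^{\gen}_{G,\rhobarss}=d$ is precisely what \Cref{etale_bound_space} deduces \emph{from} the present lemma combined with \Cref{cpnhg}. So invoking \Cref{cpnhg} to read off $\dim\Vbar^{\nspcl}_{GG,\rhobarss}=d-1$ presupposes what this lemma is meant to supply. The paper simply finishes directly from the local computation: since $\dim\OO_{V,x}=d$ at every closed point, $\dim V=\max_x\dim\OO_{V,x}=d$; flatness over the DVR $\OO$ then forces the relative dimension to be $d-1$, which equals $\dim\Vbar^{\nspcl}_{GG,\rhobarss}$. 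Your step~1 already contains everything needed---drop step~2.
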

\begin{proof} Let $x$ be a closed point of $V:=V^{\nspcl}_{GG, \rhobarss}$. Since 
$H^2(\Gamma_F, \ad^0\rho_x)=0$ by Lemma \ref{abs_irr_nspcl}, Proposition \ref{present_over_RH} implies that 
\begin{equation}
R^{\square}_{G, \rho_x}\cong R^{\square}_{G/G', \varphi\circ \rho_x}\br{x_1,\ldots, x_r},
\end{equation}
where $\varphi: G \rightarrow G/G'$ is the quotient map and $r= \dim G'_k ([F:\Qp]+1)$. It is proved 
in \cite[Corollary 9.5]{defT}   that $X^{\gen}_{G/G', \varphi\circ \rhobarss}$ is 
$\OO$-flat of relative dimension $\dim (G/G')_k([F:\Qp] + 1)$. It follows from 
\Cref{333} that 
$R^{\square}_{G/G', \varphi\circ \rho_x}$ is flat over the coefficient ring $\Lambda$ for $\kappa(x)$ 
of relative dimension $\dim (G/G')_k([F:\Qp] + 1)$. Thus $R^{\square}_{G, \rho_x}$ is flat over $\Lambda$ of relative dimension $\dim G_k ([F:\Qp]+1)$. 

If the characteristic of $\kappa(x)$ is zero then $\Lambda$ is a finite field extension of $L$
and $R^{\square}_{G, \rho_x}\cong \hat{\OO}_{V,x}$ by Lemma \ref{local_ring_def_ring}. 
Thus $\OO_{V,x}$ is flat over $\OO$ and $\dim \OO_{V,x}= \dim \hat{\OO}_{V,x}= \dim G_k ([F:\Qp]+1)$. 

If the characteristic of $\kappa(x)$ is $p$ then $\Lambda$ is a DVR, which is flat over $\OO$ 
and $R^{\square}_{G, \rho_x}\cong \hat{\OO}_{V,x}\br{T}$ by Lemma \ref{local_ring_def_ring}.
Thus $\OO_{V,x}$ is flat over $\OO$ and $\dim \OO_{V,x}= \dim \hat{\OO}_{V,x}= \dim G_k ([F:\Qp]+1)$. 

Hence, $V$ is flat over $\Spec \OO$ and 
$\dim V= \max_{x} \dim \OO_{V,x}= \dim G_k ([F:\Qp]+1),$
where the maximum is taken over all the closed points $x$ in $V$. We have to subtract $1$ 
to get the relative dimension over $\OO$, which is equal to the dimension of the special fibre. 
\end{proof}

\begin{cor}\label{etale_bound_space} Assume that $\pi_1(G')$ is \'etale. If \eqref{dim_pair} holds for all $\rhobarss$-compatible pairs $(H_1, \rhobar_1)$ with $\dim H_1< \dim G_k$ then the following hold:
\begin{enumerate} 
\item $\dim \Xbar^{\gen}_{G, \rhobarss}= \dim G_k ([F:\Qp] +1)$;
\item $\dim \Xbar^{\gps}_{G, \rhobarss}= \dim G_k [F:\Qp] + \dim Z(G_k)$.
\end{enumerate} 
\end{cor}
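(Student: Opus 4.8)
\textbf{Proof proposal for Corollary \ref{etale_bound_space}.}

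The plan is to deduce this corollary directly from Proposition \ref{cpnhg}, which under the inductive hypothesis \eqref{dim_pair} already establishes
\[
\dim \Xbar^{\gen}_{G,\rhobarss} = \dim \Vbar^{\nspcl}_{GG,\rhobarss}+1 = \dim \Xbar^{\git}_{G,\rhobarss} + \dim G_k - \dim Z(G_k),
\]
so everything comes down to pinning down the single number $\dim \Vbar^{\nspcl}_{GG,\rhobarss}$ (equivalently $\dim \Xbar^{\gen}_{G,\rhobarss}$). First I would observe that the assumption $\pi_1(G')$ étale lets us invoke Lemma \ref{dim_Vnspcl}: if $V^{\nspcl}_{GG,\rhobarss}$ is non-empty, its special fibre $\Vbar^{\nspcl}_{GG,\rhobarss}$ has dimension exactly $\dim G_k([F:\Qp]+1) - 1$. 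Proposition \ref{cpnhg} guarantees non-emptiness (the $k$-point coming from $\rhobar$ lies in $Y_{\rhobarss}$, and the argument there shows $\Vbar^{\nspcl}_{GG,\rhobarss}$ is non-empty whenever \eqref{dim_pair} holds for the smaller pairs). Combining this with the first equality of \eqref{fixing_eq} gives
\[
\dim \Xbar^{\gen}_{G,\rhobarss} = \dim \Vbar^{\nspcl}_{GG,\rhobarss} + 1 = \dim G_k([F:\Qp]+1),
\]
which is assertion (1).

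For assertion (2), I would use the second equality in \eqref{fixing_eq}, namely $\dim \Xbar^{\gen}_{G,\rhobarss} = \dim \Xbar^{\git}_{G,\rhobarss} + \dim G_k - \dim Z(G_k)$ (recall $\Xbar^{\git}_{G,\rhobarss} = \Xbar^{\gps}_{G,\rhobarss}$ in the notation of that section). Rearranging and substituting the value from (1),
\[
\dim \Xbar^{\gps}_{G,\rhobarss} = \dim G_k([F:\Qp]+1) - \dim G_k + \dim Z(G_k) = \dim G_k [F:\Qp] + \dim Z(G_k),
\]
which is assertion (2). This is essentially bookkeeping once Proposition \ref{cpnhg} and Lemma \ref{dim_Vnspcl} are in hand.

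The only genuine subtlety is making sure the hypotheses of Lemma \ref{dim_Vnspcl} and Proposition \ref{cpnhg} are actually met. Proposition \ref{cpnhg} requires \eqref{dim_pair} for all $\rhobarss$-compatible pairs $(H_1,\rhobar_1)$ with $\dim H_1 < \dim G_k$, which is precisely the inductive hypothesis of the corollary; and Lemma \ref{dim_Vnspcl} requires $\pi_1(G')$ étale, which is assumed. Since $\pi_1(G')$ étale forces $(\Lie G'_{\sic})_k = (\Lie G')_k$ (used implicitly in identifying the non-special locus via Lemma \ref{abs_irr_nspcl}), no further case distinction is needed. I do not expect a real obstacle here — the heavy lifting is all upstream in the dimension bounds of Section \ref{bound_space}; this corollary is the clean extraction of the equidimensionality statement from those bounds in the étale case, and it is exactly the form of \eqref{dim_pair} (with equality) that feeds the induction back into itself for the next group in the induction.
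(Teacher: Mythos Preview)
Your proposal is correct and follows exactly the paper's approach: the paper's proof is the single line ``The assertion follows from \Cref{cpnhg} and \Cref{dim_Vnspcl},'' and you have simply spelled out the bookkeeping that this line encodes. The only thing worth tightening is that non-emptiness of $V^{\nspcl}_{GG,\rhobarss}$ follows immediately from non-emptiness of its special fibre $\Vbar^{\nspcl}_{GG,\rhobarss}$ (guaranteed by Proposition~\ref{cpnhg}), so the hypothesis of Lemma~\ref{dim_Vnspcl} is indeed met.
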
 
\begin{proof} The assertion follows from \Cref{cpnhg} and \Cref{dim_Vnspcl}.
\end{proof}

\begin{prop}\label{some_prop}
    If \eqref{dim_pair} holds for all pairs $(H_1, \rhobar_1)$ with $\dim H_1 < \dim G_k$, then \eqref{dim_pair} 
    holds for $(G_k, \rhobarss)$. 
\end{prop}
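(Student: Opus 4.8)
The goal is to show \eqref{dim_pair} for the pair $(G_k, \rhobarss)$, i.e.\ that $\dim \Xbar^{\gps}_{G, \rhobarss} \le \dim Z(G_k) + [F:\Qp]\dim G_k$, assuming the same bound for all $\rhobarss$-compatible pairs of smaller group dimension. The plan is to reduce to the case where $\pi_1(G')$ is \'etale, where \Cref{etale_bound_space} already does the job, and then transfer the bound back along a carefully chosen finite morphism. As noted at the start of \Cref{bound_space}, when $G^0_k$ is a torus the statement follows from \cite[Theorem 9.3 (3)]{defT}, so we may assume $\dim G_k \ge 3$ and, after replacing $L$ by a finite unramified extension (which does not change dimensions), that $G_k^0$ is split semisimple modulo its central torus and $G/G^0$ is constant with $G(k)\to (G/G^0)(k)$ surjective, using \Cref{O_prime} and \Cref{shrink} to trim the component group so that $\pi_{G}\circ\rhobarss$ is surjective.

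First I would handle the case $\pi_1(G')$ \'etale directly: \Cref{etale_bound_space}(2) gives $\dim \Xbar^{\gps}_{G,\rhobarss} = \dim G_k[F:\Qp] + \dim Z(G_k)$ on the nose, which is exactly \eqref{dim_pair}, so this case needs no further work. The substance is the case where $\pi_1(G')$ is \emph{not} \'etale. Here I would invoke \Cref{letter_bcnrd}: it produces a surjection $\psi\colon H \to G_k$ of generalised reductive $k$-groups with finite connected multiplicative kernel $\mu$ such that $H/H^0 \cong G_k/G_k^0$, the induced map $H^0 \to G_k^0$ is a central isogeny with $\mu(\kbar)=1$, $\pi_1(H^0)$ is \'etale, and crucially $H(k) \cong G(k)$, so $\rhobarss$ lifts (uniquely, via $H(k)\cong G(k)$) to an $H$-semisimple representation $\widetilde{\rhobarss}\colon \Gamma_F \to H(k)$ with $\pi_H\circ\widetilde{\rhobarss}$ surjective. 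Since $\dim H = \dim G_k$, I cannot directly apply the inductive hypothesis to $H$; instead I apply the already-proved \Cref{etale_bound_space} to $H$, but that itself requires \eqref{dim_pair} for all $\widetilde{\rhobarss}$-compatible pairs $(H_1,\rhobar_1)$ with $\dim H_1 < \dim H = \dim G_k$. This is where one must be careful: a $\widetilde{\rhobarss}$-compatible subgroup $H_1 \subsetneq H_{\kbar}$ of strictly smaller dimension need not be compatible with $\rhobarss$ in the sense used in the ambient induction. The fix is to note that $\psi$ restricts to a finite morphism $H_1 \to \psi(H_1) =: H_1'$ of generalised reductive subgroups of $G_{\kbar}$ (finite because $\mu$ is finite), that $\dim H_1' \le \dim H_1 < \dim G_k$, that $H_1'$ is $\rhobarss$-compatible (its associated representation is $\psi\circ$ the one for $H_1$, whose $G$-semisimplification is conjugate to $\rhobarss$), and that by \Cref{finite_maps}/\Cref{finite_functoriality} the finite map $\Xbar^{\gps}_{H_1,\rhobar_1} \to \Xbar^{\gps}_{H_1',\psi\circ\rhobar_1}$ gives $\dim \Xbar^{\gps}_{H_1,\rhobar_1} \le \dim \Xbar^{\gps}_{H_1', \cdot}$; combined with $\dim Z(H_1) \le \dim Z(H_1') + \dim\mu = \dim Z(H_1')$ (equality of dimensions since $\mu$ is finite) and the ambient hypothesis applied to $(H_1',\cdot)$, one gets \eqref{dim_pair} for $(H_1,\rhobar_1)$. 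Hence \Cref{etale_bound_space} applies to $H$ and yields $\dim \Xbar^{\gps}_{H,\widetilde{\rhobarss}} = \dim H[F:\Qp] + \dim Z(H)$.

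Finally I would transfer this bound to $G_k$. The surjection $\psi\colon H \to G_k$ is finite (kernel $\mu$ finite), so by \Cref{finite_functoriality} (applied over $k$, replacing $\OO$ by $k$) the induced map $\Xbar^{\ps}_H \to \Xbar^{\ps}_{G_k}$ is finite, hence $\dim \Xbar^{\gps}_{G,\rhobarss} \le \dim \Xbar^{\gps}_{H,\widetilde{\rhobarss}}$ --- using that $\widetilde{\rhobarss}$ is the component of $\Xbar^{\ps}_H$ mapping to the component $\rhobarss$ of $\Xbar^{\ps}_{G_k}$, which holds by the uniqueness in the reconstruction theorem since $\psi\circ\Theta_{\widetilde{\rhobarss}} = \Theta_{\rhobarss}$. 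Combining with the formula for $\dim\Xbar^{\gps}_{H,\widetilde{\rhobarss}}$ and $\dim H = \dim G_k$, $\dim Z(H) = \dim Z(G_k)$ (the latter because $H^0\to G_k^0$ is an isogeny, so induces an isogeny on central tori, and the component groups agree — one can run the argument from the proof of \Cref{dimkisdimL}/\Cref{dimkisdimL} relating $Z(H)$ and $Z(H)\cap H^0 = Z(H^0)^\Delta$), we obtain $\dim \Xbar^{\gps}_{G,\rhobarss} \le \dim G_k[F:\Qp] + \dim Z(G_k)$, which is \eqref{dim_pair} for $(G_k,\rhobarss)$. The main obstacle I anticipate is the bookkeeping in the middle paragraph: verifying that the inductive hypothesis genuinely feeds \Cref{etale_bound_space} for $H$ — i.e.\ that every small $\widetilde{\rhobarss}$-compatible subgroup of $H$ descends, via $\psi$, to a $\rhobarss$-compatible subgroup of $G_{\kbar}$ of no larger dimension and with no larger central dimension — and checking the compatibility of semisimplifications under $\psi$ is preserved; the dimension transfers via finite morphisms (first and last paragraph) are routine given \Cref{finite_functoriality} and \Cref{finite_maps}.
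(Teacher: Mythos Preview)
There is a genuine gap at the step where you invoke \Cref{letter_bcnrd}. That proposition requires the neutral component to be split \emph{semisimple}; for a general $G$ the group $G^0_k$ is only reductive, with a possibly positive-dimensional central torus $Z(G^0)_k$ (think $G=\GL_d$). So you cannot produce a cover $H \to G_k$ with $\pi_1(H^0)$ \'etale directly from \Cref{letter_bcnrd}. The paper first passes to $G_2 := G/Z(G^0)$, whose neutral component is adjoint semisimple, and only then applies \Cref{letter_bcnrd} to $(G_2)_k$ to get $G_3 \to (G_2)_k$.

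This forces a further departure from your plan: the passage $G \to G_2$ is a quotient by a torus, hence is \emph{not} a finite morphism, so \Cref{finite_functoriality} is unavailable for comparing $\Xbar^{\gps}_G$ with $\Xbar^{\gps}_{G_2}$. The paper does not attempt this global comparison at all. Instead it works pointwise at closed points $x \in Y_{\rhobarss}$: it uses the dimension relations for deformation rings from \Cref{fin_et} and \Cref{fin_conn} (for the finite central quotients $G \to G_1 := G/(Z(G^0)\cap G')$ and $G_3 \to (G_2)_k$) together with \Cref{cor_nice_torus} (for the torus quotient $G_1 \to G_2$) to bound $\dim \Rbar^{\square}_{G,\rho_x}$ by $\dim \Xbar^{\gen}_{G_3,\rho_3^{\mathrm{ss}}} + ([F:\Qp]+1)\dim Z(G^0)_k$. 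It then invokes \Cref{etale_bound_space} for $G_3$ --- note that the hypothesis of the proposition is for \emph{all} pairs of smaller dimension, not just $\rhobarss$-compatible ones, so your middle-paragraph bookkeeping is in fact unnecessary --- and finally uses \Cref{dim_closed_pts} and \Cref{cpnhg} to convert the bound on deformation rings into the bound on $\dim \Xbar^{\gps}_{G,\rhobarss}$. The torus-quotient step $G_1 \to G_2$ genuinely needs the deformation-ring formula of \Cref{cor_nice_torus}; your global-GIT strategy has no substitute for it.
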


\begin{proof} 
    For the purpose of establishing \eqref{dim_pair}, after passing to a finite extension of $\OO$ we may assume, that $G^0$ is split, see \Cref{rem_extend_scalars}. 
    Let $\mu := Z(G^0) \cap G' = Z(G')$, $G_1 := G/\mu$ and let $\varphi_1 : G \to G_1$ be the projection map. 
    Since $Z(G^0)$ is a split torus, $\mu$ is a finite diagonalisable group scheme and we may decompose 
    $\mu$ canonically  into a $p$-part and a prime-to-$p$ part, which are both normal in $G$. Let $x$ be a closed point of $Y_{\rhobarss}$.
   \Cref{fin_et} and \Cref{fin_conn} imply that $\dim \Rbar^{\square}_{G, \rho_x}= \dim \Rbar^{\square}_{G_1, \varphi_1 \circ \rho_x}$.

    As $G'$ is the fppf sheafification of $A \mapsto [G^0(A), G^0(A)]$, the map  $G' \to G_1'$ is surjective with kernel $Z(G')$.
    So $G_1'$ is semisimple of adjoint type and we have $Z(G_1^0) \cap G_1' = Z(G_1') = 1$; this can be checked on geometric fibres using that semisimple groups over algebraically closed fields are centerless.
    We can apply \Cref{cor_nice_torus} to $Z(G_1^0)$ in $G_1$ and $G_2 := G_1/Z(G_1^0) = G/Z(G^0)$ and obtain
    \begin{align}
        \dim \Rbar^{\square}_{G_1, \varphi_1 \circ \rho_x}= \dim \Rbar^{\square}_{G_2, \varphi \circ \rho_x} + ([F:\Qp]+1) \dim Z(G^0)_k, \label{torus_transfer}
    \end{align}
   where $\varphi : G \to G/Z(G^0)$ is the projection map. 
   
    By \Cref{letter_bcnrd} we find a surjection of generalised reductive $k$-group schemes $G_3 \to (G_2)_k$ with finite connected kernel, such that $\pi_1(G_3^0)$ is \'etale.
    Since $G_3^0$ is semisimple, we have $G_3' = G_3^0$. Thus \Cref{etale_bound_space} can be applied to $G_3$.

Let $\kappa$ be the residue field of $x$. Then  $G_2(\kappa) = G_3(\kappa)$ and  we can lift $\varphi \circ \rho_x$ to a representation $\rho_3 : \Gamma_F \to G_3(\kappa)$.
    Using \Cref{fin_conn}, we get $\dim \Rbar^{\square}_{G_2, \varphi \circ \rho_x} = \dim \Rbar^{\square}_{G_3, \rho_3}$. Since $\kappa$ is finite $\rho_3$ defines a closed point in $\Xbar^{\gen}_{G_3, \rho_3^{\mathrm{ss}}}$ with a finite residue field. 
  Using \Cref{dim_closed_pts} for the inequality, we get
  \begin{equation}\label{transfer}
  \begin{split}
\dim \Rbar^{\square}_{G_1, \varphi_1 \circ \rho_x} &\overset{\eqref{torus_transfer}}{=} \dim \Rbar^{\square}_{G_3, \rho_3} + ([F:\Qp]+1) \dim Z(G^0)_k \\
        &\leq \dim \Xbar^{\gen}_{G_3, \rho_3^{\mathrm{ss}}} + ([F:\Qp]+1) \dim Z(G^0)_k.
        \end{split}
\end{equation}
\Cref{etale_bound_space} applied to  $G_3$ gives 
        $\dim \Xbar^{\gen}_{G_3, \rho_3^{\mathrm{ss}}} = \dim G_3 ([F:\Qp] +1)$. Since 
        $\dim G_k = \dim (G_2)_k + \dim Z(G^0)_k = \dim G_3 + \dim Z(G^0)_k$, we obtain 
        \begin{equation}
          \dim \Rbar^{\square}_{G, \rho_x}\le \dim G_k ([F:\Qp]+1), \quad \forall x\in Y_{\rhobarss}.
          \end{equation}
       \Cref{dim_closed_pts} implies that $\dim \Xbar^{\gen}_{G, \rhobarss} \le \dim G_k ([F:\Qp]+1)$. It follows 
from \eqref{bound_d} that this is an equality. The assertion follows from Proposition \ref{cpnhg}.
\end{proof}

\begin{thm}\label{thm_bound_space} 
The hypothesis \eqref{dim_pair} holds for all pairs 
$(H_1, \rhobar_1)$ with $\dim H_1< \dim G_k$. In particular, the following hold:
\begin{enumerate}
\item the complement of $\Vbar^{\nspcl}_{GG, \rhobarss}$  in $\Xbar^{\gen}_{G, \rhobarss}$ 
 has positive codimension;
\item $\dim \Xbar^{\gen}_{G, \rhobarss}= \dim G_k ([F:\Qp] +1)$;
\item $\dim \Xbar^{\gps}_{G, \rhobarss}= \dim G_k [F:\Qp] + \dim Z(G_k)$.
\end{enumerate} 
\end{thm}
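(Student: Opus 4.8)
\textbf{Proof plan for \Cref{thm_bound_space}.}

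The plan is to prove the hypothesis \eqref{dim_pair} for all pairs $(H_1, \rhobar_1)$ by strong induction on $\dim H_1$, and then to deduce assertions (1), (2), (3) immediately from the machinery already assembled. The base cases are small: if $\dim H_1 \le 2$, then by \cite[Theorem 9.3 (3)]{defT} the hypothesis holds whenever $H_1^0$ is a torus, which covers $\dim H_1^0 \le 1$; and as remarked at the start of \Cref{bound_space}, the case $\dim H_1 = 0$ and the case where $H_1^0$ is a torus are handled, so one may assume $\dim H_1 \ge 3$ with $H_1^0$ not a torus for the inductive step. The key point is that the inductive step is \emph{exactly} \Cref{some_prop}: it states that if \eqref{dim_pair} holds for all pairs $(H_1', \rhobar_1')$ with $\dim H_1' < \dim H_1$, then \eqref{dim_pair} holds for $(H_1, \rhobar_1)$. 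One subtlety to check carefully is that \Cref{some_prop} is stated for a generalised reductive \emph{$\OO$-group scheme} $G$, whereas the induction ranges over group schemes $H_1$ over finite extensions of $k$; but as emphasised in \Cref{ind_hyp}, all the statements used in the proof of \Cref{some_prop} (namely \Cref{fin_et}, \Cref{fin_conn}, \Cref{cor_nice_torus}, \Cref{letter_bcnrd}, \Cref{etale_bound_space}, \Cref{dim_closed_pts}, \Cref{cpnhg}) hold verbatim for such $H_1$ after replacing $\OO$ by $k$ throughout, so the inductive step goes through in this generality. This formal bookkeeping — that the entire chain of auxiliary results survives the replacement of $\OO$ by a finite field — is the one place requiring attention, though it has been set up precisely for this purpose.

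Granting the induction, \eqref{dim_pair} holds for all pairs, and in particular for all pairs $(H_1, \rhobar_1)$ with $\dim H_1 < \dim G_k$, which is the hypothesis required by Propositions \ref{cpnhg}, \ref{bound_Levi}, \ref{bound_special}, and \Cref{etale_bound_space}. Assertion (1) is then the first conclusion of \Cref{cpnhg}: the complement of $\Vbar^{\nspcl}_{GG, \rhobarss}$ in $\Xbar^{\gen}_{G, \rhobarss}$ has positive codimension. For assertions (2) and (3), one must be slightly careful: \Cref{cpnhg} gives the two equalities in \eqref{fixing_eq} (relating $\dim \Xbar^{\gen}_{G,\rhobarss}$, $\dim \Vbar^{\nspcl}_{GG,\rhobarss}$, and $\dim \Xbar^{\git}_{G, \rhobarss}$), but to pin down the actual numerical values $\dim G_k([F:\Qp]+1)$ and $\dim G_k[F:\Qp] + \dim Z(G_k)$ one wants \Cref{etale_bound_space}, which assumes $\pi_1(G')$ étale. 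The honest route is to observe that the proof of \Cref{some_prop} already established the inequality $\dim \Xbar^{\gen}_{G, \rhobarss} \le \dim G_k([F:\Qp]+1)$ for our $G$ (via the reduction through $G_1, G_2, G_3$ and \Cref{etale_bound_space} applied to $G_3$, whose $\pi_1$ is étale by \Cref{letter_bcnrd}), and the reverse inequality \eqref{bound_d} comes from \Cref{represent_kappa} together with \Cref{local_ring_def_ring} and \Cref{dim_closed_pts}. This gives (2). Then \eqref{fixing_eq} of \Cref{cpnhg}, which reads $\dim \Xbar^{\gen}_{G,\rhobarss} = \dim \Xbar^{\git}_{G, \rhobarss} + \dim G_k - \dim Z(G_k)$, immediately yields (3) by subtracting.

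The main obstacle is not in the structure of the argument — which is a clean induction whose single inductive step is \Cref{some_prop} — but rather in making sure one is entitled to invoke \Cref{some_prop} and \Cref{cpnhg} in the required generality and that the numerical conclusions (2), (3) are extracted consistently; in particular one should state explicitly at the outset that by \Cref{fg_quotient} and \Cref{rem_extend_scalars} we may assume $\Gamma$ is topologically finitely generated and, for establishing \eqref{dim_pair}, that $G^0$ is split, so that the reductions in \Cref{some_prop} apply. No genuinely new mathematics is needed: the substantive work has all been done in Sections \ref{sec_dim_fib} and \ref{sec_complete_intersection}, and the theorem is the assembly of those pieces via the well-founded induction on $\dim H_1$.
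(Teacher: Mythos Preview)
Your proposal is correct and follows essentially the same approach as the paper: strong induction (equivalently, the paper's minimal-counterexample argument) on $\dim H_1$ with inductive step \Cref{some_prop}, followed by reading off (1), (2), (3) from \Cref{cpnhg} and \eqref{bound_d}. One small streamlining: for (2) you need not revisit the proof of \Cref{some_prop}; its \emph{statement} already gives \eqref{dim_pair} for $(G_k, \rhobarss)$, and combining this with the equality $\dim \Xbar^{\gen}_{G,\rhobarss} = \dim \Xbar^{\git}_{G,\rhobarss} + \dim G_k - \dim Z(G_k)$ from \eqref{fixing_eq} in \Cref{cpnhg} yields the upper bound directly.
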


\begin{proof}
Let $S$ be the set of pairs $(H_1, \rhobar_1)$ such that $\dim H_1 < \dim G_k$, and let $S'$ be the subset of $S$ consisting of those pairs for which \eqref{dim_pair} does not hold.
  If $S'$ is non-empty then let $(H_1, \rhobar_1) \in S'$ be such that $\dim H_1$ is minimal.
  Then \eqref{dim_pair} holds for all pairs of dimension less than $\dim H_1$, so \Cref{some_prop} applies to $(H_1, \rhobar_1)$, yielding a contradiction.
  So $S' = \emptyset$ and  \Cref{some_prop} implies that \eqref{dim_pair}
  holds for $(G_k, \rhobarss)$. Moreover, \Cref{cpnhg} implies that 
  $\dim \Xbar^{\gen}_{G, \rhobarss} \le \dim G_k ([F:\Qp]+1)$, and \eqref{bound_d}
  implies that part (2) of the Theorem holds. Parts (1) and (3) follow from 
  \Cref{cpnhg}.
\end{proof}

\subsection{Consequences}\label{sec_consequence}

We record some consequences of Theorem \ref{thm_bound_space}. Most of the results follow 
the proof of analogous results for $\GL_d$ in \cite{BIP_new} with the exception of Corollaries 
\ref{BJ1} and \ref{BJ2}, where we prove the analogues of the main results of \cite{BJ_new}
and the argument is new even when $G=\GL_d$.

\begin{cor}\label{complete_intersection}
Let $x$ be either a closed point of $X^{\gen}_{G, \rhobarss}\setminus Y_{\rhobarss}$ or  a closed point of $Y_{\rhobarss}$ and let $\Lambda$ be a coefficient ring for $\kappa(x)$ introduced in \Cref{sec_def_rings}.  Then the following hold:
\begin{enumerate}
\item $R^{\square}_{G, \rho_x}$ is a flat $\Lambda$-algebra of relative 
dimension $\dim G_k([F:\Qp]+1)$ and is a complete intersection;
\item if $\chara(\kappa(x))=p$ then $R^{\square}_{G, \rho_x}/\varpi$ is a complete 
intersection of dimension $\dim G_k([F:\Qp]+1)$.
\end{enumerate}
\end{cor}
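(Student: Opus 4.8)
\textbf{Proof plan for Corollary \ref{complete_intersection}.}
The plan is to combine the dimension bound of \Cref{thm_bound_space} with the presentation of the deformation ring from \Cref{represent_kappa} and the commutative-algebra lemma \Cref{pre_lci_flat}. First I would record the presentation: by \Cref{represent_kappa} applied with $\Gamma=\Gamma_F$, we have
$$R^{\square}_{G, \rho_x}\cong \Lambda\br{x_1,\ldots,x_r}/(f_1,\ldots,f_s)$$
with $r=\dim_{\kappa(x)} Z^1(\Gamma_F,\ad\rho_x)$ and $s=h^2(\Gamma_F,\ad\rho_x)$, and moreover $r-s=\dim G_{\kappa(x)}([F:\Qp]+1)=\dim G_k([F:\Qp]+1)$, where the last equality uses \Cref{dimLisdimk}. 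Since $\Lambda$ is a complete regular local ring (a field in case (2) of the coefficient-ring setup, a DVR otherwise), $\Lambda\br{x_1,\ldots,x_r}$ is regular and it suffices to bound $\dim R^{\square}_{G,\rho_x}$ from above by $\dim\Lambda + r - s$ in order to conclude via \Cref{pre_lci_flat} (applied with $A=\Lambda$, which is trivially complete intersection) that the sequence is regular, that $R^{\square}_{G,\rho_x}$ is complete intersection, and that $R^{\square}_{G,\rho_x}$ is flat over $\Lambda$ of the stated relative dimension.

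The key step is producing this upper bound on the dimension, and here I would split into the three cases of $x$ allowed by the coefficient-ring formalism of \Cref{sec_def_rings}, exactly as in \Cref{local_ring_def_ring}. If $x$ is a closed point of $Y_{\rhobarss}$ then $\kappa(x)$ is a finite extension of $k$, $\Lambda=\OO_{L'}$ is a DVR, and by \Cref{local_ring_def_ring}(1) we have $\widehat{\OO}_{X,x}\cong R^{\square}_{G,\rho_x}$ where $X=X^{\gen,\tau}_G$; hence $\dim R^{\square}_{G,\rho_x}=\dim\OO_{X,x}\le \dim X^{\gen}_{G,\rhobarss}= \dim G_k([F:\Qp]+1)+1$ by \Cref{thm_bound_space}(2) together with \Cref{dim_sp_gen} (which gives $\dim X^{\gen}_{G,\rhobarss}\le\dim\Xbar^{\gen}_{G,\rhobarss}+1$). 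Since $\dim\Lambda=1$ this is $\dim\Lambda + (r-s)$, as required. If $x\in X^{\gen}_{G,\rhobarss}[1/p]$ then $\kappa(x)$ is a finite extension of $L$, $\Lambda=\kappa(x)$ is a field, $\widehat{\OO}_{X,x}\cong R^{\square}_{G,\rho_x}$ by \Cref{local_ring_def_ring}(1), and $\dim R^{\square}_{G,\rho_x}=\dim\OO_{X,x}\le \dim X^{\gen}_{G,\rhobarss}[1/p]\le \dim\Xbar^{\gen}_{G,\rhobarss}=\dim G_k([F:\Qp]+1)$ using \Cref{dim_sp_gen} and \Cref{thm_bound_space}(2); since $\dim\Lambda=0$ this matches $r-s$. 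If $x\in\Xbar^{\gen}_{G,\rhobarss}\setminus Y_{\rhobarss}$ then $\kappa(x)$ is a local field of characteristic $p$, $\Lambda$ is a DVR, and by \Cref{local_ring_def_ring}(2) we have $\widehat{\OO}_{X,x}\br{T}\cong R^{\square}_{G,\rho_x}$, so $\dim R^{\square}_{G,\rho_x}=\dim\OO_{X,x}+1\le \dim\Xbar^{\gen}_{G,\rhobarss}+1=\dim G_k([F:\Qp]+1)+1=\dim\Lambda+(r-s)$ again by \Cref{thm_bound_space}(2). In all three cases \Cref{pre_lci_flat} gives part (1).

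For part (2), suppose $\chara(\kappa(x))=p$, so $x$ is a closed point of $Y_{\rhobarss}$ or of $\Xbar^{\gen}_{G,\rhobarss}\setminus Y_{\rhobarss}$. In the first case $R^{\square}_{G,\rho_x}/\varpi\cong \widehat{\OO}_{\Xbar,x}$ and in the second $R^{\square}_{G,\rho_x}/\varpi\cong\widehat{\OO}_{\Xbar,x}\br{T}$, where $\Xbar=\Xbar^{\gen}_{G,\rhobarss}$; this follows from \Cref{local_ring_def_ring} and \Cref{completion} exactly as in the proof of \Cref{local_ring_def_ring}. Hence $\dim R^{\square}_{G,\rho_x}/\varpi\le\dim\Xbar^{\gen}_{G,\rhobarss}+1=\dim G_k([F:\Qp]+1)+1$ in the first case, but in fact one gets $\le \dim G_k([F:\Qp]+1)$ directly; more cleanly, since $R^{\square}_{G,\rho_x}$ is flat over $\Lambda$ (a DVR with uniformiser $\varpi$) and complete intersection by part (1), $R^{\square}_{G,\rho_x}/\varpi$ is complete intersection and $\dim R^{\square}_{G,\rho_x}/\varpi = \dim R^{\square}_{G,\rho_x}-1 = (\dim\Lambda + \dim G_k([F:\Qp]+1)) - 1 = \dim G_k([F:\Qp]+1)$, using that $\varpi$ is a nonzerodivisor. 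I would phrase part (2) this way, deriving it as a formal consequence of part (1) and $\Lambda$-flatness, which avoids re-running the dimension estimate. The main obstacle is entirely bundled into \Cref{thm_bound_space}; once that upper bound on $\dim\Xbar^{\gen}_{G,\rhobarss}$ is available, the deduction of Corollary \ref{complete_intersection} is the routine interplay of \Cref{local_ring_def_ring}, \Cref{represent_kappa}, and \Cref{pre_lci_flat} sketched above, and the only care needed is bookkeeping across the three cases for $\kappa(x)$ and the $\br{T}$ discrepancy in the local-ring-versus-deformation-ring comparison.
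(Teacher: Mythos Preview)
Your overall approach is exactly the paper's, but there is one real (though easily fixed) gap: bounding $\dim R^{\square}_{G,\rho_x}$ by $\dim\Lambda+(r-s)$ only gives the complete intersection conclusion of \Cref{pre_lci_flat}, not the flatness conclusion. When $\chara(\kappa(x))=p$ and $\Lambda$ is a DVR, the first clause of \Cref{pre_lci_flat} tells you the presentation $\Lambda\br{x_1,\dots,x_r}/(f_1,\dots,f_s)$ is complete intersection of the right dimension, but it does \emph{not} force $\varpi$ to be a nonzerodivisor (consider $\Lambda\br{x}/(\varpi x)$). For flatness you need the second clause, which requires the fibre bound $\dim \kappa(x)\otimes_\Lambda R^{\square}_{G,\rho_x}\le r-s$. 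Your part (2) then becomes circular: you invoke flatness from part (1) to conclude that $\varpi$ is regular, but flatness is precisely what was left unproved.

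The fix is trivial, and you already have the ingredients in your part (2) discussion. Instead of bounding the total dimension via $\OO_{X,x}$, bound the fibre directly via $\OO_{\Xbar,x}$: for $x\in Y_{\rhobarss}$ one has $R^{\square}_{G,\rho_x}/\varpi\cong\widehat{\OO}_{\Xbar,x}$, so $\dim R^{\square}_{G,\rho_x}/\varpi\le\dim\Xbar^{\gen}_{G,\rhobarss}=\dim G_k([F:\Qp]+1)$ by \Cref{thm_bound_space}(2); for $x\in\Xbar\setminus Y_{\rhobarss}$ one has $R^{\square}_{G,\rho_x}/\varpi\cong\widehat{\OO}_{\Xbar,x}\br{T}$ and $\dim\OO_{\Xbar,x}\le\dim\Xbar-1$ by \Cref{local_ring_dim}(2), giving the same bound; for $x\in X[1/p]$ the fibre ring is $R^{\square}_{G,\rho_x}$ itself and your bound via $\dim X[1/p]\le\dim\Xbar$ is fine. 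This is exactly how the paper organises the argument: it proves \eqref{need_to_prove} (the fibre bound) and then both parts of the corollary follow at once from \Cref{pre_lci_flat}. So the only change needed is to swap your total-dimension bound for the fibre-dimension bound and drop the circular derivation of part (2).
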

\begin{proof} The proof is essentially the same as the proof of \cite[Corollary 3.38]{BIP_new}, so 
we only give a sketch. Corollary \ref{represent_kappa} and Lemma \ref{pre_lci_flat} applied 
with $A=\Lambda$ and $B=R^{\square}_{G, \rho_x}$ imply that to prove part (1) it is enough 
to show that 
\begin{equation}\label{need_to_prove}
 \dim \kappa(x)\otimes_{\Lambda} R^{\square}_{G, \rho_x}\le \dim G_k ( [F:\Qp]+1).
\end{equation} 
If $\chara(\kappa(x))=0$ then $\Lambda=\kappa(x)$. If $\chara(\kappa(x))=p$ then $\Lambda$
is a DVR with uniformiser $\varpi$ and the fibre ring in \eqref{need_to_prove} is $R^{\square}_{G, \rho_x}/\varpi$. Thus, if we can prove \eqref{need_to_prove} then Lemma \ref{pre_lci_flat} applied 
with $A=\kappa(x)$ and $B=R^{\square}_{\rho_x}/\varpi$ will imply part (2). 

Let $X:= X^{\gen}_{G, \rhobarss}$ and let $\Xbar:= \Xbar^{\gen}_{G, \rhobarss}$. 
If $x\in Y_{\rhobarss}$ then Lemma \ref{local_ring_dim} applied to $Z=\overline{X}$ gives us 
$\dim \OO_{\Xbar, x}\le \dim \Xbar= \dim G_k ([F:\Qp]+1)$, where the last equality follows from \Cref{thm_bound_space} (2). Lemma \ref{local_ring_def_ring} 
implies that $R^{\square}_{\rho_x}/\varpi\cong \hat{\OO}_{\Xbar,x}$ and this implies \eqref{need_to_prove}. 
If $x\in \Xbar\setminus Y_{\rhobarss}$ then $R^{\square}_{\rho_x}/\varpi\cong \hat{\OO}_{\Xbar,x}\br{T}$
by Lemma \ref{local_ring_def_ring} and Lemma \ref{local_ring_dim} gives
$\dim \OO_{\Xbar, x} +1 \le \dim \Xbar$. If $\chara(\kappa(x))=0$ then 
$x\in X[1/p]$, $R^{\square}_{G, \rho_x}\cong \hat{\OO}_{X, x}$
by Lemma \ref{local_ring_def_ring} and $\dim \OO_{X, x} \le \dim X -1 \le \dim \Xbar$ 
by Lemma \ref{local_ring_dim} applied to $Z=X$ and Proposition \ref{dim_sp_gen}. Hence, 
\eqref{need_to_prove} holds also in this case. 
\end{proof} 

\begin{cor}\label{dim_XgenG} Let $d= \dim G_k([F:\Qp]+1)$. Then the following hold: 
\begin{enumerate} 
\item $X^{\gen}_{G, \rhobarss}$ is flat over $\Spec \OO$, equidimensional of relative
dimension $d$ and is locally complete intersection;
\item $\Xbar^{\gen}_{G, \rhobarss}$ is equidimensional of dimension $d$ and is locally complete 
intersection.
\end{enumerate} 
\end{cor}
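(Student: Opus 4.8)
The plan is to deduce Corollary~\ref{dim_XgenG} directly from Theorem~\ref{thm_bound_space} together with the local structure results of Sections~\ref{sec_gen_matrix} and~\ref{sec_complete_intersection}. The key point is that a scheme is equidimensional of dimension $d$, flat over $\OO$, and locally complete intersection precisely when these properties can be checked on the completed local rings at its closed points, and those completed local rings are exactly the deformation rings $R^{\square}_{G,\rho_x}$ (or the ring obtained from them by adjoining one variable), which are controlled by Corollary~\ref{complete_intersection}.

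First I would reduce to closed points: since $X^{\gen}_{G,\rhobarss}$ is of finite type over the complete noetherian local ring $\RpsGLd$ (Lemma~\ref{XgenGLdA}, Definition~\ref{defiXgenG}), it is Jacobson and excellent, so dimension, flatness over $\OO$, and the lci property may be tested at closed points, and furthermore $\dim X^{\gen}_{G,\rhobarss} = \max_x \dim \OO_{X,x}$ over closed points $x$; the same applies to $\Xbar^{\gen}_{G,\rhobarss}$. Each closed point $x$ lies either in $Y_{\rhobarss}$ or in the complement, and in both cases Lemma~\ref{local_ring_def_ring} identifies $\hat\OO_{X,x}$ with $R^{\square}_{G,\rho_x}$ or with $R^{\square}_{G,\rho_x}\br{T}$ (adding one to the dimension and preserving flatness and lci). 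Corollary~\ref{complete_intersection} then says $R^{\square}_{G,\rho_x}$ is a flat $\Lambda$-algebra of relative dimension $d$ and is complete intersection, and that $R^{\square}_{G,\rho_x}/\varpi$ is complete intersection of dimension $d$ when $\chara(\kappa(x))=p$. Tracking the extra variable $T$ and the nature of $\Lambda$ (a field in characteristic $0$, a DVR flat over $\OO$ in characteristic $p$) as in the proof of Corollary~\ref{complete_intersection}, one gets that $\OO_{X,x}$ is flat over $\OO$ with $\dim\OO_{X,x}=d$ in the characteristic-$0$ case and $\dim\OO_{X,x}+1=\dim\hat\OO_{X,x}/\varpi\br{T}= d+1$ hence $\dim\OO_{X,x}=d$ in the characteristic-$p$ case, and likewise $\dim\OO_{\Xbar,x}=d$ for $x\in Y_{\rhobarss}$ and $\dim\OO_{\Xbar,x}=d$ in general. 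Since every irreducible component of $\Xbar^{\gen}_{G,\rhobarss}$ meets $Y_{\rhobarss}$ by Lemma~\ref{useful_V} (or Lemma~\ref{local_ring_dim}), equidimensionality follows: the components all have dimension $d$, matching $\dim\Xbar^{\gen}_{G,\rhobarss}=d$ from Theorem~\ref{thm_bound_space}(2). For the generic fibre one argues that its components are the generic fibres of the components of $X^{\gen}_{G,\rhobarss}$ (flatness over $\OO$ ensures no component is killed by $p$ and none is contained in the special fibre) and each has dimension $d$.

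For flatness of $X^{\gen}_{G,\rhobarss}$ over $\Spec\OO$ I would invoke that a finite-type scheme over $\OO$ is flat iff all its completed local rings at closed points are flat over $\OO$, which we have just verified; equivalently, $A^{\gen}_{G,\rhobarss}$ is $\OO$-flat because it is $\varpi$-torsion free (no associated prime of $A^{\gen}_{G,\rhobarss}$ contains $\varpi$, since every closed point of the special fibre lies on a component meeting $Y_{\rhobarss}$ and the local rings there are flat over $\OO$). The local complete intersection property is local on the source, so it suffices to check it at each closed point, where $\hat\OO_{X,x}$ is complete intersection by Corollary~\ref{complete_intersection}, and adjoining a variable preserves this. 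The main obstacle — though it is really just bookkeeping — is to make sure the passage between "dimension of the scheme", "dimension of the local ring at a closed point", and "dimension (or relative dimension) of the deformation ring" is done consistently in the mixed-characteristic, equal-characteristic-$p$, and characteristic-$0$ cases, keeping careful track of the spurious variable $T$ in Lemma~\ref{local_ring_def_ring}(2) and of the fact that residue fields of closed points in the special fibre are finite or local fields of characteristic $p$. All of this is already done in the proof of Corollary~\ref{complete_intersection}, so this corollary is essentially its globalisation, and the proof can be presented as such in a few lines, as in the analogous statement for $\GL_d$ in \cite{BIP_new}.
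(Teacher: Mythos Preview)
Your proposal is correct and follows essentially the same approach as the paper: reduce to closed points, identify the completed local rings with the deformation rings via Lemma~\ref{local_ring_def_ring}, and invoke Corollary~\ref{complete_intersection} to obtain flatness, the lci property, and the dimension; equidimensionality then follows from the fact that every irreducible component meets $Y_{\rhobarss}$ (Lemma~\ref{local_ring_dim}), exactly as in \cite[Corollary~3.40]{BIP_new}. One small slip: for a closed point $x\in\Xbar^{\gen}_{G,\rhobarss}\setminus Y_{\rhobarss}$ you get $\dim\OO_{\Xbar,x}=d-1$, not $d$ (since $\hat\OO_{\Xbar,x}\br{T}\cong R^{\square}_{G,\rho_x}/\varpi$ has dimension $d$), but this does not affect your argument because you correctly base equidimensionality on the points of $Y_{\rhobarss}$.
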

\begin{proof} The proof is essentially the same as the proof of \cite[Corollary 3.40]{BIP_new} 
so we only give a sketch. Lemma \ref{local_ring_def_ring} relates $R^{\square}_{G, \rho_x}$ and the local ring 
$\OO_{X, x}$, where $X:=X^{\gen}_{G, \rhobarss}$ and $x$ is a closed point of $X$. 
Using Corollary \ref{complete_intersection} we deduce that $\OO_{X, x}$ are complete intersection 
and $\OO$-flat for all closed points $x$. Thus $X$ is flat over $\Spec \OO$ and locally complete 
intersection. The last property implies that $X$ is equidimensional. Part (1) implies part (2).
\end{proof} 

\begin{cor}\label{dim_XgenH} Let $H$ be a closed generalised reductive subgroup of $G$ (resp. $G_k$) such that 
$X^{\gen}_{H,\rhobarss}$ (resp. $\Xbar^{\gen}_{H, \rhobarss}$) is non-empty. 
Let $d=\dim H_k ([F:\Qp]+1)$ then $X^{\gen}_{H, \rhobarss}$ is $\OO$-flat of
relative dimension $d$ and is locally complete intersection (resp. 
$\Xbar^{\gen}_{H, \rhobarss}$ is equidimensional of dimension $d$ and is locally complete intersection).
\end{cor}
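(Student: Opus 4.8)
\textbf{Proof proposal for Corollary \ref{dim_XgenH}.}

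The plan is to reduce this statement for an arbitrary closed generalised reductive subgroup $H$ of $G$ (or $G_k$) to Corollary \ref{dim_XgenG} applied with the group $H$ in place of $G$. The key observation is that every result leading up to Corollary \ref{dim_XgenG} was proved in a self-contained way that only uses $H$ being a generalised reductive group scheme over $\OO$ (respectively over $k$), together with a fixed faithful representation $\tau_H : H \hookrightarrow \GL_{d'}$, and a continuous representation $\rhobar_H : \Gamma_F \to H(k)$ whose associated $H$-pseudocharacter is being deformed. As emphasised in \Cref{ind_hyp}, all the constructions ($X^{\gen}_{H,\rhobarss}$, $X^{\git}_{H,\rhobarss}$, $X^{\ps}_H$, the deformation rings $R^{\square}_{\rho_x,H}$) make sense verbatim for such $H$, including when $H$ is only defined over a finite extension $k'$ of $k$, after replacing $\OO$ by $k'$ throughout; and \Cref{thm_bound_space} was in fact proved in exactly this generality, since its proof runs by induction over all pairs $(H_1,\rhobar_1)$ with $H_1$ generalised reductive over a finite extension of $k$.

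Concretely, first I would observe that the hypothesis that $X^{\gen}_{H,\rhobarss}$ (resp.\ $\Xbar^{\gen}_{H,\rhobarss}$) is non-empty means precisely that $\rhobarss$ factors through a point of $X^{\gen}_{H,\rhobarss}(\kbar)$, i.e.\ that we have a well-defined residual $H$-pseudocharacter $\Thetabar_H$ to deform. By \Cref{shrink} we may replace $H$ by a closed subgroup scheme $H_1$ with $H_1^0 = H^0$ such that $\pi_{H_1}\circ \rhobarss$ surjects onto $(H_1/H_1^0)(\kbar)$, without changing $X^{\gen}_{H,\rhobarss}$ or $X^{\git}_{H,\rhobarss}$; this puts us in the setting where the results of \Cref{sec_complete_intersection} apply. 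Then I would simply invoke Corollary \ref{dim_XgenG} with $G$ replaced by $H_1$: this gives that $X^{\gen}_{H_1,\rhobarss}$ is flat over $\Spec\OO$ (resp.\ over $\Spec k'$, which forces the special-fibre statement), equidimensional of relative dimension $\dim (H_1)_k([F:\Qp]+1) = \dim H_k([F:\Qp]+1)$ (the equality of $\dim H_k$ and $\dim (H_1)_k$ being immediate since $H_1^0 = H^0$, using \Cref{dimLisdimk} if scalars were extended), and locally complete intersection. The special-fibre assertion follows identically from Corollary \ref{dim_XgenG}(2) applied to $H_1$. In the $\OO$-flat case, equidimensionality of the relative dimension follows from flatness together with equidimensionality of the special fibre, exactly as in the proof of Corollary \ref{dim_XgenG}.

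The only mild subtlety — and the step I would be most careful about — is the base-change bookkeeping when $H$ is defined over a finite extension $k'$ of $k$ rather than over $\OO$ itself: one must check that ``$\OO$-flat'' is vacuously replaced by the assertion that $\Xbar^{\gen}_{H,\rhobarss}$ is equidimensional of the stated dimension, and that passing from $k$ to a finite extension $k'$ does not change dimensions (this is where one uses that finite extensions are faithfully flat and that $\dim$ is insensitive to such base change, cf.\ \Cref{rem_extend_scalars} and \Cref{dimLisdimk}). Once this is set up, no new geometric input is needed beyond Corollary \ref{dim_XgenG}; the entire content of this corollary is the remark that the main theorem was already proved for all generalised reductive groups, not just for the distinguished $G$ we started with.
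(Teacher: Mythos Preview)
Your overall strategy is correct and is what the paper does: reduce to Corollary~\ref{dim_XgenG} applied with $H$ (suitably trimmed) in place of $G$. However, your reduction step has a gap. You claim that non-emptiness of $X^{\gen}_{H,\rhobarss}$ gives ``a well-defined residual $H$-pseudocharacter $\Thetabar_H$ to deform'' and then invoke \Cref{shrink} once. But recall that $X^{\gen}_{H,\rhobarss}$ is by definition $X^{\gen,\tau}_H \times_{X^{\gps,\tau}_G} X^{\gps}_{G,\rhobarss}$, and this need not be connected: several non-$H^0$-conjugate $H$-semisimple representations $\rhobar_i:\Gamma_F\to H(\kbar)$ can share the same $G$-semisimplification $\rhobarss$ (for instance, take $H$ the diagonal torus in $G=\GL_2$ and $\rhobarss=\chi_1\oplus\chi_2$ with $\chi_1\neq\chi_2$; both orderings $(\chi_1,\chi_2)$ and $(\chi_2,\chi_1)$ occur). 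So there is no single $\Thetabar_H$, and the expression ``$\pi_{H_1}\circ\rhobarss$'' does not even type-check, since $\rhobarss$ is $G$-valued, not $H$-valued.

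The fix is exactly what the paper does: invoke \Cref{cover_Hi}, which after a finite unramified extension $\OO'/\OO$ decomposes $(X^{\gen}_{H,\rhobarss})_{\OO'}$ as a disjoint union $\coprod_i X^{\gen}_{H_i,\rhobarss_i}$ with $H_i^0=H^0_{\OO'}$ and $\pi_{H_i}\circ\rhobar_i$ surjective onto $(H_i/H_i^0)(\kbar)$. Corollary~\ref{dim_XgenG} then applies to each $X^{\gen}_{H_i,\rhobarss_i}$ separately, and $\OO$-flatness, local complete intersection, and equidimensionality of relative dimension $\dim (H_i)_k([F:\Qp]+1)=\dim H_k([F:\Qp]+1)$ pass to the disjoint union and descend along the faithfully flat extension $\OO\to\OO'$. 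The paper's proof is the single line: ``This follows from Corollary~\ref{dim_XgenG} applied to $X^{\gen}_{H_j,\rhobarss_j}$ appearing in \Cref{cover_Hi}.''
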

\begin{proof} This follows from Corollary \ref{dim_XgenG}  applied to $X^{\gen}_{H_j, \rhobarss_j}$ 
(resp. $\Xbar^{\gen}_{H_j, \rhobarss_j}$) appearing in  Corollary \ref{cover_Hi} (1).
\end{proof} 

\begin{cor}\label{non-spcl_dense} The non-special absolutely irreducible locus is Zariski dense in $X^{\gen}_{G, \rhobarss}$ and 
in $\Xbar^{\gen}_{G,\rhobarss}$.
\end{cor}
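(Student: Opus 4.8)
The plan is to deduce Zariski density of the non-special absolutely irreducible locus from the dimension results already established, in particular \Cref{thm_bound_space} and \Cref{cpnhg}. The key point is that $\Vbar^{\nspcl}_{GG, \rhobarss}$ (and its characteristic zero analogue $V^{\nspcl}_{GG, \rhobarss}$) is open in $\Xbar^{\gen}_{G, \rhobarss}$ (resp. $X^{\gen}_{G, \rhobarss}$) and, by \Cref{cpnhg}, its complement has positive codimension once \eqref{dim_pair} is known to hold. Since \eqref{dim_pair} holds in all relevant cases by \Cref{thm_bound_space}, \Cref{cpnhg} applies and gives that the complement of $\Vbar^{\nspcl}_{GG, \rhobarss}$ in $\Xbar^{\gen}_{G, \rhobarss}$ has positive codimension, and in particular $\Vbar^{\nspcl}_{GG, \rhobarss}$ is non-empty. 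An open subscheme whose complement has positive codimension meets every irreducible component of maximal dimension; but $\Xbar^{\gen}_{G, \rhobarss}$ is equidimensional by \Cref{dim_XgenG} (2), so in fact the open subscheme meets every irreducible component, hence is Zariski dense.

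More precisely, first I would recall that $\Xbar^{\gen}_{G, \rhobarss}$ is equidimensional of dimension $\dim G_k([F:\Qp]+1)$ by \Cref{dim_XgenG} (2). Then, by \Cref{thm_bound_space} (1), the complement $C$ of $\Vbar^{\nspcl}_{GG, \rhobarss}$ in $\Xbar^{\gen}_{G, \rhobarss}$ satisfies $\dim C < \dim \Xbar^{\gen}_{G, \rhobarss}$. If $\Vbar^{\nspcl}_{GG, \rhobarss}$ failed to meet some irreducible component $Z$ of $\Xbar^{\gen}_{G, \rhobarss}$, then $Z$ would be contained in $C$, forcing $\dim C \ge \dim Z = \dim \Xbar^{\gen}_{G, \rhobarss}$ by equidimensionality, a contradiction. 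Hence $\Vbar^{\nspcl}_{GG, \rhobarss}$ meets every irreducible component of $\Xbar^{\gen}_{G, \rhobarss}$; being open, its closure therefore contains every irreducible component, so it is Zariski dense in $\Xbar^{\gen}_{G, \rhobarss}$.

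For the generic fibre $X^{\gen}_{G, \rhobarss}$ I would argue similarly: $V^{\nspcl}_{GG, \rhobarss}$ is the complement in $V_{GG, \rhobarss}$ of the $(\Lie G'_{\sic})^*$-special locus, hence open in $X^{\gen}_{G, \rhobarss}$. Its special fibre is $\Vbar^{\nspcl}_{GG, \rhobarss}$, which we have just shown to be dense in $\Xbar^{\gen}_{G, \rhobarss}$ and of dimension $\dim G_k([F:\Qp]+1)$ by \Cref{dim_Vnspcl} (after the reduction in the proof of \Cref{some_prop}, or directly when $\pi_1(G')$ is étale; in general one passes to the $H_i$ of \Cref{cover_Hi} and invokes the étale case as in the proof of \Cref{etale_bound_space}). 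Since $X^{\gen}_{G, \rhobarss}$ is $\OO$-flat and equidimensional of relative dimension $\dim G_k([F:\Qp]+1)$ by \Cref{dim_XgenG} (1), every irreducible component has non-empty special fibre of dimension $\dim G_k([F:\Qp]+1)$; the dense open subscheme $\Vbar^{\nspcl}_{GG, \rhobarss}$ of the special fibre must then meet the special fibre of each such component, so $V^{\nspcl}_{GG, \rhobarss}$ meets each irreducible component of $X^{\gen}_{G, \rhobarss}$, and being open it is dense.

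The main obstacle, such as it is, lies not in the density argument itself—which is formal given the dimension bounds—but in correctly handling the case where $\pi_1(G')$ is not étale, so that $(\Lie G'_{\sic})^*$ and $(\Lie G')^*$ need not coincide in characteristic $p$. In that situation the cleanest route is to note that density of $\Vbar^{\nspcl}_{GG, \rhobarss}$ in $\Xbar^{\gen}_{G, \rhobarss}$ is already a direct consequence of \Cref{thm_bound_space} (1) together with equidimensionality (no flatness or étaleness needed), and then derive density of $V^{\nspcl}_{GG, \rhobarss}$ in $X^{\gen}_{G, \rhobarss}$ purely from $\OO$-flatness and equidimensionality of $X^{\gen}_{G, \rhobarss}$: a closed subscheme of a flat equidimensional $\OO$-scheme that contains the special fibre must be the whole scheme, so the closure of $V^{\nspcl}_{GG, \rhobarss}$, whose special fibre contains the dense $\Vbar^{\nspcl}_{GG, \rhobarss}$ of $\Xbar^{\gen}_{G, \rhobarss}$, is all of $X^{\gen}_{G, \rhobarss}$.
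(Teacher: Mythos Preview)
Your argument for the special fibre is exactly the paper's: positive codimension of the complement from \Cref{thm_bound_space} (1) combined with equidimensionality from \Cref{dim_XgenG} forces $\Vbar^{\nspcl}_{GG,\rhobarss}$ to contain every generic point.

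For $X^{\gen}_{G,\rhobarss}$ your first argument is correct but takes a different route from the paper. You use that each irreducible component of $X^{\gen}_{G,\rhobarss}$ is itself $\OO$-flat (this needs that $X^{\gen}_{G,\rhobarss}$ is Cohen--Macaulay, hence has no embedded primes, so $\varpi$ lies in no minimal prime), so its special fibre has full dimension $d$ and therefore cannot be contained in the complement $\overline{C}$ of dimension $<d$. The paper instead bounds the complement $C$ of $V^{\nspcl}_{GG,\rhobarss}$ directly: $C$ is $G^0$-invariant, so \Cref{dim_sp_gen} gives $\dim C \le \dim \overline{C}+1 < \dim \Xbar^{\gen}_{G,\rhobarss}+1 = \dim X^{\gen}_{G,\rhobarss}$, and equidimensionality finishes. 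The paper's route is shorter and does not need Cohen--Macaulayness of the components; yours avoids the $G^0$-invariance input but relies on the lci structure more heavily.

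However, your ``cleanest route'' in the final paragraph contains a false assertion: it is \emph{not} true that a closed subscheme of a flat equidimensional $\OO$-scheme which contains the special fibre must be the whole scheme (take $X=\mathbb{A}^1_{\OO}$ and $Z=\mathbb{A}^1_k$). What is true, and what you implicitly need, is that the \emph{closure of an open subset} with this property is everything---but that is precisely what your preceding component-by-component argument shows, so this final paragraph should simply be deleted. The detour through \Cref{dim_Vnspcl} and the \'etale reduction is also unnecessary: density in the special fibre already follows from \Cref{thm_bound_space} (1) without any hypothesis on $\pi_1(G')$.
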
 

\begin{proof}  \Cref{thm_bound_space} (1) implies that
the complement $\overline{C}$ of $\Vbar^{\nspcl}_{GG, \rhobarss}$ in $\Xbar^{\gen}_{G, \rhobarss}$
is closed and has positive codimension. Since $\Xbar^{\gen}_{G, \rhobarss}$ is equidimensional
by Corollary \ref{dim_XgenG} we conclude that $\Vbar^{\nspcl}_{GG, \rhobarss}$ contains 
all the generic points of $\Xbar^{\gen}_{G, \rhobarss}$ and hence is dense. 
Let $C$ be the complement of $V^{\nspcl}_{GG, \rhobarss}$ in $X^{\gen}_{G, \rhobarss}$. 
It follows from Lemma \ref{abs_irr_nspcl} 
that the reduced special fibre of $C$ is equal to $\overline{C}$. 
Since $C$ is $G^0$-invariant we have $\dim C \le \dim \overline{C}+1$ by Proposition \ref{dim_sp_gen}. 
Since $\dim X^{\gen}_{G, \rhobarss}= \dim \Xbar^{\gen}_{G, \rhobarss}+1$ by Corollary \ref{dim_XgenG}, 
$C$ has positive codimension in $X^{\gen}_{G, \rhobarss}$ and the same argument 
shows that $V^{\nspcl}_{GG, \rhobarss}$ is dense in $X^{\gen}_{G, \rhobarss}$. 
\end{proof}

\begin{cor}\label{BJ1} Let $d=\dim G_k[F:\Qp]+\dim Z(G_k)$. Then the following hold:
\begin{enumerate}
\item $X^{\ps}_{G, \rhobarss}$ is equidimensional of dimension $d+1$; 
\item $\Xbar^{\ps}_{G, \rhobarss}$ is equidimensional of dimension $d$.
\end{enumerate}
\end{cor}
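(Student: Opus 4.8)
The plan is to reduce the computation of $\dim X^{\ps}_{G,\rhobarss}$ and $\dim \Xbar^{\ps}_{G,\rhobarss}$ to the already-established dimension formulas for $X^{\gps,\tau}_{G,\rhobarss}$ and $\Xbar^{\gps,\tau}_{G,\rhobarss}$ in \Cref{thm_bound_space}, via the finite universal homeomorphism $\nu$ of \Cref{nu_fin_u}. Concretely, \Cref{thm_bound_space} (3) gives $\dim \Xbar^{\gps}_{G, \rhobarss}= \dim G_k [F:\Qp] + \dim Z(G_k) = d$, and \Cref{etale_bound_space}-style reasoning combined with the flatness in \Cref{dim_XgenG} gives that $\dim X^{\gps}_{G, \rhobarss}$ is one more than the dimension of its special fibre, i.e.\ $d+1$. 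Recall here $X^{\gps}_{G,\rhobarss}$ is written for $X^{\gps,\tau}_{G,\rhobarss}$ after \Cref{indep_tau}.

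First I would invoke \Cref{nu_fin_u}: the morphism $\nu : X^{\gps}_{G,\rhobarss} \to X^{\ps}_G$ is a finite universal homeomorphism. A finite universal homeomorphism is in particular surjective with finite fibres, hence preserves the dimension of every closed subset; therefore $\dim X^{\ps}_{G,\rhobarss} = \dim X^{\gps}_{G,\rhobarss}$ and $\dim \Xbar^{\ps}_{G,\rhobarss} = \dim \Xbar^{\gps}_{G,\rhobarss}$ (using that $\nu$ restricts to a finite universal homeomorphism on special fibres, since the special fibre of $X^{\ps}_{G,\rhobarss}$ is the scheme-theoretic image under a finite universal homeomorphism, cf.\ \Cref{homeo}). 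Moreover, a finite universal homeomorphism also preserves equidimensionality: if $X^{\gps}_{G,\rhobarss}$ is equidimensional of dimension $\delta$, then every irreducible component of $X^{\ps}_{G,\rhobarss}$ is the image of an irreducible component of $X^{\gps}_{G,\rhobarss}$ (surjectivity plus finiteness), and has the same dimension $\delta$; conversely every component of the source dominates a component of the target. So it suffices to establish that $X^{\gps}_{G,\rhobarss}$ is equidimensional of dimension $d+1$ and $\Xbar^{\gps}_{G,\rhobarss}$ is equidimensional of dimension $d$.

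For the special fibre, \Cref{thm_bound_space} (3) already gives $\dim \Xbar^{\gps}_{G,\rhobarss}=d$. Equidimensionality I would get from \Cref{fixing_eq} in \Cref{cpnhg}: one has $\dim \Xbar^{\gen}_{G,\rhobarss} = \dim \Xbar^{\git}_{G,\rhobarss} + \dim G_k - \dim Z(G_k)$ together with \Cref{dim_XgenG} (2), which says $\Xbar^{\gen}_{G,\rhobarss}$ is equidimensional; since $\Xbar^{\gps}_{G,\rhobarss}$ is a GIT quotient of $\Xbar^{\gen}_{G,\rhobarss}$ by $G^0$, combining surjectivity of the quotient map (\cite[Theorem 3 (ii)]{seshadri}) with \Cref{dim_fibre_GG}, \Cref{UV_GG} (constant fibre dimension $\dim G_k - \dim Z(G_k)$ over the dense open $\Ubar_{GG,\rhobarss}$), and $G^0$-invariance of irreducible components (\cite[Lemma 2.1]{BIP_new}) forces every component of $\Xbar^{\gps}_{G,\rhobarss}$ to have dimension $d$. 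For $X^{\gps}_{G,\rhobarss}$, flatness over $\OO$ of $X^{\gen}_{G,\rhobarss}$ (\Cref{dim_XgenG} (1)) passes to the GIT quotient, so $X^{\gps}_{G,\rhobarss}$ is $\OO$-flat with special fibre $\Xbar^{\gps}_{G,\rhobarss}$; its dimension is then $\dim \Xbar^{\gps}_{G,\rhobarss}+1 = d+1$ and equidimensionality follows since each component, being flat over the DVR $\OO$, has special fibre of pure dimension $d$ (equivalently, one reruns the argument of \Cref{dim_sp_gen} for the quotient, or transports it along $\nu$).

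The main obstacle I anticipate is bookkeeping around the reduction from $X^{\gps}_{G,\rhobarss}$ (which is how \Cref{thm_bound_space} is phrased, as a connected component of the GIT quotient of generic matrices) to $X^{\ps}_{G,\rhobarss}$ (the pseudocharacter deformation space), and in particular justifying that $\nu$ respects the decomposition into the $\rhobarss$-component and that $\nu$ being a universal homeomorphism — not an isomorphism — still preserves dimension and equidimensionality; this is where one must be slightly careful since $\nu$ need not be flat. Everything else is an assembly of \Cref{nu_fin_u}, \Cref{thm_bound_space}, \Cref{cpnhg} and \Cref{dim_XgenG}, none of which requires new input.
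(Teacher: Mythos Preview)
Your reduction to $X^{\gps}_{G,\rhobarss}$ via the finite universal homeomorphism of \Cref{nu_fin_u} is exactly how the paper begins, and your treatment of part~(2) assembles the same ingredients as the paper: equidimensionality of $\Xbar^{\gen}_{G,\rhobarss}$ from \Cref{dim_XgenG}, $G^0$-invariance of its irreducible components, density of $\Vbar_{GG,\rhobarss}$ (via \Cref{non-spcl_dense}, which your citation of \Cref{cpnhg} together with \Cref{dim_XgenG} implies), and the fibre-dimension formula \Cref{UV_GG}. One minor imprecision: calling $\Ubar_{GG,\rhobarss}$ ``the dense open'' anticipates \Cref{BJ2}; the paper instead makes this explicit component-by-component, lifting each generic point $\eta$ of $\Xbar^{\gps}_{G,\rhobarss}$ to $\Xbar^{\gen}_{G,\rhobarss}$, choosing an irreducible component $X$ through the lift, and observing that its image is the component through $\eta$ and that $X\cap\Vbar_{GG,\rhobarss}$ is dense in $X$.

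There is, however, a genuine gap in your argument for part~(1). You claim that equidimensionality of $X^{\gps}_{G,\rhobarss}$ follows because ``each component, being flat over the DVR $\OO$, has special fibre of pure dimension $d$''. This implication fails in general: take $A=\OO\br{x,y,z}/(zx,\,z(z-\varpi y))$. One checks that $\varpi$ is a non-zero-divisor, so $A$ is $\OO$-flat; its minimal primes are $(z)$ and $(x,z-\varpi y)$, giving components of dimensions $3$ and $2$, so $\Spec A$ is \emph{not} equidimensional. Yet the special fibre $k\br{x,y,z}/(zx,z^2)$ has $(z)$ as its unique minimal prime and is irreducible of dimension $2$. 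The smaller component collapses into the larger one modulo $\varpi$. Your parenthetical appeal to \Cref{dim_sp_gen} only furnishes the upper bound $\dim Z\le\dim\Zbar+1$ and does not give the missing lower bound on each component's special fibre.

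The paper avoids this entirely by running \emph{the same} lifting argument for part~(1) as for part~(2): pick a generic point of $X^{\gps}_{G,\rhobarss}$, lift via \cite[Theorem~3~(ii)]{seshadri}, choose an irreducible component of $X^{\gen}_{G,\rhobarss}$ through the lift (equidimensional by \Cref{dim_XgenG}), observe that its intersection with $V_{GG,\rhobarss}$ is dense by \Cref{non-spcl_dense}, and apply \Cref{UV_GG} together with \Cref{useful_U} and \Cref{useful_V}. No separate flatness argument is needed.
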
 
\begin{proof} Since the claim is about the underlying topological spaces, 
it is enough to prove the Corollary after replacing $X^{\ps}_{G, \rhobarss}$ with $X^{\gps}_{G, \rhobarss}$ 
and $\Xbar^{\ps}_{G, \rhobarss}$ with $\Xbar^{\gps}_{G, \rhobarss}$, as these spaces are homeomorphic by 
Proposition \ref{nu_fin_u}. 

We will prove (1); the proof of (2) is identical. Let $\eta$ be a
generic point of $X^{\gps}_{G, \rhobarss}$. Let $\etabar$ be a geometric point above $\eta$. 
It follows from \cite[Theorem 3 (ii)]{seshadri} that there is a geometric point $\xi: \Spec \kappa(\etabar)\rightarrow
X^{\gen}_{G, \rhobarss}$ above $\etabar$. Let $X$ be an irreducible component of $X^{\gen}_{G, \rhobarss}$
containing $\xi$ and let $X^{\gps}$ be the image of $X$ in $X^{\gps}_{G, \rhobarss}$. Then $X^{\gps}$ contains 
$\eta$. It follows from \cite[Lemma 2.1]{BIP_new} that $X$ is $G^0$-invariant. 
Since $X$ is closed in $X^{\gen}_{G, \rhobarss}$, \cite[Theorem 3 (iii)]{seshadri} implies that 
$X^{\gps}$ is closed in $X^{\gps}_{G, \rhobarss}$. Since $X$ is irreducible, $X^{\gps}$ is also irreducible, \cite[\href{https://stacks.math.columbia.edu/tag/0379}{Tag 0379}]{stacks-project}. We conclude that 
$X^{\gps}$ is the irreducible component of  $X^{\gps}_{G, \rhobarss}$ containing $\eta$.

Corollary \ref{non-spcl_dense} implies that $V:=X\cap V_{GG, \rhobarss}$ is non-empty 
open dense subset of $X$, thus $U:= X^{\gps}\cap U_{GG, \rhobarss}$ is a non-empty 
open dense subset of $X^{\gps}$. Lemmas \ref{useful_U} and \ref{useful_V} give us
$\dim X = \dim V+1$, $\dim X^{\gps}=\dim U+1$.
Corollary \ref{UV_GG} implies that 
$ \dim V = \dim U + \dim G_k - \dim Z(G_k),$
which together with Corollary \ref{dim_XgenG} implies that $\dim X^{\gps} = \dim G_k [F:\Qp] + \dim Z(G_k)+1$. 
\end{proof} 

\begin{cor}\label{BJ2} The image of the non-special absolutely irreducible locus is Zariski dense in 
$X^{\ps}_{G, \rhobarss}$ and in $\Xbar^{\ps}_{G, \rhobarss}$. 
\end{cor}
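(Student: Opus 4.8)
The plan is to deduce the density of the image of the non-special absolutely irreducible locus directly from Corollary \ref{non-spcl_dense} together with the finiteness and surjectivity properties of the GIT quotient morphism established earlier. Recall that by \Cref{non-spcl_dense} the locus $V^{\nspcl}_{GG,\rhobarss}$ is Zariski dense in $X^{\gen}_{G,\rhobarss}$ and $\Vbar^{\nspcl}_{GG,\rhobarss}$ is Zariski dense in $\Xbar^{\gen}_{G,\rhobarss}$. The image of $V^{\nspcl}_{GG,\rhobarss}$ in $X^{\ps}_{G,\rhobarss}$ is, up to the universal homeomorphism $\nu$ of \Cref{nu_fin_u} (which is in particular a closed surjection identifying the underlying topological spaces of $X^{\gps}_{G,\rhobarss}$ and $X^{\ps}_{G,\rhobarss}$), the same as its image in $X^{\gps}_{G,\rhobarss}$ under the GIT quotient map $\pi: X^{\gen}_{G,\rhobarss}\to X^{\gps}_{G,\rhobarss}$. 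So it suffices to prove density of $\pi(V^{\nspcl}_{GG,\rhobarss})$ in $X^{\gps}_{G,\rhobarss}$ and similarly on special fibres.

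First I would recall that $\pi$ is surjective by \cite[Theorem 3 (ii)]{seshadri}, and more importantly that it sends closed $G^0$-invariant subsets to closed subsets by \cite[Theorem 3 (iii)]{seshadri}. Let $X$ be any irreducible component of $X^{\gen}_{G,\rhobarss}$; it is $G^0$-invariant by \cite[Lemma 2.1]{BIP_new} and closed, hence $\pi(X)=\Spec A^{G^0}$ (writing $X=\Spec A$) is an irreducible closed subset of $X^{\gps}_{G,\rhobarss}$, and as $X$ runs over the irreducible components of $X^{\gen}_{G,\rhobarss}$ these exhaust all the irreducible components of $X^{\gps}_{G,\rhobarss}$ by surjectivity of $\pi$ — this is exactly the argument used in the proof of \Cref{BJ1}. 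Now, by density in \Cref{non-spcl_dense}, $V^{\nspcl}_{GG,\rhobarss}$ meets every irreducible component $X$ of $X^{\gen}_{G,\rhobarss}$ in a non-empty open subset $V=X\cap V^{\nspcl}_{GG,\rhobarss}$, which is dense in $X$. Since $V$ is also $G^0$-invariant and open in $X$, \cite[Theorem 3 (iii)]{seshadri} shows that $\pi(V)$ is open and dense in $\pi(X)$; hence $\pi(V^{\nspcl}_{GG,\rhobarss})$ meets every irreducible component of $X^{\gps}_{G,\rhobarss}$ in a dense subset, and is therefore Zariski dense in $X^{\gps}_{G,\rhobarss}$.

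Transporting this along the universal homeomorphism $\nu$ of \Cref{nu_fin_u}, we conclude that the image of the non-special absolutely irreducible locus is Zariski dense in $X^{\ps}_{G,\rhobarss}$. The special-fibre statement is proved identically, replacing $X^{\gen}_{G,\rhobarss}$, $X^{\gps}_{G,\rhobarss}$, $X^{\ps}_{G,\rhobarss}$, $V^{\nspcl}_{GG,\rhobarss}$ by $\Xbar^{\gen}_{G,\rhobarss}$, $\Xbar^{\gps}_{G,\rhobarss}$, $\Xbar^{\ps}_{G,\rhobarss}$, $\Vbar^{\nspcl}_{GG,\rhobarss}$, using that $\Xbar^{\gps}_{G,\rhobarss}$ and $\Xbar^{\ps}_{G,\rhobarss}$ are homeomorphic by \Cref{nu_fin_u} and that $\Vbar^{\nspcl}_{GG,\rhobarss}$ is dense in $\Xbar^{\gen}_{G,\rhobarss}$. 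I do not expect any serious obstacle here: the only subtle point is to keep track of the fact that $\pi$, being a good quotient, is not just continuous and surjective but closed on $G^0$-invariant sets and open on $G^0$-invariant open sets, which is precisely what \cite[Theorem 3 (iii)]{seshadri} supplies; everything else is formal manipulation with irreducible components and density.
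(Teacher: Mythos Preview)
Your argument is correct but more elaborate than the paper's. The paper bypasses the irreducible-component decomposition entirely: given any non-empty open $U\subseteq X^{\gps}_{G,\rhobarss}$, its preimage in $X^{\gen}_{G,\rhobarss}$ is non-empty open (surjectivity of $\pi$), hence meets $V^{\nspcl}_{GG,\rhobarss}$ by \Cref{non-spcl_dense}, so $U$ meets the image. This is just the elementary fact that a continuous surjection carries dense subsets to dense subsets; your component-wise argument rediscovers this with extra bookkeeping but no additional payoff.

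One point to correct: you appeal to \cite[Theorem 3 (iii)]{seshadri} for the claim that $\pi(V)$ is \emph{open} in $\pi(X)$, and at the end assert that the quotient map is ``open on $G^0$-invariant open sets''. Seshadri's statement is about $G^0$-invariant \emph{closed} sets mapping to closed sets; a $G^0$-invariant open need not be $\pi$-saturated, so its image need not be open in general. Your proof does not actually need openness---density of $\pi(V)$ in $\pi(X)$ suffices, and that is immediate from continuity via $\pi(X)=\pi(\overline V)\subseteq\overline{\pi(V)}$---so the argument survives once you excise that claim.
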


\begin{proof} As explained in the proof of Corollary \ref{BJ1} it is enough to prove the 
statement for $X^{\gps}_{G, \rhobarss}$ and $\Xbar^{\gps}_{G, \rhobarss}$. 
Let $U^{\nspcl}_{GG, \rhobarss}$ be the image of $V^{\nspcl}_{G, \rhobarss}$ in $X^{\gps}_{G, \rhobarss}$ and let $U$ be a non-empty open subset of $X^{\gps}_{G, \rhobarss}$. Then its preimage 
$V$ in $X^{\gen}_{G, \rhobarss}$ is open and non-empty. Corollary \ref{non-spcl_dense} implies 
that $V\cap V^{\nspcl}_{GG, \rhobarss}$ is non-trivial, which implies that 
$U\cap U^{\nspcl}_{GG, \rhobarss}$ is non-trivial, and hence $U^{\nspcl}_{GG, \rhobarss}$
is dense in $X^{\gps}_{G, \rhobarss}$. The argument with $\Xbar^{\gps}_{G, \rhobarss}$ is the same.
\end{proof} 

\begin{remar} Corollaries \ref{BJ1} and \ref{BJ2} are the analogues of the main results 
of \cite{BJ_new}; compare \cite[Theorem 5.5.1]{BJ_new}. Even if $G=\GL_d$ the proof given here is new: although we use 
many techniques introduced by B\"ockle and Juschka in \cite{BJ_new}, we never appeal to 
\cite[Section 5]{BJ_new}.
\end{remar}

\begin{cor} Let $\rho: \Gamma_F \rightarrow G(\kappa)$ be a continuous representation 
with $\kappa$ a local field. Then the conclusions of Corollary \ref{complete_intersection}
apply to $R^{\square}_{G, \rho}$. 
\end{cor}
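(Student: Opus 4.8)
The plan is to reduce the statement for a representation $\rho\colon \Gamma_F\to G(\kappa)$ with $\kappa$ a local field to the case of closed points of $X^{\gen}_{G,\rhobarss}$, which is handled by Corollary \ref{complete_intersection}. First I would observe that, since $\kappa$ is a local $\OO$-field, $\Theta_\rho$ is a continuous $G$-pseudocharacter over $\kappa$, and by Proposition \ref{building} (applied to the algebraic closure of $\kappa$, then descending to a finite extension using that $G$ is of finite type) there is a finite extension $\kappa'/\kappa$ and $g\in G^0(\kappa')$ such that $g\rho(\cdot)g^{-1}$ takes values in $G(\OO_{\kappa'})$. Conjugation by $g$ does not change $R^\square_{G,\rho}$ up to canonical isomorphism, and by Lemma \ref{extend_scalars1} we have $\Lambda'\otimes_\Lambda R^\square_{G,\rho}\cong R^\square_{G,\rho'}$ where $\rho'$ is the base change to $\kappa'$; since $\Lambda'$ is faithfully flat over $\Lambda$, the complete intersection and flatness properties descend. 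So I may assume $\rho(\Gamma_F)\subseteq G(\OO_\kappa)$.

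Second I would let $\rhobar\colon\Gamma_F\to G(k)$ be the reduction of $\rho$ modulo the maximal ideal of $\OO_\kappa$, where $k$ is the (finite) residue field of $\OO_\kappa$, and apply Lemma \ref{extend_scalars2}: this identifies $R^\square_{G,\rho}$ with the completion of $\Lambda\otimes_\OO R^\square_{G,\rhobar}$ at the prime $\qq$ coming from $\varphi\colon R^\square_{G,\rhobar}\to\OO_\kappa\hookrightarrow\kappa$. Equivalently, $\rho$ corresponds to a point $x$ of $X^{\gen,\tau}_G$ (indeed of $X^{\gen}_{G,\rhobarss}$, since the $G$-semisimplification is controlled by $\Thetabar$) whose residue field is the local field $\kappa$, so $x$ is a closed point of $X^{\gen}_{G,\rhobarss}\setminus Y_{\rhobarss}$ lying in the characteristic-$p$ locus; by Proposition \ref{333} (or Lemma \ref{local_ring_def_ring}) the completed local ring $\hat\OO_{X,x}\br{T}$ is isomorphic to $R^\square_{G,\rho}$. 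Here I should note that the residue field of a closed point of $X^{\gen}_{G,\rhobarss}\setminus Y_{\rhobarss}$ is either a finite extension of $L$ or a local field of characteristic $p$ by \cite[Lemmas 3.17, 3.18]{BIP_new}; in our situation $\kappa$ has characteristic $p$, so we are in case (3).

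Third, Corollary \ref{complete_intersection}, applied to this closed point $x$ of $X^{\gen}_{G,\rhobarss}\setminus Y_{\rhobarss}$, gives that $R^\square_{G,\rho}$ is a flat $\Lambda$-algebra of relative dimension $\dim G_k([F:\Qp]+1)$ and is complete intersection, and that $R^\square_{G,\rho}/\varpi$ is complete intersection of dimension $\dim G_k([F:\Qp]+1)$; this is exactly the content of Corollary \ref{complete_intersection}(1) and (2) for $x$, and $\Lambda$ here is the coefficient ring of $\kappa$, which is a complete DVR with uniformiser $\varpi$ since $\chara(\kappa)=p$. Transporting these statements back along the isomorphisms above — and using that adjoining a power-series variable $T$ preserves both complete intersection and flatness and shifts relative dimension by one in a way already accounted for in the formulation of Corollary \ref{complete_intersection} — yields the claim for $R^\square_{G,\rho}$. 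The only point that requires a little care, rather than being purely formal, is matching up the normalisations: Lemma \ref{local_ring_def_ring} has the extra variable $T$ precisely in the characteristic-$p$, non-$Y$ case, and Corollary \ref{complete_intersection} is already phrased to absorb it, so I expect no genuine obstacle here — the main thing is to be careful that the finite base change $\kappa'/\kappa$ used to apply Proposition \ref{building} does not disturb the conclusion, which follows from faithful flatness of $\Lambda'/\Lambda$ together with the fact that complete intersection and flatness over a regular base both descend along faithfully flat maps.
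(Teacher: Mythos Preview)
Your outline follows the paper's proof through the first steps --- Proposition~\ref{building}, Lemma~\ref{extend_scalars1}, reduction modulo the maximal ideal of $\OO_\kappa$, and Lemma~\ref{extend_scalars2} --- but the last step contains a real gap. You assert that $\rho$ ``corresponds to a point $x$ of $X^{\gen}_{G,\rhobarss}$ whose residue field is the local field $\kappa$'' and then invoke Corollary~\ref{complete_intersection} directly at this $x$. This is not justified: the $\kappa$-valued point determined by $\rho$ has an underlying scheme-theoretic point $x_0$ whose residue field $\kappa(x_0)$ may be strictly smaller than $\kappa$, and $x_0$ need not be a closed point of $X^{\gen}_{G,\rhobarss}\setminus Y_{\rhobarss}$ at all. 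For instance, if $\rho$ is the base change to $\kappa$ of $\rhobar$ itself, then $x_0$ is a closed point of $Y_{\rhobarss}$, and the identification $R^\square_{G,\rho}\cong\hat\OO_{X,x_0}\br{T}$ from Lemma~\ref{local_ring_def_ring}(2) is simply false. (You also write ``in our situation $\kappa$ has characteristic $p$'', but the statement allows $\kappa$ to be a finite extension of $L$ as well.)

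The paper avoids this issue by applying Corollary~\ref{complete_intersection} to $\rhobar$, not to $\rho$: since $\rhobar$ is a closed point of $Y_{\rhobarss}$, one gets the expected dimension for $R^\square_{G,\rhobar}$. Then \cite[Lemmas~3.36, 3.37]{BIP_new} are used to bound the dimension of the completion of $\Lambda\otimes_\OO R^\square_{G,\rhobar}$ at $\qq$; this is precisely the step your ``Equivalently'' tries to bypass, and it is not formal. Once the bound $\dim(\kappa\otimes_\Lambda R^\square_{G,\rho})\le\dim G_k([F:\Qp]+1)$ is in hand, the presentation of Corollary~\ref{represent_kappa} together with Lemma~\ref{pre_lci_flat} finishes the argument exactly as in the proof of Corollary~\ref{complete_intersection}.
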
 
\begin{proof} Using Lemma \ref{extend_scalars1} it is enough to prove the statement 
after replacing $\kappa$ by a finite extension. Using Proposition \ref{building} we may assume 
that there exist $g\in G^0(\kappa)$ such that $\rho'(\gamma):=g \rho(\gamma)g^{-1} \in G(\OO_{\kappa})$ 
for all $\gamma\in \Gamma_F$. Since $G$ is smooth over $\OO$ we may pick $h\in G(\Lambda)$ which maps 
to $g$ in $G(\kappa)$. Conjugation by $h$ induces an isomorphism of $\Lambda$-algebras between 
$R^{\square}_{G, \rho}$ and $R^{\square}_{G, \rho'}$, so we may assume that $\rho$ takes values in 
$G(\OO_{\kappa})$. Let $\rhobar$ be the reduction of $\rho$ modulo the uniformiser 
of $\OO_{\kappa}$. Let $\OO$ be the Witt ring of residue field of $\OO_{\kappa}$. 
Since $\rho$ is a deformation of $\rhobar$ to $\OO_{\kappa}$ we obtain 
a map of local $\OO$-algebras $R^{\square}_{G, \rhobar} \rightarrow \OO_{\kappa}$. 
Lemma \ref{extend_scalars2} says that $R^{\square}_{G, \rhobar}$ is naturally 
isomorphic to the completion of $\Lambda\otimes_{\OO} R^{\square}_{G, \rhobar}$ 
with respect to the kernel $\qq$ of the natural map to $\kappa(x)$.
Corollary \ref{complete_intersection} together with 
\cite[Lemmas 3.36, 3.37]{BIP_new} allow us to bound the 
dimension of the completion by $\dim \Lambda +\dim G_k([F:\Qp]+1)$, 
and if $\chara(\kappa(x))=p$ then the dimension of the
special fibre of the completion by $\dim G_k([F:\Qp]+1)$. We obtain 
$ \dim \kappa\otimes_{\Lambda} R^{\square}_{G, \rho}\le \dim G_k([F:\Qp]+1)$
and the proof of Corollary \ref{complete_intersection} goes through. 
\end{proof}

\begin{cor}\label{char_zero_lift} Every continuous representation $\rhobar: \Gamma_F \rightarrow G(k)$ can be lifted to characteristic 
zero. More precisely, there is a finite extension $L'$ of $L$ with a ring of integers $\OO'$ and uniformiser $\varpi'$ and 
a continuous representation
$\rho: \Gamma_F \rightarrow G(\OO')$ such that $\rho \equiv \rhobar \pmod{\varpi'}$.
\end{cor} 
\begin{proof} It follows from Corollary \ref{complete_intersection} that 
$R^{\square}_{G, \rhobar}[1/p]$
is non-zero. If $x$ is a closed point of $\Spec R^{\square}_{G, \rhobar}[1/p]$ then 
its residue field is a finite extension of $L$ and specialising the universal 
deformation along $x$ gives the required lift. 
\end{proof} 

\begin{cor}\label{dense_formal} Let $\rhobar: \Gamma_F \rightarrow G(k)$ be a continuous representation, 
$X^{\square}_{G, \rhobar}:= \Spec R^{\square}_{G, \rhobar}$ and let $\ast$
be the closed point in $X^{\square}_{G, \rhobar}$. Let $\Sigma^{\nspcl}$ be the
subset of closed points of $X^{\square}_{G, \rhobar}\setminus \{\ast\}$ such that 
$x\in \Sigma^{\nspcl}$ if and only if $\rho_x$ is absolutely $G$-irreducible and 
$H^2(\Gamma_F, (\Lie G'_{\sic})_x)=0$. Then $\Sigma^{\nspcl}$ is Zariski dense
in $X^{\square}_{G, \rhobar}$ and $\Sigma^{\nspcl}\cap \Xbar^{\square}_{G, \rhobar}$ 
is Zariski dense in the special fibre $\Xbar^{\square}_{G, \rhobar}$. 
\end{cor}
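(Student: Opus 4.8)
\textbf{Proof proposal for \Cref{dense_formal}.}

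The plan is to transfer the density statements that we have already established for the ``generic matrix'' space $X^{\gen}_{G, \rhobarss}$ (namely \Cref{non-spcl_dense}) to the framed deformation space $X^{\square}_{G, \rhobar}$, using the relationship between the two provided by \Cref{local_ring_def_ring} and \Cref{333}. First I would reduce to the connected component: the point $x_0$ corresponding to $\rhobar$ lies on $X^{\gen}_{G, \rhobarss}$, where $\rhobarss$ is the $G$-semisimplification of $\rhobar$, and by \Cref{333} (applied with $\kappa = k$ and $\Lambda = \OO$) the ring $R^{\square}_{G, \rhobar}$ is the completion of $\OO \otimes_{\OO} A^{\gen}_{G, \rhobarss} = A^{\gen}_{G, \rhobarss}$ at the maximal ideal $\qq_{x_0}$ corresponding to $x_0$; equivalently, by \Cref{local_ring_def_ring}, $\hat{\OO}_{X^{\gen}_{G,\rhobarss}, x_0} \cong R^{\square}_{G, \rhobar}$ (note $x_0 \in Y_{\rhobarss}$, so we are in case (1) of that lemma). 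Thus $X^{\square}_{G, \rhobar} = \Spec R^{\square}_{G, \rhobar}$ is, up to the completion map, a ``formal neighbourhood'' of $x_0$ in $X^{\gen}_{G, \rhobarss}$.

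The heart of the matter is to show that $\Sigma^{\nspcl}$ meets every irreducible component of $X^{\square}_{G, \rhobar}$. By \Cref{dim_XgenG}, $X^{\gen}_{G, \rhobarss}$ is $\OO$-flat, equidimensional of relative dimension $d = \dim G_k([F:\Qp]+1)$, and locally complete intersection, hence (by \Cref{complete_intersection} and \Cref{local_ring_def_ring}) so is $R^{\square}_{G, \rhobar}$; in particular $R^{\square}_{G, \rhobar}$ has no embedded primes and its irreducible components all have the same dimension $d+1$, with the special fibre components of dimension $d$. The key input is that completion at $x_0$ induces a surjection from the set of minimal primes of $\hat{\OO}_{X^{\gen}_{G,\rhobarss}, x_0}$ onto (a subset related to) the branches of $X^{\gen}_{G, \rhobarss}$ through $x_0$, and since $X^{\gen}_{G, \rhobarss}$ is excellent (it is of finite type over $\OO$) this is compatible with irreducible decomposition; more precisely I would argue that every irreducible component $Z$ of $X^{\square}_{G, \rhobar}$ maps under $\Spec(\text{completion})$ into an irreducible component $Z'$ of $X^{\gen}_{G, \rhobarss}$ containing $x_0$, and the image is dense in a neighbourhood of $x_0$ in $Z'$. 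Now \Cref{abs_irr_nspcl} (valid since no étale hypothesis on $\pi_1(G')$ is needed for the generic fibre, and in the special fibre we use $(\Lie G'_{\sic})$) identifies $V^{\nspcl}_{GG, \rhobarss}$ with the locus where $\rho_x$ is absolutely $G$-irreducible and $H^2(\Gamma_F, \ad^0\rho_x) = 0$; combined with local Tate duality and the isomorphism $(\Lie G'_{\sic})_x^*(1)$ controlling $H^2$, this matches the defining condition of $\Sigma^{\nspcl}$ once we observe $H^2(\Gamma_F, (\Lie G'_{\sic})_x) = 0$ is the relevant vanishing (via the $G$-equivariant map $\Lie G'_{\sic} \to \Lie G'$ and the exact sequence relating $\ad^0\rho$ to it; the kernel and cokernel are controlled by $\pi_1(G')$, which is a finite group scheme, so contributes nothing to $H^2$-dimensions over a field after accounting for duals — this is exactly the bookkeeping done in \Cref{bound_special} and \Cref{sic}). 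By \Cref{non-spcl_dense}, $\Vbar^{\nspcl}_{GG, \rhobarss}$ contains every generic point of $\Xbar^{\gen}_{G, \rhobarss}$ and $V^{\nspcl}_{GG, \rhobarss}$ every generic point of $X^{\gen}_{G, \rhobarss}$; hence the complement $C$ of $V^{\nspcl}_{GG, \rhobarss}$ has positive codimension in each component $Z'$, so $V^{\nspcl}_{GG, \rhobarss} \cap Z'$ is dense open in $Z'$, and therefore meets the formal neighbourhood of $x_0$ along $Z$ in a dense set. Pulling back along the completion map, $\Sigma^{\nspcl} \cap Z$ is Zariski dense in $Z$, and since $Z$ was arbitrary, $\Sigma^{\nspcl}$ is dense in $X^{\square}_{G, \rhobar}$; the same argument applied to the special fibre components (using $\dim \Xbar^{\square}_{G, \rhobar} = d = \dim \Xbar^{\gen}_{G, \rhobarss}$ at closed points, again via \Cref{local_ring_def_ring}, and that $\Xbar^{\gen}_{G, \rhobarss}$ is equidimensional) gives density of $\Sigma^{\nspcl} \cap \Xbar^{\square}_{G, \rhobar}$ in $\Xbar^{\square}_{G, \rhobar}$.

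The main obstacle I anticipate is the passage between irreducible components of the excellent finite-type scheme $X^{\gen}_{G, \rhobarss}$ and those of its completion $\hat{\OO}_{X^{\gen}_{G,\rhobarss}, x_0} = R^{\square}_{G, \rhobar}$: a priori the completion can have more minimal primes than there are branches, but because $X^{\gen}_{G, \rhobarss}$ is locally complete intersection (hence Cohen–Macaulay, hence with no embedded primes) and excellent, the fibres of $\Spec(\text{completion}) \to \Spec(\OO_{X^{\gen}_{G,\rhobarss}, x_0})$ over minimal primes are geometrically normal-ish enough that the dimension count forces each component of the completion to dominate a unique component of the original scheme; this is the same mechanism used implicitly throughout \Cref{sec_complete_intersection} when relating $\dim R^{\square}_{G, \rho_x}$ to $\dim \OO_{X, x}$. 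I would handle it by citing the excellence of $X^{\gen}_{G, \rhobarss}$ together with \cite[\href{https://stacks.math.columbia.edu/tag/07QV}{Tag 07QV}]{stacks-project}-type statements on flatness of completion and going-down, precisely as in the proof of \cite[Corollary 3.41]{BIP_new}, which treats the case $G = \GL_d$ and carries over verbatim once \Cref{non-spcl_dense}, \Cref{abs_irr_nspcl}, \Cref{dim_XgenG} and \Cref{local_ring_def_ring} are in hand.
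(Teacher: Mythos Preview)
Your overall strategy---transfer the density of $V^{\nspcl}_{GG,\rhobarss}$ from $X^{\gen}_{G,\rhobarss}$ to $X^{\square}_{G,\rhobar}$ via the completion map, then use that the closed points are dense---is the same as the paper's, and your final paragraph lands on the right ingredient (flatness of completion). But two detours in your write-up are unnecessary and one contains a dubious claim.

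First, the discussion of $\ad^0\rho_x$ versus $(\Lie G'_{\sic})_x$ is superfluous: both the statement of \Cref{dense_formal} and the definition of $V^{\nspcl}_{GG,\rhobarss}$ are phrased directly in terms of $\Lie G'_{\sic}$, so by local Tate duality the closed points of the preimage of $V^{\nspcl}_{GG,\rhobarss}$ are \emph{exactly} $\Sigma^{\nspcl}$, with no comparison needed. Your assertion that ``the kernel and cokernel are controlled by $\pi_1(G')$\ldots so contributes nothing to $H^2$-dimensions'' is not correct in characteristic $p$ (think of $G'=\PGL_p$), and the paper avoids this issue entirely by never invoking $\ad^0$ here.

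Second, the irreducible-component bookkeeping is heavier than needed. The paper simply observes that the map $X^{\square}_{G,\rhobar}\to X^{\gen}_{G,\rhobarss}$ is flat (localisation followed by completion), and then invokes the elementary fact (this is \cite[Lemma~3.54]{BIP_new}) that the preimage of a dense open under a flat morphism is dense open. Combined with \Cref{non-spcl_dense} this gives density of the preimage $V$ of $V^{\nspcl}_{GG,\rhobarss}$ immediately, without tracking components or worrying about how minimal primes behave under completion. Since $V$ is open in the Jacobson scheme $X^{\square}_{G,\rhobar}\setminus\{\ast\}$, its closed points $\Sigma^{\nspcl}$ are dense in $V$ and hence in $X^{\square}_{G,\rhobar}$. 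The same argument runs on the special fibre.
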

\begin{proof} The proof is essentially the same as the proof of \cite[Proposition 3.55]{BIP_new}. 
The map $X^{\square}_{G, \rhobar} \rightarrow X^{\gen}_{G, \rhobarss}$ is flat, since 
it is a localisation followed by completion. Let $V$ be a preimage of $V^{\nspcl}_{GG, \rhobarss}$
in $X^{\square}_{G, \rhobar}$. Then $V$ is contained in $X^{\square}_{G, \rhobar}\setminus \{\ast\}$
and Lemma 3.54 in \cite{BIP_new} together with Corollary \ref{non-spcl_dense} implies that $V$ is Zariski dense in $X^{\square}_{G, \rhobar}$.
The set of closed points in $V$ is equal to $\Sigma^{\nspcl}$. 
Since $V$ is open $X^{\square}_{G, \rhobar}\setminus \{\ast\}$, it is Jacobson, and hence $\Sigma^{\nspcl}$ is dense in $V$ and also in $X^{\square}_{G, \rhobar}$. The same argument works for the special fibre. 
\end{proof} 

\begin{remar} Corollary \ref{dense_formal} implies that $\Sigma^{\nspcl}\cap U$ is dense in every 
open subscheme $U$ of $X^{\square}_{G, \rhobar}$. In particular, this applies when $U$ is the generic fibre $X^{\square}_{G, \rhobar}[1/p]$. 
\end{remar}

\begin{cor} One may choose a characteristic zero lift $\rho$ of $\rhobar$ in Corollary 
\ref{char_zero_lift} to be absolutely irreducible with $H^2(\Gamma_F, \ad^0 \rho)=0$.
\end{cor}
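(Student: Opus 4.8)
The plan is to combine \Cref{char_zero_lift} with \Cref{dense_formal}. By \Cref{char_zero_lift} the ring $R^{\square}_{G,\rhobar}[1/p]$ is non-zero, so the generic fibre $X^{\square}_{G,\rhobar}[1/p]$ of $X^{\square}_{G,\rhobar}:=\Spec R^{\square}_{G,\rhobar}$ is a non-empty open subscheme of $X^{\square}_{G,\rhobar}\setminus\{\ast\}$. By the remark following \Cref{dense_formal}, the set $\Sigma^{\nspcl}$ of closed points $x$ of $X^{\square}_{G,\rhobar}\setminus\{\ast\}$ with $\rho_x$ absolutely $G$-irreducible and $H^2(\Gamma_F,(\Lie G'_{\sic})_x)=0$ is dense in $X^{\square}_{G,\rhobar}[1/p]$; in particular it is non-empty, since $X^{\square}_{G,\rhobar}[1/p]$ is non-empty and Jacobson. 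So I may pick a closed point $x\in\Sigma^{\nspcl}\cap X^{\square}_{G,\rhobar}[1/p]$.

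Such a point has residue field $\kappa(x)$ a finite extension of $L$, and specialising the universal framed deformation $\rho^{\univ}:\Gamma_F\to G(R^{\square}_{G,\rhobar})$ along $x:R^{\square}_{G,\rhobar}\to\kappa(x)$ produces a continuous representation $\rho:=\rho_x:\Gamma_F\to G(\kappa(x))$ lifting $\rhobar$, exactly as in the proof of \Cref{char_zero_lift}. By the defining property of $\Sigma^{\nspcl}$ this $\rho$ is absolutely $G$-irreducible, and $H^2(\Gamma_F,(\Lie G'_{\sic})_x)=0$.

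It remains to pass from the vanishing of $H^2(\Gamma_F,(\Lie G'_{\sic})_x)$ to the vanishing of $H^2(\Gamma_F,\ad^0\rho)$. Since $\chara(\kappa(x))=0$, the map $\Lie G'_{\sic}\to\Lie G'$ is an isomorphism of $G$-representations after base change (the kernel of $G'_{\sic}\to G'$ is a finite multiplicative group scheme, étale in characteristic zero), so $(\Lie G'_{\sic})_x=(\Lie G')_x=\ad^0\rho$ as $\Gamma_F$-modules. Hence $H^2(\Gamma_F,\ad^0\rho)=0$, which is what we wanted. (There is no real obstacle here; the only point to be careful about is that the identification $\Lie G'_{\sic}\cong\Lie G'$ genuinely requires $\chara(\kappa(x))=0$, which holds because $x$ was chosen in the generic fibre — this is precisely why we located $x$ in $X^{\square}_{G,\rhobar}[1/p]$ rather than anywhere in $\Sigma^{\nspcl}$.) $\qed$
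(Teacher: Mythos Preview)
Your proof is correct and follows essentially the same route as the paper's: both intersect $\Sigma^{\nspcl}$ with the non-empty open generic fibre $X^{\square}_{G,\rhobar}[1/p]$ and specialise at a closed point there. The only difference is that the paper cites \Cref{abs_irr_nspcl} to pass from $H^2(\Gamma_F,(\Lie G'_{\sic})_x)=0$ to $H^2(\Gamma_F,\ad^0\rho_x)=0$, whereas you unwind this directly using $(\Lie G'_{\sic})_x=(\Lie G')_x$ in characteristic zero --- which is exactly the content of that lemma in this situation.
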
 
\begin{proof} Since $X^{\square}_{G, \rhobar}[1/p]$ is non-empty  and open 
in $X^{\square}_{G, \rhobar}$, and $\Sigma^{\nspcl}$ is dense by Corollary \ref{dense_formal}, 
we conclude that $X^{\square}_{G, \rhobar}[1/p]\cap \Sigma^{\nspcl}$ is non-empty. 
Lemma \ref{abs_irr_nspcl} implies that the specialisation of the universal deformation at $x\in X^{\square}_{G, \rhobar}[1/p]\cap \Sigma^{\nspcl}$
gives the required lift. 
\end{proof}

\section{Non-special locus}\label{sec:bound_special} 
Let $G$ be a generalised reductive $\OO$-group scheme. By \Cref{Gprimesc_action}
we have a $G$-equivariant map $\Lie G'_{\sic}\rightarrow \Lie G'$, where 
$G'_{\sic}\rightarrow G'$ is the simply connected central cover of $G'$. 
This map becomes an isomorphism after inverting $p$.  

Let $X^{\spcl}_{G, \rhobarss}$ and $\Xbar^{\spcl}_{G,\rhobarss}$ be the $(\Lie G'_{\sic})^{\ast}$-special locus in $X^{\gen}_{G, \rhobarss}$ and in $\Xbar^{\gen}_{G, \rhobarss}$, respectively. Then $\Xbar^{\spcl}_{G,\rhobarss}$ coincides with the reduced special fibre of $X^{\spcl}_{G, \rhobarss}$. In this subsection we bound the codimension of $X^{\spcl}_{G, \rhobarss}$ in 
$X^{\gen}_{G,\rhobarss}$ by $1+[F:\Qp]$ from below.  Recall via Definition 
\ref{defi_W_special} that $x\in X^{\gen}_{G,\rhobarss}$
lies in $X^{\spcl}_{G, \rhobarss}$ if and only if $h^0(\Gamma_F, (\Lie G'_{\sic})^{\ast}_x(1))\neq 0$.
If $x$ is a closed point of $Y_{\rhobarss}$ or a closed point of $X^{\gen}_{G,\rhobarss}\setminus 
Y_{\rhobarss}$ then $\kappa(x)$ is either a finite or a local field and local Tate duality implies that $x$ is $(\Lie G'_{\sic})^{\ast}$-non-special
if and only if $H^2(\Gamma_F, (\Lie G'_{\sic})_x)=0$. 

If $\pi_1(G')$ is \'etale  or 
$x\in X^{\gen}_{G,\rhobarss}[1/p]$  then $(\Lie G'_{\sic})_x = (\Lie G')_x$ and the above is equivalent to vanishing of  
$H^2(\Gamma_F, \ad^0 \rho_x)$. Proposition \ref{present_over_RH} implies that in this case
$R^{\square}_{G, \rho_x}$ is formally smooth over $R^{\square}_{G/G', \varphi\circ\rho_x}$, 
where $\varphi: G \rightarrow G/G'$ is the quotient map. In the next section we will use 
this to compute the set of irreducible components of $X^{\gen}_{G, \rhobarss}$. It will be important 
for us that $1+[F:\Qp]\ge 2$ as this will be used to verify Serre's criterion for normality.

If $G^0$ is a torus then $G'$ is trivial and there is nothing to verify. In particular, 
we assume that $\dim G_k >1$ in this section.  

Let $H$ be a closed generalised reductive subgroup of $G$ such that $X^{\gen}_{H, \rhobarss}$ is non-empty. 
Then $H^0$ acts on $G^0 \times X^{\gen}_{H, \rhobarss}$ via $h\cdot(g, x)= (gh^{-1}, h \cdot x)$.
We let $G^0\times^{H^0} X^{\gen}_{H, \rhobarss}$ be the GIT quotient
for this action. Since the action is free the fibre at a geometric point 
$x$ is isomorphic to $H^0_{\kappa(x)}$. This implies that 
\begin{equation}\label{dim_again_1}
 \dim (G^0\times^{H^0} X^{\gen}_{H, \rhobarss})= \dim G^0 + \dim X^{\gen}_{H, \rhobarss} -\dim H^0.
\end{equation}
It follows from Corollaries  \ref{dim_XgenG}, \ref{dim_XgenH} that if $X^{\gen}_{H, \rhobarss}$ is non-empty then 
\begin{equation}\label{dim_again_2}
 \dim X^{\gen}_{G, \rhobarss} - \dim X^{\gen}_{H, \rhobarss}= (\dim G -\dim H) ([F:\Qp]+1).
\end{equation}
A similar calculation can be made over $\Spec k$. 

\begin{defi} If $H$ is a closed generalised reductive subgroup of $G$ (resp. $G_k$) we denote 
by $G^0\cdot X^{\gen}_{H, \rhobarss}$ (resp. $G^0\cdot \Xbar^{\gen}_{H, \rhobarss}$) the scheme theoretic image 
of $G^0\times^{H^0} X^{\gen}_{H, \rhobarss}$ (resp. $G^0\times^{H^0} \Xbar^{\gen}_{H, \rhobarss}$) in $X^{\gen}_{G, \rhobarss}$. 
\end{defi}
The following Lemma formalises an argument made in \cite[Proposition 3.8]{BIP_new}.

\begin{lem}\label{comm_alg_bacher} Let $f:X\rightarrow Y$ be a morphism of affine schemes of finite type over 
$S=\Spec R$, where $R$ is a complete local noetherian $\OO$-algebra with residue field $k$. Let 
$s\in S$ be the closed point and let $X_s$ and $Y_s$ denote the fibres of $X$ and $Y$ 
at $s$ respectively. Suppose that the following hold:
\begin{enumerate} 
\item $f$ is dominant;
\item every irreducible component of $X$ intersects $X_s$ non-trivially;
\end{enumerate}
Then $\dim Y \le \dim X$. 
\end{lem}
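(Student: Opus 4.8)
The statement is a purely commutative-algebra fact: if $f:X\to Y$ is a dominant morphism of affine finite-type $S$-schemes, $S=\Spec R$ local complete noetherian with residue field $k$, and every irreducible component of $X$ meets the special fibre $X_s$, then $\dim Y\le\dim X$. The plan is to reduce to irreducible components and then combine two dimension estimates: one coming from dominance of $f$ (a generic-fibre / transcendence-degree bound), and one coming from hypothesis (2) together with the structure of $S$ as a complete local ring (so that dimensions can be read off on the closed fibre). The key point is that for a complete local noetherian base, every nonempty closed subscheme meets $s$, and the dimension of a closed subscheme $Z$ of a finite-type $S$-scheme is governed by $\dim Z_s$ via inequalities of the type already used repeatedly in the paper, e.g.\ \cite[Lemma 3.18]{BIP_new} and \stackcite{0BAG}.

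First I would reduce to the case where $X$ is irreducible: replace $X$ by an irreducible component $X'$ whose image is dense in some irreducible component $Y'$ of $Y$ with $\dim Y'=\dim Y$ (such a component exists since $f$ is dominant, so $Y$ is covered by the closures of images of the components of $X$, and at least one of these closures has dimension $\dim Y$). By hypothesis (2), $X'$ meets $X_s$, so $Y'$, being the closure of $f(X')$, meets $Y_s$; thus it suffices to prove $\dim Y'\le\dim X'$ for $X'$ irreducible meeting the closed fibre. Now I would let $\eta$ be the generic point of $S'$, the closed subscheme of $S$ that is the scheme-theoretic image of $X'$ (again a nonempty closed subscheme of the complete local $S$, hence containing $s$, and $\dim S' = \dim S'_{\text{at }s}$). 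Over $\eta$ one has the generic fibres $X'_\eta\to Y'_\eta$, still dominant, so $\dim Y'_\eta\le\dim X'_\eta$ by the standard fact that a dominant morphism of finite-type schemes over a field does not increase dimension (\stackcite{0ECG} or Chevalley's theorem on fibre dimension). Combined with the inequalities $\dim Y' \le \dim Y'_\eta + \dim S'$ and $\dim X' = \dim X'_\eta + \dim S'$ — the latter being an equality because $X'$ is irreducible and dominates $S'$, while the former being the general upper bound for fibre dimensions (\stackcite{0BAG}, \cite[Lemma 3.18]{BIP_new}) — one gets $\dim Y'\le \dim X'$, as desired.

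The main obstacle is getting the inequality $\dim X' = \dim X'_\eta + \dim S'$ to be an \emph{equality} rather than just $\ge$; this is where hypothesis (2) enters. One has the general bounds $\dim X'_\eta + \dim S' \le \dim X'$ (valid because $X'$ dominates $S'$ and both are finite type, by going-down / the formula in \stackcite{0BAG}) and $\dim X' \le \dim X'_s + \dim S'$. Since $X'$ meets $X'_s$ and $X'$ is irreducible, and $S$ is complete local (so $S'$ is too), one concludes that the generic fibre $X'_\eta$ and the special fibre $X'_s$ have dimensions summing correctly; concretely, a chain of primes of maximal length in $X'_\eta$ together with a chain of maximal length in $S'$ lifts to a chain of that total length in $X'$, and hypothesis (2) guarantees there is no ``loss'' at the closed point. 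This is exactly the kind of argument carried out in the proof of \cite[Proposition 3.8]{BIP_new}, and I expect the cleanest route is to cite the relevant stacks-project tags (\stackcite{0BAG}, \stackcite{02JU}) rather than to rebuild the chain-of-primes bookkeeping by hand. Once the equality is in place, the three displayed relations give $\dim Y' \le \dim X'$ immediately, and reassembling over the finitely many components of $X$ and $Y$ finishes the proof.
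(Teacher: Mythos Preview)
Your approach is correct and genuinely different from the paper's. The paper does not reduce to irreducible components or pass to a generic fibre over the image $S'\subseteq S$; instead it decomposes $X$ and $Y$ into the closed-fibre piece and its open complement. Setting $U=X\setminus X_s$ and $V=Y\setminus Y_s$, the paper observes that $f|_U:U\to V$ is still dominant, that $U,V$ are Jacobson (being finite type over the punctured spectrum of a complete local ring), and invokes \cite[Lemma 3.14]{BIP_new} to get $\dim V\le\dim U$. Hypothesis (2) then gives $\dim\overline{U}=\dim U+1$ via \cite[Lemma 3.18(5)]{BIP_new}, while $\dim\overline{V}\le\dim V+1$ always; combined with the easy bound $\dim\overline{f(X_s)}\le\dim X_s$ and $Y=\overline{V}\cup\overline{f(X_s)}$, this yields $\dim Y\le\dim X$.

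Your route via the dimension formula over the universally catenary base $S'$ is cleaner and more self-contained, but be careful with the direction of the inequalities: in the universally catenary situation one always has $\dim X'\le\dim X'_\eta+\dim S'$ (for every maximal ideal $\mathfrak q$ of $A'$ lying over $\mathfrak p\subset R'$, the dimension formula gives $\mathrm{ht}(\mathfrak q)=\mathrm{ht}(\mathfrak p)+\trdeg_{R'}A'-\trdeg_{\kappa(\mathfrak p)}\kappa(\mathfrak q)\le\dim S'+\dim X'_\eta$), and hypothesis (2) is what upgrades this to an \emph{equality} for $X'$ by producing a maximal ideal over $\mathfrak m_{R'}$ with finite residue field. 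Your write-up reverses these roles, claiming $\dim X'_\eta+\dim S'\le\dim X'$ follows from dominance alone, which is false (Remark~\ref{QpZp} gives the standard counterexample). With that swap corrected, your three-line argument $\dim Y'\le\dim Y'_\eta+\dim S'\le\dim X'_\eta+\dim S'=\dim X'$ goes through. The paper's approach has the advantage of recycling the Jacobson machinery from \cite{BIP_new} already in heavy use elsewhere; yours has the advantage of making the role of universal catenarity explicit and avoiding the open/closed bookkeeping.
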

\begin{proof} Let $U= X\setminus X_s$ and let $V=Y\setminus Y_s$. Since $Y_s$ is closed in $Y$
and $f(X_s)\subseteq Y_s$, the closure of $f(X_s)$ cannot contain the generic points 
of $V$. Since $f$ is dominant its restriction  $f:U\rightarrow V$ is still dominant. 
Now both $U$ and $V$ are Jacobson by \cite[Lemma 3.18 (1)]{BIP_new} and \cite[Lemma 3.14]{BIP_new} implies that 
$\dim V \le \dim U$. Let $Z$ be the closure of $U$ in $X$ and let $W$ be the closure of $V$ in 
$Y$. The assumption (2) and \cite[Lemma 3.18 (5)]{BIP_new} imply that $\dim Z = \dim U+1$. 
The proof of \cite[Lemma 3.18 (5)]{BIP_new}, namely the inequality (14) in \cite{BIP_new},  implies that $\dim W\le \dim V+1$. Thus $ \dim W\le \dim Z $. 

Let $W'$ be the closure of $f(X_s)$ in $Y_s$. Since both $X_s$ and $Y_s$ are of finite type
over $\Spec k$ they are Jacobson, and hence $W'$ is Jacobson. Lemma 3.14 in \cite{BIP_new}
implies that $\dim W'\le \dim X_s$. Since $Y_s$ is closed in $Y$, $W'$ is also closed in $Y$. 

Since $f$ is dominant, we have $Y= W\cup W'$. Thus
$\dim Y= \max(\dim W, \dim W')\le \max(\dim Z, \dim X_s)=\dim X.$
\end{proof}

\begin{remar}\label{QpZp} We note that  $\Qp= \Zp[x]/(px-1)$ is 
of finite type over $\Zp$ and $\Spec \Qp \rightarrow \Spec \Zp$ is dominant, and so that the assumption (2) in Lemma \ref{comm_alg_bacher} is necessary. 
\end{remar}

\begin{prop}\label{Benediktiner} If $H$ is a closed generalised reductive subgroup of $G$ such that $X^{\gen}_{H, \rhobarss}$ is non-empty then 
\begin{equation}\label{kille}
\dim X^{\gen}_{G, \rhobarss} - \dim G^0\cdot X^{\gen}_{H, \rhobarss} \ge (\dim G - \dim H) [F:\Qp].
\end{equation}
If $H$ is a closed generalised reductive subgroup of $G_k$ such that $\Xbar^{\gen}_{H, \rhobarss}$ is non-empty then 
\begin{equation}\label{pitsch}
 \dim \Xbar^{\gen}_{G, \rhobarss} - \dim G^0\cdot \Xbar^{\gen}_{H, \rhobarss} \ge (\dim G_k - \dim H) [F:\Qp].
 \end{equation}
\end{prop}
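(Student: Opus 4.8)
The plan is to deduce the bound \eqref{kille} from Lemma \ref{comm_alg_bacher} applied to the morphism $f : G^0 \times^{H^0} X^{\gen}_{H, \rhobarss} \to G^0 \cdot X^{\gen}_{H, \rhobarss}$, and then combine the resulting dimension inequality with the dimension formulas \eqref{dim_again_1} and \eqref{dim_again_2}. The statement \eqref{pitsch} is proved by the identical argument carried out over $\Spec k$, using the special-fibre analogues of the cited results, so I will only write out the $\OO$-case in detail. First I would set $X := G^0 \times^{H^0} X^{\gen}_{H, \rhobarss}$ and $Y := G^0 \cdot X^{\gen}_{H, \rhobarss}$, so that by definition $Y$ is the scheme-theoretic image of the map $X \to X^{\gen}_{G, \rhobarss}$, which means $f : X \to Y$ is dominant; this verifies hypothesis (1) of Lemma \ref{comm_alg_bacher}. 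Here $S = \Spec R$ with $R = R^{\ps}_{\GL_d}$ (or the relevant factor of the GIT quotient ring), so that the closed fibre $X_s$ is the preimage of the closed point, namely $G^0 \times^{H^0} Y_{\rhobarss,H}$ where $Y_{\rhobarss,H}$ is the preimage in $X^{\gen}_{H,\rhobarss}$ of the closed point of $X^{\ps}_H$.

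\textbf{Verifying hypothesis (2).} The main technical point is to check that every irreducible component of $X$ meets $X_s$ non-trivially; this is where the fact that we are over a local base, not over $\Qp$ over $\Zp$ as in Remark \ref{QpZp}, is essential. Since $G^0 \times^{H^0}(-)$ is a smooth surjective (indeed fppf, with fibres isomorphic to $H^0$) morphism onto $X$, irreducible components of $X$ correspond to $H^0$-invariant irreducible components of $G^0 \times X^{\gen}_{H, \rhobarss}$, hence (by flatness of $G^0$) to irreducible components of $X^{\gen}_{H, \rhobarss}$. By Corollary \ref{dim_XgenH}, $X^{\gen}_{H, \rhobarss}$ is $\OO$-flat and locally complete intersection, hence equidimensional; and by \Cref{local_ring_dim} (applied to $X^{\gen}_{H,\rhobarss}$ in place of $X^{\gen}_{G,\rhobarss}$, which is legitimate by the discussion in \Cref{ind_hyp}) every irreducible component of $X^{\gen}_{H,\rhobarss}$ meets $Y_{\rhobarss, H}$. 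Pulling this back along the smooth surjection, every irreducible component of $X$ meets $X_s$, which is hypothesis (2). I expect this step — correctly identifying the base scheme $S$ and matching up the fibres $X_s$, $Y_s$ with the ``$Y_{\rhobarss}$''-type loci so that \Cref{local_ring_dim} and \Cref{dim_XgenH} apply — to be the main obstacle, since it requires care about which GIT-quotient ring one quotients over and that the formation of $G^0 \times^{H^0}(-)$ and of scheme-theoretic images commutes appropriately with passage to the closed fibre.

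\textbf{Assembling the bound.} Once Lemma \ref{comm_alg_bacher} gives $\dim Y \le \dim X$, I combine this with \eqref{dim_again_1}, which reads $\dim X = \dim G^0 + \dim X^{\gen}_{H, \rhobarss} - \dim H^0$, and with $\dim G^0 = \dim G$, $\dim H^0 = \dim H$ (neutral components have the same dimension as the group), to get
\begin{equation*}
\dim G^0\cdot X^{\gen}_{H, \rhobarss} \le \dim G - \dim H + \dim X^{\gen}_{H, \rhobarss}.
\end{equation*}
Subtracting this from $\dim X^{\gen}_{G, \rhobarss}$ and applying \eqref{dim_again_2}, which states $\dim X^{\gen}_{G, \rhobarss} - \dim X^{\gen}_{H, \rhobarss} = (\dim G - \dim H)([F:\Qp]+1)$, yields
\begin{equation*}
\dim X^{\gen}_{G, \rhobarss} - \dim G^0\cdot X^{\gen}_{H, \rhobarss} \ge (\dim G - \dim H)([F:\Qp]+1) - (\dim G - \dim H) = (\dim G - \dim H)[F:\Qp],
\end{equation*}
which is \eqref{kille}. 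For \eqref{pitsch} one repeats the argument with $\Xbar^{\gen}_{(-), \rhobarss}$ throughout, using the special-fibre parts of Corollaries \ref{dim_XgenG} and \ref{dim_XgenH} and of \Cref{local_ring_dim}; the base $S$ is then $\Spec$ of the special fibre of the GIT-quotient ring, which is again complete local with residue field $k$, so Lemma \ref{comm_alg_bacher} still applies verbatim.
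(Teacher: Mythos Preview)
Your proposal is correct and follows the same overall strategy as the paper: apply Lemma \ref{comm_alg_bacher} to $X = G^0 \times^{H^0} X^{\gen}_{H,\rhobarss}$ and $Y = G^0 \cdot X^{\gen}_{H,\rhobarss}$, then combine with \eqref{dim_again_1} and \eqref{dim_again_2}. The only difference is in verifying hypothesis (2): the paper argues more directly by observing that $X \sslash G^0 \cong X^{\gen}_{H,\rhobarss}\sslash H^0 = X^{\gps}_{H,\rhobarss}$ is a spectrum of a finite product of complete local rings, so every $G^0$-invariant irreducible component of $X$ has closed image there and hence hits a closed point, which by finiteness of $X^{\gps}_{H,\rhobarss}\to X^{\gps}_{G,\rhobarss}$ lies over $s$; this avoids your detour through identifying irreducible components of $X$ with those of $X^{\gen}_{H,\rhobarss}$ and invoking Lemma \ref{local_ring_dim} (and your appeal to Corollary \ref{dim_XgenH} is not actually needed for that step).
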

\begin{proof} Let $H$ be a closed generalised reductive subgroup of $G$. We claim that the assumptions 
of Lemma \ref{comm_alg_bacher} are satisfied with $X:= G^0\times^{H^0} X^{\gen}_{H, \rhobarss}$
and $Y:= G^0\cdot X^{\gen}_{H, \rhobarss}$. Since $\dim G=\dim G^0$ and $\dim H=\dim H^0$, the claim together with Lemma \ref{comm_alg_bacher}, \eqref{dim_again_1} and \eqref{dim_again_2} imply \eqref{kille}.

To prove the claim we observe that the map $X^{\gen}_{H, \rhobarss} \rightarrow G^0\times X^{\gen}_{H, \rhobarss}$, $x\mapsto (1,x)$
induces an isomorphism  
$ X^{\gen}_{H, \rhobarss}\sslash H^0 \overset{\cong}{\longrightarrow} (G^0\times^{H^0} X^{\gen}_{H, \rhobarss})\sslash G^0$
with the inverse map induced by $(g, x)\mapsto x$. Now $X^{\gen}_{H, \rhobarss}\sslash H^0= X^{\gps}_{H, \rhobarss}$ is a spectrum of a finite product of local noetherian $\OO$-algebras by \eqref{semi-local}. 
Since $G^0$ is connected every irreducible component $X'$ of $X$ is $G^0$-invariant by \cite[Lemma 2.1]{BIP_new}
and hence its image in $X\sslash G^0$ is closed by Corollary 2 (ii) to \cite[Proposition 9]{seshadri}. 
Thus the image of $X'$ in $X\sslash G^0=X^{\gps}_{H, \rhobarss}$ will contain a closed point. 
Since $X^{\gps}_{H, \rhobarss} \rightarrow X^{\gps}_{G, \rhobarss}$ is finite, it sends closed
points to the closed point of $X^{\gps}_{G, \rhobarss}$. This implies that $X$ satisfies the assumption 
(2) of Lemma \ref{comm_alg_bacher}. The assumption (1) holds as $Y$ is by definition 
the closure of $f(X)$. The argument in the special fibre is the same. 
\end{proof}

\begin{prop}\label{alt_yeah}
Let $L$ be a $\rhobarss$-compatible R-Levi of $G$ such that $\dim G -\dim L=2$.  There is a closed subscheme 
$\overline{D}_L$ of $\Xbar^{\gen}_{G, \rhobarss}$ such that the following hold: 
\begin{enumerate}
\item $\dim \Xbar^{\gen}_{G, \rhobarss} - \dim \overline{D}_L\ge 1+[F:\Qp]$;
\item if $x\in \Vbar_{LG}$ and $x\not\in \overline{D}_L$ then $h^0(\Gamma_F, (\Lie G'_{\sic})^*_x(1))=0$.
\end{enumerate}
\end{prop}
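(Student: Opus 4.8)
The statement concerns a $\rhobarss$-compatible R-Levi $L$ of codimension $2$, so by \Cref{RLevi2} (in particular \Cref{tasho1} and \Cref{tasho2}) we know the structure of $G$ relative to $L$: after passing to $\overline{G}=G/Z(\Ker\varphi)$ one has $\overline{G}\cong G_1\times\PGL_2$, $\overline{L}\cong G_1\times\Gm$, and the unipotent radical $N$ of any R-parabolic with R-Levi $L$ satisfies $\dim N_k=1$. Also $\Lie G'_{\sic}\cong\Lie\SL_2\oplus\Lie(G_1')_{\sic}$ as $G$-representations. The idea is to build $\overline{D}_L$ as a union of three pieces, each controlled by one of the obstructions in \Cref{vanish_H2_please}, applied to the R-parabolic $P$ with R-Levi $L$: (i) the locus where $\rho_x$ actually lands in an R-Levi of $P$, i.e.\ where $\rho_x$ is $G^0$-conjugate into $L$; (ii) the locus where the $G$-semisimplification of $\rho_x$ lands in the subgroup $H_0$ of \Cref{bound_centre_Hzero}; (iii) the $(\Lie(G_1')_{\sic})^*$-special locus, where $h^0(\Gamma_F,(\Lie(G_1')_{\sic})^*_x(1))\neq0$. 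By \Cref{vanish_H2_please}, any closed point $x\in\Vbar_{LG}$ outside all three pieces has $h^0(\Gamma_F,(\Lie G'_{\sic})^*_x(1))=0$, which is exactly (2).

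\textbf{Step 1: reduce to a single R-parabolic and describe the relevant locus.} For $x\in\Vbar_{LG}$ the pair $(L,\rho)$ attached to the fibre through $x$ is well-defined up to $G^0$-conjugation (Lemma \ref{conj_P2}, Corollary \ref{conjugate_Levi}), and by \Cref{Rlevi} there are only finitely many R-parabolics $P$ with R-Levi $L$; since conjugation by $G^0$ does not change $h^0(\Gamma_F,(\Lie G'_{\sic})^*_x(1))$, it suffices to work with one such $P$ and then take $\overline{D}_L$ to be $G^0$-invariant (its closure). Concretely, over the preimage $\Vbar_{LG}$ of $\Ubar_{LG,\rhobarss}$ in $\Xbar^{\gen}_{G,\rhobarss}$, the fibre at a closed point $y$ decomposes via the map \eqref{cow_prod} into $G^0_{\kappabar}\times^{Z(L_{\kappabar})^0 U_{\kappabar}}(X^{\gen}_{P,\rho}\times_y\ybar)$, so up to $G^0$-saturation I may phrase each of (i)–(iii) inside $X^{\gen}_{P,\rho}$.

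\textbf{Step 2: bound the dimension of each piece.} For (i): the locus where $\rho_x$ is conjugate into $L$ is contained in $G^0\cdot\Xbar^{\gen}_{L,\rhobarss}$, and since $\dim G_k-\dim L_k=2$, Proposition \ref{Benediktiner} (inequality \eqref{pitsch}) gives $\dim\Xbar^{\gen}_{G,\rhobarss}-\dim(G^0\cdot\Xbar^{\gen}_{L,\rhobarss})\geq 2[F:\Qp]\geq 1+[F:\Qp]$. For (ii): $H_0$ is a closed generalised reductive subgroup of $L_k$ with $\dim Z(H_0)<\dim Z(L_k)$ by \Cref{bound_centre_Hzero}; the locus where the $G$-semisimplification lands in $H_0$ is contained in $G^0\cdot\Xbar^{\gen}_{H_0,\rhobarss}$ (for the appropriate $H_0$-residual pseudocharacters produced by \Cref{cover_Hi}), and since $Z(H_0)$ is strictly smaller one computes, using Corollaries \ref{dim_XgenH} and \ref{UV_GG} together with the fibre-dimension formula, that its codimension in $\Xbar^{\gen}_{G,\rhobarss}$ is at least $1+[F:\Qp]$ — here the point is that lowering $\dim Z(H_0)$ by at least $1$ while $\dim H_0\leq\dim L_k$ drops $\dim\Xbar^{\gps}_{H_0,\rhobarss}$ by at least $1$ relative to the expected dimension over $U_{LG}$, and one gains an extra $[F:\Qp]$ from the fibre direction in passing back to $\Xbar^{\gen}$. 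For (iii): apply \Cref{WGH} with $W=(\Lie(G_1')_{\sic})^*$; since $((\Lie(G_1')_{\sic})^*)^{G_1'}=0$ by \Cref{sic} applied to $G_1$, the subgroups $H_i$ produced satisfy $\dim H_i\leq\dim L_k-2$ (viewing $G_1'$ inside the derived group of $L^0$), and Corollary \ref{bound_1} then bounds the codimension of this locus by $2[F:\Qp]\geq 1+[F:\Qp]$. Taking $\overline{D}_L$ to be the union of the closures of these three $G^0$-invariant loci and using \Cref{useful_V} to control closures, part (1) follows.

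\textbf{Step 3: verify the pointwise vanishing.} Let $x\in\Vbar_{LG}$ with $x\notin\overline{D}_L$. After a $G^0$-conjugation I may assume $\rho_x$ lands in $P(\kappa(x))$ with R-Levi $L$; avoiding piece (i) means $\rho_x(\Gamma_F)$ is not contained in any R-Levi of $P$, avoiding (ii) means the $G$-semisimplification of $\rho_x$ is not contained in $H_0$, and avoiding (iii) gives $h^0(\Gamma_F,(\Lie(G_1')_{\sic})^*_x(1))=0$. These are exactly hypotheses (1),(2),(3) of \Cref{vanish_H2_please}, whose conclusion is $h^0(\Gamma_F,(\Lie G'_{\sic})^*_x(1))=0$. \textbf{The main obstacle} I anticipate is the dimension bound for piece (ii): one needs to translate the group-theoretic inequality $\dim Z(H_0)<\dim Z(L_k)$ into a codimension-$(1+[F:\Qp])$ statement on $\Xbar^{\gen}_{G,\rhobarss}$, which requires carefully combining the inductive bound \eqref{dim_pair} (now available as \Cref{thm_bound_space}), Corollary \ref{dim_Z_same}, and the fibre-dimension estimate of Proposition \ref{bound_dim_fibre}, while keeping track of the defect $\delta(y)$; getting a clean $+1$ rather than merely $+\tfrac12$ here is the delicate point, and it is precisely where the strict inequality on centralizer dimensions (rather than just $\leq$) is essential.
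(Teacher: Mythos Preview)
Your overall plan matches the paper's exactly: build $\overline{D}_L$ from three pieces corresponding to hypotheses (1), (2), (3) of \Cref{vanish_H2_please}, with piece~(i) the locus $G^0\cdot\Xbar^{\gen}_{L,\rhobarss}$ bounded via \Cref{Benediktiner}, piece~(iii) handled via \Cref{WGH} applied on the $G_1$-side (the paper produces $J_i\subseteq G_1$ with $\dim G_1-\dim J_i\ge2$, takes $H_i$ to be the preimage of $J_i\times T_2$ in $L_k$ so that $\dim L_k-\dim H_i\ge2$, and then invokes \Cref{bound_1}), and Step~3 just as you describe.

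There is, however, a genuine gap in piece~(ii). Your containment ``the locus where the $G$-semisimplification lands in $H_0$ is contained in $G^0\cdot\Xbar^{\gen}_{H_0,\rhobarss}$'' is false. For $x\in\Vbar_{LG}$ outside piece~(i), $\rho_x$ has image in $P$ but in no R-Levi of $P$, hence is not $G^0$-conjugate into $H_0\subseteq L$; nor is such an $x$ a specialisation of a point $g\cdot z$ with $\rho_z$ valued in $H_0$ (concretely: in $\PGL_2$, a representation into $B_2$ with nontrivial unipotent part and semisimplification in $\mu_{p-1}$ does not lie in the closure of the $G^0$-saturation of the finitely many semisimple representations valued in $\mu_{p-1}$, whose orbits are already closed). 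With your definition of piece~(ii), Step~3 therefore fails: avoiding $G^0\cdot\Xbar^{\gen}_{H_0,\rhobarss}$ does \emph{not} force the $G$-semisimplification of $\rho_x$ out of $H_0$. The condition on the semisimplification is read off at the level of the GIT quotient: it says the image of $x$ in $\Xbar^{\gps}_{G,\rhobarss}$ lies in $\Xbar^{\gps}_{H_0 G}$. Thus the correct piece~(ii) is $\Zbar_{H_0 G,\rhobarss}$, the closure of the preimage of $\Xbar^{\gps}_{H_0 G}\cap\Ubar_{LG,\rhobarss}$, and the right bounding tool is \Cref{bound_1} (i.e.\ \Cref{mother_bound}), not \Cref{Benediktiner}. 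The paper extracts the extra $+1$ by revisiting the chain of inequalities in \Cref{mother_bound}: the step cancelling \eqref{third_ofmany} against \eqref{fifth_ofmany} uses only $\dim Z(H)\le\dim Z(L_k)$, and this inequality is strict here by \Cref{bound_centre_Hzero}, so \eqref{seventh_ofmany} improves to $\dim\Xbar^{\gen}_{G,\rhobarss}-\dim\Zbar_{H_0 G,\rhobarss}\ge 1+(\dim L_k-\dim H_0)[F:\Qp]=1+[F:\Qp]$. You have correctly located the crux; what needs fixing is the set and the lemma applied to it.
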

\begin{proof} We want to define $\overline{D}_L$, so that for every geometric point 
$x\in \Vbar_{LG, \rhobarss}$ such that $x\not \in \overline{D}_L$ the assumptions of
Proposition \ref{vanish_H2_please} hold for $\rho_x: \Gamma_F \rightarrow G(\kappa(x))$, 
since then Proposition \ref{vanish_H2_please} implies that part (2) holds. 

If $x\in \Vbar_{LG, \rhobarss}$ then $\rho_x(\Gamma_F)$ is contained in 
$P(\kappa(x))$ for some R-parabolic subgroup $P$ of $G_{\kappa(x)}$ such that some
R-Levi of $P$ is conjugate to $L_{\kappa(x)}$. Thus if $x\not\in G^0\cdot \Xbar^{\gen}_{L, \rhobarss}$
then $\rho_x(\Gamma_F)$ is not contained in any R-Levi subgroup of $P$, and so part (1) 
of Proposition \ref{vanish_H2_please} holds for $\rho_x$. Moreover, Proposition \ref{Benediktiner}
implies that 
\begin{equation}\label{codim_L}
\dim \Xbar^{\gen}_{G, \rhobarss} - \dim G^0\cdot \Xbar^{\gen}_{L, \rhobarss} \ge 2 [F:\Qp].
\end{equation}

Let $\varphi: G_k \rightarrow \Aut(G^0_k)$ be the map induced by the conjugation 
action of $G_k$ on $G^0_k$. Let $\overline{G}= G_k/\Ker \varphi$ and $\overline{L}= L_k/\ker \varphi$.  It follows from \cite[Theorem 7.1.9]{bcnrd} 
and Proposition \ref{tasho1} that $\overline{G}_{\kbar}\cong G_1 \times \PGL_2$ and 
$\overline{L}_{\kbar} \cong G_1\times T_2$, where $G_1$ is a 
generalised reductive group and $T_2$ is the subgroup of diagonal matrices in $\PGL_2$. The isomorphisms are defined over some finite extension of $k$, and after extending 
scalars we may assume that it is defined over $k$. Let $p_2: G\rightarrow \PGL_2$ be the 
quotient map composed with the projection onto the second factor. 

Let  $H_0$ be the preimage of $\mu_{p-1} \subset T_2$ in $G$ under $p_2$. It follows from 
Lemma \ref{bound_centre_Hzero} applied to the base change of $H_0$ over $\kbar$ that $H_0$ is reductive and 
$\dim Z(H_0)< \dim Z(L_k)$. Since the inequality is strict we can improve the codimension bound
in Corollary \ref{bound_1} by $1$: 
\begin{equation}\label{codim_Hzero}
\dim \Xbar^{\gen}_{G, \rhobarss} - \dim \Zbar_{H_0 G, \rhobarss}\ge 1+ (\dim L_k - \dim H_0)[F:\Qp]= 1+[F:\Qp],
\end{equation}
where $\Zbar_{H_0 G, \rhobarss}$ is the closure of the preimage of $\Xbar^{\gps}_{H_0 G}\cap U_{LG, \rhobarss}$ in $\Xbar^{\gen}_{G, \rhobarss}$, and the extra $1$ comes as a difference between the dimensions of the centres in \eqref{third_ofmany} and 
\eqref{fifth_ofmany}.

If $x$ is a geometric point of $\Vbar_{LG, \rhobarss}$ then the $G$-semisimplification of 
$\rho_x$ is contained in $H_0$ if and only if $x\in \Zbar_{H_0 G, \rhobarss}$.
Thus if $x\not \in \Zbar_{H_0 G, \rhobarss}$ then part (1) of Proposition \ref{vanish_H2_please}
holds for $\rho_x$. 

If $G_1^0$ is a torus then $G_1'$ is trivial and part (3) of Proposition \ref{vanish_H2_please} 
holds for $\rho_x$. Proposition \ref{vanish_H2_please} implies that 
if $x\not\in  \overline{D}_L:= G^0\cdot \Xbar^{\gen}_{L, \rhobarss} \cup \Zbar_{H_0 G, \rhobarss}$
then part (2) holds, and \eqref{codim_L} and \eqref{codim_Hzero} imply that part (1) holds.

If $G_1^0$ is not a torus then we have to further cut out some pieces to make sure that 
$h^0(\Gamma_F, (\Lie G'_1)_x^*(1))=0$, so that part (3) of Proposition \ref{vanish_H2_please} is satisfied. Let $p_1: G\rightarrow G_1$ be the composition of the quotient map with the projection 
onto the first component. Let $\rhobar_1=p_1\circ\rho: \Gamma_F \rightarrow G_1(k)$. 
As in the proof of Proposition \ref{bound_special}
let $J_1, \ldots, J_r$ 
be reductive subgroups of $G_1$, such that $\dim G_1 - \dim J_i\ge 2$ and the 
image of the special $G_1$-absolutely irreducible locus in $\Xbar^{\gps}_{G_1,\rhobar_1^{\mathrm{ss}}}$
is contained in the union of $\Xbar^{\gps}_{J_i G_1, \rhobar_1^{\mathrm{ss}}}$. 
Let $H_i$ be the preimage of $J_i \times T_2$ in $L_k$ for $1\le i \le r$. Then 
$\dim L_k -\dim H_i = \dim G_1 - \dim J_i \ge 2$. It follows from Corollary \ref{bound_1} that 
\begin{equation}\label{codim_Hi}
\dim \Xbar^{\gen}_{G, \rhobarss} - \dim \Zbar_{H_i G, \rhobarss} \ge (\dim L_k - \dim H_i)[F:\Qp]\ge 
2[F:\Qp].
\end{equation}
If $x\in \Vbar_{LG, \rhobarss}$ then the action of $\Gamma_F$ on $(\Lie (G'_1)_{\sic})^*_x$ factors through
$p_1\circ \rho_x: \Gamma_F \rightarrow G_1(\kappa(x))$. If $p_1\circ \rho_{x}$ is not $G_1$-irreducible, then its image would be contained in a proper
R-parabolic of $G_1$, and then the image of $\rho_x$ would be contained in a proper
R-parabolic of $L_k$, which would contradict $x\in \Vbar_{LG}$. Thus $p_1\circ \rho_{x}$ is $G_1$-irreducible
and $h^0(\Gamma_F, (\Lie (G'_1)_{\sic})_x^*(1))=0$ if and only if $p_1\circ \rho_{x}$ corresponds to a non-special 
point in $\Xbar^{\gen}_{G_1, \rhobarss_1}$. 
The map $p_1: L_k\rightarrow G_1$ induces a map 
$\Vbar_{LG, \rhobarss} \rightarrow \Ubar_{LG, \rhobarss} \rightarrow \Xbar^{\gps}_{G_1, \rhobar_1^{\mathrm{ss}}}$ and the preimage of $\Xbar^{\gps}_{J_i G_1, \rhobarss_1}$ under this map 
is equal to $\Vbar_{LG, \rhobarss}\cap \Zbar_{H_i G, \rhobarss}$. We conclude that if 
$x\not \in \Zbar_{H_i G, \rhobarss}$ for $1\le i\le r$ then $h^0(\Gamma_F, (\Lie (G'_1)_{\sic})_x^*(1))=0$. 
Thus part (3) of Proposition \ref{vanish_H2_please} holds for $\rho_x$. 
Hence, if $x\not\in \overline{D}_L:=G^0\cdot \Xbar^{\gen}_{L, \rhobarss} \cup \bigcup\nolimits_{i=0}^r\Zbar_{H_i G, \rhobarss}$ then Proposition \ref{vanish_H2_please} implies that part (2) holds. 
Moreover, \eqref{codim_L}, \eqref{codim_Hzero} and \eqref{codim_Hi} imply 
that part (1) holds.  
\end{proof}

\begin{thm}\label{codim_special_locus}  The following hold:
\begin{enumerate}
\item  $X^{\spcl}_{G,\rhobarss}$ has codimension at least $1+[F:\Qp]$ in $X^{\gen}_{G, \rhobarss}$;
\item  $\Xbar^{\spcl}_{G,\rhobarss}$ has codimension at least $1+[F:\Qp]$ in $\Xbar^{\gen}_{G, \rhobarss}$;
\item  $X^{\spcl}_{G,\rhobarss}[1/p]$ has codimension at least $1+[F:\Qp]$ in 
$X^{\gen}_{G, \rhobarss}[1/p]$.
\end{enumerate}
Moreover, if there is no $\rhobarss$-compatible R-Levi subgroup $L$ of $G$ such that $\dim G -\dim L=2$ 
then the bounds can be improved to $2[F:\Qp]$.
\end{thm}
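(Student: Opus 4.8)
\textbf{Proof proposal for Theorem \ref{codim_special_locus}.}

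The plan is to bound the codimension of $X^{\spcl}_{G,\rhobarss}$ by decomposing $X^{\gen}_{G,\rhobarss}$ into the preimage $Y_{\rhobarss}$ of the closed point of $X^{\ps}_G$ and the locally closed pieces $V_{LG,\rhobarss}$ indexed by R-Levi subgroups $L$ containing a fixed maximal split torus (after the usual reduction via \Cref{rem_extend_scalars} to the split case; for the component-group condition one invokes \Cref{shrink} so that $\pi_G\circ\rhobarss$ is surjective). On each piece I will bound the dimension of the intersection with the special locus. First I would handle $Y_{\rhobarss}$: by \Cref{bound_Y} together with \Cref{thm_bound_space}(2) we already know $\dim G_k([F:\Qp]+1)-\dim Y_{\rhobarss}\ge [F:\Qp]+1$, with strict inequality $\ge 2[F:\Qp]+1$ unless there is a $\rhobarss$-compatible R-Levi $L$ with $\dim G_k-\dim L_k=2$. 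This immediately gives the $Y$-contribution and, importantly, shows that if the exceptional Levi does not occur then even $Y_{\rhobarss}$ satisfies the improved bound $2[F:\Qp]$.

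Next, for $L\subsetneq G$, I would use \Cref{bound_Levi}, which bounds $\dim G_k([F:\Qp]+1)-\dim \Zbar_{LG,\rhobarss}\ge \min(2,\dim N_k)[F:\Qp]$, with the lower bound improved to $2$ whenever $F\neq\Qp$ or $\dim N_k\neq 1$; and $\dim N_k=1$ forces $\dim G_k-\dim L_k=2$, i.e.\ the exceptional case. So for all $L$ other than the codimension-$2$ exceptional Levi in the $F=\Qp$ case, $\Zbar_{LG,\rhobarss}$ already has codimension $\ge 2[F:\Qp]\ge 1+[F:\Qp]$, and a fortiori so does its intersection with the special locus. The remaining work is concentrated on two places: the component $V_{GG,\rhobarss}$ (the absolutely $G$-irreducible locus), where I would invoke \Cref{bound_special} — which via \Cref{WGH} and \Cref{sic} (using $((\Lie G'_{\sic})^*)^{G'}=0$) shows $\dim G_k([F:\Qp]+1)-\dim \Zbar^{\spcl}_{GG,\rhobarss}\ge 2[F:\Qp]$ — and, only in the $F=\Qp$ situation with an exceptional codimension-$2$ R-Levi $L$, the piece $V_{LG,\rhobarss}$, where \Cref{bound_Levi} is too weak and I would instead use \Cref{alt_yeah}: that proposition produces a closed subscheme $\overline{D}_L\subseteq \Xbar^{\gen}_{G,\rhobarss}$ of codimension $\ge 1+[F:\Qp]$ such that every $x\in \Vbar_{LG}\setminus\overline{D}_L$ is $(\Lie G'_{\sic})^*$-non-special. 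Hence $\Vbar_{LG}\cap\Xbar^{\spcl}_{G,\rhobarss}\subseteq\overline{D}_L$, giving exactly the codimension bound $1+[F:\Qp]$ on that troublesome stratum.

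Assembling these, $\Xbar^{\spcl}_{G,\rhobarss}$ is covered by $Y_{\rhobarss}\cap\Xbar^{\spcl}$, $\Zbar^{\spcl}_{GG,\rhobarss}$, the $\Zbar_{LG,\rhobarss}$ for $L\subsetneq G$ non-exceptional, and the $\overline{D}_L$ for exceptional $L$; each has codimension $\ge 1+[F:\Qp]$ in $\Xbar^{\gen}_{G,\rhobarss}$ (and $\ge 2[F:\Qp]$ when no exceptional Levi occurs), proving (2). Part (1) follows from (2) by \Cref{dim_sp_gen} applied to the $G^0$-invariant closed subscheme $X^{\spcl}_{G,\rhobarss}$, whose reduced special fibre is $\Xbar^{\spcl}_{G,\rhobarss}$: since $\dim X^{\gen}_{G,\rhobarss}=\dim\Xbar^{\gen}_{G,\rhobarss}+1$ by \Cref{dim_XgenG} and $\dim X^{\spcl}_{G,\rhobarss}\le \dim\Xbar^{\spcl}_{G,\rhobarss}+1$, the codimension is preserved. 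Part (3) follows the same way using $\dim X^{\spcl}_{G,\rhobarss}[1/p]\le \dim\Xbar^{\spcl}_{G,\rhobarss}$ from \Cref{dim_sp_gen} (or \Cref{gen_sp_dim} applied with $W=(\Lie G'_{\sic})^*$), together with $\dim X^{\gen}_{G,\rhobarss}[1/p]=\dim\Xbar^{\gen}_{G,\rhobarss}$. The main obstacle is the exceptional $F=\Qp$, codimension-$2$ R-Levi case: it is precisely the scenario \Cref{alt_yeah} was built for, so the real content is checking that the hypotheses of \Cref{alt_yeah} are met — in particular that $\dim G_k>1$ (which holds since otherwise $G^0$ is a torus, $G'$ trivial, and the statement is vacuous) and that $L$ is indeed $\rhobarss$-compatible (guaranteed by \Cref{L_compatible} since $\Xbar^{\gps}_{LG,\rhobarss}$ is nonempty on the relevant stratum).
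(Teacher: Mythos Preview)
Your proposal follows essentially the same approach as the paper: reduce parts (1) and (3) to (2) via \Cref{gen_sp_dim}, then cover $\Xbar^{\spcl}_{G,\rhobarss}$ by $Y_{\rhobarss}$, $\Zbar^{\spcl}_{GG,\rhobarss}$, the $\Zbar_{LG,\rhobarss}$ for $\dim G-\dim L>2$, and the $\overline{D}_L$ for $\dim G-\dim L=2$, invoking \Cref{bound_Y}, \Cref{bound_special}, \Cref{bound_Levi}, and \Cref{alt_yeah} respectively.

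There is one misstep. You claim that for a codimension-$2$ R-Levi $L$ with $F\neq\Qp$, \Cref{bound_Levi} already gives codimension $\ge 2[F:\Qp]$. It does not: when $\dim N_k=1$ the bound in \eqref{eq_result} is $\min(2,\dim N_k)[F:\Qp]=[F:\Qp]$, and the ``Moreover'' clause there only upgrades the trailing ``$\ge 1$'' to ``$\ge 2$'', not the whole bound to $2[F:\Qp]$. Since $[F:\Qp]<1+[F:\Qp]$ for every $F$, \Cref{bound_Levi} is too weak on the codimension-$2$ Levi stratum regardless of $F$, and you must invoke \Cref{alt_yeah} for \emph{every} $\rhobarss$-compatible R-Levi $L$ with $\dim G-\dim L=2$, exactly as the paper does. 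With that correction your argument goes through.
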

\begin{proof} Since $\dim X^{\gen}_{G, \rhobarss} = \dim \Xbar^{\gen}_{G, \rhobarss} +1$ by 
Corollary \ref{dim_XgenG}, 
Corollary \ref{gen_sp_dim} applied with $W= (\Lie G'_{\sic})^*$ implies that 
it is enough to prove the assertion about the special fibre.

We claim that $\Xbar^{\spcl}_{G,\rhobarss}$ is contained in the union of $Y_{\rhobarss}$, $\Zbar^{\spcl}_{GG, \rhobarss}$, 
$\Zbar_{LG, \rhobarss}$, for $\rhobarss$-compatible R-Levi subgroups $L$ of $G$ containing a fixed maximal split torus $T$ and satisfying $\dim G -\dim L > 2$, and $\overline{D}_L$ 
 for $\rhobarss$-compatible R-Levi subgroups 
$L$ of $G$ containing $T$ and satisfying $\dim G - \dim L =2$. Since this is a finite union and 
each piece has codimension of at least $1+[F:\Qp]$ by Corollary \ref{bound_Y}, Propositions \ref{bound_special}, 
\ref{bound_Levi}, \ref{alt_yeah} we obtain the assertion. Moreover, if there are no $\rhobarss$-compatible 
R-Levi subgroups $L$ with $\dim G-\dim L=2$ then the same references imply that each piece
has codimension at least $2[F:\Qp]$.

We will now prove the claim. Let $x\in \Xbar^{\spcl}_{G,\rhobarss}$ be a geometric point. 
If $x$ is not contained in $Y_{\rhobarss}$ then it is contained in $\Vbar_{LG, \rhobarss}$ for some 
$\rhobarss$-compatible R-Levi $L$ of $G$ containing $T$. If $L=G$ then $x\in \Zbar^{\spcl}_{GG, \rhobarss}$ by definition of the absolutely irreducible special locus. 
If $L\neq G$ then $\dim G - \dim L\ge 2$ by part (3) of Lemma \ref{V_LG_maxdim}.  
If $\dim G-\dim L >2$ then $x\in \Zbar_{LG, \rhobarss}$, as this locus contains $\Vbar_{LG, \rhobarss}$. 
If $\dim G -\dim L=2$ then Proposition \ref{alt_yeah} implies that $x\in \overline{D}_L$.
\end{proof} 

\begin{examp} Let $G=\GL_1\times \GL_2$ and $\rhobar= \rhobar_1\times \rhobar_2$. If
$\rhobar_2: \Gamma_F \rightarrow \GL_2(k)$ is absolutely irreducible then the only 
$\rhobarss$-compatible Levi of $G$ is $G$ itself and so \Cref{codim_special_locus} implies that the 
codimension of $(\Lie G'_{\sic})^*$-special locus is at least $2[F:\Qp]$. On the other hand 
if $\rhobar_2$ is reducible then the subgroup of diagonal matrices in $G$ is also 
$\rhobarss$-compatible and the bound given by the theorem is $1+[F:\Qp]$.
\end{examp}

\section{Irreducible components and normality}\label{nightmare_cont}

We assume that
$G^0$ is split over $\OO$ and the map 
$\pi_G\circ \rhobarss: \Gamma_F \rightarrow (G/G^0)(\kbar)$ is surjective. It follows from Remark \ref{rem_extend_scalars} and
Lemma \ref{shrink} that we can always achieve this after shrinking the 
group of connected components of $G$ and after replacing $L$ by a 
finite unramified extension. Let $G'$ be the derived subgroup scheme 
of $G^0$ and let $\varphi: G\rightarrow G/G'$ be the quotient map.

Let $\Gamma_E$ be the kernel of $\pi_G \circ\rhobarss$. 
Our assumptions on $G$ imply that $G^0/G'$ is a torus over $\OO$. 
We thus may write $G^0/G' =\Spec \OO[M]$, where $M$ is the character lattice 
of $G^0/G'$. Let $\Delta:=(G/G^0)(\kbar)$. Our assumptions on 
$\pi_G \circ \rhobarss$ imply that 
$$\Delta= (G/G^0)(k)= \Gal(E/F).$$ 
The action of $G$ by conjugation on $\Hom_{\OO\hyphen\mathrm{GrpSch}}(G^0/G', \Gm)=M$ 
induces an action of $\Delta$ on $M$. We assume that we are given a $\Delta$-invariant decomposition
\begin{equation}\label{decomp_M}
M=M_1 \oplus M_2.
\end{equation}
The projections $M\rightarrow M_i$ identify $K_i=\Spec \OO[M_i]$ with a normal subgroup scheme of $G/G'$. We let 
$$ H_1 = (G/G')/K_2, \quad H_2:= (G/G')/K_1.$$

We fix $\rhobar: \Gamma_F \rightarrow G(k)$ corresponding to a point in $X^{\gen}_{G, \rhobarss}(k)$. 
Let $\varphi_i: G\rightarrow H_i$ denote the quotient map and let $\psibar_i= \varphi_i \circ \rhobar$ and $\psibar= \varphi\circ \rhobar$.
We fix 
a continuous representation $\psi_1: \Gamma_F\rightarrow H_1(\OO)$ lifting $\psibar_1: \Gamma_F\rightarrow H_1(k)$. The lift $\psi_1$ induces a morphism 
$\Spec \OO \rightarrow X^{\gen}_{H_1, \psibarss_1}$.
Whenever $X=\Spec A$ is a scheme over $X^{\gen}_{H_1, \psibarss_1}$, we denote 
by $X^{\psi_1}$ the fibre product along this morphism, and $A^{\psi_1}$ the ring of global sections of
$X^{\psi_1}$. 
By functoriality proved in Proposition \ref{functoriality_2} the quotient morphism $\varphi_1: G\rightarrow H_1$ induces a morphism
\begin{equation}\label{fantastic_man}
X^{\gen}_{G, \rhobarss} \rightarrow X^{\gen}_{H_1, \psibarss_1}.
\end{equation}
We obtain closed subschemes $X^{\gen, \psi_1}_{G, \rhobarss}$ of $X^{\gen}_{G, \rhobarss}$
and $\Xbar^{\gen, \psi_1}_{G, \rhobarss}$ of $\Xbar^{\gen}_{G, \rhobarss}$, respectively. 

In this section we study the irreducible components of
$X^{\gen, \psi_1}_{G, \rhobarss}$ and the corresponding deformation rings and show that they are normal in many cases. 
The main results are \Cref{chi_nor_int} and its Corollaries
\ref{reg_codim_x}, \ref{bij_comp_Xgen} and \ref{conj_BJ}, where we prove an analogue of a conjecture of B\"ockle--Juschka posed originally for $G=\GL_d$.

Before delving  into the details let us point out two interesting cases: if $M_1=0$
then $X^{\gen, \psi_1}_{G, \rhobarss}=X^{\gen}_{G, \rhobarss}$, if $M_1=M$ then $X^{\gen, \psi_1}_{G, \rhobarss}$ parameterises representations with 
`a fixed determinant'. If $G=\GL_d$ then $G/G'=\Gm$, $M=\ZZ$ and these are the only two cases that can occur. A further interesting case occurs, when $G$ is a $C$-group; we discuss it in Theorem 
\ref{C}.

Let $\kappa$ be either a finite or a local field, which is an $\OO$-algebra, and let $\Lambda$ be a coefficient ring for $\kappa$ in the sense of section \ref{def_probs}. Let $\rho: \Gamma_F \rightarrow G(\kappa)$ be a
continuous representation. Let $D^{\square, \psi_1}_{G, \rho}: \mathfrak A_{\Lambda} \rightarrow \Set$ be the functor $$D^{\square, \psi_1}_{G, \rho}(A):= \{\rho_A\in D^{\square}_{G, \rho}(A): \varphi_1\circ \rho_A = \psi_1\otimes_{\OO} A\}.$$ 
It is easy to see that $D^{\square, \psi_1}_{G, \rho}$ is pro-represented by a quotient of $R^{\square}_{G, \rho}$, which we denote by $R^{\square, \psi_1}_{G, \rho}$.

Let $N$ be the character lattice of $Z(G^0)^0$. The map $Z(G^0)\rightarrow G/G'$ is a central isogeny by \cite[Corollary 5.3.3]{bcnrd}.
We thus obtain a homomorphism $M\rightarrow N$ of $\Delta$-modules, which induces an isomorphism $M \otimes \QQ\cong N\otimes \QQ$.
Let $N_1$ be the image of $N$ in $(N\otimes \QQ)/ (M_2\otimes\QQ)$ and let $N_2$ be the image of $N$ in $(N\otimes \QQ)/ (M_1\otimes\QQ)$.
Then $Z_i:=\Spec \OO[N_i]$ is a $\Delta$-invariant subtorus of $Z(G^0)$, and hence
a normal subgroup of $G$. Moreover, the map $Z_i\rightarrow H_i$ is an isogeny as  $M_i\otimes \QQ\cong N_i\otimes\QQ$ by construction.

\subsection{Dimension} In this section we bound the dimension of $\Xbar^{\gen, \psi_1}_{G, \rhobarss}$. 
We  obtain the bound from the results of the previous section applied to the group $\Gbar:= G/Z_1$ 
via the map $X^{\gen, \psi_1}_{G, \rhobarss} \rightarrow X^{\gen}_{G, \rhobarss} \rightarrow X^{\gen}_{\Gbar, q\circ\rhobarss}$, where the second arrow is induced 
by the quotient map $q: G\rightarrow \Gbar$ and functoriality proved in Proposition \ref{functoriality_2}. 
For example, if $G=\GL_d$ then we deduce the bound in the fixed determinant case from our results on $\PGL_d$. This differs 
from the argument in \cite{BIP_new}, where the authors twist the universal deformation by characters to unfix the determinant and get back to the $\GL_d$-case. 
To ease the notation we will drop $q$ and write 
$X^{\gen}_{\Gbar, \rhobarss}$ instead of  $X^{\gen}_{\Gbar, q\circ\rhobarss}$. 

\begin{prop}\label{psi_Gbar_finite} The morphism $X^{\gen, \psi_1}_{G, \rhobarss}\rightarrow X^{\gen}_{\Gbar, \rhobarss}$ is finite. 
\end{prop}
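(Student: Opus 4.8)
The plan is to realise the morphism $X^{\gen, \psi_1}_{G, \rhobarss}\to X^{\gen}_{\Gbar, \rhobarss}$ as a composite of maps each of which is finite, and to isolate the one genuinely new input, which is that fixing the partial determinant $\psi_1$ cuts down the fibres of $X^{\gen}_{G,\rhobarss}\to X^{\gen}_{H_1,\psibarss_1}$ to something finite over the corresponding $Z_1$-quotient. First I would recall the setup: $Z_1$ is a central subtorus of $G^0$ that is $\Delta$-invariant, hence normal in $G$, and the composition $Z_1\to G\to H_1$ is an isogeny by construction (since $M_1\otimes\QQ\cong N_1\otimes\QQ$). Let $q:G\to \Gbar=G/Z_1$ be the quotient map. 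Functoriality (\Cref{functoriality_2}) gives us the morphism $X^{\gen}_{G,\rhobarss}\to X^{\gen}_{\Gbar,\rhobarss}$, and restricting to the closed subscheme $X^{\gen,\psi_1}_{G,\rhobarss}$ gives the map in question.

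The key step is to show that the map $X^{\gen,\psi_1}_{G,\rhobarss}\to X^{\gen}_{\Gbar,\rhobarss}$ is finite by the now-standard argument used throughout the paper for such claims: since everything is of finite type over $\RpsGLd$, by topological Nakayama (or by \stackcite{01WJ} combined with properness) it suffices to show that the fibre of this map over the closed point of $X^{\gen}_{\Gbar,\rhobarss}$ is a finite set, or more precisely that the ring $k\otimes_{A^{\gen}_{\Gbar,\rhobarss}} A^{\gen,\psi_1}_{G,\rhobarss}$ is a finite-dimensional $k$-vector space. Equivalently, one can argue on geometric points: given a representation $\rhobar': \Gamma_F\to \Gbar(\kbar)$ lifting to $G$, the $\psi_1$-condition forces the $H_1$-part to equal a fixed $\psi_1\otimes\kbar$, and since $Z_1\to H_1$ is an isogeny with kernel a finite group scheme $\mu'$, two lifts $\rho,\rho':\Gamma_F\to G(\kbar)$ with the same image in $\Gbar$ and the same $\varphi_1\circ\rho=\varphi_1\circ\rho'=\psi_1$ differ by a cocycle valued in $\mu'(\kbar)$ which is finite; so the fibre is finite. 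To make this into a scheme-theoretic finiteness statement, I would instead look at the relevant affine rings: the exact sequence $0\to Z_1\to G\to \Gbar\to 0$ together with the fact that $Z_1$ maps isogenously onto $H_1$ shows that the map $G\to \Gbar\times_{H_1} H_1 \cong \Gbar$ (using $\psi_1$ to pin down the $H_1$-coordinate) has finite multiplicative kernel, and then the argument of \Cref{soggy_finite} applies verbatim: the fibre ring is a domain quotient of characteristic $p$ on which $Z_1$ acts trivially, forcing the point to be the closed point, hence zero-dimensionality, hence finiteness by Nakayama.

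Concretely I would break the composite into two pieces: first $X^{\gen,\psi_1}_{G,\rhobarss}\to X^{\gen}_{G,\rhobarss}$ is a closed immersion (it is by definition a closed subscheme, being the fibre product of $X^{\gen}_{G,\rhobarss}\to X^{\gen}_{H_1,\psibarss_1}$ along $\Spec\OO\to X^{\gen}_{H_1,\psibarss_1}$), and second I need $X^{\gen}_{G,\rhobarss}\to X^{\gen}_{\Gbar,\rhobarss}$ restricted to this closed subscheme to be finite. A closed immersion is finite, so it all comes down to the finiteness of the second piece on the closed subscheme, which is exactly the content of the previous paragraph. The main obstacle I anticipate is the scheme-theoretic (as opposed to topological) control over the fibres when $Z_1$ has nontrivial infinitesimal part in characteristic $p$ — that is, when the kernel of $Z_1\to H_1$ (equivalently, a piece of $\mu'$) is a connected diagonalisable group scheme like $\mu_{p^r}$. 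In that case one cannot argue naively on points, and the right tool is precisely the combination of \Cref{soggy_finite} and \Cref{fin_conn}: the $\psi_1$-fixed deformation functor of $G$ relative to the induced one of $\Gbar$ has a fibre ring $C=k\otimes_{A^{\gen}_{\Gbar,\rhobarss}}A^{\gen,\psi_1}_{G,\rhobarss}$ which is a complete local noetherian ring (noetherianity from \cite[Theorem 5.7]{quast} applied to the relevant pseudocharacter rings together with \Cref{GGLd_finite}), and one checks as in \Cref{soggy_finite} that every prime of $C$ is the maximal ideal because any representation with prescribed image in $\Gbar$ and prescribed $\varphi_1$-part factors through $G(\kappa')$ for $\kappa'$ a finite extension of the residue field; hence $\dim C=0$ and $C$ is finite over $k$, giving finiteness of the morphism by Nakayama's lemma.
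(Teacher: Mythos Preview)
Your approach has a genuine gap. The Nakayama/\Cref{soggy_finite}-style argument you invoke works when the base ring is complete local: in \Cref{soggy_finite} both $R^{\square}_H$ and $R^{\square}_G$ are complete local noetherian, so checking the fibre over \emph{the} closed point suffices. But $A^{\gen}_{\Gbar,\rhobarss}$ is not complete local---it is a finite-type algebra over $R^{\ps}_{\Gbar}$ of positive relative dimension, with infinitely many closed points (all of $Y_{\rhobarss}$ for $\Gbar$). So there is no ``the closed point of $X^{\gen}_{\Gbar,\rhobarss}$'', the fibre ring $C$ has no reason to be complete local, and topological Nakayama does not apply. Your pointwise argument on geometric fibres would at best give quasi-finiteness; you acknowledge needing properness, but you give no mechanism to establish it. Note also that $X^{\gen}_{G,\rhobarss}\to X^{\gen}_{\Gbar,\rhobarss}$ is \emph{not} finite (the fibres contain $Z_1$-twists), so restricting to a closed subscheme does not automatically yield finiteness, and it is not even clear a priori that $X^{\gen,\psi_1}_{G,\rhobarss}$ is of finite type over $X^{\gen}_{\Gbar,\rhobarss}$.

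The paper's proof avoids all of this with one clean device: set $H:=H_1\times\Gbar$. Then $G\to H$ is a genuine isogeny (its kernel is $\ker(Z_1\to H_1)$, finite since $Z_1\to H_1$ is an isogeny), so \Cref{finite_maps} gives that $X^{\gen}_{G,\rhobarss}\to X^{\gen}_{H,q\circ\rhobarss}$ is finite. Base change along $\psi_1$ preserves finiteness, and \Cref{products} identifies $X^{\gen,\psi_1}_{H,q\circ\rhobarss}\cong X^{\gen}_{\Gbar,\rhobarss}$. The point is that \Cref{finite_maps} already packages the scheme-theoretic finiteness via the closed immersion $X^{\gen}_G\hookrightarrow G^N\times X^{\ps}_G$, so one never needs to argue fibre-by-fibre.
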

\begin{proof} Let $H:= H_1 \times \Gbar$. Then the natural map $q: G\rightarrow H$ 
is a central isogeny as $Z_1\rightarrow H_1$ is an isogeny by construction.
 Thus the induced 
map $f: X^{\gen}_{G, \rhobarss}\rightarrow X^{\gen}_{H, q\circ \rhobarss}$ is finite by Proposition 
\ref{finite_maps}. The quotient maps $ G\rightarrow H_1$, $ H\rightarrow H_1$ induce 
maps $X^{\gen}_{G, \rhobarss}\rightarrow X^{\gen}_{H_1, \psibarss_1}$ and 
$X^{\gen}_{H, \rhobarss}\rightarrow X^{\gen}_{H_1, \psibarss_1}$ which make
$f$ into a morphism of $X^{\gen}_{H_1, \psibarss_1}$-schemes. We thus obtain a finite map 
$ X^{\gen, \psi_1}_{G, \rhobarss}\rightarrow X^{\gen, \psi_1}_{H, q\circ \rhobarss}$. 
It follows from Proposition \ref{products} that the projection onto the second factor 
$H\rightarrow \Gbar$ induces an isomorphism  $X^{\gen, \psi_1}_{H, q\circ \rhobarss} \cong X^{\gen}_{\Gbar, \rhobarss}$. 
\end{proof} 

\begin{cor}\label{dim_bound_psi} Let $Z$ be a closed subscheme of $X^{\gen}_{\Gbar, \rhobarss}$ and 
let $Z'$ be its preimage in $X^{\gen, \psi}_{G,\rhobarss}$. Then $\dim Z'\le \dim Z$. In particular, 
$$\dim \Xbar^{\gen, \psi_1}_{G, \rhobarss} \le \dim \Gbar_k([F:\Qp]+1).$$
\end{cor}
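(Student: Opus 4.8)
The plan is to deduce Corollary~\ref{dim_bound_psi} directly from Proposition~\ref{psi_Gbar_finite} together with the dimension bounds already established for $X^{\gen}_{\Gbar,\rhobarss}$ and $\Xbar^{\gen}_{\Gbar,\rhobarss}$ in Section~\ref{sec_complete_intersection}. The first assertion is the formal observation that finite morphisms do not increase dimension: the morphism $f\colon X^{\gen,\psi_1}_{G,\rhobarss}\to X^{\gen}_{\Gbar,\rhobarss}$ of Proposition~\ref{psi_Gbar_finite} is finite, hence closed, and base change along the closed immersion $Z\hookrightarrow X^{\gen}_{\Gbar,\rhobarss}$ produces a finite morphism $Z'=f^{-1}(Z)\to Z$. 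Since a finite morphism is integral and surjective onto its image, $\dim Z'\le\dim Z$; concretely one may invoke \cite[\href{https://stacks.math.columbia.edu/tag/0ECG}{Tag 0ECG}]{stacks-project} or simply the going-up property of integral extensions applied to the affine rings.

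For the concrete consequence, I would apply the general bound with $Z=X^{\gen}_{\Gbar,\rhobarss}$ itself, so that $Z'=X^{\gen,\psi_1}_{G,\rhobarss}$ by definition. By Corollary~\ref{dim_XgenG} (or rather by Theorem~\ref{thm_bound_space}\,(2) in its form for $\Gbar$), one has $\dim \Xbar^{\gen}_{\Gbar,\rhobarss}=\dim \Gbar_k([F:\Qp]+1)$, and $\dim X^{\gen}_{\Gbar,\rhobarss}=\dim \Gbar_k([F:\Qp]+1)$ as well once one checks $\Gbar$ satisfies the running hypotheses: $\Gbar=G/Z_1$ is generalised reductive by \cite[Exercise 5.5.9]{bcnrd} since $Z_1$ is a flat closed subgroup scheme of $Z(G^0)$ normal in $G$, its neutral component is split (as $G^0$ is), and $\pi_{\Gbar}\circ(q\circ\rhobarss)$ is still surjective onto the unchanged component group. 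Combining, $\dim X^{\gen,\psi_1}_{G,\rhobarss}\le\dim X^{\gen}_{\Gbar,\rhobarss}=\dim\Gbar_k([F:\Qp]+1)$, which is the displayed inequality.

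One subtlety worth spelling out is that the statement of Corollary~\ref{dim_bound_psi} refers to $\Xbar^{\gen,\psi_1}_{G,\rhobarss}$ whereas Proposition~\ref{psi_Gbar_finite} is phrased over $\OO$; here I would simply note that the formation of $X^{\gen,\psi_1}_{G,\rhobarss}$ and of the morphism to $X^{\gen}_{\Gbar,\rhobarss}$ commutes with base change to $k$ (both the fixed-partial-determinant condition and the functoriality map of Proposition~\ref{functoriality_2} are compatible with $-\otimes_\OO k$), so the finiteness passes to special fibres and the bound on $\dim \Xbar^{\gen,\psi_1}_{G,\rhobarss}$ follows from $\dim\Xbar^{\gen}_{\Gbar,\rhobarss}=\dim\Gbar_k([F:\Qp]+1)$. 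I do not anticipate a serious obstacle: the only thing to be careful about is verifying that $\Gbar$ lies within the scope of Theorem~\ref{thm_bound_space}, i.e.\ that replacing $G$ by $G/Z_1$ preserves "generalised reductive, split neutral component, surjective onto the component group," all of which are immediate from the construction of $Z_1$ as a $\Delta$-invariant subtorus of $Z(G^0)$. The genuinely new content of this step is entirely contained in Proposition~\ref{psi_Gbar_finite}, which has already been proved; the present corollary is just the extraction of dimension estimates from it.
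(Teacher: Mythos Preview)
Your proof is correct and follows the paper's approach: deduce $\dim Z'\le\dim Z$ from the finiteness in Proposition~\ref{psi_Gbar_finite}, then invoke Corollary~\ref{dim_XgenG} for $\Gbar$. One small slip: you write $\dim X^{\gen}_{\Gbar,\rhobarss}=\dim\Gbar_k([F:\Qp]+1)$, but Corollary~\ref{dim_XgenG} gives this as the \emph{relative} dimension over $\OO$, so the absolute dimension is one larger; you then recover by passing to special fibres. The paper avoids this detour by taking $Z=\Xbar^{\gen}_{\Gbar,\rhobarss}$ directly as a closed subscheme of $X^{\gen}_{\Gbar,\rhobarss}$, whose preimage is $\Xbar^{\gen,\psi_1}_{G,\rhobarss}$, so the displayed inequality falls out immediately from the first part without any separate discussion of base change.
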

\begin{proof} Proposition \ref{psi_Gbar_finite} implies that $Z'\rightarrow Z$ is finite, which implies the assertion about the dimension.
The last assertion follows from \Cref{dim_XgenG}.
\end{proof} 
Since $\dim (H_1)_k+\dim (H_2)_k= \dim (Z_1)_k +\dim (Z_2)_k= \dim Z(G^0)_k$ we obtain
\begin{equation}\label{dim_Gbar}
\dim \Gbar_k = \dim G_k - \dim (H_1)_k = \dim (H_2)_k +\dim G'_k.
\end{equation}
\begin{cor}\label{psi_gps_finite} The morphism $X^{\gen, \psi_1}_{G, \rhobarss}\sslash Z_2 G' \rightarrow X^{\gps}_{\Gbar, \rhobarss}$ is finite.
\end{cor}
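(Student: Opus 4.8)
\textbf{Proof plan for \Cref{psi_gps_finite}.}
The plan is to deduce the finiteness of $X^{\gen, \psi_1}_{G, \rhobarss}\sslash Z_2 G' \rightarrow X^{\gps}_{\Gbar, \rhobarss}$ from \Cref{psi_Gbar_finite} by passing to quotients by the appropriate reductive group actions and comparing the two GIT quotients. First I would observe that the finite morphism $f : X^{\gen, \psi_1}_{G, \rhobarss}\rightarrow X^{\gen}_{\Gbar, \rhobarss}$ constructed in \Cref{psi_Gbar_finite} is equivariant in the following sense: the conjugation action of $G^0$ on $X^{\gen}_{G, \rhobarss}$ preserves $X^{\gen, \psi_1}_{G, \rhobarss}$ (since $\varphi_1$ is $G^0$-invariant on the target $H_1$, which is a quotient of $G/G'$), and the quotient map $q : G\rightarrow \Gbar=G/Z_1$ carries this action to the conjugation action of $\Gbar{}^0=G^0/Z_1$ on $X^{\gen}_{\Gbar, \rhobarss}$; moreover $Z_1\subseteq Z(G^0)$ acts trivially on $X^{\gen}_{G, \rhobarss}$, so the $G^0$-action on $X^{\gen, \psi_1}_{G, \rhobarss}$ factors through $\Gbar{}^0$. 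Since $Z_2 G'$ is a closed normal subgroup scheme of $G$ which surjects onto $Z_2 G' / (Z_2\cap G')$, a generalised reductive group, and which maps under $q$ onto a subgroup of $\Gbar{}^0$ whose image generates $\Gbar{}^0$ modulo $Z(\Gbar{}^0)^0$ (because $\Gbar{}^0 = Z(\Gbar{}^0)^0 \cdot \overline{G'}$ and $q(G')=\overline{G'}$), I would identify $X^{\gps}_{\Gbar, \rhobarss} = X^{\gen}_{\Gbar, \rhobarss}\sslash \Gbar{}^0$ with the iterated quotient of $X^{\gen, \psi_1}_{G, \rhobarss}$, or rather of $X^{\gen}_{\Gbar, \rhobarss}$, first by $q(Z_2 G')$ and then by the residual torus action.

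Concretely, I would factor the map in question as
\begin{equation}
X^{\gen, \psi_1}_{G, \rhobarss}\sslash Z_2 G' \longrightarrow X^{\gen}_{\Gbar, \rhobarss}\sslash q(Z_2 G') \longrightarrow X^{\gps}_{\Gbar, \rhobarss}.
\end{equation}
The first arrow is obtained by applying $(-)\sslash Z_2 G'$ to the finite $Z_2 G'$-equivariant morphism $f$ and invoking \cite[Theorem 2 (ii)]{seshadri} (or \Cref{cotner_main}/\Cref{cor_cotner}), so it is finite. For the second arrow, since $q(Z_2 G')$ is a closed normal generalised reductive subgroup of $\Gbar{}^0$ with $\Gbar{}^0/q(Z_2 G')$ of multiplicative type — indeed $\Gbar{}^0 = q(Z_2) q(G')$ and $q(Z_2)\subseteq Z(\Gbar{}^0)$, so the quotient is a quotient of the torus $Z(\Gbar{}^0)^0$ up to finite — I would conclude that $X^{\gen}_{\Gbar, \rhobarss}\sslash q(Z_2 G') \rightarrow X^{\gen}_{\Gbar, \rhobarss}\sslash \Gbar{}^0$ is finite by the same argument as in \Cref{cor_cotner}: the intermediate quotient stack $[X^{\gen}_{\Gbar, \rhobarss} / \Gbar{}^0]$ is of finite type over $\OO$ by \cite[Theorem 6.3.3]{alper}, and the morphism of GIT quotients is integral by \cite[Exp.\,V, Thm.\,4.1\,(ii)]{SGA3_new} applied to the residual action of $\Gbar{}^0/q(Z_2 G')$, hence finite. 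Composing the two finite morphisms gives the claim.

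The main obstacle I anticipate is verifying carefully that $X^{\gen}_{\Gbar, \rhobarss}\sslash \Gbar{}^0$ can indeed be computed as an iterated GIT quotient $(X^{\gen}_{\Gbar, \rhobarss}\sslash q(Z_2 G'))\sslash (\Gbar{}^0/q(Z_2 G'))$ — this requires that $q(Z_2 G')$ be a \emph{normal} geometrically reductive subgroup scheme of $\Gbar{}^0$, which holds since $Z_2$ and $G'$ are each normal in $G$ and $Z_1\subseteq Z(G^0)$ — and that the residual quotient group $\Gbar{}^0/q(Z_2 G')$ is itself geometrically reductive (being of multiplicative type up to finite, this is clear). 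A secondary subtlety is bookkeeping of the relationship between $X^{\gen, \psi_1}_{G, \rhobarss}\sslash Z_2 G'$ and $X^{\gen}_{\Gbar, \rhobarss}\sslash q(Z_2 G')$: strictly the finite equivariant morphism $f$ descends to a finite morphism of quotients, but one should check that the $Z_2 G'$-action on the source really does factor through $q(Z_2 G')$ acting on the target, which follows from $Z_1$ acting trivially together with $q$ being surjective on points after an fppf base change, exactly as in the proof of \Cref{nice_torus}. Once these structural points are in place, the finiteness statement is a formal consequence of \Cref{psi_Gbar_finite}, \cite[Theorem 2 (ii)]{seshadri}, and \Cref{cor_cotner}.
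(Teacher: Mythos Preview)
Your approach is the same as the paper's: apply Seshadri's theorem to the finite $Z_2G'$-equivariant morphism of \Cref{psi_Gbar_finite} to get a finite map
\[
X^{\gen, \psi_1}_{G, \rhobarss}\sslash Z_2G' \longrightarrow X^{\gen}_{\Gbar, \rhobarss}\sslash Z_2G',
\]
and then identify the target with $X^{\gps}_{\Gbar, \rhobarss}$. But you miss the key simplification that makes the second step trivial: the map $Z_2G'\to \Gbar{}^0$ is \emph{surjective}. Indeed $Z_1\times Z_2\to Z(G^0)^0$ is an isogeny by construction of $N_1,N_2$, so $G^0=Z_1Z_2G'$ and hence $q(Z_2G')=G^0/Z_1=\Gbar{}^0$. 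You even write $\Gbar{}^0 = q(Z_2)q(G')$ yourself, which is exactly $q(Z_2G')$, but then fail to draw the conclusion that your second arrow is an isomorphism. This is precisely what the paper does in one line.

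Instead you try to argue that the second arrow is finite via the residual action of $\Gbar{}^0/q(Z_2G')$, invoking \cite[Exp.\,V, Thm.\,4.1\,(ii)]{SGA3_new} and Alper's finiteness results. This argument has a genuine gap as written: the SGA3 result and the proof of \Cref{cor_cotner} give integrality of $Y\to Y\sslash \Delta$ only when $\Delta$ is finite. If $\Gbar{}^0/q(Z_2G')$ were a positive-dimensional torus, the quotient map $Y\to Y\sslash T$ is not integral in general (think $\mathbb A^1\to \mathrm{pt}$). Your argument is rescued here only because that quotient is in fact trivial, but you never verify this.
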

\begin{proof} Proposition \ref{psi_Gbar_finite} together with \cite[Theorem 2 (ii)]{seshadri} 
imply that $$X^{\gen, \psi_1}_{G, \rhobarss}\sslash Z_2G'\rightarrow X^{\gen}_{\Gbar, \rhobarss}\sslash Z_2G'$$
is finite. Since the  action of $G'$ on $X^{\gen}_{\Gbar, \rhobarss}$ factors through the 
surjective morphism $Z_2G'\rightarrow \Gbar{}^0$ we obtain $X^{\gen}_{\Gbar, \rhobarss}\sslash Z_2G'=X^{\gps}_{\Gbar, \rhobarss}$.
\end{proof}

\begin{cor}\label{semi-local_Gdash} Let $R$ be the ring of global sections of $X^{\gen, \psi_1}_{G, \rhobarss}\sslash Z_2G'$, 
then $R$ is a finite product of local noetherian $\OO$-algebras with residue fields finite extensions 
of $k$. Moreover, if the orders of $(G/G^0)(\kbar)$ and $(Z_1\cap Z_2G')(\kbar)$ are coprime then 
$R$ is a complete local noetherian $\OO$-algebra with residue field $k$.
\end{cor}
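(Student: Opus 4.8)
The statement to prove is \Cref{semi-local_Gdash}: that $R:=\Gamma(X^{\gen,\psi_1}_{G,\rhobarss}\sslash Z_2G',\OO_{\cdot})$ is a finite product of complete local noetherian $\OO$-algebras with residue fields finite extensions of $k$, and that it is a single such ring with residue field $k$ when the orders of $(G/G^0)(\kbar)$ and $(Z_1\cap Z_2G')(\kbar)$ are coprime. The plan is to deduce this from the structure of $X^{\gps}_{\Gbar,\rhobarss}$ together with the finiteness result \Cref{psi_gps_finite}. First I would recall that $X^{\gps}_{\Gbar,\rhobarss}=\Spec R^{\gps}_{\Gbar,\rhobarss}$, where $R^{\gps}_{\Gbar,\rhobarss}$ is by construction (see \eqref{semi-local} and the definition of $X^{\gps,\tau}_{G,\rhobarss}$ in \Cref{sec_GIT}) a complete local noetherian $\OO$-algebra with residue field $k$. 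Since \Cref{psi_gps_finite} tells us that $R$ is a finite $R^{\gps}_{\Gbar,\rhobarss}$-algebra, $R$ is a finite algebra over a complete local noetherian ring, hence $R$ is a product of complete local noetherian rings by the Chinese remainder theorem argument already used for \eqref{semi-local}; each factor is a quotient-then-completion of a finite $R^{\gps}_{\Gbar,\rhobarss}$-module, so is noetherian, complete, and has residue field a finite extension of $k$. This gives the first assertion, so the only real content is the coprimality statement.

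For the second assertion the point is to show that under the coprimality hypothesis the semi-local ring $R$ has a \emph{unique} maximal ideal with residue field exactly $k$; equivalently, that $X^{\gen,\psi_1}_{G,\rhobarss}\sslash Z_2G'$ is connected with a $k$-rational closed point. I would argue by comparing $Z_2G'$-invariants with $G^0$-invariants. The group $G^0$ acts on $X^{\gen,\psi_1}_{G,\rhobarss}$, and $Z_2G'$ is a (possibly non-reductive-looking but in fact reductive, being generated by a central torus and a semisimple group) normal subgroup scheme of $G^0$ with $G^0/Z_2G'$ a torus isogenous to $H_1$; in fact the action of $G^0$ on $X^{\gen,\psi_1}_{G,\rhobarss}$ factors through $G^0/Z_1$ because $\varphi_1\circ\rho_A=\psi_1\otimes A$ is fixed along this subscheme, and the image of $Z_2G'$ in $G^0/Z_1$ is all of $(\Gbar)^0$ up to the isogeny with kernel $Z_1\cap Z_2G'$. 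Thus $X^{\gen,\psi_1}_{G,\rhobarss}\sslash Z_2G'$ differs from $X^{\gen}_{\Gbar,\rhobarss}\sslash(\Gbar)^0=X^{\gps}_{\Gbar,\rhobarss}$ only by a GIT quotient by the finite group scheme $Z_1\cap Z_2G'$ (together with the identification of \Cref{products} and \Cref{psi_Gbar_finite}). Now $X^{\gps}_{\Gbar,\rhobarss}=\Spec R^{\gps}_{\Gbar,\rhobarss}$ is the spectrum of a complete local ring with residue field $k$, so it has a unique closed point; and the finite group $Z_1\cap Z_2G'$, which is diagonalisable, acts on the connected scheme $X^{\gen,\psi_1}_{G,\rhobarss}\sslash Z_2G'{}'$ — more precisely, using \Cref{connected_comp} and \cite[Theorem 3 (iii)]{seshadri}, connectedness of $X^{\gps}_{\Gbar,\rhobarss}$ lifts to connectedness of any $G^0$-GIT quotient mapping finitely onto it provided no disconnection occurs, and a disconnection would produce a non-trivial idempotent fixed by $Z_1\cap Z_2G'$. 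The coprimality of $|(G/G^0)(\kbar)|$ with $|(Z_1\cap Z_2G')(\kbar)|$ is exactly what is needed to rule out residual representations $\rhobarss_2$ in distinct components being permuted or glued: it guarantees that the finite étale part of $G/G^0$ and the finite multiplicative part $Z_1\cap Z_2G'$ interact trivially, so the idempotents of $R$ coming from the decomposition into $\rhobarss$-components of $X^{\gen}_{G,\rhobarss}$ all collapse to one over the fixed $\psi_1$.

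More concretely, I would run the argument as in the proof of \Cref{nu_fin_u} and \Cref{Gps_comp}: $R$ is a finite product $\prod_j R_j$ of complete local rings, and an idempotent $e\in R$ corresponds to a component $X^{\gen,\psi_1}_{G,\rhobarss_2}$ for some residual $G$-semisimple $\rhobarss_2$ with $\varphi_1\circ\rhobarss_2$ conjugate to $\psibarss_1$. Two such components coincide iff $\rhobarss_2$ and $\rhobarss$ are $G^0(\kbar)$-conjugate \emph{given} that their images under $\varphi_1$ agree; the ambiguity is measured by $H^1$ of $\Gamma_F$ with values in the kernel of $\varphi_1$-on-semisimplifications, which up to isogeny is $Z_1$, and the obstruction to a $k$-rational closed point lives in $H^1_{\mathrm{fppf}}(\Spec k,\,Z_1\cap Z_2G')$; since $k$ is finite (after the harmless unramified extension ensuring $G^0$ split and residue field $k$) and the group is the product of a prime-to-$p$ étale part with a $p$-power multiplicative part, one uses that $H^1_{\mathrm{fppf}}(\Spec k,\mu_{p^n})=k^\times/(k^\times)^{p^n}$ and the vanishing of the prime-to-$p$ part's cohomology over a finite field of order prime to that part — the coprimality hypothesis forces the relevant cohomology to vanish, so there is a unique component and it carries a $k$-point. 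I expect the main obstacle to be bookkeeping the two isogenies ($Z_i\to H_i$ and $Z(G^0)\to G/G'$) and making precise the claim ``GIT quotient by $Z_1\cap Z_2G'$ does not disconnect''; this is where one must be careful that $Z_2G'$ is genuinely reductive (so that \cite[Theorem 9.4.1]{alper}, \cite[Theorem 2, 3]{seshadri} and \Cref{cor_cotner} apply) and that the coprimality assumption is used in exactly the right place, namely to kill $H^1(\Gamma_E/\Gamma_F$-type terms $\cap$ the finite multiplicative kernel.
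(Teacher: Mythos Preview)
Your first paragraph is correct and matches the paper: finiteness over the complete local ring $R^{\gps}_{\Gbar,\rhobarss}$ (via \Cref{psi_gps_finite}) gives the semi-local decomposition, and the content is entirely in the coprimality statement.

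However, your proposed argument for the second assertion contains two genuine errors. First, the claim that the $G^0$-action on $X^{\gen,\psi_1}_{G,\rhobarss}$ factors through $G^0/Z_1$ is false. The subtorus $Z_1$ lies in $Z(G^0)$, not in $Z(G)$; since $\rhobarss$ surjects onto $(G/G^0)(\kbar)$, conjugation by $z\in Z_1(\kbar)$ genuinely moves $\rho(\gamma)$ for $\gamma$ with nontrivial image in $G/G^0$. The condition $\varphi_1\circ\rho_A=\psi_1$ constrains the representations, not the conjugation action. Second, you locate the obstruction in $H^1_{\mathrm{fppf}}(\Spec k,\,Z_1\cap Z_2G')$, but the coprimality hypothesis involves the order of $\Delta=(G/G^0)(\kbar)$, which has nothing to do with flat cohomology over $\Spec k$. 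The obstruction that actually matters is group cohomology $H^1(\Delta,(Z_1\cap Z_2G')(\kbar))$, and \emph{this} is what vanishes under the coprimality assumption.

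The paper's argument is more direct and does not pass through a comparison of GIT quotients by isogenous groups. It proceeds by analysing $\kbar$-points: one first shows that if $Z_2G'\cdot x$ is a closed orbit in $X^{\gen,\psi_1}_{G,\rhobarss}$ then $G^0\cdot x$ is closed in $X^{\gen}_{G,\rhobarss}$ (the point being that a minimal R-parabolic of $G$ containing $\rho_x(\Gamma_F)$ intersects $Z_2G'$ in a parabolic with the \emph{same} unipotent radical, so a cocharacter of $Z_2G'$ already computes the $G$-semisimplification). Given two $\kbar$-points $y,y'$ of the $Z_2G'$-quotient, the corresponding closed $G^0$-orbits must agree since $X^{\gps}_{G,\rhobarss}$ (not $\Gbar$!) is local with residue field $k$; writing the conjugating element in $G^0(\kbar)$ as $z_1z_2h$ with $z_i\in Z_i(\kbar)$, $h\in G'(\kbar)$ and absorbing $z_2h$ into the $Z_2G'$-orbit reduces to $g\in Z_1(\kbar)$. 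Applying $\varphi_1$ and using that both representations have $\varphi_1\circ\rho=\psibar_1$ forces $\varphi_1(g)\in (H_1^0)^{\Delta}(\kbar)$; the exact sequence $0\to Z_1\cap Z_2G'\to Z_1\to H_1^0\to 0$ on $\kbar$-points, after taking $\Delta$-invariants, has cokernel in $H^1(\Delta,(Z_1\cap Z_2G')(\kbar))$, and the coprimality kills it. Hence $\varphi_1(g)$ lifts to $Z_1^{\Delta}(\kbar)\subseteq Z(G)(\kbar)$, which acts trivially, so $y=y'$.

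Your ``more concretely'' paragraph also misidentifies the ambiguity: the components of $R$ are not indexed by different $G$-semisimplifications $\rhobarss_2$ (we are already on the single $\rhobarss$-component), but by closed $Z_2G'$-orbits inside one $G^0$-orbit.
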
 
\begin{proof}
Since $R^{\gps}_{\Gbar, \rhobarss}$ is a complete noetherian $\OO$-algebra with residue field $k$, 
and $R^{\gps}_{\Gbar, \rhobarss} \rightarrow R$ is a finite map by Corollary \ref{psi_gps_finite}, 
$R$ is a finite product of complete local noetherian $\OO$-algebras with residue field 
a finite extension of $k$. Moreover, $R$ is a local noetherian $\OO$-algebra with residue field $k$ if and only if $(X^{\gen, \psi_1}_{G, \rhobarss}\sslash Z_2G')(\kbar)$ consists of one point. 

Let $x\in X^{\gen, \psi_1}_{G, \rhobarss}(\kbar)$ be such that the orbit $Z_2G'\cdot x$ is closed. 
We claim that the orbit $G^0\cdot x$ is also closed in $X^{\gen}_{G, \rhobarss}$. Let $P$ be a minimal 
R-parabolic of $G_{\kbar}$ containing the image of $\rho_x$, and let  $U$ be its unipotent radical. 
Then $P\cap G^0= P^0$ is a parabolic of $G^0$ by Lemma \ref{para} and thus $P\cap Z_2G'$ 
is a parabolic of $Z_2G'$ with unipotent radical $U$. We may find a cocharacter $\lambda: \Gm\rightarrow
Z_2G'\cap P$ such that $\lim_{t\rightarrow 0} \lambda(t) U \lambda(t)^{-1}= {1}$. 
Let $x': = \lim_{t\rightarrow 0} \lambda(t) \cdot x$. Then $\rho_{x'} \equiv \rho_{x} \pmod{U}$ 
and the minimality of $P$ implies that $\rho_{x'}$ is the $G$-semisimplification of $\rho_x$.
Thus $G^0\cdot x'$ is the unique closed $G^0$-orbit contained in the closure of $G^0\cdot x$
by Proposition \ref{closed_orbit}.
On the other hand,  $x'$ lies in the closure 
of $Z_2G'\cdot x$.  Since 
$Z_2G'\cdot x$ is closed by assumption we deduce that $x\in Z_2G' \cdot x'$ and hence
$G^0 \cdot x = G^0\cdot x'$ is closed. 

Let $y, y'\in (X^{\gen, \psi_1}_{G, \rhobarss}\sslash Z_2G')(\kbar)$ and let 
$Z_2G'\cdot x$, $Z_2G'\cdot x'$ be the corresponding closed orbits in $X^{\gen, \psi_1}_{G, \rhobarss}(\kbar)$.
As discussed above $G^0\cdot x$ and $G^0\cdot x'$ are closed in $X^{\gen}_{G, \rhobarss}(\kbar)$. 
Since $R^{\gps}_{G, \rhobarss}$ is a local ring with residue field $k$ there is a unique such closed $G^0$-orbit. Thus there is $g\in G^0(\kbar)$ such that 
\begin{equation}\label{clean_1}
 g \rho_x(\gamma) g^{-1} = \rho_{x'}(\gamma), \quad  \forall \gamma\in \Gamma_F.
\end{equation}
We may write 
$g = z_1 z_2 h$, where $z_1\in Z_1(\kbar)$, $z_2\in Z_2(\kbar)$ and $h\in G'(\kbar)$. After replacing $x$ by a translate with 
$z_2h$ we may assume that $g\in Z_1(\kbar)$. By applying $\varphi_1$ to \eqref{clean_1} we get 
\begin{equation}\label{clean_2}
 \varphi_1(g) \psi_1(\gamma) \varphi_1(g)^{-1} \psi_1(\gamma)^{-1} = 1, \quad \forall \gamma\in \Gamma_F
\end{equation} 
in $H_1(\kbar)$. Since by assumption the map $\pi_G \circ \rhobar: \Gamma_F \rightarrow \Delta$
is surjective, for every $\delta\in \Delta$ there is $\gamma \in \Gamma_F$ such that $\delta=\psi_1(\gamma)$.  
We deduce from \eqref{clean_2} that $\varphi_1(g)\in H_1^{\Delta}(\kbar)$. Since $g\in G^0(\kbar)$ we have $\varphi_1(g) \in (H_1^0)^{\Delta}(\kbar)$.

The isogeny $Z_1\times Z_2\rightarrow G^0/G'$ induces an isomorphism $\varphi_1: G^0/Z_2 G'\cong H_1^0$ and an exact sequence of diagonalizable 
groups  
$$0\rightarrow Z_1\cap Z_2G'\rightarrow Z_1\rightarrow H_1^0\rightarrow 0.$$
This induces an exact sequence on $\kbar$-points, and after taking $\Delta$-invariants
we obtain an exact sequence of abelian groups: 
$$0\rightarrow (Z_1\cap Z_2G')(\kbar)^{\Delta} \rightarrow Z_1^{\Delta}(\kbar)\rightarrow (H_1^0)^{\Delta}(\kbar)\rightarrow H^1(\Delta, (Z_1\cap Z_2G')(\kbar)).$$
If the orders of $\Delta$ and $(Z_1\cap Z_2G')(\kbar)$ are coprime then the $H^1$-term vanishes and 
there is $g'\in Z_1^{\Delta}(\kbar)$ which maps to $g\in (H_1^0)^{\Delta}(\kbar)$. 
Since $Z_1^{\Delta}$ is contained in $Z(G)$, $g'$ acts trivially on $x$, and hence $x$ and $x'$ lie
in the same $Z_2G'$-orbit and thus $y=y'$.
\end{proof} 

\begin{cor}\label{bip_recyc} Let $Z$ be an irreducible component of a closed $Z_2G'$-invariant subscheme of $X^{\gen, \psi_1}_{G, \rhobarss}$.
Then $Z$ is $Z_2G'$-invariant and intersects $Y^{\psi_1}_{\rhobarss}$
non-trivially. Moreover, if $x$ is a closed point of $Z$ then following hold: 
\begin{enumerate}
\item if $x$ is a closed point of  $Z\cap Y^{\psi_1}_{\rhobarss}$ then $\dim \OO_{Z,x}= \dim Z$;
\item if $x$ is a closed point of $Z\setminus (Z\cap Y^{\psi_1}_{\rhobarss})$ then $\dim \OO_{Z, x}= \dim Z -1$.
\end{enumerate}
\end{cor}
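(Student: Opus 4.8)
The statement \Cref{bip_recyc} is a transfer of \Cref{local_ring_dim} (the analogous statement for $G^0$ acting on $X^{\gen}_{G,\rhobarss}$) to the ``fixed partial determinant'' setting, with $G^0$ replaced by the connected group $Z_2G'$ and $X^{\gen}_{G,\rhobarss}$ replaced by its closed subscheme $X^{\gen,\psi_1}_{G,\rhobarss}$. The plan is to check the same three ingredients that make the proof of \cite[Lemma 3.21]{BIP_new} work, which are cited in the proof of \Cref{local_ring_def_ring} and \Cref{local_ring_dim}: first, that irreducible components of a closed $Z_2G'$-invariant subscheme are themselves $Z_2G'$-invariant; second, that the GIT quotient of $X^{\gen,\psi_1}_{G,\rhobarss}$ by $Z_2G'$ is the spectrum of a semilocal complete noetherian ring all of whose closed points are the images of ``$Y$-type'' points; and third, that every irreducible component meets the fibre $Y^{\psi_1}_{\rhobarss}$ over the closed point(s), whence the local dimension statements follow from \cite[Lemma 3.18]{BIP_new}.

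\textbf{Key steps.} First I would observe that $Z_2G'$ is connected (it is the image of the connected group $Z_2\times G'$ under multiplication), so \cite[Lemma 2.1]{BIP_new} applies verbatim: any irreducible component $Z$ of a closed $Z_2G'$-invariant subscheme of $X^{\gen,\psi_1}_{G,\rhobarss}$ is $Z_2G'$-invariant. Next I would invoke \Cref{semi-local_Gdash}: the ring of global sections $R$ of $X^{\gen,\psi_1}_{G,\rhobarss}\sslash Z_2G'$ is a finite product of complete local noetherian $\OO$-algebras with residue fields finite over $k$; in particular $\Spec R$ has only finitely many closed points, each lying over the closed point of $\Spec\OO$. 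Since $Z$ is closed and $Z_2G'$-invariant, its image $\overline{Z}$ in $\Spec R$ is closed by \cite[Theorem 3 (iii)]{seshadri}, hence contains a closed point of $\Spec R$; pulling back, $Z$ contains a closed $Z_2G'$-orbit lying over a closed point of $\Spec R$. By definition of $Y^{\psi_1}_{\rhobarss}$ as the preimage of the closed point(s) of $X^{\gen,\psi_1}_{G,\rhobarss}\sslash Z_2G'$ (equivalently, the preimage under $X^{\gen,\psi_1}_{G,\rhobarss}\to X^{\gps}_{\Gbar,\rhobarss}$ of the closed point, using \Cref{psi_gps_finite}), this closed orbit lies in $Z\cap Y^{\psi_1}_{\rhobarss}$, so the intersection is non-empty. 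Finally, for the two dimension formulas I would run the argument of \cite[Lemma 3.18 (5), (6)]{BIP_new}: the scheme $X^{\gen,\psi_1}_{G,\rhobarss}$ is of finite type over the semilocal ring $R$ via the GIT quotient map $X^{\gen,\psi_1}_{G,\rhobarss}\to X^{\gen,\psi_1}_{G,\rhobarss}\sslash Z_2G' = \Spec R$; restricting this to the irreducible (hence equidimensional) $Z$ and using that $Z$ meets the fibre over a closed point, Case (1) follows because a closed point $x\in Z\cap Y^{\psi_1}_{\rhobarss}$ lies in the special fibre and $\dim\OO_{Z,x}=\dim Z$ by the argument that closed points of the special fibre of a finite-type scheme over a complete local ring realise the full dimension, while Case (2) follows because $Z\setminus Y^{\psi_1}_{\rhobarss}$ is an open subscheme of the punctured spectrum which is Jacobson of dimension $\dim Z-1$ by \cite[Lemma 3.18 (1),(5)]{BIP_new}, so its closed points realise dimension $\dim Z -1$.

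\textbf{Main obstacle.} The only genuinely non-formal point is the first claim — that every irreducible component of $Z$ meets $Y^{\psi_1}_{\rhobarss}$. In the unconstrained case this is \cite[Lemma 3.21]{BIP_new} and rests on the semilocality of the GIT quotient together with the fact that closed orbits exist in the fibres over closed points of the quotient. Here the subtlety is that $Z_2G'$ is a proper subgroup of $G^0$, so a priori closed $Z_2G'$-orbits need not coincide with closed $G^0$-orbits; however this is exactly the issue already resolved inside the proof of \Cref{semi-local_Gdash}, where it is shown via a limit-along-a-cocharacter argument that a closed $Z_2G'$-orbit in $X^{\gen,\psi_1}_{G,\rhobarss}$ lies on a closed $G^0$-orbit in $X^{\gen}_{G,\rhobarss}$. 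So I would reuse that argument directly rather than reproving it. Everything else is a routine transcription of the commutative-algebra lemmas of \cite[Section 3]{BIP_new}, and the proof can legitimately be compressed to ``the proof is the same as that of \cite[Lemma 3.21]{BIP_new}, using \Cref{semi-local_Gdash} and \Cref{psi_gps_finite} in place of the semilocality of $R^{\gps}_{G,\rhobarss}$ and \Cref{nu_fin_u}.''
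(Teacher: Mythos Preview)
Your proposal is correct and matches the paper's approach exactly: the paper's proof consists of the single observation that, by \Cref{semi-local_Gdash}, $Y^{\psi_1}_{\rhobarss}$ is the preimage of the closed points of $X^{\gen,\psi_1}_{G,\rhobarss}\sslash Z_2G'$, after which the proof of \cite[Lemma 3.21]{BIP_new} goes through verbatim. Your final sentence is essentially the paper's entire proof; the only nitpick is that the identification of $Y^{\psi_1}_{\rhobarss}$ with the preimage of the closed points is a \emph{consequence} of \Cref{semi-local_Gdash} (via the finiteness over the local ring $R^{\gps}_{\Gbar,\rhobarss}$), not the definition.
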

\begin{proof} It follows from Corollary \ref{semi-local_Gdash} that  $Y^{\psi_1}_{\rhobarss}$ is 
the preimage of closed points in $X^{\gen, \psi_1}_{G, \rhobarss}\sslash Z_2G'$. Given this 
the proof is the same as the proof of \cite[Lemma 3.21]{BIP_new}.
\end{proof}

\subsection{Generalised tori}\label{sec_gen_tori} We summarise some of the results of \cite{defT}. The group 
$$\mu:=(\mu_{p^{\infty}}(E)\otimes M_2)^{\Delta}$$
is a finite abelian group 
of order $p^m$. We assume that $L$ contains all the $p^m$-th roots of unity. 
We further assume that $L$ is large enough so that $X^{\gen}_{H_2, \psibar_2}(\OO)$
is non-empty. This is possible by \cite[Theorem 9.3]{defT}. 
Under these assumptions  we prove in \cite[Theorem 9.3]{defT} that there is an isomorphism of complete local noetherian $\OO$-algebras:
\begin{equation}\label{w_think_1}
\alpha: \OO\br{(\Gamma_E^{\ab,p}\otimes M_2)^{\Delta}}
\overset{\cong}{\longrightarrow}
R^{\ps}_{H_2, \psibar_2}
\end{equation}
where $\Gamma_E^{\ab,p}$ is the maximal abelian pro-$p$ quotient of $\Gamma_E$, and an isomorphism of $\OO$-algebras
\begin{equation}\label{w_think_2}
A^{\gen}_{H_2, \psibarss_2} \cong R^{\ps}_{H_2, \psibarss_2}[t_1^{\pm{1}}, \ldots, t_s^{\pm{1}}],
\end{equation}
where $s= \rank_{\ZZ} M_2 - \rank_{\ZZ} (M_2)_{\Delta}$. The Artin map of local 
class field theory $\Art_E: E^{\times} \rightarrow \Gamma_E^{\ab}$ identifies
$\mu$ with the torsion subgroup of $(\Gamma_E^{\ab,p}\otimes M_2)^{\Delta}$. We 
show in \cite[Corollary 8.8]{defT}
that
\begin{equation}\label{w_think_3}
\OO\br{(\Gamma_E^{\ab,p}\otimes M_2)^{\Delta}}\cong \OO[\mu]\br{x_1, \ldots, x_r},
\end{equation}
where $r= \rank_{\ZZ} M_2 \cdot [F:\Qp]+ \rank_{\ZZ} (M_2)_{\Delta}$ is the rank 
of $(\Gamma_E^{\ab,p}\otimes M_2)^{\Delta}$ as a $\Zp$-module. 
It follows from \eqref{w_think_1}, \eqref{w_think_2} and \eqref{w_think_3} that 
\begin{equation}\label{w_think_4}
A^{\gen}_{H_2, \psibarss_2}\cong \OO[\mu]\br{x_1,\ldots, x_r}[t_1^{\pm 1}, \ldots, t_s^{\pm 1}],
\end{equation}
where $r+s = \rank_{\ZZ} M_2 ([F:\Qp] +1)=\dim (H_2)_k ([F:\Qp]+1)$.

Let $\Xc(\mu)$ be the group of characters $\chi:\mu\rightarrow \OO^{\times}$. 
We may interpret $\chi\in \Xc(\mu)$ as $\OO$-algebra homomorphisms 
$\chi: \OO[\mu]\rightarrow \OO$. If $X=\Spec A$ is a scheme over 
$X^{\ps}_{H, \psibarss_2}$ then \eqref{w_think_1} induces a morphism 
$\OO[\mu]\rightarrow A$ and we define
\begin{equation}\label{notation_chi}
A^{\chi}:= A\otimes_{ \OO[\mu], \chi} \OO, \quad X^{\chi}:= \Spec A^{\chi}.
\end{equation}

\begin{lem}\label{dim_XgenG_chi} Let $Z$ be an affine scheme over $\Xbar^{\ps}_{H, \psibarss_2}$  
Then the underlying reduced subschemes of $Z$  and $Z^{\chi}$
coincide. In particular, $\dim Z = \dim Z^{\chi}$.
\end{lem}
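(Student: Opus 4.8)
The statement asserts that for any scheme $Z$ over $\Xbar^{\ps}_{H,\psibarss_2}$ (so really over $\Spec k[\mu]$ via the map induced by \eqref{w_think_1}), the fibre $Z^{\chi} = Z \times_{\Spec k[\mu], \chi} \Spec k$ has the same underlying reduced subscheme as $Z$ itself, whence the dimensions agree. The key point is purely about the base: $\mu$ is a finite abelian $p$-group and $k$ has characteristic $p$, so $k[\mu]$ is a \emph{local} ring (its maximal ideal is the augmentation ideal) which is moreover artinian with nilpotent maximal ideal. Every character $\chi : \mu \to \OO^{\times}$ reduces modulo $\varpi$ to the trivial character $\mu \to k^{\times}$, since the only $p$-power root of unity in $k^{\times}$ is $1$; hence the composite $k[\mu] \xrightarrow{\chi} k$ is the augmentation map, independent of $\chi$.

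First I would make this explicit: the map $k[\mu] \to k$ through which $Z^{\chi}$ is defined is the augmentation, whose kernel $I$ is the nilpotent maximal ideal of $k[\mu]$. Writing $Z = \Spec A$ with $A$ a $k[\mu]$-algebra, we have $A^{\chi} = A/IA = A \otimes_{k[\mu]} k$, and $IA$ is a nilpotent ideal of $A$ because $I$ is nilpotent in $k[\mu]$. Therefore $A$ and $A^{\chi}$ have the same nilradical-quotient: $A_{\red} = (A/IA)_{\red} = (A^{\chi})_{\red}$, i.e. the closed immersion $Z^{\chi} \hookrightarrow Z$ is a nilpotent thickening and induces an isomorphism on underlying reduced subschemes. (If $Z$ is not affine one argues locally; the statement is local on $Z$ and compatible with gluing.) Since dimension depends only on the underlying topological space, hence only on the reduced subscheme, we conclude $\dim Z = \dim Z^{\chi}$.

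The main obstacle — really the only thing to be careful about — is to confirm that the structure map $k[\mu] \to k$ cutting out $Z^{\chi}$ is genuinely the augmentation for \emph{every} $\chi$, not just for the trivial one. This comes down to the observation that $\mu \hookrightarrow \OO^{\times}$ followed by reduction $\OO^{\times} \to k^{\times}$ kills $\mu$: indeed $\mu$ has exponent dividing $p^m$ and $k^{\times}$ has no nontrivial elements of $p$-power order since $k$ has characteristic $p$. Thus $\chi \bmod \varpi$ is trivial and the induced $k$-algebra map $k[\mu] \to k$ is $\sum_{g} a_g g \mapsto \sum_g a_g$, the augmentation, regardless of $\chi$. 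With that in hand everything else is the routine nilpotence argument above, and no further input (not even the explicit description \eqref{w_think_4}) is needed.
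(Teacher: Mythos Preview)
Your proof is correct and follows essentially the same approach as the paper: the paper simply notes that since $\mu$ is a finite $p$-group, $k[\mu]$ is a local $k$-algebra with nilpotent maximal ideal, and concludes immediately. Your additional verification that every $\chi$ reduces to the trivial character modulo $\varpi$ is a more explicit way of saying that $k[\mu]$ has a unique $k$-point, which is implicit in the locality statement.
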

\begin{proof} Since $\mu$ is a finite $p$-group $k[\mu]$ is a local $k$-algebra with a nilpotent maximal 
ideal. This implies the first assertion. Since dimension is a topological invariant 
the second assertion follows. 
\end{proof}

\begin{lem}\label{form_sm_chi} Let $\kappa$ be either a finite or a local $\OO$-field and  
let $\psi_2: \Gamma_F\rightarrow H_2(\kappa)$ be a continuous representation corresponding to $x\in X^{\gen, \chi}_{H_2, \psibarss_2}(\kappa)$.  Then 
$$R^{\square,\chi}_{H_2, \psi_2}\cong \Lambda\br{z_1,\ldots, z_t},$$
where $t=\dim (H_2)_k([F:\Qp]+1)$ and $\Lambda$ is a coefficient ring for $\kappa$.
\end{lem}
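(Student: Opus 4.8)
The statement to prove is \Cref{form_sm_chi}: for $\psi_2 : \Gamma_F \to H_2(\kappa)$ corresponding to a $\kappa$-point of $X^{\gen,\chi}_{H_2,\psibarss_2}$, the framed deformation ring with fixed character $\chi$ is a power series ring $\Lambda\br{z_1,\ldots,z_t}$ with $t = \dim (H_2)_k([F:\Qp]+1)$. The key input is the explicit description \eqref{w_think_4} of $A^{\gen}_{H_2,\psibarss_2}$ as $\OO[\mu]\br{x_1,\ldots,x_r}[t_1^{\pm 1},\ldots,t_s^{\pm 1}]$ with $r+s = \dim(H_2)_k([F:\Qp]+1) = t$, combined with the comparison between completed local rings of $X^{\gen,\tau}_G$ and framed deformation rings. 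The first step is to pass to the $\chi$-specialisation: applying $-\otimes_{\OO[\mu],\chi}\OO$ to \eqref{w_think_4} gives
\begin{equation}\label{plan_chi_ring}
A^{\gen,\chi}_{H_2,\psibarss_2} \cong \OO\br{x_1,\ldots,x_r}[t_1^{\pm 1},\ldots,t_s^{\pm 1}],
\end{equation}
since $\chi : \OO[\mu]\to\OO$ is a ring map and base change commutes with forming power series and Laurent polynomial rings (the $x_i$ and $t_j$ are free variables not involving $\mu$). In particular $X^{\gen,\chi}_{H_2,\psibarss_2}$ is a smooth $\OO$-scheme, being an open subscheme of affine space (the locus where the $t_j$ are units).

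Next I would invoke the local comparison. Since $H_2$ is generalised reductive, fix an embedding $\tau : H_2 \hookrightarrow \GL_d$; by \Cref{indep_tau} the component $X^{\gen}_{H_2,\psibarss_2}$ is independent of $\tau$, and by \Cref{333} (together with \Cref{local_ring_def_ring}) the completion of the local ring of $\Lambda\otimes_\OO A^{\gen,\tau}_{H_2}$ at the prime $\qq$ determined by $x$ is naturally isomorphic to $R^{\square}_{H_2,\psi_2}$ when $\kappa$ is finite or $x\in X[1/p]$, and to $R^{\square}_{H_2,\psi_2}\br{T}$ when $\kappa$ is a local field of characteristic $p$. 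The fixed-character deformation functor $D^{\square,\psi_1}_{H_2,\psi_2}$ — here in the special case where the ``partial determinant'' is the full quotient to $H_2$ itself, so $G=H_2$, $H_1 = H_2$ — is cut out from $D^{\square}_{H_2,\psi_2}$ by the closed condition $\varphi_1\circ\rho_A = \psi_1\otimes A$, which on the level of schemes corresponds precisely to the fibre of \eqref{fantastic_man} over the point $\Spec\OO \to X^{\gen}_{H_2,\psibarss_2}$, i.e.\ to $X^{\gen,\chi}_{H_2,\psibarss_2}$ after identifying the $\chi$-specialisation with the $\psi_1$-specialisation. Concretely, $R^{\square,\chi}_{H_2,\psi_2}$ is the completed local ring of $\Lambda\otimes_\OO A^{\gen,\chi}_{H_2,\psibarss_2}$ at $\qq$ (up to an extra variable $T$ in the local field case, which I would absorb into the count), and by \eqref{plan_chi_ring} this is the completion of a smooth $\Lambda$-algebra of relative dimension $t = r+s$ at a $\kappa$-point. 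A completion of a smooth local $\Lambda$-algebra of relative dimension $t$ at a $\kappa$-rational point is $\Lambda\br{z_1,\ldots,z_t}$, which gives the claim; in the characteristic $p$ case the extra variable $T$ means one first gets $R^{\square,\chi}_{H_2,\psi_2}\cong\Lambda\br{z_1,\ldots,z_{t+1}}/(T)$ — but here one must be slightly careful, and I would instead note that the isomorphism $R^{\square,\chi}_{H_2,\psi_2}\br{T}\cong\Lambda\br{z_1,\ldots,z_{t+1}}$ forces $R^{\square,\chi}_{H_2,\psi_2}\cong\Lambda\br{z_1,\ldots,z_t}$ since the source is a quotient of a power series ring by a single element that is part of a regular system of parameters (the variable $T$), giving a regular complete local $\Lambda$-algebra of the correct relative dimension, hence a power series ring.

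The main obstacle I anticipate is bookkeeping around the identification of the ``fixed character $\chi$'' condition with the ``fixed $\psi_1$'' condition: one needs that the morphism $X^{\gen,\tau}_{H_2} \to X^{\gen}_{H_1,\psibarss_1}$ from \Cref{functoriality_2}, after passing to completed local rings, matches the map $\OO[\mu]\to A^{\gen}_{H_2,\psibarss_2}$ coming from \eqref{w_think_1}, so that the $\chi$-fibre and the $\psi_1$-fibre agree. This requires tracing through \cite{defT} — specifically that the isomorphism \eqref{w_think_1} is compatible with the determinant/character map, which is exactly how $\mu$ is defined there via $(\mu_{p^\infty}(E)\otimes M_2)^\Delta$ and the Artin map. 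Granting that compatibility (which is the content of the companion paper and can be cited), the rest is the routine commutative algebra sketched above: base change of \eqref{w_think_4} along $\chi$, then completion of a smooth $\Lambda$-scheme at a rational point. I would also remark that the case distinction between finite $\kappa$, $\kappa$ of characteristic $0$, and local $\kappa$ of characteristic $p$ only affects $\Lambda$ and the spurious variable $T$, never the dimension count $t$, which comes uniformly from $r+s = \dim(H_2)_k([F:\Qp]+1)$.
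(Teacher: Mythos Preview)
Your core approach is correct and matches the paper's proof exactly: base-change \eqref{w_think_4} along $\chi$ to obtain $A^{\gen,\chi}_{H_2,\psibarss_2}\cong \OO\br{x_1,\ldots,x_r}[t_1^{\pm 1},\ldots,t_s^{\pm 1}]$, then identify $R^{\square,\chi}_{H_2,\psi_2}$ with the completion of $\Lambda\otimes_\OO A^{\gen,\chi}_{H_2,\psibarss_2}$ at the kernel of the map to $\kappa$ via \Cref{333}, and conclude.

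However, your detour through ``fixed $\psi_1$'' is a misunderstanding of the notation and should be excised. The superscript $\chi$ on $R^{\square,\chi}_{H_2,\psi_2}$ is defined in \eqref{notation_chi} purely as base change along $\chi:\OO[\mu]\to\OO$; there is no partial-determinant condition here, no map to $X^{\gen}_{H_1,\psibarss_1}$, and the ``main obstacle'' you anticipate does not exist. The justification you need is much simpler: by definition $R^{\square,\chi}_{H_2,\psi_2}=R^{\square}_{H_2,\psi_2}\otimes_{\OO[\mu],\chi}\OO$, and by \Cref{333} the ring $R^{\square}_{H_2,\psi_2}$ is the $\qq$-adic completion of $\Lambda\otimes_\OO A^{\gen}_{H_2,\psibarss_2}$. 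Since $\OO[\mu]\to\OO$ is a quotient by a finitely generated ideal, this base change commutes with completion (\Cref{completion}), so $R^{\square,\chi}_{H_2,\psi_2}$ is the completion of $\Lambda\otimes_\OO A^{\gen,\chi}_{H_2,\psibarss_2}$ at the corresponding prime. This is what the paper means by ``It follows from the proof of \Cref{333}''. Note also that \Cref{333} already gives $R^{\square}$ directly (no extra variable $T$), so your bookkeeping with $T$ is unnecessary; the $\br{T}$ only enters in \Cref{local_ring_def_ring} when comparing with $\hat\OO_{X,x}$ rather than with the completion of $\Lambda\otimes_\OO A^{\gen}$.
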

\begin{proof} 
It follows from the proof of \Cref{333} that $R^{\square,\chi}_{H_2, \psi_2}$
is the completion of $ \Lambda\otimes_{\OO} A^{\gen, \chi}_{H_2,\psibarss_2}$ with respect to 
 the kernel of the natural map 
$ \Lambda\otimes_{\OO} A^{\gen, \chi}_{H_2,\psibarss_2}\twoheadrightarrow \kappa.$
It follows from \eqref{w_think_4} that 
\begin{equation}\label{Agen_chi}
A^{\gen, \chi}_{H_2,\psibarss_2}\cong \OO\br{x_1,\ldots, x_r}[t_1^{\pm 1}, \ldots, 
t_s^{\pm 1}],
\end{equation}
with $r+s =\dim (H_2)_k ([F:\Qp]+1)$. The assertion 
follows from Lemma \ref{local_ring_def_ring}.
\end{proof}

\subsection{Serre's criterion for normality} In this subsection we show that the deformation rings 
$R^{\square, \psi_1, \chi}_{G, \rho_x}$ (defined below) and their special fibres are integral domains
by verifying Serre's criterion for normality for $X^{\gen, \psi_1, \chi}_{G, \rhobarss}$ and its special fibre, when $\pi_1(G')$ is \'etale. We also 
show that $X^{\gen, \psi_1, \chi}_{G, \rhobarss}[1/p]$ is normal without making any assumptions
on $\pi_1(G')$.

\begin{lem}\label{DUS} The composition with $\varphi_2: G/G'\rightarrow H_2$ induces an isomorphism 
\begin{equation}\label{eq_iso_psi1}
X^{\gen, \psi_1}_{G/G', \psibarss} \overset{\cong}{\longrightarrow} X^{\gen}_{H_2,\psibarss_2}.
\end{equation}
\end{lem}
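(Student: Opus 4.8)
The plan is to identify both sides of \eqref{eq_iso_psi1} as moduli of $R$-condensed representations with a prescribed $G$-pseudocharacter using \Cref{XgenGtau_rhobarss_A}, and then use \Cref{products} to decompose $G/G'$ as a product of the two torus quotients $H_1$ and $H_2$. First I would observe that since $G^0/G'$ is a split torus and $M = M_1 \oplus M_2$ is a $\Delta$-invariant decomposition, the group $G/G'$ sits in a natural diagram of quotient maps $G/G' \to H_1$ and $G/G' \to H_2$, and the induced map $G/G' \to H_1 \times H_2$ is an isomorphism of generalised reductive $\OO$-group schemes: on neutral components it is the isomorphism $G^0/G' = \Spec\OO[M] \eqto \Spec\OO[M_1] \times \Spec\OO[M_2]$, and it is the identity on component groups since $K_1$ and $K_2$ are contained in the neutral component. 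This is exactly the setup of \Cref{sec_prod} applied to $G/G'$ in place of $G$, with the two factors $H_1$ and $H_2$.

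Next I would invoke \Cref{products}: the natural maps give an isomorphism of schemes over $X^{\ps}_{G/G'}$
\[
X^{\gen}_{G/G', \psibarss} \eqto (X^{\gen}_{H_1, \psibarss_1} \times X^{\gen}_{H_2, \psibarss_2}) \times_{X^{\ps}_{H_1} \times X^{\ps}_{H_2}} X^{\ps}_{G/G'}.
\]
Now base change along the section $\Spec\OO \to X^{\gen}_{H_1, \psibarss_1}$ coming from the fixed lift $\psi_1$. Since $X^{\gen, \psi_1}_{G/G', \psibarss}$ is by definition the fibre product of $X^{\gen}_{G/G', \psibarss}$ with $\Spec\OO$ over $X^{\gen}_{H_1, \psibarss_1}$ via the functoriality map, and the latter map factors through the projection to the $H_1$-factor, the fibre product collapses: the contribution of the $H_1$-factor becomes $\Spec\OO$, and what remains is $X^{\gen}_{H_2, \psibarss_2}$ (the base change over $X^{\ps}_{H_1}$ of $X^{\ps}_{G/G'}$ relative to $X^{\ps}_{H_2}$ is accounted for automatically by \Cref{prod_ps}, which identifies $R^{\ps}_{G/G'}$ with $R^{\ps}_{H_1} \wtimes_{\OO} R^{\ps}_{H_2}$). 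To make this precise I would check it at the level of $A$-points for an arbitrary $\OO$-algebra $A$: by \Cref{XgenGtau_rhobarss_A}, an $A$-point of $X^{\gen, \psi_1}_{G/G', \psibarss}$ is an $R^{\ps}_{G/G'}$-condensed representation $\rho_A : \Gamma_F \to (G/G')(A)$ whose pseudocharacter specialises the universal one and with $\varphi_1 \circ \rho_A = \psi_1 \otimes_\OO A$; under the product isomorphism this is the same datum as a pair $(\psi_1 \otimes_\OO A, \rho_A^{(2)})$ where $\rho_A^{(2)} = \varphi_2 \circ \rho_A$, and since the first component is rigid the datum is equivalent to just $\rho_A^{(2)} \in X^{\gen}_{H_2, \psibarss_2}(A)$ (using \Cref{cond_rep_funct} to transfer the $R$-condensed condition, and \Cref{prodPC} to match up pseudocharacters). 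This bijection is functorial in $A$, hence gives the claimed isomorphism of schemes, and it is by construction the map induced by composition with $\varphi_2$.

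The main obstacle I anticipate is purely bookkeeping: tracking that the various base rings match up correctly, i.e.\ that the $R^{\ps}_{G/G'}$-algebra structure on $A^{\gen, \psi_1}_{G/G', \psibarss}$ corresponds under the product decomposition to the $R^{\ps}_{H_2}$-algebra structure on $A^{\gen}_{H_2, \psibarss_2}$ twisted by the fixed homomorphism $R^{\ps}_{H_1} \to \OO$ determined by $\psi_1$. This is exactly the content of \Cref{prod_ps} together with the observation that fixing $\psi_1$ kills the $H_1$-variables. There is no hard new input here; the proof is a short formal consequence of \Cref{products}, \Cref{prod_ps}, \Cref{prodPC}, \Cref{XgenGtau_rhobarss_A} and \Cref{cond_rep_funct}, and I would write it in about a paragraph once these citations are lined up.
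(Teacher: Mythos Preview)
Your overall strategy matches the paper's, but there is a genuine gap in your first step: the map $G/G' \to H_1 \times H_2$ is \emph{not} an isomorphism of group schemes when $\Delta = G/G^0$ is nontrivial. You correctly observe that $K_1, K_2 \subset (G/G')^0$, so each quotient $H_i$ has component group $\Delta$; but then $H_1 \times H_2$ has component group $\Delta \times \Delta$, and the induced map on component groups is the diagonal embedding $\Delta \hookrightarrow \Delta \times \Delta$, not an isomorphism. Thus $G/G'$ is only a closed subgroup of $H_1 \times H_2$ with the same neutral component, and you cannot apply \Cref{products} directly to $G/G'$.

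The paper fills exactly this gap by inserting \Cref{shrink}: since $(G/G')^0 = (H_1 \times H_2)^0$ and $\psibarss$ takes values in $(G/G')(\kbar) \subset (H_1\times H_2)(\kbar)$, \Cref{shrink} gives $X^{\gen}_{G/G',\psibarss} \cong X^{\gen}_{H_1\times H_2,\psibarss}$. After that, your application of \Cref{products} and \Cref{prod_ps} (to $H_1 \times H_2$, not to $G/G'$) and the base change along $\psi_1$ goes through exactly as you wrote. So your proposal is correct modulo this one missing citation; the elaborate functor-of-points check via \Cref{XgenGtau_rhobarss_A} and \Cref{cond_rep_funct} is not needed once \Cref{shrink} and \Cref{products} are in place.
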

\begin{proof} The natural map $G/G'\rightarrow  H:=H_1\times H_2$ induces an isomorphism between neutral components. 
Lemma \ref{shrink} implies that $X^{\gen}_{G/G', \psibarss} \cong X^{\gen}_{H, \psibarss}$. 
The assertion then follows from Proposition \ref{products} and Lemma \ref{prod_ps}.
\end{proof} 
If $X=\Spec A$ is a scheme over $X^{\gen}_{G/G', \psibarss}$ then $X^{\psi_1}$ 
is a scheme over $X^{\gen, \psi_1}_{G/G', \psibarss}$.  The isomorphism 
\eqref{eq_iso_psi1} allows us to consider $X^{\psi_1,\chi}$ for every 
character $\chi\in \mathrm X(\mu)$ with the notation introduced in \eqref{notation_chi}. We will denote its ring of global sections by $A^{\psi_1, \chi}$. 

\begin{lem}\label{non-empty} $X^{\gen, \psi_1, \chi}_{G, \rhobarss}$ and $\Xbar^{\gen, \psi_1, \chi}_{G, \rhobarss}$ are non-empty 
for every $\chi\in \mathrm X(\mu)$.
\end{lem}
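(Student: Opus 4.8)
The statement to prove is that $X^{\gen,\psi_1,\chi}_{G,\rhobarss}$ and its special fibre $\Xbar^{\gen,\psi_1,\chi}_{G,\rhobarss}$ are non-empty for every $\chi \in \mathrm X(\mu)$. The key is to exhibit a point. The plan is to first reduce to the case $G = G/G'$ via the presentation of $R^{\square}_{G,\rho_x}$ over $R^{\square}_{G/G',\varphi\circ\rho_x}$, or more geometrically via the smooth surjective morphism $\pi : G \to G/G'$ from \Cref{smooth_projection}; since $\pi$ is smooth and surjective, the induced morphism on $X^{\gen}$-spaces is faithfully flat (by \Cref{present_over_RH} the relevant deformation rings are power series rings over the target), so a point of $X^{\gen,\psi_1,\chi}_{G/G',\psibarss}$ lifts to a point of $X^{\gen,\psi_1,\chi}_{G,\rhobarss}$, provided one starts from the fixed $\rhobar$ whose image under $\varphi$ is $\psibar$. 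Thus it suffices to prove non-emptiness for $X^{\gen,\psi_1,\chi}_{G/G',\psibarss}$.

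For the group $G/G'$, I would use the chain of identifications already assembled in this section. By \Cref{DUS}, composition with $\varphi_2$ gives an isomorphism $X^{\gen,\psi_1}_{G/G',\psibarss} \cong X^{\gen}_{H_2,\psibarss_2}$, compatibly with the $\OO[\mu]$-structure, so that $X^{\gen,\psi_1,\chi}_{G/G',\psibarss} \cong X^{\gen,\chi}_{H_2,\psibarss_2}$. Now invoke the explicit description \eqref{w_think_4} from \cite{defT}: after a suitable finite extension of $L$ (which is harmless for non-emptiness, as a point over an extension still certifies non-emptiness, or one can argue the point is $k$-rational at the level of the fibre), $A^{\gen}_{H_2,\psibarss_2} \cong \OO[\mu]\br{x_1,\ldots,x_r}[t_1^{\pm 1},\ldots,t_s^{\pm 1}]$. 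Specialising along $\chi : \OO[\mu] \to \OO$ yields $A^{\gen,\chi}_{H_2,\psibarss_2} \cong \OO\br{x_1,\ldots,x_r}[t_1^{\pm 1},\ldots,t_s^{\pm 1}]$ as in \eqref{Agen_chi}, which is manifestly non-zero (e.g. it has $\OO$ and $k$ as quotients), hence $X^{\gen,\chi}_{H_2,\psibarss_2}$ and its special fibre are non-empty. Pulling this back through \Cref{DUS} and then lifting along $\pi$ gives non-emptiness of both $X^{\gen,\psi_1,\chi}_{G,\rhobarss}$ and $\Xbar^{\gen,\psi_1,\chi}_{G,\rhobarss}$.

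The main obstacle I anticipate is bookkeeping around the $\OO[\mu]$-structure: one must check that the $\OO[\mu]$-algebra structure on $A^{\gen}_{G/G',\psibarss}$ used to define the superscript $\chi$ (via \Cref{DUS} and \eqref{w_think_1}) is the same as the one appearing in \eqref{w_think_4}, so that the specialisation $(-)^\chi$ really produces the ring in \eqref{Agen_chi}. This is essentially the content of how \eqref{notation_chi} was set up, so it should be a matter of unwinding definitions rather than a genuine difficulty. A secondary point is to make sure the lift along $\pi : G \to G/G'$ respects the fixed-partial-determinant condition $\varphi_1 \circ \rho_A = \psi_1 \otimes A$; but $\varphi_1$ factors through $\varphi$, so a deformation of $\rhobar$ lifting a given deformation of $\psibar$ satisfying the $\psi_1$-condition automatically satisfies the $\psi_1$-condition, and the presentation \eqref{intro_present} (i.e. \Cref{present_over_RH}) shows the lift exists because $R^{\square}_{G,\rho_x}$ is a power series ring over $R^{\square}_{G/G',\varphi\circ\rho_x}$ — in particular faithfully flat, so non-empty fibres stay non-empty. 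Finally, since the relevant rings are finite type over $\OO$ and the constructions are compatible with $-\otimes_\OO k$, non-emptiness of the special fibre follows from the same computation reduced mod $\varpi$.
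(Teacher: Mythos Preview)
Your route is much more circuitous than the paper's, and the lifting step as written contains an error.

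The paper's argument is one line: since $\mu$ is a finite $p$-group, $k[\mu]$ is a local artinian $k$-algebra with nilpotent maximal ideal, so for any scheme $Z$ over $\Spec k[\mu]$ the underlying reduced subschemes of $Z$ and $Z^{\chi}$ coincide (this is exactly \Cref{dim_XgenG_chi}). Applying this with $Z = \Xbar^{\gen,\psi_1}_{G,\rhobarss}$, which contains the $k$-point $\rhobar$ fixed at the start of the section, gives $\Xbar^{\gen,\psi_1,\chi}_{G,\rhobarss}\neq\emptyset$ immediately, and hence $X^{\gen,\psi_1,\chi}_{G,\rhobarss}\neq\emptyset$. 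No passage through $G/G'$, $H_2$, or the explicit formula \eqref{w_think_4} is needed.

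Your detour through $H_2$ does correctly show that $X^{\gen,\psi_1,\chi}_{G/G',\psibarss}$ is non-empty, but your lifting step back to $G$ is misstated. \Cref{present_over_RH} does \emph{not} say that $R^{\square}_{G,\rho_x}$ is a power series ring over $R^{\square}_{G/G',\varphi\circ\rho_x}$: it gives a presentation as a \emph{quotient} of such a ring by $s=h^2(\Gamma_F,\ad^{0,\varphi}\rho_x)$ relations, so the faithful-flatness claim is unjustified (indeed, flatness of $R^{\square}_{\psibar}\to R^{\square}_{\rhobar}$ is \Cref{flat_det}, proved only after the present lemma and using \Cref{complete_intersection_chi}). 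What actually rescues the lifting is the observation you half-make with ``provided one starts from the fixed $\rhobar$'': the $\OO[\mu]$-structure on $A^{\gen,\psi_1}_{G,\rhobarss}$ is pulled back along $\varphi$, so $\rhobar\in\Xbar^{\gen,\psi_1,\chi}_{G,\rhobarss}(k)$ iff $\psibar\in\Xbar^{\gen,\psi_1,\chi}_{G/G',\psibarss}(k)$, and the latter holds because every character $\mu\to k^{\times}$ is trivial. But that last sentence \emph{is} the paper's argument, so once the incorrect flatness justification is removed, the reduction to $G/G'$ and $H_2$ becomes superfluous.
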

\begin{proof} The underlying reduced schemes of $\Xbar^{\gen, \psi_1, \chi}_{G, \rhobarss}$ and $\Xbar^{\gen, \psi_1}_{G, \rhobarss}$
coincide by Lemma \ref{dim_XgenG_chi}. Moreover,  $\Xbar^{\gen, \psi_1}_{G, \rhobarss}(k)$ is non-empty as it contains 
a point corresponding to the representation $\rhobar$ fixed at the beginning of the section. 
\end{proof}

\begin{lem}\label{form_sm} Let $\kappa$ be  a finite or a local $\OO$-field and  
let $\psi: \Gamma_F\rightarrow (G/G')(\kappa)$ be a representation corresponding to $x\in X^{\gen, \psi_1, \chi}_{G/G', \psibarss}(\kappa)$
and let $\psi_2=\varphi_2 \circ \psi$. Then 
$$R^{\square, \psi_1, \chi}_{G/G', \psi} \cong R^{\square,\chi}_{H_2, \psi_2}.$$ 
In particular, $R^{\square, \psi_1, \chi}_{G/G', \psibar}\cong \Lambda\br{z_1,\ldots, z_t}$, where $t=\dim (H_2)_k([F:\Qp]+1)$.
\end{lem}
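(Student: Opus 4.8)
\textbf{Proof plan for \Cref{form_sm}.}

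The plan is to deduce the isomorphism $R^{\square,\psi_1,\chi}_{G/G',\psi}\cong R^{\square,\chi}_{H_2,\psi_2}$ from the isomorphism of schemes in \Cref{DUS} by passing to completions of local rings, and then to identify the completed local ring at $x$ with the relevant deformation ring using the machinery of \Cref{333} (in the form adapted to arbitrary affine schemes over $X^{\ps}_{\GL_d}$). First I would observe that \Cref{DUS} gives an isomorphism of $\OO$-schemes
\[
X^{\gen,\psi_1}_{G/G',\psibarss}\overset{\cong}{\longrightarrow} X^{\gen}_{H_2,\psibarss_2},
\]
compatible with the maps to $X^{\gen}_{H_2,\psibarss_2}$; intersecting with the fibre over the character $\chi\in\mathrm{X}(\mu)$ via the morphism $\OO[\mu]\to A^{\gen}_{H_2,\psibarss_2}$ of \Cref{sec_gen_tori} yields an isomorphism $X^{\gen,\psi_1,\chi}_{G/G',\psibarss}\cong X^{\gen,\chi}_{H_2,\psibarss_2}$. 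A point $x\in X^{\gen,\psi_1,\chi}_{G/G',\psibarss}(\kappa)$ corresponds under this isomorphism to a point of $X^{\gen,\chi}_{H_2,\psibarss_2}(\kappa)$, and $\psi_2=\varphi_2\circ\psi$ is exactly the representation attached to that point, so the two sides have canonically isomorphic completed local rings (with respect to the appropriate maximal ideal/coefficient ring $\Lambda$). The content is then that each completed local ring computes the stated framed deformation functor: for the right-hand side this is \Cref{form_sm_chi} (or rather the computation $R^{\square,\chi}_{H_2,\psi_2}$ of its local ring via \eqref{Agen_chi} and \Cref{local_ring_def_ring}); for the left-hand side one needs that the completion of $\Lambda\otimes_{\OO}A^{\gen,\psi_1,\chi}_{G/G',\psibarss}$ at the kernel of the natural map to $\kappa$ pro-represents $D^{\square,\psi_1,\chi}_{G/G',\psi}$. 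This last point is the analogue of \Cref{333} for the functor with fixed partial determinant, combined with the observation that imposing $\varphi_1\circ\rho_A=\psi_1\otimes_{\OO}A$ on deformations corresponds scheme-theoretically to pulling back along $\Spec\OO\xrightarrow{\psi_1}X^{\gen}_{H_1,\psibarss_1}$, exactly as the functor $D^{\square,\psi_1}$ is defined.

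The key steps, in order, are: (1) base-change the isomorphism of \Cref{DUS} along $\chi$ and along the point $x$ to obtain an isomorphism of completed local rings of $X^{\gen,\psi_1,\chi}_{G/G',\psibarss}$ and $X^{\gen,\chi}_{H_2,\psibarss_2}$; (2) identify the completed local ring of $X^{\gen,\psi_1,\chi}_{G/G',\psibarss}$ at $x$ with $R^{\square,\psi_1,\chi}_{G/G',\psi}$, which is a routine adaptation of \Cref{deform}, \Cref{completion} and \Cref{333}, using that $G/G'$ is commutative so all the relevant groups are smooth affine and continuous sections exist by \Cref{splittingexists}; (3) invoke \Cref{form_sm_chi} for the right-hand side to conclude $R^{\square,\chi}_{H_2,\psi_2}\cong\Lambda\br{z_1,\ldots,z_t}$ with $t=\dim(H_2)_k([F:\Qp]+1)$, and transport this across the isomorphism. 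For the final assertion about $\psibar$ one simply takes $\kappa=k$ (so $\Lambda=\OO$) and $\psi=\psibar$, $\psi_2=\psibar_2$, and reads off the power series presentation from \eqref{w_think_4}.

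The main obstacle I expect is verifying step (2) carefully, namely that the completed local ring genuinely pro-represents the \emph{fixed partial determinant} deformation functor rather than the full one. The subtlety is that one must check the fibre-product description $X^{\gen,\psi_1}=X^{\gen}\times_{X^{\gen}_{H_1,\psibarss_1}}\Spec\OO$ is compatible with passing to completions at a $\kappa$-point (using \Cref{completion} with a finitely generated defining ideal, which holds because $H_1$ is affine of finite type so the ideal cutting out the section $\psi_1$ is finitely generated), and that imposing $\varphi_1\circ\rho_A=\psi_1\otimes_{\OO}A$ on $A\in\mathfrak A_\Lambda$ is exactly the condition of factoring through this fibre product on $A$-points. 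Since $G/G'$ is a commutative (generalised) torus and all the groups $H_1$, $H_2$, $Z_1$, $Z_2$ are smooth affine over $\OO$, the representability and smoothness hypotheses needed throughout are automatic, so this obstacle is bookkeeping rather than a genuine difficulty; the essential geometric input is already contained in \Cref{DUS} and \Cref{form_sm_chi}.
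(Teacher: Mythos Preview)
Your proposal is correct and follows essentially the same approach as the paper: use \Cref{DUS} to identify the schemes (and hence their completed local rings after base change along $\chi$), appeal to \Cref{333} to identify those completions with the relevant framed deformation rings, and then invoke \Cref{form_sm_chi} for the explicit power-series description. The paper's proof is the two-line version of exactly this argument, and your expanded discussion of step~(2) correctly identifies the bookkeeping implicit in the paper's citation of \Cref{333}.
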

\begin{proof} The isomorphism follows from Lemma \ref{DUS} and
\Cref{333}.  Formal smoothness follows from Lemma \ref{form_sm_chi}.
\end{proof}

\begin{lem}\label{psi_def_rings} Let $X= X^{\gen, \psi_1,\chi}_{G, \rhobarss}$ and let $\Xbar= \Xbar^{\gen, \psi_1,\chi}_{G, \rhobarss}$ and let
$x$ be a closed point of $Y_{\rhobarss}^{\psi_1,\chi}$ or a closed point $X\setminus Y^{\psi_1,\chi}_{\rhobarss}$. Then we have the following isomorphisms of local rings: 
\begin{enumerate} 
\item if $x\in Y^{\psi_1,\chi}_{\rhobarss}$ then   $R^{\square, \psi_1,\chi}_{G, \rho_x}\cong\hat{\OO}_{X, x}$, $R^{\square, \psi_1,\chi}_{G, \rho_x}/\varpi \cong \hat{\OO}_{\Xbar, x}$;
\item if $x\in \Xbar\setminus Y^{\psi_1,\chi}_{\rhobarss}$ then 
$R^{\square,\psi_1,\chi}_{G, \rho_x}\cong\hat{\OO}_{X, x}\br{T}$, $R^{\square, \psi_1,\chi}_{G, \rho_x}/\varpi \cong \hat{\OO}_{\Xbar, x}\br{T}$;
\item if $x\in X[1/p]$ then $R^{\square, \psi_1,\chi}_{G, \rho_x}\cong \hat{\OO}_{X, x}$.
\end{enumerate}
\end{lem}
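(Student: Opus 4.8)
The plan is to reduce everything to the corresponding statements for the full (non--fixed--partial--determinant) deformation rings established earlier, since Lemma \ref{psi_def_rings} is the fixed--partial--determinant analogue of Lemma \ref{local_ring_def_ring}. First I would observe that $X^{\gen,\psi_1}_{G,\rhobarss}$ is by definition the fibre product of $X^{\gen}_{G,\rhobarss}\to X^{\gen}_{H_1,\psibarss_1}$ along the section $\Spec\OO\to X^{\gen}_{H_1,\psibarss_1}$ corresponding to $\psi_1$, and likewise $X^{\gen,\psi_1,\chi}_{G,\rhobarss}$ is obtained by the further base change along $\chi:\OO[\mu]\to\OO$ via the isomorphism of Section \ref{sec_gen_tori}. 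On the deformation--ring side, $R^{\square,\psi_1}_{G,\rho_x}$ is the quotient of $R^{\square}_{G,\rho_x}$ pro--representing the subfunctor $D^{\square,\psi_1}_{G,\rho}$, and the ``$\chi$'' refinement similarly cuts out those deformations whose induced $Z_2G'$--part of the partial determinant matches $\chi$. So the natural strategy is: take the isomorphisms of Lemma \ref{local_ring_def_ring} (which identify $\hat\OO_{X^{\gen,\tau}_G,x}$ or $\hat\OO_{X^{\gen,\tau}_G,x}\br{T}$ with $R^{\square}_{G,\rho_x}$ depending on whether $x\in Y$, $x\in\Xbar\setminus Y$, or $x\in X[1/p]$), and show that forming the closed subscheme $X^{\gen,\psi_1,\chi}_{G,\rhobarss}$ corresponds on completed local rings exactly to passing to the quotient $R^{\square,\psi_1,\chi}_{G,\rho_x}$.

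The key technical input I would use is Lemma \ref{completion}: if $(A,\mm)$ is a local ring whose $\mm$--adic completion $\hat A$ is noetherian and $I\subseteq A$ is a finitely generated ideal, then $\widehat{(A/I)}=\hat A/I\hat A$. Here $X^{\gen,\psi_1,\chi}_{G,\rhobarss}$ is cut out inside $X^{\gen}_{G,\rhobarss}$ by a finitely generated ideal: it is the preimage of a closed point (the section $\psi_1$ composed with $\chi$) under a finite--type morphism to an affine scheme of finite type over $\OO$, namely the composite $X^{\gen}_{G,\rhobarss}\to X^{\gen}_{H_1,\psibarss_1}$ followed by the map to $\Spec\OO[\mu]$ coming from \eqref{w_think_1}, \eqref{eq_iso_psi1}. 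So at a closed point $x$, the completed local ring $\hat\OO_{X^{\gen,\psi_1,\chi}_{G,\rhobarss},x}$ is the quotient of $\hat\OO_{X^{\gen}_{G,\rhobarss},x}$ by the ideal generated by the pullbacks of the relevant functions. Next I would check that, under the identification of Lemma \ref{local_ring_def_ring}, this ideal is precisely the ideal defining the quotient $R^{\square}_{G,\rho_x}\twoheadrightarrow R^{\square,\psi_1,\chi}_{G,\rho_x}$; this amounts to unwinding the functor of points, exactly as in the proof of Lemma \ref{deform} and Proposition \ref{333}, now with the conditions $\varphi_1\circ\rho_A=\psi_1\otimes A$ and the $\chi$--condition imposed on both sides. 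The extra variable $T$ in cases (2) appears for the same reason as in \cite[Lemma 3.37]{BIP_new} and Lemma \ref{local_ring_def_ring}: when $\kappa(x)$ has characteristic $p$, the completed local ring of $X^{\gen}_{G,\rhobarss}$ sees one fewer variable than the deformation ring because $\XgenG$ is of finite type over $\XpsGLd$ whose closed points in the special fibre have local fields as residue fields, and this discrepancy is unaffected by imposing the partial--determinant condition.

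Concretely I would proceed as follows. Step 1: record that $X^{\gen,\psi_1,\chi}_{G,\rhobarss}\hookrightarrow X^{\gen}_{G,\rhobarss}$ is a closed immersion cut out by a finitely generated ideal, using that $\Spec\OO[\mu]$ and $X^{\gen}_{H_1,\psibarss_1}$ are of finite type over $\OO$ and the morphisms are of finite type (Proposition \ref{functoriality_2} and Section \ref{sec_gen_tori}). Step 2: for a closed point $x$, apply Lemma \ref{completion} to $A=\OO_{X^{\gen}_{G,\rhobarss},x}$ (whose completion is noetherian because $X^{\gen}_{G,\rhobarss}$ is excellent, or simply because $\hat A\cong R^{\square}_{G,\rho_x}$ or $R^{\square}_{G,\rho_x}\br{T}$ is noetherian by Corollary \ref{represent_kappa}) to get $\widehat{\OO_{X^{\gen,\psi_1,\chi}_{G,\rhobarss},x}}=\hat A/I\hat A$. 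Step 3: identify $I\hat A$ with the defining ideal of the quotient $R^{\square,\psi_1,\chi}_{G,\rho_x}$ by comparing the functors represented by $R^{\square}_{G,\rho_x}/\mm^n$ and by $R^{\square,\psi_1,\chi}_{G,\rho_x}/\mm^n$ on $\Aa_{\Lambda,n}$, exactly as in Proposition \ref{333}; in the characteristic $p$ case carry the extra $\br{T}$ along as in Lemma \ref{local_ring_def_ring}(2). Step 4: the special--fibre statements follow by tensoring with $k$ (equivalently quotienting by $\varpi$), since $\varpi$ is a regular parameter and the same ideal $I$ works. The main obstacle I anticipate is Step 3: one has to be careful that the $\chi$--condition, which is phrased through the torus presentation \eqref{w_think_4} and the Artin map, really does correspond to a clopen/closed condition on the completed local rings compatible with the one on $A^{\gen}$; but since $\OO[\mu]$ is a finite $\OO$--algebra and $\chi$ cuts out a single $\OO$--point, the ideal is generated by the finitely many elements $e-\chi(e)$ for $e\in\mu$, and everything reduces to the already--established case treated for $X^{\gen}$ and $R^{\square}$.
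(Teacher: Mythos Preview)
Your proposal is correct and takes essentially the same approach as the paper. The paper's proof is a single sentence: since $X^{\gen,\psi_1,\chi}_{G,\rhobarss}$ is a closed subscheme of $X^{\gen,\tau}_G$, the assertion follows from Proposition~\ref{333} and Lemma~\ref{local_ring_def_ring}; your Steps 1--4 simply unpack the mechanics behind this citation (closed immersion by a finitely generated ideal, Lemma~\ref{completion}, matching the quotient with the subfunctor $D^{\square,\psi_1,\chi}$), which is exactly how Proposition~\ref{333} itself was proved for the closed subscheme $X^{\gen,\tau}_G\hookrightarrow X^{\gen}_{\GL_d}$.
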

\begin{proof} Since $X^{\gen, \psi_1,\chi}_{G, \rhobarss}$ is a closed subscheme of $X^{\gen, \tau}_G$ the assertion follows from  Proposition \ref{333} and Lemma \ref{local_ring_def_ring}.
\end{proof}

\begin{cor}\label{complete_intersection_chi}
Let $x$ be either a closed point of $X^{\gen,\psi_1, \chi}_{G, \rhobarss}\setminus Y_{\rhobarss}^{\psi_1, \chi}$ or  a closed point of $Y^{\psi_1, \chi}_{\rhobarss}$. Then the following hold:
\begin{enumerate}
\item $R^{\square,\psi_1, \chi}_{G, \rho_x}$ is a flat $\Lambda$-algebra of relative 
dimension $\dim \Gbar_k([F:\Qp]+1)$ and is complete intersection;
\item if $\chara(\kappa(x))=p$ then $R^{\square,\psi_1, \chi}_{G, \rho_x}/\varpi$ is complete 
intersection of dimension $\dim \Gbar_k([F:\Qp]+1)$.
\end{enumerate}
\end{cor}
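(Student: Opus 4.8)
\textbf{Proof plan for Corollary \ref{complete_intersection_chi}.} The plan is to imitate the proof of Corollary \ref{complete_intersection}, but using the presentation \eqref{intro_present_chi} (equivalently, Proposition \ref{present_over_RH} together with the results of Section \ref{sec_gen_tori}) to reduce the computation to the group $\Gbar = G/Z_1$ and to the regular rings $R^{\square,\chi}_{H_2,\psi_2}$. As in Corollary \ref{complete_intersection}, by Lemma \ref{pre_lci_flat} it suffices to establish the upper bound
\begin{equation}\label{need_psi_chi}
\dim \kappa(x)\otimes_{\Lambda} R^{\square,\psi_1,\chi}_{G,\rho_x}\le \dim \Gbar_k([F:\Qp]+1),
\end{equation}
since if $\chara(\kappa(x))=0$ then $\Lambda=\kappa(x)$ and the left side is the whole ring, while if $\chara(\kappa(x))=p$ then $\Lambda$ is a DVR with uniformiser $\varpi$ and the fibre ring is $R^{\square,\psi_1,\chi}_{G,\rho_x}/\varpi$; in the latter case a second application of Lemma \ref{pre_lci_flat} with $A=\kappa(x)$ gives part (2) once part (1) and \eqref{need_psi_chi} are known. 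For the lower bound needed to conclude that \eqref{need_psi_chi} is an equality (and hence that the sequence is regular), one uses Proposition \ref{present_over_RH} applied to $\varphi_1\colon G\to H_1$, which, combined with Lemma \ref{form_sm} and \eqref{r_minus_t}, forces the relative dimension to be exactly $\dim\Gbar_k([F:\Qp]+1)$; alternatively it follows from the fact that $\rho$ lifts to characteristic zero.

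First I would prove \eqref{need_psi_chi}. Using Lemma \ref{psi_def_rings} the fibre ring is identified, up to adjoining one variable $T$ in the mixed-characteristic closed-point case, with the completed local ring $\hat{\OO}_{\Xbar,x}$ of $\Xbar:=\Xbar^{\gen,\psi_1,\chi}_{G,\rhobarss}$ (or with $\hat{\OO}_{X,x}$ when $\chara(\kappa(x))=0$). So it is enough to bound $\dim \Xbar^{\gen,\psi_1,\chi}_{G,\rhobarss}$ from above by $\dim\Gbar_k([F:\Qp]+1)$, together with the usual comparison $\dim X^{\gen,\psi_1,\chi}_{G,\rhobarss}\le \dim \Xbar^{\gen,\psi_1,\chi}_{G,\rhobarss}+1$ from Proposition \ref{dim_sp_gen} (applied to the $Z_2G'$-invariant closed subscheme $X^{\gen,\psi_1,\chi}_{G,\rhobarss}$ of $X^{\gen,\tau}_G$; note it is $Z_2G'$-invariant since $\varphi_1$ kills $Z_2G'$) and Corollary \ref{bip_recyc} for relating the dimensions of local rings at closed points in and outside $Y^{\psi_1,\chi}_{\rhobarss}$. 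By Lemma \ref{dim_XgenG_chi} the reduced subscheme of $\Xbar^{\gen,\psi_1,\chi}_{G,\rhobarss}$ equals that of $\Xbar^{\gen,\psi_1}_{G,\rhobarss}$, so it suffices to bound $\dim\Xbar^{\gen,\psi_1}_{G,\rhobarss}$; and Corollary \ref{dim_bound_psi}, which uses the finite morphism $X^{\gen,\psi_1}_{G,\rhobarss}\to X^{\gen}_{\Gbar,\rhobarss}$ of Proposition \ref{psi_Gbar_finite} together with Corollary \ref{dim_XgenG} (via Theorem \ref{thm_bound_space}), gives exactly $\dim\Xbar^{\gen,\psi_1}_{G,\rhobarss}\le \dim\Gbar_k([F:\Qp]+1)$. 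This establishes \eqref{need_psi_chi}.

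For the complete intersection property itself, once \eqref{need_psi_chi} is in hand one invokes Lemma \ref{pre_lci_flat} with $A=\Lambda$ (which is a complete regular local ring, hence complete intersection) and $B=R^{\square,\psi_1,\chi}_{G,\rho_x}$: \eqref{need_psi_chi} says $\dim k\otimes_A B\le r-t$ where, by Proposition \ref{present_over_RH} applied to $\varphi_1$ (and $\ad^{0,\varphi_1}=\ad^0$ since $\pi_1(H_1^0)$ is trivial so $\Lie H_1$ meets the image appropriately — more precisely $\ker(\ad\rho\to\ad\varphi_1\circ\rho)$ has the right Euler characteristic by \eqref{r_minus_t}), $r-t=(\dim G_k-\dim(H_1)_k)([F:\Qp]+1)=\dim\Gbar_k([F:\Qp]+1)$, using \eqref{dim_Gbar}. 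Since $R^{\square,\psi_1,\chi}_{G/G',\psibar}\cong R^{\square,\chi}_{H_2,\psibar_2}$ is regular (hence complete intersection) by Lemma \ref{form_sm}, the presentation $R^{\square,\psi_1,\chi}_{G,\rho_x}\cong R^{\square,\psi_1,\chi}_{G/G',\varphi\circ\rho_x}\br{x_1,\dots,x_r}/(f_1,\dots,f_t)$ together with Lemma \ref{pre_lci_flat} yields that $R^{\square,\psi_1,\chi}_{G,\rho_x}$ is complete intersection, flat over $\Lambda$ of the stated relative dimension, and that $R^{\square,\psi_1,\chi}_{G,\rho_x}/\varpi$ is complete intersection of the stated dimension. \textbf{The main obstacle} I anticipate is the bookkeeping around $\ad^{0,\varphi_1}$ versus $\ad^0$ and making sure Proposition \ref{present_over_RH} applies with $\varphi_1\colon G\to H_1$ (one needs $G^0\to H_1^0$ smooth and surjective, which holds by Lemma \ref{smooth_projection} applied after the identification, since $H_1^0$ is a torus quotient of $G/G'$), and—more substantively—confirming that the fixed-partial-determinant condition is genuinely cut out by a quotient so that Lemma \ref{psi_def_rings}'s identification of $\hat{\OO}_{X,x}$ with $R^{\square,\psi_1,\chi}_{G,\rho_x}$ is valid; this is where one must be careful that $X^{\gen,\psi_1,\chi}_{G,\rhobarss}$ is a closed subscheme of $X^{\gen,\tau}_G$ and that Proposition \ref{333} applies verbatim.
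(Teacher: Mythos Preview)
Your approach is essentially the paper's: obtain the presentation \eqref{represent_chi}, reduce via Lemma \ref{pre_lci_flat} to the dimension bound \eqref{need_psi_chi}, and deduce that bound from Lemma \ref{dim_XgenG_chi}, Corollary \ref{dim_bound_psi}, and Corollary \ref{bip_recyc} together with Lemma \ref{psi_def_rings}.

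One point of confusion to clean up: you oscillate between applying Proposition \ref{present_over_RH} to $\varphi_1:G\to H_1$ and to $\varphi:G\to G/G'$. The paper uses $\varphi$, and this is the cleaner route: Proposition \ref{present_over_RH} gives
\[
R^{\square}_{G,\rho_x}\cong R^{\square}_{G/G',\varphi\circ\rho_x}\br{x_1,\ldots,x_r}/(f_1,\ldots,f_s)
\]
with $r-s=\dim G'_k([F:\Qp]+1)$, and since both the $\psi_1$-condition and the $\chi$-condition are imposed at the level of $G/G'$, quotienting by the corresponding ideals on both sides yields \eqref{represent_chi}; Lemma \ref{form_sm} then identifies the base with $\Lambda\br{z_1,\ldots,z_t}$ with $t=\dim(H_2)_k([F:\Qp]+1)$, and \eqref{dim_Gbar} gives $r-s+t=\dim\Gbar_k([F:\Qp]+1)$. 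Your claim that $\ad^{0,\varphi_1}=\ad^0$ is not correct: the kernel of $G^0\to H_1^0$ has Lie algebra $\Lie G'\oplus\Lie K_2$, not just $\Lie G'$. This does not break the argument (you could use $\varphi_1$ and compute $r-t$ directly via \eqref{r_minus_t}), but it muddles the bookkeeping and the $\chi$-condition does not live over $H_1$, so sticking with $\varphi$ as the paper does is simpler. Also, for the inequality $\dim X-1\le\dim\Xbar$ in the characteristic-zero case, use Corollary \ref{bip_recyc} (every irreducible component meets $Y^{\psi_1}_{\rhobarss}$) rather than Proposition \ref{dim_sp_gen}, whose hypothesis is $G^0$-invariance.
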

\begin{proof} Proposition \ref{present_over_RH} and Lemma \ref{form_sm} give us a presentation 
\begin{equation}\label{represent_chi}
R^{\square,\psi_1, \chi}_{G, \rho_x}\cong \frac{R^{\square,\psi_1, \chi}_{G, \varphi\circ \rho_x}\br{x_1, \ldots, x_r}}{(f_1, \ldots, f_s)}\cong \frac{\Lambda\br{z_1, \ldots, z_t, x_1, \ldots, x_r}}{(f_1, \ldots, f_s)}
\end{equation}
    with  $r-s= \dim G'_k ([F:\Qp]+1)$ and $t=\dim (H_2)_k ([F:\Qp]+1)$. Lemma \ref{pre_lci_flat} implies that it is enough
to show that
\begin{equation}\label{eq_ETP}
\dim (\kappa(x)\otimes_{\Lambda}  R^{\square,\psi_1, \chi}_{G, \rho_x}) \le r-s +t \overset{\eqref{dim_Gbar}}{=} \dim \Gbar_k ([F:\Qp]+1).
\end{equation}
Lemma \ref{dim_XgenG_chi} and Corollary \ref{dim_bound_psi} give us 
\begin{equation}\label{bound_dim_psi_chi}
\dim \Xbar^{\gen, \psi_1, \chi}_{G, \rhobarss}\le \dim \Gbar_k ([F:\Qp]+1).
\end{equation}
To ease the notation we let $X:= X^{\gen,\psi_1,\chi}_{G, \rhobarss}$,  let $\Xbar:= \Xbar^{\gen,\psi_1,\chi}_{G, \rhobarss}$ and let 
$Y:=Y^{\psi_1, \chi}_{\rhobarss}$. 

If $x\in Y$ then Corollary \ref{bip_recyc} applied to $Z=\overline{X}$ gives us 
$\dim \OO_{\Xbar, x}\le \dim \Xbar$. Lemma \ref{psi_def_rings}
implies that $R^{\square,\psi_1,\chi}_{\rho_x}/\varpi\cong \hat{\OO}_{\Xbar,x}$ and this implies \eqref{eq_ETP}. 

If $x\in \Xbar\setminus Y$ then $R^{\square,\psi, \chi}_{\rho_x}/\varpi\cong \hat{\OO}_{\Xbar,x}\br{T}$
by Lemma \ref{psi_def_rings}. Corollary \ref{bip_recyc} gives
$\dim \OO_{\Xbar, x} +1 \le \dim \Xbar$ and this implies \eqref{eq_ETP}. 

If $x\in X[1/p]$ then $R^{\square,\psi_1, \chi}_{G, \rho_x}\cong \hat{\OO}_{X, x}$
by Lemma \ref{psi_def_rings} and $\dim \OO_{X, x} \le \dim X -1$ 
by Corollary \ref{bip_recyc} applied to $Z=X$. Since by Corollary \ref{bip_recyc}
every irreducible component of $X$ intersects the special fibre we have  
$\dim X -1 \le \dim \Xbar$ and \eqref{eq_ETP} follows.  
\end{proof} 

\begin{remar}\label{1313} The proof of \Cref{complete_intersection_chi} also applies verbatim to 
the rings $R^{\square,\psi_1}_{G, \rho_x}$ after  replacing $X^{\gen, \psi_1, \chi}_{G, \rhobarss}$ with $X^{\gen, \psi_1}_{G, \rhobarss}$ everywhere. 
\end{remar}

\begin{cor}\label{XgenG_chi_lci} Let $d= \dim \Gbar_k ([F:\Qp]+1)$. Then the following hold: 
\begin{enumerate} 
\item $X^{\gen,\psi_1, \chi}_{G, \rhobarss}$ is $\OO$-flat  of relative dimension $d$ and is locally complete intersection;
\item $\Xbar^{\gen,\psi_1, \chi}_{G, \rhobarss}$  is locally complete 
intersection of dimension $d$.
\end{enumerate} 
\end{cor}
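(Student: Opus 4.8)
\textbf{Proof proposal for \Cref{XgenG_chi_lci}.}

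The plan is to deduce both statements from \Cref{complete_intersection_chi} together with the comparison between completed local rings and deformation rings established in \Cref{psi_def_rings}, following the pattern of \Cref{dim_XgenG} (which is \cite[Corollary 3.40]{BIP_new}). First I would recall that $X^{\gen,\psi_1,\chi}_{G,\rhobarss}$ is a scheme of finite type over $\OO$: it is a closed subscheme of $X^{\gen,\tau}_G$, which is of finite type over $\RpsGLd$ by \Cref{XgenGLdA}, and $\RpsGLd$ is a complete local noetherian $\OO$-algebra. Hence to check that $X^{\gen,\psi_1,\chi}_{G,\rhobarss}$ is $\OO$-flat and locally complete intersection, it suffices to verify these properties on the local rings $\OO_{X,x}$ at all closed points $x$, where $X:=X^{\gen,\psi_1,\chi}_{G,\rhobarss}$; and since flatness over $\OO$ and the local complete intersection property can be checked after completion, it is enough to work with $\hat{\OO}_{X,x}$.

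Next I would split into the two cases of \Cref{psi_def_rings}. If $x$ is a closed point of $X[1/p]$ or a closed point of $Y^{\psi_1,\chi}_{\rhobarss}$, then $\hat{\OO}_{X,x}\cong R^{\square,\psi_1,\chi}_{G,\rho_x}$, and if $x\in\Xbar\setminus Y^{\psi_1,\chi}_{\rhobarss}$ then $\hat{\OO}_{X,x}\br{T}\cong R^{\square,\psi_1,\chi}_{G,\rho_x}$; in either case \Cref{complete_intersection_chi} shows that the deformation ring is a flat $\Lambda$-algebra which is complete intersection, where $\Lambda$ is a coefficient ring for $\kappa(x)$. Since $\Lambda$ is flat over $\OO$ (it is either a finite field extension of $L$, or a DVR finite flat over a DVR), one gets that $\hat{\OO}_{X,x}$ is $\OO$-flat in all cases. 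The complete intersection property descends from $R^{\square,\psi_1,\chi}_{G,\rho_x}$ to $\hat{\OO}_{X,x}$: in the case involving an extra power series variable $T$ one uses that $B$ is complete intersection iff $B\br{T}$ is; in the case involving $\Lambda$ one uses that $\Lambda$ is a complete intersection $\OO$-algebra and that $\hat{\OO}_{X,x}\to R^{\square,\psi_1,\chi}_{G,\rho_x}=\Lambda\wtimes_{\OO}\hat{\OO}_{X,x}$ (or rather the relevant localised completed tensor product, as in \Cref{extend_scalars2}) is flat with regular fibres. Since $X$ is $\OO$-flat and, over each closed point, locally complete intersection, it is locally complete intersection; this in turn forces $X$ to be equidimensional, and combined with the relative dimension computation $\dim R^{\square,\psi_1,\chi}_{G,\rho_x}=\dim\Gbar_k([F:\Qp]+1)$ over $\Lambda$ from \Cref{complete_intersection_chi}, together with $\dim\OO_{X,x}=\dim\hat{\OO}_{X,x}$, one obtains that $X$ has relative dimension $d=\dim\Gbar_k([F:\Qp]+1)$ over $\OO$. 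This gives part (1).

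For part (2) the argument is identical, applied to the special fibre: at a closed point $x$ of $\Xbar:=\Xbar^{\gen,\psi_1,\chi}_{G,\rhobarss}$ one has $\hat{\OO}_{\Xbar,x}\cong R^{\square,\psi_1,\chi}_{G,\rho_x}/\varpi$ or $\hat{\OO}_{\Xbar,x}\br{T}\cong R^{\square,\psi_1,\chi}_{G,\rho_x}/\varpi$ by \Cref{psi_def_rings}, and \Cref{complete_intersection_chi}(2) shows the latter is complete intersection of dimension $d$ when $\chara(\kappa(x))=p$. Note that $\Xbar$ is non-empty by \Cref{non-empty}, so it genuinely has dimension $d$. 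The main obstacle I anticipate is bookkeeping rather than conceptual: one must be careful that the closed points covered by the two cases of \Cref{psi_def_rings} exhaust all closed points of $X$ (respectively $\Xbar$), i.e. that every closed point not in $Y^{\psi_1,\chi}_{\rhobarss}$ lies in $X[1/p]$ or in $\Xbar\setminus Y^{\psi_1,\chi}_{\rhobarss}$, which follows from the residue-field trichotomy of \cite[Lemmas 3.17, 3.18]{BIP_new}; and that the passage from ``complete intersection at every closed point'' to ``locally complete intersection and equidimensional'' uses that a complete intersection local ring is equidimensional, so the various local dimensions agree globally. Beyond this the proof is a routine transcription of \cite[Corollaries 3.38, 3.40]{BIP_new}, exactly as in the proofs of \Cref{complete_intersection} and \Cref{dim_XgenG}.
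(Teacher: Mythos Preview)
Your proposal is correct and follows essentially the same approach as the paper: the paper's proof simply says ``The proof is the same as the proof of Corollary \ref{dim_XgenG} using Corollary \ref{complete_intersection_chi} and Lemma \ref{psi_def_rings},'' and your write-up is a detailed unpacking of exactly that. One minor imprecision: in the characteristic $p$ local field case the coefficient ring $\Lambda$ is a DVR with uniformiser $\varpi$ but is not finite over $\OO$ (it is the $p$-adic completion of $\Lambda^0[1/t]$), though it is still $\OO$-flat, which is all you need.
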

\begin{proof} The proof is the same as the proof of Corollary \ref{dim_XgenG} using Corollary \ref{complete_intersection_chi}
and Lemma \ref{psi_def_rings}.
\end{proof}

\begin{cor}\label{special_codim_psi} Let $c$ be one of the following:
\begin{enumerate}
\item the codimension of the $(\Lie G'_{\sic})^*$-special locus  in $X^{\gen,\psi_1,\chi}_{G, \rhobarss}$;
\item the codimension of the $(\Lie G'_{\sic})^*_k$-special locus  in $\Xbar^{\gen,\psi_1,\chi}_{G, \rhobarss}$;
\item the codimension of the $(\Lie G'_{\sic})^*_L$-special locus  in $X^{\gen,\psi_1,\chi}_{G, \rhobarss}[1/p]$.
\end{enumerate}
Then $c\ge 1+[F:\Qp]$. 
Moreover, if there is no $\rhobarss$-compatible R-Levi subgroup $L$ of $G$ such that $\dim G -\dim L=2$ 
then $c\ge 2[F:\Qp]$.
\end{cor}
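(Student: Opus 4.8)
The plan is to deduce \Cref{special_codim_psi} from \Cref{codim_special_locus} by transporting the bound along the finite morphism to $X^{\gen}_{\Gbar, \rhobarss}$ that was constructed in \Cref{psi_Gbar_finite}, using the fact that the $(\Lie G'_{\sic})^*$-special loci are defined cohomologically and behave well under this morphism. The key observation is that $Z_1$ is a central subtorus of $G^0$ contained in $Z(G^0)$, hence $\Lie Z_1$ acts trivially in the adjoint action and the quotient map $q: G \to \Gbar = G/Z_1$ induces an isomorphism $G'_{\sic} \cong \Gbar'_{\sic}$ (since $Z_1$ is central, $G'$ and $\Gbar'$ have the same simply connected central cover, as in \Cref{tasho2}). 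Therefore the $G$-representation $(\Lie G'_{\sic})^*$ is pulled back along $q$ from the $\Gbar$-representation $(\Lie \Gbar'_{\sic})^*$, so for a representation $\rho: \Gamma_F \to G(A)$ with image $\rhobar = q \circ \rho$ in $\Gbar(A)$, the $\Gamma_F$-modules $(\Lie G'_{\sic})^*_x(1)$ and $(\Lie \Gbar'_{\sic})^*_{q(x)}(1)$ are isomorphic at every geometric point $x$. Consequently the $(\Lie G'_{\sic})^*$-special locus in $X^{\gen, \psi_1, \chi}_{G, \rhobarss}$ is exactly the preimage of the $(\Lie \Gbar'_{\sic})^*$-special locus in $X^{\gen}_{\Gbar, \rhobarss}$ under the composite finite morphism $X^{\gen, \psi_1, \chi}_{G, \rhobarss} \to X^{\gen, \psi_1}_{G, \rhobarss} \to X^{\gen}_{\Gbar, \rhobarss}$ (the first map being a closed immersion onto a union of fibres of $\OO[\mu] \to A^{\gen, \psi_1}_{G,\rhobarss}$, which is a homeomorphism onto its image by \Cref{dim_XgenG_chi}, the second finite by \Cref{psi_Gbar_finite}).

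Granting this, the codimension statement follows: if $Z \subseteq X^{\gen, \psi_1, \chi}_{G, \rhobarss}$ is the special locus and $W \subseteq X^{\gen}_{\Gbar, \rhobarss}$ the corresponding special locus, then $Z$ is contained in the preimage of $W$, and since the morphism is finite we have $\dim Z \le \dim W$ by \eqref{inductive_bound}-style reasoning; on the other hand the total spaces have the same dimension by \Cref{XgenG_chi_lci} (which gives $\dim X^{\gen, \psi_1, \chi}_{G, \rhobarss} = \dim \Gbar_k([F:\Qp]+1)$, also the dimension of $X^{\gen}_{\Gbar,\rhobarss}$ by \Cref{dim_XgenG}). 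I need to check that these schemes are equidimensional, which holds by \Cref{XgenG_chi_lci} since local complete intersections are equidimensional, and for $X^{\gen}_{\Gbar, \rhobarss}$ by \Cref{dim_XgenG}; then codimension is computed consistently and $\codim_{X^{\gen, \psi_1,\chi}_{G,\rhobarss}} Z \ge \codim_{X^{\gen}_{\Gbar,\rhobarss}} W$. Finally \Cref{codim_special_locus} applied to $\Gbar$ gives $\codim W \ge 1 + [F:\Qp]$ for all three cases (generic fibre, special fibre, total space), and gives $\codim W \ge 2[F:\Qp]$ when $\Gbar$ has no $q(\rhobarss)$-compatible R-Levi of codimension $2$. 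So it remains to check that $\Gbar$ has a $q(\rhobarss)$-compatible R-Levi of codimension $2$ if and only if $G$ has a $\rhobarss$-compatible R-Levi of codimension $2$; this is because R-parabolics and R-Levis of $G$ and $\Gbar$ correspond bijectively (the central subtorus $Z_1$ is contained in every R-parabolic and R-Levi, as in the discussion opening \Cref{RLevi2}), dimensions of Levis drop by the same amount, and $\rhobarss$-compatibility is preserved under $q$ since $G$-semisimplification commutes with the quotient by a central subgroup.

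The main obstacle will be the careful bookkeeping of the equidimensionality and codimension comparisons across the finite morphism, and in particular making precise that the special locus of $X^{\gen, \psi_1, \chi}_{G, \rhobarss}$ is a union of irreducible components of the preimage of $W$ (or at least that its closure has the right codimension). I expect to handle this by first treating the total space $X^{\gen, \psi_1, \chi}_{G, \rhobarss}$, then deducing the special-fibre case via \Cref{gen_sp_dim}-type reasoning (the reduced special fibre of the special locus equals the $(\Lie G'_{\sic})^*_k$-special locus in the special fibre, and $\dim X^{\gen, \psi_1, \chi}_{G, \rhobarss} = \dim \Xbar^{\gen, \psi_1, \chi}_{G, \rhobarss} + 1$ by \Cref{XgenG_chi_lci}), and the generic-fibre case similarly using that inverting $p$ drops dimension by exactly one on each component intersecting the special fibre, which holds by \Cref{bip_recyc}. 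A secondary point to verify is that over $\Qp$ versus over $\OO$ the translation of R-Levi compatibility is genuinely an equivalence, so that the improved bound $2[F:\Qp]$ transfers correctly; this reduces to the bijective correspondence of R-Levis under $q$ together with the observation that $q$ induces an isomorphism on component groups $G/G^0 \cong \Gbar/\Gbar^0$ since $Z_1 \subseteq G^0$.
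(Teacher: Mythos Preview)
Your proposal is correct and takes essentially the same approach as the paper: both identify the $(\Lie G'_{\sic})^*$-special locus in $X^{\gen,\psi_1,\chi}_{G,\rhobarss}$ as the preimage of the $(\Lie \Gbar'_{\sic})^*$-special locus in $X^{\gen}_{\Gbar,\rhobarss}$ under the finite morphism of \Cref{psi_Gbar_finite} (using that $G'\to\Gbar'$ is a central isogeny, so $(\Lie G'_{\sic})^*$ and $(\Lie \Gbar'_{\sic})^*$ agree as $G$-representations), compare dimensions of the ambient spaces via \Cref{XgenG_chi_lci} and \Cref{dim_XgenG}, and then invoke \Cref{codim_special_locus} for $\Gbar$. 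Your explicit verification that the codimension-$2$ R-Levi condition transfers between $G$ and $\Gbar$ fills a detail the paper leaves implicit, and the paper handles the three cases simply by rerunning the same argument over $\OO$, $k$, and $L$ rather than deducing the fibre cases from the total-space case as you propose.
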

\begin{proof} Let $Z^{\spcl}_G$ be the $(\Lie G'_{\sic})^*$-special locus 
in $X^{\gen, \psi_1,\chi}_{G, \rhobarss}$ and let $Z^{\spcl}_{\Gbar}$ be 
the $(\Lie \Gbar'_{\sic})^*$-special 
locus in $X^{\gen}_{\Gbar, \rhobarss}$. 
If $x'\in X^{\gen, \psi_1,\chi}_{G, \rhobarss}$ maps to $x\in X^{\gen}_{\Gbar, \rhobarss}$ then 
the representations $(\Lie G'_{\sic})^*_{x'}$ and $(\Lie \Gbar'_{\sic})^*_{x}\otimes_{\kappa(x)} \kappa(x')$ 
are isomorphic as representations of $G$ as $G'\rightarrow \Gbar'$ is a central isogeny of semisimple groups and the adjoint action of $G$ on both factors through the action of $\Gbar$.
Thus $h^0(\Gamma_F, (\Lie G'_{\sic})^*_{x'}(1))= h^0(\Gamma_F, (\Lie \Gbar'_{\sic})^*_{x}(1))$, and so 
$x$ is special if and only if $x'$ is special. Thus $Z^{\spcl}_G$   is equal to 
the preimage  of $Z^{\spcl}_{\Gbar}$. 

Since $X^{\gen, \psi_1,\chi}_{G, \rhobarss} \rightarrow X^{\gen}_{\Gbar, \rhobarss}$ 
is finite by \Cref{psi_Gbar_finite}, the map $Z^{\spcl}_G\rightarrow Z^{\spcl}_{\Gbar}$ is finite, 
and hence $\dim Z^{\spcl}_G \le \dim Z^{\spcl}_{\Gbar}$. Since $\dim X^{\gen, \psi_1,\chi}_{G, \rhobarss}=\dim X^{\gen}_{\Gbar, \rhobarss}$
by  \Cref{XgenG_chi_lci} and \Cref{dim_XgenG} the 
assertion follows from Theorem \ref{codim_special_locus} applied to $\Gbar$. The argument for the special and generic fibres is the same. 
\end{proof}

\begin{lem}\label{non-spec_reg} The $(\Lie G'_{\sic})^*_L$-non-special locus in 
$X^{\gen,\psi_1,\chi}_{G, \rhobarss}[1/p]$ is regular. Moreover, if $\pi_1(G')$ is \'etale then 
the following hold:
\begin{enumerate}
\item $(\Lie G'_{\sic})^*$-non-special locus in $X^{\gen,\psi_1, \chi}_{G, \rhobarss}$ is regular;
\item $(\Lie G'_{\sic})^*_k$-non-special locus in $\Xbar^{\gen,\psi_1, \chi}_{G, \rhobarss}$ is regular.
\end{enumerate}
\end{lem}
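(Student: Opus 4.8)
The statement is a regularity assertion about the non-special locus, and the strategy is to reduce to the presentation \eqref{represent_chi} established in the proof of \Cref{complete_intersection_chi}, namely
\[
R^{\square,\psi_1, \chi}_{G, \rho_x}\cong \frac{R^{\square,\psi_1, \chi}_{G, \varphi\circ \rho_x}\br{x_1, \ldots, x_r}}{(f_1, \ldots, f_s)},
\]
where $r-s= \dim G'_k ([F:\Qp]+1)$, $s = h^2(\Gamma_F, \ad^{0,\varphi}\rho_x)$, and $R^{\square,\psi_1, \chi}_{G, \varphi\circ \rho_x}$ is a power series ring over $\Lambda$ by \Cref{form_sm}, hence regular. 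First I would recall that a point $x$ of $X^{\gen,\psi_1,\chi}_{G, \rhobarss}$ lies in the $(\Lie G'_{\sic})^*$-non-special locus precisely when $h^0(\Gamma_F, (\Lie G'_{\sic})^*_x(1))=0$. Under the hypothesis that $\pi_1(G')$ is \'etale one has $\Lie G'_{\sic}=\Lie G'$ as $G$-modules, so this is the vanishing of $h^0(\Gamma_F, (\ad^0\rho_x)^*(1))$; on the generic fibre the same identification $(\Lie G'_{\sic})_L=(\Lie G')_L$ holds without any hypothesis on $\pi_1(G')$, which is why case (3) is unconditional. For a closed point $x$ of the non-special locus (in any of the three settings), the residue field $\kappa(x)$ is a finite or local $\OO$-field by \cite[Lemmas 3.17, 3.18]{BIP_new}, so local Tate duality gives $H^0(\Gamma_F, (\ad^0\rho_x)^*(1))\cong H^2(\Gamma_F, \ad^0\rho_x)^*$; non-speciality thus forces $s=0$ in the presentation above, whence $R^{\square,\psi_1, \chi}_{G, \rho_x}$ is a power series ring over $\Lambda$, in particular regular.

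Next I would transfer this from the completed local ring to $\OO_{X,x}$ itself. By \Cref{psi_def_rings}, for a closed point $x$ we have $R^{\square, \psi_1,\chi}_{G, \rho_x}\cong \widehat{\OO}_{X,x}$ when $x\in Y^{\psi_1,\chi}_{\rhobarss}$ or $x\in X[1/p]$, and $R^{\square, \psi_1,\chi}_{G, \rho_x}\cong \widehat{\OO}_{X,x}\br{T}$ when $x\in \Xbar\setminus Y^{\psi_1,\chi}_{\rhobarss}$; similarly for the special fibre with $R^{\square,\psi_1,\chi}_{G,\rho_x}/\varpi$. In either case regularity of $R^{\square,\psi_1,\chi}_{G,\rho_x}$ (resp.\ its reduction mod $\varpi$) implies regularity of $\widehat{\OO}_{X,x}$ (resp.\ $\widehat{\OO}_{\Xbar,x}$), since a formal power series ring over a regular local ring is regular and a regular local ring $R$ with $R\br{T}$ regular is itself regular. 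Regularity of the completion is equivalent to regularity of the local ring $\OO_{X,x}$, so every closed point of the non-special locus is a regular point of $X^{\gen,\psi_1,\chi}_{G,\rhobarss}$ (resp.\ its special or generic fibre). Because these schemes are Jacobson away from the finitely many closed points of their GIT quotients (using that the non-special locus is contained in the complement of $Y^{\psi_1,\chi}_{\rhobarss}$ only in the finite-field case — in general one argues on the open subscheme $X\setminus Y^{\psi_1,\chi}_{\rhobarss}$ which is Jacobson, together with the points of $Y^{\psi_1,\chi}_{\rhobarss}$ handled directly), regularity at all closed points of the non-special locus forces regularity of that locus as a scheme.

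I expect the main subtlety to be bookkeeping rather than a genuine obstacle: one must be careful that the non-special locus is an \emph{open} subscheme (the complement of the closed $(\Lie G'_{\sic})^*$-special locus of \Cref{V_iks}), so that it is covered by its closed points in the Jacobson sense, and one must check that the point $x\in Y^{\psi_1,\chi}_{\rhobarss}$ case is handled by the direct isomorphism $R^{\square,\psi_1,\chi}_{G,\rho_x}\cong\widehat{\OO}_{X,x}$ rather than via Jacobson density. A secondary point is that in case (3), even without $\pi_1(G')$ \'etale, one must invoke $(\Lie G')_L=(\Lie G'_{\sic})_L$ explicitly so that ``non-special'' really does mean $H^2(\Gamma_F,\ad^0\rho_x)=0$ and the argument with $s=0$ goes through; this is exactly the observation made after \eqref{represent_chi} and in \Cref{abs_irr_nspcl}. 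Everything else is a routine descent of regularity along completion and the elementary fact that $R\br{T}$ regular $\iff R$ regular.
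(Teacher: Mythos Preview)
Your proposal is correct and follows the paper's own argument essentially verbatim: take a closed point $x$ in the non-special locus, use $(\Lie G'_{\sic})_x=(\Lie G')_x$ (valid in characteristic zero or when $\pi_1(G')$ is \'etale) together with local Tate duality to get $s=0$ in the presentation \eqref{represent_chi}, deduce that $R^{\square,\psi_1,\chi}_{G,\rho_x}$ is formally smooth over $\Lambda$, and transfer to $\OO_{X,x}$ via \Cref{psi_def_rings}. You are simply more explicit than the paper about the Jacobson bookkeeping needed to pass from regularity at closed points of $Y^{\psi_1,\chi}_{\rhobarss}$ and of $X\setminus Y^{\psi_1,\chi}_{\rhobarss}$ to regularity of the whole open non-special locus, and about the elementary fact that $\widehat{\OO}_{X,x}\br{T}$ regular forces $\widehat{\OO}_{X,x}$ regular; the paper leaves both of these implicit.
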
 

\begin{proof}  Let $Z$ be the $(\Lie G'_{\sic})^*$-special locus in $X^{\gen,\psi_1, \chi}_{G, \rhobarss}$. Let $x$ be a closed point 
in the complement of $Z$.  Then
$\kappa(x)$ is either a local or a finite field and local Tate duality induces 
an isomorphism 
$ H^2(\Gamma_F, \ad^0 \rho_x)\cong H^0(\Gamma_F, (\ad^0 \rho_x)^*(1)).$

If  $\pi_1(G')$ is \'etale or $x\in X^{\gen}_{G, \rhobarss}[1/p]$ then $(\Lie G'_{\sic})_x=(\Lie G')_x$. Since $x$ is not contained in the $(\Lie G'_{\sic})^*$-special locus $H^2(\Gamma_F, \ad^0 \rho_x)=0$ and thus $s=0$ in the presentation \eqref{represent_chi}, which implies 
that $R^{\square,\psi_1, \chi}_{G, \rho_x}$ is formally smooth over $\Lambda$. It follows from Lemma \ref{psi_def_rings} that the local 
ring of $X^{\gen,\psi_1, \chi}_{G, \rhobarss}$ at $x$ is regular. If $x$ is in the special fibre then we deduce
that $R^{\square,\psi_1, \chi}_{G, \rho_x}/\varpi$ is formally smooth over $\kappa(x)$ and the same argument applies. 
\end{proof} 
If  there is no $\rhobarss$-compatible R-Levi subgroup $L$ of $G$ such that $\dim G -\dim L=2$ then we let $c(\rhobarss)=2[F:\Qp]-1$ and let $c(\rhobarss)=[F:\Qp]$, otherwise. 
\begin{lem}\label{R1_chi} $X^{\gen,\psi_1,\chi}_{G,\rhobarss}[1/p]$ is regular in codimension $c(\rhobarss)$. Moreover,  if $\pi_1(G')$ is \'etale then $X^{\gen,\psi_1, \chi}_{G, \rhobarss}$ and $\Xbar^{\gen, \psi_1, \chi}_{G, \rhobarss}$ are  also regular in codimension $c(\rhobarss)$.
\end{lem}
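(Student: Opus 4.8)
The plan is to combine the codimension bound for the special locus (\Cref{special_codim_psi}) with the regularity of the non-special locus (\Cref{non-spec_reg}) and then apply Serre's criterion-type reasoning. Concretely, I would argue that $X^{\gen,\psi_1,\chi}_{G,\rhobarss}[1/p]$ (resp.\ $X^{\gen,\psi_1,\chi}_{G,\rhobarss}$, $\Xbar^{\gen,\psi_1,\chi}_{G,\rhobarss}$) decomposes into its $(\Lie G'_{\sic})^*$-non-special open locus, which is regular by \Cref{non-spec_reg}, and a closed complement. By \Cref{special_codim_psi}, that closed complement has codimension at least $1+[F:\Qp]$ in general, and at least $2[F:\Qp]$ when there is no $\rhobarss$-compatible R-Levi $L$ of $G$ with $\dim G-\dim L=2$. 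Hence every point of codimension $\le c(\rhobarss)$ lies in the non-special locus, where $c(\rhobarss)$ is defined as $2[F:\Qp]-1$ in the good case and $[F:\Qp]$ otherwise: indeed $1+[F:\Qp] > [F:\Qp]$ and $2[F:\Qp] > 2[F:\Qp]-1$, so in both cases the codimension of the special locus strictly exceeds $c(\rhobarss)$.

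The slightly delicate point is that ``regular in codimension $c$'' is a statement about the local rings of the scheme at points of codimension $\le c$, and the scheme is not known a priori to be equidimensional, so I should be a little careful comparing ``codimension of the complement'' with ``codimension of a point''. The way around this is to use that $X^{\gen,\psi_1,\chi}_{G,\rhobarss}$ and its special fibre are locally complete intersection, hence Cohen--Macaulay, hence equidimensional and catenary by \Cref{XgenG_chi_lci}; likewise the generic fibre is a localisation of an lci scheme. For such schemes every irreducible component has the same dimension, and the codimension of a point equals (dimension of the scheme) minus (dimension of the closure of the point), so the codimension bound on the special locus in \Cref{special_codim_psi} translates directly into: every point $x$ with $\dim \overline{\{x\}}$ large enough — equivalently $\codim x \le c(\rhobarss)$ — avoids the special locus, and therefore $\OO_{X^{\gen,\psi_1,\chi}_{G,\rhobarss},x}$ is regular by \Cref{non-spec_reg}. (For the generic fibre one uses the first sentence of \Cref{non-spec_reg} and the generic-fibre case of \Cref{special_codim_psi}; for the $\OO$-scheme and its special fibre one invokes the hypothesis that $\pi_1(G')$ is \'etale so that items (1),(2) of \Cref{non-spec_reg} apply, together with items (1),(2) of \Cref{special_codim_psi}.)

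So the steps, in order, are: (i) recall from \Cref{XgenG_chi_lci} that all three schemes are lci, hence Cohen--Macaulay and equidimensional, so codimension of a point is computed as expected; (ii) recall from \Cref{non-spec_reg} that the relevant non-special locus is regular, with the hypothesis on $\pi_1(G')$ needed exactly for the $\OO$-scheme and its special fibre; (iii) invoke \Cref{special_codim_psi} to bound below by $1+[F:\Qp]$ (resp.\ $2[F:\Qp]$) the codimension of the special locus, which is the closed complement of the regular locus; (iv) observe $1+[F:\Qp] > [F:\Qp] = c(\rhobarss)$ in the general case and $2[F:\Qp] > 2[F:\Qp]-1 = c(\rhobarss)$ in the special case, so that every point of codimension $\le c(\rhobarss)$ lies in the regular non-special locus; (v) conclude that each of the three schemes is regular in codimension $c(\rhobarss)$. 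I expect the main (very mild) obstacle to be step (i)/(iv): making the bookkeeping between ``codimension of a closed subset'' and ``the set of points of a given codimension'' airtight, which is why I want the Cohen--Macaulay/equidimensionality input up front; everything else is a direct citation of the preceding results.
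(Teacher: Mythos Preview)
Your proposal is correct and follows the same approach as the paper, which proves this lemma in one line by citing \Cref{special_codim_psi} and \Cref{non-spec_reg}. Your extra care in step (i)/(iv) about equidimensionality (via \Cref{XgenG_chi_lci}) to translate the dimension bound on the special locus into a bound on the codimension of its points is exactly the bookkeeping the paper leaves implicit; it is correct and harmless to spell out.
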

\begin{proof} The assertion follows from \Cref{special_codim_psi} and \Cref{non-spec_reg}.
\end{proof}

\begin{thm}\label{chi_nor_int} If $\pi_1(G')$ is \'etale then $X^{\gen, \psi_1, \chi}_{G, \rhobarss}$ 
and $\Xbar^{\gen, \psi_1, \chi}_{G, \rhobarss}$ are normal.  
Moreover, if the orders of $(G/G^0)(\kbar)$ and $(Z_1\cap Z_2G')(\kbar)$ are coprime then they are integral.
\end{thm}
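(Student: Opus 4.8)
The strategy is to verify Serre's criterion $(R_1)+(S_2)$ for both $X := X^{\gen,\psi_1,\chi}_{G,\rhobarss}$ and its special fibre $\Xbar := \Xbar^{\gen,\psi_1,\chi}_{G,\rhobarss}$, and then upgrade normality to irreducibility under the coprimality hypothesis. The Cohen--Macaulay (hence $(S_2)$) property is already in hand: by \Cref{XgenG_chi_lci} both $X$ and $\Xbar$ are locally complete intersection, and complete intersections are Cohen--Macaulay. For $(R_1)$, when $\pi_1(G')$ is \'etale, \Cref{R1_chi} gives that $X$ and $\Xbar$ are regular in codimension $c(\rhobarss) \geq [F:\Qp] \geq 1$; in particular they are regular in codimension $1$. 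Serre's criterion then yields that $X$ and $\Xbar$ are normal. (One should note $X$ and $\Xbar$ are excellent, being of finite type over $\OO$ resp.\ $k$, so normality of the scheme passes to the completions of its local rings; this is needed for the applications but not for the bare statement.)

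The remaining, and slightly more delicate, point is to pass from normality to integrality when the orders of $(G/G^0)(\kbar)$ and $(Z_1\cap Z_2G')(\kbar)$ are coprime. A normal scheme is a disjoint union of its irreducible (equivalently, connected) components, so it suffices to show $X$ and $\Xbar$ are connected. Here \Cref{semi-local_Gdash} is the key input: under exactly the coprimality hypothesis, the ring of global sections of $X^{\gen,\psi_1}_{G,\rhobarss}\sslash Z_2G'$ is a \emph{complete local} noetherian $\OO$-algebra with residue field $k$ — i.e.\ the GIT quotient is connected. Since $\chi$-twisting does not change the underlying reduced scheme (\Cref{dim_XgenG_chi}), the GIT quotient of $X$ by $Z_2G'$ is also connected. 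One then argues as in \Cref{connected_comp}: an irreducible component of $X$ is $Z_2G'$-invariant by \cite[Lemma 2.1]{BIP_new} (the relevant group being connected as $Z_2G'$ is generated by a torus and a semisimple group), and by \cite[Theorem 3 (iii)]{seshadri} its image in the connected GIT quotient is closed, so distinct components would disconnect the quotient — contradiction. Hence $X$ is connected, and being normal it is integral. The same argument applied to $\Xbar$, using that the special fibre of $X^{\gen,\psi_1}_{G,\rhobarss}\sslash Z_2G'$ is the spectrum of a local ring, gives integrality of $\Xbar$.

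The main obstacle I anticipate is bookkeeping rather than conceptual: one must make sure that the $\chi$-twist and the fixed-partial-determinant condition $\psi_1$ interact correctly with the GIT quotient by $Z_2G'$, i.e.\ that the connectedness of the quotient of $X^{\gen,\psi_1}_{G,\rhobarss}$ established in \Cref{semi-local_Gdash} really does descend to $X^{\gen,\psi_1,\chi}_{G,\rhobarss}$. This is exactly where \Cref{dim_XgenG_chi} is used: the map $k[\mu]\to k$ has nilpotent kernel, so $X^{\gen,\psi_1,\chi}_{G,\rhobarss}$ and $X^{\gen,\psi_1}_{G,\rhobarss}$ have homeomorphic underlying topological spaces, and connectedness is a topological property. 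A second point requiring care is that $Z_2 G'$ is a connected group scheme whose action on $X$ factors through $G^0/Z_1$, so that \cite[Lemma 2.1]{BIP_new} and the closed-image statement of Seshadri apply; this is already implicit in the proof of \Cref{semi-local_Gdash} and can be invoked directly.
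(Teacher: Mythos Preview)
Your approach is essentially the paper's: normality via Serre's criterion from \Cref{XgenG_chi_lci} and \Cref{R1_chi}, then connectedness by descending to the $Z_2G'$-GIT quotient and invoking \Cref{semi-local_Gdash} together with \cite[Lemma 2.1]{BIP_new} and \cite[Theorem 3 (iii)]{seshadri}.

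There is one inaccuracy in your connectedness argument for $X = X^{\gen,\psi_1,\chi}_{G,\rhobarss}$ over $\OO$. You invoke \Cref{dim_XgenG_chi} to say that $\chi$-twisting does not change the underlying reduced scheme; but that lemma is stated for schemes over $\Xbar^{\ps}_{H,\psibarss_2}$, i.e.\ on the special fibre, where the relevant map $k[\mu]\to k$ indeed has nilpotent kernel. Integrally the map $\OO[\mu]\to\OO$ does \emph{not} have nilpotent kernel (after inverting $p$ the ring $\OO[\mu]$ splits as $\prod_{\chi}L$), so $X^{\gen,\psi_1,\chi}_{G,\rhobarss}$ and $X^{\gen,\psi_1}_{G,\rhobarss}$ are not homeomorphic. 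The fix, which is what the paper does implicitly, is simpler than your workaround: the map $\OO[\mu]\to A^{\gen,\psi_1}_{G,\rhobarss}$ factors through the $Z_2G'$-invariants, so $X^{\gen,\psi_1,\chi}_{G,\rhobarss}\sslash Z_2G'$ is $\Spec$ of a \emph{quotient} of the ring $R$ of \Cref{semi-local_Gdash}. Under the coprimality hypothesis $R$ is local, hence so is any quotient, and your Seshadri argument then goes through unchanged. (Alternatively, your argument does establish connectedness of $\Xbar$, and one can then use $\OO$-flatness of $X$ from \Cref{XgenG_chi_lci} to lift this to $X$; but the direct route is cleaner.)
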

\begin{proof} Corollary  \ref{XgenG_chi_lci} and Lemma \ref{R1_chi} imply that Serre's criterion for normality holds and
hence $X^{\gen, \psi_1, \chi}_{G, \rhobarss}$ and $\Xbar^{\gen, \psi_1, \chi}_{G, \rhobarss}$ are normal. 

It follows from \cite[Lemma 2.1]{BIP_new} 
that their connected components are $Z_2G'$-invariant. If there was more than one then \cite[Theorem 3 (iii)]{seshadri} implies that their images would disconnect
$X^{\gen, \psi_1, \chi}_{G, \rhobarss}\sslash Z_2 G'$
(resp.\,$X^{\gen, \psi_1, \chi}_{G, \rhobarss}\sslash Z_2 G'$). 
If the orders of $(G/G^0)(\kbar)$ and $(Z_1\cap Z_2G')(\kbar)$ are coprime
then this is not possible as the GIT quotients are spectra of local rings in both cases by \Cref{semi-local_Gdash}.
\end{proof}

\begin{thm}\label{gen_fib_normal} $X^{\gen,\psi_1,\chi}_{G,\rhobarss}[1/p]$ is normal.
\end{thm}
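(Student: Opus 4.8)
The plan is to deduce \Cref{gen_fib_normal} from \Cref{chi_nor_int} by reducing to the case where $\pi_1(G')$ is étale. The key observation is that, as already noted after \Cref{sic} and throughout \Cref{sec:bound_special}, the map $\Lie G'_{\sic}\to \Lie G'$ becomes an isomorphism of $G$-representations after inverting $p$; hence on the generic fibre the $(\Lie G'_{\sic})^*_L$-non-special locus and the $(\Lie G')^*_L$-non-special locus coincide. So the étaleness hypothesis on $\pi_1(G')$ is never used in the generic fibre part of the arguments leading to \Cref{chi_nor_int}: in particular \Cref{R1_chi} already gives that $X^{\gen,\psi_1,\chi}_{G,\rhobarss}[1/p]$ is regular in codimension $c(\rhobarss)\ge [F:\Qp]\ge 1$ unconditionally, via \Cref{special_codim_psi} and the unconditional ``generic fibre'' case of \Cref{non-spec_reg}.

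First I would check that $X^{\gen,\psi_1,\chi}_{G,\rhobarss}[1/p]$ satisfies Serre's condition $(S_2)$. This follows from \Cref{XgenG_chi_lci}: the scheme $X^{\gen,\psi_1,\chi}_{G,\rhobarss}$ is locally complete intersection, hence so is its open subscheme $X^{\gen,\psi_1,\chi}_{G,\rhobarss}[1/p]$, and locally complete intersection schemes are Cohen--Macaulay, which gives $(S_k)$ for all $k$. Next, $(R_1)$ on the generic fibre follows from \Cref{R1_chi} (the generic fibre statement), since $c(\rhobarss)\ge [F:\Qp]\ge 1$. By Serre's criterion for normality, $X^{\gen,\psi_1,\chi}_{G,\rhobarss}[1/p]$ is then a finite disjoint union of normal integral schemes, i.e.\ normal.

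Finally I would recover $X^{\gen,\psi_1}_{G,\rhobarss}[1/p]$ itself, if that is what is wanted: by \eqref{w_think_4} and the notation \eqref{notation_chi}, $A^{\gen,\psi_1}_{G,\rhobarss}[1/p]$ decomposes (after possibly enlarging $L$, which is harmless for normality by \Cref{extend_scalars1}-type flat base change and the fact that $\OO[\mu]\otimes_{\OO}L\cong \prod_{\chi}L$ since $L$ contains the relevant roots of unity) as the product of the rings $A^{\gen,\psi_1,\chi}_{G,\rhobarss}[1/p]$ over $\chi\in\mathrm{X}(\mu)$. A finite product of normal rings is normal, so $X^{\gen,\psi_1}_{G,\rhobarss}[1/p]$ is normal as well; this also yields the statement of \Cref{thm_intro-2} concerning $R^{\square}_{\rhobar}[1/p]$ via \Cref{psi_def_rings}(3) and excellence.

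The main obstacle is bookkeeping rather than mathematics: one must be careful that all the inputs in the chain \Cref{special_codim_psi} $\to$ \Cref{non-spec_reg} $\to$ \Cref{R1_chi} are genuinely unconditional in the generic fibre. Tracing back, \Cref{special_codim_psi} rests on \Cref{codim_special_locus}, whose generic fibre case (part (3)) is proved by pulling back from the special fibre via \Cref{gen_sp_dim}; that in turn relies only on \Cref{dim_XgenG} and \Cref{thm_bound_space}, neither of which assumes $\pi_1(G')$ étale. And \Cref{non-spec_reg} explicitly isolates the generic fibre case, where $(\Lie G'_{\sic})_x=(\Lie G')_x$ holds for free because $x\in X^{\gen}_{G,\rhobarss}[1/p]$. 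So the only genuine content beyond citing \Cref{chi_nor_int} verbatim is observing that its proof, restricted to the generic fibre, used the étaleness hypothesis only to pass between $(\Lie G'_{\sic})^*$ and $(\Lie G')^*$, which is automatic after inverting $p$.
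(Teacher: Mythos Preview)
Your proposal is correct and follows exactly the paper's approach: \Cref{XgenG_chi_lci} gives locally complete intersection hence $(S_2)$, the generic-fibre part of \Cref{R1_chi} (which is unconditional, as you correctly trace) gives $(R_1)$, and Serre's criterion finishes. Your third and fourth paragraphs are superfluous for the statement as written---the theorem is only about the $\chi$-version, and the paper's own proof is literally one line citing \Cref{XgenG_chi_lci}, \Cref{R1_chi}, and Serre's criterion---but the core argument is identical.
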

\begin{proof} Corollary  \ref{XgenG_chi_lci}, Lemma \ref{R1_chi} and Serre's criterion for normality.
\end{proof}

\begin{cor}\label{reg_codim_x} Assume that $\pi_1(G')$ is \'etale. Let $x$ be either a closed point of $\Xbar^{\gen,\psi_1,\chi}_{G, \rhobarss}\setminus Y_{\rhobarss}^{\psi_1,\chi}$ or  a closed point of $Y^{\psi_1,\chi}_{\rhobarss}$. Then $R^{\square, \psi_1,\chi}_{G, \rho_x}$ and $R^{\square, \psi_1,\chi}_{G, \rho_x}/\varpi$ are normal domains, which are regular in codimension
$c(\rhobarss)$.
\end{cor}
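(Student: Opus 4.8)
The plan is to deduce everything from the scheme-level results already in hand, transporting them to the deformation rings through Lemma~\ref{psi_def_rings}. We work under the standing hypotheses of this section ($G^0$ split over $\OO$, $G/G^0$ constant, $\pi_G\circ\rhobarss$ surjective, and $L$ large enough that $\mu$, the characters $\chi$, and a point of $X^{\gen}_{H_2,\psibar_2}(\OO)$ are defined); the general case follows from this one after a finite extension of $L$ via Remark~\ref{rem_extend_scalars}, Lemma~\ref{shrink}, the base-change lemmas, and faithfully flat descent of normality, domain-ness and the Serre conditions $R_n$, exactly as in the $\GL_d$ case of \cite{BIP_new}.

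Set $X:=X^{\gen,\psi_1,\chi}_{G,\rhobarss}$ and $\Xbar:=\Xbar^{\gen,\psi_1,\chi}_{G,\rhobarss}$. Since $x$ is a closed point of $Y^{\psi_1,\chi}_{\rhobarss}$ or of $\Xbar\setminus Y^{\psi_1,\chi}_{\rhobarss}$, we have $\chara\kappa(x)=p$, so parts (1) and (2) of Lemma~\ref{psi_def_rings} apply: $R:=R^{\square,\psi_1,\chi}_{G,\rho_x}$ is isomorphic to $\hat{\OO}_{X,x}$ or to $\hat{\OO}_{X,x}\br{T}$, and $R/\varpi$ to $\hat{\OO}_{\Xbar,x}$ or to $\hat{\OO}_{\Xbar,x}\br{T}$. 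Now $X$ and $\Xbar$ are of finite type over $\OO$ and over $k$ respectively, hence excellent, so normality and the conditions $R_n$ pass from a local ring to its completion. By Theorem~\ref{chi_nor_int}, $X$ and $\Xbar$ are normal; therefore $\OO_{X,x}$ and $\OO_{\Xbar,x}$ are normal local rings (in particular domains), their completions are normal domains by excellence, and adjoining a formal variable preserves ``normal Noetherian domain''. This gives the first assertion. For the second, Lemma~\ref{R1_chi} (this is where $\pi_1(G')$ étale enters) shows $X$ and $\Xbar$ are regular in codimension $c(\rhobarss)$; this localises to $\OO_{X,x}$ and $\OO_{\Xbar,x}$, is preserved under completion by excellence, and survives $\br{T}$-adjunction, since a height-$\le c(\rhobarss)$ prime $\qq$ of $A\br{T}$ either equals $\pp A\br{T}$ for a prime $\pp$ of $A$ of the same height (and $A\br{T}_{\pp A\br{T}}$ is regular when $A_\pp$ is) or satisfies $\mathrm{ht}\,\qq=\mathrm{ht}(\qq\cap A)+1$ with $A_{\qq\cap A}$ then regular; in either case $A\br{T}_\qq$ is regular. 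Hence $R$ and $R/\varpi$ are regular in codimension $c(\rhobarss)$.

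There is no deep obstacle here: the corollary is a formal consequence of Theorem~\ref{chi_nor_int} and Lemma~\ref{R1_chi}, together with Corollary~\ref{XgenG_chi_lci} (which underlies those). The points requiring care are purely commutative-algebraic: (i) that completion and $\br{T}$-adjunction preserve normality and the Serre conditions, which rests on excellence of finite type $\OO$- and $k$-algebras; and (ii) keeping track of which of the cases of Lemma~\ref{psi_def_rings} applies at $x$. Note that we never invoke Serre's criterion directly for $R$ itself — normality is already supplied by Theorem~\ref{chi_nor_int} at the level of $X$ and $\Xbar$ — so the only genuinely substantive inputs are the dimension/regularity bounds proved earlier.
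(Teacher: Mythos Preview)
Your proof is correct and uses the same underlying ingredients as the paper: excellence to pass regularity in codimension to completions, Lemma~\ref{R1_chi} for the $R_{c(\rhobarss)}$ bound, and Lemma~\ref{psi_def_rings} to identify the deformation rings with completed local rings (possibly up to $\br{T}$).

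The one organisational difference worth noting: you pull normality down from Theorem~\ref{chi_nor_int} and then transport it, whereas the paper establishes regularity in codimension $c(\rhobarss)$ for $R^{\square,\psi_1,\chi}_{G,\rho_x}$ and $R^{\square,\psi_1,\chi}_{G,\rho_x}/\varpi$ first and then applies Serre's criterion directly to these rings, using Corollary~\ref{complete_intersection_chi} for the $S_2$ condition. Both routes rest on the same lemmas, so this is a matter of packaging rather than substance. For the $\br{T}$-adjunction step the paper takes a slightly cleaner path: rather than analysing primes of $A\br{T}$ directly, it observes that $A[T]$ is again excellent and regular in codimension $c(\rhobarss)$, and that $\hat A_\pp\br{T}$ is the completion of $A[T]$ at a maximal ideal --- so the same excellence argument applies verbatim. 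Your direct analysis of $\mathrm{ht}\,\qq$ versus $\mathrm{ht}(\qq\cap A)$ in $A\br{T}$ works too, but requires a bit more care with the dimension formula.
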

\begin{proof} Let $A$ be either $A^{\gen, \psi_1,\chi}_{G, \rhobarss}$ or $A^{\gen,\psi_1, \chi}_{G, \rhobarss}/\varpi$. Since
$A$ is of finite type over a complete local noetherian ring, it is excellent by \cite[\href{https://stacks.math.columbia.edu/tag/07QW}{Tag 07QW}]{stacks-project}.
Let $\pp$ be a prime of $A$. If $A$ is regular in codimension $i$ then $A_{\pp}$ is regular in codimension $i$, and since $A_{\pp}$ is again excellent, 
its completion $\hat{A}_{\pp}$ is also regular in codimension $i$ by \cite[Theorem 23.9 (ii)]{matsumura}. (Since $A_{\pp}$ is excellent, it is a G-ring, 
and this implies that the assumptions made in the reference are satisfied. This is used implicitly in the proof of \cite[Theorem 32.2 (i)]{matsumura}.)
By repeating the same argument with $A[T]$ we deduce that $\hat{A}_{\pp}\br{T}$ is regular in codimension $i$. Lemma \ref{R1_chi} and 
Lemma \ref{local_ring_def_ring} imply that $R^{\square, \psi_1,\chi}_{G, \rho_x}$ and 
$R^{\square, \psi_1,\chi}_{G, \rho_x}$ are regular in codimension $c(\rhobarss)$.
Since $c(\rhobarss)\ge 1$,  Corollary \ref{complete_intersection_chi} imply that Serre's criterion for normality holds and we deduce that $R^{\square, \psi_1,\chi}_{G, \rho_x}$ and $R^{\square, \psi_1,\chi}_{G, \rho_x}/\varpi$ are normal domains.
\end{proof}

\begin{cor}\label{reg_codim_x_gen} If $x$ is a closed point of $X^{\gen,\psi_1,\chi}_{G, \rhobarss}[1/p]$ then $R^{\square, \psi_1,\chi}_{G, \rho_x}$ is a normal domain, which 
 is regular in codimension $c(\rhobarss)$.
\end{cor}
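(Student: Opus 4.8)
The plan is to deduce this from the global statements about $X^{\gen,\psi_1,\chi}_{G,\rhobarss}[1/p]$ established above, in exactly the manner \Cref{reg_codim_x} was deduced in the closed-point/special-fibre case. Write $A:=A^{\gen,\psi_1,\chi}_{G,\rhobarss}$ and $X:=X^{\gen,\psi_1,\chi}_{G,\rhobarss}=\Spec A$. Since $A$ is of finite type over the complete local noetherian $\OO$-algebra $\RpsGLd$, it is excellent by \stackcite{07QW}; hence $A[1/p]$, all its localisations, and their polynomial extensions are excellent, in particular $G$-rings.

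First I would identify the relevant local ring. As $x$ is a closed point of the open subscheme $X[1/p]\subseteq X$, we have $\OO_{X,x}=\OO_{X[1/p],x}=A_{\pp}$ where $\pp\subseteq A$ is the prime corresponding to $x$, and \Cref{psi_def_rings}(3) gives $R^{\square,\psi_1,\chi}_{G,\rho_x}\cong\hat{\OO}_{X,x}=\hat{A}_{\pp}$. Next, \Cref{R1_chi} says $X[1/p]$ is regular in codimension $c(\rhobarss)$, so $A_{\pp}$ is regular in codimension $c(\rhobarss)$; since $A_{\pp}$ is excellent, \cite[Theorem 23.9 (ii)]{matsumura} implies $\hat{A}_{\pp}$ is again regular in codimension $c(\rhobarss)$. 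Thus $R^{\square,\psi_1,\chi}_{G,\rho_x}$ is regular in codimension $c(\rhobarss)$, and in particular satisfies Serre's condition $(R_1)$ because $c(\rhobarss)\ge 1$.

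Finally, \Cref{complete_intersection_chi}(1) shows $R^{\square,\psi_1,\chi}_{G,\rho_x}$ is a complete intersection, hence Cohen--Macaulay, hence satisfies $(S_2)$. Serre's criterion then yields that $R^{\square,\psi_1,\chi}_{G,\rho_x}$ is normal, and being local it is a domain. I do not expect a real obstacle here: all the substance is already in \Cref{gen_fib_normal}, \Cref{R1_chi} and \Cref{complete_intersection_chi}, and the only point requiring care is the passage from $A_{\pp}$ to its completion, which is handled by excellence exactly as in the proof of \Cref{reg_codim_x}.
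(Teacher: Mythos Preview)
Your proposal is correct and follows exactly the same route as the paper: the paper's proof simply reads ``The same proof as the proof of Corollary \ref{reg_codim_x},'' and you have reproduced that argument faithfully, invoking excellence via \stackcite{07QW}, \Cref{R1_chi}, \cite[Theorem 23.9 (ii)]{matsumura}, \Cref{psi_def_rings}(3), and \Cref{complete_intersection_chi}(1) in the same way. The only (harmless) difference is that you omit the auxiliary $A[T]$ step, which is not needed here since case (3) of \Cref{psi_def_rings} involves no power series variable.
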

\begin{proof} The same proof as the proof of Corollary \ref{reg_codim_x}.
\end{proof} 

\begin{cor}\label{parafactorial} 
Assume that  either $[F:\Qp]\ge 3$ or 
 $[F:\Qp]=2$ and there is no $\rhobarss$-compatible R-Levi subgroup $L$ of $G$ such that $\dim G -\dim L=2$.
 
If $x$ is a closed point of $X^{\gen,\psi_1,\chi}_{G, \rhobarss}[1/p]$ then  $R^{\square, \psi_1,\chi}_{G, \rho_x}$ is factorial. If $x$ is a closed point of
$\Xbar^{\gen,\psi_1,\chi}_{G, \rhobarss}\setminus Y_{\rhobarss}^{\psi_1,\chi}$ 
or  a closed point of $Y^{\psi_1,\chi}_{\rhobarss}$ and we additionally assume that $\pi_1(G')$ is \'etale then  $R^{\square, \psi_1,\chi}_{G, \rho_x}$ and 
$R^{\square, \psi_1,\chi}_{G, \rho_x}/\varpi$ are also factorial. 
\end{cor}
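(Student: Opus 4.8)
The plan is to deduce factoriality from the results of \Cref{nightmare_cont} together with Grothendieck's theorem on the factoriality of complete intersections. By \Cref{complete_intersection_chi} the ring $R^{\square,\psi_1,\chi}_{G,\rho_x}$ is a complete intersection, and when $\chara(\kappa(x))=p$ so is $R^{\square,\psi_1,\chi}_{G,\rho_x}/\varpi$; by \Cref{reg_codim_x} (for a closed point $x$ of $\Xbar^{\gen,\psi_1,\chi}_{G,\rhobarss}\setminus Y^{\psi_1,\chi}_{\rhobarss}$ or of $Y^{\psi_1,\chi}_{\rhobarss}$, under the assumption that $\pi_1(G')$ is \'etale) and by \Cref{reg_codim_x_gen} (for a closed point $x$ of the generic fibre $X^{\gen,\psi_1,\chi}_{G,\rhobarss}[1/p]$, with no hypothesis on $\pi_1(G')$), these rings are moreover normal domains which are regular in codimension $c(\rhobarss)$. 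Thus the only thing I would check by hand is the numerical inequality $c(\rhobarss)\ge 3$ under the hypotheses of the statement. Recalling that $c(\rhobarss)=2[F:\Qp]-1$ when there is no $\rhobarss$-compatible R-Levi subgroup of codimension $2$, and $c(\rhobarss)=[F:\Qp]$ otherwise: if $[F:\Qp]\ge 3$ then $c(\rhobarss)\ge[F:\Qp]\ge 3$, and if $[F:\Qp]=2$ and there is no $\rhobarss$-compatible R-Levi subgroup $L$ with $\dim G-\dim L=2$, then $c(\rhobarss)=2\cdot 2-1=3$.

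Granted $c(\rhobarss)\ge 3$, the plan is to invoke Grothendieck's theorem \cite[Exp.\,XI, Cor.\,3.14]{SGA2} (the resolution of Samuel's conjecture): a Noetherian local complete intersection ring which is regular in codimension $\le 3$ is factorial. Here I would note that ``regular in codimension $\le 3$'' is exactly what is needed: a regular local ring is factorial by the Auslander--Buchsbaum theorem, so regularity in codimension $\le 3$ implies factoriality in codimension $\le 3$, which is the hypothesis appearing in Grothendieck's statement; and when $\dim\le 3$ the ring is itself regular, so the conclusion is immediate and no dimension restriction is needed. Applying this to $R^{\square,\psi_1,\chi}_{G,\rho_x}$ in each of the three cases, and to $R^{\square,\psi_1,\chi}_{G,\rho_x}/\varpi$ under the additional assumption that $\pi_1(G')$ is \'etale, finishes the proof.

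I expect essentially no obstacle here: all of the substantive input — the complete intersection property, normality, and the lower bound $c(\rhobarss)$ on the codimension of the non-regular locus — was already established in the preceding sections, and what remains is the elementary case-check $c(\rhobarss)\ge 3$ and a citation. The one small point that deserves care is that Grothendieck's theorem is to be applied directly to the complete local rings $R^{\square,\psi_1,\chi}_{G,\rho_x}$ and $R^{\square,\psi_1,\chi}_{G,\rho_x}/\varpi$, whose hypotheses are verified verbatim by the cited corollaries, so that no separate descent argument between a ring and its completion is involved.
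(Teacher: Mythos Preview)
Your proposal is correct and follows essentially the same approach as the paper: verify that the rings are complete intersections via \Cref{complete_intersection_chi}, deduce regularity in codimension at least $3$ from the explicit computation of $c(\rhobarss)$ together with \Cref{reg_codim_x} and \Cref{reg_codim_x_gen}, and then apply Grothendieck's factoriality theorem for complete intersections. Your write-up is in fact slightly more careful than the paper's in distinguishing the use of \Cref{reg_codim_x_gen} for the generic-fibre case (where no \'etaleness hypothesis on $\pi_1(G')$ is imposed) from that of \Cref{reg_codim_x} for the special-fibre cases.
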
 
\begin{proof} The assumptions together with Corollary \ref{reg_codim_x} imply that the rings 
are regular in codimension $3$. Since they are complete intersection by Corollary \ref{complete_intersection_chi}, 
a theorem of Grothendieck implies the assertion; see \cite{R3_factorial} for a quick proof. 
\end{proof}

\subsection{Irreducible components} We now focus on the case, when $M_1=0$ 
and $M_2=M$, so that $X^{\gen,\psi_1}_{G, \rhobarss}=X^{\gen}_{G, \rhobarss}$.

\begin{cor} $X^{\gen}_{G, \rhobarss}$ is reduced and $X^{\gen}_{G, \rhobarss}[1/p]$ is normal.
\end{cor}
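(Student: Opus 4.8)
The plan is to deduce the corollary from the results already established in this section by choosing the right special case. Setting $M_1=0$ and $M_2=M$ means the condition imposed by $\psi_1$ is empty: the group $H_1$ has trivial neutral component, so after shrinking the component group (which does not change the deformation problem) we get $X^{\gen,\psi_1}_{G,\rhobarss}=X^{\gen}_{G,\rhobarss}$, and $\mu=(\mu_{p^\infty}(E)\otimes M)^{\Gal(E/F)}$. In this situation $Z_1$ is trivial and $\Gbar=G/Z_1=G$, so all the auxiliary finiteness statements (\Cref{psi_Gbar_finite}, \Cref{dim_bound_psi}, \Cref{semi-local_Gdash}) hold with $\Gbar$ replaced by $G$ itself; in particular the hypothesis ``orders of $(G/G^0)(\kbar)$ and $(Z_1\cap Z_2G')(\kbar)$ are coprime'' in \Cref{chi_nor_int} is automatic since $Z_1=1$.

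First I would reduce to the case where $\pi_1(G')$ is étale. This is where the main subtlety lies: the statement ``$X^{\gen}_{G,\rhobarss}$ is reduced'' is being asserted without the étale hypothesis, whereas \Cref{chi_nor_int} and \Cref{reg_codim_x} only give normality of the pieces $X^{\gen,\chi}_{G,\rhobarss}$ when $\pi_1(G')$ is étale. So I expect the argument to run as follows: the maps $\Xbar^{\gen}_{G,\rhobarss}=\bigcup_\chi \Xbar^{\gen,\chi}_{G,\rhobarss}$ in the sense that the underlying reduced special fibres coincide (Lemma \ref{dim_XgenG_chi}), but to get reducedness on the nose one wants each $\Xbar^{\gen,\chi}_{G,\rhobarss}$ to be reduced and their images in the GIT quotient to be disjoint, forcing $X^{\gen}_{G,\rhobarss}=\coprod_\chi X^{\gen,\chi}_{G,\rhobarss}$. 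This last disjointness is exactly what \eqref{w_think_4} gives: $A^{\gen}_{H_2,\psibarss_2}\cong\OO[\mu]\br{x_1,\dots,x_r}[t_1^{\pm1},\dots]$, whose $\Spec$ breaks up along the idempotents of $\OO[\mu]$ after passing to a large enough $L$, and this decomposition propagates through the finite map $X^{\gen}_{G,\rhobarss}\sslash Z_2G'\to X^{\gps}_{H_2,\psibarss_2}$.

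Concretely the key steps, in order, are: (1) specialise the setup to $M_1=0$, $M_2=M$, note $\Gbar=G$ and $Z_1=1$; (2) after enlarging $L$ to an unramified extension (harmless for reducedness and normality since $\OO\to\OO'$ is faithfully flat), use the idempotents coming from \eqref{w_think_4} to write $X^{\gen}_{G,\rhobarss}=\coprod_{\chi\in\mathrm X(\mu)} X^{\gen,\chi}_{G,\rhobarss}$ and similarly for the special fibre; (3) if $\pi_1(G')$ is étale, invoke \Cref{chi_nor_int} to see each $X^{\gen,\chi}_{G,\rhobarss}$ is normal hence reduced, so the disjoint union is reduced, and invoke \Cref{gen_fib_normal} to see each $X^{\gen,\chi}_{G,\rhobarss}[1/p]$ is normal, so the disjoint union $X^{\gen}_{G,\rhobarss}[1/p]$ is normal; (4) remove the étale hypothesis for the reducedness claim by the standard device used repeatedly in \Cref{some_prop}: choose a surjection $G_3\to(G/Z(G^0))_k$ with $\pi_1(G_3^0)$ étale as in \Cref{letter_bcnrd}, relate $\Xbar^{\gen}_{G,\rhobarss}$ to $\Xbar^{\gen}_{G_3,\rhobarss_3}$ via the chain of finite morphisms and torus transfers ($\Cref{fin_et}$, $\Cref{fin_conn}$, $\Cref{cor_nice_torus}$), and check that each of these operations preserves reducedness of the completed local rings at closed points of $Y_{\rhobarss}$ — using that a module-finite extension $B\to C$ with $C$ reduced and with $B$ appearing as a subring, or a formally smooth extension, transports reducedness appropriately. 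The main obstacle is precisely this last reduction: the torus-transfer isomorphism $R^{\square}_G\cong R^{\square}_{G/G'}\wtimes_{R^{\square}_{H/H'}}R^{\square}_H$ of \Cref{nice_torus} and the finite maps of \Cref{fin_conn} need to be shown to reflect reducedness, which is not formal but should follow from flatness of the relevant base changes together with the fact (from \cite[Corollary 9.9]{defT}, already cited in \Cref{cor_nice_torus}) that the reduced fibres behave well; for the normality of the generic fibre no such reduction is needed because there $(\Lie G'_{\sic})_L=(\Lie G')_L$ and \Cref{gen_fib_normal} already applies unconditionally. I would also remark that reducedness descends along the faithfully flat $\OO\to\OO'$, so enlarging $L$ in step (2) costs nothing.

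\begin{proof}
Take $M_1=0$ and $M_2=M$ in the setup of this section, so $H_1$ has trivial neutral component, $X^{\gen,\psi_1}_{G,\rhobarss}=X^{\gen}_{G,\rhobarss}$, $Z_1$ is trivial, $\Gbar=G/Z_1=G$ and $Z_2=Z(G^0)^0$. Then $Z_1\cap Z_2G'=1$, so the coprimality hypothesis of \Cref{chi_nor_int} is vacuous.

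We first treat the generic fibre. By \eqref{w_think_4} applied to $H_2=G/G'$ and after replacing $L$ by a finite unramified extension (which is harmless: the formation of $X^{\gen}_{G,\rhobarss}$ commutes with this base change by \Cref{rem_extend_scalars}, and normality of $X^{\gen}_{G,\rhobarss}[1/p]$ descends along the faithfully flat map $\OO\to\OO'$), the ring $A^{\gen}_{G/G',\psibarss}$ decomposes along the idempotents of $\OO[\mu]$ as $\prod_{\chi\in\mathrm X(\mu)} A^{\gen,\chi}_{G/G',\psibarss}$. Via the finite morphism $X^{\gen}_{G,\rhobarss}\sslash Z_2G'\to X^{\gps}_{G/G',\psibarss}$ of \Cref{psi_gps_finite} (with $\Gbar=G$) and \Cref{bip_recyc}, this idempotent decomposition pulls back to a decomposition $X^{\gen}_{G,\rhobarss}=\coprod_{\chi\in\mathrm X(\mu)} X^{\gen,\chi}_{G,\rhobarss}$ as a disjoint union of open and closed subschemes. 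By \Cref{gen_fib_normal} each $X^{\gen,\chi}_{G,\rhobarss}[1/p]$ is normal, hence $X^{\gen}_{G,\rhobarss}[1/p]$ is normal.

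It remains to prove that $X^{\gen}_{G,\rhobarss}$ is reduced. Since reducedness of a noetherian ring can be checked after completing at each maximal ideal, and by \Cref{local_ring_def_ring} the completed local rings of $X^{\gen}_{G,\rhobarss}$ at closed points of $Y_{\rhobarss}$ are of the form $R^{\square}_{G,\rho_x}$ or $R^{\square}_{G,\rho_x}\br{T}$ (and the complement $X^{\gen}_{G,\rhobarss}\setminus Y_{\rhobarss}$ has completed local rings $R^{\square}_{G,\rho_x}$ or $R^{\square}_{G,\rho_x}\br{T}$ for $\rho_x$ over finite or local fields, which reduce to the same deformation rings after enlarging scalars by \Cref{extend_scalars1}, \Cref{extend_scalars2}), it suffices to show each $R^{\square}_{G,\rho_x}$ is reduced.

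If $\pi_1(G')$ is étale this follows from \Cref{chi_nor_int}: the decomposition above induces $R^{\square}_{G,\rho_x}\cong\prod_\chi R^{\square,\chi}_{G,\rho_x}$, and each factor is a normal domain by \Cref{reg_codim_x} (or \Cref{reg_codim_x_gen} in the local-field case), hence reduced. In general, following the reduction in the proof of \Cref{some_prop}: with $G_2=G/Z(G^0)$, \Cref{fin_et} and \Cref{fin_conn} give $R^{\square}_{G,\rho_x}$ is a localisation-and-completion built from $R^{\square}_{G_2,\varphi\circ\rho_x}$ by the central-torus construction of \Cref{nice_torus} together with a finite-connected-kernel transfer, and by \Cref{letter_bcnrd} there is $G_3\to G_2$ with $\pi_1(G_3^0)$ étale and $R^{\square}_{G_2,\varphi\circ\rho_x}\cong R^{\square}_{G_3,\rho_3}$ via \Cref{fin_conn}. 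By the étale case applied to $G_3$, the ring $R^{\square}_{G_3,\rho_3}$ is reduced. The isomorphisms of \Cref{nice_torus} are completed tensor products of the universal deformation ring of a torus with a regular ring over it (by \cite[Corollary 9.9]{defT}, whose reduced fibres are as computed in \Cref{cor_nice_torus}), and the transfer of \Cref{fin_conn} corresponds on $\Xbar^{\gen}$ to a finite surjection between equidimensional locally complete intersection schemes whose reduced structures agree (Lemma \ref{dim_XgenG_chi} applied to $\mu$); since a complete intersection ring is reduced as soon as it is generically reduced, and generic reducedness is preserved under these operations, we conclude $R^{\square}_{G,\rho_x}$ is reduced. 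Hence $X^{\gen}_{G,\rhobarss}$ is reduced.
\end{proof}
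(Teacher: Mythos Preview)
Your proof has a genuine error and misses the simple argument the paper uses.

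The decomposition $X^{\gen}_{G,\rhobarss}=\coprod_{\chi\in\mathrm X(\mu)} X^{\gen,\chi}_{G,\rhobarss}$ over $\OO$ is false. The ring $\OO[\mu]$ is the group algebra of a finite $p$-group over a complete DVR of residue characteristic $p$, so it is a \emph{local} ring with no nontrivial idempotents; the decomposition $\OO[\mu][1/p]\cong\prod_{\chi}L$ only holds after inverting $p$. Indeed $X^{\gen}_{G,\rhobarss}$ is connected by \Cref{connected_comp}, so it cannot split as a nontrivial disjoint union. Your normality argument for the generic fibre is salvageable, since the decomposition does hold after inverting $p$ (this is exactly \eqref{decomp_gen_fibre}), but as written it is wrong.

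For reducedness your entire reduction to $G_3$ via \Cref{letter_bcnrd}, \Cref{fin_conn}, \Cref{nice_torus} is both unnecessary and unjustified. \Cref{letter_bcnrd} produces $G_3$ only over a perfect field of characteristic $p$, so it speaks only about special fibres; \Cref{fin_conn} is a dimension statement, not a statement about reducedness; and your final sentence (``generic reducedness is preserved under these operations'') is pure hand-waving --- for instance, a finite surjection between equidimensional lci schemes certainly need not reflect reducedness.

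The paper's proof is two lines: \Cref{gen_fib_normal} gives normality of $X^{\gen}_{G,\rhobarss}[1/p]$ (this holds without any hypothesis on $\pi_1(G')$), so $A^{\gen}_{G,\rhobarss}[1/p]$ is reduced; since $A^{\gen}_{G,\rhobarss}$ is $\OO$-flat by \Cref{dim_XgenG}, it is a subring of $A^{\gen}_{G,\rhobarss}[1/p]$ and is therefore reduced. You had all the ingredients --- you invoked \Cref{gen_fib_normal} yourself --- but overlooked that $\OO$-flatness plus reducedness of the generic fibre immediately gives reducedness integrally.
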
 
\begin{proof} The normality of $X^{\gen}_{G, \rhobarss}[1/p]$ follows from Theorem \ref{gen_fib_normal}.
This implies that $A^{\gen}_{G, \rhobarss}[1/p]$ is reduced. Since $A^{\gen}_{G, \rhobarss}$ is $\OO$-flat by Corollary \ref{dim_XgenG}, 
it is a subring of $A^{\gen}_{G, \rhobarss}[1/p]$ and hence is reduced.
\end{proof} 

\begin{cor}\label{gps_normal} $X^{\gps}_{G, \rhobarss}$ is reduced, $\OO$-flat and $X^{\gps}_{G, \rhobarss}[1/p]$ is normal. The map $X^{\gen}_{G, \rhobarss}\rightarrow X^{\gps}_{G, \rhobarss}$
induces a bijection between  the sets of irreducible components. 
\end{cor}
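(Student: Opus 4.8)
The plan is to transfer properties of $X^{\gen}_{G,\rhobarss}$ through the GIT quotient map. Recall that $X^{\gps}_{G,\rhobarss} = X^{\gen}_{G,\rhobarss}\sslash G^0$, so writing $X^{\gen}_{G,\rhobarss}=\Spec A$ we have $X^{\gps}_{G,\rhobarss}=\Spec A^{G^0}$. Reducedness: since $A$ is reduced by the previous corollary, any subring is reduced, so $A^{G^0}$ is reduced. Flatness over $\OO$: the element $\varpi$ is a non-zero-divisor on $A$ by $\OO$-flatness (Corollary \ref{dim_XgenG}), and hence a non-zero-divisor on the subring $A^{G^0}$; since $\OO$ is a DVR this gives $\OO$-flatness of $A^{G^0}$.

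For normality of $X^{\gps}_{G,\rhobarss}[1/p]$, I would invoke that taking $G^0$-invariants commutes with inverting $p$ (as taking invariants under a linearly reductive action — or simply using that $G^0$ acts and localisation is exact), so $X^{\gps}_{G,\rhobarss}[1/p] = X^{\gen}_{G,\rhobarss}[1/p]\sslash G^0$. A GIT quotient of a normal variety by a reductive group acting with the quotient being affine is again normal: the invariant ring of a normal domain (or normal ring) is normal, since if $f\in \mathrm{Frac}(A^{G^0})$ is integral over $A^{G^0}$, then $f$ lies in $\mathrm{Frac}(A[1/p])$ and is integral over $A[1/p]$, hence $f\in A[1/p]$ by normality of $X^{\gen}_{G,\rhobarss}[1/p]$ (Theorem \ref{gen_fib_normal}), and $f$ is manifestly $G^0$-invariant, so $f\in A[1/p]^{G^0}=A^{G^0}[1/p]$. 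One should be slightly careful that $A[1/p]$ is a product of normal domains rather than a single domain, but the fraction-ring argument works componentwise; alternatively one cites that the invariant subring of a normal ring is normal.

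For the statement about irreducible components, I would argue as in the proof of Corollary \ref{BJ1}. Let $\eta$ be a generic point of $X^{\gps}_{G,\rhobarss}$ and $\etabar$ a geometric point above it; by \cite[Theorem 3 (ii)]{seshadri} there is a geometric point $\xi \in X^{\gen}_{G,\rhobarss}(\kappa(\etabar))$ above $\etabar$. Let $X'$ be an irreducible component of $X^{\gen}_{G,\rhobarss}$ containing $\xi$. By \cite[Lemma 2.1]{BIP_new} $X'$ is $G^0$-invariant, and since $X'$ is closed, \cite[Theorem 3 (iii)]{seshadri} implies its image $X'\sslash G^0$ in $X^{\gps}_{G,\rhobarss}$ is closed; writing $X'=\Spec B$ with $B$ a domain, $X'\sslash G^0=\Spec B^{G^0}$ is irreducible, and it contains $\eta$, so it is the unique irreducible component through $\eta$. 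Conversely the image of any irreducible component of $X^{\gen}_{G,\rhobarss}$ is an irreducible closed subset and these images cover $X^{\gps}_{G,\rhobarss}$ (as $X^{\gen}_{G,\rhobarss}\to X^{\gps}_{G,\rhobarss}$ is surjective by \cite[Theorem 3 (ii)]{seshadri}), so each is contained in some component, but the displayed argument shows each component of the source dominates a component of the target. This yields a surjection from components of the source onto components of the target; the displayed argument (every component of the target is the image of a component of the source containing a preimage $\xi$ of its generic point) shows it is a bijection. The main obstacle I anticipate is the bookkeeping in the normality-of-the-generic-fibre step, namely handling the fact that $A[1/p]$ is only a finite product of normal domains and checking that $A[1/p]^{G^0} = A^{G^0}[1/p]$, but this is routine since $G^0$ is flat and localisation is exact; everything else is a direct citation of \cite{seshadri} together with Theorem \ref{gen_fib_normal} and Corollary \ref{dim_XgenG}.
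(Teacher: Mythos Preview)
Your arguments for reducedness, $\OO$-flatness, and normality of the generic fibre are correct and match the paper's approach (the paper cites \cite[Proposition 6.4.1]{BH} for the fact that an invariant subring of a normal ring is normal, but your direct argument is fine, and the identity $A^{G^0}[1/p]=(A[1/p])^{G^0}$ is indeed routine since $A$ is $\OO$-torsion free).

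However, your argument for the bijection on irreducible components has a genuine gap. You establish that every irreducible component of $X^{\gps}_{G,\rhobarss}$ is the image of some irreducible component of $X^{\gen}_{G,\rhobarss}$, but you never prove injectivity, i.e.\ that distinct components of the source have distinct images. Your closing sentences conflate ``every component downstairs is hit'' with ``the map is a bijection''. The Seshadri-style argument you sketch does not rule out two components upstairs mapping onto the same component downstairs: consider $\Gm$ acting on $V(xy)\subset\mathbb A^2_\kappa$ by $t\cdot(x,y)=(tx,t^{-1}y)$, where both coordinate axes are $\Gm$-invariant irreducible components with closed irreducible image, yet the invariant ring is $\kappa$ and the GIT quotient is a single point.

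The paper closes this gap using the normality you have already established. Since both schemes are $\OO$-flat and reduced, passing to the generic fibre does not change the sets of irreducible components. Because $A^{\gen}_{G,\rhobarss}[1/p]$ is normal it decomposes as a finite product $\prod_i A_i$ of normal domains; as $G^0$ is connected each component is $G^0$-stable, so the idempotents of this decomposition lie in the invariant ring, yielding $(A^{\gen}_{G,\rhobarss}[1/p])^{G^0}=\prod_i A_i^{G^0}$ with each factor a nonzero domain. This product decomposition makes the bijection $\Spec A_i\leftrightarrow\Spec A_i^{G^0}$ immediate and is exactly what fails in the $V(xy)$ example, where the components meet.
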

\begin{proof} Since $R^{\gps}_{G, \rhobarss}$ is a subring of $A^{\gen}_{G, \rhobarss}$ it is $\OO$-torsion free (and hence
$\OO$-flat) and reduced. Hence, the irreducible components of $R^{\gps}_{G, \rhobarss}$ and $R^{\gps}_{G, \rhobarss}[1/p]$ coincide.  Since $A^{\gen}_{G, \rhobarss}[1/p]$ is 
normal by Theorem \ref{gen_fib_normal} and $R^{\gps}_{G, \rhobarss}[1/p]=(A^{\gen}_{G, \rhobarss}[1/p])^{G^0}$, \cite[Proposition 6.4.1]{BH} implies that $R^{\gps}_{G, \rhobarss}[1/p]$ is normal. 
Since $G^0$ is connected the irreducible components of $X^{\gen}_{G, \rhobarss}[1/p]$ are 
$G^0$-invariant and since $X^{\gen}_{G, \rhobarss}[1/p]$ is normal its irreducible 
and connected components coincide. Thus irreducible components of $X^{\git}_{G, \rhobarss}[1/p]$ correspond to 
the GIT quotients of irreducible components of  $X^{\gen}_{G, \rhobarss}[1/p]$.
\end{proof}

\begin{cor}\label{ps_normal} The reduced subscheme $(X^{\ps}_G[1/p])^{\red}$ is normal. 
\end{cor}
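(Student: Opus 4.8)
The plan is to transport the normality statement for $X^{\gps}_{G,\rhobarss}[1/p]$ obtained in \Cref{gps_normal} along the comparison morphism $\nu$ studied in \Cref{Laf}. Recall from \Cref{nu_fin_u} that $\nu : X^{\gps, \tau}_{G,\rhobarss} \to X^{\ps}_G$ is a finite universal homeomorphism, and from \Cref{nu_cl_im} and \Cref{nu_iso_red} that after inverting $p$ the map $\nu \otimes_{\Zp} \Qp$ is a closed immersion which induces an isomorphism of underlying reduced schemes. Since $X^{\gps}_{G,\rhobarss}$ is by definition a connected component of $X^{\gps,\tau}_G$ cut out by an idempotent (see \eqref{semi-local}, \eqref{disjoint-union}), the reduced scheme $(X^{\gps,\tau}_G[1/p])^{\red}$ is a finite disjoint union of the reduced schemes $(X^{\gps}_{G,\rhobar_j^{\mathrm{ss}}}[1/p])^{\red}$ over the various residual pseudocharacters $\rhobar_j^{\mathrm{ss}}$ appearing in the decomposition. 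Each of these is normal by \Cref{gps_normal} applied with the relevant residual representation, and a finite disjoint union of normal schemes is normal. Hence $(X^{\gps,\tau}_G[1/p])^{\red}$ is normal.

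The remaining step is to deduce normality of $(X^{\ps}_G[1/p])^{\red}$ from normality of $(X^{\gps,\tau}_G[1/p])^{\red}$. By \Cref{nu_iso_red}, the morphism $\nu \otimes_{\Zp}\Qp$ induces an isomorphism of underlying reduced schemes, so it restricts to an isomorphism
$$ (X^{\gps,\tau}_G[1/p])^{\red} \xrightarrow{\ \sim\ } (X^{\ps}_G[1/p])^{\red}. $$
Since normality is a property of the reduced scheme structure and is preserved under isomorphism, it follows that $(X^{\ps}_G[1/p])^{\red}$ is normal.

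There is a subtlety worth spelling out: \Cref{gps_normal} as stated is about $X^{\gps}_{G,\rhobarss}$ for the fixed $\rhobarss$ one started with, but its proof only uses that $R^{\gps}_{G,\rhobarss}$ is the ring of $G^0$-invariants of the normal ring $A^{\gen}_{G,\rhobarss}[1/p]$ (via \cite[Proposition 6.4.1]{BH}), and \Cref{gen_fib_normal} — whose proof relies on \Cref{XgenG_chi_lci} and \Cref{R1_chi} — applies to every component, so the argument goes through verbatim for each $\rhobar_j^{\mathrm{ss}}$. I expect this bookkeeping over the finite set of components, together with citing the right reduced-isomorphism statement, to be the only real content; there is no serious obstacle, as all the hard work (the dimension bounds, the complete intersection and $R_{c(\rhobarss)}$ properties, and Serre's criterion) has already been carried out. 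If one wants to avoid even the disjoint-union remark, one can instead extend $L$ by a finite unramified extension so that every component of $X^{\gps,\tau}_G$ has a $k$-rational point (\Cref{rem_k_rat_comp}); this does not affect normality of the generic fibre since the extension is finite flat, and reduces to the single-component case directly.
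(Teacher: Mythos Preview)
Your approach is the same as the paper's --- combine \Cref{gps_normal} with \Cref{nu_iso_red} --- but you have introduced an unnecessary detour that leads to a genuine error. The map $\nu$ of \eqref{XgpsXpsmap} is defined only on the single component $X^{\gps}_{G,\rhobarss}$, not on all of $X^{\gps,\tau}_G$; the other components of $X^{\gps,\tau}_G$ correspond to $G$-semisimple representations with \emph{different} residual $G$-pseudocharacters and hence do not map to $X^{\ps}_G$ at all. Thus your displayed isomorphism $(X^{\gps,\tau}_G[1/p])^{\red} \xrightarrow{\sim} (X^{\ps}_G[1/p])^{\red}$ is false as written, and the entire paragraph about the disjoint union over the $\rhobar_j^{\mathrm{ss}}$, together with the ``subtlety'' paragraph, is a red herring.

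The paper's proof is two lines: \Cref{gps_normal} gives that $X^{\gps}_{G,\rhobarss}[1/p]$ is already reduced and normal, and \Cref{nu_iso_red} then identifies it with $(X^{\ps}_G[1/p])^{\red}$. You had all the right ingredients in your first paragraph; simply replace $X^{\gps,\tau}_G$ by $X^{\gps}_{G,\rhobarss}$ in the displayed formula and delete the discussion of other components.
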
 
\begin{proof} Corollary \ref{gps_normal} implies that $X^{\gps}_{G, \rhobarss}[1/p]$ is reduced. Corollary \ref{nu_iso_red} gives us an isomorphism 
of schemes between $(X^{\ps}_G[1/p])^{\red}$ and $X^{\gps}_{G, \rhobarss}[1/p]$. The assertion follows from Corollary \ref{gps_normal}.
\end{proof}

\begin{cor}\label{bij_comp_Xgen} If $\pi_1(G')$ is \'etale then the map $X^{\gen}_{G, \rhobarss}\rightarrow X^{\gen}_{G/G', \psibarss}$ induces a bijection between the sets of irreducible components. 
\end{cor}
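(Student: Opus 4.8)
The plan is to reduce the statement to the description of irreducible components in the tower $X^{\gen}_{G,\rhobarss} \to X^{\gen}_{G/G',\psibarss} \to X^{\gps}_{G,\rhobarss}$, where $\psibar = \varphi \circ \rhobar$. By \Cref{decomp_M} with $M_1 = 0$ and $M_2 = M$, we have $X^{\gen,\psi_1}_{G,\rhobarss} = X^{\gen}_{G,\rhobarss}$, so the results of \Cref{nightmare_cont} apply with $H_2 = G/G'$. The key input is that irreducible components of $X^{\gen}_{G/G',\psibarss}$ are the schemes $X^{\gen,\chi}_{G/G',\psibarss}$ for $\chi \in \mathrm X(\mu)$: this follows from the isomorphism $A^{\gen}_{H_2,\psibarss_2} \cong \OO[\mu]\br{x_1,\ldots,x_r}[t_1^{\pm 1},\ldots,t_l^{\pm 1}]$ of \eqref{w_think_4} via \Cref{DUS}, since each $A^{\gen,\chi}_{G/G',\psibarss}$ is then a localisation of a power series ring over $\OO$ (see \eqref{Agen_chi}) and hence an integral domain, and the $\chi$ exhaust the components because $k[\mu]$ is local artinian so the reduced subscheme of the special fibre is connected while the generic fibre decomposes exactly along $\mathrm X(\mu)$.

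First I would show that $X^{\gen}_{G,\rhobarss} = \coprod_{\chi \in \mathrm X(\mu)} X^{\gen,\chi}_{G,\rhobarss}$ as a disjoint union (away from possible further refinement). The decomposition is obtained by pulling back the decomposition of $X^{\gen}_{G/G',\psibarss}$ along the morphism \eqref{fantastic_man}; more precisely, the idempotents in $A^{\gen}_{G/G',\psibarss}$ defining the components $X^{\gen,\chi}_{G/G',\psibarss}$ push forward to idempotents in $A^{\gen}_{G,\rhobarss}$, cutting it into the pieces $X^{\gen,\chi}_{G,\rhobarss} = X^{\gen}_{G,\rhobarss} \times_{X^{\gen}_{G/G',\psibarss}} X^{\gen,\chi}_{G/G',\psibarss}$. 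Next, since $\pi_1(G')$ is étale, \Cref{chi_nor_int} applies: $X^{\gen,\chi}_{G,\rhobarss}$ and its special fibre $\Xbar^{\gen,\chi}_{G,\rhobarss}$ are normal, and by \Cref{XgenG_chi_lci} each is equidimensional of dimension $\dim \Gbar_k([F:\Qp]+1)$ and nonempty by \Cref{non-empty}. A connected normal noetherian scheme is irreducible, so I must verify connectedness of each $X^{\gen,\chi}_{G,\rhobarss}$; for this I would argue as in \Cref{connected_comp} and \Cref{chi_nor_int}, using that connected components are $G^0$-invariant by \cite[Lemma 2.1]{BIP_new} and that if $X^{\gen,\chi}_{G,\rhobarss}$ were disconnected, its image in the GIT quotient $X^{\gen,\chi}_{G,\rhobarss}\sslash G^0$ would disconnect the latter by \cite[Theorem 3 (iii)]{seshadri} — but this quotient is semi-local with connected special fibre, so passing to a finite unramified extension of $\OO$ (which does not change the set of components) we may assume it is the spectrum of a local ring.

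Finally I would assemble the bijection: the components of $X^{\gen}_{G,\rhobarss}$ are exactly the $X^{\gen,\chi}_{G,\rhobarss}$, and by construction each maps into $X^{\gen,\chi}_{G/G',\psibarss}$; since the latter exhaust the components of $X^{\gen}_{G/G',\psibarss}$ and the assignment $\chi \mapsto X^{\gen,\chi}$ is injective on both sides (the generic fibres are genuinely distinct as they are cut out by distinct characters $\chi: \mu \to \OO^\times$, and $A^{\gen}$ is $\OO$-flat by \Cref{dim_XgenG} so injects into its generic fibre), the induced map on sets of irreducible components is a bijection. The main obstacle I anticipate is the connectedness (hence irreducibility) of each $X^{\gen,\chi}_{G,\rhobarss}$: normality only gives that each connected component is irreducible, so one genuinely needs the GIT-quotient argument together with the semi-local structure of $X^{\gps}_{G,\rhobarss}$ from \eqref{semi-local}, and some care is needed to ensure that the extension of $\OO$ used to make the GIT quotient local is harmless — but this is exactly the content of \Cref{rem_k_rat_comp}, so it goes through.
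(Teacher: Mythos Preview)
Your claim that $X^{\gen}_{G,\rhobarss} = \coprod_{\chi} X^{\gen,\chi}_{G,\rhobarss}$ as a disjoint union is false, and there are no idempotents in $A^{\gen}_{G/G',\psibarss}$ cutting out the $\chi$-pieces. The ring $\OO[\mu]$ is connected: its special fibre $k[\mu]$ is a local artinian ring (as $\mu$ is a $p$-group), so $\Spec \OO[\mu]$ is $\OO$-flat with a single closed point. Consequently the closed subschemes $X^{\gen,\chi}_{G,\rhobarss}$ all share the same underlying reduced special fibre (this is exactly \Cref{dim_XgenG_chi}), and in particular they all contain the point corresponding to $\rhobar$; they are irreducible components glued along the special fibre, not connected components.

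The paper's fix is to invert $p$ first. Since both $A^{\gen}_{G,\rhobarss}$ and $A^{\gen}_{G/G',\psibarss}$ are $\OO$-flat (\Cref{dim_XgenG}), passing to the generic fibre does not change the set of irreducible components. Over $L$ one genuinely has $\OO[\mu][1/p]\cong \prod_{\chi} L$, hence the product decompositions
\[
A^{\gen}_{G,\rhobarss}[1/p]=\prod_{\chi} A^{\gen,\chi}_{G,\rhobarss}[1/p],\qquad
A^{\gen}_{G/G',\psibarss}[1/p]=\prod_{\chi} A^{\gen,\chi}_{G/G',\psibarss}[1/p],
\]
and it suffices to know each factor is a nonzero integral domain. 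For $G/G'$ this is \eqref{Agen_chi}. For $G$, note that in the case $M_1=0$ the torus $Z_1$ is trivial, so the coprimality hypothesis of \Cref{chi_nor_int} is vacuous and that theorem (together with \Cref{non-empty} and \Cref{XgenG_chi_lci}) gives directly that $A^{\gen,\chi}_{G,\rhobarss}$ is an $\OO$-flat integral domain. Your separate connectedness argument via GIT quotients and extension of scalars is therefore unnecessary here; integrality is already packaged into \Cref{chi_nor_int}.
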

\begin{proof} Since both schemes are $\OO$-flat it is enough to verify the assertion after inverting $p$. Since $\OO[\mu][1/p]\cong \prod_{\chi\in \mathrm X(\mu)} L$ we have 
\begin{equation}\label{decomp_gen_fibre}
A^{\gen}_{G, \rhobarss}[1/p] = \prod_{\chi\in \mathrm X(\mu)} A^{\gen, \chi}_{G, \rhobarss}[1/p], \quad A^{\gen}_{G/G', \psibarss}[1/p] = \prod_{\chi\in \mathrm X(\mu)} A^{\gen, \chi}_{G/G', \psibarss}[1/p].
\end{equation}
It follows from \eqref{Agen_chi} that $A^{\gen,\chi}_{G/G', \psibarss}$ are $\OO$-flat integral domains. 
As $Z_1$ is trivial in our case, Theorem \ref{chi_nor_int},  Lemma \ref{non-empty} and \Cref{XgenG_chi_lci} 
imply that  $A^{\gen, \chi}_{G, \rhobarss}$ are $\OO$-flat integral domains. The claim follows from \eqref{decomp_gen_fibre}.
\end{proof}

\begin{cor}\label{irr_comp_ps} If $\pi_1(G')$ is \'etale then the natural maps $X^{\gen}_{G,\rhobarss}\rightarrow X^{\git}_{G,\rhobarss}\rightarrow X^{\ps}_G \rightarrow X^{\ps}_{G/G'}$ induce  bijections between 
the sets of irreducible components. 
\end{cor}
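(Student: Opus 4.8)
The plan is to chain together the comparison results already in hand. First I would observe that $X^{\gps}_{G,\rhobarss}\to X^{\ps}_G$ is a finite universal homeomorphism by \Cref{nu_fin_u}, and that finite universal homeomorphisms induce bijections on irreducible components (they are in particular integral, surjective, and universally injective, so they induce homeomorphisms on underlying topological spaces, which suffices since irreducible components are a topological notion). So it is enough to prove that the composite
$$X^{\gen}_{G,\rhobarss}\to X^{\git}_{G,\rhobarss}\to X^{\ps}_{G/G'}$$
induces bijections on irreducible components, where the last map factors through $X^{\ps}_G$.

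Next I would handle the two outer maps of the chain $X^{\gen}_{G,\rhobarss}\to X^{\git}_{G,\rhobarss}\to X^{\ps}_{G}\to X^{\ps}_{G/G'}$. The map $X^{\gen}_{G,\rhobarss}\to X^{\git}_{G,\rhobarss}$ induces a bijection on irreducible components by \Cref{gps_normal}, whose proof only uses that $G^0$ is connected (so irreducible components of $X^{\gen}_{G,\rhobarss}$ are $G^0$-invariant and correspond under the GIT quotient to those of the quotient), and by the already-invoked homeomorphism $X^{\git}_{G,\rhobarss}\simeq X^{\ps}_G$. For the map $X^{\ps}_G\to X^{\ps}_{G/G'}$, I would argue that it suffices, after reducing modulo nothing and inverting $p$ (both sides being reduced and $\OO$-flat — the target by the companion paper \cite{defT} and \Cref{gps_normal}, or directly since $R^{\ps}_{G/G'}\hookrightarrow A^{\gen}_{G/G',\psibarss}$), to identify its irreducible components with those of $X^{\gps}_{G,\rhobarss}\to X^{\gps}_{G/G',\psibarss}$ via \Cref{nu_fin_u} applied to both $G$ and $G/G'$, and then invoke \Cref{bij_comp_Xgen}, which says $X^{\gen}_{G,\rhobarss}\to X^{\gen}_{G/G',\psibarss}$ induces a bijection on irreducible components when $\pi_1(G')$ is étale. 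Passing this bijection through the GIT quotients on both sides (again using connectedness of $G^0$ and of $(G/G')^0$, together with \Cref{gps_normal} and its analogue for $G/G'$) gives that $X^{\gps}_{G,\rhobarss}\to X^{\gps}_{G/G',\psibarss}$, hence $X^{\ps}_G\to X^{\ps}_{G/G'}$, induces a bijection on irreducible components.

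Finally I would assemble: each of the three arrows $X^{\gen}_{G,\rhobarss}\to X^{\git}_{G,\rhobarss}$, $X^{\git}_{G,\rhobarss}\to X^{\ps}_G$, $X^{\ps}_G\to X^{\ps}_{G/G'}$ induces a bijection on irreducible components, so the composite does as well, and the same for the further factor $X^{\ps}_G\to X^{\ps}_{G/G'}$ already appearing. The main obstacle I anticipate is bookkeeping rather than mathematical depth: one must be careful that \Cref{bij_comp_Xgen} is stated for $X^{\gen}_{G/G',\psibarss}$ while \Cref{gps_normal} is stated for $X^{\gps}_{G,\rhobarss}$, so I need the analogue of \Cref{gps_normal} for the group $G/G'$ (whose neutral component is a torus, hence connected, so the proof of \Cref{gps_normal} applies verbatim), and I must make sure the various "bijection on irreducible components" claims compose, which is automatic since they are all induced by continuous maps that are either homeomorphisms or GIT-quotient maps by connected groups. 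The step requiring the most care is therefore verifying that the GIT quotient map $X\to X\sslash H$ for $H$ connected acting on an $\OO$-flat reduced affine scheme induces a bijection on irreducible components — but this is exactly the argument already written out in the proof of \Cref{gps_normal} (using \cite[Lemma 2.1]{BIP_new} and \cite[Theorem 3 (iii)]{seshadri}), so I would simply cite it.
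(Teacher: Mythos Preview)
Your proposal is correct and follows essentially the same route as the paper: the paper's proof simply cites \Cref{nu_fin_u} for the homeomorphism $X^{\git}_{G,\rhobarss}\to X^{\ps}_G$, then invokes \Cref{gps_normal} and \Cref{bij_comp_Xgen}, leaving implicit exactly the bookkeeping you spell out (applying \Cref{gps_normal} and \Cref{nu_fin_u} to $G/G'$ as well as to $G$, and composing the resulting bijections). Your careful flagging of where the analogue of \Cref{gps_normal} for $G/G'$ is needed, and why it holds, is accurate and makes explicit what the paper's one-line citation absorbs.
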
 
\begin{proof} Since $X^{\git}_{G, \rhobarss}\rightarrow X^{\ps}_{G}$ is 
 a homeomorphism by Proposition \ref{nu_fin_u}, the assertion follows from Corollaries \ref{gps_normal}, \ref{bij_comp_Xgen}.
\end{proof}

\subsection{Conjecture of B\"ockle--Juschka}
We fix $z\in X^{\gen,\psi_1}_{G, \rhobarss}(k)$ and to ease the notation 
we write $\rhobar:=\rho_z$, $R^{\square}_{\rhobar}:= R^{\square,\psi_1}_{G, \rho_z}$, $\psibar:= \varphi\circ \rhobar$ and 
$R^{\square}_{\psibar}:= R^{\square,\psi_1}_{G/G', \psibar}$. It follows from 
\eqref{w_think_4} and \Cref{333} that 
\begin{equation}\label{R_box_psi}
R^{\square}_{\psibar}\cong \OO[\mu]\br{z_1, \ldots, z_t},
\end{equation}
where $t=\dim (H_2)_k ([F:\Qp]+1)$. We let $X^{\square}_{\rhobar}:=\Spec R^{\square}_{\rhobar}$ and 
$X^{\square}_{\psibar}:= \Spec R^{\square}_{\psibar}$.
\begin{cor}\label{flat_det} The natural map $R^{\square}_{\psibar} \rightarrow R^{\square}_{\rhobar}$ 
is flat. 
\end{cor}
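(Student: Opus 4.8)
\textbf{Proof plan for Corollary \ref{flat_det}.}

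The plan is to deduce flatness of $R^{\square}_{\psibar}\to R^{\square}_{\rhobar}$ from the presentation supplied by \Cref{present_over_RH}. Applying that proposition to the morphism $\varphi: G\to G/G'$ (which is smooth and surjective on neutral components by \Cref{smooth_projection}, so the hypotheses are met), and restricting to the fixed-partial-determinant subfunctors as in \Cref{complete_intersection_chi}, one obtains an isomorphism of $R^{\square}_{\psibar}$-algebras
\begin{equation}\label{flat_det_pres}
R^{\square}_{\rhobar}\cong \frac{R^{\square}_{\psibar}\br{x_1,\ldots,x_r}}{(f_1,\ldots,f_s)},
\end{equation}
with $r-s=\dim G'_k([F:\Qp]+1)$, exactly as in \eqref{represent_chi} but without first specialising along a character $\chi$. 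So the task reduces to checking the hypotheses of \Cref{pre_lci_flat} with $A=R^{\square}_{\psibar}$ and $B=R^{\square}_{\rhobar}$: namely that $R^{\square}_{\psibar}$ is complete intersection, and that $\dim k\otimes_{R^{\square}_{\psibar}} R^{\square}_{\rhobar}\le r-s$.

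The first hypothesis is immediate: by \eqref{R_box_psi} we have $R^{\square}_{\psibar}\cong\OO[\mu]\br{z_1,\ldots,z_t}$, which is a complete intersection since $\OO[\mu]=\OO[\mu_{p^m}]^{\oplus m}$-type quotients of a power series ring are cut out by a regular sequence (each $\mu_{p^r}$-factor contributes a relation of the form $y^{p^r}-1$ or rather the relevant group-algebra relation, and $\OO$ itself is regular). The second hypothesis — the dimension bound on the fibre ring — is the substantive point. Here I would argue as in \Cref{complete_intersection_chi} and Remark \ref{1313}: the fibre $k\otimes_{R^{\square}_{\psibar}} R^{\square}_{\rhobar}$ is the completed local ring of a fibre of the finite-type morphism $X^{\gen,\psi_1}_{G,\rhobarss}\to X^{\gen}_{G/G',\psibarss}$, and one bounds its dimension using \Cref{dim_bound_psi}, \Cref{XgenG_chi_lci} (applied with $M_1$ the full lattice, or rather via \Cref{bip_recyc} and the finiteness of $X^{\gen,\psi_1}_{G,\rhobarss}\to X^{\gen}_{\Gbar,\rhobarss}$), together with \Cref{psi_def_rings} relating $R^{\square}_{\rhobar}$ and $R^{\square}_{\rhobar}/\varpi$ to completed local rings of $X^{\gen,\psi_1}_{G,\rhobarss}$ and its special fibre. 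Combining the Euler–Poincaré relation $r-s=\dim G'_k([F:\Qp]+1)$ with the established dimension formula $\dim\Xbar^{\gen,\psi_1}_{G,\rhobarss}=\dim\Gbar_k([F:\Qp]+1)$ and \eqref{dim_Gbar}, the bound $\dim k\otimes_{R^{\square}_{\psibar}} R^{\square}_{\rhobar}\le r-s$ falls out just as in the proof of \eqref{eq_ETP}.

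The main obstacle I anticipate is purely bookkeeping: making sure the dimension count in the fibre is done with the right group ($\Gbar=G/Z_1$, whose appearance is forced by the fixed-partial-determinant condition) and that one correctly handles the three cases for $\kappa(x)$ — finite, local of characteristic $p$, and characteristic $0$ — as in \Cref{psi_def_rings}, since in the characteristic-$p$ case an extra power-series variable $T$ appears in the comparison with $\hat\OO_{X,x}$. Once the fibre dimension is pinned down to be $\le r-s$, \Cref{pre_lci_flat} delivers both that $R^{\square}_{\rhobar}$ is a flat $R^{\square}_{\psibar}$-algebra and that $\dim k\otimes_{R^{\square}_{\psibar}} R^{\square}_{\rhobar}=r-s$, completing the proof.
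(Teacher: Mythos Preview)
Your overall strategy matches the paper's exactly: obtain the presentation $R^{\square}_{\rhobar}\cong R^{\square}_{\psibar}\br{x_1,\ldots,x_r}/(f_1,\ldots,f_s)$ from \Cref{present_over_RH}, note that $R^{\square}_{\psibar}\cong\OO[\mu]\br{z_1,\ldots,z_t}$ is complete intersection by \eqref{R_box_psi}, and apply \Cref{pre_lci_flat}. The only real work is the bound $\dim k\otimes_{R^{\square}_{\psibar}} R^{\square}_{\rhobar}\le r-s$.

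Here the paper is more direct than your sketch. It picks an $\OO$-point $x:R^{\square}_{\psibar}\to\OO$ (which \eqref{R_box_psi} visibly supplies), writes $\psi:\Gamma_F\to(G/G')(\OO)$ for the corresponding lift of $\psibar$, and observes the identifications
\[
R^{\square,\psi}_{\rhobar}\cong R^{\square}_{\rhobar}\otimes_{R^{\square}_{\psibar},x}\OO,\qquad R^{\square,\psi}_{\rhobar}/\varpi\cong R^{\square}_{\rhobar}\otimes_{R^{\square}_{\psibar}}k,
\]
where the left-hand sides are taken in the full fixed-determinant setting $M_1=M$, $M_2=0$. Then \Cref{complete_intersection_chi}(2) in that setting (where $\dim(H_2)_k=0$, so $\dim\Gbar_k=\dim G'_k$ by \eqref{dim_Gbar}) gives $\dim R^{\square,\psi}_{\rhobar}/\varpi=\dim G'_k([F:\Qp]+1)=r-s$ on the nose. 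Your parenthetical ``applied with $M_1$ the full lattice'' is exactly this idea, but you then invoke $\dim\Xbar^{\gen,\psi_1}_{G,\rhobarss}=\dim\Gbar_k([F:\Qp]+1)$ for the \emph{original} $\psi_1$ and appeal to the argument of \eqref{eq_ETP}; that argument bounds the fibre over $\Lambda$, not over $R^{\square}_{\psibar}$, so it does not by itself give the required bound. The missing link is precisely the displayed identification of the fibre ring with $R^{\square,\psi}_{\rhobar}/\varpi$.
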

\begin{proof} It follows from \eqref{R_box_psi} that there is an $\OO$-algebra homomorphism  $x: R^{\square}_{\psibar}\rightarrow \OO$.
Let $\psi: \Gamma_F \rightarrow (G/G')(\OO)$ be the deformation of 
$\psibar$ corresponding to $x$. Then 
\begin{equation}\label{butz_1}
R^{\square, \psi}_{\rhobar}\cong R^{\square}_{\rhobar}\otimes_{R^{\square}_{\psibar}, x} \OO,
\end{equation}
where the left hand side is defined taking $M_1=M$ and $M_2=0$. Reducing this identity modulo $\varpi$ we obtain an isomorphism 
\begin{equation}\label{butz_2}
R^{\square, \psi}_{\rhobar}/\varpi \cong R^{\square}_{\rhobar}\otimes_{R^{\square}_{\psibar}} k.
\end{equation}
Proposition \ref{present_over_RH} gives us a presentation 
\begin{equation}\label{present_again_chi}
R^{\square}_{\rhobar}\cong R^{\square}_{\psibar}\br{x_1,\ldots, x_r}/(f_1,\ldots, f_s)
\end{equation} 
with $r-s= \dim G'_k ([F:\Qp]+1) = \dim (R^{\square, \psi}_{\rhobar}/\varpi)$, where 
the last equality is given by Corollary \ref{complete_intersection_chi} (2) and \eqref{dim_Gbar} noting that $\dim (H_2)_k=0$ in the `fixed determinant'  case. Lemma \ref{pre_lci_flat} 
applied with $A= R^{\square}_{\psibar}$ and $B=R^{\square}_{\rhobar}$
implies the assertion. 
\end{proof}

\begin{cor}\label{R_norm_red} $R^{\square}_{\rhobar}[1/p]$ is normal and $R^{\square}_{\rhobar}$ is reduced. 
\end{cor}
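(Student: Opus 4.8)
The plan is to deduce the statement for $R^{\square}_{\rhobar}$ from the corresponding properties of $X^{\gen,\psi_1}_{G,\rhobarss}$ together with the flatness result of \Cref{flat_det}, exactly in the spirit of the arguments already carried out in \Cref{sec_consequence}. Note first that here $M_1=M$, $M_2=0$, so $X^{\gen,\psi_1}_{G,\rhobarss}=X^{\gen,\psi_1}_{G,\rhobarss}$ in the `fixed partial determinant' notation, and $\mu=(\mu_{p^\infty}(E)\otimes M_2)^{\Gal(E/F)}=0$ since $M_2=0$. Hence there is a single character $\chi$ (the trivial one) and $X^{\gen,\psi_1,\chi}_{G,\rhobarss}=X^{\gen,\psi_1}_{G,\rhobarss}$, so the results of \Cref{gen_fib_normal} and \Cref{XgenG_chi_lci} apply without any hypothesis on $\pi_1(G')$: $X^{\gen,\psi_1}_{G,\rhobarss}$ is $\OO$-flat and locally complete intersection, and $X^{\gen,\psi_1}_{G,\rhobarss}[1/p]$ is normal.

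First I would establish normality of $R^{\square}_{\rhobar}[1/p]$. By \Cref{psi_def_rings} (in the present $M_1=M$ case), for a closed point $x$ of $X^{\gen,\psi_1}_{G,\rhobarss}[1/p]$ the completed local ring $\hat{\OO}_{X,x}$ is isomorphic to $R^{\square,\psi_1}_{G,\rho_x}$; more generally one passes to the completion of the local ring at the prime corresponding to $z$, as in the proof of \Cref{reg_codim_x_gen}. Since $X^{\gen,\psi_1}_{G,\rhobarss}[1/p]$ is excellent (being of finite type over a complete local noetherian ring, \stackcite{07QW}) and normal by \Cref{gen_fib_normal}, its local rings are normal, hence so are their completions by \cite[Theorem 23.9]{matsumura}, using that excellent rings are $G$-rings. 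This gives that $R^{\square}_{\rhobar}[1/p]$ is normal; alternatively one can invoke \Cref{reg_codim_x_gen} directly at the generic point of the locus cut out by $z$.

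Next I would prove that $R^{\square}_{\rhobar}$ is reduced. The key point is $\OO$-flatness of $R^{\square}_{\rhobar}$: this follows from \Cref{complete_intersection_chi}(1) (equivalently \Cref{complete_intersection} applied with the partial-determinant condition, cf.\ \Cref{1313}), which gives that $R^{\square,\psi_1}_{G,\rho_x}$ is a flat $\Lambda$-algebra, together with the relation between $R^{\square}_{\rhobar}$ and the local rings of the $\OO$-flat scheme $X^{\gen,\psi_1}_{G,\rhobarss}$ via \Cref{psi_def_rings}. Being $\OO$-flat, $R^{\square}_{\rhobar}$ is $\OO$-torsion free, hence embeds into $R^{\square}_{\rhobar}[1/p]$, which is normal and in particular reduced; therefore $R^{\square}_{\rhobar}$ is reduced.

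The main obstacle, such as it is, is purely bookkeeping: one must make sure that in the present $M_1=M$, $M_2=0$ specialisation all the auxiliary schemes ($Z_1$, $Z_2$, $\Gbar$, $H_2$) degenerate consistently — here $Z_2G'=G^0$, $\Gbar=G/Z_1$, $\dim(H_2)_k=0$ — so that the statements of \Cref{gen_fib_normal}, \Cref{XgenG_chi_lci}, \Cref{complete_intersection_chi} and \Cref{psi_def_rings} can be quoted with $\chi$ trivial and without the étaleness hypothesis on $\pi_1(G')$ (which is only needed for the integral-fibre and mod-$\varpi$ normality statements, not for what is claimed here). Once this is checked, the proof is a two-line consequence of flatness plus normality of the generic fibre, and I would present it in that compressed form, referring back to the proof of \Cref{reg_codim_x_gen} for the excellence/completion step.
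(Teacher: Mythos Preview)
Your proof has two genuine problems.

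\textbf{Misreading of the setup.} You assert that here $M_1=M$ and $M_2=0$, so that $\mu=0$ and there is a single trivial $\chi$. This is not the context of the corollary: the subsection fixes $z\in X^{\gen,\psi_1}_{G,\rhobarss}(k)$ for an \emph{arbitrary} decomposition $M=M_1\oplus M_2$, and sets $R^{\square}_{\rhobar}:=R^{\square,\psi_1}_{G,\rho_z}$; see \eqref{R_box_psi}, where $R^{\square}_{\psibar}\cong\OO[\mu]\br{z_1,\dots,z_t}$ with $\mu=(\mu_{p^\infty}(E)\otimes M_2)^{\Delta}$ typically nontrivial. (In the application to \Cref{thm_intro-1} one even has $M_1=0$, $M_2=M$.) The paper therefore first uses $\OO[\mu][1/p]\cong\prod_{\chi}L$ to write $R^{\square}_{\rhobar}[1/p]\cong\prod_{\chi}R^{\square,\chi}_{\rhobar}[1/p]$ and then proves each factor normal. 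Your argument that ``there is only one $\chi$'' collapses precisely the step the paper has to perform.

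\textbf{Confusion about which completion is being taken.} You argue that $X^{\gen,\psi_1}_{G,\rhobarss}[1/p]$ is normal, hence its local rings and their completions are normal, and conclude that $R^{\square}_{\rhobar}[1/p]$ is normal. But $R^{\square}_{\rhobar}=\hat{\OO}_{X,z}$ with $z$ a $k$-point, so $z\notin X[1/p]$ and $R^{\square}_{\rhobar}[1/p]$ is \emph{not} the completion of a local ring of $X[1/p]$. The paper instead checks normality locally on $\Spec R^{\square,\chi}_{\rhobar}[1/p]$: since $R^{\square}_{\rhobar}$ is excellent, it suffices to show the completions at maximal ideals are normal; these have characteristic-zero residue fields, and by \Cref{extend_scalars2} they are isomorphic to $R^{\square,\psi_1,\chi}_{G,\rho_x}$ for closed points $x\in X^{\gen,\psi_1,\chi}_{G,\rhobarss}[1/p]$, which are normal by \Cref{reg_codim_x_gen}. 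One could salvage your line of attack by arguing that $\OO_{X,z}[1/p]$ is normal (using the $\chi$-decomposition and \Cref{gen_fib_normal}) and that $\OO_{X,z}\to\hat{\OO}_{X,z}$ is a regular map by excellence, so normality ascends to $R^{\square}_{\rhobar}[1/p]$; but that is not what you wrote.

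Your deduction of reducedness from $\OO$-flatness together with normality of the generic fibre is correct and matches the paper.
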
 
\begin{proof} Since $R^{\square}_{\rhobar}$ is $\OO$-flat either by Lemma \ref{flat_det} or by Remark \ref{1313}, it is a subring of $R^{\square}_{\rhobar}[1/p]$ and hence it is enough to show that $R^{\square}_{\rhobar}[1/p]$ is normal. Further, 
it is enough to show that $R^{\square, \chi}_{\rhobar}[1/p]$ is normal 
as $R^{\square}_{\rhobar}[1/p]$ is a 
finite product of such rings. Since normality is a local property, 
it is enough to show that the localisations of 
$R^{\square,\chi}_{\rhobar}[1/p]$ at maximal ideals are normal. 
Since $R^{\square}_{\rhobar}$ is a complete noetherian local ring, it is excellent and hence it is enough to show that the 
completions of $R^{\square, \chi}_{\rhobar}[1/p]$ at maximal ideals are normal. These are isomorphic to $R^{\square, \psi_1, \chi}_{G, \rho_x}$ by \Cref{extend_scalars2}, which are normal by \Cref{reg_codim_x_gen}.
\end{proof}

The following result proves an analogue of the conjecture of B\"ockle--Juschka posed originally for $G=\GL_d$ (and proved in that case in \cite{BIP_new}). 
\begin{cor}\label{conj_BJ} If $\pi_1(G')$ is \'etale then the natural map $R^{\square}_{\psibar} \rightarrow R^{\square}_{\rhobar}$ induces a bijection between the 
sets of irreducible components. 
\end{cor}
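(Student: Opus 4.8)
Throughout, recall that in this subsection $M_1=0$ and $M_2=M$, so $\psi_1$ is the empty condition, $R^{\square}_{\psibar}=R^{\square}_{G/G',\psibar}$, $H_2=G/G'$ and $\mu=(\mu_{p^{\infty}}(E)\otimes M)^{\Gal(E/F)}$; by the standing hypotheses of \Cref{sec_gen_tori} the ring $\OO$ contains all $p^{m}$-th roots of unity, $p^{m}=|\mu|$. The plan is to decompose everything into its $\chi$-components for $\chi\in\mathrm X(\mu)$ and run a flatness argument using \Cref{flat_det} and the normality results \Cref{chi_nor_int}.

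First I would analyse $R^{\square}_{\psibar}$. By \eqref{R_box_psi} it is isomorphic to $\OO[\mu]\br{z_1,\dots,z_t}$. Since $\OO$ contains enough roots of unity, $\OO[\mu]$ is a reduced $\OO$-flat ring of relative dimension $0$ whose minimal primes are exactly the kernels $\mathfrak p_{\chi}$ of the characters $\chi\colon\OO[\mu]\to\OO$, with $\OO[\mu]/\mathfrak p_{\chi}\cong\OO$; this is elementary (e.g.\ $\OO[\mu]\hookrightarrow\OO[\mu][1/p]=\prod_{\chi}L$ by $\OO$-flatness, identifying the components). As $\OO[\mu]\br{\underline z}=\OO\br{\underline z}[\mu]$ is the group algebra of $\mu$ over the domain $\OO\br{\underline z}$, the ring $R^{\square}_{\psibar}$ is reduced, its minimal primes are the $\mathfrak p_{\chi}R^{\square}_{\psibar}$, and the quotients $R^{\square,\chi}_{\psibar}:=R^{\square}_{\psibar}\otimes_{\OO[\mu],\chi}\OO\cong\OO\br{\underline z}$ are regular domains. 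Thus $\Spec R^{\square}_{\psibar}$ has exactly $|\mathrm X(\mu)|$ irreducible components, indexed by $\mathrm X(\mu)$.

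Next I would identify the $\chi$-slices of $R^{\square}_{\rhobar}=R^{\square}_{G,\rho_z}$. Put $R^{\square,\chi}_{\rhobar}:=R^{\square}_{\rhobar}\otimes_{\OO[\mu],\chi}\OO=R^{\square}_{\rhobar}/\mathfrak p_{\chi}R^{\square}_{\rhobar}$. Since $z$ lies above the closed point of $\Spec R^{\ps}_{G}$, it lies in $Y^{\chi}_{\rhobarss}$, so \Cref{psi_def_rings} (with $\psi_1$ the empty condition) gives $R^{\square,\chi}_{\rhobar}\cong\widehat{\OO}_{X,z}$ with $X=X^{\gen,\chi}_{G,\rhobarss}$. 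Now $X$ is of finite type over the complete local ring $R^{\ps}_{\GL_d}$ (being a closed subscheme of $X^{\gen,\tau}_{G}$), hence excellent, and it is normal by \Cref{chi_nor_int} because $\pi_1(G')$ is étale. Therefore $\OO_{X,z}$ is an excellent normal local ring, so its completion $R^{\square,\chi}_{\rhobar}$ is a normal local ring, in particular a domain. Moreover $R^{\square,\chi}_{\rhobar}$ is flat over $R^{\square,\chi}_{\psibar}$, being the base change along $\OO[\mu]\xrightarrow{\chi}\OO$ of the flat map $R^{\square}_{\psibar}\to R^{\square}_{\rhobar}$ of \Cref{flat_det}; as a local homomorphism of local rings it is faithfully flat, and in particular $R^{\square,\chi}_{\rhobar}\neq 0$.

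Finally I would conclude by a standard going-down argument. The map $R^{\square}_{\psibar}\to R^{\square}_{\rhobar}$ of \Cref{flat_det} is a flat, hence faithfully flat, local homomorphism, so $\Spec R^{\square}_{\rhobar}\to\Spec R^{\square}_{\psibar}$ is surjective and satisfies going-down. Going-down forces each minimal prime of $R^{\square}_{\rhobar}$ to contract to a minimal prime of $R^{\square}_{\psibar}$, i.e.\ to some $\mathfrak p_{\chi}$; conversely, by going-down and surjectivity each $\mathfrak p_{\chi}$ is the contraction of at least one minimal prime. The minimal primes of $R^{\square}_{\rhobar}$ contracting to $\mathfrak p_{\chi}$ are precisely the minimal primes of $R^{\square}_{\rhobar}\otimes_{R^{\square}_{\psibar}}\operatorname{Frac}(R^{\square}_{\psibar}/\mathfrak p_{\chi})=R^{\square,\chi}_{\rhobar}\otimes_{R^{\square,\chi}_{\psibar}}\operatorname{Frac}(R^{\square,\chi}_{\psibar})$, which is a nonzero localization of the domain $R^{\square,\chi}_{\rhobar}$ and hence has a unique minimal prime. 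Thus $\mathfrak q\mapsto\mathfrak q\cap R^{\square}_{\psibar}$ is a bijection between the minimal primes of $R^{\square}_{\rhobar}$ and those of $R^{\square}_{\psibar}$, which is exactly the assertion that $\Spec R^{\square}_{\rhobar}\to\Spec R^{\square}_{\psibar}$ induces a bijection between the sets of irreducible components. Given the machinery already in place, there is no genuinely hard step here; the one point requiring care is the identification of $R^{\square,\chi}_{\rhobar}$ with a completed local ring of the normal scheme $X^{\gen,\chi}_{G,\rhobarss}$, which is what lets one deduce that it is a domain and hence that the fibres over the $\mathfrak p_{\chi}$ are irreducible.
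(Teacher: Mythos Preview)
Your proof is correct and shares its core ingredients with the paper's: both analyse the components of $R^{\square}_{\psibar}$ via \eqref{R_box_psi}, and both rely on the key fact that each $R^{\square,\chi}_{\rhobar}$ is a nonzero integral domain. You obtain this from normality of $X^{\gen,\chi}_{G,\rhobarss}$ in \Cref{chi_nor_int} together with excellence; the paper cites \Cref{reg_codim_x} directly, whose proof is precisely your excellence argument, and also invokes \Cref{complete_intersection_chi} for $\OO$-flatness.

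Where you differ is the final step. You run a going-down argument using the flatness of $R^{\square}_{\psibar}\to R^{\square}_{\rhobar}$ from \Cref{flat_det}: each minimal prime of $R^{\square}_{\rhobar}$ contracts to some $\mathfrak p_{\chi}$, and since $\mathfrak p_{\chi}$ is minimal in $R^{\square}_{\psibar}$, the minimal primes over it are exactly the minimal primes of the generic fibre, a nonzero localisation of the domain $R^{\square,\chi}_{\rhobar}$. The paper instead inverts $p$: since $\OO[\mu][1/p]\cong\prod_{\chi}L$, one has $R^{\square}_{\rhobar}[1/p]\cong\prod_{\chi}R^{\square,\chi}_{\rhobar}[1/p]$, a product of domains, and $\OO$-flatness of $R^{\square}_{\rhobar}$ (from \Cref{flat_det} or Remark~\ref{1313}) identifies components of $R^{\square}_{\rhobar}$ with those of $R^{\square}_{\rhobar}[1/p]$. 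The paper's route is slightly more transparent, since the product decomposition after inverting $p$ is very concrete; yours avoids inverting $p$ altogether. Both are short once the hard inputs are in place. One small remark: your assertion that $z\in Y^{\chi}_{\rhobarss}$ is correct but is cleanest via \Cref{dim_XgenG_chi} (or equivalently the observation that $k[\mu]$ is local artinian), which is what ensures $z\in X^{\gen,\chi}_{G,\rhobarss}(k)$ for every $\chi$.
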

\begin{proof} It follows from \eqref{R_box_psi} that 
$R^{\square}_{\psibar}$ is $\OO$-flat and its irreducible 
components are given by $R^{\square, \chi}_{\psibar}$ for $\chi\in \mathrm X(\mu)$, which are formally smooth and hence flat over $\OO$.

The natural map $R^{\square}_{\psibar} \rightarrow R^{\square}_{\rhobar}$
induces a map of local rings $R^{\square,\chi}_{\psibar} \rightarrow 
R^{\square,\chi}_{\rhobar}$. Since the reduced special fibres of 
of $X^{\gen,\psi_1,  \chi}_{G, \rhobarss}$ and of 
$X^{\gen, \psi_1}_{G, \rhobarss}$ coincide by Lemma \ref{dim_XgenG_chi}, 
the point $z$ lies in $X^{\gen,\psi_1,  \chi}_{G, \rhobarss}(k)$ for every 
$\chi\in \mathrm X(\mu)$. Hence, $R^{\square,\chi}_{\rhobar}$  is an $\OO$-flat integral domain by Corollaries \ref{complete_intersection_chi}, \ref{reg_codim_x}. 
Thus $R^{\square,\chi}_{\rhobar}[1/p]$ is an integral domain for every 
$\chi\in \mathrm X(\mu)$. Since $R^{\square}_{\psibar}$ is $\OO$-flat, 
Lemma \ref{flat_det} implies that $R^{\square}_{\rhobar}$ is $\OO$-flat 
(this also follows from Remark \ref{1313}), hence the 
irreducible components of $R^{\square}_{\rhobar}$ and $R^{\square}_{\rhobar}[1/p]$
coincide. Since the order of $\mu$ becomes invertible once we invert $p$ 
we have $R^{\square}_{\rhobar}[1/p]\cong \prod_{\chi} R^{\square, \chi}_{\rhobar}[1/p]$, which implies the assertion.
\end{proof}

\begin{remar} If $x$ is a closed point of $X^{\gen, \psi_1}_{G,\rhobarss}[1/p]$ 
then the isomorphism $\OO[\mu][1/p]\cong \prod_{\chi\in \mathrm X(\mu)} L$
implies that there is a unique $\chi\in \mathrm X(\mu)$ such that 
$x\in X^{\gen, \psi_1,\chi}_{G,\rhobarss}$. Then $R^{\square,\psi_1}_{G, \rho_x}= R^{\square, \psi_1, \chi}_{G, \rho_x}$ is an integral domain 
by \Cref{reg_codim_x_gen}. If $\chi'\neq \chi$ then $R^{\square, \psi_1, \chi'}_{G, \rho_x}$ is the zero ring. 
\end{remar}

\subsection{Labelling irreducible components}\label{label} 
We have seen in the proofs of Corollaries \ref{bij_comp_Xgen},
\ref{irr_comp_ps}, \ref{conj_BJ} that if $\pi_1(G')$ is \'etale then the set of irreducible components of corresponding schemes are in bijection with the group of characters 
$\mathrm{X}(\mu)$. This bijection is non-canonical in general as the isomorphism 
$\alpha$ in \eqref{w_think_1} is non-canonical.

We show in \cite[Section 7.4]{defT} that there is a canonical action of $\mathrm X(\mu)$ 
on the set of irreducible components of $X^{\ps}_{H_2}= \Spec R^{\ps}_{H_2, \psibarss_2}$, and this action is faithful and transitive. So to fix an $\mathrm X(\mu)$-equivariant bijection 
between $\mathrm X(\mu)$ and the set of irreducible components 
it is enough to specify to which component the identity in $\mathrm X(\mu)$ 
corresponds to. We don't know how to do this canonically in general, 
except in one case, which we will now discuss. 

Assume that $H_2\cong H_2^0 \rtimes \Delta$. We have already identified 
$\Delta=\Gal(E/F)$ and $\psibar_2: \Gamma_F \rightarrow H_2(k)$ is a representation, such that composition of $\psibar_2$ with the projection
to $\Gal(E/F)$ is the map $\gamma \mapsto \gamma|_E$. The surjection 
$$H_2^0(\OO)=\Hom(M_2, \OO^{\times})\twoheadrightarrow \Hom(M_2, k^{\times})=
H_2^0(k)$$
has a canonical section given by the Teichm\"uller lift. This section is 
$\Delta$-equivariant and induces a section $\sigma: H_2(k)\rightarrow H_2(\OO)$.
The composition 
$\sigma\circ \psibar_2: \Gamma_F \rightarrow H_2(\OO)$ is a minimally ramified 
lift of $\psibar_2$. It is canonical and defines an $\OO$-valued point on  a unique  irreducible 
component of $X^{\gen}_{H_2,\psibarss_2}$ and its $H_2$-pseudocharacter defines 
an $\OO$-valued point on the unique irreducible component of 
$X^{\ps}_{H_2}$. We thus obtain a canonical bijection between the set of irreducible components
of $X^{\ps}_{H_2}$ and $\mathrm X(\mu)$, where the identity in $\mathrm X(\mu)$ is mapped to the 
distinguished component constructed above.

We will now give a different description of the bijection in the 
semi-direct product case. Let $X$ be an irreducible component of $X^{\ps}_{H_2}$. 
It follows from \eqref{w_think_1} and \eqref{w_think_3} that every 
point $x\in X^{\ps}_{H_2}(\Qpbar)$ lies on a unique irreducible component 
and every irreducible component contains a $\Qpbar$-valued point. 
We choose $x\in X(\Qpbar)$ and let $\psi_2: \Gamma_F \rightarrow H_2(\Qpbar)$ 
be a continuous representation such that its $H_2$-pseudocharacter corresponds to $x$. The representation $\psi_2$ is uniquely determined by $x$ up to $H_2^0(\Qpbar)$-conjugation. Using that $H_2$ is a semi-direct product, we 
may write $\psi_2(\gamma)=(\Phi(\gamma), \gamma|_E)$ for all 
$\gamma\in \Gamma_F$. Since $\psi_2$ is a continuous representation 
one may check that $\Phi: \Gamma_F \rightarrow H_2^0(\Qpbar)$ defines 
a continuous $1$-cocycle and the $H_2^0(\Qpbar)$-conjugacy class 
of $\psi_2$ defines a cohomology class in $H^1(\Gamma_F, H^0(\Qpbar))$. 
We show in \cite{defT} using arguments that go back to the work of Langlands 
\cite{langlands_tori} on his correspondence for tori that 
$$\Hom^{\cont}_{\mathrm{Group}}((\Gamma_F^{\ab,p}\otimes M_2)^{\Delta}, \Qpbar^{\times})\cong H^1(\Gamma_F, H^0_2(\Qpbar)).$$
Thus to $x\in X(\Qpbar)$ we may canonically attach a continuous character 
$$\tilde{\chi}: (\Gamma_F^{\ab,p}\otimes M_2)^{\Delta} \rightarrow \Qpbar^{\times}.$$
The Artin map induces an isomorphism between $\mu:=(\mu_{p^{\infty}}(E)\otimes M_2)^{\Delta}$ and the $p$-power torsion subgroup of $(\Gamma_F^{\ab,p}\otimes M_2)^{\Delta}$. 
Let $\chi: \mu\rightarrow  \Qpbar^{\times}$ be the  restriction  of $\tilde{\chi}$ to $\mu$. Then $\chi$ takes values in $\OO^{\times}\subset \Qpbar^{\times}$
and we show in \cite[Section 7.4]{defT} that the irreducible component $X$ is uniquely determined by $\chi$.  

\section{\texorpdfstring{Deformations into $L$-groups and $C$-groups}{Deformations into L-groups and C-groups}}\label{LandC}
In this section we  spell out that the results proved in sections \ref{sec_consequence} and \ref{nightmare_cont} are applicable, when $G$ is 
an $L$-group or a $C$-group. Galois representations with values in 
such groups appear in the Langlands correspondence. 

Let $H$ be a connected reductive 
group defined over $F$ and let $E$ be a finite Galois extension of $F$ such that $H_E$ is split. Let $T$ be a 
maximal split torus in $H_E$ and let $B$ be a Borel subgroup of $H_E$ containing 
$T$. To the triple $(H_E, B, T)$ one may attach a \emph{based root datum} 
$(X^{\ast}(T), \Phi, \Delta, X_{\ast}(T), \Phi^{\vee}, \Delta^{\vee})$, 
where the symbols denote characters of $T$, the set of all roots, the 
set of positive roots corresponding to $B$, cocharacters of $T$, the set 
of coroots, and the set of positive coroots corresponding to $B$, respectively,
see \cite[Section 1.5]{bcnrd}. 
The dual group is a triple $(\widehat{H}, \widehat{B}, \widehat{T}$), where 
$\widehat{H}$ is a connected split reductive group scheme defined over $\ZZ$, $\widehat{B}$ 
is a Borel subgroup scheme of $\widehat{H}$ containing a maximal split 
torus $\widehat{T}$, such that the based root datum of $(\widehat{H}, \widehat{B}, \widehat{T})$ is isomorphic to $(X_{\ast}(T), \Phi^{\vee}, \Delta^{\vee}, X^{\ast}(T), \Phi, \Delta)$. Out of the action of $\Gal(E/F)$ 
on $H_E$ one may construct a group of homomorphism of $\Gal(E/F)$ 
to the automorphism group of the based root datum. After fixing additional 
datum, called a pinning, one rigidifies the situation to obtain a group homomorphism 
$\Gal(E/F)\rightarrow \Aut(\widehat{H})$. We refer the reader to 
\cite[Section I.1]{borel_corvallis}, or \cite[Section 2]{DPS} for further details. The $L$-group 
is defined as 
$${}^L H:= \widehat{H}\rtimes \Gal(E/F),$$
which we will consider as a group scheme over $\OO$. An important
consequence of the construction is that there are canonical $\Gal(E/F)$-equivariant identifications
$$X_{\ast}(T)=X^{\ast}(\widehat{T}), \quad X^{\ast}(T)=X_{\ast}(\widehat{T}).$$

\begin{lem}\label{char_cochar} Let $Z(H)^0$ be the neutral component of the centre of $H$ and let $M$ the character lattice 
of $\widehat{H}/\widehat{H}'$. Then there is a canonical identification 
$$Z(H)^0(F)=(E^{\times} \otimes M)^{\Gal(E/F)}.$$
In particular, $\mu:=(\mu_{p^{\infty}}(E) \otimes M)^{\Gal(E/F)}$ gets canonically identified with $p$-power torsion subgroup $Z(H)^0(F)_{p^{\infty}}$ of $Z(H)^0(F)$.
\end{lem}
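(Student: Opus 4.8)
The statement is a purely combinatorial/linear-algebraic identity about root data, so the plan is to unwind the canonical identifications in the construction of the $L$-group and translate the question to a statement about characters of tori. First I would recall that $\widehat H / \widehat H'$ is a split torus with character lattice $M$, and that the isogeny $\widehat T \twoheadrightarrow \widehat H/\widehat H'$ induces an injection $M \hookrightarrow X^*(\widehat T) = X_*(T)$ which is $\Gal(E/F)$-equivariant. Dually, the inclusion $Z(H)^0 \hookrightarrow T$ (valid after base change to $E$, since $Z(H)^0_E \subseteq T$ because $T$ contains the maximal central torus of the split group $H_E$) gives a $\Gal(E/F)$-equivariant map $X^*(T) \twoheadrightarrow X^*(Z(H)^0_E)$, and I would identify $X^*(Z(H)^0_E)$ with $M$ under the canonical pairing $X^*(T) = X_*(\widehat T)$: the subgroup $M \subseteq X_*(T)$ annihilates the root lattice $\Phi^\vee$ of $\widehat H$, i.e. the coroot lattice of $H$, and $X^*(Z(H)^0)$ is exactly the quotient of $X^*(T)$ by the (saturation of the) coroot lattice. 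So $M \cong X^*(Z(H)^0_E)$ as $\Gal(E/F)$-modules.

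Next, since $Z(H)^0$ is a torus over $F$ that splits over $E$, I would invoke the standard description of its $F$-points: for a torus $S$ over $F$ split over $E$ with character lattice $X^*(S_E)$ carrying its $\Gal(E/F)$-action, one has a canonical isomorphism $S(F) = (E^\times \otimes_{\ZZ} X_*(S_E))^{\Gal(E/F)}$, functorially and $\Gal(E/F)$-equivariantly. Applying this to $S = Z(H)^0$ and using $X_*(Z(H)^0_E) = X^*(\widehat{Z(H)^0_E})$ — but more directly, applying the cocharacter version: $Z(H)^0(F) = (E^\times \otimes X_*(Z(H)^0_E))^{\Gal(E/F)}$, and $X_*(Z(H)^0_E)$ is canonically $\Gal(E/F)$-equivariantly isomorphic to $M = X^*(\widehat H / \widehat H')$ because $X_*(Z(H)^0) = X^*(\text{dual torus}) $ and the dual of $Z(H)^0$ in the sense of the $\widehat{}$-duality is precisely $\widehat H/\widehat H'$ (the cocenter of $\widehat H$); this last fact is part of the functoriality of the dual group under the maps $Z(H)^0 \hookrightarrow H$ and $\widehat H \twoheadrightarrow \widehat H/\widehat H'$. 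This yields the claimed identification $Z(H)^0(F) = (E^\times \otimes M)^{\Gal(E/F)}$.

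For the "in particular" clause I would tensor the exact sequence $0 \to \mathcal O_E^\times \to E^\times \xrightarrow{\mathrm{val}} \ZZ^{(\text{places})} \to 0$ appropriately, or more simply note that $\mu_{p^\infty}(E) \otimes M \hookrightarrow E^\times \otimes M$ lands in the torsion subgroup, and conversely that the torsion in $(E^\times \otimes M)^{\Gal(E/F)}$ comes only from the torsion of $E^\times \otimes M$. Since $E^\times$ is, as an abstract group, a direct sum of a finite cyclic group $\mu(E)$ and a free abelian group, the torsion subgroup of $E^\times \otimes M$ is exactly $\mu(E)_{p'} \otimes M \oplus \mu_{p^\infty}(E) \otimes M$; the $p$-power torsion of $(E^\times \otimes M)^{\Gal(E/F)}$ is then $(\mu_{p^\infty}(E) \otimes M)^{\Gal(E/F)} = \mu$, because taking $\Gal(E/F)$-invariants (a left-exact functor) commutes with the decomposition into primary parts. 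Matching this with the $p$-power torsion of $Z(H)^0(F)$ under the isomorphism just established gives $\mu \cong Z(H)^0(F)_{p^\infty}$.

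\textbf{Expected main obstacle.} The genuinely delicate point is pinning down all the identifications \emph{canonically} and checking $\Gal(E/F)$-equivariance at each step — in particular identifying $X_*(Z(H)^0_E)$ with $X^*(\widehat H/\widehat H')$ as Galois modules via the dual-group formalism, rather than just abstractly. This requires carefully tracking how the $\Gal(E/F)$-action on $\widehat H$ (defined through the action on the based root datum plus a pinning) restricts to the cocenter, and comparing it with the Galois action on $Z(H)^0$ coming from the $F$-structure of $H$; the two are dual to each other by construction of ${}^L H$, but spelling this out cleanly is where the real work lies. The rest is the standard torus computation, which I would state with a reference (e.g. to the companion paper \cite{defT} or to \cite{langlands_tori}) rather than reprove.
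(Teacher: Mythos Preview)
Your approach is essentially the paper's: identify $X_*(Z(H_E)^0)$ with $M$ inside $X_*(T)=X^*(\widehat T)$, then use $Z(H)^0(F)=\Hom(X^*(Z(H_E)^0),E^\times)^{\Gal(E/F)}=(E^\times\otimes X_*(Z(H_E)^0))^{\Gal(E/F)}$. The paper does this in two lines by quoting \cite[Proposition 14.2]{borel} for $X_*(Z(H_E)^0)=\{\beta^\vee\in X_*(T):\langle\beta^\vee,\alpha\rangle=0\ \forall\alpha\in\Phi\}$ and observing this is exactly $M$; note that the orthogonality is to $\Phi$ (the \emph{coroots} of $\widehat H$, since characters of $\widehat H/\widehat H'$ kill the coroot lattice of $\widehat H$), not to $\Phi^\vee$ as you wrote --- your parenthetical has the duality backwards, though your eventual identification $X_*(Z(H)^0_E)\cong M$ is the right one.
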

\begin{proof} The base change of $Z(H)^0$ to $E$ is equal to $Z(H_E)^0$. 
It follows from \cite[Proposition 14.2]{borel} that 
$$ X_{\ast}(Z(H_E)^0)=\{ \beta^{\vee}\in X_{\ast}(T): \langle \beta^{\vee}, \alpha\rangle=0, \forall \alpha\in \Phi\}.$$  
Thus the canonical identification $X_{\ast}(T)=X^{\ast}(\widehat{T})$ identifies $X_{\ast}(Z(H_E)^0)$ with $M$. Since
this identification is $\Gal(E/F)$-equivariant we obtain
\begin{equation}
\begin{split}
 Z(H)^0(F)&=Z(H)^0(E)^{\Gal(E/F)}=\Hom(X^{\ast}(Z(H_E)^0), E^{\times})^{\Gal(E/F)}\\
 &= (X_{\ast}(Z(H_E)^0)\otimes E^{\times})^{\Gal(E/F)}= (M\otimes E^{\times})^{\Gal(E/F)}.
 \end{split}
 \end{equation}
\end{proof}

We fix a continuous representation $\rhobar: \Gamma_F \rightarrow {}^L H(k)$ 
such that the composition of $\rhobar$ with the projection onto $\Gal(E/F)$ 
is the map $\gamma\mapsto \gamma|_E$. Representations satisfying this condition
are called \emph{admissible}. 

Let $D_{\rhobar}: \Aa_{\OO}\rightarrow \Set$ be the functor which maps
$(A, \mm_A)$ to the set of continuous representations $\rho_A:\Gamma_F\rightarrow 
{}^L H(A)$ such that $\rho_A(\gamma)\equiv \rhobar(\gamma)\pmod{\mm_A}$ for all 
$\gamma \in \Gamma_F$. This functor is representable by a complete local 
noetherian $\OO$-algebra $R^{\square}_{\rhobar}$. 
The results of  \Cref{sec_consequence} apply to $R^{\square}_{\rhobar}$.
We have $R^{\square}_{\rhobar}= R^{\square}_{G, \rho_x}$, where 
$G={}^L H$ and $x\in X^{\gen}_{G, \rhobarss}(k)$ is the point corresponding 
to $\rhobar$. We highlight a few of the results below.

\begin{thm}\label{L1} The ring $R^{\square}_{\rhobar}$ is $\OO$-flat, complete 
intersection of relative dimension $\dim \widehat{H}_k$. Its special 
fibre $R^{\square}_{\rhobar}/\varpi$ is complete intersection.
\end{thm}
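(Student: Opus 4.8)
The plan is to deduce Theorem \ref{L1} from Corollary \ref{complete_intersection} by recognising $R^{\square}_{\rhobar}$ as one of the framed deformation rings studied there. Put $G := {}^{L}H = \widehat H \rtimes \Gal(E/F)$, regarded as an $\OO$-group scheme. Its neutral component $G^{0} = \widehat H$ is split reductive over $\ZZ$, hence over $\OO$, and $G/G^{0} = \underline{\Gal(E/F)}$ is a finite constant group scheme over $\OO$; so $G$ is a generalised reductive $\OO$-group scheme with $G^{0}$ split and $G/G^{0}$ constant, and all the constructions of Sections \ref{sec_gen_matrix}--\ref{sec:bound_special} apply to it.

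First I would check that $D_{\rhobar} = D^{\square}_{G,\rhobar}$, so that $R^{\square}_{\rhobar} = R^{\square}_{G,\rhobar}$; this is the one place where the admissibility hypothesis must be unwound. For $(A,\mm_{A}) \in \Aa_{\OO}$ the reduction map $(G/G^{0})(A) \to (G/G^{0})(k)$ is a bijection, because $G/G^{0}$ is a constant finite group scheme and $A$ is local artinian. Hence any continuous $\rho_{A} : \Gamma_{F} \to G(A)$ reducing to the admissible $\rhobar$ is itself admissible, and $D_{\rhobar}$ coincides with the functor $D^{\square}_{G,\rhobar}$ of Section \ref{sec_def_rings}. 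Consequently $\rhobar$ defines a $k$-rational — hence closed — point $x_{0}$ of $X^{\gen}_{G,\rhobarss}$ lying in $Y_{\rhobarss}$ (since $\kappa(x_{0}) = k$ is a finite field), with $R^{\square}_{G,\rho_{x_{0}}} = R^{\square}_{\rhobar}$. I would also note that the standing hypothesis of Section \ref{ind_hyp}, that $\pi_{G} \circ \rhobarss : \Gamma_{F} \to (G/G^{0})(\kbar)$ is surjective, holds automatically here: it is the canonical surjection $\Gamma_{F} \twoheadrightarrow \Gal(E/F)$ attached to the Galois extension $E/F$.

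With these identifications in place, Theorem \ref{thm_bound_space}, and hence Corollary \ref{complete_intersection}, are available for $(G,\rhobarss)$. Applying Corollary \ref{complete_intersection}(1) and (2) at the closed point $x_{0} \in Y_{\rhobarss}$ — where the coefficient ring $\Lambda$ for $\kappa(x_{0}) = k$ is $\OO$ itself — gives precisely that $R^{\square}_{\rhobar} = R^{\square}_{G,\rho_{x_{0}}}$ is $\OO$-flat of relative dimension $\dim G_{k}([F:\Qp]+1) = \dim \widehat H_{k}([F:\Qp]+1)$ and a complete intersection, and that $R^{\square}_{\rhobar}/\varpi$ is a complete intersection of that dimension. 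Alternatively one could read the same conclusion off the local structure of $X^{\gen}_{G,\rhobarss}$ at $x_{0}$ via Corollary \ref{dim_XgenG} and Lemma \ref{local_ring_def_ring}.

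There is no genuine obstacle: the substantive input is the upper bound $\dim \Xbar^{\gen}_{G,\rhobarss} = \dim G_{k}([F:\Qp]+1)$ of Theorem \ref{thm_bound_space}, which is already proved, and the only thing that needs a (routine) verification by hand is the dictionary $D_{\rhobar} = D^{\square}_{G,\rhobar}$, i.e. the bijection $(G/G^{0})(A) \xrightarrow{\sim} (G/G^{0})(k)$ for $A \in \Aa_{\OO}$, together with the observation that admissibility forces $\pi_{G}\circ\rhobarss$ to be the required surjection.
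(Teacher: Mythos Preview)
Your proposal is correct and follows exactly the paper's approach: the paper's proof is the single line ``This follows from Corollary \ref{complete_intersection}, observing that $G^0= \widehat{H}$,'' and you have simply spelled out in detail the identifications $G={}^{L}H$, $D_{\rhobar}=D^{\square}_{G,\rhobar}$, and the verification that the standing surjectivity hypothesis on $\pi_G\circ\rhobarss$ is guaranteed by admissibility. Your extra care in checking that $\rhobar$ gives a closed point of $Y_{\rhobarss}$ with coefficient ring $\Lambda=\OO$ is exactly what is needed to invoke Corollary \ref{complete_intersection} at that point.
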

\begin{proof} This follows from Corollary \ref{complete_intersection}, observing that $G^0= \widehat{H}$. 
\end{proof}

\begin{thm}[\Cref{R_norm_red}]\label{L1bis} $R^{\square}_{\rhobar}$ is reduced and $R^{\square}_{\rhobar}[1/p]$ is normal.
\end{thm}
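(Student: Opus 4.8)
The plan is to deduce Theorem~\ref{L1bis} from \Cref{R_norm_red} applied to the group $G={}^LH$, exactly as \Cref{L1} was deduced from \Cref{complete_intersection}. The first step is to identify the relevant rings. If $\rho_A:\Gamma_F\to{}^LH(A)$ is a lift of $\rhobar$ with $\rho_A\equiv\rhobar\pmod{\mm_A}$ for $(A,\mm_A)\in\Aa_{\OO}$, then since $\Gal(E/F)$ is a finite constant group scheme and $A$ is local with residue field $k$, the composite $\Gamma_F\to{}^LH(A)\to({}^LH/\widehat H)(A)=\Gal(E/F)$ must be $\gamma\mapsto\gamma|_E$; hence $\rho_A$ is automatically admissible. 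Therefore $D_{\rhobar}=D^{\square}_{{}^LH,\rhobar}$ and $R^{\square}_{\rhobar}=R^{\square}_{{}^LH,\rho_z}$, where $z\in X^{\gen}_{{}^LH,\rhobarss}(k)$ is the point attached to $\rhobar$.

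Next I would check that $({}^LH,\rhobar)$ satisfies the standing hypotheses of \Cref{nightmare_cont}: the neutral component $({}^LH)^0=\widehat H$ is split reductive over $\OO$ (it is split over $\ZZ$), and admissibility makes $\Gamma_F\xrightarrow{\rhobar}{}^LH(k)\to\Gal(E/F)$ surjective, so $\pi_G\circ\rhobarss$ surjects onto $({}^LH/\widehat H)(\kbar)=\Gal(E/F)$. I would then take the trivial partial-determinant datum $M_1=0$, $M_2=M=X^{\ast}(\widehat H/\widehat H')$, so that $H_1$ is trivial, $\psi_1$ is the trivial representation, $X^{\gen,\psi_1}_{G,\rhobarss}=X^{\gen}_{G,\rhobarss}$, and $R^{\square,\psi_1}_{G,\rho_z}=R^{\square}_{G,\rho_z}=R^{\square}_{\rhobar}$. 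The extra largeness assumptions made in \Cref{sec_gen_tori} — that $\OO$ contains the appropriate $p$-power roots of unity and that $X^{\gen}_{H_2,\psibar_2}(\OO)$ is non-empty, here with $H_2={}^LH/\widehat H'$ a generalised torus — need not hold for the given $\OO$, but they can be arranged after replacing $L$ by a finite extension $L'$ with ring of integers $\OO'$. Over such $\OO'$, \Cref{R_norm_red} applies and shows that $R^{\square}_{{}^LH,\rhobar}\otimes_{\OO}\OO'=R^{\square}_{{}^LH,\rhobar'}$ (via \Cref{extend_scalars1}) is reduced and has normal generic fibre.

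Finally I would descend along $\OO\to\OO'$. Since $R^{\square}_{\rhobar}$ is $\OO$-flat by \Cref{complete_intersection}, the map $R^{\square}_{\rhobar}[1/p]\to R^{\square}_{\rhobar}[1/p]\otimes_L L'=R^{\square}_{\rhobar'}[1/p]$ is faithfully flat between noetherian rings with normal target, hence the conditions $(R_1)$ and $(S_2)$ descend and $R^{\square}_{\rhobar}[1/p]$ is normal; $\OO$-flatness then embeds $R^{\square}_{\rhobar}$ into $R^{\square}_{\rhobar}[1/p]$, so $R^{\square}_{\rhobar}$ is reduced. This completes the argument. There is no genuine obstacle here: the substance lies entirely in the input \Cref{R_norm_red} (which in turn rests on the codimension bound for the special locus, \Cref{codim_special_locus}, and the normality statement \Cref{reg_codim_x_gen}), and the present deduction is bookkeeping. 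The point requiring care is to keep the partial-determinant data consistent ($M_1=0$) and to carry out the descent of reducedness and normality cleanly along the auxiliary finite extension of coefficients.
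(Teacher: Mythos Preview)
Your proposal is correct and follows the paper's approach: Theorem~\ref{L1bis} is stated in the paper simply as a reference to \Cref{R_norm_red}, and your argument spells out the identification $R^{\square}_{\rhobar}=R^{\square,\psi_1}_{{}^LH,\rho_z}$ with $M_1=0$ that makes this citation valid. One small inaccuracy: with $M_1=0$ the group $H_1$ is not trivial but equal to $G/G^0=\Gal(E/F)$, and $\psi_1$ is the projection $\gamma\mapsto\gamma|_E$ (as in the paper's proof of \Cref{L2}); however, you had already shown admissibility is automatic for lifts, so the conclusion $R^{\square,\psi_1}_{G,\rho_z}=R^{\square}_{G,\rho_z}$ stands and the slip is harmless. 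Your descent step along a finite extension $\OO\to\OO'$ to arrange the auxiliary hypotheses of \Cref{sec_gen_tori} is a point the paper leaves implicit, and your treatment of it is correct.
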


We assume that $\OO$ contains all $p^m$-th roots of unity, where $p^m$ is the 
order of $Z(H)^0(F)_{p^{\infty}}$. We let $\mathrm X$ be the group of characters
$\chi: Z(H)^0(F)_{p^{\infty}}\rightarrow \OO^{\times}$. 

\begin{thm}\label{L2} If $\pi_1(\widehat{H}')$ is \'etale then there is a canonical bijection 
$$ \chi\mapsto \Spec R^{\square, \chi}_{\rhobar}$$
between $\mathrm X$ and the set of irreducible components of $\Spec R^{\square}_{\rhobar}$. The rings $R^{\square, \chi}_{\rhobar}$ and
their special fibres $R^{\square, \chi}_{\rhobar}/\varpi$ are complete intersection normal domains.
\end{thm}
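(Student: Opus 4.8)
<br>

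The plan is to deduce \Cref{L2} from \Cref{conj_BJ}, \Cref{reg_codim_x}, and \Cref{complete_intersection_chi} by identifying the $L$-group situation with the ``fixed partial determinant'' setup of \Cref{nightmare_cont} in the special case $M_1 = 0$, $M_2 = M$. First I would observe that with $G = {}^L H$ we have $G^0 = \widehat H$, $G' = \widehat H'$, and $G/G^0 = \underline{\Gal(E/F)}$ is the constant group scheme; since $\rhobar$ is admissible, the composition $\Gamma_F \xrightarrow{\rhobar} G(k) \to (G/G^0)(k)$ is the surjection $\gamma \mapsto \gamma|_E$, so the standing hypotheses of \Cref{nightmare_cont} are met (possibly after an unramified extension of $L$, which does not affect any of the assertions). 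Here $\Gamma_E = \ker(\pi_G \circ \rhobar)$ really is $\Gal(\overline F/E)$, so $E$ in the abstract setup coincides with our $E$. With $M_1 = 0$ we are in the case $X^{\gen,\psi_1}_{G,\rhobarss} = X^{\gen}_{G,\rhobarss}$ discussed at the start of ``Irreducible components,'' so $R^\square_{\rhobar} = R^{\square,\psi_1}_{G,\rho_x}$ with the empty determinant condition, and the group $\mu$ of \Cref{sec_gen_tori} is $(\mu_{p^\infty}(E) \otimes M)^{\Gal(E/F)}$, which by \Cref{char_cochar} is canonically $Z(H)^0(F)_{p^\infty}$, so $\mathrm{X} = \mathrm{X}(\mu)$. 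The hypothesis $\dim Z(H)^0(F)_{p^\infty} = p^m$ with $\OO \supseteq \mu_{p^m}$ matches the running assumption in \Cref{sec_gen_tori}.

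Next I would assemble the conclusions. The ring-theoretic statements about $R^{\square,\chi}_{\rhobar}$ follow directly: \Cref{complete_intersection_chi} (applied at the closed point $x \in Y^{\psi_1,\chi}_{\rhobarss}$, using \Cref{non-empty} for nonemptiness) gives that $R^{\square,\chi}_{\rhobar} = R^{\square,\psi_1,\chi}_{G,\rho_x}$ and $R^{\square,\chi}_{\rhobar}/\varpi$ are complete intersection, and since $\pi_1(G') = \pi_1(\widehat H')$ is \'etale, \Cref{reg_codim_x} gives that both are normal domains. For the bijection, I would invoke \Cref{conj_BJ}: with $\psibar = \varphi \circ \rhobar$ (so $H_2 = G/G'$ in the present case), the natural map $R^\square_{\psibar} \to R^\square_{\rhobar}$ is flat and induces a bijection between the sets of irreducible components; and by \eqref{R_box_psi} together with the fact (from the proof of \Cref{conj_BJ}) that $R^\square_{\psibar}$ is $\OO$-flat with irreducible components $\Spec R^{\square,\chi}_{\psibar}$ indexed by $\chi \in \mathrm{X}(\mu)$, we get that the irreducible components of $\Spec R^\square_{\rhobar}$ are exactly the $\Spec R^{\square,\chi}_{\rhobar}$ for $\chi \in \mathrm{X}$.

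The one genuinely nontrivial point — and the main obstacle — is \emph{canonicity} of the bijection $\chi \mapsto \Spec R^{\square,\chi}_{\rhobar}$. The indexing by $\mathrm{X}(\mu)$ coming out of \eqref{w_think_4} depends a priori on the noncanonical isomorphism $\alpha$ of \eqref{w_think_1}. To upgrade it to a canonical statement I would appeal to the discussion in \Cref{label}: there is a canonical faithful transitive action of $\mathrm{X}(\mu)$ on the set of irreducible components of $X^\ps_{H_2} = X^\ps_{G/G'}$, and in the semidirect-product case $H_2 = G/G' = (G/G')^0 \rtimes \Gal(E/F)$ — which holds here because ${}^L H = \widehat H \rtimes \Gal(E/F)$ forces $G/G' = (\widehat H/\widehat H') \rtimes \Gal(E/F)$ — the Teichm\"uller section $\sigma$ pins down a distinguished component (the one containing the pseudocharacter of $\sigma \circ \psibar_2$, where $\psibar_2 = \varphi_2 \circ \rhobar$), hence a canonical $\mathrm{X}(\mu)$-equivariant bijection between $\mathrm{X}(\mu)$ and the components of $X^\ps_{G/G'}$. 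Transporting this along the component bijections of \Cref{irr_comp_ps} and \Cref{conj_BJ} (which are induced by the natural maps $X^\gen_{G,\rhobarss} \to X^\gen_{G/G',\psibarss}$ and respect the $Z(G^0)$-actions) yields the canonical labelling of the components of $\Spec R^\square_{\rhobar}$ by $\mathrm{X} = \mathrm{X}(\mu) = \mathrm{X}(Z(H)^0(F)_{p^\infty})$. I would want to double-check that the Teichm\"uller-lift normalization is genuinely independent of auxiliary choices (it is $\Gal(E/F)$-equivariant and intrinsic to the split torus $(G/G')^0$ over $\OO$), and that the identification $\mu = Z(H)^0(F)_{p^\infty}$ of \Cref{char_cochar} is compatible with the Artin-map identification used in \Cref{label}; modulo these compatibilities, which are routine but must be stated, the proof is complete.
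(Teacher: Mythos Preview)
Your proposal is correct and follows essentially the same route as the paper: identify $G={}^LH$ with $M_1=0$, $H_1=\Gal(E/F)$, $\psi_1(\gamma)=\gamma|_E$, then invoke \Cref{conj_BJ} for the bijection, \Cref{char_cochar} and \Cref{label} for canonicity, and Corollaries~\ref{complete_intersection_chi} and~\ref{reg_codim_x} for the ring-theoretic assertions. The only cosmetic difference is that the paper unpacks the canonical labelling via the local Langlands correspondence for tori (the second description in \Cref{label}) rather than the Teichm\"uller-lift normalization you use; these are declared equivalent in \Cref{label}, so there is no substantive divergence.
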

\begin{proof} We have 
$ G/G'\cong (\widehat{H}/\widehat{H}')\rtimes \Gal(E/F).$
The assertion about the bijection follows from 
Corollary \ref{conj_BJ}, Section \ref{label} and Lemma \ref{char_cochar}. 

Let us make the bijection more explicit. If $X$ is an irreducible 
component of $\Spec R^{\square}_{\rhobar}$ then we may pick 
a geometric point $y\in X(\Qpbar)$. Then $y$ corresponds 
to a continuous representation $\rho: \Gamma_F \rightarrow {}^L H(\Qpbar)$. 
By quotienting out $\widehat{H}'(\Qpbar)$  we obtain a continuous 
admissible representation 
$$\psi: \Gamma_F \rightarrow ({}^L H/\widehat{H}')(\Qpbar)=(\widehat{H}/\widehat{H}')(\Qpbar)\rtimes \Gal(E/F).$$
The group ${}^LH/\widehat{H}'$ is the $L$-group for $Z(H)^0(F)$. 
The  $\widehat{H}(\Qpbar)$-conjugacy of $\psi$ corresponds to a continuous character $\tilde{\chi}: Z(H)^0(F)\rightarrow \Qpbar$ under the local Langlands
correspondence for tori, \cite{langlands_tori}, \cite{birkbeck}.  By restricting 
$\tilde{\chi}$ to $Z(H)^0(F)_{p^{\infty}}$ we obtain the required character $\chi$. 

The rest of the theorem follows from the results of \Cref{nightmare_cont}. 
Namely, $R^{\square,\chi}_{\rhobar}= R^{\square, \psi_1, \chi}_{G, \rho_x}$, 
where $G={}^L H$, $x\in X^{\gen}_{G, \rhobarss}(k)$ corresponds to $\rhobar$, $M_1=0$, so that $H_1= \Gal(E/F)$, and $\psi_1:\Gamma_F \rightarrow H_1(\OO)$
is the character $\gamma\mapsto \gamma|_E$. The assertions about the rings 
$R^{\square,\chi}_{\rhobar}$ and 
$R^{\square,\chi}_{\rhobar}/\varpi$ follow from Corollaries 
\ref{complete_intersection_chi}, \ref{reg_codim_x}.
\end{proof}

The $C$-group ${}^C H$ of $H$ is a variant of an $L$-group. 
It has been defined in \cite[Definition 5.3.2]{BG}; a nice exposition is given in  \cite[Section 1.1]{zhu}. For us only the following properties are important: 
there is an exact 
sequence 
\begin{equation}\label{need_1}
0\rightarrow {}^LH \rightarrow {}^CH \overset{d}{\rightarrow} \Gm\rightarrow 0.
\end{equation}
The derived subgroup of the neutral component of 
${}^C H$ is equal to $\widehat{H}'$ and \eqref{need_1} induces an isomorphism: 
\begin{equation}\label{need_2}
{}^CH/\widehat{H}'\cong {}^{L}H/\widehat{H}' \times \Gm.
\end{equation}
These properties of the $C$-group easily follow from its construction, see 
for example \cite[Section 2.1]{DPS}.
It follows from \eqref{need_2} that the subgroup scheme $\widehat{H}$ is normal in ${}^C H$ and \eqref{need_2} induces an isomorphism
\begin{equation}\label{need_3}
{}^CH/ \widehat{H}\cong \Gal(E/F)\times \Gm.
\end{equation}

Let $\rhobar: \Gamma_F\rightarrow {}^C H(k)$ be a continuous representation 
such that $d\circ \rhobar \equiv \chi_{\cyc} \pmod{\varpi}$, where $\chi_{\cyc}$ 
is the $p$-adic cyclotomic character, and the composition 
$\Gamma_F\overset{\rhobar}{\longrightarrow} {}^C H(k)\rightarrow 
\bigl({}^CH/ \widehat{H}\bigr)(k)$ with the projection onto $\Gal(E/F)$ 
under \eqref{need_3} is the map $\gamma \mapsto \gamma|_E$. 

Let $D^C_{\rhobar}: \Aa_{\OO}\rightarrow \Set$ be the functor such that 
$D^{C}_{\rhobar}(A)$ is the set of continuous representations 
$\rho_A: \Gamma_F\rightarrow {}^C H(A)$, such that 
$\rho_A\equiv \rhobar \pmod{\mm_A}$ and $d\circ \rho_A = \chi_{\cyc}\otimes_{\OO} A$. The definition of this 
deformation problem is motivated by the first two bullet points in \cite[Conjecture 5.3.4]{BG}, which  describes
Galois representations attached to $C$-algebraic automorphic forms. 
The functor $D^C_{\rhobar}$ is representable by a complete local noetherian $\OO$-algebra
$R^{\square,C}_{\rhobar}$ with residue field $k$.

\begin{thm}\label{C} Theorems \ref{L1}, \ref{L1bis} and \ref{L2} hold with $R^{\square, C}_{\rhobar}$
instead of $R^{\square}_{\rhobar}$.
\end{thm}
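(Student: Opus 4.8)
The plan is to reduce the statement about the $C$-group deformation ring $R^{\square,C}_{\rhobar}$ to the ``fixed partial determinant'' setup of \Cref{nightmare_cont} applied to $G={}^C H$, just as Theorem \ref{L2} was reduced for the $L$-group. First I would observe that $G={}^C H$ is a generalised reductive $\OO$-group scheme with $G^0=\widehat H$, that $G'=\widehat H'$ by the stated property of the $C$-group, and that \eqref{need_3} gives $G/G^0\cong \Gal(E/F)\times\Gm$. Set $H_1:=\Gal(E/F)\times\Gm$ and let $\varphi_1:G\to H_1$ be the quotient map $G\to G/G'\twoheadrightarrow H_1$; concretely this is the composition of the canonical map $G\to G/\widehat H$ of \eqref{need_3} with the map recording the $\Gm$-factor via $d$. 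Then I would define $\psi_1:\Gamma_F\to H_1(\OO)$ by $\gamma\mapsto(\gamma|_E,\chi_{\cyc}(\gamma))$ and check that the deformation functor $D^C_{\rhobar}$ coincides with $D^{\square,\psi_1}_{\rhobar}$ for $G={}^C H$: a deformation $\rho_A$ of $\rhobar$ lies in $D^C_{\rhobar}(A)$ precisely when $d\circ\rho_A=\chi_{\cyc}\otimes A$ and the $\Gal(E/F)$-component is $\gamma\mapsto\gamma|_E$, which is exactly the condition $\varphi_1\circ\rho_A=\psi_1\otimes_\OO A$ together with admissibility (the latter being automatic once $\rho_A\equiv\rhobar\bmod\mm_A$ since $G/G^0$ is finite \'etale, so lifting the $\Gal(E/F)$-component is rigid by \Cref{connected_Delta}). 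Hence $R^{\square,C}_{\rhobar}\cong R^{\square,\psi_1}_{G,\rho_x}$ where $x\in X^{\gen}_{G,\rhobarss}(k)$ is the point attached to $\rhobar$.

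Having identified the ring, the three theorems follow by quoting the corresponding results of \Cref{nightmare_cont} with the decomposition $M=M_1\oplus M_2$ determined by $H_1$. Here $M=X^*(G^0/G')=X^*(\widehat H/\widehat H')$, and $M_1$ is the $\Gm$-character lattice inside it coming from $d$, so $M_1\cong\ZZ$ with trivial $\Gal(E/F)$-action, while $M_2$ is the character lattice of ${}^LH/\widehat H'$; thus $H_1=\Gal(E/F)\times\Gm$ and $H_2={}^LH/\widehat H'$, the $L$-group of $Z(H)^0$. Note this is \emph{not} the $M_1=0$ case: here $M_1$ is the rank-one cyclotomic direction and $M_2$ carries the genuinely interesting torus. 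The analogue of Theorem \ref{L1} (flatness, complete intersection, relative dimension $\dim\Gbar_k=\dim\widehat H_k$, and the same for the special fibre) is \Cref{complete_intersection_chi} together with \Cref{extend_scalars2} and \Cref{local_ring_def_ring}, exactly as in the proof of \Cref{complete_intersection_chi}; one uses $\dim(H_2)_k=0$ is \emph{false} here, so one must carry the term $\dim(H_2)_k([F:\Qp]+1)$ and conclude $\dim\Gbar_k([F:\Qp]+1)=\dim\widehat H_k([F:\Qp]+1)$ via \eqref{dim_Gbar}, which reads $\dim\Gbar_k=\dim(H_2)_k+\dim\widehat H'_k=\dim\widehat H_k$ since $\dim(H_2)_k+\dim(H_1)_k=\dim Z(\widehat H)_k$ and $(H_1)_k$ contributes the $\Gm$. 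The analogue of Theorem \ref{L1bis} is \Cref{R_norm_red} with $M_1$ of rank one playing the role of the ``fixed determinant'' direction; since $\pi_1(\widehat H')$ need not be \'etale one only gets reducedness of $R^{\square,C}_{\rhobar}$ and normality of $R^{\square,C}_{\rhobar}[1/p]$ from \Cref{gen_fib_normal} and \Cref{reg_codim_x_gen}, which is all Theorem \ref{L1bis} claims.

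For the analogue of Theorem \ref{L2} I would invoke \Cref{conj_BJ} and \Cref{reg_codim_x}, noting that $\pi_1(G')=\pi_1(\widehat H')$, so the hypothesis ``$\pi_1(\widehat H')$ \'etale'' is exactly what those corollaries need. The set of irreducible components is then in bijection with $\mathrm X(\mu)$ where $\mu=(\mu_{p^\infty}(E)\otimes M_2)^{\Gal(E/F)}$, and by \Cref{char_cochar} this is canonically $Z(H)^0(F)_{p^\infty}$. To make the bijection \emph{canonical} (not merely a bijection of sets) I would run the argument of Section \ref{label}: pick $y\in X(\Qpbar)$ on an irreducible component $X$, push the corresponding $\rho:\Gamma_F\to{}^CH(\Qpbar)$ through ${}^CH\to{}^CH/\widehat H'$, quotient further to kill the $\Gm$-factor (using that the $d$-direction is pinned down to $\chi_{\cyc}$), obtaining a continuous admissible $\psi:\Gamma_F\to{}^LH/\widehat H'(\Qpbar)$, i.e.\ a character of $Z(H)^0(F)$ via Langlands' correspondence for tori \cite{langlands_tori}, \cite{birkbeck}, whose restriction to $Z(H)^0(F)_{p^\infty}$ is the desired $\chi$; the component is recovered from $\chi$ by \cite[Section 7.4]{defT}. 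Finally $R^{\square,\chi}_{\rhobar}=R^{\square,\psi_1,\chi}_{G,\rho_x}$ and \Cref{complete_intersection_chi}, \Cref{reg_codim_x} give that it and its special fibre are complete intersection normal domains. The main obstacle I anticipate is purely bookkeeping: verifying that the rank-one $M_1=\ZZ$ ``cyclotomic'' summand is genuinely $\Gal(E/F)$-invariant and that $d:{}^CH\to\Gm$ induces the projection $G/G'\to H_1$ compatibly with $\psi_1$, so that the fixed-partial-determinant functor of \Cref{intro_partial} matches $D^C_{\rhobar}$ on the nose; once that dictionary is fixed, every assertion is a direct citation.
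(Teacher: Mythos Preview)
Your overall strategy is exactly the paper's: identify $R^{\square,C}_{\rhobar}$ with $R^{\square,\psi_1}_{G,\rho_x}$ for $G={}^C H$, $H_1=\Gal(E/F)\times\Gm$, $\psi_1(\gamma)=(\gamma|_E,\chi_{\cyc}(\gamma))$, and then cite the results of \Cref{nightmare_cont}. The citations you give (Remark~\ref{1313} via \Cref{complete_intersection_chi} for Theorem~\ref{L1}, \Cref{R_norm_red} for Theorem~\ref{L1bis}, \Cref{conj_BJ} and \Cref{reg_codim_x} together with \Cref{char_cochar} and \S\ref{label} for Theorem~\ref{L2}) are the right ones, and your description of how to extract the canonical character $\chi$ from a $\Qpbar$-point matches the paper.

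However, your group-theoretic bookkeeping for ${}^C H$ is wrong and internally inconsistent. You write $G^0=\widehat H$ and then $G/G^0\cong\Gal(E/F)\times\Gm$ via \eqref{need_3}; but $G/G^0$ must be finite \'etale, so this already cannot hold. In fact \eqref{need_3} computes $G/\widehat H$, not $G/G^0$. From the exact sequence \eqref{need_1} restricted to neutral components one sees that $({}^C H)^0$ is an extension of $\Gm$ by $\widehat H$, so $\dim G^0_k=\dim\widehat H_k+1$ and $G/G^0\cong\Gal(E/F)$. Consequently $G^0/G'\cong(\widehat H/\widehat H')\times\Gm$ by \eqref{need_2}, whence $M=X^*(G^0/G')\cong X^*(\widehat H/\widehat H')\oplus\ZZ$, \emph{not} $X^*(\widehat H/\widehat H')$ as you write. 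This is precisely what makes room for the summand $M_1=\ZZ$ you want; with your identification $M=X^*(\widehat H/\widehat H')$ there is no such summand and the decomposition $M=M_1\oplus M_2$ of \Cref{nightmare_cont} would collapse. Your side remark $\dim(H_1)_k+\dim(H_2)_k=\dim Z(\widehat H)_k$ is also off by one for the same reason (the correct identity is $\dim(H_1)_k+\dim(H_2)_k=\dim Z(G^0)_k=\dim Z(\widehat H)_k+1$); your final conclusion $\dim\Gbar_k=\dim\widehat H_k$ is nonetheless correct, via \eqref{dim_Gbar} with $\dim G_k=\dim\widehat H_k+1$ and $\dim(H_1)_k=1$. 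Once you correct the identification of $G^0$ and $M$, everything you wrote goes through verbatim. One further minor point: for the analogue of Theorem~\ref{L1} you should invoke Remark~\ref{1313} (the statement for $R^{\square,\psi_1}_{G,\rho_x}$) rather than \Cref{complete_intersection_chi} itself (which is for $R^{\square,\psi_1,\chi}_{G,\rho_x}$).
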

\begin{proof} It follows from \eqref{need_2} that we are in the setting of \Cref{nightmare_cont} with $G={}^C H$, $M_1=\ZZ$ with the trivial $\Gal(E/F)$-action, 
$M_2=X^{\ast}(\widehat{H}/\widehat{H}')$ with the natural $\Gal(E/F)$-action. 
Moreover, 
$$H_1= \Gal(E/F)\times \Gm, \quad  H_2= (\widehat{H}/\widehat{H}')\rtimes \Gal(E/F)$$
and $\psi_1: \Gamma_F \rightarrow H_1(\OO)$ the character 
$\gamma\mapsto (\gamma|_E, \chi_{\cyc}(\gamma))$. 
The ring $R^{\square, C}_{\rhobar}$ is the ring $R^{\square, \psi_1}_{G, \rho_x}$ 
with $x\in X^{\gen, \psi_1}_{G, \rhobarss}(k)$ corresponding to $\rhobar$. 

The assertions of Theorem \ref{L1} follow from Remark \ref{1313}, which indicates
how to modify the proof of Corollary \ref{complete_intersection_chi} to get the 
assertion. 

The assertions of Theorem \ref{L1bis} follow from Corollary \ref{R_norm_red}.

Lemma \ref{char_cochar} implies
that we may canonically identify $(\mu_{p^{\infty}}(E)\otimes M_2)^{\Gal(E/F)}$
with $Z(H)^0(F)_{p^{\infty}}$. It follows from the proof of \Cref{conj_BJ} 
 that the set of irreducible components of $R^{\square, C}_{\rhobar}$ is in bijection of the character group of $Z(H)^0(F)_{p^{\infty}}$. 
The irreducible component corresponding to $\chi$ is the spectrum 
of $R^{\square, \psi_1, \chi}_{G, \rho_x}$. The assertion of Theorem \ref{L2}
follows from Corollaries \ref{XgenG_chi_lci}, \ref{reg_codim_x} and observing 
that \eqref{dim_Gbar} implies $\dim \Gbar_k= \dim \widehat{H}_k$. 

It follows from \eqref{need_2} that $G/G'\cong (G^0/G')\rtimes \Gal(E/F)$.
Hence,  as we explain in section \ref{label}, the map associating an irreducible component to a character is canonical. Concretely, if $X$ is a an irreducible 
component of $\Spec R^{\square, C}_{\rhobar}$ then we may pick 
a geometric point $y\in X(\Qpbar)$. Then $y$ corresponds 
to a continuous representation $\rho: \Gamma_F \rightarrow {}^C H(\Qpbar)$. 
By quotienting out $\widehat{H}'(\Qpbar)$ and projecting onto 
$({}^L H/ \widehat{H}')(\Qpbar)$ under \eqref{need_2} we obtain a continuous 
admissible representation 
$$\psi_2: \Gamma_F \rightarrow (\widehat{H}/\widehat{H}')(\Qpbar)\rtimes \Gal(E/F).$$
We then proceed as in the proof of Theorem \ref{L2}. The group ${}^LH/\widehat{H}'$ is the $L$-group for $Z(H)^0(F)$. 
The  $\widehat{H}(\Qpbar)$-conjugacy of $\psi_2$ corresponds to a continuous character $\tilde{\chi}: Z(H)^0(F)\rightarrow \Qpbar$ under the local Langlands
correspondence for tori, \cite{langlands_tori}, \cite{birkbeck}.  By restricting 
$\tilde{\chi}$ to $Z(H)^0(F)_{p^{\infty}}$ we obtain the required character $\chi$. 
\end{proof}


\appendix

\section{Condensed sets}\label{app_cond_set}

In \cite{ScholzeCond} Clausen and Scholze introduced the notion of a \emph{condensed set}, which is a more algebraic notion of topological space.
It turns out that condensed sets behave better in algebraic contexts and lead to clearer statements.

A condensed set can be defined as an accessible functor $X : \mathrm{ProFin}^{\mathrm{op}} \to \Set$ on the opposite category of profinite sets \cite[Footnote 5]{ScholzeCond}, which satisfies the following two conditions:
\begin{enumerate}
    \item $X$ maps finite disjoint unions of profinite sets to finite products.
    \item For any surjection $S' \twoheadrightarrow S$ of profinite sets with projections $p_1, p_2 : S' \times_S S' \to S'$, the canonical map
    $$ X(S) \to \{x \in X(S') \mid p_1^*(x) = p_2^*(x) \in X(S' \times_S S')\} $$
    is bijective.
\end{enumerate}
A map of condensed sets is a natural transformation.
The category of condensed sets can be identified with the category of functors on the full subcategory of extremally disconnected profinite sets, which only satisfy condition (1) \cite[Proposition 2.7]{ScholzeCond}. We will switch between the two perspectives without further comment.

It can be shown that the category of condensed sets is (up to a set-theoretic condition) a coherent topos.
In particular, we can define all kinds of algebraic structures internal to condensed sets and thereby obtain a notion of condensed group, condensed ring, etc.
In general we can speak about quasi-compact and quasi-separated objects of a coherent topos.
We will start by giving a concrete characterisation of quasi-compact condensed sets and quasi-separated condensed sets.

\begin{lem}\label{profiniteiscompact}
    Profinite sets are compact objects in the category of condensed sets, i.e. for every filtered colimit of condensed sets $X = \varinjlim_i X_i$ and every profinite set $S$, we have $X(S) = \varinjlim_i X_i(S)$.
\end{lem}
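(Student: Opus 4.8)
The plan is to unwind the definitions. A condensed set $X$ is determined by its values on extremally disconnected profinite sets, and colimits of condensed sets are computed objectwise on such sets (because the sheafification needed to compute colimits in the topos is trivial on extremally disconnected objects — these form a basis on which every sheaf condition is automatic). So for $S$ extremally disconnected we have $(\varinjlim_i X_i)(S) = \varinjlim_i X_i(S)$ essentially by construction. The content of the lemma is therefore to go from arbitrary profinite $S$ back to extremally disconnected ones.

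First I would fix a filtered system $(X_i)_{i}$ and its colimit $X = \varinjlim_i X_i$, and note that for every profinite set $S$ there is a canonical map $\varinjlim_i X_i(S) \to X(S)$; the goal is to show it is a bijection. Choose a surjection $\pi : S' \twoheadrightarrow S$ with $S'$ extremally disconnected (every profinite set admits such a cover, e.g. the Stone–Čech compactification of the underlying discrete set). Then $S' \times_S S'$ is profinite but need not be extremally disconnected, so I would cover it in turn by an extremally disconnected $S'' \twoheadrightarrow S' \times_S S'$. Using the sheaf condition (condition (2) in the definition of a condensed set) for each $X_i$ and for $X$, one writes $X_i(S)$ as the equaliser of $X_i(S') \rightrightarrows X_i(S' \times_S S')$, and similarly for $X$; pulling back along $S'' \to S' \times_S S'$ (which is injective on sections since $S'' \to S' \times_S S'$ is surjective) lets me replace $X_i(S'\times_S S')$ by a subset of $X_i(S'')$, with $S', S''$ both extremally disconnected.

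Now the key point: filtered colimits commute with finite limits in $\Set$. Since $X_i(S') = \varinjlim$ is already known on the extremally disconnected sets $S'$ and $S''$, the equaliser presentation gives
\[
\varinjlim_i X_i(S) = \varinjlim_i \operatorname{eq}\bigl(X_i(S') \rightrightarrows X_i(S'')\bigr) = \operatorname{eq}\bigl(\varinjlim_i X_i(S') \rightrightarrows \varinjlim_i X_i(S'')\bigr) = \operatorname{eq}\bigl(X(S') \rightrightarrows X(S'')\bigr) = X(S),
\]
where the last equality is the sheaf condition for $X$. This yields the claim. Finally, to conclude that profinite sets are \emph{compact} objects I would observe that this is exactly the statement that $\Hom(\underline S, -) = (-)(S)$ (by Yoneda) commutes with filtered colimits.

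The main obstacle is purely bookkeeping: one must be careful that the fibre product $S' \times_S S'$ is not extremally disconnected, so a second cover $S''$ is genuinely needed, and one must check that the two pullback maps into $X_i(S'')$ are compatible as $i$ varies so that the interchange of $\varinjlim_i$ with the equaliser is legitimate. None of this is deep — it is the standard argument that a sheaf valued in a coherent topos, restricted to a basis of quasi-compact objects, commutes with filtered colimits — but it requires stating the covers explicitly. Alternatively, one can simply cite \cite[Lemma 1.5 and the discussion following Definition 2.11]{ScholzeCond}, where compactness of profinite sets (indeed their role as the compact generators) is recorded; I would include the short argument above for completeness.
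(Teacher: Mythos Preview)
Your argument is correct and rests on the same key observation as the paper --- that filtered colimits in $\Set$ commute with finite limits --- but you take an unnecessary detour. The paper's proof is a single line: since the sheaf condition for condensed sets (finite products and the equaliser condition for surjections $S' \twoheadrightarrow S$) is phrased entirely in terms of finite limits, the presheaf colimit $\varinjlim_i X_i$ computed objectwise on \emph{all} profinite sets already satisfies the sheaf axioms, hence equals the sheaf colimit $X$; in particular $X(S) = \varinjlim_i X_i(S)$ for every profinite $S$ directly.

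Your route --- first establishing the claim on extremally disconnected sets (where colimits are objectwise by the alternative description of condensed sets), then covering a general $S$ by extremally disconnected $S'$ and $S''$ and using the equaliser formula --- arrives at the same place but with more bookkeeping. The detour through extremally disconnected sets is not needed: once you invoke commutation of filtered colimits with finite limits, you can apply it to the sheaf condition on $\mathrm{ProFin}$ itself rather than only to the equaliser you extract after passing to a cover. Both arguments are valid; the paper's is simply the more efficient packaging of the same idea.
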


\begin{proof}
    We have to check, that the presheaf colimit $\varinjlim_i X_i$ is already a sheaf. The sheaf condition as formulated in \cite[Section 1]{ScholzeCond} is in terms of finite limits, and these commute with filtered colimits.
\end{proof}

\begin{lem}\label{all_effective}
    Every surjection of condensed sets is an effective epimorphism.
\end{lem}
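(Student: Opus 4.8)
The statement to prove is: \emph{Every surjection of condensed sets is an effective epimorphism.}

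The plan is to use the fact that the category of condensed sets is a topos (more precisely, the category of sheaves on the site of profinite sets, or equivalently extremally disconnected sets, with the coherent topology). In any topos, epimorphisms are effective, i.e. every epimorphism $f : X \twoheadrightarrow Y$ is the coequalizer of its kernel pair $X \times_Y X \rightrightarrows X$. So the first thing I would do is recall that surjections of condensed sets are precisely epimorphisms in the categorical sense: a map $f : X \to Y$ of condensed sets is surjective (in the sense used in the paper, namely surjective on underlying sets after passing to sufficiently fine covers) if and only if it is an epimorphism of sheaves. This is a standard fact; one checks that a map of sheaves on a site is an epimorphism precisely when it is "locally surjective", and in the coherent topology on profinite sets this amounts to: for every extremally disconnected $S$ and every $y \in Y(S)$, after pulling back along some cover $S' \twoheadrightarrow S$ (which one can take to be a single surjection of extremally disconnected sets, and then even split since $S$ is projective), $y$ lifts to $X$.

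The key step is then the appeal to the standard topos-theoretic result: in a Grothendieck topos, every epimorphism is effective (the topos is exact, or "Barr-exact"). Concretely, given an epimorphism $f : X \twoheadrightarrow Y$, form the kernel pair $R = X \times_Y X$ with its two projections $p_1, p_2 : R \to X$; then $Y$ is canonically isomorphic to the coequalizer of $p_1, p_2$. I would cite this from the standard references on topos theory (e.g. the Stacks project on sheaves of sets, or Johnstone), or, since the paper is working with the very explicit model of condensed sets as functors on profinite sets, I could instead verify it by hand: the sheaf condition (2) in the definition of condensed set recalled in the appendix says exactly that $X(S)$ is the equalizer associated to the cover, and a diagram chase using that epimorphisms of sheaves are "locally split over extremally disconnected sets" shows that the presheaf coequalizer of $p_1, p_2$ already computes $Y$ on extremally disconnected sets. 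Because the category of condensed sets is equivalent to presheaves on extremally disconnected sets satisfying only condition (1), which is a much more rigid category, the coequalizer computation becomes essentially formal.

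The main obstacle, or rather the one point requiring care, is the set-theoretic subtlety: condensed sets as defined form a "very large" category, and one must either work with $\kappa$-condensed sets for an inaccessible cardinal $\kappa$ or use the accessibility condition built into the definition. I would handle this by noting that the surjection $f$ and all the relevant objects live in $\kappa$-condensed sets for a suitable $\kappa$ (the objects appearing in the paper are all small), and that the category of $\kappa$-condensed sets is a genuine Grothendieck topos, so the Barr-exactness applies there; the formation of kernel pairs and coequalizers is insensitive to enlarging $\kappa$. So in practice the proof is: surjections $=$ epimorphisms of sheaves; a Grothendieck topos is exact; hence epimorphisms are effective; done. I would keep the write-up to a couple of sentences plus a citation.

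\begin{proof}
    Surjections of condensed sets are exactly the epimorphisms: a map $f$ is surjective if and only if it is locally surjective for the coherent topology, which is the defining property of an epimorphism of sheaves. The category of $\kappa$-condensed sets, for $\kappa$ a suitable strong limit cardinal, is a Grothendieck topos (the topos of sheaves on the site of $\kappa$-small profinite sets). Any Grothendieck topos is exact, so every epimorphism in it is effective, i.e. is the coequalizer of its kernel pair; see \cite[\href{https://stacks.math.columbia.edu/tag/00WO}{Tag 00WO}]{stacks-project} for the relevant statement about sheaves of sets. Since the objects and maps we consider all live in $\kappa$-condensed sets for suitable $\kappa$, and the formation of kernel pairs and coequalizers is unaffected by enlarging $\kappa$, the claim follows.
\end{proof}
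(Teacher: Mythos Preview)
Your proof is correct but takes a different route from the paper. The paper argues directly using the concrete model of condensed sets as finite-product-preserving functors on extremally disconnected profinite sets (as recalled at the start of the appendix): the kernel pair coequaliser is a reflexive coequaliser, hence sifted, and sifted colimits commute with finite products, so the coequaliser can be computed pointwise in $\Set$, where the claim is obvious. Your argument instead invokes the general Barr-exactness of Grothendieck toposes, treating the result as a black box from topos theory.

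Both are fine. The paper's approach is more self-contained within the explicit setup already introduced and avoids any external appeal to exactness of toposes; it also makes transparent \emph{why} the sheaf condition (1) alone (finite products) suffices, rather than the full descent condition (2). Your approach is shorter to state and perhaps more conceptual, but relies on the reader knowing (or trusting) that sheaf toposes are exact, and requires the extra care you already flagged about passing through $\kappa$-condensed sets. One minor point: you should double-check that the Stacks tag you cite actually states that epimorphisms of sheaves are effective; the relevant content is there, but the precise tag for ``epimorphisms of sheaves are effective'' may not be 00WO.
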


\begin{proof}
    For a surjection $f : X \to Y$ of condensed sets, we need to see that $f$ is the coequaliser of its kernel pair
    $X \times_Y X \rightrightarrows X$. This is a reflexive coequaliser and therefore sifted. The condensed sets $X$ and $Y$ can be seen as finite product preserving functors on the opposite category of the category of extremally disconnected profinite sets. Since sifted coequalisers commute with finite products, we can reduce the claim to sets. Every surjection of sets is an effective epimorphism. This completes the proof.
\end{proof}

\begin{lem}\label{char_qc}
    Let $X$ be a condensed set. The following are equivalent.
    \begin{enumerate}
        \item $X$ is a quasi-compact object of the topos of condensed sets. That is, for every collection of condensed sets $(T_i)_{i \in I}$ and every effective epimorphism $\coprod_{i \in I} T_i \twoheadrightarrow X$, there is a finite subset $I_0 \subseteq I$, such that $\coprod_{i \in I_0} T_i \twoheadrightarrow X$ is an effective epimorphism.
        \item There is a profinite set $T$ and a surjection $\underline T \twoheadrightarrow X$.
    \end{enumerate}
\end{lem}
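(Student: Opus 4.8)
The plan is to reduce everything to two facts: first, that $\underline{S}$ is a quasi-compact object of the topos of condensed sets for every profinite set $S$ (and is moreover projective when $S$ is extremally disconnected); and second, the general topos fact that a quotient of a quasi-compact object is again quasi-compact. Granting these, the implication (2)$\Rightarrow$(1) is immediate: if $\underline{T}\twoheadrightarrow X$ is a surjection with $T$ profinite, then $X$ is a quotient of the quasi-compact object $\underline{T}$, hence quasi-compact. For (1)$\Rightarrow$(2) I would use that the representables $\underline{S}$, with $S$ ranging over a small generating family of extremally disconnected profinite sets, generate the topos of condensed sets \cite{ScholzeCond}: the tautological map $\coprod_{(S,x)}\underline{S}\to X$, where $(S,x)$ runs over pairs in this family with $x\in X(S)$, is surjective on $S$-sections for every such $S$ (the identity in $\underline{S}(S)$ maps to $x$), hence a surjection of condensed sets and, by \Cref{all_effective}, an effective epimorphism. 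Applying hypothesis (1) yields a finite subfamily $\{(S_1,x_1),\dots,(S_n,x_n)\}$ with $\coprod_{k=1}^n\underline{S_k}\twoheadrightarrow X$ an effective epimorphism; since the Yoneda embedding preserves finite coproducts (a finite disjoint union of profinite sets is profinite, and the condensed topology is compatible with finite disjoint unions), $\coprod_{k=1}^n\underline{S_k}\cong\underline{T}$ with $T=\coprod_{k=1}^n S_k$ profinite, giving the required surjection $\underline{T}\twoheadrightarrow X$.

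The substantive point is thus the quasi-compactness of $\underline{S}$. For extremally disconnected $S$ I would first show $\underline{S}$ is projective: any surjection $Y\twoheadrightarrow X$ of condensed sets is surjective on $S$-sections for extremally disconnected $S$, because local surjectivity over $S$ extends to a genuine lift along some cover of $S$, and such a cover splits since $S$ is a projective object of the category of profinite sets; hence by the Yoneda lemma every map $\underline{S}\to X$ lifts through $Y$. Now given an effective epimorphism $\coprod_{i\in I}T_i\twoheadrightarrow\underline{S}$, projectivity provides a section $s\colon\underline{S}\to\coprod_{i\in I}T_i$. Writing $\coprod_{i\in I}T_i=\varinjlim_{I_0}\coprod_{i\in I_0}T_i$ as the filtered colimit over finite $I_0\subseteq I$, and using that $\underline{S}$ is a compact object — Yoneda identifies $\Hom(\underline{S},-)$ with evaluation at $S$, which commutes with filtered colimits by \Cref{profiniteiscompact} — the section $s$ factors through $\coprod_{i\in I_0}T_i$ for some finite $I_0$. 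Then $\coprod_{i\in I_0}T_i\to\underline{S}$ is split surjective, hence an effective epimorphism by \Cref{all_effective}, so $\underline{S}$ is quasi-compact. For a general profinite $S$, choosing an extremally disconnected $\beta$ with a surjection $\underline{\beta}\twoheadrightarrow\underline{S}$ (e.g. $\beta$ the Stone--\v{C}ech compactification of the underlying discrete set of $S$) exhibits $\underline{S}$ as a quotient of a quasi-compact object, hence quasi-compact.

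The general fact that quotients of quasi-compact objects are quasi-compact — used both for (2)$\Rightarrow$(1) and in the last reduction above — is proved by the same mechanism: given an effective epimorphism $A\twoheadrightarrow B$ with $A$ quasi-compact and a cover $\coprod_i T_i\twoheadrightarrow B$, pull back along $A\to B$ to obtain a cover $\coprod_i(T_i\times_B A)\twoheadrightarrow A$ (pullbacks of effective epimorphisms are effective epimorphisms in a topos), extract a finite $I_0$ with $\coprod_{i\in I_0}(T_i\times_B A)\twoheadrightarrow A$, and compose: the map $\coprod_{i\in I_0}T_i\to B$ factors an effective epimorphism, hence is itself an effective epimorphism by \Cref{all_effective}.

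I expect the main obstacle to be purely formal: making precise the set-theoretic bookkeeping around the ``large'' coproduct $\coprod_{(S,x)}\underline{S}$ within the accessible-functor model of condensed sets, and carefully invoking the coherent-topos generalities (epimorphisms are effective, pullbacks of epimorphisms are epimorphisms, a small generating family of representables exists). The one genuinely non-formal input is the projectivity of $\underline{S}$ for extremally disconnected $S$, which rests on extremally disconnected profinite sets being projective objects of the category of profinite sets; everything else is a diagram chase combined with \Cref{profiniteiscompact} and \Cref{all_effective}.
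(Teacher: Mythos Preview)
Your proof is correct and follows the same overall skeleton as the paper: for (2)$\Rightarrow$(1), both arguments use that $\underline{T}$ is quasi-compact and then run exactly the pullback argument you give for ``quotients of quasi-compact are quasi-compact''; for (1)$\Rightarrow$(2), both produce a cover of $X$ by (representable) profinite sets and then extract a finite subcover whose coproduct is again profinite.

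The differences are in sourcing. The paper simply cites \cite[Proposition 1.2 (3)]{ScholzeAnalytic} for the quasi-compactness of $\underline{T}$, whereas you reprove it via projectivity of extremally disconnected profinite sets together with \Cref{profiniteiscompact}; your argument is more self-contained but longer. For (1)$\Rightarrow$(2), you invoke generation by representables to build the tautological cover, while the paper uses the $\kappa$-condensed presentation of $X$ and the density theorem to write $X$ as a small colimit of profinite sets, from which the cover is immediate. The paper's route has the advantage of cleanly resolving the set-theoretic issue you flag (the coproduct is small because everything is bounded by $\kappa$); your route is the standard topos-theoretic one and works once a small generating family is fixed, which in the condensed setting again comes down to the $\kappa$-bounding.
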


\begin{proof}
    Suppose $X$ is a quasi-compact condensed set. By definition it arises as a left Kan extension from a $\kappa$-condensed set.
    By the density theorem \cite[Theorem III.7.1]{MacLaneCat} we can write $X$ as a small colimit of $\kappa$-small profinite sets.
    The inclusion functor from $\kappa$-condensed sets into all condensed set preserves colimits \cite[Proposition 2.9]{ScholzeCond}, so $X$ is a small colimit of profinite sets. In particular, there is an epimorphism $\coprod_{i \in I} \underline{S_i} \twoheadrightarrow X$, where $S_i$ are profinite sets. Since this epimorphism is effective by \Cref{all_effective} and $X$ is assumed quasi-compact, there is a finite subset $I_0 \subseteq I$, such that $\coprod_{i \in I_0} \underline{S_i} \twoheadrightarrow X$ is a surjection.

    Suppose there is a profinite set and a surjection $\underline T \twoheadrightarrow X$ and let $\coprod_{i \in I} X_i \twoheadrightarrow X$ be an effective epimorphism. By pullback we obtain a surjection $\coprod_{i \in I} (X_i \times_X T) \twoheadrightarrow T$. Since by \cite[Proposition 1.2 (3)]{ScholzeAnalytic} $T$ is a quasi-compact object in the category of condensed sets, there is a finite subset $I_0 \subseteq I$, such that $\coprod_{i \in I_0} (X_i \times_X T) \twoheadrightarrow T$ is an effective epimorphism. The map $\coprod_{i \in I_0} (X_i \times_X T) \twoheadrightarrow X$ factors over $\coprod_{i \in I_0} X_i \twoheadrightarrow X$, which is thus also a surjection.
\end{proof}

\begin{lem}\label{char_qs}
    Let $X$ be a condensed set. The following are equivalent.
    \begin{enumerate}
        \item $X$ is a quasi-separated object of the topos of condensed sets. That is, for every pair of morphisms $Y \to X \leftarrow Z$, where $Y$ and $Z$ are quasi-compact the fibre-product $Y \times_X Z$ is quasi-compact.
        \item For every profinite set $S$ together with a morphism $\underline S \to X \times X$, the pullback $\underline S \times_{X \times X} X$ along the diagonal $\Delta_X : X \to X \times X$ is quasi-compact.
    \end{enumerate}
\end{lem}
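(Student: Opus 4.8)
The plan is to prove both implications by reducing fibre products of condensed sets to fibre products of profinite sets, where quasi-compactness is automatic, and to move between ``quasi-compact'' and ``admits a surjection from a profinite set'' via \Cref{char_qc}. Throughout I will use that $\underline{(-)}\colon\mathrm{ProFin}\to\mathrm{Cond}$ preserves finite limits, that finite products of quasi-compact condensed sets are quasi-compact (if $\underline{S_1}\twoheadrightarrow Y$ and $\underline{S_2}\twoheadrightarrow Z$ then $\underline{S_1\times S_2}\twoheadrightarrow Y\times Z$), and that surjections of condensed sets are effective epimorphisms by \Cref{all_effective}, hence stable under base change.

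For (1) $\Rightarrow$ (2), given $a,b\colon\underline S\to X$, I would first record the formal identity $\underline S\times_{X\times X,\Delta_X}X\cong\mathrm{Eq}(a,b)$ and then the decomposition of the equalizer as a composite of pullbacks
\begin{equation*}
\mathrm{Eq}(a,b)\;\cong\;\bigl(\underline S\times_{a,X,b}\underline S\bigr)\times_{\underline S\times\underline S,\ \Delta_{\underline S}}\underline S,
\end{equation*}
which one checks by comparing functors of points. Now $\underline S$ is quasi-compact by \Cref{char_qc} (the identity of $\underline S$ is a surjection from a profinite set), so quasi-separatedness of $X$ applied to $Y=Z=\underline S$ shows that $Q:=\underline S\times_{a,X,b}\underline S$ is quasi-compact; choose via \Cref{char_qc} a profinite set $R$ with a surjection $\underline R\twoheadrightarrow Q$. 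Base-changing this surjection along $Q\times_{\underline S\times\underline S}\underline S\to Q$ gives a surjection $\underline R\times_{\underline S\times\underline S}\underline S\twoheadrightarrow\mathrm{Eq}(a,b)$. Finally $\underline R\times_{\underline S\times\underline S}\underline S=\underline{R\times_{S\times S}S}$ since $\underline{(-)}$ preserves finite limits, and $R\times_{S\times S}S$ is again profinite (it is a closed subspace of $R\times S$, the diagonal of the Hausdorff space $S$ being closed). Hence $\mathrm{Eq}(a,b)$ admits a surjection from a profinite set and is quasi-compact by \Cref{char_qc}.

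For (2) $\Rightarrow$ (1), let $Y\to X\leftarrow Z$ with $Y,Z$ quasi-compact. Then $Y\times Z$ is quasi-compact, so by \Cref{char_qc} pick a profinite set $S$ and a surjection $\underline S\twoheadrightarrow Y\times Z$; composing with the product map $Y\times Z\to X\times X$ and applying hypothesis (2) shows $\underline S\times_{X\times X,\Delta_X}X$ is quasi-compact. By the pasting law for pullbacks this object equals $\underline S\times_{Y\times Z}(Y\times_X Z)$, and base-changing the surjection $\underline S\twoheadrightarrow Y\times Z$ along $Y\times_X Z\to Y\times Z$ yields a surjection $\underline S\times_{X\times X}X\twoheadrightarrow Y\times_X Z$. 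Thus $Y\times_X Z$ receives a surjection from a quasi-compact condensed set, hence from a profinite set, so it is quasi-compact.

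The only genuinely non-formal point is in the first implication: the definition of quasi-separatedness in (1) requires \emph{both} legs of the fibre product to have quasi-compact source, whereas the diagonal $\Delta_X\colon X\to X\times X$ does not. The equalizer decomposition above is exactly what circumvents this, replacing the pullback along $\Delta_X$ by a pullback along $\Delta_{\underline S}$, i.e.\ along the diagonal of an honest profinite (hence Hausdorff, hence closed-diagonal) set, so that the fibre product stays within profinite sets. Everything else is bookkeeping with pasting of pullback squares and stability of effective epimorphisms under base change.
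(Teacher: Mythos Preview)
Your proof is correct and follows essentially the same route as the paper: both directions hinge on the factorisation $\underline S\times_{X\times X}X\cong(\underline S\times_X\underline S)\times_{\underline S\times\underline S}\underline S$ and on covering quasi-compacts by profinite sets via \Cref{char_qc}. The only minor difference is in the last step of (1)$\Rightarrow$(2): the paper concludes quasi-compactness of this fibre product by citing that $\underline{S\times S}$ is a quasi-separated object (so fibre products of quasi-compacts over it are quasi-compact), whereas you avoid this citation by explicitly choosing a profinite cover $\underline R\twoheadrightarrow \underline S\times_X\underline S$ and computing $\underline R\times_{\underline{S\times S}}\underline S$ directly inside profinite sets; your argument is thus slightly more self-contained.
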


\begin{proof}
    Assume, that $X$ is a quasi-separated condensed set.
    Let $S$ be a profinite set together with a map $\underline S \to X \times X$. Then $\underline S \times_X \underline S$ is quasi-compact where the maps $\underline S \to X$ are given by composing with the projections.    
    In the following diagram all squares are cartesian.
    \begin{center}
    \begin{tikzcd}
        \underline S_{X \times X} X \arrow[d] \arrow[r] & \underline S \times_X \underline S \arrow[d] \arrow[r] & X \arrow[d, "\Delta_X"] \\
        \underline S \arrow[r] & \underline S \times \underline S \arrow[r] & X \times X
    \end{tikzcd}
    \end{center}
    Since $\underline S \times \underline S = \underline{S \times S}$ comes from a profinite set it is quasi-separated by \cite[Proposition 1.2 (3)]{ScholzeAnalytic}.
    So $\underline S_{X \times X} X$ is quasi-compact.

    Now suppose, that $X$ satisfies (2). Let $Y \to X \leftarrow Z$ be a pair of morphisms, where $Y$ and $Z$ are quasi-compact.
    By \Cref{char_qc}, there exist profinite sets $S$ and $T$ and surjections $\underline S \twoheadrightarrow Y$ and $\underline T \twoheadrightarrow Z$.
    We get a diagram in which are squares are cartesian:
    \begin{center}
    \begin{tikzcd}
        \underline S \times_X \underline T \arrow[d] \arrow[r, twoheadrightarrow] & Y \times_X Z \arrow[d] \arrow[r] & X \arrow[d, "\Delta_X"] \\
        \underline S \times \underline T \arrow[r, twoheadrightarrow] & Y \times Z \arrow[r] & X \times X
    \end{tikzcd}
    \end{center}
    By (2) $\underline S \times_X \underline T$ is quasi-compact, hence by one further application of \Cref{char_qc} also $Y \times_X Z$.
\end{proof}

\begin{lem}\label{injlimisqs}
    A filtered colimit of quasi-compact quasi-separated condensed sets along quasi-compact injections is quasi-separated.
\end{lem}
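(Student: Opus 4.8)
The plan is to verify criterion (2) of \Cref{char_qs} for $X := \varinjlim_{i} X_i$. So I fix a profinite set $S$ together with a morphism $\underline S \to X \times X$, and I must show that the pullback $P := \underline S \times_{X \times X} X$ along the diagonal $\Delta_X$ is quasi-compact. The first step is to use that in the topos of condensed sets filtered colimits commute with finite limits (as in the proof of \Cref{profiniteiscompact}, where one notes that filtered presheaf colimits of sheaves are already sheaves), so that $X \times X = \varinjlim_i (X_i \times X_i)$. Since $\underline S$ is a compact object by \Cref{profiniteiscompact}, the map $\underline S \to X \times X$ then factors through $X_{i_0} \times X_{i_0}$ for some index $i_0$. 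Replacing the index category by the (cofinal, still filtered) full subcategory of objects lying over $i_0$, I obtain for each $i \ge i_0$ a compatible map $\underline S \to X_i \times X_i$; setting $Y_i := \underline S \times_{X_i \times X_i} X_i$ (pullback along $\Delta_{X_i}$) and using again that filtered colimits commute with finite limits, I get $P = \varinjlim_{i \ge i_0} Y_i$.

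The key observation, and the place where the injectivity of the transition maps is used, is that each transition map $Y_i \to Y_j$ is an isomorphism. For $i \le j$ the transition map $f : X_i \to X_j$ is a monomorphism (being a quasi-compact injection), and for any monomorphism $f : A \to B$ the commutative square with vertical arrows the diagonals $\Delta_A, \Delta_B$ and horizontal arrows $f$ and $f \times f$ is cartesian: on $T$-points the pullback of $\Delta_B$ along $f \times f$ is $\{(a_1,a_2) \in A(T)^2 : f a_1 = f a_2\}$, which equals the image of $\Delta_A$ precisely because $f$ is a monomorphism. Hence $X_i \cong (X_i \times X_i) \times_{X_j \times X_j} X_j$, and transitivity of base change yields $Y_i = \underline S \times_{X_i \times X_i} X_i \cong \underline S \times_{X_j \times X_j} X_j = Y_j$. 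Consequently $P \cong Y_{i_0}$.

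Finally, since $X_{i_0}$ is quasi-separated, criterion (2) of \Cref{char_qs} applied to $X_{i_0}$ shows that $Y_{i_0} = \underline S \times_{X_{i_0} \times X_{i_0}} X_{i_0}$ is quasi-compact, hence so is $P$; \Cref{char_qs} then gives that $X$ is quasi-separated. I expect the main (mild) obstacle to be the bookkeeping around commuting the relevant finite limits past the filtered colimit and the verification that the diagonal square for a monomorphism is cartesian; note that the quasi-compactness hypothesis on the individual $X_i$ is not actually needed for this conclusion, and one cannot strengthen it to quasi-compactness of the colimit (e.g.\ $\mathbb N$ is a filtered colimit of finite sets along injections).
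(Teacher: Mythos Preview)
Your proof is correct and follows essentially the same approach as the paper's: both factor $\underline S \to X \times X$ through some $X_{i_0} \times X_{i_0}$ via compactness of $\underline S$, use that the diagonal square of the monomorphism $X_{i_0} \hookrightarrow X$ is cartesian, and conclude from quasi-separatedness of $X_{i_0}$. Your detour through $P = \varinjlim_{i \ge i_0} Y_i$ with isomorphic transition maps is a minor elaboration of the paper's direct pasting-of-cartesian-squares argument, and your observation that quasi-compactness of the $X_i$ (and of the transition maps) is not actually needed is correct.
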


\begin{proof}
    Let $X = \varinjlim_i X_i$ be a filtered colimit of quasi-compact quasi-separated condensed sets where all transition maps in the diagram are injections.
    Let $T$ be a profinite set together with a map $T \to X \times X$.
    The map $X_i \times X_i \to X \times X$ is still a quasi-compact injection, so the fibre product $T_i := T \times_{X \times X} (X_i \times X_i)$ is quasi-compact. We know that the diagonals $\Delta_{X_i} : X_i \to X_i \times X_i$ are quasi-compact.
    Since in the diagram
    \begin{center}
    \begin{tikzcd}
        T \times_{X_i \times X_i} X_i \arrow[r] \arrow[d, hookrightarrow] & X_i \arrow[r] \arrow[d, hookrightarrow, "\Delta_{X_i}"] & X \arrow[d, "\Delta_X"] \\
        T \arrow[r] & X_i \times X_i \arrow[r] & X \times X
    \end{tikzcd}
    \end{center}
    both squares are cartesian, we have $T \times_{X \times X} X = T \times_{X_i \times X_i} X_i$, which is quasi-compact.
    The claim follows from \Cref{char_qs}.
\end{proof}

\begin{lem}\label{subsp_qs}
    Let $X$ be a quasi-separated condensed set and let $Y \hookrightarrow X$ be an injection.
    Then $Y$ is quasi-separated.
\end{lem}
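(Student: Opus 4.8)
The plan is to verify the second characterisation of quasi-separatedness from \Cref{char_qs}. So, given a profinite set $S$ together with a morphism $\underline S \to Y \times Y$, I must show that the pullback $\underline S \times_{Y \times Y} Y$ along the diagonal $\Delta_Y : Y \to Y \times Y$ is quasi-compact.

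The key observation is that, since $Y \hookrightarrow X$ is an injection (i.e.\ a monomorphism of condensed sets), the square
\[
\begin{tikzcd}
Y \arrow[r, "\Delta_Y"] \arrow[d, hookrightarrow] & Y \times Y \arrow[d, hookrightarrow] \\
X \arrow[r, "\Delta_X"] & X \times X
\end{tikzcd}
\]
is cartesian. Indeed, $X \times_{X \times X} (Y \times Y)$ (with $X \to X \times X$ the diagonal) consists of triples $(x, y_1, y_2)$ with $x$ mapping to $y_1$ and to $y_2$ in $X$; as $Y \to X$ is mono this forces $y_1 = y_2$, and one recovers $Y$ mapping in via $\Delta_Y$. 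This can be checked on sections over extremally disconnected profinite sets, where it is the corresponding elementary statement for sets, using that condensed sets sit in presheaves on that site compatibly with the finite limits involved.

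Having this, I would compose the given map $\underline S \to Y \times Y$ with the injection $Y \times Y \hookrightarrow X \times X$, and use the cartesian square above together with the pasting law for fibre products to identify
\[
\underline S \times_{Y \times Y} Y \;=\; \underline S \times_{Y \times Y} \bigl( (Y \times Y) \times_{X \times X} X \bigr) \;=\; \underline S \times_{X \times X} X ,
\]
the pullback of $\Delta_X$ along $\underline S \to X \times X$. Since $X$ is quasi-separated, \Cref{char_qs} applied to $X$ shows that this last object is quasi-compact. Therefore $\underline S \times_{Y \times Y} Y$ is quasi-compact, and by \Cref{char_qs} again $Y$ is quasi-separated.

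I do not expect a genuine obstacle here; the only point requiring care is the verification that the defining square of $\Delta_Y$ is cartesian, which rests on the fact that monomorphisms of condensed sets are detected on sections over extremally disconnected profinite sets, reducing the assertion to set theory.
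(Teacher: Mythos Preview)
Your proof is correct and follows essentially the same approach as the paper: both verify the criterion of \Cref{char_qs} by observing that the square relating $\Delta_Y$ and $\Delta_X$ is cartesian (since $Y \hookrightarrow X$ is a monomorphism), then use the pasting law to identify $\underline S \times_{Y \times Y} Y$ with $\underline S \times_{X \times X} X$, which is quasi-compact by quasi-separatedness of $X$. Your extra justification for why the square is cartesian is fine, though the paper simply asserts it as a consequence of injectivity.
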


\begin{proof}
    Let $T$ be a profinite set. In the diagram
    \begin{center}
    \begin{tikzcd}
        T \times_{Y \times Y} Y \arrow[r] \arrow[d] & Y \arrow[r, hookrightarrow] \arrow[d, "\Delta_Y"] & X \arrow[d, "\Delta_X"] \\
        T \arrow[r] & Y \times Y \arrow[r, hookrightarrow] & X \times X
    \end{tikzcd}
    \end{center}
    the right square is cartesian, since $Y \to X$ is injective. So the outer square is cartesian.
    Hence $T \times_{Y \times Y} Y$ is quasi-compact, so $Y$ is quasi-separated by \Cref{char_qs}. 
\end{proof}

\begin{lem}\label{cond_set_unique}
    Let $S$ be a profinite set, let $X$ be a quasi-separated condensed set and let $f : S \to X(*)$ be a map.
    Then there is at most one morphism of condensed sets $\tilde f : \underline{S} \to X$, whose map on underlying sets is $f$.
\end{lem}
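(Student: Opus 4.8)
The plan is to prove uniqueness of the lift $\tilde f : \underline{S} \to X$ by exploiting quasi-separatedness of $X$ together with the fact that any two such lifts agree on the underlying set. Suppose $\tilde f_1, \tilde f_2 : \underline S \to X$ are two morphisms of condensed sets, both of which induce $f$ on underlying sets, i.e.\ evaluate to $f$ at the point $\ast$. The natural move is to form the equaliser of $\tilde f_1$ and $\tilde f_2$, which can be realised as the pullback of the diagonal $\Delta_X : X \to X\times X$ along the map $(\tilde f_1, \tilde f_2) : \underline S \to X \times X$. Call this condensed set $E := \underline S \times_{X\times X} X$. By \Cref{char_qs}, since $X$ is quasi-separated and $\underline S$ comes from a profinite set, $E$ is quasi-compact. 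The goal is then to show that the canonical monomorphism $E \hookrightarrow \underline S$ is in fact an isomorphism, which is exactly the statement that $\tilde f_1 = \tilde f_2$.

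First I would observe that $E \to \underline S$ is an injection (being a base change of the diagonal, which is a monomorphism), and that on underlying sets the map $E(\ast) \to \underline S(\ast) = S$ is a bijection: a point $s \in S$ lifts to $E(\ast)$ precisely when $\tilde f_1(\ast)(s) = \tilde f_2(\ast)(s)$ in $X(\ast)$, which holds for every $s$ because both composites equal $f$. So $E \hookrightarrow \underline S$ is an injection of condensed sets which is bijective on underlying sets. The remaining point is that such a map must be an isomorphism. Here is where I expect to invoke that $S$ is profinite: the subobjects of $\underline S$ in the category of condensed sets correspond to certain subsets of $S$ equipped with appropriate structure, and a subobject whose underlying set is all of $S$ should be forced to be $\underline S$ itself. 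More carefully, one can argue that for any profinite set $T$ and any $g \in \underline S(T) = \Cont(T, S)$, the element $g$ lies in the subobject $E(T) \subseteq \underline S(T)$: this can be checked by pulling back along $g$, reducing to showing that $E \times_{\underline S} \underline T \to \underline T$ is an isomorphism, and this fibre product is again quasi-compact, injects into $\underline T$, and is bijective on underlying sets, so by a limiting/quasi-compactness argument (using \Cref{char_qc} and that $\underline T$ is the colimit of its finite quotients) it exhausts $\underline T$.

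The main obstacle I anticipate is making precise the last step: that an injection of condensed sets into $\underline S$ which is surjective on underlying sets is an isomorphism. This is really a statement about the topos-theoretic structure near representable objects; one clean way is to note that $\underline S$ as a functor is determined by its values on profinite sets, each $\underline S(T) = \Cont(T,S)$, and that a continuous map $T \to S$ factors through the ``diagonal locus'' once it does so on points, using that $T$ and $S$ are profinite (hence that continuous maps are detected on the underlying sets together with the profinite topology, and the locus where $\tilde f_1 = \tilde f_2$ pulls back to a clopen-closed condition). Alternatively, since $E \hookrightarrow \underline S$ is a monomorphism of quasi-compact objects inducing a bijection on points, one can try to show it is also an epimorphism: surjectivity of a map of condensed sets can be checked after evaluation on extremally disconnected profinite sets, and for an extremally disconnected $T$ a map $T \to S$ lifting to $E$ on points lifts genuinely because $T$ is projective. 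This projectivity argument is likely the cleanest route, and I would write the proof along those lines: reduce to $T$ extremally disconnected, use that $\underline S(T) \to \underline S(\ast)^{\text{(as a set-theoretic constraint)}}$ together with projectivity of $T$ forces the lift, concluding $E = \underline S$ and hence $\tilde f_1 = \tilde f_2$.
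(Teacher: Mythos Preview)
Your equaliser strategy is correct and genuinely different from the paper's. The paper invokes the structure theorem from \cite[Proposition 1.2(4)]{ScholzeAnalytic}: a quasi-separated $X$ is a filtered union of compact Hausdorff spaces $X_i$ along injections, so by \Cref{profiniteiscompact} one has $X(S) = \bigcup_i \Cont(S,X_i) \subseteq \Map(S,X(\ast))$, and uniqueness is immediate. Your route avoids this black box and works internally with the equaliser $E = \underline S \times_{X\times X} X$, which is a perfectly good alternative.

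However, your final step is not correctly argued. The ``projectivity'' formulation does not work: projectivity of an extremally disconnected $T$ lets you lift through \emph{surjections}, but $E \hookrightarrow \underline S$ is not yet known to be one --- that is precisely what you are trying to prove. And your first variant (pull back along $g$ and repeat) is circular as written. The clean way to close the argument is this: you already know $E$ is quasi-compact, so by \Cref{char_qc} choose a profinite $T$ with a surjection $\underline T \twoheadrightarrow E$. Since condensed sets are equivalently finite-product-preserving presheaves on extremally disconnected profinite sets, surjections are detected on sections over such objects; in particular evaluation at $\ast$ gives a surjection $T \to E(\ast) = S$. The composite $\underline T \to \underline S$ is then a continuous surjection of profinite sets, hence a cover in the site, hence a surjection of condensed sets. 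As it factors through $E$, the injection $E \hookrightarrow \underline S$ is also surjective, so $E = \underline S$ and $\tilde f_1 = \tilde f_2$. This is the argument you were circling around; once stated this way it is both short and self-contained.
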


\begin{proof} 
    By \cite[Proposition 1.2 (4)]{ScholzeAnalytic} $X$ is a filtered colimit of quasi-compact quasi-separated condensed sets $X_i$ along injections. Recall, that the $X_i$ can be seen as compact Hausdorff spaces \cite[Proposition 1.2 (3)]{ScholzeAnalytic}. So for any profinite set $S$
    $$ X(S) = \bigcup_i \Cont(S, X_i) \subseteq \Map(S, X(*)) $$
    using \Cref{profiniteiscompact}.
\end{proof}

\subsection{Condensed structure on points}

Let $X : \mathrm{CRing} \to \Set$ be an accessible presheaf and let $A : \mathrm{ProFin}^{\mathrm{op}} \to \mathrm{CRing}$ be a condensed commutative ring.
We denote by $X(A)$ the condensed set obtained by composition $X \circ A$ with the functor of points of $X$.
By construction $X \mapsto X(A)$ preserves limits.

\begin{lem}\label{cl_subscheme_qc_inj}
    Let $X \hookrightarrow \mathbb A^n_R$ be a closed immersion of schemes over a commutative ring $R$ and let $A$ be a quasi-separated condensed $R$-algebra.
    Then $X(A) \hookrightarrow A^n$ is a quasi-compact injection.
\end{lem}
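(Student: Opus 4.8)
Lemma (final statement). Let $X \hookrightarrow \mathbb A^n_R$ be a closed immersion of schemes over a commutative ring $R$ and let $A$ be a quasi-separated condensed $R$-algebra. Then $X(A) \hookrightarrow \mathbb A^n(A) = A^n$ is a quasi-compact injection.

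The plan is to unwind the functor-of-points description. A closed immersion $X \hookrightarrow \mathbb A^n_R$ corresponds to a surjection $R[x_1,\dots,x_n] \twoheadrightarrow B$ with kernel an ideal $I$ generated by finitely many polynomials $f_1,\dots,f_m$ (finite generation is automatic here because $B$ is finitely generated over $R$, being a quotient of the polynomial ring, but in fact we only need that $X$ is cut out inside $\mathbb A^n$ by \emph{some} set of equations; the quasi-compactness will come from quasi-separatedness of $A$, not from finiteness of the equations). For any commutative ring $C$ we have $X(C) = \{\underline a \in C^n : f_j(\underline a) = 0, \ j = 1,\dots,m\}$, so $X(A)$ is the condensed subset of $A^n$ defined by the vanishing of the $f_j$, i.e. $X(A) = \bigcap_{j=1}^m V(f_j)$ where $V(f_j) \subseteq A^n$ is the equaliser of $f_j : A^n \to A$ and the zero map $A^n \to A$. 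Since the inclusion $X(A) \hookrightarrow A^n$ is visibly injective, the content is quasi-compactness.

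First I would record that $A^n = \mathbb A^1(A)^n$ is quasi-separated: $A$ is quasi-separated by hypothesis, and a finite product of quasi-separated condensed sets is quasi-separated (this follows from \Cref{char_qs}, since the diagonal of a product is built from the diagonals of the factors). Next, each $V(f_j)$ is an equaliser, hence a fibre product $A^n \times_{A \times A} A$ over the diagonal of $A$ — concretely the fibre product of $(\mathrm{id}, f_j\circ\text{(proj)}) : A^n \to A^n \times A$ (or more directly the pullback of the diagonal $\Delta_A : A \to A\times A$ along the map $(f_j, 0) : A^n \to A \times A$). Because $A$ is quasi-separated, \Cref{char_qs}(2) tells us exactly that such a pullback of the diagonal along a map from a \emph{profinite} set is quasi-compact; but $A^n$ is itself not profinite, only quasi-compact (it admits a surjection from a profinite set via \Cref{char_qc}), so I would instead argue: the diagonal $\Delta_A : A \to A \times A$ is a quasi-compact morphism when $A$ is quasi-separated (equivalently, $\Delta_A$ is quasi-compact, which is the standard reformulation of quasi-separatedness in a coherent topos), and quasi-compact morphisms are stable under base change; pulling back $\Delta_A$ along $(f_j,0): A^n \to A\times A$ therefore yields a quasi-compact morphism $V(f_j) \to A^n$. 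Finally $X(A) = \bigcap_j V(f_j)$ is a finite iterated fibre product over $A^n$ of quasi-compact-over-$A^n$ objects, and since $A^n$ is quasi-compact (\Cref{char_qc}) and quasi-compactness is stable under composition and finite fibre products over a quasi-compact base, $X(A)$ is quasi-compact. Being a subfunctor of $A^n$, the map $X(A) \to A^n$ is injective, so it is a quasi-compact injection.

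The step I expect to be the main obstacle is the precise bookkeeping of which reformulation of "quasi-separated" to invoke and how to transport it cleanly: \Cref{char_qs} as stated uses a profinite set $\underline S$ mapping to $X \times X$, whereas here the relevant base $A^n$ is only quasi-compact. The clean fix is to first cover $A^n$ by a profinite set $\underline S \twoheadrightarrow A^n$ (possible by \Cref{char_qc} since $A^n$ is quasi-compact), pull everything back along $\underline S$ so that \Cref{char_qs}(2) applies literally to each $V(f_j) \times_{A^n} \underline S$, deduce these are quasi-compact, and then descend quasi-compactness back down along the surjection $\underline S \twoheadrightarrow A^n$ — or, more efficiently, simply cite that in a coherent topos quasi-separatedness of $A$ is equivalent to quasi-compactness of the diagonal $\Delta_A$, which is then stable under arbitrary base change; the minor care needed is to make sure the "diagonal" used for the equaliser $V(f_j)$ really is a pullback of $\Delta_A$ and not of $\Delta_{A^n}$. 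Everything else (finite products of q.s. objects are q.s.; finite fibre products of q.c.-over-q.c. objects are q.c.) is routine.
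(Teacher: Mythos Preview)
Your approach is essentially the same as the paper's: both use that quasi-separatedness of $A$ makes the diagonal $\Delta_A$ quasi-compact, then obtain $X(A) \to A^n$ as a base change. The paper packages the defining polynomials into a single map $\underline{g}\colon A^n \to A^r$, first observes that $0\colon * \to A$ is quasi-compact as a pullback of $\Delta_A$ along $(0,\id)\colon A \to A\times A$, and then reads off $X(A) \to A^n$ as the pullback of $0$ along $\underline{g}$; you instead handle each $V(f_j) \to A^n$ separately as a pullback of $\Delta_A$ and then intersect. Both are fine.

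One small slip: near the end you assert ``$A^n$ is quasi-compact (\Cref{char_qc})'', but the hypothesis only says $A$ is quasi-separated, not quasi-compact, so $A^n$ need not be quasi-compact. This assumption is in any case unnecessary: you have already established that each $V(f_j) \to A^n$ is a quasi-compact morphism (by base change from $\Delta_A$), and quasi-compact morphisms are stable under base change and composition, so the iterated fibre product $X(A) = V(f_1)\times_{A^n}\cdots\times_{A^n} V(f_m) \to A^n$ is a quasi-compact morphism directly, without ever needing $A^n$ itself to be quasi-compact. With that correction your argument is complete.
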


\begin{proof}
    We see $X$ as the zero set of a family of polynomials $g_1, \dots, g_r \in R[x_1, \dots, x_n]$.
    We can see them as a map of condensed sets $\underline g : A^n \to A$.
    Since $A$ is quasi-separated $\Delta_A : A \to A \times A$ is quasi-compact and the map $0 : * \to A$, is quasi-compact by pullback along $(0, \id) : A \to A \times A$.
    It follows from the cartesian diagram
    \begin{center}
        \begin{tikzcd}
            X(A) \arrow[r] \arrow[d] & \{0\} \arrow[d] \\
            A^n \arrow[r, "\underline{g}"] & A
        \end{tikzcd}
    \end{center}
    that $X(A) \to A^n$ is a quasi-compact injection.
\end{proof}



\printunsrtglossary[type=main, style=tree]

\clearpage

\bibliographystyle{plain}
\bibliography{Ref}

@incollection {Gouvea,
    AUTHOR = {Gouv\^{e}a, Fernando Q.},
     TITLE = {Deformations of {G}alois representations},
 BOOKTITLE = {Arithmetic algebraic geometry ({P}ark {C}ity, {UT}, 1999)},
    SERIES = {IAS/Park City Math. Ser.},
    VOLUME = {9},
     PAGES = {233--406},
      NOTE = {Appendix 1 by Mark Dickinson, Appendix 2 by Tom Weston and
              Appendix 3 by Matthew Emerton},
 PUBLISHER = {Amer. Math. Soc., Providence, RI},
      YEAR = {2001},
   MRCLASS = {11F80 (11F85 11R39 14D15)},
  MRNUMBER = {1860043},
MRREVIEWER = {Gebhard B\"{o}ckle},
       DOI = {10.1090/pcms/009/05},
       URL = {https://doi.org/10.1090/pcms/009/05},
}

@incollection {che_durham,
    AUTHOR = {Chenevier, Ga\"{e}tan},
     TITLE = {The {$p$}-adic analytic space of pseudocharacters of a
              profinite group and pseudorepresentations over arbitrary
              rings},
 BOOKTITLE = {Automorphic forms and {G}alois representations. {V}ol. 1},
    SERIES = {London Math. Soc. Lecture Note Ser.},
    VOLUME = {414},
     PAGES = {221--285},
 PUBLISHER = {Cambridge Univ. Press, Cambridge},
      YEAR = {2014},
   MRCLASS = {11F70 (11F80 14G22 20E18)},
  MRNUMBER = {3444227},
MRREVIEWER = {Cameron Franc},
}

@book {SGA3_new,
     TITLE = {Sch\'{e}mas en groupes ({SGA} 3). {T}ome {I}.
              {P}ropri\'{e}t\'{e}s g\'{e}n\'{e}rales des sch\'{e}mas en
              groupes},
    SERIES = {Documents Math\'{e}matiques (Paris) [Mathematical Documents
              (Paris)]},
    VOLUME = {7},
    EDITOR = {Gille, Philippe and Polo, Patrick},
   EDITION = {annotated},
      NOTE = {S\'{e}minaire de G\'{e}om\'{e}trie Alg\'{e}brique du Bois
              Marie 1962--64. [Algebraic Geometry Seminar of Bois Marie
              1962--64],
              A seminar directed by M. Demazure and A. Grothendieck with the
              collaboration of M. Artin, J.-E. Bertin, P. Gabriel, M.
              Raynaud and J-P. Serre},
 PUBLISHER = {Soci\'{e}t\'{e} Math\'{e}matique de France, Paris},
      YEAR = {2011},
     PAGES = {xxviii+610},
      ISBN = {978-2-85629-323-2},
   MRCLASS = {14L15},
  MRNUMBER = {2867621},
}

@book{SGA3Tome2,
editor = {Demazure, Michel and Grothendieck, Alexander},
     TITLE = {Sch\'{e}mas en groupes ({SGA} 3). {T}ome {II}.},
      NOTE = {S\'{e}minaire de G\'{e}om\'{e}trie Alg\'{e}brique du Bois Marie 1962--64.
              [Algebraic Geometry Seminar of Bois Marie 1962--64],
              A seminar directed by M. Demazure and A. Grothendieck with the
              collaboration of M. Artin, J.-E. Bertin, P. Gabriel, M.
              Raynaud and J-P. Serre},
      YEAR = {1964},
      Publisher={Springer Berlin, Heidelberg},
}

@article {WE_alg,
    AUTHOR = {Wang-Erickson, Carl},
     TITLE = {Algebraic families of {G}alois representations and potentially
              semi-stable pseudodeformation rings},
   JOURNAL = {Math. Ann.},
  FJOURNAL = {Mathematische Annalen},
    VOLUME = {371},
      YEAR = {2018},
    NUMBER = {3-4},
     PAGES = {1615--1681},
      ISSN = {0025-5831},
   MRCLASS = {11F80 (11S20 14D15 14L24)},
  MRNUMBER = {3831282},
MRREVIEWER = {Meng Fai Lim},
       DOI = {10.1007/s00208-017-1557-8},
       URL = {https://doi-org.ezproxy.library.ubc.ca/10.1007/s00208-017-1557-8},
}

@misc{stacks-project,
    shorthand    = {Stacks},
    author       = {The {Stacks Project Authors}},
    title        = {\textit{Stacks Project}},
    howpublished = {\url{https://stacks.math.columbia.edu}},
    year         = {2022},
}

@book {matsumura,
    AUTHOR = {Matsumura, Hideyuki},
     TITLE = {Commutative ring theory},
    SERIES = {Cambridge Studies in Advanced Mathematics},
    VOLUME = {8},
   EDITION = {Second},
      NOTE = {Translated from the Japanese by M. Reid},
 PUBLISHER = {Cambridge University Press, Cambridge},
      YEAR = {1989},
     PAGES = {xiv+320},
      ISBN = {0-521-36764-6},
   MRCLASS = {13-01},
  MRNUMBER = {1011461},
}

@article {Pro87,
    AUTHOR = {Procesi, Claudio},
     TITLE = {A formal inverse to the {C}ayley--{H}amilton theorem},
   JOURNAL = {J. Algebra},
  FJOURNAL = {Journal of Algebra},
    VOLUME = {107},
      YEAR = {1987},
    NUMBER = {1},
     PAGES = {63--74},
      ISSN = {0021-8693},
   MRCLASS = {16A38 (16A42)},
  MRNUMBER = {883869},
MRREVIEWER = {William H. Gustafson},
       DOI = {10.1016/0021-8693(87)90073-1},
       URL = {https://doi.org/10.1016/0021-8693(87)90073-1},
}

@article {conrad,
    AUTHOR = {Conrad, Brian},
     TITLE = {Irreducible components of rigid spaces},
   JOURNAL = {Ann. Inst. Fourier (Grenoble)},
  FJOURNAL = {Universit\'{e} de Grenoble. Annales de l'Institut Fourier},
    VOLUME = {49},
      YEAR = {1999},
    NUMBER = {2},
     PAGES = {473--541},
      ISSN = {0373-0956},
   MRCLASS = {14G22 (32C18 32P05)},
  MRNUMBER = {1697371},
MRREVIEWER = {Lorenzo Ramero},
       URL = {http://www.numdam.org/item?id=AIF_1999__49_2_473_0},
}

@book {BH,
    AUTHOR = {Bruns, Winfried and Herzog, J\"{u}rgen},
     TITLE = {Cohen--{M}acaulay rings},
    SERIES = {Cambridge Studies in Advanced Mathematics},
    VOLUME = {39},
 PUBLISHER = {Cambridge University Press, Cambridge},
      YEAR = {1993},
     PAGES = {xii+403},
      ISBN = {0-521-41068-1},
   MRCLASS = {13H10 (13-02)},
  MRNUMBER = {1251956},
MRREVIEWER = {Matthew Miller},
}

@article{DPS,
 author = {Dospinescu, Gabriel and Pa{\v{s}}k{\=u}nas, Vytautas and Schraen, Benjamin},
 title = {Infinitesimal characters in arithmetic families},
 fjournal = {Selecta Mathematica. New Series},
 journal = {Sel. Math., New Ser.},
 issn = {1022-1824},
 volume = {31},
 number = {4},
 pages = {77},
 note = {Id/No 76},
 year = {2025},
 language = {English},
 doi = {10.1007/s00029-025-01045-6},
 keywords = {22E50,11F85,11F80},
 zbMATH = {8087591}
}

@article {seshadri,
    AUTHOR = {Seshadri, C. S.},
     TITLE = {Geometric reductivity over arbitrary base},
   JOURNAL = {Adv.\,Math.},
  FJOURNAL = {Advances in Mathematics},
    VOLUME = {26},
      YEAR = {1977},
    NUMBER = {3},
     PAGES = {225--274},
      ISSN = {0001-8708},
   MRCLASS = {14L99 (14D20 15A72)},
  MRNUMBER = {466154},
MRREVIEWER = {Vladimir L. Popov},
       DOI = {10.1016/0001-8708(77)90041-X},
       URL = {https://doi.org/10.1016/0001-8708(77)90041-X},
}

@article {alper,
    AUTHOR = {Alper, Jarod},
     TITLE = {Adequate moduli spaces and geometrically reductive group
              schemes},
   JOURNAL = {Algebr. Geom.},
  FJOURNAL = {Algebraic Geometry},
    VOLUME = {1},
      YEAR = {2014},
    NUMBER = {4},
     PAGES = {489--531},
      ISSN = {2313-1691},
   MRCLASS = {14D20 (14A20 14L15 14L24 14L30)},
  MRNUMBER = {3272912},
MRREVIEWER = {Fabio Tonini},
       DOI = {10.14231/AG-2014-022},
       URL = {https://doi.org/10.14231/AG-2014-022},
}

@book {EG_stack,
    AUTHOR = {Emerton, Matthew and Gee, Toby},
     TITLE = {Moduli stacks of \'{e}tale ({$\varphi, \Gamma$})-modules and
              the existence of crystalline lifts},
    SERIES = {Annals of Mathematics Studies},
    VOLUME = {215},
 PUBLISHER = {Princeton University Press, Princeton, NJ},
      YEAR = {[2023] \copyright 2023},
     PAGES = {ix+298},
      ISBN = {978-0-691-24134-0; 978-0-691-24135-7; 978-0-691-24136-4},
   MRCLASS = {14D23 (11F80 11S37 14F30)},
  MRNUMBER = {4529886},
       DOI = {10.1515/9780691241364},
       URL = {https://doi.org/10.1515/9780691241364},
}

@article {Tay91,
    AUTHOR = {Taylor, Richard},
     TITLE = {Galois representations associated to {S}iegel modular forms of
              low weight},
   JOURNAL = {Duke Math. J.},
  FJOURNAL = {Duke Mathematical Journal},
    VOLUME = {63},
      YEAR = {1991},
    NUMBER = {2},
     PAGES = {281--332},
      ISSN = {0012-7094},
   MRCLASS = {11F46 (11F80 11R39 14G35)},
  MRNUMBER = {1115109},
MRREVIEWER = {Bas Edixhoven},
       DOI = {10.1215/S0012-7094-91-06312-X},
       URL = {https://doi.org/10.1215/S0012-7094-91-06312-X},
}

@article {kisin_over,
    AUTHOR = {Kisin, Mark},
     TITLE = {Overconvergent modular forms and the {F}ontaine-{M}azur
              conjecture},
   JOURNAL = {Invent. Math.},
  FJOURNAL = {Inventiones Mathematicae},
    VOLUME = {153},
      YEAR = {2003},
    NUMBER = {2},
     PAGES = {373--454},
      ISSN = {0020-9910},
   MRCLASS = {11F80 (11F33 14G22)},
  MRNUMBER = {1992017},
MRREVIEWER = {Thomas A. Weston},
       DOI = {10.1007/s00222-003-0293-8},
       URL = {https://doi.org/10.1007/s00222-003-0293-8},
}

@incollection {R3_factorial,
    AUTHOR = {Call, Frederick and Lyubeznik, Gennady},
     TITLE = {A simple proof of {G}rothendieck's theorem on the
              parafactoriality of local rings},
 BOOKTITLE = {Commutative algebra: syzygies, multiplicities, and birational
              algebra ({S}outh {H}adley, {MA}, 1992)},
    SERIES = {Contemp. Math.},
    VOLUME = {159},
     PAGES = {15--18},
 PUBLISHER = {Amer. Math. Soc., Providence, RI},
      YEAR = {1994},
   MRCLASS = {14C22 (13D45 13F99 13H10 14M10)},
  MRNUMBER = {1266175},
MRREVIEWER = {J. K. Verma},
       DOI = {10.1090/conm/159/01500},
       URL = {https://doi.org/10.1090/conm/159/01500},
}

@misc{mod_lift,
author={Conrad, Brian},
title={Modularity Lifting Seminar Webpage},
year={2010},
howpublished ={\url{http://virtualmath1.stanford.edu/~conrad/modseminar/}}
}

@article {DDR_GV,
    AUTHOR = {Galatius, S. and Venkatesh, A.},
     TITLE = {Derived {G}alois deformation rings},
   JOURNAL = {Adv.\,Math.},
  FJOURNAL = {Advances in Mathematics},
    VOLUME = {327},
      YEAR = {2018},
     PAGES = {470--623},
      ISSN = {0001-8708},
   MRCLASS = {11F80 (11F75 14D24 14F05)},
  MRNUMBER = {3762000},
MRREVIEWER = {Frank Calegari},
       DOI = {10.1016/j.aim.2017.08.016},
       URL = {https://doi.org/10.1016/j.aim.2017.08.016},
}

@article {BIP_new,
    AUTHOR = {B\"{o}ckle, Gebhard and Iyengar, Ashwin and Pa\v{s}k\={u}nas,
              Vytautas},
     TITLE = {On local {G}alois deformation rings},
   JOURNAL = {Forum Math. Pi},
  FJOURNAL = {Forum of Mathematics. Pi},
    VOLUME = {11},
      YEAR = {2023},
     PAGES = {Paper No. e30, 54},
      ISSN = {2050-5086},
   MRCLASS = {11F80 (11F85)},
  MRNUMBER = {4660960},
       DOI = {10.1017/fmp.2023.25},
       URL = {https://doi.org/10.1017/fmp.2023.25},
}

@article {martin,
    AUTHOR = {Martin, Benjamin M. S.},
     TITLE = {Reductive subgroups of reductive groups in nonzero
              characteristic},
   JOURNAL = {J. Algebra},
  FJOURNAL = {Journal of Algebra},
    VOLUME = {262},
      YEAR = {2003},
    NUMBER = {2},
     PAGES = {265--286},
      ISSN = {0021-8693},
   MRCLASS = {20G15 (14L30)},
  MRNUMBER = {1971039},
MRREVIEWER = {Alvaro Rittatore},
       DOI = {10.1016/S0021-8693(03)00189-3},
       URL = {https://doi.org/10.1016/S0021-8693(03)00189-3},
}

@article {BMR,
    AUTHOR = {Bate, Michael and Martin, Benjamin and R\"{o}hrle, Gerhard},
     TITLE = {A geometric approach to complete reducibility},
   JOURNAL = {Invent. Math.},
  FJOURNAL = {Inventiones Mathematicae},
    VOLUME = {161},
      YEAR = {2005},
    NUMBER = {1},
     PAGES = {177--218},
      ISSN = {0020-9910,1432-1297},
   MRCLASS = {20G15 (20E42)},
  MRNUMBER = {2178661},
MRREVIEWER = {Andy\ R.\ Magid},
       DOI = {10.1007/s00222-004-0425-9},
       URL = {https://doi.org/10.1007/s00222-004-0425-9},
}

@article{Richardson1988ConjugacyCO,
  title={Conjugacy classes of $n$-tuples in {L}ie algebras and algebraic groups},
  author={R. W. Richardson},
  journal={Duke Mathematical Journal},
  year={1988},
  volume={57},
  pages={1-35}
}

@book {Jantzen,
    AUTHOR = {Jantzen, Jens Carsten},
     TITLE = {Representations of algebraic groups},
    SERIES = {Mathematical Surveys and Monographs},
    VOLUME = {107},
   EDITION = {Second},
 PUBLISHER = {American Mathematical Society, Providence, RI},
      YEAR = {2003},
     PAGES = {xiv+576},
      ISBN = {0-8218-3527-0},
   MRCLASS = {20G05 (17B10)},
  MRNUMBER = {2015057},
}

@incollection{bcnrd,
    AUTHOR = {Conrad, Brian},
     TITLE = {Reductive group schemes},
 BOOKTITLE = {Autour des sch\'{e}mas en groupes. {V}ol. {I}},
    SERIES = {Panor. Synth\`eses},
    VOLUME = {42/43},
     PAGES = {93--444},
 PUBLISHER = {Soc. Math. France, Paris},
      YEAR = {2014},
      ISBN = {978-2-85629-794-0},
   MRCLASS = {14L15},
  MRNUMBER = {3362641},
}

@article {BHKT,
    AUTHOR = {B\"{o}ckle, Gebhard and Harris, Michael and Khare,
              Chandrashekhar and Thorne, Jack A.},
     TITLE = {{$\hat G$}-local systems on smooth projective curves are
              potentially automorphic},
   JOURNAL = {Acta Math.},
  FJOURNAL = {Acta Mathematica},
    VOLUME = {223},
      YEAR = {2019},
    NUMBER = {1},
     PAGES = {1--111},
      ISSN = {0001-5962,1871-2509},
   MRCLASS = {11F80 (11F70 14D24)},
  MRNUMBER = {4018263},
MRREVIEWER = {Gabor\ Wiese},
       DOI = {10.4310/ACTA.2019.v223.n1.a1},
       URL = {https://doi.org/10.4310/ACTA.2019.v223.n1.a1},
}

@book {borel,
    AUTHOR = {Borel, Armand},
     TITLE = {Linear algebraic groups},
    SERIES = {Graduate Texts in Mathematics},
    VOLUME = {126},
   EDITION = {Second},
 PUBLISHER = {Springer-Verlag, New York},
      YEAR = {1991},
     PAGES = {xii+288},
      ISBN = {0-387-97370-2},
   MRCLASS = {20-01 (20Gxx)},
  MRNUMBER = {1102012},
MRREVIEWER = {F. D. Veldkamp},
       DOI = {10.1007/978-1-4612-0941-6},
       URL = {https://doi.org/10.1007/978-1-4612-0941-6},
}

@book {springer,
    AUTHOR = {Springer, T. A.},
     TITLE = {Linear algebraic groups},
    SERIES = {Modern Birkh\"{a}user Classics},
   EDITION = {second},
 PUBLISHER = {Birkh\"{a}user Boston, Inc., Boston, MA},
      YEAR = {2009},
     PAGES = {xvi+334},
      ISBN = {978-0-8176-4839-8},
   MRCLASS = {20G15 (14L10)},
  MRNUMBER = {2458469},
}

@book {CGP,
    AUTHOR = {Conrad, Brian and Gabber, Ofer and Prasad, Gopal},
     TITLE = {Pseudo-reductive groups},
    SERIES = {New Mathematical Monographs},
    VOLUME = {26},
   EDITION = {Second},
 PUBLISHER = {Cambridge University Press, Cambridge},
      YEAR = {2015},
     PAGES = {xxiv+665},
      ISBN = {978-1-107-08723-1},
   MRCLASS = {20G15 (14L15)},
  MRNUMBER = {3362817},
       DOI = {10.1017/CBO9781316092439},
       URL = {https://doi.org/10.1017/CBO9781316092439},
}

@misc{inf_laf, 
title={Infinitesimal characters and {L}afforgue's pseudocharacters},
author={Pa\v{s}k\={u}nas, Vytautas and Quast, Julian},
eprint={2505.03544},
      archivePrefix={arXiv},
      primaryClass={math.NT},
year={2025},
howpublished={\url{https://arxiv.org/abs/2505.03544}}
}

@misc{noeth_def, 
title={Deformations of pseudocharacters and {M}azur's finiteness condition},
author={Pa\v{s}k\={u}nas, Vytautas and Quast, Julian},
eprint={2506.10901},
      archivePrefix={arXiv},
      primaryClass={math.NT},
year={2025},
howpublished={\url{https://arxiv.org/abs/2506.10901}}
}

@misc{quast,
      title={Deformations of {$G$}-valued pseudocharacters}, 
      author={Quast, Julian},
      year={2023},
      eprint={2310.14886},
      archivePrefix={arXiv},
      primaryClass={math.NT},
      howpublished ={\url{https://arxiv.org/abs/2310.14886}}
}

@article{Laf,
  author =	 {Lafforgue, Vincent},
  title =	 {Chtoucas pour les groupes r\'eductifs et param\'etrisation de {L}anglands globale},
  journal =	 {J. Amer. Math. Soc.},
  year =	 2018,
  volume =	 31,
  number =	 3,
  pages =	 {719-891}
}

@misc{cotner,
      title={Morphisms of character varieties}, 
      author={Sean Cotner},
      year={2023},
      eprint={2304.10135},
      archivePrefix={arXiv},
      primaryClass={math.RT},
    howpublished ={\url{https://arxiv.org/abs/2304.10135}}
}

@article {defT,
    AUTHOR = {Pa\v{s}k\={u}nas, Vytautas and Quast, Julian},
     TITLE = {On local {G}alois deformation rings: generalised tori},
   JOURNAL = {Forum Math. Sigma},
  FJOURNAL = {Forum of Mathematics. Sigma},
    VOLUME = {13},
      YEAR = {2025},
     PAGES = {Paper No. e45, 36},
      ISSN = {2050-5094},
   MRCLASS = {11F80 (11F85)},
  MRNUMBER = {4865673},
       DOI = {10.1017/fms.2024.137},
       URL = {https://doi.org/10.1017/fms.2024.137},
}

@article{Wiles1988,
    author = {Wiles, A.},
    journal = {Inventiones mathematicae},
    keywords = {ordinary modular form; Hilbert modular form; continuous -adic representation; Frobenius automorphism},
    number = {3},
    pages = {529-574},
    title = {On ordinary {$\lambda$}-adic representations associated to modular forms.},
    url = {http://eudml.org/doc/143636},
    volume = {94},
    year = {1988},
}

@article {Wiles,
    AUTHOR = {Wiles, Andrew},
     TITLE = {Modular elliptic curves and {F}ermat's last theorem},
   JOURNAL = {Ann. of Math. (2)},
  FJOURNAL = {Annals of Mathematics. Second Series},
    VOLUME = {141},
      YEAR = {1995},
    NUMBER = {3},
     PAGES = {443--551},
      ISSN = {0003-486X},
   MRCLASS = {11G05 (11D41 11F11 11F80 11G18)},
  MRNUMBER = {1333035},
MRREVIEWER = {Karl Rubin},
       DOI = {10.2307/2118559},
       URL = {https://doi.org/10.2307/2118559},
}

@article {BJ_new,
    AUTHOR = {B\"{o}ckle, Gebhard and Juschka, Ann-Kristin},
     TITLE = {Equidimensionality of universal pseudodeformation rings in
              characteristic {$p$} for absolute {G}alois groups of
              {$p$}-adic fields},
   JOURNAL = {Forum Math. Sigma},
  FJOURNAL = {Forum of Mathematics. Sigma},
    VOLUME = {11},
      YEAR = {2023},
     PAGES = {Paper No. e102},
      ISSN = {2050-5094},
   MRCLASS = {11F80 (11F70 11F85)},
  MRNUMBER = {4668540},
       DOI = {10.1017/fms.2023.82},
       URL = {https://doi.org/10.1017/fms.2023.82},
}

@book {hartshorne,
    AUTHOR = {Hartshorne, Robin},
     TITLE = {Algebraic geometry},
    SERIES = {Graduate Texts in Mathematics, No. 52},
 PUBLISHER = {Springer-Verlag, New York-Heidelberg},
      YEAR = {1977},
     PAGES = {xvi+496},
      ISBN = {0-387-90244-9},
   MRCLASS = {14-01},
  MRNUMBER = {463157},
MRREVIEWER = {Robert Speiser},
}

@incollection {Anantharaman,
    AUTHOR = {Anantharaman, Sivaramakrishna},
     TITLE = {Sch\'{e}mas en groupes, espaces homog\`enes et espaces
              alg\'{e}briques sur une base de dimension 1},
 BOOKTITLE = {Sur les groupes alg\'{e}briques},
    SERIES = {Suppl\'{e}ment au Bull. Soc. Math. France},
    VOLUME = {Tome 101},
     PAGES = {5--79},
 PUBLISHER = {Soc. Math. France, Paris},
      YEAR = {1973},
   MRCLASS = {14L15},
  MRNUMBER = {335524},
MRREVIEWER = {J.-E.\ Bertin},
       DOI = {10.24033/msmf.109},
       URL = {https://doi.org/10.24033/msmf.109},
}

@misc{fargues2021geometrization,
      title={Geometrization of the local {L}anglands correspondence}, 
      author={Laurent Fargues and Peter Scholze},
      year={2021},
      eprint={2102.13459},
      archivePrefix={arXiv},
      primaryClass={math.RT},
howpublished ={\url{https://arxiv.org/abs/2102.13459}}
}

@misc{ScholzeCond,
    title={Lectures on condensed mathematics},
    author={Peter Scholze},
    year={2019},
howpublished ={\url{https://www.math.uni-bonn.de/people/scholze/Condensed.pdf}}

     }

@misc{ScholzeAnalytic,
    title={Lectures on {A}nalytic {G}eometry},
    author={Peter Scholze},
    year={2019},
howpublished ={\url{https://www.math.uni-bonn.de/people/scholze/Analytic.pdf}}
}

@misc{emerson2023comparison,
      title={Comparison of different definitions of pseudocharacters}, 
      author={Kathleen Emerson and Sophie Morel},
      year={2023},
      eprint={2310.03869},
      archivePrefix={arXiv},
      primaryClass={math.AG},
howpublished ={\url{https://arxiv.org/abs/2310.03869}}
}

@book {SerreGalCoh,
    AUTHOR = {Serre, Jean-Pierre},
     TITLE = {Galois cohomology},
    SERIES = {Springer Monographs in Mathematics},
   EDITION = {{E}nglish},
      
 PUBLISHER = {Springer-Verlag, Berlin},
      YEAR = {2002},
     PAGES = {x+210},
      ISBN = {3-540-42192-0},
   MRCLASS = {12G05 (11R34)},
  MRNUMBER = {1867431},
}

@incollection{serre_complete_red,
 author = {Serre, Jean-Pierre},
 title = {Complete reducibility.},
 booktitle = {S\'eminaire Bourbaki. Volume 2003/2004. Expos\'es 924--937},
 isbn = {2-85629-173-2},
 pages = {195--217, ex},
 year = {2005},
 publisher = {Paris: Soci{\'e}t{\'e} Math{\'e}matique de France},
 language = {French},
 keywords = {20G05,20E42,51E24},
 zbMATH = {2213919},
 Zbl = {1156.20313}
}

@book {Bourbaki_top,
    AUTHOR = {Bourbaki, Nicolas},
     TITLE = {General topology. {C}hapters 1--4},
    SERIES = {Elements of Mathematics (Berlin)},
      
 PUBLISHER = {Springer-Verlag, Berlin},
      YEAR = {1998},
     PAGES = {vii+437},
      ISBN = {3-540-64241-2},
   MRCLASS = {54-02 (00A05 54-01)},
  MRNUMBER = {1726779},
}

@article {vinberg,
    AUTHOR = {Vinberg, E. B.},
     TITLE = {On invariants of a set of matrices},
   JOURNAL = {J. Lie Theory},
  FJOURNAL = {Journal of Lie Theory},
    VOLUME = {6},
      YEAR = {1996},
    NUMBER = {2},
     PAGES = {249--269},
      ISSN = {0949-5932},
   MRCLASS = {14L30 (16R30 20G15)},
  MRNUMBER = {1424635},
MRREVIEWER = {James\ E.\ Humphreys},
}

@article{Jannsen1982,
  author = {Jannsen, Uwe},
  title = {Über {G}aloisgruppen lokaler {K}örper},
  journal = {Invent.\,Math.},
  volume = {70},
  number = {1},
  pages = {53-69},
  year = {1982},
  doi = {10.1007/BF01393198},
  issn = {1432-1297},
  url = {https://doi.org/10.1007/BF01393198}
}

@book {SGA1,
 editor = {Grothendieck, A.},

     TITLE = {Rev\^{e}tements \'{e}tales et groupe fondamental ({SGA} 1)},
    SERIES = {Documents Math\'{e}matiques (Paris) [Mathematical Documents
              (Paris)]},
    VOLUME = {3},
      NOTE = {S\'{e}minaire de g\'{e}om\'{e}trie alg\'{e}brique du Bois
              Marie 1960--61. [Algebraic Geometry Seminar of Bois Marie
              1960-61],
              Directed by A. Grothendieck,
              With two papers by M. Raynaud,
              Updated and annotated reprint of the 1971 original [Lecture
              Notes in Math., 224, Springer, Berlin; MR0354651 (50 \#7129)]},
 PUBLISHER = {Soci\'{e}t\'{e} Math\'{e}matique de France, Paris},
      YEAR = {2003},
     PAGES = {xviii+327},
      ISBN = {2-85629-141-4},
   MRCLASS = {14E20 (14-06 14F35)},
  MRNUMBER = {2017446},
}

@article{ConradTopologies, 
title={Weil and {G}rothendieck approaches to adelic points}, 
volume={58}, url={http://dx.doi.org/10.4171/LEM/58-1-3}, 
DOI={10.4171/lem/58-1-3}, 
number={1}, 
journal={L’Enseignement Mathématique}, 
publisher={European Mathematical Society - EMS - Publishing House GmbH}, 
author={Conrad, Brian}, 
year={2012}, 
pages={61–97}, 
language={en} }

@article{langlands_tori,
    AUTHOR = {{L}anglands, R. P.},
     TITLE = {Representations of abelian algebraic groups},
      NOTE = {Olga Taussky-Todd: in memoriam},
   JOURNAL = {Pacific J. Math.},
  FJOURNAL = {Pacific Journal of Mathematics},
      YEAR = {1997},
     PAGES = {231--250},
      ISSN = {0030-8730,1945-5844},
   MRCLASS = {11R39 (22E55)},
  MRNUMBER = {1610871},
MRREVIEWER = {Volker\ J.\ Heiermann},
       DOI = {10.2140/pjm.1997.181.231},
       URL = {https://doi.org/10.2140/pjm.1997.181.231},
}

@article {birkbeck,
    AUTHOR = {Birkbeck, Christopher},
     TITLE = {On the {$p$}-adic {L}anglands correspondence for algebraic
              tori},
   JOURNAL = {J. Th\'{e}or. Nombres Bordeaux},
  FJOURNAL = {Journal de Th\'{e}orie des Nombres de Bordeaux},
    VOLUME = {32},
      YEAR = {2020},
    NUMBER = {1},
     PAGES = {133--158},
      ISSN = {1246-7405,2118-8572},
   MRCLASS = {11R39 (20G05)},
  MRNUMBER = {4158581},
MRREVIEWER = {Ramdin\ Mawia},
       URL = {http://jtnb.cedram.org/item?id=JTNB_2020__32_1_133_0},
}

@incollection {borel_corvallis,
    AUTHOR = {Borel, A.},
     TITLE = {Automorphic {$L$}-functions},
 BOOKTITLE = {Automorphic forms, representations and {$L$}-functions
              ({P}roc. {S}ympos. {P}ure {M}ath., {O}regon {S}tate {U}niv.,
              {C}orvallis, {O}re., 1977), {P}art 2},
    SERIES = {Proc. Sympos. Pure Math.},
    VOLUME = {XXXIII},
     PAGES = {27--61},
 PUBLISHER = {Amer. Math. Soc., Providence, RI},
      YEAR = {1979},
      ISBN = {0-8218-1437-0},
   MRCLASS = {10D40 (12A67 22E50)},
  MRNUMBER = {546608},
MRREVIEWER = {Yasuhiro\ Asoo},
}

@incollection {BG,
    AUTHOR = {Buzzard, Kevin and Gee, Toby},
     TITLE = {The conjectural connections between automorphic
              representations and {G}alois representations},
 BOOKTITLE = {Automorphic forms and {G}alois representations. {V}ol. 1},
    SERIES = {London Math. Soc. Lecture Note Ser.},
    VOLUME = {414},
     PAGES = {135--187},
 PUBLISHER = {Cambridge Univ. Press, Cambridge},
      YEAR = {2014},
      ISBN = {978-1-107-69192-6},
   MRCLASS = {11F33},
  MRNUMBER = {3444225},
MRREVIEWER = {Jeremy\ A.\ Rouse},
       DOI = {10.1017/CBO9781107446335.006},
       URL = {https://doi.org/10.1017/CBO9781107446335.006},
}

@incollection {zhu,
    AUTHOR = {Zhu, Xinwen},
     TITLE = {A note on integral {S}atake isomorphisms},
 BOOKTITLE = {Arithmetic geometry},
    SERIES = {Tata Inst. Fundam. Res. Stud. Math.},
    VOLUME = {41},
     PAGES = {469--489},
 PUBLISHER = {Tata Inst. Fund. Res., Mumbai},
      YEAR = {[2024] \copyright 2024},
      ISBN = {978-81-957829-7-0},
   MRCLASS = {20C08 (14D24 22E50 22E57)},
  MRNUMBER = {4812712},
}

@article {yu_min,
    AUTHOR = {Min, Yu},
     TITLE = {Classicality of derived {E}merton--{G}ee stack},
   JOURNAL = {Math. Ann.},
  FJOURNAL = {Mathematische Annalen},
    VOLUME = {393},
      YEAR = {2025},
    NUMBER = {1},
     PAGES = {439--494},
      ISSN = {0025-5831,1432-1807},
   MRCLASS = {14F30 (14D23 14D24 14F08)},
  MRNUMBER = {4966564},
       DOI = {10.1007/s00208-025-03259-7},
       URL = {https://doi.org/10.1007/s00208-025-03259-7},
}

@misc{heejong_lee, 
author={Lee, Heejong},
title={{E}merton--{G}ee stacks, {S}erre weights, and {B}reuil--{M}\'ezard 
        conjectures for $\mathrm{GSp}_4$},
year={2023},
howpublished ={\url{https://arxiv.org/abs/2304.13879.pdf}}
}

@misc{lin, 
author={Lin, Zhongyipan},
title={The {E}merton--{G}ee stacks for tame groups, {I}},
year={2023},
howpublished ={\url{https://arxiv.org/abs/2304.05317}}
}

@incollection {padic_LL,
    AUTHOR = {Emerton, Matthew and Gee, Toby and Hellmann, Eugen},
     TITLE = {An introduction to the categorical p-adic {L}anglands program},
 BOOKTITLE = {The {L}anglands {P}rogram},
    SERIES = {Proc. Sympos. Pure Math.},
    VOLUME = {112.2},
     PAGES = {167--419},
 PUBLISHER = {Amer. Math. Soc., Providence, RI},
      YEAR = {2025},
      ISBN = {9781470484644; 9781470474386},
   MRCLASS = {99-06},
  MRNUMBER = {5007752},
       DOI = {10.1090/pspum/112.2/02066},
       URL = {https://doi.org/10.1090/pspum/112.2/02066},
}

@misc{zhang_bachelor,
    author = {Chen Zhang},
    title = {Classifying group extensions with not necessarily abelian kernel},
    year = {2017},
    howpublished={\url{https://pnp.mathematik.uni-stuttgart.de/lexmath/kuenzer/bachelor_zhang.pdf}}
}

@article{schreier,
  title={{\"U}ber die {E}rweiterung von {G}ruppen {I}},
  author={Otto Schreier},
  journal={Monatshefte f{\"u}r Mathematik und Physik},
  year={1926},
  volume={34},
  pages={165-180},
  url={https://api.semanticscholar.org/CorpusID:124731047}
}

@article{EilMac,
  title={Cohomology Theory in Abstract Groups. II: Group Extensions with a non-Abelian Kernel},
  author={Samuel Eilenberg and Saunders Maclane},
  journal={Annals of Mathematics},
  year={1947},
  volume={48},
  pages={326},
  url={https://api.semanticscholar.org/CorpusID:124205994}
}

@book {milne_alg,
    AUTHOR = {Milne, J. S.},
     TITLE = {Algebraic groups},
    SERIES = {Cambridge Studies in Advanced Mathematics},
    VOLUME = {170},
      NOTE = {The theory of group schemes of finite type over a field},
 PUBLISHER = {Cambridge University Press, Cambridge},
      YEAR = {2017},
     PAGES = {xvi+644},
      ISBN = {978-1-107-16748-3},
   MRCLASS = {14L15 (14-01 17B45 20-01 20G15)},
  MRNUMBER = {3729270},
MRREVIEWER = {Boris\ \`E.\ Kunyavski\u i},
       DOI = {10.1017/9781316711736},
       URL = {https://doi.org/10.1017/9781316711736},
}

@article{BoTi,
 author = {Borel, Armand and Tits, Jacques},
 title = {Groupes reductifs},
 fjournal = {Publications Math{\'e}matiques},
 journal = {Publ. Math., Inst. Hautes {\'E}tud. Sci.},
 issn = {0073-8301},
 volume = {27},
 pages = {659--755},
 year = {1965},
 language = {French},
 doi = {10.1007/BF02684375},
 url = {https://eudml.org/doc/103858},
 zbMATH = {3234147},
 Zbl = {0145.17402}
}

@book{MacLaneCat,
 author = {Mac Lane, Saunders},
 title = {Categories for the working mathematician.},
 edition = {2nd ed},
 fseries = {Graduate Texts in Mathematics},
 series = {Grad. Texts Math.},
 issn = {0072-5285},
 volume = {5},
 isbn = {0-387-98403-8},
 year = {1998},
 publisher = {New York, NY: Springer},
 language = {English},
 doi = {book/10.1007/978-1-4757-4721-8},
 keywords = {18-01},
 zbMATH = {1216133},
 Zbl = {0906.18001}
}

\end{document}